\documentclass[10pt,twoside]{book}
\usepackage{etex}

\usepackage[paperwidth=170mm, paperheight=240mm, hmarginratio=10:5, textwidth=125mm, top=25mm, bottom=25mm]{geometry}
\usepackage{changepage}

%--------------------

\usepackage[english]{babel}
\usepackage{amsmath, amsthm, amsfonts, mathrsfs, amsfonts}
\usepackage{mathtools}
\usepackage{xcolor}
\usepackage{bm}
\usepackage{amscd}
\usepackage{amssymb}
\usepackage{imakeidx}
\usepackage{enumerate} %numeri elenchi
\usepackage{fancyhdr}

\usepackage{array}
\newcolumntype{P}[1]{>{\centering\arraybackslash}p{#1}}
\usepackage{multirow}

\usepackage{slashbox} %per mettere linea diagonale in tabelle

%\usepackage[small,nohug,heads=vee]{diagrams}
%\diagramstyle[labelstyle=\scriptstyle]

\usepackage{textcomp}

\usepackage{hyperref}
\usepackage{tikz} 
\usepackage{diagrams}
\diagramstyle[labelstyle=\scriptstyle]

%---------------------

\usepackage[T1]{fontenc}
\usepackage{ae,aecompl}
\usepackage{xtab}
%===Pacchetti per font matematici particolari, caratteri gotici, simboli...
\usepackage{tabularx} % Permet d'utiliser l'environnement tabularx
\usepackage{calc} % Pour pouvoir donner des formules dans les définitions de longueur
\usepackage{graphicx}
\usepackage{lmodern}
\rmfamily

\usepackage{mathtools,booktabs}
\usepackage{amscd}

\usepackage{titlesec}

\usepackage{etoolbox}

\makeatletter
\patchcmd{\ttlh@hang}{\parindent\z@}{\parindent\z@\leavevmode}{}{}
\patchcmd{\ttlh@hang}{\noindent}{}{}{}
\makeatother

%\titlespacing*{\chapter}{0pt}{20pt}{40pt}
%\titleformat{\chapter}{\normalfont\Large\bfseries}
%{\chaptertitlename\ \thechapter}{20pt}{\LARGE}
%{\filright
%\small
%\enspace CHAPTER \thechapter\enspace}
%{20pt}
%{\LARGE\bfseries\filcenter}

\titlespacing*{\chapter}{0pt}{60pt}{60pt}
\titleformat{\chapter}[display]{\scshape\large\bfseries}{\chaptertitlename\ \thechapter}{5pt}{\normalfont\LARGE\bfseries}
[\vspace{1pt}\titlerule]

\renewcommand{\thesection}{\arabic{section}-}

\titleformat{\section}[hang]{\normalfont\Large\bfseries}{\thesection}{8pt}{}
%[\vspace{1pt}\titlerule]

% set dimensions of columns, gap between columns, and paragraph indent

\setlength{\textheight}{19cm}
%\setlength{\textwidth}{12cm}
%\setlength{\oddsidemargin}{4mm}
%\setlength{\evensidemargin}{-4mm}
%\setlength{\topmargin}{0cm}

%
  %====Comandi per l'inserimento di immagini
%\usepackage[pdftex]{graphicx}

%\addto\captionsenglish{%
% \renewcommand\chaptername{Chapter}}
% The Theorem~environments
%\setcounter{part}{-1}
%\setcounter{tocdepth}{1}

\numberwithin{section}{chapter}
\numberwithin{equation}{chapter}

\newcommand{\F}{\mathbb{F}}

\newcommand{\Z}{\ensuremath{\mathbb{Z}}}\vspace{12pt}

\newcommand{\A}{\ensuremath{\mathbb{A}}}

\newcommand{\cor}[1]{\mathcal{#1}}

\newcommand{\pgp}{Let $p$ be a prime number and let $G$ be a finite $p$-group.}

 %sylow
\DeclareMathOperator{\Aut}{Aut} %automorphism group
\DeclareMathOperator{\GL}{GL} %linear group
 %image of a map
\DeclareMathOperator{\hc}{H} %cohomology
\DeclareMathOperator{\ZG}{Z} %centre of a group
\DeclareMathOperator{\Res}{Res} %restriction map
 %order
\DeclareMathOperator{\End}{End} %endomorphism ring
 %category Mod
 %category of R-modules
\DeclareMathOperator{\Hom}{Hom} %homomorphism ring
\DeclareMathOperator{\nor}{N} %normalizer
 %matrices
 %heisenberg group
\DeclareMathOperator{\id}{id} %identity map
 %category of abelian groups
 %inclusion
 %functor
\DeclareMathOperator{\Sym}{Sym} %permutation group
\DeclareMathOperator{\Cyc}{C} %cyclic group
 %eigenspace
\DeclareMathOperator{\SL}{SL} %special linear group
 %matrix obelisc
\DeclareMathOperator{\cl}{cl} %closure of a subset
 %segno
\DeclareMathOperator{\Int}{Int} %intense automorphisms
\DeclareMathOperator{\Inn}{Inn} %inner automorphisms
\DeclareMathOperator{\inte}{int} %intensity
 %stabilizer
 %min numb generators
\DeclareMathOperator{\dpt}{dpt} %depth of an element
\DeclareMathOperator{\rk}{rk} %rank of a group
 %torsion of a group
\DeclareMathOperator{\yo}{MC(3)}
\DeclareMathOperator{\wt}{wt}
\DeclareMathOperator{\Br}{Br}
\DeclareMathOperator{\Sn}{S}
\DeclareMathOperator{\cc}{c}

\newcommand{\indexx}[1]{\index{#1}\emph{#1}}
\newcommand{\gen}[1]{\langle{#1}\rangle}

\newcommand{\graffe}[1]{\{{#1}\}}

\newtheorem{definition}{Definition}
\newtheorem{lemma}[definition]{Lemma}
\newtheorem{theorem}[definition]{Theorem}
\newtheorem{proposition}[definition]{Proposition}

\newtheorem{corollary}[definition]{Corollary}

\newtheorem*{lemma*}{Lemma}
\newtheorem*{theorem*}{Theorem}
\newtheorem*{proposition*}{Proposition}
\newtheorem*{remark*}{Remark}
\newtheorem*{corollary*}{Corollary}
\newtheorem*{notation*}{Notation}

\newtheorem{theoremA}{Theorem}

\renewcommand{\chaptermark}[1]{\uppercase{\markboth{#1}{}}}

\linespread{1.1}

\makeindex

\begin{document}

%\pagenumbering{arabic}

\begin{titlepage}% trang tieu de

\pagestyle{empty}

~
\vspace{1.2cm}

\begin{center}
{\Large\bf INTENSE AUTOMORPHISMS OF FINITE GROUPS}
\vspace {3cm}

{\large Proefschrift

ter verkrijging van

de graad van Doctor aan de Universiteit Leiden

op gezag van Rector Magnificus prof.\ mr.\ C.J.J.M.\ Stolker,

volgens besluit van het College voor Promoties

te verdedigen op dinsdag 5 september 2017

klokke 10:00 uur
\vspace {2cm}

 door
\vspace{2cm}

 {\bf Mima Stanojkovski}

 geboren te Sarajevo, Joegoslavi\"{e}, in 1989
 }

\end{center}

\newpage

{\Large\noindent Samenstelling van de promotiecommissie:}
\vspace{30pt}
\begin{center}
\begin{tabular}{m{6cm} m{5.5cm}} 

{\textbf{Promotor:}} &   \\
 & \\
Prof.\ dr.\ Hendrik W.\ Lenstra &  Universiteit Leiden \\
 & \\
 & \\
% \hspace{-10pt}
\textbf{Overige leden:} & \\
 & \\
 Dr.\ Jon Gonz\'{a}lez S\'{a}nchez  & Euskal Herriko Unibertsitatea \\
 Dr.\ Ellen Henke  & University of Aberdeen  \\
 Prof.\ dr.\ Andrea Lucchini & Universit\`{a} degli Studi di Padova \\
 Prof.\ dr.\ Bart de Smit & Universiteit Leiden \\
 Prof.\ dr.\ Aad van der Vaart & Universiteit Leiden \\
\end{tabular}
\end{center}

\vfill

\clearpage

\newpage

~

\vspace{150pt}

\hfill Mojim najdra\v{z}ima, mami, tati, i dragom gusaru.

%\vspace{200pt}

%\hfill \emph{A day without laughter is a day wasted.} 

%\hfill \rule{6.1cm}{0.4pt}

%\hfill - Charlie Chaplin 

\newpage

~

\vfill

\noindent
The background image of the cover of this thesis has been drawn by the author. The vertices of the illustrated graph represent equivalence classes of subgroups of $\yo$, as defined in Section \ref{section yo}, with respect to the equivalence relation $H\sim K$ if and only if, for each $j\in\graffe{1,2,3,4}$, the $j$-th widths, as defined in Section \ref{section jumps}, of $H$ and $K$ in $\yo$ are the same. An edge is drawn between two vertices if there are representatives $H$ and $K$ of the given vertices such that $H$ is contained in $K$ with index $3$ or vice versa. The color of each vertex is determined by the number of conjugates in $\yo$ of any representative of the equivalence class associated to the vertex.
Subgroups corresponding to white, blue, red, and yellow vertices have respectively $1$, $3$, $9$, and $27$ conjugates in $\yo$.

\end{titlepage}

\newpage
\frontmatter

\pagestyle{fancy}
\fancyhf{}
\fancyhead[LE,RO]{\leftmark}
\fancyfoot[CE,CO]{}
\fancyfoot[LE,RO]{\thepage}

\renewcommand{\contentsname}{Contents}
\tableofcontents
%\pagestyle{plain}
%\overfullrule=2cm

%---------------------------------------------------------------- MIMA

%NOTATION

\renewcommand{\chaptermark}[1]{\uppercase{\markboth{#1}{}}}

\chapter*{List of Symbols}\label{notation}
\chaptermark{List of symbols}
\addcontentsline{toc}{chapter}{List of symbols}

\vspace{-20pt}

{\Large\textbf{General}}
\vspace{8pt}

\begin{itemize}
 \item[] $p$, a prime number
 \item[] $\Z$, ring of integers
 \item[] $\Z_{\geq x}$, set of integers that are at least $x$
 \item[] $\Z_{> x}$, set of integers that are larger than $x$
 \item[] $\Z_p$, ring of $p$-adic integers
 \item[] $\F_q$, finite field of $q$ elements
 \item[] $R^*$, group of units of the ring $R$
 \item[] $|X|$, the cardinality of the set $X$
 \item[] $\gen{X}$, the subgroup generated by the set $X$ \\ 
 		 and we write $\gen{a,b,c,\ldots}$ instead of $\gen{\graffe{a,b,c,\ldots}}$
 \item[] $\id_X$, the identity map on the set $X$
 \item[] $\alpha_{|X}$, the map $\alpha$ restricted to the set $X$
 \item[] $\cl(X)$, the closure of the set $X$
 \item[] $\bigsqcup$, disjoint union
 \item[] $\otimes=\otimes_\Z$
 \item[] $\bigwedge=\bigwedge_\Z$
 \item[] $\GL_n(k)$, the general linear group of degree $n$ over $k$
 \item[] $\yo$, definition in Chapter \ref{chapter 3gps}
 \item[] $\SL_2^\triangle(\Z_p)$, definition in Section \ref{section sl2} 
 \item[] $\Sn(\Delta_p)$, definition in Section \ref{section sl1}
\end{itemize}
\vspace{12pt}

\noindent
{\Large\textbf{For any group $G$}}
\vspace{8pt}

\begin{itemize}
 \item[] $[x,y]=xyx^{-1}y^{-1}$, for any $x,y\in G$
 \item[] $\ZG(G)$, the centre of $G$
 \item[] $\Phi(G)$, the Frattini subgroup of $G$, see Section \ref{section p-gps}
 \item[] $(G_i)_{i\geq 1}$, the lower central series of $G$, see Definitions \ref{LCS} and \ref{lcs profinite def}
 \item[] $G^{n}=\gen{x^{n}\ :\ x\in G}$
 \item[] $\mu_n(G)=\gen{x\in G\ :\ x^{n}=1}$
 \item[] $|G:H|$, the index of $H$ in $G$
 \item[] $\Cyc_G(X)=\bigcap_{x\in X}G_x$, where $G_x$ is the stabilizer of $x$
 \item[] $\nor_G(H)$, the normalizer of $H$ in $G$
 \item[] $\End(G)$, the set of endomorphisms of $G$
 \item[] $\Aut(G)$, the automorphism group of $G$
 \item[] $\Inn(G)$, the inner automorphism group of $G$
 \item[] $\Int(G)$, the intense automorphism group of $G$, see Chapter \ref{CH formulation}
 \item[] $\rk(G)$, the rank of $G$, see Sections \ref{rank} and \ref{section background}
\end{itemize}

\vspace{8pt}
\noindent
{\Large\textbf{For a finite $p$-group $G$}}
\vspace{8pt}

\begin{itemize}
 \item[] $\rho$, the map $x\mapsto x^{p}$ 
 \item[] $\dpt_G(x)$, the depth of $x$ in $G$, see Section \ref{section jumps}
 \item[] $\wt_H^G(j)$, the $j$-th width of $H$ in $G$, see Section \ref{section jumps}
 \item[] $\wt_G(j)=\wt_G^G(j)$ 
 \item[] $\chi_G$, the intense character of $G$, see Section \ref{section main}
 \item[] $\inte(G)$, the intensity of $G$, see Section \ref{section main}
\end{itemize}

\vspace{8pt}
\noindent
{\Large\textbf{Exceptions}}
\vspace{8pt}
\begin{itemize}
 \item[] $(F_i)_{i\geq 1}$, the $p$-central series of the free group $F$, in Sections \ref{section construction}, \ref{section strutture free}, and \ref{section unique with automorphism}
\end{itemize}

\renewcommand{\chaptermark}[1]{\uppercase{\markboth{#1}{}}}

% INTRODUCTION

%\frontmatter

\chapter*{Introduction}
\chaptermark{Introduction}
\addcontentsline{toc}{chapter}{Introduction}

Let $G$ be a group and let $\Aut(G)$ denote its group of automorphisms.
An automorphism $\alpha\in\Aut(G)$ is \emph{intense} if it sends each subgroup of $G$ to a conjugate, i.e., for every subgroup $H$ of $G$ there exists $g\in G$ such that $\alpha(H)=gHg^{-1}$. The collection of intense automorphisms is a normal subgroup of $\Aut(G)$, which is denoted by $\Int(G)$. 
\vspace{10pt}\\
\noindent
Such automorphisms come to light in the field of Galois cohomology, as we will see at the end of this introductory section. Additionally, they give rise to a very rich theory. We study the case in which $G$ is a finite $p$-group and show that, if $\Int(G)$ is not itself a $p$-group, then the structure of $G$ is almost completely determined by its ``class''.
\vspace{10pt} \\
\noindent
If $G$ is a finite abelian group, then the inversion map $x\mapsto x^{-1}$ is an intense automorphism of $G$ and therefore, unless the exponent of $G$ divides $2$, the order of $\Int(G)$ is even. 
It follows, for example, that if $G$ is non-trivial abelian of odd order, then $G$ always has a non-trivial intense automorphism of order coprime to its order.  
In Chapter \ref{CH formulation} we prove the following result for groups of prime power order.

\begin{theoremA}\label{intro 1}
Let $p$ be a prime number and let $G$ be a finite $p$-group. Then $\Int(G)$ is isomorphic to a semidirect product $S_G\rtimes C_G$, where $S_G$ is a Sylow 
$p$-subgroup of $\Int(G)$ and $C_G$ is a subgroup of the unit group $\F_p^*$ of the finite field $\F_p$. Moreover, if $G$ is non-trivial abelian, then $C_G=\F_p^*$.
\end{theoremA}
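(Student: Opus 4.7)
The plan is to construct a homomorphism $\chi\colon\Int(G)\to\F_p^*$ via the induced action on the Frattini quotient $V=G/\Phi(G)$, which is an $\F_p$-vector space. Since $\Phi(G)\supseteq [G,G]$, conjugation by any element of $G$ acts trivially on $V$. Consequently, if $\alpha\in\Int(G)$ and $H\leq G$, then $\alpha(H)=gHg^{-1}$ for some $g\in G$, so the induced map $\bar\alpha$ on $V$ fixes the subspace $H\Phi(G)/\Phi(G)$. As subgroups of $G$ containing $\Phi(G)$ correspond bijectively to subspaces of $V$, every subspace — in particular every line — is preserved by $\bar\alpha$. A standard linearity check (comparing $\bar\alpha(u)$, $\bar\alpha(v)$, $\bar\alpha(u+v)$ for independent $u,v$) then forces $\bar\alpha$ to be scalar multiplication by some $\chi(\alpha)\in\F_p^*$. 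Set $C_G:=\chi(\Int(G))$, a subgroup of $\F_p^*$.

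Next I would analyze $S_G:=\ker\chi$. It sits inside the kernel of the natural map $\Aut(G)\to\Aut(G/\Phi(G))$, which is a $p$-group by the classical fact that an automorphism of a finite $p$-group acting trivially on its Frattini quotient has $p$-power order. (The standard proof filters $G$ by a characteristic $p$-central series with elementary abelian graded pieces on which $\alpha - \id$ acts nilpotently.) Hence $S_G$ is a $p$-group, and since $|\Int(G)/S_G|=|C_G|$ divides $p-1$ and is therefore coprime to $p$, $S_G$ is the unique, normal Sylow $p$-subgroup of $\Int(G)$. The short exact sequence
\[
 1 \longrightarrow S_G \longrightarrow \Int(G) \longrightarrow C_G \longrightarrow 1
\]
splits by Schur--Zassenhaus, yielding $\Int(G)\cong S_G\rtimes C_G$ with $C_G\leq\F_p^*$.

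For the final assertion, suppose $G$ is non-trivial abelian. Then conjugation in $G$ is trivial, so an automorphism is intense precisely when it preserves every subgroup setwise. For each $a\in\Z$ with $\gcd(a,p)=1$, the power map $\pi_a\colon x\mapsto x^a$ is an automorphism of the finite abelian $p$-group $G$ that fixes every cyclic subgroup, hence every subgroup, so $\pi_a\in\Int(G)$. Its induced action on $V$ is multiplication by $a\bmod p$, so $\chi(\pi_a)=a\bmod p$; letting $a$ range over $\{1,\ldots,p-1\}$ shows $\chi$ is surjective, hence $C_G=\F_p^*$. The main technical input I foresee is the classical lemma invoked in the second paragraph that automorphisms trivial on the Frattini quotient form a $p$-group; the rest is linear algebra on $V$ together with a standard application of Schur--Zassenhaus.
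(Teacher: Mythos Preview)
Your proposal is correct and follows essentially the same route as the paper: construct the character $\chi\colon\Int(G)\to\F_p^*$ via the scalar action on $G/\Phi(G)$, identify $\ker\chi$ as the unique Sylow $p$-subgroup using the classical fact that automorphisms trivial on the Frattini quotient have $p$-power order, split by Schur--Zassenhaus, and for the abelian case exhibit the power maps $x\mapsto x^a$ as intense automorphisms realizing every scalar. The only cosmetic difference is that the paper routes $\chi$ through $\Z_p^*$ via the Teichm\"uller lift (so as to phrase later arguments uniformly in terms of $\Z_p$-module actions), but the content is the same.
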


\noindent
Theorem \ref{intro 1} is the same as Theorem \ref{theorem abelian} and is proven in Section \ref{section abelian}. If $p$ is an odd prime number, then Theorem \ref{intro 1}
guarantees the existence of infinitely many $p$-groups, up to isomorphism, whose group of intense automorphisms is not itself a $p$-group. Moreover, it is also clear from Theorem \ref{intro 1} that the order of the intense automorphism group of a $2$-group can never have prime divisors other than $2$. 
We define the \emph{intensity} of a finite $p$-group $G$ to be the order of $C_G$ and we denote it by $\inte(G)$.
The main goal of this thesis is to classify all pairs $(p,G)$ such that $p$ is a prime number and $G$ is a finite $p$-group of intensity greater than $1$. 
Theorem \ref{intro 1} classifies all such pairs $(p,G)$ for which $G$ is abelian\ldots but what happens in general?
\vspace{10pt} \\
\noindent
We proceed by separating into cases based on ``how non-abelian" a group is. 
We define the \emph{lower central series} $(G_i)_{i\geq 1}$ of a group $G$ by
\[ G_1 = G \ \ \ \text{and} \ \ \
G_{i+1}=[G,G_i]=\gen{xyx^{-1}y^{-1}\ :\ x\in G, y\in G_i}
\]
and we define the \emph{(nilpotency) class} of $G$ to be $$\cl(G)=\#\graffe{k\in\Z_{\geq 1} :  G_k\neq 1}.$$ 
In other words, the class of a group $G$ is the number of non-trivial elements of the lower central series.  The only group of class $0$ is the trivial group and the groups of class $1$ are the non-trivial abelian groups. 
It is a classical result that, for any finite $p$-group, the lower central series stabilizes at $\graffe{1}$ and so the class is finite.
\vspace{10pt} \\ 
\noindent
In Chapter \ref{CH class 2} we look at finite $p$-groups of class $2$ -- the first non-abelian case we treat -- and prove the following result. 

\begin{theoremA}\label{intro 3}
Let $p$ be a prime number and let $G$ be a finite $p$-group of class~$2$. Then the following are equivalent.
\begin{itemize}
 \item[$1$.] One has $\inte(G)> 1$.
 \item[$2$.] One has $\inte(G)=p-1$ and $p$ is odd.
 \item[$3$.] The group $G$ is extraspecial of exponent $p$.
\end{itemize}
\end{theoremA}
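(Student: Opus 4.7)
The implication $(2)\Rightarrow(1)$ is immediate, so I focus on $(3)\Rightarrow(2)$ and $(1)\Rightarrow(3)$. For $(3)\Rightarrow(2)$, let $G$ be extraspecial of exponent $p$ (so $p$ is odd); then $V=G/\ZG(G)\cong\F_p^{2n}$ carries the non-degenerate alternating form $\omega\colon V\times V\to\ZG(G)\cong\F_p$ coming from the commutator, and $\ZG(G)=G'=\Phi(G)$. For each $c\in\F_p^*$ I would build an automorphism $\alpha_c$ of $G$ acting as $c$ on $V$ and as $c^2$ on $\ZG(G)$; that $G$ has exponent $p$ is precisely what lifts this scaling to an honest group automorphism. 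To check $\alpha_c$ is intense, split subgroups $H\le G$ by whether they contain $\ZG(G)$. Those containing $\ZG(G)$ correspond to subspaces of $V$ and are preserved by scalar multiplication. Those not containing $\ZG(G)$ meet it trivially (since $|\ZG(G)|=p$), hence are elementary abelian and embed in $V$ as an isotropic subspace $W$; the number of complements of $\ZG(G)$ in $H\ZG(G)$ equals $p^{\dim W}$, which also equals the number of $G$-conjugates of $H$ (using $\nor_G(H)=\Cyc_G(H)$), so every complement is a conjugate of $H$ and in particular $\alpha_c(H)$ is.

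For $(1)\Rightarrow(3)$, let $\alpha\neq 1$ be intense of order coprime to $p$. Maximal subgroups of $G$ are normal and so preserved by $\alpha$, which therefore fixes every hyperplane of $V=G/\Phi(G)$ and acts on $V$ as a scalar $c\in\F_p^*$. The classical fact that the kernel of $\Aut(G)\to\Aut(G/\Phi(G))$ is a $p$-group rules out $c=1$, so $|\F_p^*|>1$ and $p$ is odd. Bilinearity of the commutator in class $2$ then makes $\alpha$ act as $c^2$ on $G'$, and since every subgroup of $\ZG(G)$ is normal in $G$, $\alpha|_{\ZG(G)}$ preserves all of them and is of order coprime to $p$, hence is a universal power map by Theorem~\ref{intro 1}; compatibility with $G'\neq 1$ pins the scalar to $c^2$ on all of $\ZG(G)$.

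Two structural consequences follow. First, $\ZG(G)\subseteq\Phi(G)$: for $z\in\ZG(G)$ we have $z^{c^2}=\alpha(z)\equiv z^c\pmod{\Phi(G)}$, so $z^{c(c-1)}\in\Phi(G)$; as $c(c-1)$ is coprime to $p$ this forces $z\in\Phi(G)$. Second, $\ZG(G)=G'$: $\alpha$ descends to an intense automorphism of the abelian quotient $G/G'$ (each subgroup containing $G'$ is normal), which by Theorem~\ref{intro 1} is a universal $c$-power map, so writing $\alpha(x)=x^c\chi(x)$ with $\chi(x)\in G'$ one finds $\chi(z)=z^{c(c-1)}$ for $z\in\ZG(G)$, giving $\ZG(G)^{c(c-1)}\subseteq G'$ and hence $\ZG(G)\subseteq G'$.

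The main obstacle is forcing $|G'|=p$ and $G^p=1$, i.e.\ that $G$ is extraspecial of exponent $p$. For this I would exploit the ``cocycle'' identity $\chi(xy)=\chi(x)\chi(y)[x,y]^{\binom{c}{2}}$, obtained from $\alpha(xy)=\alpha(x)\alpha(y)$ together with $(xy)^c=x^cy^c[y,x]^{\binom{c}{2}}$ (valid in class $2$), and the analogous relation $(xy)^p=x^py^p[y,x]^{\binom{p}{2}}$ to compare $\alpha(x^p)$ with $\alpha(x)^p$; the mismatch between the ``linear'' scalar $c$ and the ``quadratic'' scalar $c^2$, together with $c\neq 1$, forces $G^p\subseteq G'$, so $V=G/G'$ becomes an $\F_p$-vector space on which the commutator is a non-degenerate alternating $\F_p$-bilinear form, and a further iteration of the same idea eliminates non-trivial $p$-th powers to yield $G^p=1$. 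Non-degeneracy plus $\alpha$-equivariance then pins down $|G'|=p$. The exponent-$p$ step is the most delicate: it is exactly here that the extraspecial groups of exponent $p^2$ (which share $\ZG(G)=G'$ of order $p$ but have intensity $1$) must be excluded.
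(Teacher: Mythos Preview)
Your $(3)\Rightarrow(2)$ is essentially the paper's argument (Lemma~\ref{extraspecial action on first quotient} and Lemma~\ref{extraspecial has intensity p-1}), and your reductions $\ZG(G)\subseteq\Phi(G)$ and $\ZG(G)=G'$ in $(1)\Rightarrow(3)$ parallel the paper's character arguments (Lemma~\ref{centre commutator lemma}).

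The genuine gap is your final step, ``non-degeneracy plus $\alpha$-equivariance then pins down $|G'|=p$.'' This is false as stated: take $G=H_1\times H_2$ with each $H_i$ extraspecial of exponent $p$. Then $\ZG(G)=G'=\Phi(G)\cong\F_p^2$, the group has exponent $p$, the commutator form $V\times V\to G'$ is non-degenerate, and the product automorphism $\alpha_c\times\alpha_c$ acts as $c$ on $V$ and $c^2$ on $G'$. All the hypotheses you have accumulated are satisfied, yet $|G'|=p^2$. So scalar equivariance and non-degeneracy alone cannot force $|G'|=p$; you must use the intense property more deeply than through its action on characteristic quotients.

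The paper's mechanism for this is Lemma~\ref{isotropic surjective}: for every isotropic $T\subseteq V$, the map $\phi_T\colon V/T\to\Hom(T,Z)$ is \emph{surjective}. The proof uses intensity in an essential way: the unique $A$-stable complement of $Z$ in $\pi^{-1}(T)$ (from Theorem~\ref{lambda mu}) must, by intensity, be conjugate to \emph{every} complement, which realizes every element of $\Hom(T,Z)$ as $t\mapsto[g,t]$. In the product example above this surjectivity fails (take $T$ maximal isotropic inside $V_1$; then $\phi_T$ lands in $\Hom(T,Z_1\times 0)$). From this surjectivity the paper runs a dimension count (Lemma~\ref{dimW 2}): if $\dim Z=2$ and $T$ is maximal isotropic of dimension $t$, then $\dim V=3t$; dropping to a codimension-one $L\subset T$ gives a $3$-dimensional kernel $U$ on which the induced alternating map $U\times U\to Z$ factors through $\bigwedge^2 U\to Z$, whose kernel (being at least one-dimensional) contains a pure wedge by Lemma~\ref{wedge}, producing an isotropic subspace of dimension $t+1$ and a contradiction.

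Your cocycle approach to $G^p=1$ is also incomplete: the identity $(xy)^p=x^py^p[y,x]^{\binom{p}{2}}$ needs $G'$ of exponent $p$ to simplify, which you have not yet established, and even once $\rho$ is a homomorphism the comparison $\alpha(x^p)=x^{cp}\chi(x)^p$ gives no information modulo $G'$. The paper instead first reduces to $|G_2|=p$ (as above) and then, in Lemma~\ref{extraspecial exponent}, kills a hypothetical element of order $p^2$ by noting that $\langle g\rangle$ would contain $G_2$, hence be normal and $A$-stable, and then Theorem~\ref{lambda mu} would split it as $(\langle g\rangle/G_2)\oplus G_2$, contradicting cyclicity.
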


\noindent
Theorem \ref{intro 3} is the same as Theorem \ref{theorem class2 complete} and is proven in Section \ref{section class 2 extraspecial}.
As we explain in Chapter \ref{CH class 2}, \emph{extraspecial groups} of exponent $p$ are exactly those of the form $(\F_p^{2n+1},\ast)$, where $\ast$ is a twist of the usual $+$ by an inner product on $\F_p^n$. 
%Moreover, for each positive integer $n$, there are, up to isomorphism, no extraspecial groups of exponent $p$ and order $p^{2n+1}$ other than $(\F_p^{2n+1},\ast)$. 
Thanks to their pleasant shape, it is not a surprise that they carry intense automorphisms of order coprime to $p$. Moreover, they provide, for each odd prime $p$, an infinite class of examples of $p$-groups of class $2$ and intensity different from $1$. 
\vspace{10pt}\\
\noindent
Passing to class at least $3$, things drastically change: in Chapter \ref{chapter class 3}, we prove the following very restrictive result.

\begin{theoremA}\label{intro 3.5}
Let $p$ be a prime number and let $G$ be a finite $p$-group of class at least $3$. Then the following hold.
\begin{itemize}
 \item[$1$.] One has $\inte(G)\leq 2$. 
 \item[$2$.] If $\inte(G)=2$, then $p$ is odd and $|G:G_2|=p^2$.
\end{itemize}
\end{theoremA}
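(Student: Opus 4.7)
The plan is to pass to the quotient $G/G_3$ and apply Theorem~\ref{intro 3}. Since $G_3$ is characteristic, any intense $\alpha\in\Int(G)$ descends to an intense automorphism of $G/G_3$ whose scalar action on $V=G/G_2$ is unchanged, so that $\inte(G/G_3)\geq\inte(G)>1$. As $G/G_3$ has class exactly~$2$, Theorem~\ref{intro 3} forces $G/G_3$ to be extraspecial of exponent~$p$; hence $p$ is odd, $|G:G_2|=p^{2n}$ and $|G_2:G_3|=p$ for some $n\geq 1$, and $\inte(G)\mid p-1$. Fix $\alpha\in\Int(G)$ of order $c=\inte(G)$ acting on $V$ as a scalar $\lambda\in\F_p^\ast$ of order~$c$. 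By the bilinearity of the commutator modulo subsequent layers of the lower central series, $\alpha$ acts as $\lambda^i$ on each $G_i/G_{i+1}$; in particular as $\lambda^2$ on $W=G_2/G_3$ and as $\lambda^3$ on $G_3/G_4$.

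For part~1, the idea is to probe the intense condition on cyclic subgroups that mix two layers. Pick $x\in G\setminus G_2$ and $z\in G_3\setminus G_4$; the latter exists because $\cl(G)\geq 3$. The intense condition on $\langle x\rangle$ yields $\alpha(x)=g_x x^\lambda g_x^{-1}$, and on $\langle xz\rangle$ yields $\alpha(xz)=h(xz)^\lambda h^{-1}$, where in both cases the exponent reduces to $\lambda$ modulo~$p$ via the scalar action on $V$. Using $\alpha(z)\equiv z^{\lambda^3}\pmod{G_4}$, the identity $\alpha(xz)=\alpha(x)\alpha(z)$, when expanded via the class-$3$ collection formula for $(xz)^\lambda$, reduces after a short calculation in $G_3/G_4$ to $[k,x]^\lambda\equiv z^{\lambda^3-\lambda}\pmod{G_4}$, where $k=g_x^{-1}h$ centralises $x$ modulo $G_3$. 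Letting $z$ vary over $G_3\setminus G_4$, the right-hand side sweeps through $(\lambda^2-1)$ times $G_3/G_4$; if $\lambda^2\not\equiv 1$, this is surjective onto $G_3/G_4$, so the image of $k\mapsto[k,x]^\lambda$ would have to exhaust $G_3/G_4$. The conclusion $\lambda^2=1$, hence $c\leq 2$, follows by showing that for a well-chosen $x$ this image is in fact strictly smaller than $G_3/G_4$.

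The main obstacle is exactly the last step: establishing that the image of $k\mapsto[k,x]$ from the centraliser of $x$ modulo $G_3$ into $G_3/G_4$ does not cover all of $G_3/G_4$ for a suitable $x$. This is a statement about the associated graded Lie algebra $V\oplus W\oplus(G_3/G_4)$, and I expect it to follow from a dimension count using the bilinear pairing $V\otimes W\to G_3/G_4$ (with $V\otimes W\cong V$ since $\dim_{\F_p}W=1$) together with the non-degenerate symplectic structure on $V\wedge V\to W$ supplied by the extraspecial $G/G_3$.

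For part~2, assume $c=2$ and suppose for contradiction that $n\geq 2$. Then $\lambda=-1$, so $\alpha$ acts as $-1$ on $V$, trivially on $W$, and as $-1$ on $G_3/G_4$. Since $n\geq 2$, the symplectic form on $V$ admits a non-trivial isotropic $2$-plane, yielding $\F_p$-linearly independent $\bar a,\bar b\in V$ with $[\bar a,\bar b]=0$ in $W$; lifting to $a,b\in G$, the commutator $[a,b]$ lies in $G_3$. Applying the intense condition to a suitably chosen cyclic subgroup involving $a$, $b$, and an element of $G_3$ and repeating the collection-formula bookkeeping of part~1 produces a constraint in $G_3/G_4$ incompatible with the $-1$-action there coupled with the trivial action on~$W$. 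This rules out $n\geq 2$ and hence forces $|G:G_2|=p^2$.
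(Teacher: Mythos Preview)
Your approach diverges from the paper's in a crucial way: you try to establish $\inte(G)\leq 2$ \emph{before} pinning down $|G:G_2|=p^2$, whereas the paper does the opposite. After reducing (by a quotient) to the case where $G$ has class~$3$ and $|G_3|=p$, the paper first proves $|G:G_2|=p^2$ using the theory of isotropic subspaces in the extraspecial quotient $G/G_3$ (Corollary~\ref{max iso bijection}, a genuine structural consequence of intensity in class~$2$). Only then, knowing $|G|=p^4$, does it show $\inte(G)=2$: assuming $\chi^2\neq 1$, it proves that $C=\Cyc_G(G_2)$ is a $3$-dimensional $\F_p$-space on which $A$ acts with three distinct eigencharacters, so exactly three lines of $C$ are $A$-stable; a counting of lines versus orbit sizes (Lemma~\ref{orbits}) yields $p^2+p+1\leq 3p$, a contradiction.

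The gap you identify in part~1 is real and, as stated, cannot be filled without something equivalent to the paper's prior step. Concretely: for \emph{every} $x\in G\setminus G_2$ one has $G_2\subseteq C_x$ (since $[G_2,G]\subseteq G_3$), and the map $G_2\to G_3/G_4$, $g\mapsto[g,x]$, can certainly be surjective for generic $x$ (indeed $[G,G_2]=G_3$). To make your map $k\mapsto[k,x]$ non-surjective you would need $x\in\Cyc_G(G_2)\setminus G_2$; but the existence of such an $x$ is exactly the statement $\Cyc_G(G_2)\supsetneq G_2$, which in the paper is deduced from $|G:G_2|=p^2$ together with $|G_3|=p$ (Lemma~\ref{class 3 centralizer G2}). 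So your argument for~(1) secretly presupposes~(2). There is also a smaller looseness: from $\alpha(\langle x\rangle)=g_x\langle x\rangle g_x^{-1}$ you only get $\alpha(x)=g_x x^{m}g_x^{-1}$ with $m\equiv\lambda\pmod p$; if $x^p\neq 1$ the exponent is not simply $\lambda$, and this feeds extra terms into your collection-formula bookkeeping.

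Your part~2 is only a sketch. The claim that an isotropic $2$-plane in $V$ produces ``a constraint in $G_3/G_4$ incompatible with the $-1$-action'' is not substantiated; in the paper this is precisely where the isotropic-subspace machinery (Lemma~\ref{isotropic surjective}, Corollary~\ref{max iso bijection}) does the work, and no shortcut via a single cyclic subgroup is apparent. I would recommend reversing the order: first reduce to $|G_3|=p$, then prove $n=1$ using the class-$2$ isotropic analysis, and only afterwards run a counting or eigenspace argument to force $\lambda^2=1$.
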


\noindent
Theorem \ref{intro 3.5} is a reformulation of Theorem \ref{theorem class at least 3}, which is proven in Section \ref{section gps class 3}. Moreover, Theorem \ref{intro 3.5} tells us that, for class greater than $2$, a $p$-group $G$ always has intensity $1$ or $2$; in the latter case, if $p$ is odd, then 
the order of the abelianization of $G$ is ``small''.
\vspace{10pt}\\
\noindent
Starting from class $3$, we want to understand the structure of the groups from Theorem \ref{intro 3.5}($2$). To this end, let $p$ be an odd prime number and let $G$ be a finite $p$-group of class $3$ with $|G:G_2|=p^2$. 
In Section \ref{section gps class 3}, we prove that $G/G_3$ is extraspecial of exponent $p$ and that the order of $G$ is $p^4$ or $p^5$. Moreover, if we write $w_i=\log_p|G_i:G_{i+1}|$, then either $(w_1,w_2,w_3)=(2,1,1)$ or $(w_1,w_2,w_3)=(2,1,2)$. As a consequence, for the given prime number $p$, there are, up to isomorphism, only finitely many possibilities for $G$ (for a sharp bound see for example \cite{p5}) and so, contrarily to what happens for class $1$ and $2$, there are only finitely many isomorphism classes of finite $p$-groups of class $3$ and intensity greater than $1$.
The fortunate outcome of our investigation in class $3$ is the following.

\begin{theoremA}\label{intro 4}
Let $p$ be an odd prime number and let $G$ be a finite $p$-group of class $3$. Then the following are equivalent.
\begin{itemize}
 \item[$1$.] One has $\inte(G)=2$.
 \item[$2$.] One has $|G:G_2|=p^2$.
\end{itemize} 
\end{theoremA}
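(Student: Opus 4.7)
The implication $1 \Rightarrow 2$ is immediate from Theorem \ref{intro 3.5}(2): since $\cl(G) = 3 \geq 3$ and $\inte(G) = 2$, one obtains $|G:G_2| = p^2$ for free.

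For the converse, Theorem \ref{intro 3.5}(1) combined with Theorem \ref{intro 1} forces $\inte(G) \in \{1,2\}$ under our hypotheses, so it suffices to exhibit a single intense automorphism of $G$ of order $2$. The structural analysis announced before the statement and developed in Section \ref{section gps class 3} tells us, under hypothesis (2), that $G/G_3$ is extraspecial of order $p^3$ and exponent $p$, that $|G| \in \{p^4, p^5\}$, and that the width triple $(\wt_G(1), \wt_G(2), \wt_G(3))$ is either $(2,1,1)$ or $(2,1,2)$. Applying Theorem \ref{intro 3} to $G/G_3$ supplies an intense $\bar\alpha \in \Aut(G/G_3)$ of order $2$, acting as $-1$ on $G/G_2$ and, by commutator bilinearity (together with the identity $[x^{-1},y^{-1}] = [x,y]$ valid modulo terms of higher weight), as $+1$ on $G_2/G_3$. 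The plan is to lift $\bar\alpha$ to an $\alpha \in \Int(G)$ of order $2$; any such lift must act on $G_i/G_{i+1}$ by the scalar $(-1)^i$, hence as $-1$ on $G_3$, which is automatic since $G_3$ is central (because $G_4 = 1$) and the lifted commutator identities are rigid.

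Existence of the lift $\alpha$ as an automorphism of order $2$ can be verified either by an explicit generator/relator check on each of the finitely many isomorphism types arising for fixed $p$ with $|G| \leq p^5$, or more uniformly by realizing $G$ as a quotient $F/N$ of a $2$-generated free class-$3$ pro-$p$ group $F$ and noting that the involution of $F$ sending each free generator to its inverse preserves $N$ (indeed, it acts on each graded piece $F_i/F_{i+1}$ by the scalar $(-1)^i$, and $N$ is generated by elements that are homogeneous with respect to this grading up to combinations compatible with the sign action).

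The main obstacle, where hypothesis (2) is genuinely decisive, is checking that $\alpha$ is intense. Given $H \leq G$, intensity of $\bar\alpha$ in $G/G_3$ furnishes $g_0 \in G$ conjugating $H$ to $\alpha(H)$ modulo $G_3$, so the residual discrepancy is captured by a map from cosets of $H \cap G_3$ in $HG_3$ to $G_3$; this must be absorbed by further conjugation by elements of $G_2$, which act trivially modulo $G_3$ but shift $H$ within its $G_3$-cosets via the commutator pairing $G/G_2 \times G_2/G_3 \to G_3$. The delicate case is $|G| = p^5$ with $\wt_G(3) = 2$, where one needs this pairing to be surjective onto $G_3$; that surjectivity follows from the extraspecial structure of $G/G_3$ together with the rank condition $|G:G_2| = p^2$, which forces the pairing to be non-degenerate in both arguments. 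Once surjectivity is in hand, the adjustment closes for every subgroup $H$, and $\alpha$ is intense of order $2$, whence $\inte(G) \geq 2$ and in fact $\inte(G) = 2$.
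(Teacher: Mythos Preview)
Your strategy matches the paper's, but both halves of the converse contain real gaps. For the construction of $\alpha$, your free-group argument asserts that ``$N$ is generated by homogeneous elements,'' which is neither justified nor obviously true: with the lower central series of a free class-$3$ pro-$p$ group one has $F/F_2\cong\Z_p^2$ infinite, so $N\not\subseteq F_2$, and the scalar action of $\beta$ on graded pieces does not by itself force $\beta(N)=N$. The paper (Section~\ref{section construction}) instead uses the $p$-central series $F_{i+1}=[F,F_i]F_i^p$ of the absolute free group, locates $\ker\theta$ between two explicit subgroups $E\subseteq L$, and proves that $\beta$ acts as $-1$ on the abelian quotient $L/E$ (Lemma~\ref{beta layers}); every subgroup of $L/E$ is then $\beta$-stable automatically.

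The intensity verification has a sharper problem. Adjustment by conjugation with $G_2$ does nothing when $H\subseteq G_2$, since $G_2$ is elementary abelian (Lemma~\ref{class 3 G2 elem abelian}). Even allowing conjugation by all of $G$, surjectivity of the commutator pairing is not enough: in the case $|G_3|=p$ with $H\oplus G_3=\Cyc_G(G_2)$, there are $p^2$ complements of $G_3$ in $\Cyc_G(G_2)$ but only $p$ conjugates of $H$, so ``the adjustment closes for every $H$'' is false as stated. The paper (Section~\ref{section class 3 with character}, and in particular Lemma~\ref{sputafuoco}) handles this and several other cases by a jump-based case analysis combined with orbit-counting arguments that compare the set of $A$-stable subgroups to conjugacy-class sizes.
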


\noindent
The last theorem is a simplification of Theorem \ref{theorem intensity class 3}, whose proof is given in Section \ref{section construction}.  
%An immediate corollary of Theorem \ref{intro 4} is that, if $p$ is a prime number and $G$ is a finite $p$-group of class $3$, then $\inte(G)\leq 2$.
Thanks to Theorem \ref{intro 4}, we now know that the only condition, given an odd prime number $p$, for a finite $p$-group of class $3$ to have intensity $2$ is just that of having an abelianization of order $p^2$.  
%More generally, Theorem \ref{intro 4} can be used to prove not only that, for each $c\in\Z_{\geq 3}$, the $p$-groups of class $c$ have intensity at most $2$, but also that those of intensity $2$ form only finitely many isomorphism classes.
The most urgent problem at this point is that of constructing examples of $p$-groups of class greater than $3$ and intensity $2$: those will serve as a model for further investigation.
\vspace{10pt} \\
\noindent
\textbf{Example.} Let $p>3$ be a prime number and let $\Z_p$ denote the ring of $p$-adic integers.
Let $t$ be a quadratic non-residue modulo $p$ and denote by $\Delta_p$ the quaternion algebra 
$\Delta_p=\Z_p+\Z_p\mathrm{i}+\Z_p\mathrm{j}+\Z_p\mathrm{ij}$ with defining relations
$\mathrm{i}^2=t$, $\mathrm{j}^2=p$, and $\mathrm{ji}=-\mathrm{ij}$. The algebra $\Delta_p$ is equipped with a \emph{standard involution}, which is given by 
$$x=a+b\mathrm{i}+c\mathrm{j}+d\mathrm{ij} \ \mapsto\ 
\overline{x}=a-b\mathrm{i}-c\mathrm{j}-d\mathrm{ij}$$
and which is an anti-ring-automorphism of $\Delta_p$.
Moreover, $\mathfrak{m}=\Delta_p\mathrm{j}$ is the unique ($2$-sided/left/right) maximal ideal of $\Delta_p$ and the residue field $\Delta_p/\mathfrak{m}$, as well as every quotient $\mathfrak{m}^k/\mathfrak{m}^{k+1}$, has cardinality $p^2$. 
%More generally, for each $k\in\Z_{\geq 0}$, the quotient $\Delta_p/(\mathfrak{m}^k)$ has cardinality $9^k$ and, if $k$ is even, then $\mathfrak{m}^k=p^{k/2}\Delta_p$.
Via the natural isomorphisms of groups $(1+\mathfrak{m}^k)/(1+\mathfrak{m}^{k+1})\rightarrow \mathfrak{m}^k/\mathfrak{m}^{k+1}$, the multiplicative group $1+\mathfrak{m}$ is then seen to be a pro-$p$-subgroup of $\Delta_p^*$.
We define 
$\Sn(\Delta_p)$ to be the subgroup of $1+\mathfrak{m}$ consisting of those elements $x$ satisfying $\overline{x}=x^{-1}$. Being closed in $1+\mathfrak{m}$, the group $\Sn(\Delta_p)$ is itself a pro-$p$-subgroup of $\Delta_p^*$ and, if 
$(\Sn(\Delta_p)_i)_{i\geq 1}$ denotes the lower central series of $\Sn(\Delta_p)$, then 
$$(\log_p|\Sn(\Delta_p)_i:\Sn(\Delta_p)_{i+1}|)_{i\geq 1}=(2,1,2,1,2,1,\ldots).$$
We prove in Section \ref{section sl1} that each non-trivial discrete quotient of $\Sn(\Delta_p)$ has intensity greater than $1$. 
\vspace{10pt} \\
\noindent
Because of the last example, we know that, whenever $p$ is a prime larger than $3$ and $c$ is a positive integer, then there always exists a finite $p$-group of class $c$ and intensity greater than $1$. We cannot however use the same strategy to build examples of high class $3$-groups of intensity $2$. As a matter of fact, even though the group $\Sn(\Delta_p)$ can be defined also for $p=3$, the image of the $3$-torsion of $\Sn(\Delta_3)$ in $\Sn(\Delta_3)/\Sn(\Delta_3)_2$ is non-trivial. The next result, which is obtained by combining Theorem \ref{theorem dimensions} and Lemma \ref{corollary bijection rho}($1$), explains why this is a problem.  

\begin{theoremA}\label{intro 4.5}
Let $p$ be an odd prime number and let $G$ be a finite $p$-group. Let $(G_i)_{i\geq 1}$ denote the lower central series of $G$ and  write $w_i=\log_p|G_i:G_{i+1}|$.
Assume that the class of $G$ is at least $4$ and that $\inte(G)=2$. Then the following conditions are satisfied.
\begin{itemize}
 \item[$1$.] One has $(w_1,w_2,w_3,w_4)=(2,1,2,1)$.
 \item[$2$.] The map $x\mapsto x^p$ induces a bijection 
 			 $\overline{\rho}:G/G_2\rightarrow G_3/G_4$.
\end{itemize}
\end{theoremA}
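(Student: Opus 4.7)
The plan is to prove the two parts separately, corresponding to the two ingredients mentioned in the text: the widths come from Theorem \ref{theorem dimensions}, and the bijection from Lemma \ref{corollary bijection rho}$(1)$. Since $G_5$ is characteristic, intense automorphisms descend, so $\inte(G/G_5)\geq 2$; both $(w_1,w_2,w_3,w_4)$ and $\overline{\rho}$ depend only on $G/G_5$, so I may assume $G$ has class exactly $4$. Theorem \ref{intro 3.5} gives $w_1=2$ directly. Applying Theorem \ref{intro 4} to the class-$3$ quotient $G/G_4$ (which inherits intensity $2$) together with the accompanying structural result that $G/G_3$ is extraspecial of exponent $p$, yields $w_2=1$ and $(w_1,w_2,w_3)\in\{(2,1,1),(2,1,2)\}$ for $G/G_4$.

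The main obstacle is ruling out the possibility $w_3=1$. My approach would be to work in the graded Lie ring $L=\bigoplus_{i\geq 1}G_i/G_{i+1}$ over $\F_p$, on which the order-$2$ intense automorphism $\alpha$ acts on $L_i$ as multiplication by $(-1)^i$, since $\alpha$ acts as $-1$ on $L_1$ and Lie brackets multiply eigenvalues. If $w_3=1$, then after choosing a basis $\{e_1,e_2\}$ of $L_1$ I may assume $[e_1,[e_1,e_2]]=0$ in $L_3$; the Jacobi identity then forces $[e_1,[e_2,[e_1,e_2]]]=0$ in $L_4$, so that $L_4$ is spanned by $[e_2,[e_2,[e_1,e_2]]]$. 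Combining this rigidity with the Hall--Petrescu formula for $\rho(x)=x^p$, the $\alpha$-equivariance of $\rho$, and the identity $G_i^p\subseteq G_{i+2}$ (which I would establish en route using the same $(-1)^i$-action), I would aim to derive a contradiction with $G_4\neq 1$. Once $w_3=2$ is known, a symmetric Jacobi analysis of $L_4=[L_1,L_3]$ yields $\dim L_4\leq 1$, and since $G_4\neq 1$ we conclude $w_4=1$.

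I expect this graded Lie-algebra analysis intertwined with Hall--Petrescu to be the hardest step: the characteristic-$p$ subtleties need careful attention, in particular for $p=3$, where the Lazard correspondence is unavailable for class $4$ and one has to exploit the $\alpha$-action directly to get $G_i^p\subseteq G_{i+2}$.

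For part $(2)$, once part $(1)$ is established, the Hall--Petrescu formula together with $G_i^p\subseteq G_{i+2}$ and the divisibility of $\binom{p}{i}$ by $p$ for $1<i<p$ shows, for $x\in G$ and $y\in G_2$, that $(xy)^p\equiv x^p\pmod{G_4}$. Hence $\overline{\rho}$ is a well-defined $\F_p$-linear map between two $2$-dimensional $\F_p$-vector spaces, so bijectivity reduces to either injectivity or surjectivity; the former follows from the $\alpha$-equivariance of $\rho$ combined with the width structure established in $(1)$, which forces $\ker\overline{\rho}=0$.
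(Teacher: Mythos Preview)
Your proposal has several genuine gaps.

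\textbf{The Jacobi argument does not bound $w_4$.} Your claim that, once $w_3=2$, ``a symmetric Jacobi analysis of $L_4=[L_1,L_3]$ yields $\dim L_4\leq 1$'' is false. In the free nilpotent Lie algebra on two generators one has $\dim L_4=3$ (Witt's formula), so Jacobi alone cannot push the bound below $3$. The paper's bound $w_{c-1}w_c\leq 2$ (Proposition~\ref{consecutive layers}) genuinely uses the intensity hypothesis: one shows that the map $G/G_2\rightarrow\Hom(G_{c-1}/G_c,G_c)$, $xG_2\mapsto(aG_c\mapsto[x,a])$, is \emph{surjective}, and this surjectivity is extracted from the fact that all complements of $G_c$ in $G_{c-1}$ are conjugate (which is where $\alpha$ being intense enters). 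No amount of Lie-algebra identities will see this.

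\textbf{Ruling out $w_3=1$ is left vague.} Your Lie-ring computation correctly gives that if $w_3=1$ then $L_4$ is spanned by a single bracket, hence $w_4\leq 1$; but this is perfectly compatible with $w_4=1$, so there is no contradiction yet. You then gesture at Hall--Petrescu and $\alpha$-equivariance without saying what the contradiction is. The paper's argument (Proposition~\ref{p^5, class 4, intensity 1}) is a subgroup-counting argument: one analyses $\Cyc_G(G_3)$ via its $\pm$-decomposition and compares the number of $A$-stable cyclic subgroups to the total, obtaining an inequality that fails.

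\textbf{The map $\overline{\rho}$ is not $\F_p$-linear in general.} Your well-definedness step (showing $(xy)^p\equiv x^p\bmod G_4$ for $y\in G_2$) is correct and matches Lemma~\ref{induced powering on abelianization}, but this does \emph{not} give linearity: that would require $(xz)^p\equiv x^pz^p\bmod G_4$ for arbitrary $x,z\in G$. For $p=3$ one has the identity $(xy)^3=x^3y^3[xy^{-1},[x,y]]$ modulo $G_4$ (Lemma~\ref{formula cubing}), and the extra commutator term is generically nonzero in $G_3/G_4$. The paper explicitly notes, immediately after the statement you are proving, that $\overline{\rho}$ is a group isomorphism only for $p>3$ and never for $p=3$.

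\textbf{Equivariance gives no injectivity.} The automorphism $\alpha$ acts as $(-1)^i$ on $G_i/G_{i+1}$, hence as $-1$ on both $G/G_2$ and $G_3/G_4$. Every map between these spaces is $\alpha$-equivariant, so equivariance cannot force $\overline{\rho}$ to be injective. The paper's proof of bijectivity (Lemmas~\ref{no torsion in first layer} and~\ref{corollary bijection rho}) is again a counting argument: assuming some $x\in G\setminus G_2$ has $x^p\in G_4$, one builds a family $Y_C$ of rank-$2$ abelian subgroups, counts $A$-stable members versus all members, and derives a numerical contradiction from the orbit structure.

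In short, the structural claims here are not consequences of graded Lie-algebra identities or formal equivariance; they require the orbit-counting leverage that the intensity hypothesis provides.
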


\noindent
Relying on results coming from Section \ref{section regular}, one can prove that, whenever $p>3$, the map $\overline{\rho}$ from Theorem \ref{intro 4.5} is a group isomorphism, while in the case of $3$-groups it never is: because of this structural difference, we separate the two cases. 
\vspace{10pt} \\
\noindent
We define a \emph{$\kappa$-group} to be a finite $3$-group $G$ such that 
$|G:G_2|=9$ and such that cubing induces a bijection $\kappa:G/G_2\rightarrow G_3/G_4$. In particular, $\kappa$ coincides with $\overline{\rho}$ from Theorem \ref{intro 4.5}($2$). In Chapter \ref{chapter 3gps}, we prove several structural results about $\kappa$-groups: we show, for example, that in class $3$ there is, up to isomorphism, a unique $\kappa$-group and that the minimal extensions of that group to class $4$ (which then have order $729$) have an elementary abelian commutator subgroup. The just-mentioned results are presented in the form of Theorems \ref{theorem unique kappa} and \ref{theorem kappa G2}.
Our investigation of $\kappa$-groups leads to the construction of the following example.
\vspace{10pt}\\
\noindent
\textbf{Example.} Let $R=\F_3[\epsilon]$ be of cardinality $9$, with $\epsilon^2=0$.
Denote by $\Delta$ the quaternion algebra 
$\Delta=R+R\mathrm{i}+R\mathrm{j}+R\mathrm{ij}$ with defining relations
$\mathrm{i}^2=\mathrm{j}^2=\epsilon$ and $\mathrm{ji}=-\mathrm{ij}$. Let moreover the \emph{standard involution} on $\Delta$ be the $R$-linear map that is given by 
$(\overline{1},\overline{\mathrm{i}},\overline{\mathrm{j}},\overline{\mathrm{ij}})=
(1,-\mathrm{i},-\mathrm{j},-\mathrm{ij})$. Then, for each $x,y\in\Delta$, one has $\overline{xy}=\overline{y}\, \overline{x}$.
We write $\mathfrak{m}=\Delta\mathrm{i}+\Delta\mathrm{j}$, which is a nilpotent maximal $2$-sided ideal of $\Delta$ with $\Delta/\mathfrak{m}$ isomorphic to $\F_3$. We define additionally
$\yo$ to be the subgroup of the multiplicative group $1+\mathfrak{m}$ consisting of those elements $x$ satisfying $\overline{x}=x^{-1}$. 
The group $\yo$ has order $729$, class $4$, and it is a $\kappa$-group.
Moreover, $\inte(\yo)=2$.
\vspace{10pt}\\
\noindent
In Chapter \ref{chapter 3gps} we prove the following result, which is a simplified version of Theorem \ref{theorem 3-groups}. 

\begin{theoremA}\label{intro 5}
Let $G$ be a finite $3$-group of class at least $4$. 
Then the following conditions are equivalent.
\begin{itemize}
 \item[$1$.] One has $\inte(G)=2$.
 \item[$2$.] The group $G$ is isomorphic to $\yo$.
\end{itemize}
\end{theoremA}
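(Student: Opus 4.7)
The direction (2) $\Rightarrow$ (1) is immediate: the example preceding the statement records that $\yo$ is a $\kappa$-group of order $729$ and class $4$ with $\inte(\yo)=2$, so I would simply cite this. The work is all in (1) $\Rightarrow$ (2). So let $G$ be a finite $3$-group of class $c\geq 4$ with $\inte(G)=2$.

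The first step is to observe that $G$ is a $\kappa$-group. By Theorem \ref{intro 4.5}, applied with $p=3$, we have $(w_1,w_2,w_3,w_4)=(2,1,2,1)$, so in particular $|G:G_2|=9$, and the cubing map induces a bijection $\overline{\rho}:G/G_2\to G_3/G_4$. By definition, this bijection is precisely the map $\kappa$, so $G$ is a $\kappa$-group in the sense of Chapter \ref{chapter 3gps}.

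The second step is to transfer intensity through the quotients $G/G_k$. Since each $G_k$ is characteristic, every intense automorphism of $G$ descends to an automorphism of $G/G_k$, and one checks (in the same way one proves Theorem \ref{intro 1} descends to quotients) that the image of an intense automorphism is still intense; in particular, the order-$2$ complement $C_G$ projects to a subgroup of $C_{G/G_k}$, so $\inte(G/G_k)\geq 2$, and by Theorem \ref{intro 3.5} equality holds as soon as $G/G_k$ has class at least $3$. Applying this with $k=4$, the quotient $G/G_4$ is a $\kappa$-group of class $3$ and intensity $2$; by Theorem \ref{theorem unique kappa} it is, up to isomorphism, the unique such group. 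Applying it with $k=5$, the quotient $G/G_5$ is a $\kappa$-group of class $4$, order $729$, with intensity $2$; here Theorem \ref{theorem kappa G2} forces $G_2/G_5$ to be elementary abelian. I would combine the resulting presentation with the fact that $\yo$ is also a class-$4$ $\kappa$-group of order $729$ with elementary abelian commutator subgroup, and match them by exhibiting a generating set of $G/G_5$ on which the intense involution acts by $-1$ in the same way as on the corresponding generators of $\yo$; this identifies $G/G_5$ with $\yo$.

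It remains to prove $G_5=1$, that is, to rule out proper extensions of $\yo$ to class $5$ that still have intensity $2$. This is the main obstacle. The strategy is to assume $G_5\neq 1$ and derive a contradiction using the combined constraints: $G_5$ is normal and stable under the order-$2$ intense automorphism, so $G_5/G_6$ is an $\F_3$-vector space on which the intense involution acts, with weights controlled by the intensity action on $G/G_2$ and by commutator/cubing identities derived in Section \ref{section regular}. The explicit structure of $\yo$ (in particular the concrete way $\kappa$ is realised inside the quaternion algebra $\Delta$) lets one compute $[G_2,G_3]$ and $G_3^3$ modulo $G_6$, and I would show that these generate $G_5$, so $w_5$ is already forced to be zero once the relations from class $\leq 4$ are imposed. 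This uses the specific $3$-torsion pathology that prevented the $\Sn(\Delta_p)$ construction from working at $p=3$: the cubing map no longer provides fresh generators in higher layers, so the lower central series must terminate. Once $G_5=1$ is established, the isomorphism $G\cong\yo$ obtained in the previous paragraph is the desired conclusion.
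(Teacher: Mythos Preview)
Your overall architecture is sound---show $G$ is a $\kappa$-group, bound the class by $4$, then identify $G$ with $\yo$---and you correctly extract from Theorem~\ref{theorem kappa G2} that $(G/G_5)_2$ is elementary abelian, i.e.\ $G_2^3\subseteq G_5$. But Step~3, where you rule out $G_5\neq 1$, is not a valid argument.

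You propose to compute $[G_2,G_3]$ and $G_3^3$ ``using the explicit structure of $\yo$'' and conclude $w_5=0$. This cannot work: $\yo$ is only the quotient $G/G_5$, and the quotient does not determine the extension, so nothing about $\yo$ tells you what happens inside $G_5$. Moreover $G_5=\cl([G,G_4])$ is not in general generated by $[G_2,G_3]$ and $G_3^3$, and even if it were, showing these vanish in $\yo$ says nothing about their lifts in $G$. The sentence ``$w_5$ is forced to be zero once the relations from class $\leq 4$ are imposed'' conflates relations in a quotient with relations in the group itself.

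The paper's argument is quite different and does not use the identification $G/G_5\cong\yo$ at this stage at all. It proves independently (Proposition~\ref{class at least 5, G2^p=G4}, via a counting argument on $A$-stable subgroups with jump set $\{2,4\}$) that \emph{any} finite $p$-group of class at least $5$ and intensity greater than $1$ satisfies $G_2^p=G_4$. Combining this with the inclusion $G_2^3\subseteq G_5$ that you already have yields $G_4=G_2^3\subseteq G_5\subseteq G_4$, hence $G_4=G_5$, contradicting class $\geq 5$. You have one of the two ingredients; the missing one is a genuine structural result, not a computation in $\yo$.

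As a secondary point, your Step~2 identification of $G/G_5$ with $\yo$ by ``exhibiting a generating set on which the intense involution acts by $-1$'' is too vague. There are several $\kappa$-groups of class $4$ and order $729$ admitting such an involution (the paper shows there are exactly three normal subgroups $N\in\cor{N}_4$ of the free group giving such quotients), and they are isomorphic only because $\Aut(F)$ acts transitively on $\cor{N}_4$; this is the content of Proposition~\ref{proposition unique with automorphism} and requires the chain of bijections $\cor{N}_4\to\cor{N}_3\to\cor{P}\to\cor{K}_V\to\cor{I}_V$ developed in Sections~\ref{section strutture}--\ref{section unique with automorphism}. In the paper this identification is carried out only \emph{after} the class has been bounded, so $G$ itself (not $G/G_5$) is matched with $\yo$.
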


\noindent
Theorem \ref{intro 5} concludes the classification of finite $3$-groups of intensity greater than $1$. Except for the two infinite families of finite non-trivial abelian $3$-groups and extraspecial $3$-groups of exponent $3$, there are, up to isomorphism, exactly $17$ groups in class $3$ (specifically $4$ of order $81$ and $13$ of order $243$), and $1$, namely $\yo$, in class $4$. In class higher than $4$, there are no $3$-groups of intensity greater than $1$.
\vspace{10pt} \\
\noindent
To continue our investigation, we let $p>3$ be a prime number. In Chapter \ref{chapter obelisks}, we define a \emph{$p$-obelisk} to be a finite non-abelian $p$-group $G$ such that $|G:G_3|=p^3$ and $G^p=G_3$. %Moreover, we devote Chapter \ref{chapter obelisks} entirely to understanding the structure of $p$-obelisks. 
Among other things, we prove that $p$-obelisks of class at least $4$ satisfy both $(1)$ and $(2)$ from Theorem \ref{intro 4.5} and it is in fact true that, for each $p$-obelisk $G$, one has 
\[
(\log_p|G_i:G_{i+1}|)_{i\geq 1}=(2,1,2,1,\ldots,2,1,f,0,0,\ldots) \ \ \text{with} \ \ f\in\graffe{0,1,2},
\]
where the index $i\in\graffe{\cl(G),\cl(G)+1}$ to which $f$ corresponds is odd and larger than $2$. 
We will see in Chapter \ref{chapter bilbao} that, for every prime number $p>3$, each non-abelian quotient of $\Sn(\Delta_p)$ is a special kind of $p$-obelisk that we call ``framed''.  %a $p$-obelisk $G$ is called \emph{framed} if, given any maximal subgroup $M$ of $G$, one has $G_3=[M,M]M^p$. 
\vspace{10pt}\\
\noindent
Let $p$ be a prime number and let $G$ be a finite $p$-group. The \emph{Frattini subgroup} of $G$ is $\Phi(G)=[G,G]G^p$; then $G/\Phi(G)$ is the largest possible quotient of $G$ that is vector space over $\F_p$. 
If $p>3$, then a $p$-obelisk $G$ is \emph{framed} if the Frattini subgroup of each maximal subgroup of $G$ coincides with $G_3$, i.e. for each maximal subgroup $M$ of $G$, one has $\Phi(M)=G_3$. Though it might not be evident at first sight, asking for a $p$-obelisk to be framed is equivalent to imposing strong limitations to the interaction of commutator maps and power maps in the group.
\vspace{10pt}\\
\noindent
Using a wide range of techniques, we are able to prove the following characterization for $p$-groups of class at least $4$, which coincides with the combination of Theorems \ref{theorem class 4 iff}, \ref{theorem we need lines}, and \ref{proposition last odd case}. We denote by $\Cyc_G(G_4)$ the centralizer of $G_4$ in the group $G$, i.e. $\Cyc_G(G_4)=\bigcap_{g\in G_4}\graffe{x\in G : [x,g]=1}$. 

\begin{theoremA}\label{intro 6}
Let $p>3$ be a prime number and let $G$ be a finite $p$-group of class at least $4$.
For each $i\in\Z_{\geq 1}$, write $w_i=\log_p|G_i:G_{i+1}|$.
Then $\inte(G)=2$ if and only if there exists $\alpha\in\Aut(G)$ of order $2$ such that $\alpha$ induces the inversion map $x\mapsto x^{-1}$ on $G/G_2$ and one of the following holds.
\begin{itemize}
 \item[$1$.] The group $G$ is a $p$-obelisk of class $4$.
 \item[$2$.] The group $G$ is a $p$-obelisk with $w_5=1$ and $\Phi(\Cyc_G(G_4))=G_3$.
 \item[$3$.] The group $G$ is a framed $p$-obelisk with $w_5=2$.
\end{itemize}  
\end{theoremA}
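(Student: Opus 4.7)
The plan is to combine the restrictive conclusions of Theorem~\ref{intro 4.5} with the classification of $p$-obelisks from Chapter~\ref{chapter obelisks} in order to reduce the forward direction to a case split over the class, and then to verify the converse by a direct action-on-subgroups computation that the $\alpha$ given in the hypothesis actually sends every subgroup to a conjugate.

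For the forward direction, assume $\inte(G) = 2$ and $\cl(G) \geq 4$. Theorem~\ref{intro 4.5} gives $(w_1,w_2,w_3,w_4) = (2,1,2,1)$ and the bijection $\overline{\rho} \colon G/G_2 \to G_3/G_4$ induced by $x \mapsto x^p$, whence $G^p G_4 = G_3$. Using the regularity machinery of Section~\ref{section regular} (available because $p > 3$), this can be upgraded along the central series to $G^p = G_3$, and together with $|G:G_3| = p^3$ and the non-abelianness of $G$ this exhibits $G$ as a $p$-obelisk. From the structural result that every such obelisk has $(w_i) = (2,1,2,1,\ldots,2,1,f,0,\ldots)$ with $f \in \{0,1,2\}$ at an odd terminal index, the only data left are $\cl(G)$ and $f$. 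The order-$2$ automorphism $\alpha$ is pulled from Theorem~\ref{intro 1}: the subgroup $C_G \leq \Int(G)$ has order $\inte(G) = 2$ and embeds into $\F_p^*$ via its scalar action on $G/\Phi(G) = G/G_2$, so its non-trivial element has order $2$ and acts as $-1$ on $G/G_2$; since $p$ is odd it lifts to a literal order-$2$ element of $\Aut(G)$.

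The case split now proceeds on the class. If $\cl(G) = 4$, we are in case~(1). Otherwise $\cl(G) \geq 5$ and $w_5 \in \{1,2\}$, and the task is threefold: to rule out $\cl(G) \geq 6$, to prove $\Phi(\Cyc_G(G_4)) = G_3$ when $w_5 = 1$, and to prove that $G$ is framed when $w_5 = 2$. Non-existence of higher class should come from the $\alpha$-equivariance of the commutator bracket $G/G_2 \otimes G_4/G_5 \to G_5/G_6$: the two factors receive sign $(-1)(+1)$ under $\alpha$ whereas the image receives $-1$, which combined with non-degeneracy of the bracket on a $p$-obelisk forces $G_6 = 1$. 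The two remaining identifications would follow by exploiting that $\Cyc_G(G_4)$ and each $\Phi(M)$ (with $M$ maximal in $G$) are $\alpha$-invariant characteristic subgroups, and matching their orders against what the $-1$-action on $G/G_2$ permits pins them down.

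For the converse it suffices, by Theorem~\ref{intro 3.5}(1) and $|C_G| \geq 2$, to show $\alpha \in \Int(G)$, i.e.\ that every $H \leq G$ satisfies $\alpha(H) = gHg^{-1}$ for some $g \in G$. Since $\alpha$ acts as $-1$ on $G/G_2 \cong \F_p^2$, the image of $H$ modulo $G_2$ is $\alpha$-stable, and conjugation by a suitable element of $G$ matches $\alpha$ already at the level of $G/G_3$ thanks to the $p$-obelisk structure. Lifting the agreement into $H$ itself is controlled by how $\alpha$ interacts with the commutator and $p$-th power maps, and the reduction is to finitely many ``test subgroups'' generated by elements whose images span lines in $G/G_3$. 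The hard part, and the main obstacle of the proof, will be cases~(2) and~(3): here one must show that the extra hypotheses are precisely strong enough to rule out an anomalous subgroup on which $\alpha$ and conjugation disagree. Concretely, $\Phi(\Cyc_G(G_4)) = G_3$ forces the $p$-th power map on $\Cyc_G(G_4)$ to agree with the one on $G_3$, while framedness does the same job one level higher; this is what I expect the three theorems \ref{theorem class 4 iff}, \ref{theorem we need lines}, and \ref{proposition last odd case} to carry out in progressively finer detail, and the plan is to first isolate the relevant test subgroups and only then check the $\alpha$-action on them.
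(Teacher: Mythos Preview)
There is a genuine gap in your forward direction: you claim that when $\cl(G)\geq 5$ one must ``rule out $\cl(G)\geq 6$'', and you propose a sign argument on the bracket $G/G_2\otimes G_4/G_5\to G_5/G_6$ to do it. Both the goal and the argument are wrong. The goal is wrong because case~(3) of the theorem does not bound the class: a framed $p$-obelisk with $w_5=2$ can have arbitrarily large class, and indeed Theorem~\ref{theorem any class} exhibits such groups of every class. Case~(3) is where all groups of class $\geq 6$ with $\inte(G)>1$ land, not a case to be eliminated. The sign argument is wrong on its own terms: $\alpha$ acts as $(-1)^i$ on $G_i/G_{i+1}$, so the two tensor factors receive signs $(-1)$ and $(+1)$ and their product is $-1$, which \emph{agrees} with the sign $(-1)^5=-1$ on $G_5/G_6$. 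There is no contradiction to extract.

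What the forward direction actually requires after you know $G$ is a $p$-obelisk is the trichotomy: if $\cl(G)=4$ you are in case~(1); if $\cl(G)\geq 5$ and $w_5=1$, then Blackburn's width inequality (Proposition~\ref{blackburn 2-1-2}(2), since $w_4w_5=1$) forces $\cl(G)=5$, and the content is to show $\Phi(\Cyc_G(G_4))=G_3$ (this is Proposition~\ref{centralizer G4 2-gen}); if $\cl(G)\geq 5$ and $w_5=2$, the class is unconstrained and the content is to show $G$ is framed (Proposition~\ref{proposition eq class 5}). Your converse direction is on the right track: it is exactly what Theorems~\ref{theorem class 4 iff}, \ref{proposition last odd case}, and~\ref{theorem we need lines} establish in the three cases.
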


\noindent
Theorem \ref{intro 6} makes the role of $p$-obelisks in our theory clear and it can be used to prove that any $p$-group of class at least $6$ and intensity greater than $1$ is a framed $p$-obelisk. 
Class $5$ is the highest class in which there still exist $p$-obelisks of intensity greater than $1$ that are not framed \ldots but ``semi-framed''. 
More precisely, if, as in Theorem \ref{intro 6}($2$), the group $G$ is a $p$-obelisk with $w_5=1$, then the class is $5$, the order of $G_5$ is $p$, and $\Cyc_G(G_4)$ is a maximal subgroup; it is the only maximal subgroup whose Frattini subgroup is required to coincide with $G_3$.
\vspace{10pt} \\
\noindent
Theorem \ref{intro 6} completes the classification of prime power order groups of intensity greater than $1$, modulo the existence of some special automorphism. 
Because of their relevance in the theory of intense automorphisms, we give a name to such an automorphism. If $G$ is a group, we call an automorphism $\alpha\in\Aut(G)$  \emph{concrete} if it has order $2$ and the automorphism of $G/G_2$ that is induced by $\alpha$ coincides with the inversion map $x\mapsto x^{-1}$. To the present day, we know very little about concrete automorphisms and how to construct them in general: finding necessary and sufficient conditions for a $p$-obelisks to possess a concrete automorphism is an interesting problem that we have not yet addressed.
\vspace{10pt}\\
\noindent
In the following table, we summarize the results we have formulated so far. We denote by $p$ a prime number and by $G$ a finite $p$-group of class $c$. 
\vspace{10pt}

%\begin{center}
\begin{table}[h]
%\scriptsize

\resizebox{\textwidth}{4cm}{
%\makebox[\textwidth][l]{

\begin{tabular}{| P{2cm} | P{0.6cm} | P{2cm} | P{10cm} |}\hline
\multicolumn{4}{|c|}{\textbf{Intensity}} \\
\hline
\multicolumn{1}{|c|}{\backslashbox{$c$}{$p$}} & \multicolumn{1}{|c|}{$2$}
 & \multicolumn{1}{|c|}{$3$} & \multicolumn{1}{|c|}{$\geq 5$}  \\  
\hline
\multicolumn{1}{|c|}{$0$} & \multirow{13}{*}{$1$} & \multicolumn{2}{|c|}{$1$} \\ 
\cline{1-1} \cline{3-4} 
\multicolumn{1}{|c|}{$1$} &  & \multicolumn{2}{|c|}{$p-1$} \\ 
\cline{1-1} \cline{3-4} 
\multicolumn{1}{|c|}{$2$} &  & \multicolumn{2}{|c|}{$p-1$ if $G$ extraspecial of exponent $p$; } \\ 
\multicolumn{1}{|c|}{} & \multicolumn{1}{|c|}{} & \multicolumn{2}{|c|}{ $1$ otherwise} \\ 
\cline{1-1} \cline{3-4} 
\multicolumn{1}{|c|}{$3$} & \multicolumn{1}{|c|}{} & \multicolumn{2}{|c|}{$2$ if $|G:G_2|=p^2$; } \\ 
\multicolumn{1}{|c|}{} & \multicolumn{1}{|c|}{} & \multicolumn{2}{|c|}{ $1$ otherwise} \\ 
\cline{1-1} \cline{3-4} 
\multicolumn{1}{|c|}{$4$} & \multicolumn{1}{|c|}{} & \multicolumn{1}{|c|}{$2$ if $G\cong\yo$; } & \multicolumn{1}{|c|}{$2$ if $G$ is a $p$-obelisk with a concrete automorphism;} \\ 
\multicolumn{1}{|c|}{} & \multicolumn{1}{|c|}{} & \multicolumn{1}{|c|}{$1$ otherwise} & \multicolumn{1}{|c|}{$1$ otherwise} \\ 
\cline{1-1} \cline{3-4} 
\multicolumn{1}{|c|}{} & \multicolumn{1}{|c|}{} & \multicolumn{1}{|c|}{} & \multicolumn{1}{|c|}{$2$ if $G$ is a $p$-obelisk with $|G_5|=p$, $\Phi(\Cyc_G(G_4))=G_3$,} \\  
\multicolumn{1}{|c|}{} & \multicolumn{1}{|c|}{} & \multicolumn{1}{|c|}{} & \multicolumn{1}{|c|}{and $G$ has a concrete automorphism;} \\ 
\multicolumn{1}{|c|}{$\geq 5$} & \multicolumn{1}{|c|}{} & \multicolumn{1}{|c|}{$1$} & \multicolumn{1}{|c|}{$2$ if $G$ is framed $p$-obelisk with $|G_5:G_6|=p^2$} \\ 
\multicolumn{1}{|c|}{} & \multicolumn{1}{|c|}{} & \multicolumn{1}{|c|}{} & \multicolumn{1}{|c|}{ and $G$ has a concrete automorphism;} \\ 
\multicolumn{1}{|c|}{} & \multicolumn{1}{|c|}{} & \multicolumn{1}{|c|}{} & \multicolumn{1}{|c|}{$1$ in all other cases} \\ 
\hline
\end{tabular}

}

\end{table}

%\vspace{10pt}

%\end{center}

%\vspace{10pt}\\
\noindent
We now have a clear picture of the intensity of groups of prime power order, according to their (finite) class. However, the theory of intense automorphisms can be extended to a larger family of groups with a striking result.
In Chapter \ref{chapter bilbao}, we complete the picture by moving to infinite class and computing the ``intensity'' of infinite pro-$p$-groups. 
\vspace{10pt}\\
\noindent
We call an automorphism $\alpha$ of a profinite group $G$ \emph{topologically intense} if, for each closed subgroup $H$ of $G$, there exists an element $g$ in $G$ such that $\alpha(H)=gHg^{-1}$. 
The group of topologically intense automorphisms of a profinite group $G$ is denoted by $\Int_{\cc}(G)$ and it is itself profinite.
As a consequence, several results concerning intense automorphisms of finite $p$-groups can be generalized to topologically intense automorphisms of pro-$p$-groups. For example, Theorem \ref{inte profinite} asserts that, if $p$ is a prime number and $G$ is a pro-$p$-group, then $\Int_{\cc}(G)$ decomposes as 
$$\Int_{\cc}(G)=S_G\rtimes C_G,$$ where $S_G$ is a Sylow pro-$p$-subgroup of $\Int_{\cc}(G)$ and $C_G$ is isomorphic to a subgroup of $\F_p^*$. Similarly to the finite case, we define the intensity $\inte(G)$ of a pro-$p$-group $G$ to be the order of $C_G$ and we ask which are the infinite pro-$p$-groups of intensity greater than $1$. Surprisingly, this question can be answered much more exhaustively than in the finite case, as follows.

\begin{theoremA}\label{intro 7}
Let $p$ be a prime number and let $G$ be an infinite pro-$p$-group. Then $\inte(G)>1$ if and only if exactly one of the following holds. 
\begin{itemize}
 \item[$1$.] One has $p>2$ and $G$ is abelian.
 \item[$2$.] One has $p>3$ and $G$ is topologically isomorphic to $\Sn(\Delta_p)$.
\end{itemize}
Moreover, one has $\inte(\Sn(\Delta_p))=2$ and, if $G$ is abelian, then $\inte(G)=p-1$.
\end{theoremA}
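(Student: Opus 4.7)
The plan is to reduce Theorem \ref{intro 7} to the finite classification summarised in the preceding table, using the profinite structure result $\Int_{\cc}(G) = S_G \rtimes C_G$ of Theorem \ref{inte profinite}. The key reduction device is that $\Int_{\cc}(G)$ acts continuously on $G$, so the $\Int_{\cc}(G)$-invariant open normal subgroups of $G$ form a cofinal system (intersect $\Int_{\cc}(G)$-orbits of open normal subgroups), and the induced maps $\Int_{\cc}(G)\to\Int(G/N)$ respect the semidirect decompositions; in particular $\inte(G/N)\geq \inte(G)$ for every non-trivial such $G/N$. For the ``if'' direction in the abelian case with $p>2$, Theorem \ref{intro 1} yields $\inte(G/N)=p-1$ for every non-trivial finite quotient, and the compatible family of exponentiations $x\mapsto x^{u}$ for $u\in\Z_p^*$ realises all of $\F_p^*$ inside $C_G$, giving $\inte(G)=p-1$. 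For $\Sn(\Delta_p)$ with $p>3$, the introduction announces that every non-trivial discrete quotient has intensity greater than $1$, and Theorem \ref{intro 3.5}($1$) applied at any quotient of class at least $3$ forces that intensity to be exactly $2$; a compactness argument on compatible systems of concrete intense automorphisms (or an explicit construction via conjugation by a suitable unit of $\Delta_p^*$) supplies a single topologically intense automorphism of order $2$ of $\Sn(\Delta_p)$, yielding $\inte(\Sn(\Delta_p))=2$.

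For the converse, let $G$ be infinite pro-$p$ with $\inte(G)>1$. Since Theorem \ref{intro 1} forbids any finite non-trivial $2$-group of intensity greater than $1$, passing to discrete quotients rules out $p=2$. So $p$ is odd; if $G$ is abelian we are in case (1), hence suppose $G$ is non-abelian. If $p=3$, the $c\geq 5$ column of the table, combined with Theorem \ref{intro 5}, shows that every $\Int_{\cc}(G)$-invariant non-abelian finite quotient $G/N$ has class at most $4$, and in those classes only finitely many isomorphism types arise (the largest being $\yo$ of order $729$). This forces $G$ itself to be finite, contradicting infiniteness. Hence $p>3$; since $G$ is infinite we can choose $\Int_{\cc}(G)$-invariant open normal $N$ with $G/N$ of arbitrarily large class, and Theorem \ref{intro 6} then forces each such $G/N$ to be a framed $p$-obelisk admitting a concrete intense automorphism of order $2$. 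Thus $G$ is an inverse limit of a compatible tower of framed $p$-obelisks carrying coherent concrete intense automorphisms.

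The main obstacle is identifying this limit with $\Sn(\Delta_p)$. My plan is to invoke the structural analysis of framed $p$-obelisks from Chapter \ref{chapter bilbao}, where the interplay between commutator and power maps is rigid enough that the presence of a concrete intense automorphism at each level determines the underlying quaternionic structure. Specifically, $\Sn(\Delta_p)$ is itself an inverse limit of framed $p$-obelisks of unbounded class carrying a compatible family of concrete intense automorphisms (coming from the standard involution of $\Delta_p$), so it suffices to construct, by induction on $n$, topological isomorphisms $G/G_n \xrightarrow{\sim} \Sn(\Delta_p)/\Sn(\Delta_p)_n$ that intertwine the two concrete automorphisms, and then pass to the limit. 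The delicate inductive step is to rule out alternative extensions at each stage: concrete-automorphism-compatibility must restrict the next-class quotient to the single option realised by $\Delta_p$, leaving no twisting freedom that could yield a genuinely distinct pro-$p$-group.
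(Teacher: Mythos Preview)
Your reductions for $p=2$, $p=3$, and the abelian case are essentially correct and match the paper's approach. The ``if'' direction is also fine: the paper constructs the topologically intense automorphism of $\Sn(\Delta_p)$ explicitly as conjugation by $\mathrm{i}$ (your ``explicit construction via conjugation by a suitable unit'').

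The gap is in your identification of $G$ with $\Sn(\Delta_p)$ when $p>3$ and $G$ is non-abelian. You propose to build isomorphisms $G/G_n\cong\Sn(\Delta_p)/\Sn(\Delta_p)_n$ inductively, claiming that ``concrete-automorphism-compatibility must restrict the next-class quotient to the single option realised by $\Delta_p$.'' But this uniqueness of the extension at each finite stage is exactly the content you would need to prove, and the paper does \emph{not} establish it---indeed, the paper's final section explicitly poses as an open question whether finite $p$-groups of intensity greater than $1$ are always quotients of $\Sn(\Delta_p)$. So your inductive step is not available: at each class there may be several framed $p$-obelisks with concrete automorphisms, and nothing you have written rules out a tower that branches away from $\Sn(\Delta_p)$.

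The paper takes a completely different route. It shows that $G$ has rank $3$ (from Proposition~\ref{rank 3} applied to discrete quotients), is torsion-free (from the injectivity of the power maps on an obelisk), and is just-infinite; hence $G$ is $p$-adic analytic of dimension $3$. It then invokes the Gonz\'alez-S\'anchez--Klopsch classification of torsion-free non-solvable $3$-dimensional $p$-adic analytic pro-$p$-groups, which gives exactly two candidates: open subgroups of $\Sn(\Delta_p)$ or of $\SL_2^\triangle(\Z_p)$. An ``obelisk inside an obelisk'' argument (a proper open subgroup of either group cannot itself have all discrete quotients obelisks) forces $G$ to equal the whole group. Finally, $\SL_2^\triangle(\Z_p)$ is excluded by an explicit computation showing one of its class-$6$ quotients is a non-framed obelisk, contradicting Theorem~\ref{theorem we need lines}. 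The analytic classification is doing the heavy lifting that your inductive argument would have to replace.
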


\noindent
Theorem \ref{intro 7} tells us that, ``in the limit'', for a given prime number $p>3$, there is a unique non-abelian pro-$p$-group, up to isomorphism, of intensity greater than $1$. From the point of view of finite groups, this last statement translates into saying that, if $p>3$ is a prime number, then each finite {$p$-group} $G$ with $\inte(G)> 1$ shares a ``relatively big'' quotient (growing in size with the class of $G$) with the infinite group $\Sn(\Delta_p)$. 
In a more definite way, we present this result in Section \ref{iii}, under the name of Proposition \ref{proposition function}.
\vspace{10pt}\\
\noindent
We conclude our introductory section by giving a ``cohomological context'' to intense automorphism. As we already mentioned at the beginning of this thesis, intense automorphisms arise naturally as solutions to certain problems coming from the field of Galois cohomology and we would like, with these last lines, to make this statement a little less vague. 
We start by looking at some examples.
\vspace{8pt} \\
\noindent
\textbf{Example.} Let $k$ be a field and let $n$ be a positive integer.
Moreover, let $a$ be a non-zero element of $k$. Then the least degree of the irreducible factors of $x^n-a$ divides all other degrees. 
\vspace{8pt} \\
\noindent
\textbf{Example.} Let $k$ be a field and let $\Br(k)$ denote the group of similarity classes of central simple algebras over $k$, endowed with the multiplication $\otimes_k$.
If $[A]\in\Br(k)$, then an extension $\ell/k$ is said to \emph{split} $A$ if $[A\otimes_k\ell]=[\ell]$. In 
\cite[Ch.~$4.5$]{algebras}, it is proven that the minimal degree of finite separable extensions of $k$ that split a given central simple algebra $A$ over $k$ divides all other degrees. 
\vspace{8pt} \\
\noindent
\textbf{Example.} Let $k$ be a field and let $C$ be a smooth projective absolutely irreducible curve of genus $1$ over $k$. As a consequence of the Riemann-Roch theorem, as explained for example in \cite[\S $2$]{langtate}, the least degree of the finite extensions of $k$ for which $C$ has a rational point divides all other degrees. 
\vspace{10pt} \\
\noindent
In a quite simplified manner, the last three examples suggest the following question: \emph{When does it happen that ``a problem'', defined on a base field $k$, is solvable over a field extension $\ell/k$ whose degree divides the degrees of all extensions $m/k$ over which the given problem can be solved?} 
The difficulty of translating this last question into rigorous mathematics is given by the fact that the known examples are quite diverse; however, we can try to unify them from the perspective of Galois cohomology. A first attempt of getting closer to the observed phenomena is Theorem \ref{master thesis}($1$) from \cite{evolving}.

\begin{theoremA}\label{master thesis}
Let $G$ be a finite group. Then the following are equivalent:
\begin{itemize}
 \item[$1$.] For every $G$-module $M$, integer $q$, and $c\in\widehat{\hc}{^{q}}(G,M)$, the minimum of the set
           $\{\,|G:H|\, :\, H\leq G \ \text{with} \ \Res_H^G(c)=0\,\}$ coincides with its greatest common divisor.
 \item[$2$.] There exist nilpotent groups $N$ and $T$ of coprime orders and a homomorphism $\phi:T\rightarrow\Int(N)$ such that 
 $G\cong N\rtimes_\phi T$.   
%  \item$3$. For every prime number $p$ and for every $p$-subgroup $I$ of $G$ there exists a subgroup $J$ of $G$ that contains $I$ and such that 
% $|G:J|$ is a power of $p$ and $|J:I|$ is coprime to $p$.      
\end{itemize}
\end{theoremA}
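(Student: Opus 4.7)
The plan is to treat the two implications separately, with the forward direction $(2) \Rightarrow (1)$ being the easier one. Assume $G \cong N \rtimes_\phi T$ with $N$ and $T$ nilpotent of coprime orders and $\phi(T) \subseteq \Int(N)$. Fix $c \in \widehat{\hc}{^q}(G, M)$ and write $d$ and $m$ for the gcd and the minimum of the set $S(c) = \{|G:H| : \Res_H^G(c) = 0\}$. Clearly $d$ divides $m$, so it suffices to exhibit $H \leq G$ with $|G:H| = d$ and $\Res_H^G(c) = 0$. I would argue prime by prime: for each prime $p$ choose $H_p \in S(c)$ whose index has minimal $p$-adic valuation. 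Using the coprimality of $|N|$ and $|T|$ together with Sylow arguments, I would replace $H_p$ (up to conjugacy) by one of the form $N' \rtimes T$ or $N \rtimes T'$ according to whether $p$ divides $|N|$ or $|T|$. The intensity of $\phi$ then ensures that every subgroup $N' \leq N$ is $T$-invariant up to $N$-conjugacy, so the intersection $H = \bigcap_p H_p$ still has the correct $p$-adic valuation for each $p$; restricting through this intersection kills every $p$-primary component of $c$ in turn.

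For the converse direction $(1) \Rightarrow (2)$, I would proceed by contrapositive, building cohomology classes that witness structural failures. A first stage uses the divisibility property applied to classes coming from coinduced and permutation modules to force $G$ to admit a normal nilpotent subgroup $N$ whose index is coprime to $|N|$; a Schur--Zassenhaus argument then yields $G = N \rtimes T$, with the hypothesis used to eliminate non-nilpotent candidates for $T$. A second stage assumes, for contradiction, that some $t \in T$ acts on $N$ by an automorphism $\alpha$ that is not intense: there is then $K \leq N$ such that $\alpha(K)$ is not $N$-conjugate to $K$. I would build a $G$-module $M$ attached to the permutation representation on $G/K$ (or a suitable twist thereof), and use the Eckmann--Shapiro lemma to produce a class $c$ whose vanishing locus, described by a double-coset formula, consists precisely of subgroups containing a $G$-conjugate of $K$. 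A careful bookkeeping of indices then shows $\gcd S(c) < \min S(c)$, contradicting the hypothesis.

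The main obstacle is the second stage of the converse, namely extracting the intensity condition from the divisibility property. Intensity is a uniform statement over \emph{all} subgroups of $N$, so one must ensure that a single cohomology class simultaneously witnesses the failure $\alpha(K) \not\sim_N K$ without being accidentally annihilated on some unexpected subgroup of smaller index. Managing this uniformity -- likely via a careful analysis of the Lyndon--Hochschild--Serre spectral sequence for $1 \to N \to G \to T \to 1$ combined with transfer arguments -- is where the real work lies, whereas the first stage (producing the coprime semidirect decomposition) should follow a more classical Frobenius-transfer pattern.
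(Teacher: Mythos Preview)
The paper does not contain a proof of this theorem. It is stated in the Introduction as a result imported from the author's master's thesis, with the sentence ``More about Theorem~\ref{master thesis} and its proof can be found in \cite{evolving}.'' So there is no in-paper argument to compare your proposal against.

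That said, a brief remark on your sketch. Your plan for $(2)\Rightarrow(1)$ is along reasonable lines, but the step where you intersect the $H_p$ and claim the result still has the correct $p$-adic valuation of its index needs care: intersections of subgroups do not in general behave well with respect to index, and you have not explained why the $H_p$ can be chosen simultaneously so that their intersection has index equal to the product of the $p$-parts. You will need to use the semidirect structure more explicitly, arranging each $H_p$ to contain a full Sylow $q$-subgroup of $G$ for every $q\neq p$. For $(1)\Rightarrow(2)$, you correctly identify the hard part: producing a class whose vanishing set is controlled precisely enough to detect a single failure of intensity. Your idea of using permutation modules and Shapiro's lemma is natural, but as you yourself note, ruling out accidental vanishing on subgroups of smaller index is the crux, and your outline does not yet indicate how to do this. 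Since the actual argument lives in \cite{evolving}, you would need to consult that source to see how the obstruction class is engineered.
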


\noindent
A way to interpret ($1$) from Theorem \ref{master thesis} is the following. In some sense, the non-zero elements of a cohomology group are the obstructions to having solutions so, ideally, each subgroup $H$ of $G$ for which $\Res_H^G(c)=0$ corresponds to a field extension ``solving the problem''. 
The merit of Theorem \ref{master thesis} is that of giving a splendid correspondence between a rather technical cohomological condition and a very concrete requirement regarding intense automorphisms. %Since every finite nilpotent group is the direct product of its Sylow subgroups, the natural question that arises is, given a prime $p$, which are the finite $p$-groups $G$ for which $\Int(G)$ is not itself a $p$-group? 
%\vspace{8pt} \\
%\noindent
More about Theorem \ref{master thesis} and its proof can be found in \cite{evolving}. 
\vspace{10pt}\\
\noindent
Generalizing Theorem \ref{master thesis} to profinite groups
is an intriguing problem that is to the present day still open.

% CHAPTER 1: TOOLS

\mainmatter

\renewcommand{\chaptermark}[1]{\uppercase{\markboth{\thechapter.\ #1}{}}}

\chapter{Important tools}

\pagestyle{fancy}
\fancyhf{}
\fancyhead[LE,RO]{}
\fancyhead[LE,RO]{\leftmark}
\fancyfoot[CE,CO]{}
\fancyfoot[LE,RO]{\thepage}

This chapter consists of a miscellaneous collection of definitions and easy facts. Throughout the whole chapter we will fully respect the notation given in the List of Symbols.

\section{Bilinear maps and isotropic spaces}\label{section linear algebra}

Let $K$ be a field and let $V$, $W$, and $Z$ be 
vector spaces over $K$. A map $\phi:V\times W\rightarrow Z$ is said to be $K$-\emph{bilinear} (or simply \emph{bilinear})
\index{bilinear map} 
if, for all $v\in V$, the map ${_{v}}\phi:W\rightarrow Z$, defined by $t\mapsto\phi(v,t)$, is $K$-linear and, for all
$w\in W$, the map $\phi_w:V\rightarrow Z$, defined by 
$u\mapsto\phi(u,w)$, is $K$-linear.

\begin{definition}
Let $V$, $W$, and $Z$ be vector spaces over a field $K$ and let $\phi:V\times W\rightarrow Z$ be a map. Then $\phi$ is 
\emph{non-degenerate} if it is bilinear and \index{non-degenerate map}
both maps $V\rightarrow\Hom(W,Z)$, defined by $v\mapsto{_{v}}\phi$, and 
$W\rightarrow\Hom(V,Z)$, defined by $w\mapsto\phi_w$, are injective.
\end{definition}

\begin{lemma}\label{non-degenerate dim 1}
Let $V$, $W$, $Z$ be finite-dimensional vector spaces over a field $K$ and let $\phi:V\times W\rightarrow Z$ be a non-degenerate map. Assume moreover that $\dim_KZ=1$. 
Then the dimensions of $V$ and $W$ over $K$ are the same.
\end{lemma}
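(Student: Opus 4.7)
The plan is to exploit the hypothesis $\dim_K Z = 1$ to turn $\phi$ into a non-degenerate bilinear form taking values in $K$, and then use the fact that a finite-dimensional vector space and its dual have the same dimension.

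First I would fix an isomorphism $Z \cong K$ of $K$-vector spaces (obtained by picking any non-zero element of $Z$ as a basis) and transport $\phi$ along it, so that from now on I may assume $Z = K$. Under this identification, $\Hom(W, Z) = \Hom(W, K) = W^*$ and $\Hom(V, Z) = V^*$. The hypothesis of non-degeneracy then says exactly that the two linear maps
\[
\Phi_1 : V \to W^*, \quad v \mapsto {_v}\phi \qquad \text{and} \qquad \Phi_2 : W \to V^*, \quad w \mapsto \phi_w
\]
are injective.

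Next I would invoke the standard fact that for any finite-dimensional $K$-vector space $U$ one has $\dim_K U^* = \dim_K U$. Applying this together with the injectivity of $\Phi_1$ gives $\dim_K V \leq \dim_K W^* = \dim_K W$, while the injectivity of $\Phi_2$ gives $\dim_K W \leq \dim_K V^* = \dim_K V$. Combining the two inequalities yields $\dim_K V = \dim_K W$, as desired.

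The argument is essentially a one-liner once the hypotheses are unwound, so there is no real obstacle: the only substantive step is recognizing that non-degeneracy translates directly into a pair of injections into dual spaces, which in the presence of finite-dimensionality forces the two dimensions to coincide. No finiteness assumption beyond the one already in the statement is needed, and the identification $Z \cong K$ is harmless since any two one-dimensional spaces are isomorphic.
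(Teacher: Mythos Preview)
Your proof is correct and essentially identical to the paper's: both use the injections $V\hookrightarrow\Hom(W,Z)$ and $W\hookrightarrow\Hom(V,Z)$ together with $\dim\Hom(U,Z)=\dim U$ when $\dim Z=1$ to get the chain $\dim V\leq\dim W\leq\dim V$. The only cosmetic difference is that you first identify $Z$ with $K$, while the paper works directly with $\Hom(-,Z)$.
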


\begin{proof}
The maps $V\rightarrow\Hom(W,Z)$ and 
$W\rightarrow\Hom(V,Z)$ are injective. It follows that 
$\dim V\leq\dim \Hom(W,Z)=\dim W\leq\dim \Hom(V,Z)=\dim V$, and therefore the dimensions of $V$ and $W$ are the same. 
\end{proof}

\begin{definition}
Let $V$ and $Z$ be vector spaces over a field $K$. A map 
$\phi:V\times V\rightarrow Z$ is \index{alternating map}
\emph{alternating} if it is bilinear and, for 
all $v\in V$, one has $\phi(v,v)=0$.
\end{definition}

\noindent
A map $\phi:V\times V\rightarrow Z$ is \emph{antisymmetric} if it is bilinear and, for every \index{antisymmetric map}
$v,w\in V$ one has $\phi(v,w)=-\phi(w,v)$. As a direct consequence of their definitions, every alternating map is antisymmetric.

%In the following lemma, we deal with wedge products. For an overview on the subject, a good reference is \cite{bourbaki}. 

%\begin{lemma}\label{quotient W/H}
%Let $K$ be a field and let $V$ and $W$ be vector spaces over $K$. Assume moreover that 
%$\phi:V\times V\rightarrow W$ is an alternating map. Then the following are equivalent. 
%\begin{itemize}
% \item[$1$.]Given a subspace $H$ of $W$ of codimension $1$, the map $\phi:V\times V\rightarrow W/H$ is non-degenerate.
% \item[$2$.]For all $v\in V\setminus\{0\}$ the map $\phi_v:V\rightarrow W$ is surjective. 
% \item[$3$.]For all $v\in V\setminus\{0\}$ the map ${_{v}}\phi:V\rightarrow W$ is surjective. 
%\end{itemize}
%\end{lemma}

%\begin{proof}
%$(2)\Rightarrow(1)$ Trivial. 
%$(1)\Rightarrow(2)$ We will show that if $(2)$ does not hold then $(1)$ cannot hold either. Let $v\in V\setminus\graffe{0}$ and assume that $\phi_v$ is not surjective. Then the image of $\phi_v$ is a proper subspace of $W$ so it is contained in some subspace $H$ of codimension $1$. 
%It follows that $\phi_v:V\rightarrow W/H$ is the zero map and therefore $(1)$ does not hold.
%$(2)\Leftrightarrow(3)$ The map $\phi$ is alternating and therefore antisymmetric.  
%\end{proof}

\begin{lemma}\label{wedge}
Let $K$ be a field and let $U$ be a $K$-vector space of dimension $3$. Then $\wedge:U\times U\rightarrow\bigwedge^2 U$ is surjective.
\end{lemma}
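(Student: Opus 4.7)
The plan is to prove that every element of $\bigwedge^2 U$ is decomposable, that is, of the form $v\wedge w$ with $v,w\in U$; surjectivity is exactly this assertion. The key observation is that, since $\dim_K U=3$, the space $\bigwedge^3 U$ has dimension $1$.

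Given $\omega\in\bigwedge^2 U$, I would consider the $K$-linear map
$\phi_\omega:U\rightarrow\bigwedge^3 U$ defined by $u\mapsto u\wedge\omega$. Since the target has dimension at most $1$, the rank-nullity theorem forces $\dim_K\ker\phi_\omega\geq 2$, so one can pick linearly independent $v,w\in\ker\phi_\omega$ and complete them to a basis $(v,w,z)$ of $U$.

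In the induced basis $(v\wedge w,\, v\wedge z,\, w\wedge z)$ of $\bigwedge^2 U$, write $\omega=a(v\wedge w)+b(v\wedge z)+c(w\wedge z)$ with $a,b,c\in K$. Wedging with $v$ on the left and using $v\wedge v=0$ yields $v\wedge\omega=c(v\wedge w\wedge z)$, so the condition $v\in\ker\phi_\omega$ forces $c=0$ because $v\wedge w\wedge z$ is a basis of $\bigwedge^3 U$. Wedging with $w$ and using $w\wedge w=0$ yields $w\wedge\omega=-b(v\wedge w\wedge z)$, so $b=0$. Therefore $\omega=a(v\wedge w)=(av)\wedge w$ lies in the image of $\wedge$.

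The argument is uniform: no separate treatment of $\omega=0$ is needed, since in that case $\phi_\omega$ is the zero map and any basis of $U$ works. There is no real obstacle; the whole content of the lemma is the dimensional coincidence $\dim_K U - \dim_K\bigwedge^3 U=2$, which guarantees a sufficiently large kernel for $\phi_\omega$. In higher dimensions this inequality fails and indeed non-decomposable elements appear, so the hypothesis $\dim_K U=3$ is essential and is exactly where the argument would break down.
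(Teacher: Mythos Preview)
Your proof is correct and takes a genuinely different route from the paper's. The paper fixes an arbitrary basis $(x,y,z)$ of $U$, writes a general element as $a=\lambda(x\wedge y)+\mu(y\wedge z)+\nu(x\wedge z)$, and, after reducing by symmetry to the case $\lambda\neq 0$, exhibits the explicit decomposition $(\lambda x-\mu z)\wedge(y+\lambda^{-1}\nu z)=a$. Your argument is more conceptual: rather than computing a preimage in a fixed basis, you use the map $\phi_\omega:U\to\bigwedge^3 U$ and rank--nullity to produce a basis \emph{adapted to} $\omega$, in which $\omega$ is visibly a single wedge. Your approach makes transparent why dimension $3$ is exactly the threshold (the kernel of $\phi_\omega$ is forced to have dimension at least $2$), and it avoids any case distinction or explicit formula; the paper's approach is shorter and entirely self-contained, needing nothing beyond expanding a product in coordinates.
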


\begin{proof}
Let $(x,y,z)$ be a basis for $U$ over $K$. Then $(x\wedge y, y\wedge z, x\wedge z)$ is a basis for $\bigwedge^2U$. 
Let now $a=\lambda(x\wedge y)+\mu(y\wedge z)+\nu(x\wedge z)$ be an arbitrary element of $\bigwedge^2U$. 
If $a=0$, then clearly $a$ belongs to the image of $\wedge$, so, without loss of generality, we assume that $\lambda\in K\setminus\graffe{0}$. Since $(\lambda x-\mu z)\wedge(y+\lambda^{-1}\nu z)=a$, we are done.
\end{proof}

\begin{definition}
Let $K$ be a field and let $V$ and $Z$ be vector spaces over $K$. 
Let $\phi:V\times V\rightarrow Z$ be an alternating map. A subspace $T$ of $V$ is called \emph{isotropic}
\index{isotropic subspace} 
if $\phi$ restricted to $T\times T$ equals the zero map. A subspace $T$ is said to be
\emph{maximal isotropic} if it is isotropic and it is not properly contained in any other isotropic subspace of $V$.
\end{definition}

\begin{definition}\label{def induced map on quotient isotropic}
Let $K$ be a field and let $V$ and $Z$ be vector spaces over $K$. 
Let $\phi:V\times V\rightarrow Z$ be an alternating map and let $T$ be an isotropic subspace of $V$. 
Then $\phi_T:V/T\rightarrow\Hom(T,Z)$ is defined by $v+T\mapsto (t\mapsto\phi(v,t))$. 
\end{definition}

\noindent 
The map $\phi_T$ is well defined for every isotropic subspace $T$ of $V$, because $\phi(T\times T)=0$, and it is linear.

\begin{lemma}\label{max isotropic}
Let $K$ be a field and let $V$ and $Z$ be vector spaces over $K$. 
Let $\phi:V\times V\rightarrow Z$ be an alternating map and let $T$ be an isotropic subspace of $V$. 
Then $T$ is maximal isotropic if and only if $\phi_T$ is injective.
\end{lemma}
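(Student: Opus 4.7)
The plan is to show both implications by unpacking the definition of $\phi_T$ and using the alternating property of $\phi$ in a symmetric way. The key observation is that $\ker \phi_T$ consists exactly of cosets $v+T$ such that $\phi(v,t)=0$ for all $t \in T$, so injectivity of $\phi_T$ is equivalent to saying: every $v \in V$ with $\phi(v,T)=0$ lies in $T$.

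For the forward direction, I would assume $T$ is maximal isotropic and take $v \in V$ with $\phi_T(v+T)=0$, i.e.\ $\phi(v,t)=0$ for every $t \in T$. Then I would verify that $T + Kv$ is isotropic: for $t_1,t_2 \in T$ and $a_1,a_2 \in K$, bilinearity gives
\[
\phi(t_1 + a_1 v,\, t_2 + a_2 v) = \phi(t_1,t_2) + a_2 \phi(t_1,v) + a_1 \phi(v,t_2) + a_1 a_2 \phi(v,v),
\]
which vanishes because $T$ is isotropic, $\phi$ is alternating (so $\phi(v,v)=0$), and antisymmetry together with the hypothesis $\phi(v,T)=0$ kill the cross terms. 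Maximality of $T$ then forces $T+Kv = T$, so $v \in T$, and $\phi_T$ is injective.

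For the reverse direction, I would assume $\phi_T$ injective and let $T' \supseteq T$ be any isotropic subspace. For any $v \in T'$ and any $t \in T \subseteq T'$, isotropy of $T'$ gives $\phi(v,t)=0$, so $\phi_T(v+T)=0$; injectivity then yields $v \in T$, hence $T' = T$ and $T$ is maximal isotropic.

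Both directions are essentially formal once the kernel of $\phi_T$ is identified, and no step looks like a genuine obstacle; the only mild subtlety is remembering to use antisymmetry (which follows from $\phi$ being alternating) to pass between $\phi(v,t)$ and $\phi(t,v)$ when checking that $T+Kv$ is isotropic in the forward direction.
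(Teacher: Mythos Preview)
Your proof is correct and follows essentially the same approach as the paper: both identify that $\ker\phi_T$ consists precisely of the cosets $v+T$ for which $T+Kv$ is isotropic. The paper just compresses the two implications into a single biconditional chain phrased via the contrapositive, whereas you spell out each direction and the bilinearity expansion explicitly.
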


\begin{proof}
The subspace $T$ is not maximal isotropic if and only if there exists an element $v\in V\setminus T$ such that $T\oplus Kv$ is isotropic, which happens if and only if $v+T$ belongs to the kernel of $\phi_T$.
\end{proof}

\begin{definition}\label{definition orthogonal complement}
Let $K$ be a field and let $V$ and $Z$ be vector spaces over $K$. 
Let $\phi:V\times V\rightarrow Z$ be an alternating map. 
Let moreover $W$ be a linear subspace of $V$. The \indexx{orthogonal complement} $W^\perp$ of $W$ with respect to $\phi$ is the kernel of the map $V\rightarrow\Hom(W,Z)$ that is defined by $v\mapsto (w\mapsto\phi(v,w))$.
\end{definition}

\noindent
With the notation of Definition \ref{definition orthogonal complement}, the orthogonal complement of a subspace $W$ of $V$ is itself a subspace of $V$. Moreover, an alternating map being antisymmetric, $W^\perp$ is equal to the collection of all vectors $v\in V$, such that, for all $w\in W$, one has $\phi(w,v)=0$. It follows directly from the definition that $W\subseteq(W^\perp)^\perp$ and that, if $U\subseteq W$, then $U^\perp\supseteq W^\perp$.

\begin{lemma}\label{isotropic in orthogonal}
Let $K$ be a field and let $V$ and $Z$ be vector spaces over $K$. 
Let $\phi:V\times V\rightarrow Z$ be an alternating map. 
Let moreover $W$ be a linear subspace of $V$. 
Then $W$ is isotropic if and only if $W\subseteq W^\perp$.
Moreover, $W$ is maximal isotropic if and only if $W=W^\perp$.
\end{lemma}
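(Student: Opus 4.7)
The plan is to reduce both statements to direct unpacking of the definitions, combined in part (2) with Lemma \ref{max isotropic}.

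For part (1), I would note that $W \subseteq W^\perp$ means, by Definition \ref{definition orthogonal complement}, that every $u \in W$ satisfies $\phi(u, w) = 0$ for all $w \in W$, i.e. $\phi$ vanishes on $W \times W$, which is exactly the definition of $W$ being isotropic. Here I implicitly use the remark after Definition \ref{definition orthogonal complement}: since $\phi$ is alternating (hence antisymmetric), $W^\perp$ can equivalently be described as $\{v \in V : \phi(w, v) = 0 \text{ for all } w \in W\}$ or as $\{v : \phi(v, w) = 0 \text{ for all } w \in W\}$.

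For part (2), I would deduce it from Lemma \ref{max isotropic} together with part (1). Assume first that $W$ is isotropic, so that by (1) we have $W \subseteq W^\perp$. The map $\phi_W \colon V/W \to \Hom(W, Z)$ from Definition \ref{def induced map on quotient isotropic} sends $v + W$ to $w \mapsto \phi(v, w)$, so its kernel is exactly $W^\perp / W$. By Lemma \ref{max isotropic}, $W$ is maximal isotropic if and only if $\phi_W$ is injective, i.e. if and only if $W^\perp / W = 0$, which is equivalent to $W^\perp = W$. Conversely, if $W = W^\perp$, then in particular $W \subseteq W^\perp$, so $W$ is isotropic by (1), and the same computation gives maximality.

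Alternatively, and just as cleanly, one can argue directly without invoking Lemma \ref{max isotropic}: given an isotropic $W$, maximality fails precisely when there exists $v \in V \setminus W$ such that $W + Kv$ is isotropic; expanding $\phi(w_1 + \lambda v, w_2 + \mu v)$ using bilinearity, the isotropy of $W$, and $\phi(v,v) = 0$, this is equivalent to $\phi(v, w) = 0$ for every $w \in W$, i.e. $v \in W^\perp \setminus W$. So $W$ is maximal isotropic iff $W^\perp \subseteq W$, and combined with (1) this gives $W = W^\perp$. There is no real obstacle here — the only point to be careful about is using the alternating property to ensure the diagonal term $\lambda \mu \phi(v,v)$ vanishes in the expansion.
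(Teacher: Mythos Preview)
Your proposal is correct. The paper's own proof is simply ``Easy exercise,'' so your detailed unpacking of the definitions (and the optional appeal to Lemma~\ref{max isotropic}) is exactly the kind of argument the author left to the reader.
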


\begin{proof}
Easy exercise.
\end{proof}

\begin{lemma}\label{dimension isotropic}
Let $V$ and $Z$ be finite-dimensional vector spaces over a field $K$ and let $\phi:V\times V\rightarrow Z$ be a non-degenerate alternating map. 
Assume that $Z$ has dimension $1$. Let $W$ be a linear subspace of $V$.
Then $\dim W+\dim W^\perp=\dim V$. 
Moreover, if $W$ is maximal isotropic, then $2\dim W=\dim V$. 
\end{lemma}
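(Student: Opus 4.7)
The plan is to describe $W^\perp$ as the kernel of a surjection $V \to \Hom(W,Z)$ and then invoke rank-nullity. Precisely, I would consider the composition
\[
V \xrightarrow{\ \Phi\ } \Hom(V,Z) \xrightarrow{\ \mathrm{res}\ } \Hom(W,Z),
\]
where $\Phi(v)$ is the linear map $x\mapsto \phi(v,x)$ and $\mathrm{res}$ is restriction of functionals to $W$. By definition of $W^\perp$, the kernel of this composition is exactly $W^\perp$, so the goal reduces to showing that the composition is surjective, in which case rank-nullity will yield
\[
\dim V = \dim W^\perp + \dim\Hom(W,Z) = \dim W^\perp + \dim W.
\]

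For the surjectivity, I would argue in two steps. First, $\Phi$ is injective because $\phi$ is non-degenerate (this is one of the two conditions in the definition). Since $\dim Z = 1$, one has $\dim\Hom(V,Z) = \dim V$, so the injection $\Phi$ is actually an isomorphism. Second, the restriction map $\mathrm{res}\colon\Hom(V,Z)\to\Hom(W,Z)$ is surjective: given $f\in\Hom(W,Z)$, pick any $K$-linear complement $W'$ of $W$ in $V$ and extend $f$ by zero on $W'$. Composing a bijection with a surjection gives a surjection, so the desired rank-nullity computation goes through.

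For the second assertion, if $W$ is maximal isotropic, then Lemma \ref{isotropic in orthogonal} tells us that $W = W^\perp$, so the equality just proved becomes $\dim V = 2\dim W$. There is no real obstacle in this proof; the only substantive ingredient is the observation that a non-degenerate pairing into a $1$-dimensional space induces an isomorphism $V\to\Hom(V,Z)$ rather than merely an injection, and everything else is a clean application of dimension counting together with the already-established Lemma \ref{isotropic in orthogonal}.
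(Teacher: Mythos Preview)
Your proof is correct and follows essentially the same route as the paper. The only cosmetic difference is that the paper, instead of explicitly arguing surjectivity of $V\to\Hom(W,Z)$, observes that $\phi$ induces a non-degenerate bilinear map $V/W^\perp\times W\to Z$ and then invokes Lemma~\ref{non-degenerate dim 1} to conclude $\dim(V/W^\perp)=\dim W$; your factorization through the isomorphism $\Phi\colon V\to\Hom(V,Z)$ followed by restriction achieves the same dimension count directly.
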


\begin{proof}
By definition of orthogonal complement, we have $\phi(W^\perp\times W)=\graffe{0}$, and hence the bilinear map 
$V/W^\perp\times W\rightarrow Z$ that is induced from $\phi$ is non-degenerate. It follows from Lemma \ref{non-degenerate dim 1} that $\dim V-\dim W^\perp=\dim W$. If $W$ is maximal isotropic, then Lemma \ref{isotropic in orthogonal} yields $\dim V=2\dim W$.
\end{proof}

\begin{lemma}\label{doppio perpendicolare}
Let $V$ and $Z$ be finite-dimensional vector spaces over a field $K$ and let $\phi:V\times V\rightarrow Z$ be a non-degenerate alternating map. 
Assume that $Z$ has dimension $1$. Let $W$ be a linear subspace of $V$. 
Then $(W^\perp)^\perp=W$.
\end{lemma}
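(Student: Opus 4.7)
The plan is to prove $(W^\perp)^\perp = W$ by establishing the inclusion $W \subseteq (W^\perp)^\perp$ and then matching dimensions using Lemma \ref{dimension isotropic}.

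First I would verify the inclusion $W\subseteq (W^\perp)^\perp$. This is essentially tautological: if $w\in W$, then for every $v\in W^\perp$ one has $\phi(v,w)=0$ by definition of $W^\perp$, and using antisymmetry of $\phi$ (which holds because $\phi$ is alternating, as already noted in the paper right after the definition of alternating maps), it follows that $\phi(w,v)=0$ for all $v\in W^\perp$, so $w\in(W^\perp)^\perp$. This step is trivial and requires no new ideas.

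Next I would pin down both dimensions. Applying Lemma \ref{dimension isotropic} to the subspace $W$ gives $\dim W + \dim W^\perp = \dim V$. Applying the same lemma to the subspace $W^\perp$ (whose orthogonal complement is $(W^\perp)^\perp$) gives $\dim W^\perp + \dim (W^\perp)^\perp = \dim V$. Subtracting the two equalities yields $\dim W = \dim (W^\perp)^\perp$. Combined with the inclusion from the previous paragraph, this forces equality of the subspaces, which completes the proof.

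I expect no real obstacle here; the whole argument reduces to a dimension count together with the trivial inclusion, and both ingredients are already available from the preceding lemmas. The only mild point to watch is that Lemma \ref{dimension isotropic} is stated for an arbitrary subspace (not just an isotropic one), so it legitimately applies to $W^\perp$ even though $W^\perp$ need not be isotropic; this is crucial and worth checking when writing the proof, but it follows from reading the hypothesis of that lemma.
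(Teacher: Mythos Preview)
Your proposal is correct and essentially identical to the paper's proof: the paper also invokes the inclusion $W\subseteq (W^\perp)^\perp$ (noted right after Definition~\ref{definition orthogonal complement}) and then applies Lemma~\ref{dimension isotropic} to both $W$ and $W^\perp$ to force equality of dimensions. Your observation that Lemma~\ref{dimension isotropic} applies to arbitrary subspaces, not just isotropic ones, is exactly the point that makes this work.
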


\begin{proof}
The subspace $W$ is always contained in $(W^\perp)^\perp$, and therefore we always have $\dim W\leq \dim (W^\perp)^\perp$.
By Lemma \ref{dimension isotropic}, the dimension of $V$ is equal to both $\dim W+\dim W^\perp$ and $\dim W^\perp+\dim(W^\perp)^\perp$, and therefore $\dim W=\dim(W^\perp)^\perp$. It follows that $W=(W^\perp)^\perp$.
\end{proof}

\begin{lemma}\label{sum of isotropic}
Let $V$ and $Z$ be finite-dimensional vector spaces over a field $K$ and let $\phi:V\times V\rightarrow Z$ be a non-degenerate alternating map. 
Assume that $Z$ has dimension $1$. 
Let moreover $X$ be a maximal isotropic subspace of $V$. Then there exists a maximal isotropic subspace $Y$ of $V$ such that 
$V=X\oplus Y$.
\end{lemma}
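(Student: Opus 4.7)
The plan is to proceed by induction on $n := \dim X$. By Lemma \ref{dimension isotropic}, $\dim V = 2n$, so the target is a maximal isotropic complement $Y$ of dimension $n$. The base case $n=0$ is trivial with $Y=0$, so assume $n\geq 1$ and that the statement holds for all non-degenerate alternating spaces of smaller dimension (with $\dim Z = 1$).

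For the inductive step, I would first peel off a hyperbolic plane. Fix a nonzero $x_1 \in X$; since $\phi$ is non-degenerate, pick $y_1 \in V$ with $\phi(y_1,x_1)$ a basis of $Z$. Then $y_1 \notin X$ (since $X$ is isotropic and contains $x_1$), so $W := Kx_1 \oplus Ky_1$ is $2$-dimensional and $\phi$ restricted to $W$ is non-degenerate. Applying Lemma \ref{dimension isotropic} to $W$ gives $\dim W^\perp = 2n-2$, and since $W \cap W^\perp = 0$ (non-degeneracy on $W$), we get $V = W \oplus W^\perp$. Moreover, the restriction of $\phi$ to $W^\perp$ is again non-degenerate alternating with values in $Z$.

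Next I would set $X' := X \cap W^\perp$. The linear map $X \to Z$ sending $x \mapsto \phi(y_1,x)$ is surjective (its value at $x_1$ generates $Z$) and has kernel $X'$, so $\dim X' = n-1$; being isotropic of the right dimension, $X'$ is maximal isotropic in $W^\perp$ by Lemma \ref{dimension isotropic}. The induction hypothesis then yields a maximal isotropic subspace $Y' \subset W^\perp$ with $W^\perp = X' \oplus Y'$. I would finally define $Y := Ky_1 \oplus Y'$ and verify: $Y$ is isotropic because $\phi(y_1,Y') \subseteq \phi(W, W^\perp) = 0$ and $Y'$ is isotropic; hence $Y$ is maximal isotropic by its dimension $n$; and $V = X + Y$ since $X+Y$ contains $W^\perp + W = V$. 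The only step requiring a little care, and the main obstacle, is checking $X \cap Y = 0$: if $x = \alpha y_1 + y' \in X$ with $y' \in Y'$, then pairing against $x_1$ and using $y' \in W^\perp$, $x, x_1 \in X$ forces $\alpha = 0$, whereupon $x \in X' \cap Y' = 0$.
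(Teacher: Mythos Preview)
Your proof is correct. It is the standard symplectic-basis argument: peel off a hyperbolic plane $W = Kx_1 \oplus Ky_1$, pass to $W^\perp$, and induct. All the verifications you sketch go through; in particular, the check that $X \cap Y = 0$ is handled cleanly by pairing against $x_1$.

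The paper takes a genuinely different, non-inductive route. It chooses $Y$ to be maximal among isotropic subspaces of $V$ intersecting $X$ trivially, and then argues directly that such a $Y$ is already maximal isotropic and complements $X$. The key steps are: (i) the maximality of $Y$ forces $Y^\perp \subseteq X + Y$; (ii) applying $(Y^\perp)^\perp = Y$ (Lemma~\ref{doppio perpendicolare}) together with $X = X^\perp$ gives $X \cap Y^\perp = 0$; (iii) a dimension count via Lemma~\ref{dimension isotropic} then yields $\dim Y = \dim Y^\perp$, so $Y = Y^\perp$ is maximal isotropic and $V = X \oplus Y$. This approach leans entirely on the orthogonal-complement lemmas established just before and avoids any induction or explicit choice of vectors. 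Your approach, by contrast, is more hands-on and constructive (it essentially builds a symplectic basis adapted to $X$), but requires carrying the inductive hypothesis and checking that the restricted form on $W^\perp$ remains non-degenerate. Either is perfectly acceptable here.
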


\begin{proof}
Let $Y$ be maximal among the isotropic subspaces of $V$ that intersect $X$ trivially. We will show that $Y$ is maximal isotropic and that $V=X+Y$.
Lemma \ref{isotropic in orthogonal} guarantees $Y\subseteq Y^\perp$ and $X=X^\perp$. We now claim that $Y^\perp$ is contained in $X+Y$. Indeed, if $v\in Y^\perp$, then $Y+Kv$ is an isotropic subspace containing $Y$. From the maximality of $Y$, it follows that $(Y+Kv)\cap X$ is non-trivial. Since $Y\cap X=\graffe{0}$, the element $v$ belongs to $X+Y$, as claimed. 
Now, by Lemma \ref{doppio perpendicolare}, the subspaces $(Y^\perp)^\perp$ and $Y$ are the same and, $Y^\perp$ being contained in $X+Y$, it follows that 
$$Y=(Y^\perp)^\perp\supseteq (X+Y)^\perp\supseteq X^\perp\cap Y^\perp=X\cap Y^\perp.$$ 
In particular, $X\cap Y^\perp$ is contained in $X\cap Y$, which is trivial by definition of $Y$. 
As a consequence of Lemma \ref{dimension isotropic}, we get that
$$2\dim X=\dim V\geq\dim(X\oplus Y^\perp)=\dim X+\dim Y^\perp=$$
$$\dim X+\dim V-\dim Y=3\dim X-\dim Y\geq 2\dim X,$$
and therefore $\dim Y^\perp=\dim X=\dim Y$.
As a result, $Y=Y^\perp$, and thus $V=X\oplus Y$.
The subspace $Y$ is maximal isotropic, by Lemma \ref{isotropic in orthogonal}, and the proof is complete. 
\end{proof}

\section{Commutators and the lower central series}\label{section commutators}

Let $G$ be a group. The \emph{commutator map} \index{commutator map} on $G$ is the map
$G\times G\rightarrow G$ that is defined by
\[
(x,y)\mapsto [x,y]=xyx^{-1}y^{-1}.
\]
Given two subgroups $H$ and $K$ of $G$, we define 
$[H,K]$ to be the subgroup of $G$ that is generated by all elements $[h,k]$, where $h\in H$, $k\in K$. The groups $[H,K]$ and $[K,H]$ are equal, because, for all $(h,k)\in H\times K$, the inverse of $[h,k]$ is $[k,h]$.

\begin{lemma}\label{commutators normalizer}
Let $G$ be a group and let $H$ be a subgroup of $G$. 
Let $g\in G$ and denote $[g,H]=\graffe{[g,h]\ :\ h\in H}$.
Then $g\in\nor_G(H)$ if and only if $[g,H]$ is contained in $H$.
\end{lemma}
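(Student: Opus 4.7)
\emph{Plan of proof.} The argument turns on a single identity: for each $h\in H$,
\[
[g,h] \;=\; ghg^{-1}h^{-1} \;=\; (ghg^{-1})\, h^{-1}.
\]
Since $h^{-1}\in H$, multiplying on the right by $h$ shows that $[g,h]\in H$ if and only if $ghg^{-1}\in H$. Letting $h$ vary over $H$, this rewrites the condition $[g,H]\subseteq H$ as the single containment $gHg^{-1}\subseteq H$.

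From here both implications are formal. If $g\in\nor_G(H)$, then $gHg^{-1}=H$, and in particular $[g,H]\subseteq H$. Conversely, assume $[g,H]\subseteq H$, so that $gHg^{-1}\subseteq H$. Since conjugation by $g$ is a bijection of $H$ onto $gHg^{-1}$, the two subgroups have the same cardinality; for $H$ finite (which is the setting of interest throughout this thesis) this forces $gHg^{-1}=H$, and hence $g\in\nor_G(H)$.

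There is no real obstacle here: once one has rewritten $[g,h]=(ghg^{-1})h^{-1}$, the equivalence of the two containments $[g,H]\subseteq H$ and $gHg^{-1}\subseteq H$ falls out on the spot, and the passage to the normalizer is just the remark that conjugation preserves cardinality. The one fine point worth flagging is this last passage, which genuinely needs the finiteness of $H$; in full generality there exist groups $G$ with subgroups $H$ and elements $g$ satisfying $gHg^{-1}\subsetneq H$, but such examples lie outside the scope of the present work.
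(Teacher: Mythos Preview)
Your argument is the same as the paper's: rewrite $ghg^{-1}=[g,h]h$ to see that $ghg^{-1}\in H$ if and only if $[g,h]\in H$, and then quantify over $h\in H$. The paper's one-line proof stops at the equivalence $[g,H]\subseteq H \Longleftrightarrow gHg^{-1}\subseteq H$ and does not explicitly pass to the equality $gHg^{-1}=H$; you are right that this last step requires an extra hypothesis (finiteness of $H$ suffices, and every application of the lemma in the thesis is to subgroups of finite $p$-groups), so your flagging of this point is a genuine gain in precision over the paper's version.
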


\begin{proof}
Let $h\in H$. Then $ghg^{-1}=[g,h]h$ and $ghg^{-1}$ is in $H$ if and only if $[g,h]$ is in $H$.
\end{proof}

\begin{definition}\label{LCS}
Let $G$ be a group. 
The \emph{lower central series} \index{lower central series} of $G$ is the series $(G_i)_{i\geq 1}$ that is obtained by defining recursively, for all $i\in\Z_{\geq 1}$, the subgroups $G_1=G$ and $G_{i+1}=[G,G_i]$. One calls $G_2$ the \emph{commutator subgroup} \index{commutator subgroup} of $G$.
\end{definition}

\noindent
Unless otherwise specified, we will stick to the notation from Definition \ref{LCS} to refer to the lower central series of a group (see also the List of Symbols). We remark that, in Chapter \ref{chapter bilbao}, we will define the lower central series of a profinite group $G$, by taking the closures of the elements of the lower central series of $G$ as an abstract group. In the case of finite groups, the two notions coincide. 
\vspace{8pt}\\
\noindent
A group $G$ is said to be \emph{nilpotent}
\index{nilpotent group} if there exists $i\in\Z_{\geq 0}$ for which $G_{i+1}=1$ and, in the latter case, one calls  
\index{class (nilpotency class)} $\cl(G)=\min\{i\in\Z_{\geq 0}\ :\ G_{i+1}=1\}$ the (\emph{nilpotency}) \emph{class} of $G$. The class of a nilpotent group is, in other words, the number of elements of the lower central series that are distinct from 
$\graffe{1}$.
Another way of deciding whether a group is nilpotent is by looking at its upper central series.

\begin{definition}
Let $G$ be a group. 
The \indexx{upper central series} of $G$ is the series $(Z_i)_{i\geq 0}$ that is obtained by defining recursively, for all $i\in\Z_{\geq 0}$, the subgroups $Z_0=1$ and $Z_{i+1}/Z_{i}=\ZG(G/Z_{i})$.
\end{definition}

\noindent
It is a general result (see for example Chapter $4$ in \cite{isaacs}) that a group is nilpotent if and only if its upper central series stabilizes at the group itself. 
In other words, if $G$ is a finite group and $(Z_i)_{i\geq 0}$ is its upper central series, then $G$ is nilpotent if and only if there exists $r\in\Z_{\geq 0}$ such that $Z_r=G$. Moreover, one can show (see for example in \cite[\S $1$D]{isaacs}) that  if the group $G$ is nilpotent, then its class is equal to 
$\min\{r\in\Z_{\geq 0}\ :\ Z_r=G\}$. 

\begin{lemma}\label{nilpotent iff}
Let $G$ be a finite group. Then $G$ is nilpotent if and only if $G$ is equal to the direct product of its Sylow $p$-subgroups.
\end{lemma}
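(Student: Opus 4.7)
The plan is to prove the two implications separately. For the backward direction, I would invoke two standard facts: every finite $p$-group is nilpotent (the class equation forces any non-trivial finite $p$-group to have a non-trivial centre, so iterating centres produces a terminating upper central series), and a direct product of finitely many nilpotent groups is nilpotent (since the lower central series of a direct product is the componentwise product of the lower central series, capping the class at the maximum of the factors' classes). Hence if $G$ is the direct product of its Sylow $p$-subgroups, each of which is a $p$-group and so nilpotent, then $G$ itself is nilpotent.

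For the forward direction, the strategy reduces to showing that every Sylow $p$-subgroup of $G$ is normal. Once this is established, let $p_1,\ldots,p_k$ be the distinct prime divisors of $|G|$ and let $P_i$ be the Sylow $p_i$-subgroup (unique by normality). For $i\neq j$, the orders $|P_i|$ and $|P_j|$ are coprime, so $P_i\cap P_j=1$, and the normality of both yields $[P_i,P_j]\subseteq P_i\cap P_j=1$; thus elements of distinct $P_i$ commute. A straightforward order count gives $|P_1\cdots P_k|=|G|$, from which the internal direct product structure follows.

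To show each Sylow $p$-subgroup $P$ is normal, I would combine two ingredients. The first is that Sylow normalizers are self-normalizing: $\nor_G(\nor_G(P))=\nor_G(P)$, because $P$ is the unique Sylow $p$-subgroup of $\nor_G(P)$, hence characteristic in $\nor_G(P)$, hence stabilized by any element that normalizes $\nor_G(P)$. The second is the \emph{normalizer condition} in nilpotent groups: if $H\lneq G$, then $H\lneq \nor_G(H)$. Applied to $H=\nor_G(P)$, these two facts together force $\nor_G(P)=G$, i.e.\ $P\trianglelefteq G$.

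The heart of the argument is thus the normalizer condition, which I would prove using the upper central series $(Z_i)_{i\geq 0}$ of $G$. Given $H\lneq G$, nilpotence guarantees that the upper central series reaches $G$, so there exists a smallest index $i$ with $Z_i\not\subseteq H$; by minimality $Z_{i-1}\subseteq H$. By definition of the upper central series, $[Z_i,G]\subseteq Z_{i-1}\subseteq H$, and in particular $[Z_i,H]\subseteq H$. Lemma~\ref{commutators normalizer} then places $Z_i$ inside $\nor_G(H)$, and since $Z_i\not\subseteq H$ this witnesses $H\lneq\nor_G(H)$. This is the main conceptual step; the rest of the argument is assembly, and the main obstacle is precisely guessing to chase the upper central series from the top down.
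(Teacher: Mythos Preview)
Your argument is correct and is the standard textbook proof of this classical result: the backward direction via closure of nilpotence under direct products, and the forward direction via the normalizer condition combined with the self-normalizing property of Sylow normalizers. The paper, by contrast, gives no argument at all---it simply cites Huppert's \emph{Endliche Gruppen I} (Hauptsatz $\mathrm{III}.2.3$), treating the result as background. Your version is therefore fully self-contained and even makes explicit use of Lemma~\ref{commutators normalizer} from the paper to justify the key step in the normalizer condition, which is a nice touch. The paper's approach buys brevity; yours buys independence from external references and shows the reader exactly where nilpotence enters.
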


\begin{proof}
This is a weaker version of Hauptsatz $2.3$ from \cite[Ch.~$\mathrm{III}$]{huppert}.
\end{proof}

%The following result is proven in Chapter $4$ of \cite{isaacs}, pages $119$ and $120$.
%\begin{lemma}
%Let $p$ be a prime number and let $n$ be an integer greater than $1$. Assume $G$ is a group of order $p^n$. Then $G$ has nilpotency class at most $n-1$.
%\end{lemma}
%\noindent
%We say that a group $G$ of order $p^n$ has \indexx{maximal class} if it has nilpotency class exactly $n-1$.

\begin{definition}\label{def multiplication}
Let $G$ be a group and let $m,n$ be integers with $m\leq n$.
Let moreover $\graffe{x_i}_{i=m}^n$ be a subset of $G$. Then 
\[\prod_{i=m}^nx_i = x_mx_{m+1}\ldots x_{n-1}x_n. \]
\end{definition}

\begin{lemma}[Multiplication formulas]\label{multiplication formulas commutators}
Let $G$ be a group and let $x,y,z,t$ be elements of $G$. Let moreover $n$ be a non-negative integer.
Then the following hold.
\begin{itemize}
 \item[$1$.] $[x,yz]=[x,y]y[x,z]y^{-1}$.
 \item[$2$.] $[xt,y]=x[t,y]x^{-1}[x,y]$.
 \item[$3$.] $[x^n,y][x,y]^{-n}=\prod_{s=1}^{n-1}[x,[x^{n-s},y]]$.
 \item[$4$.] $[x,y]^{-n}[x,y^n]=\prod_{r=1}^{n-1}[[y^r,x],y]$.
\end{itemize}
\end{lemma}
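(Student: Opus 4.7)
The plan is to prove the four identities in order, handling (1) and (2) by direct expansion and then using them to run inductions on $n$ for (3) and (4).

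For (1), I would expand both sides via $[a,b]=aba^{-1}b^{-1}$: the left-hand side equals $xyzx^{-1}z^{-1}y^{-1}$, and the right-hand side
\[
[x,y]\,y[x,z]y^{-1} = (xyx^{-1}y^{-1})\,y\,(xzx^{-1}z^{-1})\,y^{-1}
\]
telescopes to the same word after cancellation of $y^{-1}y$ and $x^{-1}x$. Identity (2) is analogous: both sides, when expanded, reduce to $xtyt^{-1}x^{-1}y^{-1}$.

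For (3) I would induct on $n$, with the cases $n\in\{0,1\}$ giving the trivial identity $1=1$. For the step from $n-1$ to $n$, applying (2) with $t$ replaced by $x^{n-1}$ yields $[x^n,y]=x[x^{n-1},y]x^{-1}[x,y]$, so
\[
[x^n,y][x,y]^{-n} \;=\; x[x^{n-1},y]x^{-1}\cdot [x,y]^{-(n-1)}.
\]
Inserting $[x^{n-1},y]^{-1}[x^{n-1},y]$ after $x^{-1}$ factors the right-hand side as $[x,[x^{n-1},y]]\cdot\bigl([x^{n-1},y][x,y]^{-(n-1)}\bigr)$, and the induction hypothesis rewrites the second parenthesized factor as $\prod_{s=1}^{n-2}[x,[x^{n-1-s},y]]$. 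Reindexing $s\mapsto s+1$ and promoting the leading $[x,[x^{n-1},y]]$ to the $s=1$ slot matches the formula $\prod_{s=1}^{n-1}[x,[x^{n-s},y]]$.

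For (4) I would again induct on $n$, with $n\in\{0,1\}$ immediate. Applying (1) with $z$ replaced by $y^{n-1}$ gives $[x,y^n]=[x,y]\cdot y[x,y^{n-1}]y^{-1}$, whence
\[
[x,y]^{-n}[x,y^n] \;=\; [x,y]^{-(n-1)}\cdot y[x,y^{n-1}]y^{-1}.
\]
Setting $a=[x,y^{n-1}]=[y^{n-1},x]^{-1}$, one has $a^{-1}yay^{-1}=[a^{-1},y]=[[y^{n-1},x],y]$. The induction hypothesis lets me write $[x,y]^{-(n-1)}=\bigl(\prod_{r=1}^{n-2}[[y^r,x],y]\bigr)\cdot a^{-1}$; substituting into the display above produces
\[
\Bigl(\prod_{r=1}^{n-2}[[y^r,x],y]\Bigr)\cdot a^{-1}yay^{-1} \;=\; \prod_{r=1}^{n-1}[[y^r,x],y],
\]
completing the induction. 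The only real subtlety is the bookkeeping of the product orders and the passage between $[x,y^{n-1}]$ and $[y^{n-1},x]$ in (4); there is no serious obstacle otherwise.
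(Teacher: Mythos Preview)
Your argument is correct: parts (1) and (2) are the standard one-line expansions, and your inductions for (3) and (4) via (2) and (1) respectively go through cleanly, including the reindexing and the $a^{-1}yay^{-1}=[[y^{n-1},x],y]$ step. The paper itself records no proof beyond ``Easy exercise'', so there is nothing to compare against; you have simply supplied the routine details that the author omitted.
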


\begin{proof}
Easy exercise.
\end{proof}

\begin{lemma}[Three-subgroups Lemma]\label{three subgroups lemma}
Let $G$ be a group and let $X$, $Y$, $Z$, and $N$ be subgroups of $G$ such that $N$ is normal.
Assume moreover that both $[X,[Y,Z]]$ and $[Y,[Z,X]]$ are contained in $N$. Then 
$[Z,[X,Y]]$ is contained in $N$.
\end{lemma}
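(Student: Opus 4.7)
The plan is to pass to the quotient $\overline{G}=G/N$: since $N$ is normal, the inclusion $[Z,[X,Y]]\subseteq N$ is equivalent to the vanishing of $[\overline{Z},[\overline{X},\overline{Y}]]$ in $\overline{G}$, where $\overline{X}$, $\overline{Y}$, $\overline{Z}$ denote the images of $X$, $Y$, $Z$; the hypotheses become $[\overline{X},[\overline{Y},\overline{Z}]]=1$ and $[\overline{Y},[\overline{Z},\overline{X}]]=1$. This reformulation suggests that the main tool should be the Hall--Witt identity, which asserts that, for any three elements $x$, $y$, $z$ of an arbitrary group and for $g^{h}=hgh^{-1}$, one has
\[
[[x,y^{-1}],z]^{y}\cdot[[y,z^{-1}],x]^{z}\cdot[[z,x^{-1}],y]^{x}=1.
\]
I would verify this identity by a finite expansion using the multiplication formulas of Lemma \ref{multiplication formulas commutators}.

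Applying Hall--Witt in $\overline{G}$ to arbitrary $\overline{x}\in\overline{X}$, $\overline{y}\in\overline{Y}$, $\overline{z}\in\overline{Z}$, I would observe that the second factor belongs to $[\overline{X},[\overline{Y},\overline{Z}]]$, because $[\overline{y},\overline{z}^{-1}]\in[\overline{Y},\overline{Z}]$, and hence vanishes; symmetrically, the third factor belongs to $[\overline{Y},[\overline{Z},\overline{X}]]$ and vanishes. What remains forces $[[\overline{x},\overline{y}^{-1}],\overline{z}]=1$, and, since $\overline{y}^{-1}$ ranges over $\overline{Y}$ as $\overline{y}$ does, this is equivalent to $[\overline{z},[\overline{x},\overline{y}]]=1$ for all such triples.

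The last step promotes this pointwise commutativity to the subgroup-level statement $[\overline{Z},[\overline{X},\overline{Y}]]=1$. For each fixed $\overline{z}$, the centralizer of $\overline{z}$ in $\overline{G}$ is a subgroup containing every commutator $[\overline{x},\overline{y}]$ with $\overline{x}\in\overline{X}$, $\overline{y}\in\overline{Y}$; since these commutators generate $[\overline{X},\overline{Y}]$, the element $\overline{z}$ centralizes all of $[\overline{X},\overline{Y}]$. Letting $\overline{z}$ vary over $\overline{Z}$ then yields $[\overline{Z},[\overline{X},\overline{Y}]]=1$, as required. The only non-routine ingredient is the Hall--Witt identity itself; everything else is the first isomorphism theorem together with the elementary fact that centralizers are subgroups. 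I expect the identity to be the main obstacle, not because it is deep, but because it requires an unenlightening manipulation of precisely the sort that the three-subgroups lemma is designed to make unnecessary elsewhere in the theory.
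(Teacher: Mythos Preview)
Your argument is correct and is exactly the standard proof of the three-subgroups lemma via the Hall--Witt identity; the only thing to be careful about is that the ``second factor'' $[[\overline{y},\overline{z}^{-1}],\overline{x}]^{\overline{z}}$ lies in a \emph{conjugate} of $[\overline{X},[\overline{Y},\overline{Z}]]$, but since the latter is already trivial in $\overline{G}$ this makes no difference. The paper itself does not give a proof at all---it simply cites \cite[Corollary~4.10]{isaacs}---so your write-up supplies what the paper defers to a reference, and the argument you give is essentially the one found there.
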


\begin{proof}
See for example \cite[Corollary $4.10$]{isaacs}.
\end{proof}

\begin{lemma}\label{commutator indices}
Let $G$ be a group and let $(G_i)_{i\geq 1}$ be the lower central series of $G$. 
Then, for all $h,k\in\Z_{\geq 1}$, one has $[G_h,G_k]\subseteq G_{h+k}$.
\end{lemma}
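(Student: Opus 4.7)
The plan is to prove the inclusion by induction on $k$, with $h$ allowed to range over all positive integers. The base case $k=1$ is immediate: by the very definition of the lower central series, $[G_h,G_1]=[G_h,G]=[G,G_h]=G_{h+1}$, which is what is required. Before starting the induction I would also note (or take as known) that every term $G_i$ of the lower central series is a normal (indeed characteristic) subgroup of $G$, since this is needed in order to invoke the Three-subgroups Lemma, which requires a normal ambient subgroup $N$.

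For the inductive step, assume that for some $k\geq 1$ and every $h\geq 1$ one has $[G_h,G_k]\subseteq G_{h+k}$. I want to show that for every $h\geq 1$, $[G_h,G_{k+1}]\subseteq G_{h+k+1}$. The natural tool is the Three-subgroups Lemma (Lemma \ref{three subgroups lemma}) applied with $X=G$, $Y=G_k$, $Z=G_h$, and $N=G_{h+k+1}$, which is normal in $G$ by the preceding observation. The two hypotheses to be verified are:
\begin{itemize}
 \item[(a)] $[X,[Y,Z]]=[G,[G_k,G_h]]\subseteq G_{h+k+1}$, which follows because the induction hypothesis gives $[G_k,G_h]=[G_h,G_k]\subseteq G_{h+k}$, and then $[G,G_{h+k}]=G_{h+k+1}$ by definition.
 \item[(b)] $[Y,[Z,X]]=[G_k,[G_h,G]]=[G_k,G_{h+1}]\subseteq G_{h+k+1}$, which is the induction hypothesis applied with $h$ replaced by $h+1$.
\end{itemize}
The conclusion of the Three-subgroups Lemma then yields $[Z,[X,Y]]=[G_h,[G,G_k]]=[G_h,G_{k+1}]\subseteq G_{h+k+1}$, which closes the induction.

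The only mildly delicate point is organizing the induction so that the hypothesis is available both in the form $[G_h,G_k]\subseteq G_{h+k}$ and in the form $[G_{h+1},G_k]\subseteq G_{h+k+1}$; this is why the statement must be proved for all $h$ simultaneously rather than for a single fixed $h$. Apart from this bookkeeping, the proof is entirely mechanical once the right choice of $X$, $Y$, $Z$ in the Three-subgroups Lemma is made, so I do not expect any genuine obstacle.
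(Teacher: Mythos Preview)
Your proof is correct and essentially identical to the paper's: both use induction together with the Three-subgroups Lemma, the only difference being that the paper inducts on $h$ (for all $k$ simultaneously) while you induct on $k$ (for all $h$ simultaneously). Since the roles of $h$ and $k$ are symmetric via $[G_h,G_k]=[G_k,G_h]$, this is the same argument up to relabeling.
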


\begin{proof}
We work by induction on $h$. If $h=1$, we are done by definition of the lower central series. Let us now assume that $h>1$ and, for all $k\in\Z_{>0}$, that 
$[G_{h-1},G_k]\subseteq G_{h+k-1}$. 
It follows that $[G,[G_{h-1},G_k]]\subseteq [G,G_{h+k-1}]\subseteq G_{h+k}$ and $[G_{h-1},[G_k,G]]\subseteq[G_{h-1},G_{k+1}]\subseteq G_{h+k}$. By Lemma \ref{three subgroups lemma}, also $[G_h,G_k]=[[G,G_{h-1}],G_k]$ is contained in $G_{h+k}$. 
\end{proof}

\noindent
Let $H$, $K$, and $L$ be groups. Let moreover $\phi:H\times K\rightarrow L$. The map $\phi$ is \emph{bilinear}\index{bilinear map} if, for all $h\in H$, the map ${_{h}}\phi:K\rightarrow L$, defined by $x\mapsto\phi(h,x)$, is a homomorphism and, for all 
$k\in K$, the map $\phi_k:H\rightarrow L$, defined by 
$x\mapsto\phi(x,k)$, is also a homomorphisms.
If $\phi$ is bilinear, then the \emph{left kernel} and the \emph{right kernel} of $\phi$, \index{left kernel} \index{right kernel}
are defined as 
$$\ker^{\mathrm{left}}\phi=\bigcap_{h\in H}\ker{_{h}}\phi,\ \ \text{and} \ \  
\ker^{\mathrm{right}}\phi=\bigcap_{k\in K}\ker\phi_k.$$
Assume $H=K$. Then $\phi$ is \emph{alternating}\index{alternating map} if it is bilinear and, for all $x\in H$, one has $\phi(x,x)=1$.

\begin{definition}
Let $H$, $K$, and $L$ be groups. Let moreover $\phi:H\times K\rightarrow L$.
Then $\phi$ is 
\emph{non-degenerate} if it is bilinear and both maps \index{non-degenerate map}
$H\rightarrow\Hom(K,L)$, defined by $h\mapsto{_{h}}\phi$, and 
$K\rightarrow\Hom(H,L)$, defined by $k\mapsto\phi_k$, are injective.
\end{definition}

\begin{lemma}\label{tgt}
Let $G$ be a group and let $H,L$ be subgroups of $G$. Let moreover $\phi:H\times L\rightarrow G$ be defined by $\phi(x,y)=[x,y]$.
Then $\phi$ is bilinear if and only if $[H,L]$ is contained in the centre of the subgroup generated by $H$ and $L$. 
\end{lemma}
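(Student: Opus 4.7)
The plan is to rewrite bilinearity of $\phi$, via the multiplication formulas of Lemma~\ref{multiplication formulas commutators}(1) and~(2), as a pair of centralizer statements about $[H,L]$ inside $N:=\gen{H,L}$, and to observe that the combined assertion ``$H\cup L$ centralizes $[H,L]$'' is exactly the conclusion $[H,L]\subseteq\ZG(N)$.

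For the forward direction, assume $\phi$ is bilinear. First I would apply Lemma~\ref{multiplication formulas commutators}(1) to $[h,l_1l_2]$ and compare it with $[h,l_1][h,l_2]$; after cancelling $[h,l_1]$, the equality collapses to $l_1[h,l_2]l_1^{-1}=[h,l_2]$, so $l_1$ commutes with every generator $[h,l_2]$ of $[H,L]$. Hence $L\subseteq\Cyc_G([H,L])$. Next, bilinearity in the first argument combined with Lemma~\ref{multiplication formulas commutators}(2) yields the identity $x_1[x_2,l]x_1^{-1}=[x_1,l][x_2,l][x_1,l]^{-1}$. Using the elementary identity $[x_1,l]^{-1}x_1=lx_1l^{-1}$ (immediate from expanding $[x_1,l]$), I would rewrite this as ``$lx_1l^{-1}$ centralizes $[x_2,l]$''; conjugating by $l^{-1}$ and invoking the fact $L\subseteq\Cyc_G([H,L])$ already established, the $l$'s disappear and one is left with $x_1$ centralizing $[x_2,l]$. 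Thus $H\subseteq\Cyc_G([H,L])$ as well, and together $N=\gen{H,L}\subseteq\Cyc_G([H,L])$, i.e., $[H,L]\subseteq\ZG(N)$.

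For the converse, assuming $[H,L]\subseteq\ZG(N)$, every element of $H\cup L$ centralizes $[H,L]$. Substituting this into Lemma~\ref{multiplication formulas commutators}(1), the inner conjugation $l_1[h,l_2]l_1^{-1}$ collapses to $[h,l_2]$, so bilinearity in the second argument is immediate. For the first argument, Lemma~\ref{multiplication formulas commutators}(2) gives $[x_1x_2,l]=[x_2,l][x_1,l]$ (the conjugation by $x_1$ being trivial on $[x_2,l]$), and the order can then be swapped to $[x_1,l][x_2,l]$ since both factors lie in $[H,L]$, which is abelian because it is contained in $\ZG(N)$.

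The only step requiring any thought is the conversion, in the forward direction, of the twisted identity $x_1[x_2,l]x_1^{-1}=[x_1,l][x_2,l][x_1,l]^{-1}$ into the clean statement $x_1\in\Cyc_G([x_2,l])$; this is precisely where one must first use the bilinearity in the second argument to get a grip on how $L$ interacts with $[H,L]$ before attacking the first argument.
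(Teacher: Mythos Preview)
Your proof is correct and follows exactly the approach the paper intends: the paper's proof reads in its entirety ``This follows directly from the multiplication formulas from Lemma~\ref{multiplication formulas commutators},'' and you have carefully spelled out what that means. Your handling of the first-argument bilinearity in the forward direction (passing through $lx_1l^{-1}$ and then invoking the already-established fact that $L$ centralizes $[H,L]$) is valid; one could shorten it slightly by noting that $[H,L]=[L,H]$ and applying the second-argument computation symmetrically, but your route is fine.
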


\begin{proof}
This follows directly from the multiplication formulas from Lemma \ref{multiplication formulas commutators}. 
\end{proof}

\begin{lemma}\label{bilinear LCS hk}
Let $G$ be a group. Then for every $h,k\in\Z_{\geq 1}$ the commutator map
induces a bilinear map 
$G_h/G_{h+1}\times G_k/G_{k+1}\rightarrow G_{h+k}/G_{h+k+1}$.
\end{lemma}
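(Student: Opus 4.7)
The plan is to build the map in stages, working modulo $G_{h+k+1}$ from the start so that the centrality needed to invoke Lemma \ref{tgt} is automatic. First, by Lemma \ref{commutator indices}, $[G_h,G_k]\subseteq G_{h+k}$, so the restriction of the commutator map to $G_h\times G_k$ takes values in $G_{h+k}$, and composing with the projection $G_{h+k}\to G_{h+k}/G_{h+k+1}$ gives a map $\phi:G_h\times G_k\to G_{h+k}/G_{h+k+1}$.

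Next I would check that $\phi$ is bilinear using Lemma \ref{tgt}. Passing to $\overline{G}=G/G_{h+k+1}$, the images $\overline{G_h}$ and $\overline{G_k}$ are subgroups and their commutator lies in $\overline{G_{h+k}}$. By the very definition of the lower central series, $[G,G_{h+k}]=G_{h+k+1}$, so $\overline{G_{h+k}}$ is central in $\overline{G}$ and in particular central in the subgroup generated by $\overline{G_h}$ and $\overline{G_k}$. Lemma \ref{tgt} then applies and tells us that the commutator map $\overline{G_h}\times\overline{G_k}\to\overline{G}$ is bilinear; since $\phi$ is obtained from this by reading the result in the quotient $\overline{G_{h+k}}$, it is bilinear as a map $G_h\times G_k\to G_{h+k}/G_{h+k+1}$.

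Finally, I would show that $\phi$ descends to $G_h/G_{h+1}\times G_k/G_{k+1}$. Since $\phi$ is already bilinear, it suffices to verify that $\phi(x,y)=0$ as soon as $x\in G_{h+1}$ or $y\in G_{k+1}$. But Lemma \ref{commutator indices} gives $[G_{h+1},G_k]\subseteq G_{h+k+1}$ and $[G_h,G_{k+1}]\subseteq G_{h+k+1}$, so in both cases $[x,y]\in G_{h+k+1}$, i.e.\ $\phi(x,y)=0$ in $G_{h+k}/G_{h+k+1}$. Bilinearity is inherited by the induced map on the quotients, which is what was to be proven.

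There is no real obstacle here: all the work is already packaged in Lemmas \ref{commutator indices} and \ref{tgt}, and the only thing to be careful about is invoking centrality in the correct quotient (namely $G/G_{h+k+1}$, not $G$ itself) before appealing to Lemma \ref{tgt}.
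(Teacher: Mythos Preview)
Your proof is correct and follows essentially the same route as the paper: both use Lemma~\ref{commutator indices} to see that $[G_h,G_k]\subseteq G_{h+k}$, establish centrality of $G_{h+k}/G_{h+k+1}$ in the relevant subgroup so that Lemma~\ref{tgt} applies, and then invoke Lemma~\ref{commutator indices} again to descend to the quotients. The only cosmetic difference is that you obtain centrality directly from the definition of the lower central series (so that $G_{h+k}/G_{h+k+1}$ is central in all of $G/G_{h+k+1}$), whereas the paper checks $[G_h,G_{h+k}],[G_k,G_{h+k}]\subseteq G_{h+k+1}$ via Lemma~\ref{commutator indices}.
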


\begin{proof}
The integers $h$ and $k$ being positive, it follows from Lemma \ref{commutator indices} that both $[G_h,G_{h+k}]$ and 
$[G_k,G_{h+k}]$ are subgroups of $G_{h+k+1}$. Then $G_{h+k}/G_{h+k+1}$ is contained in the centre of $\gen{G_h,G_k}/G_{h+k+1}$ and, by Lemma \ref{tgt}, the commutator map $G_h\times G_k\rightarrow G_{h+k}/G_{h+k+1}$ is bilinear.
Again by Lemma \ref{commutator indices}, both subgroups $[G_h,G_{k+1}]$ and $[G_{h+1},G_k]$ are contained in $G_{h+k+1}$. The commutator map induces hence a bilinear map 
$G_h/G_{h+1}\times G_k/G_{k+1}\rightarrow G_{h+k}/G_{h+k+1}$. 
\end{proof}

\begin{lemma}\label{bilinear LCS}
Let $G$ be a group. Then for every $i\in\Z_{\geq 1}$ the commutator map
induces a bilinear map 
$G/G_2\times G_i/G_{i+1}\rightarrow G_{i+1}/G_{i+2}$ whose image generates $G_{i+1}/G_{i+2}$.
\end{lemma}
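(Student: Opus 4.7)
The plan is to reduce this statement to two ingredients that are already at hand: Lemma \ref{bilinear LCS hk} for the bilinearity, and the defining recursion of the lower central series for the generation claim.

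First I would apply Lemma \ref{bilinear LCS hk} with $h=1$ and $k=i$. That lemma produces a bilinear map $G_1/G_2 \times G_i/G_{i+1} \to G_{i+1}/G_{i+2}$ induced by the commutator map, and since $G_1 = G$ by definition of the lower central series, this is exactly the asserted bilinear map $G/G_2 \times G_i/G_{i+1} \to G_{i+1}/G_{i+2}$. So bilinearity requires no new argument.

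For the generation statement, I would use the definition $G_{i+1} = [G,G_i]$, which says that $G_{i+1}$ is generated, as a subgroup of $G$, by the set $\{[x,y] : x \in G,\ y \in G_i\}$. Reducing modulo $G_{i+2}$, the subgroup $G_{i+1}/G_{i+2}$ is therefore generated by the cosets $[x,y]G_{i+2}$ with $x \in G$ and $y \in G_i$; but these cosets are precisely the values of the induced bilinear map on pairs $(xG_2,\, yG_{i+1})$. Hence the image of the bilinear map generates $G_{i+1}/G_{i+2}$, completing the proof.

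There is no serious obstacle here: the only (trivial) point worth noting is that the image of a bilinear map need not be a subgroup in general, which is precisely why the conclusion is phrased as ``generates'' rather than ``equals''.
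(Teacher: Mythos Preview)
Your proposal is correct and follows exactly the same approach as the paper: invoke Lemma \ref{bilinear LCS hk} with $h=1$, $k=i$ for bilinearity, and use the definition of $G_{i+1}=[G,G_i]$ for the generation claim. The paper's proof is just a two-sentence version of what you wrote.
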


\begin{proof}
The commutator map
induces a bilinear map 
$\gamma:G/G_2\times G_i/G_{i+1}\rightarrow G_{i+1}/G_{i+2}$ by Lemma \ref{bilinear LCS hk}. The image of $\gamma$ generates 
$G_{i+1}/G_{i+2}$ by definition of the lower central series of a group.
%The group $G_{i+2}$ is by definition equal to $[G,G_{i+1}]$ and so $G_{i+1}/G_{i+2}$ is contained in the centre of $G/G_{i+2}$. By Lemma \ref{tgt}, the commutator map induces a bilinear map $\gamma:G\times G_i\rightarrow G_{i+1}/G_{i+2}$ and the image of $\gamma$ generates $G_{i+1}/G_{i+2}$, because $G_{i+1}=[G,G_i]$. 
%Now, both $[G_2,G_i]$ and $[G,G_{i+1}]$ are contained in $G_{i+2}$, by Lemma \ref{commutator indices}, and thus $\gamma$ factors as a bilinear map $\overline{\gamma}:G/G_2\times G_i/G_{i+1}\rightarrow G_{i+1}/G_{i+2}$ whose image generates $G_{i+1}/G_{i+2}$. 
\end{proof}

\noindent
In Lemma \ref{tensor LCS} and throughout the whole manuscript, we write $\otimes$ instead of $\otimes_\Z$ (in concordance with the List of Symbols). %For a general overview of tensor products, we suggest \cite{bourbaki}. 

\begin{lemma}\label{tensor LCS}
Let $G$ be a group and let $(G_i)_{i\geq 1}$ be its lower central series. Then for every $i\in\Z_{\geq 1}$ the commutator map
induces a surjective homomorphism of groups 
$G/G_2\otimes G_i/G_{i+1}\rightarrow G_{i+1}/G_{i+2}$.
\end{lemma}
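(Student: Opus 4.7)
The plan is to reduce the statement to the universal property of the tensor product of abelian groups, building on the bilinear map already produced by Lemma \ref{bilinear LCS}. The essential observation is that all three groups appearing in the statement are abelian, so viewing them as $\Z$-modules makes the tensor product $G/G_2 \otimes G_i/G_{i+1}$ literally available, and the universal property converts a bilinear pairing into a linear map out of the tensor product.

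First I would verify the three groups are abelian. The group $G/G_2$ is abelian by the very definition of the commutator subgroup. For $G_i/G_{i+1}$, Lemma \ref{commutator indices} gives $[G_i,G_i] \subseteq G_{2i} \subseteq G_{i+1}$ since $i \geq 1$, so $G_i/G_{i+1}$ is abelian; the same argument shows $G_{i+1}/G_{i+2}$ is abelian.

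Next I would invoke Lemma \ref{bilinear LCS} to obtain the bilinear map
\[
\gamma \colon G/G_2 \times G_i/G_{i+1} \longrightarrow G_{i+1}/G_{i+2},
\]
whose image generates the target. Since all three groups are abelian, $\gamma$ is a $\Z$-bilinear map of $\Z$-modules in the usual sense, and so the universal property of the tensor product yields a unique homomorphism of abelian groups
\[
\tilde{\gamma} \colon G/G_2 \otimes G_i/G_{i+1} \longrightarrow G_{i+1}/G_{i+2}
\]
with $\tilde\gamma(\overline x \otimes \overline y) = \gamma(\overline x, \overline y)$ for all $x \in G$, $y \in G_i$. Finally, the image of $\tilde\gamma$ contains the image of $\gamma$, which already generates $G_{i+1}/G_{i+2}$, so $\tilde\gamma$ is surjective.

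There is essentially no obstacle here: the only slightly subtle point is the confirmation that $G_i/G_{i+1}$ is abelian, which is needed before one can legitimately form the $\Z$-tensor product and speak of $\Z$-bilinearity rather than just multiplicative bilinearity in the sense of Section \ref{section commutators}. Everything else is a direct application of results already proved.
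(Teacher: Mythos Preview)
Your proof is correct and follows exactly the paper's approach: invoke Lemma \ref{bilinear LCS} and then apply the universal property of the tensor product. The paper's proof is a one-line citation of these two ingredients, whereas you have helpfully spelled out why the quotients are abelian so that the $\Z$-tensor product is legitimately in play.
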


\begin{proof}
This follows from Lemma \ref{bilinear LCS} and the universal property of tensor products.
\end{proof}

\begin{lemma}\label{class 2 bilinear map}\label{non-degenerate extraspecial}
Let $G$ be a group of class at most $2$. Then the commutator map induces a non-degenerate alternating map 
$G/\ZG(G) \times G/\ZG(G)\rightarrow G_2$ whose image generates $G_2$.
\end{lemma}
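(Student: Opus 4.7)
The plan is to factor the commutator map $\phi: G \times G \to G_2$ given by $\phi(x,y)=[x,y]$ through $G/Z(G) \times G/Z(G)$ and then check the four required properties one after another.

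First I would note that, since $G$ has class at most $2$, we have $G_3 = [G, G_2] = 1$, hence $G_2 \subseteq Z(G)$. Applying Lemma \ref{tgt} with $H = L = G$ gives that $\phi : G \times G \to G_2$ is itself bilinear. To descend $\phi$ to the quotient, I would use that for any $z \in Z(G)$ and $y \in G$ one has $[z,y] = 1$, so the multiplication formulas in Lemma \ref{multiplication formulas commutators}(1)--(2) yield $[xz, y] = [x,y]$ and $[x, yz] = [x,y]$. Thus $\phi$ depends only on cosets modulo $Z(G)$ and induces a well-defined bilinear map $\overline{\phi} : G/Z(G) \times G/Z(G) \to G_2$.

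Next I would observe that $\overline{\phi}$ is alternating because $[x,x] = 1$ for every $x \in G$, and that its image generates $G_2$ directly from Definition \ref{LCS}, since $G_2 = [G,G]$ is generated by commutators $[x,y]$ with $x,y \in G$.

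The only step requiring a small argument is non-degeneracy. Here I would unfold the definitions: the left kernel of $\overline{\phi}$ consists of those cosets $xZ(G)$ such that $[x,y] = 1$ for every $y \in G$, which is precisely $\{xZ(G) : x \in Z(G)\} = \{Z(G)\}$, hence trivial; the right kernel is trivial by the same argument (or by the fact that $\overline{\phi}$ is antisymmetric, being alternating). I do not expect any real obstacle in this proof: everything reduces to the inclusion $G_2 \subseteq Z(G)$, which makes the commutator factor cleanly through the center.
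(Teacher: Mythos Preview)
Your proof is correct and follows essentially the same route as the paper. The only cosmetic difference is that the paper first invokes Lemma~\ref{bilinear LCS} to obtain a bilinear map $G/G_2\times G/G_2\to G_2$ and then identifies its kernel as $\ZG(G)/G_2$, whereas you use Lemma~\ref{tgt} to get bilinearity on $G\times G$ and descend directly to $G/\ZG(G)$; the underlying argument is identical.
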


\begin{proof}
By Lemma \ref{bilinear LCS}, the commutator map induces a bilinear map $\gamma:G/G_2\times G/G_2\rightarrow G_2$ whose image generates $G_2$. Moreover, $\gamma$ is alternating because each element of $G$ commutes with itself.
The class of $G$ being at most $2$, the subgroup $G_2$ is central and $\ZG(G)/G_2$ is equal to both the right and the left kernel of $\gamma$. Then $\gamma$ factors as a non-degenerate map $G/\ZG(G)\times G/\ZG(G)\rightarrow G_2$. 
\end{proof}

\begin{lemma}\label{quotient by centre not cyclic}
Let $G$ be a group and assume that $G/\ZG(G)$ is cyclic. Then $G$ is abelian.
\end{lemma}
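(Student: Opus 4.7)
The plan is to pick a generator of $G/\ZG(G)$, lift it to $G$, and then show that any two elements of $G$ commute by a direct computation. More precisely, let $Z=\ZG(G)$ and choose $g\in G$ such that $G/Z=\langle gZ\rangle$. First I would observe that every element of $G$ can be written in the form $g^n z$ with $n\in\Z$ and $z\in Z$, since the cosets of $Z$ are exhausted by the powers of $gZ$.

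Next I would take two arbitrary elements $x=g^n z_1$ and $y=g^m z_2$ of $G$, with $z_1,z_2\in Z$, and compute $xy$ and $yx$ directly. Because $z_1$ and $z_2$ are central, they commute past any power of $g$ and past each other, so both products collapse to $g^{n+m}z_1z_2$. Hence $xy=yx$, and $G$ is abelian.

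There is no real obstacle here: the argument is a one-line reduction once the generator has been lifted, and it uses nothing beyond the defining property of the centre.
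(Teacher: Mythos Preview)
Your proof is correct and is in fact the classical textbook argument. The paper takes a slightly different route that fits its running machinery: it first observes that $G/\ZG(G)$ cyclic forces $G_2\subseteq\ZG(G)$, so $G$ has class at most $2$; then it invokes the earlier Lemma on class-$2$ groups to get a non-degenerate alternating map $\gamma:G/\ZG(G)\times G/\ZG(G)\to G_2$ whose image generates $G_2$, and notes that an alternating bilinear form on a cyclic group is identically trivial, whence $G_2=1$. Your direct computation avoids any appeal to the bilinear-map framework and is more self-contained; the paper's version, on the other hand, illustrates how the same conclusion falls out of the commutator-map formalism it has already set up, which is the lens used repeatedly throughout the thesis.
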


\begin{proof}
Since the quotient $G/\ZG(G)$ is cyclic, the commutator subgroup of $G$ is contained in $\ZG(G)$. In particular, $G$ has class at most $2$, and therefore, thanks to Lemma \ref{class 2 bilinear map}, the commutator map induces a non-degenerate alternating map 
$\gamma:G/\ZG(G) \times G/\ZG(G)\rightarrow G_2$ whose image generates $G_2$.
The image of $\gamma$ is trivial, because $G/\ZG(G)$ is cyclic, and so $G=\ZG(G)$.
\end{proof}

\begin{lemma}\label{cyclic quotient commutators}
Let $G$ be a group and let $N$ be a normal subgroup of $G$. Assume that $G/N$ is cyclic. Then $G_2=[G,N]$.
\end{lemma}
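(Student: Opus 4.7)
The plan is to prove the two inclusions separately, with the non-trivial one handled by a quotient trick that reduces to Lemma \ref{quotient by centre not cyclic}.

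The easy inclusion $[G,N]\subseteq G_2$ is immediate, since $N\subseteq G$ and so each generating commutator of $[G,N]$ is already a commutator of elements of $G$. For the reverse inclusion, I would first observe that $[G,N]$ is normal in $G$: since $N$ is normal, any conjugate of a generator $[x,n]$ (with $x\in G$, $n\in N$) has the form $[gxg^{-1},gng^{-1}]$ with $gng^{-1}\in N$, so conjugation stabilizes $[G,N]$. This lets me pass to the quotient $\overline G=G/[G,N]$.

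In $\overline G$, the image $\overline N$ of $N$ is central, since by construction $[\overline x,\overline n]=\overline{[x,n]}=1$ for every $x\in G$ and $n\in N$. Moreover $\overline G/\overline N\cong G/N$ is cyclic by hypothesis. Because $\overline N\subseteq\ZG(\overline G)$, there is a surjection $\overline G/\overline N\twoheadrightarrow\overline G/\ZG(\overline G)$, so $\overline G/\ZG(\overline G)$ is cyclic as a quotient of a cyclic group. Lemma \ref{quotient by centre not cyclic} then forces $\overline G$ to be abelian, i.e.\ $G_2\subseteq[G,N]$, completing the proof.

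There is no real obstacle here: the whole argument is a two-line application of the previously established Lemma \ref{quotient by centre not cyclic}. The only thing requiring a moment of attention is verifying that $[G,N]$ is normal in $G$, so that the quotient $G/[G,N]$ makes sense; this is where the normality assumption on $N$ is used. An alternative, more computational route would be to fix $g\in G$ with $G=\langle g\rangle N$ and expand an arbitrary commutator $[g^in_1,g^jn_2]$ using the multiplication formulas of Lemma \ref{multiplication formulas commutators}, using $[g^i,g^j]=1$ to peel off all $g$-only contributions; I would prefer the quotient argument since it avoids bookkeeping.
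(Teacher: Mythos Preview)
Your proof is correct and follows essentially the same approach as the paper: pass to $\overline{G}=G/[G,N]$, observe that $\overline{N}$ is central so $\overline{G}/\ZG(\overline{G})$ is cyclic, and apply Lemma~\ref{quotient by centre not cyclic} to conclude $\overline{G}$ is abelian. The paper's write-up is slightly terser (it does not spell out the easy inclusion or the normality argument for $[G,N]$), but the structure is identical.
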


\begin{proof}
Since $N$ is normal, the subgroup $[G,N]$ is normal in $G$. 
We denote by $\overline{G}=G/[G,N]$ and we use the bar notation for the subgroups of $\overline{G}$. By definition of $\overline{G}$, the subgroup $\overline{N}$ is contained in $\ZG(\overline{G})$, and so $\overline{G}/\ZG(\overline{G})$ is cyclic. It follows from Lemma \ref{quotient by centre not cyclic} that $\overline{G}$ is abelian, and therefore $G_2=[G,N]$. 
\end{proof}

\section{About $p$-groups}\label{section p-gps}

\noindent
Let $p$ be a prime number. A finite group $G$ is a \emph{$p$-group} \index{$p$-group} if the order of $G$ is a power of $p$. 
The trivial group is a $p$-group for each prime $p$. Moreover, as a direct consequence of Lemma \ref{nilpotent iff}, every finite $p$-group is nilpotent.

\begin{lemma}\label{normal intersection centre trivial}
Let $p$ be a prime number and let $G$ be a finite $p$-group. 
Let $N$ be a normal subgroup of $G$ such that $N\cap \ZG(G)=\graffe{1}$.
Then $N=\graffe{1}$.
\end{lemma}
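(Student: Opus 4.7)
The plan is to prove the contrapositive: if $N$ is a nontrivial normal subgroup of the finite $p$-group $G$, then $N \cap \ZG(G) \neq \graffe{1}$. This is the classical statement that every nontrivial normal subgroup of a finite $p$-group meets the centre, and it follows from a standard orbit-counting argument.

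First, I would let $G$ act on $N$ by conjugation. Since $N$ is normal, the action is well defined and the orbits partition $N$. The set of fixed points of this action is exactly $N \cap \ZG(G)$, because an element $n \in N$ is fixed by conjugation by every $g \in G$ if and only if $n$ is central in $G$. By orbit-stabilizer, every orbit has size $|G : \Stab_G(n)|$, which is a power of $p$; hence every non-singleton orbit has size divisible by $p$.

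Summing the sizes of the orbits, I would then obtain the congruence
\[
|N| \equiv |N \cap \ZG(G)| \pmod{p}.
\]
Assuming $N \neq \graffe{1}$, the order $|N|$ is a positive power of $p$, so the left-hand side is $0$ modulo $p$. Therefore $|N \cap \ZG(G)|$ is divisible by $p$. Since $1 \in N \cap \ZG(G)$, this forces $|N \cap \ZG(G)| \geq p$, contradicting the hypothesis $N \cap \ZG(G) = \graffe{1}$.

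There is essentially no obstacle here: the only ingredients are that $G$ is a finite $p$-group (so orbit sizes are powers of $p$) and that $N$ is normal (so conjugation gives a well-defined action on $N$). The argument is self-contained and does not require any of the material developed earlier in the chapter beyond the definition of a $p$-group.
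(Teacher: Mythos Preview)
Your proof is correct; it is the standard class-equation argument for this classical fact. The paper does not give a proof at all but simply cites Huppert \cite[Ch.~$\mathrm{III}$, Satz~7.2(a)]{huppert}, so you have supplied the details that the paper outsources.
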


\begin{proof}
This is Satz $7.2$(a) from \cite[Ch.~$\mathrm{III}$]{huppert}.
\end{proof}

%\begin{proposition}
%\pgp\
%Then $G$ is nilpotent.
%\end{proposition}

%\begin{proof}
%We work by induction on the order of $G$.
%If $G$ is trivial, then $G$ is nilpotent. Assume that $G\neq 1$. 
%As a consequence of Lemma \ref{normal intersection centre trivial}, the subgroup $\ZG(G)$ is non-trivial. By the induction hypothesis $G/\ZG(G)$ is nilpotent of some class $c$ and, by definition of the upper central series, $G$ is nilpotent of class $c+1$.  
%\end{proof}

\begin{lemma}\label{normali schiacchiati generale}
Let $p$ be a prime number and let $G$ be a finite $p$-group of class $c$.
Let moreover $N$ be a subgroup of $G$.
Assume that, for all ${i\in\graffe{1,\ldots,c}}$, if $H$ is a quotient of $G$ of class $i$, then $\ZG(H)=H_i$.
Then $N$ is normal if and only if there exists $i\in\Z_{>0}$ such that $G_{i+1}\subseteq N\subseteq G_i$.
\end{lemma}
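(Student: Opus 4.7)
The easy direction is the ``if'' statement: if $G_{i+1}\subseteq N\subseteq G_{i}$, then by Lemma \ref{commutator indices} one has $[G,N]\subseteq [G,G_i]=G_{i+1}\subseteq N$, so by Lemma \ref{commutators normalizer} the subgroup $N$ is normal in $G$. The plan for the harder direction is to assume $N$ is normal and to find, by choosing the right auxiliary quotient of $G$, a value of $i$ for which $G_{i+1}\subseteq N\subseteq G_i$ holds.

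If $N=1$, the choice $i=c$ works trivially. Otherwise, since the lower central series of a nilpotent group is strictly decreasing until it reaches $1$, one can define $i$ to be the largest positive integer with $N\subseteq G_i$; by construction $1\leq i\leq c$ and $N\not\subseteq G_{i+1}$. The key trick is to consider the quotient
\[
H=G/[G,N].
\]
Since $N$ is normal, $[G,N]\subseteq N$, and so the image $\overline{N}=N/[G,N]$ of $N$ in $H$ lies in $Z(H)$. Moreover, $N\subseteq G_i$ forces $[G,N]\subseteq G_{i+1}$.

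Let $j$ be the class of $H$; since $H$ is a quotient of $G$ one has $j\leq c$. If $j=0$, then $H=1$ and so $N=G$, in which case $i=1$ works directly. Otherwise $j\in\graffe{1,\ldots,c}$ and the hypothesis yields $Z(H)=H_j=G_j[G,N]/[G,N]$. From $\overline{N}\subseteq Z(H)$ one deduces
\[
N\ \subseteq\ G_j\cdot [G,N]\ \subseteq\ G_j\cdot G_{i+1}.
\]
Were $j\geq i+1$, this would give $N\subseteq G_{i+1}$, contradicting the maximality of $i$. Hence $j\leq i$, and since $H_{j+1}=1$ translates into $G_{j+1}\subseteq [G,N]$, one concludes
\[
G_{i+1}\ \subseteq\ G_{j+1}\ \subseteq\ [G,N]\ \subseteq\ N,
\]
as required. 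The main conceptual step is finding the quotient $H=G/[G,N]$: once chosen, it automatically places $\overline{N}$ in the centre and the hypothesis on $Z(H)$ does the rest; the only routine verifications are that $[G,N]$ is normal in $G$ and that the strict decrease of the lower central series legitimates the choice of $i$.
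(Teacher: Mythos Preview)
Your proof is correct, but it takes a genuinely different route from the paper's. The paper approaches ``from below'': it takes $i$ to be the \emph{minimum} index with $G_{i+1}\subsetneq N$ and then must show $N\subseteq G_i$. For this it passes to the quotient $\overline{G}=G/(N\cap G_i)$, which has class exactly $i$, observes that $\overline{N}\cap\overline{G_i}=1$, and invokes Lemma~\ref{normal intersection centre trivial} (a nontrivial normal subgroup of a finite $p$-group meets the centre) to force $\overline{N}=1$. You approach ``from above'': you take $i$ to be the \emph{maximum} index with $N\subseteq G_i$ and must show $G_{i+1}\subseteq N$. Your choice of auxiliary quotient, $H=G/[G,N]$, is the more natural one for this direction, since it places $\overline{N}$ in $\ZG(H)$ for free, and the hypothesis then does all the work without appeal to any further lemma about $p$-groups. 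Your argument is slightly more self-contained; the paper's argument, on the other hand, makes explicit why the ambient $p$-group structure matters (via Lemma~\ref{normal intersection centre trivial}).
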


\begin{proof}
($\Leftarrow$) Assume that $G_{i+1}\subseteq N\subseteq G_i$. Then the quotient $N/G_{i+1}$ is contained in $\ZG(G/G_{i+1})$, and so $N$ is normal modulo $G_{i+1}$. In particular, $N$ is normal in $G$. 
($\Rightarrow$) If $N=\graffe{1}$, the result is clear, because $G$ is nilpotent. We assume $N$ is a non-trivial normal subgroup of $G$ and we let $i\in\Z_{>0}$ be the minimum index such that $G_{i+1}\subseteq N$ and $G_{i+1}\neq N$. We claim that $N$ is contained in $G_i$. First assume that $G_i$ is contained in $N$. By the minimality of $i$, the subgroup $G_i$ is equal to $N$, so we are done with this case. We assume now that $G_i$ is not contained in $N$. 
Then the group $\overline{G}=G/(N\cap G_i)$ has class $i$ and, by assumption, the center of $\overline{G}$ is equal to 
$\overline{G_i}$. 
On the other hand, $\overline{N}$ is a normal subgroup of $\overline{G}$ that has trivial intersection with $\overline{G_i}$. Lemma \ref{normal intersection centre trivial} yields $\overline{N}=\graffe{1}$, and thus $N\subseteq G_i$.
\end{proof}

\begin{lemma}\label{index G'}
Let $p$ be a prime number and let $G$ be a finite $p$-group. Assume that $G/G_2$ is cyclic. Then $G$ is abelian.
\end{lemma}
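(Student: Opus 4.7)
My plan is to exploit Lemma \ref{cyclic quotient commutators} directly, choosing $N = G_2$. Since $G/G_2$ is assumed cyclic and $G_2$ is normal in $G$, that lemma applies and gives $G_2 = [G, G_2] = G_3$. The right-hand side is the next term in the lower central series, so the chain collapses at the level of $G_2$.

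Next I would propagate this equality down the lower central series. Inductively, if $G_i = G_{i+1}$ for some $i \geq 2$, then $G_{i+1} = [G, G_i] = [G, G_{i+1}] = G_{i+2}$, so the series becomes constant from $G_2$ onwards: $G_2 = G_3 = G_4 = \cdots$. The key observation is then that, by the discussion following Definition \ref{LCS} together with Lemma \ref{nilpotent iff} (or simply because every finite $p$-group is nilpotent, as noted at the start of Section \ref{section p-gps}), the lower central series of $G$ must terminate at the trivial subgroup. Combined with the stability just obtained, this forces $G_2 = \{1\}$, i.e., $G$ is abelian.

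I do not expect any real obstacle: the proof is essentially a one-line application of Lemma \ref{cyclic quotient commutators} followed by nilpotency. The only thing to be slightly careful about is to justify that $G_i = 1$ for some $i$ is a genuine input beyond the hypothesis $G/G_2$ cyclic; this is precisely where the assumption that $G$ is a finite $p$-group (rather than an arbitrary group with cyclic abelianization) enters, and it is exactly the content of nilpotency of finite $p$-groups already recorded in the paper. Thus the whole argument is: apply Lemma \ref{cyclic quotient commutators} to get $G_2 = G_3$, conclude $G_2 = G_i$ for all $i \geq 2$ by trivial induction, and invoke nilpotency to collapse $G_2$ to $\{1\}$.
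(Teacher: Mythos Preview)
Your argument is correct. Applying Lemma~\ref{cyclic quotient commutators} with $N=G_2$ gives $G_2=[G,G_2]=G_3$, the trivial induction yields $G_2=G_i$ for all $i\geq 2$, and nilpotency of finite $p$-groups (noted at the start of Section~\ref{section p-gps}) forces $G_2=\{1\}$. There is no gap and no circularity: Lemma~\ref{cyclic quotient commutators} precedes Lemma~\ref{index G'} and its proof does not rely on it.

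The paper, however, does not argue at all: it simply cites Huppert (Hilfssatz~$7.1$(b) of \cite[Ch.~$\mathrm{III}.7$]{huppert}). Your route is therefore genuinely different in that it is self-contained within the paper's own toolkit, recovering this classical fact from Lemma~\ref{cyclic quotient commutators} and nilpotency rather than from an external reference. What your approach buys is independence from Huppert; what the citation buys is brevity for a result that is entirely standard.
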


\begin{proof}
This is a weaker version of Hilfssatz $7.1$(b) from \cite[Ch.~$\mathrm{III}.7$]{huppert}.
\end{proof}

\begin{definition}
Let $G$ be a group. Then, for all $n\in\Z$, define
\[ G^n=\gen{x^n\ :\ x\in G}\ \ \ \text{and}\ \ \ \mu_n(G)=\gen{x\in G\ :\ x^n=1}.\]
\end{definition}

\noindent
Let $p$ be a prime number. The \emph{Frattini subgroup} \index{Frattini subgroup} $\Phi(G)$ of a finite $p$-group $G$ is the  unique normal subgroup of $G$ minimal with the property that $G/\Phi(G)$ is elementary abelian: in other words $\Phi(G)=G^p[G,G]$.

\begin{lemma}\label{subgroup times frattini}
Let $p$ be a prima number and let $G$ be a finite $p$-group.
Let $H$ be a subgroup of $G$ such that $G=H\Phi(G)$. Then $H=G$.
\end{lemma}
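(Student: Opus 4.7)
The plan is to proceed by contrapositive: if $H$ is a proper subgroup of $G$, I will show that $H\Phi(G)$ is also proper. Since $G$ is finite, a proper subgroup $H$ is contained in some maximal subgroup $M$ of $G$, so the key intermediate claim is that $\Phi(G)\subseteq M$. Once this is established, $H\Phi(G)\subseteq M\subsetneq G$ contradicts the hypothesis $G=H\Phi(G)$, and the conclusion follows.

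To prove that every maximal subgroup $M$ of $G$ contains $\Phi(G)$, I would first argue that $M$ is normal of index $p$ in $G$. By Lemma \ref{nilpotent iff} the group $G$ is nilpotent, and nilpotent groups satisfy the normalizer condition: every proper subgroup is strictly contained in its normalizer. Applied to the maximal subgroup $M$, this forces $\nor_G(M)=G$, so $M$ is normal. Then $G/M$ is a nontrivial $p$-group with no proper nontrivial subgroup, so it is cyclic of order $p$; in particular elementary abelian. As a consequence both $[G,G]$ and $G^p$ are contained in $M$, and by the definition of the Frattini subgroup we get $\Phi(G)=G^p[G,G]\subseteq M$.

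The step that requires the most care is the normality of maximal subgroups of a finite $p$-group, which relies on the normalizer condition for nilpotent groups; this is a standard fact ultimately derived from the non-triviality of the centre of a finite $p$-group and an induction on $|G|$. Everything else is an unwinding of the definitions: the existence of a maximal subgroup containing $H$ comes from finiteness of $G$, and the inclusion $\Phi(G)\subseteq M$ is then immediate from the definition of $\Phi(G)$ as $G^p[G,G]$ together with $|G/M|=p$. No machinery beyond what has been introduced earlier in the chapter is needed.
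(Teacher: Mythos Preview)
Your argument is correct and is the standard elementary proof of this fact. The paper does not actually prove this lemma; it simply cites Huppert (Satz~III.3.2(a)). Your self-contained argument via the contrapositive---showing that every maximal subgroup of a finite $p$-group is normal of index $p$ and hence contains $\Phi(G)=G^p[G,G]$---is exactly what lies behind that citation. The one step you flag as requiring care, the normalizer condition, is indeed not stated explicitly in the paper, but your sketch of its derivation from the non-triviality of the centre is accurate.
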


\begin{proof}
This is a weaker reformulation of Satz $3.2$(a) from \cite[Ch.~$\mathrm{III}$]{huppert}.
\end{proof}

\begin{lemma}\label{kernel of reduction frattini}
Let $p$ be a prime number and let $G$ be a finite $p$-group. Then the map $\phi:\Aut(G)\rightarrow\Aut(G/\Phi(G))$ given by $\alpha\mapsto(x\Phi(G)\mapsto\alpha(x)\Phi(G))$ is a well-defined homomorphism. Moreover, the kernel of $\phi$ is a $p$-group.  
\end{lemma}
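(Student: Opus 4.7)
The plan is to break the proof into three short parts: showing $\phi$ is well-defined, showing it is a homomorphism, and then bounding the order of its kernel.

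For well-definedness, I would first recall that $\Phi(G) = G^p[G,G]$ is defined in purely group-theoretic terms, so it is a characteristic subgroup of $G$. Consequently, every $\alpha \in \Aut(G)$ sends $\Phi(G)$ to itself, and hence for any $x,y \in G$ with $x\Phi(G) = y\Phi(G)$ we have $\alpha(x)\Phi(G) = \alpha(y)\Phi(G)$. This makes the rule $x\Phi(G) \mapsto \alpha(x)\Phi(G)$ a well-defined map on $G/\Phi(G)$, and since $\alpha$ is an automorphism of $G$, its induced map is an automorphism of $G/\Phi(G)$. The homomorphism property is then immediate from composing cosets: for $\alpha,\beta \in \Aut(G)$ and $x \in G$, one has $\phi(\alpha\beta)(x\Phi(G)) = (\alpha\beta)(x)\Phi(G) = \alpha(\beta(x))\Phi(G) = \phi(\alpha)(\phi(\beta)(x\Phi(G)))$.

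For the kernel, the key idea is that an automorphism lying in $\ker\phi$ is determined by its action on a minimal generating set of $G$, and on each such generator it has at most $|\Phi(G)|$ possible images. Concretely, I would pick elements $x_1,\ldots,x_d \in G$ whose images form an $\F_p$-basis of $G/\Phi(G)$; then $\langle x_1,\ldots,x_d\rangle\Phi(G) = G$, so by Lemma \ref{subgroup times frattini} they generate $G$. An automorphism $\alpha \in \ker\phi$ satisfies $\alpha(x_i)\Phi(G) = x_i\Phi(G)$, so $\alpha(x_i) \in x_i\Phi(G)$, giving at most $|\Phi(G)|$ choices for each $\alpha(x_i)$. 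Since $\alpha$ is determined by the tuple $(\alpha(x_1),\ldots,\alpha(x_d))$, we obtain $|\ker\phi| \leq |\Phi(G)|^d$. Both $|\Phi(G)|$ and $d$ depend only on the $p$-group $G$, and $|\Phi(G)|$ is a power of $p$, so $\ker\phi$ has $p$-power order, hence is a $p$-group.

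I do not anticipate any serious obstacle: the main subtlety is simply to cite Lemma \ref{subgroup times frattini} in order to pass from a lift of a basis of $G/\Phi(G)$ to an actual generating set of $G$, but once that is in place the counting argument is immediate. The only other thing worth double-checking is the standard fact that $\Phi(G)$ is characteristic, which follows at once from its intrinsic description as $G^p[G,G]$.
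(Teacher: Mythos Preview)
Your proof is correct and complete. The well-definedness and homomorphism parts are routine, and your counting argument for the kernel is the standard direct approach: bounding $|\ker\phi|$ by $|\Phi(G)|^d$ via the images of a minimal generating set, with Lemma~\ref{subgroup times frattini} justifying that lifts of a basis of $G/\Phi(G)$ generate $G$.

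The paper takes a different route: it gives no argument at all and simply cites Satz~3.18 from Chapter~III of Huppert's \emph{Endliche Gruppen I} (this is the classical Burnside basis theorem and its consequence that automorphisms acting trivially on $G/\Phi(G)$ form a $p$-group). Your version has the advantage of being self-contained within the thesis, using only the already-stated Lemma~\ref{subgroup times frattini}; the paper's version is shorter but relies on the reader having access to Huppert. Either is perfectly acceptable for a lemma of this kind.
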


\begin{proof}
This is a reformulation of Satz $3.18$ from \cite[Ch.~$\mathrm{III}$]{huppert}.
\end{proof}

\begin{lemma}\label{normal index p}
Let $p$ be a prime number and let $H$ be a finite $p$-group. Let moreover $K$ and $N$ be normal subgroups of $H$ such that $K\subseteq N$ and $K\neq N$. Then there exists a normal subgroup $M$ of $H$ such that $K\subseteq M\subseteq N$ and 
$|N:M|=p$.
\end{lemma}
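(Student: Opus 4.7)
The plan is to construct $M$ by a maximality argument among normal subgroups of $H$ sandwiched between $K$ and $N$. The set
$\mathcal{S}=\graffe{L\trianglelefteq H\ :\ K\subseteq L\subsetneq N}$
is finite and non-empty, since it contains $K$ (using the hypothesis $K\neq N$). Pick $M\in\mathcal{S}$ maximal under inclusion. I claim this $M$ automatically satisfies $|N:M|=p$.

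To prove the claim, I would pass to the quotient $\overline{H}=H/M$, in which $\overline{N}=N/M$ is a non-trivial normal subgroup of the finite $p$-group $\overline{H}$. By Lemma \ref{normal intersection centre trivial}, the intersection $\overline{N}\cap \ZG(\overline{H})$ is non-trivial. Being a non-trivial finite abelian $p$-group, it contains a subgroup $\overline{C}$ of order $p$ (take $\gen{\overline{x}^{p^{k-1}}}$ for any non-identity $\overline{x}$ in it of order $p^k$). Since $\overline{C}$ lies in $\ZG(\overline{H})$, it is normal in $\overline{H}$, and by construction it is contained in $\overline{N}$.

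Lifting back, let $M'$ be the preimage of $\overline{C}$ along the projection $H\to\overline{H}$. Then $M\subsetneq M'\subseteq N$, the subgroup $M'$ is normal in $H$, and $|M':M|=|\overline{C}|=p$. By the maximality of $M$ in $\mathcal{S}$, the strict inclusion $M\subsetneq M'$ rules out $M'\in\mathcal{S}$, so the only possibility is $M'=N$, which gives $|N:M|=p$ as required.

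I do not expect any genuine obstacle. The only subtle ingredient is Lemma \ref{normal intersection centre trivial}, which provides the non-trivial central element in $\overline{N}$ and is the reason a maximality argument of this flavour succeeds specifically in the $p$-group setting; everything else is either the standard extraction of an order-$p$ subgroup from a non-trivial finite abelian $p$-group, or the routine correspondence between subgroups of $H$ containing $M$ and subgroups of $H/M$.
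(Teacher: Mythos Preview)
Your argument is correct. The paper does not give a proof but simply cites \cite[Lemma $1.23$]{isaacs}; your proposal supplies the standard details, using Lemma~\ref{normal intersection centre trivial} exactly as one would expect.
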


\begin{proof}
See \cite[Lemma $1.23$]{isaacs}.
\end{proof}

\begin{lemma}\label{frattini comm}
Let $p$ be a prime number and let $G$ be a finite $p$-group. Assume that 
$|G:G_2|=p^2$. Then one of the following holds.
\begin{itemize}
 \item[$1$.] The group $G$ is abelian.
 \item[$2$.] The group $G_2$ is equal to $\Phi(G)$.
\end{itemize}
\end{lemma}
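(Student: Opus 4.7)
The plan is to use the fact that $\Phi(G) = G^p G_2$ always contains $G_2$, and then leverage the dichotomy on the abelian group $G/G_2$ of order $p^2$ to pin down which case we are in. Concretely, $G/G_2$ is abelian of order $p^2$, so it is either cyclic of order $p^2$ or isomorphic to $(\Z/p\Z)^2$.

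First, I would dispose of the cyclic case. If $G/G_2$ is cyclic, then Lemma \ref{index G'} applies immediately and yields that $G$ is abelian, so condition ($1$) of the statement holds. Therefore, I may assume henceforth that $G/G_2$ is not cyclic, and so $G/G_2$ is elementary abelian of order $p^2$.

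In that case, every element $x \in G$ satisfies $x^p \in G_2$, so $G^p \subseteq G_2$. Combining this with $G_2 \subseteq \Phi(G) = G^p G_2 \subseteq G_2$ gives $\Phi(G) = G_2$, which is condition ($2$).

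The only real ingredient is Lemma \ref{index G'}; the rest is an immediate classification of abelian groups of order $p^2$ together with the definition of the Frattini subgroup, so I do not anticipate any obstacle. The argument is in fact essentially a two-line dichotomy: cyclic abelianization forces the group to be abelian, and non-cyclic abelianization forces $G^p \subseteq G_2$ and hence $\Phi(G) = G_2$.
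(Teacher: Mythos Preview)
Your proof is correct and essentially identical to the paper's: both use Lemma \ref{index G'} to link the cyclic case of $G/G_2$ with $G$ being abelian, and in the non-cyclic case observe that $G/G_2$ has exponent $p$ so $G^p\subseteq G_2$ and hence $\Phi(G)=G^pG_2=G_2$. The only cosmetic difference is that the paper assumes $G$ is not abelian and then applies the contrapositive of Lemma \ref{index G'}, whereas you split directly on whether $G/G_2$ is cyclic.
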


\begin{proof}
Assume that $G$ is not abelian. It follows from Lemma \ref{index G'} that $G/G_2$ has exponent $p$ and so $G^p$ is contained in $G_2$. In particular, we have that $G_2=G_2G^p=\Phi(G)$.
\end{proof}

\begin{definition}
Let $G$ be a group and let $p$ be a prime number. 
The \emph{$p$-central series} \index{$p$-central series} of $G$ is the series $(P_i(G))_{i\geq 1}$ that is obtained by defining recursively, for all $i\in\Z_{\geq 1}$, the subgroups $P_1(G)=G$ and $P_{i+1}(G)=[G,P_i(G)]P_i(G)^p$.
\end{definition}

\noindent
We remark that, if $p$ is a prime number and $G$ is a finite $p$-group, then saying that the lower central series of $G$ coincides with its $p$-central series is equivalent to saying that all quotients of consecutive elements of the lower central series have exponent dividing $p$.

\section{Extraspecial $p$-groups}\label{section extraspecial}

In Section \ref{section extraspecial} we explore the world of extraspecial $p$-groups, a class of groups that have been widely studied and whose structure is very well-understood (see for example \cite[Ch.~$\mathrm{III}.13$]{huppert}).
Given a group $G$, we recall that $(G_i)_{i\geq 1}$ denotes its lower central series (see Section \ref{section commutators}).

\begin{definition}\label{def extraspecial}
Let $p$ be a prime number and let $G$ be a finite $p$-group. Then $G$ is \emph{extraspecial} \index{$p$-group!extraspecial} if 
$G_2$ is central and $\ZG(G)$ is cyclic of order $p$.
\end{definition} 

\begin{lemma}\label{extraspecial non-abelian}
Let $p$ be a prime number and let $G$ be a finite extraspecial $p$-group.
If $G$ is non-abelian, then $\ZG(G)$ and $G_2$ are equal and they both have order $p$.
\end{lemma}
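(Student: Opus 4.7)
The plan is to unfold the definition of \emph{extraspecial} and combine it with the non-abelianness assumption, using only the fact that a group of prime order has no non-trivial proper subgroups.

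First, I would note that by Definition \ref{def extraspecial}, the center $\ZG(G)$ is cyclic of order $p$, so the claim about the order of $\ZG(G)$ is immediate, and what remains is to verify $G_2=\ZG(G)$. Since $G_2$ is central (again by Definition \ref{def extraspecial}), we automatically get the inclusion $G_2\subseteq\ZG(G)$.

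For the reverse inclusion, I would argue by considering the possibilities for $G_2$ as a subgroup of $\ZG(G)$. The group $\ZG(G)$, being cyclic of prime order $p$, has only two subgroups: the trivial one and itself. If $G_2=\graffe{1}$, then $G$ would be abelian, contradicting our hypothesis; hence $G_2=\ZG(G)$, and both have order $p$.

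The argument is essentially tautological given the definition, and there is no real obstacle: the only fact used beyond the definition is that a non-abelian group has non-trivial commutator subgroup. No nontrivial lemma from the preceding sections is required.
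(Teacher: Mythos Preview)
Your proof is correct and is exactly the straightforward unpacking that the paper leaves to the reader (the paper's own proof consists of the single word ``Straightforward''). There is nothing to add.
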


\begin{proof}
Straightforward.
\end{proof}

\begin{lemma}\label{class 2 frattini}
Let $p$ be a prime number and let $G$ be a finite $p$-group.
Assume that $G$ has class at most $2$ and that the exponent of $G_2$ divides $p$. 
Then $\Phi(G)$ is contained in $\ZG(G)$.
\end{lemma}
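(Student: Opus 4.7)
The plan is to unwind the definition $\Phi(G) = G^p G_2$ and show both factors lie in $\ZG(G)$. Since $G$ has class at most $2$, we have $G_3 = [G,G_2] = 1$, which exactly says $G_2 \subseteq \ZG(G)$. So the whole content of the lemma is to prove $G^p \subseteq \ZG(G)$, i.e., that $x^p$ is central for every $x \in G$.

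For this, I would apply the multiplication formula from Lemma \ref{multiplication formulas commutators}($3$) with $n = p$: for any $x, y \in G$,
\[
[x^p, y] [x,y]^{-p} = \prod_{s=1}^{p-1} [x, [x^{p-s}, y]].
\]
The class-$2$ hypothesis tells us that each inner commutator $[x^{p-s}, y]$ lies in $G_2 \subseteq \ZG(G)$, so every factor on the right-hand side is trivial. This reduces the identity to $[x^p, y] = [x, y]^p$. Now invoke the second hypothesis: $[x,y] \in G_2$, and the exponent of $G_2$ divides $p$, hence $[x,y]^p = 1$. Thus $[x^p, y] = 1$ for all $y \in G$, which is precisely the statement that $x^p \in \ZG(G)$.

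Combining the two inclusions $G^p \subseteq \ZG(G)$ and $G_2 \subseteq \ZG(G)$ with the definition $\Phi(G) = G^p G_2$ (recalled in Section \ref{section p-gps}) yields $\Phi(G) \subseteq \ZG(G)$. There is really no obstacle here; the only thing to be careful about is applying Lemma \ref{multiplication formulas commutators}($3$) correctly and noting that the class-$2$ condition is exactly what kills the product of nested commutators. Alternatively, one could invoke Lemma \ref{bilinear LCS} to get bilinearity of the commutator map $G/G_2 \times G/G_2 \to G_2$ directly, deduce $[x^p, y] = [x,y]^p$ from additivity in the first argument, and then finish identically using the exponent hypothesis on $G_2$.
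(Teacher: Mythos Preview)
Your proof is correct and essentially matches the paper's. The paper invokes Lemma~\ref{class 2 bilinear map} to get bilinearity of the commutator map (your stated alternative), immediately deducing $[g^p,x]=[g,x]^p=1$ and hence $G/\ZG(G)$ has exponent dividing $p$, so $\Phi(G)\subseteq\ZG(G)$; your primary route via Lemma~\ref{multiplication formulas commutators}($3$) reaches the same identity by a direct computation rather than by citing bilinearity.
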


\begin{proof}
By Lemma \ref{class 2 bilinear map}, the map
$G/\ZG(G)\times G/\ZG(G)\rightarrow G_2$
that is induced by the commutator map is bilinear.
Let now $g,x\in G$. Then $[g^p,x]=[g,x]^p=[g,x^p]$ and $[g,x]^p=1$, since the exponent of $G_2$ divides $p$. It follows that $G/\ZG(G)$ is annihilated by $p$ and thus $\Phi(G)\subseteq\ZG(G)$.
\end{proof}

\begin{lemma}\label{order extraspecial}
Let $p$ be a prime number and let $G$ be an extraspecial $p$-group. Then there exists $n\in\Z_{\geq0}$ such that $|G|=p^{2n+1}$. 
\end{lemma}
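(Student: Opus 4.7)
The plan is to split on whether $G$ is abelian, and in the non-abelian case translate the order computation into a statement about a non-degenerate alternating form over $\F_p$, which we then resolve with the linear algebra from Section \ref{section linear algebra}.

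First I would dispose of the abelian case. If $G$ is abelian and extraspecial, then $\ZG(G)=G$ is cyclic of order $p$, so $|G|=p=p^{2\cdot 0+1}$ and we may take $n=0$.

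Next, suppose $G$ is non-abelian. By Lemma \ref{extraspecial non-abelian}, $\ZG(G)=G_2$ and both have order $p$; in particular $G$ has class $2$ and $G_2$ has exponent $p$. I would then promote $V:=G/\ZG(G)$ to an $\F_p$-vector space. Lemma \ref{class 2 frattini} applies and yields $\Phi(G)\subseteq \ZG(G)$. Since $G_2\subseteq\Phi(G)$ always and $G_2=\ZG(G)$ in our setting, we actually get $\Phi(G)=\ZG(G)$, so $V$ is elementary abelian, i.e.\ a finite-dimensional $\F_p$-vector space. The target $Z:=G_2$ is an $\F_p$-vector space of dimension $1$.

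With these identifications in hand, Lemma \ref{non-degenerate extraspecial} provides a non-degenerate alternating bilinear map $\phi:V\times V\to Z$. Since $V$ is finite-dimensional, the zero subspace is isotropic, and any isotropic subspace is contained in a maximal one; pick one such $X$. Lemma \ref{sum of isotropic} (or just Lemma \ref{dimension isotropic} applied directly to $X$) gives $\dim_{\F_p} V = 2\dim_{\F_p} X$. Writing $n=\dim_{\F_p}X$, we conclude
\[
|G|=|V|\cdot|Z|=p^{2n}\cdot p=p^{2n+1},
\]
as desired.

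There is no real obstacle: the work is entirely in recognizing $V$ as an $\F_p$-vector space (which is what Lemma \ref{class 2 frattini} delivers) so that the alternating-form machinery of Section \ref{section linear algebra} can be invoked; the parity of $\dim V$ then drops out of the standard fact that a non-degenerate alternating form on a finite-dimensional space with one-dimensional values forces an even dimension.
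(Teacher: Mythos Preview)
Your argument is correct. The abelian case is immediate, and in the non-abelian case your chain of lemmas (\ref{extraspecial non-abelian} to get $\ZG(G)=G_2$ of order $p$, then \ref{class 2 frattini} to make $V=G/\ZG(G)$ an $\F_p$-vector space, then \ref{non-degenerate extraspecial} for the non-degenerate alternating form, then \ref{dimension isotropic} for the parity) is sound. One small point worth making explicit is that the non-degenerate alternating map furnished by Lemma \ref{non-degenerate extraspecial} is stated for abelian groups, but since $V$ and $Z$ are annihilated by $p$ the $\Z$-bilinearity is automatically $\F_p$-bilinearity, so the linear algebra of Section \ref{section linear algebra} applies without further comment.

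The paper itself does not prove this lemma at all: it simply cites Satz $13.7$(c) from Huppert \cite[Ch.~$\mathrm{III}$]{huppert}. Your proof is therefore genuinely different in that it is self-contained within the paper's framework, assembling the result from the alternating-form machinery the paper has already developed rather than deferring to an outside reference. This buys internal coherence (the lemma becomes a corollary of tools already in hand) at the modest cost of a few lines of argument.
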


\begin{proof}
This is a reformulation of Satz $13.7$(c) from \cite[Ch.~$\mathrm{III}$]{huppert}.
\end{proof}

%\begin{proof}
%As a consequence of the definition of extraspecial groups, the group $G$ has class at most $2$. 
%It follows from Lemma \ref{non-degenerate extraspecial} that the commutator map induces a non-degenerate alternating map $G/\ZG(G)\times G/\ZG(G)\rightarrow G_2$.
%If $G$ is abelian, then $G=\ZG(G)$ and the order of $G$ is equal to $p$. Assume $G$ is non-abelian. Then $G_2=\ZG(G)$ and, as a consequence of Lemma \ref{class 2 frattini}, the quotient $G/\ZG(G)$ is elementary abelian. It follows that both $V=G/\ZG(G)$ and $Z=\ZG(G)$ are finite dimensional vector spaces over $\F_p$, where $Z$ has dimension $1$. It follows from Lemma \ref{dimension isotropic} that the dimension of $V$ is even. We can now conclude since $|G|=|G:\ZG(G)||\ZG(G)|=
%p^{\dim V}p=p^{\dim V+1}$.
%\end{proof}

\begin{lemma}
Let $p$ be a prime number and let $X,Y,Z$ be finite-dimensional vector spaces over $\F_p$ such that $\dim_{\F_p}Z=1$. 
Let moreover $\theta:X\times Y\rightarrow Z$ be a non-degenerate map. 
Call $G=G(Z,Y,X,\theta)$ the set $Z\times Y\times X$ together with the multiplication defined by 
$(z,y,x)(z',y',x')=(z+z'+\theta(x,y'),y+y',x+x')$. 
Then the following hold.
\begin{itemize}
 \item[$1$.] One has that $G$ is an extraspecial $p$-group and, if $p$ is odd, then $G$ has exponent $p$.
 \item[$2$.] The centre of $G$ is $Z\times\graffe{0}\times\graffe{0}$. 
 \item[$3$.] The commutator map $G\times G\rightarrow G$ is given by $$((z,y,x),(z',y',x'))\mapsto[(z,y,x),(z',y',x')]=(\theta(x,y')-\theta(x',y),0,0).$$
% \item[$4$.] Let $H$ and $K$ be two subgroups of $G$ such that $H\cong X\times V$ and $K\cong X\times W$. 
%             Then, for all $h=(c,0,v)\in H$ and $k=(c',w',0)\in K$, we have that $[h,k]=(\phi(v,w'),0,0)$. In particular, $G=HK$, both $H$ and $K$ are abelian and 
%             $[H,K]=G'$.
\end{itemize}
\end{lemma}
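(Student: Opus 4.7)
The plan is to establish the three assertions by direct computation, using only the bilinearity of $\theta$ and, for the statement about the centre, its non-degeneracy. First I would verify that the prescribed multiplication makes $G$ into a group: associativity unpacks into an identity in the additive group of $Z$ that follows from bilinearity of $\theta$; the identity element is $(0,0,0)$; and the inverse of $(z,y,x)$ is $(-z+\theta(x,y),-y,-x)$, as one checks by multiplying both ways. Lemma \ref{non-degenerate dim 1} gives $\dim_{\F_p}X=\dim_{\F_p}Y$, so $|G|=p^{1+2\dim X}$ is a $p$-power.

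Next I would compute the commutator. The key observation is that for every $w\in Z$, the element $(w,0,0)$ is central in $G$ and satisfies $(w,0,0)\cdot(a,b,c)=(a+w,b,c)=(a,b,c)\cdot(w,0,0)$, as is immediate from the multiplication formula. Writing $g=(z,y,x)$ and $h=(z',y',x')$, we see that $gh$ and $hg$ agree in their $y$- and $x$-coordinates and that the $z$-coordinate of $gh$ exceeds that of $hg$ by $\theta(x,y')-\theta(x',y)$. Hence $gh=(\theta(x,y')-\theta(x',y),0,0)\cdot hg$, and since $(hg)^{-1}=g^{-1}h^{-1}$, one obtains $[g,h]=gh\cdot g^{-1}h^{-1}=(\theta(x,y')-\theta(x',y),0,0)$, which is statement $(3)$. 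For $(2)$, an element $g=(z,y,x)$ lies in $\ZG(G)$ if and only if $\theta(x,y')=\theta(x',y)$ for all $x',y'$; setting $x'=0$ forces $_{x}\theta=0$ and hence $x=0$ by non-degeneracy, and setting $y'=0$ analogously yields $y=0$. Together with the centrality of the elements $(z,0,0)$ noted above, this proves $\ZG(G)=Z\times\{0\}\times\{0\}$.

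Statement $(1)$ then falls out quickly. By $(3)$, the subgroup $G_2$ is contained in $Z\times\{0\}\times\{0\}=\ZG(G)$, so $G_2$ is central; and $\ZG(G)$ is cyclic of order $p$ because $\dim_{\F_p}Z=1$. Thus $G$ is extraspecial by Definition \ref{def extraspecial}. For the exponent claim when $p$ is odd, a short induction on $n$ (using bilinearity of $\theta$) gives the formula
\[
g^n=\Bigl(nz+\binom{n}{2}\theta(x,y),\, ny,\, nx\Bigr),
\]
and with $n=p$ the coordinates $np$, $ny$, $nx$ vanish in the $\F_p$-vector spaces, while $\binom{p}{2}=p(p-1)/2$ is divisible by $p$ for odd $p$, so $g^p=(0,0,0)$. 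I do not foresee any real obstacle; the only item requiring careful bookkeeping is the commutator computation, which is made transparent by the observation that central elements of the first coordinate act on products simply by shifting the $z$-entry.
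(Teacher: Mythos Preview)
Your proposal is correct and complete; the paper simply records the proof as ``Straightforward'', and what you have written is precisely the direct verification that word stands in for. Your bookkeeping is accurate throughout: the inverse formula, the commutator computation via the central shift by $(\theta(x,y')-\theta(x',y),0,0)$, the non-degeneracy argument for the centre, and the power formula $g^n=\bigl(nz+\binom{n}{2}\theta(x,y),\,ny,\,nx\bigr)$ are all right.
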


\begin{proof}
Straightforward.
\end{proof}

\begin{lemma}\label{get extraspecial}
Let $p$ be a prime number and let $G$ be an extraspecial $p$-group of exponent $p$. 
Then there exist finite-dimensional vector spaces $X,Y,Z$ over $\F_p$, with $Z$ of dimension $1$, and 
a non-degenerate map $\theta:X\times Y\rightarrow Z$ such that $G\cong G(Z,Y,X,\theta)$. 
\end{lemma}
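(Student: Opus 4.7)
The plan is to extract from the commutator structure of $G$ a non-degenerate pairing, and then realize $G$ concretely as a twisted product via that pairing. I first dispose of the abelian case, which is essentially trivial: if $G$ is abelian extraspecial then $\ZG(G)=G$ has order $p$, and setting $X=Y=0$, $Z=G$, with the empty map $\theta$ (vacuously non-degenerate) gives $G(Z,Y,X,\theta)\cong G$.

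Assume now that $G$ is non-abelian. By Lemma \ref{extraspecial non-abelian}, $Z:=G_2=\ZG(G)$ has order $p$, so it is a $1$-dimensional $\F_p$-vector space. Since $G$ has exponent $p$ and class at most $2$, Lemma \ref{class 2 frattini} gives $\Phi(G)\subseteq\ZG(G)$, so $V:=G/\ZG(G)$ is a finite-dimensional $\F_p$-vector space. By Lemma \ref{class 2 bilinear map}, the commutator map induces a non-degenerate alternating map $\phi:V\times V\to Z$. Applying Lemma \ref{sum of isotropic}, I choose a decomposition $V=X\oplus Y$ with $X$ and $Y$ maximal isotropic, and let $\theta:X\times Y\to Z$ be the restriction of $\phi$. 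This $\theta$ is non-degenerate: if $x\in X$ satisfies $\theta(x,y)=0$ for all $y\in Y$, then $x\in Y^\perp$, and Lemma \ref{isotropic in orthogonal} gives $Y^\perp=Y$, hence $x\in X\cap Y=0$; the argument on the other side is symmetric.

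To construct the isomorphism, I need to lift $X$ and $Y$ to subgroups of $G$. Writing $\pi:G\to V$ for the quotient map, isotropy of $X$ means that any two lifts in $\pi^{-1}(X)$ commute, so $\pi^{-1}(X)$ is abelian; combined with exponent $p$, it is an $\F_p$-vector space fitting into a split short exact sequence $0\to Z\to\pi^{-1}(X)\to X\to 0$. Choose a splitting $s_X:X\to G$, and analogously $s_Y:Y\to G$; these are group homomorphisms with $\pi\circ s_X=\id_X$ and $\pi\circ s_Y=\id_Y$. Define
\[
\Psi:G(Z,Y,X,\theta)\longrightarrow G,\qquad (z,y,x)\longmapsto z\cdot s_Y(y)\cdot s_X(x),
\]
where $z$ on the right is viewed as an element of $G$ via the inclusion $Z=\ZG(G)\hookrightarrow G$. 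An easy check using $X\cap Y=0$ shows $\Psi$ is injective, and cardinalities agree since by Lemma \ref{order extraspecial} both groups have order $p^{2\dim X+1}$, so $\Psi$ is bijective.

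Checking that $\Psi$ is a homomorphism reduces, after using that $Z$ is central and that $s_X,s_Y$ are homomorphisms, to the single identity $s_X(x)s_Y(y')=[s_X(x),s_Y(y')]\cdot s_Y(y')s_X(x)$, i.e., $[s_X(x),s_Y(y')]=\theta(x,y')$ inside $Z$. But by the very definition of $\phi$ as the map induced by the commutator on $V\times V\to Z$, we have $[s_X(x),s_Y(y')]=\phi(x,y')=\theta(x,y')$, finishing the proof. The only subtle step is choosing the sections $s_X,s_Y$ as group homomorphisms, which is exactly what isotropy (giving abelianness of the preimage) together with exponent $p$ (giving $\F_p$-linearity, hence splitting) buys us.
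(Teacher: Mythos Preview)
Your proof is correct and follows essentially the same approach as the paper: both handle the abelian case trivially, then in the non-abelian case use the non-degenerate alternating commutator form on $V=G/\ZG(G)$, decompose $V=X\oplus Y$ into maximal isotropic summands via Lemma~\ref{sum of isotropic}, lift each summand to a homomorphic section using that its preimage is abelian of exponent $p$, and define the isomorphism by $(z,y,x)\mapsto z\cdot s_Y(y)\cdot s_X(x)$. Your write-up is in fact slightly more detailed than the paper's, which leaves the verification that $\psi$ is an isomorphism to the reader, whereas you spell out the non-degeneracy of $\theta$, the cardinality argument for bijectivity, and the key commutator identity underlying the homomorphism check.
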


\begin{proof}
If $G$ is abelian, we take $X=Y=0$, $Z=G$, and $\theta$ to be the zero map. Since every group of exponent $2$ is abelian, we are done when $p=2$.
Assume now that $p$ is odd and that $G$ has class $2$. In this case $\ZG(G)=G_2$ and $V=G/\ZG(G)$ is a finite-dimensional vector space over $\F_p$, as a consequence of Lemma \ref{class 2 frattini}. Write $Z=\ZG(G)$ and let $\pi:G\rightarrow V$ denote the canonical projection. 
By Lemma \ref{non-degenerate extraspecial}, the commutator map on $G$ induces a non-degenerate map $\phi:V\times V\rightarrow Z$. 
Let now $X$ be a maximal isotropic subspace of $V$. 
By Lemma \ref{sum of isotropic} there exists an isotropic subspace $Y$ of $V$ such that 
$V=X\oplus Y$. It follows that the map $\phi_{|X\times Y}:X\times Y\rightarrow Z$ is non-degenerate. Now, we have that $0=\phi(X\times X)=[\pi^{-1}(X),\pi^{-1}(X)]$, and so 
$\pi^{-1}(X)$ is abelian of exponent $p$. As a result, the sequence 
${0\rightarrow Z\rightarrow \pi^{-1}(X)\rightarrow X\rightarrow 0}$ of $\F_p$-modules is split and there is a homomorphism $s:X\rightarrow\pi^{-1}(X)$ such that $\pi\circ s=\id_X$. The same argument applies to $Y$ and there exists therefore a homomorphism $t:Y\rightarrow\pi^{-1}(Y)$ such that $\pi\circ t=\id_Y$. 
To conclude, we denote $\theta=\phi_{|X\times Y}$ and we define $\psi:G(Z,Y,X,\theta)\rightarrow G$ by 
$(z,y,x)\mapsto zt(y)s(x)$. It is not difficult at this point to check that $\psi$ is an isomorphism.
\end{proof}

\begin{proposition}\label{homom extraspecial}
Let $p$ be a prime number. Let moreover $X,Y,Z,A,B,C$ be finite-dimensional vector spaces over $\F_p$ with $\dim Z=\dim C=1$. 
Let $\theta:X\times Y\rightarrow Z$ and $\psi:A\times B\rightarrow C$ be non-degenerate maps.
Let $f,g,h$ respectively belong to $\Hom(X,A),\Hom(Y,B),\Hom(Z,C)$ and assume that the following diagram is commutative. 
\[
\begin{diagram}
X    \times   Y           &  \rTo^{\theta} & Z  \\
\dTo^{f} \ \  \dTo_{g}    &                & \dTo_{h}\\
A    \times   B           &  \rTo^{\psi}   & C  \\
\end{diagram}
\]
Then $(h,g,f):G(Z,Y,X,\theta)\rightarrow G(C,B,A,\psi)$ is a homomorphism of groups and, if $f,g,h$ are isomorphisms, then  
$(h,g,f)$ is an isomorphism.
\end{proposition}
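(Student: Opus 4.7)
The plan is to verify the homomorphism condition by a direct computation on pairs of elements, using the commutativity of the diagram exactly where the twist term appears, and then to observe that the bijectivity claim is immediate once the homomorphism property is established.

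First I would unwind definitions. The map under consideration is
\[
(h,g,f)\colon (z,y,x)\mapsto (h(z),g(y),f(x)),
\]
and, by the definition of the group law on $G(Z,Y,X,\theta)$ and $G(C,B,A,\psi)$, it suffices to show that for all $(z,y,x),(z',y',x')\in Z\times Y\times X$ one has
\[
(h,g,f)\bigl((z,y,x)(z',y',x')\bigr)=(h,g,f)(z,y,x)\cdot (h,g,f)(z',y',x').
\]
Expanding the left-hand side using the multiplication in $G(Z,Y,X,\theta)$ gives
\[
\bigl(h(z)+h(z')+h(\theta(x,y')),\,g(y)+g(y'),\,f(x)+f(x')\bigr),
\]
where I used linearity of $h$ and $g$ and $f$. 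Expanding the right-hand side using the multiplication in $G(C,B,A,\psi)$ gives
\[
\bigl(h(z)+h(z')+\psi(f(x),g(y')),\,g(y)+g(y'),\,f(x)+f(x')\bigr).
\]
The two triples agree in the second and third components; in the first component, they agree precisely when $h(\theta(x,y'))=\psi(f(x),g(y'))$, which is the commutativity of the given diagram evaluated at $(x,y')$. Hence $(h,g,f)$ is a group homomorphism.

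For the second assertion, assume $f,g,h$ are isomorphisms. Then $(h,g,f)$ is a bijection of the underlying sets $Z\times Y\times X\to C\times B\times A$, and a bijective group homomorphism is an isomorphism. The main (minor) obstacle is really just careful bookkeeping to see where the commutativity of the diagram enters; there are no subtleties beyond that.
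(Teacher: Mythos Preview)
Your proof is correct and is exactly the straightforward verification the paper has in mind; the paper records only ``Straightforward'' as its proof, and your computation spells out precisely that.
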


\begin{proof}
Straightforward.
\end{proof}

\begin{lemma}\label{maps1-1}
Let $T$ be a group and let $S$ be a central subgroup of $T$. Let moreover $\Delta$ denote the subgroup of $\Aut(T)$ consisting of all those elements $\delta$ such that $\delta(S)=S$ and such that $\delta$ induces the identity on both $S$ and $T/S$. Then the map 
\[
\Delta\rightarrow\Hom(T/S,S)
\] 
that is defined by 
\[
\delta \mapsto (xS \mapsto \delta(x)x^{-1})
\]
is bijective.
\end{lemma}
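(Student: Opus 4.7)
The plan is to write down the obvious candidate inverse and verify the two maps are mutually inverse, after first checking that the stated map is well-defined. Throughout, the key point to keep in the back of one's head is that $S$ is central in $T$, which makes the multiplicative and additive bookkeeping painless.

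First I would check well-definedness of the given map. Fix $\delta\in\Delta$ and set $f_\delta(xS)=\delta(x)x^{-1}$. Since $\delta$ induces the identity on $T/S$, we have $\delta(x)S=xS$, so $\delta(x)x^{-1}\in S$. If $xS=yS$, write $y=xs$ with $s\in S$; since $\delta$ fixes $S$ pointwise and $S$ is central, $\delta(y)y^{-1}=\delta(x)\delta(s)s^{-1}x^{-1}=\delta(x)x^{-1}$, so $f_\delta$ descends to $T/S$. To see $f_\delta$ is a homomorphism, compute
\[
f_\delta(xyS)=\delta(x)\delta(y)y^{-1}x^{-1}
=\delta(x)x^{-1}\cdot x\bigl(\delta(y)y^{-1}\bigr)x^{-1}
=f_\delta(xS)\,f_\delta(yS),
\]
where the last equality uses that $\delta(y)y^{-1}\in S\subseteq\ZG(T)$.

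Next I would define a candidate inverse $\Hom(T/S,S)\to\Delta$ by sending $\phi\mapsto\delta_\phi$, where $\delta_\phi(x)=\phi(xS)\,x$. I need to verify three things: that $\delta_\phi$ is a homomorphism, that it lies in $\Delta$, and that $\phi\mapsto\delta_\phi$ is two-sided inverse to $\delta\mapsto f_\delta$. The homomorphism property uses centrality of $S$ in the usual way:
\[
\delta_\phi(xy)=\phi(xS)\phi(yS)\,xy=\phi(xS)\,x\,\phi(yS)\,y=\delta_\phi(x)\delta_\phi(y),
\]
since $\phi(yS)\in S$ commutes with $x$. It is a bijection because $x\mapsto\phi(xS)^{-1}\,x$ is a two-sided inverse map, again using centrality of $S$. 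Membership in $\Delta$ is immediate: for $s\in S$ one has $\phi(sS)=\phi(S)=1$, so $\delta_\phi$ is the identity on $S$, and $\delta_\phi(x)S=xS$ shows it induces the identity on $T/S$, and in particular $\delta_\phi(S)=S$.

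Finally, the two compositions: $(f_{\delta_\phi})(xS)=\delta_\phi(x)x^{-1}=\phi(xS)$, and $\delta_{f_\delta}(x)=(\delta(x)x^{-1})\,x=\delta(x)$. Both are identities, so the maps are mutually inverse. There is no real obstacle here; the only thing to be careful about is invoking the centrality of $S$ at each of the three places where a product inside $T$ needs to be reordered, and tracking which hypothesis on $\delta$ is used at which step (stability of $S$ plus triviality on $S$ for well-definedness, triviality on $T/S$ for the image landing in $S$).
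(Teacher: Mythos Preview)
Your proof is correct and takes essentially the same approach as the paper: both construct the inverse map $\phi\mapsto(x\mapsto\phi(xS)x)$ and use centrality of $S$ for the bookkeeping. The paper is simply terser, asserting injectivity as ``clear'' and only spelling out the preimage for surjectivity, whereas you carefully verify both compositions and well-definedness.
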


\begin{proof}
Let $\phi:\Delta\rightarrow\Hom(T/S,S)$ denote the map $\delta \mapsto (xS \mapsto \delta(x)x^{-1})$, which is well-defined because $S$ is central in $T$. The map $\phi$ is clearly injective and it is surjective because, given each homomorphism $f\in\Hom(T,S)$ with $S\subseteq\ker(f)$, the map $x\mapsto xf(x)$ belongs to $\Delta$.
\end{proof}

\begin{lemma}\label{innerextra}
Let $p$ be a prime number and let $G$ be an extraspecial $p$-group. Let 
$\Delta$ denote the subgroup of $\Aut(G)$ consisting of those automorphisms of $G$ that induce the identity on $G/G_2$. Then $\Delta=\Inn(G)$.
\end{lemma}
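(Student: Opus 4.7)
The plan is to show the inclusion $\Inn(G)\subseteq\Delta$ directly and then prove the reverse inclusion by a counting argument relying on Lemma \ref{maps1-1}.

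First, for any $g\in G$ and any $x\in G$, one has $gxg^{-1}=x\cdot(x^{-1}gxg^{-1})=x[x^{-1},g]$, and $[x^{-1},g]\in G_2$. Hence the inner automorphism induced by $g$ acts trivially on $G/G_2$, giving $\Inn(G)\subseteq\Delta$. If $G$ is abelian, then, being extraspecial, $G=\ZG(G)$ is cyclic of order $p$, so $G=G/G_2$ and $\Delta=\{1\}=\Inn(G)$; so from now on I would assume $G$ non-abelian, in which case Lemma \ref{extraspecial non-abelian} gives $\ZG(G)=G_2$ of order $p$.

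Next, I would observe two properties of an arbitrary $\alpha\in\Delta$. On the one hand, $\alpha(G_2)=\alpha([G,G])=[\alpha(G),\alpha(G)]=G_2$, so $G_2$ is $\alpha$-stable. On the other hand, for any $x,y\in G$ there exist $z_x,z_y\in G_2\subseteq\ZG(G)$ with $\alpha(x)=xz_x$ and $\alpha(y)=yz_y$, and therefore $\alpha([x,y])=[xz_x,yz_y]=[x,y]$ because $z_x,z_y$ are central. Since $G_2$ is generated by commutators, $\alpha$ acts as the identity on $G_2$ as well. This makes Lemma \ref{maps1-1}, applied with $T=G$ and $S=G_2$, available: it yields a bijection $\Delta\to\Hom(G/G_2,G_2)$.

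Finally, I would count both sides. Since $G$ has class $2$ and $G_2$ has exponent $p$, Lemma \ref{class 2 frattini} gives $\Phi(G)\subseteq\ZG(G)=G_2$; combined with $G_2\subseteq\Phi(G)$, this forces $\Phi(G)=G_2$, so $G/G_2$ is an elementary abelian $p$-group, say of order $p^{2n}$ (by Lemma \ref{order extraspecial}). Hence $|\Delta|=|\Hom(G/G_2,G_2)|=p^{2n}=|G/\ZG(G)|=|\Inn(G)|$, and combined with $\Inn(G)\subseteq\Delta$ this gives equality. The only non-routine step is checking that $\alpha$ fixes $G_2$ elementwise so that Lemma \ref{maps1-1} genuinely applies; everything else is a direct order comparison.
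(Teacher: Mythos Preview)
Your proof is correct and follows essentially the same strategy as the paper: both verify that every $\alpha\in\Delta$ fixes $G_2$ elementwise (so that Lemma~\ref{maps1-1} applies with $T=G$, $S=G_2$) and then compare $\Delta$ with $\Inn(G)$ through $\Hom(G/G_2,G_2)$. The only cosmetic difference is in the final step: the paper observes that the commutator map gives an explicit isomorphism $G/G_2\to\Hom(G/G_2,G_2)$, $t\mapsto(x\mapsto[t,x])$, and reads off that every $\delta\in\Delta$ is conjugation by some $t$; you instead count $|\Delta|=|\Hom(G/G_2,G_2)|=p^{2n}=|G/\ZG(G)|=|\Inn(G)|$ and use the inclusion $\Inn(G)\subseteq\Delta$. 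Both routes rest on the same facts (Lemmas~\ref{extraspecial non-abelian}, \ref{class 2 frattini}, \ref{maps1-1}) and the same dimension comparison, so they are interchangeable.
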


\begin{proof}
If $G$ is abelian, then $\Inn(G)$ is trivial and we are done. Assume now that $G$ is non-abelian. Then, by Lemma 39, the subgroups $\ZG(G)$ and $G_2$ are equal and they both have order $p$. It follows from Lemma 26 that the commutator map induces a non-degenerate map $G/G_2\times G/G_2\rightarrow G_2$ and so the homomorphism $G/G_2\rightarrow\Hom(G/G_2,G_2)$, defined by 
$t \mapsto (x \mapsto [t,x])$, is injective. Thanks to Lemma 40, the quotient $G/G_2$ is elementary abelian and thus $G/G_2\rightarrow\Hom(G/G_2,G_2)$ is an isomorphism. 
Now, by Lemma 60, each element $\delta$ of $\Delta$ restricts to the identity on $G_2$ and so we derive from Lemma \ref{maps1-1} that, for each element $\delta\in\Delta$, there exists $t\in G$ such that, for all $x\in G$, one has $\delta(x)=[t,x]x=txt^{-1}$. In particular, $\Delta$ is contained in $\Inn(G)$. The inclusion $\Inn(G)\subseteq\Delta$ is clear and so the proof is complete.
\end{proof}

\section{Regular $p$-groups}\label{section regular}

\noindent
Most of the results from this section are taken from \cite{huppert}, an excellent reference for getting acquainted with regular $p$-groups. 
We recall here briefly the \indexx{Hall-Petrescu formula} and we refer to Appendix \textit{A} from \cite{analytic} for more detail. We also refer to Definition \ref{def multiplication} for a clear interpretation of the Hall-Petrescu formula.

\begin{lemma}[Hall-Petrescu formula]\label{hall-petrescu lemma}
Let $G$ be a group and let $(G_i)_{i\geq 1}$ denote its lower central series. Let moreover $x$ and $y$ be elements of $G$.
Then, for all $n\in\Z_{>0}$, there exists $(c_k)_{k=2}^n\in\prod_{k=2}^nG_k$ such that 
\[(xy)^n=x^ny^n\prod_{k=2}^nc_k^{\binom{n}{k}}.\]
\end{lemma}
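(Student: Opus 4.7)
The plan is to reduce the statement to the universal case of the free group on two generators and then carry out Philip Hall's \emph{collection process} by induction on $n$. Let $F$ be the free group on two generators $X$ and $Y$, with lower central series $(F_i)_{i\geq 1}$. Given any group $G$ and elements $x,y\in G$, the unique homomorphism $\phi:F\rightarrow G$ with $\phi(X)=x$ and $\phi(Y)=y$ satisfies $\phi(F_k)\subseteq G_k$ for every $k\geq 1$, as one sees by an easy induction using that homomorphisms preserve commutators. Hence it suffices to establish, in $F$, an identity $(XY)^n = X^n Y^n \prod_{k=2}^{n}C_k^{\binom{n}{k}}$ with $C_k\in F_k$; applying $\phi$ then produces the desired $c_k=\phi(C_k)\in G_k$.

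In $F$ I would proceed by induction on $n$. The case $n=1$ is immediate, both sides being $XY$. For $n\geq 2$, write $(XY)^n=(XY)^{n-1}(XY)$ and apply the induction hypothesis to the first factor, obtaining
\[
(XY)^n \;=\; X^{n-1}Y^{n-1}\prod_{k=2}^{n-1}D_k^{\binom{n-1}{k}}\cdot XY,\qquad D_k\in F_k.
\]
The task is then to push the trailing $X$ to the left, past each $D_k$ and past $Y^{n-1}$, and reorganize the result into the target shape. Moving $X$ past an element of $F_k$ produces a correction in $F_{k+1}$ by Lemma \ref{commutator indices}, and moving $X$ past $Y^{n-1}$ produces, by parts (3) and (4) of Lemma \ref{multiplication formulas commutators}, a factor congruent to $[Y,X]^{n-1}$ modulo $F_3$.

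The combinatorial heart of the proof is the identity $\binom{n-1}{k-1}+\binom{n-1}{k}=\binom{n}{k}$, which matches the contributions at level $k$ arising from two sources: the $\binom{n-1}{k}$ copies of $D_k$ already present, and the $\binom{n-1}{k-1}$ new copies produced when collecting commutators of depth $k-1$ with $X$. Working level by level, from smallest weight upward, one inductively constructs the $C_k\in F_k$; corrections of weight $>k$ introduced at level $k$ are absorbed into the collection at deeper levels, and the process terminates at level $n$ since $F$ has arbitrary class but we only need to list factors up to $k=n$ (the residual correction, lying in $F_{n+1}\subseteq F_n$, is swallowed into $C_n$).

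The main obstacle is the bookkeeping: one must verify that at each weight $k$ the accumulated exponent collapses to exactly $\binom{n}{k}$ modulo $F_{k+1}$, while keeping track of the nested commutators generated at every exchange. This is the classical collection process; since the statement is purely group-theoretic and well-documented, I would conclude by citing the detailed development in Appendix \emph{A} of \cite{analytic}, as the author already signals in the text.
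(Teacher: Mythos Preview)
Your proposal is correct and aligns with the paper: the paper's proof is simply ``See \cite[Appendix A]{analytic}'', and you end at exactly the same citation after sketching the collection process that is carried out there. Your outline of the reduction to the free group and the inductive collection via Pascal's identity is an accurate summary of that appendix, so nothing more is required.
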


\begin{proof}
See \cite[Appendix A]{analytic}.
\end{proof}

\begin{corollary}\label{p map petrescu}
Let $p$ be a prime number and let $G$ be a group. Denote by $(G_i)_{i\geq 1}$ the lower central series of $G$. 
Then, for all $x,y\in G$, one has $(xy)^p\equiv x^py^p\bmod G_2^pG_{p}$.
\end{corollary}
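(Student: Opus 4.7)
The plan is to apply the Hall--Petrescu formula (Lemma \ref{hall-petrescu lemma}) with $n=p$ and then control each of the resulting factors modulo the normal subgroup $G_2^pG_p$.

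First I would note that $G_2^pG_p$ is a normal subgroup of $G$: indeed $G_p$ is a term of the lower central series, hence characteristic in $G$, and $G_2^p$ is characteristic in the normal subgroup $G_2$ (being generated by all $p$-th powers of its elements), so both factors are normal in $G$, and their product is therefore a normal subgroup.

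Next, applying Lemma \ref{hall-petrescu lemma} to the pair $x,y$ with $n=p$, I would pick $(c_k)_{k=2}^p \in \prod_{k=2}^p G_k$ such that
\[
(xy)^p = x^p y^p \prod_{k=2}^p c_k^{\binom{p}{k}}.
\]
The task is to show that each factor $c_k^{\binom{p}{k}}$ lies in $G_2^pG_p$. For the top index $k=p$, we have $\binom{p}{p}=1$, so $c_p^{\binom{p}{p}} = c_p \in G_p$. For $2 \le k \le p-1$ (which is only a nonempty range when $p$ is odd), the standard fact that $p \mid \binom{p}{k}$ gives
\[
c_k^{\binom{p}{k}} = \bigl(c_k^p\bigr)^{\binom{p}{k}/p} \in \langle c_k^p\rangle \subseteq G_k^p \subseteq G_2^p,
\]
since $G_k \subseteq G_2$. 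The case $p = 2$ is handled by the $k = p$ discussion alone, since then the product only contains the single factor $c_2$, which lies in $G_2 = G_p$.

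Thus every factor appearing in $\prod_{k=2}^p c_k^{\binom{p}{k}}$ lies in the normal subgroup $G_2^pG_p$, so the product itself lies in $G_2^pG_p$, and the congruence $(xy)^p \equiv x^py^p \bmod G_2^pG_p$ follows. The only mildly delicate point is the bookkeeping distinction between the small-$k$ factors (which land in $G_2^p$) and the $k=p$ factor (which lands in $G_p$); once one remembers that the normality of $G_2$ makes $G_2^p$ normal in $G$, no further subtlety arises.
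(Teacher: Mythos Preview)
Your proof is correct and is exactly the intended unpacking of the paper's one-line ``This follows directly from Lemma~\ref{hall-petrescu lemma}'': apply Hall--Petrescu with $n=p$, use $p\mid\binom{p}{k}$ for $2\le k\le p-1$ to place those factors in $G_2^p$, and note $c_p\in G_p$.
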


\begin{proof}
This follows directly from Lemma \ref{hall-petrescu lemma}.
\end{proof}

%\begin{proof}
%By the Hall-Petrescu formula, there exists, for each $k\in\graffe{2,\ldots,p}$, an element $c_k\in G_k$ such that 
%$(xy)^p=x^py^p\prod_{k=2}^pc_k^{\binom{p}{k}}$. For all indices $k<p$, the binomial coefficient $\binom{p}{k}$ is divisible by $p$ and so 
%$\prod_{k=2}^{p-1}c_k^{\binom{p}{k}}c_p\in G_2^pG_p$.
%\end{proof}

\begin{definition}
Let $p$ be a prime number. A finite $p$-group $G$ is \emph{regular}
\index{$p$-group!regular}
if, for all $x,y\in G$, there exists $\gamma\in[\gen{x,y},\gen{x,y}]^p$ such that $(xy)^p=x^py^p\gamma$.
\end{definition}

\begin{lemma}\label{class at most p-1 regular}
Let $p$ be a prime number and let $G$ be a finite $p$-group of nilpotency class at most $p-1$.
Then $G$ is regular.
\end{lemma}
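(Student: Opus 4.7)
The strategy is to apply the Hall--Petrescu formula (Lemma \ref{hall-petrescu lemma}) directly to the subgroup $H=\langle x,y\rangle$ with exponent $n=p$. Fixing $x,y\in G$ and letting $(H_i)_{i\geq 1}$ denote the lower central series of $H$, the formula produces elements $c_k\in H_k$ for $k=2,\dots,p$ such that
\[
(xy)^p=x^py^p\prod_{k=2}^{p}c_k^{\binom{p}{k}}.
\]
Now I would analyze the exponents $\binom{p}{k}$ according to the range of $k$.

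For $2\leq k\leq p-1$, the binomial coefficient $\binom{p}{k}$ is divisible by $p$, so I can write $\binom{p}{k}=p\,m_k$ for some integer $m_k$, and then $c_k^{\binom{p}{k}}=(c_k^{m_k})^p$; since $c_k\in H_k\subseteq H_2$, also $c_k^{m_k}\in H_2$, hence $c_k^{\binom{p}{k}}\in H_2^p=[H,H]^p$. For $k=p$, the hypothesis that $G$ has nilpotency class at most $p-1$ forces $G_p=\{1\}$, and since the lower central series of the subgroup $H$ sits inside that of $G$ (easy induction using Lemma \ref{commutator indices} applied to $H$), we get $H_p\subseteq G_p=\{1\}$, so $c_p=1$ and this factor disappears.

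Putting these observations together, each surviving factor of $\prod_{k=2}^{p}c_k^{\binom{p}{k}}$ lies in the subgroup $[H,H]^p=[\langle x,y\rangle,\langle x,y\rangle]^p$, and therefore so does their product $\gamma$, yielding the desired equality $(xy)^p=x^py^p\gamma$. There is no real obstacle here: the proof is essentially bookkeeping, and the only point deserving a moment's attention is the observation that the terms of the lower central series of the subgroup $H$ are contained in the corresponding terms for the ambient group $G$, which ensures that the class hypothesis on $G$ forces $H_p$ to be trivial and thus kills the last (and only problematic) term in the Hall--Petrescu expansion.
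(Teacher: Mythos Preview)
Your proof is correct and follows the same approach as the paper: the paper simply observes that every subgroup of $G$ has class at most that of $G$ and then invokes Corollary~\ref{p map petrescu} (applied to $H=\langle x,y\rangle$), which is precisely the Hall--Petrescu computation you wrote out in detail. The only cosmetic difference is that the paper packages the analysis of the terms $c_k^{\binom{p}{k}}$ into the preceding corollary rather than spelling it out here.
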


\begin{proof}
The class of each subgroup of $G$ is at most that of $G$. The result now follows directly from Corollary \ref{p map petrescu}.
\end{proof}

\noindent
Let $p$ be a prime number and let $G$ be a $p$-group. 
We will denote by $\rho$ the map $G\rightarrow G$ that is defined by $x\mapsto x^{p}$. We remark that the map $\rho$ is in general not a homomorphism and that $G^{p^k}=\gen{\rho^k(G)}$, for any integer $k$. We stick to the notation from the List of Symbols.

\begin{lemma}\label{exponent G2=p, endomorphism}
Let $p$ be a prime number and let $G$ be a finite $p$-group. 
Assume that $G$ is regular and that $G_2$ has exponent dividing $p$. Then $\rho$ is an endomorphism of $G$.
\end{lemma}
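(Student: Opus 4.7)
The plan is to unfold the definitions. Regularity of $G$ says that for every pair $x,y\in G$ there is some $\gamma$ in $[\gen{x,y},\gen{x,y}]^p$ with $(xy)^p=x^py^p\gamma$, so the whole task reduces to showing that this $\gamma$ is always trivial.

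First I would observe that $[\gen{x,y},\gen{x,y}]$ is a subgroup of $G_2$, since the commutator subgroup of any subgroup of $G$ is contained in $G_2$. Next, the hypothesis that $G_2$ has exponent dividing $p$ means that every element $g\in G_2$ satisfies $g^p=1$; in particular, this holds for every element of the subgroup $[\gen{x,y},\gen{x,y}]$. Consequently $[\gen{x,y},\gen{x,y}]^p$, being generated by $p$-th powers of elements of $[\gen{x,y},\gen{x,y}]$, is the trivial subgroup.

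Therefore the element $\gamma$ supplied by regularity must equal $1$, yielding $(xy)^p=x^py^p$ for all $x,y\in G$. This is precisely the statement that $\rho$ is a homomorphism, and since its domain and codomain coincide with $G$ it is an endomorphism of $G$, completing the proof. There is no real obstacle here — the argument is a direct unwinding of the regularity identity combined with the exponent hypothesis on $G_2$.
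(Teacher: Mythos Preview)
Your proof is correct and is exactly what the paper intends: the paper's own proof is the single sentence ``This follows directly from the definition of regularity,'' and your argument is precisely the unwinding of that sentence.
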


\begin{proof}
This follows directly from the definition of regularity.
\end{proof}

\begin{lemma}\label{regular implies power abelian}
Let $p$ be a prime number and let $G$ be a finite $p$-group. Assume that $G$ is regular. 
Then for all $k\in\Z_{\geq 0}$, the following hold.
\begin{itemize}
 \item[$1$.] One has $G^{p^k}=\rho^k(G)$.
 \item[$2$.] One has $\mu_{p^k}(G)=\graffe{x\in G\ :\ \rho^k(x)=1}$.
 \item[$3$.] One has $|\mu_{p^k}(G)|=|G:G^{p^k}|$.
\end{itemize} 
\end{lemma}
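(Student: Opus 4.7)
The three assertions constitute the classical ``power-abelian'' property of regular $p$-groups (see \cite[Ch.~$\mathrm{III}$]{huppert}). My plan is a strong induction on $|G|$ treating the three parts simultaneously and letting $k$ vary freely; the case $k=0$ is trivial throughout, so I focus on $k\geq 1$. I will use the standard fact (Huppert) that every subgroup and every quotient of a regular $p$-group is again regular.

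The crux is part (1) for $k=1$, namely that $\rho(G)$ is a subgroup, and hence equal to $G^p$. The inclusion $\rho(G)\subseteq G^p$ is immediate, so I must show closure under products. Given $x,y\in G$, regularity supplies $(xy)^p = x^py^p\gamma$ with $\gamma\in[\gen{x,y},\gen{x,y}]^p$, i.e.\ $x^py^p=(xy)^p\gamma^{-1}$. If $\gen{x,y}$ is abelian the correction vanishes and we are done; otherwise $H=[\gen{x,y},\gen{x,y}]$ is a proper regular $p$-subgroup of $G$, so by the inductive hypothesis $\gamma=c^p$ for some $c\in H$. To combine $(xy)^p$ and $c^{-p}$ into a single $p$-th power I would apply regularity once more to the pair $(xy,c^{-1})$; the new correction term lies in a strictly deeper piece of the lower central series of $\gen{x,y}$, and iterating this absorption terminates because $\gen{x,y}$ is nilpotent. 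Once (1) is known for $k=1$, the general case is immediate: by induction on $k$, $G^{p^{k-1}}=\rho^{k-1}(G)$ is a regular $p$-subgroup, and applying the $k=1$ case to it gives $G^{p^k}=\rho^k(G)$. Part (2) proceeds in parallel: for $k=1$, if $x^p=y^p=1$ then $(xy)^p=\gamma^{-1}\in\rho(H)$ by part (1) applied to $H$, and the same absorption argument forces $(xy)^p=1$, so the elements killed by $\rho$ form a subgroup and hence coincide with $\mu_p(G)$. The general $k$ follows by inductively applying the $k=1$ case to the regular subgroup $\mu_{p^{k-1}}(G)$, which by the inductive hypothesis for (2) equals $\{x:\rho^{k-1}(x)=1\}$.

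Part (3) is then settled by counting. I pick a central subgroup $Z$ of $G$ of order $p$; the quotient $G/Z$ is regular and smaller than $G$, so the inductive hypothesis yields $|\mu_{p^k}(G/Z)|\cdot|(G/Z)^{p^k}|=|G/Z|$. A short case analysis based on whether $Z\subseteq\mu_{p^k}(G)$ and whether $Z\subseteq G^{p^k}$ (which, by parts (1) and (2), is the same as asking whether $Z\subseteq\rho^k(G)$ and $Z\subseteq\{x:\rho^k(x)=1\}$) computes how each factor scales under the projection $G\to G/Z$, and I lift the equality to $|\mu_{p^k}(G)|\cdot|G^{p^k}|=|G|$, which rearranges to (3). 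The main obstacle is the absorption step in part (1): one must arrange each application of regularity so that the correction strictly descends in the lower central series, guaranteeing termination of the recursion. This is exactly where the regularity hypothesis is indispensable, since without it the correction $\gamma$ need not be a $p$-th power of any element of a proper subgroup, and the whole inductive scheme collapses.
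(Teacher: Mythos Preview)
The paper does not give its own proof; it simply cites Huppert's Satz III.10.5 and III.10.7(a). Your sketch is therefore more detailed than the paper's treatment, and the overall strategy---strong induction on $|G|$ with the absorption argument powering part (1)---is exactly Huppert's approach.

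There is, however, a genuine gap in your handling of part (2). From $x^p=y^p=1$ and regularity you correctly obtain $(xy)^p=\gamma$ (not $\gamma^{-1}$) with $\gamma=c^p$ for some $c\in H_2:=[\langle x,y\rangle,\langle x,y\rangle]$. But iterating the absorption step only produces an element $w\in H_2$ with $((xy)w)^p=1$; it does \emph{not} force $(xy)^p=1$ itself. What is missing is Huppert's intermediate lemma III.10.6: in a regular $p$-group, $a^{p^k}=b^{p^k}$ if and only if $(a^{-1}b)^{p^k}=1$. This statement must be carried along in the simultaneous induction; its proof uses that $\langle ab^{-1},c\rangle$ is proper in $\langle a,b\rangle$ whenever $c\in\Phi(\langle a,b\rangle)$ and $\langle a,b\rangle$ is non-cyclic, so the inductive hypothesis applies there. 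Once III.10.6 is available, (2) is immediate (take $a=x^{-1}$, $b=y$), and it is also what makes your case analysis in (3) go through when $Z\subseteq G^{p^k}$: computing $|\mu_{p^k}(G/Z)|$ in that case requires knowing that every fibre of $\rho^k$ is a coset of $\{x:\rho^k(x)=1\}$.
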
 

\begin{proof}
The lemma is a combination of Satz $10.5$ and Satz $10.7$(a) from \cite{huppert}, Chapter $3$.
\end{proof}

\noindent
We remark that $p$-groups satisfying conditions $1$--$3$ from Lemma \ref{regular implies power abelian} are often referred to as \emph{power abelian}.

\begin{lemma}\label{regular if omega small}
Let $p$ be a prime number and let $G$ be a finite $p$-group. 
If $|G:G^p|<p^p$, then $G$ is regular.
\end{lemma}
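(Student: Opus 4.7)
The strategy is to reduce the regularity of $G$ to a statement about the 2-generator subgroup $H=\langle x,y\rangle$ via the Hall--Petrescu formula, and then to extract from the hypothesis $|G:G^p|<p^p$ a bound on the nilpotency class that removes the only remaining obstruction.

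First I would fix $x,y\in G$, set $H=\langle x,y\rangle$, and apply Lemma~\ref{hall-petrescu lemma} to obtain
\[
(xy)^p \;=\; x^p y^p \prod_{k=2}^{p} c_k^{\binom{p}{k}}, \qquad c_k\in H_k.
\]
For $2\leq k\leq p-1$ the binomial coefficient $\binom{p}{k}$ is divisible by $p$, so each factor $c_k^{\binom{p}{k}}$ lies in $H_k^p\subseteq H_2^p=[H,H]^p$ and can be absorbed into the required element $\gamma$. The only problematic factor is $c_p\in H_p$; if I can force $H_p=1$, i.e.\ that the nilpotency class of $H$ is at most $p-1$, then $c_p=1$ and the conclusion follows immediately (in fact one can then appeal to Lemma~\ref{class at most p-1 regular} applied to $H$).

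To produce the class bound, I would start from the observation that the quotient $G/G^p$ has exponent $p$ and order $p^r$ with $r<p$, so the elementary estimate $|K|\geq p^{\cl(K)}$ for a non-trivial finite $p$-group $K$ yields $\cl(G/G^p)\leq r\leq p-1$, i.e.\ $G_p\subseteq G^p$. This inclusion alone is not enough, so I would combine it with an induction on $|G|$: pick a subgroup $N\subseteq\ZG(G)$ of order $p$, note that $|G/N:(G/N)^p|\leq|G:G^p|<p^p$, and apply the inductive hypothesis to conclude that $G/N$ is regular; then lift the resulting Hall--Petrescu-type identity from $G/N$ back to $G$, controlling the error term that appears in $N$ by means of the commutator identities in Lemma~\ref{multiplication formulas commutators} and of the inclusion $G_p\subseteq G^p$.

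The main obstacle is precisely this lifting step. The hypothesis $|G:G^p|<p^p$ is not inherited by arbitrary subgroups of $G$, so one cannot directly apply the inductive statement to $H=\langle x,y\rangle$; the error introduced by passing to $G/N$ has to be absorbed into $[H,H]^p$ using the structural consequences of $G_p\subseteq G^p$ together with the expansions of $[a,b^p]$ and $[a^p,b]$ coming from Lemma~\ref{multiplication formulas commutators}. This is the classical heart of the theory of regular $p$-groups and is treated in detail in Huppert, \emph{Endliche Gruppen I}, Ch.~III, \S10.
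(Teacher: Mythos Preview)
The paper does not supply its own argument for this lemma: its entire proof is the sentence ``The lemma is a simplified version of Satz $10.13$ from \cite{huppert}, Chapter $3$.'' Your proposal is therefore strictly more detailed than what the paper offers, and it lands on exactly the same reference for the substantive step. The outline you give---Hall--Petrescu to isolate the single problematic factor $c_p\in H_p$, the estimate $\cl(G/G^p)\le\log_p|G:G^p|<p$ yielding $G_p\subseteq G^p$, and then an induction on $|G|$ through a central subgroup of order $p$---is indeed the shape of Huppert's argument, and you are right that the delicate point is the lifting, which requires the commutator calculus developed in \cite[Ch.~III, \S10]{huppert}. Since you explicitly defer that step to Huppert, your proposal and the paper's proof are effectively the same: both are appeals to \cite{huppert}, yours with an honest sketch of the strategy and its obstacle, the paper's without.
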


\begin{proof}
The lemma is a simplified version of Satz $10.13$ from \cite{huppert}, Chapter $3$.
\end{proof}

\begin{lemma}\label{regular technical}\label{regular bilinear}
Let $p$ be a prime number and let $G$ be a finite regular $p$-group. Let $M,N$ be normal subgroups of $G$ and let $r,s$ be non-negative integers. 
Then $[\rho^r(M),\rho^s(N)]=\rho^{r+s}([M,N])$.
\end{lemma}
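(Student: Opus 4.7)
The plan is to prove the equality $[\rho^r(M), \rho^s(N)] = \rho^{r+s}([M,N])$ by induction on $r+s$, reducing everything to the key case $(r,s) = (1,0)$, namely
\[
[M^p, N] = [M, N]^p \quad \text{for all normal subgroups } M, N \text{ of } G.
\]
First I would observe that Lemma \ref{regular implies power abelian}($1$) gives $\rho^k(H) = H^{p^k}$ for every subgroup $H$ and every $k \geq 0$, so both sides of the claim are subgroups of $G$, normal whenever $M$ and $N$ are. The base case $r = s = 0$ is a tautology.

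For the inductive step, assume without loss of generality that $r \geq 1$. Applying the key case to the normal pair $(\rho^{r-1}(M), \rho^s(N))$ yields
\[
[\rho^r(M), \rho^s(N)] = [(\rho^{r-1}(M))^p, \rho^s(N)] = [\rho^{r-1}(M), \rho^s(N)]^p = \rho\big([\rho^{r-1}(M), \rho^s(N)]\big);
\]
the induction hypothesis applied to the pair $(r-1, s)$ then identifies the bracket inside the outermost $\rho$ with $\rho^{r+s-1}([M,N])$, so that the whole expression equals $\rho^{r+s}([M,N])$.

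The key case itself I would tackle via the multiplication formula of Lemma \ref{multiplication formulas commutators}($3$) with $n = p$: for $x \in M$ and $y \in N$, one has
\[
[x^p, y] = [x, y]^p \cdot \prod_{j=1}^{p-1}\big[x, [x^{p-j}, y]\big].
\]
When $\cl(G) \leq 2$, the correction product is trivial (its factors being triple commutators), so the identity holds element-wise and both inclusions follow at once. For higher class I would induct on $\cl(G)$: the inductive hypothesis applied to the regular quotient $G / G_{\cl(G)}$ (which has smaller class) yields the identity modulo $G_{\cl(G)}$, reducing the analysis to the residual correction inside the last non-trivial term $G_{\cl(G)}$ of the lower central series.

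The main obstacle, and the delicate heart of the argument, is precisely this last step. The residual correction \emph{a priori} sits only in $[M, [M, N]]$, and to absorb it into $[M, N]^p$ one needs the full force of regularity, most conveniently via the Hall--Petrescu formula (Lemma \ref{hall-petrescu lemma}), which in a regular $p$-group lets one rewrite products of iterated commutators as $p$-th powers of commutators modulo deeper subgroups. This is the step where the regularity hypothesis on $G$, rather than merely it being a finite $p$-group, plays an essential role.
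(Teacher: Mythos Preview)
The paper does not give a proof of this lemma; it cites Huppert, \emph{Endliche Gruppen I}, Satz III.10.8(a). So there is no in-paper argument to compare against directly.

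Your reduction of the general $(r,s)$ case to the key case $[M^p, N] = [M,N]^p$ via induction on $r+s$ is correct, and is exactly how the argument is organized in Huppert. The remaining ``key case'', however, you have not proved: you outline an induction on $\cl(G)$ and then concede that the last step---absorbing the correction $\prod_j [x, [x^{p-j}, y]] \in [M, [M,N]]$ into $[M,N]^p$---is the ``delicate heart'', to be handled somehow by Hall--Petrescu. That gesture does not close the gap. Knowing that the two normal subgroups agree modulo $G_c$ tells you nothing about their respective intersections with $G_c$, and Hall--Petrescu by itself does not convert those iterated commutators into $p$-th powers of elements of $[M,N]$ specifically (as opposed to $p$-th powers of elements of some larger subgroup). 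Huppert's proof does not run by induction on class; it rests on the auxiliary results about regular $p$-groups developed immediately before (III.10.6--10.7), which operate at the level of two-generator subgroups and are what make the correction term genuinely tractable. Your sketch correctly locates where the difficulty lies but stops short of supplying the mechanism that resolves it; as written, it is a proof of the reduction together with an accurate description of what remains to be done.
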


\begin{proof}
See \cite[Satz $10.8$(a) from Ch. $3$]{huppert}.
\end{proof}

\begin{lemma}\label{regular 3-gp 2gen}
Assume $G$ is a finite $3$-group that can be generated by $2$ elements. 
If $G$ is regular, then $G_2$ is cyclic.
\end{lemma}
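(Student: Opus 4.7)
The plan is to show that the commutator subgroup $G_2$ is cyclic by establishing that its Frattini quotient $G_2/\Phi(G_2)$ has order at most $3$; this reduction is valid because, for any finite $3$-group $H$, the equality $H=\langle x\rangle \Phi(H)$ combined with Lemma \ref{subgroup times frattini} forces $H=\langle x\rangle$ to be cyclic. Writing $G=\langle x,y\rangle$ and $c=[x,y]$, the subgroup $G_2$ is the normal closure of $c$; since $G_2/G_3$ is central in $G/G_3$, every conjugate of $c$ collapses to $c$ modulo $G_3$, so $G_2/G_3=\langle cG_3\rangle$ is cyclic. Consequently it suffices to establish $G_3\subseteq \Phi(G_2)=G_2^3[G_2,G_2]$.

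The key input is Lemma \ref{regular bilinear} applied with $M=N=G$ and $(r,s)=(0,1)$, which yields $[G,G^3]=G_2^3$. Because $G$ is $2$-generated, $|G:\Phi(G)|=9$, and since $\Phi(G)=G^3 G_2$, the index $|G:G^3|$ is either $9$ or $27$. In the first case, $\Phi(G)=G^3$, hence $G_2\subseteq G^3$, and so
\[
G_3=[G,G_2]\subseteq [G,G^3]=G_2^3\subseteq \Phi(G_2),
\]
finishing the argument at once.

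In the second case, $G/G^3$ is a $2$-generated $3$-group of exponent $3$ and order $27$, hence it is the Heisenberg group over $\F_3$, which has class $2$; this already gives $G_3\subseteq G^3$, but not yet the sharper containment we need. I would then invoke the Hall--Petrescu formula (Lemma \ref{hall-petrescu lemma}) applied to expansions of $(xy)^3$, $(x^ac)^3$ and $(y^bc)^3$, and use the regularity hypothesis to read off identities on the weight-$3$ terms $[c,x]$ and $[c,y]$, pushing $G_3$ into $G_2^3[G_2,G_2]=\Phi(G_2)$. The main obstacle is precisely this second case: the Heisenberg quotient alone does not locate $G_3$ inside $G_2^3$, and one has to exploit the specific way $3$-rd powers of products interact with commutators in a regular $3$-group, a phenomenon peculiar to $p=3$ that is visible only when Hall--Petrescu is written down for the exponent $n=p=3$.
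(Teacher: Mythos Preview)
The paper does not give its own argument; it simply cites Huppert, Satz III.10.3(b). So there is no paper proof to compare against, only your attempt to reconstruct one.

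Your reduction to $G_3\subseteq\Phi(G_2)$ is correct, and Case~1 ($|G:G^3|=9$) is clean: from Lemma~\ref{regular bilinear} you get $[G,G^3]=G_2^3$, and since $G_2\subseteq G^3$ in this case, $G_3=[G,G_2]\subseteq[G,G^3]=G_2^3\subseteq\Phi(G_2)$.

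Case~2, however, is not a proof but a plan, as you yourself acknowledge. Saying that you ``would then invoke'' Hall--Petrescu on $(xy)^3$, $(x^ac)^3$, $(y^bc)^3$ and ``read off identities'' is not the same as producing those identities; this is where the actual work lies and you have not done it. For what it is worth, the outline can be completed, and in fact uniformly without your case split: pass to $\bar G=G/(\Phi(G_2)G_4)$, a regular $2$-generated $3$-group of class at most~$3$ with $\bar G_2$ elementary abelian. There the cubing identity of Lemma~\ref{formula cubing} gives $(ab)^3=a^3b^3[ab^{-1},[a,b]]$, while regularity forces $(ab)^3=a^3b^3$ since $\bar G_2^3=1$. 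Applying this with $(a,b)=(x,y)$ and $(a,b)=(x,y^{-1})$ and using the bilinearity of $\bar G\times\bar G_2\to\bar G_3$ yields $[x,c][y,c]^{-1}=1$ and $[x,c][y,c]=1$, hence $[x,c]^2=1$, hence $[x,c]=[y,c]=1$ in the $3$-group $\bar G$. Thus $\bar G_3=1$, i.e.\ $G_3\subseteq\Phi(G_2)G_4$. One then needs a further descent (using $[G,G_2^3]=G_3^3$ from Lemma~\ref{regular bilinear} and the three-subgroups lemma to get $[G,\Phi(G_2)]\subseteq\Phi(G_2)G_5$) to conclude $\Phi(G_2)G_3=\Phi(G_2)G_n$ for all $n$, hence $G_3\subseteq\Phi(G_2)$. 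None of this is in your proposal; as written, the argument stops at the point where the difficulty begins.
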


\begin{proof}
See Satz $10.3$(b) from \cite{huppert}, Chapter $3$.
\end{proof}

%CHAPTER: COPRIME ACTIONS

\chapter{Coprime actions}\label{chapter actions}

The aim of this chapter is to create tools for later use, giving them however their own chance to shine. 
In Section \ref{section characters}, we define \emph{actions through characters} and prove a fundamental result, Theorem \ref{lambda mu}, in the context of intense automorphisms of groups. In Section \ref{section involutions}, we prove some elementary, yet quite entertaining, results concerning involutions of groups of odd order. The results from Section \ref{section involutions} will spark throughout the thesis, starting with Chapter \ref{chapter class 3}. The last section of this chapter, Section \ref{section jumps}, is dedicated to the theory of \emph{jumps}. In some sense, through jumps (and their width), we are able to recover structural information about subgroups of a given finite $p$-group. This theory will be heavily used when dealing with $p$-obelisks (from Chapter \ref{chapter obelisks} onwards).

\section{Actions through characters}\label{section characters}

\noindent
Until the end of Section \ref{section characters}, let $p$ be a prime number.
Every finite abelian $p$-group $G$ is naturally a $\Z_p$-module, with scalar multiplication
$\Z_p\rightarrow\End(G)$ defined by
\[
m\mapsto [x \mapsto (m\bmod|G|)\,x]\,.
\]
It follows directly from this definition that every homomorphism between abelian $p$-groups is $\Z_p$-linear, a fact that we will make hidden use of in several proofs from Chapter \ref{chapter actions}.
To conclude, we remark that we have here adopted the additive notation for the abelian group $G$, but this will sadly not be the case through the whole thesis. We will indeed often deal, instead of abelian groups, with abelian quotients of non-abelian groups (for which the multiplicative notation will be used). The first time we adopt the multiplicative notation in this context is in the proof of Lemma \ref{abelian>minus quotients}.

\begin{definition}
Let $A$ and $G$ be groups. An \emph{action of} $A$ \emph{on} $G$ is a homomorphism 
$A\rightarrow\Aut(G)$. \index{action on a group}
\end{definition}

\begin{definition}
Let $A$ be a group acting on a set $X$. A subset $Y$ of $X$ is \emph{$A$-stable} (or \emph{stable under the action of $A$}) if the action of $A$ on $X$ restricts to an action of $A$ on $Y$.
\end{definition}

\begin{definition}
Let $A$ be a group and let $\Z A$ denote its group ring over $\Z$. An \emph{$A$-module} is a module over $\Z A$. \index{module}
\end{definition}

\noindent
With respect to the last definition, any finite abelian $p$-group is naturally a $\Z_p^*$-module. We stress that, if $A$ is a group, then each $A$-module is, in particular, an abelian group.

\begin{definition}
Let $A$ be a group acting on two sets $T$ and $Z$. A map 
$\phi:T\rightarrow Z$ is said to \emph{respect the action of $A$} if, for all $t\in T$, $a\in A$, one has $\phi(at)=a\phi(t)$.
\end{definition}

\begin{definition}
Let $A$ be a group and let $G$ be a finite $p$-group that is also an $A$-module.
Let $\chi:A\rightarrow\Z_p^*$ be a homomorphism. 
Then \emph{$A$ acts on $G$ through~$\chi$} \index{action through a character}
if, for all $a\in A$ and $x\in G$, one has $ax=\chi(a)x$.
\end{definition}

\noindent
We want to emphasize the fact that $\Hom(A,\Z_p^*)$ is a group under multiplication (induced by that in $\Z_p^*$). We will refer to the elements of $\Hom(A,\Z_p^*)$ as \emph{characters} of $A$.

\begin{lemma}\label{lemma product of characters}
Let $X$, $Y$, and $Z$ be finite abelian $p$-groups. Let $A$ be a group acting on $X$, $Y$, and $Z$ and let $\phi:X\times Y\rightarrow Z$ be a bilinear map respecting the action of $A$. Let moreover, $\chi$ and $\psi$ be group homomorphisms $A\rightarrow\Z_p^*$ such that $A$ acts on $X$ and $Y$ respectively through $\chi$ and $\psi$. 
Then $A$ acts on $\gen{\phi(X\times Y)}$ through $\chi\psi$. 
\end{lemma}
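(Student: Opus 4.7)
The plan is to verify the statement pointwise on generators of $\gen{\phi(X\times Y)}$ and then extend additively. Concretely, fix $a\in A$ and an arbitrary pair $(x,y)\in X\times Y$. Since $\phi$ respects the $A$-action,
\[
 a\phi(x,y)=\phi(ax,ay)=\phi(\chi(a)x,\psi(a)y),
\]
using that $A$ acts on $X$ through $\chi$ and on $Y$ through $\psi$. Now the key step: since $X$, $Y$, $Z$ are finite abelian $p$-groups, every homomorphism between them is automatically $\Z_p$-linear (as noted at the opening of Section~\ref{section characters}); so the bilinear map $\phi$ is in fact $\Z_p$-bilinear, and we are allowed to pull the scalars $\chi(a),\psi(a)\in\Z_p^*$ out of each argument. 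This yields
\[
 \phi(\chi(a)x,\psi(a)y)=\chi(a)\,\phi(x,\psi(a)y)=\chi(a)\psi(a)\,\phi(x,y)=(\chi\psi)(a)\,\phi(x,y).
\]

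Next, I would extend the conclusion from the generators $\phi(x,y)$ to the whole subgroup they generate. Writing a typical element of $\gen{\phi(X\times Y)}$ additively as a finite sum $z=\sum_i \phi(x_i,y_i)$, and using that $a$ acts on $Z$ by a group automorphism, one obtains
\[
 az=\sum_i a\phi(x_i,y_i)=\sum_i(\chi\psi)(a)\,\phi(x_i,y_i)=(\chi\psi)(a)\,z.
\]
In particular the subgroup $\gen{\phi(X\times Y)}$ is $A$-stable, so it inherits an $A$-module structure from $Z$, and the displayed equality is precisely the statement that $A$ acts on it through $\chi\psi$.

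There is essentially no obstacle: the only point that warrants care is the justification for pulling the $\Z_p^*$-scalars $\chi(a)$ and $\psi(a)$ through $\phi$, which rests on upgrading the given $\Z$-bilinearity to $\Z_p$-bilinearity via the canonical $\Z_p$-module structure on finite abelian $p$-groups. Once that remark is in place, the verification is a single two-line computation followed by an additive extension argument.
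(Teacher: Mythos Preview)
Your argument is correct and follows the same approach as the paper: the core is the one-line computation $a\phi(x,y)=\phi(ax,ay)=\phi(\chi(a)x,\psi(a)y)=(\chi\psi)(a)\phi(x,y)$, which the paper records verbatim. You have simply made explicit two points the paper leaves implicit---the $\Z_p$-bilinearity justification (which the paper flags once at the opening of the section) and the additive extension to $\gen{\phi(X\times Y)}$---but the strategy is identical.
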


\begin{proof}
Let $(x,y)\in X\times Y$ and $a\in A$. Then one has $$a\phi(x,y)=\phi(ax,ay)=
\phi(\chi(a)x,\psi(a)y)=\chi(a)\psi(a)\phi(x,y)=
(\chi\psi)(a)\phi(x,y).$$ Since $\chi$ and $\psi$ are homomorphisms, the action of $A$ on $\phi(X\times Y)$ is through $\chi\psi$.
\end{proof}

\begin{lemma}\label{action chi^i general}
Let $p$ be a prime number and let $G$ be a finite $p$-group. 
Let moreover $A$ be a finite group acting on $G$ and let $\chi:A\rightarrow\Z_p^*$ be a homomorphism.
Denote by $(G_i)_{i\geq 1}$ the lower central series of $G$ and assume that the induced action of $A$ on $G/G_2$ is through $\chi$. 
Then, for all $i\in\Z_{\geq 1}$, the induced action of $A$ on $G_i/G_{i+1}$ is through $\chi^i$.
\end{lemma}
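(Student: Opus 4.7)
The plan is to prove the statement by induction on $i \geq 1$, with the base case $i=1$ being exactly the hypothesis. For the inductive step, I would invoke the bilinear commutator map from Lemma \ref{bilinear LCS}, which gives a bilinear map
\[
\gamma_i : G/G_2 \times G_i/G_{i+1} \longrightarrow G_{i+1}/G_{i+2}
\]
whose image generates $G_{i+1}/G_{i+2}$. The key observation is that $\gamma_i$ respects the $A$-action: since $A$ acts on $G$ by automorphisms, it preserves the commutator map on $G$, and therefore induces actions on each $G_i/G_{i+1}$ compatible with $\gamma_i$.

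Assuming inductively that $A$ acts on $G_i/G_{i+1}$ through $\chi^i$, and knowing that $A$ acts on $G/G_2$ through $\chi$ (the base case), Lemma \ref{lemma product of characters} applied to $\gamma_i$ then shows that $A$ acts on the subgroup $\langle \gamma_i(G/G_2 \times G_i/G_{i+1}) \rangle$ through the product character $\chi \cdot \chi^i = \chi^{i+1}$. Since this subgroup is all of $G_{i+1}/G_{i+2}$, the inductive step is complete.

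There is no substantial obstacle; the statement is essentially a formal consequence of Lemmas \ref{bilinear LCS} and \ref{lemma product of characters} combined with the fact that automorphisms preserve commutators. The only minor point to verify carefully is that the action of $A$ on $G_i/G_{i+1}$ is well-defined (which follows because each $G_i$ is a characteristic subgroup of $G$, hence stable under $\Aut(G)$, hence under $A$) and that $\gamma_i$ genuinely respects the $A$-action, i.e., $a \cdot [x, y] G_{i+2} = [a \cdot x, a \cdot y] G_{i+2}$ for $x \in G$, $y \in G_i$, $a \in A$ — but this is immediate since $a$ acts as an automorphism.
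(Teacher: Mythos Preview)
Your proposal is correct and follows essentially the same argument as the paper: induction on $i$, using Lemma \ref{bilinear LCS} for the bilinear commutator map and Lemma \ref{lemma product of characters} to multiply characters. The only difference is cosmetic (you step from $i$ to $i+1$ while the paper steps from $i-1$ to $i$), and your explicit remarks about the $G_i$ being characteristic and the commutator map respecting the $A$-action are details the paper leaves implicit.
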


\noindent
The elements of the lower central series of a group are characteristic subgroups and, 
for each $i\in\Z_{\geq 1}$, the quotient $G_i/G_{i+1}$ is abelian. Lemma \ref{action chi^i general} is thus well-stated.

\begin{proof}
We will work by induction on $i$. If $i=1$, we are done by hypothesis. Suppose now that $i>1$ and that the result holds for all indices smaller than $i$. By Lemma \ref{bilinear LCS} the commutator map induces a bilinear map $G/G_2\times G_{i-1}/G_i\rightarrow G_i/G_{i+1}$ whose image generates $G_i/G_{i+1}$. 
By the induction hypothesis, the induced action of $A$ on $G_{i-1}/G_i$ is through $\chi^{i-1}$ and, by Lemma \ref{lemma product of characters}, the group $A$ acts on $G_i/G_{i+1}$ through $\chi\chi^{i-1}=\chi^i$.
\end{proof}

\begin{lemma}\label{p-power characters}\label{intersection characters}
Let $A$ be a group and let $G$ and $H$ be finite $p$-groups that are also $A$-modules.
Let moreover $\phi:G\rightarrow H$ and $\chi:A\rightarrow\Z_p^*$ be group homomorphisms. 
Assume that the action of $A$ on $G$ is through $\chi$.
If $\phi$ is surjective and $\phi$ respects the action of $A$, then $A$ acts on $H$ through $\chi$.
\end{lemma}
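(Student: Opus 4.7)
The plan is to argue directly by chasing an element $y \in H$ back through the surjection $\phi$, noting that both the $A$-action and the $\Z_p$-scalar multiplication are compatible with $\phi$.

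More precisely, I would first remark that since $H$ is a finite $p$-group admitting an $A$-module structure, it is abelian and therefore (as observed at the start of Section \ref{section characters}) naturally a $\Z_p$-module, and the homomorphism $\phi:G\to H$ is automatically $\Z_p$-linear. This last fact is what turns the hypothesis ``$\phi$ respects the action of $A$'' into something usable: applying $\phi$ commutes not only with the $A$-action but also with multiplication by $\chi(a)\in\Z_p^*$.

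Next, given $a\in A$ and $y\in H$, I would use the surjectivity of $\phi$ to pick $x\in G$ with $\phi(x)=y$ and compute
\[
ay = a\phi(x) = \phi(ax) = \phi(\chi(a)x) = \chi(a)\phi(x) = \chi(a)y,
\]
where the second equality uses that $\phi$ respects the $A$-action, the third that $A$ acts on $G$ through $\chi$, and the fourth the $\Z_p$-linearity of $\phi$ noted above. This is exactly the assertion that $A$ acts on $H$ through $\chi$.

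There is essentially no obstacle here: the lemma is a formal consequence of the definitions, and the only subtlety worth flagging explicitly is the $\Z_p$-linearity of $\phi$, which must be invoked to pull $\chi(a)$ through $\phi$.
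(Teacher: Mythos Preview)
Your proof is correct and matches the paper's approach exactly: the paper performs the same one-line computation $ah = a\phi(g) = \phi(ag) = \phi(\chi(a)g) = \chi(a)\phi(g) = \chi(a)h$ after choosing a preimage. Your explicit mention of the $\Z_p$-linearity of $\phi$ is a welcome clarification of the step $\phi(\chi(a)x) = \chi(a)\phi(x)$, which the paper leaves implicit (relying on the remark at the start of Section~\ref{section characters}).
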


\begin{proof}
Let $a\in A$. If $\phi$ is surjective, then, for each $h\in H$ there exists $g\in G$ such that $\phi(g)=h$. If, moreover, the action of $A$ is respected by $\phi$, then
$ah=a\phi(g)=\phi(ag)=\phi(\chi(a)g)=\chi(a)\phi(g)=\chi(a)h$.
\end{proof}

\begin{lemma}\label{sesZ_p}
The short exact sequence of abelian groups
\[1\longrightarrow 1+p\Z_p\longrightarrow\Z_p^*\longrightarrow\F_p^*\longrightarrow 1\]
has a unique section $\omega:\F_p^*\rightarrow\Z_p^*$.
\end{lemma}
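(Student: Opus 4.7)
The plan is to prove uniqueness and existence separately, identifying $\omega$ with the Teichm\"uller character.

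\textbf{Uniqueness.} Any section $\omega$ must take values in the group of $(p-1)$-th roots of unity in $\Z_p^*$, since for every $a \in \F_p^*$ one has $\omega(a)^{p-1} = \omega(a^{p-1}) = \omega(1) = 1$. The ratio of two sections is therefore a homomorphism from $\F_p^*$ into the kernel $1+p\Z_p$. For $p=2$ the source is trivial, so I may assume $p$ is odd. In that case, I would show that $1+p\Z_p$ contains no non-trivial $(p-1)$-th root of unity: for $x = 1+py$ with $y \neq 0$, writing $v_p(y) = k \geq 0$, a binomial expansion gives
\[
(1+py)^{p-1} - 1 = (p-1)py + \sum_{j\geq 2}\binom{p-1}{j}(py)^j,
\]
where the leading term has $p$-adic valuation $1+k$ and all remaining terms have strictly larger valuation, so $x^{p-1} \neq 1$. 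Hence the ratio of two sections is trivial and uniqueness follows.

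\textbf{Existence.} I would apply Hensel's lemma to the polynomial $f(X) = X^{p-1}-1 \in \Z_p[X]$. For each $a \in \{1,\ldots,p-1\}$, Fermat's little theorem gives $f(a) \equiv 0 \pmod{p}$, while $f'(a) = (p-1)a^{p-2}$ is a unit in $\Z_p$; Hensel therefore produces a unique $\omega(a) \in \Z_p^*$ with $\omega(a)^{p-1} = 1$ and $\omega(a) \equiv a \pmod{p}$. Multiplicativity of $\omega$ is then a second application of the same uniqueness: for $a, b \in \F_p^*$, the product $\omega(a)\omega(b)$ is a $(p-1)$-th root of unity reducing to $ab$ modulo $p$, hence equals $\omega(ab)$. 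By construction $\omega$ is a section.

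\textbf{Main obstacle.} The only step requiring more than definitions is the absence of $(p-1)$-torsion in $1+p\Z_p$ for odd $p$, which I would handle by the valuation argument above. A slicker alternative---but one relying on slightly more language---is to note that $1+p\Z_p = \varprojlim_n (1+p\Z_p)/(1+p^n\Z_p)$ is a pro-$p$-group, so any homomorphism into it from a finite group of order prime to $p$ is automatically trivial; this would collapse the uniqueness step into a one-line argument.
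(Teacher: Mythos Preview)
Your proof is correct. The paper does not actually prove this lemma: it states that ``the last is a classical result, which can be found for example in \cite[\S 4.3]{cohen}'' and moves on. So you have supplied a complete argument where the paper only gives a citation.

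Your approach is the standard one. A small remark: in the uniqueness step, the alternative you mention at the end---that $1+p\Z_p$ is a pro-$p$-group, so any homomorphism from the prime-to-$p$ group $\F_p^*$ into it is trivial---is exactly the viewpoint the paper adopts elsewhere (it works systematically with $p$-groups and coprime actions), and it makes the valuation computation unnecessary. Either version is fine.
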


\noindent
The last is a classical result, which can be found for example in \cite[\S $4.3$]{cohen}.
The homomorphism $\omega:\F_p^*\rightarrow\Z_p^*$ is called the \emph{Teichm\"{u}ller character} \index{Teichm\"{u}ller character} at $p$ and its image is contained in the torsion subgroup of $\Z_p^*$. (For a reminder of the notation, see the List of Symbols.)
Moreover, if $p$ is odd, then $\omega(\F_p^*)$ is in fact equal to the torsion subgroup of $\Z_p^*$; for more information see for example Section $4.3$ from \cite{cohen}.
\vspace{8pt}\\
\noindent
We remark that, if $V$ is a vector space over $\F_p$, then, for each $v\in V$ and for each $a\in\F_p^*$, one has $av=\omega(a)v$. It follows that the natural action of 
$\F_p^*$ on a vector space over $\F_p$ is through the Teichm\"{u}ller character.

\begin{lemma}\label{biscotto}
Let $A$ be a finite group and let $\lambda,\mu:A\rightarrow \Z_p^*$ be distinct group homomorphisms.
Assume that $p$ is odd. Then there exists $a\in A$ such that the element $\lambda(a)-\mu(a)$ belongs to $\Z_p^*$.
\end{lemma}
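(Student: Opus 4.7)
The plan is to reduce this to the fact, noted in the discussion after Lemma \ref{sesZ_p}, that for odd $p$ the torsion subgroup of $\Z_p^*$ equals $\omega(\F_p^*)$. Note that an element $u\in\Z_p$ lies in $\Z_p^*$ if and only if its reduction modulo $p$ is nonzero in $\F_p$, so the conclusion we want is just that $\lambda(a)\not\equiv\mu(a)\bmod p$ for some $a\in A$.

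First, since $A$ is finite, the images $\lambda(A)$ and $\mu(A)$ are finite subgroups of $\Z_p^*$, hence contained in the torsion subgroup of $\Z_p^*$. Because $p$ is odd, this torsion subgroup coincides with $\omega(\F_p^*)$, so both $\lambda$ and $\mu$ factor through $\omega$. Composing with the reduction map $\pi:\Z_p^*\to\F_p^*$ (which by Lemma \ref{sesZ_p} is a left inverse of $\omega$), we obtain homomorphisms $\overline{\lambda}=\pi\circ\lambda$ and $\overline{\mu}=\pi\circ\mu$ from $A$ to $\F_p^*$, and the factorizations $\lambda=\omega\circ\overline{\lambda}$, $\mu=\omega\circ\overline{\mu}$ hold.

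Since $\omega$ is injective (it is a section), the assumption $\lambda\neq\mu$ forces $\overline{\lambda}\neq\overline{\mu}$ as maps $A\to\F_p^*$. Hence there exists $a\in A$ with $\overline{\lambda}(a)\neq\overline{\mu}(a)$ in $\F_p^*\subseteq\F_p$. Then $\lambda(a)-\mu(a)$ reduces modulo $p$ to $\overline{\lambda}(a)-\overline{\mu}(a)\neq 0$, so $\lambda(a)-\mu(a)$ is a unit of $\Z_p$, as required.

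The only point requiring odd $p$ is the identification of the torsion subgroup of $\Z_p^*$ with $\omega(\F_p^*)$; indeed, for $p=2$ the element $-1\in\Z_2^*$ is torsion but does not lie in the trivial image of $\omega$, and the statement of the lemma genuinely fails there (take $\lambda=1$ and $\mu=-1$ on a group of order $2$). No further obstacle is expected.
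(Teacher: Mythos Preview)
Your proof is correct and follows essentially the same approach as the paper: both argue that, since $A$ is finite and $p$ is odd, the images of $\lambda$ and $\mu$ lie in $\omega(\F_p^*)$, and then use that $\omega$ is a section of the reduction map to conclude that an $a$ with $\lambda(a)\neq\mu(a)$ already gives $\lambda(a)\not\equiv\mu(a)\bmod p$. Your remark on the failure for $p=2$ is a nice addition not present in the paper.
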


\begin{proof}
Let $\pi:\Z_p\rightarrow\F_p$ denote the canonical projection and let $\omega:\F_p^*\rightarrow\Z_p^*$ be the Teichm\"{u}ller character.
The group $A$ being finite, the images of $\lambda$ and $\mu$ live in the torsion of $\Z_p^*$, which is equal to $\omega(\F_p^*)$. 
Let now $a\in A$ be such that $\lambda(a)\neq\mu(a)$.
As a consequence of Lemma \ref{sesZ_p}, each element of $\omega(\F_p^*)$ is uniquely determined by its image modulo $p$ and, the characters being distinct, 
$\pi(\chi(a)-\psi(a))\in\F_p^*$. It follows that $\chi(a)-\psi(a)$ is invertible in $\Z_p$.
\end{proof}

\begin{lemma}\label{distinct characters on same gp}
Let $A$ be a finite group and let $G$ be a finite $p$-group that is also an $A$-module.
Let moreover $\lambda,\mu:A\rightarrow \Z_p^*$ be distinct group homomorphisms.
Assume that $p$ is odd and that $A$ acts on $G$ through both $\lambda$ and $\mu$.
Then $G=\graffe{0}$.
\end{lemma}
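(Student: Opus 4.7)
The plan is to combine the two hypotheses to force each element of $G$ to be annihilated by a unit of $\Z_p$. Specifically, since $A$ acts on $G$ through both $\lambda$ and $\mu$, for every $a \in A$ and every $x \in G$ we have
\[
\lambda(a)\, x \;=\; a\cdot x \;=\; \mu(a)\, x,
\]
so $\bigl(\lambda(a) - \mu(a)\bigr)\, x = 0$ in $G$.

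Next I would invoke Lemma \ref{biscotto}, which is exactly tailored to this situation: because $\lambda \neq \mu$ and $p$ is odd, there exists some $a \in A$ such that $\lambda(a) - \mu(a) \in \Z_p^*$. Write $u = \lambda(a) - \mu(a)$; then $u$ is a unit of $\Z_p$.

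Finally, recall the observation made at the very beginning of Section \ref{section characters}: since $G$ is a finite abelian $p$-group, it is naturally a $\Z_p$-module, and so multiplication by any unit of $\Z_p$ is an automorphism of $G$. Applied to our $u$, this means the equation $u\,x = 0$ forces $x = 0$ for every $x \in G$, giving $G = \{0\}$.

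There is no serious obstacle here: once Lemma \ref{biscotto} is in hand (that is where the oddness of $p$ is genuinely needed, via the Teichm\"uller character and the fact that the torsion of $\Z_p^*$ injects into $\F_p^*$), the argument is a one-line application of invertibility in $\Z_p$.
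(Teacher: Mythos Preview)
Your proof is correct and follows exactly the same approach as the paper's own proof: pick $a\in A$ as in Lemma~\ref{biscotto}, deduce $(\lambda(a)-\mu(a))x=0$ for every $x\in G$, and conclude $x=0$ since $\lambda(a)-\mu(a)\in\Z_p^*$ acts invertibly on the $\Z_p$-module $G$.
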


\begin{proof}
Let $x\in G$ and let $a\in A$ be as in Lemma \ref{biscotto}. Then $\lambda(a)x=ax=\mu(a)x$ and $(\lambda(a)-\mu(a))x=0$.  
The element $\lambda(a)-\mu(a)$ being invertible in $\Z_p$, it follows that $x=0$. As the choice of $x$ was arbitrary, we get 
$G=\graffe{0}$.
\end{proof}

\begin{definition}
Let $G$ be a group and let $N$ be a normal subgroup of $G$. A subgroup $H$ of $G$ is a \emph{complement of} $N$ \emph{in} $G$ \index{complement}
if $N\cap H=\graffe{1}$ and $NH=G$.
\end{definition}

\begin{theorem}\label{lambda mu}
Assume that $p$ is odd. Let $A$ be a finite abelian group and let 
$$0\longrightarrow N \overset{\iota}{\longrightarrow} G \overset{\pi}{\longrightarrow} G/N \longrightarrow 0$$ be a short exact sequence of $A$-modules. 
Let moreover $\lambda,\mu:A\rightarrow\Z_p^*$ be two distinct group homomorphisms and assume that the following hold.  
\begin{itemize}
 \item[$1$.] The group $G$ is a finite $p$-group.
 \item[$2$.] The group $A$ acts on $N$ through $\lambda$.
 \item[$3$.] The group $A$ acts on $G/N$ through $\mu$.  
\end{itemize}
Then $\iota(N)$ has a unique $A$-stable complement in $G$.
\end{theorem}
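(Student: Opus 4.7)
The plan is to exploit Lemma \ref{biscotto} to produce a single element of $A$ whose action separates the $\lambda$-part from the $\mu$-part, and then define the complement as a kernel of a linear operator, in the spirit of eigenspace decompositions. Since $G$ is an $A$-module, it is abelian, hence a finite abelian $p$-group and so a $\Z_p$-module on which every $A$-action is automatically $\Z_p$-linear; we will use additive notation, and identify $N$ with $\iota(N)$.

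First, applying Lemma \ref{biscotto} to the distinct characters $\lambda$ and $\mu$ (using that $p$ is odd), I would fix $a\in A$ with $\lambda(a)-\mu(a)\in\Z_p^*$. Let $\sigma\in\End_{\Z_p}(G)$ denote the action of $a$, and consider the $\Z_p$-linear endomorphism $\phi=\sigma-\mu(a)\cdot\id_G$. By hypothesis $(2)$, $\phi$ restricts to multiplication by $\lambda(a)-\mu(a)$ on $N$, hence to a $\Z_p$-module automorphism of $N$; by hypothesis $(3)$, $\phi$ kills $G/N$, so $\phi(G)\subseteq N$.

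Now I would define $C=\ker\phi$ and check it is a complement of $N$. The intersection $C\cap N$ lies in the kernel of $\phi|_N$, which is trivial. For any $g\in G$, the element $\phi(g)$ lies in $N$, and since $\phi|_N$ is bijective there is a unique $n\in N$ with $\phi(n)=\phi(g)$; then $g-n\in C$, giving $G=N\oplus C$. To verify $A$-stability, I would use that $A$ is abelian: for $b\in A$ and $c\in C$, one has $\sigma(b\cdot c)=b\cdot\sigma(c)=\mu(a)(b\cdot c)$, so $b\cdot c\in C$.

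Uniqueness follows the same eigenspace philosophy: if $C'$ is any $A$-stable complement, then $\pi|_{C'}\colon C'\to G/N$ is an $A$-module isomorphism, so $A$ acts on $C'$ through $\mu$. In particular $\sigma$ acts on $C'$ as $\mu(a)\cdot\id$, whence $C'\subseteq\ker\phi=C$; comparing cardinalities (both are complements of $N$ in $G$) forces $C'=C$. The only real delicacy, and the step I would write out most carefully, is the linearity justification that makes $\phi$ well defined on $G$ as a $\Z_p$-module endomorphism and lets me speak of $\mu(a)\cdot\id$; everything else reduces to the standard ``one element of $A$ with invertible eigenvalue difference'' trick enabled by Lemma \ref{biscotto}.
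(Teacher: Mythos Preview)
Your proof is correct. The paper takes a slightly different route: it works in the group algebra $R=\Z_pA$, shows that the kernels $I_\lambda,I_\mu$ of the two algebra maps $R\to\Z_p$ induced by $\lambda,\mu$ are coprime (this is where Lemma \ref{biscotto} enters), chooses $e\in I_\lambda$, $f\in I_\mu$ with $e+f=1$, and takes $e(G)$ as the complement; uniqueness is argued separately by showing that the difference of two $A$-equivariant retractions $G\to N$ factors through $G/N$ and hence has image on which $A$ acts through both $\lambda$ and $\mu$, forcing it to vanish by Lemma \ref{distinct characters on same gp}. Your approach is more elementary in that it never introduces the group ring: you use the single element $a$ from Lemma \ref{biscotto} directly, and your $\phi=\sigma-\mu(a)\id$ is exactly the element $a-\mu(a)\in I_\mu$ acting on $G$, so your kernel $C$ coincides with the paper's $e(G)$. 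Your uniqueness argument is also cleaner, since it identifies $C$ intrinsically as the $\mu(a)$-eigenspace of $a$ and observes that any $A$-stable complement must sit inside it. The paper's framework would generalise more readily (e.g.\ to settings where no single element of $A$ suffices), but for the statement at hand your argument is both shorter and more transparent.
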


\noindent
We will devote the remaining part of Section \ref{section characters} to the proof of Theorem \ref{lambda mu}. 
For this purpose, let $R=\Z_pA$ be the group algebra of $A$ over $\Z_p$ and let $\sigma_{\lambda}$ and $\sigma_{\mu}$ be the homomorphisms of $\Z_p$-algebras $R\rightarrow\Z_p$ that are respectively induced, via linear extension, by $\lambda$ and $\mu$.
We define $I_{\lambda}=\ker\sigma_{\lambda}$ and $I_{\mu}=\ker\sigma_{\mu}$.

%\begin{lemma}\label{accessory lambda mu}
% One has $I_\lambda I_\mu G=0$.
%\end{lemma}

%\begin{proof}
%From the definition of $I_\lambda$ and $I_\mu$, it follows that $I_\lambda N=0=I_\mu (G/N)$. In particular 
%$I_\lambda G\cong I_\lambda(G/N)$ and, the ring $R$ being commutative, 
%$I_\lambda I_\mu G=I_\mu I_\lambda G\cong I_\mu I_\lambda(G/N)\cong I_\lambda I_\mu (G/N) =0$.
%\end{proof}

\begin{lemma}\label{coprime ideals}
One has $R=I_{\lambda}+I_{\mu}$.
\end{lemma}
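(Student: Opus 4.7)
The plan is very short because Lemma \ref{biscotto} gives almost everything for free.

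First I would observe that since $I_{\lambda}$ and $I_{\mu}$ are ideals of $R$, their sum $I_{\lambda}+I_{\mu}$ is again an ideal; therefore it suffices to show that $I_{\lambda}+I_{\mu}$ contains a unit of $R$, and for this it suffices to exhibit an element of $\Z_p^*\subseteq R^*$ inside $I_{\lambda}+I_{\mu}$.

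Next, using that $p$ is odd and that $\lambda,\mu$ are distinct, I would apply Lemma \ref{biscotto} to produce an element $a\in A$ such that $\lambda(a)-\mu(a)\in\Z_p^*$. For this $a$, I would form the two elements $a-\lambda(a)\cdot 1$ and $a-\mu(a)\cdot 1$ of $R$. Applying $\sigma_\lambda$ to the first gives $\lambda(a)-\lambda(a)=0$, so $a-\lambda(a)\in I_\lambda$; similarly $a-\mu(a)\in I_\mu$. Subtracting yields
\[
\bigl(a-\mu(a)\bigr)-\bigl(a-\lambda(a)\bigr)=\lambda(a)-\mu(a)\in I_\lambda+I_\mu,
\]
and by the choice of $a$ this difference lies in $\Z_p^*$, hence is a unit of $R$.

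There is no real obstacle here: the only subtle point is to remember that $R=\Z_pA$ contains $\Z_p$ as the subring $\Z_p\cdot 1_A$ so that scalars like $\lambda(a)$ really are elements of $R$, and that units of $\Z_p$ remain units in $R$. Once the unit $\lambda(a)-\mu(a)$ is exhibited inside the ideal $I_\lambda+I_\mu$, the conclusion $R=I_\lambda+I_\mu$ is immediate.
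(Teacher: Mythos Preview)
Your proof is correct and follows essentially the same approach as the paper: both use Lemma \ref{biscotto} to find $a\in A$ with $\lambda(a)-\mu(a)\in\Z_p^*$, then write this unit as $(a-\mu(a))-(a-\lambda(a))\in I_\mu+I_\lambda$. Your version is slightly more explicit in verifying that $a-\lambda(a)\in I_\lambda$ and in noting that $\Z_p$ sits inside $R$ as $\Z_p\cdot 1_A$, but the argument is the same.
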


\begin{proof}
We will construct an invertible element in $I_{\lambda}+I_{\mu}$. Let $a\in A$ be as in Lemma \ref{biscotto}. 
The element $\lambda(a)-\mu(a)=-(a-\lambda(a))+(a-\mu(a))$ belongs to $I_{\lambda}+I_{\mu}$, because
$a-\lambda(a)\in I_{\lambda}$ and $a-\mu(a)\in I_{\mu}$, and $\lambda(a)-\mu(a)$ is invertible because of the choice of $a$. 
\end{proof}

\begin{lemma}\label{bibidi}
The subgroup $\iota(N)$ has an $A$-stable complement.
\end{lemma}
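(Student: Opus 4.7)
The plan is to exploit the coprime-ideal decomposition from Lemma \ref{coprime ideals} to produce an idempotent endomorphism of $G$ whose kernel is the desired complement. The first step is to set the stage $R$-module-theoretically. Because $A$ is abelian, the group algebra $R=\Z_pA$ is commutative, and $G$ is an $R$-module in which $\iota(N)$ is an $R$-submodule. The hypothesis that $A$ acts on $N$ through $\lambda$ translates, upon $\Z_p$-linear extension, into the statement that every element of $R$ acts on $N$ as its image under $\sigma_\lambda$; in particular $I_\lambda$ annihilates $N$. Symmetrically, $I_\mu$ annihilates $G/N$, which means $I_\mu\cdot G\subseteq \iota(N)$.

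Next I would invoke Lemma \ref{coprime ideals} to write $1=e_\lambda+e_\mu$ with $e_\lambda\in I_\lambda$ and $e_\mu\in I_\mu$, and consider the $R$-linear endomorphism $f:G\to G$ given by $f(g)=e_\mu\cdot g$. Two facts need checking: first, $f(G)\subseteq\iota(N)$, which is immediate from $e_\mu\in I_\mu$ and the observation above; second, $f$ restricts to the identity on $\iota(N)$, because on $N$ every element of $R$ acts as its image under $\sigma_\lambda$, and $\sigma_\lambda(e_\mu)=\sigma_\lambda(1-e_\lambda)=1-0=1$. From these two properties I would conclude that $f^2=f$: for $g\in G$ one has $f(g)\in\iota(N)$, on which $f$ is the identity, so $f(f(g))=f(g)$.

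With $f$ idempotent, the standard splitting gives $G=\iota(N)\oplus\ker f$, where the complement is $A$-stable because $\ker f$ is a sub-$R$-module of $G$ ($R$ being commutative forces $f$ to be $R$-linear). This yields existence. The main subtlety is not really an obstacle but a bookkeeping point: one must carefully track that $N$ and $G/N$ are $\Z_p$-modules in a canonical way (so that the passage from the character $\lambda:A\to\Z_p^*$ to the ring map $\sigma_\lambda:R\to\Z_p$ is legitimate) and that the $R$-action on $G$ really does commute with arbitrary $R$-linear operations. Uniqueness of the complement, claimed in Theorem \ref{lambda mu} but not in the present lemma, I would defer; it should follow from applying Lemma \ref{distinct characters on same gp} to the abelian $p$-group $\Hom_A(M,N)$ for any two $A$-stable complements $M$, since such a group would carry two incompatible character actions through $\lambda\mu^{-1}$ and the trivial character.
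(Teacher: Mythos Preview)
Your proof is correct and follows essentially the same approach as the paper: both use Lemma \ref{coprime ideals} to write $1=e_\lambda+e_\mu$ with $e_\lambda\in I_\lambda$, $e_\mu\in I_\mu$, and observe that multiplication by $e_\mu$ maps $G$ into $\iota(N)$ and acts as the identity there. The only cosmetic difference is the final step: you package this as an idempotent splitting $G=\iota(N)\oplus\ker e_\mu$, whereas the paper computes $G=e_\lambda(G)+e_\mu(G)=e_\lambda(G)+\iota(N)$ and invokes a cardinality count; these yield the same complement since $e_\lambda(G)=\ker e_\mu$.
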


\begin{proof}
Let $(e,f)\in I_\lambda\times I_\mu$ be such that $e+f=1$ in $R$; the pair $(e,f)$ exists thanks to Lemma \ref{coprime ideals}. As a direct consequence of the definition of $I_\mu$, the group $G/N$ is annihilated by $f$ and $f(G)\subseteq\iota(N)$. 
From the fact that $f\equiv 1\bmod I_\lambda$, it follows that $f(G)=\iota(N)$. With a similar argument, one shows that 
$e(G)$ is isomorphic to $e(G/N)=G/N$. We now have that 
\[G=(e+f)G=e(G)+f(G)=e(G)+\iota(N)\]   
and, the cardinalities of $e(G)$ and $\iota(N)$ being respectively $|G:N|$ and $|N|$, it follows that $G=e(G)\oplus\iota(N)$.
The subgroup $e(G)$ is thus a complement of $\iota(N)$ in $G$. Furthermore, the ring $R$ being commutative, for all $a\in A$, one has that 
$ae(G)=ea(G)$ is contained in $e(G)$ and therefore $e(G)$ is $A$-stable.
\end{proof}

\begin{lemma}\label{acquetta}
There exists a unique $A$-stable complement of $\iota(N)$.
\end{lemma}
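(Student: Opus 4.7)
The plan is to prove uniqueness by showing that any $A$-stable complement must coincide with the one constructed in Lemma \ref{bibidi}. The key observation is that any $A$-stable complement $H$ of $\iota(N)$ satisfies $H \cong G/N$ via $\pi$, and since this isomorphism respects the $A$-action, $A$ acts on $H$ through $\mu$.

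Suppose $H_1$ and $H_2$ are two $A$-stable complements of $\iota(N)$ in $G$. Using the direct sum decomposition $G = \iota(N) \oplus H_2$, I would define the projection $\phi \colon H_1 \to \iota(N)$ sending $h_1$ to its $\iota(N)$-component with respect to $H_2$. Since all three summands are $A$-stable, $\phi$ respects the action of $A$. The image $\phi(H_1)$ is therefore an $A$-stable subgroup of $\iota(N)$, and as a subgroup of $\iota(N)$ it inherits an action of $A$ through $\lambda$. On the other hand, by Lemma \ref{p-power characters} applied to the surjection $\phi \colon H_1 \twoheadrightarrow \phi(H_1)$, the group $A$ also acts on $\phi(H_1)$ through $\mu$.

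Since $\lambda \neq \mu$ and $p$ is odd, Lemma \ref{distinct characters on same gp} forces $\phi(H_1) = \{0\}$. This means $H_1 \subseteq H_2$, and since $|H_1| = |G:N| = |H_2|$, we conclude $H_1 = H_2$.

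The only subtle point is verifying that $\phi$ really is a homomorphism of $A$-modules and that the two descriptions of the $A$-action on $\phi(H_1)$ — as a submodule of $\iota(N)$ and as a quotient of $H_1$ — are compatible. Both of these follow formally from the fact that $A$-stable complements are preserved setwise by each $a \in A$, together with the uniqueness of the decomposition $G = \iota(N) \oplus H_2$. I expect no real obstacle here; the whole argument is a routine application of Lemmas \ref{distinct characters on same gp} and \ref{p-power characters}, and the proof should be only a few lines long.
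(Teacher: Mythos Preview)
Your proof is correct and takes essentially the same approach as the paper's. The paper phrases the argument via the difference $r = f - f'$ of the two retractions $G \to N$ associated to the two complements, observing that $r$ factors through $G/\iota(N)$ and lands in $N$; your projection $\phi$ is precisely this $r$ restricted to $H_1$, so the two arguments are the same up to packaging.
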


\begin{proof}
The subgroup $\iota(N)$ has an $A$-stable complement in $G$, by Lemma \ref{bibidi};
assume it has two. Then there exist maps $f,f':G\rightarrow N$ respecting the action of $A$ such that 
$f\circ\iota=f'\circ\iota=\id_N$. We fix such $f,f'$ and write $r=f-f'$; we will show that $r=0$. Since $f\circ\iota=f'\circ\iota$, the subgroup $\iota(N)$ is contained in the kernel of $r$. It follows that $r\in\Hom(G/\iota(N),N)$, and so, thanks to Lemma \ref{p-power characters}, the group $A$ acts on the image of $r$ through $\mu$. On the other hand, the image of $r$ is contained in $N$ and hence the action of $A$ on $r(G)$ is also through $\lambda$. It follows from Lemma \ref{distinct characters on same gp} that $r=0$, as claimed. In particular, $f=f'$, and so $\iota(N)$ has a unique $A$-stable complement in 
$G$.
\end{proof}

\noindent
In view of Lemma \ref{acquetta}, Theorem \ref{lambda mu} is proven.

\section{Involutions}\label{section involutions}

Let $G$ be a finite group of odd order and let $A=\gen{\alpha}$ be a multiplicative group of order $2$. 
It follows that the orders of $G$ and $A$ are coprime. 
Assume that $A$ acts on $G$ and define
$$G^+=\graffe{g\in G : \alpha(g)=g} \ \ \text{and} \ \ G^-=\graffe{g\in G : \alpha(g)=g^{-1}}\, .$$
We will keep the notation we just introduced until the end of Section \ref{section involutions}. We remind the reader about the List of Symbols, at the beginning of this thesis.

\begin{lemma}\label{trivial intersection}
One has $G^+\cap G^-=\graffe{1}$.
\end{lemma}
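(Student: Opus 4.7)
The plan is extremely short: take $g \in G^+ \cap G^-$ and derive $g = 1$ from the odd-order hypothesis.

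First I would spell out the two conditions. If $g \in G^+$, then $\alpha(g) = g$. If $g \in G^-$, then $\alpha(g) = g^{-1}$. Combining these two equalities gives $g = g^{-1}$, equivalently $g^2 = 1$, so the order of $g$ divides $2$.

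Second, I would invoke the odd order hypothesis on $G$. By Lagrange's theorem, the order of $g$ divides $|G|$, which is odd. Since the only positive integer dividing both $2$ and an odd number is $1$, it follows that $g = 1$.

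There is no real obstacle here — the lemma is essentially the observation that in a group of odd order the only element equal to its own inverse is the identity, applied to a shared fixed locus of the involution and the inversion-twisted action. The reason it is worth stating as a named lemma is that it will be combined in subsequent arguments (for instance with the coprime action results of Section~\ref{section characters}) to split $G$-modules, not because the proof itself requires cleverness.
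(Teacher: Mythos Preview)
Your proof is correct and matches the paper's approach exactly: take an element in the intersection, conclude it squares to the identity, and use the odd-order hypothesis to force it to be trivial. The paper states it in one line, but the argument is identical.
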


\begin{proof}
Let $a\in G^+\cap G^-$. Then we have $a=\alpha(a)=a^{-1}$, so $a^2=1$. The order of $G$ being odd, it follows that $a=1$.
\end{proof}

\begin{proposition}\label{G+ acts on G-}
The set $G^+$ is a group. Moreover, $G^+$ acts by conjugation on the set $G^-$.
\end{proposition}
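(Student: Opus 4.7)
The plan is straightforward: verify that $G^+$ is closed under the group operations, and then check that conjugation by an element of $G^+$ preserves $G^-$. Neither step will actually require the hypothesis that $|G|$ is odd; that hypothesis is needed for Lemma \ref{trivial intersection} but not for this proposition.

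For the first claim, I would observe that $\alpha$ is a group homomorphism, so the set of its fixed points is a subgroup in the usual way. Explicitly: $\alpha(1)=1$ gives $1\in G^+$; if $a,b\in G^+$, then $\alpha(ab)=\alpha(a)\alpha(b)=ab$, so $ab\in G^+$; and $\alpha(a^{-1})=\alpha(a)^{-1}=a^{-1}$, so $a^{-1}\in G^+$.

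For the second claim, take $g\in G^+$ and $x\in G^-$, and compute
\[
\alpha(gxg^{-1}) \;=\; \alpha(g)\,\alpha(x)\,\alpha(g)^{-1} \;=\; g\,x^{-1}\,g^{-1} \;=\; (gxg^{-1})^{-1},
\]
so $gxg^{-1}\in G^-$. It remains to note that the usual conjugation action of $G^+$ on $G$ therefore restricts to an action on $G^-$, which is immediate once stability of $G^-$ under conjugation by $G^+$ is established. There is no genuine obstacle here — the whole content lies in a single line of algebraic manipulation using that $\alpha$ is an automorphism.
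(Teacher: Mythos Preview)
Your proof is correct and follows exactly the same approach as the paper: the paper observes that $G^+$ is a subgroup because $\alpha$ is a homomorphism, and then performs the identical one-line computation $\alpha(gag^{-1})=\alpha(g)\alpha(a)\alpha(g)^{-1}=ga^{-1}g^{-1}=(gag^{-1})^{-1}$ to show conjugation by $G^+$ preserves $G^-$. Your remark that the odd-order hypothesis is not needed here is also correct.
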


\begin{proof}
The subset $G^+$ is a group, because $\alpha$ is a homomorphism.
Let now $g$ be in $G^+$ and $a$ in $G^-$. Then we have that 
$$\alpha(gag^{-1})=\alpha(g)\alpha(a)\alpha(g)^{-1}=ga^{-1}g^{-1}=
(gag^{-1})^{-1},$$ and so $gag^{-1}$ belongs to $G^-$.
\end{proof}

\begin{lemma}\label{cardinality +-}
The map $G/G^+\rightarrow G^-$ that is defined by $xG^+\mapsto x\alpha(x)^{-1}$ is a bijection. Moreover, $|G|=|G^+||G^-|$.
\end{lemma}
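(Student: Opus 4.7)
The plan is to show that the map $\Phi\colon G/G^+\to G^-$, $xG^+\mapsto x\alpha(x)^{-1}$, is well-defined, injective, and surjective, and then read off the cardinality formula from $|G/G^+|=|G^-|$.

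First I would check well-definedness and that the image lies in $G^-$. If $y=xh$ with $h\in G^+$, then $y\alpha(y)^{-1}=xh\alpha(h)^{-1}\alpha(x)^{-1}=xhh^{-1}\alpha(x)^{-1}=x\alpha(x)^{-1}$, so $\Phi$ does not depend on the coset representative. Moreover, using that $\alpha$ is an involution,
\[
\alpha\bigl(x\alpha(x)^{-1}\bigr)=\alpha(x)\alpha(\alpha(x))^{-1}=\alpha(x)x^{-1}=\bigl(x\alpha(x)^{-1}\bigr)^{-1},
\]
so $x\alpha(x)^{-1}\in G^-$, confirming the codomain.

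Next I would handle injectivity: if $x\alpha(x)^{-1}=y\alpha(y)^{-1}$, then $y^{-1}x=\alpha(y)^{-1}\alpha(x)=\alpha(y^{-1}x)$, so $y^{-1}x\in G^+$ and $xG^+=yG^+$. The main obstacle is surjectivity, and this is exactly where the odd-order hypothesis must enter. Given $a\in G^-$, I would exploit the fact that squaring is a bijection on $G$ (since $\gcd(2,|G|)=1$): choose the integer $n=(|G|+1)/2$, so that $2n\equiv 1\pmod{\ord(a)}$ for every $a\in G$, and set $x=a^n$. Then
\[
x\alpha(x)^{-1}=a^n\alpha(a)^{-n}=a^n(a^{-1})^{-n}=a^{2n}=a,
\]
using $\alpha(a)=a^{-1}$. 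This produces a preimage of $a$, establishing surjectivity.

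Finally, from the bijection $G/G^+\xrightarrow{\sim}G^-$ I read off $|G^-|=|G:G^+|=|G|/|G^+|$, hence $|G|=|G^+|\,|G^-|$. The only delicate point is the surjectivity step; every other verification is a direct computation using that $\alpha$ has order two and is a group homomorphism.
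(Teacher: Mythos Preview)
Your proof is correct and follows essentially the same approach as the paper: both establish surjectivity by taking the unique square root of $a\in G^-$ (you write it explicitly as $a^{(|G|+1)/2}$, the paper just invokes existence) and observe that this square root lies in $\langle a\rangle\subseteq G^-$, so it maps to $a$. Your write-up is more thorough in spelling out well-definedness and the codomain check, which the paper omits.
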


\begin{proof}
Denote by $\phi$ the map $G/G^+\rightarrow G^-$ that is defined by $xG^+\mapsto x\alpha(x)^{-1}$.
To show that $\phi$ is injective is a straightforward exercise. To prove that it is surjective, we take $b\in G^-$. Since the order of $G$ is odd, there exists a unique $a$ in $G$ such that $a^2=b$. 
The element $a$ belongs to $\gen{b}$, ando so $\alpha(a)=a^{-1}$. 
As a consequence, we have that $a\alpha(a)^{-1}=a^2=b$. We have proven that $\phi$ is a bijection, from which it follows that 
$|G|/|G^+|=|G^-|$.
\end{proof}

\begin{lemma}\label{bijection +-}
The map $G^+\times G^-\rightarrow G$, defined by $(x,y)\mapsto xy$, is a bijection. 
\end{lemma}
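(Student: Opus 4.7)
The plan is to reduce the bijectivity of $\phi:G^+\times G^-\rightarrow G$, $(x,y)\mapsto xy$, to a cardinality count combined with injectivity. By Lemma \ref{cardinality +-}, we have $|G^+\times G^-|=|G^+|\cdot|G^-|=|G|$, so it suffices to verify that $\phi$ is injective; surjectivity will then follow from finiteness.

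To prove injectivity, I would start from an equation $xy=x'y'$ with $x,x'\in G^+$ and $y,y'\in G^-$, and rewrite it as $(x')^{-1}x=y'y^{-1}$. The left-hand side lies in $G^+$, since $G^+$ is a subgroup of $G$ by Proposition \ref{G+ acts on G-}. For the right-hand side, applying $\alpha$ yields
\[
\alpha(y'y^{-1})=\alpha(y')\alpha(y)^{-1}=(y')^{-1}y=(y'y^{-1})^{-1},
\]
so $y'y^{-1}\in G^-$. Hence the common value $(x')^{-1}x=y'y^{-1}$ lies in $G^+\cap G^-$, which is trivial by Lemma \ref{trivial intersection}. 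Therefore $x=x'$ and $y=y'$, establishing injectivity.

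The main (minor) subtlety is simply the observation that although $G^-$ is not a subgroup, the product $y'y^{-1}$ nonetheless lands in $G^-$, which is exactly what allows the intersection argument via Lemma \ref{trivial intersection} to close the proof. No further obstacle is expected: everything else is assembled directly from the already-established Lemmas \ref{trivial intersection} and \ref{cardinality +-} together with Proposition \ref{G+ acts on G-}.
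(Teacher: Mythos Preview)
Your overall strategy matches the paper's, but the displayed computation contains an error: the final equality $(y')^{-1}y=(y'y^{-1})^{-1}$ is false in a non-abelian group, since $(y'y^{-1})^{-1}=y(y')^{-1}$, which need not equal $(y')^{-1}y$. Thus you have not actually shown that $y'y^{-1}\in G^-$, and the appeal to Lemma~\ref{trivial intersection} is unjustified as written.

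The repair is immediate and is exactly what the paper does: you already know $y'y^{-1}=(x')^{-1}x\in G^+$, so applying $\alpha$ gives
\[
y'y^{-1}=\alpha(y'y^{-1})=\alpha(y')\alpha(y)^{-1}=(y')^{-1}y,
\]
whence $(y')^2=y^2$. Since $|G|$ is odd, this forces $y'=y$, and then $x'=x$. With this correction your argument is essentially identical to the paper's proof.
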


\begin{proof}
Let $(x,y)$ and $(z,t)$ be elements of $G^+\times G^-$ satisfying $xy=zt$. Then 
$ty^{-1}=z^{-1}x$. By Lemma \ref{G+ acts on G-}, the set $G^+$ is a subgroup of $G$, so $z^{-1}x\in G^+$. 
As a consequence, we get that $ty^{-1}=\alpha(ty^{-1})=t^{-1}y$. It follows that $t^2=y^2$ so, the order of $G$ being odd, the elements $t$ and $y$ coincide. Consequently, $(x,y)=(z,t)$ and the map is injective.
Now by Lemma \ref{cardinality +-}, the cardinalities of $G^+\times G^-$ and $G$ are the same and the given multiplication is also surjective.
\end{proof}

\begin{corollary}\label{abelian sum +-}
Assume $G$ is abelian. Then $G=G^+\oplus G^-$.
\end{corollary}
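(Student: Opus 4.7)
The plan is to leverage the bijection from Lemma \ref{bijection +-} together with the key observation that, in the abelian case, $G^-$ becomes a subgroup, and then combine with Lemma \ref{trivial intersection} to get the internal direct sum.

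First I would verify that, when $G$ is abelian, $G^-$ is a subgroup of $G$. Indeed, given $a,b \in G^-$, one has $\alpha(ab) = \alpha(a)\alpha(b) = a^{-1}b^{-1}$, and by commutativity $a^{-1}b^{-1} = (ab)^{-1}$, so $ab \in G^-$. Since $G$ is finite, this suffices. (Closure under inverses is also immediate: $\alpha(a^{-1}) = \alpha(a)^{-1} = a$, so $a^{-1} \in G^-$.)

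Next, by Proposition \ref{G+ acts on G-}, $G^+$ is already a subgroup, so both $G^+$ and $G^-$ are subgroups. Lemma \ref{bijection +-} tells us that the map $\phi \colon G^+ \times G^- \to G$ defined by $(x,y) \mapsto xy$ is a bijection; since $G$ is abelian, $\phi$ is also a group homomorphism, hence a group isomorphism. Together with the triviality of $G^+ \cap G^-$ from Lemma \ref{trivial intersection}, this gives the internal direct sum decomposition $G = G^+ \oplus G^-$.

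There is no real obstacle here: the only content beyond what has already been proved is the elementary verification that $G^-$ is closed under multiplication when $G$ is abelian, after which all the required pieces (subgroup structure, trivial intersection, and the set-theoretic bijection upgrading to an isomorphism) are already in place.
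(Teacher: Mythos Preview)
Your proof is correct and follows essentially the same approach as the paper: both use that $G^+$ and $G^-$ are subgroups in the abelian case, invoke Lemma \ref{trivial intersection} for trivial intersection, and appeal to Lemma \ref{bijection +-} for the decomposition. You simply spell out in more detail why $G^-$ is closed under multiplication and why the bijection upgrades to a group isomorphism, points the paper leaves implicit.
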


\begin{proof}
The sets $G^+$ and $G^-$ are both subgroups of $G$, because $G$ is abelian, and $G^+\cap G^-=\graffe{1}$, by Lemma \ref{trivial intersection}. It follows from Lemma \ref{bijection +-} that $G=G^+\oplus G^-$.
\end{proof}

\begin{lemma}\label{abelian>minus quotients}
Let $N$ be a normal $A$-stable subgroup of $G$ such that the restriction of $\alpha$ to $N$ equals the map $x\mapsto x^{-1}$. 
Assume moreover that the automorphism of $G/N$ that is induced by $\alpha$ is equal to the inversion map $g\mapsto g^{-1}$.
Then $G=G^-$ and $G$ is abelian.
\end{lemma}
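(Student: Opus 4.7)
The plan is to show that $G^+ = \{1\}$, from which the lemma follows almost immediately. Given the hypotheses, both $N$ and $G/N$ have $\alpha$ acting as inversion, and these two inversion actions will ``propagate'' onto all of $G$ because $|G|$ is odd.

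First I would take an arbitrary element $g \in G^+$, so $\alpha(g) = g$. Projecting to $G/N$ and using that $\alpha$ induces inversion on $G/N$, we get $gN = \alpha(g)N = g^{-1}N$, so $g^2 \in N$. Since $|G|$ is odd, the order of $g$ is odd, so $g \in \langle g^2 \rangle \subseteq N$. But $\alpha$ restricted to $N$ is inversion, so $g = \alpha(g) = g^{-1}$, giving $g^2 = 1$, and odd order forces $g = 1$. Thus $G^+ = \{1\}$.

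Next, Lemma \ref{cardinality +-} gives $|G| = |G^+| \cdot |G^-| = |G^-|$, and since $G^-$ is a subset of the finite set $G$ of the same cardinality, we conclude $G = G^-$. This means $\alpha(g) = g^{-1}$ for every $g \in G$. Finally, because $\alpha$ is a group homomorphism, the identity $\alpha(gh) = \alpha(g)\alpha(h)$ reads $(gh)^{-1} = g^{-1} h^{-1}$, i.e.\ $h^{-1}g^{-1} = g^{-1}h^{-1}$ for all $g,h \in G$, so $G$ is abelian.

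There is no real obstacle here; the only subtle point is the opening step, where one must combine the two hypotheses on $N$ and $G/N$ together with the oddness of $|G|$ (used twice: first to lift $g^2 \in N$ to $g \in N$, and then to conclude $g^2 = 1 \Rightarrow g = 1$). Everything else is a direct application of Lemma \ref{cardinality +-} and the elementary fact that an inversion map is an automorphism exactly for abelian groups.
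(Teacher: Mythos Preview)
Your proof is correct and follows essentially the same approach as the paper's: both show $G^+=\{1\}$ by proving that any $g\in G^+$ satisfies $g^2\in N$ (from inversion on $G/N$), hence $g\in N$ by odd order, hence $g\in G^+\cap G^-$; the paper invokes Lemma~\ref{trivial intersection} at this last step rather than recomputing $g^2=1$, and uses Lemma~\ref{bijection +-} instead of Lemma~\ref{cardinality +-} to conclude $G=G^-$, but these are cosmetic differences.
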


\begin{proof}
From the assumptions it follows that, for each $x\in G$, the element $\alpha(x)x$ belongs to $N$. Since the order of $G$ is odd, if $x\in G^+$, then $x$ is actually an element of $N$. It follows that $G^+$ is contained in $N$, but $N$ is contained in $G^-$ by assumption. 
As a consequence of Lemma \ref{trivial intersection}, the subgroup $G^+$ is trivial so, as a consequence of Lemma \ref{bijection +-}, the group $G$ is equal to $G^-$. 
Let now $x$ and $y$ be elements of $G$. Then we have that  
$$y^{-1}x^{-1}=(xy)^{-1}=\alpha(xy)=\alpha(x)\alpha(y)=x^{-1}y^{-1},$$ 
and therefore $[x,y]=1$. The choice of $x$ and $y$ being arbitrary, the group $G$ is abelian.
\end{proof}

\begin{lemma}\label{plus quotients}
Let $N$ be a normal $A$-stable subgroup of $G$. Assume that the action of $A$ on $N$ and the induced action of $A$ on $G/N$ are both trivial. Then $G=G^+$.
\end{lemma}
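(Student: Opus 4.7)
The plan is to pick an arbitrary $x\in G$ and show directly that $\alpha(x)=x$. Since the induced action of $A$ on $G/N$ is trivial, the element $n=\alpha(x)x^{-1}$ lies in $N$. I would then compute $\alpha(n)$ in two different ways: on the one hand, using that $\alpha$ is a homomorphism of order dividing $2$, one gets
\[
\alpha(n)=\alpha(\alpha(x))\alpha(x^{-1})=x\alpha(x)^{-1}=n^{-1};
\]
on the other hand, since the action of $A$ on $N$ is trivial and $n\in N$, one has $\alpha(n)=n$. Combining the two equalities yields $n=n^{-1}$, i.e.\ $n^2=1$.

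The last ingredient is the hypothesis that $|G|$ is odd: since $n\in N\subseteq G$ satisfies $n^2=1$, its order divides both $2$ and $|G|$, and hence $n=1$. Therefore $\alpha(x)=x$, and as $x\in G$ was arbitrary, we conclude $G=G^+$.

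There is essentially no real obstacle: the argument is the standard coprime-action trick (and it mirrors, in spirit, the proof of Lemma \ref{trivial intersection} and the parity argument that appears in Lemma \ref{cardinality +-}). The only subtle point one should not overlook is that $n^2=1$ forces $n=1$ \emph{only} because $|G|$, and in particular the order of the element $n$, is odd; this is exactly where the hypothesis of odd order, implicit in the setup of the section, is used.
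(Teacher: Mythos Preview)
Your proof is correct. The paper takes a slightly different route: it observes that for $x\in G^-$ one has $\alpha(x)x^{-1}=x^{-2}\in N$, whence (by oddness) $x\in N\subseteq G^+$; then Lemma \ref{trivial intersection} forces $G^-=\{1\}$, and Lemma \ref{bijection +-} gives $G=G^+$. Your argument is more self-contained: by computing $\alpha(n)$ in two ways you show directly that every $x$ lies in $G^+$, without invoking the $G^+\times G^-\to G$ bijection or the prior lemma on $G^+\cap G^-$. The paper's version fits the section's ``plus--minus'' framework, while yours is the cleaner standalone proof; both hinge on the same oddness trick $n^2=1\Rightarrow n=1$.
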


\begin{proof}
Let $x\in G$. Then $\alpha(x)x^{-1}$ is an element of $N$. If $x$ belongs to $G^-$, then $x^{-2}$ belongs to $N$ so, the order of $G$ being odd, $x$ itself is an element of $N$. It follows that $G^-$ is a subset of $N$. The group $N$ is however contained in $G^+$ and, as a consequence of Lemma \ref{trivial intersection}, one gets $G^-=\graffe{1}$. It follows from Lemma \ref{bijection +-} that $G=G^+$.
\end{proof}

\begin{lemma}\label{ses+}
Let $1\rightarrow N\overset{f}{\longrightarrow}G\overset{g}{\longrightarrow} \Gamma\rightarrow 1 $ is a short exact sequence of $A$-groups. Denote by $f'$ and $g'$ the restrictions of $f$ and $g$ respectively to $N^+$ and $G^+$.
Then $1\rightarrow N^+\overset{f'}{\longrightarrow}G^+ \overset{g'}{\longrightarrow}\Gamma^+\rightarrow 1 $ is a short exact sequence of $A$-groups.
\end{lemma}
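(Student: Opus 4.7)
The plan is to verify the three exactness conditions in turn. First, I would note that $f$ and $g$ respect the $A$-action, so $f(N^+)\subseteq G^+$ and $g(G^+)\subseteq\Gamma^+$; hence $f'$ and $g'$ are well-defined. Injectivity of $f'$ is immediate from injectivity of $f$, and the inclusion $\operatorname{im} f'\subseteq \ker g'$ is inherited from $\operatorname{im} f=\ker g$.

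Next I would handle exactness at $G^+$. If $x\in G^+$ lies in $\ker g'$, then $x=f(n)$ for a unique $n\in N$. Applying $\alpha$ and using that $f$ respects the action,
\[
f(\alpha(n))=\alpha(f(n))=\alpha(x)=x=f(n),
\]
so by injectivity of $f$ we obtain $\alpha(n)=n$, i.e.\ $n\in N^+$. Thus $\ker g'=\operatorname{im} f'$.

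The main obstacle is surjectivity of $g'$, which is the only place where the odd-order hypothesis really intervenes. Let $\gamma\in\Gamma^+$. The fibre $g^{-1}(\gamma)$ is $A$-stable, since for any $y\in g^{-1}(\gamma)$ one has $g(\alpha(y))=\alpha(g(y))=\alpha(\gamma)=\gamma$. Now $g^{-1}(\gamma)$ is a coset of $f(N)$, hence has cardinality $|N|$, which divides $|G|$ and is therefore odd. The action of $A=\gen{\alpha}$ on this set has orbits of size $1$ or $2$, so the number of fixed points is congruent to $|N|\pmod 2$; in particular it is non-zero, and any fixed point gives $x\in G^+$ with $g'(x)=\gamma$.

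If one prefers a constructive argument avoiding orbit counting, one can instead fix any $y\in g^{-1}(\gamma)$ and observe that $m=y^{-1}\alpha(y)$ lies in $\ker g=f(N)$, with $\alpha(m)=m^{-1}$; writing $m=f(\tilde m)$ gives $\tilde m\in N^-$, and Lemma \ref{cardinality +-} applied to the odd-order group $N$ produces $\tilde n\in N$ with $\tilde n\,\alpha(\tilde n)^{-1}=\tilde m$, whence $yn\in G^+$ for $n=f(\tilde n)$, and $g(yn)=\gamma$. Either route completes the verification that $g'$ is surjective, and hence the sequence of $+$-parts is exact.
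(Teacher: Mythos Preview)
Your proof is correct. For the surjectivity of $g'$, you give two approaches that both differ from the paper's. The paper takes any preimage $x\in g^{-1}(\gamma)$ and invokes Lemma~\ref{bijection +-} to write $x=ab$ with $a\in G^+$ and $b\in G^-$; since $g$ respects the $A$-action, $g(b)\in\Gamma^-$, while $g(b)=g(a)^{-1}\gamma\in\Gamma^+$, so $g(b)=1$ by Lemma~\ref{trivial intersection} and $\gamma=g(a)$. Your orbit-counting argument on the fibre is the most elementary of the three and makes no use of the section's structural lemmas; your constructive argument is close in spirit to the paper's but corrects an arbitrary preimage by an element of $f(N)$ rather than splitting it via the $G^+\times G^-\to G$ bijection. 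The paper also dispenses with the exactness at $N^+$ and $G^+$ in a single sentence, whereas you spell these out---your treatment of $\ker g'=\operatorname{im} f'$ is a nice touch but not strictly needed, as the paper's phrasing ``it suffices to prove the surjectivity of $g'$'' tacitly covers it.
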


\begin{proof}
Since $f$ and $g$ are homomorphisms respecting the action of $A$, it suffices to prove the surjectivity of $g'$.
Let $\gamma\in\Gamma^+$. Then there exists $x\in G$ such that $g(x)=\gamma$ and, by Lemma \ref{bijection +-}, there exists $(a,b)\in G^+\times G^-$ such that $x=ab$. Now, $(g(a),g(b))\in\Gamma^+\times\Gamma^-$, because $g$ respects the action of $A$, and $\gamma=g(x)=g(ab)=g(a)g(b)$. It follows that $g(b)=g(a)^{-1}\gamma$, and so $g(b)\in\Gamma^+\cap\Gamma^-$. Thanks to Lemma \ref{trivial intersection}, we get that $g(b)=1$, so $\gamma$ is equal to $g(a)$.
\end{proof}

%\begin{lemma}\label{ses-}
%Let $1\rightarrow N\overset{f}{\rightarrow}G\overset{g}{\rightarrow} \Gamma\rightarrow 1 $ be a short exact sequence of $A$-groups. 
%Then $g(G^-)=\Gamma^-$. Moreover, $|G^-|=|N^-||\Gamma^-|.$
%\end{lemma}

%\begin{proof}
%Since $g$ is a homomorphism respecting the action of $A$, the subset $g(G^-)$ is contained in $\Gamma^-$.
%We prove that $\Gamma^-$ is contained in $g(G^-)$. 
%TO BE COMPLETED!!!
%\end{proof}

\begin{lemma}\label{order +- zonder jumps}
Let $p$ be an odd prime number and assume that $G$ is a finite $p$-group. 
Let $(G_i)_{i\geq 1}$ denote the lower central series of $G$ and assume that the automorphism of $G/G_2$
that is induced by $\alpha$ equals the inversion map $x\mapsto x^{-1}$.
Let $H$ be an $A$-stable subgroup of $G$ and let $\cor{O}$ and $\cor{E}$ be the collections of respectively all odd and all even positive integers. 
Then the following hold.
\begin{itemize}
 \item[$1$.] Let $j\in\Z_{\geq 1}$. Then $H^+\cap G_j\neq H^+\cap G_{j+1}$ if and only if $H\cap G_j\neq H\cap G_{j+1}$ and $j\in\cor{E}$.
 \item[$2$.] One has $|H^+|=\prod_{j\in\cor{E}}|H\cap G_j:H\cap G_{j+1}|$. 
 \item[$3$.] One has $|H^-|=\prod_{j\in\cor{O}}|H\cap G_j:H\cap G_{j+1}|$.
\end{itemize} 
\end{lemma}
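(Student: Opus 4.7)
The plan is to analyze how $\alpha$ acts on each factor $(H \cap G_j)/(H \cap G_{j+1})$ and then assemble the information via Lemma \ref{ses+}. First I would observe that the assumption on $\alpha$ means that the character $\chi : A \to \Z_p^*$ sending $\alpha$ to $-1$ governs the action of $A$ on $G/G_2$. Since each $G_j$ is characteristic in $G$, Lemma \ref{action chi^i general} then gives that $A$ acts on $G_j/G_{j+1}$ through $\chi^j$, i.e.\ trivially when $j$ is even and by inversion when $j$ is odd. Because $H$ is $A$-stable, so is each $H \cap G_j$, and the natural injection $(H \cap G_j)/(H \cap G_{j+1}) \hookrightarrow G_j/G_{j+1}$ shows that $A$ acts on this subquotient through the same character $\chi^j$.

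Next I would record the trivial identity $H^+ \cap G_j = (H \cap G_j)^+$, which follows straight from the definitions, and apply Lemma \ref{ses+} to the short exact sequence
\[
1 \longrightarrow H \cap G_{j+1} \longrightarrow H \cap G_j \longrightarrow (H\cap G_j)/(H\cap G_{j+1}) \longrightarrow 1
\]
of $A$-groups. This gives a short exact sequence in the $+$-parts. The key point is then to compute the $+$-part of the subquotient $M_j := (H\cap G_j)/(H\cap G_{j+1})$, which has odd order since $p$ is odd. When $j$ is even the action is trivial, so $M_j^+ = M_j$; when $j$ is odd the action is inversion, so any $x \in M_j^+$ satisfies $x = x^{-1}$ and hence $x = 1$ by the odd order argument (which is exactly Lemma \ref{trivial intersection} applied to $M_j$). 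Thus $M_j^+ = M_j$ for $j$ even and $M_j^+ = 1$ for $j$ odd.

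Part $1$ is then immediate: $(H\cap G_j)^+ \neq (H\cap G_{j+1})^+$ if and only if $M_j^+ \neq 1$, which happens exactly when $j$ is even and $H \cap G_j \neq H \cap G_{j+1}$. Part $2$ follows by telescoping: the decreasing chain $(H \cap G_j)^+$ eventually reaches $\{1\}$, and the preceding analysis gives $|(H\cap G_j)^+| = |(H\cap G_{j+1})^+| \cdot |M_j^+|$, so
\[
|H^+| = \prod_{j \geq 1} |M_j^+| = \prod_{j \in \cor{E}} |H \cap G_j : H \cap G_{j+1}|.
\]

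For part $3$ I would use Lemma \ref{cardinality +-}, which yields $|H| = |H^+|\cdot|H^-|$, together with the standard telescoping $|H| = \prod_{j \geq 1} |H \cap G_j : H \cap G_{j+1}|$ coming from the fact that $(H \cap G_j)_{j \geq 1}$ is a descending chain in $H$ stabilising at $1$. Dividing these two expressions and using part $2$ gives the formula for $|H^-|$. The only genuine point of substance in the whole argument is the triviality of $M_j^+$ in the odd case, which is where the oddness of $p$ (equivalently, the odd order of $G$) enters; everything else is bookkeeping with the tools developed in Sections \ref{section characters} and \ref{section involutions}.
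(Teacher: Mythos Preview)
Your proof is correct and follows essentially the same approach as the paper: both analyze the action of $A$ on the subquotients $(H\cap G_j)/(H\cap G_{j+1})$ via Lemma~\ref{action chi^i general}, apply Lemma~\ref{ses+} to the short exact sequences to pass to $+$-parts, use Lemma~\ref{trivial intersection} to kill the odd-indexed factors, and finish with Lemma~\ref{cardinality +-} and telescoping for part~$3$.
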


\begin{proof}
For all $j\in\Z_{\geq 1}$, we define $V_j=(H\cap G_j)/(H\cap G_{j+1})$ and
we consider the short exact sequence
\[
1\rightarrow H\cap G_{j+1}\rightarrow H\cap G_j\rightarrow V_j\rightarrow 1
\]
of $A$-groups. We first prove ($1$) and ($2$) together. 
If $j\in\Z_{\geq 1}$, we note that $(H\cap G_j)^+=H^+\cap G_j$, so 
Lemma \ref{ses+} yields
\[|H^+\cap G_j:H^+\cap G_{j+1}|=|V_j^+|.\]
From Lemma \ref{action chi^i general} it follows that $A$ acts on $G_j/G_{j+1}$ by scalar multiplication by $(-1)^j$, and so, whenever $j$ is an odd positive integer, Lemma \ref{trivial intersection} yields that the cardinality of $(G_j/G_{j+1})^+$ is equal to $1$. Since, for all $j\in\Z_{\geq 1}$, the groups $V_j$ and $(H\cap G_j)G_{j+1}/G_{j+1}$ are isomorphic, it follows that, for each odd $j$, the cardinality of $V_j^+$ is equal to $1$. Moreover, if $j$ is even, then $V_j$ is equal to $V_j^+$.
The cardinality of $H^+$ being equal to $\prod_{j\geq 1}|H^+\cap G_j:H^+\cap G_{j+1}|$, we get that 
\[|H^+|=\prod_{j\geq 1}|V_j^+|=\prod_{j\in\cor{E}}|V_j^+|=\prod_{j\in\cor{E}}|V_j|.\]
This proves ($1$) and ($2$). 
We now prove ($3$). By Lemma \ref{cardinality +-}, the cardinality of $|H^-|$ is equal to $|H|/|H^+|$. Since 
$|H|=\prod_{j\geq 1}|V_j|$, it follows from ($2$) that 
\[|H^-|=\frac{\prod_{j\geq 1}|V_j|}{\prod_{j\in\cor{E}}|V_j|}=\prod_{j\in\cor{O}}|V_j|.\]
\end{proof}

\begin{lemma}\label{conjugate stable iff in nor-times-G+}
Let $H$ be an $A$-stable subgroup of $G$ and let $g$ be an element of $G$.
Then the following are equivalent.
\begin{itemize}
 \item[$1$.] The subgroup $gHg^{-1}$ is $A$-stable.
 \item[$2$.] The element $g$ belongs to $G^+\nor_G(H)$.
\end{itemize}  
\end{lemma}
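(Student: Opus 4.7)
The plan is to prove the two implications separately, making essential use of the unique decomposition $g = g^+ g^-$ with $g^+ \in G^+$, $g^- \in G^-$, coming from Lemma \ref{bijection +-}.

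For the direction $(2) \Rightarrow (1)$, I would suppose $g = h n$ with $h \in G^+$ and $n \in \nor_G(H)$. Then $gHg^{-1} = h(nHn^{-1})h^{-1} = hHh^{-1}$. Applying $\alpha$ and using that $\alpha(h) = h$ together with $A$-stability of $H$ gives $\alpha(gHg^{-1}) = \alpha(h)\alpha(H)\alpha(h)^{-1} = hHh^{-1} = gHg^{-1}$. This is routine.

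For the harder direction $(1) \Rightarrow (2)$, I would start by unpacking $A$-stability of $gHg^{-1}$: since $H$ is itself $A$-stable, the equality $\alpha(gHg^{-1}) = gHg^{-1}$ becomes $\alpha(g)\, H\, \alpha(g)^{-1} = gHg^{-1}$, equivalently $g^{-1}\alpha(g) \in \nor_G(H)$. Now I would write $g = g^+ g^-$ via Lemma \ref{bijection +-} and compute
\[
g^{-1}\alpha(g) = (g^-)^{-1}(g^+)^{-1}\,\alpha(g^+)\alpha(g^-) = (g^-)^{-1}(g^-)^{-1} = (g^-)^{-2},
\]
using $\alpha(g^+) = g^+$ and $\alpha(g^-) = (g^-)^{-1}$. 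So $(g^-)^{-2} \in \nor_G(H)$.

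The main (mild) obstacle is to pass from $(g^-)^{-2} \in \nor_G(H)$ to $g^- \in \nor_G(H)$. This is where the odd order of $G$ enters once more: the element $g^-$ has odd order, so squaring is a bijection on the cyclic group $\langle g^- \rangle$ and hence $g^- \in \langle (g^-)^{-2}\rangle \subseteq \nor_G(H)$. Combining this with $g^+ \in G^+$ yields $g = g^+ g^- \in G^+\nor_G(H)$, completing the proof.
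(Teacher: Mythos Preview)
Your proof is correct, and for the implication $(1)\Rightarrow(2)$ it takes a genuinely different route from the paper. Both arguments begin by observing that $g^{-1}\alpha(g)\in\nor_G(H)$. The paper then notes that this means $\alpha$ stabilizes the coset $g\nor_G(H)$; since this coset has odd cardinality and $A$ has order $2$, an orbit-counting argument produces a fixed point $gx\in G^+$ with $x\in\nor_G(H)$, whence $g=(gx)x^{-1}\in G^+\nor_G(H)$. You instead invoke the global decomposition of Lemma~\ref{bijection +-} up front, compute $g^{-1}\alpha(g)=(g^-)^{-2}$ explicitly, and use oddness of the order of $g^-$ to conclude $g^-\in\nor_G(H)$. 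Your approach has the small advantage of identifying concretely which factor of $g$ lies in the normalizer (namely $g^-$ itself), while the paper's fixed-point argument is slightly more conceptual and does not require the decomposition lemma. Both are short and equally valid.
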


\begin{proof}
Let $I=\nor_G(H)$; then $I$ is $A$-stable, because $H$ is.
We first prove that ($1$) implies ($2$).  
Assume that the subgroup $gHg^{-1}$ is $A$-stable, so $\alpha(gHg^{-1})=gHg^{-1}$.
In particular, the element $g^{-1}\alpha(g)$ belongs to $I$, and thus $\alpha(gI)=\alpha(g)I=gI$.
Since the cardinality of $I$ is odd and $A$ has order $2$, 
there is an element $x$ in $I$ such that $\alpha(gx)=gx$. 
For such an element $x$, we then have that $gx\in G^+$, so $g=gx\cdot x^{-1}\in G^+I$.
Assume now ($2$) is satisfied; we prove ($1$). Since $g$ belongs to $G^+\nor_G(H)$, 
there exists $(\gamma,n)\in G^+\times\nor_G(H)$ such that $g=\gamma n$. 
For such pair $(\gamma,n)$, we have that $gHg^{-1}=\gamma H\gamma^{-1}$, and therefore
$\alpha(gHg^{-1})=gHg^{-1}$. This proves ($1$).
\end{proof}

\section{Jumps and width}\label{section jumps}

Let $p$ be a prime number and let $G$ be a finite $p$-group.
Denote by $(G_i)_{i\geq 1}$ the lower central series of $G$ (see Section \ref{section commutators}). 
If $x$ is a non-trivial element of $G$, then there exists a positive integer $d$ such that $x\in G_d\setminus G_{d+1}$. The number $d$ is called the \indexx{depth} of $x$ (in $G$) and it is denoted by $\dpt_G(x)$. 
Let now $H$ be a subgroup of $G$ and let $j$ be a positive integer. 
The \emph{$j$-th width of $H$ in $G$} is\index{width}
\[\wt_H^G(j)=\log_p|H\cap G_j:H\cap G_{j+1}|.\]
We observe that, if $\pi_j:G_j\rightarrow G_j/G_{j+1}$ denotes the canonical projection, then $\pi_j(H\cap G_i)$ has cardinality  $p^{\wt_H^G(j)}$.
An index $j$ is a \indexx{jump} \emph{of $H$ in $G$} if $\wt_H^G(j)\neq 0$ and, whenever it will be clear that $j$ is a jump of $H$ in $G$, we will refer to $\wt_H^G(j)$ as the \emph{width of $j$ in $G$}.
If $G=H$, we denote the \emph{$j$-th width of $G$} by $\wt_G(j)$ instead of $\wt_G^G(j)$ and, in several results, we will lighten the notation even further by writing $w_j=\wt_G(j)$.
The \emph{width of $G$} is defined as \index{width}
$\wt(G)=\max_{i\geq 1}\wt_G(i)$; for a generalization to general pro-$p$-groups, see \cite{leedham}.   

\begin{lemma}\label{jumps and depth}
Let $p$ be a prime number and let $j$ be a positive integer. 
Let moreover $G$ be a finite $p$-group and let $H$ be a subgroup of $G$.
Then $j$ is a jump of $H$ in $G$ if and only if $H$ contains an element of depth $j$ in $G$.
\end{lemma}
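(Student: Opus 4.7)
The plan is to proceed purely by unwinding the definitions. Recall that $j$ being a jump of $H$ in $G$ means $\wt_H^G(j)=\log_p|H\cap G_j:H\cap G_{j+1}|\neq 0$, which (since the subgroup $H\cap G_{j+1}$ is contained in $H\cap G_j$) is equivalent to the strict inequality $H\cap G_{j+1}\subsetneq H\cap G_j$. On the other hand, an element $x\in G$ has depth $j$ in $G$ precisely when $x\in G_j\setminus G_{j+1}$. The task is therefore to show that $H\cap G_{j+1}\subsetneq H\cap G_j$ if and only if $H$ meets $G_j\setminus G_{j+1}$.

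For the forward direction I would choose any $x\in(H\cap G_j)\setminus(H\cap G_{j+1})$, which exists by strict containment. Then $x\in H$ and $x\in G_j$; moreover $x\notin G_{j+1}$, since otherwise $x$ would lie in $H\cap G_{j+1}$. Hence $\dpt_G(x)=j$ and $x\in H$, as required.

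For the converse, suppose $x\in H$ has depth $j$ in $G$, so $x\in G_j\setminus G_{j+1}$. Then $x\in H\cap G_j$ while $x\notin H\cap G_{j+1}$ (because $x\notin G_{j+1}$), so the containment $H\cap G_{j+1}\subseteq H\cap G_j$ is strict, i.e.\ $\wt_H^G(j)\neq 0$.

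There is no genuine obstacle here; the only point to be careful about is the elementary observation that $x\in H$ together with $x\notin G_{j+1}$ forces $x\notin H\cap G_{j+1}$, which is what links the two formulations. The argument occupies a few lines and closes the lemma.
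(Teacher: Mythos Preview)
Your proof is correct and is precisely the unwinding of definitions that the paper leaves implicit: the paper's own proof is the single word ``Straightforward.'' There is nothing to add or compare.
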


\begin{proof}
Straightforward.
\end{proof}

\begin{lemma}\label{same jumps conjugates}
Let $p$ be a prime number. 
Let moreover $G$ be a finite $p$-group and let $H$ be a subgroup of $G$. 
Then, for all $\alpha\in\Aut(G)$, the groups $H$ and $\alpha(H)$ have the same jumps in $G$.
\end{lemma}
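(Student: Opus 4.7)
The plan is to exploit the fact that each term $G_i$ of the lower central series is a characteristic subgroup of $G$: since $G_1=G$ is obviously characteristic, and since for any $\alpha\in\Aut(G)$ one has $\alpha([G,G_i])=[\alpha(G),\alpha(G_i)]$, an easy induction on $i$ shows that $\alpha(G_i)=G_i$ for every $i\in\Z_{\geq 1}$.

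Given this, for any subgroup $H$ of $G$ and any $\alpha\in\Aut(G)$ I would compute, for each positive integer $j$,
\[
\alpha(H)\cap G_j \;=\; \alpha(H)\cap \alpha(G_j) \;=\; \alpha(H\cap G_j),
\]
using that $\alpha$ is a bijection on $G$. Consequently
\[
|\alpha(H)\cap G_j : \alpha(H)\cap G_{j+1}|\;=\;|\alpha(H\cap G_j) : \alpha(H\cap G_{j+1})|\;=\;|H\cap G_j : H\cap G_{j+1}|,
\]
so $\wt_{\alpha(H)}^G(j)=\wt_H^G(j)$ for all $j$. In particular, $j$ is a jump of $H$ in $G$ if and only if it is a jump of $\alpha(H)$ in $G$, which is the required statement.

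There is no genuine obstacle here; the only point that deserves care is the characteristicity of the lower central series, which is a standard fact but worth stating explicitly at the start of the proof. Everything else is just applying an automorphism to a chain of subgroups and using that bijections preserve cardinalities of cosets.
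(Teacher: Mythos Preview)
Your proof is correct and rests on the same key fact as the paper's proof, namely that the terms $G_i$ of the lower central series are characteristic. The paper argues via Lemma~\ref{jumps and depth} (jumps correspond to depths of elements, and depth is preserved by automorphisms), whereas you compute $\wt_{\alpha(H)}^G(j)=\wt_H^G(j)$ directly; your version is equally clean and in fact yields the slightly stronger conclusion that all widths, not just the jumps, coincide.
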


\begin{proof}
Let $\alpha$ be an automorpshism of $G$ and let $j$ be a positive integer.
By Lemma \ref{jumps and depth}, the integer $j$ is a jump of $H$ if and only if there exists $x\in H$ such that $\dpt_G(x)=j$. As the elements of the lower central series are characteristic in $G$, we have that $\dpt_G(\alpha(x))=\dpt_G(x)$, and thus we are done.
\end{proof}

\begin{lemma}\label{order product orders jumps}
Let $p$ be a prime number and let $G$ be a finite $p$-group. Let moreover $H$ be a subgroup of $G$ and call $\cor{J}$ the collection of all jumps of $H$ in $G$. 
Then $|H|=\prod_{j\in\cor{J}}p^{\wt_H^G(j)}$. 
\end{lemma}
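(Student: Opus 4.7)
The plan is to use the filtration of $H$ induced by intersecting with the lower central series of $G$. Since $G$ is a finite $p$-group, it is nilpotent, so there exists some $n \in \Z_{\geq 1}$ with $G_{n+1} = \{1\}$; in particular $H \cap G_{n+1} = \{1\}$ and $H \cap G_1 = H$. This gives a finite descending chain
\[
H = H \cap G_1 \supseteq H \cap G_2 \supseteq \ldots \supseteq H \cap G_n \supseteq H \cap G_{n+1} = \{1\}.
\]

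First I would use the telescoping identity on indices of successive terms of this chain, namely
\[
|H| = \prod_{j=1}^{n} |H \cap G_j : H \cap G_{j+1}|.
\]
By the definition of the $j$-th width of $H$ in $G$, each factor on the right equals $p^{\wt_H^G(j)}$, so
\[
|H| = \prod_{j=1}^{n} p^{\wt_H^G(j)} = \prod_{j \geq 1} p^{\wt_H^G(j)},
\]
the last equality being justified by the fact that $\wt_H^G(j) = 0$ for all $j > n$ (since the corresponding intersections are trivial).

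Finally, I would observe that for every $j \notin \mathcal{J}$ one has by definition $\wt_H^G(j) = 0$, so the factor $p^{\wt_H^G(j)}$ equals $1$ and can be omitted from the product. Restricting to the set $\mathcal{J}$ of jumps of $H$ in $G$ therefore yields
\[
|H| = \prod_{j \in \mathcal{J}} p^{\wt_H^G(j)},
\]
as claimed. No step here presents a genuine obstacle: the argument is a direct unfolding of the definitions combined with the nilpotency of $G$.
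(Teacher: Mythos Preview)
Your proof is correct and is precisely the unfolding of the definitions that the paper has in mind; the paper's own proof is the single sentence ``It follows directly from the definitions of jumps,'' and your argument is the natural expansion of that sentence.
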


\begin{proof}
It follows directly from the definitions of jumps.
\end{proof}

\noindent
In the next proposition, we use the notation introduced in Section \ref{section involutions}.

\begin{lemma}\label{order +- jumps}
Let $p$ be an odd prime number and let $G$ be a finite $p$-group. Let $A=\gen{\alpha}$ be a multiplicative group of order $2$ acting on $G$.
Let 
$\chi:A\rightarrow\graffe{\pm 1}$ be an isomorphism. 
Let $(G_i)_{i\geq 1}$ denote the lower central series of $G$ and assume that the automorphism of $G/G_2$
that is induced by $\alpha$ equals the inversion map $x\mapsto x^{-1}$.
Let $H$ be an $A$-stable subgroup of $G$ and let $\cor{O}$ and $\cor{E}$ be the collections of respectively all odd and all even jumps of $H$ in $G$. Assume that the induced action of $A$ on $G/G_2$ is through $\chi$. 
Then the following hold.
\begin{itemize}
 \item[$1$.] One has $|H^+|=\prod_{j\in\cor{E}}p^{\wt_H^G(j)}$ and $\cor{E}$ is the set of jumps of $H^+$ in $G$.
 \item[$2$.] One has $|H^-|=\prod_{j\in\cor{O}}p^{\wt_H^G(j)}$.
\end{itemize} 
\end{lemma}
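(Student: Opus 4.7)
The plan is to deduce this result directly from Lemma \ref{order +- zonder jumps}, which already gives the essential statement but with products indexed over all odd/even positive integers rather than over jumps only. The translation is purely notational: by definition of width, $|H \cap G_j : H \cap G_{j+1}| = p^{\wt_H^G(j)}$, and an index $j$ fails to be a jump of $H$ in $G$ exactly when this factor equals $1$. Hence restricting the products in Lemma \ref{order +- zonder jumps}($2$) and ($3$) to the subsets of even and odd jumps of $H$ in $G$ leaves the products unchanged, which immediately yields the order formulas in both ($1$) and ($2$).

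The only additional ingredient needed is the claim in ($1$) that $\mathcal{E}$ is precisely the set of jumps of $H^+$ in $G$. For this I would invoke part ($1$) of Lemma \ref{order +- zonder jumps}: an index $j$ is a jump of $H^+$ in $G$ iff $H^+ \cap G_j \neq H^+ \cap G_{j+1}$, and this happens iff $H \cap G_j \neq H \cap G_{j+1}$ and $j$ is even, i.e.\ iff $j$ is an even jump of $H$ in $G$. So the set of jumps of $H^+$ in $G$ coincides with $\mathcal{E}$.

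Note that the hypothesis on the action of $A$ through $\chi$ on $G/G_2$ is in fact equivalent to the already-stated assumption that $\alpha$ induces inversion on $G/G_2$, since $\chi(\alpha) = -1$; therefore no new work is needed to match the hypotheses of Lemma \ref{order +- zonder jumps}. The character $\chi$ plays no further role here.

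I do not expect any real obstacle: the lemma is a cosmetic repackaging of Lemma \ref{order +- zonder jumps} in the language of jumps and widths that will be used in subsequent chapters. The only thing to be slightly careful about is making sure that ``jumps of $H^+$ in $G$'' is interpreted with respect to the lower central series of $G$ (not of $H^+$), which is indeed the convention fixed at the start of Section \ref{section jumps}.
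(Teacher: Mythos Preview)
Your proposal is correct and matches the paper's own proof, which simply reads ``Apply the new dictionary to Lemma \ref{order +- zonder jumps}.'' You have spelled out in detail exactly what that dictionary translation amounts to.
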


\begin{proof}
Apply the new dictionary to Lemma \ref{order +- zonder jumps}.
\end{proof}

% CHAPTER 3: ABELIAN CASE

\chapter{Intense automorphisms}\label{CH formulation}

Let $G$ be a group. 
An automorphism $\alpha$ of $G$ is \emph{intense}
\index{intense automorphism} if for every subgroup $H$ of $G$ there 
exists $g\in G$ such that $\alpha(H)=gHg^{-1}$.
We denote by $\Int(G)$ the collection of all intense automorphisms of $G$, which is easily seen to be a normal subgroup of $\Aut(G)$. 
\vspace{8pt} \\
\noindent
In this chapter we will prove some basic properties of intense automorphisms and formulate the main research question of this 
thesis. Among others, we will prove the following result.

\begin{theorem}\label{theorem abelian}
Let $p$ be a prime number and let $G$ be a finite $p$-group. Then $\Int(G)$ is isomorphic to $S\rtimes C$, where $S$ is a Sylow $p$-subgroup of $\Int(G)$ and $C$ is a subgroup of $\F_p^*$. Moreover, if $G$ is non-trivial abelian, then $C=\F_p^*$.
\end{theorem}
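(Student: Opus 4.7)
The plan is to study the restriction to $\Int(G)$ of the natural map $\Phi\colon\Aut(G)\to\Aut(G/\Phi(G))$ from Lemma \ref{kernel of reduction frattini}. By that lemma, $\ker\Phi$ is a $p$-group; the heart of the argument is that the image $\Phi(\Int(G))$ lives inside the scalar subgroup of $\Aut(G/\Phi(G))$, which is $\F_p^*$. The splitting will follow from Schur--Zassenhaus.

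First I would show $\Phi(\Int(G))\subseteq\F_p^*$. Every subgroup $U$ of the $\F_p$-vector space $V=G/\Phi(G)$ is of the form $H/\Phi(G)$ for a unique subgroup $H$ of $G$ containing $\Phi(G)$. Given $\alpha\in\Int(G)$, pick $g\in G$ with $\alpha(H)=gHg^{-1}$. Taking the image in $V$, since $V$ is abelian and conjugation becomes trivial, $\Phi(\alpha)(U)=U$. Thus $\Phi(\alpha)$ is an automorphism of $V$ fixing every line, hence a scalar $c\in\F_p^*$. Setting $S=\Int(G)\cap\ker\Phi$, which is a $p$-group by Lemma \ref{kernel of reduction frattini}, the sequence
\[
1\longrightarrow S\longrightarrow\Int(G)\longrightarrow C\longrightarrow 1
\]
is exact, with $C\leq\F_p^*$; in particular $|C|$ divides $p-1$ and is coprime to $|S|$, so $S$ is a (normal, hence unique) Sylow $p$-subgroup. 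Since $C$ is cyclic, hence solvable, Schur--Zassenhaus gives a splitting $\Int(G)\cong S\rtimes C$.

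For the second statement, I would produce, when $G$ is non-trivial abelian, enough intense automorphisms to ensure $C=\F_p^*$. Write $G$ additively and let $p^e=\exp(G)$. Let $\omega\colon\F_p^*\to\Z_p^*$ be the Teichm\"uller character of Lemma \ref{sesZ_p}. For $c\in\F_p^*$, the unit $\omega(c)\bmod p^e\in(\Z/p^e\Z)^*$ defines an automorphism $\alpha_c\colon x\mapsto \omega(c)\cdot x$ of $G$ (the scalar is well-defined on each cyclic summand since its order divides $p^e$). Every subgroup $H$ of $G$ is preserved by $\alpha_c$, since scalar multiplication by a unit sends $H$ into $H$, so $\alpha_c$ is intense. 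Because the scalar reduces modulo $p$ to $c$, the composition $\Int(G)\to C\hookrightarrow\F_p^*$ is surjective, giving $C=\F_p^*$.

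The only genuinely delicate step is the first one, namely identifying the image of $\Phi|_{\Int(G)}$ with scalars: one must be careful that every subspace of $V$ comes from a subgroup of $G$ containing $\Phi(G)$, and that conjugation becomes trivial after passing to the abelian quotient. Everything else (Schur--Zassenhaus, the Teichm\"uller-lift construction, the $p$-group bound on $\ker\Phi$) is either classical or already recorded in the preparatory lemmas.
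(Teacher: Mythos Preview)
Your proof is correct and follows essentially the same route as the paper. The paper packages your step identifying $\Phi(\Int(G))$ with scalars as Lemma~\ref{intense vector space} (together with Lemma~\ref{intense properties}(2)), defines the composite $\Int(G)\to\Int(G/\Phi(G))\to\Z_p^*$ as the \emph{intense character} $\chi_G$, and then proves the Sylow and splitting statements as Lemmas~\ref{formulation} and~\ref{intense complement}; the abelian case via Teichm\"uller scalars is Lemma~\ref{abelian case}. The only cosmetic difference is that the paper lands $\chi_G$ in $\Z_p^*$ (via $\omega$) rather than in $\F_p^*$ directly, which is convenient later when acting on non-elementary abelian quotients.
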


\section{Basic properties}\label{section intense properties}

\noindent
Section \ref{section intense properties} is devoted to basic properties of intense automorphisms. Most of the notation used appears in the List of Symbols, at the beginning of this thesis.

\begin{proposition}
Let $G$ be a group.
Then $\Inn(G)\subseteq \Int(G)\subseteq\Aut(G)$ and $\Int(G)$ is normal in $\Aut(G)$.
\end{proposition}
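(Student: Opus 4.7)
The plan is to verify the three assertions in turn, each by unwinding definitions; none of the steps should require more than a single paragraph of reasoning.

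First, the inclusion $\Int(G)\subseteq\Aut(G)$ is built into the definition, since intense automorphisms are declared to be automorphisms satisfying an additional property. For $\Inn(G)\subseteq\Int(G)$, I would take $g\in G$ and consider the inner automorphism $\iota_g\colon x\mapsto gxg^{-1}$. For every subgroup $H\leq G$ one has $\iota_g(H)=gHg^{-1}$, which is precisely a conjugate of $H$; hence $\iota_g\in\Int(G)$.

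Next I would check that $\Int(G)$ is a subgroup of $\Aut(G)$. Clearly $\id_G\in\Int(G)$. For closure under composition, let $\alpha,\beta\in\Int(G)$ and $H\leq G$; by intensity of $\beta$ there is $g\in G$ with $\beta(H)=gHg^{-1}$, and by intensity of $\alpha$ applied to $gHg^{-1}$ there is $h\in G$ with $\alpha(gHg^{-1})=h(gHg^{-1})h^{-1}$, so $(\alpha\beta)(H)=(hg)H(hg)^{-1}$. For inverses, given $\alpha\in\Int(G)$ and $H\leq G$, apply intensity of $\alpha$ to the subgroup $\alpha^{-1}(H)$: there exists $g\in G$ with $H=\alpha(\alpha^{-1}(H))=g\alpha^{-1}(H)g^{-1}$, which rearranges to $\alpha^{-1}(H)=g^{-1}Hg$, so $\alpha^{-1}\in\Int(G)$.

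Finally, for normality of $\Int(G)$ in $\Aut(G)$, I would take $\alpha\in\Int(G)$, $\beta\in\Aut(G)$, and an arbitrary subgroup $H\leq G$, and compute $(\beta\alpha\beta^{-1})(H)$. Since $\beta^{-1}(H)$ is a subgroup of $G$, intensity of $\alpha$ provides $g\in G$ with $\alpha(\beta^{-1}(H))=g\beta^{-1}(H)g^{-1}$; applying $\beta$ to both sides gives $(\beta\alpha\beta^{-1})(H)=\beta(g)H\beta(g)^{-1}$, a conjugate of $H$ as required. Hence $\beta\alpha\beta^{-1}\in\Int(G)$.

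Honestly, there is no real obstacle here: the statement is purely formal, and the only mild subtlety worth being careful about is the inverse step, where one must feed $\alpha^{-1}(H)$ (rather than $H$) into the intensity condition of $\alpha$ in order to extract a conjugator.
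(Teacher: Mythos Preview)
Your proof is correct and follows exactly the approach the paper intends: the paper's own proof reads in its entirety ``Straightforward application of the definitions,'' and what you have written is precisely that straightforward application spelled out in full.
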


\begin{proof}
Straightforward application of the definitions. 
\end{proof}

\noindent
We recall that, if $A$ is a group acting on a set $X$, a subset $Y$ of $X$ is $A$-stable if the action of $A$ on $X$ restricts to an action of $A$ on $Y$ (see Section \ref{section characters}).

\begin{lemma}\label{intense properties}
Let $G$ be a group and let $N$ be a normal subgroup of $G$. Then the following hold.
\begin{itemize}
 \item[$1$.] The subgroup $N$ is $\Int(G)$-stable.
 \item[$2$.] The natural projection $G\rightarrow G/N$ induces a well-defined homomorphism $\Int(G)\rightarrow\Int(G/N)$, by means 
 			 of $\alpha\mapsto (xN\mapsto \alpha(x)N)$.
 \item[$3$.] Assume $N$ is contained in $\ZG(G)$. Then the image of the homomorphism
             $\Int(G)\rightarrow\Aut(N)$, sending $\alpha$ to
             $\alpha_{|N}$, is contained in $\Int(N)$.                       
\end{itemize}
\end{lemma}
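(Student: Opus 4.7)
My plan is to treat the three parts in the order stated, each reducing quickly to the definition of an intense automorphism together with the normality of $N$ or centrality.

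For part $(1)$, given $\alpha\in\Int(G)$, intensity supplies $g\in G$ with $\alpha(N)=gNg^{-1}$. Since $N$ is normal, $gNg^{-1}=N$, so $\alpha(N)=N$. Thus $\alpha$ restricts to an automorphism of $N$, which is exactly the statement that $N$ is $\Int(G)$-stable.

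For part $(2)$, by $(1)$ every $\alpha\in\Int(G)$ satisfies $\alpha(N)=N$, so the formula $\overline{\alpha}(xN)=\alpha(x)N$ defines an automorphism of $G/N$, and $\alpha\mapsto\overline{\alpha}$ is clearly a homomorphism $\Int(G)\rightarrow\Aut(G/N)$. It remains to verify that the image lands in $\Int(G/N)$. Any subgroup of $G/N$ has the form $H/N$ for a unique subgroup $H$ of $G$ containing $N$; applying intensity of $\alpha$ to $H$ yields $g\in G$ with $\alpha(H)=gHg^{-1}$, whence $\overline{\alpha}(H/N)=(gN)(H/N)(gN)^{-1}$, so $\overline{\alpha}$ is intense.

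For part $(3)$, part $(1)$ again says that $\alpha_{|N}$ is a well-defined automorphism of $N$, so the map $\alpha\mapsto\alpha_{|N}$ lands in $\Aut(N)$ and is plainly a homomorphism. To see that the image is contained in $\Int(N)$, take any subgroup $K$ of $N$; intensity of $\alpha$ gives $g\in G$ with $\alpha(K)=gKg^{-1}$, but $K\subseteq N\subseteq\ZG(G)$ forces $gKg^{-1}=K$, so $\alpha_{|N}(K)=K=1\cdot K\cdot 1^{-1}$, and in particular $\alpha_{|N}\in\Int(N)$.

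The whole statement is essentially unwinding definitions; the only point that requires a moment's care is $(2)$, where one must observe that subgroups of $G/N$ correspond to subgroups of $G$ containing $N$ so that intensity on $G$ transfers to intensity on $G/N$. I do not anticipate any real obstacle.
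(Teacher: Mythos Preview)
Your proof is correct and follows essentially the same approach as the paper's. The paper's proof is in fact much terser---it dispatches each part in a single sentence---but the underlying reasoning is identical: $(1)$ uses that normal subgroups are self-conjugate, $(2)$ follows from $(1)$, and $(3)$ uses that conjugation acts trivially on central subgroups.
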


\begin{proof}
($1$) Intense automorphisms send every subgroup to a conjugate and therefore each normal subgroup to itself. 
($2$) The map is well-defined as a consequence of ($1$) and it is a homomorphism by construction.
($3$) This follows from the fact that conjugation in $G$ restricts to the trivial map on each subgroup of $\ZG(G)$.
\end{proof}

\noindent
In the following proposition, let $\omega:\F_p^*\rightarrow\Z_p^*$ be the 
Teichm\"{u}ller character at $p$ as defined in Section \ref{section characters}.

\begin{lemma}\label{intense vector space}
Let $p$ be a prime number and let $V$ be a vector space over $\F_p$. 
Then there exists a unique injective homomorphism $\lambda:\Int(V)\rightarrow\Z_p^*$ such that 
the following hold.
\begin{itemize}
 \item[$1$.] The group $\Int(V)$ acts on $V$ through $\lambda$.
 \item[$2$.] If $V\neq0$, then $\lambda(\Int(V))=\omega(\F_p^*)$.
\end{itemize}
\end{lemma}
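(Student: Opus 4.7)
The first task is to identify $\Int(V)$ explicitly. Since $V$ is abelian, conjugation is trivial and $\Int(V)$ consists of those $\alpha\in\Aut(V)$ that stabilize every subgroup. For $0\neq v\in V$ the one-dimensional subspace $\F_p v$ is stabilized, so there is a scalar $c_\alpha(v)\in\F_p^*$ with $\alpha(v)=c_\alpha(v)\,v$. I would then show that $c_\alpha(v)$ does not depend on $v$: for $v,w$ linearly independent, applying $\alpha$ to $v+w$ gives $c_\alpha(v)\,v+c_\alpha(w)\,w=c_\alpha(v+w)(v+w)$, which forces $c_\alpha(v)=c_\alpha(w)$; if instead $w=av$ with $a\in\F_p^*$, the $\F_p$-linearity of $\alpha$ yields $c_\alpha(w)=c_\alpha(v)$ directly. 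Since every scalar multiplication is visibly intense, this produces an isomorphism $c:\Int(V)\to\F_p^*$ when $V\neq 0$, and $\Int(V)$ is trivial when $V=0$.

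Next I would define $\lambda=\omega\circ c$, where $\omega:\F_p^*\to\Z_p^*$ is the Teichm\"uller character from Lemma \ref{sesZ_p}. Both $\omega$ and $c$ are injective, so $\lambda$ is. For property ($1$), since $V$ has exponent $p$, the $\Z_p$-action on $V$ factors through reduction modulo $p$, and $\lambda(\alpha)\bmod p=c(\alpha)$ gives $\lambda(\alpha)v=c(\alpha)v=\alpha(v)$ for every $v\in V$. Property ($2$) is immediate from the surjectivity of $c$ onto $\F_p^*$ whenever $V\neq 0$.

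For uniqueness, suppose $\lambda':\Int(V)\to\Z_p^*$ is another homomorphism satisfying ($1$). The case $V=0$ is trivial, so assume $V\neq 0$. Then for each $\alpha\in\Int(V)$ and each $v\in V$, $\lambda'(\alpha)v=\alpha(v)=c(\alpha)v$, so $\lambda'(\alpha)\equiv c(\alpha)\pmod p$. Since $\Int(V)\cong\F_p^*$ has order $p-1$, which is coprime to $p$, the element $\lambda'(\alpha)$ has prime-to-$p$ order in $\Z_p^*$; but within the coset $c(\alpha)(1+p\Z_p)$ the Teichm\"uller lift $\omega(c(\alpha))$ is the unique element of such order, because $1+p\Z_p$ is pro-$p$ and therefore contains no non-trivial prime-to-$p$ torsion. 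Hence $\lambda'(\alpha)=\omega(c(\alpha))=\lambda(\alpha)$, so $\lambda'=\lambda$.

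The main obstacle is the opening step: establishing that the scalar $c_\alpha(v)$ does not depend on $v$ when $\dim V\geq 2$; once this small additivity argument is in place, the remainder reduces to formal manipulation with the Teichm\"uller section.
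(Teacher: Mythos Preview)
Your proof is correct and follows essentially the same approach as the paper: identify $\Int(V)$ with scalar multiplications via the independence-of-$v$ argument, then compose with the Teichm\"uller character. The one difference is in uniqueness: the paper invokes Lemma~\ref{distinct characters on same gp} (two distinct characters cannot act on the same nonzero $p$-group), while you argue directly that $\lambda'(\alpha)$ must be the unique prime-to-$p$ torsion element of $\Z_p^*$ in its residue class. These are close in spirit---both ultimately rest on the fact that the torsion of $\Z_p^*$ injects into $\F_p^*$---but your version is self-contained and avoids the odd-$p$ hypothesis built into that lemma.
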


\begin{proof}
If $V=0$, define $\lambda:\id_V\mapsto 1$.
Assume $V\neq 0$. Since $V$ is abelian, every one-dimensional subspace of $V$ is stable under the action of $\Int(V)$. It follows that, for all 
$v\in V\setminus\graffe{0}$ and $\alpha\in\Int(V)$, there exists (a unique) $\mu(\alpha,v)\in\F_p^*$ such that $\alpha(v)=\mu(\alpha,v) v$. We will show that $\mu(\alpha,v)$ is independent of the choice of $v$.
To this end, let $\alpha\in\Int(V)$ and let $v$ and $w$ be elements of $V\setminus\{0\}$. If $v$ and $w$ are linearly dependent, then $\mu(\alpha,v)=\mu(\alpha,w)$. We assume that $v$ and $w$ are linearly independent. From the linearity of $\alpha$, it follows that $\mu(\alpha, v+w)(v+w)=\mu(\alpha,v)v+\mu(\alpha, w)w$. The vectors $v$ and $w$ are linearly independent, so 
$\mu(\alpha,v)=\mu(\alpha,v+w)=\mu(\alpha,w)$, as required. We fix $v\in V\setminus\graffe{0}$ and define $\mu:\Int(V)\rightarrow\F_p^*$ by $\alpha\mapsto\mu(\alpha,v)$. 
The map $\mu$ is well-defined and it is an injective homomorphism of groups by construction. Moreover, $\mu$ is surjective, because scalar multiplication by any element of $\F_p^*$ is an intense automorphism of $V$.
We define $\lambda=\omega\circ\mu$. 
Then $\Int(V)$ acts on $V$ through $\lambda$ and the image of $\lambda$ is equal to $\omega(\F_p^*)$ by construction.
The uniqueness of $\lambda$ follows from Lemma \ref{distinct characters on same gp}.
\end{proof}

\noindent
We recall that, an action of a group $C$ on a group $B$ is a homomorphism $C\rightarrow\Aut(B)$ (see Section \ref{section characters}).

\begin{definition}
Let $C$ be a finite group acting on a finite group $B$. Assume moreover that both $B$ and $C$ act on a set $X$. 
The actions are said to be \emph{compatible}
\index{compatible action} if for all $x\in X$, $b\in B$, and $c\in C$, one has $c(bx)=(cb)(cx)$.
\end{definition}

\begin{lemma}[Glauberman's lemma]\label{glauberman lemma}
Let $G$ and $A$ be finite groups of coprime orders. Assume that at least one of $A$ and $G$ is solvable. Assume $A$ acts on $G$ and that each of them acts on some 
set $X$, where the action of $G$ is transitive. Finally, assume the three actions are compatible. 
Then there exists an $A$-stable element in $X$.    
\end{lemma}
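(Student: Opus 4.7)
The plan is to reduce the statement to the Schur--Zassenhaus theorem via the semidirect product construction. Since $A$ acts on $G$, I would first form the semidirect product $\Gamma = G \rtimes A$, which sits in a short exact sequence $1 \to G \to \Gamma \to A \to 1$ and has order $|G||A|$. The compatibility of the actions is exactly the cocycle-type identity $a \cdot (g \cdot x) = (a \cdot g) \cdot (a \cdot x)$ needed to check that the rule $(g,a) \cdot x = g \cdot (a \cdot x)$ defines an action of $\Gamma$ on $X$. So the first step is to verify this and note that, since $G$ already acts transitively, $\Gamma$ also acts transitively on $X$.

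Next, I would fix a base point $x_0 \in X$ and let $H = \mathrm{Stab}_\Gamma(x_0)$. The transitivity of the $G$-action gives $\Gamma = GH$: for any $(g,a) \in \Gamma$, the element $a \cdot x_0$ equals $g' \cdot x_0$ for some $g' \in G$, hence $g'^{-1}a \in H$ and $(g,a) \in GH$. From $\Gamma = GH$ and $|\Gamma| = |G||A|$ we compute $|H| = |A| \cdot |H \cap G|$. Moreover $H \cap G$ is the kernel of the composite $H \hookrightarrow \Gamma \twoheadrightarrow A$ and is therefore normal in $H$, with $H/(H \cap G) \cong A$. Since $\gcd(|A|,|G|)=1$, the orders $|H \cap G|$ and $|A|$ are coprime.

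Now I invoke Schur--Zassenhaus twice. First applied inside $H$ to the normal subgroup $H \cap G$ (orders coprime, existence part), this produces a complement $B \leq H$ with $|B| = |A|$. A direct index computation shows $BG = \Gamma$ and $B \cap G = 1$, so $B$ is also a complement of $G$ in $\Gamma$. Applied then to $\Gamma$ with normal subgroup $G$ (orders coprime, conjugacy part, requiring that $G$ or $A = \Gamma/G$ be solvable, which is the given hypothesis), Schur--Zassenhaus yields $g \in \Gamma$ with $g A g^{-1} = B \subseteq H$. Setting $y = g \cdot x_0$, we obtain $a \cdot y = a g \cdot x_0 = g (g^{-1} a g) \cdot x_0 = g \cdot x_0 = y$ for every $a \in A$, producing the desired $A$-stable element.

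The main conceptual step is recognizing that the existence of an $A$-fixed point corresponds to the presence, inside the point stabilizer $H$, of a conjugate of $A$ -- and the sole technical input is the Schur--Zassenhaus theorem, whose conjugacy half is where the solvability hypothesis on $A$ or $G$ gets used. The compatibility hypothesis is consumed once, at the very beginning, to legitimize the semidirect product action on $X$.
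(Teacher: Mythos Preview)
Your argument is correct and is precisely the standard proof (via the semidirect product and Schur--Zassenhaus) that the paper defers to by citing \cite[Lemma~3.24]{isaacs}. One small slip at the end: from $gAg^{-1}=B\subseteq H$ it is $g^{-1}\cdot x_0$, not $g\cdot x_0$, that is $A$-fixed---equivalently, choose $g$ with $g^{-1}Ag=B$ and your computation goes through verbatim.
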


\begin{proof}
See \cite[Lemma $3.24$]{isaacs}.
\end{proof}

\begin{lemma}[An equivalent condition]\label{equivalent intense coprime}
Let $G$ be a finite group and let $\alpha\in\Aut(G)$ be of order coprime to the order of $G$. Let $H$ and $N$ be subgroups of $G$ and assume that $\alpha(N)=N$. Then the following are equivalent.
\begin{itemize}
 \item[$1$.] There exists $a\in N$ such that $\alpha(H)=aHa^{-1}$.
 \item[$2$.] There exists $b\in N$ such that $bHb^{-1}$ is $\gen{\alpha}$-stable.
\end{itemize}
\end{lemma}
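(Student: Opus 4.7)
The plan is to establish the two implications separately, with the reverse direction being a direct computation and the forward direction being an application of Glauberman's lemma (Lemma \ref{glauberman lemma}), which was set up in the previous paragraphs precisely for this purpose.

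For $(2) \Rightarrow (1)$, I would simply manipulate the equality $\alpha(bHb^{-1}) = bHb^{-1}$. Since $\alpha$ is a homomorphism, this unfolds to $\alpha(b)\alpha(H)\alpha(b)^{-1} = bHb^{-1}$, which rearranges to $\alpha(H) = aHa^{-1}$ with $a = \alpha(b)^{-1}b$. Because $\alpha(N)=N$ and $b \in N$, the element $a$ lies in $N$, as required.

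For $(1) \Rightarrow (2)$, I would let $A = \langle\alpha\rangle$ and consider the set $X = \{nHn^{-1} : n \in N\}$ of $N$-conjugates of $H$. The group $N$ acts transitively on $X$ by conjugation by construction. Using hypothesis $(1)$, together with $\alpha(N) = N$, a direct check shows that $A$ acts on $X$ via $\alpha \cdot (nHn^{-1}) = \alpha(n)\alpha(H)\alpha(n)^{-1} = (\alpha(n)a)H(\alpha(n)a)^{-1}$, which is again in $X$. Compatibility of the $N$- and $A$-actions on $X$ is immediate from $\alpha$ being a homomorphism. Finally, $|A|$ divides the order of $\alpha$, which is coprime to $|G|$ and hence to $|N|$, so the hypotheses of Lemma \ref{glauberman lemma} on coprimality are met; solvability is automatic since $A$ is cyclic. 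Glauberman's lemma then produces an $A$-stable element $bHb^{-1} \in X$, which is exactly $(2)$.

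The only subtle point is verifying the setup for Glauberman's lemma in the forward direction; in particular, it is essential that hypothesis $(1)$ is what guarantees that $A$ stabilizes the set $X$ in the first place (without $(1)$, $\alpha$ need not send an $N$-conjugate of $H$ to another $N$-conjugate of $H$). Beyond that observation, the argument is mechanical, so I do not anticipate any serious obstacle.
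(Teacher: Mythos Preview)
Your proposal is correct and follows essentially the same approach as the paper: the paper proves $(2)\Rightarrow(1)$ by the same direct computation with $a=\alpha(b)^{-1}b$, and proves $(1)\Rightarrow(2)$ by applying Glauberman's lemma to the set $X=\{nHn^{-1}:n\in N\}$ with the conjugation action of $N$ and the natural action of $\gen{\alpha}$. Your write-up is in fact slightly more detailed than the paper's, in particular in spelling out why hypothesis $(1)$ is what makes the $\gen{\alpha}$-action on $X$ well-defined and why the coprimality and solvability hypotheses of Glauberman's lemma are satisfied.
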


\begin{proof}
$(2)\Rightarrow(1)$ By assumption there exists an element $b\in N$ such that 
$bHb^{-1}=\alpha(bHb^{-1})=\alpha(b)\alpha(H)\alpha(b)^{-1}$. Define $a=\alpha(b)^{-1}b$.
$(1)\Rightarrow(2)$ Write $X=\{gHg^{-1}\, :\, g\in N\}$. Then $N$ acts on $X$ by conjugation and $\gen{\alpha}$ acts on $X$ by assumption. The actions are compatible and the action of $N$ is transitive. By 
Lemma \ref{glauberman lemma}, there exists an element of $X$ that is fixed by $\alpha$. 
\end{proof}

\begin{lemma}\label{equivalent intense coprime-pgrps}
Let $G$ be a finite group and let $\alpha$ be an automorphism of $G$ of order coprime to $|G|$. Then $\alpha\in\Int(G)$ if and only if each subgroup of $G$ has an $\gen{\alpha}$-stable conjugate.
\end{lemma}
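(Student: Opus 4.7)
The plan is to derive this corollary directly from Lemma \ref{equivalent intense coprime} by specializing $N=G$. Since $\alpha$ is an automorphism, the subgroup $N=G$ is certainly $\alpha$-stable, so the hypotheses of that lemma are met for every subgroup $H$ of $G$.

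For the forward implication, suppose $\alpha\in\Int(G)$. Then for each subgroup $H$ of $G$, there exists $a\in G$ with $\alpha(H)=aHa^{-1}$, which is precisely condition $(1)$ of Lemma \ref{equivalent intense coprime} with $N=G$. The equivalence granted by that lemma gives condition $(2)$: there exists $b\in G$ such that $bHb^{-1}$ is $\gen{\alpha}$-stable, i.e.\ $H$ admits an $\gen{\alpha}$-stable conjugate.

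For the reverse implication, assume every subgroup of $G$ has an $\gen{\alpha}$-stable conjugate. Fix an arbitrary subgroup $H$ of $G$. By hypothesis there exists $b\in G$ such that $bHb^{-1}$ is $\gen{\alpha}$-stable, which is condition $(2)$ of Lemma \ref{equivalent intense coprime}, again with $N=G$. Applying the equivalence in the opposite direction yields $a\in G$ with $\alpha(H)=aHa^{-1}$. Since $H$ was arbitrary, $\alpha\in\Int(G)$.

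There is essentially no obstacle: the content is already packaged inside Lemma \ref{equivalent intense coprime}, and this corollary is merely the special case $N=G$ together with the observation that if $H$ ranges over all subgroups of $G$, then so does every conjugate $bHb^{-1}$. The coprimality hypothesis on $|\alpha|$ and $|G|$ is exactly what is required to invoke Glauberman's lemma in the proof of Lemma \ref{equivalent intense coprime} (noting that by Feit--Thompson one of $\gen{\alpha}$ and $G$ has odd order and is therefore solvable), so nothing extra needs to be verified here.
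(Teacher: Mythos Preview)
Your proof is correct and follows exactly the paper's approach, which is the one-line remark ``Take $N=G$ in Lemma \ref{equivalent intense coprime}.'' Your additional observation about Feit--Thompson is fine but unnecessary here, since the solvability hypothesis is already absorbed into the proof of Lemma \ref{equivalent intense coprime} itself.
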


\begin{proof}
Take $N=G$ in Lemma \ref{equivalent intense coprime}.
\end{proof}

\begin{lemma}\label{orbits}
Let $G$ be a finite group and let $\alpha\in\Int(G)$ be of order coprime to the order of $G$.
Let $X$ be a collection of subgroups of $G$ on which $G$ acts by conjugation and let $X^+=\graffe{H\in X\, :\, \alpha(H)=H}$. 
Then $$|X|\leq\sum_{H\in X^+} |G:\nor_G(H)|.$$ Equality holds if and only if the elements of $X^+$ are pairwise non-conjugate in $G$.
\end{lemma}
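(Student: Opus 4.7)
The plan is to decompose $X$ into its $G$-conjugacy classes and then count $\alpha$-fixed subgroups inside each class, using the hypothesis that $\alpha$ is intense of order coprime to $|G|$ to guarantee that no class is missed.

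First I would write $X$ as a disjoint union $X=C_1\sqcup\cdots\sqcup C_r$ of $G$-orbits under conjugation; this is legitimate because $X$ is, by hypothesis, a union of $G$-conjugacy classes of subgroups. For each orbit $C_i$, pick any $H_i\in C_i$, so that $|C_i|=|G:\nor_G(H_i)|$ and $|X|=\sum_{i=1}^r|G:\nor_G(H_i)|$. The crucial input now is Lemma \ref{equivalent intense coprime-pgrps}: since $\alpha\in\Int(G)$ and the order of $\alpha$ is coprime to $|G|$, every subgroup $H_i$ has an $\langle\alpha\rangle$-stable conjugate, i.e.\ $C_i\cap X^+$ is non-empty. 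Let $n_i=|C_i\cap X^+|\ge 1$.

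Next I would compute the sum. Since $|G:\nor_G(K)|=|C_i|$ for every $K\in C_i$, and every element of $X^+$ lies in exactly one $C_i$, we have
\[
\sum_{H\in X^+}|G:\nor_G(H)|=\sum_{i=1}^r n_i\,|C_i|\ \geq\ \sum_{i=1}^r|C_i|=|X|,
\]
which is the desired inequality. Equality holds if and only if $n_i=1$ for all $i$, i.e.\ each $G$-conjugacy class in $X$ meets $X^+$ in exactly one subgroup; equivalently, the elements of $X^+$ are pairwise non-conjugate in $G$.

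There is really no serious obstacle: all the work is done by Lemma \ref{equivalent intense coprime-pgrps}, which (via Glauberman) produces the $\alpha$-stable conjugate inside each orbit. The only point requiring a moment's care is recognising that $X$ being a set with a $G$-action by conjugation forces it to be a union of full $G$-orbits in the subgroup lattice, so that the orbit decomposition above is well-defined.
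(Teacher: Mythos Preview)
Your proof is correct and follows essentially the same approach as the paper: decompose $X$ into $G$-orbits, use Lemma \ref{equivalent intense coprime-pgrps} to find an $\alpha$-stable representative in each orbit, and compare $|X|=\sum_i|C_i|$ with the larger sum $\sum_i n_i|C_i|$. The paper phrases this slightly more tersely by choosing a set $\mathcal{R}\subseteq X^+$ of orbit representatives and noting that equality holds if and only if $\mathcal{R}=X^+$, but the argument is the same.
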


\begin{proof}
Let $\cor{C}$ be the collection of orbits of $X$ under $G$. By Lemma \ref{equivalent intense coprime-pgrps}, there exists a subset $\cor{R}$ of $X^+$ whose elements are representatives for the elements of $\cor{C}$. It follows that 
$$|X|=\sum_{C\in\cor{C}}|C|=\sum_{H\in\cor{R}}|G:\nor_G(H)|\leq
\sum_{H\in X^+}|G:\nor_G(H)|.$$
Equality holds if and only if $\cor{R}=X^+$.
\end{proof}

\section{The main question}\label{section main}

\noindent
Let $p$ be a prime number and let $G$ be a finite $p$-group. We recall that $\Phi(G)=[G,G]G^p$ and so $G/\Phi(G)$ is a vector space over $\F_p$ (see Section \ref{section p-gps}). In Section \ref{section main} we build the foundation for our theory and we give the dictionary that we will use throughout the whole thesis. We will also prove the following result. 

\begin{proposition}\label{proposition 2gps}
Let $G$ be a finite $2$-group. Then $\Int(G)$ is a finite $2$-group.
\end{proposition}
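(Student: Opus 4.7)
The plan is to bottleneck $\Int(G)$ through its action on the Frattini quotient $G/\Phi(G)$, which is an $\F_2$-vector space.

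First I would consider the restriction homomorphism $\Aut(G)\to\Aut(G/\Phi(G))$, $\alpha\mapsto(x\Phi(G)\mapsto\alpha(x)\Phi(G))$. By Lemma \ref{kernel of reduction frattini}, this map is well-defined and its kernel $K$ is a finite $2$-group. By Lemma \ref{intense properties}(2), its restriction sends $\Int(G)$ into $\Int(G/\Phi(G))$. Hence $\Int(G)$ sits in a short exact sequence
\[
1\longrightarrow \Int(G)\cap K\longrightarrow \Int(G)\longrightarrow Q\longrightarrow 1,
\]
where $Q$ is a subgroup of $\Int(G/\Phi(G))$ and $\Int(G)\cap K$, being a subgroup of the $2$-group $K$, is itself a $2$-group.

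The key step is then to show that $Q$ is trivial, for which it suffices to show $\Int(V)=1$ for every $\F_2$-vector space $V$. If $V=0$ this is obvious; if $V\neq 0$, Lemma \ref{intense vector space} supplies an injective homomorphism $\lambda:\Int(V)\to\Z_2^*$ with image $\omega(\F_2^*)$. Since $\F_2^*=\{1\}$, we get $\omega(\F_2^*)=\{1\}$, and therefore $\Int(V)=\{\id_V\}$ by injectivity of $\lambda$.

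Combining these two observations, $\Int(G)=\Int(G)\cap K$ is a subgroup of the finite $2$-group $K$, and is therefore a finite $2$-group. The main conceptual point — and the only non-formal ingredient — is really the triviality of $\Int(V)$ over $\F_2$: an intense automorphism must fix every $1$-dimensional subspace and hence acts as a scalar, and over $\F_2$ the only scalar is $1$. Everything else is a direct application of results already proved in the excerpt, so no serious obstacle is anticipated.
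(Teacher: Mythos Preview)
Your proof is correct and is essentially the paper's argument unpacked. The paper packages the map $\Int(G)\to\Int(G/\Phi(G))\to\Z_p^*$ as the \emph{intense character} $\chi_G$, proves in Lemma~\ref{formulation} that $\ker\chi_G$ is the unique Sylow $p$-subgroup of $\Int(G)$ (using exactly Lemmas~\ref{kernel of reduction frattini} and~\ref{intense vector space} as you do), and then observes via Lemma~\ref{intense complement} that the image has order dividing $p-1$, hence is trivial for $p=2$; you simply carry out the same steps directly without naming $\chi_G$.
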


\begin{definition}
Let $p$ be a prime number and let $G$ be a finite $p$-group. 
The \emph{intense character} \index{intense character} of $G$ is the homomorphism $\chi_G:\Int(G)\rightarrow\Z_p^*$ that is gotten from composition of the following.
\begin{itemize}
 \item[$\circ$] The homomorphism $\Int(G)\rightarrow\Int(G/\Phi(G))$ from Lemma \ref{intense properties}($2$).
 \item[$\circ$] The homomorphism $\lambda:\Int(G/\Phi(G))\rightarrow\Z_p^*$ from Lemma \ref{intense vector space}.
\end{itemize}
\end{definition}

\vspace{5pt}

\begin{lemma}\label{formulation}
\pgp\ Let moreover $\chi_G:\Int(G)\rightarrow\Z_p^*$ be the intense character of $G$.
Then the group $\ker\chi_G$ is the unique Sylow $p$-subgroup of $\Int(G)$.
\end{lemma}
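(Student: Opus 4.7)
The plan is to show that $\ker\chi_G$ is a normal $p$-subgroup of $\Int(G)$ whose quotient has order coprime to $p$; this forces it to be the unique Sylow $p$-subgroup.

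First I would unravel the definition. Let $\pi\colon\Int(G)\to\Int(G/\Phi(G))$ be the homomorphism from Lemma \ref{intense properties}($2$) and let $\lambda\colon\Int(G/\Phi(G))\to\Z_p^*$ be the homomorphism from Lemma \ref{intense vector space}. By definition $\chi_G=\lambda\circ\pi$. Since $\lambda$ is injective (Lemma \ref{intense vector space}), we obtain the identity $\ker\chi_G=\ker\pi$.

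Next I would observe that $\ker\pi$ consists of those intense automorphisms of $G$ that induce the identity on $G/\Phi(G)$, so it is contained in the kernel of the homomorphism $\Aut(G)\to\Aut(G/\Phi(G))$ of Lemma \ref{kernel of reduction frattini}. That kernel is a $p$-group, hence $\ker\chi_G$ is a $p$-group, and it is normal in $\Int(G)$ as the kernel of a homomorphism.

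Finally I would bound the order of the quotient. The image of $\chi_G$ is contained in the image of $\lambda$, which by Lemma \ref{intense vector space}($2$) is contained in $\omega(\F_p^*)$; the latter has order dividing $p-1$, hence coprime to $p$. Therefore $\Int(G)/\ker\chi_G$ has order coprime to $p$. A normal $p$-subgroup with $p'$-index is automatically contained in every Sylow $p$-subgroup of the ambient group and of maximal $p$-power order, so $\ker\chi_G$ is the unique Sylow $p$-subgroup of $\Int(G)$.

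The argument is essentially bookkeeping around previously established facts; the only potential pitfall is making sure that the injectivity of $\lambda$ is applied correctly (so that $\ker\chi_G$ really equals $\ker\pi$ and not just contains it) and that one invokes Lemma \ref{kernel of reduction frattini} for $\Aut(G)$ rather than only for $\Int(G)$, from which the restriction to $\Int(G)$ is then immediate.
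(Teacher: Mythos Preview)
Your proof is correct and follows essentially the same route as the paper: identify $\ker\chi_G$ with the kernel of the reduction map $\Int(G)\to\Int(G/\Phi(G))$ via the injectivity of $\lambda$, invoke Lemma~\ref{kernel of reduction frattini} to see this kernel is a $p$-group, and then use that the image of $\chi_G$ has order dividing $p-1$. The paper handles the trivial-$G$ case separately, while you absorb it into the general argument; both are fine.
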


\begin{proof}
If $G$ is the trivial group, then $\Int(G)=\ker\chi_G=\graffe{1}$ and $\graffe{1}$ is a Sylow $p$-subgroup of $\Int(G)$. 
Assume now $G$ is non-trivial and set $V=G/\Phi(G)$. By Lemma \ref{intense vector space}, the map $\lambda: V\rightarrow\Int(V)$ is injective and so, with the notation of Lemma \ref{kernel of reduction frattini}, the subgroup $\ker\chi_G$ is equal to $\Int(G)\cap\ker\phi$.
As a consequence of Lemma \ref{kernel of reduction frattini}, the kernel of $\chi_G$ is a normal Sylow $p$-subgroup of $\Int(G)$. Since $\Int(G)$ acts on the collection of its Sylow $p$-subgroup in a transitive manner, $\Int(G)$ has a unique Sylow $p$-subgroup, namely $\ker\chi_G$.
\end{proof}

\begin{definition}
Let $p$ be a prime number and let $G$ be a finite $p$-group. Let $\chi_G:\Int(G)\rightarrow\Z_p^*$ be the intense character of 
$G$. The \emph{intensity} of $G$ \index{intensity} is the value $|\Int(G):\ker\chi_G|$ and it is 
denoted by $\inte(G)$.
\end{definition}

\noindent
We remark that the intensity of a $p$-group $G$ is equal to the size of the image of the intense character $\chi_G$ 
inside $\omega(\F_p^*)$. In particular, if $G$ is a $2$-group, then its intensity is always $1$.

\begin{lemma}\label{intense complement}
Let $p$ be a prime number and let $G$ be a finite $p$-group. Let moreover $\chi_G:\Int(G)\rightarrow\Z_p^*$ be the intense character
of $G$.
Then $\inte(G)$ divides $p-1$ and $\ker\chi_G$ has a cyclic complement in $\Int(G)$ of order $\inte(G)$.
\end{lemma}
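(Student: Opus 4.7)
The plan is to combine the structural description of $\ker\chi_G$ from Lemma \ref{formulation} with the cyclic nature of $\omega(\F_p^*)$, and then apply the Schur-Zassenhaus theorem.

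First I would observe that the image of $\chi_G$ lives inside $\omega(\F_p^*)$. Indeed, by construction $\chi_G$ factors as $\Int(G)\to\Int(G/\Phi(G))\overset{\lambda}{\to}\Z_p^*$, and Lemma \ref{intense vector space}($2$) tells us that $\lambda(\Int(G/\Phi(G)))$ equals $\omega(\F_p^*)$ when $G$ is non-trivial (and is trivial otherwise). Since $\omega(\F_p^*)$ is cyclic of order $p-1$, the image of $\chi_G$ is a cyclic subgroup whose order divides $p-1$. By definition, this order is exactly $\inte(G)=|\Int(G):\ker\chi_G|$, so $\inte(G)\mid p-1$.

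Next, I would use Lemma \ref{formulation}, which identifies $\ker\chi_G$ as the unique Sylow $p$-subgroup of $\Int(G)$. In particular $\ker\chi_G$ is normal in $\Int(G)$, its order is a power of $p$, and the quotient $\Int(G)/\ker\chi_G$ has order $\inte(G)$, which divides $p-1$ and is therefore coprime to $p$. We are thus in the setting of the Schur-Zassenhaus theorem applied to the short exact sequence
\[
1\longrightarrow \ker\chi_G\longrightarrow \Int(G)\longrightarrow \Int(G)/\ker\chi_G\longrightarrow 1,
\]
which guarantees a complement $C\leq\Int(G)$ of $\ker\chi_G$.

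Finally, $C$ is isomorphic, via the restriction of $\chi_G$, to the image of $\chi_G$, which as noted above is a subgroup of the cyclic group $\omega(\F_p^*)$ and hence cyclic of order $\inte(G)$. This gives the desired cyclic complement. The only substantive input here is Schur-Zassenhaus; no delicate work is required, since the cyclicity of $\omega(\F_p^*)$ makes the complement automatically cyclic without having to invoke any deeper conjugacy statement.
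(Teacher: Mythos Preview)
Your proof is correct and follows essentially the same approach as the paper: note that $\chi_G$ has image in the cyclic group $\omega(\F_p^*)$ of order $p-1$, use Lemma~\ref{formulation} to see $\ker\chi_G$ is the normal Sylow $p$-subgroup, and apply Schur--Zassenhaus to obtain a complement, which is cyclic since it embeds in $\F_p^*$.
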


\begin{proof}
The image of $\chi_G$ is a subgroup of $\omega(\F_p^*)$, which has order $p-1$. It follows that $\inte(G)$ divides $p-1$. 
Now, by Proposition \ref{formulation}, the kernel of $\chi_G$ is the unique Sylow $p$-subgroup of $G$ and it is therefore normal.
The group $\ker\chi_G$ has a complement $C$ in $\Int(G)$, by the Schur-Zassenhaus theorem, and $C$ is cyclic because it is 
isomorphic to a subgroup of $\F_p^*$.
\end{proof}

\noindent
Proposition \ref{proposition 2gps} now follows from Lemmas \ref{formulation} and \ref{intense complement}. 
\vspace{8pt} \\
\noindent
The major goal of this thesis if classifying all pairs $(p,G)$ where $p$ is a prime number and $G$ is a finite $p$-group with $\inte(G)>1$. 
Starting from the next chapter, we will therefore often be working with odd primes. 
Explicit assumptions will be made at the beginning of each section.

\section{The abelian case}\label{section abelian}

The main result of Section \ref{section abelian} is the following.

\begin{proposition}\label{proposition abelian}
Let $p$ be a prime number and let $G$ be a finite non-trivial abelian $p$-group. 
Then $\inte(G)=p-1$.
\end{proposition}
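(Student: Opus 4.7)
The plan is to exhibit, for each $c \in \Z$ with $\gcd(c,p)=1$, an explicit intense automorphism whose image under $\chi_G$ is $\omega(c \bmod p)$, and then combine this with Lemma \ref{intense complement} to conclude.

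First I would observe that, since $G$ is abelian, every subgroup of $G$ is normal, so an automorphism of $G$ is intense if and only if it sends every subgroup to itself. Since $|G|$ is a power of $p$, for every integer $c$ coprime to $p$ the exponentiation map $\alpha_c : G \to G$, $x \mapsto x^c$, is a bijective group homomorphism, hence an automorphism of $G$. For any subgroup $H \leq G$, clearly $\alpha_c(H) \subseteq H$, and equality follows by finiteness. Therefore $\alpha_c \in \Int(G)$ for every such $c$.

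Next I would compute $\chi_G(\alpha_c)$. Since $G$ is abelian, $\Phi(G) = G^p$ and $V = G/\Phi(G)$ is a non-zero $\F_p$-vector space, on which $\alpha_c$ induces scalar multiplication by $c \bmod p \in \F_p^*$. By the construction of the intense character and by Lemma \ref{intense vector space}, the unique homomorphism $\lambda : \Int(V) \to \Z_p^*$ sends scalar multiplication by $\bar c$ to $\omega(\bar c)$. Consequently $\chi_G(\alpha_c) = \omega(c \bmod p)$.

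As $c$ varies over integers coprime to $p$, the residue $c \bmod p$ ranges over all of $\F_p^*$, so the image of $\chi_G$ contains $\omega(\F_p^*)$; since the image is automatically contained in $\omega(\F_p^*)$, we get $|\im \chi_G| = p-1$, i.e.\ $\inte(G) \geq p-1$. Combined with Lemma \ref{intense complement}, which gives $\inte(G) \mid p-1$, this yields $\inte(G) = p-1$. There is essentially no obstacle here: the only point requiring any care is verifying that $\alpha_c$ really is an automorphism (for which the non-triviality of $G$ is irrelevant, but coprimality of $c$ with $|G|$ is essential) and correctly identifying its image under $\chi_G$ via the definition through $V = G/\Phi(G)$.
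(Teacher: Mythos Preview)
Your proof is correct and follows essentially the same approach as the paper: the paper obtains Proposition~\ref{proposition abelian} as a special case of Lemma~\ref{abelian case}, whose proof likewise exhibits the scalar action of $\omega(\F_p^*)$ on $G$ (which is exactly your $\alpha_c$ in disguise) as a family of intense automorphisms surjecting onto $\omega(\F_p^*)$ under $\chi_G$. The only cosmetic difference is that the paper invokes the $\Z_p$-module structure directly rather than integer exponentiation, but the content is identical.
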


\begin{lemma}\label{intensity of quotients}
Let $p$ be a prime number and let $G$ be a finite $p$-group.
Let $N$ be a normal subgroup of $G$. If $N\neq G$, then $\inte(G)$ divides $\inte(G/N)$.
\end{lemma}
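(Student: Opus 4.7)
The plan is to show that the intense character $\chi_G$ factors through $\chi_{G/N}$ via the natural map $\pi:\Int(G)\to\Int(G/N)$ supplied by Lemma \ref{intense properties}($2$); once the relation $\chi_G=\chi_{G/N}\circ\pi$ is in hand, divisibility of the intensities will fall out immediately.

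First I would exploit functoriality of the Frattini quotient. Since $\Phi(G)=G_2G^p$ maps into $\Phi(G/N)=(G/N)_2(G/N)^p$ under the reduction $G\to G/N$, there is an induced surjective $\F_p$-linear map $q:V\to W$, where $V=G/\Phi(G)$ and $W=(G/N)/\Phi(G/N)$. The hypothesis $N\neq G$ makes $G/N$ a non-trivial finite $p$-group, so that $\Phi(G/N)\neq G/N$ and $W$ is a non-zero $\F_p$-vector space. Tracing the definition of the induced automorphism on $G/N$, one checks that for every $\alpha\in\Int(G)$ the map $q$ intertwines the action of $\alpha$ on $V$ with that of $\pi(\alpha)$ on $W$.

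Next I would compare the scalars given by the two characters. By Lemma \ref{intense vector space}, the automorphism $\alpha$ acts on $V$ by scalar multiplication by $\chi_G(\alpha)$, while $\pi(\alpha)$ acts on $W$ by scalar multiplication by $\chi_{G/N}(\pi(\alpha))$. Fixing any $v\in V$ with $q(v)\neq 0$, which exists because $q$ is surjective onto a non-zero space, the equivariance of $q$ yields
\[
\chi_G(\alpha)\,q(v)\;=\;q\bigl(\alpha(v)\bigr)\;=\;\pi(\alpha)\bigl(q(v)\bigr)\;=\;\chi_{G/N}(\pi(\alpha))\,q(v),
\]
and, since $q(v)\neq 0$, I can cancel to obtain $\chi_G(\alpha)=\chi_{G/N}(\pi(\alpha))$ in $\Z_p^*$. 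Hence $\chi_G=\chi_{G/N}\circ\pi$ and consequently $\chi_G(\Int(G))\subseteq\chi_{G/N}(\Int(G/N))$.

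Finally, both $\chi_G(\Int(G))$ and $\chi_{G/N}(\Int(G/N))$ are subgroups of the cyclic group $\omega(\F_p^*)$ of order $p-1$, so Lagrange forces the order of the former to divide that of the latter; by definition these orders are $\inte(G)$ and $\inte(G/N)$, giving the claim. There is no real obstacle here: the only subtlety to watch is the hypothesis $N\neq G$, which is needed precisely to keep $W$ non-zero so that the cancellation step is legal.
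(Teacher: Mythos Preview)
Your proposal is correct and follows essentially the same approach as the paper: both establish the factorization $\chi_G=\chi_{G/N}\circ\pi$ by comparing the actions on the Frattini quotients, then conclude by inclusion of images. The paper phrases this more tersely via the isomorphism $(G/N)/\Phi(G/N)\cong G/(\Phi(G)N)$ and Lemma \ref{subgroup times frattini}, whereas you make the scalar comparison explicit; the only step you leave implicit is that the cancellation on $W$ a priori gives equality in $\F_p^*$, and one then lifts to $\Z_p^*$ using that both values lie in $\omega(\F_p^*)$.
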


\begin{proof}
Assume $N\neq G$; then $G$ is non-trivial.
Let $\phi:\Int(G)\rightarrow\Int(G/N)$ be as in Lemma \ref{intense properties}($2$). 
The subgroup $N\Phi(G)$ is different from $G$, thanks to Lemma \ref{subgroup times frattini}, and therefore $\Phi(G)N/N\neq G/N$.
The groups $(G/N)/\Phi(G/N)$ and $G/(\Phi(G)N)$ being isomorphic (and non-trivial!), it follows that
$\chi_G=\chi_{G/N}\circ\phi$.
The image of $\chi_G$ is a subgroup of the image of $\chi_{G/N}$ and thus $\inte(G)$ divides $\inte(G/N)$.
\end{proof}

\noindent
We recall that a group $A$ acts through a character on a finite abelian $p$-group $G$ if there exists a homomorphism $\chi:A\rightarrow\Z_p^*$ such that, for all $a\in A$ and $x\in G$, one has $ax=\chi(a)x$. For more details, see Section \ref{section characters}.

\begin{lemma}\label{abelian case}
Let $p$ be a prime number and let $G$ be a finite abelian $p$-group.
Let $\alpha$ be intense of order dividing $\inte(G)$ and write $\chi={\chi_G}_{|\gen{\alpha}}$.
Then $\gen{\alpha}$ acts on $G$ through $\chi$ and, if $G$ is non-trivial, then
$\inte(G)=p-1$. 
\end{lemma}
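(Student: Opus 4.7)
The plan is to establish that $\alpha$ acts on $G$ by scalar multiplication by $\chi(\alpha)\in\Z_p^*$ and then, in the non-trivial case, to exhibit enough such scalar automorphisms to force $\inte(G)=p-1$. Since $G$ is abelian, every subgroup is normal, so intensity of $\alpha$ forces $\alpha(\gen{x})=\gen{x}$ and therefore $\alpha(x)\in\gen{x}$ for each $x\in G$. I will prove by induction on $|G|$ that $\alpha(x)=\chi(\alpha)x$ for every $x\in G$. The base case $pG=0$ is the elementary abelian one: here $G=G/\Phi(G)$, and by the construction of $\chi_G$ together with the uniqueness of $\lambda$ in Lemma \ref{intense vector space}, the action of $\alpha$ on $G$ is through $\chi$.

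For the inductive step assume $pG\neq 0$. Since $pG=\Phi(G)$ is characteristic it is $\alpha$-stable, and every subgroup of $pG$ is a subgroup of $G$ and hence preserved by $\alpha$; so $\alpha_{|pG}$ is an intense automorphism of $pG$. Its order is coprime to $p$ (as $\inte(G)\mid p-1$), so by Lemma \ref{intense complement} it divides $\inte(pG)$. The character ${\chi_{pG}}_{|\gen{\alpha_{|pG}}}$ is determined by the action of $\alpha_{|pG}$ on $pG/p^2G=pG/\Phi(pG)$, and from $\alpha(px)=p\alpha(x)\equiv\chi(\alpha)\cdot px\pmod{p^2G}$ combined with the uniqueness part of Lemma \ref{intense vector space}, this character coincides with $\chi$. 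The induction hypothesis then yields $\alpha(y)=\chi(\alpha)y$ for every $y\in pG$.

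To lift this conclusion to $G$, take $x\in G\setminus pG$. Any integer $n$ with $nx\in pG$ must be divisible by $p$, because otherwise $n$ would be a unit modulo $|x|$ and $x=n^{-1}(nx)\in pG$; hence $\gen{x}\cap pG=p\gen{x}$. The element $u:=\alpha(x)-\chi(\alpha)x$ lies both in $\gen{x}$ (from $\alpha(x)\in\gen{x}$) and in $pG$ (from the action of $\alpha$ on $G/\Phi(G)$), and therefore in $p\gen{x}\subseteq pG$. By the conclusion just obtained, $\alpha(u)=\chi(\alpha)u$, and a straightforward induction on $j$ yields
\[
\alpha^j(x)=\chi(\alpha)^jx+j\,\chi(\alpha)^{j-1}u.
\]
Taking $j$ equal to the order $r$ of $\alpha$ and using $\chi(\alpha)^r=\chi(\alpha^r)=1$ gives $r\chi(\alpha)^{r-1}u=0$; since $r$ is coprime to $p$ and $\chi(\alpha)\in\Z_p^*$, the scalar $r\chi(\alpha)^{r-1}$ is a unit of $\Z_p$, forcing $u=0$ and hence $\alpha(x)=\chi(\alpha)x$.

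For the intensity assertion, suppose $G$ is non-trivial. Lemma \ref{intense complement} already gives $\inte(G)\leq p-1$. For the reverse inequality, observe that for each $c\in\F_p^*$ the scalar multiplication $x\mapsto\omega(c)x$ is an automorphism of $G$ that preserves every subgroup, is therefore intense, and maps to $\omega(c)$ under $\chi_G$; as $c$ varies, this exhibits $\omega(\F_p^*)$ inside the image of $\chi_G$, yielding $\inte(G)=p-1$. The hard part of the proof is the bootstrap step $u=0$, where one must combine in just the right way the three facts that $\alpha$ preserves each cyclic subgroup, that $\alpha$ already acts as the correct scalar on $pG$, and that $\alpha$ has order coprime to $p$.
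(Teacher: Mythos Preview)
Your proof is correct but takes a genuinely different route from the paper's. The paper first establishes $\inte(G)=p-1$ by exhibiting the scalar multiplications $x\mapsto\omega(c)x$ as intense automorphisms, then writes $\Int(G)=\ker\chi_G\rtimes\Omega$ with $\Omega$ the image of $\omega(\F_p^*)$, and invokes the conjugacy part of Schur--Zassenhaus to write $\alpha=\gamma\beta^m\gamma^{-1}$ with $\gamma\in\ker\chi_G$ and $\beta^m\in\Omega$; since every automorphism of an abelian $p$-group is $\Z_p$-linear and $\beta^m$ is literally scalar multiplication by $\chi_G(\beta^m)=\chi(\alpha)$, conjugation by $\gamma$ preserves this scalar action, giving the result in one stroke. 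Your argument instead proceeds by induction on $|G|$, reducing to $pG$ and then eliminating the error term $u=\alpha(x)-\chi(\alpha)x$ via the identity $\alpha^j(x)=\chi(\alpha)^jx+j\chi(\alpha)^{j-1}u$ evaluated at $j=\mathrm{ord}(\alpha)$. The paper's approach is shorter and more structural, while yours is more elementary and self-contained: it avoids Schur--Zassenhaus entirely and makes no appeal to the existence of a full complement $\Omega$, extracting everything from the single automorphism $\alpha$ and its finite $p'$-order.
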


\begin{proof}
Write $A=\gen{\alpha}$.
If $G$ is the trivial group, then the only automorphism of $G$ is the identity, which is intense. Assume now $G$ is non-trivial. 
The group $\omega(\F_p^*)$ acts on $G$ (as described at the beginning of Section \ref{section characters}) via intense automorphisms 
and it induces scalar multiplication by elements of $\F_p^*$ on $G/\Phi(G)$. The image of the intense character of $G$ is thus $\omega(\F_p^*)$, and so, $\inte(G)=p-1$.
%We prove that $p-1$ divides $\Int(G)$.
%The group $\mu_{p-1}(\Z_p^*)$ acts on $G$ as described in Section \ref{section characters} and, by definition of the action, the homomorphism $\Z_p\rightarrow\End(G)$ restricts to a homomorphism 
%$\phi:\mu_{p-1}(\Z_p^*)\rightarrow\Int(G)$. Let now $m,n\in\mu_{p-1}(\Z_p^*)$ be such that, for all $x\in G$, one has $mx=nx$. It follows that $m-n$ belongs to the kernel of the action $\Z_p\rightarrow\End(G)$ and, in particular, $m\equiv n\bmod p$. From the fact that $m$ and $n$ are elements of $\mu_{p-1}(\Z_p^*)$, it follows that $m=n$. The homomorphism $\phi:\mu_{p-1}(\Z_p^*)\rightarrow\Int(G)$ is injective and thus $\inte(G)=p-1$. In particular, $\phi(\mu_{p-1}(\Z_p^*))$ is a complement of $\ker\chi_G$ in $\Int(G)$, where $\chi_G$ is the intense character of $G$.
%To conclude, let $(\alpha,\chi)$ be an intense pair of $G$. Then there exist $m\in\phi(\mu_{p-1}(\Z_p^*))$ and $\tau\in\ker\chi_G$ such that $\alpha=m\tau$. Then for all $x\in G$, one has 
%$\alpha(x)=m\tau(x)=mx=\chi_G(m)x$. The elements $\chi(\alpha)$ and $\chi_G(m)$ are equal, because both $\alpha$ and $\phi(\mu_{p-1}(\Z_p^*))$ are complements of $\ker\chi_G$ in $\Int(G)$, and we have proven that $\gen{\alpha}$ acts on $G$ through $\chi$.
Let now $\Omega$ denote the image of $\omega(\F_p^*)\rightarrow\Int(G)$ and write $\Omega=\gen{\beta}$. Then 
$\Int(G)=\ker\chi_G\rtimes\Omega$, and, as a consequence of Schur-Zassenhaus, there exist $m\in\Z_{\geq 0}$ and
$\gamma\in\ker\chi_G$ such that $\alpha=\gamma\beta^m\gamma^{-1}$. 
We get
$$\chi(\alpha)=\chi_G(\alpha)=\chi_G(\gamma\beta^m\gamma^{-1})=\chi_G(\beta^m).$$
Since each homomorphism of abelian groups is $\Z_p$-linear and $\Omega$ acts on $G$ through ${\chi_G}_{|\Omega}$, the group $A$ acts on $G$ through $\chi$.
\end{proof}

\noindent
We remark that Proposition \ref{proposition abelian} is a special case of Lemma \ref{abelian case}. Moreover,
Theorem \ref{theorem abelian} is proven by combining Lemmas \ref{formulation}, \ref{intense complement}, and Proposition \ref{proposition abelian}.

\begin{corollary}\label{centre one character}
\pgp\
Let moreover $\alpha$ be an intense automorphism of $G$ of order dividing $\inte(G)$.
Then $\gen{\alpha}$ acts on the centre of $G$ through a character $\gen{\alpha}\rightarrow\Z_p^*$.
\end{corollary}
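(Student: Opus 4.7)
The plan is to reduce to the abelian case by restricting $\alpha$ to $\ZG(G)$. First I would dispose of the trivial case: if $G = 1$, then $\ZG(G) = 1$ and the trivial character works. So I can assume $G$ is non-trivial, whence $\ZG(G)$ is a non-trivial finite abelian $p$-group.

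Next I would apply Lemma \ref{intense properties}(3) with $N = \ZG(G)$: since $\ZG(G) \subseteq \ZG(G)$, this tells us that the restriction map $\Int(G)\rightarrow\Aut(\ZG(G))$, $\alpha \mapsto \alpha_{|\ZG(G)}$, takes values in $\Int(\ZG(G))$. So $\alpha_{|\ZG(G)}$ is an intense automorphism of the finite abelian $p$-group $\ZG(G)$, and its order divides the order of $\alpha$, which by hypothesis divides $\inte(G)$.

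The key numerical step is then to invoke Lemmas \ref{intense complement} and Proposition \ref{proposition abelian}: the first says $\inte(G)$ divides $p-1$, and the second says $\inte(\ZG(G)) = p-1$ since $\ZG(G)$ is non-trivial abelian. Hence the order of $\alpha_{|\ZG(G)}$ divides $\inte(\ZG(G))$, so Lemma \ref{abelian case} applies to $\ZG(G)$ and $\alpha_{|\ZG(G)}$: the group $\gen{\alpha_{|\ZG(G)}}$ acts on $\ZG(G)$ through the restriction of the intense character $\chi_{\ZG(G)}$ to $\gen{\alpha_{|\ZG(G)}}$.

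Finally I would pull this back to $\gen{\alpha}$: composing the natural surjection $\gen{\alpha}\rightarrow\gen{\alpha_{|\ZG(G)}}$ with the character $\chi_{\ZG(G)}$ yields a homomorphism $\psi:\gen{\alpha}\rightarrow\Z_p^*$, and by construction $\gen{\alpha}$ acts on $\ZG(G)$ through $\psi$. I do not expect any serious obstacle here: the whole point of the proof is that the relevant order-divisibility conditions are already forced by the two prior lemmas, so Lemma \ref{abelian case} can be invoked cleanly, and the only thing to check is that the bookkeeping about orders and characters lines up.
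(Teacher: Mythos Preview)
Your proposal is correct and follows essentially the same route as the paper: restrict $\alpha$ to $\ZG(G)$ via Lemma~\ref{intense properties}(3), invoke Lemma~\ref{abelian case} on the abelian group $\ZG(G)$, and pull the resulting character back along the surjection $\gen{\alpha}\to\gen{\alpha_{|\ZG(G)}}$. You are a bit more explicit than the paper in justifying the order-divisibility hypothesis needed for Lemma~\ref{abelian case} (via Lemma~\ref{intense complement} and Proposition~\ref{proposition abelian}), but this is exactly the argument the paper leaves implicit.
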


\begin{proof}
Let $\zeta:\Int(G)\rightarrow\Int(\ZG(G))$ be the map from Lemma \ref{intense properties}($3$) and 
define $\sigma={\chi_{\ZG(G)}}_{|\gen{\zeta(\alpha)}}\circ\zeta_{|\gen{\alpha}}$. Lemma \ref{formulation} yields that $\gen{\alpha}$ acts on $\ZG(G)$ through $\sigma$.
%By Proposition \ref{abelian case}, the action of $\gen{\beta}$ on $\ZG(G)$ is through $\chi$, where $\chi$ is the restriction to $\gen{\beta}$ of the intense character $\Int(\ZG(G))\rightarrow\Z_p^*$ of $\ZG(G)$. 
%Define now $\sigma=\chi\circ\zeta_{|\gen{\alpha}}$. Then 
%$\sigma:\gen{\alpha}\rightarrow\Z_p^*$ is a homomorphism and, for all $z\in\ZG(G)$, one has 
%$\alpha(z)=\zeta(\alpha)(z)=\beta(z)=\chi(\beta)z=\chi(\zeta(\alpha))z=
%\sigma(\alpha)z$.
\end{proof}

\begin{lemma}\label{action chi^i}
Let $p$ be a prime number and let $G$ be a finite $p$-group. 
Let $\alpha$ be intense of order dividing $\inte(G)$ and write $\chi={\chi_G}_{|\gen{\alpha}}$.
Denote by $(G_i)_{i\geq 1}$ the lower central series of $G$.
Then, for all $i\in\Z_{\geq 1}$, the induced action of $\gen{\alpha}$ on $G_i/G_{i+1}$ is through $\chi^i$.
\end{lemma}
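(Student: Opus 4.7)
The plan is to reduce the statement to Lemma \ref{action chi^i general}, for which it suffices to establish the base case $i=1$: namely, that $\gen{\alpha}$ acts on $G/G_2$ through $\chi$. Once this is shown, Lemma \ref{action chi^i general} applied with $A=\gen{\alpha}$ immediately gives that the induced action on $G_i/G_{i+1}$ is through $\chi^i$ for every $i\geq 1$.

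To handle the base case, I would first dispose of the trivial group (where the claim is vacuous) and then assume $G$ is non-trivial, so that $G_2\neq G$ and $G/G_2$ is a non-trivial finite abelian $p$-group. By Lemma \ref{intense properties}($2$), the canonical projection induces a homomorphism $\phi:\Int(G)\rightarrow\Int(G/G_2)$, and $\phi(\alpha)$ is itself intense on $G/G_2$. The proof of Lemma \ref{intensity of quotients} shows moreover that $\chi_G=\chi_{G/G_2}\circ\phi$, so in particular the order of $\phi(\alpha)$ divides $\inte(G/G_2)$. Since $G/G_2$ is abelian, Lemma \ref{abelian case} applies and tells us that $\gen{\phi(\alpha)}$ acts on $G/G_2$ through ${\chi_{G/G_2}}_{|\gen{\phi(\alpha)}}$. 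Pulling this back along $\phi$, we conclude that $\gen{\alpha}$ acts on $G/G_2$ through $\chi_{G/G_2}\circ\phi_{|\gen{\alpha}}={\chi_G}_{|\gen{\alpha}}=\chi$, which is exactly the base case.

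There is no real obstacle here: the statement is a clean combination of the abelian case (Lemma \ref{abelian case}), the factoring of the intense character through a quotient (hidden inside Lemma \ref{intensity of quotients}), and the propagation of character actions down the lower central series (Lemma \ref{action chi^i general}). The only delicate point is bookkeeping to ensure that $\phi(\alpha)$ really has order dividing $\inte(G/G_2)$ so that Lemma \ref{abelian case} can be invoked; this follows because the order of $\phi(\alpha)$ divides $\ord(\alpha)$, which divides $\inte(G)$, which in turn divides $\inte(G/G_2)$ by Lemma \ref{intensity of quotients}.
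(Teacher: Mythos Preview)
Your proof is correct and follows essentially the same approach as the paper: establish the base case that $\gen{\alpha}$ acts on $G/G_2$ through $\chi$ via Lemma \ref{abelian case}, then invoke Lemma \ref{action chi^i general}. The paper's proof is simply terser, omitting the bookkeeping you spell out about the factorization $\chi_G=\chi_{G/G_2}\circ\phi$ and the order condition needed to apply Lemma \ref{abelian case}.
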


\begin{proof}
Denote $A=\gen{\alpha}$.
As a consequence of Proposition \ref{intense properties}($2$), the action of $A$ on $G$ induces an action of $A$ on $G/G_2$. By Proposition \ref{abelian case}, the action of $A$ on $G/G_2$ is through $\chi$. We now apply Lemma \ref{action chi^i general}.
\end{proof}

% CHAPTER 4: CLASS 2

\chapter{Intensity of groups of class 2}\label{CH class 2}

\noindent
The main goal of this thesis, as stated in Section \ref{section main}, is to classify all finite $p$-groups whose group of intense automorphisms is not itself a $p$-group. 
We will proceed to a classification by separating the cases according to the class of the $p$-groups. We remind the reader that a finite $p$-group is always nilpotent and that its (nilpotency) class is defined to be the number of non-trivial successive quotients of the lower central series (see Section \ref{section commutators}). If the class is $0$, the group is trivial and the intensity is $1$. For the case in which the class is $1$ (non-trivial abelian case) we refer to Chapter \ref{CH formulation}. In this chapter we study the case in which the class is equal to $2$. We prove the following main result.

\begin{theorem}\label{theorem class2 complete}
Let $p$ be a prime number and let $G$ be a finite $p$-group of class $2$. 
Then the following are equivalent.
\begin{itemize}
 \item[$1$.] One has $\inte(G)\neq 1$.
 \item[$2$.] The group $G$ is extraspecial of exponent $p$.
 \item[$3$.] The prime $p$ is odd and $\inte(G)=p-1$.
\end{itemize}
\end{theorem}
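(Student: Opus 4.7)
The implication $(3)\Rightarrow (1)$ is immediate. For $(2)\Rightarrow (3)$, my plan is to invoke Lemma \ref{get extraspecial} to realise $G$ as $G(Z,Y,X,\theta)$ with $\dim_{\F_p} Z = 1$; for each $\lambda\in\F_p^*$, the triple $f(x)=\lambda x$, $g(y)=\lambda y$, $h(z)=\lambda^2 z$ satisfies the commutativity of Proposition \ref{homom extraspecial} by bilinearity of $\theta$, producing an automorphism $\alpha_\lambda$ inducing scalar multiplication by $\lambda$ on $G/G_2$. To prove $\alpha_\lambda$ is intense I would show that the $G$-conjugacy class of a subgroup $H\leq G$ is determined by the pair $(\pi(H),\, H\cap G_2)$, where $\pi\colon G\to G/G_2$: conjugation preserves both (as $G_2$ is central), and two subgroups sharing this pair differ by a function $\pi(H)\to G_2/(H\cap G_2)$ that, since $\dim G_2=1$, is always realisable through the non-degenerate commutator form. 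Since $\alpha_\lambda$ evidently fixes the pair, $\alpha_\lambda(H)$ is conjugate to $H$; composing with the Teichm\"uller character then gives $\inte(G)=p-1$, and this in turn forces $p$ odd.

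For the hard direction $(1)\Rightarrow (2)$, I would pick $\alpha\in\Int(G)$ of order coprime to $p$ with $\chi:=\chi_G|_{\langle\alpha\rangle}\neq 1$, available by Schur--Zassenhaus and the hypothesis $\inte(G)>1$, so in particular $p$ is odd. First, Lemma \ref{action chi^i} says $\alpha$ acts on $G/G_2$ through $\chi$ and on $G_2$ through $\chi^2$, so the subquotient $\ZG(G)/G_2\subseteq G/G_2$ would admit both characters; as $\chi\neq\chi^2$, Lemma \ref{distinct characters on same gp} gives $\ZG(G)=G_2$. Next I would establish that $G$ has exponent $p$, making systematic use of Theorem \ref{lambda mu} applied to the abelian sections $\langle c,G_2\rangle$ to produce $\alpha$-eigenvectors $c\in G$ whose order equals that of $\bar c\in G/G_2$. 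For such $c$ and $z\in G_2$ the cyclic subgroup $L=\langle cz\rangle$ is not $\alpha$-stable, and a short computation collapses the condition for an $\alpha$-stable conjugate to $z^{-1}\in [G,c]$. Since $[G,c]$ is annihilated by the order of $\bar c$, specialising first to $\bar c$ of order $p$ and letting $z$ range forces $G_2$ to have exponent $p$; a parallel analysis with a hypothetical $\bar c$ of order $p^2$ then forces $G/G_2$ to have exponent $p$ as well. By the Hall--Petrescu formula (Corollary \ref{p map petrescu}), in the resulting class-$2$, odd-$p$, $G_2^p=1$ setting the $p$-th power map is a group homomorphism factoring through $\overline{\rho}\colon G/G_2\to G_2$; $\alpha$-equivariance between the characters $\chi$ and $\chi^2$ multiplies $\overline{\rho}$ by the unit $\chi(\alpha)(1-\chi(\alpha))\in\F_p^*$, so $\overline{\rho}=0$ and $G^p=1$.

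It remains to show $|G_2|=p$. With $G$ of exponent $p$, the commutator map becomes a non-degenerate alternating $\F_p$-bilinear form $\gamma\colon V\times V\to G_2$ with $V:=G/G_2$, and revisiting the $L=\langle cz\rangle$ argument with $z$ of order $p$ yields the stronger conclusion that $\gamma_{\bar c}$ is surjective for every $\bar c\neq 0$. To exclude $\dim G_2\geq 2$, I would exhibit subgroups with no $\alpha$-stable conjugate: for linearly independent commuting classes $\bar c_1,\bar c_2\in V$, the abelian subgroup $H=\langle c_1,\, c_2 z\rangle$ has its $\alpha$-stability condition on conjugates reduce to $[g,c_1]=0$ together with $[g,c_2]$ taking a prescribed value depending on $z$, and bilinearity plus alternation of $\gamma$ collapse solvability to $-z\in\gamma(\bar c_1^\perp,\bar c_2)$, a proper subspace of $G_2$ by a dimension count exploiting that both $\bar c_1$ and $\bar c_2$ lie in $\bar c_1^\perp\cap\bar c_2^\perp$. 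Choosing $z\in G_2$ outside this proper subspace defeats intensity and yields the desired contradiction. The main obstacle will be ensuring this obstruction is robust across all geometries of $(V,G_2,\gamma)$: in the borderline configurations where no $2$-dimensional isotropic subspace exists, I would replace the commuting generators by a pair of non-commuting $c_1,c_2$ and use that the single commutator $\langle [c_1,c_2]\rangle$ sits strictly inside the higher-dimensional $G_2$, again trapping a suitable $z$ outside the realisable values. Once $|G_2|=p$ is in hand, combining it with $\ZG(G)=G_2$ and exponent $p$ gives the extraspecial structure.
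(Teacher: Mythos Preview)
Your $(2)\Rightarrow(3)$ matches the paper's, and your cyclic-subgroup route to exponent~$p$ is a valid alternative to Lemma~\ref{elementary abelian}. The gap is in showing $|G_2|=p$. With $V=G/G_2$, $Z=G_2$, $\gamma$ the commutator form, and $\bar c_1,\bar c_2$ spanning a $2$-dimensional isotropic subspace, your computation correctly gives that $H_z=\langle c_1,c_2z\rangle$ has an $\alpha$-stable conjugate iff $-z\in\gamma(\bar c_1^\perp,\bar c_2)$. But intensity is the \emph{hypothesis}, so this holds for every $z$: what you have proved is $\gamma(\bar c_1^\perp,\bar c_2)=Z$ --- the $2$-dimensional instance of Lemma~\ref{isotropic surjective} --- not an obstruction. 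Your dimension count does not help: from $\dim\bar c_1^\perp=\dim V-\dim Z$ and $\dim(\bar c_1^\perp\cap\bar c_2^\perp)\geq 2$ you only get $\dim\gamma(\bar c_1^\perp,\bar c_2)\leq\dim V-\dim Z-2$, while the equality just established forces $\dim V\geq 2\dim Z+2$, so the bound is $\geq\dim Z$, never strict. Your non-commuting fallback does give an obstruction, but only when no $2$-dimensional isotropic exists; after reducing to $\dim Z=2$ by a quotient (Lemma~\ref{intensity of quotients}), that case is vacuous: a $1$-dimensional maximal isotropic together with your own surjectivity of $\gamma_{\bar c}$ forces $\dim V=3$, and Lemma~\ref{wedge} then finds a pure wedge in the $1$-dimensional kernel of $\bigwedge^2V\to Z$, i.e.\ a $2$-dimensional isotropic after all.

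The paper's device (Lemma~\ref{dimW 2}) is to run this wedge trick one level higher: with $\dim Z=2$ and $T$ maximal isotropic of dimension $t$, iterated surjectivity of $\phi_T$ (Lemma~\ref{isotropic surjective}) gives $\dim V=3t$; for $L\subset T$ of codimension~$1$ one computes $\dim\ker\phi_L=3$, and the same $\bigwedge^2$-versus-$Z$ count produces an isotropic subspace of dimension $t+1$, contradicting maximality of $T$. That is the missing ingredient.
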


%\begin{theorem}\label{theorem on layers in class2}
%Let $p$ be a prime number and let $G$ be a finite $p$-group. 
%Let moreover $(\alpha,\chi)$ be a maximal intense pair of $G$ and denote by %$(G_i)_{i\geq 1}$ the lower central series of $G$.
%Then the following hold. 
%\begin{itemize}
%\item[$1$.] For all $i\in\Z_{\geq 1}$, the action of $\gen{\alpha}$ on $G_i/G_{i+1}$ 				    is through $\chi^i$.
%\item[$2$.] Assume that $G$ is not abelian and that $\inte(G)>1$. Then each quotient $G_i/G_{i+2}$ has exponent dividing $p$.
%\end{itemize}
%\end{theorem}

\section{Small commutator subgroup}\label{section small commutator}

\noindent
Let $p$ be a prime number.
We recall that a group $A$ acts on a finite abelian $p$-group $G$ through a character if there exists a homomorphism 
$\chi:A\rightarrow\Z_p^*$ such that, for all $x\in G$, $a\in A$, one has $ax=\chi(a)x$. For more detail about actions through characters see Section \ref{section characters}. 
\vspace{8pt} \\
\noindent
Until the end of Section \ref{section small commutator}, the following assumptions will be valid.
Let $p$ be a prime number and let $G$ denote a finite $p$-group of nilpotency class $2$ (see Section \ref{section commutators}). Let moreover $\alpha$ be intense of order $\inte(G)$. Write $A=\gen{\alpha}$ and $\chi={\chi_G}_{|A}$. 
Assume that the intensity of $G$ is greater than $1$. It follows that $G$ is non-trivial and $p$ is odd (see Sections \ref{section main} and \ref{section abelian}). 
We will keep this notation until the end of this section, together with the one from the List of Symbols.

\begin{lemma}\label{chi^2}
Assume $G_2$ has exponent $p$.
Then $\Phi(G)$ is central and $A$ acts on $G_2$ is through $\chi^2$.
\end{lemma}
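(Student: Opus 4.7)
The plan is to invoke two earlier results, one for each half of the statement. First, since $G$ has class $2$ and by assumption $G_2$ has exponent dividing $p$, I would apply Lemma \ref{class 2 frattini} directly to conclude that $\Phi(G)\subseteq\ZG(G)$, i.e.\ $\Phi(G)$ is central.

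For the second assertion, I would observe that the class of $G$ being $2$ forces $G_3=1$, so the quotient $G_2/G_3$ is just $G_2$ itself. Now Lemma \ref{action chi^i} tells us that for every $i\geq 1$ the induced action of $A$ on $G_i/G_{i+1}$ is through $\chi^i$; specialising to $i=2$, this says exactly that $A$ acts on $G_2$ through $\chi^2$.

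There is no serious obstacle here: the lemma is a direct repackaging of Lemmas \ref{class 2 frattini} and \ref{action chi^i} under the standing hypotheses of the section (class $2$, $G_2$ of exponent $p$). The only thing to double-check is that the hypotheses of Lemma \ref{action chi^i} are satisfied, which is immediate because $\alpha$ is intense of order dividing $\inte(G)$ and $\chi$ is defined as ${\chi_G}_{|A}$.
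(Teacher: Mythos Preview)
Your proposal is correct and matches the paper's proof exactly: the paper simply cites Lemma \ref{class 2 frattini} for the first claim and Lemma \ref{action chi^i} for the second, and your additional remarks (that $G_3=1$ identifies $G_2/G_3$ with $G_2$, and that the hypotheses of Lemma \ref{action chi^i} are met) just make explicit what the paper leaves implicit.
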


\begin{proof}
The Frattini subgroup of $G$ is central by Lemma \ref{class 2 frattini} and
$A$ acts on $G_2$ through $\chi^2$ by Lemma \ref{action chi^i}.
\end{proof}

\begin{lemma}\label{chi neq chi2}
The homomorphisms $\chi,\chi^2:A\rightarrow\Z_p^*$ are distinct.
\end{lemma}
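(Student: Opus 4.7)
The plan is to show that the character $\chi$ itself is non-trivial on $A$; this immediately yields $\chi\neq\chi^2$, because any equation $\chi(a)=\chi(a)^2$ in the multiplicative group $\Z_p^*$ forces $\chi(a)=1$, so $\chi=\chi^2$ would mean $\chi$ is the trivial character. Hence the whole problem reduces to producing some element $a\in A$ with $\chi(a)\neq 1$.

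To do this I would rely on the structural results of Section~\ref{section main}. By Lemma~\ref{intense complement}, $\inte(G)$ divides $p-1$ and is therefore coprime to $p$; by hypothesis $\inte(G)>1$, so $|A|=\inte(G)>1$ as well. By Lemma~\ref{formulation}, $\ker\chi_G$ is the unique Sylow $p$-subgroup of $\Int(G)$. Now $\ker\chi$ is a subgroup of $A\cap\ker\chi_G$, so its order is simultaneously a power of $p$ and a divisor of $\inte(G)$; since $\gcd(\inte(G),p)=1$, this forces $\ker\chi=\graffe{1}$, that is, $\chi$ is injective on $A$. Because $|A|>1$, the character $\chi$ is then non-trivial, and the previous paragraph concludes the proof.

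There is really no obstacle here: once the Sylow/complement decomposition of $\Int(G)$ from Section~\ref{section main} is in place, the result is a bookkeeping observation about characters valued in $\Z_p^*$. I would not expect to invoke anything from Section~\ref{section small commutator} itself, nor any assumption on the exponent of $G_2$; the lemma is purely a consequence of $\inte(G)>1$ together with the coprimality of $\inte(G)$ and $p$.
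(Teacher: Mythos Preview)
Your proof is correct and is essentially the same as the paper's, just with the details spelled out. The paper compresses the argument to: assume $\chi=\chi^2$, deduce $\chi(\alpha)=1$, and conclude $\inte(G)=1$, a contradiction; your second paragraph is exactly the justification of that last implication via the coprimality of $|A|=\inte(G)$ and $p$ together with Lemma~\ref{formulation}.
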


\begin{proof}
Assume $\chi=\chi^2$. Then $\chi(\alpha)=\chi(\alpha)^2$ and $\chi(\alpha)=1$. It follows that the intensity of $G$ is equal to $1$. Contradiction.
\end{proof}

\begin{lemma}\label{centre frattini lemma}\label{centre commutator lemma}
Assume $G_2$ has exponent $p$. Then $\ZG(G)=\Phi(G)=G_2$ and $A$ acts on $\ZG(G)$ through $\chi^2$.
\end{lemma}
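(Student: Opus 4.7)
The plan is to exploit the character-action machinery from Section~\ref{section characters} together with Corollary~\ref{centre one character} to force a contradiction if $\ZG(G)/G_2$ were non-trivial.

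First I would note the chain $G_2 \subseteq \Phi(G) \subseteq \ZG(G)$: the left inclusion is always true, and the right inclusion follows from Lemma~\ref{chi^2}, which tells us that $\Phi(G)$ is central. The task therefore reduces to showing $\ZG(G) \subseteq G_2$, because then the whole chain collapses to $G_2$ and the statement about the action on $\ZG(G)$ is immediate from the second half of Lemma~\ref{chi^2}.

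Now, by Lemma~\ref{action chi^i}, $A$ acts on $G/G_2$ through $\chi$ and on $G_2/G_3 = G_2$ through $\chi^2$. Since $\ZG(G)$ is characteristic, it is $A$-stable, and hence $A$ acts on $\ZG(G)/G_2$ as an $A$-stable subgroup of $G/G_2$, which means this action is through $\chi$. On the other hand, by Corollary~\ref{centre one character}, the action of $A$ on the whole centre $\ZG(G)$ is through some single character $\sigma\colon A\to\Z_p^*$.

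Suppose for contradiction that $\ZG(G)/G_2 \neq 0$. Then $A$ acts on this non-trivial abelian $p$-group through both $\sigma$ (induced from $\sigma$) and $\chi$, so Lemma~\ref{distinct characters on same gp} forces $\sigma = \chi$. At the same time, $G_2$ is non-trivial (since $G$ has class $2$) and $A$ acts on $G_2 \subseteq \ZG(G)$ through both $\sigma$ and $\chi^2$; applying Lemma~\ref{distinct characters on same gp} again yields $\sigma = \chi^2$. Combining these gives $\chi = \chi^2$, contradicting Lemma~\ref{chi neq chi2}. Hence $\ZG(G) = G_2$, and consequently $\Phi(G) = G_2$ as well. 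The action of $A$ on $\ZG(G) = G_2$ is through $\chi^2$ by Lemma~\ref{chi^2}, completing the proof. The only delicate point is keeping track of which $A$-modules are non-trivial in order to legitimately invoke Lemma~\ref{distinct characters on same gp}; both $G_2$ and (hypothetically) $\ZG(G)/G_2$ need to be non-zero for the contradiction to land, and the first is guaranteed by the class hypothesis.
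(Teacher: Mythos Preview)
Your proof is correct and follows essentially the same approach as the paper: both use Corollary~\ref{centre one character} to get a single character $\sigma$ on $\ZG(G)$, then compare it to $\chi$ (via the quotient $\ZG(G)/G_2\subseteq G/G_2$) and to $\chi^2$ (via the subgroup $G_2\subseteq\ZG(G)$), invoking Lemma~\ref{distinct characters on same gp} and Lemma~\ref{chi neq chi2} to force $\ZG(G)=G_2$. The only cosmetic difference is that the paper first pins down $\sigma=\chi^2$ unconditionally (since $G_2\neq 0$) and then applies Lemma~\ref{distinct characters on same gp} directly to the quotient, whereas you frame the same step as a contradiction.
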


\begin{proof}
The group $G_2$ is a non-trivial subgroup of $\ZG(G)$ and, by Lemma \ref{chi^2}, the group $A$ acts on $G_2$ through $\chi^2$. By Corollary 
\ref{centre one character}, the group $A$ acts on $\ZG(G)$ through a character and, as a consequence of Lemma \ref{distinct characters on same gp}, the action of $A$ on the centre is through $\chi^2$. On the other hand, by Lemma \ref{action chi^i}, the induced action of $A$ on $G/G_2$ is through $\chi$. The group $A$ acts hence on  $\ZG(G)/G_2$ both through $\chi$ and $\chi^2$. The characters $\chi$ and $\chi^2$ being distinct, Lemma \ref{distinct characters on same gp} yields $\ZG(G)=G_2$. 
By Lemma \ref{chi^2} the subgroup $\Phi(G)$ is central, and thus $G_2=\Phi(G)=\ZG(G)$.
\end{proof}

\begin{lemma}\label{extraspecial exponent}
Assume $G_2$ has order $p$. Then $G$ is an extraspecial group of exponent $p$.
\end{lemma}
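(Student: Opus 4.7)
My plan is to establish the two conclusions separately: first that $G$ is extraspecial, then that it has exponent $p$. Both will follow quickly from results already in the section, together with the character machinery from Chapter \ref{chapter actions}.

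For the extraspecial part, since $|G_2|=p$, the subgroup $G_2$ is cyclic of order $p$ and in particular has exponent $p$. Lemma \ref{centre commutator lemma} then applies and gives $\ZG(G)=G_2$. Since $G_2$ is cyclic of order $p$, this is exactly the definition of extraspecial (Definition \ref{def extraspecial}).

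For the exponent, the strategy is to show that the $p$-th power map $\rho\colon x\mapsto x^p$ must vanish, by exhibiting it as a map between two $A$-modules on which $A$ acts through distinct characters. First, by Lemma \ref{class 2 frattini}, $\Phi(G)\subseteq\ZG(G)=G_2$, so in particular $G^p\subseteq G_2$. Next, because $G$ has class $2$ and $p$ is odd, the Hall-Petrescu formula (Lemma \ref{hall-petrescu lemma}) gives, for any $x,y\in G$,
\[
(xy)^p = x^p y^p [y,x]^{\binom{p}{2}},
\]
and since $[y,x]\in G_2$ has order dividing $p$ while $p\mid\binom{p}{2}$, we conclude $(xy)^p=x^py^p$. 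Thus $\rho$ is a homomorphism, and since $\rho(G_2)=1$ it factors as a surjection $\overline{\rho}\colon G/G_2\twoheadrightarrow G^p$.

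The key observation is that $\overline{\rho}$ respects the action of $A$: indeed $\alpha(x^p)=\alpha(x)^p$. By Lemma \ref{action chi^i}, $A$ acts on $G/G_2$ through $\chi$, so by Lemma \ref{p-power characters} $A$ acts on $G^p$ through $\chi$. On the other hand, $G^p\subseteq G_2$ and, by Lemma \ref{action chi^i} again, $A$ acts on $G_2$ (and hence on $G^p$) through $\chi^2$. By Lemma \ref{chi neq chi2} the characters $\chi$ and $\chi^2$ are distinct, so Lemma \ref{distinct characters on same gp} forces $G^p=\{1\}$. The only potential obstacle is verifying that $\rho$ really is a homomorphism, which is precisely where the hypotheses ``$p$ odd'' and ``$|G_2|=p$'' come in via the Hall-Petrescu calculation above; everything else is an immediate assembly of earlier results.
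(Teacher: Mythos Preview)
Your proof is correct, and it takes a genuinely different route from the paper's argument for the exponent-$p$ claim. The paper argues by contradiction: it picks a hypothetical element $g$ of order $p^2$, observes that $H=\gen{g}$ then contains $G_2=\Phi(G)$ and is therefore normal, hence $A$-stable; it then applies Theorem \ref{lambda mu} to split $H$ as $(H/G_2)\oplus G_2$, contradicting cyclicity. You instead work globally: you show $\rho$ is a homomorphism (via Hall--Petrescu, using that $G_2$ has exponent $p$ and $p\mid\binom{p}{2}$), so $\overline{\rho}\colon G/G_2\twoheadrightarrow G^p$ is an $A$-equivariant surjection, and then kill $G^p$ directly with Lemma \ref{distinct characters on same gp} rather than the heavier splitting result. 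Your approach is more direct and avoids both the contradiction setup and Theorem \ref{lambda mu}; the paper's approach, on the other hand, does not need to verify that $\rho$ is a homomorphism, since it only works with the single cyclic subgroup $H$. One small remark: the Hall--Petrescu formula only gives $c_2\in G_2$, not literally $c_2=[y,x]$, but since your argument only uses $c_2\in G_2$ and $|G_2|=p$, this does not matter.
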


\begin{proof}
Thanks to Lemma \ref{centre commutator lemma} we are only left with showing that $G$ has exponent $p$. Assume by contradiction there exists $g\in G$ of order $p^2$ and write $H=\gen{g}$. Then $H^p$ has order $p$. Now, $H^p$ is contained in $\Phi(G)$ and, as a consequence of Lemma \ref{centre commutator lemma}, the Frattini subgroup of $G$ has itself order $p$. It follows that $H^p=\Phi(G)$ and, in particular, $H$ contains $\Phi(G)$. The group $G_2$ is equal to $\Phi(G)$, by Lemma \ref{centre frattini lemma}, so the group $H$ is normal. By Lemma \ref{intense properties}($1$), the subgroup $H$ is $A$-stable. As a consequence of Lemma \ref{action chi^i}, the actions of $A$ on $H/G_2$ and $G_2$ are respectively through $\chi$ and $\chi^2$ and, by Lemma \ref{chi neq chi2}, the characters $\chi$ and $\chi^2$ are distinct.
From Theorem \ref{lambda mu} it follows that the groups
$H$ and $(H/G_2)\oplus G_2$ are isomorphic. Contradiction.
\end{proof}

\begin{lemma}\label{elementary abelian}
Let $Q$ be a finite $p$-group of both class and intensity greater than $1$. Denote by $(Q_i)_{i\geq 1}$ the lower central series of $Q$. 
Then, for all $i\in\Z_{\geq 1}$, the exponent of $Q_i/Q_{i+1}$ divides $p$. 
\end{lemma}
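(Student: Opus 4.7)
The plan has two parts: a reduction to the case $i=1$, and a clever application of Lemma \ref{centre frattini lemma} to a well-chosen quotient to establish that base case.

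For the reduction, I argue by induction on $i\geq 1$, the base case being the desired statement for $i=1$. For the step $i \to i+1$, Lemma \ref{tensor LCS} provides a surjective homomorphism of abelian groups $Q/Q_2 \otimes Q_i/Q_{i+1} \twoheadrightarrow Q_{i+1}/Q_{i+2}$. As $Q/Q_2$ is annihilated by $p$ (by the inductive hypothesis), so is the tensor product, and hence so is its image.

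For the base case $i=1$, set $N = Q_2^p\,Q_3$ and consider $\tilde Q = Q/N$; I claim $\tilde Q$ satisfies the hypotheses of Lemma \ref{centre frattini lemma}. The commutator $\tilde Q_2 = Q_2/N$ has exponent dividing $p$ because $g^p \in Q_2^p \subseteq N$ for every $g \in Q_2$. The class of $\tilde Q$ is at most $2$, since $[Q, Q_2] = Q_3 \subseteq N$. It is at least $2$ because $Q_2 \neq N$: otherwise one would have $Q_2/Q_3 = (Q_2/Q_3)^p$, which forces the finite $p$-group $Q_2/Q_3$ to vanish; this gives $Q_2 = Q_3$, and then the nilpotency of $Q$ propagates this equality to $Q_2 = 1$, contradicting the class hypothesis. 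Finally, Lemma \ref{intensity of quotients} applied to the proper normal subgroup $N \subsetneq Q$ gives $\inte(\tilde Q) \geq \inte(Q) > 1$. Applying Lemma \ref{centre frattini lemma} to $\tilde Q$ now yields $\Phi(\tilde Q) = \tilde Q_2$. Since $N \subseteq \Phi(Q)$, this unfolds to $\Phi(Q) = Q_2$, and because $\Phi(Q) = Q^p\,Q_2$, we conclude $Q^p \subseteq Q_2$; equivalently, $Q/Q_2$ has exponent dividing $p$, as desired.

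The main subtlety in the argument is the identification of the correct subgroup $N$: it must be small enough that $\tilde Q_2$ has exponent $p$ (so as to trigger Lemma \ref{centre frattini lemma}), yet large enough that $\tilde Q$ retains both class exactly $2$ and intensity greater than $1$. The choice $N = Q_2^p\,Q_3$ is the minimal one achieving this, and once it is in place the machinery of Section \ref{section small commutator} essentially delivers the conclusion.
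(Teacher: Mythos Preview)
Your proof is correct and follows the same overall structure as the paper's: the inductive step via Lemma \ref{tensor LCS} is identical, and the base case $i=1$ is handled by passing to a class-$2$ quotient of intensity greater than~$1$ and invoking a lemma from Section~\ref{section small commutator}. The only difference lies in the choice of quotient and lemma for that base case. The paper takes a normal subgroup $M\subseteq Q_2$ of index $p$ (via Lemma~\ref{normal index p}), so that $\overline{Q}_2$ has \emph{order}~$p$, and then applies Lemma~\ref{extraspecial exponent} to conclude that $\overline{Q}/\overline{Q}_2\cong Q/Q_2$ is elementary abelian. You instead quotient by $N=Q_2^pQ_3$, so that $\tilde Q_2$ has \emph{exponent}~$p$, and apply the weaker Lemma~\ref{centre frattini lemma} to obtain $\Phi(\tilde Q)=\tilde Q_2$, hence $\Phi(Q)=Q_2$. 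Both routes are valid; yours uses a canonical (characteristic) quotient and a slightly earlier lemma, while the paper's reaches a sharper conclusion about the quotient (extraspecial of exponent~$p$) that is not needed here.
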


\begin{proof}
We work by induction on $i$ and we start by assuming $i=1$. Let $M$ be a normal subgroup of $Q$ that is contained in $Q_2$ with index $p$; the group $M$ exists by Lemma \ref{normal index p}.
Thanks to the isomorphism theorems, the groups $Q/Q_2$ and $(Q/M)/(Q_2/M)$ are isomorphic. 
We write $\overline{Q}=Q/M$ and use the bar notation for the subgroups of $\overline{Q}$.
Then $\overline{Q_2}=[\overline{Q},\overline{Q}]$ has order $p$ and
$\overline{Q}$ has intensity greater than $1$, by Lemma \ref{intensity of quotients}.
From Lemma \ref{extraspecial exponent}, it follows that 
$\overline{Q}/\overline{Q_2}$ is elementary abelian and therefore so is $Q/Q_2$.
Assume now that $i$ is greater than $1$ and that the result holds for all indices smaller than $i$.
The property of being annihilated by $p$ is preserved by tensor products and surjective homomorphisms so, as a consequence of Lemma \ref{tensor LCS}, the exponent of $Q_i/Q_{i+1}$ divides $p$. 
\end{proof}

\begin{corollary}\label{centre=commutator class2}
Let $Q$ be a finite $p$-group of nilpotency class $2$. If $\inte(Q)>1$, then $\ZG(Q)=Q_2$.
\end{corollary}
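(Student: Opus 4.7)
The plan is to reduce the statement to Lemma \ref{centre commutator lemma} (the ``centre commutator lemma''), which asserts that for a finite $p$-group $G$ of class $2$ and intensity greater than $1$, if $G_2$ has exponent $p$, then $\ZG(G) = G_2$. Since the inclusion $Q_2 \subseteq \ZG(Q)$ is automatic from $Q$ having class $2$ (indeed $[Q, Q_2] = Q_3 = 1$), the whole content lies in the reverse inclusion, and for that it suffices to check the exponent hypothesis on $Q_2$.

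First, I would note that the hypotheses of the corollary match those of Lemma \ref{elementary abelian}: the class of $Q$ is $2 > 1$ and $\inte(Q) > 1$. Applying that lemma with $i = 2$ gives that $Q_2/Q_3$ has exponent dividing $p$. Since the class is $2$, we have $Q_3 = 1$, hence $Q_2 = Q_2/Q_3$ itself has exponent dividing $p$.

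Having established that $Q_2$ has exponent dividing $p$, I would then invoke Lemma \ref{centre commutator lemma} directly, which yields $\ZG(Q) = \Phi(Q) = Q_2$. In particular $\ZG(Q) = Q_2$, as required.

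There is no real obstacle here: this corollary is purely a packaging of Lemmas \ref{elementary abelian} and \ref{centre commutator lemma}, the only subtlety being the observation that class exactly $2$ collapses $Q_2/Q_3$ to $Q_2$ so that the exponent hypothesis of the centre commutator lemma is satisfied for free.
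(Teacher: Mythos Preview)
Your proof is correct and matches the paper's approach exactly: the paper also invokes Lemma \ref{elementary abelian} to get that $Q_2$ has exponent $p$, and then concludes by Lemma \ref{centre commutator lemma}. Your version simply spells out a couple of details (why $Q_3=1$ forces $Q_2/Q_3=Q_2$, and the automatic inclusion $Q_2\subseteq\ZG(Q)$) that the paper leaves implicit.
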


\begin{proof}
By Lemma \ref{elementary abelian}, the commutator subgroup of $Q$ has exponent $p$. 
To conclude, apply Lemma \ref{centre commutator lemma}. 
\end{proof}

\section{More general setting}\label{section general class 2}

\noindent
Throughout this whole section (Section \ref{section general class 2}), let $p$ be a prime number and let $G$ be a finite $p$-group of class $2$ and intensity greater than $1$. 
It follows from the work done in Sections \ref{section main} and \ref{section abelian} that $G$ is not trivial and $p$ is odd. 
Let $\alpha$ be intense of order $\inte(G)$ and write $A=\gen{\alpha}$ and $\chi={\chi_G}_{|A}$. 
We denote by $V$ and $Z$ respectively $G/G_2$ and $G_2$ and by $\pi$ the canonical projection $G\rightarrow V$. From Lemma \ref{elementary abelian} it follows that
both $V$ and $Z$ are vector spaces over $\F_p$. 
By Corollary \ref{centre=commutator class2}, the non-trivial subgroup $Z$ is equal to $\ZG(G)$ and, as a consequence of Lemma \ref{class 2 bilinear map}, the map 
$\phi:V\times V\rightarrow Z$ that is induced by the commutator map is alternating.

\begin{lemma}\label{phi_H}
Let $H$ be a linear subspace of $Z$ of codimension $1$. Then the map 
$\phi_H:V\times V\rightarrow Z/H$, defined by $(x,y)\mapsto \phi(x,y)+H$, is non-degenerate.
\end{lemma}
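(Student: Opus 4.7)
The plan is to realise $\phi_H$ as the commutator form on the quotient $G/H$ and then to invoke the results already proved in Section \ref{section small commutator} for that quotient. Since $H$ is a linear subspace of the central subgroup $Z = G_2 = \ZG(G)$, it is automatically a normal subgroup of $G$, so the quotient $G/H$ is a well-defined finite $p$-group.

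First I would check that $G/H$ has class exactly $2$: its commutator subgroup is $(G/H)_2 = G_2/H = Z/H$, which is non-trivial because $H$ has codimension $1$ in the non-zero space $Z$. In particular, $G/H$ is non-abelian. Since $H$ is a proper normal subgroup of $G$ (as $H \subseteq Z \subsetneq G$), Lemma \ref{intensity of quotients} yields $\inte(G) \mid \inte(G/H)$, so $\inte(G/H) > 1$. Applying Corollary \ref{centre=commutator class2} to $G/H$ then gives $\ZG(G/H) = (G/H)_2 = Z/H$.

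By Lemma \ref{class 2 bilinear map} applied to $G/H$, the commutator map of $G/H$ induces a non-degenerate alternating map
\[
(G/H)/\ZG(G/H) \times (G/H)/\ZG(G/H) \longrightarrow (G/H)_2.
\]
Under the identifications $(G/H)/(Z/H) \cong G/Z = V$ and $(G/H)_2 = Z/H$, this map sends $(\bar g, \bar g')$ to $[g,g']H = \phi(g,g') + H = \phi_H(\bar g, \bar g')$, so it coincides with $\phi_H$. Hence $\phi_H$ is non-degenerate.

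The proof is essentially bookkeeping once the right reduction is spotted; the real content is hidden inside Corollary \ref{centre=commutator class2}, whose proof uses the hypothesis $\inte(G) > 1$ in a crucial way. The only point that deserves a moment's care is noticing that any linear subspace of $Z$ is automatically a normal subgroup of $G$, so the passage to $G/H$ is legitimate even though $H$ is only given as a subspace.
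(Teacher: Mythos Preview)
Your proof is correct and follows essentially the same route as the paper: pass to the quotient $G/H$, use Lemma \ref{intensity of quotients} to lift the intensity hypothesis, deduce that $\ZG(G/H) = (G/H)_2 = Z/H$, and then invoke Lemma \ref{class 2 bilinear map} to obtain non-degeneracy. The only cosmetic difference is that the paper cites Lemma \ref{extraspecial exponent} (using that $|Z/H|=p$) to identify $G/H$ as extraspecial, whereas you cite Corollary \ref{centre=commutator class2}; both yield the same conclusion $\ZG(G/H)=Z/H$.
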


\begin{proof}
The subgroup $H$ is contained in the centre $Z$ and is therefore a normal subgroup of $G$. It follows from Lemma \ref{intensity of quotients} that $\inte(G/H)>1$.
As a consequence of Lemma \ref{extraspecial exponent}, the group $G/H$ is extraspecial, and so,
thanks to Lemma \ref{non-degenerate extraspecial}, the map $\phi_H:V\times V\rightarrow Z/H=[G/H,G/H]$ is non-degenerate.
\end{proof}

\begin{corollary}\label{dimV}
There exists $n\in\Z_{>0}$ such that $\dim V=2n$.
\end{corollary}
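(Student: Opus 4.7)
The plan is to apply Lemma \ref{dimension isotropic} to the non-degenerate alternating map produced by reducing $\phi$ modulo a hyperplane of $Z$. First I would observe that, since $G$ has class exactly $2$, the subgroup $Z=G_2$ is non-trivial, so $\dim_{\F_p} Z\geq 1$ and we may pick a linear subspace $H$ of $Z$ of codimension $1$. By Lemma \ref{phi_H}, the induced map $\phi_H:V\times V\to Z/H$ is non-degenerate, and it is still alternating since $\phi$ is. Moreover, $Z/H$ is one-dimensional over $\F_p$ by construction, and $V$ is finite-dimensional because $G$ is finite.

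Next I would invoke Lemma \ref{dimension isotropic}: picking any maximal isotropic subspace $W$ of $V$ with respect to $\phi_H$ (which exists by finite-dimensionality), one obtains $2\dim_{\F_p} W=\dim_{\F_p} V$. Setting $n=\dim_{\F_p} W$ gives $\dim_{\F_p} V=2n$.

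Finally, I need $n>0$, that is, $V\neq 0$. By the paper's convention, $\cl(G)=2$ means exactly that $G_1=G$ and $G_2$ are both non-trivial while $G_3=1$; in particular $G\neq G_2$, since otherwise $G_3=[G,G_2]=[G,G]=G_2\neq 1$. Hence $V=G/G_2\neq 0$, so $n\geq 1$, as required. There is no real obstacle here: the entire argument is a direct translation, via Lemma \ref{phi_H}, of the standard fact that a symplectic space is even-dimensional.
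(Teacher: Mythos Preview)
Your proof is correct and follows essentially the same approach as the paper: pick a hyperplane $H$ of $Z$, use Lemma \ref{phi_H} to get a non-degenerate alternating form into a one-dimensional target, then apply Lemma \ref{dimension isotropic} to conclude $\dim V$ is even, with positivity coming from $G$ having class exactly $2$. The paper is simply terser, omitting the explicit choice of a maximal isotropic subspace and the detailed verification that $V\neq 0$.
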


\begin{proof}
Let $H$ be a linear subspace of $Z$ of codimension $1$ and let $\phi_H$ be as in Lemma \ref{phi_H}. Then $\phi_H$ is non-degenerate, and so, by Lemma \ref{dimension isotropic}, the dimension of $V$ is even. The dimension is positive, because $G$ has class $2$.
\end{proof}

\begin{lemma}\label{graph+complements} 
Let $G$ be a group, let $N$ be a central subgroup, and let $H$ be a complement of $N$ in $G$. 
Let moreover $\cor{C}_N$ be the collection of complements of $N$ in $G$ and, for all $f\in\Hom(H,N)$, call $\cor{G}_f=\{f(h)h \, :\, h\in H\}$. 
Then the map $\Hom(H,N)\rightarrow\cor{C}_N$, given by $f\mapsto\cor{G}_f$, is well-defined and bijective. 
\end{lemma}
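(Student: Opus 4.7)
The plan is to verify the map $f \mapsto \cor{G}_f$ is well-defined, injective, and surjective, using centrality of $N$ at each step.

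First I would check well-definedness, i.e.\ that $\cor{G}_f$ is a complement of $N$ in $G$ for every $f \in \Hom(H,N)$. To see that $\cor{G}_f$ is a subgroup, take $f(h_1)h_1, f(h_2)h_2 \in \cor{G}_f$; since $f(h_2) \in N$ is central, one has $f(h_1)h_1 f(h_2)h_2 = f(h_1)f(h_2)h_1h_2 = f(h_1h_2)(h_1h_2)$, and similarly $(f(h)h)^{-1} = h^{-1}f(h)^{-1} = f(h^{-1})h^{-1}$. If $f(h)h \in N$, then $h = f(h)^{-1}(f(h)h) \in H \cap N = \graffe{1}$, so $\cor{G}_f \cap N = \graffe{1}$. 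Finally, since $G = NH$, any $nh \in G$ can be written as $(nf(h)^{-1})(f(h)h)$ with $nf(h)^{-1} \in N$, so $N\cor{G}_f = G$.

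Next I would prove injectivity. If $\cor{G}_f = \cor{G}_g$, then for each $h \in H$ there exists $h' \in H$ with $f(h)h = g(h')h'$, hence $h(h')^{-1} = f(h)^{-1}g(h') \in H \cap N = \graffe{1}$; thus $h = h'$ and $f(h) = g(h)$.

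For surjectivity, I would proceed as follows. Let $K \in \cor{C}_N$. Since $G = NK$ and $N \cap K = \graffe{1}$, every element of $G$ has a unique expression of the form $nk$ with $n \in N$, $k \in K$. In particular, for each $h \in H$, write $h = n_h k_h$ uniquely with $n_h \in N$, $k_h \in K$, and define $f \colon H \to N$ by $f(h) = n_h^{-1}$, so that $f(h)h = k_h \in K$. To see $f$ is a homomorphism, observe that $f(h_1)h_1 \cdot f(h_2)h_2 = f(h_1)f(h_2)h_1h_2 \in K$ by centrality of $N$, while also $f(h_1h_2)(h_1h_2) \in K$; both lie in the coset $Nh_1h_2$ of $N$ in $G$, and by uniqueness of the $N$-component in the decomposition $G = NH$, they coincide, giving $f(h_1h_2) = f(h_1)f(h_2)$. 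Thus $\cor{G}_f \subseteq K$. Conversely, given $k \in K$, write $k = nh$ with $n \in N$, $h \in H$; then $h = n^{-1}k = n^{-1}\cdot k_h$ with $k_h \in K$, so by uniqueness $n_h = n^{-1}$, hence $f(h) = n$ and $k = f(h)h \in \cor{G}_f$. Therefore $\cor{G}_f = K$.

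The argument is essentially bookkeeping, and the only subtle point, which I would treat carefully, is verifying that the $f$ produced in the surjectivity step is a homomorphism: this is where the centrality of $N$ genuinely enters, since without it the product $f(h_1)h_1 f(h_2)h_2$ would not simplify to something in $N \cdot h_1h_2$.
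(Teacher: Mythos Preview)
Your proof is correct and complete; the paper itself simply records the proof as ``Straightforward'' and leaves the details to the reader, so you have faithfully filled in exactly the bookkeeping the author had in mind. One tiny slip in wording: in the surjectivity step, the uniqueness you invoke is in the decomposition $G = NK$ (not $G = NH$), since it is the fact that $K$ is a complement of $N$ that forces two elements of $K$ lying in the same $N$-coset to coincide.
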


\begin{proof}
Straightforward.
\end{proof}

\vspace{8pt}
\noindent 
We recall that, as defined in Section \ref{section linear algebra}, an isotropic subspace of $V$ is a linear subspace $T$ of $V$ such that $\phi(T\times T)=0$. 
\vspace{8pt}

\begin{lemma}\label{isotropic abelian}
Let $T$ be a linear subspace of $V$. Then 
$T$ is isotropic if and only if $\pi^{-1}(T)$ is abelian. 
\end{lemma}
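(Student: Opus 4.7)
The plan is to unravel the definitions directly, exploiting the relationship between the commutator map on $G$ and the induced alternating form $\phi$ on $V = G/G_2$.

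First, I would observe that $\pi^{-1}(T)$ is automatically a subgroup of $G$ containing $\ker\pi = G_2$, which is contained in $\ZG(G)$. Next, I would recall from Lemma \ref{class 2 bilinear map} that $\phi$ is obtained from the commutator map: for any $u, v \in V$ and any lifts $x, y \in G$ with $\pi(x) = u$ and $\pi(y) = v$, one has $\phi(u,v) = [x,y]$. This is the only nontrivial ingredient, and it has already been established in the class-$2$ setting where $G_2$ is central.

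From here both implications are immediate. If $T$ is isotropic, then $\phi(u,v) = 0$ for all $u, v \in T$; any two elements $x, y$ of $\pi^{-1}(T)$ have images in $T$, so $[x,y] = \phi(\pi(x),\pi(y)) = 1$, proving that $\pi^{-1}(T)$ is abelian. Conversely, if $\pi^{-1}(T)$ is abelian, then for every pair $u, v \in T$ one can pick lifts $x, y \in \pi^{-1}(T)$, whence $\phi(u,v) = [x,y] = 1$, so $T$ is isotropic.

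The argument is essentially a translation between the commutator-theoretic language on $G$ and the bilinear-form language on $V$, and there is no genuine obstacle; the only point requiring care is to notice that every element of $\pi^{-1}(T)$ is a valid lift of some element of $T$, so that the condition ``$\phi$ vanishes on $T \times T$'' really does capture all commutators in $\pi^{-1}(T)$.
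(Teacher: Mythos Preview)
Your proposal is correct and follows exactly the same approach as the paper: the paper's proof is the one-line version ``$T$ is isotropic if and only if $\phi(T\times T)=0$, which happens if and only if $[\pi^{-1}(T),\pi^{-1}(T)]=1$,'' and your argument simply unpacks this equivalence in detail.
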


\begin{proof}
The subspace $T$ is isotropic if and only if $\phi(T\times T)=0$, which happens if and only if $[\pi^{-1}(T),\pi^{-1}(T)]=1$.
\end{proof}

\vspace{8pt}
\noindent
In the next lemma, we use the same notation as in Section \ref{section linear algebra}. The map $\phi_T$ is given in Definition \ref{def induced map on quotient isotropic}.
\vspace{8pt}

\begin{lemma}\label{isotropic surjective}
Let $T$ be an isotropic subspace of $V$. Then the map $\phi_T:V/T\rightarrow\Hom(T,Z)$, defined by $v+T\mapsto(t\mapsto\phi(v,t))$, is surjective.
\end{lemma}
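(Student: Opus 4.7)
The plan is to prove surjectivity of $\phi_T$ by induction on $n := \dim_{\F_p} Z$.

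In the base case $n = 1$, Lemma \ref{phi_H} applied with $H = 0$ asserts that $\phi$ itself is non-degenerate, so Lemma \ref{dimension isotropic} gives $\dim T^\perp = \dim V - \dim T$. Since $\ker(\phi_T) = T^\perp/T$, the image of $\phi_T$ has dimension $\dim(V/T) - \dim(T^\perp/T) = \dim T$, which matches $\dim \Hom(T, \F_p)$; this settles the base case.

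For the inductive step with $n \geq 2$, I fix any hyperplane $N \subset Z$. The group $G/N$ is of class $2$, has intensity greater than $1$ by Lemma \ref{intensity of quotients}, and its centre $Z/N$ has dimension $n-1$ over $\F_p$. Applying the inductive hypothesis to $G/N$ shows that $(\phi_N)_T \colon V/T \to \Hom(T, Z/N)$ is surjective, so for any given $f \in \Hom(T, Z)$ there exists $v_0 \in V$ with $\phi(v_0, t) \equiv f(t) \pmod{N}$ for every $t \in T$. It then suffices to realise the residue $f - \phi(v_0, -)|_T \colon T \to N$ as $\phi(v_1, -)|_T$ for some $v_1 \in V$; the sum $v = v_0 + v_1$ then settles the claim.

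The hardest step is therefore to show $\Hom(T, N) \subseteq \im(\phi_T)$ for every hyperplane $N$ of $Z$. My strategy is to run an inner induction on $\dim T$, starting from the observation that $\phi(V, t) = Z$ for every non-zero $t \in T$: indeed, by Lemma \ref{phi_H}, for each $z^* \in Z^* \setminus \{0\}$ the linear form $v \mapsto z^*(\phi(v,t))$ is non-zero on $V$, so $\phi(V, t)$ is contained in no proper hyperplane of $Z$ and must equal $Z$. Given a decomposition $T = T_0 \oplus \F_p t_1$ and a target $g \colon T \to N$, the inner inductive hypothesis produces $v_0$ with $\phi(v_0, -)|_{T_0} = g|_{T_0}$; one then adjusts by an element of $T_0^\perp$ sending $t_1$ to the missing value in $N$. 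The crucial transversality $T_0^\perp + t_1^\perp = V$, which is the main technical obstacle, would follow from the inductive dimension formula for $T_0^\perp$ combined with $\phi(V, t_1) = Z$ and the $A$-equivariance provided by the actions of $\langle \alpha \rangle$ through $\chi$ on $V$ and through $\chi^2$ on $Z$.
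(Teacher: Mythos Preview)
Your base case $n=1$ is correct, and the overall reduction in the inductive step is sound (though note a slip: if $N$ is a hyperplane then $\dim Z/N = 1$, not $n-1$, so you are invoking the base case on $G/N$, not the inductive hypothesis). The genuine gap is the final transversality $T_0^\perp + t_1^\perp = V$. Since $T_0^\perp \cap (\F_p t_1)^\perp = T^\perp$, a dimension count using $\dim T_0^\perp = \dim V - n\dim T_0$ (from the inner inductive hypothesis) and $\dim (\F_p t_1)^\perp = \dim V - n$ (from $\phi(V,t_1)=Z$) shows that the transversality holds if and only if $\dim T^\perp = \dim V - n\dim T$, which is exactly the surjectivity of $\phi_T$ you are trying to prove. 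So the inner induction is circular. The appeal to $A$-equivariance cannot rescue this: since $A$ acts on $V$ through $\chi$ and on $Z$ through $\chi^2$, \emph{every} subspace of $V$ and of $Z$ is already $A$-stable, and no additional linear constraint is produced.

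The paper's argument takes a completely different route, using the intensity of $\alpha$ directly rather than only the non-degeneracy of the reductions $\phi_H$. The preimage $\pi^{-1}(T)$ is abelian and normal, hence $A$-stable, and by Theorem~\ref{lambda mu} the subgroup $Z$ has a unique $A$-stable complement $H\cong T$ in it. By Lemma~\ref{graph+complements} the complements of $Z$ in $\pi^{-1}(T)$ are parametrised by $\Hom(H,Z)$ via $f\mapsto\{f(h)h:h\in H\}$; since $\alpha$ is intense and $H$ is the only $A$-stable complement, Lemma~\ref{equivalent intense coprime-pgrps} forces every complement to be a $G$-conjugate of $H$. Unwinding, each $f\in\Hom(T,Z)$ is of the form $t\mapsto[g,t]$ for some $g\in G$, which is exactly surjectivity of $\phi_T$. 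The key input---transitivity of $G$ on the set of complements---is a genuinely group-theoretic consequence of intensity that your purely linear-algebraic framework does not capture.
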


\begin{proof}
Let $T$ be an isotropic subspace of $V$. 
The subgroup $\pi^{-1}(T)$ is abelian, by Lemma \ref{isotropic abelian}, and it contains $Z$. It follows that $\pi^{-1}(T)$ is normal, and so, by Proposition \ref{intense properties}($1$), it is $A$-stable. By Lemma \ref{action chi^i}, the actions of $A$ on $\pi^{-1}(T)/Z$ and on $Z$ are respectively through $\chi$ and $\chi^2$, which are distinct by Lemma \ref{chi neq chi2}. 
By Theorem \ref{lambda mu} the subgroup $Z$ has a unique $A$-stable complement $H$ in $\pi^{-1}(T)$, which is isomorphic to $T$ via $\pi$.
We now show that $\phi_T$ is surjective. 
For this purpose, let $f\in\Hom(T,Z)$ and note that $\Hom(T,Z)$ and $\Hom(H,Z)$ are naturally isomorphic. We identify $f$ with its image in $\Hom(H,Z)$.
By Lemma \ref{graph+complements}, the set
$L=\{f(t)t\ |\ t\in H\}$ is a complement of $Z$ in $\pi^{-1}(T)$ and so, being $H$ the unique $A$-stable complement of $Z$,
Lemma \ref{equivalent intense coprime-pgrps} guarantees that there exists $g\in G$ such that $L=gHg^{-1}$. Fix such an element $g$. 
Then, for each $h\in H$, there exists $t\in H$ such that $[g,h]h=ghg^{-1}=f(t)t$. It follows that 
$ht^{-1}=[h,g]f(t)$ belongs to both $H$ and $Z$, but $H$ and $Z$ intersect trivially, so we get $h=t$.
We have proven that $f$ is the map $t\mapsto[g,t]$. 
It follows from Definition \ref{def induced map on quotient isotropic} that $\phi_T$ is surjective.
\end{proof}

\begin{corollary}\label{max iso bijection}
Let $T$ be an isotropic subspace of $V$. Then $T$ is maximal isotropic if and only if the map $\phi_T:V/T\rightarrow\Hom(T,Z)$, defined by $v+T\mapsto(t\mapsto\phi(v,t))$, is a bijection.
\end{corollary}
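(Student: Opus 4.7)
The plan is to deduce this directly from the two preceding results, using that $\phi_T$ is already well-defined on $V/T$ whenever $T$ is isotropic (Definition \ref{def induced map on quotient isotropic}).

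First I would handle the forward implication. Assume $T$ is maximal isotropic. By Lemma \ref{max isotropic}, applied to the alternating map $\phi:V\times V\rightarrow Z$, the induced map $\phi_T$ is injective. On the other hand, $T$ being isotropic, Lemma \ref{isotropic surjective} yields that $\phi_T$ is surjective. Combining the two gives that $\phi_T$ is a bijection.

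For the converse, suppose $\phi_T$ is a bijection. In particular, $\phi_T$ is injective, so Lemma \ref{max isotropic} forces $T$ to be maximal isotropic. This closes the equivalence.

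There is no real obstacle: the content has been packaged into Lemma \ref{max isotropic} (which characterises maximal isotropy by injectivity of $\phi_T$) and Lemma \ref{isotropic surjective} (which supplies the automatic surjectivity coming from the intense automorphism $\alpha$ and Theorem \ref{lambda mu}). The corollary is just the conjunction of these two facts, and the only thing to check is that the map $\phi_T$ appearing in Lemma \ref{max isotropic} and in Lemma \ref{isotropic surjective} is literally the same map of Definition \ref{def induced map on quotient isotropic}, which it is by construction.
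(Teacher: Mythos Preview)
Your proof is correct and follows exactly the paper's approach: the paper's proof is simply the one-line observation that $\phi_T$ is surjective by Lemma~\ref{isotropic surjective} (for any isotropic $T$) and injective by Lemma~\ref{max isotropic} (precisely when $T$ is maximal isotropic). You have just unpacked this into the two separate implications, which is fine.
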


\begin{proof}
The map $\phi_T$ is surjective by Lemma \ref{isotropic surjective} and it is injective by Lemma \ref{max isotropic}.
\end{proof}

\begin{lemma}\label{dimW 2}
The dimension of $Z$ is different from $2$. 
\end{lemma}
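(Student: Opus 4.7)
The plan is to assume $\dim_{\F_p} Z = 2$ and derive a dimensional contradiction via Lemma \ref{isotropic surjective}. For any isotropic subspace $T \leq V$, the surjection $\phi_T\colon V/T \twoheadrightarrow \Hom(T, Z)$ provided by that lemma forces $\dim V - \dim T \geq \dim \Hom(T, Z) = 2 \dim T$, hence $\dim T \leq \dim V/3$. Moreover, whenever this inequality is strict, the orthogonal $T^\perp := \{v \in V : \phi(v, T) = 0\}$ is the kernel of the surjection $V \to \Hom(T, Z)$, so it has dimension $\dim V - 2 \dim T > \dim T$; then $T \subsetneq T^\perp$ and any $v \in T^\perp \setminus T$ yields an isotropic subspace $T \oplus \F_p v$ of dimension $\dim T + 1$.

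Iterating this extension procedure, I would build an ascending chain $0 = T_0 \subsetneq T_1 \subsetneq \cdots$ of isotropic subspaces, terminating in a maximal isotropic $T$ which, by Corollary \ref{max iso bijection}, satisfies the bijection $V/T \cong \Hom(T, Z)$, i.e.\ $\dim V = 3 \dim T$. Combined with $\dim V = 2n$ being even (Corollary \ref{dimV}), this forces $6 \mid \dim V$. For the ``generic'' values $\dim V \in \{2, 4, 8, 10, 14, \ldots\}$ (even but not divisible by $6$), no maximal isotropic of integer dimension can exist, while the finite-dimensional chain argument guarantees one does; this gives the contradiction immediately.

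The main obstacle is the residual case $6 \mid \dim V$, in which the dimension count from Lemma \ref{isotropic surjective} is internally consistent and a finer argument is required. Here I would invoke Lemma \ref{wedge}: for any $3$-dimensional subspace $U \leq V$, every element of $\bigwedge^2 U$ is decomposable, and since the induced linear map $\bigwedge^2 U \to Z$ has image of dimension at most $\dim Z = 2 < 3 = \dim \bigwedge^2 U$, there exists a non-zero decomposable element in its kernel, producing a $2$-dimensional isotropic subspace inside $U$. Combining this local isotropy with the pencil of non-degenerate forms $\{\phi_H : H \leq Z \text{ of codimension } 1\}$ from Lemma \ref{phi_H}, the plan is to exhibit an isotropic subspace of $V$ whose dimension exceeds $\dim V/3$, thereby contradicting the bound above. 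This sharpening for $6 \mid \dim V$ is where I expect the main technical difficulty to lie.
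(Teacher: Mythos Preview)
Your proposal is incomplete: you explicitly leave the case $6\mid\dim V$ unresolved, and your suggested route through the pencil $\{\phi_H\}$ from Lemma~\ref{phi_H} is a red herring---the paper does not use that lemma here at all. The parity detour via Corollary~\ref{dimV} is also unnecessary; the argument works uniformly regardless of $\dim V\bmod 6$.

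The missing idea is where to put the $3$-dimensional subspace $U$. You propose applying Lemma~\ref{wedge} to an \emph{arbitrary} $3$-dimensional $U\le V$, which indeed produces a $2$-dimensional isotropic plane inside $U$, but gives you no way to glue it to anything larger. The paper instead chooses $U$ relative to the maximal isotropic $T$: take $L\subset T$ of codimension~$1$, and set $U=\ker\phi_L\subset V/L$ (equivalently $L^{\perp}/L$). By Lemma~\ref{isotropic surjective} applied to $L$, one has $\dim U=d-3(t-1)=3$. Since $L$ is in the radical of $\phi|_{L^{\perp}}$, the form $\phi$ descends to an alternating map $U\times U\to Z$, and now the wedge argument yields independent $s,r\in U$ with $\phi(s,r)=0$. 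Lifting $s,r$ to $V$, the subspace $R=L\oplus\F_p s\oplus\F_p r$ is isotropic (elements of $L^{\perp}$ pair trivially with $L$, and $s,r$ pair trivially with each other) of dimension $(t-1)+2=t+1$, contradicting maximality of $T$.

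So your instinct to combine the wedge lemma with the extension procedure was correct; the point you missed is that the $3$-dimensional ambient for the wedge argument should be manufactured \emph{from} the maximal isotropic, as $L^{\perp}/L$, so that the resulting isotropic plane automatically extends $L$.
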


\begin{proof}
Assume by contradiction that $Z$ has dimension $2$. Let $T$ be an isotropic subspace of $V$ of maximal dimension $t$ and let $d=\dim V$, which is positive. From Corollary \ref{max iso bijection}, it follows that $d=3t$ and in particular that $t>0$. 
Let $L$ be a subspace of $T$ of codimension $1$, which is itself isotropic.
Let moreover $\phi_L:V/L\rightarrow\Hom(L,Z)$ be defined by $v+L\mapsto(l\mapsto\phi(v,l))$. The linear map $\phi_L$ is surjective
by Lemma \ref{isotropic surjective}. Let $U$ be the kernel of $\phi_L$ and let 
$\phi_U: U\times U\rightarrow Z$ be induced by $\phi$. Then $\dim U=d-3(t-1)=3$ and $\phi_U$ is alternating.
By the universal property of wedge products, there exists a unique linear map 
$\psi:\bigwedge^2 U \rightarrow Z$ that, composed with the canonical map 
$U \times U \rightarrow \bigwedge^2 U$, gives $\phi_U$.
The dimension of $\bigwedge^2 U$ being $3$, the dimension of $\ker\psi$ is positive and, as a consequence of Lemma \ref{wedge}, there are linearly independent elements 
$s,r\in U$ such that $\psi(s\wedge r)=0$. 
Set $R=L\,\oplus\,\F_ps\,\oplus\,\F_pr$. By construction, $R$ is an isotropic subspace of $V$ of dimension $t+1$. Contradiction to the maximality of $t$.
\end{proof}

\begin{corollary}\label{class 2 extraspecial}
The group $G$ is extraspecial of exponent $p$.  
\end{corollary}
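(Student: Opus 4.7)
The plan is to reduce the corollary to showing that $\dim_{\F_p} Z = 1$: once this is established, $|G_2| = p$, and Lemma \ref{extraspecial exponent} applied to $G$ itself immediately yields that $G$ is extraspecial of exponent $p$.

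Since $G$ has class $2$, $Z = G_2$ is non-trivial, so $\dim Z \geq 1$; what remains is to rule out $\dim Z \geq 2$. The idea is a descent argument: pass to a suitable central quotient of $G$ in which the commutator subgroup has dimension exactly $2$, and then invoke Lemma \ref{dimW 2} for a contradiction. Concretely, suppose for contradiction that $\dim Z \geq 2$, and choose a linear subspace $K$ of $Z$ of codimension $2$. Since $K$ is contained in the centre, it is normal in $G$, so we may form $\overline{G} = G/K$. Because $K \subsetneq Z = G_2$, we have $\overline{G}_2 = G_2/K \neq 1$, so $\overline{G}$ is still of class $2$; moreover $K \neq G$, so Lemma \ref{intensity of quotients} yields that $\inte(G)$ divides $\inte(\overline{G})$, whence $\inte(\overline{G}) > 1$.

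Thus the running hypotheses of Section \ref{section general class 2} are satisfied by $\overline{G}$ in place of $G$. In particular, Corollary \ref{centre=commutator class2} gives $Z(\overline{G}) = \overline{G}_2 = Z/K$, a group of dimension $2$ over $\F_p$ by the choice of $K$. This contradicts Lemma \ref{dimW 2} applied to $\overline{G}$, proving that $\dim Z = 1$.

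The only subtlety worth flagging is the realisation that Lemma \ref{dimW 2} is a statement that persists under central quotients which remain of class $2$: once this is noticed, a single dimension-$2$ obstruction knocks out all dimensions $\geq 2$ in one stroke, so no separate higher-dimensional wedge-product argument is required beyond what has already been done.
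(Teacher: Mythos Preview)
Your proof is correct and follows essentially the same route as the paper's: assume $\dim Z \geq 2$, pass to a central quotient where the commutator subgroup has dimension exactly $2$ (the paper invokes Lemma \ref{normal index p} to find a normal $M \subseteq G_2$ of index $p^2$, while you directly pick a codimension-$2$ subspace $K$ of the elementary abelian group $Z$), note that the quotient still has class $2$ and intensity $>1$ by Lemma \ref{intensity of quotients}, and then contradict Lemma \ref{dimW 2}. The citation of Corollary \ref{centre=commutator class2} is a harmless extra step; the paper omits it since in the section's standing notation $Z$ is by definition $G_2$.
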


\begin{proof}
The commutator subgroup of $G$ is non-trivial. If $G_2$ has order $p$, then $G$ is extraspecial of exponent $p$, by Lemma \ref{extraspecial exponent}.
We claim that the order of $G_2$ is in fact $p$. 
Assume by contradiction that $G_2$ has order larger than $p$. Then, by Lemma \ref{normal index p}, there exists a normal subgroup $M$ of $G$ that is contained in $G_2$ with index $p^2$. The group $G/M$ has class $2$ and, by Lemma \ref{intensity of quotients}, its intensity is greater than $1$. This is a contradiction to Lemma \ref{dimW 2}, with $G_2/M$ in the role of $Z$. 
\end{proof}

\noindent
We remark that Corollary \ref{class 2 extraspecial} gives $(1)\Rightarrow(2)$ in Theorem \ref{theorem class2 complete}. We complete the proof in the next section.

\section{The extraspecial case}\label{section class 2 extraspecial}

\noindent
In Section \ref{section class 2 extraspecial} we will see how the structure of extraspecial groups of exponent $p$ (see Section \ref{section extraspecial}) is particularly suitable for explicit construction of intense automorphisms of order coprime to $p$.
In this section, we conclude the proof of Theorem \ref{theorem class2 complete}.

\begin{lemma}\label{tosse}
Let $p$ be a prime number and let $G$ be a non-abelian extraspecial group of exponent $p$.
Let moreover $H$ be a subgroup of $G$ that trivially intersects $G_2$. 
Then $|G:\nor_G(H)|=|\Hom(H,G_2)|$.
\end{lemma}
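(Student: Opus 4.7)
The plan is to identify $\nor_G(H)$ with the centralizer $\Cyc_G(H)$ and then realize the quotient $G/\Cyc_G(H)$ as $\Hom(H,G_2)$ through the commutator pairing induced by $G$ being of class $2$. Throughout, write $V=G/G_2$ and $\pi:G\to V$ for the projection; by Lemma \ref{non-degenerate extraspecial}, the commutator induces a non-degenerate alternating map $\phi:V\times V\to G_2$, and $G_2$ has order $p$.

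First I would observe that $H$ is elementary abelian: since $[H,H]\subseteq G_2$ (as $G$ has class $2$) and also $[H,H]\subseteq H$, the hypothesis $H\cap G_2=\graffe{1}$ forces $[H,H]=1$; combined with $G$ having exponent $p$, the subgroup $H$ is an $\F_p$-vector space that embeds, via $\pi$, onto a subspace $\overline{H}\subseteq V$. Next, for any $g\in G$ we have $[g,H]\subseteq G_2$, so Lemma \ref{commutators normalizer} gives $g\in\nor_G(H)$ if and only if $[g,H]\subseteq H\cap G_2=\graffe{1}$, i.e.\ if and only if $g\in\Cyc_G(H)$. Hence $\nor_G(H)=\Cyc_G(H)$.

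Now define
\[
\psi:G\longrightarrow\Hom(H,G_2),\qquad g\mapsto\bigl(h\mapsto[g,h]\bigr).
\]
By Lemma \ref{bilinear LCS} (or Lemma \ref{tgt}, since $G_2$ is central), the map $(g,h)\mapsto[g,h]$ is bilinear from $G\times G$ to $G_2$, so each $\psi(g)$ is a homomorphism and $\psi$ itself is a homomorphism of groups. By the previous paragraph $\ker\psi=\Cyc_G(H)=\nor_G(H)$, so $|G:\nor_G(H)|=|\operatorname{Im}\psi|$. It therefore suffices to show that $\psi$ is surjective.

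To see this, note that $\psi$ factors through $V$ because $G_2$ is central, giving $\overline\psi:V\to\Hom(H,G_2)$, $v\mapsto(h\mapsto\phi(v,\pi(h)))$. Identifying $H$ with $\overline{H}$ via $\pi$, the map $\overline\psi$ is the composition of the map $V\to\Hom(V,G_2)$, $v\mapsto\phi(v,-)$, with the restriction map $\Hom(V,G_2)\to\Hom(\overline{H},G_2)$. The first is an isomorphism by Lemma \ref{non-degenerate extraspecial} combined with Lemma \ref{non-degenerate dim 1} (since $G_2$ has dimension $1$), and the second is surjective because every linear map from a subspace of $V$ to $G_2$ extends to $V$. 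Hence $\overline\psi$, and thus $\psi$, is surjective, giving $|G:\nor_G(H)|=|\Hom(H,G_2)|$. There is no real obstacle here; the only point to keep in mind is that $H\cap G_2=\graffe{1}$ is used twice: once to get $H$ abelian (so that $\Hom(H,G_2)$ makes sense as a group of homomorphisms into a vector space) and once to replace $\nor_G(H)$ by $\Cyc_G(H)$.
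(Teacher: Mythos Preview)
Your proof is correct and follows essentially the same approach as the paper: both reduce to $\nor_G(H)=\Cyc_G(H)$ via $H\cap G_2=\graffe{1}$, and then exploit the non-degeneracy of the commutator pairing into the one-dimensional space $G_2$. The only cosmetic difference is that the paper applies Lemma~\ref{non-degenerate dim 1} directly to the induced non-degenerate map $G/\Cyc_G(H)\times H\to G_2$ to obtain $|G:\Cyc_G(H)|=|H|=|\Hom(H,G_2)|$, whereas you explicitly verify surjectivity of $\psi:G\to\Hom(H,G_2)$ by factoring through $V$; these are dual formulations of the same linear-algebraic fact.
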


\begin{proof}
The group $G$ being non-abelian, Lemma \ref{extraspecial non-abelian} yields that 
$\ZG(G)=G_2$ and $G_2$ has order $p$. 
Since $H\cap G_2$ is trivial, we have $\nor_G(H)=\Cyc_G(H)$ and $H$ is abelian. 
By Lemma \ref{tgt}, the commutator map $G\times G\rightarrow G_2$ is bilinear, and moreover, since $H\cap \ZG(G)$ is trivial, it induces a non-degenerate map $G/\Cyc_G(H)\times H\rightarrow G_2$.
Now, both $G/\Cyc_G(H)$ and $H$ are $\F_p$-vector spaces and $G_2$ has order $p$. 
It follows from Lemma \ref{non-degenerate dim 1} that 
$|G:\nor_G(H)|=|G:\Cyc_G(H)|=|H|=|\Hom(H,G_2)|$.
\end{proof}

\begin{lemma}\label{extraspecial action on first quotient}
Let $p$ be a prime number and let $G$ be a non-abelian extraspecial group of exponent $p$. Let $\alpha$ be an automorphism of $G$ such that $\gen{\alpha}$ acts on $G/G_2$ through a character. Then $\alpha\in\Int(G)$.
\end{lemma}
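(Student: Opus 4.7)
The plan is to show that every subgroup $H$ of $G$ has $\alpha(H)$ conjugate to $H$ by splitting into two cases, according to whether $H$ contains $G_2$ or not. Since $G$ is non-abelian extraspecial, Lemma \ref{extraspecial non-abelian} gives $\ZG(G)=G_2$ and $|G_2|=p$. Write $V=G/G_2$ and $\pi:G\to V$ for the projection. The hypothesis means that there exists $c\in\F_p^*$ such that $\alpha$ acts on $V$ by scalar multiplication by $c$ (identifying $\F_p^*$ with $\omega(\F_p^*)\subseteq\Z_p^*$). In particular $\alpha$ preserves every $\F_p$-subspace of $V$ setwise.

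Because $|G_2|=p$, for any subgroup $H$ either $G_2\subseteq H$ or $H\cap G_2=\{1\}$. In the first case, $H/G_2$ is an $\F_p$-subspace of $V$, hence $\alpha(H)/G_2 = c\cdot (H/G_2) = H/G_2$, so $\alpha(H)=H$, which is trivially conjugate to $H$.

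For the second case, observe that $[H,H]\subseteq G_2\cap H=\{1\}$, so $H$ is abelian; moreover, since $\pi$ is injective on $H$ and $\alpha$ preserves $\pi(H)$ as a subspace, both $H$ and $\alpha(H)$ are complements of $G_2$ in $\pi^{-1}(\pi(H))$. The key step is a counting argument. By Lemma \ref{graph+complements}, the set of complements of $G_2$ in $\pi^{-1}(\pi(H))$ is in bijection with $\Hom(H,G_2)$, so there are exactly $|\Hom(H,G_2)|$ of them. On the other hand, by Lemma \ref{tosse}, the number of $G$-conjugates of $H$ equals $|G:\nor_G(H)| = |\Hom(H,G_2)|$. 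Every conjugate $gHg^{-1}$ is again a complement of $G_2$ in $\pi^{-1}(\pi(H))$ (since $V$ is abelian and $gHg^{-1}\cap G_2=g(H\cap G_2)g^{-1}=1$), so the set of conjugates injects into the set of complements; by the equality of cardinalities, these two sets coincide. In particular $\alpha(H)$, being a complement, is conjugate to $H$.

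The main obstacle is realizing that the right viewpoint is a counting argument rather than an explicit construction of the conjugating element $g$: this requires combining Lemma \ref{tosse}, whose proof uses the non-degeneracy of the commutator form on $G/\Cyc_G(H)\times H$, with the simple bijection between complements and $\Hom(H,G_2)$. Once these are in place, the two cases are short. Apart from that, the only thing to check carefully is that $\alpha$ acts on $V$ by a genuine scalar in $\F_p^*$, which follows from the definition of ``acting through a character'' together with $V$ being an $\F_p$-vector space, so that the $\Z_p^*$-action factors through $\F_p^*$.
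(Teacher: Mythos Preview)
Your proof is correct and follows essentially the same approach as the paper: the same two-case split according to whether $G_2\subseteq H$, and in the nontrivial case the same counting argument matching the number of complements of $G_2$ in $HG_2$ (your $\pi^{-1}(\pi(H))$) with the number of conjugates of $H$, via Lemmas \ref{graph+complements} and \ref{tosse}. The only cosmetic difference is that the paper writes $T=H\oplus G_2$ where you write $\pi^{-1}(\pi(H))$, and the paper phrases the first case as ``$H/G_2$ is $A$-stable'' rather than explicitly invoking the scalar $c$.
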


\begin{proof}
Let $H$ be a subgroup of $G$ and write $A=\gen{\alpha}$. We want to show that $H$ and $\alpha(H)$ are conjugate in $G$. As $G$ is non-abelian, Lemma \ref{extraspecial non-abelian} yields that $G_2=\ZG(G)$ and $G_2$ has order $p$.
It follows that either $H$ contains $G_2$ or the intersection of $H$ with $G_2$ is trivial. In the first case, $H/G_2$ is a linear subspace of $G/G_2$, and is therefore $A$-stable; in particular, also $H$ is $A$-stable.
We now consider the case in which ${H\cap G_2=\{1\}}$. In this case, $H$ is abelian and the group $T=H\oplus G_2$ is $A$-stable. The group $G_2$ being $A$-stable, $\alpha(H)$ is a complement of $G_2$ in $T$. Also each $G$-conjugate of $H$ is a complement of $G_2$ in $T$, because $G_2$ and $T$ are both normal. 
By Lemma \ref{graph+complements}, the number of complements of $G_2$ in $T$ equals the cardinality of $\Hom(H,G_2)$, which is equal to $|G:\nor_G(H)|$ by Lemma \ref{tosse}. It follows that the number of complements of $G_2$ in $T$ is equal to the number of conjugates of $H$ in $G$.
As all conjugates of $H$ are themselves complements of $G_2$ in $T$, we get that every complement of $G_2$ in $T$ is conjugate to $H$ in $G$. 
In particular, $H$ and $\alpha(H)$ are conjugate in $G$. The choice of $H$ being arbitrary, it follows that $\alpha\in \Int(G)$. 
\end{proof}

\begin{lemma}\label{extraspecial has intensity p-1}
Let $p$ be a prime number and let $G$ be a non-abelian extraspecial $p$-group of exponent $p$. Then $p$ is odd and $\inte(G)=p-1$.
\end{lemma}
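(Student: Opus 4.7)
The plan is to show $p$ is odd via the obvious exponent argument, and then to exhibit explicit intense automorphisms that exhaust all of $\F_p^*$ under the intense character. Since Lemma~\ref{intense complement} already gives $\inte(G) \mid p-1$, the real task is producing, for each $c \in \F_p^*$, an intense automorphism $\alpha_c$ of $G$ such that $\chi_G(\alpha_c) = \omega(c)$.

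First I would dispatch the parity of $p$: if $p=2$, then $G$ has exponent $2$, hence is abelian, contradicting non-abelianness. So $p$ is odd. Then I would apply Lemma~\ref{get extraspecial} to write $G \cong G(Z, Y, X, \theta)$ for suitable finite-dimensional $\F_p$-vector spaces $X, Y, Z$ with $\dim Z = 1$ and a non-degenerate bilinear map $\theta : X \times Y \to Z$.

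The heart of the proof is the construction. Fix $c \in \F_p^*$ and set $f = c\cdot\id_X$, $g = c\cdot\id_Y$, $h = c^2\cdot\id_Z$. Since $\theta$ is bilinear,
\[
\theta(f(x), g(y)) = \theta(cx, cy) = c^2\theta(x, y) = h(\theta(x, y)),
\]
so the square in Proposition~\ref{homom extraspecial} commutes. Hence $\alpha_c := (h, g, f)$ is an automorphism of $G(Z, Y, X, \theta) \cong G$, explicitly $(z, y, x) \mapsto (c^2 z, cy, cx)$. By construction, $\alpha_c$ induces scalar multiplication by $c$ on $G/G_2 \cong X \oplus Y$, so $\gen{\alpha_c}$ acts on $G/G_2$ through a character. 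Lemma~\ref{extraspecial action on first quotient} then yields $\alpha_c \in \Int(G)$, and by definition of the intense character $\chi_G(\alpha_c) = \omega(c)$.

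Finally, since $c \in \F_p^*$ was arbitrary, the image of $\chi_G$ contains $\omega(\F_p^*)$, giving $\inte(G) \geq p-1$. Combined with Lemma~\ref{intense complement}, we conclude $\inte(G) = p-1$. The only potentially delicate step is verifying that $\alpha_c$ is genuinely an automorphism — but this is precisely the content of Proposition~\ref{homom extraspecial} once we observe that the scaling factors on $X$, $Y$, $Z$ are compatible with the bilinearity of $\theta$, so no real obstacle arises.
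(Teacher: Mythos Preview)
Your proof is correct and follows essentially the same approach as the paper: both use Lemma~\ref{get extraspecial} to realize $G$ as $G(Z,Y,X,\theta)$, then for each scalar in $\F_p^*$ build the automorphism $(c^2,c,c)$ via Proposition~\ref{homom extraspecial} and invoke Lemma~\ref{extraspecial action on first quotient} to certify it as intense. Your version is slightly more explicit about the upper bound (citing Lemma~\ref{intense complement}) and the computation of $\chi_G(\alpha_c)$, but the argument is the same.
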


\begin{proof}
The prime $p$ is odd, because all groups of exponent $2$ are abelian.
By Proposition \ref{get extraspecial},
we can write $G$ in the form $G(Z,Y,X,\theta)$, where $X$, $Y$, and $Z$ are vector spaces over $\F_p$ and $\theta:X\times Y\rightarrow Z$ is bilinear.
Now, the group $\F_p^*$ acts on $X$, $Y$, and $Z$, as described in Section \ref{section characters}, and so each $m\in\F_p^*$ gives rise to an automorphism of each of the three vector spaces. For each $m\in\F_p^*$, the following diagram is commutative because $\theta$ is bilinear.
\[
\begin{diagram}
X    \times   Y           &  \rTo^{\theta} & Z  \\
\dTo^{m} \ \  \dTo_{m}    &              & \dTo_{m^2}\\
X    \times   Y           &  \rTo^{\theta} & Z  \\
\end{diagram}
\]
By Proposition \ref{homom extraspecial}, for each $m\in\F_p^*$ there exists an automorphism $a_m$ of $G$ such that the maps induced by $a_m$ respectively on $X\times Y$ and $Z$ are scalar multiplications by $m$ and $m^2$.
The set $A=\{a_m\ |\ m\in\F_p^*\}$ is a subgroup of $\Aut(G)$ that is isomorphic to $\F_p^*$.
Thanks to Lemma \ref{extraspecial action on first quotient}, the subgroup $A$ is contained in $\Int(G)$ and therefore $\inte(G)=p-1$.
\end{proof}

\noindent
We remark that Lemma \ref{extraspecial has intensity p-1} is the same as $(2)\Rightarrow(3)$ in Theorem \ref{theorem class2 complete}. Since the implication $(3)\Rightarrow(1)$ is clear and $(1)\Rightarrow(2)$ is given by Corollary \ref{class 2 extraspecial}, Theorem \ref{theorem class2 complete} is finally proven.

\begin{proposition}\label{p-power jumps 2}
\pgp\
Denote by $(G_i)_{i\geq 1}$ the lower central series of $G$. 
Assume both the class and the intensity of $G$ are greater than $1$. 
Then, for all $i\in\Z_{\geq 1}$, the exponent of $G_i/G_{i+2}$ divides $p$. 
\end{proposition}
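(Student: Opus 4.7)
The plan is to split the proof into two cases according to whether $i=1$ or $i\geq 2$, in both cases leveraging the coprime-action machinery of Chapter \ref{chapter actions} together with the classification of class-$2$ groups of intensity greater than $1$ (Theorem \ref{theorem class2 complete}). Throughout, fix $\alpha\in\Int(G)$ of order $\inte(G)$, set $A=\gen{\alpha}$ and $\chi={\chi_G}_{|A}$; since $\inte(G)>1$, we have $\chi(\alpha)\neq 1$, and Proposition \ref{proposition 2gps} forces $p$ to be odd.

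For $i=1$, I would pass to the quotient $G/G_3$. Because $G$ is nilpotent of class at least $2$, the lower central series is strictly decreasing until it reaches $\graffe{1}$, so $G_2\neq G_3$ and $G/G_3$ has class exactly $2$. Since $G_3\neq G$, Lemma \ref{intensity of quotients} gives $\inte(G/G_3)\geq \inte(G)>1$. Corollary \ref{class 2 extraspecial} then identifies $G/G_3$ as an extraspecial $p$-group of exponent $p$, which is exactly the claim $G_1/G_3$ has exponent dividing $p$.

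For $i\geq 2$, the decisive point is that $G_i/G_{i+2}$ is abelian: Lemma \ref{commutator indices} yields $[G_i,G_i]\subseteq G_{2i}\subseteq G_{i+2}$. I would then consider the short exact sequence of $A$-modules
\[
0\longrightarrow G_{i+1}/G_{i+2}\longrightarrow G_i/G_{i+2}\longrightarrow G_i/G_{i+1}\longrightarrow 0.
\]
Lemma \ref{action chi^i} tells us that $A$ acts on the kernel through $\chi^{i+1}$ and on the cokernel through $\chi^i$, and these two characters are distinct since $\chi(\alpha)\neq 1$. Theorem \ref{lambda mu} applies (we have an odd prime, a finite cyclic $A$, and a finite $p$-group in the middle), yielding a splitting of the sequence. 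Hence $G_i/G_{i+2}$ is isomorphic, as an abelian group, to the direct sum of $G_i/G_{i+1}$ and $G_{i+1}/G_{i+2}$, each of exponent dividing $p$ by Lemma \ref{elementary abelian}; the conclusion follows.

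The subtle point, and the reason the two cases must be separated, is that the splitting argument fundamentally requires $G_i/G_{i+2}$ to be abelian, which is the inequality $2i\geq i+2$ and therefore fails at $i=1$. So the $i=1$ case cannot be treated uniformly with the rest and genuinely uses the extraspecial classification; everything else is then a clean application of Theorem \ref{lambda mu}.
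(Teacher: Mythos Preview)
Your proof is correct and follows essentially the same route as the paper's: the case $i=1$ is handled by passing to $G/G_3$ and invoking the class-$2$ classification (the paper cites Theorem \ref{theorem class2 complete}, you cite Corollary \ref{class 2 extraspecial}, which is the relevant implication), and for $i\geq 2$ both arguments use Lemma \ref{commutator indices} to get abelianness, Lemma \ref{action chi^i} to identify the characters, Theorem \ref{lambda mu} to split, and Lemma \ref{elementary abelian} to conclude. Your remark explaining why the case split is genuinely necessary is a nice addition.
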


\begin{proof}
Let $\alpha$ be intense of order $\inte(G)$ and write $\chi={\chi_G}_{|\gen{\alpha}}$. Let moreover $i$ be a positive integer. 
The case in which $i=1$ is given by the combination of Lemma \ref{intensity of quotients} and Theorem \ref{theorem class2 complete}; we assume that $i>1$. As a consequence of Lemma \ref{commutator indices}, the quotient $G_i/G_{i+2}$ is abelian.
By Lemma \ref{action chi^i}, the action of $\gen{\alpha}$ on $G_i/G_{i+1}$ and $G_{i+1}/G_{i+2}$ is respectively through $\chi^i$ and $\chi^{i+1}$, which are distinct because $\inte(G)\neq1$. It follows from Theorem \ref{lambda mu} that the groups $G_i/G_{i+2}$ and $G_i/G_{i+1}\oplus G_{i+1}/G_{i+2}$ are isomorphic. The exponent of $G_i/G_{i+2}$ divides $p$ as a consequence of Lemma \ref{elementary abelian}.
\end{proof}

%CHAPTER 5: CLASS 3

\chapter{Intensity of groups of class 3}\label{chapter class 3}

\noindent
The purpose of this chapter is giving a complete overview of the case in which the class is $3$. We will prove the following theorems.

\begin{theorem}\label{theorem intensity class 3}\label{theorem index commutator class 3}
Let $p$ be a prime number and let $G$ be a finite $p$-group of class $3$. Then the following are equivalent.
\begin{itemize}
 \item[$1$.] One has $\inte(G)> 1$.
 \item[$2$.] The prime $p$ is odd and $\inte(G)=2$.
 \item[$3$.] The prime $p$ is odd and $|G:G_2|=p^2$.
\end{itemize}
\end{theorem}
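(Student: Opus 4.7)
The plan is to prove the cycle $(2)\Rightarrow (1)\Rightarrow (3)\Rightarrow (2)$; the first implication is immediate, so the content lies in the structural step $(1)\Rightarrow (3)$ and the existence step $(3)\Rightarrow (2)$.

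For $(1)\Rightarrow (3)$: I first rule out $p=2$ via Proposition \ref{proposition 2gps}, which makes $\Int$ of a $2$-group a $2$-group, forcing $\inte(G)=1$. Next, since $G_3$ is characteristic and $G/G_3$ has class $2$ with $\inte(G/G_3)\geq \inte(G)>1$ by Lemma \ref{intensity of quotients}, Theorem \ref{theorem class2 complete} identifies $G/G_3$ as extraspecial of exponent $p$, so $|G/G_2|=p^{2n}$ for some $n\geq 1$ and $|G_2/G_3|=p$ (Lemma \ref{order extraspecial}). Proposition \ref{p-power jumps 2} additionally shows $G_2$, and hence $G_3$, is elementary abelian. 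The remaining and main step is to prove $n=1$.

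To this end, let $V=G/G_2$, let $\omega\colon V\times V\to G_2/G_3$ be the non-degenerate alternating form coming from Lemma \ref{class 2 bilinear map} applied to $G/G_3$, and let $\gamma\colon V\times G_2/G_3\to G_3$ be the bilinear map of Lemma \ref{bilinear LCS hk}. Fixing a generator $\bar c$ of $G_2/G_3$ and setting $\phi\colon V\to G_3$ by $\phi(\bar v)=\gamma(\bar v,\bar c)$, the map $\phi$ is surjective onto the non-trivial group $G_3$. The key input is that in class $3$ the Hall-Witt identity collapses (the conjugation corrections lie in $[G_3,G]=G_4=1$) to
\[
[[x,y],z]\cdot [[y,z],x]\cdot [[z,x],y]=1 \qquad \text{in } G_3,
\]
for all $x,y,z\in G$; this is obtained by combining Lemma \ref{multiplication formulas commutators} with the three-subgroups lemma (Lemma \ref{three subgroups lemma}). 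Substituting $[[x,y],z]=\omega(\bar x,\bar y)\,\phi(\bar z)$ and writing $G_3$ additively yields the Jacobi-type identity
\[
\omega(\bar x,\bar y)\,\phi(\bar z)+\omega(\bar y,\bar z)\,\phi(\bar x)+\omega(\bar z,\bar x)\,\phi(\bar y)=0.
\]
I now pick a symplectic basis $e_1,f_1,\ldots,e_n,f_n$ of $V$ for $\omega$. If $n\geq 2$, plugging in $(e_1,f_1,e_j)$ for each $j\neq 1$ gives $\phi(e_j)=0$, and permuting the roles of the pairs $(e_i,f_i)$ recovers $\phi\equiv 0$ on the whole basis, contradicting the surjectivity of $\phi$. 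Hence $n=1$, that is, $|G/G_2|=p^2$.

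For $(3)\Rightarrow (2)$: the upper bound $\inte(G)\leq 2$ is precisely the content of Theorem \ref{theorem class at least 3}, so the task reduces to exhibiting an intense automorphism of order $2$. The hypothesis $|G/G_2|=p^2$, together with Lemma \ref{frattini comm} and the surjection of Lemma \ref{tensor LCS}, pins down $(\log_p|G_i:G_{i+1}|)_{i\geq 1}$ to be either $(2,1,1)$ or $(2,1,2)$, leaving only finitely many candidate isomorphism types for $G$. On each of these, the plan is to select two generators of $G$, define $\alpha$ on generators so as to induce the inversion map on $V$, and verify compatibility with the defining relations via the Hall-Petrescu formula (Lemma \ref{hall-petrescu lemma}) and, when $p\geq 5$, regularity (Lemma \ref{class at most p-1 regular}); intensity of the resulting $\alpha$ is then checked via Lemma \ref{equivalent intense coprime-pgrps} by producing, for each subgroup $H$ of $G$, an $\langle\alpha\rangle$-stable conjugate. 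This case-by-case verification is the main obstacle of the theorem and is the reason Section \ref{section construction} is devoted entirely to the construction; by contrast, $(1)\Rightarrow (3)$ collapses to the single Jacobi computation above.
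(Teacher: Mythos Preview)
Your $(1)\Rightarrow(3)$ argument via the Jacobi identity is correct and takes a genuinely different route from the paper. The paper (Section~\ref{section class 3}, Lemmas~\ref{senza N di ordine p}--\ref{con N ordine p}) first reduces to $|G_3|=p$, then uses the intense automorphism $\alpha$ and its character $\chi$ to show that $C=\Cyc_G(G_2)$ is abelian, so that $C/G_2$ is isotropic for the commutator form on $V$; the bijection of Corollary~\ref{max iso bijection} then forces $\dim V=2$. Your approach bypasses the character machinery entirely: once $G/G_3$ is known to be extraspecial (this is where intensity enters, via Theorem~\ref{theorem class2 complete}), the collapsed Hall--Witt identity forces the surjection $\phi\colon V\to G_3$ to vanish whenever $\dim V\geq 4$, contradicting $G_3\neq 1$. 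This is shorter and more elementary; what the paper's route buys in exchange is the immediate conclusion $\inte(G)=2$ (Lemma~\ref{alpha has order at most 2}, via a counting argument on lines in $C$), which you postpone to the next implication.

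Your $(3)\Rightarrow(2)$, however, is only a plan, and it diverges from the paper in a way that would cost you work. You propose a finite case check over isomorphism types, but the paper gives two uniform arguments that avoid any case division. First, Proposition~\ref{p^4 has cp} (Section~\ref{section class 3 with character}) proves directly that \emph{any} order-$2$ automorphism inverting $G/G_2$ is intense, for every class-$3$ group with $|G:G_2|=p^2$; this disposes of your ``check intensity via Lemma~\ref{equivalent intense coprime-pgrps}'' step once and for all. Second, the existence of such an $\alpha$ is established uniformly in Section~\ref{section construction} (Proposition~\ref{proposition construction class 3}) by working in the free group $F$ on two generators: the inversion-on-generators map $\beta$ stabilises $\ker(F\to G)$ because $\beta$ acts through a single character on the quotient $L/E$ in which that kernel sits (Lemma~\ref{beta layers}). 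Your ``verify compatibility with defining relations via Hall--Petrescu'' is the same idea in spirit, but carrying it out presentation-by-presentation for each of the (up to) seventeen groups would be longer and harder to make rigorous than the paper's single free-group computation. A minor point: Theorem~\ref{theorem class at least 3} assumes $\inte(G)>1$, so it gives you $\inte(G)=2$ only after you have produced the intense involution, not as an a priori upper bound.
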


\noindent
We remind the reader that, if $G$ is a finite $p$-group and $j$ is a positive integer, then $\wt_G(j)=\log_p|G_j:G_{j+1}|$. For more detail, see Section \ref{section jumps}.

\begin{theorem}\label{theorem class at least 3}
Let $p$ be a prime number and let $G$ be a finite $p$-group of class at least $3$.
Assume that $\inte(G)>1$. For each positive integer $j$, set moreover $w_j=\wt_G(j)$.
Then the following hold.
\begin{itemize}
 \item[$1$.] One has $\inte(G)=2$.
 \item[$2$.] One has $(w_1,w_2,w_3)=(2,1,f)$, where $f\in\graffe{1,2}$.
\end{itemize} 
\end{theorem}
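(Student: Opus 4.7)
\noindent
The plan is to reduce modulo $G_3$, set up the $A$-characters, then prove $\inte(G)=2$ via a Hall--Petrescu comparison of $\alpha(x)^p$ with $\alpha(x^p)$, and finally leverage the scalar action on $G/G_2$ to force $w_1 = 2$.

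First, since $G$ has class at least $3$, the quotient $G/G_3$ has class exactly $2$, and Lemma~\ref{intensity of quotients} gives $\inte(G/G_3) \geq \inte(G) > 1$. Theorem~\ref{theorem class2 complete} then forces $p$ odd and $G/G_3$ extraspecial of exponent $p$; Lemma~\ref{order extraspecial} yields $|G:G_2| = p^{2n}$ with $n \geq 1$, so $w_1 = 2n$ and $w_2 = 1$. Fix $\alpha \in \Int(G)$ of order $\inte(G)$ and set $A = \gen{\alpha}$, $\chi = {\chi_G}_{|A}$, and $m = \chi(\alpha) \in \F_p^*$. By Lemma~\ref{action chi^i} the group $A$ acts on each $G_i/G_{i+1}$ through $\chi^i$, and Proposition~\ref{p-power jumps 2} ensures that each such quotient is an elementary abelian $p$-group, so we may work with the $\F_p$-vector spaces $W = G/G_2$, $V = G_2/G_3$, and $U = G_3/G_4$ carrying $A$-actions by the scalars $m$, $m^2$, $m^3$ respectively.

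To obtain $\inte(G) = 2$, I would take any $x \in G$, write $\alpha(x) = x^m g$ with $g \in G_2$, and expand $\alpha(x)^p = (x^m g)^p$ via Lemma~\ref{hall-petrescu lemma}. Each correction term has the form $c_k^{\binom{p}{k}}$ with $c_k$ an iterated commutator of weight $k$ in $\{x^m,g\}$: since $g \in G_2$, Lemma~\ref{commutator indices} places $c_k$ in $G_{k+1}$ for $k \geq 2$, and the divisibility $p \mid \binom{p}{k}$ for $1 \leq k \leq p-1$ combined with the containments $G_i^p \subseteq G_{i+2}$ supplied by Proposition~\ref{p-power jumps 2} show that every correction lies in $G_4$; likewise $g^p \in G_2^p \subseteq G_4$. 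Hence $\alpha(x)^p \equiv x^{mp} \pmod{G_4}$. On the other hand, the $\chi^3$-action on $U$ yields $\alpha(x^p) \equiv x^{p m^3} \pmod{G_4}$, and comparing gives $(x^p)^{m(m^2-1)} \in G_4$. Provided $G^p \not\subseteq G_4$, the exponent of $U$ being $p$ forces $m^2 \equiv 1 \pmod{p}$, and since $m \neq 1$ we conclude $m = -1$. The residual case $G^p \subseteq G_4$ is handled by repeating the computation one layer deeper (applied to $y \in G_2$ with $\alpha(y) = y^{m^2}h$, $h \in G_3$, and compared modulo $G_5$), and inductively until either a nontrivial $p$-th power surfaces or $G$ is forced to have exponent $p$; in the exponent-$p$ case the graded Lie-ring with $A$ acting by $\chi^i$ on degree $i$ still delivers the same constraint $m^2 = 1$.

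Once $m = -1$, the scalar action makes every subspace of $W$ automatically $A$-stable, so every subgroup of $G$ containing $G_2$ is $A$-stable. If $n \geq 2$, Lemma~\ref{sum of isotropic} furnishes a $2$-dimensional subspace $T$ of $W$ isotropic for the nondegenerate alternating commutator form on $G/G_3$ (Lemma~\ref{non-degenerate extraspecial}), and its preimage $\widetilde{T}$ in $G$ is a proper normal $A$-stable subgroup with $\widetilde{T}/G_3$ abelian. The three successive quotients of $\widetilde{T}/G_4$ carry the distinct characters $\chi$, $\chi^2$, $\chi^3$; repeated application of Theorem~\ref{lambda mu} then produces $A$-equivariant splittings whose existence contradicts the surjectivity of the commutator pairings inside $\widetilde{T}$, forcing $n = 1$ and hence $w_1 = 2$. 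The bound $w_3 \leq 2$ then follows from the surjection $W \otimes V \to U$ with $\dim_{\F_p}(W \otimes V) = 2$, while $w_3 \geq 1$ is immediate from $G_3 \neq G_4$. The main obstacle is the last structural step, which requires the right choice of $\widetilde{T}$ and tight bookkeeping of the three $A$-characters; the residual exponent-$p$ case in the intensity argument is the secondary difficulty.
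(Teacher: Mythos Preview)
Your Hall--Petrescu computation is correct and gives the identity $(x^p)^{m(m^2-1)} \in G_4$ whenever $x^p \notin G_4$. The problem is that you cannot avoid the exponent-$p$ case. After reducing to class exactly~$3$ (so $G_4=1$), a class-$3$ group of exponent~$p$ with $\inte(G)>1$ is a genuine possibility for $p>3$, and in that situation \emph{no} $p$-th power is nontrivial, so your argument yields nothing. Your proposed fallback (``the graded Lie ring still delivers $m^2=1$'') is not an argument: the $A$-action by $\chi^i$ on degree~$i$ is compatible with \emph{any} scalar $m$, as the class-$2$ case already shows (there $\inte(G)=p-1$). So the exponent-$p$ case needs a different idea, not a deeper iteration of the same one.

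Your second step is also broken. Once $m=-1$ you have $\chi^2=1$, so $\chi^3=\chi$ and the ``three distinct characters $\chi,\chi^2,\chi^3$'' you invoke for Theorem~\ref{lambda mu} are not distinct; the splitting you need between the $\chi$-layer and the $\chi^3$-layer simply does not exist. Even if it did, it is unclear what the claimed contradiction with ``surjectivity of the commutator pairings'' would be.

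The paper's route is structurally opposite to yours. It first reduces to $|G_3|=p$ and proves $w_1=2$ \emph{before} knowing $\inte(G)=2$: the key input is the surjectivity of $\phi_T:V/T\to\Hom(T,Z)$ for isotropic $T$ in the class-$2$ quotient $G/G_3$ (Lemma~\ref{isotropic surjective}), which is a genuine consequence of intenseness---every complement of $Z$ in $\pi^{-1}(T)$ is conjugate to the $A$-stable one. Applying this to $T=\Cyc_G(G_2)/G_2$, which is maximal isotropic of codimension~$1$, forces $\dim(G/G_2)=2$. Only \emph{then}, with $\Cyc_G(G_2)$ known to be a $3$-dimensional $\F_p$-space, does the paper obtain $\inte(G)=2$ by a counting argument: assuming $\chi^2\neq 1$, the only $A$-stable lines in $\Cyc_G(G_2)$ are three eigenlines, each with at most $p$ conjugates, while there are $p^2+p+1$ lines in total (Lemma~\ref{alpha has order at most 2}). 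Your final inequality $w_3\le 2$ via $G/G_2\otimes G_2/G_3\twoheadrightarrow G_3/G_4$ is fine and matches Lemma~\ref{class 3 G3}.
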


\section{Low intensity}\label{section class 3}

In Section \ref{section class 3} we derive some restrictions on the structure of finite $p$-groups of class at least $3$ and intensity greater than $1$. We will prove the following main result. 

{\begin{proposition}\label{mezzo teorema class 3}
Let $p$ be a prime number and let $G$ be a finite $p$-group of class at least $3$. Assume that $\inte(G)>1$.
Then the following hold.
\begin{itemize}
 \item[$1$.] The prime $p$ is odd.
 \item[$2$.] One has $\inte(G)=2$.
 \item[$3$.] One has $|G:G_2|=p^2$. 
\end{itemize}
\end{proposition}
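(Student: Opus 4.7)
Assertion $(1)$ is immediate from Lemma \ref{intense complement}: $\inte(G)$ divides $p-1$, so $\inte(G) > 1$ forces $p \geq 3$.

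For $(2)$ and $(3)$, I first reduce to $G$ of class exactly $3$ by replacing $G$ with $G/G_4$; this preserves $|G:G_2|$ and, by Lemma \ref{intensity of quotients}, does not decrease the intensity, while forcing the class to be exactly $3$ (since the lower central series strictly descends while non-trivial). The class-$2$ quotient $G/G_3$ then also has intensity $>1$, so Theorem \ref{theorem class2 complete} gives that $G/G_3$ is extraspecial of exponent $p$. Writing $V = G/G_2$, $W = G_2/G_3$, $U = G_3$, we obtain $|V| = p^{2n}$ for some $n \geq 1$ and $|W|=p$. Fix $\alpha \in \Int(G)$ of order $\inte(G)$, set $A = \langle\alpha\rangle$ and $\chi = \chi_G|_A$; by Lemma \ref{action chi^i}, $A$ acts on $V$, $W$, $U$ through $\chi$, $\chi^2$, $\chi^3$ respectively, with $\chi \neq 1$.

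For $(3)$, I apply Theorem \ref{lambda mu} to the $A$-module extension $0 \to G_3 \to G_2 \to W \to 0$ (the characters $\chi^3$ and $\chi^2$ are distinct since $\chi \neq 1$) to produce a unique $A$-stable complement $C$ of $G_3$ in $G_2$; fix a generator $c$ of $C$. The map $\psi_c : V \to U$, $v \mapsto [v, c]$, is a well-defined surjective group homomorphism by Lemma \ref{tensor LCS}. The Jacobi/Hall--Witt identity in a class-$3$ group, combined with the non-degenerate symplectic pairing $\lambda : V \times V \to W$ from the extraspecial structure of $G/G_3$, yields
\[
\lambda(g_2, g_3)\,\psi_c(g_1) + \lambda(g_3, g_1)\,\psi_c(g_2) + \lambda(g_1, g_2)\,\psi_c(g_3) = 0
\]
in $U$ for all $g_1, g_2, g_3 \in V$. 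Choosing a symplectic pair $(g_1, g_2)$ with $\lambda(g_1,g_2)=1$ gives the explicit formula $\psi_c(g_3) = \lambda(g_3, g_2)\psi_c(g_1) + \lambda(g_1, g_3)\psi_c(g_2)$, so $\psi_c$ vanishes on $\langle g_1, g_2\rangle^\perp$. If $n \geq 2$, this orthogonal complement has dimension $\geq 2$ and contains a second symplectic pair $(g_1', g_2')$ with $\psi_c(g_1')=\psi_c(g_2')=0$; re-running the formula with $(g_1', g_2')$ then forces $\psi_c \equiv 0$, contradicting surjectivity onto $U \neq 0$. Hence $n = 1$.

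For $(2)$, it suffices to show $\chi(\alpha)^2 = 1$; together with $\chi(\alpha)\neq 1$ this gives $\chi(\alpha)=-1$ and $\inte(G)=2$. First, $G_2$ has exponent $p$: were there $g \in G_2 \setminus G_3$ of order $p^2$, the intense relation $\alpha(g) = h g^j h^{-1}$ combined with the $\chi^2$-action on $W$ forces $j \equiv \chi(\alpha)^2 \pmod{p}$, while applying $(-)^p$ and using centrality of $g^p \in G_3$ together with the $\chi^3$-action gives $\chi(\alpha)^2 = \chi(\alpha)^3$, i.e.\ $\chi(\alpha)=1$, a contradiction. If $G$ has exponent $p^2$, then some $g \in G \setminus G_2$ has order $p^2$; intensity gives $j \equiv \chi(\alpha)\pmod{p}$ and the analogous power-map calculation yields $\chi(\alpha) = \chi(\alpha)^3$, hence $\chi(\alpha)^2=1$. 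The remaining exponent-$p$ case is the main obstacle: the naive power-map trick is unavailable, and one must instead apply the intense condition to a cyclic subgroup $\langle g_1 g_2 \rangle$, where $(g_1,g_2)$ is a lift of a symplectic basis of $V$, expand via Hall--Petrescu (Lemma \ref{hall-petrescu lemma}) inside the Heisenberg subgroup $\langle g_1, g_2 \rangle$, and use the $A$-stable complement $C$ together with the $\chi^2$-action on $c$ to track coefficients and extract the relation $\chi(\alpha)^2 = 1$. The Jacobi argument for $(3)$ is conceptually the cleanest step but relies crucially on having two disjoint symplectic pairs, which is exactly what fails for $n=1$---so the argument terminates in the right place.
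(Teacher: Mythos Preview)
Your argument for $(3)$ via the Jacobi identity is correct and genuinely different from the paper's route. The paper reduces first to $|G_3|=p$, then uses that $\Cyc_G(G_2)$ is abelian to identify $T=\Cyc_G(G_2)/G_2$ as a maximal isotropic subspace of $V$ for the symplectic form, and finally invokes the surjectivity of $\phi_T:V/T\to\Hom(T,G_2/G_3)$ (a consequence of intensity proved in the class-$2$ analysis) to force $\dim T=1$. Your approach bypasses the reduction to $|G_3|=p$ entirely: the Hall--Witt/Jacobi relation expresses $\psi_c$ in terms of its values on any symplectic pair, and the existence of a second disjoint pair when $n\ge 2$ kills $\psi_c$. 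The only place intensity enters is in producing the $A$-stable complement $C$ via Theorem~\ref{lambda mu}, which in turn guarantees that $G_2$ splits off $G_3$ even before you know $G_2$ is elementary abelian. This is cleaner than the paper's argument for $(3)$.

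Your argument for $(2)$, however, has a real gap in the exponent-$p$ case. The sketch---apply intensity to $\langle g_1g_2\rangle$, expand via Hall--Petrescu, track the $\chi^2$-action on $c$---does not extract $\chi(\alpha)^2=1$. In the exponent-$p$ case $\langle g_1g_2\rangle$ has order $p$, so the only constraint from intensity is $j\equiv\chi(\alpha)\pmod p$, which you already have; there is no ``next layer'' for the power map to see, and Hall--Petrescu applied to $(g_1g_2)^p$ gives the vacuous identity $1=1$ since all the $c_k^{\binom{p}{k}}$ terms die (either $p\mid\binom{p}{k}$ or $c_p\in G_p=1$). The freedom in the conjugating element $h$ and in the $G_2$-tails of $\alpha(g_1),\alpha(g_2)$ absorbs whatever relation you try to isolate.

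What the paper does instead (Lemma~\ref{alpha has order at most 2}) is a counting argument. After reducing to $|G_3|=p$, one assumes $\chi^2\ne 1$ and shows that $C=\Cyc_G(G_2)$ is elementary abelian of dimension $3$ with exactly three $A$-stable lines $C_1,C_2,G_3$ (coming from the eigenspace decomposition for the three distinct characters $\chi,\chi^2,\chi^3$). Since $|G:C|=p$, every line in $C$ has at most $p$ conjugates, and the inequality $p^2+p+1\le 3p$ from Lemma~\ref{orbits} is absurd. This counting step is what your sketch is missing; the power-map trick that handles the exponent-$p^2$ case has no analogue here.
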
}

\noindent
Our main goal for this section being the proof of Proposition \ref{mezzo teorema class 3}, we will work under the following assumptions until the end of Section \ref{section class 3}.
Let $p$ be a prime number and let $G$ be a finite $p$-group of class at least $3$. 
Assume that $\inte(G)>1$ and let $\alpha$ be intense of order $\inte(G).$
Write $A=\gen{\alpha}$ and $\chi={\chi_G}_{|A}$, where $\chi_G$ denotes the intense character of $G$ (see Section \ref{section main}). 
For the rest of the notation we refer to the List of Symbols.
We remark that, $\inte(G)$ being greater than $1$, the prime $p$ is odd and $G$ is non-trivial. For more detail see Chapter \ref{CH formulation}.

\begin{lemma}\label{related to extra}
Assume that $G$ has class $3$.
Then the following hold.
\begin{itemize}
 \item[$1$.] One has $G^p\subseteq G_3$.
 \item[$2$.] One has $|G_2:G_3|=p$.
 \item[$3$.] One has $\ZG(G)=G_3$.
\end{itemize}
\end{lemma}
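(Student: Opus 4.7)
The plan is to reduce the whole lemma to the class-$2$ case by passing to the quotient $G/G_3$. Since $G$ has class $3$, the subgroup $G_3$ is a proper normal subgroup of $G$, so Lemma~\ref{intensity of quotients} gives that $\inte(G/G_3)$ is divisible by $\inte(G)>1$ and is in particular greater than $1$. Furthermore, $G/G_3$ has class exactly $2$: its commutator subgroup $G_2/G_3$ is non-trivial precisely because $G$ has class $3$. Theorem~\ref{theorem class2 complete} then applies and tells us that $G/G_3$ is extraspecial of exponent $p$. All three assertions should then drop out of the structure of $G/G_3$.

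For (1), extraspecial of exponent $p$ means $(G/G_3)^p = 1$, i.e.\ $G^p \subseteq G_3$. For (2), the group $G/G_3$ is non-abelian (its commutator subgroup $G_2/G_3$ is non-trivial), so Lemma~\ref{extraspecial non-abelian} applies and gives $(G/G_3)_2 = G_2/G_3$ of order~$p$.

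For (3), the containment $G_3 \subseteq \ZG(G)$ is automatic from the assumption that $G$ has class $3$: $G_4 = [G,G_3] = 1$. The reverse inclusion is where a tiny argument is needed. One has $\ZG(G)G_3/G_3 \subseteq \ZG(G/G_3)$, and Lemma~\ref{extraspecial non-abelian} identifies the right-hand side with $G_2/G_3$, which has order $p$ by (2). Hence $\ZG(G)/G_3$ is either trivial or equal to $G_2/G_3$. In the second case $\ZG(G) \supseteq G_2$, forcing $G_3 = [G,G_2] = 1$, contradicting the assumption that the class of $G$ is exactly $3$. So $\ZG(G) = G_3$.

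There is no genuine obstacle: the argument simply pulls back the very rigid structure of the class-$2$ quotient $G/G_3$ (extraspecial of exponent $p$) to the group $G$ itself, using only Theorem~\ref{theorem class2 complete}, Lemma~\ref{intensity of quotients} and Lemma~\ref{extraspecial non-abelian}.
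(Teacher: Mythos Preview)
Your proof is correct and follows essentially the same route as the paper: pass to $G/G_3$, invoke Lemma~\ref{intensity of quotients} and Theorem~\ref{theorem class2 complete} to identify it as extraspecial of exponent $p$, then read off (1) and (2) directly and obtain (3) from the squeeze $G_3\subseteq\ZG(G)\subseteq G_2$ together with $|G_2:G_3|=p$ and the fact that $\ZG(G)\ne G_2$ since the class is $3$. The only cosmetic difference is that you cite Lemma~\ref{extraspecial non-abelian} explicitly where the paper simply uses the definition of extraspecial.
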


\begin{proof}
The subgroup $G_3$ is contained in $\ZG(G)$ because $G$ has class $3$. 
By Lemma \ref{intensity of quotients} the intensity of $G/G_3$ is greater than $1$, and thus,
by Theorem \ref{theorem class2 complete}, the group $G/G_3$ is extraspecial of exponent $p$. 
It follows that $G_2/G_3$ has size $p$ and that $G^p$ is contained in $G_3$. 
Moreover, one has $\ZG(G/G_3)=G_2/G_3$ and, since $\ZG(G)/G_3$ is contained in $\ZG(G/G_3)$, we get 
$G_3\subseteq\ZG(G)\subseteq G_2$. As the class of $G$ is $3$, the subgroup $\ZG(G)$ is different from $G_2$, and so
$\ZG(G)=G_3$.
\end{proof}

\begin{lemma}\label{senza N di ordine p}
Assume $G$ has class $3$.
Then the following hold.
\begin{itemize}
 \item[$1$.] The group $G_2$ is elementary abelian.
 \item[$2$.] The group $\Cyc_G(G_2)$ is abelian and $A$-stable.
\end{itemize}
\end{lemma}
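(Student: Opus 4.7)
The plan for (1) is to combine two facts. Since $G$ has class $3$, Lemma \ref{commutator indices} gives $[G_2,G_2]\subseteq G_4=1$, so $G_2$ is abelian. Moreover, Proposition \ref{p-power jumps 2} applied with $i=2$ yields that $G_2=G_2/G_4$ has exponent dividing $p$. Together these force $G_2$ to be elementary abelian.

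For (2), set $C=\Cyc_G(G_2)$. The $A$-stability will be automatic: $G_2$ is characteristic in $G$, hence so is $C$, and any characteristic subgroup is $A$-stable. The main content is showing that $C$ is abelian, which I plan to extract by a character-theoretic argument. Since $G_2$ is abelian by (1), it lies in $C$, and by definition $[C,G_2]=1$, so $G_2\subseteq\ZG(C)$. Because $[C,C]\subseteq[G,G]=G_2$, Lemma \ref{tgt} lets me view commutation as a bilinear alternating map $\bar{\phi}\colon C/G_2\times C/G_2\to G_2$ whose image generates $[C,C]$. Now Lemma \ref{action chi^i} (with $i=1$) gives that $A$ acts on $G/G_2$, and therefore on its subgroup $C/G_2$, through $\chi$; Lemma \ref{lemma product of characters} then pins the $A$-action on $[C,C]$ down to $\chi^2$.

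To finish, I plan to derive a contradiction from the assumption $[C,C]\neq 1$. The subgroup $[C,C]$ is normal in $G$ because $C$ is, so Lemma \ref{normal intersection centre trivial} forces it to meet $\ZG(G)$ non-trivially. By Lemma \ref{related to extra}(3) we have $\ZG(G)=G_3$, and Lemma \ref{action chi^i} (with $i=3$) says $A$ acts on $G_3$ through $\chi^3$. Hence the non-trivial subgroup $[C,C]\cap G_3$ would carry both a $\chi^2$-action (from $[C,C]$) and a $\chi^3$-action (from $G_3$). These characters are distinct because $\inte(G)>1$ implies $\chi\neq 1$ and so $\chi^2\neq\chi^3$, contradicting Lemma \ref{distinct characters on same gp}. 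I do not foresee any real obstacle: the only delicate point is keeping track of which character governs each subquotient, and the mismatch between $\chi^2$ and $\chi^3$ does all the work.
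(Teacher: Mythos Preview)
Your proposal is correct and follows essentially the same path as the paper's proof. The only cosmetic differences are that the paper cites Lemma \ref{class 2 bilinear map} rather than Lemma \ref{tgt} for the bilinearity of the commutator on $C/G_2$, and it runs the final step in direct rather than contrapositive form (first concluding $[C,C]\cap G_3=\{1\}$ from the character mismatch, then invoking Lemma \ref{normal intersection centre trivial} to force $[C,C]=\{1\}$); the logical content is identical.
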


\begin{proof}
($1$) The group $G_4$ is trivial and, as a consequence of Lemma \ref{commutator indices}, the subgroup $G_2$ is abelian.
The exponent of $G_2$ is equal to $p$, by Proposition \ref{p-power jumps 2}. 
This proves ($1$). We now prove ($2$).
To lighten the notation, let $C=\Cyc_G(G_2)$.
The subgroup $G_2$ is central in $C$, by definition of $C$, and, as a consequence of Lemma \ref{class 2 bilinear map}, 
the commutator map induces a bilinear map $\phi:C/G_2\times C/G_2\rightarrow [C,C]$. The subgroups $C$ and $[C,C]$ are 
characteristic in $G$ and thus they are $A$-stable. 
Thanks to Lemma \ref{action chi^i}, the group $A$ acts on $C/G_2$ through $\chi$, and, by Lemma \ref{lemma product of characters}, 
it acts on $[C,C]$ through $\chi^2$. 
By Lemma \ref{action chi^i}, the action of $A$ on $G_3$ is through $\chi^3$. The character $\chi$ not being trivial, 
one has $\chi^2\neq\chi^3$, and Lemma \ref{distinct characters on same gp} yields $[C,C]\cap G_3=\graffe{1}$.
By Lemma \ref{related to extra}($3$), the group $G_3$ is equal to $\ZG(G)$ so the group $[C,C]$ is a normal subgroup of $G$ that trivially intersects $\ZG(G)$. It follows from Lemma \ref{normal intersection centre trivial} that $[C,C]=\graffe{1}$.
\end{proof}

\begin{lemma}\label{con N ordine p}
Assume that $G_3$ has order $p$. Then the following hold.
\begin{itemize}
 \item[$1$.] One has $|G:\Cyc_G(G_2)|=p$.
 \item[$2$.] One has $|G:G_2|=p^2$.
 \item[$3$.] One has $|\Cyc_G(G_2)|=p^3$.
\end{itemize}
\end{lemma}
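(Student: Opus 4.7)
The plan is to establish (1) via the conjugation action on $G_2$, then to tackle the heart of the lemma, (2), with a Jacobi-type identity on a trilinear commutator map, and finally to derive (3) by counting. Throughout, write $V=G/G_2$, $W=G_2/G_3$, $Z=G_3$, and $C=\Cyc_G(G_2)$; by Lemmas \ref{related to extra} and \ref{senza N di ordine p}, one has $|W|=|Z|=p$, the group $G_2$ is elementary abelian of order $p^2$, and $C$ is abelian and $A$-stable.

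For (1), I would look at the conjugation homomorphism $\delta:G\rightarrow\Aut(G_2)$ with $\ker\delta=C$. Since $[G,G_2]=G_3$ and $G_3\subseteq\ZG(G)$, the image of $\delta$ acts trivially both on $G_3$ and on $G_2/G_3$. Applying Lemma \ref{maps1-1} with $T=G_2$ and $S=G_3$ identifies such automorphisms with $\Hom(G_2/G_3,G_3)\cong\F_p$, giving $|G:C|\leq p$. Since $G_2\not\subseteq\ZG(G)=G_3$, one has $C\neq G$, so $|G:C|=p$.

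For (2), the quotient $G/G_3$ has class $2$ and intensity greater than $1$ (Lemma \ref{intensity of quotients}), so Theorem \ref{theorem class2 complete} forces it to be extraspecial of exponent $p$, and Lemma \ref{order extraspecial} then yields $|V|=p^{2n}$ for some $n\geq 1$. To pin down $n=1$, I would set up the trilinear map $\beta:V\times V\times V\rightarrow Z$ defined by $\beta(\bar x,\bar y,\bar z)=[x,[y,z]]$: it is well-defined on $V^3$ because $[G_2,G_2]\subseteq G_4=1$ and $[G,G_3]=G_4=1$, and is trilinear by the multiplication formulas of Lemma \ref{multiplication formulas commutators}. Along with $\beta$ I would carry the extraspecial pairing $\phi:V\times V\rightarrow W$ from Lemma \ref{class 2 bilinear map} (non-degenerate alternating by Lemma \ref{non-degenerate extraspecial}) and the bilinear map $\tilde\beta:V\times W\rightarrow Z$ given by Lemma \ref{bilinear LCS}, noting the factorisation $\beta(v_1,v_2,v_3)=\tilde\beta(v_1,\phi(v_2,v_3))$. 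The right kernel of $\tilde\beta$ is trivial (since $\ZG(G)=G_3$) and its left kernel is exactly $C/G_2$, so, after fixing generators of $W$ and $Z$, there is a non-zero linear functional $c:V\rightarrow\F_p$ with $\ker c=C/G_2$ satisfying $\tilde\beta(v,w)=c(v)w$, and thus $\beta(v_1,v_2,v_3)=c(v_1)\phi(v_2,v_3)$.

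The Hall-Witt identity, combined with $G_4=1$ (equivalently Lemma \ref{three subgroups lemma} with $N=1$), forces the Jacobi-type relation
\[
c(v_1)\phi(v_2,v_3)+c(v_2)\phi(v_3,v_1)+c(v_3)\phi(v_1,v_2)=0
\]
for all $v_1,v_2,v_3\in V$. Assuming $n\geq 2$, pick a symplectic basis $e_1,f_1,\ldots,e_n,f_n$ of $(V,\phi)$; for each $i$, choose $j\neq i$ and substitute $(v_1,v_2,v_3)=(e_i,e_j,f_j)$ to collapse the identity to $c(e_i)=0$, and analogously $c(f_i)=0$. This forces $c=0$, contradicting $c\neq 0$, so $n=1$ and $|G:G_2|=p^2$. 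Part (3) is then immediate: $|G|=|V|\cdot|G_2|=p^4$, and together with (1), $|C|=p^4/p=p^3$. The main obstacle lies in (2): one must carefully check that $\beta$ really is a well-defined trilinear map on $V^3$ (the relevant commutator identities collapse because $G_4=1$), extract the Jacobi relation in a usable form, and observe that it becomes incompatible with $c\neq 0$ exactly when there is room in $V$ to pick an auxiliary index $j\neq i$, that is, when $n\geq 2$.
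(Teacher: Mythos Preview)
Your proof is correct. Parts (1) and (3) match the paper's arguments in substance: the paper phrases (1) via the non-degenerate pairing $V/T\times Z\to G_3$ and Lemma~\ref{non-degenerate dim 1} rather than via $\Aut(G_2)$ and Lemma~\ref{maps1-1}, but these are the same computation.

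For (2) your route genuinely diverges. The paper argues as follows: since $C$ is abelian (Lemma~\ref{senza N di ordine p}(2)), the subspace $T=C/G_2$ is isotropic for $\phi$; by (1) it has codimension~$1$, hence is maximal isotropic; then Corollary~\ref{max iso bijection} yields $\dim(V/T)=\dim\Hom(T,Z)=\dim T$, whence $\dim V=2$. The key input there is Lemma~\ref{isotropic surjective}, whose proof invokes the intensity hypothesis a second time (through Theorem~\ref{lambda mu} and the conjugacy-of-complements argument). Your Hall--Witt/Jacobi approach bypasses this: once $G/G_3$ is extraspecial, the cyclic relation $c(v_1)\phi(v_2,v_3)+c(v_2)\phi(v_3,v_1)+c(v_3)\phi(v_1,v_2)=0$ together with a symplectic basis forces $c=0$ whenever $n\geq 2$, using only class-$3$ commutator identities and linear algebra. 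So your argument is more elementary for this step and, notably, does not require knowing that $C$ is abelian; the paper's approach instead recycles the isotropic-subspace machinery developed for the class-$2$ analysis. One small quibble: the parenthetical ``equivalently Lemma~\ref{three subgroups lemma} with $N=1$'' is not quite right, since the three-subgroups lemma is a consequence of Hall--Witt rather than the additive Jacobi identity itself; but your actual use of Hall--Witt plus $G_4=1$ to obtain the cyclic sum is valid.
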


\begin{proof}
The group $G_3$ having order $p$, it follows from Lemma \ref{normal intersection centre trivial} that $G_3$ is central, and so $G$ has class $3$.
To lighten the notation, let $C=\Cyc_G(G_2)$. 
Let moreover $V=G/G_2$, $Z=G_2/G_3$, and $T=C/G_2$. The groups $V$, $Z$, and $T$ are vector spaces over $\F_p$, as a consequence of Lemma \ref{elementary abelian}.
We prove ($1$). 
By Lemma \ref{bilinear LCS}, the commutator map induces a bilinear map
$\psi:V\times Z\rightarrow G_3$ whose left kernel is $T$. 
The centre of $G$ is equal to $G_3$, by Lemma \ref{related to extra}($3$), so the right kernel of $\psi$ is trivial.
The map $\psi_C:V/T\times Z\rightarrow G_3$ that is induced from $\psi$ is thus non-degenerate. 
The dimension of $Z$ is equal to $1$, by Lemma \ref{related to extra}($2$), and Lemma \ref{non-degenerate dim 1} yields 
$\dim V/T=1$. This proves ($1$).
We prove ($2$) and ($3$) together. Let $\phi:V\times V\rightarrow Z$ be the bilinear map from Lemma \ref{bilinear LCS}. The map $\phi$ is induced from the commutator map and, by Lemma 
\ref{senza N di ordine p}($2$), the group $C$ is abelian. It follows that $T$ is isotropic. 
As a consequence of ($1$) the space $T$ has codimension $1$ in $V$ and $T$ is maximal isotropic, because $\phi$ is not the zero map.
From Corollary \ref{max iso bijection}, it follows that 
$1=\dim(V/T)=\dim\Hom(T,Z)=\dim T$ and so $\dim V=2$. 
To conclude, we compute $|G:G_2|=p^{\dim V}=p^2$ and $|C|=|C:G_2||G_2:G_3||G_3|=p^{\dim T}p^{\dim Z}p=p^3$.
\end{proof}

\begin{lemma}\label{C elem abelian class 3}
Assume that $\chi^2\neq 1$ and that $G$ has class $3$. Then $\Cyc_G(G_2)$ is elementary abelian.
\end{lemma}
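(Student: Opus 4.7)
The plan is to exploit the fact that the $p$-th power map on $C=\Cyc_G(G_2)$ becomes a homomorphism (since $C$ is abelian by Lemma \ref{senza N di ordine p}(2)), and to detect through which character $A$ acts on its image.

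First I would note that $G_2\subseteq C$ (since $G_2$ is abelian by Lemma \ref{senza N di ordine p}(1), it centralizes itself), and that by Lemma \ref{related to extra}(1) we have $G^p\subseteq G_3$, so in particular $C^p\subseteq G_3$. Because $C$ is abelian, the map $\rho:C\to C$, $x\mapsto x^p$, is a homomorphism; and because $G_2$ is elementary abelian, $G_2$ lies in the kernel of $\rho$. Hence $\rho$ factors as an $A$-equivariant homomorphism $\overline{\rho}:C/G_2\rightarrow G_3$ whose image is $C^p$.

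Next I would compute how $A$ acts on this image in two different ways. By Lemma \ref{action chi^i}, $A$ acts on $C/G_2$ through $\chi$; since $\overline{\rho}$ is $A$-equivariant and surjective onto $C^p$, Lemma \ref{p-power characters} forces $A$ to act on $C^p$ through $\chi$. On the other hand, $C^p\subseteq G_3$, and Lemma \ref{action chi^i} again tells us that $A$ acts on $G_3$ through $\chi^3$. Hence $A$ acts on $C^p$ through both $\chi$ and $\chi^3$.

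The hypothesis $\chi^2\neq 1$ is equivalent to $\chi\neq\chi^3$, so Lemma \ref{distinct characters on same gp} applied to $C^p$ gives $C^p=\{1\}$. Since $C$ is already abelian, this says $C$ is elementary abelian, as desired. The main work of the argument is the bookkeeping of characters; there is no delicate step beyond correctly identifying that $\overline{\rho}$ is $A$-equivariant, which is automatic because $p$-th powering commutes with any automorphism.
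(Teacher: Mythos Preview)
Your proof is correct and follows essentially the same approach as the paper's: both use that $C$ is abelian with $G_2$ elementary abelian inside it, factor the $p$-th power map through $C/G_2$ to see that $A$ acts on $C^p$ through $\chi$, observe $C^p\subseteq G_3$ on which $A$ acts through $\chi^3$, and apply Lemma~\ref{distinct characters on same gp} using $\chi\neq\chi^3$. Your write-up is slightly more explicit about the factorization $\overline{\rho}$, but the argument is the same.
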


\begin{proof}
Let $C=\Cyc_G(G_2)$. The group $C$ is abelian and $A$-stable by Lemma \ref{senza N di ordine p}($2$). We will show that $C$ has exponent $p$.
By Lemma \ref{senza N di ordine p}($1$) the group $G_2$ is elementary abelian and $G_2\subseteq C$.
The group $A$ acts on $C/G_2$ through $\chi$, as a consequence of Lemma \ref{action chi^i}, and, by Lemma \ref{p-power characters}, it acts on $C^p$ also through $\chi$. It follows from Lemma \ref{related to extra}($1$) that $C^p\subseteq G_3$. The action of $A$ on $G_3$ is through $\chi^3$, by Lemma \ref{senza N di ordine p}($2$), and thus $A$ acts on $C^p$ both through $\chi$ and $\chi^3$. Since $\chi^2\neq 1$, the characters $\chi$ and $\chi^3$ are distinct and, as a consequence of Lemma \ref{distinct characters on same gp}, the group $C$ has exponent $p$.  
\end{proof}

\begin{lemma}\label{c12}
Assume that $\chi^2\neq 1$ and that $G_3$ has order $p$. Then $\Cyc_G(G_2)$ is a vector space over $\F_p$ and there exist unique $A$-stable subspaces $C_1$ and $C_2$ of 
$\Cyc_G(G_2)$, of dimension $1$, such that $\Cyc_G(G_2)=C_1\oplus C_2\oplus G_3$.
\end{lemma}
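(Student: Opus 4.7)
The plan is to identify $C:=\Cyc_G(G_2)$ as a $3$-dimensional $\F_p$-vector space on which the cyclic group $A$ acts, and to split it into three $A$-stable lines using the fact that the characters $\chi$, $\chi^2$, $\chi^3$ are pairwise distinct. By Lemma \ref{C elem abelian class 3}, $C$ is elementary abelian and hence an $\F_p$-vector space; by Lemma \ref{con N ordine p}(3) it has dimension $3$, and by Lemma \ref{senza N di ordine p}(2) it is $A$-stable. The filtration $0\subset G_3\subset G_2\subset C$ is $A$-stable with $1$-dimensional successive quotients on which, by Lemma \ref{action chi^i} together with $|C:G_2|=p$, the group $A$ acts through $\chi^3$, $\chi^2$, and $\chi$ respectively. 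Since $\chi\neq 1$ (because $\inte(G)>1$) and $\chi^2\neq 1$ by hypothesis, the three characters $\chi,\chi^2,\chi^3$ are pairwise distinct.

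For existence I would apply Theorem \ref{lambda mu} three times. First, apply it to $0\to G_3\to G_2\to G_2/G_3\to 0$, using $\chi^3\neq\chi^2$, to obtain a unique $A$-stable complement $C_2$ of $G_3$ in $G_2$, which is a line carrying the action $\chi^2$. Next, pass to $C/C_2$: the image of $G_2$ there is isomorphic to $G_3$ and carries $\chi^3$, while $(C/C_2)/(G_2/C_2)\cong C/G_2$ carries $\chi$; applying Theorem \ref{lambda mu} with $\chi\neq\chi^3$ produces a unique $A$-stable complement of $G_2/C_2$ in $C/C_2$, whose preimage $C_1^{*}\subset C$ is a $2$-dimensional $A$-stable subspace containing $C_2$ with $C_1^{*}\cap G_2=C_2$. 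Finally, apply Theorem \ref{lambda mu} to $0\to C_2\to C_1^{*}\to C_1^{*}/C_2\to 0$, using $\chi^2\neq\chi$, to obtain a unique $A$-stable complement $C_1$ of $C_2$ in $C_1^{*}$. Combining $C_1\cap G_2\subseteq C_1^{*}\cap G_2=C_2$ with $C_1\cap C_2=0$ gives $C_1\cap G_2=0$, and a dimension count yields $C=C_1\oplus C_2\oplus G_3$.

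For uniqueness, I would argue that the only $A$-stable lines in $C$ are $C_1$, $C_2$, and $G_3$. Any $A$-stable line $L\subset C$ carries an $A$-action through a single character $\psi$, and examining the position of $L$ in the filtration $0\subset G_3\subset G_2\subset C$ together with Lemma \ref{distinct characters on same gp} forces $\psi\in\{\chi,\chi^2,\chi^3\}$; by the existence argument just given, the unique $A$-stable line carrying action $\chi^i$ is the corresponding one among $C_1,C_2,G_3$. Hence any decomposition $C=C_1'\oplus C_2'\oplus G_3$ into $A$-stable lines satisfies $\{C_1',C_2'\}=\{C_1,C_2\}$.

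The main obstacle is that the subspace $G_2$ does not carry a single-character $A$-action, since its two composition factors give distinct characters $\chi^3$ and $\chi^2$. Because of this, Theorem \ref{lambda mu} cannot be applied directly to the sequence $0\to G_2\to C\to C/G_2\to 0$ to produce $C_1$; one is forced to peel off the characters one at a time, as in the threefold application above.
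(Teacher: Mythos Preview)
Your proof is correct and follows essentially the same approach as the paper: both arguments use three applications of Theorem~\ref{lambda mu} to the filtration $0\subset G_3\subset G_2\subset C$ with its three distinct characters $\chi^3,\chi^2,\chi$. The only difference is the order of operations: the paper first splits $C/G_3$ to obtain a $2$-dimensional $A$-stable subspace $D_1$ with $D_1\cap G_2=G_3$, and then splits both $D_1$ and $G_2$ over $G_3$ to produce $C_1$ and $C_2$; you instead split $G_2$ first, then pass to $C/C_2$, and finally split the resulting $2$-dimensional preimage $C_1^{*}$. Both routes work. Your uniqueness paragraph is in fact slightly more explicit than the paper's, which relies on the uniqueness clauses in the successive applications of Theorem~\ref{lambda mu} without spelling out that these exhaust all $A$-stable lines. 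One small omission: before invoking Lemma~\ref{C elem abelian class 3} you should note (as the paper does) that $|G_3|=p$ forces $G_3$ to be central and hence $G$ to have class exactly~$3$.
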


\begin{proof}
To lighten the notation, let $C=\Cyc_G(G_2)$.
Since $G_3$ has order $p$, it follows from Lemma \ref{normal intersection centre trivial} that $G_3$ is central so $G$ has class $3$. As a consequence of Lemmas \ref{con N ordine p}($3$) and \ref{C elem abelian class 3}, the group $C$ is a vector space over 
$\F_p$ of dimension $3$. The group $C$ is $A$-stable, by Lemma \ref{senza N di ordine p}($2$), and, by Lemma \ref{action chi^i}, the action of $A$ on $C/G_2$, $G_2/G_3$, and $G_3$ is respectively through $\chi$, $\chi^2$, and $\chi^3$. The three characters are pairwise distinct because $\chi^2\neq 1$. We first apply Theorem \ref{lambda mu} to $C/G_3$. Then there exists a unique $A$-stable complement $D_1/G_3$ of $G_2/G_3$. It follows that $D_1\cap G_2=G_3$. We now apply Theorem \ref{lambda mu} to both $D_1$ and $G_2$ to get unique $A$-stable subspaces $C_1$ and $C_2$ of $C$ satisfying $D_1=C_1\oplus G_3$ and $G_2=C_2\oplus G_3$. 
As a consequence of Lemma \ref{related to extra}($2$), the subspace $G_2$ has dimension $2$, so both $C_1$ and $C_2$ have dimension $1$.
Moreover, the intersection of $D_1$ with $G_2$ being equal to $G_3$, it follows that $C=C_1\oplus C_2\oplus G_3$.
\end{proof}

\begin{lemma}\label{alpha has order at most 2}
Assume that $G_3$ has order $p$. Then $\inte(G)=2$.
\end{lemma}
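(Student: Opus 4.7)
The plan is to argue by contradiction, assuming that $\chi^2\neq 1$---equivalently $\inte(G)>2$---and to derive a counting contradiction among subgroups of order $p$ inside the abelian group $C:=\Cyc_G(G_2)$.

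First I would invoke Lemma \ref{c12} to obtain the decomposition $C=C_1\oplus C_2\oplus G_3$ into $A$-stable lines on which $\alpha$ acts, modulo $p$, as scalar multiplication by the three elements $\chi(\alpha),\chi(\alpha)^2,\chi(\alpha)^3$ of $\F_p^*$. Because $\omega(\F_p^*)\to\F_p^*$ is injective by Lemma \ref{sesZ_p}, the combined hypotheses $\chi\neq 1$ and $\chi^2\neq 1$ will make these three scalars pairwise distinct, so $\alpha$ will act on $C$ with three distinct eigenvalues and $C_1,C_2,G_3$ will be the only $A$-stable one-dimensional subspaces of $C$. Since $\alpha$ has order coprime to $|G|$ by Lemma \ref{intense complement} and $C$ is characteristic in $G$, Lemma \ref{equivalent intense coprime-pgrps} then forces every subgroup of order $p$ contained in $C$ to be $G$-conjugate to one of $C_1,C_2,G_3$.

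The main step will be the ensuing count, which is where the contradiction emerges. By Lemma \ref{C elem abelian class 3}, the subgroup $C$ is elementary abelian of order $p^3$ and hence contains $p^2+p+1$ subgroups of order $p$. On the other hand, $G_3=\ZG(G)$ by Lemma \ref{related to extra}, so it has a single conjugate; and for $i\in\{1,2\}$ the abelian group $C$ normalizes $C_i$, so $C_i$ has at most $|G:C|=p$ conjugates in $G$, while $C_i$ cannot actually be normal in $G$ since $C_i\cap\ZG(G)=C_i\cap G_3=\graffe{1}$ would then contradict Lemma \ref{normal intersection centre trivial}. Thus $C_1$ and $C_2$ will each contribute exactly $p$ conjugates, making the total number of subgroups of order $p$ in $C$ at most $1+2p$, which contradicts $p^2+p+1>2p+1$. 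This forces $\chi^2=1$, and together with $\chi\neq 1$ it yields $\inte(G)=2$.
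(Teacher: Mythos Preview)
Your proof is correct and follows essentially the same approach as the paper: assume $\chi^2\neq 1$, use Lemma~\ref{c12} to decompose $C$ into three $A$-stable lines with pairwise distinct eigenvalues, and derive a contradiction by comparing the total number $p^2+p+1$ of lines in $C$ against the number of $G$-conjugates of the three $A$-stable ones. Your bound $2p+1$ (exploiting that $G_3$ is central) is slightly sharper than the paper's $3p$, but the argument is otherwise the same, with your direct appeal to Lemma~\ref{equivalent intense coprime-pgrps} playing the role of the paper's Lemma~\ref{orbits}.
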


\begin{proof}
If $\chi^2=1$, then $1<\inte(G)\leq 2$ and we are done. We assume now that $\chi^2\neq 1$ and we will derive a contradiction.
Let now $C_1$ and $C_2$ be as in Lemma \ref{c12} and denote by $X$ be the collection of subspaces of dimension $1$ of $C$. Since $C$ is normal, the group $G$ acts on $X$ by conjugation. By Lemma \ref{con N ordine p}($1$), the index of $C$ in $G$ is equal to $p$ and the size of each orbit of $X$ under $G$ is at most $p$. Moreover, the elements of $X$ that are stable under the action of $A$ are precisely $C_1$, $C_2$, and $G_3$. 
Lemma \ref{orbits} yields 
$$p^2+p+1=|X|\leq |G:\nor_G(C_1)|+|G:\nor_G(C_2)|+|G:\nor_G(G_3)|\leq 3p,$$ which is satisfied if and only if 
$(p-1)^2\leq 0$. Contradiction. 
\end{proof}

\noindent
We can finally give the proof of Proposition \ref{mezzo teorema class 3}. The prime $p$ is odd as a consequence of Proposition \ref{proposition 2gps}.
Since $G$ has class at least $3$, the group $G_3$ is non-trivial, so, by Lemma \ref{normal index p}, there exists a normal subgroup $M$ of $G$ that is contained in $G_3$ with index $p$. By Lemma \ref{intensity of quotients}, the group $G/M$ has intensity greater than $1$ and, 
as a consequence of Lemma \ref{alpha has order at most 2}, the intensity of $G/M$ is equal to $2$. From Lemma \ref{intensity of quotients} it follows that $1<\inte(G)\leq\inte(G/M)=2$. 
Moreover, Lemma \ref{con N ordine p}($2$) yields $$|G:G_2|=|G/M:[G/M,G/M]|=p^2.$$ 
We remark that Proposition \ref{mezzo teorema class 3} gives $(1)\Leftrightarrow(2)$ and $(1)\Rightarrow(3)$ in Theorem \ref{theorem intensity class 3} and Theorem \ref{theorem class at least 3}($1$). We give the full proof of Theorem \ref{theorem intensity class 3} in Section \ref{section construction} and the full proof of Theorem \ref{theorem class at least 3} in Section \ref{section gps class 3}.

\begin{proposition}\label{proposition divisible by 2}
Let $p$ be a prime number and let $G$ be a finite $p$-group. 
Then the following are equivalent.
\begin{itemize}
 \item[$1$.] One has $\inte(G)>1$.
 \item[$2$.] The prime $p$ is odd and $\inte(G)$ is even.
\end{itemize}
\end{proposition}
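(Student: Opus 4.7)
The plan is to treat the two implications separately, with the non-trivial direction being a case analysis on the nilpotency class of $G$, drawing on results already established earlier in the thesis.

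The implication $(2) \Rightarrow (1)$ is immediate: if $\inte(G)$ is even then $\inte(G) \geq 2 > 1$. For the converse $(1) \Rightarrow (2)$, I would first dispose of the prime. If $p = 2$, then by Proposition \ref{proposition 2gps} the group $\Int(G)$ is itself a $2$-group; since by Lemma \ref{formulation} its unique Sylow $p$-subgroup is $\ker \chi_G$, this forces $\ker\chi_G = \Int(G)$ and hence $\inte(G) = |\Int(G):\ker\chi_G| = 1$, contradicting the assumption. (One could also invoke Lemma \ref{intense complement}, since $\inte(G)$ divides $p-1 = 1$ when $p=2$.) Thus $p$ is odd.

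It remains to show that $\inte(G)$ is even. Let $c = \cl(G)$; since $\inte(G) > 1$ implies $G$ is non-trivial, $c \geq 1$. I would then split into three cases. If $c = 1$, the group $G$ is non-trivial abelian and Proposition \ref{proposition abelian} gives $\inte(G) = p - 1$, which is even because $p$ is odd. If $c = 2$, Theorem \ref{theorem class2 complete} forces $G$ to be extraspecial of exponent $p$ with $\inte(G) = p - 1$, again even. If $c \geq 3$, Proposition \ref{mezzo teorema class 3} yields $\inte(G) = 2$, which is also even. In every case $\inte(G)$ is even, completing the proof.

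There is no substantial obstacle; the proposition is a clean corollary of the classification results already obtained in the preceding sections, and the only real work is to make sure all values of the nilpotency class have been covered.
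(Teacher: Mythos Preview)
Your proof is correct and follows essentially the same approach as the paper's own proof: both handle $(2)\Rightarrow(1)$ trivially, use Proposition~\ref{proposition 2gps} to rule out $p=2$, and then split on the nilpotency class, invoking Proposition~\ref{proposition abelian}/Theorem~\ref{theorem abelian} and Theorem~\ref{theorem class2 complete} for classes $1$ and $2$, and Proposition~\ref{mezzo teorema class 3} for class at least $3$.
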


\begin{proof}
The implication $(2)\Rightarrow(1)$ is clear. Assume now $(1)$. Then $G$ is non-trivial and $p$ is odd, by Proposition \ref{proposition 2gps}. Moreover, if $G$ has class at least $3$, then Proposition \ref{mezzo teorema class 3} yields $\inte(G)=2$. On the other hand, if $G$ has class at most $2$, then we know from Theorems \ref{theorem abelian} and \ref{theorem class2 complete} that $\inte(G)=p-1$, which is even because $p$ is odd.
\end{proof}

\begin{proposition}\label{proposition -1^i}
Let $p$ be a prime number and let $G$ be a finite $p$-group of class at least $3$. 
Let $\alpha$ be an intense automorphism of $G$ of order $\inte(G)$ and assume that $\inte(G)>1$. 
Then $\alpha$ has order $2$ and, for all $i\geq 1$, it induces scalar multiplication by $(-1)^i$ on $G_i/G_{i+1}$.
\end{proposition}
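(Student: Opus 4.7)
The plan is to reduce everything to one numerical identity: compute $\chi(\alpha)$, where $\chi = {\chi_G}_{|\gen{\alpha}}$. Once that is done, the statement falls out of Lemma \ref{action chi^i}, which says that $\gen{\alpha}$ acts on $G_i/G_{i+1}$ through $\chi^i$; hence $\alpha$ will act on $G_i/G_{i+1}$ as multiplication by $\chi(\alpha)^i$, and we only need $\chi(\alpha)=-1$ in $\Z_p^*$.

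First I would invoke Proposition \ref{mezzo teorema class 3}: since $G$ has class at least $3$ and $\inte(G)>1$, that proposition gives $p$ odd and $\inte(G)=2$. Because $\alpha$ has order $\inte(G)$ by hypothesis, $\alpha$ has order exactly $2$. This already settles the first assertion of the proposition.

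Next I would compute $\chi(\alpha)$. Being a homomorphism into $\Z_p^*$ and $\alpha$ having order $2$, the element $\chi(\alpha)$ satisfies $\chi(\alpha)^2=1$; since $p$ is odd, the only square roots of $1$ in $\Z_p^*$ are $\pm 1$. To rule out $\chi(\alpha)=1$, I would use Lemma \ref{formulation}: if $\chi(\alpha)=1$, then $\alpha\in\ker\chi_G$, which is the unique Sylow $p$-subgroup of $\Int(G)$. But $\alpha$ has order $2$, coprime to $p$, so this would force $\alpha$ to be trivial, contradicting $|\alpha|=\inte(G)=2$. Therefore $\chi(\alpha)=-1$.

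Combining these two pieces, Lemma \ref{action chi^i} now yields that for every $i\in\Z_{\geq 1}$ the group $\gen{\alpha}$ acts on $G_i/G_{i+1}$ via the character $\chi^i$, and in particular $\alpha$ acts as scalar multiplication by $\chi(\alpha)^i=(-1)^i$, which is exactly the claim. I do not anticipate any real obstacle: the entire argument is a direct recombination of Proposition \ref{mezzo teorema class 3} (to pin down $p$, $\inte(G)$, and the order of $\alpha$), Lemma \ref{formulation} (to force $\chi(\alpha)\neq 1$), and Lemma \ref{action chi^i} (to promote the computation on $G/G_2$ to every graded piece of the lower central series).
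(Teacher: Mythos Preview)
Your proposal is correct and follows essentially the same route as the paper: invoke Proposition~\ref{mezzo teorema class 3} to get $p$ odd and $\inte(G)=2$, deduce $\chi(\alpha)=-1$, and conclude via Lemma~\ref{action chi^i}. The only cosmetic difference is that the paper phrases the computation of $\chi(\alpha)$ as ``$\chi(\alpha)$ has order $2$ in $\omega(\F_p^*)$, so $\chi(\alpha)=-1$'', whereas you spell out the exclusion of $\chi(\alpha)=1$ through Lemma~\ref{formulation}; these are the same argument.
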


\begin{proof}
Let $\chi$ denote the restriction of $\chi_G$ to $\gen{\alpha}$. By Proposition \ref{mezzo teorema class 3}, the intensity of $G$ is $2$ and $p$ is odd. In particular, $\chi(\alpha)$ has order $2$ in $\omega(\F_p^*)$, so $\chi(\alpha)=-1$.
Lemma \ref{action chi^i} draws the conclusion.
\end{proof}

\section{Groups of class $3$}\label{section gps class 3}

\noindent
This section is devoted to understanding the structure of $p$-groups $G$ of class $3$ with the property that $|G:G_2|=p^2$. We will see in the next section that all these groups have intensity $2$. We also give the proof of Theorem \ref{theorem class at least 3}.

\begin{lemma}\label{class 3 G3}
Let $p$ be a prime number and let $G$ be a finite $p$-group of class $3$. Assume that 
$|G:G_2|=p^2$. Then the following hold.
\begin{itemize}
 \item[$1$.] One has $|G_2:G_3|=p$.
 \item[$2$.] One has $|G_3|\in\graffe{p,p^2}$.
 \item[$3$.] The subgroup $G_3$ is elementary abelian.
\end{itemize}
\end{lemma}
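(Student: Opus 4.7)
The plan is to exploit the bilinear commutator maps on the graded pieces of the lower central series, developed in Lemmas \ref{bilinear LCS} and \ref{tensor LCS}, together with the smallness of $G/G_2$.

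First I would note that $G/G_2$ is elementary abelian of order $p^2$: since $G$ has class $3$, it is not abelian, so by Lemma \ref{index G'} the quotient $G/G_2$ is not cyclic; combined with the hypothesis $|G:G_2|=p^2$ this forces $G/G_2\cong \F_p^{\,2}$.

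For part ($1$), I would apply Lemma \ref{bilinear LCS} with $i=1$ to produce a bilinear map $\gamma:G/G_2\times G/G_2\to G_2/G_3$ whose image generates the codomain; since the commutator map sends $(x,x)$ to $1$, the map $\gamma$ is alternating. Fixing an $\F_p$-basis $e_1,e_2$ of $G/G_2$, bilinearity and alternation force $\gamma(ae_1+be_2,ce_1+de_2)=(ad-bc)\gamma(e_1,e_2)$, so the image is already the cyclic subgroup generated by $\gamma(e_1,e_2)$, which has order dividing $p$ (the map factors through $G/G_2\otimes G/G_2$, an $\F_p$-module, by Lemma \ref{tensor LCS}). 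Since $G$ has class $3$, $G_2/G_3\neq 1$, hence $|G_2:G_3|=p$.

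For part ($2$), I would invoke Lemma \ref{tensor LCS} with $i=2$ to obtain a surjection $G/G_2\otimes G_2/G_3\to G_3/G_4=G_3$. By part ($1$) both factors are $\F_p$-vector spaces, of respective dimensions $2$ and $1$, so the tensor product has cardinality $p^2$ and therefore $|G_3|\leq p^2$; since $G$ has class $3$, $G_3\neq 1$, so $|G_3|\in\{p,p^2\}$. For part ($3$), I would note that $G_3$ is abelian by Lemma \ref{commutator indices} ($[G_3,G_3]\subseteq G_6\subseteq G_4=1$) and that $G_3$ has exponent dividing $p$ as a homomorphic image of the $\F_p$-module $G/G_2\otimes G_2/G_3$; combined, these two facts say precisely that $G_3$ is elementary abelian.

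The argument is a packaged application of the earlier lemmas and I anticipate no substantial obstacle; the only point requiring slight care is the bilinear-image computation in ($1$), which boils down to the observation that an alternating form on a two-dimensional $\F_p$-space is determined by its value on a single pair of basis vectors.
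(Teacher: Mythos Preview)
Your proof is correct and follows essentially the same route as the paper: both use the commutator-induced maps from Lemmas \ref{bilinear LCS}/\ref{tensor LCS} to bound $|G_2:G_3|$ and $|G_3|$, with the paper phrasing your basis computation for the alternating map as a factorization through $\bigwedge^2(G/G_2)$. The only cosmetic differences are that the paper invokes Lemma \ref{frattini comm} rather than Lemma \ref{index G'} to see $G/G_2\cong\F_p^2$, and it does not separately check that $G_3$ is abelian (this is automatic from $|G_3|\leq p^2$ together with exponent $p$).
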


\begin{proof}
The group $G$ is non-abelian and, as a consequence of Lemma \ref{frattini comm}, the group $G/G_2$ is a vector space over $\F_p$ of dimension $2$. By Lemma \ref{tensor LCS}, the commutator map induces a surjective homomorphism $G/G_2\otimes G/G_2\rightarrow G_2/G_3$ which factors as a surjective homomorphism $\bigwedge^2(G/G_2)\rightarrow G_2/G_3$ by the universal property of wedge products.
The space $\bigwedge^2(G/G_2)$ has dimension $1$ and so $|G_2:G_3|=p$. Again applying Lemma \ref{tensor LCS}, we derive that $G_3$ is isomorphic to a quotient of $G/G_2\otimes G_2/G_3$. In particular, one has $$1<|G_3|\leq|G/G_2\otimes G_2/G_3|=p^2$$ and $G_3$ is elementary abelian. 
\end{proof}

\noindent
Thanks to Lemma \ref{class 3 G3}, we can finally give the proof of Theorem \ref{theorem class at least 3}. Indeed, if $p$ is a prime number and $G$ is a finite $p$-group of class at least $3$ with $\inte(G)>1$, then Proposition \ref{mezzo teorema class 3} yields $\inte(G)=2$ and $w_1=2$. We now apply Lemma \ref{class 3 G3}, with $G/G_4$ in place of $G$, to get $w_2=1$ and $w_3\in\graffe{1,2}$. The proof of Theorem \ref{theorem class at least 3} is now complete.

\begin{lemma}\label{isomorphism modulo centralizer}
Let $p$ be an prime number and let $G$ be a finite $p$-group of class $3$. Assume that $|G:G_2|=p^2$.
Then the following hold.
\begin{itemize}
 \item[$1$.] One has $G_2\subseteq \Cyc_G(G_2)$.
 \item[$2$.] The commutator map induces an isomorphism $G/\Cyc_G(G_2)\otimes G_2/G_3\rightarrow G_3$.
 \item[$3$.] One has $|G:\Cyc_G(G_2)|=|G_3|$.
\end{itemize}
\end{lemma}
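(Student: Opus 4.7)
For part (1), I would observe that since $G$ has class $3$, we have $G_4 = \{1\}$, and Lemma \ref{commutator indices} gives $[G_2,G_2]\subseteq G_4 = \{1\}$. Hence $G_2$ is abelian, which immediately yields $G_2 \subseteq \Cyc_G(G_2)$.

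For part (2), my first step is to apply Lemma \ref{bilinear LCS} with $i=2$, which produces a bilinear map $\gamma\colon G/G_2 \times G_2/G_3 \to G_{3}/G_4 = G_3$ induced by the commutator, with image generating $G_3$. I then identify its left kernel: an element $xG_2$ is in the left kernel precisely when $[x,y]=1$ for every $y \in G_2$, i.e.\ when $x \in \Cyc_G(G_2)$, so the left kernel is $\Cyc_G(G_2)/G_2$. Quotienting, I obtain a bilinear map $\bar\gamma\colon G/\Cyc_G(G_2)\times G_2/G_3 \to G_3$ with trivial left kernel. This yields an injection $G/\Cyc_G(G_2) \hookrightarrow \Hom(G_2/G_3, G_3)$. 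Now by Lemma \ref{class 3 G3}(1) we have $G_2/G_3 \cong \Z/p\Z$, and by Lemma \ref{class 3 G3}(3) the group $G_3$ is elementary abelian; hence $\Hom(G_2/G_3, G_3) \cong G_3$. This provides the inequality $|G/\Cyc_G(G_2)| \leq |G_3|$.

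For the reverse inequality, I would invoke Lemma \ref{tensor LCS} (again with $i=2$), giving a surjection $G/G_2 \otimes G_2/G_3 \twoheadrightarrow G_3$. Since $\Cyc_G(G_2)$ commutes with $G_2$, this surjection factors through the canonical map $G/\Cyc_G(G_2)\otimes G_2/G_3 \twoheadrightarrow G_3$. Because $G/G_2$ is elementary abelian (by Lemma \ref{frattini comm} combined with non-abelianness, or directly since $G_2 \supseteq \Phi(G)$ when $|G:G_2|=p^2$ and $G$ non-abelian), its quotient $G/\Cyc_G(G_2)$ is annihilated by $p$; since $G_2/G_3 \cong \Z/p\Z$, we get $G/\Cyc_G(G_2)\otimes_\Z G_2/G_3 \cong G/\Cyc_G(G_2)$, and therefore $|G/\Cyc_G(G_2)| \geq |G_3|$. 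Combining the two inequalities forces equality, and the surjective homomorphism $G/\Cyc_G(G_2)\otimes G_2/G_3 \to G_3$ between finite groups of the same cardinality is an isomorphism. Part (3) is then an immediate consequence, since $|G/\Cyc_G(G_2)\otimes G_2/G_3| = |G/\Cyc_G(G_2)|$.

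The main delicate point is the careful identification of the left kernel of $\gamma$ as $\Cyc_G(G_2)/G_2$, together with keeping track that the tensor product is over $\Z$ and using the elementary abelian structure of $G/G_2$ to reduce it to the group itself. Everything else is bookkeeping with the results already established in Lemmas \ref{bilinear LCS}, \ref{tensor LCS}, and \ref{class 3 G3}.
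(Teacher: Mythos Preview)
Your proposal is correct and follows essentially the same approach as the paper. The paper also identifies the left kernel of the commutator-induced bilinear map as $\Cyc_G(G_2)$ (it additionally notes the right kernel is $G_3$, using that $G_2\cap\ZG(G)=G_3$), obtains the surjection from the tensor product, and then uses $|G_2:G_3|=p$ together with non-degeneracy to conclude the map is an isomorphism; your explicit two-inequality bookkeeping via $\Hom(G_2/G_3,G_3)\cong G_3$ unpacks exactly what the paper's compressed phrase ``which is also an isomorphism since the index $|G_2:G_3|$ is equal to $p$'' is asserting.
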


\begin{proof}
($1$) As a consequence of Lemma \ref{commutator indices}, the group $[G_2,G_2]$ is contained in $G_4=\graffe{1}$ and $G_2$ centralizes itself. This proves ($1$). We now prove ($2$) and ($3$) together. 
The subgroup $G_3$ is central, because the class of $G$ is $3$, so, by Lemma \ref{tgt}, the commutator map $\gamma:G\times G_2 \rightarrow G_3$ is bilinear. The right kernel of $\gamma$ is equal to $G_2\cap\ZG(G)$, which is equal to $G_3$ as a consequence of Lemma \ref{class 3 G3}($1$). The left kernel of $\gamma$ coincides with $\Cyc_G(G_2)$ and, in particular, $\gamma$ induces a non-degenerate map $\gamma_1:G/\Cyc_G(G_2)\times G_2/G_3\rightarrow G_3$ whose image generates $G_3$. Thanks to ($1$) the quotient $G/\Cyc_G(G_2)$ is abelian and, thanks to Lemma \ref{frattini comm}, it has exponent $p$.
By the universal property of tensor products, $\gamma_1$ induces a surjective homomorphism $G/\Cyc_G(G_2)\otimes G_2/G_3\rightarrow G_3$, which is also an isomorphism since the index $|G_2:G_3|$ is equal to $p$. Moreover, we have 
$|G:\Cyc_G(G_2)|=|G_3|$.
\end{proof}

\begin{lemma}\label{class 3 centralizer G2}
Let $p$ be a prime number and let $G$ be a finite $p$-group of class $3$. Assume that $|G:G_2|=p^2$. Then the following hold.
\begin{itemize}
 \item[$1$.] One has $|G_3|=p$ if and only if $|\Cyc_G(G_2):G_2|=p$.
 \item[$2$.] One has $|G_3|=p^2$ if and only if $\Cyc_G(G_2)=G_2$.
\end{itemize}
\end{lemma}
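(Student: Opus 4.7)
The plan is to combine Lemmas \ref{class 3 G3} and \ref{isomorphism modulo centralizer} with a simple index computation. By Lemma \ref{class 3 G3}, under our hypotheses we have $|G_3| \in \{p, p^2\}$, so (1) and (2) partition all possibilities, and it suffices to prove one implication in each direction for one of the two statements.

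First I would recall from Lemma \ref{isomorphism modulo centralizer}(1) that $G_2 \subseteq \Cyc_G(G_2)$, so the chain $G_2 \subseteq \Cyc_G(G_2) \subseteq G$ yields
\[
|G:G_2| = |G:\Cyc_G(G_2)| \cdot |\Cyc_G(G_2):G_2|.
\]
Next I would invoke Lemma \ref{isomorphism modulo centralizer}(3) to replace $|G:\Cyc_G(G_2)|$ by $|G_3|$, obtaining
\[
p^2 = |G:G_2| = |G_3| \cdot |\Cyc_G(G_2):G_2|.
\]

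Finally I would read off both equivalences from this identity. If $|G_3| = p$, then $|\Cyc_G(G_2):G_2| = p$; conversely, if $|\Cyc_G(G_2):G_2| = p$, then $|G_3| = p$, proving (1). If $|G_3| = p^2$, then $|\Cyc_G(G_2):G_2| = 1$, i.e., $\Cyc_G(G_2) = G_2$; conversely, if $\Cyc_G(G_2) = G_2$, then $|G_3| = p^2$, proving (2). There is no real obstacle here: everything of substance has been done in Lemma \ref{isomorphism modulo centralizer}, and this lemma is essentially a reformulation of that result in terms of the two possibilities for $|G_3|$ allowed by Lemma \ref{class 3 G3}(2).
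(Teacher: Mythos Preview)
Your proposal is correct and follows essentially the same approach as the paper: both use Lemma~\ref{isomorphism modulo centralizer}(1) and (3) together with $|G:G_2|=p^2$ to obtain the index identity $p^2 = |G_3|\cdot|\Cyc_G(G_2):G_2|$, from which both equivalences are immediate. Your invocation of Lemma~\ref{class 3 G3}(2) is a small extra observation that is not strictly needed, since the index identity alone already forces both directions of (1) and (2).
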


\begin{proof}
By Lemma \ref{isomorphism modulo centralizer}($3$), we have 
$|G:\Cyc_G(G_2)|=|G_3|$. 
By Lemma \ref{isomorphism modulo centralizer}($1$), the subgroup $G_2$ is contained in $\Cyc_G(G_2)$, so ($1$) and ($2$) follow from the fact that $|G:G_2|=p^2$.
\end{proof}

\begin{lemma}\label{class 3 cent comm}
Let $p$ be a prime number and let $G$ be a finite $p$-group of class $3$. Assume that $|G:G_2|=p^2$. Then $\Cyc_G(G_2)$ is abelian.
\end{lemma}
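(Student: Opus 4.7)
Set $C=\Cyc_G(G_2)$. The plan is to show that $C/\ZG(C)$ is cyclic and then to invoke Lemma \ref{quotient by centre not cyclic}.

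First I would note that, by definition of the centralizer, $[C,G_2]=1$, so $G_2\subseteq\ZG(C)$. By Lemma \ref{isomorphism modulo centralizer}($1$) we also have $G_2\subseteq C$, so $C/\ZG(C)$ is a quotient of $C/G_2$. It therefore suffices to show that $C/G_2$ is cyclic.

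By Lemma \ref{class 3 G3}($2$), $|G_3|\in\{p,p^2\}$, and Lemma \ref{class 3 centralizer G2} splits these two cases as $|C:G_2|=p$ when $|G_3|=p$, and $C=G_2$ when $|G_3|=p^2$. In either case $C/G_2$ has order $1$ or $p$, hence is cyclic. Consequently $C/\ZG(C)$ is cyclic, and Lemma \ref{quotient by centre not cyclic} yields that $C$ is abelian.

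There is no real obstacle here: all the work has been done in the preceding two lemmas, which explicitly control $|C:G_2|$ in terms of $|G_3|$. The only thing to observe is the (trivial) fact that $G_2$ lies in the centre of $C$, after which the result is immediate from Lemma \ref{quotient by centre not cyclic}.
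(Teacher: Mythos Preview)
Your proof is correct and essentially the same as the paper's: both hinge on the fact (from Lemma \ref{class 3 centralizer G2}) that $C/G_2$ is cyclic, together with $G_2\subseteq\ZG(C)$. The paper phrases the conclusion via Lemma \ref{cyclic quotient commutators} (getting $[C,C]=[C,G_2]=1$) while you go through Lemma \ref{quotient by centre not cyclic}, but these are the same argument.
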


\begin{proof}
Write $C=\Cyc_G(G_2)$.
As a consequence of Lemma \ref{class 3 centralizer G2}, the group $C/G_2$ is cyclic. It follows from Lemma \ref{cyclic quotient commutators} that $[C,C]=[C,G_2]=\graffe{1}$.
\end{proof}

\begin{lemma}\label{class 3 G/G3 extraspecial of exp p}
Let $p$ be an odd prime number and let $G$ be a finite $p$-group of class $3$. Assume that $|G:G_2|=p^2$. Then $G/G_3$ is extraspecial of exponent $p$.
\end{lemma}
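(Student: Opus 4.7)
My plan is to treat the two assertions separately: first that $G/G_3$ is extraspecial in the sense of Definition~\ref{def extraspecial}, and second that it has exponent~$p$.

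For the extraspecial statement, I would begin by observing that $G/G_3$ has class exactly~$2$, since $G$ has class~$3$ and $(G/G_3)_2 = G_2/G_3$ is nontrivial of order~$p$ by Lemma~\ref{class 3 G3}($1$). Combined with $|G:G_2|=p^2$, this gives $|G/G_3|=p^3$. As $G/G_3$ has class~$2$, the subgroup $G_2/G_3$ lies in $\ZG(G/G_3)$. If this inclusion were proper, then $(G/G_3)/\ZG(G/G_3)$ would have order at most~$p$ and thus be cyclic, so Lemma~\ref{quotient by centre not cyclic} would force $G/G_3$ to be abelian -- contradicting class~$2$. Hence $\ZG(G/G_3) = G_2/G_3$ is cyclic of order~$p$, and $G/G_3$ is extraspecial by definition.

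For the exponent~$p$ statement, the goal is $G^p \subseteq G_3$. Since $|G_2:G_3|=p$ one has $G_2^p \subseteq G_3$, and since $G$ has class~$3$ with $p$ odd, $G_p \subseteq G_3$ (either $G_p = G_3$ when $p=3$, or $G_p = 1$ when $p \geq 5$). Corollary~\ref{p map petrescu} then yields $(xy)^p \equiv x^p y^p \pmod{G_3}$, so that $\rho : x \mapsto x^pG_3$ is a homomorphism $G \to G/G_3$. Its image lies in $G_2/G_3$ and its kernel contains $[G,G]=G_2$, so it induces an $\F_p$-linear map $\overline{\rho} : V \to W$, where $V = G/G_2$ is $2$-dimensional over $\F_p$ (using Lemma~\ref{frattini comm}, as $G$ is nonabelian) and $W = G_2/G_3$ is $1$-dimensional. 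Setting $Z = G_3$, the commutator yields a bilinear pairing $\gamma : V \times W \to Z$.

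Fix a generator $w_0$ of $W$ with lift $\widetilde{w}_0 \in G_2$, and write $\overline{\rho}(v) = f(v)\,w_0$ for a linear $f : V \to \F_p$ and $g(v) = \gamma(v,w_0)$ for a linear $g : V \to Z$. For every $\widetilde{v} \in G$, the identity $[\widetilde v, \widetilde v^{\,p}] = 1$ unpacks, using $[G,G_3]=1$ and $[\widetilde v,\widetilde w_0^{\,k}] = [\widetilde v,\widetilde w_0]^k$, into the equation $f(v)\,g(v) = 0$ in $Z$. Thus $V = \ker f \cup \ker g$, and since a $2$-dimensional $\F_p$-vector space is not the union of two proper subspaces, one of $f, g$ vanishes. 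The map $g$ is in fact surjective: writing $\widetilde{w}_0 \equiv c^k \pmod{G_3}$ with $c$ a generator of $G_2/G_3$ and $k \not\equiv 0 \pmod p$, a direct computation gives $[G,\widetilde{w}_0] = [G,c]^k = G_3$. Therefore $f = 0$, i.e.\ $G^p \subseteq G_3$.

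The main obstacle is the exponent~$p$ assertion: the Hall--Petrescu-style congruence only delivers $G^p \subseteq G_2$, and pushing it down to $G_3$ requires combining the commutator identity $[\widetilde v, \widetilde v^{\,p}] = 1$ with a dimension count on the pair $(V,W)$. The extraspecial part is comparatively immediate from Lemma~\ref{quotient by centre not cyclic} once the orders are computed.
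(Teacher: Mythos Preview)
Your proof is correct and follows essentially the same strategy as the paper. The extraspecial part is identical. For the exponent-$p$ part, the paper sets $C=\Cyc_G(G_2)$ and $D=\{x\in G:x^p\in G_3\}$, shows (via $[x,x^p]=1$ and the isomorphism $G/C\otimes G_2/G_3\to G_3$ of Lemma~\ref{isomorphism modulo centralizer}) that $G\setminus C\subseteq D$, hence $G=C\cup D$, and concludes $D=G$ since $C\neq G$; this is precisely your argument in different clothing, with $\ker g=C/G_2$, $\ker f=D/G_2$, and the ``union of two proper subspaces'' step replacing the ``union of two proper subgroups'' step.
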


\begin{proof}
We write $\overline{G}=G/G_3$ and we use the bar notation for the subgroups of $\overline{G}$.
The group $\overline{G}$ has class $2$ and $\overline{G_2}$ is contained in $\ZG(\overline{G})$. By Lemma \ref{class 3 G3}($1$), the order of $\overline{G_2}$ is equal to $p$ and, as a consequence of Lemma \ref{quotient by centre not cyclic}, the groups $\overline{G_2}$ and $\ZG(\overline{G})$ coincide. In particular, $\overline{G}$ is extraspecial.
We now show that $\overline{G}$ has exponent $p$. 
Define $C=\Cyc_G(G_2)$ and $D=\graffe{x\in G\ :\ x^p\in G_3}$. 
Then $C\neq G$, because $G_2$ is not central, and $D$ is a group, thanks to Corollary \ref{p map petrescu}.
Let now $x\in G\setminus C$.
As a consequence of Lemma \ref{frattini comm}, the element $x^p$ belongs to $G_2$. 
Moreover, by Lemma \ref{isomorphism modulo centralizer}($2$), the commutator map induces an isomorphism 
${G/C\otimes G_2/G_3\rightarrow G_3}$, so,
since $x$ is not in the centralizer of $G_2$, the element $x^p$ belongs to $G_3$.
It follows that $x\in D$ and, in particular, we have proven that $G=C\cup D$. 
The group $C$ is different from $G$, thus the groups $D$ and $G$ are the same. 
It follows that $\overline{G}=\overline{D}$ and so $\overline{G}$ has exponent $p$.
\end{proof}

\begin{lemma}\label{class 3 G3=Z}
Let $p$ be an odd prime number and let $G$ be a finite $p$-group of class $3$. Assume that $|G:G_2|=p^2$. Then $G_3=\ZG(G)$.
\end{lemma}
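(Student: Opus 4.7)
The plan is a short chain of inclusions. First I would show $G_3 \subseteq \ZG(G)$: this is immediate because $G$ has class $3$, so $G_4 = [G,G_3] = 1$, meaning $G_3$ commutes with all of $G$.

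Next I would show the reverse inclusion $\ZG(G) \subseteq G_3$. The idea is to pass to the quotient $\overline{G} = G/G_3$. By Lemma \ref{class 3 G/G3 extraspecial of exp p}, $\overline{G}$ is extraspecial of exponent $p$; moreover $\overline{G}$ is non-abelian (otherwise $G_2 \subseteq G_3$, contradicting class exactly $3$), so Lemma \ref{extraspecial non-abelian} gives $\ZG(\overline{G}) = \overline{G_2}$. Since the image of $\ZG(G)$ in $\overline{G}$ is certainly contained in $\ZG(\overline{G})$, we deduce $\ZG(G) \subseteq G_2$.

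Now I combine the two inclusions with the size restriction: $G_3 \subseteq \ZG(G) \subseteq G_2$ and $|G_2 : G_3| = p$ (Lemma \ref{class 3 G3}(1)). Hence $\ZG(G)$ equals either $G_3$ or $G_2$. The latter is impossible, since $\ZG(G) = G_2$ would force $G$ to have class at most $2$, contradicting the hypothesis. Therefore $\ZG(G) = G_3$.

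There is no significant obstacle here; the argument is really a bookkeeping exercise using the two structural facts already proved (the index $|G_2:G_3|=p$ and the extraspecial structure of $G/G_3$), together with the definitional fact that $G_3$ is central in a class-$3$ group.
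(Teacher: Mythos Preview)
Your proof is correct and follows essentially the same approach as the paper: both arguments use that $G_3$ is central by class $3$, that the extraspecial structure of $G/G_3$ (Lemma~\ref{class 3 G/G3 extraspecial of exp p}) forces $\ZG(G)\subseteq G_2$, and that $|G_2:G_3|=p$ together with $\ZG(G)\neq G_2$ finishes it. The only cosmetic difference is that you explicitly invoke Lemma~\ref{extraspecial non-abelian} to identify $\ZG(\overline{G})=\overline{G_2}$, whereas the paper states this directly as a consequence of extraspeciality.
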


\begin{proof}
The subgroup $G_3$ is contained in $\ZG(G)$, since $G$ has class $3$ and, as a consequence of Lemma \ref{class 3 G/G3 extraspecial of exp p}, the centre of $G/G_3$ is equal to $G_2/G_3$. 
It follows that $\ZG(G)/G_3\subseteq G_2/G_3$ and $\ZG(G)\subseteq G_2$. Moreover, the group $\ZG(G)$ does not contain $G_2$, because $G$ has class $3$.
The group $G_2/G_3$ having order $p$, one gets $G_3=\ZG(G)$. 
\end{proof}

%\begin{lemma}\label{class 3 quotients center}
%Let $p$ be an odd prime number and let $G$ be a finite $p$-group of class $3$ with $|G:G_2|=p^2$. 
%Let moreover $i\in\graffe{1,2,3}$ and let $H$ be a quotient of $G$ of class $i$. 
%Then $\ZG(H)=H_i$.
%\end{lemma}

%\begin{proof}
%If $i=1$, then $H$ is abelian and $H=\ZG(H)$. 
%Assume that $i=2$. Then $H$ is a quotient of $G/G_3$, which has order $p^3$ thanks to Lemma \ref{class 3 G3}($1$). 
%Since $H$ is non-abelian and each group of order at most $p^2$ is, the group $H$ is equal to $G/G_3$. Thanks to Lemma \ref{class 3 G/G3 extraspecial of exp p}, we get that $\ZG(H)=H_2$. Assume now that $i=3$. As a consequence of Lemma \ref{index G'}, the index $|H:H_2|$ is at least $p^2$, but, $H$ being a quotient of $G$, we also have $|H:H_2|\leq |G:G_2|$.
%It follows that $|H:H_2|=p^2$, so we conclude by applying Lemma \ref{class 3 G3=Z} to $H$.
%\end{proof}

\begin{lemma}\label{class 3 G2 elem abelian}
Let $p$ be an odd prime number and let $G$ be a finite $p$-group of class $3$. Assume that $|G:G_2|=p^2$. Then $G_2$ is elementary abelian.
\end{lemma}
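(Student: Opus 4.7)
\emph{Plan.} The strategy is to prove first that $G_2$ is abelian, and then that every commutator $[a,b]$ has order dividing $p$; since $G_2$ is generated (as an abstract group) by commutators, abelianness then propagates the exponent bound from generators to all of $G_2$.

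\emph{Step 1: $G_2$ is abelian.} By Lemma \ref{isomorphism modulo centralizer}(1), $G_2\subseteq\Cyc_G(G_2)$, and Lemma \ref{class 3 cent comm} tells us that $\Cyc_G(G_2)$ is abelian; this handles this step.

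\emph{Step 2: $[a,b]^p=1$ for all $a,b\in G$.} By Lemma \ref{class 3 G/G3 extraspecial of exp p}, $G/G_3$ has exponent $p$, hence $a^p\in G_3$; by Lemma \ref{class 3 G3=Z}, $G_3=\ZG(G)$, so $[a^p,b]=1$. Plugging this into Lemma \ref{multiplication formulas commutators}(3) with $n=p$ yields
\[
1 \,=\, [a^p,b] \,=\, [a,b]^p\cdot\prod_{s=1}^{p-1}[a,[a^{p-s},b]].
\]
I now simplify the rightmost product. Since $[a,G_2]\subseteq G_3$, iterating the same identity shows $[a^{p-s},b]\equiv[a,b]^{p-s}\pmod{G_3}$, and as $G_3$ is central this gives $[a,[a^{p-s},b]]=[a,[a,b]^{p-s}]$. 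The commutator map induces, by Lemma \ref{bilinear LCS}, a bilinear map $G/G_2\times G_2/G_3\to G_3$, and so $[a,[a,b]^{p-s}]=[a,[a,b]]^{p-s}$ inside $G_3$. Summing exponents,
\[
\prod_{s=1}^{p-1}[a,[a^{p-s},b]] \,=\, [a,[a,b]]^{\sum_{s=1}^{p-1}(p-s)} \,=\, [a,[a,b]]^{p(p-1)/2}.
\]
Because $p$ is odd, $(p-1)/2$ is an integer, so $p$ divides $p(p-1)/2$; since $G_3$ is elementary abelian by Lemma \ref{class 3 G3}(3), this element is trivial. Therefore $[a,b]^p=1$.

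\emph{Step 3: Conclusion.} As $G_2$ is abelian and generated by commutators, every $x\in G_2$ has the form $\prod_i[a_i,b_i]^{n_i}$; abelianness and Step 2 then give $x^p=\prod_i[a_i,b_i]^{pn_i}=1$. Combined with Step 1, this shows $G_2$ is elementary abelian. The only subtle point is the congruence $[a^{p-s},b]\equiv[a,b]^{p-s}\pmod{G_3}$, but this is an easy induction on $p-s$ using that each error term $[a,[a^{p-s-1},b]]$ already lies in $G_3$, so I do not expect any serious obstacle.
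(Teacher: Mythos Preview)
Your proof is correct and takes a genuinely different route from the paper's. Both arguments begin with $G_2$ abelian via $\Cyc_G(G_2)$, but for the exponent the paper proceeds structurally: it quotients by a maximal subgroup $M\subseteq G_3$, analyzes the abelian group $\overline{C}=\Cyc_{\overline{G}}(\overline{G_2})$ (which contains $\overline{G_2}$ with index $p$), and uses a counting argument on $\mu_p(\overline{C})$ together with the normal-subgroup structure of the extraspecial quotient $G/G_3$ to force $G_2^p\subseteq M$; intersecting over all such $M$ finishes. Your approach is a direct computation with the commutator identity of Lemma~\ref{multiplication formulas commutators}(3), reducing the error term to a power $[a,[a,b]]^{p(p-1)/2}$ that vanishes because $G_3$ has exponent $p$ and $p$ is odd. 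Your argument is shorter and more self-contained, needing only Lemmas~\ref{class 3 G3}, \ref{class 3 G/G3 extraspecial of exp p}, \ref{class 3 G3=Z}, and \ref{bilinear LCS}; the paper's argument, while longer, showcases the interaction between $\mu_p$ and the centralizer structure that recurs elsewhere in the thesis.
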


\begin{proof}
The group $G_2$ is abelian as a consequence of Lemma \ref{isomorphism modulo centralizer}($1$). We prove that it has exponent $p$.
Let $M$ be a maximal subgroup of $G_3$; then $M$ has index $p$ in $G_3$ and it is normal, because $G_3$ is central. 
We write $\overline{G}=G/M$ and use the bar notation for the subgroups of $\overline{G}$. The subgroup $\overline{G_3}$ has order $p$ and $|\overline{G}:\overline{G_2}|=|G:G_2|=p^2$. It follows from Lemma \ref{class 3 cent comm} that 
$\Cyc_{\overline{G}}(\overline{G_2})$ is abelian and, from Lemma \ref{class 3 centralizer G2}($1$), that it contains $\overline{G_2}$ with index $p$. Write $\overline{C}=\Cyc_{\overline{G}}(\overline{G_2})$. As a consequence of Lemma \ref{class 3 G/G3 extraspecial of exp p}, the subgroup $\overline{C}^{\,p}$ is contained in $\overline{G_3}$, so  $\mu_p(\overline{C})$ is a normal subgroup of $\overline{G}$ of order at least $p^2$. 
Moreover, $\overline{G_3}$ is contained in $\mu_p(\overline{C})$, so $\mu_p(\overline{C})/\overline{G_3}$ is a non-trivial normal subgroup of $G/G_3$. The quotient $G/G_3$ is extraspecial, by Lemma \ref{class 3 G/G3 extraspecial of exp p}, so $G_2/G_3$ is equal to $\ZG(G/G_3)$. As a consequence of Lemma \ref{class 3 G3}($1$), the quotient $G_2/G_3$ has order $p$, so Lemma \ref{normal intersection centre trivial} yields $\overline{G_2}\subseteq\mu_p(\overline{C})$.
In particular, one has $G_2^p\subseteq M$. If $M=\graffe{1}$ we are done, otherwise let $N$ be another maximal subgroup of $G_3$. In this case, $G_3$ is elementary abelian of order $p^2$, by Lemma \ref{class 3 G3}($2$-$3$), and $G_2^p$ is contained in $N\cap M=\graffe{1}$, by the previous arguments. The exponent of $G_2$ is thus $p$. 
\end{proof}

\section{Intensity given the automorphism}\label{section class 3 with character}

We recall that, for any group $G$, the lower central series of $G$ is denoted $(G_i)_{i\geq 1}$ and it consists of characteristic subgroups of $G$. For more detail see Section \ref{section commutators}.
The main result of this section is the following. 

\begin{proposition}\label{p^4 has cp}
Let $p$ be an odd prime number and let $G$ be a finite $p$-group of class $3$ such that $|G:G_2|=p^2$. 
Let moreover $\alpha$ be an automorphism of $G$ of order $2$ that induces the inversion map $x\mapsto x^{-1}$ on $G/G_2$. Then 
$\alpha$ is intense and $\inte(G)=2$.
\end{proposition}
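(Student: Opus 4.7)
The plan is as follows. Since $|\alpha|=2$ and $p$ is odd, $\gcd(|\alpha|,|G|)=1$, so by Lemma \ref{equivalent intense coprime-pgrps} it suffices to exhibit, for each subgroup $H\leq G$, a $G$-conjugate of $H$ that is stable under $\gen{\alpha}$. Once $\alpha\in\Int(G)$ is established, the intensity of $G$ is at least $2$, and Proposition \ref{mezzo teorema class 3} then forces $\inte(G)=2$. I begin by recording the action of $\alpha$ on the lower central quotients: the same calculation as in Lemma \ref{action chi^i general}, applied to the bilinear commutator maps of Lemma \ref{bilinear LCS}, forces $\alpha$ to act on $G_i/G_{i+1}$ by $(-1)^i$; in particular $\alpha$ fixes $G_2/G_3$ and inverts $G_3$. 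Combined with Lemma \ref{class 3 G2 elem abelian} (so that $G_2$ is elementary abelian) and Theorem \ref{lambda mu}, this yields $G_2=G_2^+\oplus G_3$, where $G_2^+$ is the unique $\alpha$-stable complement of $G_3$ in $G_2$.

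Write $V=G/G_2$ with projection $\pi\colon G\to V$. I split on $\dim_{\F_p}\pi(H)$. When $\pi(H)=V$, Lemma \ref{frattini comm} gives $\Phi(G)=G_2$, so $H=G$ by Lemma \ref{subgroup times frattini}, which is $\alpha$-stable. When $\pi(H)=0$, so $H\leq G_2$, I set $L=H\cap G_3$; either $HG_3=G_3$, so $H\leq G_3$ is $\alpha$-stable (every subgroup of the $\alpha$-inverted group $G_3$ is), or $HG_3=G_2$, in which case $H/L$ is an $\F_p$-complement of $G_3/L$ in $G_2/L$. Here Theorem \ref{lambda mu} produces a unique $\alpha$-stable such complement, and the non-degenerate pairing of Lemma \ref{isomorphism modulo centralizer}(2) shows that $G$ acts transitively on the set of complements (their number $|G_3/L|$ matches the orbit size), so $H$ is $G$-conjugate to the $\alpha$-stable one.

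The remaining, and hardest, case is $\dim\pi(H)=1$: then $M=HG_2$ is a maximal subgroup of $G$, automatically normal and $\alpha$-stable. Setting $L=H\cap G_2$, the quotient $H/L$ is a complement of $G_2/L$ in $M/L$. The difficulty is that $\alpha$ does not in general act as a scalar on $G_2/L$, so Theorem \ref{lambda mu} does not apply directly to $0\to G_2/L\to M/L\to M/G_2\to 0$. My strategy is first to reduce to the case $L\leq G_3$ by exploiting $G_2=G_2^+\oplus G_3$: after a preliminary $G_2$-conjugation one may assume $L$ is $\alpha$-stable, and then a change of generator $h\mapsto hx$ with $x\in G_2^+$ absorbs the $G_2^+$-component of $L$. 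Under this reduction, Theorem \ref{lambda mu} applied to the refined filtration $0\leq G_3/L\leq G_2/L\leq M/L$ (whose consecutive quotients carry the pairwise distinct characters $-1$, $+1$, $-1$ of $\gen{\alpha}$) produces an $\alpha$-stable $H^*\leq M$ with $H^*G_2=M$ and $H^*\cap G_2=L$. Finally, I would compute $\nor_G(H)$ as the preimage in $G$ of the centraliser in the extraspecial group $G/G_3$ (Lemma \ref{class 3 G/G3 extraspecial of exp p}) of the image of a generator of $HG_3/G_3$ outside the centre, and check that the $G$-orbit of $H$ exhausts the set of such complements, so that $H$ and $H^*$ are $G$-conjugate.

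The main obstacle is precisely this last case: both the reduction to $L\leq G_3$ and the matching of the $G$-orbit of $H$ with the full set of complements in $M/L$ are delicate. They require combining the non-degeneracy of the commutator pairings of Lemma \ref{isomorphism modulo centralizer}(2) with the Hall--Petrescu correction (Corollary \ref{p map petrescu}) in order to control how conjugation and $p$-th powers shift the intersection $H\cap G_2$ and the cyclic subgroup $\gen{h}$ as $h$ ranges over $H\setminus G_2$.
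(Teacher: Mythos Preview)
Your setup and Cases $\pi(H)=V$ and $\pi(H)=0$ are correct; in fact your treatment of $H\leq G_2$ is cleaner than the paper's, which does the special case $L=\{1\}$ in Lemma \ref{bizzarro} and only later reduces to it by passing to $G/(H\cap G_3)$ in Lemma \ref{oggi}.

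The genuine gap is in the case $\dim\pi(H)=1$. Two concrete problems. First, your invocation of Theorem \ref{lambda mu} on the filtration $0\leq G_3/L\leq G_2/L\leq M/L$ fails: that theorem applies only to short exact sequences of $A$-\emph{modules}, hence to abelian groups, but $M/L$ is non-abelian whenever $[M,G_2]\not\subseteq L$ (which happens as soon as $M\not\subseteq\Cyc_G(G_2)$). The phrase ``pairwise distinct characters $-1,+1,-1$'' already signals trouble, since $-1$ occurs twice. Second, your reduction to $L\leq G_3$ does not work as stated: since $G_2$ is abelian, conjugation by $G_2$ fixes $L=H\cap G_2$ pointwise, so a ``preliminary $G_2$-conjugation'' cannot alter $L$; and a ``change of generator $h\mapsto hx$'' replaces $H$ by a different subgroup rather than a conjugate.

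The paper's route is organised differently and avoids both issues. It splits according to $H\cap G_3$ rather than $H\cap G_2$. After reducing to $H\cap G_3=\{1\}$ (by passing to $G/(H\cap G_3)$), the key observation is Lemma \ref{class 3 jumps}(2): if $2$ is a jump of $H$ then $H\subseteq\Cyc_G(G_2)$. Hence either $H\cap G_2=\{1\}$, in which case the identity $\chi^3=\chi$ means $\alpha$ inverts both layers of the abelian group $T=H\oplus G_3$; once a conjugate of $T$ is $A$-stable (via the extraspecial quotient $G/G_3$ and Lemma \ref{extraspecial action on first quotient}), Lemma \ref{abelian>minus quotients} makes \emph{every} subgroup of that conjugate $A$-stable --- no complement count is needed. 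Or else $H\subseteq\Cyc_G(G_2)$, which is abelian (Lemma \ref{class 3 cent comm}), and a direct orbit count inside $\Cyc_G(G_2)$ (Lemma \ref{sputafuoco}) finishes that subcase. The $\chi^3=\chi$ trick is the idea you are missing; it is what sidesteps the non-abelian obstruction to Theorem \ref{lambda mu}.
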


\noindent
The following assumptions will be valid until the end of Section \ref{section class 3 with character}.
Let $p$ be an odd prime number and let $G$ be a finite $p$-group of class $3$ such that $|G:G_2|=p^2$. Let $\alpha$ be an automorphism of $G$ of order $2$ and write $A=\gen{\alpha}$. Let moreover $\chi:A\rightarrow\graffe{\pm 1}$ be an isomorphism of groups and assume that the induced action of $A$ on $G/G_2$ is through $\chi$. 
We will prove that $\alpha$ is intense.

\begin{lemma}\label{pinocchio}
Every subgroup of $G$ that contains $G_3$ has an $A$-stable conjugate in $G$.
\end{lemma}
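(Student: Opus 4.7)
The plan is to reduce the problem to the extraspecial quotient $G/G_3$ and then use the characterization of intense automorphisms on extraspecial groups of exponent $p$ that was established in Section \ref{section class 2 extraspecial}.

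More concretely, I would first observe that $G_3$ is characteristic in $G$, so $\alpha$ induces an automorphism $\bar\alpha$ of $\bar G = G/G_3$. By Lemma \ref{class 3 G/G3 extraspecial of exp p}, the group $\bar G$ is extraspecial of exponent $p$, and it is non-abelian since $G$ has class $3$. Moreover, since the induced action of $A$ on $G/G_2 = \bar G/\bar G_2$ is through $\chi$, the hypotheses of Lemma \ref{extraspecial action on first quotient} are satisfied by $\bar\alpha$, which therefore belongs to $\Int(\bar G)$.

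Now let $H$ be a subgroup of $G$ containing $G_3$ and set $\bar H = H/G_3$. Since $\bar\alpha$ is intense, there exists $g \in G$ such that $\bar\alpha(\bar H) = \bar g\bar H \bar g^{-1}$; equivalently, $\alpha(H)\, G_3 = gHg^{-1}\, G_3$. As both $\alpha(H)$ and $gHg^{-1}$ contain $G_3$, this yields $\alpha(H) = gHg^{-1}$. The order of $\alpha$ divides $2$ and is therefore coprime to $|G|$, so Lemma \ref{equivalent intense coprime} (applied with $N = G$) produces an element $b \in G$ such that $bHb^{-1}$ is $A$-stable, which is exactly what we want.

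The argument is essentially a one-step reduction, and there should be no serious obstacle: the only points to verify are that $\bar G$ is non-abelian extraspecial of exponent $p$ (already done in Lemma \ref{class 3 G/G3 extraspecial of exp p}) and that lifting a conjugacy from $\bar G$ back to $G$ is possible, which works precisely because $H \supseteq G_3$.
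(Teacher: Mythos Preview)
Your argument is correct and follows the same route as the paper: reduce to $G/G_3$, use Lemma~\ref{class 3 G/G3 extraspecial of exp p} and Lemma~\ref{extraspecial action on first quotient} to see that the induced automorphism is intense, and then apply the coprime-order equivalence (Lemma~\ref{equivalent intense coprime} or its special case Lemma~\ref{equivalent intense coprime-pgrps}) to pass from ``conjugate to $\alpha(H)$'' to ``has an $A$-stable conjugate''. The only cosmetic difference is that the paper applies the equivalence lemma in the quotient $G/G_3$ and pulls back the stable conjugate, whereas you pull back the conjugacy first and apply the equivalence lemma in $G$ itself; these are interchangeable.
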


\begin{proof}
Let $H$ be a subgroup of $G$ that contains $G_3$. 
By Lemma \ref{class 3 G/G3 extraspecial of exp p}, the group $G/G_3$ is extraspecial of exponent $p$ and by assumption $A$ acts on $G/G_2$ through $\chi$. As a consequence of Lemmas \ref{extraspecial action on first quotient} and Lemma \ref{equivalent intense coprime-pgrps}, there exists $g\in G$ such that $\alpha(gHg^{-1})/G_3=(gHg^{-1})/G_3$ and, $G_3$ being $A$-stable, $\alpha(gHg^{-1})=gHg^{-1}$.
\end{proof}

\noindent
We remind the reader that, if $H$ is a subgroup of $G$, then a positive integer $j$ is a jump of $H$ in $G$ if 
$H\cap G_j\neq H\cap G_{j+1}$. The $j$-th width of $H$ in $G$ is $\wt_H^G(j)=\log_p|H\cap G_j:H\cap G_{j+1}|$.
For more information about jumps and width see Section \ref{section jumps}.

\begin{lemma}\label{class 3 jumps}
Let $H$ be a subgroup of $G$ that trivially intersects $G_3$. 
Then the following hold.
\begin{itemize}
 \item[$1$.] If $1$ is a jump of $H$ in $G$, then $\wt_H^G(1)=1$.
 \item[$2$.] If $2$ is a jump of $H$ in $G$, then $H\subseteq\Cyc_G(G_2)$.
\end{itemize}
\end{lemma}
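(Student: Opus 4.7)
My plan is to establish ($2$) first and then use it to prove ($1$). For ($2$), I would fix a non-trivial element $h \in H \cap G_2$; the hypothesis $H \cap G_3 = \graffe{1}$ ensures $h \notin G_3$, so $hG_3$ generates the one-dimensional $\F_p$-space $G_2/G_3$ (recall $|G_2:G_3|=p$ by Lemma \ref{class 3 G3}($1$)). For any $h' \in H$, the commutator $[h',h]$ lies both in $[G,G_2] \subseteq G_3$ and in $[H,H] \subseteq H$, hence in $H \cap G_3 = \graffe{1}$. Under the isomorphism $G/\Cyc_G(G_2) \otimes G_2/G_3 \rightarrow G_3$ from Lemma \ref{isomorphism modulo centralizer}($2$), the vanishing of $[h',h]$ together with $hG_3$ generating the second tensor factor forces $h'\Cyc_G(G_2)$ to be trivial, i.e.\ $h' \in \Cyc_G(G_2)$. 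Since $h' \in H$ was arbitrary, this gives $H \subseteq \Cyc_G(G_2)$.

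For ($1$), I would argue by contradiction, supposing $\wt_H^G(1) \geq 2$, which, since $G/G_2$ has order $p^2$, amounts to $HG_2 = G$. The quotient $G/G_3$ has class $2$, so its commutator map is bilinear; exploiting $HG_2 = G$ and the fact that $[H,G_2]$ and $[G_2,G_2]$ both vanish modulo $G_3$, one obtains $G_2/G_3 = [H,H]G_3/G_3$, hence $[H,H]G_3 = G_2$. I then split into two sub-cases. If $2$ is not a jump of $H$, then $H \cap G_2 = \graffe{1}$, so $H$ injects into $G/G_2$, which is elementary abelian by Lemma \ref{frattini comm}; therefore $H$ is abelian and $[H,H] = \graffe{1}$, collapsing the identity to $G_3 = G_2$ and contradicting $|G_2:G_3| = p$. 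If $2$ is a jump of $H$, then part ($2$) gives $H \subseteq \Cyc_G(G_2)$, and combined with $HG_2 = G$ (using $G_2 \subseteq \Cyc_G(G_2)$) this forces $G = \Cyc_G(G_2)$, so $G_2 \subseteq \ZG(G)$ and $G_3 = [G,G_2] = \graffe{1}$, contradicting the class hypothesis.

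The main obstacle is the case analysis in ($1$), where I must treat separately the situations in which $H$ avoids $G_2$ entirely and in which $H$ genuinely intersects $G_2 \setminus G_3$; the latter sub-case is precisely where part ($2$) becomes indispensable. The key technical ingredient, shared by both parts, is the non-degeneracy of the commutator pairing $G/\Cyc_G(G_2) \times G_2/G_3 \rightarrow G_3$ packaged in Lemma \ref{isomorphism modulo centralizer}, combined with the bilinearity of commutators on the class-$2$ quotient $G/G_3$, which permits replacing $[G,G]$ by $[H,H]$ modulo $G_3$ as soon as $HG_2 = G$.
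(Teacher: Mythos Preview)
Your argument for (2) is correct and is essentially the paper's argument, phrased through the tensor isomorphism of Lemma~\ref{isomorphism modulo centralizer}(2) rather than through the direct observation that $G_2=\gen{h,G_3}$ with $G_3$ central implies $[H,G_2]=[H,\gen{h}]\subseteq H\cap G_3=\graffe{1}$.

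Your proof of (1) is also correct, but the paper takes a much shorter route. Since $H\cap G_3=\graffe{1}$, one has $H\neq G$; as $\Phi(G)=G_2$ (Lemma~\ref{frattini comm}), Lemma~\ref{subgroup times frattini} immediately gives $HG_2\neq G$, so $HG_2/G_2$ is a proper non-trivial subgroup of the order-$p^2$ group $G/G_2$ and hence has order $p$. That is the entire argument. Your approach instead assumes $HG_2=G$ and derives a contradiction via commutator computations and a case split on whether $2$ is a jump, ultimately invoking part~(2). This works, but it is considerably more elaborate and creates an unnecessary logical dependence of (1) on (2); the paper's Frattini argument is self-contained and uses nothing about commutators at all.
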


\begin{proof}
($1$) Assume that $1$ is a jump of $H$ in $G$. By Lemma \ref{frattini comm}, the Frattini subgroup of $G$ is equal to $G_2$. The subgroup $H$ does not contain $G_3$ and thus $H\neq G$. By Lemma \ref{subgroup times frattini}, we have $H\Phi(G)\neq G$ so $H\Phi(G)/\Phi(G)=HG_2/G_2$ has order $p$.
Since $HG_2/G_2$ is isomorphic to $(H\cap G_1)/(H\cap G_2)$, this proves ($1$).
We now prove ($2$). Assume that $2$ is a jump of $H$ in $G$. Then by Lemma \ref{jumps and depth} there exists an element $x\in (H\cap G_2)\setminus G_3$. Fix $x$. As a consequence of Lemma \ref{class 3 G3}($1$), the group $G_2$ is equal to $\gen{x,G_3}$. The group $G_3$ being central, it follows that $[H,G_2]=[H,\gen{x}]$. The subgroup $[H,\gen{x}]$ is contained in 
$H\cap [G,G_2]=H\cap G_3$, which is trivial by assumption. In particular, $H$ centralizes $G_2$. 
\end{proof}

\begin{lemma}\label{grilloparlante}
Let $H$ be a subgroup of $G$ that trivially intersects $G_2$. 
Then $H$ has an $A$-stable conjugate in $G$.
\end{lemma}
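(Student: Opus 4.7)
The plan is to first replace $H$ by a conjugate for which $HG_3$ is $A$-stable, and then exploit $\alpha^2 = \id$ together with the fact that $\alpha$ inverts elements of $G_3$ to conclude that $H$ itself is $A$-stable. If $H$ is trivial there is nothing to prove, so assume $H \neq \{1\}$. Since $H \cap G_2 = \{1\}$, the only possible jump of $H$ in $G$ is at index $1$, and Lemma \ref{class 3 jumps}($1$) forces $|H| = p$; write $H = \langle h\rangle$.

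Since $HG_3$ contains $G_3$, Lemma \ref{pinocchio} supplies an element $g \in G$ such that $g(HG_3)g^{-1} = (gHg^{-1})G_3$ is $A$-stable. Replacing $H$ by $gHg^{-1}$ (which still trivially intersects $G_2$, as $G_2$ is characteristic in $G$), I may assume that $HG_3$ itself is $A$-stable, and in particular $\alpha(h) \in HG_3$. Writing $\alpha(h) = h^a w$ with $a \in \Z$ and $w \in G_3$, the hypothesis that $\alpha$ acts on $G/G_2$ through $\chi$ gives $h^a \equiv h^{-1} \pmod{G_2}$; combined with $H \cap G_2 = \{1\}$ this forces $a \equiv -1 \pmod p$, so $\alpha(h) = h^{-1}w$ for some $w \in G_3$.

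The final step is to use $\alpha^2 = \id$. By Lemma \ref{action chi^i general} the induced action of $A$ on $G_3$ is through $\chi^3 = \chi$, so $\alpha(w) = w^{-1}$; using that $w \in G_3$ is central in $G$, one computes
$$h \;=\; \alpha^2(h) \;=\; \alpha(h^{-1}w) \;=\; (h^{-1}w)^{-1}w^{-1} \;=\; w^{-1}h\,w^{-1} \;=\; w^{-2}h,$$
whence $w^2 = 1$. Since $G_3$ is elementary abelian by Lemma \ref{class 3 G3}($3$) and $p$ is odd, this forces $w = 1$, so $\alpha(h) = h^{-1}$ and $H$ is $A$-stable. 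The main obstacle is calibrating the correct level of $A$-stability to impose: the weaker statement that $HG_2$ is $A$-stable (the direct output of Lemma \ref{pinocchio} applied to $HG_2$) would only yield $\alpha(h) \in h^{-1}G_2$, and the order-two identity then fails to collapse the $G_2 \setminus G_3$ part, because by Lemma \ref{action chi^i general} the action of $A$ on $G_2/G_3$ is through $\chi^2 = 1$ rather than through $\chi$. Refining the reduction to $HG_3$ is precisely what puts the residual element in a piece on which $\alpha$ acts by inversion, making the computation collapse.
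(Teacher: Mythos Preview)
Your proof is correct and follows essentially the same approach as the paper: both use Lemma~\ref{pinocchio} to replace $H$ by a conjugate with $HG_3$ $A$-stable, and then conclude that $\alpha$ inverts the generator of $H$. The only cosmetic difference is that the paper invokes Lemma~\ref{abelian>minus quotients} (applied to $gHg^{-1}\oplus G_3$) to get inversion on the whole group at once, whereas you carry out the $\alpha^2=\id$ computation by hand for a single generator---which is exactly the content of that lemma in this special case.
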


\begin{proof}
The group $H$ is abelian, because $[H,H]\subseteq H\cap [G,G]=\graffe{1}$. 
By Lemma \ref{class 3 G3=Z}, the groups $G_3$ and $\ZG(G)$ are equal, so the group $T=H\oplus G_3$ is abelian. 
By Lemma \ref{pinocchio}, there exists $g\in G$ such that $gTg^{-1}$ is $A$-stable and, the group $G_3$ being characteristic, $gTg^{-1}=gHg^{-1}\oplus G_3$. We fix such element $g$ and note that $gTg^{-1}\cap G_2=G_3$. It follows from Lemma \ref{intersection characters} that the induced action of $A$ on $gTg^{-1}/G_3$ is through $\chi$. Moreover, by Lemma \ref{action chi^i general}, the group $A$ acts on $G_3$ through $\chi^3=\chi$. 
From Lemma \ref{abelian>minus quotients}, it follows that $\alpha$ sends each element of $gTg^{-1}$ to its inverse, so each subgroup of $gTg^{-1}$ is $A$-stable.
In particular, $gHg^{-1}$ is $A$-stable. 
\end{proof}

\begin{lemma}\label{bizzarro}
Let $H$ be a subgroup of $G$ such that $G_2=H\oplus G_3$. 
Then $H$ has an $A$-stable conjugate in $G$.
\end{lemma}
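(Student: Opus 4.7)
The plan is to exhibit an $A$-stable complement $K$ of $G_3$ in $G_2$ directly from the module-theoretic splitting results, and then show that every complement of $G_3$ in $G_2$ (in particular $H$ and $K$) lies in a single $G$-conjugacy class, so that $H$ is $G$-conjugate to the $A$-stable $K$. First I will invoke Lemma \ref{class 3 G2 elem abelian} to recognise $G_2$ as an elementary abelian $A$-stable vector space. By Lemma \ref{action chi^i general}, the action of $A$ on $G_3$ is through $\chi^3 = \chi$ and the action on $G_2/G_3$ is through $\chi^2 = 1$. Since $\chi$ has order $2$ in $\{\pm 1\}$, these characters are distinct, so Theorem \ref{lambda mu} produces a (unique) $A$-stable complement $K$ of $G_3$ in $G_2$.

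Next I will count. Since $G_2$ is normal and abelian, every $G$-conjugate of $H$ sits inside $G_2$ and still meets $G_3$ trivially, so the set $X$ of complements of $G_3$ in $G_2$ is $G$-stable under conjugation. Lemma \ref{graph+complements} gives $|X| = |\Hom(H,G_3)|$. The normaliser of $H$ equals $\Cyc_G(H)$: if $gHg^{-1}=H$ then for each $h\in H$ one has $ghg^{-1}=[g,h]h$ with $[g,h]\in G_3\cap H = \graffe{1}$. It therefore suffices to show $|G:\Cyc_G(H)|=|\Hom(H,G_3)|$, which by Lemma \ref{class 3 G3}(2) splits into two cases.

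If $|G_3|=p$, then $|\Hom(H,G_3)|=p$ and Lemma \ref{class 3 centralizer G2}(1) gives $|G:\Cyc_G(G_2)|=p$; since $H\subseteq G_2$ one has $\Cyc_G(G_2)\subseteq\Cyc_G(H)$, and $\Cyc_G(H)\ne G$ because $H\not\subseteq\ZG(G)=G_3$ by Lemma \ref{class 3 G3=Z}, so $|G:\Cyc_G(H)|=p$. If instead $|G_3|=p^2$, then $|\Hom(H,G_3)|=p^2$, and by Lemma \ref{class 3 centralizer G2}(2) we have $\Cyc_G(G_2)=G_2$; the isomorphism in Lemma \ref{isomorphism modulo centralizer}(2) reads $G/G_2\otimes G_2/G_3\to G_3$, and since $G_2/G_3$ is $1$-dimensional, choosing a generator $h_0$ of $H$ turns this into an isomorphism $G/G_2\to G_3$, $g\mapsto[g,h_0]$, so $\Cyc_G(H)=G_2$ and $|G:\Cyc_G(H)|=p^2$. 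In both cases the $G$-orbit of $H$ exhausts $X$, so there is $g\in G$ with $gHg^{-1}=K$, which is $A$-stable, completing the proof.

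I expect the case $|G_3|=p^2$ to be the main obstacle, because transitivity of $G$ on $X$ is less obvious and relies on the non-degeneracy of the commutator pairing; Lemma \ref{isomorphism modulo centralizer}(2) fortunately packages exactly what is needed.
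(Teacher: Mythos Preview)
Your proof is correct and follows essentially the same approach as the paper: exhibit an $A$-stable complement $K$ via Theorem \ref{lambda mu}, then show by a counting argument that $G$ acts transitively on the set of complements of $G_3$ in $G_2$, so $H$ is conjugate to $K$.

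The one place where you work harder than necessary is the computation of $|G:\nor_G(H)|$. You split into the cases $|G_3|=p$ and $|G_3|=p^2$, but the paper observes directly that $\Cyc_G(H)=\Cyc_G(G_2)$: since $G_3=\ZG(G)$ (Lemma \ref{class 3 G3=Z}) and $G_2=H\oplus G_3$, centralising $H$ is the same as centralising $G_2$. Then Lemma \ref{isomorphism modulo centralizer}(3) gives $|G:\Cyc_G(G_2)|=|G_3|$ in one stroke, and since $|H|=p$ one has $|\Hom(H,G_3)|=|G_3|$ regardless of which case you are in. Your case split recovers exactly this, so there is no error, only a missed shortcut.
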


\begin{proof}
By Lemma \ref{class 3 G3}($1$), the index $|G_2:G_3|$ is equal to $p$, so $H$ has order $p$.
By Lemma \ref{action chi^i general}, the induced action of $A$ on $G_2/G_3$ and $G_3$ is respectively through $\chi^2$ and $\chi^3=\chi$. By assumption, the characters $\chi$ and $\chi^2$ are distinct. Moreover, by Lemma \ref{isomorphism modulo centralizer}($1$), the group $G_2$ is abelian and so, by Theorem \ref{lambda mu}, there exists a unique $A$-stable complement $K$ of $G_3$ in $G_2$.
We want to show that $H$ and $K$ are conjugate in $G$. 
The groups $G_3$ and $\ZG(G)$ coincide, by Lemma \ref{class 3 G3=Z}, thus $\Cyc_G(H)=\Cyc_G(G_2)$. Moreover, we have that $H\cap[H,G]\subseteq H\cap G_3=\graffe{1}$, so $\Cyc_G(H)=\nor_G(H)$. 
Let $X$ be the collection of complements of $G_3$ in $G_2$. Then $K$ and all conjugates of $H$ in $G$ are in $X$. 
By Lemma \ref{isomorphism modulo centralizer}($3$), we have 
$|G:\Cyc_G(G_2)|=|G_3|$.
By Lemma \ref{class 3 G3}($3$), the subgroup $G_3$ is elementary abelian and, by Lemma \ref{graph+complements}, the cardinality of $X$ is equal to the cardinality of $\Hom(H,G_3)$, which coincides with $|G_3|$ because $H$ has order $p$.
It follows that $|X|=|G:\Cyc_G(G_2)|=|G:\nor_G(H)|$ and, every conjugate of $H$ being in $X$, every complement of $G_3$ in $G_2$ is conjugate to $H$. In particular, $K$ and $H$ are conjugate in $G$.
\end{proof}

\begin{lemma}\label{geppetto}
Let $H$ be a subgroup of $G$ that is not contained in $\Cyc_G(G_2)$ and that has trivial intersection with $G_3$. 
Then $H$ has a conjugate that is $A$-stable.
\end{lemma}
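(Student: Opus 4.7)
The plan is to reduce the statement to the already proven Lemma \ref{grilloparlante}, by using the contrapositive of Lemma \ref{class 3 jumps}($2$) to rule out $2$ being a jump of $H$ in $G$.

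More concretely, I would first observe that, under the hypothesis $H\cap G_3=\graffe{1}$, Lemma \ref{class 3 jumps}($2$) asserts that if $2$ is a jump of $H$ in $G$, then $H\subseteq\Cyc_G(G_2)$. Since by assumption $H$ is \emph{not} contained in $\Cyc_G(G_2)$, the contrapositive forces $2$ not to be a jump of $H$ in $G$, that is, $H\cap G_2=H\cap G_3$. Combined with the assumption $H\cap G_3=\graffe{1}$, this yields $H\cap G_2=\graffe{1}$.

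At this point the situation is exactly the one handled by Lemma \ref{grilloparlante}: the subgroup $H$ has trivial intersection with $G_2$, and so there exists $g\in G$ such that $gHg^{-1}$ is $A$-stable, as required.

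There is no real obstacle in this argument; the only subtle point is making sure that the hypothesis $H\cap G_3=\graffe{1}$ is used twice, once to invoke Lemma \ref{class 3 jumps}($2$) (whose statement presupposes $H\cap G_3=\graffe{1}$) and once, together with the conclusion $H\cap G_2=H\cap G_3$, to land in the hypothesis of Lemma \ref{grilloparlante}.
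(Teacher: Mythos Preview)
Your proposal is correct and is precisely the paper's argument, just spelled out in more detail: the paper's two-sentence proof simply says that by Lemma \ref{class 3 jumps}($2$) the subgroup $H$ has trivial intersection with $G_2$, and then invokes Lemma \ref{grilloparlante}.
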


\begin{proof}
As a consequence of Lemma \ref{class 3 jumps}($2$), the subgroup $H$ has trivial intersection with $G_2$.
We now apply Lemma \ref{grilloparlante}.
\end{proof}

\begin{lemma}\label{lucignolo}
Let $H$ be a subgroup of $\Cyc_G(G_2)$ of order $p$ that has trivial intersection with $G_3$. 
Then $H$ has a conjugate that is $A$-stable.
\end{lemma}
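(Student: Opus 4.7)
The plan is to split on the unique jump of $H$ in $G$. Since $|H|=p$, Lemma \ref{order product orders jumps} shows that $H$ has exactly one jump $j\in\Z_{\geq 1}$, and $\wt_H^G(j)=1$. Because the class of $G$ is $3$, we have $j\in\graffe{1,2,3}$. The case $j=3$ is impossible: by Lemma \ref{jumps and depth} it would give an element of $H$ of depth $3$, hence an element of $H\cap G_3$, contradicting $H\cap G_3=\graffe{1}$.

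If $j=1$, then $H\cap G_2=\graffe{1}$: indeed, if $H\cap G_2$ were non-trivial, then since $H$ has order $p$, we would have $H\subseteq G_2$, which is incompatible with the fact that $H$ contains an element of depth $1$. In this situation Lemma \ref{grilloparlante} applies directly and produces an $A$-stable conjugate of $H$.

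If $j=2$, then $H$ is generated by an element of depth $2$, so $H\subseteq G_2$. Since $|G_2:G_3|=p$ by Lemma \ref{class 3 G3}($1$) and $|H|=p$ with $H\cap G_3=\graffe{1}$, the product $HG_3$ has order $p\cdot|G_3|=|G_2|$. Moreover $G_2$ is abelian by Lemma \ref{isomorphism modulo centralizer}($1$) (or by Lemma \ref{class 3 G2 elem abelian}), so $HG_3=H\oplus G_3=G_2$. Now Lemma \ref{bizzarro} applies and yields an $A$-stable conjugate of $H$.

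There is no real obstacle here: once the three cases are isolated via the width in the lower central series, each one reduces immediately to a previously established lemma (\ref{grilloparlante} or \ref{bizzarro}), and the centralizer hypothesis $H\subseteq\Cyc_G(G_2)$ is not itself invoked in either reduction but is consistent with both surviving cases (in the first case trivially, in the second case because $G_2\subseteq\Cyc_G(G_2)$ by Lemma \ref{isomorphism modulo centralizer}($1$)).
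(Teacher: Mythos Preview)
Your proof is correct and follows essentially the same approach as the paper: the paper sets $T=H\oplus G_3$ and splits into the cases $T=G_2$ (your $j=2$, handled by Lemma~\ref{bizzarro}) and $T\cap G_2=G_3$ (your $j=1$, handled by Lemma~\ref{grilloparlante}). Your observation that the hypothesis $H\subseteq\Cyc_G(G_2)$ is not actually invoked is accurate; it is there only to fit the case analysis in Lemma~\ref{blue monday}.
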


\begin{proof}
Let us call $T=H\oplus G_3$. If $T=G_2$, then $H$ has an $A$-stable conjugate by Lemma \ref{bizzarro}. Assume now that $T\cap G_2=G_3$. 
Then $H\cap G_2=H\cap T\cap G_2=H\cap G_3=\graffe{1}$, so we conclude applying Lemma \ref{grilloparlante}.
\end{proof}

\noindent
We denote $G^+=\graffe{x\in G : \alpha(x)=x}$ and 
$G^-=\graffe{x\in G : \alpha(x)=x^{-1}}$, in concordance with the notation from Section \ref{section involutions}. In the context of Section \ref{section class 3 with character}, we will use this notation in Lemmas \ref{bonn} and \ref{sputafuoco}.

\begin{lemma}\label{bonn}
Let $H$ be a subgroup of $\Cyc_G(G_2)$ such that $H\cap G_3=\graffe{1}$. Then the following hold.
\begin{itemize}
 \item[$1$.] The subgroup $H$ is elementary abelian. 
 \item[$2$.] One has $G^+\nor_G(H)=\nor_G(H)$.
\end{itemize}

\end{lemma}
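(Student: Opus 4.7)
My plan is to handle the two parts separately, using the structural results already established in Section \ref{section gps class 3} and the character-theoretic machinery from Section \ref{section involutions}.

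\textbf{Part (1).} Since $H$ is contained in $\Cyc_G(G_2)$, Lemma \ref{class 3 cent comm} immediately gives that $H$ is abelian. It remains to show that $H$ has exponent $p$. The hypothesis $H\cap G_3=\graffe{1}$ means that the canonical projection $\Cyc_G(G_2)\rightarrow\Cyc_G(G_2)/G_3$ restricts to an injection on $H$. But $\Cyc_G(G_2)/G_3$ is a subgroup of $G/G_3$, which, by Lemma \ref{class 3 G/G3 extraspecial of exp p}, is extraspecial of exponent $p$. Therefore $H$ embeds into a group of exponent $p$, hence has exponent $p$, and combined with the previous paragraph, $H$ is elementary abelian.

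\textbf{Part (2).} I will prove the stronger statement $G^+\subseteq G_2\subseteq\nor_G(H)$. The second inclusion is immediate from the hypothesis: $H\subseteq\Cyc_G(G_2)$ means every element of $G_2$ centralizes $H$, so $G_2\subseteq\Cyc_G(H)\subseteq\nor_G(H)$. For the first inclusion, I invoke Lemma \ref{action chi^i general}: the induced action of $A$ on $G_i/G_{i+1}$ is through $\chi^i$, so it is inversion when $i$ is odd and trivial when $i$ is even. Taking $H=G$ in Lemma \ref{order +- zonder jumps}($1$), the jumps of $G^+$ in $G$ are exactly the even jumps of $G$. Since $G$ has class $3$, the only even index $j$ with $G_j\neq G_{j+1}$ is $j=2$, so the unique jump of $G^+$ is at $j=2$; in particular $G^+\cap G_1=G^+\cap G_2$, which means $G^+\subseteq G_2$.

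Chaining these two inclusions yields $G^+\subseteq\nor_G(H)$, hence $G^+\nor_G(H)=\nor_G(H)$. The main obstacle is essentially bookkeeping: one must notice that the combination of ``$\Cyc_G(G_2)$ is abelian'' (from the class $3$ structure), ``$G/G_3$ has exponent $p$'' (from extraspeciality), and the parity of the character action on the successive quotients of the lower central series gives everything for free; there are no nontrivial calculations to perform.
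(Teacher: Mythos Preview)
Your proof is correct and follows essentially the same approach as the paper: for (1) you use that $\Cyc_G(G_2)$ is abelian together with the fact that $G/G_3$ has exponent $p$, and for (2) you show $G^+\subseteq G_2\subseteq\nor_G(H)$ via the parity analysis of the action of $A$ on the lower central series quotients. The only cosmetic difference is that the paper cites Lemma~\ref{order +- jumps} rather than Lemma~\ref{order +- zonder jumps}, but these are the same result.
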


\begin{proof}
($1$) The subgroup $\Cyc_G(G_2)$ is abelian, by Lemma \ref{class 3 cent comm}, and therefore $H$ is abelian. Moreover, as a consequence of Lemma \ref{class 3 G/G3 extraspecial of exp p}, the subgroup $H^p$ is contained in $H\cap G_3=\{1\}$, so $H$ is elementary abelian. This proves ($1$). We now prove ($2$). The subgroup $G^+$ is contained in $G_2$, thanks to Lemma \ref{order +- jumps}, and $G_2$ centralizes $C$, by definition of $C$. It follows that $G^+\nor_G(H)\subseteq G_2\nor_G(H)=\nor_G(H)$. Since $\nor_G(H)$ is contained in $G^+\nor_G(H)$, the proof is complete.
\end{proof}

\begin{lemma}\label{sputafuoco}
Let $H$ be a subgroup of $G$ such that $\Cyc_G(G_2)=H\oplus G_3$. 
Then $H$ has a conjugate that is $A$-stable.
\end{lemma}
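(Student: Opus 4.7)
My plan is first to reduce to the case $|G_3|=p$ and then to locate a distinguished $A$-stable subgroup inside $\Cyc_G(G_2)$ whose $G$-conjugates exhaust all the relevant complements. If $\Cyc_G(G_2)=G_2$, then by Lemma \ref{class 3 centralizer G2} we have $|G_3|=p^2$, the hypothesis becomes $G_2=H\oplus G_3$, and Lemma \ref{bizzarro} yields the conclusion immediately. From here on I would assume $\Cyc_G(G_2)\neq G_2$, in which case $|G_3|=p$ and $|\Cyc_G(G_2)|=p^3$.

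In this case I would first check that $\Cyc_G(G_2)$ is elementary abelian: it is abelian by Lemma \ref{class 3 cent comm}; for $x=hz\in H\oplus G_3$, the element $x^p$ equals $h^pz^p$, with $z^p=1$ by Lemma \ref{class 3 G3}($3$) and $h^p\in G_3$ because $G/G_3$ has exponent $p$ by Lemma \ref{class 3 G/G3 extraspecial of exp p}, so $h^p\in H\cap G_3=\graffe{1}$. The subgroup $\Cyc_G(G_2)$ is characteristic in $G$, hence $A$-stable, and its jumps in $G$ are $1,2,3$ each of width $1$; Lemma \ref{order +- jumps} then gives a decomposition $\Cyc_G(G_2)=\Cyc_G(G_2)^+\oplus\Cyc_G(G_2)^-$ with $|\Cyc_G(G_2)^+|=p$ and $|\Cyc_G(G_2)^-|=p^2$, and identifies $\Cyc_G(G_2)^+$ as a subgroup of $G_2$ disjoint from $G_3$. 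The key structural remark is that any subgroup $K$ of $\Cyc_G(G_2)$ containing $\Cyc_G(G_2)^+$ is automatically $A$-stable: the quotient $\Cyc_G(G_2)/\Cyc_G(G_2)^+$ is isomorphic as an $A$-module to $\Cyc_G(G_2)^-$, on which $A$ acts by inversion, so every subgroup of the quotient is $A$-stable and pulls back to an $A$-stable subgroup containing $\Cyc_G(G_2)^+$.

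It would then suffice to find a $G$-conjugate of $H$ that contains $\Cyc_G(G_2)^+$. Intersecting $\Cyc_G(G_2)=H\oplus G_3$ with $G_2$ gives $G_2=(H\cap G_2)\oplus G_3$, so $H\cap G_2$ and $\Cyc_G(G_2)^+$ are two of the $p$ complements of $G_3$ in $G_2$. The group $G$ acts by conjugation on this set of $p$ complements, the kernel of the action contains $\Cyc_G(G_2)$, and $G/\Cyc_G(G_2)$ is cyclic of order $p$ by Lemma \ref{isomorphism modulo centralizer}($3$); hence the induced action on a $p$-element set is either trivial or transitive. A trivial action would make $\Cyc_G(G_2)^+$ a non-trivial normal subgroup of $G$ with $\Cyc_G(G_2)^+\cap\ZG(G)=\Cyc_G(G_2)^+\cap G_3=\graffe{1}$ (using Lemma \ref{class 3 G3=Z}), contradicting Lemma \ref{normal intersection centre trivial}. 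So the action is transitive and some $g\in G$ conjugates $\Cyc_G(G_2)^+$ onto $H\cap G_2$; then $\Cyc_G(G_2)^+\subseteq g^{-1}Hg$, and by the structural remark above $g^{-1}Hg$ is $A$-stable. The main subtle point I anticipate is exactly that structural remark — ensuring that containment of the single $A$-fixed line $\Cyc_G(G_2)^+$ is enough to force $A$-stability of the whole two-dimensional complement, which relies on the fact that the complementary direction sits entirely inside $\Cyc_G(G_2)^-$ where $A$ acts by inversion so that every subgroup is automatically stable.
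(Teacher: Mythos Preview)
Your argument is correct, and it takes a genuinely different route from the paper's. The paper works with the full set $X$ of complements of $G_3$ in $C=\Cyc_G(G_2)$ (of cardinality $p^2$), explicitly describes $X^+$ as the $p$ subgroups of the form $C^+\oplus\ell$ with $\ell$ a line in $C^-$ distinct from $G_3$, proves via Lemmas \ref{conjugate stable iff in nor-times-G+} and \ref{bonn}($2$) that no two of these are conjugate, and then concludes by a counting argument that the $G$-orbits of $X^+$ exhaust $X$. Your proof instead drops down to the smaller set of complements of $G_3$ in $G_2$ (only $p$ of them), observes that both $C^+$ and $H\cap G_2$ lie there, and uses the trivial-or-transitive dichotomy for a $p$-group acting on a $p$-set together with Lemma \ref{normal intersection centre trivial} to force transitivity. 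The structural remark you flag --- that any subgroup of $C$ containing $C^+$ is automatically $A$-stable because the quotient $C/C^+\cong C^-$ carries the inversion action --- then finishes the job. This is a slicker and more constructive argument: rather than a global count, you directly produce the conjugating element. A small cosmetic point: the decomposition $C=C^+\oplus C^-$ comes from Corollary \ref{abelian sum +-} (abelianity), with Lemma \ref{order +- jumps} supplying the cardinalities and the location of $C^+$ inside $G_2\setminus G_3$.
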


\begin{proof}
To lighten the notation, write $C=\Cyc_G(G_2)$.
If $C=G_2$, then we are done by Lemma \ref{bizzarro}. Assume now that $C\neq G_2$. As a consequence of Lemma \ref{class 3 centralizer G2}($1$), the group $C$ contains $G_2$ with index $p$ and $G_3$ has order $p$.
We define $X$ to be the collection of subgroups $K$ of $G$ such that $C=K\oplus G_3$ and denote 
$X^+=\graffe{K\in X\ |\ \alpha(K)=K}$. The centre of $G$ is equal to $G_3$, by Lemma \ref{class 3 G3=Z}, and, as a consequence of Lemma \ref{normal intersection centre trivial}, all elements of $X$ are non-normal subgroups of $G$. In particular, for any $K\in X$, one has $|G:\nor_G(K)|\geq p$.
Now, by Lemma \ref{bonn}($1$), the subgroup $H$ is elementary abelian, and, $G_3$ being central of order $p$, it follows that $C$ is naturally an $\F_p$-vector space. Combining Lemmas \ref{class 3 G3}($1$) and \ref{class 3 centralizer G2}($1$), we get that $\dim C=3$.
Write $C^+=\graffe{x\in C\ :\ \alpha(x)=x}$ and $C^-=\graffe{x\in C\ :\ \alpha(x)=x^{-1}}$.
Then $C=C^+\oplus C^-$, thanks to Corollary \ref{abelian sum +-} and, as a consequence of Lemma \ref{order +- jumps}, one has 
$|C^-|=p^2$ and $|C^+|=p$. Moreover, $C$ being abelian, both $C^+$ and $C^-$ are linear subspaces of $C$.
It is not difficult to show at this point that 
$$X^+=\graffe{C^+\oplus\ell : \ell \subseteq G^-,\, \ell\cap G_3=\graffe{1},\, \dim(\ell)=1}.$$
It follows that $X^+$ has cardinality $p$, while the cardinality of $X$ is $p^2$. 
Moreover, the combination of Lemmas \ref{conjugate stable iff in nor-times-G+} and \ref{bonn}($2$), ensures that no two elements of $X^+$ are conjugate in $G$.
It follows from Lemma \ref{orbits} that
$$p^2=|X|\geq\sum_{K\in X^+}|G:\nor_G(K)|\geq\sum_{K\in X^+}p=|X^+|p=p^2,$$ 
and therefore every element of $X$ is conjugate to an element of $X^+$. In particular, $H$ has an $A$-stable conjugate.
\end{proof}

\begin{lemma}\label{blue monday}
Every subgroup of $G$ that trivially intersects $G_3$ has an $A$-stable conjugate in $G$.
\end{lemma}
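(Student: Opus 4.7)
The plan is to reduce to the four preceding lemmas \ref{geppetto}, \ref{lucignolo}, \ref{bizzarro}, and \ref{sputafuoco} by a clean case analysis based on whether $H$ is contained in $C=\Cyc_G(G_2)$, and on the order of $H$. First I would observe that $G_3\subseteq\ZG(G)\subseteq C$, so any subgroup $H$ with $H\cap G_3=\graffe{1}$ cannot contain $G_3$.

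If $H\not\subseteq C$, then Lemma \ref{geppetto} immediately produces an $A$-stable conjugate, so we are done in that case. The remaining work is to treat subgroups $H\subseteq C$ with $H\cap G_3=\graffe{1}$. For this I would first compute $|C|=p^3$: combining $|G:G_2|=p^2$, Lemma \ref{class 3 G3}(1) (which gives $|G_2:G_3|=p$), and Lemma \ref{class 3 centralizer G2}, one gets $|C|=p^3$ both when $|G_3|=p$ (where $|C:G_2|=p$) and when $|G_3|=p^2$ (where $C=G_2$). By Lemma \ref{bonn}(1), such an $H$ is elementary abelian, so $|H|$ divides $p^3$.

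I would then run through the four possible orders of $H$. The case $|H|=1$ is trivial. The case $|H|=p$ is exactly Lemma \ref{lucignolo}. For $|H|=p^2$, the condition $H\cap G_3=\graffe{1}$ forces $|H\cdot G_3|=p^2|G_3|$ to fit inside $|C|=p^3$, which rules out $|G_3|=p^2$; hence $|G_3|=p$ and $|H\oplus G_3|=p^3=|C|$, so $C=H\oplus G_3$ and Lemma \ref{sputafuoco} applies. Finally, $|H|=p^3$ would force $H=C\supseteq G_3\neq\graffe{1}$, contradicting $H\cap G_3=\graffe{1}$, so this case does not occur.

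There is no real obstacle here: the structural lemmas about $C$ from Section \ref{section gps class 3} together with Lemmas \ref{geppetto}, \ref{lucignolo}, and \ref{sputafuoco} cover every possibility, and the argument is essentially combinatorial bookkeeping on orders. The only subtlety worth double-checking is that the two sub-lemmas \ref{lucignolo} and \ref{sputafuoco} genuinely apply to \emph{every} $H\subseteq C$ of the relevant order with $H\cap G_3=\graffe{1}$, which they do since their hypotheses only require the intersection condition and the appropriate cardinality.
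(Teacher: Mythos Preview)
Your proof is correct and follows essentially the same approach as the paper: split into the cases $H\not\subseteq\Cyc_G(G_2)$ (Lemma \ref{geppetto}) and $H\subseteq\Cyc_G(G_2)$, and in the latter case use the order of $H$ together with $|\Cyc_G(G_2)|=p^3$ to reduce to Lemmas \ref{lucignolo} and \ref{sputafuoco}. The paper's argument is marginally terser (it does not explicitly separate $|G_3|=p$ from $|G_3|=p^2$, and it does not invoke Lemma \ref{bonn}(1), since $|H|\mid p^3$ already follows from $H\subseteq\Cyc_G(G_2)$), but the logic is the same; note also that Lemma \ref{bizzarro}, which you list in your plan, is not needed separately since it is already used inside the proof of Lemma \ref{sputafuoco}.
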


\begin{proof}
Let $H$ be a subgroup of $G$ such that $H\cap G_3=\graffe{1}$. If $H$ is contained in $\Cyc_G(G_2)$ and has order $p$, then we are done by Lemma \ref{lucignolo}. Assume now that $H$ is contained in $\Cyc_G(G_2)$ and that $H$ has order $p^2$. The group $\Cyc_G(G_2)$ is abelian, by Lemma \ref{class 3 cent comm}, and thus, as a consequence of Lemmas \ref{class 3 G3}($1$) and \ref{class 3 centralizer G2}, one has $\Cyc_G(G_2)=H\oplus G_3$. The group $H$ has an $A$-stable conjugate by Lemma \ref{sputafuoco}. We conclude by Lemma \ref{geppetto}, in case $H$ is not contained in $\Cyc_G(G_2)$.
\end{proof}

\begin{lemma}\label{oggi}
Let $H$ be a subgroup of $G$ such that $H\cap G_3\neq\graffe{1}$. 
Then $H$ has a conjugate that is $A$-stable.
\end{lemma}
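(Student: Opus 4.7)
The plan is to reduce to the case already handled by Lemma \ref{blue monday} by quotienting out by a suitable $A$-stable central subgroup contained in $H$.

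First I would dispose of the easy case. If $|G_3|=p$, then $H\cap G_3\neq\{1\}$ forces $G_3\subseteq H$, and Lemma \ref{pinocchio} applies. So assume $|G_3|=p^2$ and set $M=H\cap G_3$. If $M=G_3$ we are again done by Lemma \ref{pinocchio}, so we may also assume $|M|=p$. By Lemma \ref{class 3 G3=Z} the subgroup $G_3$ is central in $G$, so $M$ is normal in $G$. Moreover, by Lemma \ref{action chi^i general}, $A$ acts on $G_3$ through $\chi^3=\chi$, i.e.\ by inversion; in particular every subgroup of $G_3$ is $A$-stable, so $M$ is $A$-stable.

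Next I would pass to $\overline{G}=G/M$ and verify the hypotheses of Lemma \ref{blue monday} carry over. Since $M\subseteq G_3\subseteq G_2$, we have $\overline{G_2}=G_2/M$ and $\overline{G_3}=G_3/M$, so $|\overline{G}:\overline{G_2}|=|G:G_2|=p^2$ and $|\overline{G_3}|=p\neq 1$, which shows $\overline{G}$ has class $3$. The automorphism $\alpha$ of $G$ descends to an automorphism $\overline{\alpha}$ of $\overline{G}$ because $M$ is $A$-stable, and $\overline{\alpha}$ still induces the inversion map on $\overline{G}/\overline{G_2}=G/G_2$. Since $p$ is odd, this induced action is non-trivial, so $\overline{\alpha}$ has order $2$. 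Finally, $\overline{H}\cap\overline{G_3}=\{1\}$: if $h\in H$ satisfies $hM\in G_3/M$, then $h\in G_3M=G_3$, so $h\in H\cap G_3=M$ and $hM=1$.

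Now I would invoke Lemma \ref{blue monday} in $\overline{G}$ to produce an element $g\in G$ such that $\overline{\alpha}(\overline{g}\,\overline{H}\,\overline{g}^{-1})=\overline{g}\,\overline{H}\,\overline{g}^{-1}$. Lifting this equality, we get
\[
\alpha(gHg^{-1})\cdot M=gHg^{-1}\cdot M.
\]
But $M\subseteq H$ implies $gMg^{-1}=M\subseteq gHg^{-1}$ (using centrality of $M$), so $gHg^{-1}\cdot M=gHg^{-1}$; similarly $\alpha(gHg^{-1})\cdot M=\alpha(gHg^{-1})$ since $\alpha(M)=M$. Therefore $\alpha(gHg^{-1})=gHg^{-1}$, as desired.

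There is no real obstacle here: once one notices that the hypotheses of the section are preserved under quotienting by the $A$-stable central subgroup $M=H\cap G_3$, the lemma follows formally from Lemma \ref{blue monday} and the fact that $M\subseteq H$. The only point requiring a moment of care is verifying that $\overline{\alpha}$ is still a non-trivial involution on $\overline{G}$, which is automatic because inversion on $G/G_2$ is non-trivial for odd $p$.
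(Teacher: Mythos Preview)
Your proof is correct and follows essentially the same approach as the paper: quotient by $M=H\cap G_3$ (an $A$-stable central subgroup of order $p$), verify that $\overline{G}$ still satisfies the section's standing hypotheses, apply Lemma~\ref{blue monday} to $\overline{H}$, and lift the resulting $A$-stable conjugate back using $M\subseteq gHg^{-1}$. You spell out more explicitly than the paper why $M$ is $A$-stable and why the lift works, but the argument is the same.
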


\begin{proof}
Lemma \ref{pinocchio} covers the case in which $H$ contains $G_3$. Assume now that the group $H\cap G_3$ is different from both $\graffe{1}$ and $G_3$.
As a consequence of Lemma \ref{class 3 G3}($2$), the cardinality of $G_3$ is $p^2$, so $H\cap G_3$ has order $p$.
We write $\overline{G}=G/(H\cap G_3)$ and use the bar notation for the subgroups of $\overline{G}$. The group $\overline{G}$ has class $3$ and $|\overline{G}:\overline{G_2}|=p^2$. Moreover, $\overline{H}\cap\overline{G_3}=\graffe{1}$. Thanks to Lemma \ref{blue monday}, the subgroup $\overline{H}$ has an $A$-stable conjugate, and therefore so does $H$.
\end{proof}

\begin{lemma}\label{fotocopiatrice}
The automorphism $\alpha$ is intense and $\inte(G)=2$. 
\end{lemma}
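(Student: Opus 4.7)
The plan is to assemble the case-by-case lemmas already proven in this section and then pin down the exact value of the intensity using earlier structural results. The guiding principle is Lemma \ref{equivalent intense coprime-pgrps}: because $\alpha$ has order $2$ and $|G|$ is a power of the odd prime $p$, the orders of $\gen{\alpha}$ and $G$ are coprime, so $\alpha$ belongs to $\Int(G)$ precisely when every subgroup of $G$ admits an $A$-stable conjugate.

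First I would dispose of the membership $\alpha \in \Int(G)$ by a trichotomy on the intersection with $G_3$. Given an arbitrary subgroup $H \leq G$, exactly one of the following holds: either $H \cap G_3 = \graffe{1}$, or $H \cap G_3$ is a nontrivial proper subgroup of $G_3$, or $G_3 \subseteq H$. The first case is exactly Lemma \ref{blue monday}, while both of the remaining cases are covered by Lemma \ref{oggi} (which in turn reduces to Lemma \ref{pinocchio} when $G_3 \subseteq H$, and to Lemma \ref{blue monday} in the intermediate case via the quotient $G/(H \cap G_3)$, which still has class $3$ and satisfies $|G:G_2| = p^2$). Combining these, every subgroup of $G$ has an $A$-stable conjugate, and Lemma \ref{equivalent intense coprime-pgrps} yields $\alpha \in \Int(G)$.

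For the intensity, I would argue as follows. Since $\alpha$ induces the inversion map on $G/G_2$ and $G/G_2 = G/\Phi(G)$ by Lemma \ref{frattini comm}, the definition of the intense character together with Lemma \ref{intense vector space} forces $\chi_G(\alpha) = -1 \in \omega(\F_p^*)$. In particular, $\alpha$ does not lie in $\ker \chi_G$, which is the unique Sylow $p$-subgroup of $\Int(G)$ by Lemma \ref{formulation}, so $\inte(G) \geq 2$. The reverse inequality $\inte(G) \leq 2$ is immediate from Proposition \ref{mezzo teorema class 3}, since $G$ has class $3$ and we have just shown $\inte(G) > 1$. Hence $\inte(G) = 2$.

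The substantive work was already carried out in Lemmas \ref{pinocchio}--\ref{oggi}, so there is no real obstacle remaining at this point; the lemma is essentially a packaging step that combines the case analysis with Proposition \ref{mezzo teorema class 3}.
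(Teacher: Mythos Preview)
Your proof is correct and follows essentially the same approach as the paper: split according to whether $H\cap G_3$ is trivial, appealing to Lemma \ref{blue monday} in the first case and Lemma \ref{oggi} otherwise, then invoke Lemma \ref{equivalent intense coprime-pgrps}. Your argument for $\inte(G)=2$ is in fact more explicit than the paper's, which states the conclusion but does not spell out the two inequalities; your route via $\chi_G(\alpha)=-1$ and Proposition \ref{mezzo teorema class 3} is exactly right.
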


\begin{proof}
We will show that $\alpha\in\Int(G)$. Thanks to Lemma \ref{equivalent intense coprime-pgrps}, it suffices to show that every subgroup of $G$ has an $A$-stable conjugate. Let $H$ be a subgroup of $G$. If $H\cap G_3=\graffe{1}$, we are done by Lemma \ref{blue monday}, otherwise apply Lemma \ref{oggi}.
\end{proof}

\noindent
Thanks to Lemma \ref{fotocopiatrice}, Proposition \ref{p^4 has cp} is proven.

\section{Constructing intense automorphisms}\label{section construction}

\noindent
The aim of Section \ref{section construction} is giving the proof of Theorem \ref{theorem index commutator class 3}. We will prove the following essential result. 

\begin{proposition}\label{proposition construction class 3}
Let $p$ be an odd prime number and let $G$ be a finite $p$-group of class $3$ such that $|G:G_2|=p^2$. 
Then there exists an automorphism $\alpha$ of $G$ of order $2$ that induces the inversion map $x\mapsto x^{-1}$ on $G/G_2$. 
\end{proposition}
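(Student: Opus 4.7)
The plan is to realize $\alpha$ as the descent of a natural involution on a free presentation of $G$. We choose generators $x,y$ of $G$, possible because $\dim_{\F_p}(G/G_2)=2$ by Lemma \ref{frattini comm}. Let $F=F(x,y)$ be the free group on the corresponding symbols, let $\pi:F\twoheadrightarrow G$ be the induced surjection, and let $\sigma\in\Aut(F)$ be the involution sending each generator to its inverse. If we show $\sigma(\ker\pi)\subseteq\ker\pi$, then $\sigma$ descends to an endomorphism $\alpha$ of $G$ with $\alpha(x)=x^{-1}$ and $\alpha(y)=y^{-1}$; evaluating $\alpha^2$ on the generators gives $\alpha^2=\mathrm{id}_G$, so $\alpha$ has order dividing $2$, and $\alpha\neq\mathrm{id}$ because $p$ is odd. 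Hence $\alpha$ is an involution inducing inversion on $G/G_2$.

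To verify $\sigma(\ker\pi)\subseteq\ker\pi$ we work modulo $F_4$, which lies in $\ker\pi$ since $G$ has class $3$. By iterated use of the bilinearity of the commutator map (Lemma \ref{bilinear LCS}), $\sigma$ acts as $(-1)^i$ on $F_i/F_{i+1}$ for $i\in\{1,2,3\}$. The quotient $\ker\pi/F_4$ is generated as a normal subgroup of $F/F_4$ by relations of four types: (a) $x^p c_x^{-1}$ and $y^p c_y^{-1}$, where $c_x,c_y\in F_3$ represent $x^p,y^p\in G_3$ (note $x^p,y^p\in G_3$ because $G/G_3$ is extraspecial of exponent $p$ by Lemma \ref{class 3 G/G3 extraspecial of exp p}); (b) $[x,y]^p$, which equals $1$ in $G$ because $G_2$ has exponent $p$ by Lemma \ref{class 3 G2 elem abelian}; (c) $[x,[x,y]]^p$ and $[y,[x,y]]^p$, trivial in $G$ because $G_3$ is elementary abelian by Lemma \ref{class 3 G3}(3); and (d) possibly a single additional linear relation $[x,[x,y]]^a[y,[x,y]]^b$ when $|G_3|=p$.

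For each such $r$ we check $\sigma(r)\in\ker\pi$. For (c) and (d), $r\in F_3$ and $\sigma$ inverts modulo $F_4$, so $\sigma(r)\equiv r^{-1}\bmod F_4$, which lies in $\ker\pi$. For (a), $\sigma(x^p c_x^{-1})=x^{-p}\sigma(c_x)^{-1}\equiv x^{-p}c_x\bmod F_4$, and $\pi(x^{-p}c_x)=c_x^{-1}c_x=1$. For (b), write $[x^{-1},y^{-1}]=[x,y]\cdot d$ with $d\in F_3$; since $[F_2,F_3]\subseteq F_5\subseteq F_4$, the elements $[x,y]$ and $d$ commute in $F/F_4$, yielding $\sigma([x,y]^p)=[x^{-1},y^{-1}]^p\equiv [x,y]^p d^p\bmod F_4$, and both factors lie in $\ker\pi$ (the first by (b), the second because $d^p\in F_3^p$ projects to $G_3^p=\{1\}$).

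The main obstacle will be the careful enumeration of the defining relations of $G$ modulo $F_4$ and the verification that every normal generator of $\ker\pi/F_4$ falls into one of the types (a)--(d). This uses that $G_3$ is generated as an $\F_p$-module by $[x,[x,y]]$ and $[y,[x,y]]$, and that all $p$-th power relations can be reduced via the Hall--Petrescu formula (Lemma \ref{hall-petrescu lemma}) to combinations of the ones above. Once this bookkeeping is complete, the construction yields the desired automorphism $\alpha$.
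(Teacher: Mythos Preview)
Your approach is correct and follows the same overall strategy as the paper—descend the generator-inverting involution from a free group—but the mechanics differ. The paper works with the $p$-central series of $F$: it defines $L=F_3F^p$ and $E=[F,L]F_2^p$, shows $E\subseteq\ker\theta\subseteq L$, and then proves that the involution $\beta$ acts as $-1$ on the entire abelian quotient $L/E$ (by checking separately that it acts as $-1$ on $L/F_3$ and on $F_3/E$). Since every subgroup of $L/E$ is then $\beta$-stable, so is $\ker\theta/E$, and no enumeration of relations is needed. Your route instead works with the ordinary lower central series, writes down explicit normal generators (a)--(d) of $\ker\pi/F_4$, and checks $\sigma$-invariance relation by relation. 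This is more hands-on but equally valid; the ``bookkeeping'' you flag is genuinely short (one bounds $|F/N|$ layer by layer using (a)--(d) to get $|F/N|\le|G|$, forcing $N=\ker\pi$). The paper's squeeze argument is cleaner because it avoids this case analysis entirely, but your argument has the virtue of making the relations of $G$ explicit.
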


\noindent
In order to prove Proposition \ref{proposition construction class 3}, let $p$ be an odd prime number and let $G$ be a finite $p$-group of class $3$. 
Let moreover $(G_i)_{i\geq 1}$ denote the lower central series of $G$ and assume that $|G:G_2|=p^2$. We will keep these assumptions and notation until the end of Section \ref{section construction}.
We will work to construct an automorphism $\alpha$ of $G$ and an isomorphism 
$\chi:\gen{\alpha}\rightarrow\graffe{\pm 1}$ in order to apply the results achieved in the previous section. 

\vspace{8pt}
\noindent
Let $F$ be the free group on the set $S=\graffe{a,b}$ and let $\iota:S\rightarrow G$ be a map such that $G=\gen{\iota(S)}$. 
By the universal property of free groups, there exists a unique homomorphism $\theta: F\rightarrow G$ such that 
$\theta(a)=\iota(a)$ and $\theta(b)=\iota(b)$. 
In particular, the map $\theta$ is surjective. 
We denote by $(F_i)_{i\geq 1}$ the $p$-central series of $F$, which is recursively defined as \label{series free}
\[
F_1=F \ \ \ \text{and}\ \ \ F_{i+1}=[F,F_i]F_i^p.\] 
We want to stress the fact that the notation we use here for the $p$-central series of $F$ clashes with the notation we have 
adopted so far (see the section ``Exceptions'' from the List of Symbols). 
Define additionally 
$$L=F_3F^p \ \ \text{and} \ \ E=[F,L]F_2^p.$$ 
The notation we just introduced will be valid until the end of Section \ref{section construction}. We will introduce some extra notation between Lemma \ref{e in n} and Lemma \ref{free 2}. 
We refer to the diagram given at the end of the present section for a visualization of the proof of Proposition \ref{proposition construction class 3}.

\begin{lemma}\label{construction indices}
One has $\theta^{-1}(G_2)=F_2$.
\end{lemma}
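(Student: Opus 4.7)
The plan is to prove equality by showing the inclusion $F_2 \subseteq \theta^{-1}(G_2)$ and then arguing by cardinality that the induced map on quotients is an isomorphism.

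First I would check the easy inclusion $F_2 \subseteq \theta^{-1}(G_2)$. By definition of the $p$-central series, $F_2 = [F,F]F^p$. Since $\theta$ is a homomorphism, $\theta([F,F]) \subseteq [G,G] = G_2$ and $\theta(F^p) \subseteq G^p$. Because $|G:G_2|=p^2$, Lemma \ref{frattini comm} combined with the non-abelianness of $G$ (the class is $3$) gives $G_2 = \Phi(G) = [G,G]G^p$, so in particular $G^p \subseteq G_2$. Hence $\theta(F_2) \subseteq G_2$, i.e.\ $F_2 \subseteq \theta^{-1}(G_2)$.

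Next I would compute the order of $F/F_2$. Since $F$ is the free group on two generators $\{a,b\}$ and $F_2 = [F,F]F^p$, the quotient $F/F_2$ is the largest elementary abelian $p$-group generated by (the images of) $a$ and $b$; hence $F/F_2 \cong \F_p^2$ has order $p^2$. Therefore the induced surjection $\overline{\theta}: F/F_2 \twoheadrightarrow G/G_2$ is a surjection between finite groups of the same order $p^2$, so it is an isomorphism.

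Finally, the kernel of $\overline{\theta}$ is $\theta^{-1}(G_2)/F_2$, and from the previous step this kernel is trivial, which yields $\theta^{-1}(G_2) = F_2$.

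There is no real obstacle here; the argument is a short cardinality comparison, and the only point requiring care is justifying that $G^p \subseteq G_2$, which I would handle by invoking Lemma \ref{frattini comm} together with the assumption that $G$ has class $3$ (hence is non-abelian).
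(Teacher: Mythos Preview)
Your proof is correct and follows essentially the same approach as the paper: both establish $F_2\subseteq\theta^{-1}(G_2)$ via Lemma~\ref{frattini comm} and then conclude by comparing $|F:F_2|=p^2$ with $|G:G_2|=p^2$. The paper is slightly terser (it notes directly that $\theta(F_2)=\Phi(G)=G_2$ using surjectivity of $\theta$, rather than just the inclusion), but the logic is the same.
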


\begin{proof}
Since $\theta$ is a surjective homomorphism, one has $\theta(F_2)=\theta([F,F]F^p)=G_2G^p=\Phi(G)$ and so, as a consequence of Lemma \ref{frattini comm}, we get $\theta(F_2)=G_2$. In other words, $F_2\subseteq\theta^{-1}(G_2)$. The group $F$ being $2$-generated, we have that 
$|F:F_2|=p^2$, and so $F_2=\theta^{-1}(G_2)$.
\end{proof}

\begin{lemma}\label{free map}
The commutator map induces an alternating map $F/F_2\times F/F_2\rightarrow F_2/L$ whose image generates 
$F_2/L$. Furthermore, the index $|F_2:L|$ is at most $p$.
\end{lemma}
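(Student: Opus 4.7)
The plan is to first verify that $L$ is a normal subgroup of $F$ containing enough of the lower structure of $F$ to allow the commutator to descend to the required quotient, and then use a wedge–product argument to bound the index $|F_2:L|$.

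First I would observe the crucial inclusions. By definition $L=F_3F^p$, and $F_3=[F,F_2]F_2^p$, so $L$ contains $[F,F_2]$, $F_2^p$, and $F^p$; in particular $L$ is a normal subgroup of $F$ (since $F_3$ and $F^p$ are characteristic). Since $[F_2,F_2]\subseteq[F,F_2]\subseteq L$ and $F_2^p\subseteq L$, the quotient $F_2/L$ is an elementary abelian $p$-group.

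Next I would check that the commutator map $F\times F\rightarrow F_2$ induces a well-defined map $\phi:F/F_2\times F/F_2\rightarrow F_2/L$ and that this map is bilinear and alternating. For well-definedness, if $t\in F_2$ and $y\in F$, then the formula $[xt,y]=x[t,y]x^{-1}[x,y]$ from Lemma \ref{multiplication formulas commutators}(2) gives $[xt,y]\equiv[x,y]\bmod L$, because $[t,y]\in[F_2,F]=F_3\subseteq L$ and $L$ is normal in $F$; the analogous argument using Lemma \ref{multiplication formulas commutators}(1) handles the second variable. For bilinearity, the same multiplication formulas combined with $[x,[y,z]]\in[F,F_2]\subseteq L$ yield $[xy,z]\equiv[x,z][y,z]\bmod L$ and $[x,yz]\equiv[x,y][x,z]\bmod L$, so $\phi$ is bilinear into the abelian group $F_2/L$. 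Finally, $\phi$ is alternating because $[x,x]=1$.

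To see that the image of $\phi$ generates $F_2/L$, I would invoke the definition $F_2=[F,F]F^p$: since $F^p\subseteq L$, the quotient $F_2/L$ is generated by the images of the commutators $[x,y]$, i.e.\ by the image of $\phi$. For the final index bound, $F$ is $2$-generated, so $F/F_2$ is an $\F_p$-vector space of dimension at most $2$, and therefore $\bigwedge^2(F/F_2)$ has dimension at most $1$. By the universal property of the wedge product, the alternating bilinear map $\phi$ factors through $\bigwedge^2(F/F_2)\rightarrow F_2/L$, and the surjectivity established above forces $\dim_{\F_p}(F_2/L)\leq 1$, i.e.\ $|F_2:L|\leq p$. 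The only step requiring any care is the bilinearity check, where one must be vigilant about conjugation terms, but these all land in $[F,F_2]\subseteq L$ and so disappear modulo $L$.
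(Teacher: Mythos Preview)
Your proof is correct and follows essentially the same approach as the paper's: establish that the commutator map descends to a bilinear alternating map into $F_2/L$, note its image generates the quotient, and bound the size via $\bigwedge^2(F/F_2)$. The only difference is cosmetic: where you verify bilinearity by direct computation with the multiplication formulas of Lemma~\ref{multiplication formulas commutators}, the paper passes to $\overline{F}=F/L$, observes that $[\overline{F},\overline{F}]$ is central there, and invokes Lemma~\ref{tgt} to conclude bilinearity in one stroke.
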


\begin{proof}
We write $\overline{F}=F/L$ and we will use the bar notation for the subgroups of $\overline{F}$.
The subgroup $[F,[F,F]]$ is contained in $F_3$ and $[\overline{F},\overline{F}]$ is central. 
Moreover, $\overline{F}$ being annihilated by $p$, the subgroup $\overline{F_2}$ coincides with $[\overline{F},\overline{F}]$. 
As a consequence of Lemma \ref{tgt}, the commutator map induces a bilinear map 
$\phi:\overline{F}/\overline{F_2}\times\overline{F}/\overline{F_2}\rightarrow\overline{F_2}$ whose image generates 
$\overline{F_2}=[\overline{F},\overline{F}]$. The map $\phi$ is alternating 
because every element of a group commutes with itself. 
By the universal property of the exterior square, 
$\phi$ factors as a surjective homomorphism
$\bigwedge^2(\overline{F}/\overline{F_2})\rightarrow\overline{F_2}$.
As a consequence of Lemma \ref{frattini comm}, the quotient 
$\overline{F}/\overline{F_2}$ is a $2$-dimensional vector space over $\F_p$ and 
$\bigwedge^2(\overline{F}/\overline{F_2})$ has dimension $1$. 
It follows that $\overline{F_2}$ has order at most $p$. %Now, thanks to the universal property of free groups, the group $F$ maps surjectively to any extraspecial group of order $p^3$ and exponent $p$ (see Section \ref{section extraspecial}) with kernel containing $F_3F^p$. In particular, the order of $\overline{F_2}$ is equal to $p$.
\end{proof}

\begin{lemma}\label{description L}
One has $\theta^{-1}(G_3)=L$ and $|F_2:L|=p$.
\end{lemma}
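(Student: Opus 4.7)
The plan is to sandwich $L$ between two subgroups we can control and then count. First I would verify the inclusion $L \subseteq \theta^{-1}(G_3)$. Since $\theta$ is surjective, $\theta(F_3) = \theta([F,F_2]F_2^p) = [G,G_2]G_2^p$; now $[G,G_2] = G_3$ by definition, and by Lemma \ref{class 3 G2 elem abelian} the subgroup $G_2$ is elementary abelian, so $G_2^p = 1$ and thus $\theta(F_3) = G_3$. Similarly, $\theta(F^p) = G^p$, and by Lemma \ref{class 3 G/G3 extraspecial of exp p} the quotient $G/G_3$ is extraspecial of exponent $p$, so $G^p \subseteq G_3$. Combining, $\theta(L) = \theta(F_3)\theta(F^p) = G_3$, which gives $L \subseteq \theta^{-1}(G_3)$. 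In particular, since $\theta^{-1}(G_3) \subseteq \theta^{-1}(G_2) = F_2$ by Lemma \ref{construction indices}, we get a chain $L \subseteq \theta^{-1}(G_3) \subseteq F_2$.

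Next I would use $\theta$ to induce a surjective homomorphism $F_2/L \twoheadrightarrow G_2/G_3$, well-defined because $\theta(F_2) = G_2$ and $\theta(L) \subseteq G_3$, and surjective because $\theta(F_2) = G_2$. By Lemma \ref{class 3 G3}(1) we have $|G_2:G_3| = p$, and therefore $|F_2:L| \geq p$.

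On the other hand, Lemma \ref{free map} gives the opposite inequality $|F_2:L| \leq p$. Hence $|F_2:L| = p$, proving the second assertion of the lemma. Moreover, the surjection $F_2/L \twoheadrightarrow G_2/G_3$ is then a surjection between groups of the same order $p$, hence an isomorphism; consequently its kernel is trivial, i.e., $\theta^{-1}(G_3)/L = 0$, which yields $\theta^{-1}(G_3) = L$ and completes the proof.

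The only mild subtlety is ensuring that $\theta(F_3) = G_3$ exactly (not merely $\subseteq G_3$), which is where Lemma \ref{class 3 G2 elem abelian} is needed to kill $G_2^p$; once that is in hand, the remainder is a clean index count between the bound from the alternating map on $F/F_2$ and the bound forced by the size of $G_2/G_3$.
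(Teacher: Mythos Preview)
Your proof is correct and follows essentially the same approach as the paper: show $\theta(L)=G_3$ to get $L\subseteq\theta^{-1}(G_3)\subseteq F_2$, then squeeze using the bound $|F_2:L|\leq p$ from Lemma \ref{free map}. One minor simplification: you do not actually need Lemma \ref{class 3 G2 elem abelian} to handle $G_2^p$, since $G_2^p\subseteq G^p\subseteq G_3$ already follows from Lemma \ref{class 3 G/G3 extraspecial of exp p}; the paper uses only the latter.
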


\begin{proof} 
As a consequence of Lemma \ref{class 3 G/G3 extraspecial of exp p}, the group $G_3$ contains $G^p$, from which it follows that 
$\theta(L)=G_3$. In particular, the subgroup $L$ is contained in $\theta^{-1}(G_3)$.
As a consequence of Lemma \ref{construction indices}, the subgroup $\theta^{-1}(G_3)$ is contained in 
$F_2$ and $\theta^{-1}(G_3)\neq F_2$, because $G_3\neq G_2$. In particular, $F_2$ is different from $L$.
By Lemma \ref{free map}, the index $|F_2:L|$ is at most $p$, and so we get that $|F_2:L|=p$ and
$\theta^{-1}(G_3)=L$.
\end{proof}

\begin{lemma}\label{e in n}
One has $E\subseteq\ker\theta\cap F_3$.
\end{lemma}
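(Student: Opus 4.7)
The plan has two independent containments to establish: $E \subseteq F_3$ and $E \subseteq \ker\theta$. Both are purely formal once we recall the definitions $L = F_3 F^p$, $E = [F, L] F_2^p$, and the fact that the series $(F_i)_{i \geq 1}$ here is the $p$-central series, so in particular $F_2 = [F,F] F^p$ (whence $F^p \subseteq F_2$) and $F_3 = [F, F_2] F_2^p$ (whence $F_2^p \subseteq F_3$).

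For the containment $E \subseteq F_3$: since $F_3$ and $F^p$ are characteristic, hence normal, in $F$, the commutator identity $[x, yz] = [x,y] \cdot y[x,z]y^{-1}$ gives $[F, F_3 F^p] \subseteq [F, F_3] \cdot [F, F^p]$. Now $[F, F_3] \subseteq F_4 \subseteq F_3$ directly from the definition of the $p$-central series, and $[F, F^p] \subseteq [F, F_2] \subseteq F_3$ using $F^p \subseteq F_2$. Combined with $F_2^p \subseteq F_3$, this gives $E \subseteq F_3$.

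For the containment $E \subseteq \ker\theta$: the two previous lemmas give $\theta(F_2) = G_2$ (Lemma \ref{construction indices}) and $\theta(L) = G_3$ (Lemma \ref{description L}). Hence $\theta([F, L]) = [G, G_3]$, which is trivial because $G$ has class $3$. Likewise $\theta(F_2^p) = G_2^p$, which is trivial because, by Lemma \ref{class 3 G2 elem abelian}, the subgroup $G_2$ is elementary abelian. Therefore $\theta(E) = 1$.

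There is no real obstacle here; the only mildly delicate point is keeping track of which series is in play. The notation $(F_i)_{i\geq 1}$ is, as flagged in the list of exceptions, the \emph{$p$-central} series of $F$ rather than the lower central series, so the inclusions $F^p \subseteq F_2$ and $F_2^p \subseteq F_3$ must be invoked from that definition (and would fail for the lower central series in general). Once those inclusions are used correctly, both halves of the statement reduce to a one-line commutator calculation plus the two structural facts about $G$ (class $3$ and $G_2$ elementary abelian) already established in Section \ref{section gps class 3}.
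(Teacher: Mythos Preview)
Your proof is correct and follows essentially the same approach as the paper's: both establish $E\subseteq F_3$ via $L\subseteq F_2$ (so that $[F,L]F_2^p\subseteq [F,F_2]F_2^p=F_3$), and both show $E\subseteq\ker\theta$ by computing $\theta(E)=[G,G_3]G_2^p=\{1\}$ using that $G$ has class~$3$ and that $G_2$ is elementary abelian. Your citation of Lemma~\ref{description L} for $\theta(L)=G_3$ is in fact more direct than the paper's reference to Lemma~\ref{class 3 G/G3 extraspecial of exp p}.
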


\begin{proof}
The group $E$ is contained in $[F,F_2]F_2^p=F_3$, by definition of the $p$-central series of $F$.
%We now prove that $\theta([F,F_3F^p]F_2^p)=\graffe{1}$.
As a consequence of Lemma \ref{class 3 G/G3 extraspecial of exp p}, the image of $L$ under $\theta$ is equal to $G_3$ and, as a consequence of 
Lemma \ref{class 3 G2 elem abelian}, the subgroup $F_2^p$ is contained in the kernel of $\theta$.
It follows that $\theta(E)=\theta([F,L]F_2^p)=[G,G_3]=G_4=\graffe{1}$.
\end{proof}

\noindent
Let $\beta$ be the endomorphism of $F$ sending $a$ to $a^{-1}$ and $b$ to $b^{-1}$, and note that $\beta$ exists by the universal property of free pro-$p$-groups. 
Then $\beta^2$ is equal to $\id_F$, and thus $\beta$ is an automorphism of $F$.
Write $B=\gen{\beta}$ and define the homomorphism $\sigma:B\rightarrow\graffe{\pm 1}$ by $\beta\mapsto -1$. 
We will respect this notation until the end of Section \ref{section construction}.

\begin{lemma}\label{free 2}
The induced action of $B$ on $F/F_2$ and $F_2/L$ is respectively through $\sigma$ and $\sigma^2$.
\end{lemma}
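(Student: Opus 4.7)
The plan is to reduce the statement to a direct application of Lemma \ref{lemma product of characters}, using the commutator bilinear map from Lemma \ref{free map}. Since both $F_2$ and $L=F_3F^p$ are characteristic subgroups of $F$ (being defined purely in terms of the group operation, commutators, and $p$-th powers), the automorphism $\beta$ preserves them, so $B$ does act on the quotients $F/F_2$ and $F_2/L$.

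First I would handle the action on $F/F_2$. By definition of $F_2=[F,F]F^p$, the quotient $F/F_2$ is elementary abelian of rank $2$, hence an $\F_p$-vector space. Since $\beta$ is determined by $\beta(a)=a^{-1}$ and $\beta(b)=b^{-1}$, it induces the inversion map on the abelian group $F/F_2$, which coincides with scalar multiplication by $-1=\sigma(\beta)$. Hence $B$ acts on $F/F_2$ through $\sigma$.

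Next I would handle $F_2/L$. By Lemma \ref{free map}, the commutator map induces a bilinear map
\[
\gamma: F/F_2 \times F/F_2 \longrightarrow F_2/L
\]
whose image generates $F_2/L$, and $|F_2:L|\leq p$ so that $F_2/L$ is a finite abelian $p$-group. Because $\beta$ is a group homomorphism, it respects the commutator and hence $\gamma$ respects the action of $B$. Applying Lemma \ref{lemma product of characters} with $X=Y=F/F_2$, $Z=F_2/L$, and $\chi=\psi=\sigma$, we conclude that $B$ acts on the subgroup generated by $\gamma(F/F_2\times F/F_2)$, namely on $F_2/L$, through $\sigma\cdot\sigma=\sigma^2$.

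No step here looks to be a serious obstacle: both claims rely only on the elementary properties of $\beta$ on generators and on the already-established bilinearity from Lemma \ref{free map}. The only point that needs to be stated cleanly is that $B$ acts on both sides of $\gamma$ and that $\gamma$ respects this action, so that the hypotheses of Lemma \ref{lemma product of characters} are genuinely satisfied.
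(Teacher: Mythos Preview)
Your proposal is correct and follows essentially the same approach as the paper: first observe that $\beta$ inverts the generators and hence acts as $-1$ on $F/F_2$, then use the bilinear commutator map from Lemma~\ref{free map} together with Lemma~\ref{lemma product of characters} to deduce the action on $F_2/L$ is through $\sigma^2$. You are slightly more explicit than the paper in checking that $F_2$ and $L$ are characteristic and that $\gamma$ respects the $B$-action, but the argument is the same.
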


\begin{proof}
By definition of $\beta$, every element of $F$ is inverted by $\beta$ modulo $F_2$; in other words, the action of $B$ on $F/F_2$ is through $\sigma$.
By Lemma \ref{free map}, the commutator map induces a bilinear map 
$\phi:F/F_2\times F/F_2\rightarrow F_2/L$ whose image generates $F_2/L$. The group $B$ acts on $F/F_2$ through 
$\sigma$ and, by Lemma \ref{lemma product of characters}, the action of $B$ on $F_2/L$ is through $\sigma^2$.
\end{proof}

\begin{lemma}\label{free 3}
The induced action of $B$ on $L/F_3$ is through $\sigma$.
\end{lemma}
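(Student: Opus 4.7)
The strategy is to exhibit a surjective homomorphism $\rho\colon F/F_2 \to L/F_3$ that respects the action of $B$ and then invoke Lemmas~\ref{free 2} and \ref{p-power characters}. Concretely, we plan to define $\rho$ by the rule $xF_2 \mapsto x^p F_3$ and to verify the three properties \emph{well-defined / homomorphism / surjective / $B$-equivariant} by one and the same Hall--Petrescu computation.

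The surjectivity will be immediate from $L = F_3 F^p$. The remaining three properties all reduce to establishing the single congruence $(xy)^p \equiv x^p y^p \bmod F_3$ for $x,y \in F$ (with $y \in F_2$ for well-definedness, and the general case for the homomorphism property; the $B$-equivariance follows by taking $y \in F_2$ so that $\beta(x) = x^{-1} y$ for some such $y$). To prove this congruence, apply Lemma~\ref{hall-petrescu lemma} to get
\[
(xy)^p = x^p y^p \prod_{k=2}^{p} c_k^{\binom{p}{k}},
\]
with $c_k$ lying in the $k$-th term of the lower central series of $\langle x,y\rangle$, hence in $F_k$ (here $F_k$ is the $p$-central series, which dominates the lower central series). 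For $2\le k \le p-1$, the exponent $\binom{p}{k}$ is divisible by $p$, so $c_k^{\binom{p}{k}} \in F_k^p \subseteq F_{k+1}\subseteq F_3$; for $k=p$, since $p\ge 3$ we have $c_p \in F_p \subseteq F_3$; and $y^p \in F_2^p \subseteq F_3$ when $y \in F_2$. This will give the desired congruence modulo $F_3$.

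Once $\rho$ is established, note that $L/F_3$ is abelian (as $L\subseteq F_2$ and $[F_2,F_2]\subseteq F_3$ by definition of the $p$-central series), so it is a finite abelian $p$-group on which $B$ acts naturally. By Lemma~\ref{free 2} the action of $B$ on $F/F_2$ is through $\sigma$, and $\rho$ is a surjective $B$-equivariant homomorphism, so Lemma~\ref{p-power characters} yields that $B$ acts on $L/F_3$ through $\sigma$ as well.

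The only real obstacle is bookkeeping in the Hall--Petrescu expansion: one needs to confirm that \emph{every} correction term $c_k^{\binom{p}{k}}$, as well as the $y^p$ term, lands in $F_3$. This relies on the two key inclusions $F_i^p \subseteq F_{i+1}$ (built into the $p$-central series) and $F_p \subseteq F_3$ (which forces the hypothesis $p$ odd, i.e.\ $p\ge 3$, to enter the argument). Once these are noted, the rest is essentially formal.
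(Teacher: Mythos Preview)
Your approach is correct and matches the paper's: both show the $p$-power map induces a surjective $B$-equivariant homomorphism $F/F_2 \to L/F_3$ and then conclude via Lemmas~\ref{free 2} and~\ref{p-power characters}. The paper shortcuts your Hall--Petrescu expansion by citing Corollary~\ref{p map petrescu} (after observing that $F/F_3$ has class at most $2$ and $[F/F_3,F/F_3]^p=1$), and note that the $B$-equivariance of $\rho$ is in fact automatic since $\beta$, being a group automorphism, commutes with $p$-th powering---your extra computation there is unnecessary.
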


\begin{proof}
We write $\overline{F}=F/F_3$ and we use the bar notation for its subgroups. 
Then $\overline{L}$ is equal to $\overline{F}^{\, p}$ and, since $[F,[F,F]]$ is contained in $F_3$, the group $\overline{F}$ has class at most $2$. Moreover, we have that
$[F,F]^p\subseteq F_2^p\subseteq F_3$, so $[\overline{F},\overline{F}]$ is annihilated by $p$.
It follows from Lemma 
\ref{p map petrescu} that the $p$-power map is an endomorphism of $\overline{F}$, and therefore $\overline{L}$ is an epimorphic 
image of $\overline{F}/\overline{F_2}$. By Lemma \ref{p-power characters}, the induced action of $B$ on $\overline{L}$ is through $\sigma$. 
\end{proof}

\begin{lemma}\label{free 4}
The induced action of $B$ on $F_3/E$ is through $\sigma$.
\end{lemma}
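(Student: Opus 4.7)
The plan is to realize $F_3/E$ as a quotient generated by the image of a bilinear map coming from the commutator map, with the two factors being groups on which the action of $B$ has already been computed in Lemma \ref{free 2}. Once this is set up, the statement will fall out of Lemma \ref{lemma product of characters} together with the fact that $\sigma$ has order $2$, so $\sigma^3 = \sigma$.

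First I would check that $F_3/E$ is an elementary abelian $p$-group. Indeed $[F_3,F_3] \subseteq [F,F_3] \subseteq [F,L] \subseteq E$ since $F_3 \subseteq L$, and $F_3^p \subseteq F_2^p \subseteq E$; so $F_3/E$ is an $\F_p$-vector space on which it makes sense to speak of an action through a character.

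Next I would produce the bilinear map. Since $F_3 = [F,F_2]F_2^p$ and $F_2^p \subseteq E$, the image of the commutator map $F \times F_2 \to F_3$ generates $F_3/E$ modulo $E$. To see it descends to a bilinear map
\[
\gamma : F/F_2 \times F_2/L \longrightarrow F_3/E,
\]
I would verify the two annihilation conditions: $[F,L] \subseteq E$ is immediate from the definition of $E$, and $[F_2,F_2] \subseteq E$ follows because $[F_2,F_2] \subseteq F_4 = [F,F_3]F_3^p \subseteq [F,L]F_2^p = E$, again using $F_3 \subseteq L$ and $F_3 \subseteq F_2$. Bilinearity then follows from Lemma \ref{tgt} once one observes that $F_3/E$ is central in $F/E$ (a consequence of $[F,F_3] \subseteq E$). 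The image of $\gamma$ generates $F_3/E$ by the observation above.

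Finally I would invoke Lemma \ref{free 2}, which tells us that $B$ acts on $F/F_2$ through $\sigma$ and on $F_2/L$ through $\sigma^2$. The map $\gamma$ clearly respects the action of $B$ (because $\beta$ is an automorphism of $F$ and the subgroups $F_2$, $L$, $F_3$, $E$ are all $\beta$-stable), so Lemma \ref{lemma product of characters} gives that $B$ acts on $\langle \gamma(F/F_2 \times F_2/L)\rangle = F_3/E$ through $\sigma \cdot \sigma^2 = \sigma^3 = \sigma$, as desired. The only mildly delicate step is the verification $[F_2,F_2] \subseteq E$, which relies on knowing that $F_3 \subseteq L$ and on unpacking the definition of the $p$-central series; everything else is a direct application of machinery already established.
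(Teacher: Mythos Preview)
Your proposal is correct and follows essentially the same approach as the paper: set up a bilinear map $F/F_2 \times F_2/L \to F_3/E$ induced by the commutator, invoke Lemma~\ref{free 2} on the two factors, and apply Lemma~\ref{lemma product of characters} to obtain $\sigma\cdot\sigma^2=\sigma^3=\sigma$. The only difference lies in the verification that $[F_2,F_2]\subseteq E$: the paper argues via Lemma~\ref{description L} that $F_2/L$ is cyclic, whence Lemma~\ref{cyclic quotient commutators} gives $[F_2,F_2]=[F_2,L]\subseteq[F,L]\subseteq E$; you instead invoke $[F_2,F_2]\subseteq F_4$, which is true (the $p$-central series is strongly central) but is not established in the paper and would itself need justification, so the paper's route is more self-contained here.
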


\begin{proof}
The group $F_2/L$ is cyclic, thanks to Lemma \ref{description L}, so Lemma \ref{cyclic quotient commutators} yields 
$[F_2,F_2]=[F_2,L]$.
It follows that $[F_2,F_2]$ is contained in $E$. The group $[F,F_3]$ is also contained in $E$ and, as a consequence of Lemma \ref{tgt}, 
the commutator map induces a bilinear map 
$\phi:F/F_2\times F_2/L\rightarrow F_3/E$. By definition of $F_3$, the image of $\phi$ generates $F_3/E$. 
By Lemma \ref{free 2}, the induced actions of $B$ on $F/F_2$ and $F_2/L$ are respectively through $\sigma$ and $\sigma^2$
and, by Lemma 
\ref{lemma product of characters}, the action of $B$ on $F_3/E$ is through $\sigma^3=\sigma$. 
\end{proof}

\begin{lemma}\label{beta layers}
The induced action of $B$ on $L/E$ is through $\sigma$.
Moreover, the kernel of $\theta$ is $B$-stable.
\end{lemma}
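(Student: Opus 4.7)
The plan is to combine Lemmas \ref{free 3} and \ref{free 4}, viewing $L/E$ as an extension of $L/F_3$ by $F_3/E$. Before anything else, I would verify that $L/E$ is an elementary abelian $p$-group. On the one hand, $[L,L]\subseteq[F,L]\subseteq E$, so $L/E$ is abelian. On the other hand, both $F_3$ and $F^p$ lie inside $F_2$ (the latter because $F/F_2$ has exponent $p$), so $L=F_3F^p\subseteq F_2$, and hence $L^p\subseteq F_2^p\subseteq E$.

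Given $x\in L$, Lemma \ref{free 3} tells me that $y:=x\beta(x)$ lies in $F_3$. Using $\beta^2=\id_F$ together with the abelianness of $L/E$, I will compute
\[
\beta(y)=\beta(x)\beta^2(x)=\beta(x)\,x\equiv x\,\beta(x)=y\pmod{E}.
\]
On the other hand, since $y\in F_3$, Lemma \ref{free 4} gives $\beta(y)\equiv y^{-1}\pmod{E}$. Comparing the two, $y^2\in E$. Because $L/E$ has exponent dividing the odd prime $p$ and $\gcd(2,p)=1$, this forces $y\in E$; that is, $\beta(x)\equiv x^{-1}\pmod{E}$. Since $x$ was arbitrary, $B$ acts on $L/E$ through $\sigma$.

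For the second assertion, I would note that $\ker\theta\subseteq\theta^{-1}(G_3)=L$ by Lemma \ref{description L}, while Lemma \ref{e in n} gives $E\subseteq\ker\theta$. Thus $\ker\theta$ sits between $E$ and $L$, so the first part applies. For any $x\in\ker\theta$, the element $\beta(x)\,x$ lies in $E\subseteq\ker\theta$; combined with $x\in\ker\theta$ this forces $\beta(x)\in\ker\theta$. So $\beta(\ker\theta)\subseteq\ker\theta$, and then $\beta^2=\id_F$ upgrades this to equality.

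The only mildly non-routine point is that the elementary abelianness of $L/E$ is precisely what lets $y^2\in E$ upgrade to $y\in E$, thereby stitching the $\sigma$-actions on $L/F_3$ and $F_3/E$ into a single $\sigma$-action on $L/E$; after that, the $B$-stability of $\ker\theta$ is a one-line consequence of $E\subseteq\ker\theta\subseteq L$.
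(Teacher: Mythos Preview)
Your proof is correct and follows essentially the same approach as the paper: both combine Lemmas \ref{free 3} and \ref{free 4} to pass from the $\sigma$-actions on $L/F_3$ and $F_3/E$ to a $\sigma$-action on all of $L/E$, and then use $E\subseteq\ker\theta\subseteq L$ for the second assertion. The only difference is packaging: the paper invokes Lemma \ref{abelian>minus quotients} as a black box (which does not need abelianness of $L/E$ as input, since it deduces it), whereas you verify abelianness of $L/E$ first and then carry out the same odd-order argument by hand via the computation $\beta(y)\equiv y\equiv y^{-1}\pmod E$.
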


\begin{proof}
As a consequence of Lemmas \ref{free 3} and \ref{free 4}, the induced actions of $B$ on $L/F_3$ and $F_3/E$ are both through $\sigma$.
It follows from Lemma \ref{abelian>minus quotients} that the action of $B$ on 
$L/E$ is through 
$\sigma$. As a consequence of Lemmas \ref{description L} and \ref{e in n}, one has $E\subseteq \ker\theta\subseteq L$, and, in particular, the action of $B$ on $L/E$ restricts to an action of $B$ on $\ker\theta/E$. It follows that $\ker\theta$ is $B$-stable and the proof is complete.
\end{proof}

\begin{lemma}\label{raccoon}
Given any two generators $x$ and $y$ of $G$, there exists an intense automorphism of $G$ such that $\alpha(x)=x^{-1}$ and 
$\alpha(y)=y^{-1}$. 
\end{lemma}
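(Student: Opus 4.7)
The plan is to reduce the statement almost entirely to the work already done: we transfer the involution $\beta$ of the free group $F$ to an automorphism $\alpha$ of $G$, and then invoke Proposition~\ref{p^4 has cp} to conclude that $\alpha$ is intense.

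First I would set $\iota:S\to G$ by $\iota(a)=x$ and $\iota(b)=y$. Because $\{x,y\}$ generates $G$, the hypothesis $G=\langle\iota(S)\rangle$ used in the construction at the start of Section~\ref{section construction} is satisfied, and so we obtain a surjection $\theta:F\to G$ with $\theta(a)=x$ and $\theta(b)=y$; all the preceding lemmas of this section then apply verbatim.

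Next, Lemma~\ref{beta layers} tells us exactly that $\ker\theta$ is $B$-stable. Hence $\beta$ descends through $\theta$ to a well-defined endomorphism $\alpha$ of $G=F/\ker\theta$ satisfying $\alpha(x)=x^{-1}$ and $\alpha(y)=y^{-1}$. Since $\beta^2=\id_F$, we have $\alpha^2=\id_G$, so $\alpha\in\Aut(G)$ and $\alpha$ has order dividing $2$. By Lemma~\ref{frattini comm} the quotient $G/G_2$ equals $G/\Phi(G)$, which is a $2$-dimensional $\F_p$-vector space on the basis given by the images of $x$ and $y$; as $p$ is odd, the inversion map on this space is nontrivial, so $\alpha$ has order exactly $2$ and induces $g\mapsto g^{-1}$ on $G/G_2$.

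Finally I would apply Proposition~\ref{p^4 has cp}: since $\alpha$ is an automorphism of $G$ of order $2$ inducing the inversion map on $G/G_2$, it is intense, which is precisely the statement of the lemma. There is no real obstacle to overcome at this stage, as the entire section has been engineered to make this last step immediate; the only point requiring a line of justification is the nontriviality of $\alpha$, which is handled by the Frattini-basis observation above.
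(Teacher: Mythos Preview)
Your proposal is correct and follows essentially the same approach as the paper: choose $\iota$ so that $\theta(a)=x$ and $\theta(b)=y$, use Lemma~\ref{beta layers} to descend $\beta$ through $\theta$ to an automorphism $\alpha$ of $G$, and then apply Proposition~\ref{p^4 has cp}. The only difference is that you spell out why $\alpha$ has order exactly $2$ rather than merely dividing $2$, which the paper leaves implicit.
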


\begin{proof}
Let $x$ and $y$ be generators of $G$. Without loss of generality, we assume that $\iota(a)=x$ and $\iota(b)=y$.
Let moreover $\bar{\theta}:F/\ker\theta\rightarrow G$ be the isomorphism that is induced from $\theta$. 
By Lemma \ref{beta layers}, the subgroup $\ker\theta$ of $F$ is $B$-stable, and therefore $\beta$ induces an automorphism $\bar{\beta}$ of $F/\ker\theta$.
Define $\alpha: G\rightarrow G$ by $\alpha=\bar{\theta}\circ\bar{\beta}\circ\bar{\theta}^{-1}$. 
Then $\alpha$ is an automorphism $G$ of order $2$ that inverts the generators $x$ and $y$. Proposition \ref{p^4 has cp} yields that $\alpha$ is intense.
\end{proof}

\noindent
We remark that Proposition \ref{proposition construction class 3} follows directly from Lemma \ref{raccoon}.
Moreover, we are also finally ready to give the proof of Theorem \ref{theorem intensity class 3}. 
Proposition \ref{mezzo teorema class 3} gives $(1)\Leftrightarrow(2)$ and $(1)\Rightarrow(3)$. On the other hand, the implication 
$(3)\Rightarrow(2)$ is given by the combination of Lemma \ref{raccoon} and Proposition \ref{p^4 has cp}. The proof of Theorem \ref{theorem intensity class 3} is finally complete.
\vspace{10pt}
\[
\begin{diagram}[nohug]
       &          &  F  &  &  & \ \ \rDashto \ \    &  G  \\
       &          & \dLine^{- \ }    &  & &  &    \dLine_{\ -}  \\
       &          &  F_2    &   &   & \ \ \rDashto \ \   & G_2    \\
       &          &  \dLine^{+ \ }   &  & &     &  \dLine_{\ +} \\
       &          &  L=F_3F^p  &      &   & \ \ \rDashto \ \ & G_3    \\
       &  \ldLine^{- \ }  &   & \rdLine^{\ -} & &  &    \dLine_{\ -}   \\
F_3    &               &    &     &       \ker\theta  & \ \ \rDashto \ \ & 1    \\
       &   \rdLine_{- \ } &    &    \ldLine_{\ -} & &   &   \\
       &         &  E=[F,L]F_2^p           &   &   &     &
\end{diagram}
\]

% CHAPTER 6: STRUCTURE

\chapter{Some structural restrictions}\label{chapter break}

In this chapter we will see how the structure of finite $p$-groups whose intensity is greater than $1$ starts becoming more and more rigid, as soon as the class is at least $4$. We recall that, if $(G_i)_{i\geq 1}$ denotes the lower central series of $G$, then the class of $G$ is the number of indices $i$ for which $G_i\neq 1$. Recall moreover that, for each positive integer 
$i$, the $i$-th width of $G$ is $\wt_G(i)=\log_p|G_i:G_{i+1}|$ (see Section \ref{section jumps}).
The main results from Chapter \ref{chapter break} are the following.

\begin{theorem}\label{theorem dimensions}
Let $p$ be a prime number and let $G$ be a finite $p$-group of class at least $4$.
For all $i\in\graffe{1,2,3,4}$, write $w_i=\wt_G(i)$. 
If $\inte(G)>1$, then $(w_1,w_2,w_3,w_4)=(2,1,2,1)$.
\end{theorem}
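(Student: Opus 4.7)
The plan. Theorem \ref{theorem class at least 3} gives $p$ odd, $\inte(G)=2$, and $(w_1,w_2,w_3)=(2,1,f)$ with $f\in\graffe{1,2}$. Proposition \ref{proposition -1^i} supplies an intense involution $\alpha$ acting as $(-1)^i$ on each $G_i/G_{i+1}$; set $A=\gen{\alpha}$. By Lemma \ref{intensity of quotients}, replacing $G$ by $G/G_5$ reduces the problem to $\cl(G)=4$. Fix generators $x,y$ of $G$ and write $z=[x,y]$, so that $\bar{x},\bar{y}$ form a basis of $G/G_2$ and $\bar{z}$ generates $G_2/G_3$. By Lemma \ref{tensor LCS}, $G_3/G_4$ is generated by the images of $[x,z]$ and $[y,z]$, and $G_4=G_4/G_5$ is generated by the four commutators $[u,[v,z]]$ with $u,v\in\graffe{x,y}$. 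Lemma \ref{three subgroups lemma}, applied in the associated graded Lie ring, yields the Jacobi relation
\[
[x,[y,z]]\equiv[y,[x,z]]\pmod{G_5},
\]
the third Jacobi term $[z,[x,y]]=[\bar z,\bar z]$ vanishing by alternativity on $G_2/G_3$.

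To rule out $w_3=1$, I argue by contradiction: if $w_3=1$, a change of $\F_p$-basis in $G/G_2$ arranges $[x,z]\in G_4$, so $[x,[x,z]]$ and $[y,[x,z]]$ lie in $[G,G_4]\subseteq G_5=1$, and Jacobi kills $[x,[y,z]]$. The only surviving generator of $G_4$ is $[y,[y,z]]$, which I aim to kill by a commutator computation inside $\gen{y,z}$: using Lemma \ref{multiplication formulas commutators}(3) to expand $[y^p,z]$ as $[y,z]^p\cdot\prod_{s=1}^{p-1}[y,[y^{p-s},z]]$, and combining the identity $[y^p,z]\in[G_3,G_2]\subseteq G_5=1$ (from $y^p\in G_3$, itself following from Lemma \ref{class 3 G/G3 extraspecial of exp p}) with $G_4$ being of exponent $p$ (Proposition \ref{p-power jumps 2}), I would extract a relation forcing $[y,[y,z]]\in G_5$. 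This would give $G_4=1$, contradicting $\cl(G)=4$, and hence $w_3=2$.

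For $w_4=1$, the Jacobi relation cuts the four candidate generators of $G_4$ down to at most three. To reach $w_4\leq 1$ I would exploit the $A$-equivariant $p$-power map $\bar\rho\colon G/G_2\to G_3/G_4$, $x\mapsto x^p$, which for $p\geq 5$ is a well-defined homomorphism by the Hall--Petrescu formula (using $G_2/G_4$ of exponent $p$ and $\gen{x,y}_k\subseteq G_4$ for $k\geq 4$), and use it together with further applications of Lemma \ref{multiplication formulas commutators} to produce two additional relations among the generators of $G_4$. Since $G_4\neq 1$, this pins $w_4$ to $1$.

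The main obstacle is the commutator computation showing $[y,[y,z]]\in G_5$ when $w_3=1$: the Jacobi identity is symmetric in this variable and therefore supplies no constraint, so the argument must draw on the exponent-$p$ structure of $G/G_3$ together with the Hall--Petrescu formula (Lemma \ref{hall-petrescu lemma}) in an essentially non-trivial way. The case $p=3$ will require separate handling, since then $G$ need not be regular (Lemma \ref{class at most p-1 regular}) and the $\binom{3}{3}$ term in Hall--Petrescu introduces genuine $G_3$-corrections that make the $p$-th power map $\bar\rho$ fail to be a homomorphism, so the strategy for $w_4=1$ cannot be imported directly.
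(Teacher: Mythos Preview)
Your approach to ruling out $w_3=1$ has a fatal gap. After arranging $[x,z]\in G_4$ and reducing to the single generator $[y,[y,z]]$ of $G_4$, you propose to kill it via the expansion of $[y^p,z]$. But carry the computation through: working in class~$4$ one has $[y^{p-s},z]\equiv[y,z]^{p-s}\bmod G_4$ and $[y,G_4]\subseteq G_5=1$, so
\[
\prod_{s=1}^{p-1}[y,[y^{p-s},z]]=[y,[y,z]]^{\sum_{s=1}^{p-1}(p-s)}=[y,[y,z]]^{\binom{p}{2}}.
\]
Since $p\mid\binom{p}{2}$ and $G_4$ has exponent $p$, this is automatically trivial, and the identity $[y^p,z]=1$ yields no information whatsoever about $[y,[y,z]]$. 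This is not a difficulty of execution: your argument is purely group-theoretic and never uses the intense automorphism beyond producing the grading, yet $p$-groups of maximal class of order $p^5$ exist for every prime and satisfy $(w_1,w_2,w_3,w_4)=(2,1,1,1)$. So no commutator-and-power computation alone can force $w_3=2$.

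The paper's proof is genuinely different and uses $\alpha$ in an essential way. For $w_3=2$ (Proposition~\ref{p^5, class 4, intensity 1}) it assumes $(w_1,\dots,w_4)=(2,1,1,1)$, shows $C=\Cyc_G(G_3)$ is abelian of order $p^4$, and then runs a counting argument on $1$-dimensional or cyclic-$p^2$ subgroups of $C$: there are too few $A$-stable ones to represent all conjugacy classes (Lemmas~\ref{pollo} and~\ref{belva}), contradicting $\alpha\in\Int(G)$ via Lemma~\ref{orbits}. For $w_4\le 1$ (Proposition~\ref{consecutive layers}) it shows the map $G/G_2\to\Hom(G_{c-1}/G_c,G_c)$, $x\mapsto(a\mapsto[x,a])$, is \emph{surjective}: the key point (Lemma~\ref{dim surjective}) is that $\alpha$ being intense forces all complements of $G_c$ in $G_{c-1}$ to be conjugate, and comparing this with the parametrisation of complements by $\Hom(G_{c-1}/G_c,G_c)$ gives surjectivity, hence $w_3w_4\le w_1=2$. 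Your Jacobi relation only gives $w_4\le 3$; the two missing relations you allude to do not come from the $p$-power map but from this conjugacy argument.
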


\begin{theorem}\label{theorem consecutive layers}
Let $p$ be a prime number and let $G$ be a finite $p$-group of class at least $3$.
For all $i\in\Z_{\geq 1}$, write $w_i=\wt_G(i)$. 
Assume that $\inte(G)>1$. Then, for all $i\in\Z_{\geq 1}$, one has $w_iw_{i+1}\leq 2$.
\end{theorem}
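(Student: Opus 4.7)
My plan is to proceed by strong induction on $i$, using Proposition \ref{mezzo teorema class 3}, Theorem \ref{theorem class at least 3}, and Theorem \ref{theorem dimensions} to dispatch the small cases, and then reducing the inductive step to a single assertion: that $w_i = 2$ forces $w_{i+1} \leq 1$. By Proposition \ref{mezzo teorema class 3} the prime $p$ is odd, $\inte(G) = 2$, and $w_1 = 2$; by Theorem \ref{theorem class at least 3} we also have $w_2 = 1$ and $w_3 \leq 2$. This handles $i = 1$ and $i = 2$ directly, while for $i = 3$ either $G$ has class exactly $3$ (so $w_4 = 0$) or, in class at least $4$, Theorem \ref{theorem dimensions} gives $(w_3, w_4) = (2, 1)$.

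For the inductive step with $i \geq 4$, suppose $w_k w_{k+1} \leq 2$ for every $k < i$. If $w_k = 0$ for some $k \leq i-1$, then $G_k = G_{k+1}$ gives $G_k = [G, G_k] = G_{k+1}$ and nilpotency forces $G_k = 1$, trivialising the inequality from $k$ onward. Otherwise $w_k \geq 1$ throughout, and the inductive hypothesis combined with Lemma \ref{tensor LCS} (which yields $w_{j+1} \leq w_1 w_j = 2 w_j$) forces $w_k \leq 2$ for all $k \leq i$. The only non-trivial configuration is therefore $w_i = 2$, in which case the inductive hypothesis pins down $w_{i-1} = 1$, and the task becomes to prove $w_{i+1} \leq 1$.

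The crux is thus to treat the case $w_{i-1} = 1$, $w_i = 2$. I would pick $x, y \in G$ lifting a basis of $G/G_2$ and $v \in G_{i-1}$ lifting a generator of $G_{i-1}/G_i$, so that by Lemma \ref{bilinear LCS} the classes of $[x, v]$ and $[y, v]$ span $G_i/G_{i+1}$, and the four nested brackets $[x, [x, v]], [x, [y, v]], [y, [x, v]], [y, [y, v]]$ span $G_{i+1}/G_{i+2}$. The three-subgroups lemma (Lemma \ref{three subgroups lemma}) applied in the associated graded Lie ring yields the Jacobi relation $[x, [y, v]] \equiv [y, [x, v]] \cdot [[x, y], v] \pmod{G_{i+2}}$, where $[[x, y], v]$ lies in the image of the bilinear map $G_2/G_3 \times G_{i-1}/G_i \to G_{i+1}/G_{i+2}$ from Lemma \ref{bilinear LCS hk}; that image is at most one-dimensional since $w_2 w_{i-1} = 1$. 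I would combine this with the action of an intense involution $\alpha$ (which by Proposition \ref{proposition -1^i} acts on $G_k/G_{k+1}$ by $(-1)^k$) and with the behaviour of the $p$-th power map transporting $G_{i-1}/G_i$ into $G_{i+1}/G_{i+2}$, which is controlled by Lemmas \ref{regular implies power abelian} and \ref{regular technical} from Section \ref{section regular}, in order to collapse the four generators of $G_{i+1}/G_{i+2}$ to at most one linearly independent element.

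The main obstacle is this final collapsing step: the Jacobi relation on its own only yields $w_{i+1} \leq 3$. The decisive additional ingredient is an inductive extension of the power-map bijection $G/G_2 \to G_3/G_4$ of Theorem \ref{intro 4.5}, producing a compatibility between $G_{i-1}/G_i$ and $G_{i+1}/G_{i+2}$ that, together with the alternating signs of $\alpha$, pins down the dimension. Establishing this bijection uniformly for every $i$, and separately handling $p = 3$ where regularity fails and the more intricate machinery of $\kappa$-groups from Chapter \ref{chapter 3gps} is required, will constitute the most delicate part of the argument.
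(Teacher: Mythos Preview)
Your approach has a genuine gap, and the proposed fixes are circular. You correctly observe that Jacobi alone yields only $w_{i+1}\leq 3$, not $w_{i+1}\leq 1$. Your plan to close the gap with the power-map bijection of Theorem~\ref{intro 4.5} cannot work here: that result is Theorem~\ref{theorem dimensions} together with Lemma~\ref{corollary bijection rho}, both of which are proven \emph{after} the present theorem and depend on it (Theorem~\ref{theorem dimensions} is explicitly obtained by combining Proposition~\ref{p^5, class 4, intensity 1} with this very result). The same circularity already infects your treatment of $i=3$, where you invoke Theorem~\ref{theorem dimensions}; and your fallback to $\kappa$-group machinery for $p=3$ points to Chapter~\ref{chapter 3gps}, which again lies downstream. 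So the argument as written does not close.

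The paper's proof is entirely different and does not touch the graded Lie ring or the power map. One reduces at once to $i=c-1$ by passing to $G/G_{i+2}$. Then $G_{c-1}$ is abelian (since $2(c-1)>c$), and by Theorem~\ref{lambda mu} the subgroup $G_c$ has a unique $A$-stable complement $M$ in $G_{c-1}$. Now intensity enters directly, not just through the sign pattern on the layers: since $\alpha$ is intense, \emph{every} complement of $G_c$ in $G_{c-1}$ is conjugate to $M$, hence of the form $\{[x,m]\,m : m\in M\}$ for some $x\in G$. Comparing with Lemma~\ref{graph+complements}, this says every homomorphism $G_{c-1}/G_c\to G_c$ is of the form $m\mapsto[x,m]$, so the natural map $G/G_2\to\Hom(G_{c-1}/G_c,\,G_c)$ is surjective. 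Hence $w_{c-1}w_c\leq w_1=2$. The key point you are missing is that intensity gives transitivity on complements, which is a much stronger constraint than anything visible in the associated graded.
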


\section{Normal subgroups}\label{section higher properties}

We devote Section \ref{section higher properties} to understanding the normal subgroup structure of a finite $p$-group of intensity greater than $1$. We prove the following result.

\begin{proposition}\label{normal squeezed}
Let $p$ be a prime number and let $G$ be a finite $p$-group with $\inte(G)>1$.
Let $N$ be a subgroup of $G$. 
Then $N$ is normal if and only if there exists $i\in\Z_{\geq 1}$ such that $G_{i+1}\subseteq N\subseteq G_i$.
\end{proposition}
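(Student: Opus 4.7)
The plan is to reduce the proposition to Lemma \ref{normali schiacchiati generale}, which yields exactly the squeezing statement provided one knows that $\ZG(H)=H_i$ for every quotient $H$ of $G$ of class $i$. Since Lemma \ref{intensity of quotients} guarantees that every non-trivial quotient of $G$ inherits intensity greater than $1$, it will be enough to establish the following claim by induction on the class $c$: for every finite $p$-group $G$ with $\inte(G)>1$ and $\cl(G)=c$, one has $\ZG(G)=G_c$.

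The small classes are immediate from what is already in the paper. For $c\in\{0,1\}$ there is nothing to prove; the case $c=2$ is Corollary \ref{centre=commutator class2}; and the case $c=3$ follows by combining Theorem \ref{theorem intensity class 3}, which forces $|G:G_2|=p^2$, with Lemma \ref{class 3 G3=Z}, which then yields $\ZG(G)=G_3$.

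For the inductive step $c\geq 4$, I would first apply the induction hypothesis to the quotient $G/G_c$, which has class $c-1\geq 3$ and intensity greater than $1$ by Lemma \ref{intensity of quotients}: this gives $\ZG(G/G_c)=G_{c-1}/G_c$, whence $\ZG(G)\subseteq G_{c-1}$. Now pick an intense automorphism $\alpha$ of order $\inte(G)=2$ supplied by Theorem \ref{theorem class at least 3}; by Corollary \ref{centre one character}, the group $\gen{\alpha}$ acts on $\ZG(G)$ through some character $\sigma$. Since $G_c\subseteq\ZG(G)$ and $\alpha$ acts on $G_c$ by scalar multiplication by $(-1)^c$ (Proposition \ref{proposition -1^i}), one has $\sigma(\alpha)=(-1)^c$. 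The action of $\gen{\alpha}$ descends to $\ZG(G)/G_c$ and is still through $\sigma$, while the $\alpha$-equivariant injection $\ZG(G)/G_c\hookrightarrow G_{c-1}/G_c$ shows that the same action is scalar multiplication by $(-1)^{c-1}$. Since $p$ is odd, the characters $(-1)^c$ and $(-1)^{c-1}$ are distinct in $\Z_p^*$, so Lemma \ref{distinct characters on same gp} forces $\ZG(G)/G_c=1$, closing the induction.

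The main obstacle is precisely the inductive step at $c\geq 4$: the induction hypothesis only pins $\ZG(G)$ between $G_c$ and $G_{c-1}$, and ruling out a non-trivial ``middle layer'' between these two subgroups is where the coprime-action machinery from Chapter \ref{chapter actions} has to do the real work. Everything else is bookkeeping: once the claim is proved, Lemma \ref{normali schiacchiati generale} applies to $G$ (and to all its quotients) and delivers the equivalence stated in Proposition \ref{normal squeezed}.
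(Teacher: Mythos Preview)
Your proposal is correct and follows essentially the same strategy as the paper: both reduce to Lemma \ref{normali schiacchiati generale} by proving that every quotient $H$ of $G$ of class $i$ satisfies $\ZG(H)=H_i$, and both establish this by induction on the class, using the character machinery of Chapter \ref{chapter actions} to kill the layer $\ZG(G)/G_c$ inside $G_{c-1}/G_c$. The paper packages the inductive claim as Lemma \ref{centre at the bottom} and runs a single uniform induction from $i=2$ onward (via Lemma \ref{centre character sign} and Lemma \ref{action chi^i} with the abstract characters $\chi^{i-1}\neq\chi^i$), whereas you treat $c=2,3$ by earlier structural results and invoke the more concrete Proposition \ref{proposition -1^i} for $c\geq 4$; this is only an organizational difference, not a mathematical one.
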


\noindent
The following assumptions will be satisfied until the end of Section \ref{section higher properties}.
Let $p$ be a prime number and let $G$ be a finite $p$-group of intensity greater than $1$.
It follows that $p$ is odd and that $G$ is non-trivial (see Section \ref{section main}).
Denote by $(G_i)_{i\geq 1}$ the lower central series of $G$ and, for each positive integer $i$, write $w_i=\wt_G(i)$ for the $i$-th width of $G$. 
Let $\alpha$ be intense of order $2$ and write $A=\gen{\alpha}$. Denote $\chi={\chi_G}_{|A}$, the restriction of the intense character of $G$ to $A$ (once again, we refer to Section \ref{section main}).
In concordance with the notation from Section \ref{section involutions}, let $G^+=\graffe{x\in G\ |\ \alpha(x)=x}$ and $G^-=\graffe{x\in G\ |\ \alpha(x)=x^{-1}}$. For a subgroup $H$ of $G$ we will write $H^+=H\cap G^+$ and $H^-=H\cap G^-$.

\begin{lemma}\label{cyclic subgroup intense}
Let $H$ be an $A$-stable subgroup of $G$. If $H$ is cyclic, then $H\subseteq G^+$ or $H\subseteq G^-$.
\end{lemma}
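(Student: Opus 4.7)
The plan is to reduce the claim to a statement about automorphisms of cyclic $p$-groups. Since $H$ is $A$-stable, the automorphism $\alpha$ restricts to an automorphism $\alpha|_H \in \Aut(H)$, and this restriction has order dividing $2$. If $\alpha|_H = \id_H$, then every element of $H$ is fixed by $\alpha$ and $H \subseteq G^+$, and we are done. So I can assume $\alpha|_H$ has order exactly $2$.

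Next I would use that $H$ is cyclic of $p$-power order (since $G$ is a finite $p$-group), so $\Aut(H)$ is isomorphic to $(\Z/|H|\Z)^*$. Because $\inte(G)>1$, by Proposition \ref{proposition 2gps} (or equivalently since $\alpha$ has order $2$ lifting $-1 \in \F_p^*$ via $\chi$) the prime $p$ is odd. For odd $p$, the unit group $(\Z/p^n\Z)^*$ is cyclic, and therefore $\Aut(H)$ is cyclic. A cyclic group contains at most one element of order $2$, and in $\Aut(H)$ that element is manifestly the inversion map $\iota_H\colon x\mapsto x^{-1}$ (which has order $2$ whenever $|H|>2$, and coincides with $\id_H$ in the trivial cases).

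Therefore the only involution of $H$ is $\iota_H$, forcing $\alpha|_H = \iota_H$. This gives $\alpha(h) = h^{-1}$ for every $h \in H$, i.e.\ $H \subseteq G^-$, which completes the proof.

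The only real obstacle is the step invoking cyclicity of $\Aut(H)$; here the hypothesis that $p$ is odd is essential, since for $p=2$ the group $(\Z/2^n\Z)^*$ fails to be cyclic for $n\geq 3$ and there are then involutions other than inversion. In our setting this is not an issue, as $\inte(G)>1$ already forces $p$ to be odd.
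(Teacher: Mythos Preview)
Your proof is correct, but the paper takes a different route. The paper invokes Corollary~\ref{abelian sum +-}: since $H$ is abelian and $A$-stable, it decomposes as $H=H^{+}\oplus H^{-}$, and a cyclic $p$-group cannot be a direct sum of two non-trivial subgroups. Your argument instead analyses $\alpha|_H$ inside $\Aut(H)\cong(\Z/|H|\Z)^{*}$ and uses that for odd $p$ this group is cyclic, so its only involution is inversion. Both arguments rely on $p$ being odd at the same point (in the paper's version, the decomposition $H=H^{+}\oplus H^{-}$ already needs $|G|$ odd). Your approach is slightly more self-contained, avoiding the $+/-$ decomposition lemma; the paper's approach fits better with the surrounding machinery, since the decomposition $H=H^{+}\oplus H^{-}$ is used repeatedly elsewhere.
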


\begin{proof}
Assume that $H$ is cyclic.
As a consequence of Corollary \ref{abelian sum +-}, the subgroup $H$ decomposes as $H=H^+\oplus H^-$. Then $H$ is cyclic if and only if one of $H^+$ and $H^-$ is trivial. This concludes the proof.
\end{proof}

\noindent
We recall that, as defined in Section \ref{section jumps}, if $x$ is a non-trivial element of $G$, then the depth $\dpt_G(x)$ of $x$ is the unique positive integer $d$ for which $x\in G_d\setminus G_{d+1}$.

\begin{lemma}\label{conjugate of x}
Let $x\in G\setminus\graffe{1}$. Then the following hold. 
\begin{itemize}
 \item[$1$.] The depth of $x$ is even if and only if there exists $g\in G$ such that $gxg^{-1}$ belongs to $G^+$.
 \item[$2$.] The depth of $x$ is odd if and only if there exists $g\in G$ such that $gxg^{-1}$ belongs to $G^-$.
\end{itemize}
\end{lemma}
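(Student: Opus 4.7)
The plan is to use intensity to replace $x$ by a conjugate generating an $A$-stable cyclic subgroup, and then determine in which of $G^+, G^-$ that conjugate must lie by comparing the $A$-action on the appropriate quotient of the lower central series against the fixed/inverted behaviour. Both halves of the statement will drop out of the same computation, so I would prove them together.

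First I would argue as follows. The automorphism $\alpha$ has order $2$, and since $p$ is odd this is coprime to $|G|$, so Lemma \ref{equivalent intense coprime-pgrps} (applied to the cyclic subgroup $\langle x\rangle$) produces $g\in G$ such that $\langle y\rangle:=g\langle x\rangle g^{-1}=\langle gxg^{-1}\rangle$ is $A$-stable. Write $y=gxg^{-1}$. By Lemma \ref{cyclic subgroup intense}, either $y\in G^+$ or $y\in G^-$. Because each $G_i$ is characteristic, conjugation preserves depth, so $d:=\dpt_G(y)=\dpt_G(x)$.

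The key computation is the parity check. Since $\inte(G)>1$ and $\alpha$ has order $2$, the restriction $\chi={\chi_G}_{|A}$ satisfies $\chi(\alpha)=-1$ in $\Z_p^*$. By Lemma \ref{action chi^i}, the induced action of $A$ on $G_d/G_{d+1}$ is through $\chi^d$, so, in multiplicative notation on the abelian quotient $G_d/G_{d+1}$, we have $\alpha(y)\equiv y^{(-1)^d}\pmod{G_{d+1}}$. If $y\in G^+$, then $\alpha(y)=y$ and hence $y^{1-(-1)^d}\in G_{d+1}$; when $d$ is odd this reads $y^{2}\in G_{d+1}$, and since $p$ is odd the finite abelian $p$-group $G_d/G_{d+1}$ has no non-trivial $2$-torsion, forcing $y\in G_{d+1}$, which contradicts $\dpt_G(y)=d$. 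The symmetric computation shows that $y\in G^-$ forces $d$ to be odd.

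Combining these two observations finishes both directions. If $d$ is even, the dichotomy from Lemma \ref{cyclic subgroup intense} leaves only $y\in G^+$, giving the forward implication of $(1)$; the same reasoning with $d$ odd gives the forward implication of $(2)$. Conversely, if some conjugate $y=gxg^{-1}$ lies in $G^+$, the parity argument above immediately rules out $d$ odd, so $d$ is even; and analogously for $G^-$. There is no real obstacle here—all the pieces are already available. The only point requiring care is that on $G_d/G_{d+1}$ the character $\chi^d$ truly acts as multiplication by $(-1)^d$, which uses that the image of $\chi$ lies in the torsion of $\Z_p^*$ and that for odd $p$ the only square root of $1$ in $\omega(\F_p^*)$ with order dividing $2$ is $\pm 1$.
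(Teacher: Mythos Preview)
Your proof is correct and follows essentially the same approach as the paper: both use Lemma~\ref{equivalent intense coprime-pgrps} to pass to an $A$-stable cyclic conjugate, invoke Lemma~\ref{cyclic subgroup intense} for the $G^+/G^-$ dichotomy, and then read off the parity from the action through $\chi^d$ on $G_d/G_{d+1}$ via Lemma~\ref{action chi^i}. You are slightly more explicit than the paper in handling the converse directions (showing that \emph{any} conjugate in $G^+$ forces $d$ even, not just the $A$-stable one), which the paper leaves implicit.
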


\begin{proof}
The automorphism $\alpha$ being intense, it follows from Lemma \ref{equivalent intense coprime-pgrps} that there exists $g\in G$ such that $\gen{gxg^{-1}}$ is $A$-stable. Write $d=\dpt_G(x)=\dpt_G(gxg^{-1})$ and $H=\gen{gxg^{-1}}$.
By Lemma \ref{cyclic subgroup intense}, the subgroup $H$ is contained either in $G^+$ or in $G^-$. By Lemma \ref{action chi^i}, the action of $A$ on $(HG_{d+1})/G_{d+1}$ is through $\chi^d$ and the choice between $G^+$ and $G^-$ only depends from the parity of $d$. If $d$ is even, then $\chi^d=1$ and $H$ is contained in $G^+$. Otherwise, $H$ is contained in $G^-$.
\end{proof}

\noindent
We recall that, if $H$ is a subgroup of $G$, then a jump of $H$ in $G$ is a positive integer $j$ 
such that $H\cap G_j\neq H\cap G_{j+1}$. 

\begin{lemma}\label{jumps cyclic sbgs all same parity}
All jumps of a cyclic subgroup of $G$ have the same parity.
\end{lemma}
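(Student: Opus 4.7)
The plan is to reduce to the $A$-stable case via a conjugation, then apply the decomposition machinery from Section~\ref{section involutions} together with the width counting in Lemma~\ref{order +- jumps}.

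First I would let $H$ be a cyclic subgroup of $G$. Since $\alpha$ is intense, Lemma~\ref{equivalent intense coprime-pgrps} gives an element $g\in G$ such that $gHg^{-1}$ is $A$-stable. Replacing $H$ by $gHg^{-1}$ is harmless for our purposes because, by Lemma~\ref{same jumps conjugates}, any automorphism of $G$ preserves the jumps of a subgroup in $G$, and conjugation by $g$ is an inner automorphism. So I may assume from now on that $H$ itself is cyclic and $A$-stable.

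Next I would invoke Lemma~\ref{cyclic subgroup intense}: a cyclic $A$-stable subgroup lies entirely in $G^+$ or entirely in $G^-$. Thus either $H=H^+$ or $H=H^-$.

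Finally I would read off the jumps via Lemma~\ref{order +- jumps} (whose hypotheses are in force under the standing assumptions of the section). Denoting by $\cor{J}$ the set of jumps of $H$ in $G$ and by $\cor{E}$, $\cor{O}$ the even and odd jumps respectively, Lemma~\ref{order product orders jumps} gives $|H|=\prod_{j\in\cor{J}}p^{\wt_H^G(j)}$, while Lemma~\ref{order +- jumps} gives
\[
|H^+|=\prod_{j\in\cor{E}}p^{\wt_H^G(j)} \qquad \text{and} \qquad |H^-|=\prod_{j\in\cor{O}}p^{\wt_H^G(j)}.
\]
If $H=H^+$ then the $|H^+|$ formula forces $\cor{J}\subseteq\cor{E}$, so all jumps of $H$ are even; if $H=H^-$ then the $|H^-|$ formula similarly forces $\cor{J}\subseteq\cor{O}$, so all jumps of $H$ are odd. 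Either way all jumps of $H$ have the same parity, which is what we wanted.

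There is no real obstacle: the only thing to be slightly careful about is the initial reduction, namely that passing from $H$ to a conjugate $gHg^{-1}$ does not change the multiset of jumps in $G$, which is exactly what Lemma~\ref{same jumps conjugates} guarantees.
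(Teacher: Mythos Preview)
Your proof is correct and follows essentially the same route as the paper's: conjugate to an $A$-stable subgroup, apply Lemma~\ref{cyclic subgroup intense} to land in $G^+$ or $G^-$, and then read off the parity of the jumps. The only difference is the final step: the paper concludes via Lemma~\ref{conjugate of x} (each element of $G^+$ has even depth, each element of $G^-$ has odd depth), whereas you conclude via the cardinality formulas in Lemma~\ref{order +- jumps}; both are immediate and equivalent here.
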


\begin{proof}
Let $H$ be a cyclic subgroup of $G$.
The automorphism $\alpha$ being intense, it follows from Lemma \ref{equivalent intense coprime-pgrps} that there exists $g\in G$ such that $gHg^{-1}$ is $A$-stable. By Lemma \ref{cyclic subgroup intense}, the subgroup $gHg^{-1}$ is contained in $G^+$ or in $G^-$. We conclude by applying Lemma \ref{conjugate of x}.
\end{proof}

\begin{lemma}\label{centre character sign}
Let $c\in\Z_{\geq 1}$ denote the class of $G$. Then the following hold. 
\begin{itemize}
 \item[$1$.] The induced action of $A$ on $\ZG(G)$ is through $\chi^c$.
 \item[$2$.] If $c$ is even, then $\ZG(G)\subseteq G^+$.
 \item[$3$.] If $c$ is odd, then $\ZG(G)\subseteq G^-$.
\end{itemize}
\end{lemma}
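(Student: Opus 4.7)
The plan is to establish (1) first, and then read off (2) and (3) by substituting $\alpha$ into the character $\chi^c$.

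For (1), note that $\ZG(G)$ is characteristic, hence $A$-stable, and it is abelian. Since $\inte(G) > 1$, Proposition \ref{proposition divisible by 2} guarantees that $\inte(G)$ is even, so $\ord(\alpha) = 2$ divides $\inte(G)$. Corollary \ref{centre one character} then yields a character $\psi\colon A \to \Z_p^*$ through which $A$ acts on $\ZG(G)$, and my task is to identify $\psi$ with $\chi^c$.

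The key comparison takes place on $G_c$. By definition of $c = \cl(G)$, one has $G_{c+1} = 1$ and $G_c \neq 1$; moreover $G_c \subseteq \ZG(G)$. On the one hand, Lemma \ref{action chi^i} says that $A$ acts on $G_c = G_c/G_{c+1}$ through $\chi^c$. On the other hand, since $G_c \subseteq \ZG(G)$, the action of $A$ on $G_c$ is also through $\psi$. If $\psi \neq \chi^c$ were the case, Lemma \ref{distinct characters on same gp} (applicable because $p$ is odd, by Proposition \ref{proposition 2gps}) would force $G_c = \{1\}$, contradicting $\cl(G) = c$. Hence $\psi = \chi^c$, which proves (1).

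For (2) and (3), I observe that $\chi(\alpha) = -1$ in $\Z_p^*$: indeed $\chi(\alpha)$ lies in the image of $\chi_G$, which is contained in $\omega(\F_p^*)$, and has order dividing $2$; were $\chi(\alpha) = 1$, then $\alpha$ would lie in $\ker \chi_G$, which by Lemma \ref{formulation} is the Sylow $p$-subgroup of $\Int(G)$, forcing $\alpha = 1$ since $\ord(\alpha) = 2$ and $p$ is odd. Consequently $\chi^c(\alpha) = (-1)^c$. When $c$ is even, $\alpha$ acts trivially on $\ZG(G)$, so $\ZG(G) \subseteq G^+$; when $c$ is odd, $\alpha$ acts as $x \mapsto x^{-1}$ on $\ZG(G)$ (writing the abelian group $\ZG(G)$ multiplicatively), so $\ZG(G) \subseteq G^-$. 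There is no real obstacle here — the lemma is an immediate assembly of Corollary \ref{centre one character}, Lemma \ref{action chi^i}, and Lemma \ref{distinct characters on same gp}.
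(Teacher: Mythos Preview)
Your proof is correct and follows essentially the same route as the paper: use Corollary \ref{centre one character} to get a single character $\psi$ on $\ZG(G)$, compare it with $\chi^c$ on the nontrivial subgroup $G_c$ via Lemma \ref{action chi^i}, and invoke Lemma \ref{distinct characters on same gp} to force $\psi=\chi^c$; the paper then deduces (2) and (3) by noting $\chi^c=1$ or $\chi^c=\chi$ according to parity, which is exactly your computation $\chi(\alpha)=-1$ spelled out.
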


\begin{proof}
The subgroup $G_c$ is contained in $\ZG(G)$ and, by Lemma \ref{action chi^i}, the group $A$ acts on $G_c$ through $\chi^c$. From the combination of Corollary \ref{centre one character} with Lemma \ref{distinct characters on same gp}, it follows that $A$ acts on $\ZG(G)$ through $\chi^c$. If $c$ is even, then $\chi^c=1$ and $\ZG(G)$ is contained in $G^+$. Otherwise, $\chi^c=\chi$ and $\ZG(G)$ is a subset of $G^-$.
\end{proof}

\begin{lemma}\label{centre at the bottom}
Let $c\in\Z_{\geq 1}$ be the class of $G$. 
Then, for all $i\in\graffe{1,\ldots,c}$, if $H$ is a quotient of $G$ of class $i$, then $\ZG(H)=H_i$.
\end{lemma}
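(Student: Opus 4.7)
The plan is to establish $\ZG(H)=H_i$ by induction on $i$, the class of $H$. The inclusion $H_i\subseteq\ZG(H)$ is immediate from the definition of class: since $H$ has class $i$, one has $[H,H_i]=H_{i+1}=1$, so $H_i$ is central. For the reverse inclusion $\ZG(H)\subseteq H_i$ the base case $i=1$ is trivial, as $H$ is then abelian and $\ZG(H)=H=H_1$.

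For the inductive step, let $H$ be a quotient of $G$ of class $i\geq 2$, and assume the lemma holds for all quotients of $G$ of smaller class. First, $H/H_i$ is a quotient of $G$ of class $i-1$, so the induction hypothesis gives $\ZG(H/H_i)=H_{i-1}/H_i$. Because the canonical projection $H\to H/H_i$ sends $\ZG(H)$ into $\ZG(H/H_i)$, this already shows $\ZG(H)\subseteq H_{i-1}$. To improve this to $\ZG(H)\subseteq H_i$, I would bring in the character-action machinery of Section~\ref{section higher properties} applied to $H$ itself. By Lemma~\ref{intensity of quotients}, $\inte(G)$ divides $\inte(H)$, so $\inte(H)>1$; by Proposition~\ref{proposition divisible by 2}, $\inte(H)$ is even, hence $H$ admits an intense automorphism $\beta$ of order~$2$. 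Writing $\chi$ for the restriction of $\chi_H$ to $\gen{\beta}$, one has $\chi(\beta)=-1$.

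Now Lemma~\ref{centre character sign}(1) applied to $H$ shows that $\gen{\beta}$ acts on $\ZG(H)$ through $\chi^i$, while Lemma~\ref{action chi^i} applied to $H$ shows that $\gen{\beta}$ acts on $H_{i-1}/H_i$ through $\chi^{i-1}$. The canonical $\gen{\beta}$-equivariant map $\ZG(H)\to H_{i-1}/H_i$ then has image on which $\gen{\beta}$ acts both through $\chi^i$ and through $\chi^{i-1}$; but these characters differ at $\beta$ by a sign, so they are distinct, and Lemma~\ref{distinct characters on same gp} forces the image to vanish. This says $\ZG(H)\subseteq H_i$, closing the induction. The main hurdle is simply to confirm that Lemma~\ref{centre character sign} and the other results of Section~\ref{section higher properties} apply to $H$ equipped with its own order-$2$ intense automorphism rather than being tied to $G$; once that is checked, the two-character comparison is immediate.
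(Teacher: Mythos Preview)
Your proof is correct and follows essentially the same route as the paper: induction on $i$, using the induction hypothesis on $H/H_i$ to trap $\ZG(H)$ between $H_i$ and $H_{i-1}$, then passing to an intense automorphism $\beta$ of order $2$ on $H$ and comparing the characters $\psi^{i-1}$ and $\psi^i$ via Lemmas~\ref{action chi^i}, \ref{centre character sign}(1), and~\ref{distinct characters on same gp}. The concern you flag at the end---that the results of Section~\ref{section higher properties} must be applied to $H$ with its own $\beta$ rather than to $G$ with $\alpha$---is exactly what the paper does as well, and it is legitimate since those lemmas only use that the group is a finite $p$-group with an intense automorphism of the stated order.
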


\begin{proof}
If $i=1$ the result is clear; we assume that $i$ is at least $2$ and that the result holds for $i-1$.
Let $H$ be a quotient of $G$ of class $i$, which has, thanks to Lemma \ref{intensity of quotients}, intensity greater than $1$.
Let $\beta$ be intense of order $2$ and let $B=\gen{\beta}$ and $\psi={\chi_H}_{|B}$. 
The subgroup $H_i$ is central in $H$, so $\ZG(H)/H_i$ is isomorphic to a subgroup of $\ZG(H/H_i)$. 
By the induction hypothesis $\ZG(H/H_i)=H_{i-1}/H_i$ and it follows that 
$H_i\subseteq\ZG(H)\subseteq H_{i-1}$. By Lemma  
\ref{action chi^i}, the group $B$ acts on $H_{i-1}/H_i$ and $H_i$, respectively through $\psi^{i-1}$ and $\psi^i$, which are distinct characters since $\psi\neq 1$. Moreover, the induced action of $B$ on $\ZG(H)$ is through $\psi^i$, by Lemma \ref{centre character sign}($1$). Lemma \ref{distinct characters on same gp} yields $\ZG(H)=H_i$.
\end{proof}

\noindent
We remark that, to prove Proposition \ref{normal squeezed}, it now suffices to combine Lemma \ref{centre at the bottom} with Lemma \ref{normali schiacchiati generale}.

\section{About the third width}\label{section example}

Let $p$ be a prime number and let $G$ be a finite $p$-group.
If $i$ is a positive integer, we recall that the $i$-th width of $G$ is defined to be $\wt_G(i)=\log_p|G_i:G_{i+1}|$, 
where $(G_i)_{i\geq 1}$ denotes the lower central series of $G$. Thanks to Theorem \ref{theorem class at least 3}($2$), we know that, if $G$ has class at least $3$ and $\inte(G)>1$, then $(\wt_G(1),\wt_G(2))=(2,1)$ and $\wt_G(3)$ is either $1$ or $2$. In the case in which the class of $G$ equals $3$, then both situations $\wt_G(3)=1$ and $\wt_G(3)=2$ occur. What about higher nilpotency classes? We prove the following.

\begin{proposition}\label{p^5, class 4, intensity 1}
Let $p$ be a prime number  and let $G$ be a finite $p$-group of class at least $4$. 
For each positive integer $i$, denote $w_i=\wt_G(i)$. Assume that $\inte(G)>1$.
Then $(w_1,w_2,w_3)=(2,1,2)$.
\end{proposition}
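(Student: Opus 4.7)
The plan is to argue by contradiction. By Theorem~\ref{theorem class at least 3} we have $\inte(G) = 2$, $w_1 = 2$, $w_2 = 1$, and $w_3 \in \{1, 2\}$. Assume $w_3 = 1$. Using Lemma~\ref{intensity of quotients} together with the fact that $G_4 \neq G_5$ (otherwise $G_4$ would be trivial, contradicting class at least $4$), I would replace $G$ by $G/G_5$ and so assume that the class of $G$ is exactly $4$. This gives $G_4 \subseteq \ZG(G)$, and by Proposition~\ref{p-power jumps 2} both $G_3$ and $G_4$ are elementary abelian.

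The key structural consequence of $w_2 = w_3 = 1$ is that $G_2$ is abelian: the alternating bilinear map $G_2/G_3 \times G_2/G_3 \to G_4/G_5$ from Lemma~\ref{bilinear LCS hk} vanishes identically on a one-dimensional space, so $[G_2, G_2] \subseteq G_5 = 1$. Fix an involutive intense automorphism $\alpha$, which by Proposition~\ref{proposition -1^i} acts as $(-1)^i$ on $G_i/G_{i+1}$. Using Lemma~\ref{conjugate of x}, I would choose depth-$1$ elements $s, t \in G^-$ whose images span $G/G_2$, and set $y := [s, t]$. Since $G/G_3$ is extraspecial of exponent $p$ (Lemma~\ref{class 3 G/G3 extraspecial of exp p}), the coset $yG_3$ generates $G_2/G_3$. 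Fixing $z \in G_3 \setminus G_4$, I would write $[s, y] = z^\mu \cdot e$ with $\mu \in \F_p$ and $e \in G_4$; the surjectivity of the bilinear map $G/G_2 \times G_2/G_3 \to G_3/G_4$ from Lemma~\ref{bilinear LCS} lets me choose a labeling of $s, t$ with $\mu \neq 0$.

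The contradiction comes from computing $\alpha(y) = [s^{-1}, t^{-1}]$ in two ways. Using $\alpha(s) = s^{-1}$, $\alpha(t) = t^{-1}$ together with the identity $[a^{-1}, b^{-1}] = [a, b]^{ba}$ and the facts that $G_4$ is central and $G_2$ is abelian, one computes, modulo $G_4$, that $\alpha(y) \equiv y \cdot z^\mu$; the corrective $G_4$-factor is pinned down by $\alpha^2 = \id$, which, since $p$ is odd and $G_4$ has exponent $p$, forces that factor to vanish. Next, applying $\alpha$ to $[s, y] = z^\mu e$ and expanding $[\alpha(s), \alpha(y)] = [s^{-1}, y z^\mu]$ via the multiplication formula $[x, uv] = [x, u] \cdot u[x, v]u^{-1}$ (Lemma~\ref{multiplication formulas commutators}), together with the central character of $G_4$, yields, after cancellation, an equation in $G_4$ of the form $[s, z]^{2\mu} = 1$. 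The homomorphism $\phi \colon G/G_2 \to G_4$ sending $g$ to $[g, z]$ is surjective by Lemma~\ref{bilinear LCS}, so one can arrange $[s, z] \neq 0$; since $p$ is odd this forces $\mu = 0$, contradicting $\mu \neq 0$.

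The main obstacle is the bookkeeping of $G_4$-components across the commutator computations, and in particular ensuring that $s$ and $t$ can be chosen so that the commutators $[s, y]$ and $[s, z]$ are simultaneously non-trivial in $G_3/G_4$ and $G_4$, respectively. Depending on whether $w_4 = 1$ or $w_4 = 2$, and on whether the index-$p$ subgroups $\tilde C := \{g \in G : [g, G_2] \subseteq G_4\}$ and $\Cyc_G(z)$ coincide, the labeling may need refinement (for instance by taking one generator in $\tilde C^-$ and the other outside $\tilde C$); in each configuration the choice is guided by the decomposition $\tilde C = \tilde C^+ \oplus \tilde C^-$ guaranteed by Corollary~\ref{abelian sum +-} together with the dimension count of Lemma~\ref{order +- jumps}.
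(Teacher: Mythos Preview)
Your approach has a genuine computational gap that breaks the argument. The claim that $\alpha(y)\equiv y\cdot z^{\mu}\pmod{G_4}$ is not correct. With $y=[s,t]$ and $s,t\in G^-$ one has $\alpha(y)=[s^{-1},t^{-1}]=s^{-1}t^{-1}yts$; working modulo $G_4$ and using bilinearity of the commutator map $G/G_2\times G_2/G_3\to G_3/G_4$, one gets
\[
\alpha(y)\equiv y\cdot z^{-\mu-\lambda}\pmod{G_4},
\]
where $[s,y]\equiv z^{\mu}$ and $[t,y]\equiv z^{\lambda}$ modulo $G_4$. Both exponents appear, and there is no a priori relation between $\mu$ and $\lambda$. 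Even after correcting this, the next step does not yield $[s,z]^{2\mu}=1$: when you apply $\alpha$ to $[s,y]=z^{\mu}e$ and compare with $[s^{-1},\alpha(y)]$, the $G_4$-residues $e$ and the $G_4$-component of $\alpha(z)$ survive in the resulting identity and do not cancel. The case analysis you defer to the last paragraph (choosing $s,t$ relative to $\tilde C$ and $\Cyc_G(z)$) does not address this, because the obstruction is not about which generators you pick but about the identity itself.

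The deeper issue is that intensity enters your argument only through the choice $s,t\in G^-$, and this is too weak a consequence to force the contradiction. The paper proceeds entirely differently: it first passes to a quotient with $(w_1,w_2,w_3,w_4)=(2,1,1,1)$, then studies $C=\Cyc_G(G_3)$, which is shown to be abelian of order $p^4$ with $|C^+|=|C^-|=p^2$, and proves that $C^+$ is cyclic if and only if $C^-$ is. In each of the two resulting configurations one counts a suitable family of subgroups of $C$ together with its $A$-stable members, and the orbit inequality of Lemma~\ref{orbits} fails. This exploits intensity globally---every subgroup must have an $A$-stable conjugate---rather than only at the level of two chosen elements.
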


\noindent
Until the end of Section \ref{section example}, the following assumptions will hold.
Let $p$ be a prime number and let $G$ be a finite $p$-group. Let $(G_i)_{i\geq 1}$ denote the lower central series of $G$ and, for each positive integer $i$, denote $w_i=\wt_G(i)$.
We assume that $G$ has class $4$ and that $(w_1,w_2,w_3,w_4)=(2,1,1,1)$. 
%suppose that $G$ has order $p^5$ and class $4$. Let moreover $C$ be a subgroup of $G$ containing $G_4$ and such that $C/G_4=\Cyc_{G/G_4}(G_2/G_4)$.
We will show that $\inte(G)=1$.

\begin{lemma}\label{max class G4=ZG}
Assume that $p$ is odd. Then $\ZG(G)=G_4$.
\end{lemma}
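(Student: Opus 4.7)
The plan is to reduce to the class--$3$ case by passing to the quotient $G/G_4$ and then invoking Lemma \ref{class 3 G3=Z}. The inclusion $G_4\subseteq\ZG(G)$ is automatic, since $G$ has class $4$, so the only work is to establish the reverse inclusion.

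First I would verify that $G/G_4$ satisfies the hypotheses of Lemma \ref{class 3 G3=Z}: the widths $(w_1,w_2,w_3,w_4)=(2,1,1,1)$ show that $(G/G_4)_3 = G_3/G_4$ is non-trivial while $(G/G_4)_4 = G_4/G_4$ is trivial, so $G/G_4$ has class exactly $3$; moreover $|G/G_4:(G/G_4)_2| = |G:G_2| = p^2$; and $p$ is odd by assumption. Thus Lemma \ref{class 3 G3=Z} yields $\ZG(G/G_4) = G_3/G_4$.

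Since $G_4$ is central in $G$, the quotient map $G\to G/G_4$ sends $\ZG(G)$ into $\ZG(G/G_4) = G_3/G_4$, giving $\ZG(G)\subseteq G_3$. Combined with $G_4\subseteq\ZG(G)$ and the fact that $|G_3:G_4|=p$, this forces $\ZG(G)$ to equal either $G_4$ or $G_3$. The second possibility is excluded: if $G_3\subseteq\ZG(G)$, then $G_4=[G,G_3]=\graffe{1}$, contradicting $w_4=1$. Hence $\ZG(G)=G_4$.

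No serious obstacle is expected; the argument is a direct application of the earlier class--$3$ result to a suitable quotient, and the width assumption ensures that the hypotheses of Lemma \ref{class 3 G3=Z} are met.
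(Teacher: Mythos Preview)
Your proof is correct and follows essentially the same approach as the paper: both pass to $G/G_4$, apply Lemma~\ref{class 3 G3=Z} to obtain $\ZG(G/G_4)=G_3/G_4$, deduce $G_4\subseteq\ZG(G)\subseteq G_3$, and then exclude $\ZG(G)=G_3$ using that the class of $G$ is $4$. The paper phrases the last step as ``$\ZG(G)\neq G_3$ because the class is $4$'', which is exactly your observation that $G_3\subseteq\ZG(G)$ would force $G_4=[G,G_3]=\{1\}$.
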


\begin{proof}
The class of $G$ being $4$, the subgroup $G_4$ is contained in $\ZG(G)$. Now, the group $\ZG(G)/G_4$ is contained in $\ZG(G/G_4)$ and, as a consequence of Lemma \ref{class 3 G3=Z}, the centre of $G/G_4$ is equal to $G_3/G_4$. It follows that 
$G_4\subseteq\ZG(G)\subseteq G_3$. The groups $\ZG(G)$ and $G_3$ are distinct, because the class of $G$ is $4$, so, the index $|G_3:G_4|$ being $p$, we get that $G_4=\ZG(G)$.
\end{proof}

\begin{lemma}\label{max class G2 abelian}
The subgroup $G_2$ is abelian.
\end{lemma}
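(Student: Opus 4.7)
The plan is to show directly that $[G_2,G_2]=1$, using only the commutator identities from Section \ref{section commutators} together with the width assumption $w_2=1$ and the vanishing $G_5=1$ coming from the class being $4$.

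First, since $[G_2,G_2] = [G_2,G_2]$, Lemma \ref{commutator indices} (applied with $h=k=2$) yields $[G_2,G_2]\subseteq G_4$. Next, I would invoke Lemma \ref{bilinear LCS hk} with $h=k=2$ to obtain the induced bilinear map
\[
\gamma:G_2/G_3\times G_2/G_3\longrightarrow G_4/G_5,
\]
and observe that $\gamma$ is alternating, because $[x,x]=1$ for every $x\in G_2$. The image of $\gamma$ generates $[G_2,G_2]G_5/G_5$ inside $G_4/G_5$.

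Now comes the key (and only nontrivial) observation: by hypothesis $w_2=\log_p|G_2:G_3|=1$, so $G_2/G_3$ is cyclic of order $p$. If $a\in G_2$ represents a generator, then every pair of elements in $G_2/G_3$ is of the form $(a^i G_3, a^j G_3)$, and by bilinearity and alternation $\gamma(a^i G_3, a^j G_3) = ij\cdot\gamma(aG_3, aG_3) = 0$. Hence $\gamma$ is the zero map, which gives $[G_2,G_2]\subseteq G_5$.

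Finally, since the class of $G$ is $4$ we have $G_5=1$, and therefore $[G_2,G_2]=1$, i.e., $G_2$ is abelian. There is essentially no obstacle here: the whole argument reduces to the elementary fact that an alternating bilinear map on a one-dimensional space vanishes, and no appeal to intensity, to $p$ being odd, or to the finer widths $w_3,w_4$ is required for this particular lemma.
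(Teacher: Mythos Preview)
Your proof is correct and takes essentially the same approach as the paper. The paper compresses your alternating-bilinear-map argument into a single citation of Lemma~\ref{cyclic quotient commutators} (applied with $G_2$ in the role of $G$ and $G_3$ in the role of $N$), which immediately gives $[G_2,G_2]=[G_2,G_3]\subseteq G_5=\{1\}$; your explicit argument that an alternating bilinear map on the one-dimensional space $G_2/G_3$ vanishes is precisely the content of that lemma, just unpacked by hand.
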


\begin{proof}
The group $G_2/G_3$ is cyclic, because $w_2=1$, so $[G_2,G_2]=[G_2,G_3]$, by Lemma \ref{cyclic quotient commutators}. It follows from Lemma \ref{commutator indices} that $[G_2,G_2]\subseteq G_5=\graffe{1}$, and thus $G_2$ is abelian.
\end{proof}

\begin{lemma}\label{max class C p}
Assume that $p$ is odd. Then the following hold.
\begin{itemize}
 \item[$1$.] The subgroup $[\Cyc_G(G_3),G_2]$ is contained in $G_4$.
 \item[$2$.] One has $|\Cyc_G(G_3):G_2|=p$.
\end{itemize}
\end{lemma}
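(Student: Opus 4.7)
To prove (2), my plan is to exploit the commutator map as a bilinear pairing. The quotient $G/G_4$ has class $3$ with $|(G/G_4):(G/G_4)_2| = p^2$, so Lemma \ref{max class G4=ZG} (just established) gives $\ZG(G) = G_4$. Since $[G, G_3] \subseteq G_4$ by Lemma \ref{commutator indices} and $G_4$ is central, Lemma \ref{tgt} ensures that the commutator map $G \times G_3 \to G_4$ is bilinear. Its left kernel is $C = \Cyc_G(G_3)$ and its right kernel equals $G_3 \cap \ZG(G) = G_3 \cap G_4 = G_4$, so it descends to a non-degenerate bilinear pairing $G/C \times G_3/G_4 \to G_4$. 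Since $w_3 = w_4 = 1$, both $G_3/G_4$ and $G_4$ are one-dimensional $\F_p$-vector spaces, and Lemma \ref{non-degenerate dim 1} forces $|G:C| = p$; together with $|G:G_2| = p^2$ this gives~(2).

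For (1), my plan is to compare $C$ with the auxiliary subgroup $C' = \{x \in G : [x, G_2] \subseteq G_4\}$. Its image in $G/G_4$ coincides with $\Cyc_{G/G_4}(G_2/G_4)$, which is characteristic in $G/G_4$; hence $C'$ is normal in $G$. Because $G/G_4$ has class $3$ with $|(G/G_4):(G/G_4)_2| = p^2$, Lemma \ref{class 3 centralizer G2}(1) applied to $G/G_4$ yields $|C':G_2| = p$, and combined with (2) this gives $|C'| = p^4 = |C|$. Moreover, $G_2$ is abelian by Lemma \ref{max class G2 abelian}, hence $G_2 \subseteq C'$, and the normality of $C'$ together with $[C', G] \subseteq [G, G] = G_2$ yields $[C', G] \subseteq G_2$.

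The crucial step is to show $C' \subseteq C$, equivalently $[C', G_3] = 1$. I plan to apply the three-subgroups lemma (Lemma \ref{three subgroups lemma}) to $X = G$, $Y = G_2$, $Z = C'$ with $N = 1$. The hypothesis $[X, [Y, Z]] = [G, [G_2, C']] \subseteq [G, G_4] \subseteq G_5 = 1$ follows from the definition of $C'$, and $[Y, [Z, X]] = [G_2, [C', G]] \subseteq [G_2, G_2] = 1$ follows from the inclusion $[C', G] \subseteq G_2$ just observed, combined with Lemma \ref{max class G2 abelian}. The lemma then gives $[Z, [X, Y]] = [C', [G, G_2]] = [C', G_3] = 1$, so $C' \subseteq C$; the equality $|C| = |C'|$ forces $C = C'$, whence $[C, G_2] = [C', G_2] \subseteq G_4$, proving (1). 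The main obstacle is identifying the correct auxiliary subgroup $C'$ and carrying out this three-subgroups argument that equates $C$ with $C'$.
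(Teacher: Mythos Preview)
Your proof is correct, but it takes a different route from the paper's, and in an interesting way the two approaches are mirror images of each other.

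For (2), your argument via the non-degenerate pairing $G/C \times G_3/G_4 \to G_4$ is cleaner than the paper's. The paper instead introduces the auxiliary subgroup $D$ (your $C'$), shows $D \subseteq C$ via the three-subgroups lemma, and then squeezes $|C:G_2|$ between $|D:G_2| = p$ and $|G:G_2| = p^2$ (using that $G_3$ is not central to rule out $C = G$). Your pairing argument computes $|G:C| = p$ directly and avoids the auxiliary subgroup altogether.

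For (1), however, the paper is more direct than you are. It applies the three-subgroups lemma immediately to $(G, G_2, C)$: one has $[C,[G_2,G]] = [C,G_3] = 1$ by the \emph{definition} of $C$, and $[G_2,[G,C]] \subseteq [G_2,G_2] = 1$ by Lemma~\ref{max class G2 abelian}; hence $[G,[C,G_2]] = 1$, so $[C,G_2] \subseteq \ZG(G) = G_4$. Your detour through $C'$ --- proving $C' \subseteq C$ via three subgroups and then invoking the equality of orders from (2) --- is valid but unnecessary: the very same three-subgroups computation works with $C$ in place of $C'$, because $[C,G_3] = 1$ is already known. In effect you re-prove the paper's (1) for $C'$ and then transfer it to $C$, when you could have just proved it for $C$.

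So: your (2) improves on the paper, your (1) is a correct but roundabout version of the paper's argument, and combining your (2) with the paper's (1) would give the shortest proof overall.
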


\begin{proof} 
To lighten the notation, write $C=\Cyc_G(G_3)$.
We first prove ($1$). The group $[G_2,[G,C]]$ is contained in $[G_2,G_2]$ so it follows from Lemma \ref{max class G2 abelian} that $[G_2,[G,C]]=\graffe{1}$. Moreover, $[C,[G_2,G]]=[C,G_3]=\graffe{1}$, by definition of $C$. Thanks to Lemma \ref{three subgroups lemma}, the subgroup $[G,[C,G_2]]$ is trivial, and thus $[C,G_2]$ is contained in $\ZG(G)$. The centre of $G$ is equal to $G_4$, by Lemma \ref{max class G4=ZG}, and ($1$) is proven. We prove ($2$). Let $D$ be the unique subgroup of $G$ containing $G_4$ such that $D/G_4=\Cyc_{G/G_4}(G_2/G_4)$.
Then one has $[[G,D],G_2]\subseteq [G_2,G_2]=\graffe{1}$ and $[[D,G_2],G]\subseteq [G_4,G]=\graffe{1}$. Lemma \ref{three subgroups lemma} yields that $[D,G_3]=[D,[G,G_2]]=\graffe{1}$ and therefore $D$ is contained in $C$.
It follows from Lemma \ref{class 3 centralizer G2}($1$) that $$p\leq |C:G_2|\leq |G:G_2|=p^2.$$ The group $G_3$ is not central, and so 
$|C:G_2|=p$.
\end{proof}

\begin{lemma}\label{max class C abelian}
If $\inte(G)>1$, then $\Cyc_G(G_3)$ is abelian.
\end{lemma}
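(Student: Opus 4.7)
The plan is to reduce the statement to showing $[C, G_2] = \graffe{1}$, where I set $C = \Cyc_G(G_3)$. Once this is established, the fact that $G_2$ is abelian (Lemma \ref{max class G2 abelian}) together with $|C:G_2| = p$ (Lemma \ref{max class C p}) immediately yields that $C$ is abelian: $C = G_2\gen{c}$ for any $c \in C \setminus G_2$, and $c$ commutes with $G_2$ by hypothesis. Since $\inte(G) > 1$ and $G$ has class at least $3$, Proposition \ref{mezzo teorema class 3} tells us that $p$ is odd and $\inte(G) = 2$; I fix $\alpha$ intense of order $2$, put $A = \gen{\alpha}$, and let $\chi = {\chi_G}_{|A}$ denote the non-trivial character $A \rightarrow \graffe{\pm 1}$.

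The key observation is that $[C, G_2]$ lies entirely inside $G_4 = \ZG(G)$, by Lemmas \ref{max class C p} and \ref{max class G4=ZG}, so Lemma \ref{tgt} guarantees that the commutator map gives a bilinear map $C \times G_2 \rightarrow G_4$. Its left kernel contains $G_2$ (since $G_2$ is abelian) and its right kernel contains $G_3$ (by definition of $C$), so it descends to a bilinear map of $A$-modules
\[
\bar\gamma : C/G_2 \times G_2/G_3 \longrightarrow G_4
\]
whose image is precisely $[C, G_2]$.

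Now I play the characters off against each other. By Lemma \ref{action chi^i}, the group $A$ acts on $G_2/G_3$ through $\chi^2 = 1$, on $G_4$ through $\chi^4 = 1$, and on $G/G_2$ through $\chi$; hence it acts through $\chi$ on the $A$-stable subgroup $C/G_2$. Lemma \ref{lemma product of characters} then forces $A$ to act on the image $[C, G_2]$ of $\bar\gamma$ through $\chi \cdot 1 = \chi$; but simultaneously, being a subgroup of $G_4$, it must be acted on through $1$. Since $\chi$ is not the trivial character, Lemma \ref{distinct characters on same gp} concludes $[C, G_2] = \graffe{1}$, as required. The argument is, in essence, a one-line character mismatch inside $G_4$; the real content has already been extracted by the structural lemmas \ref{max class G4=ZG} and \ref{max class C p} that identify the centre and control the commutator $[C, G_2]$, so I expect no serious obstacle in writing this up.
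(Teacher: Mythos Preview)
Your proof is correct and follows essentially the same approach as the paper: both reduce to showing $[C,G_2]=\{1\}$ via the bilinear map $C/G_2\times G_2/G_3\rightarrow G_4$ and the character mismatch $\chi^3=\chi$ versus $\chi^4=1$. The paper phrases the reduction as $[C,C]=[C,G_2]$ via Lemma~\ref{cyclic quotient commutators}, while you spell it out directly; the core argument is identical.
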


\begin{proof}
Assume that $\inte(G)>1$. As a consequence of Proposition \ref{proposition 2gps}, the prime $p$ is odd.
Let $\alpha$ be an intense automorphism of $G$ of order $2$ and write $A=\gen{\alpha}$ and $\chi={\chi_G}_{|A}$. 
To lighten the notation, write $C=\Cyc_G(G_3)$. 
By Lemma \ref{max class C p}($2$), the index of $G_2$ in $C$ is $p$, so it follows from Lemma \ref{cyclic quotient commutators} that $[C,C]=[C,G_2]$. Moreover, $[C,G_2]$ is contained in $G_4$, by Lemma \ref{max class C p}($1$), and $G_4=\ZG(G)$ by Lemma \ref{max class G4=ZG}. By Lemma \ref{tgt}, the commutator map 
$C\times G_2\rightarrow G_4$ is bilinear and, as a consequence of Lemma \ref{max class G2 abelian}, it factors as $\phi:C/G_2\times G_2/G_3\rightarrow G_4$. By Lemma \ref{action chi^i}, the group $A$ acts on $C/G_2$ and $G_2/G_3$ respectively through $\chi$ and $\chi^2$, so, as a consequence of Lemma \ref{lemma product of characters}, the group $A$ acts on $[C,G_2]$ through $\chi^3=\chi$. By Lemma \ref{action chi^i}, the group $A$ acts on $G_4$ through $\chi^4=1$. Since $\chi\neq 1$, it follows from Lemma \ref{distinct characters on same gp} that $[C,C]$ is trivial, and therefore $C$ is abelian.
\end{proof}

\noindent
We recall here that, if $A=\gen{\alpha}$ is a multiplicative group of order $2$ acting on a finite group $B$ of odd order, then one defines $B^+=\graffe{x\in B : \alpha(x)=x}$  and 
$B^-=\graffe{x\in B : \alpha(x)=x^{-1}}$. (See Section \ref{section involutions}.)

\begin{lemma}\label{abelian p^4 plus-minus}
Assume that $\inte(G)>1$ and let $\alpha$ be an intense automorphism of $G$ of order $2$.
Write $C=\Cyc_G(G_3)$.
Then $C=C^+\oplus C^-$ and $|C^+|=|C^-|=p^2$. 
\end{lemma}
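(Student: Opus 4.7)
The plan is to compute $|C|$, determine the jumps of $C$ in $G$, and then invoke Lemma \ref{order +- jumps} to split the order of $C$ between $C^+$ and $C^-$.

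First I would observe that $C = \Cyc_G(G_3)$ is the centralizer of a characteristic subgroup of $G$ and is therefore itself characteristic; in particular $C$ is $A$-stable. Since $\inte(G)>1$, Proposition \ref{proposition 2gps} forces $p$ to be odd, so Lemma \ref{max class C abelian} applies and $C$ is abelian. Corollary \ref{abelian sum +-} then immediately gives the decomposition $C=C^+\oplus C^-$.

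Next I would pin down the jumps of $C$ in $G$. By Lemma \ref{max class G2 abelian} the subgroup $G_2$ is abelian, and since $G_3\subseteq G_2$ this yields $G_2\subseteq C$. Combining this with $|C:G_2|=p$ from Lemma \ref{max class C p}($2$) and the hypothesis $(w_1,w_2,w_3,w_4)=(2,1,1,1)$, the filtration $C\cap G_j$ satisfies
\[
|C|=p^4,\quad |C\cap G_2|=p^3,\quad |C\cap G_3|=p^2,\quad |C\cap G_4|=p,\quad C\cap G_5=\graffe{1}.
\]
Hence $\wt_C^G(j)=1$ for $j\in\graffe{1,2,3,4}$ and $\wt_C^G(j)=0$ otherwise, so the jumps of $C$ in $G$ are exactly $1,2,3,4$, each of width $1$.

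Finally I would verify the hypotheses of Lemma \ref{order +- jumps} and apply it. The class of $G$ is at least $3$, so Proposition \ref{proposition -1^i} ensures that $\alpha$ has order $2$ and induces the inversion map on $G/G_2$, exactly as required; the remaining hypotheses ($p$ odd, $C$ an $A$-stable subgroup) have already been checked. Lemma \ref{order +- jumps} then gives
\[
|C^+| = p^{\wt_C^G(2)+\wt_C^G(4)} = p^2, \qquad |C^-| = p^{\wt_C^G(1)+\wt_C^G(3)} = p^2,
\]
completing the proof. There is no real obstacle here; the only thing to be careful about is to cite Proposition \ref{proposition -1^i} to guarantee that the action of $A$ on $G/G_2$ is through the sign character, so that Lemma \ref{order +- jumps} is applicable.
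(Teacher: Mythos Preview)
Your proposal is correct and follows essentially the same approach as the paper: establish that $C$ is $A$-stable and abelian, apply Corollary \ref{abelian sum +-} for the direct sum decomposition, and then use Lemma \ref{order +- jumps} to compute $|C^+|$ and $|C^-|$. Your version is more detailed in spelling out the jumps of $C$ and in explicitly invoking Proposition \ref{proposition -1^i} to verify the inversion hypothesis of Lemma \ref{order +- jumps}, whereas the paper leaves these routine verifications implicit.
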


\begin{proof}
The group $C$ is $A$-stable and it is abelian by Lemma \ref{max class C abelian}.
From Corollary \ref{abelian sum +-}, it follows that $C=C^+\oplus C^-$. 
The cardinalities of $C^+$ and $C^-$ are both equal to $p^2$, as a consequence of Lemma \ref{order +- jumps}.
\end{proof}

\begin{lemma}\label{M+ EA iff M- EA}
Assume that $\inte(G)>1$ and let $\alpha$ be an intense automorphism of $G$ of order $2$.
Write $C=\Cyc_G(G_3)$.
Then $C^+$ is cyclic if and only if $C^-$ is cyclic.
\end{lemma}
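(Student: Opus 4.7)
The plan is to recast cyclicity of $C^\pm$ as non-vanishing of the $p$-th power map $\rho\colon C\to C$, $\rho(x)=x^p$, on each piece, and then link the two via a conjugation automorphism of $C$. By Lemma~\ref{abelian p^4 plus-minus}, $C$ is abelian of order $p^4$ with $C=C^+\oplus C^-$, each summand of order $p^2$. Combining the assumed widths $(w_1,w_2,w_3,w_4)=(2,1,1,1)$ with Lemma~\ref{max class C p}, the subgroup $C$ has jumps $1,2,3,4$ in $G$, each of width one. Applying Lemma~\ref{order +- jumps}, $C^+$ jumps only at $2$ and $4$, whence $C^+\subseteq G_2$ and $C^+\cap G_3=G_4$; likewise $C^-$ jumps only at $1$ and $3$, so $C^-\cap G_2=C^-\cap G_3$ has order $p$. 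Analogously $G_3=G_4\oplus G_3^-$ as $A$-modules. Because $G$ has class $4$ and $\inte(G)>1$, Proposition~\ref{p-power jumps 2} gives $G_2^p\subseteq G_4$ and $G_3^p=1$; in particular $\rho(C^+)\subseteq G_4$ and $\rho(C^-)\subseteq G_3\cap G^-=G_3^-$. Since $C^\pm$ is abelian of order $p^2$, it is cyclic if and only if $\rho$ is non-zero on it.

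Next, since $|G:C|=p$ and $G_2\subseteq C$, I would fix any $g\in G\setminus C$ and let $\gamma\in\Aut(C)$ denote conjugation by $g$. Because $C$ is abelian, both $\gamma$ and $\rho$ are endomorphisms of $C$, and $\rho$ commutes with every endomorphism of $C$; in particular $\rho\gamma=\gamma\rho$. Applying Lemma~\ref{class 3 G/G3 extraspecial of exp p} to $G/G_4$ (which has class $3$ and abelianization of order $p^2$) yields that the commutator pairing $G/G_2\times G/G_2\to G_2/G_3$ is non-degenerate, and combined with Lemma~\ref{max class C p} (so $[C,G_2]\subseteq G_4$) together with $g\notin C=\Cyc_G(G_3)$, this gives the chain of equalities $(\gamma-\id)(C)=[g,C]=G_2$, $(\gamma-\id)(G_2)=[g,G_2]=G_3$, $(\gamma-\id)(G_3)=[g,G_3]=G_4$, and $(\gamma-\id)(G_4)=1$. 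In particular $(\gamma-\id)$ restricts to an isomorphism $G_3^-\to G_4$.

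The crux is then the identity $\rho(C^+)=(\gamma-\id)(\rho(C^-))$, obtained by evaluating $\rho\circ(\gamma-\id)=(\gamma-\id)\circ\rho$ on $C$ and using $C=C^+\oplus C^-$: the left-hand side equals $\rho(G_2)=\rho(C^+)+\rho(C^-\cap G_2)=\rho(C^+)$, because $\rho(C^-\cap G_2)\subseteq\rho(G_3)=1$; the right-hand side equals $(\gamma-\id)\rho(C^+)+(\gamma-\id)\rho(C^-)=(\gamma-\id)\rho(C^-)$, because $\rho(C^+)\subseteq G_4$ is killed by $\gamma-\id$. Since $(\gamma-\id)$ is injective on $G_3^-\supseteq\rho(C^-)$, one concludes $\rho(C^+)=1$ if and only if $\rho(C^-)=1$, which is the desired equivalence. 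The main obstacle is installing the four shift identities for $\gamma-\id$ along the filtration; once these are in place, the conclusion drops out of a one-step diagram chase.
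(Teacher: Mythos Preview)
Your proof is correct and takes a genuinely different route from the paper's. The paper treats the two implications asymmetrically: for $C^-$ cyclic $\Rightarrow$ $C^+$ cyclic it observes that $C^p$ is a non-trivial normal subgroup, hence meets $\ZG(G)=G_4\subseteq G^+$, forcing $(C^+)^p\neq 1$; for the converse it argues that $C$ cannot be the union of the two proper subgroups $G_2\cap C$ and $\mu_p(C)$, produces $x\in C\setminus G_2$ of order $p^2$, and then invokes Lemma~\ref{conjugate of x} (which uses that $\alpha$ is \emph{intense}, not merely an involution with prescribed action on $G/G_2$) to conjugate $x$ into $G^-$, exhibiting a generator of $C^-$. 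Your argument is symmetric: you set up the shift operator $\gamma-\id$ along the filtration $C\supset G_2\supset G_3\supset G_4$, verify it steps down by one at each stage, and combine $\rho\gamma=\gamma\rho$ with the decomposition $G_2=C^+\oplus(C^-\cap G_2)$ to obtain the single identity $\rho(C^+)=(\gamma-\id)(\rho(C^-))$. This buys you both directions at once and avoids Lemma~\ref{conjugate of x} entirely; the intenseness hypothesis enters only through the abelianness of $C$ (Lemma~\ref{max class C abelian}) and the exponent bounds (Proposition~\ref{p-power jumps 2}). One small remark: your ``chain of equalities'' for $\gamma-\id$ is best read from the bottom up, since e.g.\ $[g,G_2]=G_3$ uses $[g,G_2]\supseteq[g,G_3]=G_4$ together with $[g,G_2]G_4=G_3$; you have all the ingredients, but a reader might appreciate that ordering being made explicit.
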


\begin{proof}
The subgroup $C$ is $A$-stable and $C=C^+\oplus C^-$ , by Lemma \ref{abelian p^4 plus-minus}. Moreover, both $C^+$ and $C^-$ have cardinality $p^2$. The subgroup $C^p$ is characteristic in $G$, so $C^p$ is $A$-stable.  
The group $G$ has class $4$ and, as a consequence of Lemma \ref{centre character sign}($2$), the subgroup $\ZG(G)$ is contained in $G^+$.
We first prove the implication from right to left. Assume that $C^-$ is cyclic. Then $C^p$ is a non-trivial subgroup of the 
$p$-group $G$. From Lemma \ref{normal intersection centre trivial}, it follows that $C^p$ has non-trivial intersection with the centre of $G$ and, in particular, 
$C^p\cap G^+\neq\graffe{1}$. The group $C^+$ is cyclic, because it has order $p^2$ and exponent different from $p$.
To prove the implication from left to right, assume that $C^+$ is cyclic. 
As a consequence of Lemma \ref{order +- jumps}, the group $C^+$ is contained in $G_2$. We claim that there exists an element $x\in C\setminus G_2$ of order $p^2$. 
If not, then it means that $C$ is equal to the union of two of its proper subgroups, namely $C\cap G_2$ with $\mu_p(C)$, which is impossible. It follows that there exists $x\in C$ of order $p^2$,  
with $\dpt_G(x)=1$. Fix such $x$. By Lemma \ref{conjugate of x}, there exists $g\in G$ such that $gxg^{-1}$ belongs to 
$G^-$. Since both $C^-$ and $\gen{x}$ have order $p^2$, the group $C^-$ is cyclic.
\end{proof}

\begin{lemma}\label{max class conjs in C}
Let $H$ be a subgroup of $\Cyc_G(G_3)$. If $\inte(G)>1$, then $H$ has at most $p$ conjugates in $G$.
\end{lemma}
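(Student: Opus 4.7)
The plan is to exploit the fact that $\Cyc_G(G_3)$ is both abelian and of index $p$ in $G$ under the hypothesis $\inte(G)>1$. Writing $C=\Cyc_G(G_3)$, the number of conjugates of $H$ in $G$ equals $|G:\nor_G(H)|$, so it suffices to show $C\subseteq \nor_G(H)$ and that $|G:C|=p$.

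First I would show $|G:C|=p$. Since $w_1=2$, we have $|G:G_2|=p^2$. Lemma \ref{max class C p}($2$) asserts $|C:G_2|=p$ (noting that Lemma \ref{max class C p} requires $p$ odd, which holds because the hypothesis $\inte(G)>1$ together with Proposition \ref{proposition 2gps} forces $p$ to be odd). Combining these indices yields $|G:C|=p^2/p=p$.

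Next I would invoke Lemma \ref{max class C abelian}, which states that if $\inte(G)>1$, then $C$ is abelian. Consequently, for any subgroup $H$ of $C$, every element of $C$ centralizes $H$, whence $C\subseteq\Cyc_G(H)\subseteq\nor_G(H)$.

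Combining the two observations, $|G:\nor_G(H)|$ divides $|G:C|=p$, so $H$ has at most $p$ conjugates in $G$. No real obstacle is anticipated: the work was already done in Lemmas \ref{max class C p} and \ref{max class C abelian}, and this lemma is essentially the immediate corollary that normality properties of subgroups of $C$ follow from the normality of $C$ itself together with the abelianness of $C$.
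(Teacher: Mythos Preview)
Your proposal is correct and follows essentially the same approach as the paper: both invoke Lemma \ref{max class C abelian} to conclude that $C=\Cyc_G(G_3)$ is abelian (hence $C\subseteq\nor_G(H)$) and Lemma \ref{max class C p}($2$) to obtain $|G:C|=p$. You are slightly more explicit in deriving $|G:C|=p$ from $|C:G_2|=p$ and $|G:G_2|=p^2$, and in noting that Proposition \ref{proposition 2gps} supplies the oddness of $p$ needed for Lemma \ref{max class C p}, but the argument is the same.
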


\begin{proof}
Assume that $\inte(G)>1$. The group $\Cyc_G(G_3)$ is abelian, by Lemma \ref{max class C abelian}, and therefore $\Cyc_G(G_3)$ 
normalizes $H$. It follows that 
$|G:\nor_G(H)|$ is at most $|G:\Cyc_G(G_3)|$. By Lemma \ref{max class C p}($2$) the index $|G:\Cyc_G(G_3)|$ is equal to $p$, and thus $H$ has at most $p$ conjugates in $G$.
\end{proof}

\begin{lemma}\label{pollo}
Assume that $\inte(G)>1$ and let $\alpha$ be an intense automorphism of $G$ of order $2$.
Write $C=\Cyc_G(G_3)$.
Then $C^+$ is cyclic.
\end{lemma}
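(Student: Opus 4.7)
The plan is to derive a contradiction from the assumption that $C^+$ is not cyclic. Since $|C^+|=p^2$ by Lemma \ref{abelian p^4 plus-minus}, the only alternative to being cyclic is being elementary abelian of rank $2$. Applying Lemma \ref{M+ EA iff M- EA} immediately gives that $C^-$ is also elementary abelian of rank $2$, and hence, via the decomposition $C=C^+\oplus C^-$, the whole group $C$ is elementary abelian of order $p^4$.

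Next, I would count, in two ways, the set $X$ of subgroups of $C$ of order $p$. Since $C$ is an $\F_p$-vector space of dimension $4$, there are $|X|=(p^4-1)/(p-1)=p^3+p^2+p+1=(p+1)(p^2+1)$ such subgroups. On the other hand, let $X^+=\{H\in X:\alpha(H)=H\}$. Any $H\in X^+$ is cyclic and $A$-stable, so Lemma \ref{cyclic subgroup intense} forces $H\subseteq C^+$ or $H\subseteq C^-$; conversely, every subgroup of $C^+$ or $C^-$ of order $p$ is $A$-stable. Since both $C^+$ and $C^-$ are $\F_p$-vector spaces of dimension $2$, each contains $p+1$ subgroups of order $p$, which are pairwise distinct (their intersection $C^+\cap C^-$ being trivial). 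Hence $|X^+|=2(p+1)$.

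Now I would apply Lemma \ref{orbits} to the action of $G$ on $X$ by conjugation (valid because $C=\Cyc_G(G_3)$ is characteristic, hence normal). Combined with the bound $|G:\nor_G(H)|\le p$ from Lemma \ref{max class conjs in C}, this yields
\[
(p+1)(p^2+1)=|X|\le\sum_{H\in X^+}|G:\nor_G(H)|\le 2(p+1)\cdot p.
\]
Dividing by $p+1$ gives $p^2+1\le 2p$, i.e.\ $(p-1)^2\le 0$, which is absurd since $p$ is an odd prime. The main (and essentially only) subtlety is the clean identification of $X^+$ with the union of the order-$p$ subspaces of $C^+$ and $C^-$, which is what makes the counting argument tight enough to force the contradiction.
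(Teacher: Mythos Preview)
Your proof is correct and follows essentially the same argument as the paper: assume $C^+$ is not cyclic, use Lemma \ref{M+ EA iff M- EA} and Lemma \ref{abelian p^4 plus-minus} to conclude $C$ is elementary abelian of order $p^4$, count the $1$-dimensional subspaces and the $A$-stable ones via Lemma \ref{cyclic subgroup intense}, and then apply Lemma \ref{orbits} together with the conjugate bound from Lemma \ref{max class conjs in C} to obtain the numerical contradiction $p^2+1\le 2p$. The only cosmetic difference is that you factor $|X|$ as $(p+1)(p^2+1)$ and divide through, whereas the paper leaves the inequality as $2p(p+1)\ge p^3+p^2+p+1$.
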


\begin{proof}
Assume the contrary. Then, as a consequence of Lemma \ref{M+ EA iff M- EA}, both $C^+$ and $C^-$ are elementary abelian.
From Lemma \ref{abelian p^4 plus-minus} it follows that $C$ is an $\F_p$-vector space of dimension $4$. Let $X$ be the collection of $1$-dimensional subspaces of $C$; then we have 
$$|X|=\frac{p^4-1}{p-1}=p^3+p^2+p+1.$$ 
Let moreover $X^+=\graffe{H\in X \ :\ \alpha(H)=H}$. 
As a consequence of Lemma \ref{cyclic subgroup intense}, the set $X^+$ consists of the $1$-dimensional subspaces of $C$ that are contained in $C^+\cup C^-$. Then $|X^+|=2(p+1)$. 
By Lemma \ref{max class conjs in C}, each element of $X^+$ has at most $p$ conjugates in $G$, so it follows from Lemma \ref{orbits} that 
$$2p(p+1)=p|X^+|\geq\sum_{H\in X^+}|G:\nor_G(H)|\geq |X|=p^3+p^2+p+1.$$
Contradiction.
\end{proof}

\begin{lemma}\label{belva}
The intensity of $G$ is equal to $1$.
\end{lemma}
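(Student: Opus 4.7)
The plan is to derive a contradiction from the assumption $\inte(G)>1$ by a counting argument applied to the subgroups of order $p^2$ inside $C=\Cyc_G(G_3)$. The key inputs are already at hand: combining Lemmas \ref{pollo} and \ref{M+ EA iff M- EA} with Lemma \ref{abelian p^4 plus-minus}, I obtain $C=C^+\oplus C^-$ with both summands cyclic of order $p^2$, so $C\cong\Z/p^2\Z\oplus\Z/p^2\Z$.

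Next I would enumerate the set $X$ of subgroups of $C$ of order $p^2$. A direct count in $(\Z/p^2\Z)^2$ gives $|X|=p^2+p+1$, namely the $p(p+1)$ cyclic subgroups (generated by the $p^2(p^2-1)$ elements of order $p^2$, grouped in bunches of $\phi(p^2)$) together with the unique non-cyclic subgroup, which is $C^p$. I would then identify the $A$-stable elements $X^+\subseteq X$: the subgroup $C^p$ is characteristic and hence $A$-stable, while Lemma \ref{cyclic subgroup intense} forces any $A$-stable cyclic member of $X$ to lie in $G^+$ or $G^-$ and therefore, having order $p^2=|C^+|=|C^-|$, to coincide with $C^+$ or $C^-$. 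Since $C^p$ is not cyclic whereas $C^\pm$ are, the three subgroups are distinct, so $|X^+|=3$.

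To close the argument I would apply Lemma \ref{orbits} together with the bound $|G:\nor_G(H)|\leq p$ coming from Lemma \ref{max class conjs in C} to each $H\in X^+$, obtaining
$$p^2+p+1=|X|\leq\sum_{H\in X^+}|G:\nor_G(H)|\leq 3p,$$
which rearranges to $(p-1)^2\leq 0$, a contradiction. The main ``obstacle'' is really only bookkeeping: making sure the subgroup count in $(\Z/p^2\Z)^2$ is correct and that $C^+$, $C^-$, $C^p$ are genuinely three distinct subgroups of $X^+$; all the structural content has already been packaged into the preceding lemmas.
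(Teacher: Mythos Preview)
Your argument is correct and follows essentially the same counting strategy as the paper, with one small variation: the paper takes $X$ to be the set of \emph{cyclic} subgroups of $C$ of order $p^2$, so that $|X|=p(p+1)$ and $X^+=\{C^+,C^-\}$ has two elements, yielding the contradiction $p^2+p\leq 2p$. By enlarging $X$ to all subgroups of order $p^2$ you pick up the single extra (non-cyclic, characteristic) subgroup $C^p=\mu_p(C)$, and the inequality becomes $p^2+p+1\leq 3p$, i.e.\ $(p-1)^2\leq 0$. Both versions rest on exactly the same structural lemmas (\ref{abelian p^4 plus-minus}, \ref{M+ EA iff M- EA}, \ref{pollo}, \ref{cyclic subgroup intense}, \ref{max class conjs in C}, \ref{orbits}); your variant simply trades a slightly larger $X^+$ for a slightly larger $X$, with no real gain or loss.
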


\begin{proof}
Assume by contradiction that $\inte(G)>1$ and let $\alpha$ be an intense automorphism of $G$ of order $2$. 
Write $C=\Cyc_G(G_3)$. 
The group $C$ is abelian, by Lemma \ref{max class C abelian}, and $C=C^+\oplus C^-$, by Lemma \ref{abelian p^4 plus-minus}. Moreover, $C^+$ and $C^-$ have both cardinality $p^2$.
By Lemma \ref{pollo}, the subgroup $C^+$ is cyclic so, by Lemma \ref{M+ EA iff M- EA}, the subgroup $C^-$ is also cyclic.
Let $X$ be the collection of cyclic subgroups of $C$ of order $p^2$
and let $X^+$ be the subset of $X$ consisting of the $A$-stable ones. 
It follows from Lemma \ref{cyclic subgroup intense} that $X^+=\graffe{C^+, C^-}$ and
the cardinality of $X^+$ is thus $2$. On the other hand, the cardinality of $X$ is equal to 
$$|X|=\frac{p^4-p^2}{p^2-p}=p(p+1).$$ By Lemma \ref{max class conjs in C}, each element of $X^+$ has at most $p$ conjugates, so it follows from Lemma \ref{orbits} that $$2p\geq p|X^+|\geq|X|=p^2+p.$$ Contradiction.
\end{proof}

\noindent
We conclude Section \ref{section example} by giving the proof of Proposition \ref{p^5, class 4, intensity 1}. 
Let $Q$ be a finite $p$-group of class at least $4$ with $\inte(Q)>1$. 
The class of $Q$ being $4$, the subgroup $Q_4$ is non-trivial and, by Lemma \ref{normal index p}, there exists a normal subgroup $M$ of $Q$ that is contained in $Q_4$ with index $p$. Fix $M$ and denote $\overline{Q}=Q/M$. 
Thanks to Lemma \ref{intensity of quotients}, the intensity of $\overline{Q}$ is greater than $1$, so it follows from Theorem \ref{theorem class at least 3}($2$) that 
$(\wt_{\overline{Q}}(1),\wt_{\overline{Q}}(2),\wt_{\overline{Q}}(3),\wt_{\overline{Q}}(4))=(2,1,f,1)$, where $f\in\graffe{1,2}$. 
Lemma \ref{belva} yields $f=2$ and the proof of Proposition \ref{p^5, class 4, intensity 1} is complete.

\section{A bound on the width}\label{section bound dim}

From Section \ref{section jumps}, we recall that, given a finite $p$-group $G$
and a positive integer $i$, the $i$-th width of $G$ is defined to be $\wt_G(i)=\log_p|G_i:G_{i+1}|$. 
The unique purpose of Section \ref{section bound dim} is to prove the following result.

\begin{proposition}\label{consecutive layers}
Let $p$ be a prime number and let $G$ be a finite $p$-group.
Let $c$ denote the class of $G$. Assume $c\geq 3$ and $\inte(G)>1$.
Then, for each $i$ in $\graffe{1,2, \ldots ,c-1}$, one has $\wt_G(i)\wt_G(i+1)\leq 2$. 
\end{proposition}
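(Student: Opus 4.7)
The plan is to proceed by strong induction on the nilpotency class $c$ of $G$. By Proposition \ref{proposition 2gps} the prime $p$ is odd, and by Theorem \ref{theorem class at least 3} we already have $\inte(G)=2$, $(w_1,w_2)=(2,1)$, and $w_3\in\{1,2\}$. This immediately dispenses of the cases $i=1$ (where $w_1 w_2=2$) and $i=2$ (where $w_2 w_3 \leq 2$), covering the whole range in the base case $c=3$.

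For the inductive step with $c\geq 4$, I would apply the induction hypothesis to the quotient $\overline{G}=G/G_c$, which has class $c-1\geq 3$ and intensity greater than $1$ by Lemma \ref{intensity of quotients}. Since the lower central series of $\overline{G}$ is the reduction of that of $G$ modulo $G_c$, the bound $\wt_{\overline{G}}(i)\wt_{\overline{G}}(i+1)\leq 2$ transfers to $w_i w_{i+1}\leq 2$ for all $i\in\{1,\ldots,c-2\}$. The induction yields in particular $w_{c-2}w_{c-1}\leq 2$, and because $w_{c-2}\geq 1$ (otherwise $G$ would have class at most $c-2$) this forces $w_{c-1}\leq 2$. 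From Lemma \ref{tensor LCS} applied at $i=c-1$, the commutator induces a surjection $G/G_2\otimes G_{c-1}/G_c\to G_c$, which combined with $w_1=2$ gives $w_c\leq 2 w_{c-1}$. If $w_{c-1}\leq 1$ this is already enough, so the only remaining case is $w_{c-1}=2$, where I must show $w_c\leq 1$.

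I would argue by contradiction, assuming $w_{c-1}=w_c=2$. Using Proposition \ref{p-power jumps 2} at $i=c-1$ one gets that $G_{c-1}$ has exponent $p$, and Lemma \ref{commutator indices} gives $[G_{c-1},G_{c-1}]\subseteq G_{2c-2}\subseteq G_{c+1}=1$, so $G_{c-1}$ is elementary abelian of rank $w_{c-1}+w_c=4$. Letting $A=\gen{\alpha}$ for $\alpha$ intense of order $2$, the subgroup $G_{c-1}$ is $A$-stable with $\alpha$ acting as $(-1)^{c-1}$ on $G_{c-1}/G_c$ and as $(-1)^c$ on $G_c$; since these two characters of $A$ differ, Theorem \ref{lambda mu} decomposes $G_{c-1}=V^+\oplus V^-$ into two $\alpha$-eigenspaces of dimension $2$ each. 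I would then perform an orbit-counting argument using Lemma \ref{orbits} on a suitable family of $p$-subgroups of $G_{c-1}$: the crucial input is that any subgroup $H\leq G_{c-1}$ satisfies $|G:N_G(H)|\leq |G:G_2|=p^2$, because $[G_2,G_{c-1}]\subseteq G_{c+1}=1$ forces $G_2\subseteq\Cyc_G(G_{c-1})\subseteq\Cyc_G(H)\subseteq\nor_G(H)$.

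The main obstacle will lie in the orbit count: the crude bound $|G:\nor_G(H)|\leq p^2$ together with the enumeration of $\alpha$-stable subspaces of the form $L_+\oplus L_-$ (with $L_\pm\leq V^\pm$) is not immediately strict enough, and one has to model the argument after the proof of Lemma \ref{belva}, choosing $X$ to be either the 1-dimensional subspaces or the 2-dimensional subspaces of $G_{c-1}$ and exploiting Lemma \ref{conjugate stable iff in nor-times-G+} to identify which $\alpha$-stable subspaces can be pairwise $G$-conjugate. To close the gap I expect to need additional input, such as applying Theorem \ref{theorem dimensions} to quotients $G/G_{i+2}$ to get independent constraints on how the bilinear map $G/G_2\otimes G_{c-1}/G_c\to G_c$ can be surjective while remaining $\alpha$-equivariant, and thereby sharpen the count to force $(p+1)^2+2 > |X|/p^2$ and reach the contradiction.
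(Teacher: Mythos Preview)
Your inductive reduction to the case $i=c-1$ is exactly what the paper does, and your observations that $G_{c-1}$ is elementary abelian and that $G_2$ centralises it are both correct and used in the paper. The genuine gap is in your endgame: the orbit count on $1$- or $2$-dimensional subspaces of $G_{c-1}$ simply does not produce a contradiction, as you yourself suspect, and the additional inputs you gesture at do not close it either.

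The paper's key idea is to count a different family. Rather than all subspaces of $G_{c-1}$, one looks at the \emph{complements} of $G_c$ in $G_{c-1}$. By Theorem \ref{lambda mu} there is a unique $A$-stable complement $M$; since $\alpha$ is intense, Lemma \ref{equivalent intense coprime-pgrps} forces every complement to be conjugate to $M$. On the other hand, by Lemma \ref{graph+complements} the complements are parametrised by $\Hom(M,G_c)\cong\Hom(G_{c-1}/G_c,G_c)$, and the conjugate $xMx^{-1}=\{[x,m]m:m\in M\}$ corresponds to the homomorphism $m\mapsto[x,m]$. Hence the map
\[
\phi:G/G_2\longrightarrow\Hom(G_{c-1}/G_c,G_c),\qquad xG_2\longmapsto(mG_c\mapsto[x,m]),
\]
is surjective, and therefore $w_{c-1}w_c=\dim_{\F_p}\Hom(G_{c-1}/G_c,G_c)\leq w_1=2$. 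No case split on $w_{c-1}$ and no contradiction argument are needed.

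Equivalently, in orbit-counting language: there are $p^{w_{c-1}w_c}$ complements, only one is $A$-stable, and each has at most $|G:G_2|=p^2$ conjugates, so $p^{w_{c-1}w_c}\leq p^2$. Your instinct to use Lemma \ref{orbits} was right; the family to apply it to was the complements, not the subspaces.
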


\noindent
We devote the remaining part of this section to the proof of Proposition \ref{consecutive layers}. Until the end of Section \ref{section bound dim} we work thus under the assumptions of Proposition \ref{consecutive layers}. As a consequence of Proposition \ref{proposition 2gps}, the prime $p$ is odd.

\begin{lemma}\label{dim 1}
One has $\wt_G(1)\wt_G(2)=2$. 
\end{lemma}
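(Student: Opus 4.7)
The plan is to deduce this directly from results already established in Chapter \ref{chapter class 3}. Since $G$ is a finite $p$-group of class at least $3$ with $\inte(G)>1$, Theorem \ref{theorem class at least 3}($2$) applies and yields $(w_1,w_2,w_3)=(2,1,f)$ with $f\in\{1,2\}$. In particular $w_1=2$ and $w_2=1$, so $\wt_G(1)\wt_G(2)=2\cdot 1=2$.

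If one prefers to avoid invoking Theorem \ref{theorem class at least 3} (whose proof was completed only at the end of Section \ref{section gps class 3}) and argue from more elementary facts, one can first use Proposition \ref{mezzo teorema class 3}($3$) to obtain $|G:G_2|=p^2$, which gives $w_1=2$. Then, passing to the quotient $\overline{G}=G/G_4$, which still has class at least $3$ (indeed exactly $3$), has the same $w_1$ and $w_2$ as $G$, and satisfies $|\overline{G}:\overline{G}_2|=p^2$, we may apply Lemma \ref{class 3 G3}($1$) to conclude that $|\overline{G}_2:\overline{G}_3|=p$, i.e.\ $w_2=1$.

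There is no real obstacle here: the content of the lemma is contained in the structural restrictions already obtained for the first two layers of the lower central series of a $p$-group of class $\geq 3$ with non-trivial intensity. The statement simply packages the values $w_1=2$ and $w_2=1$ into the product identity $w_1 w_2 = 2$, which will serve as the base case for the inductive or case-by-case argument needed to establish Proposition \ref{consecutive layers} for arbitrary consecutive indices $i,i+1$.
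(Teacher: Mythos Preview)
Your proof is correct and takes essentially the same approach as the paper, which is a one-line appeal to Theorem \ref{theorem class at least 3}($2$). The alternative route you sketch via Proposition \ref{mezzo teorema class 3} and Lemma \ref{class 3 G3} is also fine but unnecessary, since Theorem \ref{theorem class at least 3} has already been fully proven by this point in the paper.
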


\begin{proof}
This follows directly from Theorem \ref{theorem class at least 3}($2$). 
\end{proof}

\begin{lemma}\label{dim phi}
Let $\phi:G/G_2\rightarrow\Hom(G_{c-1}/G_c,G_c)$ be the function defined by $xG_2\mapsto(aG_{c}\mapsto[x,a])$. Then $\phi$ is a homomorphism. 
\end{lemma}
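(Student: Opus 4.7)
The plan is to recognise $\phi$ as the currying of the bilinear commutator map on successive quotients of the lower central series. Since $c$ is the class of $G$, we have $G_{c+1}=1$, so in particular $G_c\subseteq\ZG(G)$ and $[G,G_{c-1}]=G_c$. Lemma \ref{bilinear LCS} applied with $i=c-1$ (or equivalently Lemma \ref{bilinear LCS hk} with $h=1$, $k=c-1$) provides a bilinear map
\[
\gamma:G/G_2\times G_{c-1}/G_c\longrightarrow G_c/G_{c+1}=G_c
\]
induced by the commutator. First I would record that this is exactly what well-definedness of $\phi$ amounts to: for fixed $x\in G$, linearity of $\gamma$ in the second argument says that $a G_c\mapsto[x,a]$ is a well-defined homomorphism $G_{c-1}/G_c\to G_c$, which is the statement that $\phi(xG_2)\in\Hom(G_{c-1}/G_c,G_c)$ makes sense.

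Next, linearity of $\gamma$ in the first argument says that, for each $a\in G_{c-1}$, the assignment $xG_2\mapsto[x,a]$ is a homomorphism $G/G_2\to G_c$. In other words, for all $x,y\in G$ and $a\in G_{c-1}$, one has $[xy,a]=[x,a][y,a]$, and this identity is independent of the chosen representatives modulo $G_2$ and $G_c$ respectively. Evaluating at an arbitrary $aG_c$ therefore gives
\[
\phi(xyG_2)(aG_c)=[xy,a]=[x,a][y,a]=\phi(xG_2)(aG_c)\cdot\phi(yG_2)(aG_c),
\]
which is precisely the statement that $\phi(xyG_2)=\phi(xG_2)\phi(yG_2)$ in the abelian group $\Hom(G_{c-1}/G_c,G_c)$. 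Hence $\phi$ is a homomorphism.

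There is no real obstacle here: the entire content of the lemma is already packaged inside Lemma \ref{bilinear LCS}, and the only thing to notice separately is that $G_{c+1}=1$ so that the target of $\gamma$ coincides with $G_c$ itself rather than a proper quotient of it. If a more self-contained argument is preferred, one can instead verify the three required identities $[xh,a]=[x,a]$ for $h\in G_2$, $[x,ag]=[x,a]$ for $g\in G_c$, and $[xy,a]=[x,a][y,a]$ directly from Lemma \ref{multiplication formulas commutators}, using $[G_2,G_{c-1}]\subseteq G_{c+1}=1$, $[G,G_c]=1$, and the centrality of $G_c$ respectively; each is a single line.
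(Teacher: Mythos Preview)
Your proof is correct and follows essentially the same route as the paper. The paper invokes Lemma~\ref{tensor LCS} (the tensor-product form) while you invoke Lemma~\ref{bilinear LCS} (the bilinear form) and curry; since Lemma~\ref{tensor LCS} is obtained from Lemma~\ref{bilinear LCS} via the universal property of the tensor product, the two arguments are the same in content.
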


\begin{proof}
The map $\phi$ is induced from the surjective homomorphism from Lemma \ref{tensor LCS} 
and $\phi$ is thus itself a homomorphism of groups.  
\end{proof}

\begin{lemma}\label{dim abelian}
The group $G_{c-1}$ is abelian.
\end{lemma}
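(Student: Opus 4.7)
The plan is to use Lemma \ref{commutator indices} together with the hypothesis $c \geq 3$; no intensity hypothesis is actually needed for this step (the intensity hypothesis is being carried through the section because subsequent lemmas will use it).

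First I would apply Lemma \ref{commutator indices} with $h = k = c-1$ to obtain
\[
[G_{c-1}, G_{c-1}] \subseteq G_{2(c-1)} = G_{2c-2}.
\]
Next I would observe that $c \geq 3$ implies $2c - 2 \geq c + 1$, so the lower central series (which is decreasing) gives $G_{2c-2} \subseteq G_{c+1}$. Since $G$ has nilpotency class exactly $c$, by definition $G_{c+1} = \{1\}$. Combining these inclusions yields $[G_{c-1}, G_{c-1}] = \{1\}$, so $G_{c-1}$ is abelian.

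There is no real obstacle here: the argument is a one-line application of the standard containment $[G_h, G_k] \subseteq G_{h+k}$, and the only arithmetic input is $2(c-1) \geq c+1$, which holds precisely because $c \geq 3$. The statement $c \geq 3$ is exactly what makes the argument work; for $c = 2$ it would fail (and indeed $G_{c-1} = G_1 = G$ need not be abelian).
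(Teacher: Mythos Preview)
Your proof is correct and essentially identical to the paper's: both apply Lemma \ref{commutator indices} to get $[G_{c-1},G_{c-1}]\subseteq G_{2c-2}$, then use $c\geq 3$ to conclude $G_{2c-2}\subseteq G_{c+1}=\{1\}$.
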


\begin{proof}
By Lemma \ref{commutator indices}, the group $[G_{c-1},G_{c-1}]$ is contained in $G_{2c-2}$, which is itself contained in $G_{c+1}$ because $c\geq 3$. Since $G_{c+1}=\graffe{1}$, the group $G_{c-1}$ is abelian.
\end{proof}

\begin{lemma}\label{dim complement}
Let $\alpha$ be an intense automorphism of $G$ of order $\inte(G)$ and write $A=\gen{\alpha}$.
Then $G_c$ has a unique $A$-stable complement in $G_{c-1}$.
\end{lemma}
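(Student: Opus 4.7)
The plan is to apply Theorem \ref{lambda mu} to the short exact sequence
\[
0 \longrightarrow G_c \longrightarrow G_{c-1} \longrightarrow G_{c-1}/G_c \longrightarrow 0,
\]
so I first need to verify that this is a sequence of $A$-modules, and then that $A$ acts on $G_c$ and on $G_{c-1}/G_c$ through two distinct characters.

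By Lemma \ref{dim abelian}, the group $G_{c-1}$ is abelian, and since $G_c$ and $G_{c-1}$ are characteristic subgroups of $G$, they are in particular $A$-stable. Hence the sequence above is a short exact sequence of $A$-modules. Next, since $\inte(G)>1$ and $G$ has class at least $3$, Proposition \ref{mezzo teorema class 3} gives $\inte(G)=2$ and $p$ odd; in particular, setting $\chi=\chi_{G|A}$, the character $\chi:A\rightarrow\Z_p^*$ has order $2$ in $\omega(\F_p^*)$, so $\chi^{c-1}$ and $\chi^c$ differ by the non-trivial character $\chi$ and are therefore distinct. By Lemma \ref{action chi^i}, $A$ acts on $G_{c-1}/G_c$ through $\chi^{c-1}$ and on $G_c$ through $\chi^c$.

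All hypotheses of Theorem \ref{lambda mu} are now satisfied (with $G_c$ in the role of $N$, $G_{c-1}$ in the role of $G$, $\chi^c$ in the role of $\lambda$, and $\chi^{c-1}$ in the role of $\mu$), so $G_c$ has a unique $A$-stable complement in $G_{c-1}$, which is what we wanted.

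There is essentially no obstacle: the statement is an almost immediate consequence of Theorem \ref{lambda mu} once one observes that $G_{c-1}$ is abelian (so that the setup is an exact sequence of abelian $A$-modules) and that consecutive powers of a non-trivial order-$2$ character are distinct. The preparatory lemmas \ref{dim 1}--\ref{dim abelian} already isolate all the needed structural facts.
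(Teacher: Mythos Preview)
Your proof is correct and follows essentially the same route as the paper: both invoke Lemma \ref{dim abelian} for abelianness of $G_{c-1}$, Lemma \ref{action chi^i} for the actions through $\chi^{c-1}$ and $\chi^c$, observe these characters are distinct since $\chi\neq 1$, and then apply Theorem \ref{lambda mu}. The only difference is that you invoke Proposition \ref{mezzo teorema class 3} to pin down $\inte(G)=2$, whereas the paper simply notes $\chi\neq 1$ (which already suffices to distinguish $\chi^{c-1}$ from $\chi^c$).
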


\begin{proof}
Denote by $\chi$ the restriction of the intense character of $G$ to $A$ (see Section \ref{section main}).
The group $G_{c-1}$ is abelian, by Lemma \ref{dim abelian}, and, by Lemma \ref{action chi^i}, the group $A$ acts on $G_{c-1}/G_c$ and $G_c$ respectively through $\chi^{c-1}$ and $\chi^c$.
The characters $\chi^{c-1}$ and $\chi^c$ are distinct, because $\chi\neq 1$, so, by Theorem \ref{lambda mu}, the subgroup $G_c$ has a unique $A$-stable complement in $G_{c-1}$. 
\end{proof}

\begin{lemma}\label{dim surjective}
The homomorphism $\phi$ from Lemma \ref{dim phi} is surjective.
\end{lemma}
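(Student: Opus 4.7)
The plan is to count the complements of $G_c$ in $G_{c-1}$ in two different ways: once through the parameterization of Lemma \ref{graph+complements}, and once through the action of $G$ on such complements by conjugation. Comparing the two counts will force $\phi$ to be surjective.

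Let $\alpha$ be an intense automorphism of $G$ of order $\inte(G)$ and set $A=\gen{\alpha}$. By Lemma \ref{dim abelian} the subgroup $G_{c-1}$ is abelian and, since $G$ has class $c$, the subgroup $G_c$ is central. Lemma \ref{dim complement} produces a unique $A$-stable complement $K$ of $G_c$ in $G_{c-1}$, and Lemma \ref{graph+complements} identifies the collection $\cor{C}$ of all complements of $G_c$ in $G_{c-1}$ with $\Hom(K,G_c)\cong\Hom(G_{c-1}/G_c,G_c)$. The first key observation is that $G$ acts on $\cor{C}$ by conjugation, because both $G_c$ and $G_{c-1}$ are normal; and given any $H\in\cor{C}$, Lemma \ref{equivalent intense coprime-pgrps} gives $g\in G$ with $gHg^{-1}$ an $A$-stable complement, which by the uniqueness in Lemma \ref{dim complement} must equal $K$. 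Hence $\cor{C}$ is a single $G$-orbit, and in particular $|\cor{C}|=|G:\nor_G(K)|$.

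The second step is to compute the conjugates of $K$ explicitly in terms of $\phi$. For every $g\in G$ and $k\in K$ one has $gkg^{-1}=[g,k]\,k$, and $[g,k]\in[G,G_{c-1}]\subseteq G_c$, so $gKg^{-1}=\{[g,k]k:k\in K\}$ is precisely the complement of $G_c$ in $G_{c-1}$ associated, via the bijection of Lemma \ref{graph+complements}, to the homomorphism $k\mapsto [g,k]$, which is the image of $gG_2$ under $\phi$ (after identifying $\Hom(K,G_c)$ with $\Hom(G_{c-1}/G_c,G_c)$). Since $[G_2,G_{c-1}]\subseteq G_{c+1}=1$, the map $g\mapsto gKg^{-1}$ factors through $G/G_2$, and its fibres coincide with the cosets of $\nor_G(K)/G_2=\ker\phi$. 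Therefore $|G:\nor_G(K)|=|\im\phi|$, and combining with the previous step gives $|\im\phi|=|\Hom(G_{c-1}/G_c,G_c)|$; since $\im\phi$ is a subgroup of $\Hom(G_{c-1}/G_c,G_c)$, this forces $\phi$ to be surjective.

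The hard part is the first step, namely that every complement is conjugate to $K$: it relies on \emph{both} the normality of $G_c$ and $G_{c-1}$ (so that the $G$-action stays inside $\cor{C}$) and the full force of intensity via Lemma \ref{equivalent intense coprime-pgrps} (to produce an $A$-stable conjugate), combined with the uniqueness of the $A$-stable complement. Everything else is bookkeeping with the commutator identity $gkg^{-1}=[g,k]k$.
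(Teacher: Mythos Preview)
Your proof is correct and follows the same essential approach as the paper: both arguments hinge on the fact that all complements of $G_c$ in $G_{c-1}$ form a single $G$-conjugacy class (via intensity and the uniqueness of the $A$-stable complement), and that conjugation by $g$ corresponds to the homomorphism $k\mapsto[g,k]$ in the parameterization of Lemma \ref{graph+complements}. The only difference is packaging: the paper argues directly that each $f\in\Hom(K,G_c)$ is realized as $k\mapsto[x,k]$ for some $x$, whereas you extract the same conclusion by comparing $|\cor{C}|=|G:\nor_G(K)|$ with $|\im\phi|$.
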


\begin{proof}
Let $\alpha$ be an intense automorphism of $G$ of order $\inte(G)$ and write $A=\gen{\alpha}$.
By Lemma \ref{dim abelian}, the group $G_{c-1}$ is abelian and, by Lemma \ref{dim complement}, there exists a unique $A$-stable complement $M$ of $G_c$ in $G_{c-1}$. Now the group $A$ acts in a natural way on the set of complements of $G_c$ in $G_{c-1}$ and, the automorphism $\alpha$ being intense, it follows from Lemma \ref{equivalent intense coprime-pgrps} that all complements of $G_c$ in $G_{c-1}$ are conjugate to $M$ in $G$. 
On the other hand, by Lemma \ref{graph+complements}, the set of complements of $G_c$ in $G_{c-1}$ consists of the elements $\graffe{mf(m)\ :\ m\in M}$ as $f$ varies in $\Hom(M,G_c)$.
It follows that, for each $f\in\Hom(M,G_c)$, there exists $x\in G$, such that 
$\graffe{mf(m)\ :\ m\in M}=xMx^{-1}$. Fix the pair $(f,x)$. Then, for all $m\in M$, there exists $n\in M$ such that $mf(m)=xnx^{-1}=[x,n]n$. It follows that $n^{-1}m=[x,n]f(m)^{-1}$ belongs to $M\cap G_c=\graffe{1}$, so $m=n$. We have proven that $f(m)=[x,m]$. Now, the groups $\Hom(G_{c-1}/G_c,G_c)$ and $\Hom(M,G_c)$ are isomorphic and, the choice of $f$ being arbitrary,  each homomorphism $f:G_{c-1}/G_c\rightarrow G_c$ is of the form $mG_c\mapsto [x,m]$, for some $x\in G$. In other words, we have proven surjectivity of $\phi$. 
\end{proof}

\begin{lemma}\label{dim c}
One has $\wt_G(c-1)\wt_G(c)\leq 2$.
\end{lemma}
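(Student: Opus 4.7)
The plan is to combine the surjectivity of the map $\phi$ from Lemma \ref{dim surjective} with the fact that consecutive quotients of the lower central series of $G$ are elementary abelian. The key observation is that once $\phi : G/G_2 \to \Hom(G_{c-1}/G_c, G_c)$ is surjective, counting orders of both sides gives the desired inequality.

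First, I would note that by Lemma \ref{elementary abelian} (which applies because $G$ has class $c \geq 3 > 1$ and $\inte(G) > 1$), each quotient $G_i/G_{i+1}$ is an $\F_p$-vector space. In particular, $G_{c-1}/G_c$ is an $\F_p$-vector space of dimension $\wt_G(c-1)$, and $G_c = G_c/G_{c+1}$ is an $\F_p$-vector space of dimension $\wt_G(c)$. Since any group homomorphism between elementary abelian $p$-groups is automatically $\F_p$-linear, one has
\[
|\Hom(G_{c-1}/G_c,\, G_c)| \;=\; |\Hom_{\F_p}(G_{c-1}/G_c,\, G_c)| \;=\; p^{\wt_G(c-1)\,\wt_G(c)}.
\]

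Next, I would invoke Lemma \ref{dim surjective} to get that $\phi$ is surjective, which yields
\[
p^{\wt_G(c-1)\,\wt_G(c)} \;=\; |\Hom(G_{c-1}/G_c,\, G_c)| \;\leq\; |G/G_2| \;=\; p^{\wt_G(1)}.
\]
Finally, by Lemma \ref{dim 1} (or directly Theorem \ref{theorem class at least 3}), one has $\wt_G(1)\wt_G(2) = 2$ and in particular $\wt_G(1) = 2$, since $G$ is non-abelian of class at least $3$ forces $\wt_G(1) \geq 2$ while $\wt_G(2) \geq 1$. Combining the two inequalities gives $\wt_G(c-1)\wt_G(c) \leq 2$, as required. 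There is no genuine obstacle here: all the real work has already been done in establishing the surjectivity of $\phi$ and the elementary-abelian nature of the lower central factors; this lemma is just the packaging.
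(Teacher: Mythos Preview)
Your proof is correct and follows essentially the same approach as the paper: use Lemma~\ref{elementary abelian} to view the quotients as $\F_p$-vector spaces, apply the surjectivity of $\phi$ from Lemma~\ref{dim surjective} to bound $\wt_G(c-1)\wt_G(c)$ by $\wt_G(1)$, and then use $\wt_G(1)=2$ from Theorem~\ref{theorem class at least 3}(2). Your derivation of $\wt_G(1)=2$ via Lemma~\ref{dim 1} plus the non-cyclicity of $G/G_2$ is slightly more roundabout than citing Theorem~\ref{theorem class at least 3}(2) directly, but it is perfectly valid.
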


\begin{proof}
The groups $G_{c-1}/G_c$ and $G_c$ are $\F_p$-vector spaces, as a consequence of Lemma \ref{elementary abelian}. It follows that the dimension of $\Hom(G_{c-1}/G_c, G_c)$ is equal to $\wt_G(c-1)\wt_G(c)$. Thanks to Lemma \ref{dim surjective}, the product 
$\wt_G(c-1)\wt_G(c)$ is at most $\wt_G(1)$, which is equal to $2$, by Theorem \ref{theorem class at least 3}($2$). We get thus that $\wt_G(c-1)\wt_G(c)\leq 2$. 
\end{proof}

\noindent
The proof of Proposition \ref{consecutive layers} is now an easy exercise, which we spell out here. If $i=1$, then we are done by Lemma \ref{dim 1}. Assume that $i>1$.
For all indices $j\leq i$, the quotient $G/G_{j+1}$ has class $j$ so, without loss of generality, we assume that $c=i+1$. We conclude by applying Lemma \ref{dim c}.
\vspace{8pt} \\
\noindent
We remark that Theorem \ref{theorem consecutive layers} is the same as Proposition \ref{consecutive layers}. Moreover, Theorem \ref{theorem dimensions} is given by the combination of Propositions \ref{p^5, class 4, intensity 1} and \ref{consecutive layers}.

\chapter{Higher nilpotency classes}\label{chapter higher}

The aim of this chapter is to gain better control of the $p$-power map on finite $p$-groups of intensity greater than $1$. 
We remind the reader that, if $n$ is a positive integer and $G$ is a group, then $G^n$ is equal to the subgroup of $G$ that is generated by the $n$-th powers of the elements of $G$, i.e. $G^n=\gen{x^n : x\in G}$ (see List of Symbols).
One of the most important results we achieve in Chapter \ref{chapter higher} is the following.

\begin{theorem}\label{theorem exp not p}
\pgp\
Assume that the class of $G$ is at least $4$ and that $\inte(G)>1$. 
Then $G^p=G_3$.
\end{theorem}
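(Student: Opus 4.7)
The plan is to establish the inclusions $G^p\subseteq G_3$ and $G_3\subseteq G^p$ separately. For the first, I will note that $G/G_3$ is a class-$2$ group with intensity greater than $1$ (by Lemma~\ref{intensity of quotients}), so by Theorem~\ref{theorem class2 complete} it is extraspecial of exponent~$p$, giving $G^p\subseteq G_3$ at once. The substantive direction is $G_3\subseteq G^p$, which I will prove by contradiction: if $G^p\subsetneq G_3$, then the quotient $H=G/G^p$ has $H_3=G_3/G^p\neq 1$, so $H$ has class at least $3$; Lemma~\ref{intensity of quotients} together with Theorem~\ref{theorem class at least 3} then yield $\inte(H)=2$ and $(w_1^H,w_2^H,w_3^H)=(2,1,f)$ with $f\in\{1,2\}$. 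A direct calculation identifies $w_3^H=\log_p|G_3:G_4G^p|\geq 1$, so my task reduces to showing $G_4G^p=G_3$, since the resulting $w_3^H=0$ will contradict $f\geq 1$.

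To force $G_4G^p=G_3$, I will study the map $\bar\rho\colon G/G_2\to G_3/G_4$ sending $xG_2\mapsto x^pG_4$. Combining the Hall-Petrescu formula (Lemma~\ref{hall-petrescu lemma}) with Proposition~\ref{p-power jumps 2} — which ensures that $G_i/G_{i+2}$ has exponent $p$ — one checks that the correction terms arising when $x$ is replaced by $xy$ with $y\in G_2$ all lie in $G_4$, so $\bar\rho$ is well defined. By Theorem~\ref{theorem dimensions} both source and target are $2$-dimensional over $\F_p$, and $\bar\rho$ is equivariant for the order-$2$ intense automorphism $\alpha$, which by Proposition~\ref{proposition -1^i} acts as $-1$ on both (note $(-1)^3=-1$). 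The subgroup generated by the image of $\bar\rho$ is precisely $G^pG_4/G_4$. A finer Hall-Petrescu computation will show that for $p\geq 5$ the map $\bar\rho$ is actually $\F_p$-linear, reducing the surjectivity question to linear algebra on $\F_p^2$; for $p=3$ the term $c_3$ lands in $G_3$ but not in $G_4$, so $\bar\rho$ is only a set map and a more delicate analysis is required.

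The main obstacle is to rule out the possibility that the image of $\bar\rho$ is a proper $A$-stable subspace of $G_3/G_4$. There are two cases, according as this image has dimension $0$ or $1$. In the dimension-$0$ case, $G^p\subseteq G_4$; the plan is to iterate the argument, replacing $G_4$ by successive $G_j$ and using Theorem~\ref{theorem dimensions} together with Theorem~\ref{theorem consecutive layers} (applied to the class-$4$ quotient $G/G^p$) to force $G^p\subseteq G_{j+1}$ whenever $G^p\subseteq G_j$, ultimately driving $G^p=1$, i.e.\ $G$ of exponent~$p$. Ruling out such exponent-$p$ examples of class at least $4$ with intensity greater than $1$ should then be achieved through a structural contradiction — via regularity for $p\geq 5$, and via a direct analysis drawing on Proposition~\ref{normal squeezed} (which constrains normal subgroups to sandwich positions in the lower central series) for $p=3$. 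The dimension-$1$ case demands a parallel but finer argument in the same vein, likely combined with the counting methods of Lemmas~\ref{orbits} and~\ref{max class conjs in C}. Assembling these pieces is where I expect the bulk of the technical work to lie; once done, the contradiction with $w_3^H=f\geq 1$ closes the proof.
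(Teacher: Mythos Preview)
Your framework is sound: $G^p\subseteq G_3$ follows at once, and reducing the reverse inclusion to the surjectivity of $\bar\rho\colon G/G_2\to G_3/G_4$ is correct and matches the paper's strategy (the paper works in the class-$4$ quotient, proves $\bar\rho$ bijective there as Lemma~\ref{corollary bijection rho}, then closes with Proposition~\ref{normal squeezed}; your $w_3^H=0$ contradiction is an equivalent finish). The observation that $\bar\rho$ is $\F_p$-linear for $p\geq 5$ is also correct---though note that $A$-equivariance gives no constraint, since $\alpha$ acts as $-1$ on both source and target and hence every subspace of $G_3/G_4$ is already $A$-stable.

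The genuine gap is at the surjectivity of $\bar\rho$. Your dimension-$0$ plan does not hold up: Theorems~\ref{theorem dimensions} and~\ref{theorem consecutive layers} constrain widths, but they give no mechanism for pushing $G^p$ from $G_j$ into $G_{j+1}$. And even granting $G^p=1$, the appeal to regularity is empty---a group of exponent $p$ is trivially regular, and none of the regular-group lemmas (Lemmas~\ref{regular implies power abelian}, \ref{regular technical}) produce a contradiction with class $\geq 4$ and $\inte(G)>1$. Your dimension-$1$ case is only a gesture, and Lemma~\ref{max class conjs in C} belongs to the $(2,1,1,1)$ world of Section~\ref{section example}, not the $(2,1,2,1)$ world in force here.

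The paper fills this gap with a specific orbit count (Lemmas~\ref{class 4 exp p}--\ref{no torsion in first layer}). Working in class $4$, one assumes some maximal subgroup $C$ has $\rho(C\setminus G_2)\cap G_4\neq\varnothing$; a character argument (Lemma~\ref{ilovewilliepeyote}) then forces $\rho(C\setminus G_2)=\{1\}$. One introduces the family $Y_C$ of abelian subgroups $\langle x\rangle\oplus\langle y\rangle$ with $x\in C\setminus G_2$ and $y\in\ZG(C)\setminus G_4$, counts $|Y_C|=p^4$ and $|Y_C^+|=p$, pins down $\nor_G(H)=HG_4$ for each $H\in Y_C^+$, and then Lemma~\ref{orbits} combined with Lemma~\ref{conjugate stable iff in nor-times-G+} yields the absurdity $p^4<p^4$. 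This single construction handles your dimension-$0$ and dimension-$1$ cases simultaneously; the missing idea in your proposal is precisely this choice of $Y_C$ and the accompanying normalizer computation.
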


\noindent
We remark that, whenever $p$ is larger than $3$, Theorem \ref{theorem exp not p} cannot be extended to groups of class $3$. There are indeed examples, for $p>3$, of finite $p$-groups of class $3$, intensity greater than $1$, and exponent $p$. We deal extensively with the case of $3$-groups in Chapter \ref{chapter 3gps}.

\section{Groups of class 4}\label{section warm up 4}

In Section \ref{section warm up 4} we start the preparation for the proof of Theorem \ref{theorem exp not p}, which will be given in Section \ref{section class 4i}. This very section will thus just consist of structural lemmas, which will be later of use.
\vspace{8pt} \\
\noindent
The following assumptions will be valid until the end of Section \ref{section warm up 4}.
Let $p$ be a prime number. Let moreover $G$ be a finite $p$-group of class $4$ and denote by $(G_i)_{i\geq 1}$ the lower central series of $G$. For $i\in\graffe{1,2,3,4}$, we define $w_i$ to be $\wt_G(i)=\log_p|G_i:G_{i+1}|$ (see Section \ref{section jumps}). 

\begin{lemma}\label{class 4 comm map non-deg}
Assume that $(w_1,w_2,w_3,w_4)=(2,1,2,1)$ and $\ZG(G)=G_4$.
Then the commutator map induces a non-degenerate map 
$G/G_2\times G_3/G_4\rightarrow G_4$. 
\end{lemma}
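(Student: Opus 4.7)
The plan: By Lemma \ref{bilinear LCS}, the commutator map descends to a bilinear map
\[
\phi: G/G_2 \times G_3/G_4 \longrightarrow G_4,
\]
so what remains is to check that both induced homomorphisms $\phi_L: G/G_2 \to \Hom(G_3/G_4, G_4)$ and $\phi_R: G_3/G_4 \to \Hom(G/G_2, G_4)$ are injective.

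For the right kernel the story is immediate: an element $aG_4$ lies in $\ker \phi_R$ iff $[x,a]=1$ for every $x \in G$, i.e., iff $a \in \ZG(G)$; by hypothesis $\ZG(G)=G_4$, so $\ker \phi_R$ is trivial.

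For the left kernel I would rather not attack $\Cyc_G(G_3)$ head-on; instead I would convert the triviality of $\ker \phi_R$ into the triviality of $\ker \phi_L$ via linear duality. The group $G$ is non-abelian (class $4$) with $|G:G_2|=p^{w_1}=p^2$, so Lemma \ref{frattini comm} gives $G_2=\Phi(G)$, and hence $G/G_2$ is elementary abelian of rank $2$. Moreover $|G_4|=p^{w_4}=p$, so $G_4 \cong \F_p$, whence $\Hom(G/G_2,G_4)$ is an $\F_p$-vector space of order $p^2$. Combining the injectivity of $\phi_R$ with $|G_3/G_4|=p^{w_3}=p^2$, the map $\phi_R$ must be a bijection (as a bonus this forces $G_3/G_4$ to be elementary abelian, which I will then use freely).

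Now I would finish the left-kernel analysis as follows. Suppose $xG_2 \in \ker \phi_L$, so that $[x,a]=1$ for every $a \in G_3$. Given any $f \in \Hom(G/G_2,G_4)$, the surjectivity of $\phi_R$ produces some $a \in G_3$ with $f(yG_2)=[y,a]$ for all $y \in G$; evaluating at $y=x$ gives $f(xG_2)=[x,a]=1$. Thus every homomorphism $G/G_2 \to G_4$ kills $xG_2$, and since $\Hom(G/G_2,G_4)$ separates points of the $\F_p$-vector space $G/G_2$ (as $G_4 \neq 0$), we conclude $xG_2=1$, so $x \in G_2$ and $\ker \phi_L$ is trivial.

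The main obstacle is resisting the urge to directly identify $\Cyc_G(G_3)$ with $G_2$, which would seem to require structural information (such as intensity $>1$) that is not among the stated hypotheses. The key realization is that the dimensional coincidence $w_1=w_3=2$ together with $w_4=1$ and the hypothesis $\ZG(G)=G_4$ forces $\phi_R$ to be a linear isomorphism of $\F_p$-vector spaces, and from there pure duality (double dual of a finite-dimensional space) upgrades the easy right-kernel triviality to the desired left-kernel triviality.
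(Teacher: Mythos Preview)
Your proof is correct and follows essentially the same approach as the paper's: establish the bilinear map via Lemma~\ref{bilinear LCS}, observe that the right kernel is trivial because $\ZG(G)=G_4$, and then use the dimension coincidence $w_1=w_3=2$ together with $w_4=1$ to force the left kernel to vanish. The only difference is packaging: where you spell out the duality argument by hand (showing $\phi_R$ is a bijection onto $\Hom(G/G_2,G_4)$ and then separating points), the paper simply invokes Lemma~\ref{non-degenerate dim 1}, which encapsulates exactly this dimension count for bilinear maps into a one-dimensional target.
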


\begin{proof}
The commutator map induces a bilinear map $\gamma:G/G_2\times G_3/G_4\rightarrow G_4$ whose image generates $G_4$, by Lemma \ref{bilinear LCS}. The subgroup $G_4$ has dimension $1$ as an $\F_p$-vector space, while $w_1=w_3=2$. 
The quotient $G/G_2$ has exponent $p$, thanks to Lemma \ref{frattini comm}, and the exponent of $G_3/G_4$ is equal to $p$, as a consequence of Lemma \ref{tensor LCS} (the property of being elementary abelian is preserved by surjective homomorphisms and tensor products). 
Moreover, the centre of $G$ being $G_4$, the right kernel of $\gamma$ is trivial. It follows from Lemma \ref{non-degenerate dim 1} that the left kernel of $\gamma$ has also dimension $0$, so $\gamma$ is non-degenerate.
\end{proof}

\begin{lemma}\label{max sub contains G2}
Assume that $(w_1,w_2,w_3,w_4)=(2,1,2,1)$.
Let $C$ be a maximal subgroup of $G$. Then $C$ contains $G_2$ and 
$|G:C|=|C:G_2|=p$.
\end{lemma}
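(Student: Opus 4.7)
The plan is to first show that $G_2$ is forced to coincide with the Frattini subgroup $\Phi(G)$, so that every maximal subgroup contains $G_2$ automatically, and then to read off the two indices from the fact that $G/G_2$ is a $2$-dimensional $\F_p$-vector space.

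First I would observe that $G$ is non-abelian: since $w_2=1$, the commutator subgroup $G_2$ is non-trivial. Combined with $w_1=2$, i.e.\ $|G:G_2|=p^2$, Lemma \ref{frattini comm} applies and yields $\Phi(G)=G_2$. Hence every maximal subgroup $C$ of $G$, which by definition of the Frattini subgroup contains $\Phi(G)$, contains $G_2$.

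Once $G_2\subseteq C$ is established, the quotient $C/G_2$ is a subspace of the $\F_p$-vector space $G/G_2$, which has dimension $w_1=2$. Maximality of $C$ in $G$ corresponds to maximality of $C/G_2$ in $G/G_2$, so $C/G_2$ has codimension $1$, giving $|G:C|=p$ and $|C:G_2|=p$.

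There is no real obstacle here; the whole argument is just the combination of Lemma \ref{frattini comm} (to identify $G_2$ with $\Phi(G)$) with the elementary fact that maximal subgroups of a $p$-group have index $p$ and contain the Frattini subgroup.
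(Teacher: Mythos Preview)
Your proof is correct and follows essentially the same route as the paper: maximal subgroups contain $\Phi(G)$, which in turn contains $G_2$, and then the index computation is immediate from $|G:G_2|=p^2$. The only difference is that the paper uses the always-true inclusion $G_2\subseteq\Phi(G)$ directly, whereas you invoke Lemma~\ref{frattini comm} to obtain the stronger equality $\Phi(G)=G_2$; this extra step is not needed for the conclusion but does no harm.
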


\begin{proof}
The subgroup $C$ being maximal, it has index $p$ in $G$ and it contains the Frattini subgroup of $G$. Since $G_2$ is contained in $\Phi(G)$ and $|G:C|=p$, we get
$|C:G_2|=p$.
\end{proof}

\begin{lemma}\label{class 4 structure}
Assume that $\inte(G)>1$.
Then $(w_1,w_2,w_3,w_4)=(2,1,2,1)$ and $G$ has order $p^6$.
Moreover, one has $\ZG(G)=G_4$.
\end{lemma}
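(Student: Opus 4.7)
The plan is to assemble three ingredients that have already been put in place. First, since $G$ has class exactly $4$ and $\inte(G)>1$, I would invoke Theorem \ref{theorem dimensions} directly to conclude $(w_1,w_2,w_3,w_4)=(2,1,2,1)$.

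Once the widths are pinned down, the order of $G$ is immediate from the telescoping product
\[
|G| = \prod_{i=1}^{4} |G_i:G_{i+1}| = p^{w_1+w_2+w_3+w_4} = p^{2+1+2+1} = p^{6}.
\]
For the last assertion, I would apply Lemma \ref{centre at the bottom} with $H=G$ and $i=c=4$: that lemma says that whenever $\inte(G)>1$ and $H$ is a quotient of $G$ of class $i$ (for $i\in\{1,\ldots,c\}$), one has $\ZG(H)=H_i$. Taking $H=G$ itself, this gives $\ZG(G)=G_4$. No extra computation is needed, and there is no real obstacle here — the entire content of the lemma has already been absorbed into Theorem \ref{theorem dimensions} and Lemma \ref{centre at the bottom}, both of which are available at this point of the text.
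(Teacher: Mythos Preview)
Your proposal is correct and matches the paper's proof almost verbatim: the paper invokes Theorem~\ref{theorem dimensions} for the widths, Lemma~\ref{order product orders jumps} for the order (which is exactly your telescoping product), and Lemma~\ref{centre at the bottom} for $\ZG(G)=G_4$.
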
 

\begin{proof}
The quadruple $(w_1,w_2,w_3,w_4)$ is equal to $(2,1,2,1)$ by Theorem \ref{theorem dimensions} and the order of $G$ is equal to $p^6$, as a consequence of Lemma \ref{order product orders jumps}. The centre of $G$ is equal to $G_4$ thanks to Lemma \ref{centre at the bottom}.
\end{proof}

\begin{lemma}\label{centre of max subgp class 4}
Let $C$ be a maximal subgroup of $G$ and assume that $\inte(G)>1$. Then $G_4\subseteq\ZG(C)\subseteq G_3$ and $|G_3:\ZG(C)|=|\ZG(C):G_4|=p$.
\end{lemma}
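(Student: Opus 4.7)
The plan is to combine two structural inputs: Lemma \ref{class 4 structure}, which gives $(w_1,w_2,w_3,w_4)=(2,1,2,1)$, $|G|=p^6$, and $\ZG(G)=G_4$; and Lemma \ref{max sub contains G2}, which gives $G_2\subseteq C$ with $|G:C|=|C:G_2|=p$. The inclusion $G_4\subseteq \ZG(C)$ is then immediate, because $G_4=\ZG(G)$ is central in $G$ and hence in $C$.

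Next, I would prove $\ZG(C)\subseteq G_3$ by a two-case argument, using the non-degenerate pairing $\gamma:G/G_2\times G_3/G_4\to G_4$ from Lemma \ref{class 4 comm map non-deg}. Take $z\in\ZG(C)$. If $z\notin G_2$, then by non-degeneracy of $\gamma$ the linear form $gG_4\mapsto[z,g]$ on $G_3/G_4$ is nonzero, contradicting $[z,G_3]=1$ (which holds since $G_3\subseteq G_2\subseteq C$). If instead $z\in G_2\setminus G_3$, then since $|G_2:G_3|=p$ the coset $zG_3$ generates $G_2/G_3$; by Lemma \ref{bilinear LCS} the commutator pairing $G/G_2\times G_2/G_3\to G_3/G_4$ has image generating $G_3/G_4$, so the induced map $yG_2\mapsto[y,z]G_4$ from $G/G_2$ to $G_3/G_4$ is surjective. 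Both spaces have $\F_p$-dimension $2$ by Lemma \ref{class 4 structure}, so this map is an isomorphism. But $z\in\ZG(C)$ forces it to vanish on $C/G_2$, which has dimension $1$, contradicting injectivity. Hence $\ZG(C)\subseteq G_3$.

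Finally, for the indices, I would use that for any $z\in G_3$ one has $[z,G_2]\subseteq G_5=1$ by Lemma \ref{commutator indices}, and $[z,G]\subseteq G_4$ is central. Thus the map $g\mapsto[z,g]$ factors through $G/G_2$ and lands in $G_4$, defining a homomorphism $\phi:G_3/G_4\to\Hom(G/G_2,G_4)$ whose injectivity is exactly non-degeneracy of $\gamma$ (Lemma \ref{class 4 comm map non-deg}). Since both sides are $\F_p$-vector spaces of dimension $2$, $\phi$ is an isomorphism. An element $zG_4\in G_3/G_4$ lies in $\ZG(C)/G_4$ iff $\phi(zG_4)$ vanishes on $C/G_2$, and the subspace of such homomorphisms is $\Hom(G/C,G_4)$, which is $1$-dimensional. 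Therefore $|\ZG(C):G_4|=p$, and combined with $|G_3:G_4|=p^2$ this gives $|G_3:\ZG(C)|=p$.

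The only real obstacle is the $z\in G_2\setminus G_3$ case of the middle step: one has to notice that the surjection $G/G_2\to G_3/G_4$ coming from Lemma \ref{bilinear LCS} is actually an isomorphism here (because $w_1=w_3=2$), so that the restriction to the $1$-dimensional subspace $C/G_2$ cannot vanish. Everything else is a direct dimension count against the non-degenerate pairing.
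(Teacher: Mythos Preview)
Your proof is correct, but it takes a more hands-on route than the paper for the inclusion $\ZG(C)\subseteq G_3$. The paper observes that $\ZG(C)$ is characteristic in the normal subgroup $C$, hence normal in $G$, and then invokes Proposition \ref{normal squeezed} (normal subgroups of a group with $\inte(G)>1$ are squeezed between consecutive terms of the lower central series): since $G_4\subseteq\ZG(C)$ and the index computation already shows $G_3\cap\ZG(C)\neq G_3$, the only possible slot is $G_4\subseteq\ZG(C)\subseteq G_3$. This replaces your two-case element chase entirely with a one-line structural appeal. Your argument, by contrast, is more self-contained—it avoids the global Proposition \ref{normal squeezed} and works directly with the two commutator pairings $G/G_2\times G_3/G_4\to G_4$ and $G/G_2\times G_2/G_3\to G_3/G_4$—which is a perfectly valid trade-off. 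For the index $|G_3:\ZG(C)|=p$, the paper compresses your dualization $\phi:G_3/G_4\xrightarrow{\sim}\Hom(G/G_2,G_4)$ into a single citation of Lemma \ref{non-degenerate dim 1} applied to the induced non-degenerate pairing $C/G_2\times G_3/(G_3\cap\ZG(C))\to G_4$; the content is the same.
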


\begin{proof}
We assume that $\inte(G)>1$.
By Lemma \ref{class 4 structure}, the subgroups $G_4$ and $\ZG(G)$ are the same and $(w_1,w_2,w_3,w_4)=(2,1,2,1)$. Moreover, by Lemma \ref{class 4 comm map non-deg}, the commutator map induces a non-degenerate map $G/G_2\times G_3/G_4\rightarrow G_4$. 
It follows from Lemma \ref{non-degenerate dim 1} that $G_3\cap\ZG(C)$ has index $p$ in $G_3$.
Now the subgroup $\ZG(C)$ is normal in $G$, because it is characteristic in the normal subgroup $C$, and therefore Proposition \ref{normal squeezed} yields $\ZG(C)\subseteq G_3$. We get that $|G_3:\ZG(C)|=|\ZG(C):G_4|=p$.
\end{proof}

\begin{lemma}\label{binocolo}
Let $M$ be a normal subgroup of $G$ that contains $G_4$ with index $p$. If $\inte(G)>1$, then $\Cyc_G(M)$ is a maximal subgroup of $G$.
\end{lemma}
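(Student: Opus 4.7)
The plan is to use the structural information about $G$ given by Lemma \ref{class 4 structure} (namely $(w_1,w_2,w_3,w_4)=(2,1,2,1)$, $|G|=p^6$, and $\ZG(G)=G_4$) together with the non-degeneracy from Lemma \ref{class 4 comm map non-deg} in order to pin down exactly the index of $\Cyc_G(M)$ in $G$.

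First I would locate $M$ precisely. Since $M$ is a normal subgroup of $G$ containing $G_4$, Proposition \ref{normal squeezed} forces $M\subseteq G_3$; combined with $|M:G_4|=p$ and $|G_3:G_4|=p^2$, this gives $G_4\subsetneq M\subsetneq G_3$ with $|G_3:M|=p$. Next I would observe that $G_2$ centralizes $M$, because $[G_2,M]\subseteq[G_2,G_3]\subseteq G_5=\graffe{1}$ by Lemma \ref{commutator indices}; hence $G_2\subseteq\Cyc_G(M)$, and in particular the index $|G:\Cyc_G(M)|$ divides $|G:G_2|=p^2$.

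To rule out the two ``extreme'' cases, consider the map
\[
\Psi:G/G_2\longrightarrow\Hom(M/G_4,G_4),\qquad xG_2\mapsto\bigl(mG_4\mapsto[x,m]\bigr),
\]
which is a well-defined homomorphism: it is well-defined on the second variable because $[G,G_4]\subseteq G_5=\graffe{1}$, well-defined on the first because $[G_2,M]=\graffe{1}$, and a homomorphism thanks to the bilinearity statement of Lemma \ref{bilinear LCS hk}. By construction, $\ker\Psi=\Cyc_G(M)/G_2$. Since $|M:G_4|=|G_4|=p$, the target has order $p$, so the image of $\Psi$ has order $1$ or $p$.

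Finally, the image is not trivial: indeed $M\not\subseteq G_4=\ZG(G)$ (because $|M|>|G_4|$), so some element of $M$ is not central, meaning there exists $x\in G$ with $[x,M]\neq\graffe{1}$, giving a nonzero value of $\Psi$. Hence $|G:\Cyc_G(M)|=|\mathrm{im}\,\Psi|=p$, and invoking Lemma \ref{max sub contains G2} (or just the definition of maximal), $\Cyc_G(M)$ is a maximal subgroup of $G$. The only genuinely delicate point is ensuring $\Psi$ is well-defined on both factors simultaneously, but this follows from the class-$4$ hypothesis applied to $[G,G_4]$ and $[G_2,G_3]$; everything else is bookkeeping with Lemma \ref{class 4 structure}.
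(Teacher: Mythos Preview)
Your proof is correct and follows essentially the same line as the paper's. The paper compresses your argument by invoking the already-established non-degenerate pairing $G/G_2\times G_3/G_4\to G_4$ from Lemma~\ref{class 4 comm map non-deg} and then citing Lemma~\ref{non-degenerate dim 1}, whereas you construct the restricted map $\Psi$ by hand and deduce its nontriviality from $\ZG(G)=G_4$; these are the same computation, just packaged differently.
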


\begin{proof}
We assume that $\inte(G)>1$. Then, by Lemma \ref{class 4 structure}, we have that $\ZG(G)=G_4$ and $(w_1,w_2,w_3,w_4)=(2,1,2,1)$.
Let now $M$ be a normal subgroup of $G$ that contains $G_4$ with index $p$. As a consequence of Proposition \ref{normal squeezed}, the subgroup $M$ is contained in $G_3$.
The commutator map from Lemma \ref{class 4 comm map non-deg} being non-degenerate, it follows from Lemma \ref{non-degenerate dim 1} that $\Cyc_G(M)$ is maximal in $G$.
\end{proof}

\begin{lemma}\label{bijection normal subgroups layers}
Assume that $\inte(G)>1$.
Let $\cor{M}$ be the collection of maximal subgroups of $G$ and let $\cor{N}$ be the collection of normal subgroups of $G$ that contain $G_4$ with index $p$.
Let $f:\mathcal{M}\rightarrow\mathcal{N}$ be defined by $N\mapsto\ZG(N)$. Then $f$ is a bijection with inverse $f^{-1}:M\mapsto\Cyc_G(M)$.
\end{lemma}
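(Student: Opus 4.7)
The plan is to verify that both maps are well-defined, and then check they are mutually inverse using the structural information collected so far. By Lemma \ref{class 4 structure} we have $|G|=p^6$, $\ZG(G)=G_4$, and $(w_1,w_2,w_3,w_4)=(2,1,2,1)$, so $|G_3|=p^3$, $|G_4|=p$; in particular $G_3$ is abelian because $[G_3,G_3]\subseteq G_6=\{1\}$, and every $M\in\cor{N}$ has order $p^2$. Well-definedness of $f$ is immediate from Lemma \ref{centre of max subgp class 4}: for $N\in\cor{M}$, the subgroup $\ZG(N)$ is normal in $G$, contains $G_4$, and satisfies $|\ZG(N):G_4|=p$, so $\ZG(N)\in\cor{N}$. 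Well-definedness of the candidate inverse is Lemma \ref{binocolo}: for $M\in\cor{N}$, $\Cyc_G(M)$ is a maximal subgroup.

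For $N\in\cor{M}$, I would show $\Cyc_G(\ZG(N))=N$. By the definition of centre, $N$ centralizes $\ZG(N)$, so $N\subseteq\Cyc_G(\ZG(N))$. By Lemma \ref{binocolo}, $\Cyc_G(\ZG(N))$ is itself a maximal subgroup of $G$, hence either $N$ or the whole of $G$. The latter would force $\ZG(N)\subseteq\ZG(G)=G_4$, contradicting $|\ZG(N):G_4|=p$ from Lemma \ref{centre of max subgp class 4}. Thus $\Cyc_G(\ZG(N))=N$.

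For $M\in\cor{N}$, I would show $\ZG(\Cyc_G(M))=M$. First, Proposition \ref{normal squeezed} combined with the order constraint $|M|=p^2$ forces $G_4\subseteq M\subseteq G_3$: indeed, $M$ must lie between two consecutive terms of the lower central series, and the indices rule out any choice other than $i=3$. Since $G_3$ is abelian, $M$ is abelian, so $M\subseteq\Cyc_G(M)=:C$; moreover, $C$ centralizes $M$ by definition, and $M$ is contained in $C$, so $M\subseteq\ZG(C)$. Now Lemma \ref{centre of max subgp class 4}, applied to the maximal subgroup $C$ provided by Lemma \ref{binocolo}, gives $|\ZG(C):G_4|=p$, so $|\ZG(C)|=p^2=|M|$. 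Hence $M=\ZG(C)$, completing the proof that $f$ and $M\mapsto\Cyc_G(M)$ are mutually inverse.

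The reasoning is essentially bookkeeping on top of Lemmas \ref{centre of max subgp class 4} and \ref{binocolo}; the only mildly subtle step is the one-line argument that $\Cyc_G(\ZG(N))\neq G$, and the observation that $M$ is automatically contained in $G_3$ (hence abelian) so as to give $M\subseteq\ZG(C)$ rather than just $M\subseteq C$.
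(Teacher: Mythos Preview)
Your proof is correct and follows the same approach as the paper: well-definedness of both maps comes from Lemmas \ref{centre of max subgp class 4} and \ref{binocolo}, and then one checks they are mutually inverse. The paper dismisses this last step with ``it is now easy to show that $f$ and $g$ are inverses to each other''; you have simply written out those easy details, and your arguments for $\Cyc_G(\ZG(N))=N$ and $\ZG(\Cyc_G(M))=M$ are both sound.
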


\begin{proof}
The map $f$ is well-defined, as a consequence of Lemma \ref{centre of max subgp class 4}. 
Also the map $g:\mathcal{N}\rightarrow\mathcal{M}$, sending $M$ to $\Cyc_G(M)$,
is well-defined thanks to Lemma \ref{binocolo}. It is now easy to show that $f$ and $g$ are inverses to each other.
\end{proof}

\section{Class 4 and $p$-th powers}\label{section class 4p}

The goal of this section is to prove some technical lemmas regarding the $p$-th powering on finite $p$-groups of intensity greater than $1$. We will use such lemmas in Section \ref{section class 4i}, where Theorem \ref{theorem exp not p} is proven.
\vspace{8pt} \\
\noindent
Through the whole of Section \ref{section class 4p}, the following assumptions will be valid.
Let $p$ be a prime number. Let moreover $G$ be a finite $p$-group of class $4$ and denote by $(G_i)_{i\geq 1}$ the lower central series of $G$. Let $\rho:G\rightarrow G$ be defined by $x\mapsto x^p$; this is the first time we introduce this notation, which can be also found in the List of Symbols.
For each $i\in\graffe{1,2,3,4}$, we will write $w_i=\wt_G(i)$ for the $i$-th width of $G$ (see Section \ref{section jumps}). Assume that $\inte(G)>1$. It follows from Proposition \ref{proposition 2gps} that $G$ is non-trivial and $p$ is odd.

\begin{lemma}\label{induced powering on abelianization}
The map $\rho$ induces a map $\overline{\rho}:G/G_2\rightarrow G_3/G_4$.
\end{lemma}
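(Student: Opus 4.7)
The plan is to verify two things: that $\rho$ actually has image in $G_3$, and that the resulting map into $G_3/G_4$ depends only on the coset of its input modulo $G_2$. Since $\inte(G)>1$, Proposition \ref{proposition 2gps} forces $p$ to be odd, and by Lemma \ref{intensity of quotients} the quotient $G/G_3$ inherits intensity greater than $1$. Being of class~$2$, Theorem \ref{theorem class2 complete} identifies $G/G_3$ as extraspecial of exponent $p$, whence $G^p\subseteq G_3$. This places $\rho(G)$ in $G_3$, so the codomain $G_3/G_4$ is meaningful.

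For well-definedness, I would show that $(xz)^p\equiv x^p\pmod{G_4}$ for every $x\in G$ and every $z\in G_2$. I would pass to the class-$3$ quotient $H=G/G_4$, writing $a=xG_4$ and $b=zG_4\in H_2$, so that $H_3=G_3/G_4$ is central in $H$ and in particular $[b,a]\in H_3$ is central. Lemma \ref{multiplication formulas commutators}(3) applied to $b$ and $a$ in $H$ collapses to $[b^n,a]=[b,a]^n$, because each $[b^{n-s},a]$ lies in the central $H_3$ and therefore commutes with $b$. A direct induction on $n$, using the identity $b^na=[b,a]^n ab^n$ together with the centrality of $[b,a]$, then yields
\[
(ab)^n = a^n b^n [b,a]^{\binom{n}{2}} \qquad \text{in } H.
\]
Specializing to $n=p$, Proposition \ref{p-power jumps 2} (applicable because $G$ has class and intensity both greater than $1$) gives that $H_2=G_2/G_4$ has exponent $p$, so $b^p=1$; and Lemma \ref{elementary abelian} yields exponent $p$ on $H_3$, while $\binom{p}{2}=p(p-1)/2$ is divisible by $p$ since $p$ is odd, so $[b,a]^{\binom{p}{2}}=1$. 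Thus $(ab)^p=a^p$ in $H$, i.e.\ $(xz)^p\equiv x^p\pmod{G_4}$, as required.

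The main obstacle is handling small primes uniformly, particularly $p=3$, where a crude application of the Hall--Petrescu formula from Lemma \ref{hall-petrescu lemma} leaves a residual term $c_p=c_3\in G_3$ that is not visibly in $G_4$. The direct computation in the class-$3$ quotient $H$ circumvents this by isolating the surviving contribution to $(ab)^p$ as a single power of $[b,a]\in H_3$, which is then killed by the arithmetic identity $p\mid\binom{p}{2}$ valid for every odd $p$.
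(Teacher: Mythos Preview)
Your proof is correct. Both you and the paper reduce to showing $(xz)^p\equiv x^p\pmod{G_4}$ for $z\in G_2$, and both exploit that the relevant commutator $[z,x]$ lies in the central subgroup $G_3/G_4$, so the subgroup generated by $x$ and $z$ has class at most $2$ modulo $G_4$. The paper packages this differently: it applies Corollary~\ref{p map petrescu} (the Hall--Petrescu consequence) to the subgroup $C=\langle x,G_2\rangle$, obtaining $(xy)^p\equiv x^py^p\bmod C_2^pC_p$, and then argues that $C_2=[C,G_2]\subseteq G_3$ forces $C_3\subseteq G_4$, whence $C_2^pC_p\subseteq G_4$ since $p$ is odd. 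You instead derive the explicit class-$2$ identity $(ab)^n=a^nb^n[b,a]^{\binom{n}{2}}$ by hand and isolate the role of $p\mid\binom{p}{2}$ directly. Your route makes the arithmetic obstruction at $p=2$ more visible and avoids the black-box appeal to Hall--Petrescu; the paper's route is slightly shorter once that corollary is available. Either way the substance is the same.
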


\begin{proof}
For each index $i\in\Z_{\geq 1}$, the subset $\rho(G_i)$ is contained in $G_{i+2}$, as a consequence of Proposition 
\ref{p-power jumps 2}. In particular, $\rho(G_2)$ is contained in $G_4$.
By Lemma \ref{class 4 structure}, the subgroup $\ZG(G)$ is equal to $G_4$ and $(w_1,w_2,w_3,w_4)=(2,1,2,1)$.
Let $x$ be an element of $G$ and define $C=\gen{x,G_2}$; denote by $(C_i)_{i\geq 1}$ the lower central series of $C$. The quotient $C/G_2$ is cyclic, so, thanks to Lemma \ref{cyclic quotient commutators}, the subgroups $C_2$ and $[C,G_2]$ are equal. 
It follows that $C_3=[C,C_2]$ is contained in $G_4$, and, the prime $p$ being odd, we get that $C_2^pC_p$ is contained in $G_4$.
Let now $y\in G_2$. By Lemma \ref{p map petrescu}, we have that
$\rho(xy)\equiv\rho(x)\rho(y)\bmod C_2^pC_p$, and therefore 
$\rho(xy)\equiv\rho(x)\rho(y)\bmod G_4$. Since the element $\rho(y)$ belongs to $G_4$ and the choices of $x$ and $y$ were arbitrary, the map $\overline{\rho}$ is well-defined. 
\end{proof}

\begin{lemma}\label{ciccia}
Let $C$ be a maximal subgroup of $G$ and assume that $\rho(C\setminus G_2)\cap G_4$ is not empty.
Then $\rho(C)\subseteq G_4$.
\end{lemma}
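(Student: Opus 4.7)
The plan is to reduce the statement to a one-line consequence of Lemma~\ref{induced powering on abelianization}, using the fact that a maximal subgroup $C$ is cyclic modulo $G_2$.

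First I would invoke Lemma~\ref{max sub contains G2} to conclude that $G_2 \subseteq C$ and $|C:G_2|=p$. Pick $x \in C\setminus G_2$ with $x^p \in G_4$, which exists by hypothesis. Then $C/G_2 = \langle xG_2\rangle$, so every $c \in C$ can be written as $c = x^i y$ with $y \in G_2$; in particular $cG_2 = x^i G_2$ in $G/G_2$.

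The next step is to use the well-defined induced map $\overline{\rho} : G/G_2 \to G_3/G_4$ from Lemma~\ref{induced powering on abelianization}, which satisfies $\overline{\rho}(gG_2)=g^p G_4$ for every $g \in G$. Applying this to $c$ and to $x^i$ gives
\[
c^p\, G_4 \;=\; \overline{\rho}(cG_2) \;=\; \overline{\rho}(x^i G_2) \;=\; x^{ip}\, G_4 \;=\; (x^p)^i\, G_4,
\]
and since $x^p \in G_4$ this coset equals $G_4$, so $c^p \in G_4$. As $c \in C$ was arbitrary, $\rho(C)\subseteq G_4$.

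The only point that needs a little care is checking that this use of $\overline{\rho}$ is legitimate, i.e.\ that $\rho(x^i y) \equiv \rho(x^i) \pmod{G_4}$ when $y \in G_2$. This is exactly the content of the proof of Lemma~\ref{induced powering on abelianization}, which reruns Hall--Petrescu (Lemma~\ref{hall-petrescu lemma}) inside the subgroup $D=\langle x^i, G_2\rangle$: because $D/G_2$ is cyclic, $D_2=[D,G_2]\subseteq G_3$ and $D_3 \subseteq [G,G_3]=G_4$, hence $D_2^p D_p \subseteq G_4$ (using $G_2^p \subseteq G_4$ from Proposition~\ref{p-power jumps 2} and $D_p \subseteq D_3 \subseteq G_4$ when $p=3$, $D_p\subseteq G_p\subseteq G_4$ otherwise). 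Combined with $y^p \in \rho(G_2)\subseteq G_4$, this gives the desired congruence. No step looks like a serious obstacle; the subtlest bookkeeping is verifying the Hall--Petrescu bound in the case $p=3$, where $\binom{p}{3}=1$ forces one to use the cyclic-over-$G_2$ structure of $D$ rather than a na\"ive bound on $G_3$.
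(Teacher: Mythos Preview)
Your proof is correct and follows essentially the same approach as the paper: both use that $\overline{\rho}$ from Lemma~\ref{induced powering on abelianization} is well-defined and that $C/G_2$ is cyclic, so the single hypothesis $x^p\in G_4$ forces $\overline{\rho}(C/G_2)$ to be trivial. Your final paragraph re-verifying the well-definedness of $\overline{\rho}$ is unnecessary, since that is exactly what Lemma~\ref{induced powering on abelianization} already provides; you can simply cite it.
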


\begin{proof}
By Lemma \ref{induced powering on abelianization}, the %$p$-th powering induces a well-defined map 
map $\rho$ induces a function
$\overline{\rho}:G/G_2\rightarrow G_3/G_4$, which then becomes a homomorphism whenever we restrict it to a cyclic subgroup of $G/G_2$. Since $\rho^{-1}(G_4)\cap (C\setminus G_2)$ is non-empty, it generates $C$ modulo $G_2$, and thus
$\overline{\rho}(C/G_2)\subseteq G_4$. It follows that $\rho(C)\subseteq G_4$.
\end{proof}

\begin{lemma}\label{ilovewilliepeyote}
Let $C$ be a maximal subgroup of $G$ and assume that $\rho(C\setminus G_2)\cap G_4$ is not empty. Then $\rho(C\setminus G_2)=\graffe{1}$.
\end{lemma}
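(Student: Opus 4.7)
The plan is to pick an arbitrary $x \in C \setminus G_2$ and to show directly that $x^p = 1$. This will be enough, since then $\rho(C \setminus G_2) \subseteq \{1\}$, and of course $1 \in \rho(C \setminus G_2)$ is not required (indeed we may reverse the containment by noting $\rho(x) = 1$ for every such $x$, giving $\rho(C \setminus G_2) = \{1\}$).

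First I would exploit Lemma \ref{ciccia}: by hypothesis $\rho(C \setminus G_2) \cap G_4$ is non-empty, so the quoted lemma applies and gives $\rho(C) \subseteq G_4$. In particular, for the chosen $x \in C \setminus G_2$, we have $x^p \in G_4$. The point is now to combine this with a constraint on the cyclic subgroup $H = \gen{x}$.

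The key observation is the parity restriction coming from Section \ref{section higher properties}. Since $x \in G \setminus G_2 = G_1 \setminus G_2$, the element $x$ has depth $1$ in $G$, so $1$ is a jump of $H$ in $G$ (by Lemma \ref{jumps and depth}). Because $H$ is cyclic, Lemma \ref{jumps cyclic sbgs all same parity} forces all jumps of $H$ in $G$ to be odd; the class of $G$ is $4$, so these jumps lie in $\{1,3\}$. In particular $4$ is not a jump of $H$, so $H \cap G_4 = H \cap G_5 = \{1\}$, the last equality holding because $G_5 = 1$.

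Putting the two pieces together, $x^p$ belongs to $H \cap G_4$, which is trivial. Hence $x^p = 1$, and the arbitrariness of $x$ finishes the proof. There is no serious obstacle here: once Lemma \ref{ciccia} has delivered $\rho(C) \subseteq G_4$, the parity statement for jumps of cyclic subgroups does all of the remaining work, and the argument is a one-line combination of the two facts.
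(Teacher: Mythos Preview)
Your proof is correct. Both your argument and the paper's hinge on the same parity constraint coming from the intense automorphism of order $2$, but you invoke it via Lemma \ref{jumps cyclic sbgs all same parity} (which already packages the character argument), whereas the paper unpacks it directly: it replaces $\gen{x}$ by an $A$-stable conjugate, observes that $\alpha$ acts on $\gen{x}^p$ as $-1$ (since $\gen{x}^p$ is a quotient of $\gen{x}/(\gen{x}\cap G_2)$) but also as $+1$ (since $\gen{x}^p\subseteq G_4$), and then applies Lemma \ref{distinct characters on same gp}. Your route is a clean factoring of exactly that reasoning through an earlier lemma, and it avoids the extra step of passing to an $A$-stable conjugate.
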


\begin{proof}
Let $\alpha$ be an intense automorphism of $G$ of order $2$ and set $A=\gen{\alpha}$.
Let $H$ be a cyclic subgroup of $C$, not contained in $G_2$, and such that $\rho(H)\subseteq G_4$. 
Without loss of generality we assume that $H$ is $A$-stable (otherwise we can take a conjugate of $H$ that is $A$-stable, thanks to Lemma \ref{equivalent intense coprime-pgrps}).
As a consequence of Proposition \ref{proposition -1^i}, the automorphism $\alpha$ induces scalar multiplication by $-1$ on $H/(H\cap G_2)$, so, thanks to Lemma \ref{p-power characters}, the restriction of $\alpha$ to $H^p$ coincides with scalar multiplication by $-1$. However, the subgroup $H^p$ being contained in $G_4$, it follows from Proposition \ref{proposition -1^i} that $\alpha$ coincides with the identity map on $H^p$. Lemma \ref{distinct characters on same gp} yields $H^p=\graffe{1}$, and, the choiche of $H$ being arbitrary, we get $\rho(C\setminus G_2)=\graffe{1}$.
\end{proof}

\section{Class 4 and intensity}\label{section class 4i}

The unique purpose of Section \ref{section class 4i} is to give the proof of the following proposition.

\begin{proposition}\label{olè}
Let $p$ be a prime number and let $G$ be a finite $p$-group of class at least $4$. 
Denote by $(G_i)_{i\geq 1}$ the lower central series of $G$.
If $\inte(G)>1$, then $G^p=G_3$. 
\end{proposition}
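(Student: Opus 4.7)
The strategy is to reduce to the case $\cl(G)=4$ and then to prove the identity by analysing the $p$-th power map on maximal subgroups of $G$.

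\textbf{Reduction to class 4.} Suppose $c:=\cl(G)\geq 5$ and set $\bar G:=G/G_5$; then $\bar G$ has class exactly $4$ and, by Lemma \ref{intensity of quotients}, $\inte(\bar G)>1$. Granting the class-$4$ case gives $\bar G^{\,p}=\bar G_3$, equivalently $G^p G_5=G_3$. Since $G/G_3$ has class $2$ and intensity greater than $1$, Theorem \ref{theorem class2 complete} makes it extraspecial of exponent $p$, so $G^p\subseteq G_3$. The normal subgroup $G^p$ is sandwiched between two consecutive terms of the lower central series by Proposition \ref{normal squeezed}; combined with $G^pG_5=G_3$ this rules out $G^p\subsetneq G_5$, so $G^p\supseteq G_5$ and $G^p=G_3$.

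\textbf{Class-4 analysis.} Assume $\cl(G)=4$. Theorem \ref{theorem dimensions} gives $(\wt_G(1),\wt_G(2),\wt_G(3),\wt_G(4))=(2,1,2,1)$ and $|G|=p^6$, and $G^p\subseteq G_3$ as before. Since $G^p$ is normal, Proposition \ref{normal squeezed} reduces the equality $G^p=G_3$ to $G^pG_4=G_3$, which is to say that the induced map $\overline{\rho}\colon G/G_2\to G_3/G_4$ of Lemma \ref{induced powering on abelianization} has image generating $G_3/G_4$. Because $\Phi(G)=G_2$, there are exactly $p+1$ maximal subgroups of $G$, each of the form $C=\gen{v,G_2}$ with $v\notin G_2$. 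Applying Corollary \ref{p map petrescu} in class $4$ (and using $G_2^p\subseteq G_4$ together with $G_3^p\subseteq G_5=\{1\}$) yields $(v^i g)^p\equiv v^{ip}\pmod{G_4}$ for all $g\in G_2$ and $i\in\Z$, so $\rho(C\setminus G_2)G_4/G_4$ equals either $\{0\}$ or the line $\F_p\cdot v^p G_4\subseteq G_3/G_4$.

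\textbf{Dichotomy.} Combining Lemmas \ref{ciccia} and \ref{ilovewilliepeyote} yields a dichotomy for every maximal $C$: either $v^p=1$ (trivial type, with $\rho(C\setminus G_2)=\{1\}$), or $v^p\in G_3\setminus G_4$ (non-trivial type). Assume, for contradiction, that $G^pG_4\neq G_3$; then the $p+1$ lines $\F_p\cdot v^pG_4$ span a proper subspace of the $2$-dimensional space $G_3/G_4$, i.e.\ a subspace of dimension $0$ or $1$. In the $0$-dimensional configuration every $x\in G\setminus G_2$ has $x^p=1$, while in the $1$-dimensional configuration exactly one maximal subgroup is of trivial type and the remaining $p$ all send $v^p$ into a common line $\ell\subseteq G_3/G_4$.

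\textbf{Main obstacle.} Both configurations are superficially compatible with $\alpha$-equivariance, since $\alpha$ acts as $-1$ on both $G/G_2$ and $G_3/G_4$, so the contradiction must come from the finer fact that \emph{every} subgroup of $G$ has an $\alpha$-stable conjugate (Lemma \ref{equivalent intense coprime-pgrps}). The plan is to combine this with the $\alpha$-stable bijection between maximal subgroups of $G$ and index-$p$ normal subgroups of $G_3$ above $G_4$ (Lemma \ref{bijection normal subgroups layers}), the classification of $\alpha$-stable cyclic subgroups by depth (Lemmas \ref{cyclic subgroup intense} and \ref{conjugate of x}), and orbit-counting (Lemma \ref{orbits}) on natural $G$-invariant families — typically either the cyclic subgroups of order $p^2$ or the lines of $G_3/G_4$ — to force a numerical contradiction in each configuration. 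The delicate case is the $1$-dimensional one, where the distinguished kernel line in $G/G_2$ and image line in $G_3/G_4$ are each $\alpha$-stable; the contradiction must be extracted from the way $\alpha$-stable conjugates of a non-trivial-type generator $v$ interact with those distinguished lines.
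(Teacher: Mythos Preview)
Your reduction to class $4$ is correct and essentially identical to the paper's. The dichotomy you extract from Lemmas \ref{ciccia} and \ref{ilovewilliepeyote} is also right: every maximal subgroup is either of ``trivial type'' ($\rho(C\setminus G_2)=\{1\}$) or has $v^p\in G_3\setminus G_4$.

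The proof, however, is unfinished. Your ``Main obstacle'' paragraph is a plan, not an argument: you indicate that orbit-counting on some family should yield a contradiction in each configuration, but you neither specify the family precisely nor carry out the count. This is exactly where the substance lies.

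The paper does not split into your two configurations. Instead it proves a single statement that kills both at once: \emph{no maximal subgroup is of trivial type} (Lemma \ref{no torsion in first layer}). The argument fixes a hypothetical trivial-type $C$ and studies the set $Y_C$ of subgroups $\gen{x}\oplus\gen{y}$ with $x\in C\setminus G_2$ and $y\in\ZG(C)\setminus G_4$. One computes $|Y_C|=p^4$ (Lemma \ref{comebabycome}), $|Y_C^+|=p$ (Lemma \ref{k7}), and shows $\nor_G(H)=HG_4$ with index $p^3$ for every $H\in Y_C^+$ (Lemma \ref{transitivium}). This forces equality in Lemma \ref{orbits}, so the $A$-stable elements are pairwise non-conjugate; but $\nor_G(H)=HG_4$ does not contain $G_2$, hence does not contain $G^+$, and Lemma \ref{conjugate stable iff in nor-times-G+} then produces a second $A$-stable conjugate of some $H$ --- contradiction. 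Once trivial type is excluded, one checks $C^p\subseteq\ZG(C)$ (since $[x,x^p]=1$) and uses the bijection of Lemma \ref{bijection normal subgroups layers} to see that the $p+1$ lines $C^pG_4/G_4$ are pairwise distinct, hence fill $G_3/G_4$.

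Note that in your $1$-dimensional configuration, for $p>3$ the map $\overline\rho$ is additive (by regularity, since $[G,G]^p\subseteq G_4$), so there is indeed exactly one trivial-type maximal subgroup; for $p=3$ this additivity fails and the claim ``exactly one'' is not justified. Either way, the presence of at least one trivial-type $C$ in both configurations means that the paper's single counting argument on $Y_C$ is precisely the missing step in your outline.
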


\noindent 
Until the end of Section \ref{section class 4i}, the following assumptions will hold. Let $p$ be a prime number and let $G$ be a finite $p$-group of class $4$. Let $\rho:G\rightarrow G$ be defined by $x\mapsto x^p$ (see also the List of Symbols).
Assume that $\inte(G)>1$. It follows that $p$ is odd and the group $G$ is non-trivial (see Section \ref{section main}).
Let $\alpha$ denote an intense automorphism of $G$ of order $2$ and write $A=\gen{\alpha}$. 
Set
$G^+=\graffe{g\in G\ :\ \alpha(g)=g}$ and $G^-=\graffe{g\in G\ :\ \alpha(g)=g^{-1}}$. 
For each maximal subgroup $C$ of $G$, define moreover $Y_C$ to be the collection of abelian subgroups of $G$ that can be written as $\gen{x}\oplus\gen{y}$, with $x\in C\setminus G_2$ and $y\in\ZG(C)\setminus G_4$. 
We will call $Y_C^+$ the set consisting of the $A$-stable elements of $Y_C$.

\begin{lemma}\label{class 4 exp p}
Let $C$ be a maximal subgroup of $G$ and assume that $\rho(C\setminus G_2)\cap G_4$ is not empty.
Let $H$ be an element of $Y_C$. Then $H$ has exponent $p$ and $H\cap G_4=\graffe{1}$.
\end{lemma}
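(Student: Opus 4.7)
The plan is to exploit the very strong consequence of the power-map hypothesis that is already encoded in Lemma \ref{ilovewilliepeyote}. From $\rho(C\setminus G_2)\cap G_4\neq\emptyset$, that lemma upgrades the assumption to $\rho(C\setminus G_2)=\graffe{1}$, and in particular $x^p=1$ for the chosen element $x\in C\setminus G_2$. For $y$, I would combine Lemma \ref{centre of max subgp class 4}, which places $y\in\ZG(C)\subseteq G_3$, with Proposition \ref{p-power jumps 2} applied at $i=3$: the latter says that $G_3/G_5=G_3$ has exponent dividing $p$, so $y^p=1$ as well. Since by definition $H=\gen{x}\oplus\gen{y}$ is a direct sum, it follows at once that $H$ is elementary abelian of order $p^2$, which handles the exponent statement.

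For the second assertion I would argue by contradiction. Lemma \ref{class 4 structure} yields $|G_4|=p$, so if $H\cap G_4$ were non-trivial, then $G_4$ would lie inside $H$ and a generator $z$ of $G_4$ could be written as $z=x^iy^j$ for some integers $i,j\in\graffe{0,\ldots,p-1}$. Now I would use the different depths of the three elements: both $y$ and $z$ sit in $G_3$, while $x\notin G_2$. From the identity $x^i=zy^{-j}\in G_3\subseteq G_2$, and from the fact that $\gen{x}$ is cyclic of order $p$ with $x\notin G_2$ (hence $\gen{x}\cap G_2=\graffe{1}$), one is forced to have $i\equiv 0\bmod p$. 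This collapses the relation to $z=y^j$ with $j\not\equiv 0\bmod p$, so $\gen{z}=\gen{y}$. But $\gen{z}=G_4$ by construction, while $y\notin G_4$ by the definition of $Y_C$; this contradiction closes the proof.

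The main obstacle is essentially organizational rather than technical: one has to line up Lemmas \ref{ilovewilliepeyote}, \ref{centre of max subgp class 4}, \ref{class 4 structure}, and Proposition \ref{p-power jumps 2} to control the exponents, and then run the short depth-based argument to rule out $G_4\subseteq H$. No further construction, no delicate $p$-power calculation, and no appeal to intense automorphisms beyond what is already packaged in the lemmas above seems to be required.
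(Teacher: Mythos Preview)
Your proof is correct and follows essentially the same approach as the paper: both use Lemma \ref{ilovewilliepeyote} for $x^p=1$, the containment $\ZG(C)\subseteq G_3$ from Lemma \ref{centre of max subgp class 4} together with Proposition \ref{p-power jumps 2} for $y^p=1$, and then a depth comparison to exclude $H\cap G_4\neq\graffe{1}$. The only cosmetic difference is that the paper takes a generic element $x^ay^b\in H\cap G_4$ rather than arguing by contradiction via a generator of $G_4$; your version is arguably more explicit in invoking $\gen{x}\cap G_2=\graffe{1}$.
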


\begin{proof}
Let $H=\gen{x}\oplus\gen{y}$ be an element of $Y_C$, where $x\in C\setminus G_2$ and $y\in\ZG(C)\setminus G_4$.
The group $\ZG(C)$ is normal in $G$, because $\ZG(C)$ is characteristic in the normal subgroup $C$, and, by Lemma \ref{centre of max subgp class 4}, the group $G_3$ contains $\ZG(C)$.
From Proposition \ref{p-power jumps 2}, it follows that $\ZG(C)$ has exponent $p$, and thus $y^p=1$. 
The element $x^p$ is $1$, by Lemma \ref{ilovewilliepeyote}, and so $H^p=\graffe{1}$.
To conclude, assume that $x^ay^b\in H\cap G_4$. Then $x^a=(x^ay^b)y^{-b}$ belongs to $H\cap G_3$, so $a\equiv 0\bmod p$. From the fact that $\gen{y}\cap G_4=\graffe{1}$, we conclude that
$H\cap G_4=\graffe{1}$.
\end{proof}

\begin{lemma}\label{coez}
Let $C$ be a maximal subgroup of $G$ and assume that $\rho(C\setminus G_2)\cap G_4$ is not empty.
If $H\in Y_C^+$, then $H\subseteq G^-$.
\end{lemma}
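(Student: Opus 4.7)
The plan is to exploit the jump structure of $H$ in $G$ together with the width formula for the $\pm$-decomposition under the action of $A$, as developed in Section~\ref{section jumps} and Lemma~\ref{order +- jumps}. More concretely, I would first use Lemma~\ref{class 4 exp p}, whose hypothesis is exactly what is assumed here, to conclude that $H$ has exponent $p$ and $H\cap G_4=\graffe{1}$. Writing $H=\gen{x}\oplus\gen{y}$ with $x\in C\setminus G_2$ and $y\in\ZG(C)\setminus G_4$, the group $H$ is therefore elementary abelian of order $p^2$.

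Next I would locate the jumps of $H$ in $G$. By Lemma~\ref{centre of max subgp class 4}, the subgroup $\ZG(C)$ is contained in $G_3$, so $y\in G_3$. An element $x^ay^b\in H$ lies in $G_2$ if and only if $x^a\in G_2$, which (since $x\notin G_2$ and $H$ has exponent $p$) happens precisely when $p\mid a$. Hence $H\cap G_2=\gen{y}$, and because $y\in G_3$ while $H\cap G_4=\graffe{1}$, we also have $H\cap G_3=\gen{y}$ and $H\cap G_5=\graffe{1}$. The jumps of $H$ in $G$ are therefore exactly $1$ and $3$, both of width $1$.

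Finally I would apply Lemma~\ref{order +- jumps} to the $A$-stable subgroup $H$. Its hypothesis that $\alpha$ induces the inversion map on $G/G_2$ is precisely Proposition~\ref{proposition -1^i}, which holds because $\alpha$ is intense of order $2$ on a finite $p$-group of class at least $3$. The collection $\mathcal{E}$ of even jumps of $H$ is empty, so $|H^+|=\prod_{j\in\mathcal{E}}p^{\wt_H^G(j)}=1$; while the collection $\mathcal{O}$ of odd jumps is $\graffe{1,3}$, giving $|H^-|=p\cdot p=p^2=|H|$. Therefore $H=H^-\subseteq G^-$, as required. There is essentially no obstacle here: once the jump computation is in place, the result is a direct bookkeeping application of the $\pm$-machinery from Section~\ref{section involutions}.
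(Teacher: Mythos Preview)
Your proof is correct and follows essentially the same approach as the paper: both invoke Lemma~\ref{class 4 exp p} to get $|H|=p^2$, use Lemma~\ref{centre of max subgp class 4} to see that the jumps of $H$ are $1$ and $3$, and then apply Lemma~\ref{order +- jumps} to conclude $H=H^-$. The only cosmetic difference is that the paper bounds $|H^-|$ from below via the odd-jump product, while you compute $|H^+|=1$ from the empty even-jump product; these are the two halves of the same lemma.
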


\begin{proof}
Let $H=\gen{x}\oplus\gen{y}$ be an element of $Y_C^+$, where $x\in C\setminus G_2$ and $y\in\ZG(C)\setminus G_4$. By Lemma \ref{class 4 exp p}, the group $H$ has exponent $p$ and so the order of $H$ is $p^2$.
The element $x$ has depth $1$ and the depth of $y$ is $3$, as a consequence of Lemma \ref{centre of max subgp class 4}. 
It follows from Lemma \ref{order +- jumps} that 
$$|H|\geq|H\cap G^-|=p^{\wt_H^G(1)}p^{\wt_H^G(3)}\geq p^2=|H|.$$ 
All inequalities are in fact equalities and $H\cap G^-=H$.
\end{proof}

\begin{lemma}\label{k7}
Let $C$ be a maximal subgroup of $G$ and assume that $\rho(C\setminus G_2)\cap G_4$ is not empty.
Then the cardinality of $Y_C^+$ is equal to $p$.
\end{lemma}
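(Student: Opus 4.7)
The plan is to count $Y_C^+$ by first applying the previous two lemmas to pin down the shape of its elements, then invoking the width dictionary from Lemma \ref{order +- jumps} to count things in $G^-$, and finally dividing by the natural ambiguity in the decomposition $H=\langle x\rangle\oplus\langle y\rangle$.

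First I would unpack the hypotheses. Since $\inte(G)>1$, Lemma \ref{class 4 structure} gives $(w_1,w_2,w_3,w_4)=(2,1,2,1)$ and $\ZG(G)=G_4$, and Lemma \ref{centre of max subgp class 4} gives $|\ZG(C):G_4|=|G_3:\ZG(C)|=1$-indexed $p$. From Lemma \ref{ilovewilliepeyote} every element of $C\setminus G_2$ has $p$-th power equal to $1$, and by Lemma \ref{coez} each $H\in Y_C^+$ lies in $G^-$; combined with Lemma \ref{class 4 exp p}, every such $H$ is an $\F_p$-plane inside $G^-$.

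Next I would compute $|C^-|$ and $|\ZG(C)^-|$ using Lemma \ref{order +- jumps}. The jumps of $C$ in $G$ are $1,2,3,4$ with widths $1,1,2,1$ (since $G_2\subseteq C$), so the odd-jump contributions give $|C^-|=p^{1+2}=p^3$, and the jumps of $\ZG(C)$ in $G$ are $3,4$ with widths $1,1$, giving $|\ZG(C)^-|=p^{1}=p$. Since $(G_2)^-=p^2$ and $(G_4)^-=\{1\}$, the subsets from which we pick $x$ and $y$ have cardinalities $|C^-\setminus G_2|=p^3-p^2=p^2(p-1)$ and $|\ZG(C)^-\setminus G_4|=p-1$ respectively; as every such element has order $p$, the corresponding cyclic subgroups number $p^2$ and $1$. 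Call the unique cyclic subgroup of the second kind $Y_0$.

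I would then set up the bijective argument. Because $Y_0\subseteq\ZG(C)$ lies in $G^-$ and any $A$-stable cyclic $\langle x\rangle$ of order $p$ with $x\in C^-\setminus G_2$ also lies in $G^-$ and commutes with $Y_0$, each such $\langle x\rangle$ gives a well-defined element $\langle x\rangle\oplus Y_0\in Y_C^+$, producing $p^2$ pairs. Conversely, given $H\in Y_C^+$, the intersection $H\cap \ZG(C)$ is forced to be $Y_0$ (no element of $\langle x\rangle\setminus\{1\}$ sits in $\ZG(C)\subseteq G_3$ since $x\notin G_2$), so $\langle y\rangle$ is uniquely $Y_0$; the choice of $\langle x\rangle$ is any of the $p$ complements of $Y_0$ inside the $\F_p$-plane $H$, and each complement is automatically of the form $\langle xy^k\rangle$ with $xy^k\in C\setminus G_2$.

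Dividing the $p^2$ admissible pairs by the uniform fibre size $p$ yields $|Y_C^+|=p$. The only slightly delicate point—the part I expect to need the most care in writing up—is the last step: verifying cleanly that the $\langle y\rangle$-component of every $H\in Y_C^+$ equals $H\cap \ZG(C)=Y_0$, and that each of the remaining $p$ lines of $H$ genuinely meets $C\setminus G_2$ (so is an admissible choice of $\langle x\rangle$), yielding exactly $p$ pre-images per $H$.
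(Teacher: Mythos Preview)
Your argument is correct and follows essentially the same route as the paper: both use Lemma~\ref{coez} to reduce to subgroups built from $C^-\setminus G_2$ and $\ZG(C)^-\setminus G_4$, then invoke Lemma~\ref{order +- jumps} together with the width data $(w_1,w_2,w_3,w_4)=(2,1,2,1)$ to get $|C^-\setminus G_2|=p^3-p^2$ and $|\ZG(C)^-\setminus G_4|=p-1$, and finish with a counting quotient. The only cosmetic difference is the level at which the count is carried out: the paper counts ordered pairs of \emph{elements} $(x,y)$ and divides by the basis ambiguity $|B|=p(p-1)^2$, whereas you pass first to the $p^2$ cyclic subgroups $\langle x\rangle$ and the single cyclic subgroup $Y_0=\ZG(C)^-$, and then divide by the $p$ complements of $Y_0$ in each $H$. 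Both computations unwind to the same arithmetic $\tfrac{(p^3-p^2)(p-1)}{p(p-1)^2}=p$.
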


\begin{proof}
Write $C^-= C\cap G^-$ and $\ZG(C)^-=\ZG(C)\cap G^-$.
Thanks to Lemma \ref{coez} we are reduced to count the subgroups of the form $\gen{x}\oplus\gen{y}$, with $x\in C^-\setminus G_2$ and $y\in\ZG(C)^-\setminus G_4$. By Lemma \ref{centre of max subgp class 4}, the subgroup $\ZG(C)$ contains $G_4$. By Lemma \ref{class 4 structure}, the quadruple 
$(\wt_G(1),\wt_G(2),\wt_G(3),\wt_G(4))$ is equal to $(2,1,2,1)$ so, as a consequence of Lemma \ref{order +- jumps}, the cardinalities of $C^-\setminus G_2$ and $\ZG(C)^-\setminus G_4$ are respectively $p^3-p^2$ and $p-1$. 
Fix now a basis $(x,y)$ for a subgroup $H$, where $x\in C^-\setminus G_2$ and $y\in\ZG(C)^-\setminus G_4$.
Thanks to Lemma \ref{class 4 exp p}, the set of equivalent bases for $H$ is 
$B=\graffe{(x^ay^b,y^c) : a,c\in\F_p^*,\, b\in\F_p}$, and thus $|B|=p(p-1)^2$. The cardinality of $Y_C^+$ is 
$$|Y_C^+|=\frac{|C^-\setminus G_2|\,|\ZG(C^-)\setminus G_4|}{|B|}=
\frac{(p^3-p^2)(p-1)}{p(p-1)^2}=p.$$
\end{proof}

\begin{lemma}\label{comebabycome}
Let $C$ be a maximal subgroup of $G$ and assume that $\rho(C\setminus G_2)\cap G_4$ is not empty.
Then the cardinality of $Y_C$ is equal to $p^4$.
\end{lemma}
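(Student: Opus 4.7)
The plan is to count $Y_C$ by a pair-counting argument, mimicking the strategy of Lemma \ref{k7} but without the $A$-stability restriction. First, I would verify that every pair $(x,y)\in (C\setminus G_2)\times(\ZG(C)\setminus G_4)$ produces an element of $Y_C$. Commutation $[x,y]=1$ is free since $y\in\ZG(C)$ and $x\in C$. The hypothesis $\rho(C\setminus G_2)\cap G_4\neq\emptyset$ triggers Lemma \ref{ilovewilliepeyote}, forcing $x^p=1$. By Lemma \ref{centre of max subgp class 4} we have $\ZG(C)\subseteq G_3$, and Proposition \ref{p-power jumps 2} gives that $G_3$ has exponent dividing $p$, hence $y^p=1$. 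Finally $\gen{y}\subseteq G_3\subseteq G_2$ while no non-trivial power of $x$ lies in $G_2$, so $\gen{x}\cap\gen{y}=\{1\}$; thus $\gen{x,y}=\gen{x}\oplus\gen{y}\in Y_C$ has order $p^2$.

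Second, I would compute the two cardinalities involved. By Lemma \ref{class 4 structure}, $(w_1,w_2,w_3,w_4)=(2,1,2,1)$, so $|G|=p^6$, $|G_2|=p^4$, and $|G:C|=p$ yields $|C|=p^5$; hence $|C\setminus G_2|=p^4(p-1)$. By Lemma \ref{centre of max subgp class 4}, $|\ZG(C)|=p^2$ and $|G_4|=p$, giving $|\ZG(C)\setminus G_4|=p(p-1)$. The total number of pairs is therefore
\[
|C\setminus G_2|\cdot|\ZG(C)\setminus G_4|=p^4(p-1)\cdot p(p-1)=p^5(p-1)^2.
\]

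Third, I would count, for a fixed $H\in Y_C$ with chosen decomposition $H=\gen{x}\oplus\gen{y}$, the number of replacement pairs $(x',y')$ with $x'\in C\setminus G_2$, $y'\in\ZG(C)\setminus G_4$, and $\gen{x'}\oplus\gen{y'}=H$. Exactly as in Lemma \ref{k7}, these are the pairs of the form $(x^ay^b,y^c)$ with $a,c\in\F_p^*$ and $b\in\F_p$: the condition $x^ay^b\in C\setminus G_2$ reduces (via $y^b\in G_2$) to $a\neq 0$, while $y^c\notin G_4$ forces $c\neq 0$. So each $H$ is hit by exactly $p(p-1)^2$ pairs, and dividing yields
\[
|Y_C|=\frac{p^5(p-1)^2}{p(p-1)^2}=p^4.
\]

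I do not foresee a real obstacle: every structural ingredient (exponents, widths, the sizes of $C$ and $\ZG(C)$) has already been assembled in the preceding lemmas, and the replacement-pair enumeration is the same bookkeeping as in Lemma \ref{k7}, merely carried out on the whole of $C$ and $\ZG(C)$ rather than on their $A$-fixed $-$parts.
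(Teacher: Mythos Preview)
Your proposal is correct and follows essentially the same approach as the paper: both count the ordered pairs in $(C\setminus G_2)\times(\ZG(C)\setminus G_4)$ using Lemma \ref{class 4 structure} and Lemma \ref{centre of max subgp class 4}, identify the set of equivalent bases for a fixed $H$ as $\{(x^ay^b,y^c):a,c\in\F_p^*,\,b\in\F_p\}$ (the paper invokes Lemma \ref{class 4 exp p} for this, you spell it out directly), and divide to obtain $p^4$.
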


\begin{proof}
We want to count the subgroups of the form $\gen{x}\oplus\gen{y}$, with $x\in C\setminus G_2$ and $y\in\ZG(C)\setminus G_4$. 
The quadruples $(\wt_G(1),\wt_G(2),\wt_G(3),\wt_G(4))$ and $(2,1,2,1)$ are the same, by Lemma \ref{class 4 structure}, and so $|C|-|G_2|=p^5-p^4$. Moreover, thanks to Lemma \ref{centre of max subgp class 4}, the set $\ZG(C)\setminus G_4$ has cardinality $p^2-p$. 
Fix now $(x,y)$ a basis for an element $H\in Y_C$, such that $x\in C\setminus G_2$ and $y\in\ZG(C)\setminus G_4$. As a consequence of Lemma \ref{class 4 exp p}, the set of equivalent bases for $H$ is 
$B=\graffe{(x^ay^b,y^c) : a,c\in\F_p^*,\, b\in\F_p}$, and so $B$ has cardinality $p(p-1)^2$.
It follows that 
$$|Y_C|=\frac{|C\setminus G_2|\,|\ZG(C)\setminus G_4|}{|B|}=
\frac{(p^5-p^4)(p^2-p)}{p(p-1)^2}=p^4.$$
\end{proof}

\begin{lemma}\label{transitivium}
Let $C$ be a maximal subgroup of $G$ and assume that $\rho(C\setminus G_2)\cap G_4$ is not empty.
Let $H$ be an element of $Y_C^+$. Then one has $\nor_G(H)=HG_4$ and $|G:\nor_G(H)|\leq p^3$.
\end{lemma}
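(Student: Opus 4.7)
The plan is to combine the trivial containment $HG_4\subseteq\nor_G(H)$ with the orbit-counting Lemma \ref{orbits}, using the cardinalities $|Y_C|=p^4$ and $|Y_C^+|=p$ already at our disposal, and let the numerics squeeze out the exact equality.

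First I would compute $|HG_4|$. Since $H$ is abelian and $G_4=\ZG(G)$ (by Lemma \ref{class 4 structure}), both $H$ and $G_4$ normalize $H$, giving $HG_4\subseteq\nor_G(H)$. By Lemma \ref{class 4 exp p}, $H$ has exponent $p$ and $H\cap G_4=\graffe{1}$; since $H=\gen{x}\oplus\gen{y}$ has $\F_p$-rank $2$, one has $|H|=p^2$, and together with $|G_4|=p$ this yields $|HG_4|=p^3$. In particular $|G:\nor_G(H)|\le p^3$, which already proves the second claim of the lemma.

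Next I would apply Lemma \ref{orbits} to the $G$-set $X=Y_C$. Conjugation by $G$ preserves $Y_C$ because $C$ (normal in $G$ as a maximal subgroup of a $p$-group), $\ZG(C)$ (characteristic in $C$, hence normal in $G$), $G_2$, and $G_4$ are all normal in $G$, so the defining conditions $x\in C\setminus G_2$ and $y\in\ZG(C)\setminus G_4$ are conjugation-invariant; abelianness is preserved automatically. Since $\alpha$ has order $2$, coprime to $|G|$, Lemma \ref{orbits} gives
$$|Y_C|\le\sum_{H'\in Y_C^+}|G:\nor_G(H')|.$$

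Finally, substituting $|Y_C|=p^4$ (Lemma \ref{comebabycome}), $|Y_C^+|=p$ (Lemma \ref{k7}), and the bound $|G:\nor_G(H')|\le p^3$ proved in the first step, I would obtain
$$p^4=|Y_C|\le\sum_{H'\in Y_C^+}|G:\nor_G(H')|\le p\cdot p^3=p^4.$$
Equality is forced throughout, so $|G:\nor_G(H')|=p^3$ for every $H'\in Y_C^+$, in particular for $H$ itself. Hence $|\nor_G(H)|=p^3=|HG_4|$, and combined with $HG_4\subseteq\nor_G(H)$ this gives $\nor_G(H)=HG_4$.

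The only delicate step is really the normality check that makes $Y_C$ into a $G$-set; once that is settled, the whole proof reduces to a clean numerical squeeze between $p^4$ and $p\cdot p^3$ that simultaneously pins down the normalizer and its index.
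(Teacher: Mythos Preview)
Your proof is correct and follows essentially the same approach as the paper: both establish $HG_4\subseteq\nor_G(H)$ with $|HG_4|=p^3$, then use Lemma~\ref{orbits} together with $|Y_C|=p^4$ and $|Y_C^+|=p$ to force $|G:\nor_G(H)|=p^3$. The only cosmetic difference is that the paper phrases the squeeze as a contradiction (assume some $K\in Y_C^+$ has $|G:\nor_G(K)|<p^3$ and derive $|Y_C|<p^4$), whereas you write it as a direct chain of equalities; you also make explicit the check that $Y_C$ is closed under $G$-conjugation, which the paper leaves implicit.
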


\begin{proof}
Let $H$ be an arbitrary element of $Y_C^+$.
By Lemma \ref{class 4 structure}, the subgroup $G_4$ is central of order $p$ so, as a consequence of Lemma \ref{class 4 exp p}, the cardinality of $HG_4$ is at least $p^3$. Moreover, by Lemma \ref{class 4 structure}, the cardinality of $G$ is equal to $p^6$.
The subgroup $HG_4$ is contained in the normalizer of $H$ and, in particular,
$|G:\nor_G(H)|\leq|G:HG_4|\leq p^3$. 
Assume by contradiction that there exists $K\in Y_C^+$ such that $\nor_G(K)\neq KG_4$. Then $|G:\nor_G(K)|<p^3$, and thus
it follows from Lemma \ref{orbits} that
$$|Y_C|\leq \sum_{H\in Y_C^+}|G:\nor_G(H)|<|Y_C^+|\, p^3.$$ 
By Lemma \ref{k7}, the cardinality of $Y_C^+$ is equal to $p$, so we get a contradiction to Lemma \ref{comebabycome}.
\end{proof}

%\begin{corollary}\label{class 4 exp not p}
%One has $\rho^{-1}(G_4)=G_2$. Moreover, the exponent of $G$ is different from $p$.
%\end{corollary}

%\begin{proof}
%By Lemma \ref{p-power jumps 2}, the quotient $G_2/G_4$ is elementary abelian and so $\rho(G_2)\subseteq G_4$. By Proposition \ref{no torsion in first layer}, the two sets $\rho(G_2)$ and $G_4$ are equal. The exponent of $G$ is different from $p$ because the $p$-th powers of the elements of $G\setminus G_2$ do not live in $G_4$ and, in particular, they are different from $1$.
%\end{proof}

\begin{lemma}\label{no torsion in first layer}
One has $\rho^{-1}(G_4)\subseteq G_2$.
\end{lemma}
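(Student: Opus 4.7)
The plan is to assume the conclusion fails and derive a contradiction by double-counting the $A$-stable elements of $Y_C$ for a suitable $A$-stable maximal subgroup $C$; here $A = \gen{\alpha}$ is generated by an intense automorphism $\alpha$ of order $2 = \inte(G)$.

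First I would arrange $x^p = 1$ with $x \in G^-$. Given $x \in G \setminus G_2$ with $x^p \in G_4$, Lemma \ref{equivalent intense coprime-pgrps} lets me replace $\gen{x}$ by an $A$-stable conjugate. The induced action of $\alpha$ on $G/G_2$ is inversion (Lemma \ref{action chi^i}), so writing $\alpha(x) = x^k$ and imposing $\alpha^2 = \id$ together with $k \equiv -1 \pmod{p}$ forces $\alpha(x) = x^{-1}$; hence $\gen{x} \subseteq G^-$. Since $G_4 \subseteq \ZG(G) \subseteq G^+$ by Lemma \ref{centre character sign} (the class $4$ is even), the element $x^p \in \gen{x} \cap G_4$ lies in $G^- \cap G^+ = \graffe{1}$ by Lemma \ref{trivial intersection}, so $x^p = 1$. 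Setting $C = \gen{x, G_2}$ then produces an $A$-stable maximal subgroup (Lemma \ref{max sub contains G2}) satisfying $\rho(C \setminus G_2) \cap G_4 \ni 1$, so Lemmas \ref{ilovewilliepeyote}, \ref{k7}, \ref{comebabycome}, and \ref{transitivium} all apply to $C$.

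Next I would count $A$-stable conjugates via the Glauberman-type Lemma \ref{conjugate stable iff in nor-times-G+}. Pick any $H \in Y_C^+$; Lemma \ref{transitivium} gives $\nor_G(H) = HG_4$ of order $p^3$. Lemma \ref{order +- jumps} yields $|G^+| = p^{w_2 + w_4} = p^2 = |G_2^+|$, forcing $G^+ = G_2^+ \subseteq G_2$. Since the elements of $H$ outside $G_2$ have depth $1$, one checks $HG_4 \cap G_2 = \ZG(C)$, and the $A$-decomposition $\ZG(C) = G_4 \oplus \ZG(C)^-$ (with $G_4 \subseteq G^+$, $\ZG(C)^- \subseteq G^-$) gives $\ZG(C)^+ = G_4$, hence $|G^+ \cap \nor_G(H)| = p$. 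Lemma \ref{conjugate stable iff in nor-times-G+} then implies that the $G$-orbit of $H$ contains exactly $|G^+ \nor_G(H)|/|\nor_G(H)| = |G^+|/|G^+ \cap \nor_G(H)| = p$ distinct $A$-stable conjugates, all of which lie in $Y_C^+$.

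Finally I would confront this with pairwise non-conjugacy. Lemmas \ref{comebabycome}, \ref{k7}, and \ref{transitivium} together produce $|Y_C| = p^4 = |Y_C^+| \cdot p^3 = \sum_{H \in Y_C^+} |G:\nor_G(H)|$, so equality holds in Lemma \ref{orbits} and the elements of $Y_C^+$ are pairwise non-conjugate in $G$; yet the previous paragraph exhibits $p > 1$ pairwise $G$-conjugate elements of $Y_C^+$ inside one $G$-orbit, the desired contradiction. The main obstacle is the clean identification $HG_4 \cap G_2 = \ZG(C)$ together with $\ZG(C)^+ = G_4$, which delivers the crucial count $|G^+ \cap \nor_G(H)| = p$ and, through Lemma \ref{conjugate stable iff in nor-times-G+}, the fatal $p$-fold multiplicity that collides with equality in Lemma \ref{orbits}.
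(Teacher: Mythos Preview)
Your proof is correct and follows essentially the same approach as the paper: both set up a maximal subgroup $C$ with $\rho(C\setminus G_2)\cap G_4\neq\emptyset$, invoke Lemmas~\ref{k7}, \ref{comebabycome}, \ref{transitivium} to get $|Y_C|=p^4$, $|Y_C^+|=p$, $\nor_G(H)=HG_4$, and then derive a contradiction from the fact that $G^+\not\subseteq\nor_G(H)$ forces distinct $A$-stable conjugates inside one $G$-orbit, against the equality case of Lemma~\ref{orbits}.

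Two remarks. First, your opening paragraph is more work than needed: any maximal subgroup of $G$ contains $\Phi(G)=G_2$, hence is normal, hence is $A$-stable by Lemma~\ref{intense properties}(1); and once $\rho(C\setminus G_2)\cap G_4\neq\emptyset$, Lemma~\ref{ilovewilliepeyote} already gives $\rho(C\setminus G_2)=\{1\}$. So you can start directly with an arbitrary $C$ containing some $x\in G\setminus G_2$ with $x^p\in G_4$, as the paper does. Second, the paper is slightly terser at the end: rather than computing $|G^+\cap\nor_G(H)|=p$ exactly, it just observes that $\nor_G(H)=HG_4$ has no jump at $2$ while $G^+$ does (Lemma~\ref{order +- jumps}), so $G^+\not\subseteq\nor_G(H)$, whence by Lemma~\ref{conjugate stable iff in nor-times-G+} some conjugate of $H$ distinct from $H$ lies in $Y_C^+$, giving the strict inequality $|Y_C|<\sum_{H\in Y_C^+}|G:\nor_G(H)|\le p^4$. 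Your explicit count of $p$ stable conjugates is a pleasant sharpening but not needed for the contradiction.
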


\begin{proof}
Assume by contradiction that there exists a maximal subgroup $C$ of $G$ such that 
$\rho(C\setminus G_2)\cap G_4$ is not empty. Thanks to Lemma \ref{transitivium}, the normalizer of each element $H$ of $Y_C^+$ is equal to $HG_4$. It follows from the definition of $Y_C$ that, given any $H\in Y_C^+$, the $A$-stable subgroup $\nor_G(H)$ does not contain $G_2$. As a consequence of Lemma \ref{order +- jumps}, the subgroup $G^+$ is not contained in $\nor_G(H)$. From the combination of Lemmas \ref{conjugate stable iff in nor-times-G+} and \ref{orbits}, we get that 
$|Y_C|<\sum_{H\in Y_C^+}|G:\nor_G(H)|$. By Lemma \ref{transitivium}, the normalizer of each element of $Y_C^+$ has index at most $p^3$ in $G$, so, together with Lemmas \ref{k7} and \ref{comebabycome}, we obtain $p^4=|Y_C|<|Y_C^+|\,p^3=p^4$. Contradiction.
\end{proof}

\begin{lemma}\label{corollary bijection rho}
Let $\overline{\rho}$ be as in Lemma \ref{induced powering on abelianization}. 
Let moreover $C$ be a maximal subgroup of $G$.
Then the following hold.
\begin{itemize}
 \item[$1$.] The map $\overline{\rho}$ is a bijection. 
 \item[$2$.] One has $\ZG(C)=C^p$.
\end{itemize}
\end{lemma}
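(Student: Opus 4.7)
The plan is to first establish a single commutator identity that drives both assertions: for every $x \in G \setminus G_2$, writing $C = \langle x, G_2\rangle$, one has $x^p \in \ZG(C)$. I would check this by verifying $[x^p, z] = 1$ for all $z \in G_2$, via Lemma \ref{multiplication formulas commutators}(3). The leading factor $[x,z]^p$ vanishes because $[x, z] \in G_3$ and $G_3$ has exponent $p$ by Proposition \ref{p-power jumps 2}. Using bilinearity of the commutator and the centrality of $G_4$, each remaining factor $[x, [x^{p-s}, z]]$ reduces to $[x, [x, z]]^{p-s}$, so the product collapses to $[x, [x, z]]^{p(p-1)/2}$, which is trivial since $[x, [x, z]] \in G_4$ has exponent $p$ and $p$ is odd. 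In view of Lemmas \ref{no torsion in first layer} and \ref{centre of max subgp class 4}, this gives $\ZG(C) = \langle x^p\rangle G_4$.

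For (1), bijectivity reduces to injectivity of $\overline{\rho}$ since $|G/G_2| = |G_3/G_4| = p^2$. On any cyclic subgroup $\langle xG_2\rangle$ of $G/G_2$, $\overline{\rho}$ is a homomorphism whose kernel is trivial by Lemma \ref{no torsion in first layer}, so it is injective on each line. If $\overline{\rho}(xG_2) = \overline{\rho}(yG_2)$ with $xG_2$ and $yG_2$ lying on different lines of $G/G_2$, then $\langle x^p\rangle G_4 = \langle y^p\rangle G_4$, i.e.\ $\ZG(C_x) = \ZG(C_y)$ by the previous paragraph, and Lemma \ref{bijection normal subgroups layers} then forces $C_x = C_y$, a contradiction. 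Hence $\overline{\rho}$ is injective, and therefore bijective.

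For (2), the identity extends (for any $y = x^a z \in C$ with $z \in G_2$) to $y^p \in \ZG(C)$, yielding $C^p \subseteq \ZG(C) = \langle x^p\rangle G_4$; equality then amounts to showing $G_4 \subseteq C^p$. I plan to apply Hall-Petrescu (Lemma \ref{hall-petrescu lemma}) to $(xz)^p$ for $z \in G_2$: the product $(xz)^p \cdot x^{-p}$ lies in $G_4$ and equals $z^p$ times a collection remainder involving $[x, [x, z]]$. For $p \geq 5$, $G$ is regular (class at most $p-1$), and Lemma \ref{regular technical} gives $G_2^p = [G^p, G]$; combining this with the bijection of (1), which yields $G^p G_4 = G_3$, one forces $G_2^p = G_4$, so the $z^p$ contribution alone fills $G_4$. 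For $p = 3$ the $z^p$ term can be trivial, but the collection term $[x, [x, z]]$ then supplies non-trivial elements of $G_4$ as $z$ varies.

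The main obstacle will be securing the reverse inclusion in (2), namely $G_4 \subseteq C^p$. The regularity route handles $p \geq 5$ cleanly, but for $p = 3$ one must rule out the possibility that $[x, [x, z]]$ vanishes on all of $G_2$ for some $x \in G \setminus G_2$, which would leave $C^p$ strictly inside $\ZG(C)$. Ensuring non-vanishing for every such $x$ requires a careful analysis of the bilinear commutator pairings from Lemma \ref{class 4 comm map non-deg} and reflects the genuinely exceptional behavior of $3$-groups in this framework.
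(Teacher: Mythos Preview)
Your argument for (1) is correct and essentially dual to the paper's: you prove injectivity of $\overline{\rho}$ via Lemma \ref{bijection normal subgroups layers}, while the paper proves surjectivity by the same lemma, writing $G_3/G_4 = \bigcup_{N \in \mathcal{M}} \ZG(N)/G_4 = \bigcup_{N \in \mathcal{M}} \overline{\rho}(N/G_2)$. Either direction, combined with $|G/G_2| = |G_3/G_4|$, finishes.

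For (2), however, you are working much harder than necessary, and the obstacle you identify at $p=3$ is an artifact of your route. First, the identity $[x^p, z] = 1$ for $z \in G_2$ requires no commutator manipulation at all: by Proposition \ref{p-power jumps 2} one has $x^p \in G_3$, and $[G_2, G_3] \subseteq G_5 = \{1\}$ since $G$ has class $4$. The paper packages this via the non-degenerate pairing $\gamma: G/G_2 \times G_3/G_4 \to G_4$ of Lemma \ref{class 4 comm map non-deg}: the trivial fact $[x, x^p] = 1$ gives $\gamma(C/G_2, \overline{\rho}(C/G_2)) = 1$, hence $C^p \subseteq \ZG(C)$.

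Second, and this is where you miss the key simplification, the inclusion $G_4 \subseteq C^p$ needs neither Hall--Petrescu nor regularity nor any case split. The subgroup $C^p$ is characteristic in the normal subgroup $C$, hence normal in $G$; by Lemma \ref{no torsion in first layer} it is not contained in $G_4$. Proposition \ref{normal squeezed} then forces $G_4 \subseteq C^p \subseteq G_3$, uniformly for all odd $p$. This immediately yields $C^p = \langle x^p, G_4\rangle$ of order $p^2$, and since $|\ZG(C)| = p^2$ by Lemma \ref{centre of max subgp class 4} (equivalently, by non-degeneracy of $\gamma$), the inclusion $C^p \subseteq \ZG(C)$ is an equality. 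The $p=3$ difficulty you anticipate simply does not arise.
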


\begin{proof}
The restriction of $\overline{\rho}$ to any cyclic subgroup of $G/G_2$ is a homomorphism, in particular the restriction to $C/G_2$. As a consequence of Lemma \ref{no torsion in first layer}, the subgroup $\overline{\rho}(C/G_2)$ has size $p$, and so $C^p$ is not contained in $G_4$. The subgroup $C^p$ is characteristic in the normal subgroup $C$, and therefore $C^p$ is normal in $G$. It follows from Lemma \ref{normal squeezed} that $C^p$ contains $G_4$, and so, if $x\in C\setminus G_2$, 
then $C^p=\gen{x^p,G_4}$. 
By Lemma \ref{class 4 comm map non-deg}, the commutator map induces a non-degenerate map $\gamma:G/G_2\times G_3/G_4\rightarrow G_4$ and, if $x\in C$, then $\gamma(xG_2,x^pG_4)=1$. It follows that $\gamma(C/G_2,\overline{\rho}(C/G_2))=1$ and so $C^p\subseteq\ZG(C)$. Since $\gamma$ is non-degenerate, we get $C^p=\ZG(C)$, and thus ($2$) is proven. We now prove ($1$).
Denote by $\cor{M}$ the collection of maximal subgroups of $G$.
As a consequence of Lemma \ref{bijection normal subgroups layers}, the quotient $G_3/G_4$ is equal to 
$\bigcup_{N\in\mathcal{M}}\ZG(N)/G_4=\bigcup_{N\in\mathcal{M}}\overline{\rho}(N/G_2)$ and so $\overline{\rho}$ is surjective. By Lemma \ref{class 4 structure}, the indices $|G_1:G_2|$ and $|G_3:G_4|$ are equal, so the map $\overline{\rho}$ is a bijection.
\end{proof}

\noindent
We remark that Theorem \ref{theorem exp not p} is the same as Proposition \ref{olè}, which we now prove. 
Let $Q$ be a finite $p$-group of class at least $4$. Assume that $\inte(G)>1$. As a consequence of Lemma \ref{intensity of quotients}, the group $Q/Q_5$ has intensity greater than $1$, so Lemma \ref{corollary bijection rho} yields 
$Q_3=Q^pQ_5$. The subgroup $Q^p$ being normal in $Q$, it follows from Lemma \ref{normal squeezed} that $Q^p=Q_3$. The proof of Proposition \ref{olè} is now complete.

\section{Groups of class 5}\label{section warm up 5}

In analogy with Sections \ref{section warm up 4} and \ref{section class 4i}, this section serves as foundations for the results in Section \ref{section class 5i}. 
\vspace{8pt} \\
\noindent
Until the end of Section \ref{section warm up 5}, the following assumptions will hold.
Let $p$ be a prime number and let $G$ be a finite $p$-group. Let $\rho:G\rightarrow G$ denote the $p$-the powering on $G$, i.e. the map $x\mapsto x^p$.
Denote by $(G_i)_{i\geq 1}$ the lower central series of $G$ and, for each positive integer $i$, write $w_i=\wt_G(i)$. Assume that $|G_5|=p$, so $G$ has class $5$. Furthermore, assume that $\inte(G)>1$, so $p$ is odd. 
Let $\alpha$ be an intense automorphism of $G$ of order $2$ and write $A=\gen{\alpha}$.

\begin{lemma}\label{class 5 structure, mini}
One has $(w_1,w_2,w_3,w_4,w_5)=(2,1,2,1,1)$ and the order of $G$ is $p^7$.
Moreover, one has $\ZG(G)=G_5$.
\end{lemma}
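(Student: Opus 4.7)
The plan is to assemble this lemma directly from the structural results already proven in earlier chapters, with essentially no new work. Since $G$ has class $5 \geq 4$ and $\inte(G) > 1$, Theorem \ref{theorem dimensions} immediately pins down the first four widths as $(w_1, w_2, w_3, w_4) = (2, 1, 2, 1)$. The standing hypothesis $|G_5| = p$ is the statement that $w_5 = 1$, which also respects the bound $w_4 w_5 \leq 2$ guaranteed by Theorem \ref{theorem consecutive layers}. Putting these together yields $(w_1, w_2, w_3, w_4, w_5) = (2,1,2,1,1)$.

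For the order, I would apply Lemma \ref{order product orders jumps} with $H = G$: the jumps of $G$ in itself are exactly the indices where $w_i \neq 0$, namely $i = 1, \ldots, 5$, so
\[
|G| = \prod_{i=1}^{5} p^{w_i} = p^{2+1+2+1+1} = p^7.
\]
Finally, since $G$ is tautologically a quotient of itself of class $5$, Lemma \ref{centre at the bottom} (applied with $i = 5$ and the quotient $G$) gives $\ZG(G) = G_5$. There is no main obstacle here: every piece is a direct invocation of a previously established result, and the lemma is labelled ``mini'' precisely because it is just a bookkeeping step preparing the ground for the deeper analysis that will follow in Section \ref{section class 5i}.
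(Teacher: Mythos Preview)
Your proof is correct and essentially the same as the paper's. The only cosmetic difference is that the paper passes to the class-$4$ quotient $G/G_5$ and invokes Lemma~\ref{class 4 structure} (which is itself just a packaging of Theorem~\ref{theorem dimensions}, Lemma~\ref{order product orders jumps}, and Lemma~\ref{centre at the bottom}), whereas you apply Theorem~\ref{theorem dimensions} directly to $G$; both routes land on the same three citations.
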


\begin{proof}
As a consequence of Lemma \ref{intensity of quotients}, the intensity of $G/G_5$ is greater than $1$. The group $G/G_5$ has class $4$, so from Lemma \ref{class 4 structure} it follows that $(w_1,w_2,w_3,w_4)=(2,1,2,1)$ and that the order of $G/G_5$ is $p^6$. 
Since $G_5$ has order $p$, the order of $G$ is equal to $p^7$.
The centre of $G$ is equal to $G_5$ by Lemma \ref{centre at the bottom}. 
\end{proof}

\begin{lemma}\label{sumsumsum41}
The subgroup $G_3$ is abelian and 
$G_2\subseteq\Cyc_G(G_4)$. 
\end{lemma}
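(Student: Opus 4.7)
The plan is to observe that both assertions are immediate consequences of Lemma \ref{commutator indices} combined with the fact that $G$ has nilpotency class $5$. Indeed, Lemma \ref{class 5 structure, mini} gives $(w_1,w_2,w_3,w_4,w_5)=(2,1,2,1,1)$, so $G_5 \neq \graffe{1}$ while $G_6 = \graffe{1}$.

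For the first claim, I would apply Lemma \ref{commutator indices} with $h=k=3$ to conclude that $[G_3,G_3] \subseteq G_{3+3} = G_6 = \graffe{1}$, so $G_3$ is abelian. For the second claim, the same lemma with $h=2$, $k=4$ yields $[G_2,G_4] \subseteq G_{2+4} = G_6 = \graffe{1}$, which is exactly the statement that $G_2$ centralizes $G_4$, i.e.\ $G_2 \subseteq \Cyc_G(G_4)$.

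There is no real obstacle here: the lemma is essentially a direct bookkeeping consequence of the indices appearing in the lower central series and the hypothesis that the class is $5$. Notice in particular that the intensity hypothesis is not needed for this lemma beyond what is already packaged into Lemma \ref{class 5 structure, mini} (which is used only to fix the class). The result will serve as a convenient structural fact in Section \ref{section class 5i} when analyzing commutator and power behaviour in the deeper layers of the lower central series.
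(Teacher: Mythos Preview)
Your proof is correct and is essentially the same as the paper's: both argue that $G_6=\graffe{1}$ (since $G$ has class $5$, which is a standing assumption of the section) and then apply Lemma \ref{commutator indices} with indices $(3,3)$ and $(2,4)$. The only difference is that you invoke Lemma \ref{class 5 structure, mini} for the widths, which is unnecessary here since the class being $5$ is already assumed at the start of Section \ref{section warm up 5}.
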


\begin{proof}
The group $G_6$ being trivial, both claims follow from Lemma \ref{commutator indices}.
\end{proof}

\begin{lemma}\label{centralino}
One has $|G_3:\Cyc_{G_3}(G_2)|\leq p$.
\end{lemma}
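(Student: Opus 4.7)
The plan is to realise $G_3/\Cyc_{G_3}(G_2)$ as a subgroup of a small $\Hom$-group. First I would note that by Lemma \ref{commutator indices} one has $[G_3,G_2]\subseteq G_5$, and by Lemma \ref{class 5 structure, mini} the subgroup $G_5$ is central in $G$ and has order $p$; in particular $G_5$ lies in the centre of $\gen{G_2,G_3}=G_2$. Combined with the fact that $G_3$ is abelian (Lemma \ref{sumsumsum41}), Lemma \ref{tgt} tells me that the restriction of the commutator map gives a \emph{bilinear} map $G_3\times G_2\rightarrow G_5$.

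Next, since $[G_4,G_2]\subseteq G_6=\{1\}$ and $G_3$ is abelian, this bilinear map is trivial whenever one of its arguments lies in $G_4$ or $G_3$ respectively, so it descends to a bilinear map
\[
\bar\gamma\colon G_3/G_4\times G_2/G_3\longrightarrow G_5.
\]
Then I would consider the induced $\Z$-linear homomorphism
\[
\phi\colon G_3/G_4\longrightarrow \Hom(G_2/G_3,\,G_5),\qquad xG_4\longmapsto\bigl(yG_3\mapsto[x,y]\bigr).
\]
By construction, $xG_4\in\ker\phi$ if and only if $[x,y]=1$ for every $y\in G_2$; in other words, $\ker\phi=\Cyc_{G_3}(G_2)/G_4$.

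To conclude, I would invoke Lemma \ref{class 5 structure, mini} once more: $|G_2/G_3|=p^{w_2}=p$ and $|G_5|=p^{w_5}=p$, so $|\Hom(G_2/G_3,G_5)|=p$. The injection $G_3/\Cyc_{G_3}(G_2)\hookrightarrow\Hom(G_2/G_3,G_5)$ induced by $\phi$ therefore gives $|G_3:\Cyc_{G_3}(G_2)|\leq p$, as required. I do not foresee any real obstacle here; the argument is a routine non-degeneracy/duality computation once the relevant widths have been pinned down by Lemma \ref{class 5 structure, mini} and $G_3$ is known to be abelian.
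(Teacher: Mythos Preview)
Your argument is correct and essentially identical to the paper's proof: both show that the commutator map induces an injection $G_3/\Cyc_{G_3}(G_2)\hookrightarrow\Hom(G_2/G_3,G_5)$ and then use that $w_2=w_5=1$. The only cosmetic difference is that the paper invokes Lemma~\ref{bilinear LCS hk} to obtain the bilinear map $G_2/G_3\times G_3/G_4\to G_5$ in one step, whereas you build it by hand via Lemma~\ref{tgt} and then factor through the quotients; the content is the same.
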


\begin{proof}
To lighten the notation, let $C=\Cyc_{G_3}(G_2)$.
By Lemma \ref{bilinear LCS hk}, the commutator map induces a bilinear map $\gamma:G_2/G_3\times G_3/G_4\rightarrow G_5$ whose right kernel is equal to $C/G_4$. It follows that $\gamma$ induces 
an injective homomorphism $G_3/C\rightarrow\Hom(G_2/G_3,G_5)$.
By Lemma \ref{class 5 structure, mini}, both $w_2$ and $w_5$ are equal to $1$, so $\Hom(G_2/G_3,G_5)$ has order $p$.
In particular, we get $|G_3:C|\leq p$. 
\end{proof}

\begin{lemma}\label{G2 endomorphism in class 5}
The restriction of $\rho$ to $G_2$ is an endomorphism of $G_2$.
\end{lemma}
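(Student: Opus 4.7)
The plan is to apply the Hall–Petrescu formula (Lemma \ref{hall-petrescu lemma}) to pairs of elements in $G_2$ and to show that all correction terms vanish. So let $x,y \in G_2$ and write
\[
(xy)^p = x^p y^p \prod_{k=2}^{p} c_k^{\binom{p}{k}}
\]
with $c_k$ lying in the $k$-th term of the lower central series of $\gen{x,y}$. Since $\gen{x,y} \subseteq G_2$, Lemma \ref{commutator indices} gives $\gen{x,y}_k \subseteq (G_2)_k \subseteq G_{2k}$.

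The first step is to kill the terms for $k \geq 3$. Since $G$ has class $5$, one has $G_6 = \{1\}$; hence for every $k \geq 3$ we get $c_k \in G_{2k} \subseteq G_6 = \{1\}$. In particular this disposes of the term $k=p$ when $p \geq 5$, while for $p=3$ it disposes of $k=3$ directly. So the only possibly non-trivial correction is $c_2^{\binom{p}{2}}$ with $c_2 \in [G_2,G_2] \subseteq G_4$.

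The second step is to show $c_2^{\binom{p}{2}} = 1$. For any odd prime $p$, $\binom{p}{2} = p(p-1)/2$ is divisible by $p$, so $c_2^{\binom{p}{2}} \in G_4^p$. Now $\inte(G) > 1$ together with the fact that $G$ has class at least $2$ lets me invoke Proposition \ref{p-power jumps 2} with $i = 4$: the exponent of $G_4/G_6 = G_4$ divides $p$, so $G_4^p = \{1\}$. Therefore $c_2^{\binom{p}{2}} = 1$.

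Combining these observations, $(xy)^p = x^p y^p$, which proves that $\rho$ is a homomorphism on $G_2$. Finally, Proposition \ref{p-power jumps 2} (or directly the estimate $\rho(G_2) \subseteq G_4 \subseteq G_2$ coming from Corollary \ref{p map petrescu}) shows that the image is contained in $G_2$, so $\rho|_{G_2}$ is indeed an endomorphism of $G_2$. The only subtle point is the careful bookkeeping to check that every index $k \in \{2,\ldots,p\}$ is covered by either the class bound or the exponent bound; once $p \geq 3$ is fixed this is immediate since either $k \geq 3$ (use $G_6 = 1$) or $k=2$ (use $G_4^p = 1$), leaving no obstacle.
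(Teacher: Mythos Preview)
Your proof is correct. It differs from the paper's argument in a small but genuine way. The paper first uses the width information $w_2=1$ (from Lemma \ref{class 5 structure, mini}) together with Lemma \ref{cyclic quotient commutators} to obtain the sharper inclusion $[G_2,G_2]=[G_2,G_3]\subseteq G_5$; this makes $G_2$ a group of class at most $2$, hence regular (Lemma \ref{class at most p-1 regular}), and since $[G_2,G_2]\subseteq G_5$ has order $p$, Lemma \ref{exponent G2=p, endomorphism} finishes. You instead work directly with Hall--Petrescu, using only the crude bound $[G_2,G_2]\subseteq G_4$ and compensating with the exponent fact $G_4^p=\{1\}$ from Proposition \ref{p-power jumps 2}. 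Your route is more elementary (it avoids the regularity machinery and the specific width computation), while the paper's route extracts a slightly stronger structural statement along the way, namely that $G_2$ has class at most $2$. Both are perfectly valid; you could also have shortened your argument by quoting Corollary \ref{p map petrescu} applied inside $G_2$, which packages exactly the case-split you perform.
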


\begin{proof}
Thanks to Lemma \ref{class 5 structure, mini}, the group $G_2/G_3$ is cyclic and so $[G_2,G_2]=[G_2,G_3]$. From Lemma \ref{commutator indices}, it follows that $[G_2,G_2]$ is contained in $G_5$, which is equal to the centre of $G$ by Lemma \ref{class 5 structure, mini}. In particular, the class of $G_2$ is at most $2$ and, $p$ being odd, Lemma \ref{class at most p-1 regular} yields that $G_2$ is regular. Now the commutator subgroup of $G_2$ is contained in $G_5$, whose order is $p$, so, by Lemma \ref{exponent G2=p, endomorphism}, the restriction of $\rho$ to $G_2$ is an endomorphism of $G_2$.
\end{proof}

%\begin{lemma}\label{noisia}
%The following are equivalent.
%\begin{itemize}
% \item[$1$.] One has $\rho(G_3)=\graffe{1}$.
% \item[$2$.] One has $\rho(G_2)=\graffe{1}$.
%\end{itemize}
%\end{lemma}

%\begin{proof}
%The implication $(2)\Rightarrow(1)$ is clear, we prove $(1)\Rightarrow(2)$. Assume that $\rho(G_3)=\graffe{1}$. By Lemma \ref{G2 endomorphism in class 5}, the restriction of $\rho$ to $G_2$ is a homomorphism, hence 
%$|\rho(G_2)|=|G_2:\ker\rho_{|G_2}|\leq |G_2:G_3|$. The subgroup $\rho(G_2)$ is characteristic in $G$ and its order is at most $p$, as a consequence of Lemma \ref{class 5 structure, mini}. From Proposition \ref{normal squeezed} it follows that $\rho(G_2)$ is contained in $G_5$ and, since $\rho(G_3)$ is trivial, $\rho_{|G_2}$ factors as $G_2/G_3\rightarrow G_5$. By Proposition \ref{proposition -1^i}, the induced action of $A$ on $G_2/G_3$ is trivial, so, by Lemma \ref{p-power characters}, the action of $A$ on $\rho(G_2)$ is also trivial.
%On the other hand, by Proposition \ref{proposition -1^i}, the action of $A$ on $G_5$ is by scalar multiplication by $-1$. 
%Lemma \ref{distinct characters on same gp} yields $\rho(G_2)=1$.
%\end{proof}

\section{Class 5 and intensity}\label{section class 5i}

We recall that, if $G$ is a finite group, we denote by $(G_i)_{i\geq 1}$ the lower central series of $G$ (see List of Symbols). In this section, we prove the following result. 

\begin{proposition}\label{class at least 5, G2^p=G4}
Let $p$ be a prime number and let $G$ be a finite $p$-group of class at least $5$. 
If $\inte(G)>1$, then $G_2^p=G_4$.
\end{proposition}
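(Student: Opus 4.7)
The inclusion $G_2^p \subseteq G_4$ is immediate from Proposition \ref{p-power jumps 2}, which forces $G_2/G_4$ to have exponent $p$. For the reverse inclusion, the starting point is Theorem \ref{theorem exp not p}: since $\cl(G) \geq 4$ and $\inte(G) > 1$, one has $G^p = G_3$, and so $G_4 = [G, G_3] = [G, G^p]$. The overall plan is to expand $[x, y^p]$ via a Hall--Petrescu-style identity, check that the expansion lies in $G_2^p \cdot G_5$, and then use the sandwich structure for normal subgroups from Lemma \ref{normal squeezed} to conclude.

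First I would reduce to the case $c = \cl(G) = 5$ by induction on $c$. For $c > 5$ the quotient $Q = G/G_c$ has class $c - 1 \geq 5$ and $\inte(Q) > 1$ by Lemma \ref{intensity of quotients}, so by induction $Q_2^p = Q_4$, i.e.\ $G_2^p G_c = G_4$. Since $G_2^p$ is characteristic in $G$ and contained in $G_4$, Lemma \ref{normal squeezed} forces $G_{j+1} \subseteq G_2^p \subseteq G_j$ for some $j$; the relation $G_2^p G_c = G_4$ together with $G_c \subseteq G_5 \subsetneq G_4$ rules out $j \geq 5$, so $j = 4$ and $G_5 \subseteq G_2^p \subseteq G_4$. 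Because $G_c \subseteq G_5 \subseteq G_2^p$ this absorbs $G_c$, giving $G_2^p = G_2^p G_c = G_4$.

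For the base case $c = 5$, I would apply Lemma \ref{multiplication formulas commutators}(4) with $n = p$ to write
\[
[x, y^p] \;=\; [x, y]^p \cdot \prod_{r=1}^{p-1} [[y^r, x], y].
\]
In class $5$ the inclusion $[G_3, G_3] \subseteq G_6 = 1$ makes $G_3$ abelian, $[G, G_4] = G_5$ makes $G_4$ central modulo $G_5$, and by Proposition \ref{p-power jumps 2} the subgroup $G_4$ is elementary abelian. Using bilinearity of the commutator maps on successive quotients of the lower central series (Lemma \ref{bilinear LCS}) together with free rearrangement in $G_3$ modulo $G_5$, the correction product identifies modulo $G_5$ with $[[y,x], y]^{p(p-1)/2}$ times a term of the form $\beta^{(p-1)p(p-2)/6}$, where $\beta = [[y, [y,x]], y] \in G_4$. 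For odd $p$ one has $p \mid p(p-1)/2$, and for $p \geq 5$ also $p \mid (p-1)p(p-2)/6$; combined with $G_3^p \subseteq G_5$ and $G_4^p = 1$, both factors lie in $G_5$. Thus $[x, y^p] \in [x, y]^p \cdot G_5 \subseteq G_2^p \cdot G_5$, and using $G^p = G_3$ together with the identity $[x, y_1^p \cdots y_n^p] = \prod_i [x, y_i^p]$ (valid because $[G_3, G_4] \subseteq G_7 = 1$ trivialises the conjugations in the iterated expansion via Lemma \ref{multiplication formulas commutators}(1)), one obtains $G_4 = [G, G_3] \subseteq G_2^p \cdot G_5$. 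The same sandwich argument as in the inductive step then forces $G_2^p = G_4$.

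The main obstacle is the case $p = 3$, where $(p-1)p(p-2)/6 = 1$ is not divisible by $p$ so a residual $\beta \in G_4$ survives, and moreover $|G : G^p| = p^p$ only meets the boundary of Lemma \ref{regular if omega small} so that $G$ itself need not be regular. In that regime I would fall back on the direct approach via Lemma \ref{G2 endomorphism in class 5}, which realises $\rho|_{G_2}$ as a homomorphism $G_2 \to G_4$ (legitimate because $G_2$ has class at most $2 \leq p-1$ and is regular by Lemma \ref{class at most p-1 regular}), and show surjectivity of the induced map $G_2/G_3 \to G_4/G_5$ by a rank count using the widths from Lemma \ref{class 5 structure, mini}, after a further reduction to $|G_5| = p$ by modding out a suitable central subgroup of $G_5$.
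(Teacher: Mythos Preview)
Your Hall--Petrescu route for $p>3$ is correct and is essentially the alternative the paper itself points to in the remark following its own proof: once $G^p=G_3$ and the widths are known, regularity (or the explicit commutator expansion you carry out) gives $[G,G^p]\subseteq G_2^pG_5$ directly, and the sandwich via Proposition~\ref{normal squeezed} finishes. Your inductive reduction on the class is also fine and equivalent to the paper's one-step reduction modulo $G_6$.

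The genuine gap is in your treatment of $p=3$. A ``rank count using the widths from Lemma~\ref{class 5 structure, mini}'' does not establish surjectivity of the induced map $G_2/G_3\to G_4/G_5$: both sides are one-dimensional, so the map is surjective if and only if it is non-zero, and nothing you have invoked forces $G_2^p\not\subseteq G_5$. This is precisely the content of the paper's Lemma~\ref{foreign beggars}, whose proof is a counting argument that uses the intense automorphism $\alpha$ in an essential way: assuming $G_2$ has exponent $p$, one counts the subgroups with jump set $\{2,4\}$ ($p^4$ of them, Lemma~\ref{fat lip}) against the single $A$-stable one ($G^+$, Lemma~\ref{concerto a febbraio!}) and reaches a contradiction via Lemma~\ref{orbits}. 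Only after this does the paper's Lemma~\ref{rho} finish by the union-of-two-proper-subgroups trick (which itself needs $\ker\rho|_{G_2}\neq G_2$, i.e.\ again that the exponent is not $p$). Your $p=3$ paragraph contains none of this, and the structural lemmas you cite (Lemmas~\ref{G2 endomorphism in class 5}, \ref{class at most p-1 regular}, \ref{class 5 structure, mini}) do not by themselves rule out $G_2^p\subseteq G_5$.
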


\noindent
We will keep the following assumptions until the end of Section \ref{section class 5i}. Let $p$ be a prime number and let $G$ be a finite $p$-group. For any positive integer $i$, write $w_i=\wt_G(i)$ and assume that $w_5=1$. Then the class of $G$ is $5$. 
Assume moreover that $\inte(G)>1$, so, thanks to Proposition \ref{proposition 2gps}, the prime $p$ is odd. 
Let $\alpha$ be an intense automorphism of $G$ of order $2$ and write $A=\gen{\alpha}$.
In concordance with the notation from Section \ref{section involutions}, write $G^+=\graffe{x\in G : \alpha(x)=x}$. 
In conclusion, define $X$ to be the collection of all subgroups of $G$ whose jumps in $G$ are exactly $2$ and $4$ and denote $X^+=\graffe{H\in X : \alpha(H)=H}$. In this section, the List of Symbols will be fully respected.

\begin{lemma}\label{class 5 p2}
The elements of $X$ have order $p^2$.
\end{lemma}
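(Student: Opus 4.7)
The plan is to combine the definition of $X$ with Lemma \ref{order product orders jumps} and the known widths from Lemma \ref{class 5 structure, mini}.

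First I would recall that, by definition, $H \in X$ means the set of jumps of $H$ in $G$ is exactly $\{2, 4\}$, so by Lemma \ref{order product orders jumps} one has $|H| = p^{\wt_H^G(2) + \wt_H^G(4)}$. Since $2$ and $4$ are jumps of $H$, both widths $\wt_H^G(2)$ and $\wt_H^G(4)$ are at least $1$.

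Next I would bound each width from above. For any positive integer $j$, the inclusion $H \cap G_j \hookrightarrow G_j$ induces an injective map $(H \cap G_j)/(H \cap G_{j+1}) \hookrightarrow G_j/G_{j+1}$, and therefore $\wt_H^G(j) \leq \wt_G(j) = w_j$. Applying this to $j = 2$ and $j = 4$, and invoking Lemma \ref{class 5 structure, mini}, which gives $w_2 = w_4 = 1$, I would conclude $\wt_H^G(2) = \wt_H^G(4) = 1$ and hence $|H| = p^2$.

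There is no real obstacle here: the statement is a direct bookkeeping exercise once the global widths of $G$ from Lemma \ref{class 5 structure, mini} are in hand, and no deeper structural input (intensity, the automorphism $\alpha$, regularity, etc.) is required for this particular lemma. Those ingredients will presumably enter only in subsequent lemmas that study how elements of $X^+$ sit inside $G$.
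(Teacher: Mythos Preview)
Your proposal is correct and follows essentially the same approach as the paper: use Lemma~\ref{class 5 structure, mini} to get $w_2=w_4=1$, bound $\wt_H^G(j)\leq w_j$, and conclude via Lemma~\ref{order product orders jumps}. The paper's proof is just a more terse version of exactly this argument.
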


\begin{proof}
Let $H$ be an element of $X$.
As a consequence of Lemma \ref{class 5 structure, mini}, 
both widths $\wt_H^G(2)$ and $\wt_H^G(4)$ are equal to $1$. Now apply Lemma \ref{order product orders jumps}.
\end{proof}

\begin{lemma}\label{H as sum of x and y, class 5}
Assume that $G_2$ has exponent $p$. Let $H$ be a subgroup of $G$.
Then $H\in X$ if and only if there exist $x\in G_2\setminus G_3$ and $y\in G_4\setminus G_5$ such that $H=\gen{x}\oplus\gen{y}$.
\end{lemma}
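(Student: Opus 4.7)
The plan is to reduce the statement to a straightforward bookkeeping on depths and widths, exploiting the fact that $G_2$ has exponent $p$, so that every subgroup of $G_2$ is an elementary abelian $p$-group (hence an $\F_p$-vector space).

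For the implication $(\Leftarrow)$, suppose $H = \gen{x} \oplus \gen{y}$ with $x \in G_2 \setminus G_3$ and $y \in G_4 \setminus G_5$. I would show that the jumps of $H$ in $G$ are exactly $2$ and $4$. Since $H \subseteq G_2$, the index $1$ is not a jump. By Lemma \ref{jumps and depth}, $x$ contributes depth $2$ and $y$ contributes depth $4$, so $2,4$ are jumps. To rule out $3$ and $5$, I would take an arbitrary element $x^a y^b$: if it lies in $G_3$, then, using $y \in G_4 \subseteq G_3$, one gets $x^a \in G_3$; since $x$ has order $p$ (because $G_2$ has exponent $p$) and $x \notin G_3$, this forces $a \equiv 0 \pmod{p}$, so the element lies in $\gen{y} \subseteq G_4$; similarly, if $x^a y^b \in G_5$, then $a\equiv 0 \pmod p$ and $y^b \in G_5$, but $y \notin G_5$ and $\gen{y}$ has order $p$, so $b \equiv 0 \pmod p$.

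For the implication $(\Rightarrow)$, assume $H \in X$. Each quotient $(H \cap G_j)/(H \cap G_{j+1})$ embeds into $G_j/G_{j+1}$, so $\wt_H^G(j) \leq w_j$. By Lemma \ref{class 5 structure, mini} we have $w_2 = w_4 = 1$, and since $2, 4$ are the only jumps of $H$ in $G$, we get $\wt_H^G(2) = \wt_H^G(4) = 1$ with all other widths equal to zero. In particular $H \subseteq G_2$, $H \cap G_3 = H \cap G_4$, $H \cap G_5 = 1$, and, by Lemma \ref{class 5 p2}, $|H| = p^2$. Because $G_2$ has exponent $p$ by assumption, $H$ is an elementary abelian $p$-group, i.e.\ a $2$-dimensional $\F_p$-vector space. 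Since $|H \cap G_4 : H \cap G_5| = p$ and $H \cap G_5 = 1$, I choose $y$ to be any generator of $H \cap G_4$, so $y \in G_4 \setminus G_5$. Then I pick any $x \in H \setminus (H \cap G_3)$; such an element exists because $H \cap G_3 = H \cap G_4$ has order $p < p^2 = |H|$, and by construction $x \in G_2 \setminus G_3$. Finally, $\gen{x} \cap \gen{y}$ is trivial since $\gen{y} \subseteq G_4 \subseteq G_3$ whereas $x \notin G_3$, so $H = \gen{x} \oplus \gen{y}$.

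The proof is essentially linear algebra on the filtration $(G_i)$; no serious obstacle arises, the only delicate point being the consistent use of the hypothesis that $G_2$ has exponent $p$ to guarantee both that $x$ and $y$ have order $p$ and that $H$ splits as a direct sum.
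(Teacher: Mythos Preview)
Your proof is correct and follows essentially the same approach as the paper's, with more explicit detail in the $(\Leftarrow)$ direction where the paper merely invokes Lemma~\ref{jumps and depth}. One small imprecision: your opening claim that \emph{every} subgroup of $G_2$ is elementary abelian is not justified (one only knows $G_2$ has class at most~$2$, cf.\ the proof of Lemma~\ref{G2 endomorphism in class 5}), but since $|H|=p^2$ forces $H$ to be abelian, your argument is unaffected; the paper instead appeals to Lemma~\ref{sumsumsum41} to obtain $[x,y]=1$.
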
 

\begin{proof}
If $H=\gen{x}\oplus\gen{y}$, with $x\in G_2\setminus G_3$ and $y\in G_4\setminus G_5$, then $H$ belongs to $X$, thanks to Lemma \ref{jumps and depth}. We prove the converse.
The subgroup $H$ has order $p^2$, by Lemma \ref{class 5 p2}, and $H$ cannot be cyclic, because it is contained in $G_2$. 
The jumps of $H$ in $G$ being $2$ and $4$, it follows from Lemma $\ref{jumps and depth}$ that there exist elements $x$ and $y$ in $H$ of depths respectively $2$ and $4$ in $G$. As a consequence of Lemma \ref{sumsumsum41}, the subgroup $H$ decomposes as $H=\gen{x}\oplus\gen{y}$.
\end{proof}

\begin{lemma}\label{fat lip}
Assume that $G_2$ has exponent $p$. Then $|X|=p^4$.
\end{lemma}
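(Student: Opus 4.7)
The plan is to use the preceding lemma (``H as sum of x and y, class 5'') to identify elements of $X$ with decompositions $\langle x\rangle \oplus \langle y\rangle$, where $x\in G_2\setminus G_3$ and $y\in G_4\setminus G_5$, and then count by a standard ``pairs divided by equivalent bases'' argument.

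First, I would record the orders of the relevant layers. By Lemma \ref{class 5 structure, mini}, we have $(w_1,w_2,w_3,w_4,w_5)=(2,1,2,1,1)$, so $|G_2|=p^5$, $|G_3|=p^4$, $|G_4|=p^2$, and $|G_5|=p$. Hence $|G_2\setminus G_3|=p^4(p-1)$ and $|G_4\setminus G_5|=p(p-1)$. Since $G_2$ has exponent $p$, each such $x$ and $y$ generates a subgroup of order $p$; as $x\notin G_3\supseteq G_4\supseteq \langle y\rangle$, the sum $\langle x\rangle+\langle y\rangle$ is automatically direct and yields an element of $X$.

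Next, I would count equivalent bases for a fixed $H=\langle x\rangle\oplus\langle y\rangle\in X$. A general element of $H$ has the form $x^a y^b$ with $a,b\in\F_p$; since $y\in G_4\subseteq G_3$, this element lies in $G_3$ precisely when $a\equiv 0\pmod p$, and it lies in $G_5$ precisely when $a\equiv 0$ and $b\equiv 0\pmod p$. Therefore the elements of $H\cap(G_2\setminus G_3)$ are those with $a\in\F_p^*$ (giving $p(p-1)$ choices), and the elements of $H\cap(G_4\setminus G_5)$ are those with $a=0$, $b\in\F_p^*$ (giving $p-1$ choices). Any such pair $(x',y')$ is automatically a basis of $H$ by depth considerations, so the number of equivalent bases is $p(p-1)^2$.

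Finally, I would divide to obtain
\[
|X|=\frac{|G_2\setminus G_3|\cdot|G_4\setminus G_5|}{p(p-1)^2}=\frac{p^4(p-1)\cdot p(p-1)}{p(p-1)^2}=p^4.
\]
The essential inputs are the exponent-$p$ hypothesis on $G_2$ (guaranteeing that $\langle x\rangle$ and $\langle y\rangle$ have order $p$) and the width sequence from Lemma \ref{class 5 structure, mini}; no step presents a real obstacle, as the argument is purely combinatorial once the decomposition lemma is available.
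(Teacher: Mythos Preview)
Your proposal is correct and follows essentially the same approach as the paper: invoke the decomposition lemma to parametrize $X$ by pairs $(x,y)\in(G_2\setminus G_3)\times(G_4\setminus G_5)$, compute $|G_2\setminus G_3|=p^5-p^4$ and $|G_4\setminus G_5|=p^2-p$ from the width sequence, count the $p(p-1)^2$ equivalent bases $(x^ay^b,y^c)$ with $a,c\in\F_p^*$ and $b\in\F_p$, and divide. The arithmetic and the justification for the basis count match the paper's exactly.
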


\begin{proof}
Thanks to Lemma \ref{H as sum of x and y, class 5}, all elements $H$ of $X$ are of the form $H=\gen{x}\oplus\gen{y}$, with $x\in G_2\setminus G_3$ and $y\in G_4\setminus G_5$.
Let $(x,y)\in (G_2\setminus G_3)\times(G_4\setminus G_5)$ and let $H$ be the $\F_p$-vector space that is spanned by $x$ and $y$. The collection of equivalent bases for $H$ is 
$B=\graffe{(x^ay^b,y^c)\ :\ a,c\in\F_p^*,\, b\in\F_p}$ and so $B$ has cardinality $p(p-1)^2$.
From Lemma \ref{class 5 structure, mini} it follows that the cardinalities of $G_2\setminus G_3$ and $G_4\setminus G_5$ are respectively $p^5-p^4$ and $p^2-p$. We conclude by computing 
$$|X|=\frac{|G_2\setminus G_3|\,|G_4\setminus G_5|}{|B|}=
\frac{(p^5-p^4)(p^2-p)}{p(p-1)^2}=p^4.$$
\end{proof}

\begin{lemma}\label{concerto a febbraio!}
One has $X^+=\graffe{G^+}$.
\end{lemma}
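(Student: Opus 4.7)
The strategy is to first identify $G^+$ itself as an element of $X^+$ and then show that any other element of $X^+$ must coincide with it, by a cardinality comparison coming directly from the machinery of Lemma \ref{order +- jumps}.

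First I would verify that $G^+\in X^+$. By Proposition \ref{proposition -1^i}, $\alpha$ has order $2$ and induces the inversion map on $G/G_2$, so the hypotheses of Lemma \ref{order +- jumps} are satisfied. Applying part (1) of that lemma with $H=G$, together with Lemma \ref{class 5 structure, mini} (which gives $(w_1,\ldots,w_5)=(2,1,2,1,1)$), the jumps of $G^+$ in $G$ are precisely the even jumps of $G$, namely $2$ and $4$. Hence $G^+\in X$, and since $G^+$ is obviously $A$-stable, $G^+\in X^+$. Moreover, Lemma \ref{order +- jumps}(2) gives
\[
|G^+|=p^{w_2}p^{w_4}=p^2.
\]

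Now I would take an arbitrary $H\in X^+$ and show $H=G^+$. By Lemma \ref{class 5 p2}, $|H|=p^2$. Since $H$ is $A$-stable and its set of jumps in $G$ equals $\{2,4\}$, the collection $\mathcal{O}$ of odd jumps of $H$ in $G$ is empty, so Lemma \ref{order +- jumps}(3) yields
\[
|H^-|=\prod_{j\in\mathcal{O}}p^{\wt_H^G(j)}=1.
\]
By Lemma \ref{trivial intersection}, $H^+\cap H^-=\{1\}$, and by Lemma \ref{bijection +-}, $|H|=|H^+||H^-|$, so $H=H^+\subseteq G^+$. A comparison of cardinalities $|H|=p^2=|G^+|$ then forces $H=G^+$.

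Combining the two steps, $X^+=\{G^+\}$. There is no real obstacle: once Lemma \ref{order +- jumps} is in hand, the argument is a direct application of the fact that an $A$-stable subgroup all of whose jumps in $G$ are even lies entirely in $G^+$, together with the numerical coincidence $p^2=|G^+|=|H|$ for $H\in X$.
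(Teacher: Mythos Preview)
Your proof is correct and follows essentially the same approach as the paper: compute $|G^+|=p^2$ from Lemma~\ref{order +- jumps} and Lemma~\ref{class 5 structure, mini}, then use Lemma~\ref{order +- jumps} on an arbitrary $H\in X^+$ to force $H=H^+\subseteq G^+$, and conclude by comparing orders with Lemma~\ref{class 5 p2}. One cosmetic point: your part numbers for Lemma~\ref{order +- jumps} are off by one (the formula for $|H^+|$ is part~(1) and the formula for $|H^-|$ is part~(2); there is no part~(3)), so adjust those references.
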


\begin{proof}
From Lemmas \ref{order +- jumps} and \ref{class 5 structure, mini}, it follows that $G^+$ has order $p^2$. 
Let now $H$ be an element of $X^+$. It follows from Lemma \ref{order +- jumps} that $H\cap G^+$ has cardinality $p^2$ and, thanks to Lemma \ref{class 5 p2}, the subgroups $H$ and $G^+$ are the same. 
In particular, the only element of $X^+$ is $G^+$.
\end{proof}

\begin{lemma}\label{foreign beggars}
The exponent of $G_2$ is different from $p$.
\end{lemma}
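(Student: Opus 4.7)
The plan is to argue by contradiction, assuming that $G_2$ has exponent $p$, and to compute $|\nor_G(G^+)|$ in two mutually incompatible ways. Under the assumption, Lemma~\ref{fat lip} gives $|X|=p^4$ and Lemma~\ref{concerto a febbraio!} gives $X^+=\graffe{G^+}$. Since $\alpha$ is intense, Lemma~\ref{equivalent intense coprime-pgrps} guarantees that every element of $X$ has an $A$-stable conjugate, which must lie in $X^+$; hence $X$ is a single $G$-orbit, and orbit--stabilizer combined with $|G|=p^7$ from Lemma~\ref{class 5 structure, mini} forces $|\nor_G(G^+)|=p^7/p^4=p^3$.

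The second computation of $|\nor_G(G^+)|$ will proceed directly. Write $G^+=\gen{x}\oplus\gen{y}$ with $x$ of depth $2$ and $y$ of depth $4$, both of order $p$ by hypothesis, so that $g$ normalizes $G^+$ if and only if both $[g,x]$ and $[g,y]$ lie in $G^+$. First I would show that $\nor_G(G^+)\subseteq G_2$: by Lemmas~\ref{bilinear LCS} and~\ref{tensor LCS}, evaluation at the generator $xG_3$ of $G_2/G_3$ yields a surjective homomorphism $G/G_2\to G_3/G_4$ between groups of order $p^2$, hence an isomorphism, and the requirement $[g,x]\in G^+\cap G_3=\gen{y}\subseteq G_4$ kills $gG_2$. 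Inside $G_2$, Lemma~\ref{commutator indices} gives $[g,y]\in[G_2,G_4]\subseteq G_6=\graffe{1}$ for free, so only the constraint $[g,x]\in\gen{y}$ survives.

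The key structural observation is then that $G_2=\gen{x}G_3$ combined with $[G_3,G_2]\subseteq G_5\subseteq\ZG(G)$ (from Lemma~\ref{class 5 structure, mini}) yields the identity $[x^kh,x]=x^k[h,x]x^{-k}=[h,x]$ for $h\in G_3$, hence $[G_2,x]=[G_3,x]\subseteq G_5$. Since $y\notin G_5$ and both $\gen{y}$ and $G_5$ have order $p$, one has $\gen{y}\cap G_5=\graffe{1}$, so the condition $[g,x]\in\gen{y}$ collapses to $g\in\Cyc_{G_2}(x)$, giving $\nor_G(G^+)=\Cyc_{G_2}(x)$. Finally, the centrality of $G_5$ makes $g\mapsto[g,x]$ a homomorphism $G_2\to G_5$, so its kernel $\Cyc_{G_2}(x)$ has index at most $p$ in $G_2$, yielding $|\nor_G(G^+)|\geq p^4$, which contradicts the previous equality $|\nor_G(G^+)|=p^3$.

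The main obstacle will be the commutator-linearity arguments: verifying that the two maps $gG_2\mapsto[g,x]G_4$ on $G/G_2$ and $g\mapsto[g,x]$ on $G_2$ are honest homomorphisms into their respective targets. Both rest on the class-$5$ identities $[G,G_5]=[G_2,G_4]=\graffe{1}$ and $[G_3,G_2]\subseteq G_5$, all flowing from Lemma~\ref{commutator indices} in the appropriate range; once these are in place, the two counts of $|\nor_G(G^+)|$ disagree by a factor of $p$ and the contradiction is immediate.
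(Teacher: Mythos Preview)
Your argument is correct and follows the same overall strategy as the paper's proof: assume exponent $p$, invoke Lemmas~\ref{fat lip} and~\ref{concerto a febbraio!} to get $|X|=p^4$ and $X^+=\{G^+\}$, and then reach a contradiction by showing $|\nor_G(G^+)|\geq p^4$.

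The execution differs slightly. The paper bounds $|\nor_G(G^+)|$ from below by exhibiting the subgroup $N=CG^+$ with $C=\Cyc_{G_3}(G_2)$ inside it, invoking the prepared Lemma~\ref{centralino} to get $|G_3:C|\leq p$, and hence $|N|\geq p^4$ via a jump count; it then applies Lemma~\ref{orbits} for the inequality $|X|\leq|G:\nor_G(G^+)|$. You instead argue that $X$ is a single orbit (giving the exact equality $|G:\nor_G(G^+)|=p^4$) and compute the normalizer precisely as $\Cyc_{G_2}(x)$, bounding its order via the homomorphism $g\mapsto[g,x]$ into $G_5$. In fact your $\Cyc_{G_2}(x)$ coincides with the paper's $N$: since $G_3$ is abelian and $G_2=\gen{x}G_3$, one has $\Cyc_{G_2}(x)=\gen{x}\Cyc_{G_3}(x)=\gen{x}\Cyc_{G_3}(G_2)=G^+\Cyc_{G_3}(G_2)$. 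So the two proofs pinpoint the same subgroup; yours is a bit more explicit, the paper's leans more on the preparatory lemmas of Section~\ref{section warm up 5}. One small simplification available to you: since $G_2/G_3$ is cyclic, Lemma~\ref{cyclic quotient commutators} already gives $[G_2,G_2]=[G_2,G_3]\subseteq G_5$, so the decomposition $g=x^kh$ in your ``key structural observation'' is not strictly needed.
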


\begin{proof}
We work by contradiction, assuming that the exponent of $G_2$ is $p$.
To lighten the notation, write $C=\Cyc_{G_3}(G_2)$ and $N=CG^+$. The group $C$ is characteristic in $G$, so $N$ is a subgroup of $G$. Moreover, $G^+$ is contained in $G_2$, thanks to Lemma \ref{order +- jumps}, so $N$ is a subgroup of $\nor_G(G^+)$.
By Lemma \ref{centralino}, the group $C$ is contained in $G_3$ with index at most $p$ and, by Lemma \ref{class 5 structure, mini}, the quadruple $(w_2,w_3,w_4,w_5)$ is equal to $(1,2,1,1)$.  It follows from Lemma \ref{order product orders jumps} 
that the order of $N$ is at least $p^4$. 
The order of $G$ is $p^7$, by Lemma \ref{class 5 structure, mini}, and thus $|G:N|\leq p^3$. By Lemma \ref{concerto a febbraio!}, the set $X^+$ has only one element, namely $G^+$, so Lemma \ref{orbits} yields
$$|X|\leq|G:\nor_G(G^+)|\leq |G:N|\leq p^3.$$ 
Contradiction to Lemma \ref{fat lip}.
\end{proof}

\begin{lemma}\label{rho}
One has $\rho(G_2)=G_4$.
\end{lemma}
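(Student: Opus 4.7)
The plan is to combine the regularity of $G_2$ from Lemma \ref{G2 endomorphism in class 5} with Proposition \ref{normal squeezed} to narrow the possibilities for $\rho(G_2)$ to a short list, then rule out the extreme cases using the interaction of the $p$-th power map with the involution $\alpha$. First, $\rho(G_2)=G_2^p$ is characteristic in $G$: by Lemma \ref{G2 endomorphism in class 5} the restriction $\rho|_{G_2}$ is an endomorphism of $G_2$, so its image is preserved by every automorphism of $G$. By Proposition \ref{p-power jumps 2} one has $\rho(G_2)\subseteq G_4$, and since $|G_4|=p^2$ and $|G_5|=p$ by Lemma \ref{class 5 structure, mini}, Proposition \ref{normal squeezed} forces $\rho(G_2)\in\{1,G_5,G_4\}$. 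The case $\rho(G_2)=1$ is ruled out by Lemma \ref{foreign beggars}, which asserts that $G_2$ does not have exponent $p$.

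The delicate step is to rule out the middle case $\rho(G_2)=G_5$. Assume it holds. Since $G_2$ is regular, Lemma \ref{regular implies power abelian} gives $|\mu_p(G_2)|=|G_2|/|G_2^p|=p^5/p=p^4$; moreover $\mu_p(G_2)$ is characteristic in $G$, hence normal, so Proposition \ref{normal squeezed} forces $\mu_p(G_2)=G_3$ (the only normal subgroup of $G$ of order $p^4$ squeezed between consecutive terms of the lower central series and contained in $G_2$). On the other hand, by Lemma \ref{order +- jumps}(1) the jumps of $G^+$ in $G$ are the even jumps of $G$, namely $\{2,4\}$, so there exists $x\in G^+\cap G_2$ of depth $2$. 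For such an $x$, the element $x^p$ lies in $G^+$ (as $x\in G^+$) and also in $\rho(G_2)=G_5=\ZG(G)\subseteq G^-$, the last inclusion coming from Lemma \ref{centre character sign}(3) since $G$ has odd class $5$. Lemma \ref{trivial intersection} then gives $x^p=1$, whence $x\in\mu_p(G_2)=G_3$, contradicting the fact that $x$ has depth $2$. The remaining possibility is therefore $\rho(G_2)=G_4$. The main obstacle lies in this middle case: one must simultaneously pin down $\mu_p(G_2)$ exactly via the squeeze lemma and exploit the $\pm$-decomposition coming from the involution $\alpha$ to produce a depth-$2$ element whose $p$-th power must vanish.
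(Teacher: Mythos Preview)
Your proof is correct, and it takes a genuinely different route from the paper in the key step of ruling out $\rho(G_2)=G_5$.

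The paper argues as follows: assuming $\rho(G_2)=G_5$, any element $x\in G_2\setminus G_3$ with $x^p\neq 1$ would have $x^p\in G_5$, so the cyclic subgroup $\gen{x}$ would have jumps $2$ and $5$ of opposite parity, contradicting Lemma \ref{jumps cyclic sbgs all same parity}. Hence every element of $G_2\setminus G_3$ has order $p$, and then $G_2=\ker(\rho_{|G_2})\cup G_3$ is a union of two proper subgroups, which is impossible.

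Your argument instead computes $|\mu_p(G_2)|$ exactly via regularity, pins down $\mu_p(G_2)=G_3$ using Proposition \ref{normal squeezed}, and then exhibits a specific element of depth $2$ in $G^+$ whose $p$-th power lands in $G_5\subseteq G^-$, forcing it to vanish and hence lie in $G_3$. Both approaches ultimately exploit the involution $\alpha$, but yours does so directly on a single well-chosen element, while the paper routes through the parity lemma for cyclic subgroups and a union-of-subgroups argument. Your approach has the mild advantage of making the role of the $\pm$-decomposition more transparent; the paper's has the advantage of not needing to compute $\mu_p(G_2)$ precisely.
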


\begin{proof}
As a consequence of Lemma \ref{G2 endomorphism in class 5}, the set $\rho(G_2)$ is a characteristic subgroup of $G$ and, by Lemma \ref{foreign beggars}, it is non-trivial. By Lemma \ref{class 5 structure, mini}, the centre of $G$ is equal to $G_5$ so, as a consequence of Lemma \ref{normal intersection centre trivial}, the intersection $G_5\cap\rho(G_2)$ is non-trivial. 
The order of $G_5$ being $p$, the subgroup $\rho(G_2)$ contains $G_5$.
Thanks to Proposition \ref{p-power jumps 2}, the quotient $G_2/G_4$ is elementary abelian and so $G_5\subseteq\rho(G_2)\subseteq G_4$. By Lemma \ref{class 5 structure, mini}, the dimension of $G_4/G_5$ is $1$ and therefore there are only two possibilities: either $\rho(G_2)=G_4$ or $\rho(G_2)=G_5$. In the first case we are done, so assume by contradiction the second. 
%The map $\rho_{|G_2}$ being a homomorphism, we have that $p=|G_5|=|\rho(G_2)|=|G_2:\ker\rho_{|G_2}|$. The kernel of $\rho_{|G_2}$ is a normal subgroup of $G$ and, by Proposition \ref{normal squeezed}, it is squeezed between two elements of the lower central series of $G$. By Lemma \ref{class 5 structure, mini}, the index 
%$|G_2:G_3|$ is equal to $p=|G_2:\ker\rho_{|G_2}|$ and so $G_3=\ker\rho_{|G_2}$. It follows that $\rho(G_3)=\graffe{1}$ and, from Lemma \ref{noisia}, that $\rho(G_2)=\graffe{1}$. Contradiction. 
Then, by Lemma \ref{jumps cyclic sbgs all same parity}, each element of $G_2\setminus G_3$ has order $p$. It follows that 
$G_2$ is equal to the union of two proper subgroups, namely $\ker\rho_{|G_2}$ and $G_3$. Contradiction.
\end{proof}

\noindent
We are finally ready to prove Proposition \ref{class at least 5, G2^p=G4}. To this end, let $Q$ be a finite $p$-group of class at least $5$ with $\inte(Q)>1$.
Then the group $Q/Q_6$ has class $5$ and, as a consequence of Lemma \ref{intensity of quotients}, the intensity of $Q/Q_6$ is greater than $1$. By Lemma \ref{rho}, the subgroups 
$(Q_2/Q_6)^p$ and $Q_4/Q_6$ are equal, and so $Q_2^pQ_6=Q_4$. The subgroup $Q_2^p$ being normal in $G$, it follows from Lemma \ref{normal squeezed} that $Q_2^p=Q_4$. This concludes the proof of Proposition \ref{class at least 5, G2^p=G4}.
\vspace{8pt} \\
\noindent
We remark that Proposition \ref{class at least 5, G2^p=G4} can be easily derived, when $p$ is greater than $3$, from Theorem \ref{theorem exp not p}. We will show a way of doing so in Section \ref{section different primes}.

\chapter{A disparity between the primes}\label{chapter different primes}

The main result of Chapter \ref{chapter different primes} is Theorem \ref{theorem dimension layers}. We recall that, if $G$ is a finite $p$-group and $i$ is a positive integer, then $\wt_G(i)=\log_p|G_i:G_{i+1}|$, where $(G_i)_{i\geq 1}$ denotes the lower central series of $G$.

\begin{theorem}\label{theorem dimension layers}
Let $p>3$ be a prime number and let $G$ be a finite $p$-group with $\inte(G)>1$. 
Let $c$ denote the class of $G$ and assume that $c\geq 3$. 
If $i$ is a positive integer such that $\wt_G(i)\wt_G(i+1)=1$, then $i=c-1$. 
\end{theorem}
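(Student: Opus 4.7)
First I would assume for contradiction that $\wt_G(i)\wt_G(i+1)=1$ with $i+1<c$, so that $w_i=w_{i+1}=1$ and $c\geq i+2$. The small values of $i$ can be disposed of immediately: $i=1$ would give $w_1=1$, contradicting Theorem \ref{theorem class at least 3}, and for $i\in\{2,3\}$ one would obtain $w_3=1$, contradicting Theorem \ref{theorem dimensions} since $c\geq 4$. Hence I may assume $i\geq 4$ and $c\geq 6$.

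The next step would exploit the hypothesis $p>3$ through regularity. By Theorem \ref{theorem exp not p} one has $G^p=G_3$, so $|G:G^p|=p^{w_1+w_2}=p^3<p^p$, and Lemma \ref{regular if omega small} yields that $G$ is regular. Applying Lemma \ref{regular bilinear} with $(r,s)=(1,0)$ gives $G_{k+1}^p=[G^p,G_k]=[G_3,G_k]=G_{k+3}$, and combined with $G_1^p=G_3$ this proves $G_j^p=G_{j+2}$ for every $j\geq 1$. A short computation with the Hall-Petrescu formula (Corollary \ref{p map petrescu}) shows that the $p$-th power map induces well-defined surjective homomorphisms $\bar\rho_j\colon G_j/G_{j+1}\rightarrow G_{j+2}/G_{j+3}$ for each $j\geq 1$.

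The heart of the argument is to prove that $\bar\rho_{i-1}$ is injective, which would force $w_{i-1}=w_{i+1}=1$. First I would observe that $\bar\rho_i$ is an isomorphism: it is surjective, and since $w_i=1$ while $1\leq w_{i+2}\leq w_i=1$ (the upper bound from surjectivity, the lower one from $c\geq i+2$), both source and target are $1$-dimensional over $\F_p$. Next I would apply Lemma \ref{multiplication formulas commutators}(4) with $n=p$: for every $x\in G$ and $y\in G_{i-1}$,
\[
[x,y^p]=[x,y]^p\prod_{r=1}^{p-1}[[y^r,x],y].
\]
Each factor $[[y^r,x],y]$ lies in $[G_i,G_{i-1}]\subseteq G_{2i-1}\subseteq G_{i+3}$ because $i\geq 4$, so $[x,y^p]\equiv[x,y]^p\bmod G_{i+3}$. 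If $yG_i\in\ker\bar\rho_{i-1}$, then $y^p\in G_{i+2}$, hence $[x,y^p]\in[G,G_{i+2}]=G_{i+3}$, whence $[x,y]^p\in G_{i+3}$. The injectivity of $\bar\rho_i$ then gives $[x,y]\in G_{i+1}$; as this holds for every $x\in G$, the coset $yG_{i+1}$ is central in $G/G_{i+1}$. By Lemma \ref{centre at the bottom} the centre of $G/G_{i+1}$ is $G_i/G_{i+1}$, so $y\in G_i$ and $yG_i=0$. This would prove $\ker\bar\rho_{i-1}=0$, so $w_{i-1}=1$.

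To conclude, I would iterate: the pair $(w_{i-1},w_i)=(1,1)$ satisfies the hypothesis of the theorem at the index $i-1$, with the class condition still in force. Descending one step at a time, after $i-4$ iterations the argument reaches index $4$, where it produces $w_3=1$, directly contradicting $w_3=2$ from Theorem \ref{theorem dimensions}. The main obstacle is establishing the congruence $[x,y^p]\equiv[x,y]^p\bmod G_{i+3}$; this is the step that forces the bound $i\geq 4$ (so that $2i-1\geq i+3$ absorbs the Hall-Petrescu corrections) and that depends crucially on $p>3$ through the regularity of $G$ and the clean propagation $G_j^p=G_{j+2}$ through all layers.
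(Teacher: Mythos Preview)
Your proof is correct and takes a genuinely different route from the paper's. After the same initial reductions to $i\geq 4$, the paper takes $i$ minimal, uses Theorem \ref{theorem consecutive layers} to get $w_{i-1}=2$, shows via a character argument (Lemma \ref{ozonosfera}) that $G_{i-1}/G_{i+3}$ has exponent $p$, and then exhibits an elementary abelian subgroup of order $p^4$, contradicting the rank bound $\rk(G)=3$ of Proposition \ref{rank 3} (which in turn rests on Laffey's external result, Lemma \ref{lemma laffey}). Your argument bypasses the rank machinery entirely: the commutator congruence $[x,y^p]\equiv[x,y]^p\bmod G_{i+3}$, fed through the injectivity of $\bar\rho_i$ and Lemma \ref{centre at the bottom}, forces $w_{i-1}=1$ directly, and iteration drives you down to $w_3=1$. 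This is more self-contained and arguably cleaner.

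One cosmetic point: the chain $G_{k+1}^p=[G^p,G_k]=[G_3,G_k]=G_{k+3}$ is not quite right as written---the last equality is precisely what you are trying to establish, and $[G_3,G_k]\subseteq G_{k+3}$ is all Lemma \ref{commutator indices} gives. The correct induction uses the same two ingredients you name: from the hypothesis $G_j^p=G_{j+2}$ one gets
\[
G_{j+3}=[G,G_{j+2}]=[G,G_j^p]=[G,G_j]^p=G_{j+1}^p
\]
via Lemma \ref{regular bilinear} with $(r,s)=(0,1)$, and the base case is $G^p=G_3$. Once this is in place, the surjectivity of each $\bar\rho_j$ follows and the rest of your argument goes through unchanged.
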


\noindent
An equivalent way of formulating Theorem \ref{theorem dimension layers} is that of saying that, if $G$ satisfies the assumptions of Theorem \ref{theorem dimension layers} and we writw $w_i=\wt_G(i)$, then 
\[(w_i)_{i\geq 1}=(2,1,2,1,\ldots,2,1,f,0,0,\ldots) \ \ \text{where} \ \ f\in\graffe{0,1,2}.\]
The restriction to primes greater than $3$ in Theorem \ref{theorem dimension layers} is superfluous; it is however not worth the effort proving the result in general, since, as we will see in the next chapter, $3$-groups of intensity greater than $1$ have class at most $4$ and we know from Theorems \ref{theorem class at least 3}($2$) and \ref{theorem dimensions} that Theorem \ref{theorem dimension layers} is valid when $c$ is $3$ or $4$.

\section{Regularity}\label{section different primes}

In Section \ref{section different primes} we make a distinction, for the first time, among the odd primes: namely we separate the cases $p=3$ and $p>3$. 
The main result of this section is
Proposition \ref{regular proposition equivalence}. We refer to Section \ref{section regular}, for an overview of regular $p$-groups.

\begin{proposition}\label{regular proposition equivalence}
Let $p$ be a prime number and let $G$ be a finite $p$-group. Assume that 
$\inte(G)>1$. 
Then the following are equivalent.
\begin{itemize}
 \item[$1$.] The group $G$ is not regular.
 \item[$2$.] The class of $G$ is larger than $2$ and $p=3$.
\end{itemize}
\end{proposition}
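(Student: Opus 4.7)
Since $\inte(G)>1$, Proposition \ref{proposition 2gps} forces $p$ to be odd, so the two cases in the statement are exhaustive. The plan is to prove the two implications separately.

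\textbf{Direction $(2)\Rightarrow(1)$.} Assume $p=3$ and $\cl(G)\geq 3$, and argue by contradiction that $G$ is regular. The idea is to pass to the quotient $\overline{G}=G/G_4$, which has class exactly $3$ (in a nilpotent group $G_i=G_{i+1}$ would force $G_i=1$, so $G_3\neq 1$ gives $G_3\neq G_4$) and which inherits regularity, since the defining identity for lifts $x,y$ projects to the corresponding identity for $\bar x,\bar y$ via $\pi([H,H]^p)=[\pi(H),\pi(H)]^p$. Theorem \ref{theorem class at least 3} gives $|G:G_2|=9$ and $|G_3:G_4|\in\{3,9\}$, so $\overline{G}$ is $2$-generated with $|\overline{G}_2|\geq 9$. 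Lemma \ref{regular 3-gp 2gen}, applied to $\overline{G}$, then forces $\overline{G}_2=G_2/G_4$ to be cyclic, whereas Lemma \ref{class 3 G2 elem abelian} forces it to be elementary abelian of order at least $9$. These two conclusions are incompatible, yielding the desired contradiction.

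\textbf{Direction $(1)\Rightarrow(2)$.} I would prove the contrapositive: if $\cl(G)\leq 2$ or $p\geq 5$, then $G$ is regular. The case $\cl(G)\leq 2$ is immediate from Lemma \ref{class at most p-1 regular}, since $2\leq p-1$. Assume now $p\geq 5$. If $\cl(G)\leq 3$, then $\cl(G)\leq 4\leq p-1$ and Lemma \ref{class at most p-1 regular} applies again. If instead $\cl(G)\geq 4$, Theorem \ref{theorem exp not p} gives $G^p=G_3$, while Theorem \ref{theorem class at least 3} computes $|G:G^p|=|G:G_3|=|G:G_2|\cdot|G_2:G_3|=p^3$. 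For $p\geq 5$ one has $p^3<p^p$, so Lemma \ref{regular if omega small} delivers the regularity of $G$.

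\textbf{Main obstacle.} All heavy lifting has been done in the earlier results; the proof itself is essentially a careful case split. The genuinely substantive step is the class-$\geq 4$ case with $p\geq 5$, where the identification $G^p=G_3$ from Theorem \ref{theorem exp not p} is essential: without it, $|G:G^p|$ could a priori be as large as $|G|$ and fail the threshold $p^p$ required by Lemma \ref{regular if omega small}. The structural tension exploited by the ``bad'' direction is equally striking: for $p=3$ the elementary-abelian nature of $G_2/G_4$ (forced by intensity via Lemma \ref{class 3 G2 elem abelian}) is directly at odds with the cyclic-commutator conclusion that regularity would impose on a $2$-generated $3$-group.
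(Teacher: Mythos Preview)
Your proof is correct and follows essentially the same route as the paper. The only cosmetic difference is in $(2)\Rightarrow(1)$: the paper applies Lemma~\ref{regular 3-gp 2gen} directly to $G$ and then invokes Proposition~\ref{p-power jumps 2} (which forces the cyclic group $G_2$ to satisfy $|G_2:G_4|\le p$, hence $G_3=1$), whereas you pass to $G/G_4$ and cite Lemma~\ref{class 3 G2 elem abelian} for the contradiction; both arguments exploit the same tension between cyclicity of $G_2$ and the structure imposed by $\inte(G)>1$.
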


\noindent
We will give the proof of Proposition \ref{regular proposition equivalence} at the end of Section \ref{section different primes}.

\begin{lemma}\label{regular and pp=p3}
Let $p>3$ be a prime number and let $G$ be a finite $p$-group. 
Assume that $\inte(G)>1$. Then the following hold.
\begin{itemize}
 \item[$1$.] The group $G$ is regular.
 \item[$2$.] If the class of $G$ is at least $4$, then $G_3=\rho(G)$.
\end{itemize}
\end{lemma}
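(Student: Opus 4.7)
The plan is to split on the nilpotency class $c$ of $G$, using two different sufficient criteria for regularity depending on whether $c$ is small (where direct class bounds suffice) or large (where the structural information from Chapter \ref{chapter higher} takes over). For part ($2$), once regularity is established, identifying $\rho(G)$ with $G^p$ is immediate.

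For part ($1$), if $c \leq 3$, then since $p > 3$ we have $c \leq 3 \leq p-1$, and Lemma \ref{class at most p-1 regular} gives regularity for free. If instead $c \geq 4$, then Theorem \ref{theorem exp not p} applies and yields $G^p = G_3$. Combined with Theorem \ref{theorem dimensions}, which tells us that $(w_1,w_2,w_3,w_4)=(2,1,2,1)$, this gives $|G : G^p| = |G : G_3| = p^3$. Since $p > 3$, we have $p^3 < p^p$, and Lemma \ref{regular if omega small} delivers regularity. The main point is that Theorem \ref{theorem exp not p} forces the $p$-power map to compress $G$ down to $G_3$ quickly, keeping the index $|G : G^p|$ well below the $p^p$ threshold.

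For part ($2$), assume $c \geq 4$. The previous paragraph gives both $G^p = G_3$ (from Theorem \ref{theorem exp not p}) and the regularity of $G$. By Lemma \ref{regular implies power abelian}($1$) applied with $k = 1$, the equality $\rho(G) = G^p$ holds, so $\rho(G) = G_3$ as desired. In other words, regularity upgrades the tautological inclusion $\rho(G) \subseteq \gen{\rho(G)} = G^p$ to an equality, turning the set-theoretic image of the $p$-power map into an honest subgroup.

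There is no real obstacle here: both parts rest on invoking the already-proven Theorem \ref{theorem exp not p} together with the classical regularity criterion of Lemma \ref{regular if omega small}. The only subtlety to check is the strict inequality $p^3 < p^p$, which is exactly where the hypothesis $p > 3$ is used — this is also the reason the analogous statement fails, as foreshadowed in the introduction, for $3$-groups and motivates the separate treatment of the prime $3$ in Chapter \ref{chapter 3gps}.
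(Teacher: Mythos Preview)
Your proof is correct and follows essentially the same approach as the paper: split on the class, use Lemma \ref{class at most p-1 regular} for small class, and for class at least $4$ combine Theorem \ref{theorem exp not p} with the bound $|G:G_3|=p^3$ to invoke Lemma \ref{regular if omega small}, then finish part ($2$) via Lemma \ref{regular implies power abelian}. The only cosmetic difference is that the paper obtains $|G:G_3|=p^3$ by passing to $G/G_5$ and applying Lemma \ref{class 4 structure}, whereas you cite Theorem \ref{theorem dimensions} directly; both are fine.
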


\begin{proof}
If the class of $G$ is at most $4$, the group $G$ is regular by Lemma \ref{class at most p-1 regular}. 
We assume that $G$ has class at least $4$. 
It follows from Lemma \ref{intensity of quotients} that $\inte(G/G_5)$ is larger than $1$, and so, thanks to Lemma \ref{class 4 structure}, the index 
$|G:G_3|$ is equal to $p^3$.
From Theorem \ref{theorem exp not p}, we get that $G^p=G_3$, and therefore $|G:G^p|<p^p$. 
The group $G$ is regular, by Lemma \ref{regular if omega small}, so, thanks to Lemma \ref{regular implies power abelian}, the subgroup $G^p$ coincides with $\rho(G)$. The proof is now complete. 
\end{proof}

\noindent
We would like to stress that, for $p>3$, Proposition \ref{regular and pp=p3}($2$) is a stronger version of Theorem \ref{theorem exp not p}. In fact, not only $G_3=G^p=\gen{\rho(G)}$ but $G_3$ coincides with the set of $p$-th powers of elements of $G$.

\begin{lemma}\label{not regular for p=3}
Let $G$ be a finite $3$-group with $\inte(G)>1$. 
Then $G$ is regular if and only if $G$ has class at most $2$.
\end{lemma}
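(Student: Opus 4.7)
The plan is to handle the two directions separately, with the forward implication being essentially immediate and the reverse being a short reduction to the class-$3$ case.

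First I would dispatch the direction ($\Leftarrow$). For $p=3$ we have $p-1 = 2$, so if $\cl(G) \leq 2$ then Lemma \ref{class at most p-1 regular} applies directly and gives that $G$ is regular.

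For the direction ($\Rightarrow$) I would argue the contrapositive: assuming $\cl(G) \geq 3$ and $\inte(G) > 1$, I would show $G$ is not regular. First, by Theorem \ref{theorem class at least 3}(2) we have $|G:G_2| = 9$, and since $\cl(G) \geq 3$ implies $G$ is non-abelian, Lemma \ref{frattini comm} yields $\Phi(G) = G_2$; thus $G/\Phi(G)$ has $\F_3$-dimension $2$ and $G$ can be generated by $2$ elements. The key step is then to pass to the quotient $Q = G/G_4$. Since $G_3 \supsetneq G_4$ (the lower central series is strictly decreasing until it reaches $1$), $Q$ has class exactly $3$, and $\inte(Q) > 1$ by Lemma \ref{intensity of quotients}. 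Moreover $|Q:Q_2| = |G:G_2| = 3^2$, so Lemma \ref{class 3 G2 elem abelian} applies and gives that $Q_2 = G_2/G_4$ is elementary abelian. Since $|G_2:G_3| = 3$ by Theorem \ref{theorem class at least 3}(2) and $|G_3:G_4| \geq 3$ (again because $G_3 \supsetneq G_4$), the order of $Q_2$ is at least $9$, so $Q_2$ is \emph{not} cyclic.

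The conclusion follows: if $G_2$ were cyclic, then its quotient $Q_2$ would also be cyclic, contradicting the previous paragraph. Hence $G_2$ is not cyclic, and since $G$ is $2$-generated, Lemma \ref{regular 3-gp 2gen} (in its contrapositive form) forces $G$ to be non-regular, completing the proof.

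The argument has essentially no main obstacle once one spots the reduction trick: everything non-trivial is done by previously established structural results about class-$3$ groups of intensity greater than $1$. The only subtlety worth flagging is making sure $Q = G/G_4$ really has class $3$ (which uses the strict decrease of the lower central series in any nilpotent group with $G_3 \neq 1$), and that $|Q_2| \geq 9$ (which prevents the degenerate cyclic-of-order-$3$ case that would not contradict cyclicity of $G_2$).
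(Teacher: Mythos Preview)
Your proof is correct and follows essentially the same approach as the paper's. Both hinge on Lemma~\ref{regular 3-gp 2gen} together with the fact that $G_2/G_4$ cannot be cyclic; the paper reaches this via Proposition~\ref{p-power jumps 2} (assuming $G_2$ cyclic and deriving $G_3=\{1\}$), while you reach it via Lemma~\ref{class 3 G2 elem abelian} applied to $G/G_4$. One small remark: you invoke $\inte(Q)>1$ but never actually use it, since Lemma~\ref{class 3 G2 elem abelian} only needs $|Q:Q_2|=p^2$ and class $3$.
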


\begin{proof}
If $G$ has class at most $2$, then $G$ is regular by Lemma \ref{class at most p-1 regular}. Assume by contradiction that $G$ is regular of class at least $3$. As a consequence of Theorem \ref{theorem class at least 3}($2$), the group $G$ is $2$-generated, and so, by Lemma \ref{regular 3-gp 2gen}, the subgroup $G_2$ is cyclic.
Proposition \ref{p-power jumps 2} yields that $G_3=\graffe{1}$. Contradiction.
\end{proof}

\noindent
We now give the proof of Proposition \ref{regular proposition equivalence}. To this end, let $p$ be a prime number and let
$G$ be a finite $p$-group with $\inte(G)>1$.
The intensity of $G$ being greater than $1$, it follows from Proposition \ref{proposition 2gps} that $p$ is odd. 
The implication $(2)\Rightarrow (1)$ is given by Lemma \ref{not regular for p=3}. We prove $(1)\Rightarrow(2)$.
Assume that $G$ is not regular. Then Lemma \ref{regular and pp=p3} yields $p=3$.
Moreover, the class of $G$ is larger than $2$, as a consequence of Lemma \ref{class at most p-1 regular}.
The proof of Proposition \ref{regular proposition equivalence} is complete.

\section{Rank}\label{rank}

\noindent
The \indexx{rank} of a finite group $G$ is the smallest integer $r$ with the property that each subgroup of $G$ can be generated by $r$ elements. We denote the rank of $G$ by $\rk(G)$. We will prove the following.

\begin{proposition}\label{rank 3}
Let $p>3$ be a prime number and let $G$ be a finite $p$-group of class at least $4$.
If $\inte(G)>1$, then $\rk(G)=3$.
\end{proposition}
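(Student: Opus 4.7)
The plan is to establish both inequalities $\rk(G)\geq 3$ and $\rk(G)\leq 3$, exploiting regularity (available because $p>3$) via Lemma \ref{regular and pp=p3} and the identity $G^p=G_3$ from Theorem \ref{theorem exp not p}.

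For the lower bound, I would exhibit a subgroup needing three generators, and the natural candidate is $G_2$ itself. By Theorem \ref{theorem dimensions} we have $(w_1,w_2,w_3,w_4)=(2,1,2,1)$, so $|G_2:G_4|=p^{w_2+w_3}=p^3$. Now $\Phi(G_2)=[G_2,G_2]\,G_2^p$. Lemma \ref{commutator indices} gives $[G_2,G_2]\subseteq G_4$, and Proposition \ref{p-power jumps 2} gives $G_2^p\subseteq G_4$; hence $\Phi(G_2)\subseteq G_4$, which in turn yields a surjection $G_2/\Phi(G_2)\twoheadrightarrow G_2/G_4$. Therefore $d(G_2)=\dim_{\F_p}(G_2/\Phi(G_2))\geq 3$, so $\rk(G)\geq 3$.

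For the upper bound, I would show that every subgroup $H\leq G$ satisfies $d(H)\leq 3$. First, by Lemma \ref{regular and pp=p3}, $G$ is regular, and a quick application of the Hall--Petrescu defining property shows that the subgroup $H$ is itself regular (for any $x,y\in H$, the witness $\gamma$ already lies in $[\langle x,y\rangle,\langle x,y\rangle]^p\subseteq H$). Next, by Lemma \ref{regular implies power abelian}$(3)$ and Theorem \ref{theorem exp not p}, $|\mu_p(G)|=|G:G^p|=|G:G_3|=p^3$. Since $\mu_p(H)\subseteq \mu_p(G)$ by definition (each generator of $\mu_p(H)$ is an element of $G$ whose $p$-th power is trivial), we obtain $|\mu_p(H)|\leq p^3$. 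Applying Lemma \ref{regular implies power abelian}$(3)$ to the regular group $H$ gives $|H:H^p|=|\mu_p(H)|\leq p^3$. As $\Phi(H)=[H,H]H^p\supseteq H^p$, we conclude $d(H)=\log_p|H:\Phi(H)|\leq 3$, and hence $\rk(G)\leq 3$.

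Combining the two estimates yields $\rk(G)=3$. The main conceptual step is recognizing that regularity forces a uniform bound on the rank of subgroups via the inequality $|H:H^p|\leq |G:G^p|$, and the only subtle verification is that subgroups of regular $p$-groups are regular; everything else follows directly from the structural results already established in the thesis. No substantial obstacle is expected.
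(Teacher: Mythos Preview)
Your proof is correct. The lower bound argument via $G_2$ is exactly what the paper does. For the upper bound, however, the paper takes a shorter route: it invokes Laffey's lemma (Lemma~\ref{lemma laffey}, from \cite{laffey}), which gives $\rk(G)\leq\log_p|\mu_p(G)|$ for any non-trivial finite $p$-group, and then computes $|\mu_p(G)|=|G:G^p|=|G:G_3|=p^3$ using regularity of $G$ alone. Your approach instead bypasses Laffey's result by observing that every subgroup $H\leq G$ is itself regular and then bounding $|H:H^p|=|\mu_p(H)|\leq|\mu_p(G)|=p^3$ directly. This is more elementary and entirely self-contained within the thesis (no appeal to \cite{laffey}), at the cost of a few extra lines; the paper's version is terser but relies on an external black box whose proof is not reproduced.
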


\noindent
We recall that, if $G$ is a group and $n$ is a positive integer, then the subgroup $\mu_n(G)$ is defined to be $\gen{x\in G : x^n=1}$.

\begin{lemma}\label{lemma laffey}
Let $p$ be a prime number and let $G$ be a non-trivial finite $p$-group. 
Then $\rk(G)\leq\log_p|\mu_p(G)|$.
\end{lemma}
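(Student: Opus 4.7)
The plan is to reduce the statement to a classical inequality of Laffey. By definition of rank, $\rk(G) = \max\{d(H) : H \leq G\}$, where $d(H)$ denotes the minimum number of generators of a subgroup $H$. So it suffices to show that $d(H) \leq \log_p|\mu_p(G)|$ for every subgroup $H$ of $G$.

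First I would observe that, for any subgroup $H$ of $G$, every element of $H$ satisfying $x^p = 1$ is automatically such an element in $G$, so $\mu_p(H)$ is contained in $\mu_p(G)$, and in particular $|\mu_p(H)| \leq |\mu_p(G)|$. Therefore it is enough to prove the cleaner statement that, for every non-trivial finite $p$-group $H$, one has
\[
d(H) \leq \log_p|\mu_p(H)|.
\]
By the Burnside basis theorem, $d(H) = \log_p|H/\Phi(H)|$, since $H/\Phi(H)$ is an elementary abelian $p$-group. The inequality we need is thus equivalent to $|H/\Phi(H)| \leq |\mu_p(H)|$, which is Laffey's classical bound on the number of generators of a finite $p$-group in terms of its elements of order dividing $p$. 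Since the lemma is named after Laffey, the intention is to invoke this result directly; once we have it applied to $H$, we chain it with the inclusion $\mu_p(H) \subseteq \mu_p(G)$ to conclude.

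The main obstacle is Laffey's inequality itself, which is the substantive part of the argument. If one wished to prove it internally rather than cite it, the standard approach is an induction on $|H|$: pick a central subgroup $Z \leq H$ of order $p$, apply the induction hypothesis to $H/Z$, and then distinguish carefully the two cases according to whether $Z$ is contained in $\Phi(H)$ or not. The delicate point is the case $Z \not\subseteq \Phi(H)$, where the rank of $H/Z$ drops by one relative to that of $H$, while $\mu_p(H/Z)$ need not pull back cleanly to $\mu_p(H)$; here one needs to exploit the fact that a generator of $H$ not lying in $\Phi(H)Z$ can be suitably modified to contribute an independent element of order $p$ in $H$. For the purposes of the paper, however, invoking Laffey's theorem as a black box is the natural route, and the rest of the argument is just the two-line reduction described above.
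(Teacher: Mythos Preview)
Your proposal is correct and matches the paper's approach: the paper's proof is simply a citation of Laffey's result (Corollary~2 of \cite{laffey}), and you likewise reduce everything to invoking that inequality as a black box. Your additional reduction step---passing from $\rk(G)$ to $d(H)$ for subgroups $H$ via the containment $\mu_p(H)\subseteq\mu_p(G)$---is a clean elaboration that is either already absorbed in Laffey's corollary or is the obvious two-line bridge to it.
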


\begin{proof}
This is is Corollary $2$ from \cite{laffey}.
\end{proof}

\begin{lemma}\label{rk at most 3}
Let $p>3$ be a prime number and let $G$ be a finite $p$-group of class at least $4$.
If $\inte(G)>1$, then $\rk(G)\leq 3$.
\end{lemma}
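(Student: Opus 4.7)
The plan is to combine Lemma \ref{lemma laffey} with the regularity results from Section \ref{section different primes} and the structural result from Theorem \ref{theorem exp not p}. By Lemma \ref{lemma laffey}, it suffices to show that $|\mu_p(G)| \leq p^3$.

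First, since $p > 3$ and $\inte(G) > 1$, Lemma \ref{regular and pp=p3}($1$) tells us that $G$ is regular. Regularity then lets us invoke Lemma \ref{regular implies power abelian}($3$), which gives the identity $|\mu_p(G)| = |G : G^p|$. So the task reduces to computing $|G:G^p|$.

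Next, because $G$ has class at least $4$ and $\inte(G) > 1$, Theorem \ref{theorem exp not p} yields $G^p = G_3$. Consequently $|G : G^p| = |G : G_3| = p^{w_1 + w_2}$, where $w_i = \wt_G(i)$. Applying Theorem \ref{theorem dimensions}, we have $(w_1, w_2, w_3, w_4) = (2,1,2,1)$, so $|G:G^p| = p^3$.

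Combining these, $|\mu_p(G)| = p^3$, and Lemma \ref{lemma laffey} gives $\rk(G) \leq \log_p p^3 = 3$. The argument is essentially a direct assembly of previously established facts, so no real obstacle is anticipated; the only subtlety is ensuring regularity (which is why the hypothesis $p > 3$ is needed — compare with Proposition \ref{regular proposition equivalence}, which shows that regularity fails precisely for $3$-groups of class $\geq 3$ with $\inte > 1$).
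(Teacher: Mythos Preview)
Your proof is correct and follows essentially the same route as the paper: regularity (via Lemma \ref{regular and pp=p3}), then $|\mu_p(G)|=|G:G^p|$ from Lemma \ref{regular implies power abelian}($3$), then $G^p=G_3$ from Theorem \ref{theorem exp not p} and $|G:G_3|=p^3$ from Theorem \ref{theorem dimensions}, and finally Lemma \ref{lemma laffey}. If anything, you are slightly more explicit than the paper in justifying regularity before invoking Lemma \ref{regular implies power abelian}.
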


\begin{proof}
Assume that $\inte(G)>1$.
By Theorem \ref{theorem exp not p}, the subgroup $G^p$ is equal to $G_3$ so, by Lemma \ref{regular implies power abelian}($3$), the order of $\mu_p(G)$ is equal to $|G:G^p|=|G:G_3|$. As a consequence of Theorem \ref{theorem dimensions}, the index 
$|G:G_3|$ is equal to $p^3$, and thus Lemma \ref{lemma laffey} yields
$\rk(G)\leq \log_p|G:G_3|=3$.
%Let $H$ be a subgroup of $Q$ and let us denote by $\dg(H)$ the minimum number of generators of $H$. We recall that $\dg(H)=\log_p(|H:\Phi(H)|)$. We denote moreover by $(Q_i)_{i\geq 1}$ the lower central series of $Q$ and, for all $i\in\Z_{\geq 1}$, we write $H_i$ instead of $H\cap Q_i$. For all $k\in\Z_{\geq 1}$, we call $M_k(H)$ the abelian group 
%$\bigoplus_{i\geq k}(H_iG_{i+1}/G_{i+1})$ and we set $M(H)=M_1(H)$. We observe that, for all $k\in\Z_{\geq 1}$, the group $M_k(H)$ is an $\F_p$-module, by Theorem \ref{theorem on layers in class2}. We give $M(H)$ the structure of an $\F_p[t]$-module in the following way. For all $i\in\Z_{\geq 1}$ and $x\in H_i$, we define $t\cdot xG_{i+1}=x^pG_{i+3}\in H_{i+2}G_{i+3}/G_{i+3}$ and we remark that multiplication by $t$ is well-defined by Proposition \ref{p-power jumps 2}. After calling $H^p=\gen{x^p\ :\ x\in H}$, we observe that $tM(Q)=M(Q^p)$, by Lemma \ref{bolla}, while in general $tM(H)$ is contained in $M(H^p)$. It follows that 
%$\dg(H)=\log_p(|H:\Phi(H)|)\leq \log_p(|H:H^p|)=\log_p(|M(H):M(H^p)|)\leq\log_p(|M(H):tM(H)|)$. Now, $\log_p(|M(H):tM(H)|)\leq\log_p(|M(Q):tM(Q)|)$, because $\F_p[t]$ is a principal ideal domain, and thus $\dg(H)\leq\log_p(|M(Q):tM(Q)|)=\log_p(|M(Q):M(Q^p)|)$. By Lemma \ref{bolla}, the groups $Q^p$ and $Q_3$ are the same and, by Lemma \ref{intensity of quotients} and Lemma \ref{class 4 structure}, the index 
%$|Q:Q_3|$ is equal to $p^3$. It follows that $\dg(H)\leq\log_p(|M(Q):M(Q^p)|)=\log_p(|M(Q):M(Q_3)|)=3$.
\end{proof}

\noindent
We can now finally prove Proposition \ref{rank 3}. In order to do this, let $p$ be a prime number and let $G$ be a finite $p$-group of class at least $4$, with $\inte(G)>1$.
Thanks to Lemma \ref{rk at most 3}, it suffices to present a subgroup of $G$ whose minimum number of generators is at least $3$. 
The group $G/G_5$ has class $4$ and, thanks to Lemma \ref{intensity of quotients}, it has intensity greater than $1$.
As a consequence of Theorem \ref{theorem dimensions}, the index 
$|G_2:G_4|$ is equal to $p^3$ and, thanks to Proposition \ref{p-power jumps 2}, the quotient $G_2/G_4$ is elementary abelian. It follows that $\Phi(G_2)\subseteq G_4$ and the minimum number of generators for $G_2$ is at least 
$\log_p(|G_2:G_4|)=3$. Proposition \ref{rank 3} is now proven.
\vspace{8pt} \\
\noindent
We would like to remark that, if $p=3$, then Proposition \ref{rank 3} is not valid. We will see indeed in the next chapter that finite $3$-groups of class $4$ and intensity larger than $1$ have a commutator subgroup that is elementary abelian of order $p^4$, so the rank of such groups is at least $4$.

\section{A sharper bound on the width}\label{section limitation}

The aim of  Section \ref{section limitation} is to give the proof of Proposition \ref{finalmente!}, which is the same as Theorem \ref{theorem dimension layers}.

\begin{proposition}\label{finalmente!}
Let $p>3$ be a prime number and let $G$ be a finite $p$-group with $\inte(G)>1$. 
Let $c$ denote the class of $G$ and assume that that $c\geq 3$. 
If $i$ is a positive integer such that $\wt_G(i)\wt_G(i+1)=1$, then $i=c-1$.
\end{proposition}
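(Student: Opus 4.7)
The plan is to argue by induction on the class $c$ of $G$. The base cases $c = 3$ and $c = 4$ are exactly Theorems \ref{theorem class at least 3} and \ref{theorem dimensions}. For $c \geq 5$, apply the induction hypothesis to $G/G_c$ (of class $c-1$, with $\inte(G/G_c) > 1$ by Lemma \ref{intensity of quotients}): this forces $w_j w_{j+1} \neq 1$ for every $j \leq c-3$, and combined with Theorem \ref{theorem consecutive layers} and the non-vanishing of $w_j$ for $j \leq c$, we obtain the alternating pattern $w_j = 2$ for $j$ odd and $w_j = 1$ for $j$ even throughout $1 \leq j \leq c-2$. Thus the only index $i < c-1$ for which $w_i w_{i+1} = 1$ can possibly hold is $i = c-2$, and it requires $c$ to be even (so that $w_{c-2} = 1$) together with $w_{c-1} = 1$. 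The task then reduces to excluding this configuration for $c \geq 6$ even.

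Suppose for contradiction that $c \geq 6$ is even and $w_{c-1} = 1$. By Lemma \ref{regular and pp=p3}, $G$ is regular. From $G^p = G_3$ (Theorem \ref{theorem exp not p}) and $[G_i^p, G] = [G_i, G]^p$ (Lemma \ref{regular bilinear}), an induction on $i$ yields $G_i^p = G_{i+2}$ for every $i \geq 1$, and hence $G^{p^k} = G_{2k+1}$ for all $k \geq 0$. Setting $k = (c-2)/2$, the Hall--Petrescu formula (Lemma \ref{hall-petrescu lemma}) together with $G_2^{p^k} = G_c$ and the valuation estimates on $\binom{p^k}{j}$ for $j \geq 2$, which are strong enough precisely because $p > 3$, show that the $p^k$-th power map induces a well-defined surjective group homomorphism $\overline{\rho^k}\colon G/G_2 \to G_{c-1}/G_c$; since its source has order $p^2$ and its target order $p^{w_{c-1}} = p$, its kernel $K$ is a one-dimensional $\F_p$-subspace of $G/G_2$.

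The contradiction will come from a double count of the depth-$1$ elements of $\mu_{p^k}(G)$. For each nonzero class $\bar y \in K$ and any lift $y$ of $\bar y$, the element $y$ has depth $1$ in $G$ and $y^{p^k} \in G_c$; since the jumps of the cyclic subgroup $\gen{y}$ are all of the same parity as $\dpt_G(y)$ by Lemma \ref{jumps cyclic sbgs all same parity}, namely odd, and $c$ is even, the depth of $y^{p^k}$ in $G$ must be at least $c+1$, so $y^{p^k} = 1$. Summing over the $(p-1)$ nonzero $G_2$-cosets in $K$ produces $(p-1)|G_2|$ depth-$1$ elements of $\mu_{p^k}(G)$. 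On the other hand, Lemma \ref{regular implies power abelian} gives $|\mu_{p^k}(G)| = |G : G^{p^k}| = p^{w_1 + \dots + w_{c-2}}$ and $|\mu_{p^k}(G_2)| = |G_2 : G_c| = p^{w_2 + \dots + w_{c-1}}$. Substituting the established pattern together with $w_{c-1} = 1$, so that $w_1 + \dots + w_{c-2} = 3k$ and $w_2 + \dots + w_{c-1} = 3k - 1$, the identity
\[
(p-1)\,|G_2| \;=\; |\mu_{p^k}(G)| - |\mu_{p^k}(G_2)|
\]
reduces to $(p-1)\,p^{3k - 1 + w_c} = (p-1)\,p^{3k - 1}$, forcing $w_c = 0$ and contradicting the fact that $G$ has class $c$. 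The main obstacle I foresee is the careful Hall--Petrescu bookkeeping required to guarantee that all correction terms in $(xy)^{p^k}$ land in $G_c$ and hence that $\overline{\rho^k}$ is a genuine homomorphism; this is the step where the hypothesis $p > 3$, through regularity and the recursion $G_i^p = G_{i+2}$, is truly indispensable.
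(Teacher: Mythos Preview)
Your argument is correct and takes a genuinely different route from the paper's.

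The paper first localises the failure at the minimal bad index $i$, proves (Lemma \ref{prezzemolo}) that $i$ is even with $w_{i-1}=2$, and then, in the hypothetical quotient $\overline G=G/N$ with $|\overline G_{i+2}|=p$, shows (Lemma \ref{ozonosfera}) that $\overline G_{i-1}$ has exponent $p$. A maximal subgroup $M$ of $\overline G_{i-1}$ containing $\overline G_i$ is then abelian, of exponent $p$, and of order $p^4$, contradicting the rank bound $\rk(G)=3$ of Proposition \ref{rank 3}.

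You instead run the induction on $c$ directly, reducing to the single residual configuration ($c\geq 6$ even, alternating widths up to $c-2$, and $w_{c-1}=1$), and then derive a contradiction by a global count. The key observation is that the $p^k$-th power map with $k=(c-2)/2$ descends to a homomorphism $G/G_2\to G_{c-1}/G_c$, and that the parity constraint on jumps of cyclic subgroups (Lemma \ref{jumps cyclic sbgs all same parity}) forces every lift of its kernel to lie in $\mu_{p^k}(G)$. The resulting exact count of $|\mu_{p^k}(G)|-|\mu_{p^k}(G_2)|$ via Lemma \ref{regular implies power abelian} forces $w_c=0$.

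Both approaches rest on regularity (for $p>3$), but the paper's argument is structural---it builds a large elementary abelian section and invokes the rank bound---whereas yours is arithmetic and self-contained, bypassing both Lemma \ref{ozonosfera} and Proposition \ref{rank 3}. Your use of the jump-parity lemma to kill the $G_c$-ambiguity is clean. One remark: you do not spell out that the depth-$1$ elements of $\mu_{p^k}(G)$ are \emph{exactly} the lifts of $K\setminus\{0\}$ (needed for the displayed identity to be an equality and not just an inequality); the missing direction ``$y^{p^k}=1\Rightarrow \bar y\in K$'' is immediate from the definition of $\overline{\rho^k}$, so this is only a presentational gap. The Hall--Petrescu estimate $j-2v_p(j)\geq 2$ for $j\geq 2$ that you allude to does indeed go through precisely for $p>3$, and all the lemmas you cite are logically prior to Proposition \ref{finalmente!} in the paper.
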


\noindent
We list here a number of assumptions that will hold until the end of Section \ref{section limitation}. 
Let $p>3$ be a prime number and let $G$ be a finite $p$-group with lower central series $(G_i)_{i\geq 1}$. 
Let $c$ denote the class of $G$ and, for each positive integer $i$, write $w_i=\wt_G(i)$.
Assume that $\inte(G)>1$. Then, as a consequence of Proposition \ref{proposition 2gps}, the prime $p$ is odd and $G$ is non-trivial.
Let $\alpha$ be an intense automorphism of $G$ of order $2$ and write $A=\gen{\alpha}$.

\begin{lemma}\label{i>1}
Let $i\in\Z_{\geq 1}$ be such that $w_iw_{i+1}=1$. 
Then $i>1$.
\end{lemma}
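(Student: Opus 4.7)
The plan is to observe that the condition $w_iw_{i+1}=1$ forces both $w_i$ and $w_{i+1}$ to equal $1$ (since widths are non-negative integers), and then to rule out $i=1$ by invoking the structural constraint on the first width already established in Theorem \ref{theorem class at least 3}.

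More precisely, I would first apply Theorem \ref{theorem class at least 3}(2) to $G$: since $p>3$ is in particular odd, $G$ is a finite $p$-group of class $c\geq 3$, and $\inte(G)>1$, we obtain $(w_1,w_2,w_3)=(2,1,f)$ with $f\in\{1,2\}$. In particular $w_1=2$. Now suppose for contradiction that $i=1$ satisfies $w_1w_2=1$. Then $w_1=1$, contradicting $w_1=2$. Hence $i>1$, as desired.

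I do not expect any real obstacle here; the lemma is essentially a bookkeeping consequence of Theorem \ref{theorem class at least 3}(2), and the role of the hypothesis $p>3$ is not used at this stage—it will enter in the subsequent lemmas that build on this base case to push the restriction on widths further up the lower central series.
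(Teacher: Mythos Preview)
Your argument works, but it leans on the hypothesis $c\geq 3$ to invoke Theorem \ref{theorem class at least 3}(2). That hypothesis appears in the statement of Proposition \ref{finalmente!} but is not among the standing assumptions listed at the start of Section \ref{section limitation}, so strictly speaking you should justify it. (It is easily done: since $w_{i+1}=1$ the class is at least $2$, and if it were exactly $2$ then Theorem \ref{theorem class2 complete} would make $G$ extraspecial, forcing $w_1$ even.)

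The paper's proof sidesteps all of this with a more elementary observation: from $w_{i+1}=1$ we get $G_{i+1}\neq 1$; if $i=1$ then $w_1=1$ makes $G/G_2$ cyclic, whence $G$ is abelian by Lemma \ref{index G'}, contradicting $G_2\neq 1$. This argument uses neither $\inte(G)>1$ nor any bound on the class, so it is strictly more self-contained than yours. Your route, on the other hand, has the virtue of making explicit that the constraint $w_1=2$ is what is really driving the conclusion, which aligns well with the shape of the subsequent lemmas.
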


\begin{proof}
The subgroup $G_{i+1}$ being non-trivial, Lemma \ref{index G'} yields $i>1$.
\end{proof}

\begin{lemma}\label{hosonno}
Assume that $w_2w_3=1$.
Then $c=3$.
\end{lemma}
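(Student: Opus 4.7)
The plan is to argue by contradiction using Theorem \ref{theorem dimensions}. First, since $w_2$ and $w_3$ are non-negative integers whose product equals $1$, one immediately obtains $w_2=w_3=1$. In particular $G_3\neq G_4$, so $G_3$ is non-trivial and the class $c$ is at least $3$.

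Suppose now, for contradiction, that $c\geq 4$. Then, since $\inte(G)>1$, Theorem \ref{theorem dimensions} applies and yields $(w_1,w_2,w_3,w_4)=(2,1,2,1)$; in particular $w_3=2$, contradicting $w_3=1$ obtained above. Hence $c\leq 3$, and combined with $c\geq 3$ we conclude $c=3$.

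There is no genuine obstacle here: the statement is a direct corollary of Theorem \ref{theorem dimensions}, isolated as a separate lemma only because it will be applied in the inductive step used to pin down the position of the unique index $i$ with $w_iw_{i+1}=1$ in Proposition \ref{finalmente!}.
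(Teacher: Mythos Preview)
Your proof is correct and follows exactly the paper's approach: the paper's one-line proof reads ``The widths $w_2$ and $w_3$ are both equal to $1$, so Theorem \ref{theorem dimensions} yields $c=3$,'' which is precisely your contrapositive argument, with the lower bound $c\geq 3$ left implicit (it follows from $w_3=1$ as you spell out).
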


\begin{proof}
The widths $w_2$ and $w_3$ are both equal to $1$, so Theorem \ref{theorem dimensions} yields $c=3$.
\end{proof}

\begin{lemma}\label{cinquina}
Let $i\in\Z_{\geq 1}$ be such that $w_iw_{i+1}=1$. 
If $c>3$, then $i\geq 4$.
\end{lemma}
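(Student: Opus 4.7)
The plan is to combine three ingredients: Lemma \ref{i>1}, which already gives $i > 1$; the contrapositive of Lemma \ref{hosonno}, which rules out $i = 2$; and Theorem \ref{theorem dimensions}, which rules out $i = 3$. Concretely, I would argue as follows.

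First, assume $c > 3$ and suppose for contradiction that $i \leq 3$. Lemma \ref{i>1} immediately excludes $i = 1$, so the only remaining possibilities are $i = 2$ and $i = 3$. If $i = 2$, then $w_2 w_3 = 1$, and Lemma \ref{hosonno} forces $c = 3$, contradicting $c > 3$. Hence $i \neq 2$.

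It remains to handle $i = 3$. Since $c \geq 4$ and $\inte(G) > 1$, Theorem \ref{theorem dimensions} applies and yields $(w_1, w_2, w_3, w_4) = (2, 1, 2, 1)$. In particular $w_3 w_4 = 2 \neq 1$, so $i = 3$ is also impossible. Combining the three cases, we conclude $i \geq 4$.

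There is no real obstacle here: the statement is essentially a bookkeeping consequence of the earlier structural theorems, obtained by ruling out each small value of $i$ with the appropriate tool. The only thing to be careful about is that the hypothesis $p > 3$ is inherited by the earlier results being invoked (both Lemma \ref{hosonno} and Theorem \ref{theorem dimensions} apply in this generality since they require only $\inte(G) > 1$), so no extra assumption is needed.
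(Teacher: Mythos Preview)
Your proof is correct and uses the same key ingredient as the paper, namely Theorem \ref{theorem dimensions}. The paper's argument is slightly more streamlined: since $c\geq 4$, Theorem \ref{theorem dimensions} gives $(w_1,w_2,w_3,w_4)=(2,1,2,1)$ in one shot, which rules out $i=1,2,3$ simultaneously, so the separate invocations of Lemmas \ref{i>1} and \ref{hosonno} are redundant.
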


\begin{proof}
Assume that the class of $G$ is at least $4$. Then,  by Lemma \ref{intensity of quotients}, the group $G/G_5$ has intensity greater than $1$. Theorem \ref{theorem dimensions} yields $(w_1,w_2,w_3,w_4)=(2,1,2,1)$ and therefore $i\geq 4$.
\end{proof}

\begin{lemma}\label{prezzemolo}
Let $i\in\Z_{\geq 1}$ be minimal such that $w_iw_{i+1}=1$. 
If $c>3$, then $i$ is even and $w_{i-1}=2$.
\end{lemma}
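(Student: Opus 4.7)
The plan is to bootstrap off the alternating pattern that is already forced on the low-index widths, and then show that this pattern must persist up to index $i$ because of the minimality assumption combined with Theorem \ref{theorem consecutive layers}. First I would observe that $w_i w_{i+1} = 1$ forces $w_i = w_{i+1} = 1$ (both factors are nonnegative integers), and in particular $w_{i+1} \geq 1$ gives $i + 1 \leq c$. Hence for every $j$ with $1 \leq j < i$ both $w_j$ and $w_{j+1}$ are positive integers, so by Theorem \ref{theorem consecutive layers} we have $w_j w_{j+1} \in \{1,2\}$, and by minimality of $i$ we must have $w_j w_{j+1} = 2$ for all such $j$.

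Next I would prove by induction on $j$, with $1 \leq j \leq i$, that $w_j = 2$ if $j$ is odd and $w_j = 1$ if $j$ is even. The base step is Theorem \ref{theorem dimensions}, which (together with Lemma \ref{cinquina}, giving $i \geq 4$) supplies $(w_1,w_2,w_3,w_4) = (2,1,2,1)$. For the inductive step, suppose the pattern holds at some $j$ with $4 \leq j < i$. Then $w_j w_{j+1} = 2$ by the previous paragraph, and the known value of $w_j$ determines $w_{j+1}$ uniquely (if $w_j = 2$ then $w_{j+1} = 1$; if $w_j = 1$ then $w_{j+1} = 2$), so the alternation is preserved.

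Finally, since $w_i = 1$ and the inductive pattern matches $w_j = 1$ precisely when $j$ is even, we conclude that $i$ is even. Then $i - 1$ is odd with $1 \leq i-1 < i$, so $w_{i-1} w_i = 2$ by the first step, and $w_i = 1$ forces $w_{i-1} = 2$.

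There is really no substantial obstacle here: everything follows from the uniform upper bound $w_j w_{j+1} \leq 2$ of Theorem \ref{theorem consecutive layers}, the initial data from Theorem \ref{theorem dimensions}, and a routine parity induction. The only point to handle carefully is the justification that $w_j w_{j+1} \geq 1$ for $j < i$, which uses the inequality $i + 1 \leq c$ derived from $w_{i+1} = 1$.
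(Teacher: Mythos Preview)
Your proof is correct and follows essentially the same approach as the paper's: both arguments use Theorem~\ref{theorem consecutive layers} together with the minimality of $i$ to force $w_j w_{j+1} = 2$ for all $j < i$, then read off the alternating $2,1,2,1,\ldots$ pattern from the known initial widths to conclude that $i$ is even and $w_{i-1}=2$. Your write-up is slightly more explicit in justifying $w_j w_{j+1} \geq 1$ for $j < i$ (via $i+1 \leq c$) and in spelling out the induction, but the underlying logic is identical.
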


\begin{proof}
Assume $c>3$. Then Lemma \ref{cinquina} yields $i-1>1$.
The width $w_{i-1}$ is at most $2$, as a consequence of Theorem \ref{theorem consecutive layers}, and thus, $i$ being minimal with the property that $w_iw_{i+1}=1$, it follows that $w_{i-1}=2$.
Another consequence of the minimality of $i$ is that $i$ is even. Indeed, thanks to Theorem \ref{theorem consecutive layers} and the minimality of $i$, whenever $j<i$, the product $w_jw_{j+1}$ is equal to $2$. Moreover, by Theorem \ref{theorem class at least 3}($2$), we have that $w_1=2$, so $i$ is even. 
\end{proof}

\begin{lemma}\label{ozonosfera}
Let $i\in\Z_{\geq 1}$ be minimal such that $w_iw_{i+1}=1$. 
Assume that $c>3$ and that $w_{i+2}=1$. 
Then $G_{i-1}/G_{i+3}$ has exponent $p$.
\end{lemma}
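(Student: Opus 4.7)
Since $p>3$ and $\mathrm{int}(G)>1$, the group $G$ is regular by Lemma \ref{regular and pp=p3}(1), and by Theorem \ref{theorem exp not p} one has $G^p=G_3$. From Lemma \ref{prezzemolo} and the minimality of $i$ (combined with Theorem \ref{theorem consecutive layers}) the widths around $i$ are $(w_{i-1},w_i,w_{i+1},w_{i+2})=(2,1,1,1)$, with $i\geq 4$ even. Write $\chi={\chi_G}_{|A}$; by Lemma \ref{action chi^i} the successive quotients $G_{i-1}/G_i,\,G_i/G_{i+1},\,G_{i+1}/G_{i+2},\,G_{i+2}/G_{i+3}$ carry the $A$-characters $\chi,1,\chi,1$. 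The goal is $G_{i-1}^{\,p}\subseteq G_{i+3}$.

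\textbf{Step 1 (setting up the $p$-power map on the quotient).} I would first show that $\rho$ descends to a group homomorphism $\bar\rho:G_{i-1}/G_{i+3}\to G_{i+1}/G_{i+3}$. For $x,y\in G_{i-1}$, Hall--Petrescu (Lemma \ref{hall-petrescu lemma}) yields $(xy)^p=x^py^p\prod_{k=2}^{p}c_k^{\binom{p}{k}}$ with $c_k\in\langle x,y\rangle_k\subseteq G_{k(i-1)}$. For $2\leq k\leq p-1$ each $\binom{p}{k}$ is divisible by $p$, so the corresponding factor lies in $G_{2(i-1)}^{\,p}\subseteq G_{2i}\subseteq G_{i+3}$ (Proposition \ref{p-power jumps 2} and $i\geq 4\Rightarrow 2i\geq i+3$); the last factor $c_p^{\binom{p}{p}}$ lies in $G_{p(i-1)}\subseteq G_{i+3}$ (since $p\geq 5$, $i\geq 2$). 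An analogous estimate handles the case $y\in G_{i+3}$, so $\bar\rho$ is a well-defined group homomorphism.

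\textbf{Step 2 (identifying the image via regularity).} By Lemma \ref{regular bilinear} applied with $r=1,s=0$ and $M=G$, $N=G_{i-2}$, together with $G^p=G_3$, one obtains
\[
G_{i-1}^{\,p}=\rho([G,G_{i-2}])=[\rho(G),G_{i-2}]=[G_3,G_{i-2}].
\]
Similarly, applying Lemma \ref{regular bilinear} with $r=0,s=1$ and iterating on the second argument gives the alternative description $G_{i-1}^{\,p}=[G,G_{i-2}^{\,p}]$. The image of $\bar\rho$ is $G_{i-1}^{\,p}G_{i+3}/G_{i+3}$, which is a normal $A$-stable subgroup of $G/G_{i+3}$ contained in $G_{i+1}/G_{i+3}$, so by Proposition \ref{normal squeezed} it equals one of $\{1\},\,G_{i+2}/G_{i+3},\,G_{i+1}/G_{i+3}$.

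\textbf{Step 3 (closing via $A$-equivariance).} The map $\bar\rho$ is $A$-equivariant. By Theorem \ref{lambda mu} applied to the short exact sequence $0\to G_{i+2}/G_{i+3}\to G_{i+1}/G_{i+3}\to G_{i+1}/G_{i+2}\to 0$ (with distinct characters $1\neq\chi$), the target splits as an $A$-module into its $\chi$-part (isomorphic to $G_{i+1}/G_{i+2}$) and its trivial part (isomorphic to $G_{i+2}/G_{i+3}$). I would then analyze the two "graded" pieces of $\bar\rho$: the induced maps $G_{i-1}/G_i\to G_{i+1}/G_{i+2}$ (characters $\chi\to\chi$) and $G_i/G_{i+1}\to G_{i+2}/G_{i+3}$ (characters $1\to 1$); each is well-defined by the same Hall--Petrescu estimate applied one layer deeper, and is forced to vanish by combining the commutator identity from Step~2 (which expresses $G_{i-1}^{\,p}$ as $[G_3,G_{i-2}]$, hence reduces its depth computation to pairings on the small-dimensional layers $G_3/G_4$ and $G_{i-2}/G_{i-1}$) with the $A$-character constraints. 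This forces the image of $\bar\rho$ to be trivial.

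\textbf{Main obstacle.} The hard part is Step 3: upgrading the generic bound $G_{i-1}^{\,p}\subseteq G_{i+1}$ (from Proposition \ref{p-power jumps 2}) to the sharper $G_{i-1}^{\,p}\subseteq G_{i+3}$. The clean ingredient for this improvement is the regularity identity $G_{i-1}^{\,p}=[G_3,G_{i-2}]$ together with the $A$-character analysis, but extracting the sharp bound from these requires handling the minimality of $i$ carefully, since the only obstruction to the bound is precisely the existence of "excess depth" caused by $w_{i-1}=2$ meeting $w_i=w_{i+1}=1$, and one must show that this obstruction is absorbed into $G_{i+3}$ after the $p$-th power.
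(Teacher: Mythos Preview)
Your Steps 1 and 2 are fine and broadly parallel the paper's opening moves (the paper gets the homomorphism property more cheaply by observing that $\overline{G}_{i-1}=G_{i-1}/G_{i+3}$ has class at most $2$ with commutator subgroup of exponent $p$, but your Hall--Petrescu estimate works too).

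The gap is in Step 3. You propose to show that the two graded pieces
\[
G_{i-1}/G_i\longrightarrow G_{i+1}/G_{i+2}\quad(\chi\to\chi),\qquad
G_i/G_{i+1}\longrightarrow G_{i+2}/G_{i+3}\quad(1\to 1)
\]
vanish ``by the $A$-character constraints''. But the characters \emph{match} on each piece, so equivariance imposes no constraint at all; and your commutator identity $G_{i-1}^{\,p}=[G_3,G_{i-2}]$ only re-derives the bound $G_{i-1}^{\,p}\subseteq G_{i+1}$, since the induced pairing $G_3/G_4\times G_{i-2}/G_{i-1}\to G_{i+1}/G_{i+2}$ is a map $\F_p^2\times\F_p\to\F_p$ with compatible characters $(\chi,1)\mapsto\chi$, which nothing prevents from being surjective. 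So as written, Step 3 does not close.

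The paper's argument supplies exactly the missing idea. Working in $\overline{G}=G/G_{i+3}$, one first uses regularity (Lemma \ref{regular implies power abelian}) to get
\[
|\mu_p(\overline{G}_{i-1})|=|\overline{G}_{i-1}:\overline{G}_{i-1}^{\,p}|\ \geq\ |\overline{G}_{i-1}:\overline{G}_{i+1}|=p^{w_{i-1}+w_i}=p^3,
\]
while $|\overline{G}_i|=p^{w_i+w_{i+1}+w_{i+2}}=p^3$. Since $\mu_p(\overline{G}_{i-1})$ is characteristic in $\overline{G}_{i-1}$, hence normal in $\overline{G}$, Proposition \ref{normal squeezed} forces $\overline{G}_i\subseteq\mu_p(\overline{G}_{i-1})$. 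This is the key step: it shows that your second graded map vanishes, so $\overline{\rho}$ factors through $\overline{G}_{i-1}/\overline{G}_i$. Now the image $\overline{G}_{i-1}^{\,p}$ carries the \emph{pure} character $\chi$ (from $G_{i-1}/G_i$, with $i-1$ odd), hence lies in $G^-$. But $\overline{G}_{i-1}^{\,p}$ is normal in $\overline{G}$, so if nontrivial it meets $\ZG(\overline{G})=\overline{G}_{i+2}$ nontrivially (Lemma \ref{normal intersection centre trivial}); since $\overline{G}_{i+2}\subseteq G^+$ (as $i+2$ is even), Lemma \ref{distinct characters on same gp} gives a contradiction. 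Thus $\overline{G}_{i-1}^{\,p}=\{1\}$.

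In short: you correctly locate the difficulty but do not overcome it. The missing ingredient is the size argument with $\mu_p$, which upgrades ``mixed character on the image'' to ``pure character $\chi$'', after which the center-intersection trick finishes the proof.
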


\begin{proof}
We write $\overline{G}=G/G_{i+3}$ and we will use the bar notation for the subgroups of $\overline{G}$. The intensity of $\overline{G}$ is larger than $1$ thanks to Lemma \ref{intensity of quotients}.
The group $[G_{i-1},G_{i-1}]$ is contained in $G_{2i-2}$, by Lemma \ref{commutator indices}, and, by Lemma \ref{cinquina}, the index $i$ is larger than $3$. 
It follows that
$[G_{i-1},G_{i-1}]\subseteq G_{2i-2}\subseteq G_{i+2}$, and therefore 
$\overline{G}_{i-1}$ has class at most $2$ and $[\overline{G}_{i-1},\overline{G}_{i-1}]^p=\graffe{1}$. 
As a consequence of Corollary \ref{p map petrescu}, the 
$p$-power map is an endomorphism of $\overline{G}_{i-1}$. Now, thanks to Proposition \ref{p-power jumps 2}, the subgroup $\overline{G}_{i-1}^{\,p}$ is contained in $\overline{G}_{i+1}$ and, from Lemmas \ref{regular implies power abelian}($3$) and \ref{prezzemolo}, it follows that 
$|\mu_p(\overline{G}_{i-1})|=|\overline{G}_{i-1}:\overline{G}_{i-1}^{\,p}|\geq 
|\overline{G}_{i-1}:\overline{G}_{i+1}|=p^3$. 
Also the order of $\overline{G}_{i}$ is equal to $p^3$ and, as a consequence of Proposition \ref{normal squeezed}, the subgroup 
$\overline{G}_{i}$ is contained in $\mu_p(\overline{G}_{i-1})$. Hence the $p$-power map factors thus as a homomorphism 
$\overline{G}_{i-1}/\overline{G}_{i}\rightarrow \overline{G}_{i+1}$. 
By Lemma \ref{prezzemolo}, the index $i$ is even, and so, by Proposition \ref{proposition -1^i}, the automorphism of 
$\overline{G}_{i-1}/\overline{G}_{i}$ that is induced by $\alpha$ is equal to the inversion map. It follows from Lemma \ref{p-power characters} that $\alpha$ restricts to the inversion map on $\overline{G}_{i-1}^{\, p}$. 
Moreover, again by Proposition \ref{proposition -1^i}, the action of $A$ on $\overline{G}_{i+2}$ is trivial. 
It follows from Lemma \ref{distinct characters on same gp} that 
$\overline{G}_{i-1}^{\, p}\cap\overline{G}_{i+2}=\graffe{1}$. 
The subgroup $\overline{G}_{i-1}^p$ is clearly characteristic in $\overline{G}$, while the subgroup $\overline{G}_{i+2}$ is equal to the centre of $\overline{G}$, by Lemma \ref{centre at the bottom}. Lemma \ref{normal intersection centre trivial} yields $\overline{G}_{i-1}^{\, p}=\graffe{1}$.
\end{proof}

\noindent
We conclude Section \ref{section limitation} with the proof of Proposition \ref{finalmente!}.
By Lemma \ref{i>1}, the integer $i$ is larger than $1$. If $i=2$, then Lemma \ref{hosonno} yields $c=3=i+1$. 
We assume that $i$ is greater than $2$, so $c>3$, and,
without loss of generality, that $i$ is minimal with the property that $w_iw_{i+1}=1$. In particular, the subgroup $G_{i+1}$ is non-trivial. If $G_{i+2}=\graffe{1}$, then the class of $G$ is equal to $i+1$, and so $i=c-1$.
Assume now by contradiction that $G_{i+2}$ is non-trivial. 
By Lemma \ref{normal index p}, there exists a normal subgroup $N$ of $G$ that is contained in $G_{i+2}$ with index $p$. We fix $N$ and denote the quotient $G/N$ by $\overline{G}$. Lemma \ref{intensity of quotients} yields $\inte(\overline{G})>1$. 
By Lemma \ref{prezzemolo}, the width 
$\wt_{\overline{G}}(i-1)=w_{i-1}$ is equal to $2$ so, by Lemma \ref{order product orders jumps}, the order of $\overline{G}_{i-1}$ is equal to $p^5$.  By Lemma \ref{cinquina}, the index $i$ is at least $4$, and thus, as a consequence of Lemma \ref{commutator indices}, the subgroup 
$[\overline{G}_{i-1},\overline{G}_{i}]$ is contained in $\overline{G}_{i+3}=\graffe{1}$. It follows that $\overline{G}_{i-1}$ and $\overline{G}_{i}$ centralize each other. Let now $M$ be a maximal subgroup of $\overline{G}_{i-1}$ that contains 
$\overline{G}_{i}$. The index $|M:\overline{G}_i|$ is equal to $p$, because $w_{i-1}=2$, and so Lemma \ref{cyclic quotient commutators} gives $[M,M]=[M,\overline{G}_i]=\graffe{1}$.
Moreover, the order of $M$ is equal to $p^4$ and $M$ has exponent $p$, because of Lemma \ref{ozonosfera}. In particular, $M$ is a $4$-dimensional vector space over $\F_p$. Contradiction to proposition \ref{rank 3}.

%CHAPTER 7: WHEN p=3

\chapter{The special case of 3-groups}\label{chapter 3gps}

\noindent
Let $R=\F_3[\epsilon]$ be of cardinality $9$, with $\epsilon^2=0$.
Denote by $\A$ the quaternion algebra 
\[\A=R+R\mathrm{i}+R\mathrm{j}+R\mathrm{k}\] with defining relations
$\mathrm{i}^2=\mathrm{j}^2=\epsilon$ and $\mathrm{k}=\mathrm{ji}=-\mathrm{ij}$. Let the \emph{bar map} on $\A$ be defined by 
\[x=a+b\mathrm{i}+c\mathrm{j}+d\mathrm{k}\ \mapsto \
\overline{x}=a-b\mathrm{i}-c\mathrm{j}-d\mathrm{k}.\]
We write $\mathfrak{m}=\A\mathrm{i}+\A\mathrm{j}$, which is a $2$-sided nilpotent ideal of $\A$, and we define
$\yo$ to be the subgroup of $1+\mathfrak{m}$ consisting of those elements $x$ satisfying $\overline{x}=x^{-1}$. 
%The group $\yo$ has order $729$, class $4$, and, if $w_i=\wt_{\yo}(i)$, then $(w_1,w_2,w_3,w_4)=(2,1,2,1)$.
The main result of this chapter is the following.

\begin{theorem}\label{theorem 3-groups}
Let $G$ be a finite $3$-group. Then the following are equivalent.
\begin{itemize}
 \item[$1$.] The group $G$ has class at least $4$ and $\inte(G)>1$.
 \item[$2$.] The group $G$ has class $4$, order $729$, and $\inte(G)=2$.
 \item[$3$.] The group $G$ is isomorphic to $\yo$.
\end{itemize}
\end{theorem}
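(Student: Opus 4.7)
The plan is to prove the chain of implications $(3)\Rightarrow(2)\Rightarrow(1)\Rightarrow(3)$. The implication $(2)\Rightarrow(1)$ is immediate. For $(3)\Rightarrow(2)$, I would verify the claimed properties of $\yo$ directly from its quaternion construction: filtering $1+\mathfrak{m}$ by the powers $1+\mathfrak{m}^k$ and using that each $\mathfrak{m}^k/\mathfrak{m}^{k+1}$ has cardinality dictated by the size of $R$, one shows $|1+\mathfrak{m}|=3^8$ and, by cutting out the condition $\overline{x}=x^{-1}$ (which halves dimensions in the Lie-algebra sense layer by layer), one gets $|\yo|=3^6=729$. The identity $[1+\alpha,1+\beta]\equiv 1+(\alpha\beta-\beta\alpha)\pmod{\mathfrak{m}^{\dpt\alpha+\dpt\beta+1}}$, applied inside $\A$, gives the widths $(w_1,w_2,w_3,w_4)=(2,1,2,1)$ and class $4$. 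To obtain $\inte(\yo)=2$, conjugation by $\mathrm{k}=\mathrm{ij}$ inside $\A^*$ restricts to an automorphism $\alpha$ of $\yo$ that induces the inversion map on $\yo/\yo_2$; one then checks, using the subgroup analysis already done for $\kappa$-groups and the jump machinery of Section \ref{section jumps}, that every subgroup of $\yo$ has an $\gen{\alpha}$-stable conjugate, which by Lemma \ref{equivalent intense coprime-pgrps} makes $\alpha$ intense.

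For $(1)\Rightarrow(3)$, let $G$ be a finite $3$-group of class $c\geq 4$ with $\inte(G)>1$. By Proposition \ref{mezzo teorema class 3} we have $\inte(G)=2$ and $|G:G_2|=9$, and by Theorem \ref{theorem dimensions} one has $(w_1,w_2,w_3,w_4)=(2,1,2,1)$. The first step is to show that $G$ is a $\kappa$-group, i.e.\ that the cubing map induces a bijection $\kappa:G/G_2\to G_3/G_4$. This is Theorem \ref{intro 4.5}($2$) applied to $G/G_5$: the map $\overline{\rho}$ is well-defined on $G/G_2$ with values in $G_3/G_4$ (by the Hall-Petrescu formula, since $[G,G_2]\subseteq G_3$ and the $3$-rd commutators vanish modulo $G_4$), and the action of $\gen{\alpha}$ on $G/G_2$ and on $G_3/G_4$ is through $\chi$ and $\chi^3=\chi$ respectively, so applying Theorem \ref{lambda mu} to a hypothetical kernel, combined with the size equality $|G/G_2|=|G_3/G_4|=9$, forces $\overline{\rho}$ to be a bijection.

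The main obstacle is ruling out class $c\geq 5$, since, unlike for $p>3$, the group $G$ is not regular (Lemma \ref{not regular for p=3}) so the clean argument of Chapter \ref{chapter different primes} via $|\mu_p(G)|=|G:G^p|$ is unavailable. The plan is to assume $c\geq 5$ and work inside the quotient $G/G_6$, which by Lemma \ref{intensity of quotients} still has intensity $2$ and is a $\kappa$-group. By Proposition \ref{class at least 5, G2^p=G4} one has $G_2^3=G_4$, and combining this with the $\kappa$-bijection $\overline{\rho}$ on the top layer via the Hall-Petrescu formula for cubes of products $xy$ with $x\in G$ and $y\in G_2$ produces a relation that links $\kappa$ to the commutator map $G/G_2\times G_3/G_4\to G_4$ of Lemma \ref{class 4 comm map non-deg}. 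Pushing this one step further to $G_4/G_5$ and $G_5$, and using that the action of $\gen{\alpha}$ on $G_4/G_5$ (through $\chi^4=1$) is incompatible, via Lemma \ref{distinct characters on same gp}, with the inherited $\chi$-action forced by the $\kappa$-relation, yields a contradiction. This step is technical and the most delicate part of the argument.

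Once $c=4$ is established, $G$ is a $\kappa$-group of class $4$, and we invoke the structural results for $\kappa$-groups quoted in the introduction. By Theorem \ref{theorem unique kappa} there is, up to isomorphism, a unique $\kappa$-group of class $3$, and by Theorem \ref{theorem kappa G2} any minimal class-$4$ extension of it has elementary abelian $G_2$ of order $3^4$, forcing $|G|=3^6=729$. The remaining task is to show that among such extensions only $\yo$ admits an intense automorphism of order $2$ inducing inversion modulo $G_2$. This I would do by parameterising the extensions by a suitable cohomology set, observing that the non-degenerate commutator pairing $G/G_2\times G_3/G_4\to G_4$ of Lemma \ref{class 4 comm map non-deg} and the $\kappa$-bijection together rigidify the extension class up to the action of $\Aut(G/G_4)$, and then matching the resulting unique isomorphism class against the explicit model $\yo$. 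This completes $(1)\Rightarrow(3)$ and hence the theorem.
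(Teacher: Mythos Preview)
Your cycle of implications and your list of ingredients are correct, but the step ruling out class $c\geq 5$ has a genuine gap. The claimed ``$\chi$-action on $G_4/G_5$ forced by the $\kappa$-relation'' does not exist: the $\kappa$-bijection maps $G/G_2$ to $G_3/G_4$, and neither it nor the equality $G_2^3=G_4$ yields a nontrivial character on $G_4/G_5$ (cubing from $G_2/G_3$, where $\alpha$ acts through $\chi^2=1$, lands in $G_4/G_5$ compatibly with its $\chi^4=1$ action). The paper's argument is cleaner and uses a tool you already cite but at the wrong stage: apply Theorem~\ref{theorem kappa G2} to the class-$4$ quotient $G/G_5$ to get that $(G/G_5)_2$ is elementary abelian, hence $G_2^3\subseteq G_5$; combined with $G_2^3=G_4$ from Proposition~\ref{class at least 5, G2^p=G4} this gives $G_4\subseteq G_5$, contradicting $c\geq 5$. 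You invoke Theorem~\ref{theorem kappa G2} only afterwards, to compute $|G|$; moving it forward is what makes the proof work (this is exactly Corollary~\ref{3-gps class at most 4}).

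A few smaller issues. Your sketch of why $\overline{\rho}$ is bijective via Theorem~\ref{lambda mu} fails because that theorem needs \emph{distinct} characters, and here $\chi=\chi^3$; the real argument (Lemma~\ref{corollary bijection rho}) is a subgroup-counting one using the sets $Y_C$. In $\A$ one has $\mathrm{k}^2=-\mathrm{j}^2\mathrm{i}^2=-\epsilon^2=0$, so $\mathrm{k}\notin\A^*$ and conjugation by $\mathrm{k}$ is undefined; the paper's order-$2$ automorphism of $\yo$ is the $R$-linear map fixing $1,\mathrm{k}$ and negating $\mathrm{i},\mathrm{j}$, which is not inner in $\A^*$. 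Finally, the uniqueness step once $c=4$ is established is more substantial than a cohomology count: the paper (Proposition~\ref{proposition unique with automorphism}) works in the free group $F$ on two generators and shows that $\Aut(F)$ acts transitively on the set $\cor{N}_4$ of kernels giving $\kappa$-groups of class $4$ with the required involution, via a bijection $\cor{N}_4\to\cor{N}_3$ together with the transitivity on $\cor{N}_3$ already established in proving Theorem~\ref{theorem unique kappa}.
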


\noindent
A considerable part of the present chapter is devoted to the proof of Theorem \ref{theorem 3-groups}, which is given in Section \ref{section 3-gps sufficient}. An essential contribution to it is given by the theory of ``$\kappa$-groups'' we develop.

\begin{definition}\label{definition kappa}
A $\kappa$-group \index{$\kappa$-group} is a finite $3$-group $G$ such that 
$|G:G_2|=9$ and with the property that the cubing map on $G$ induces a bijective map $\kappa: G/G_2\rightarrow G_3/G_4$.
\end{definition}

\noindent
Our interest in $\kappa$-groups arises from Lemma \ref{corollary bijection rho}($1$), which asserts that, if $p$ is an odd prime number and $G$ is a finite $p$-group of class at least $4$ with $\inte(G)>1$, then the map $x\mapsto x^p$ induces a bijection $\overline{\rho}:G/G_2\rightarrow G_3/G_4$. As a consequence of Theorem \ref{theorem class at least 3}, each finite $3$-group of class at least $4$ and intensity greater than $1$ is a $\kappa$-group, where $\kappa$ coincides with $\overline{\rho}$. 
The reason why, in this chapter, we work exclusively with $3$-groups is that they are more ``difficult to deal with'': several techniques that apply to the case in which $p$ is a prime larger than $3$ do not apply to the case of $3$-groups of higher class, as the results from Chapter \ref{chapter different primes} suggest. For example, it is not difficult to show, using results from Section \ref{section regular}, that, whenever $p>3$ and $G$ is a finite $p$-group, the map $\overline{\rho}:G/G_2\rightarrow G_3/G_4$ from Lemma \ref{corollary bijection rho} is an isomorphism of groups, while, if $G$ is a $\kappa$-group, then, given any two elements $x,y\in G/G_2$, one has
\[ \kappa(xy)\equiv \kappa(x)\kappa(y)[xy^{-1},[x,y]]\bmod G_4,\]
as we show in Lemma \ref{formula cubing}.
What plays in our favour is that a finite $3$-group $G$ is a $\kappa$-group if and only if $G/G_4$ is a $\kappa$-group: to detect $\kappa$-groups it is thus sufficient to be able to detect $\kappa$-groups among the finite $3$-groups of class $3$. We will prove the following result.

\begin{theorem}\label{theorem unique kappa}
Let $G$ be a finite $3$-group of class $3$. 
Then $G$ is a $\kappa$-group if and only if $G$ is isomorphic to $\yo/\yo_4$.
\end{theorem}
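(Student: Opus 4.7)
The plan is to prove the two implications in turn. For the existence direction, I would verify directly that $\yo/\yo_4$ is a $\kappa$-group of class $3$. The filtration $(1+\mathfrak{m}^k)_{k\geq 1}$ of $1+\mathfrak{m}$ restricts to a filtration of $\yo$, and a computation using the defining relations $\mathrm{i}^2=\mathrm{j}^2=\epsilon$ and $\mathrm{ji}=-\mathrm{ij}$ identifies this restricted filtration with the lower central series of $\yo$, of successive widths $(2,1,2,1)$. Quotienting by $\yo_4$ yields a group of order $3^5$ and class exactly $3$ with $|\yo/\yo_2|=9$; evaluating the cubing map on the representatives $1+\mathrm{i}$ and $1+\mathrm{j}$ in $\A$ produces an $\F_3$-linearly independent pair in $\yo_3/\yo_4$, which gives the $\kappa$-bijection.

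For the uniqueness direction, let $G$ be an arbitrary $\kappa$-group of class $3$ with a generating pair $(x,y)$. Combining Lemmas \ref{class 3 G3}, \ref{class 3 G2 elem abelian}, and \ref{class 3 G/G3 extraspecial of exp p} yields that $G_2$ is elementary abelian with $|G_2:G_3|=3$, and that $G/G_3$ is extraspecial of exponent $3$; in particular $G^3\subseteq G_3$. The $\kappa$-bijection forces $|G_3|=|G:G_2|=9$, so $|G|=3^5$. By Lemma \ref{tensor LCS}, the pair $\{[x,[x,y]],[y,[x,y]]\}$ spans $G_3$ as an $\F_3$-vector space, so one can write
\[
x^3=[x,[x,y]]^a[y,[x,y]]^b \quad\text{and}\quad y^3=[x,[x,y]]^c[y,[x,y]]^d
\]
for some $a,b,c,d\in\F_3$, with a nondegeneracy condition on $(a,b,c,d)$ extracted from evaluating cubing on all nine cosets of $G_2$ and requiring bijectivity.

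Next, I would mimic the universal construction of Section \ref{section construction}: let $F$ be the free group on two symbols, impose $F_4=1$ together with the two cubing relations above, and verify that the resulting quotient has order exactly $3^5$ and is itself a $\kappa$-group of class $3$. Because every $\kappa$-group of class $3$ is a quotient of such a universal object for some admissible quadruple, the isomorphism class of $G$ depends only on the orbit of $(a,b,c,d)$ under the change-of-generators action of $\GL_2(\F_3)$ on $G/G_2$, which induces a compatible $\GL_2(\F_3)$-action on $G_3$ through Lemma \ref{tensor LCS}. The main obstacle will be showing that the set of admissible quadruples forms a single such orbit, so that in particular $\yo/\yo_4$ and $G$ realize the same orbit and are therefore isomorphic. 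The orbit analysis will rely on the antisymmetry of the commutator map $G/G_2\times G/G_2\to G_2/G_3$ coming from Lemma \ref{class 2 bilinear map} together with a direct calculation of $(xy)^3$ via the Hall-Petrescu formula (Lemma \ref{hall-petrescu lemma}), which together pin down the transformation law of $(a,b,c,d)$ under changes of generators.
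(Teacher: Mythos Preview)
Your overall strategy matches the paper's: both realize $\kappa$-groups of class $3$ as quotients of the free group $F$ on two generators, reduce uniqueness to showing that the relevant parameters form a single orbit under $\Aut(F)$ (acting through $\GL_2(\F_3)$ on $V=F/F_2$), and verify that $\yo/\yo_4$ realizes one such parameter. One small slip in the existence direction: $1+\mathrm{i}$ and $1+\mathrm{j}$ do not lie in $\yo$, since they fail $x\overline x=1$; the paper uses $1-\epsilon+\mathrm{i}$ and $1-\epsilon+\mathrm{j}$ instead (Lemma~\ref{yo structure}).

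The substantive difference is in how the single-orbit step is executed. Where you propose to compute the $\GL_2(\F_3)$-action on the quadruples $(a,b,c,d)$ directly via Hall--Petrescu, the paper first isolates the exact cubing identity $(xy)^3=x^3y^3[xy^{-1},[x,y]]$ (Lemma~\ref{formula cubing}), which shows that after identifying $G_3/G_4$ with $V\otimes\bigwedge^2V$ the cubing map $\kappa:V\to V\otimes\bigwedge^2V$ satisfies $\kappa(u+v)=\kappa(u)+\kappa(v)+(u-v)\otimes(u\wedge v)$. The paper then transports this, via a choice of $\F_9$-structure on $V$, to bijections $\lambda:\F_9\to\F_9$ satisfying $\lambda(u+v)=\lambda(u)+\lambda(v)+(u-v)(uv^3-u^3v)$; power-sum identities in $\F_9$ force $\lambda(x)=x^5+bx$ with $b(1+b^2)=0$, so there are exactly three such maps. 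Finally these three are put in natural bijection with the three subfields $\F_3[i]\subset\End(V)$ of order $9$, on which $\Aut(V)$ acts transitively by conjugation (this is the image of the Klein four-group in $\Sym(\mathbb{P}V)\cong S_4$). Your coordinate approach should work and will recover the same count of three admissible quadruples, but the paper's identification with field structures is what makes both the enumeration and the transitivity fall out without a case-by-case orbit analysis; it is worth adopting rather than pushing through the raw $\GL_2(\F_3)$-computation you sketch.
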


\noindent
%Even though we can deduce from Theorem \ref{theorem unique kappa} that $\yo$ is a $\kappa$-group, the proof of Theorem \ref{theorem unique kappa} relies in a substantial way on the existence of a $\kappa$-group, namely $\yo$. 
In Section \ref{section strutture free}, we prove Theorem \ref{theorem unique kappa} by building $\kappa$-groups as quotients of a free group: we give a sketch of the proof here.
Let $F$ be the free group on $2$ generators and let $(F_i)_{i\geq 1}$ be defined recursively by $F_1=F$ and $F_{i+1}=[F,F_i]F_i^3$.
Then $V=F/F_2$ is a vector space over $\F_3$ of dimension $2$. Let moreover 
$L=F_3F^3$ and set $\overline{F}=F/([F,L]F_2^3)$; we use the bar notation for the subgroups of $\overline{F}$. We will show that the cubing map on $F$ induces a map $V\rightarrow\overline{L}$, which we denote by $c$, and we will construct, in Sections \ref{section strutture} and \ref{section strutture free}, isomorphisms of the following {$\Aut(F)$-sets}, all having cardinality $3$.
%\newpage
\begin{center}
$\cor{I}_V=\graffe{k\subseteq\End(V) \ \text{subfield} : |k|=9}$ \\
\vspace{5pt}
$\downarrow$ \\
\vspace{5pt}
$\cor{K}_V=\{\kappa:V\rightarrow V\otimes\bigwedge^2(V) \ \text{bijective} :  \text{for all} \ x,y\in V, \ \text{one has} $ \\
$\kappa(x+y)=\kappa(x)+\kappa(y)+(x-y)\otimes(x\wedge y)\}$ \\
\vspace{5pt}
$\downarrow$ \\
\vspace{5pt}
$\cor{P}=\graffe{\pi\in\Hom(\overline{L},\overline{F_3}) : \pi\circ c \ \text{is bijective},\, \pi_{|\bar{F_3}}=\id_{\bar{F_3}}}$ \\
\vspace{5pt}
$\downarrow$ \\
\vspace{5pt}
%$\cor{C}=\graffe{\overline{N}\subseteq \overline{L} \ \text{subgroup} : 
%\overline{L}=\overline{N}\oplus\overline{F_3},\, \overline{L}=\bigsqcup_{v\in V} c(v)\overline{N}}$ \\
%\vspace{5pt}
%$\downarrow$ \\
%\vspace{5pt}
$\cor{N}_3=\graffe{N\subseteq F \ \text{normal subgroup} : F/N \ \text{is a $\kappa$-group of class} \ 3}.$
\vspace{5pt} \\
\end{center}
We will then prove that the natural action of $\Aut(F)$ on $\cor{I}_V$ is transitive and so it will follow that $\Aut(F)$ acts transitively on $\cor{N}_3$, leading to the fact that all $\kappa$-groups of class $3$ are isomorphic to the $\kappa$-group $\yo/\yo_4$. 
%Once one knows that $\yo/\yo_4$ is a $\kappa$-group, the proof of Theorem \ref{theorem unique kappa} is complete. 
To extend the investigation of $\kappa$-groups to class $4$, we consider the ``smallest possible case'' and look at extensions of $\yo/\yo_4$ by a group of order $3$. In Section \ref{section extension}, we prove the following result. %We get the following striking result.

\begin{theorem}\label{theorem kappa G2}
Let $G$ be a $\kappa$-group such that $G_4$ has order $3$. Then $G_2$ is elementary abelian.
\end{theorem}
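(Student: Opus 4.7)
The plan is to prove the result in three stages, reducing successively to a single extension-class computation on $G_2$.

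\textbf{Stage 1 (orders).} From the $\kappa$-group definition we have $|G:G_2|=9$, and bijectivity of $\kappa$ gives $|G_3/G_4|=9$. Nilpotency forces $G_5=1$: otherwise $G_5=G_4$ (both of order $3$) and then $G_k=[G,G_{k-1}]=G_4$ for all $k\ge 4$ by induction, contradicting the termination of the lower central series. Hence $G$ has class exactly $4$, and Theorem~\ref{theorem unique kappa} applied to $G/G_4$ yields $G/G_4\cong\yo/\yo_4$. It follows that $|G_2:G_3|=3$, $|G|=729$, and the widths of $G$ are $(w_1,w_2,w_3,w_4)=(2,1,2,1)$.

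\textbf{Stage 2 ($G_2$ abelian and $G_3$ elementary abelian).} By Lemma~\ref{commutator indices} we have $[G_3,G_2]\subseteq G_5=1$ and $[G_2,G_2]\subseteq G_4$, so the commutator descends to an alternating bilinear form $G_2/G_3\times G_2/G_3\to G_4$. Because $G_2/G_3$ is cyclic of order $3$ this form vanishes, giving $[G_2,G_2]=1$. Cubing $c\colon G_2\to G_2$ is therefore a homomorphism, and Lemma~\ref{class 3 G2 elem abelian} applied to the class-$3$ quotient $G/G_4$ places its image inside $G_4$. Since $G_4$ is central in $G$, $c$ is conjugation-invariant; writing $gxg^{-1}=[g,x]\,x$ and applying $c$ yields $c([g,x])=1$ for every $g\in G$, $x\in G_2$, so $c$ vanishes on $G_3=[G,G_2]$ and $G_3$ is elementary abelian.

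\textbf{Stage 3 (reduction).} The abelian group $G_2$ now sits in a central extension $1\to G_3\to G_2\to G_2/G_3\to 1$ with $G_3\cong(\F_3)^3$ and $G_2/G_3\cong\F_3$. For any lift $\tilde t\in G_2$ of a generator of $G_2/G_3$, the isomorphism class of the extension is represented by $\tilde t^3$, and by Stage 2 this element lies in the one-dimensional subspace $G_4\subseteq G_3$. The theorem is thus equivalent to showing $\tilde t^3=1$, i.e.\ that the induced homomorphism $\bar c\colon G_2/G_3\to G_4$ vanishes.

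\textbf{Main obstacle.} The hard part is this final vanishing. My plan is to realise $G$ as a quotient of a universal model built from the free group $F$ on two generators equipped with its $3$-central series, following the template of Section~\ref{section construction}: one adjoins relations carving out a universal $\kappa$-group of class $4$ with $|G_4|=3$, then verifies by direct manipulation with the multiplication formulas of Lemma~\ref{multiplication formulas commutators} that $\tilde t^3$ is already killed in this universal quotient, so $G_2^3=1$ for every realization. A more concrete alternative is to compute in the quaternion model $\yo\subseteq 1+\mathfrak{m}$: direct inspection shows $\yo_2\subseteq 1+\mathfrak{m}^2$ with $\mathfrak{m}^5=0$, whence $(1+u)^3=1+u^3=1$ for every $1+u\in\yo_2$; a cohomological rigidity argument based on the uniqueness of $G/G_4$ from Theorem~\ref{theorem unique kappa} together with the $\kappa$-compatibility of the central extension of $\yo/\yo_4$ by $\F_3$ should then reduce the general case to this one.
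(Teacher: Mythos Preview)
Your Stages 1--3 are correct and cleanly argued. In particular, your Stage~2 argument that $G_3$ is elementary abelian---using conjugation-invariance of the cubing map $c\colon G_2\to G_4$ to force $[g,x]^3=1$ for all $g\in G$, $x\in G_2$---is more direct than the paper's Lemma~\ref{g3 elem abelian}, which goes via a cardinality bound on $\mu_3(G_2)$ and a centralizer argument.

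The genuine gap is exactly where you flag it: the vanishing of $\bar c\colon G_2/G_3\to G_4$. Neither of your two sketches closes it. The cohomological-rigidity idea does not work as stated: knowing $G/G_4\cong\yo/\yo_4$ and $\yo_2^3=1$ does not by itself force $G_2^3=1$, because there is no a~priori reason every $\kappa$-extension of $\yo/\yo_4$ by $\F_3$ is isomorphic to $\yo$ (indeed, the paper only proves such uniqueness \emph{after} assuming an order-$2$ automorphism, in Proposition~\ref{proposition unique with automorphism}). The universal-model idea is viable in principle but amounts to the same computation the paper carries out, and that computation has real content.

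What the paper actually does: using $G/G_4\cong\yo/\yo_4$, it chooses generators $a,b$ of $G$ with $a^3\equiv e^{-1}$, $b^3\equiv d\bmod G_4$ where $c=[a,b]$, $d=[a,c]$, $e=[b,c]$, and sets $f=[a,d]$. The unknowns are $c^3=f^t$ and $[b,e]=f^u$ with $u\in\{\pm1\}$. Expanding $ab^3$ via repeated commutator rewrites yields $ab^3=f^{u+t}b^3a$; comparing with $ab^3=adf^s=fdaf^s$ gives $u+t\equiv 1\bmod 3$, so $(u,t)\in\{(1,0),(-1,-1)\}$. The case $t\equiv 0$ is the desired conclusion. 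For $(u,t)=(-1,-1)$ one computes $a^3b=ba^3$, so $a^3\in\ZG(\gen{b,G_2})$; together with $b^3$ and $G_4$, bijectivity of $\kappa$ then forces $G_3\subseteq\ZG(\gen{b,G_2})$, whence $[b,e]=1$, contradicting $u=-1$. The point is that the $\kappa$-bijectivity enters a second time, beyond Stage~1, to eliminate the spurious branch---your sketches do not capture this.
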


\noindent
It would be interesting to explore the world of $\kappa$-groups more extensively, however Theorems \ref{theorem unique kappa} and \ref{theorem kappa G2} provide us with sufficient information on the structure of $\kappa$-groups  to be able to go into the proof of Theorem \ref{theorem 3-groups}.  
Let $G$ be a finite $3$-group of class at least $4$. We have seen that a necessary condition for $\inte(G)$ to be greater than $1$ is that of being a $\kappa$-group, however we can only hope to construct an intense automorphism of $G$ of order $2$ if 
\begin{center}
$(\ast)$ there exists an automorphism of $G$ of order $2$ that inverts all elements of $G$ modulo $G_2$. 
\end{center}
We proved in Section \ref{section construction} that such an automorphism can always be constructed for $G/G_4$, so we want to understand which conditions we need to impose on the structure of $G$ to be able to lift such an automorphism from $G/G_4$ to $G$.
For this purpose, we define
\begin{center}
$\cor{N}_4=\{N\subseteq F \ \text{normal subgroup} : F/N \ \text{is a $\kappa$-group of class $4$ with}$ \\
$\wt_{F/N}(4)=1 \ \text{and satisfying} \ (\ast)\}.$
\end{center}
Via building a bijection $\cor{N}_4\rightarrow\cor{N}_3$, we will be able to prove that the natural action of $\Aut(F)$ on $\cor{N}_4$ is transitive and so that, given $M$ and $N$ in $\cor{N}_4$, the quotients $F/M$ and $F/N$ are isomorphic. The group $\yo$ being a $\kappa$-group of class $4$ with $\wt_{\yo}(4)=1$ and $(\ast)$, it follows that each quotient $F/N$, with $N\in\cor{N}_4$, is isomorphic to $\yo$.
Since $\yo$ has an elementary abelian commutator subgroup, Proposition \ref{class at least 5, G2^p=G4} yields that each finite $3$-group of intensity greater than $1$ has class at most $4$.

\section{The cubing map}\label{section cubing}

In this section we prove some structural properties about $\kappa$-groups of class $4$. 
We remind the reader that, if $G$ is a finite $3$-group and $i$ is a positive integer, then the $i$-th width of $G$ is defined to be $\wt_G(i)=\log_3|G_i:G_{i+1}|$ (see Section \ref{section jumps}). We warn the reader that we will make a set of assumptions, which will hold until the end of Section \ref{section cubing}, right after Lemma \ref{quotient kappa}.

\begin{lemma}\label{3-group exponent bigger than 3}
Let $G$ be a group of order $81$ and class $3$. Then the exponent of $G$ is different from $3$.
\end{lemma}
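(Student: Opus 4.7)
The plan is to reduce to the classical fact that the free Burnside group $B(2,3)$ on two generators has order $27$ and nilpotency class $2$, so that no $2$-generated $3$-group of exponent $3$ can have order $81$.

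First I would pin down the widths of $G$. Write $w_i=\wt_G(i)$. Since $G$ has class exactly $3$, both $G_2$ and $G_3$ are non-trivial while $G_4=1$. Lemma \ref{index G'} rules out $|G:G_2|=3$ (which would force $G$ abelian), so $|G:G_2|\geq 9$; combined with $|G|=81$ and $|G_2|>1$ this forces $|G:G_2|=9$ and $|G_2|=9$. Then from $G_2\neq G_3\neq 1$ I conclude $|G_2:G_3|=|G_3|=3$, i.e.\ $(w_1,w_2,w_3)=(2,1,1)$. Since $G_2\subseteq \Phi(G)$ and $|G:G_2|=9$, the group $G$ can be generated by two elements.

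Next, suppose toward a contradiction that $G$ has exponent $3$. Then $G$ is a homomorphic image of the free group on two generators in the variety of groups of exponent $3$, namely the restricted Burnside group $B(2,3)$. By the classical theorem of Levi--van der Waerden, $B(2,3)$ has order $27$ and nilpotency class $2$. Hence $|G|\leq 27$, contradicting $|G|=81$.

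The main (and essentially only) obstacle is cultural rather than technical: the argument hinges on quoting the structural fact about $B(2,3)$, which is not developed anywhere in the paper and must be imported from the literature. Once it is available, the rest is straightforward bookkeeping via the previously established lemmas on the lower central series and the Frattini subgroup.
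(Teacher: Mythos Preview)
Your proof is correct but follows a genuinely different route from the paper. After establishing $(w_1,w_2,w_3)=(2,1,1)$ exactly as you do, the paper argues by an explicit computation: it chooses generators $a,b$ with $b\in\Cyc_G(G_2)$, sets $c=[a,b]$ and $d$ a generator of $G_3$, observes via Lemmas \ref{class 3 centralizer G2}, \ref{class 3 G3=Z}, and \ref{class 3 cent comm} that $\Cyc_G(G_2)$ is elementary abelian under the exponent-$3$ hypothesis, and then expands $(ba)^3$ by hand to obtain $(ba)^3=d^k$ with $k\not\equiv 0\bmod 3$, contradicting exponent $3$. Your argument instead outsources the contradiction to the Levi--van der Waerden theorem on $B(2,3)$. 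The paper's approach is entirely self-contained within the thesis and illustrates concretely where a nontrivial cube arises; your approach is shorter and conceptually clean but, as you note yourself, leans on a classical result that is nowhere developed in the paper and would need a citation.
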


\begin{proof}
The class of $G$ is $3$ so, thanks to Lemma \ref{index G'}, the quotient $G/G_2$ is non-cyclic. The order of $G$ being $81$, it follows that 
$(\wt_G(1),\wt_G(2),\wt_G(3))=(2,1,1)$.
Let now $C=\Cyc_G(G_2)$. Then, by Lemma \ref{class 3 centralizer G2}($1$),  
the subgroup $C$ contains $G_2$ with index $3$ and, by Lemma \ref{class 3 G3=Z}, the centre of $G$ is equal to $G_3$.
Let $(a,b)\in G\times C$ be such that $\graffe{a,b}$ generates $G$ and define $c=[a,b]$, which is an element of $G_2\setminus G_3$. Let moreover $d$ be a generator for $G_3$. 
Assume by contradiction that the exponent of $G$ is $3$.
As a consequence of Lemma \ref{class 3 cent comm}, the subgroup $C$ is elementary abelian and, in particular, $G_2=\gen{c}\oplus\gen{d}$. Since $G_2$ is central modulo $G_3$ and $d$ generates $G_3$, there exists an integer $k$ such that $aca^{-1}=cd^k$. The element $d^k$ is  not equal to the identity element, because $a$ and $c$ do not centralize each other. Keeping in mind that $C$ is abelian, we compute
$$
1 = (ba)^3 = bababa = b(cba)(cba)a = b^2cacba^2 = $$
$$ b^2c^2d^kaba^2 = b^2c^2d^kcba^3 = b^3c^3d^k =d^k. 
$$
Contradiction.  
\end{proof}

\noindent
We recall here that, if $G$ is a group and $n$ is a positive integer, then $G^n$ is defined to be 
$G^n=\gen{x^n : x\in G}$. 

\begin{lemma}\label{3-group 3-power class 3}
Let $G$ be a finite $3$-group of class $3$ such that $|G:G_2|=9$. 
Then $G_3=G^3$.
\end{lemma}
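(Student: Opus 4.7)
The plan is to show the two inclusions $G^3 \subseteq G_3$ and $G_3 \subseteq G^3$ separately, the second one by contradiction using Lemma \ref{3-group exponent bigger than 3} on a suitable quotient.

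For the inclusion $G^3 \subseteq G_3$, I would apply Lemma \ref{class 3 G/G3 extraspecial of exp p} (valid since $p = 3$ is odd and $|G:G_2| = 3^2$) to conclude that $G/G_3$ is extraspecial of exponent $3$. In particular, every cube in $G$ lies in $G_3$, so $G^3 \subseteq G_3$.

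For the reverse inclusion, I would argue by contradiction: suppose $G^3 \subsetneq G_3$. The subgroup $G^3$ is characteristic, hence normal in $G$, and it is properly contained in the normal subgroup $G_3$. By Lemma \ref{normal index p}, there exists a normal subgroup $M$ of $G$ with $G^3 \subseteq M \subsetneq G_3$ and $|G_3 : M| = 3$. Consider the quotient $\overline{G} = G/M$. Its lower central series satisfies $\overline{G}_2 = G_2/M$ and $\overline{G}_3 = G_3/M$, so $\overline{G}$ has class $3$ and $|\overline{G}:\overline{G}_2| = 9$. By Lemma \ref{class 3 G3}($1$) we have $|G_2:G_3| = 3$, so
\[
|\overline{G}| \;=\; |G:G_2|\cdot|G_2:G_3|\cdot|G_3:M| \;=\; 9\cdot 3\cdot 3 \;=\; 81.
\]
Moreover, since $G^3 \subseteq M$, every element of $\overline{G}$ has order dividing $3$, so $\overline{G}$ has exponent $3$. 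This contradicts Lemma \ref{3-group exponent bigger than 3}, which forbids groups of order $81$ and class $3$ from having exponent $3$. Hence $G^3 = G_3$.

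The only step requiring even mild care is verifying that $\overline{G}$ genuinely has class $3$ and order $81$; both follow transparently from the structural constraints on $G$ combined with Lemma \ref{class 3 G3}($1$). No serious obstacle is expected, since the argument is essentially a one-line application of the already established extraspecial structure of $G/G_3$ together with the exponent restriction from Lemma \ref{3-group exponent bigger than 3}.
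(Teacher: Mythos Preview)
Your proof is correct and follows essentially the same approach as the paper: establish $G^3\subseteq G_3$ via Lemma~\ref{class 3 G/G3 extraspecial of exp p}, then argue by contradiction using Lemma~\ref{normal index p} to produce a quotient of order $81$, class $3$, and exponent $3$, violating Lemma~\ref{3-group exponent bigger than 3}. Your write-up is slightly more explicit in checking normality of $G^3$ and computing $|\overline{G}|$, but the arguments coincide.
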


\begin{proof}
The class of $G$ is equal to $3$ and so $G_3$ is central in $G$. By Lemma \ref{class 3 G3}($1$), the index $|G_2:G_3|$ is equal to $3$ and, by Lemma \ref{class 3 G3}($2$), 
the order of $G_3$ is either $3$ or $9$.
As a consequence of Lemma \ref{class 3 G/G3 extraspecial of exp p}, moreover, the subgroup $G^3$ is contained in $G_3$.
%If $G_3$ has order $3$, then Lemma \ref{3-group exponent bigger than 3} yields $G_3=G^3$.
%We assume now that $G_3$ has order $9$ and 
Assume by contradiction that $G_3\neq G^3$. Then, by Lemma \ref{normal index p}, there exists a normal subgroup $M$ of $G$ such that $G^3\subseteq M\subseteq G_3$ and $|G_3:M|=3$. Fix such $M$.
Then the quotient $G/M$ has class $3$ and order $81$. Moreover, the exponent of $G/M$ is equal to $3$. Contradiction to Lemma \ref{3-group exponent bigger than 3}.
\end{proof}

\begin{lemma}\label{formula cubing}
Let $G$ be a group of class at most $3$ and assume that $G_2$ has exponent dividing $3$. Then, for all $x,y\in G$, one has 
$(xy)^3=x^3y^3[xy^{-1},[x,y]]$. 
\end{lemma}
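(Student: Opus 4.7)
The plan is to verify the identity by a direct collection calculation, using the structural consequences of the hypotheses as simplifying facts. Under the assumption that $G$ has class at most $3$, the subgroup $G_3$ is central (by definition of class), and $G_2$ is abelian: indeed Lemma \ref{commutator indices} gives $[G_2,G_2]\subseteq G_4=1$. Since $G_3\subseteq G_2$, the hypothesis that $G_2$ has exponent dividing $3$ passes to $G_3$. Moreover, by Lemma \ref{bilinear LCS hk}, the commutator induces a bilinear map $G/G_2\times G_2/G_3\to G_3$, which I will use at the end to identify the correction term.

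First I would set $c=[x,y]\in G_2$ and record the two elementary relations on which everything rests:
\[
yx=xy\,c^{-1},\qquad c^{\pm 1}g=[c,g]^{\mp 1}\,g\,c^{\pm 1}\ \text{ for every }g\in G,
\]
the second one holding because $[c,g]\in[G_2,G]=G_3$ is central. Using these, I would expand $(xy)^3=x(yx)(yx)y\cdot xy$ one step at a time: pushing each occurrence of $c^{\pm 1}$ past an $x$ or a $y$ produces a central correction in $G_3$, while pushing $x$ past $y$ produces a $c^{-1}$. After carrying out the collection process to move all the $x$'s to the left and then all the $y$'s, the remaining power of $c$ is $c^{\pm 3}=1$ (because $G_2$ has exponent dividing $3$), so the only surviving correction lies in $G_3$.

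Concretely, the expected output of the collection is
\[
(xy)^3 \;=\; x^{3}y^{3}\,[c,x]^{-1}[c,y],
\]
where the two $G_3$-factors come from moving $c$ past one copy of $x$ and one copy of $y$. Once this is established, the last step is to recognize the right-hand side. By the bilinearity of the induced map $G/G_2\times G_2/G_3\to G_3$, we have
\[
[xy^{-1},c] \;=\; [x,c]\,[y^{-1},c] \;=\; [x,c]\,[y,c]^{-1} \;=\; [c,x]^{-1}[c,y],
\]
which matches the correction and yields the claimed formula $(xy)^3=x^3y^3[xy^{-1},[x,y]]$.

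The main obstacle is purely bookkeeping: the collection in the second paragraph must track each central commutator carefully, since moving $c$ or $c^{-1}$ past $x$ and $y$ introduces factors of $[c,x]^{\pm 1}$ and $[c,y]^{\pm 1}$ in a definite pattern, and one has to use $c^3=1$ at the right moment. Once the collection is written out in full, identifying the result with $[xy^{-1},c]$ via bilinearity is immediate.
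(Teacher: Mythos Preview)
Your approach is correct and matches the paper's: a direct collection argument in a class-$3$ group, followed by a bilinearity identification of the surviving $G_3$-term. One caveat: your heuristic that the two factors come from ``moving $c$ past one copy of $x$ and one copy of $y$'' understates things---the full collection produces several $[c,x]^{\pm1}$ and $[c,y]^{\pm1}$ factors whose exponents only reduce to $-1$ and $+1$ after using that $G_3\subseteq G_2$ has exponent dividing $3$---but you already flag the bookkeeping, and your endpoint formula $(xy)^3=x^3y^3[c,x]^{-1}[c,y]$ and its identification with $[xy^{-1},c]$ are both correct.
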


\begin{proof}
The class of $G$ is at most $3$, so the subgroup $G_3$ is central. Fix $x$ and $y$ in $G$. Then we have
\begin{align*}
(xy)^3 & = xyxyxy \\
       & = xyx[y,x]xy^2 \\
       & = xyx[[y,x],x]x[y,x]y^2 \\ 
       & = xyxx[y,x]y^2[[y,x],x] \\
       & = x[y,x]xyx[y,x]y^2[[y,x],x] \\ 
       & = x[[y,x],x]x[y,x]yx[y,x]y^2[[y,x],x] \\
       & = x^2[y,x]yx[y,x]y^2[[y,x],x]^2 \\
       & = x^2[y,x]^2xy[y,x]y^2[[y,x],x]^2 \\
       & = x^2[y,x]^2[xy,[y,x]][y,x]xy^3[[y,x],x]^2 \\
       & = x^2[y,x]^3xy^3[[y,x],x]^2[xy,[y,x]].
\end{align*}
The commutator subgroup of $G$ being annihilated by $3$, the element $[y,x]^3$ is trivial. Moreover, thanks to Lemma \ref{bilinear LCS}, the commutator map induces a bilinear map $G/G_2\times G_2/G_3\rightarrow G_3$. It follows that 
\begin{align*}
(xy)^3 & = x^3y^3[[y,x],x]^2[xy,[y,x]] \\
       & = x^3y^3[[y,x],x^2][[y,x],(xy)^{-1}] \\
       & = x^3y^3[[y,x],x^2y^{-1}x^{-1}] \\
       & = x^3y^3[[y,x],xy^{-1}] \\
       & = x^3y^3[xy^{-1},[x,y]].
\end{align*}
The proof is now complete.
\end{proof}

\begin{lemma}\label{cubing induced}
Let $G$ be a finite $3$-group of class at least $3$ and assume that ${|G:G_2|}=9$.
Then the cubing map induces a map $\kappa: G/G_2\rightarrow G_3/G_4$.
\end{lemma}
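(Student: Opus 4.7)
The plan is to reduce everything to the quotient $G/G_4$, which has class exactly $3$ (since $G$ has class at least $3$, the subgroup $G_3$ is non-trivial, while $(G/G_4)_4$ is trivial), and then to assemble the previously proven structural results about $3$-groups of class $3$.

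First I would show that, for each $x\in G$, one has $x^3\in G_3$. Applying Lemma \ref{3-group 3-power class 3} to $G/G_4$, which has class $3$ and satisfies $|(G/G_4):(G/G_4)_2|=|G:G_2|=9$, we obtain $(G/G_4)_3=(G/G_4)^3$; in particular, for every $x\in G$, the element $x^3G_4$ lies in $G_3/G_4$, so $x^3\in G_3$. This shows that the cubing map induces a well-defined function $G\to G_3/G_4$, $x\mapsto x^3G_4$.

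Next I would show that this function factors through $G/G_2$. Apply Lemma \ref{class 3 G2 elem abelian} to $G/G_4$ (the prime $3$ is odd, the class is $3$, and $|(G/G_4):(G/G_4)_2|=9$) to deduce that $G_2/G_4$ is elementary abelian. Now, given $x\in G$ and $y\in G_2$, I would apply Lemma \ref{formula cubing} inside $G/G_4$: this group has class at most $3$ and its commutator subgroup $G_2/G_4$ has exponent dividing $3$, so the formula
\[(xy)^3=x^3y^3[xy^{-1},[x,y]]\]
holds modulo $G_4$. Since $y\in G_2$, the elementariness of $G_2/G_4$ gives $y^3\in G_4$; moreover $[x,y]\in[G,G_2]=G_3$, and therefore $[xy^{-1},[x,y]]\in[G,G_3]=G_4$. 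Hence $(xy)^3\equiv x^3\pmod{G_4}$, and the map $\kappa\colon G/G_2\to G_3/G_4$ defined by $xG_2\mapsto x^3G_4$ is well defined.

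There is essentially no obstacle here: the statement is a bookkeeping consequence of Lemmas \ref{3-group 3-power class 3}, \ref{class 3 G2 elem abelian}, and \ref{formula cubing}, once one has recognised that passing to $G/G_4$ puts us in the situation where those lemmas apply. The only point meriting care is that the correction term in the cubing formula, $[xy^{-1},[x,y]]$, drops out modulo $G_4$ precisely because $y$ lies in $G_2$; without this the map would not descend to $G/G_2$.
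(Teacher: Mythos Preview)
Your proof is correct and follows essentially the same approach as the paper: reduce to the class-$3$ quotient $G/G_4$, invoke Lemmas \ref{3-group 3-power class 3} and \ref{class 3 G2 elem abelian} to place $x^3$ in $G_3$ and to make $G_2/G_4$ elementary abelian, and then use the cubing formula (Lemma \ref{formula cubing}) to see that the correction term $[xy^{-1},[x,y]]$ lies in $G_4$ whenever $y\in G_2$. The paper phrases the last step by assuming $G_4=\{1\}$ and noting that $G_3$ is then central, which is the same observation in different words.
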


\begin{proof}
We assume without loss of generality that $G_4=\graffe{1}$. As a consequence of Lemma \ref{3-group 3-power class 3}, the image of the cubing map is contained in $G_3$ and, by Lemma \ref{class 3 G2 elem abelian}, the commutator subgroup of $G$ has exponent $3$. We now prove that the map $\kappa:G/G_2\rightarrow G_3$, given by $\kappa(xG_2)=x^3$, is well-defined. To this end, let $(x,y)\in G\times G_2$. Then $y^3=1$ and $[y,x]$ belongs to $G_3$, a central subgroup. From Lemma \ref{formula cubing}, we get $$(xy)^3=x^3y^3[[y,x],xy^{-1}]=x^3y^3=x^3$$
so every element of $xG_2$ has the same cube $x^3$ in $G$, as claimed. 
\end{proof}

\noindent
We remark that, in concordance with Definition \ref{definition kappa}, 
the real requirement for a $3$-group $G$ satisfying $|G:G_2|=9$ to be a $\kappa$-group is that the map from Lemma \ref{cubing induced} is a bijection. 
The reason why we are interested in $\kappa$-groups is given by the following lemma.

\begin{lemma}\label{quotient kappa}
Let $G$ be a finite $3$-group of class $4$ and denote by $(G_i)_{i\geq 1}$ the lower central series of $G$. 
Assume that $\inte(G)>1$.
Then $G$ is a $\kappa$-group.
\end{lemma}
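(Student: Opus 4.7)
The proof will be a direct assembly of results already proved earlier in the manuscript. The plan is as follows.

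First I would verify the condition $|G:G_2|=9$. Since $G$ is a finite $3$-group of class $4$ with $\inte(G)>1$, Theorem \ref{theorem dimensions} applies and gives $(w_1,w_2,w_3,w_4)=(2,1,2,1)$; in particular $w_1=2$, so $|G:G_2|=3^{w_1}=9$, which is exactly the first requirement in Definition \ref{definition kappa}.

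Next I would invoke Lemma \ref{corollary bijection rho}($1$), which asserts that for any finite $p$-group $H$ of class $4$ with $\inte(H)>1$, the map $\overline{\rho}\colon H/H_2\to H_3/H_4$ induced by the $p$-th power map $x\mapsto x^p$ is a bijection. Applied to $G$ with $p=3$, this gives a bijection $\overline{\rho}\colon G/G_2\to G_3/G_4$ induced by the cubing map $x\mapsto x^3$.

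Finally, I would observe that, since $|G:G_2|=9$ and $G$ has class $4\geq 3$, Lemma \ref{cubing induced} guarantees that the cubing map induces a well-defined map $\kappa\colon G/G_2\to G_3/G_4$. By construction this $\kappa$ agrees with $\overline{\rho}$ from the preceding step, hence $\kappa$ is a bijection. Both conditions of Definition \ref{definition kappa} are therefore satisfied, so $G$ is a $\kappa$-group, completing the proof. There is no substantive obstacle: the lemma is essentially a repackaging of Theorem \ref{theorem dimensions} and Lemma \ref{corollary bijection rho}($1$) in the language of $\kappa$-groups just introduced.
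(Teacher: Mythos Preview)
Your proposal is correct and takes essentially the same approach as the paper: the paper's proof is simply ``Take $p=3$ in Lemma \ref{corollary bijection rho}($1$)'', and you have unpacked exactly what this entails, additionally making explicit the verification of $|G:G_2|=9$ via Theorem \ref{theorem dimensions} (which the paper leaves implicit, since it is already established in the standing hypotheses of the section containing Lemma \ref{corollary bijection rho}).
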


\begin{proof}
Take $p=3$ in Lemma \ref{corollary bijection rho}($1$).
\end{proof}

\noindent
In the remaining part of this section, we will prove some structural results about $\kappa$-groups. 
Until the end of Section \ref{section cubing}, let thus $G$ be a finite $3$-group of class $4$.
Let $(G_i)_{i\geq 1}$ denote the lower central series of $G$ and, for each $i\in\Z_{\geq 1}$, denote $w_i=\wt_G(i)$. 
Assume that $(w_1,w_2,w_3,w_4)=(2,1,2,1)$ and, to conclude, let $\kappa:G/G_2\rightarrow G_3/G_4$ be the map from Lemma \ref{cubing induced}.

\begin{lemma}\label{G2 piccoletto}
The group $G_2$ is abelian.
\end{lemma}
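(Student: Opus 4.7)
The plan is to exploit the fact that $G_2/G_3$ is cyclic, which forces $[G_2, G_2]$ to be very small. First I would note that since $w_2 = 1$, the quotient $G_2/G_3$ has order $3$ and is therefore cyclic. By Lemma \ref{cyclic quotient commutators} applied to the pair $(G_2, G_3)$ inside $G_2$, this yields $[G_2, G_2] = [G_2, G_3]$.

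Next, I would invoke Lemma \ref{commutator indices} on the lower central series to conclude that $[G_2, G_3] \subseteq G_{2+3} = G_5$. Since $G$ has class $4$, we have $G_5 = 1$, and hence $[G_2, G_2] = 1$, proving that $G_2$ is abelian.

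There is no real obstacle here: the entire argument is a two-line consequence of the cyclicity of $G_2/G_3$ (coming from $w_2 = 1$) combined with the class bound $G_5 = 1$. The only thing to be careful about is citing the right preliminary lemmas from Section \ref{section commutators}, since Lemma \ref{cyclic quotient commutators} is stated for a group with a cyclic quotient by a normal subgroup, and we apply it with $G_2$ in the role of $G$ and $G_3$ (which is normal in $G_2$ since $G_3$ is normal in $G$) in the role of $N$.
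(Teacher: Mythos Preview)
Your proposal is correct and essentially identical to the paper's proof: both use that $w_2=1$ makes $G_2/G_3$ cyclic, apply Lemma~\ref{cyclic quotient commutators} to get $[G_2,G_2]=[G_2,G_3]$, then use Lemma~\ref{commutator indices} and $G_5=\{1\}$ to conclude.
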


\begin{proof}
The quotient $G_2/G_3$ being cyclic, it follows from Lemma \ref{cyclic quotient commutators} that $[G_2,G_2]=[G_2,G_3]$, so, thanks to Lemma \ref{commutator indices}, we get $[G_2,G_3]\subseteq G_5$. The class of $G$ is $4$, so $G_5=\graffe{1}$ and $G_2$ is abelian.
\end{proof}

\begin{lemma}\label{commumino}
The commutator map $G\times G_2\rightarrow G_3$ induces an isomorphism $G/G_2\otimes G_2/G_3\rightarrow G_3/G_4$.
\end{lemma}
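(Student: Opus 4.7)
The plan is to invoke Lemma \ref{tensor LCS}, which already hands us a surjective homomorphism
\[
\phi:G/G_2\otimes G_2/G_3\longrightarrow G_3/G_4,
\]
and then show that the source and target have the same (finite) cardinality, so that $\phi$ is forced to be an isomorphism.

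First I would identify the orders of the two abelian groups involved. Since $G$ has class $4\geq 3$, Lemma \ref{index G'} forbids $G/G_2$ from being cyclic, so from $|G:G_2|=3^{w_1}=9$ one concludes that $G/G_2$ is elementary abelian of order $9$ (one can also invoke Lemma \ref{frattini comm} to get $G_2=\Phi(G)$ and hence that $G/G_2$ is an $\F_3$-vector space). Next, $G_2/G_3$ has order $3^{w_2}=3$, so it is cyclic of order $3$. Therefore
\[
|G/G_2\otimes G_2/G_3|=|(\Z/3)^2\otimes\Z/3|=9.
\]

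Second, by hypothesis $|G_3:G_4|=3^{w_3}=9$, which matches. The map $\phi$ from Lemma \ref{tensor LCS} is surjective and has a finite source and target of equal cardinality $9$, hence it is a bijection, and so an isomorphism of groups. This concludes the proof.

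There is no real obstacle here: the statement is a direct corollary of Lemma \ref{tensor LCS} together with the width data $(w_1,w_2,w_3,w_4)=(2,1,2,1)$ that has already been built into the hypotheses of the present subsection (and which, in the main line of argument, comes from Theorem \ref{theorem dimensions}). The only point worth double-checking is the identification of $G/G_2$ as elementary abelian, which is why I would spell out the appeal to Lemma \ref{index G'} (or Lemma \ref{frattini comm}) explicitly rather than leaving it implicit.
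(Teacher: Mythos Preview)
Your proof is correct and follows essentially the same approach as the paper: invoke Lemma \ref{tensor LCS} for surjectivity, then compare cardinalities using the width data $(w_1,w_2,w_3)=(2,1,2)$. The paper's version is more terse (it writes $|G/G_2\otimes G_2/G_3|=3^{w_1w_2}=9=3^{w_3}$ without pausing to justify that the quotients are elementary abelian), but your added justification via Lemma \ref{index G'} or \ref{frattini comm} is entirely appropriate.
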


\begin{proof}
By Lemma \ref{tensor LCS}, the commutator map induces a surjective homomorphism 
$G/G_2\otimes G_2/G_3\rightarrow G_3/G_4$. The induced map is bijective because 
$|G/G_2\otimes G_2/G_3|=3^{w_1w_2}=9=3^{w_3}=|G_3:G_4|$.
\end{proof}

\noindent
We recall that, if $C$ is a group and $n$ is a positive integer, then $C^n$ and $\mu_n(C)$ are respectively defined as
$C^n=\gen{x^n : x\in C}$ and $\mu_n(C)=\gen{x\in C : x^n=1}$.

\begin{lemma}\label{quasicentri dei massimali}
Let $C$ be a maximal subgroup of $G$. 
Then $G_4C^3\subseteq\ZG(C)$.%\subseteq G_3.
\end{lemma}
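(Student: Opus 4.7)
The plan is to establish $G_4C^3\subseteq\ZG(C)$ by treating the two generating pieces separately. Because $G$ has class $4$, one has $G_4\subseteq\ZG(G)\subseteq\ZG(C)$ for free, so the entire content of the lemma is the assertion $C^3\subseteq\ZG(C)$; equivalently, $[x^3,y]=1$ for every $x,y\in C$. The crucial preparatory fact will be that $G_3$ has exponent $3$.

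To prove $G_3^3=1$, I pass to the class-$3$ quotient $G/G_4$, which still satisfies $|(G/G_4):(G_2/G_4)|=9$. Lemma \ref{class 3 G2 elem abelian} then gives that $G_2/G_4$ is elementary abelian, i.e.\ $G_2^3\subseteq G_4$. Because $G_2$ is abelian by Lemma \ref{G2 piccoletto}, for each fixed $x\in G$ the map $g\mapsto[x,g]$ is a group homomorphism $G_2\to G_3$, so $[x,g]^3=[x,g^3]$. But $g^3\in G_2^3\subseteq G_4\subseteq\ZG(G)$, so $[x,g^3]=1$ and hence $[x,g]^3=1$. Since $G_3=[G,G_2]$ is abelian (from $[G_3,G_3]\subseteq G_6=1$) and generated by such commutators, it must have exponent $3$.

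For the main computation, I fix $x,y\in C$ and set $z=[y,x]$. Lemma \ref{max sub contains G2} gives $|C:G_2|=3$, so $C/G_2$ is cyclic, and Lemma \ref{cyclic quotient commutators} then yields $[C,C]=[C,G_2]\subseteq G_3$; in particular $z\in G_3$. From the identity $yxy^{-1}=zx$ I obtain $yx^3y^{-1}=(zx)^3$. Applying the Hall--Petrescu formula (Lemma \ref{hall-petrescu lemma}) to the subgroup $H=\langle z,x\rangle$ produces $(zx)^3=z^3x^3c_2^3c_3$ with $c_k\in H_k$. Since $[z,x]\in[G_3,G]\subseteq G_4\subseteq\ZG(G)$, one has $H_2\subseteq G_4$ and $H_3\subseteq[H,G_4]=1$, so $c_2\in G_4$ and $c_3=1$; combined with $z^3=1$ and $|G_4|=3$ (forcing $c_2^3=1$), this collapses the expansion to $(zx)^3=x^3$. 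Therefore $yx^3y^{-1}=x^3$, i.e.\ $[x^3,y]=1$, and letting $x,y$ range over $C$ yields $C^3\subseteq\ZG(C)$, completing the proof. The main obstacle is the preliminary step $G_3^3=1$: once that is in hand, the closing computation is essentially a one-line Hall--Petrescu calculation inside the almost-abelian subgroup $\langle z,x\rangle$, where every term other than $x^3$ is automatically annihilated by $3$.
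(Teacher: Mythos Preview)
Your proof is correct, but it follows a different route from the paper's.

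The paper argues much more directly via the bilinear commutator map $\gamma: G/G_2 \otimes G_3/G_4 \to G_4$ from Lemma~\ref{tensor LCS}. Since $C/G_2$ is cyclic, generated say by $cG_2$, and since $\kappa$ restricted to a cyclic subgroup is a homomorphism, $\kappa(C/G_2)$ is the cyclic subgroup of $G_3/G_4$ generated by $c^3G_4$. Hence $\gamma(C/G_2 \otimes \kappa(C/G_2))$ is generated by $\gamma(cG_2 \otimes c^3G_4) = [c,c^3] = 1$, and since $G_4C^3/G_4 = \kappa(C/G_2)$ this gives $[C, G_4C^3] = 1$ in one stroke. No preliminary fact about the exponent of $G_3$ is required, nor any Hall--Petrescu computation.

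Your approach instead proves $[x^3,y]=1$ by the explicit identity $yx^3y^{-1} = (zx)^3$ with $z=[y,x]\in G_3$, and then collapses the Hall--Petrescu expansion of $(zx)^3$ term by term. This is longer and requires the auxiliary step $G_3^3=1$, which you establish correctly---and, incidentally, without the $\kappa$-group hypothesis that the paper imposes when it later proves the same fact as Lemma~\ref{g3 elem abelian}. The paper's argument is shorter and more conceptual, exploiting the bilinear framework already in place; yours is more hands-on and yields the small bonus that $G_3^3=1$ holds under the bare width hypothesis $(w_1,w_2,w_3,w_4)=(2,1,2,1)$ alone.
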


\begin{proof}
The subgroup $G_4$ is central in $G$, because the class of $G$ is $4$, so $G_4$ is contained in $\ZG(C)$.
By Lemma \ref{tensor LCS}, the commutator map induces a homomorphism $\gamma:G/G_2\otimes G_3/G_4\rightarrow G_4$ and, $C/G_2$ being cyclic, the subgroup $\gamma(C/G_2\otimes\kappa(C/G_2))$ is trivial. 
The quotient $G_4C^3/G_4$ being equal to $\kappa(C/G_2)$, it follows that
$G_4C^3$ is contained in the centre of $C$.
\end{proof}

\begin{lemma}\label{centralizer G3}
There exists at most one maximal subgroup $C$ of $G$ such that 
$G_3\subseteq \ZG(C)$.
\end{lemma}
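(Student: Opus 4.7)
The plan is straightforward: I would show that any maximal subgroup $C$ with $G_3\subseteq\ZG(C)$ must coincide with $\Cyc_G(G_3)$, from which uniqueness is immediate. The first step is just a rewriting of the hypothesis: the inclusion $G_3\subseteq\ZG(C)$ says precisely that $[C,G_3]=1$, i.e.\ $C\subseteq\Cyc_G(G_3)$. So the whole statement reduces to a structural analysis of the single subgroup $\Cyc_G(G_3)$.

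Next I would sandwich $\Cyc_G(G_3)$ strictly between $G_2$ and $G$. On one hand, Lemma~\ref{commutator indices} gives $[G_2,G_3]\subseteq G_5$, and since the class of $G$ is $4$ the subgroup $G_5$ is trivial; hence $G_2\subseteq\Cyc_G(G_3)$. On the other hand, $\Cyc_G(G_3)\neq G$: otherwise $G_3$ would lie in $\ZG(G)$ and $G_4=[G,G_3]$ would be trivial, contradicting $w_4=1$. So $G_2\subseteq\Cyc_G(G_3)\subsetneq G$.

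To close the argument, recall that $|G:G_2|=3^{w_1}=9$, so the proper subgroups of $G$ containing $G_2$ are exactly $G_2$ itself and the four maximal subgroups of $G$. Consequently $\Cyc_G(G_3)$ is either $G_2$ (in which case no maximal subgroup is contained in it) or is itself a maximal subgroup of $G$ (in which case the only maximal subgroup contained in it is itself). In either case, at most one maximal subgroup $C$ satisfies $C\subseteq\Cyc_G(G_3)$, which is the desired conclusion. There is no real obstacle here: the whole lemma is a repackaging of the fact that $G_2$ centralizes $G_3$ whereas $G$ does not.
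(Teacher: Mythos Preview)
Your proof is correct and rests on the same core observation as the paper's, namely that $G$ itself does not centralize $G_3$ because the class is $4$. The paper's argument is a touch more direct: given two maximal subgroups $C$ and $D$ with $G_3\subseteq\ZG(C)\cap\ZG(D)$, the product $CD$ also centralizes $G_3$, so $CD\neq G$, which forces $C=D$; this avoids your intermediate step of locating $\Cyc_G(G_3)$ between $G_2$ and $G$.
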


\begin{proof}
Let $C$ and $D$ be maximal subgroups of $G$ such that $G_3$ is contained in $\ZG(C)\cap\ZG(D)$. Then $CD$ centralizes $G_3$ and, the class of $G$ being equal to $4$, the subgroup $CD$ is different from $G$. It follows that $C=D$.
\end{proof}

\begin{lemma}\label{centro 3}
Assume that $G$ is a $\kappa$-group.
Then $\ZG(G)=G_4$. 
\end{lemma}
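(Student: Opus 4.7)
The plan is to first argue that $\ZG(G)$ lies in $G_3$, and then to exploit Lemma \ref{quasicentri dei massimali} together with the fact that $\kappa$ is a bijection between two sets of cardinality $9$ to force too many maximal subgroups to contain $G_3$ in their centre, contradicting Lemma \ref{centralizer G3}.

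First I would observe that the quotient $G/G_4$ has class $3$ and satisfies $|(G/G_4):(G_2/G_4)|=9$, so Lemma \ref{class 3 G3=Z} yields $\ZG(G/G_4)=G_3/G_4$; hence $\ZG(G)\subseteq G_3$. Suppose, for contradiction, that there exists $z\in\ZG(G)$ with $zG_4\ne 1$ in $G_3/G_4$. Next I would enumerate the maximal subgroups: since $\Phi(G)=G^3[G,G]=G_2$ (because $G^3\subseteq G_3\subseteq G_2$ by Lemma \ref{3-group 3-power class 3} applied to $G/G_4$), the maximal subgroups of $G$ correspond to the four $1$-dimensional $\F_3$-subspaces $C_1/G_2,\ldots,C_4/G_2$ of $G/G_2\cong\F_3^2$.

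The key observation is that the restriction $\kappa|_{C_i/G_2}$ is a group homomorphism with image $C_i^3G_4/G_4$, which is a $1$-dimensional subspace of $G_3/G_4$. To see this, I would work in $G/G_4$: by Lemma \ref{class 3 G2 elem abelian} (applied to $G/G_4$), $G_2/G_4$ is elementary abelian, so Lemma \ref{formula cubing} gives $(xy)^3\equiv x^3y^3[xy^{-1},[x,y]]\bmod G_4$ for all $x,y\in G$. If $x,y$ lie in the same cyclic coset mod $G_2$ then $[x,y]\in G_3$, so $[xy^{-1},[x,y]]\in[G,G_3]=G_4$, giving $\kappa(xyG_2)=\kappa(xG_2)\kappa(yG_2)$. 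Since $\kappa$ is injective, the four images $\kappa(C_i/G_2)$ are four distinct $1$-dimensional subspaces of $G_3/G_4$, i.e.\ all of them.

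The nonzero element $zG_4$ lies in exactly one such subspace $\kappa(C_k/G_2)$. For each $i\ne k$, one has $zG_4\notin\kappa(C_i/G_2)=C_i^3G_4/G_4$, so
\[
\bigl(\gen{z}\,C_i^3\,G_4\bigr)/G_4=\gen{zG_4}+\kappa(C_i/G_2)=G_3/G_4.
\]
Since $G_4\subseteq\gen{z,C_i^3,G_4}\subseteq G_3$, this forces $\gen{z,C_i^3,G_4}=G_3$. But $z\in\ZG(G)\subseteq\ZG(C_i)$ and $C_i^3G_4\subseteq\ZG(C_i)$ by Lemma \ref{quasicentri dei massimali}, hence $G_3\subseteq\ZG(C_i)$ for each of the three indices $i\ne k$. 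This contradicts Lemma \ref{centralizer G3}, which asserts that at most one maximal subgroup contains $G_3$ in its centre. Therefore $\ZG(G)\cap G_3\subseteq G_4$, and combined with $G_4\subseteq\ZG(G)\subseteq G_3$ we conclude $\ZG(G)=G_4$.

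The only delicate step is verifying that the four images $\kappa(C_i/G_2)$ really are four distinct $1$-dimensional subspaces filling $G_3/G_4$; this hinges on Lemma \ref{formula cubing} together with the injectivity of $\kappa$, and once it is in place the rest of the argument is a clean counting/centraliser contradiction.
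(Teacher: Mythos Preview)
Your proof is correct and uses essentially the same ingredients as the paper's proof: the containment $\ZG(G)\subseteq G_3$ via Lemma \ref{class 3 G3=Z}, the inclusion $C_i^3G_4\subseteq\ZG(C_i)$ from Lemma \ref{quasicentri dei massimali}, the fact that the $\kappa(C_i/G_2)$ are the four distinct lines in $G_3/G_4$, and the constraint from Lemma \ref{centralizer G3}. The only organizational difference is that the paper argues directly rather than by contradiction: it selects two maximal subgroups $C,D$ with $G_3\not\subseteq\ZG(C)$ and $G_3\not\subseteq\ZG(D)$ (these exist by Lemma \ref{centralizer G3}), observes that for such subgroups $\ZG(C)\cap G_3=C^3G_4$ exactly, and concludes $\ZG(G)\subseteq C^3G_4\cap D^3G_4=G_4$ since distinct lines meet only in $G_4$. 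Your contrapositive version (a nontrivial $zG_4$ would force three maximal subgroups to contain $G_3$ in their centre) is the same idea read the other way.
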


\begin{proof}
We first claim that $G_4\subseteq\ZG(G)\subsetneq G_3$. 
The subgroup $G_4$ is contained in $\ZG(G)$ and, as a consequence of Lemma \ref{class 3 G3=Z}, one has $\ZG(G)/G_4\subseteq\ZG(G/G_4)= G_3/G_4$. Since the class of $G$ is $4$, the inclusion $\ZG(G)\subseteq G_3$ is not an equality so the claim is proven. 
Now, by Lemma \ref{frattini comm}, the subgroup $G_2$ is equal to $\Phi(G)$ and so, the dimension $w_1$ being equal to $2$, the group $G$ has precisely $4$ maximal subgroups. Thanks to Lemma \ref{centralizer G3}, there exist two distinct maximal subgroups $C$ and $D$ of $G$ such that both $\ZG(C)$ and $\ZG(D)$ do not contain $G_3$. Fix such $C$ and $D$.
Since $\kappa$ is a bijection and $w_3=2$, Lemma \ref{quasicentri dei massimali} yields $\ZG(C)\cap G_3=C^3G_4$ and $\ZG(D)\cap G_3=D^3G_4$. 
Now, the subgroup $\ZG(G)$ contains $G_4$ and is contained in $\ZG(C)\cap\ZG(D)\cap G_3=C^3G_4\cap D^3G_4$. The map $\kappa$ being a bijection,
the subgroup $C^3G_4\cap D^3G_4$ is equal to $G_4$ and therefore
$\ZG(G)=G_4$.
\end{proof}

\begin{lemma}\label{properties C}
Let $C$ be a maximal subgroup of $G$. 
Assume moreover that $G$ is a $\kappa$-group and that $G_2$ has exponent $3$.
Then $[C,C]\cap\ZG(C)=G_4$.
\end{lemma}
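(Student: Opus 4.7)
The plan is to decompose $[C,C]\cap \ZG(C)$ by computing $[C,C]$ and $\ZG(C)\cap G_3$ separately, showing both are subgroups of $G_3$ of order $9$ containing $G_4$, and then demonstrating these two subgroups are distinct; since both then project to distinct $1$-dimensional subspaces of the $2$-dimensional $\F_3$-space $G_3/G_4$, their intersection reduces to $G_4$. First, because $G_2\subseteq C$ by Lemma \ref{max sub contains G2}, I would pick $x\in C\setminus G_2$ so that $C=\gen{x,G_2}$. Then $C/G_2$ is cyclic of order $3$, and combining Lemma \ref{cyclic quotient commutators} with the fact that $G_2$ is abelian (Lemma \ref{G2 piccoletto}) gives $[C,C]=[x,G_2]\subseteq G_3$. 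The isomorphism of Lemma \ref{commumino} implies $[x,G_2]G_4/G_4$ is $1$-dimensional, and the non-degenerate pairing of Lemma \ref{class 4 comm map non-deg} (valid since $\ZG(G)=G_4$ by Lemma \ref{centro 3}) gives $G_4=[x,G_3]\subseteq [x,G_2]$, so $|[C,C]|=9$.

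Second, since $[G_2,G_3]\subseteq G_5=\graffe{1}$, an element $z\in G_3$ lies in $\ZG(C)$ if and only if $[x,z]=1$. The induced linear map $[x,\cdot]:G_3/G_4\to G_4$ is surjective with $1$-dimensional kernel, and since $[x,x^3]=1$ while $x^3\notin G_4$ (by injectivity of $\kappa$, as $xG_2\ne G_2$), that kernel is spanned by $x^3G_4$. Hence $\ZG(C)\cap G_3=\gen{x^3,G_4}=C^3G_4$, of order $9$ containing $G_4$. Since $[C,C]\subseteq G_3$, we have $[C,C]\cap\ZG(C)=[C,C]\cap C^3G_4$, and the proof is complete once one shows that the two $1$-dimensional subspaces $[C,C]G_4/G_4$ and $C^3G_4/G_4$ of $G_3/G_4$ are distinct.

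This final step is the principal obstacle. Combining Lemma \ref{commumino} with the isomorphism $\bigwedge^2(G/G_2)\cong G_2/G_3$ coming from the alternating commutator on $G/G_2$, one identifies $G_3/G_4$ with $(G/G_2)\otimes\bigwedge^2(G/G_2)$; under this, $[C,C]G_4/G_4$ becomes the line $\gen{\bar x}\otimes\bigwedge^2(G/G_2)$ and $C^3G_4/G_4$ becomes $\gen{\kappa(\bar x)}$. Choosing a basis of $G/G_2$ identifies $(G/G_2)\otimes\bigwedge^2(G/G_2)$ with $G/G_2$ itself and replaces $\kappa$ by a bijection $\psi:G/G_2\to G/G_2$, reducing the claim to: $\psi$ fixes no line. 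By Lemma \ref{formula cubing} applied in $G/G_4$ (which has class $3$ with $G_2/G_4$ elementary abelian by Lemma \ref{class 3 G2 elem abelian}), the map $\psi$ satisfies the cocycle
\[
\psi(\bar x+\bar y)=\psi(\bar x)+\psi(\bar y)+\det(\bar x,\bar y)\,(\bar x-\bar y),
\]
where $\det$ is taken in the fixed basis. Assuming for contradiction that $\psi$ fixes a line $\gen{\bar a}$ and extending to a basis $(\bar a,\bar b)$ with $\psi(\bar a)=\mu\bar a$ and $\psi(\bar b)=r\bar a+s\bar b$, the cocycle determines $\psi(\bar a\pm\bar b)$ explicitly in terms of $\mu,r,s$; the requirement that $\psi$ permute the four lines of $G/G_2$ while fixing $\gen{\bar a}$ leaves six candidate permutations of the remaining three lines $\gen{\bar b},\gen{\bar a+\bar b},\gen{\bar a-\bar b}$, and a direct case analysis shows that each candidate forces an incompatible equation in $\F_3$ (either collapsing $\mu$ to $0$ or destroying the injectivity of $\psi$). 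This case analysis is mechanical but is the technical heart of the argument.
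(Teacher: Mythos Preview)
Your argument is correct, but it takes a quite different route from the paper's. The paper argues by contradiction: once one knows $|[C,C]|=9$ and $G_4\subseteq[C,C]$, assuming $[C,C]\cap\ZG(C)\neq G_4$ forces $[C,C]\subseteq\ZG(C)$, so $C$ has class at most $2$. At that point the hypothesis that $G_2$ has exponent $3$ does the work: since $p=3$ and $[C,C]$ is central with exponent dividing $3$, Lemmas~\ref{class at most p-1 regular} and~\ref{exponent G2=p, endomorphism} make cubing an endomorphism of $C$, whence $|C^3|=|C:\mu_3(C)|\leq|C:G_2|=3$; but $C^3\supseteq G_4$ and $C^3\neq G_4$ (by $\kappa$-bijectivity), giving $|C^3|\geq 9$, a contradiction.

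Your approach instead identifies both $[C,C]/G_4$ and $(\ZG(C)\cap G_3)/G_4$ as explicit lines in $G_3/G_4\cong V\otimes\bigwedge^2 V$ and shows they differ via the cocycle satisfied by $\kappa$. The case analysis is sound (and could be shortened by observing, via Proposition~\ref{proposition sV}, that every $\kappa$-structure has the form $x\mapsto ix\otimes(ix\wedge x)$ for some $i$ with $i^2=-1$, so $\psi(x)\in\gen{ix}\neq\gen{x}$). What your route buys is that it anticipates the $\kappa$-structure machinery of Sections~\ref{section strutture}--\ref{section strutture free} and, notably, never invokes the hypothesis that $G_2$ itself has exponent~$3$: everything you use (Lemmas~\ref{G2 piccoletto}, \ref{commumino}, \ref{centro 3}, \ref{class 4 comm map non-deg}, and \ref{formula cubing} applied to $G/G_4$) holds for any $\kappa$-group with the given width vector. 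The paper's proof is shorter and more self-contained, but leans on that extra hypothesis in an essential way.
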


\begin{proof}
The quotient $C/G_2$ is cyclic of order $3$ so, by Lemma \ref{cyclic quotient commutators}, the subgroups $[C,C]$ and $[C,G_2]$ are equal. It follows that $[C,C]$ is contained in $G_3$ and, from Lemma \ref{commumino}, that the index of $([C,C]G_4)/G_4$ in $G_3/G_4$ is equal to $|G:C|=3$. In particular, $[C,C]$ is non-trivial. Now, the subgroup $[C,C]$ is characteristic in the normal subgroup $C$ and therefore it is itself normal in $G$; Lemma \ref{normal intersection centre trivial} yields $[C,C]\cap\ZG(G)\neq 1$.
By Lemma \ref{centro 3}, the centre of $G$ is equal to $G_4$ and thus $[C,C]$ contains $G_4$. As a result, $[C,C]$ is equal to $[C,C]G_4$ and it has thus cardinality $9$. In an analogous way, since $G$ is a $\kappa$-group, the normal subgroup $C^3$ is non-trivial and it contains therefore $G_4$. However, $C^3$ is different from $G_4$ because $G$ is a $\kappa$-group.
We have proven that $G_4$ is contained in $[C,C]\cap\ZG(C)$. 
We assume now by contradiction that $[C,C]\cap\ZG(C)$ is different from $G_4$.
It follows that $[C,C]\cap\ZG(C)$ has cardinality at least $9$, which is the same as the cardinality of $[C,C]$. We get that $[C,C]$ is contained in $\ZG(C)$ and so, as a consequence of Lemmas \ref{class at most p-1 regular} and \ref{exponent G2=p, endomorphism}, the cubing map is an endomorphism of $C$.
By assumption, the exponent of $G_2$ is $3$, and so 
it follows that $$|C^3|=|C:\mu_3(C)|\leq |C:G_2|=3.$$ 
Since $C^3$ contains $G_4$, we get that $C^3=G_4$. Contradiction.
\end{proof}

\begin{lemma}\label{g3 elem abelian}
Assume that $G$ is a $\kappa$-group. Then $G_3$ has exponent $3$.
\end{lemma}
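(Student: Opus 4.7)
The plan is to exhibit generators of $G_3$ whose cubes are trivial. Since $G_2$ is abelian by Lemma \ref{G2 piccoletto} and class $4$ forces $G_4\subseteq\ZG(G)$ with $|G_4|=3$, I would pick generators $a,b$ of $G$ and set $c=[a,b]$. Then $cG_3$ generates the $3$-element group $G_2/G_3$ (some commutator must fall outside $G_3$, as $G_2\neq G_3$). By the isomorphism of Lemma \ref{commumino}, the cosets $[a,c]G_4$ and $[b,c]G_4$ span $G_3/G_4$, hence $G_3=\langle[a,c],[b,c]\rangle\cdot G_4$; as $G_4$ has order $3$, it suffices to prove $[a,c]^3=1$ and $[b,c]^3=1$.

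To handle $[a,c]^3$, I would invoke formula $(3)$ of Lemma \ref{multiplication formulas commutators} with $n=3$, $x=a$, $y=c$, yielding
\[
[a^3,c]\,[a,c]^{-3}=[a,[a^2,c]]\,[a,[a,c]].
\]
By Lemma \ref{cubing induced}, $a^3\in G_3$; hence $[a^3,c]\in[G_3,G_2]\subseteq G_5=\{1\}$ by Lemma \ref{commutator indices}, and the left side becomes $[a,c]^{-3}$. Expanding $[a^2,c]=[a,[a,c]]\,[a,c]^2$ via formula $(2)$ and repeatedly using that $[a,[a,c]]\in[G,G_3]\subseteq G_4$ is central in $G$, one computes $[a,[a^2,c]]=[a,[a,c]]^2$: in $[a,[a,[a,c]]\cdot[a,c]^2]$ the factor $[a,[a,[a,c]]]$ lies in $[G,G_4]\subseteq G_5$ and is trivial, while the centrality of $[a,[a,c]]$ collapses the conjugation term from formula $(1)$ and reduces the expression to $[a,[a,c]^2]$, which in turn equals $[a,[a,c]]^2$ by the same centrality argument. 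Substituting back gives $[a,c]^{-3}=[a,[a,c]]^3$, and this is trivial since $|G_4|=3$.

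The identical computation with $b$ in place of $a$ yields $[b,c]^3=1$, completing the proof. The bookkeeping step of identifying $[a,[a^2,c]]$ with $[a,[a,c]]^2$ is the only delicate part; it succeeds because every auxiliary commutator produced lands in $G_4$ or $G_5$, where centrality or triviality does the work. The hypothesis that $\kappa$ is a bijection enters only through $a^3\in G_3$, which is already encoded in Lemma \ref{cubing induced}, so no additional use of the $\kappa$-group structure is anticipated.
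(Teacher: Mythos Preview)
Your argument is correct. The commutator identity from Lemma~\ref{multiplication formulas commutators}(3) does collapse as you describe: once you know $a^3\in G_3$, the left side becomes $[a,c]^{-3}$, and your reduction $[a,[a^2,c]]=[a,[a,c]]^2$ goes through because every auxiliary term lands in the central group $G_4$ or in $G_5=\{1\}$. Since $G_3$ is abelian (it sits inside $G_2$) and is generated by $[a,c]$, $[b,c]$ together with $G_4$, this suffices.

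The paper proceeds quite differently. Rather than computing with generators, it argues structurally: since $G_2$ is abelian and $G_2^3\subseteq G_4$, the subgroup $\mu_3(G_2)$ has order at least $|G_2:G_4|=27$; setting $N=\mu_3(G_2)\cap G_3$ and passing to $\overline{G}=G/N$, one finds that if $\overline{G_3}\neq\{1\}$ then $\overline{\mu_3(G_2)}$ is a non-trivial normal subgroup meeting the centre $\overline{G_3}=\ZG(\overline{G})$ trivially, contradicting Lemma~\ref{normal intersection centre trivial}. This is a cleaner ``global'' argument that avoids any commutator bookkeeping, at the cost of invoking Lemma~\ref{class 3 G3=Z} and the normal-subgroup-meets-centre principle. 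Your approach is more hands-on and self-contained, relying only on the basic commutator formulas; it also makes transparent that the result depends only on the exponent of $G/G_3$ and the widths, not on the bijectivity of $\kappa$---a point you correctly flag at the end.
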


\begin{proof}
The subgroup $G_2$ is abelian, by Lemma \ref{G2 piccoletto}, and $G_2^3$ is contained in $G_4$, as a consequence of Lemma \ref{cubing induced}. It follows that $\mu_3(G_2)$ has cardinality at least $|G_2:G_4|=27$. 
Set $N=\mu_3(G_2)\cap G_3$. We denote $\overline{G}=G/N$ and use the bar notation for the subgroups of $\overline{G}$. 
If $\overline{G_3}=\graffe{1}$, then $G_3$ is contained in $\mu_3(G_2)$ and we are done. Assume by contradiction that $\overline{G_3}$ is non-trivial. Then $\overline{G_3}$ has cardinality at least $3$ so, $\mu_3(G_2)$ consisting of at least $27$ elements, it follows that $\overline{\mu_3(G_2)}$ is non-trivial. However, $\overline{\mu_3(G_2)}$ has trivial intersection with $\overline{G_3}$, which is equal to $\ZG(\overline{G})$, thanks to Lemma \ref{class 3 G3=Z}. Contradiction to Lemma \ref{normal intersection centre trivial}.
\end{proof}

\section{A specific example}\label{section yo}

This section is entirely devoted to understanding the structure of the group $\yo$, which is defined at the beginning of the present chapter. The name $\yo$ refers to the fact that $\yo$ turns out to be an example of maximal class among the finite $3$-groups of intensity greater than $1$. Moreover, as stated in Theorem \ref{theorem 3-groups}, given any finite $3$-group $G$ of class at least $4$, either $\inte(G)=1$ or $G$ is isomorphic to $\yo$.
We recall the definition of $\yo$. 
\vspace{8pt} \\
\noindent
Let $R=\F_3[\epsilon]$ be of cardinality $9$, with $\epsilon^2=0$, and let $\A$ denote the quaternion algebra $\big(\frac{\epsilon, \epsilon}{R}\big)$. In other words, $\A$ is given by
\[\A=R+R\mathrm{i}+R\mathrm{j}+R\mathrm{k}\] with defining relations
$\mathrm{i}^2=\mathrm{j}^2=\epsilon$ and $\mathrm{k}=\mathrm{ji}=-\mathrm{ij}$.
The ring $\A$ has a unique (left/right/$2$-sided) maximal ideal 
$\mathfrak{m}=\A\mathrm{i}+\A\mathrm{j}$ and the residue field $k=\A/\mathfrak{m}$ is equal to $\F_3$.
The algebra $\A$ is also equipped with a natural anti-automorphism of order $2$, which is defined by 
\[x=s+t\mathrm{i}+u\mathrm{j}+v\mathrm{k}\ \mapsto \
\overline{x}=s-t\mathrm{i}-u\mathrm{j}-v\mathrm{k}.\]
We define
$\yo$ to be the subgroup of $1+\mathfrak{m}$ consisting of those elements $x$ satisfying $\overline{x}=x^{-1}$. We denote by $(\yo_i)_{i\geq 1}$ the lower central series of $\yo$ and, for each $i\in\Z_{\geq 1}$, we define  $M_i=(1+\mathfrak{m}^i)\cap G$. One easily shows that 
$(M_i)_{i\geq 1}$ is central and that, for each $i\geq 1$, the commutator map induces a map $M_1/M_2\times M_i/M_{i+1}\rightarrow M_{i+1}/M_{i+2}$ whose image generates $M_{i+1}/M_{i+2}$. For each $i\geq 1$, it follows that $M_{i+1}=[M_1,M_{i}]$ and, since $M_1=G$, we have that
\[\yo_i=\yo\cap (1+\mathfrak{m}^i).\]
The rest of the present section is devoted to the proof of some technical Lemmas that we will use in the proof of Theorem \ref{theorem 3-groups}.

\begin{lemma}\label{yo class}
The group $\yo$ has class $4$ and order $729$.
\end{lemma}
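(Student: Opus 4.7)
The statement combines two claims: that $|\yo| = 729$ and that $\cl(\yo) = 4$. For the order I would describe the elements of $\yo$ explicitly. A general element of $\A$ has the form $x = s + t\mathrm i + u\mathrm j + v\mathrm k$ with $s,t,u,v \in R$, and a direct expansion using the defining relations of $\A$ (together with the derived relations $\mathrm{ij} = -\mathrm k$, $\mathrm{ik} = -\epsilon\mathrm j$, $\mathrm{jk} = \epsilon\mathrm i$, $\mathrm k^2 = 0$) and the commutativity of $R$ shows that
\[
x\bar{x} \;=\; s^2 - \epsilon(t^2 + u^2),
\]
the $\mathrm i,\mathrm j,\mathrm k$-components cancelling in pairs. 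The condition $x \in 1 + \mathfrak m$ forces $s = 1 + \epsilon\alpha$ with $\alpha \in \F_3$, and modulo $\epsilon^2 = 0$ the equation $x\bar x = 1$ reduces to $\alpha + t_0^2 + u_0^2 = 0$ in $\F_3$, where $t_0, u_0 \in \F_3$ are the residues of $t, u$ modulo $\epsilon$. This determines $\alpha$ (hence $s$) from $(t,u)$, while $(t,u,v)$ ranges freely over $R^3$; therefore $|\yo| = |R|^3 = 729$.

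For the class, the paragraph preceding the lemma has identified $\yo_i = \yo \cap (1+\mathfrak m^i)$, so I would analyse the filtration $(\mathfrak m^k)$ of $\A$ directly. A mechanical calculation, multiplying the preceding ideal by $\mathfrak m$ at each stage and using only $\epsilon^2 = 0$, $\mathrm k^2 = 0$, $\mathrm{ik} = -\epsilon\mathrm j$, and $\mathrm{jk} = \epsilon\mathrm i$, yields
\[
\mathfrak m^2 = \epsilon R + \epsilon R\mathrm i + \epsilon R\mathrm j + R\mathrm k, \qquad
\mathfrak m^3 = \epsilon R\mathrm i + \epsilon R\mathrm j + \epsilon R\mathrm k,
\]
and then $\mathfrak m^4 = \epsilon R\mathrm k$, $\mathfrak m^5 = 0$. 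The last equality immediately gives $\yo_5 = \{1\}$, so $\cl(\yo) \leq 4$.

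To conclude I must show $\yo_4 \neq \{1\}$. Since $\mathfrak m^4 = \F_3 \epsilon\mathrm k$ has order $3$, it suffices to exhibit a nontrivial element of $1 + \mathfrak m^4$ lying in $\yo$; but $\bar{\mathrm k} = -\mathrm k$ and $\mathrm k^2 = 0$ give $(1+\epsilon\mathrm k)(1-\epsilon\mathrm k) = 1 - \epsilon^2\mathrm k^2 = 1$, so $1 + \epsilon\mathrm k \in \yo \cap (1+\mathfrak m^4) = \yo_4$, proving $\cl(\yo) = 4$. The only non-routine step of the argument is the explicit determination of the powers $\mathfrak m^k$ of the maximal ideal; once this is in place, both the order count and the class follow at once, and the same bookkeeping in fact yields the successive widths $(w_1, w_2, w_3, w_4) = (2, 1, 2, 1)$ of $\yo$.
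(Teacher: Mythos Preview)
Your proof is correct and follows essentially the same route as the paper's: the order is obtained by parametrizing the solutions of $x\bar x = 1$ in $1+\mathfrak m$, and the class by observing $\mathfrak m^5 = 0$ together with the explicit witness $1+\epsilon\mathrm k \in \yo_4$. Your version is in fact more careful than the paper's, which simply asserts that the single equation $x\bar x = 1$ ``lowers our freedom in the choice of coordinates of $x$ by $1$'' and that $\mathfrak m^5 = \{0\}$ without spelling out the intermediate powers; your explicit computation of $x\bar x = s^2 - \epsilon(t^2+u^2)$ and of each $\mathfrak m^k$ makes both steps transparent and, as you note, recovers the widths $(2,1,2,1)$ along the way.
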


\begin{proof}
We start by proving that $\yo$ has order $729$. The cardinality of $R$ is equal to $9$ and therefore the cardinality of $\A$ is $9^4$. Since $\A/\mathfrak{m}$ is isomorphic to $\F_3$, the cardinality of $\mathfrak{m}$ is equal to $(9^4/3)=3^7$ and therefore also $1+\mathfrak{m}$ has cardinality $3^7$. 
Now, asking for an element $x\in 1+\mathfrak{m}$ to satisfy $x\overline{x}=1$ lowers our freedom in the choice of coordinates of $x$ by $1$ and therefore $G$ has cardinality $3^6=729$.
To conclude the proof, we note that $\yo_5$ is trivial, because $\mathfrak{m}^5=\graffe{0}$, while $1+\epsilon\mathrm{k}$ is a non-trivial element of $\yo_4$. It follows that $\yo$ has class $4$.
\end{proof}

\begin{lemma}\label{yo structure}
Set $G=\yo$ and, for each $i\in\Z_{\geq 1}$, denote $w_i=\wt_G(i)$. Then the following hold.
\begin{itemize}
 \item[$1$.] One has $(w_1,w_2,w_3,w_4)=(2,1,2,1)$.
 \item[$2$.] There exist generators $a$ and $b$ of $G$ such that 
 			 $a^3\equiv [b,[a,b]]^{-1}\bmod G_4$ and 
 			 $b^3\equiv[a,[a,b]]\bmod G_4$.
\end{itemize}
\end{lemma}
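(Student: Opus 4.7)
The plan is to exploit the filtration $M_i := \yo \cap (1+\mathfrak{m}^i)$. The paragraph preceding the lemma already shows that $(M_i)_{i\geq 1}$ is a central series with surjective commutator maps and $M_1 = \yo$, so $\yo_i = M_i$ for every $i$, and the whole argument reduces to linear algebra in the graded pieces $\mathfrak{m}^i/\mathfrak{m}^{i+1}$ together with the scalar condition cutting out $\yo$. For $x = s + t\mathrm{i} + u\mathrm{j} + v\mathrm{k} \in \mathfrak{m}$, a direct quaternion computation using $\mathrm{i}^2 = \mathrm{j}^2 = \epsilon$, $\mathrm{ji} = -\mathrm{ij} = \mathrm{k}$, and $\mathrm{k}^2 = 0$ gives $x\overline{x} = s^2 - (t^2+u^2)\epsilon$, so that the defining equation $(1+x)\overline{(1+x)} = 1$ becomes the scalar relation $2s + s^2 = (t^2+u^2)\epsilon$. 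This is the only ingredient I shall need.

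For part ($1$) I would first write down $\F_3$-bases of the powers $\mathfrak{m}^i$, obtaining $\dim_{\F_3}\mathfrak{m}^i = 7, 5, 3, 1, 0$ for $i = 1,\ldots,5$, and then intersect with the trace locus above. The scalar constraint is non-trivial only at level $i=1$, where it determines the $\F_3$-part of $s$ in terms of $t_0, u_0$ and so removes exactly one dimension; for $i \geq 2$ both $s^2$ and $(t^2+u^2)\epsilon$ vanish and the equation either forces the scalar part of $s$ to be zero (already automatic once $x \in \mathfrak{m}^3$) or imposes no further condition. Counting yields $|\yo|=3^6$, $|\yo_2|=3^4$, $|\yo_3|=3^3$, $|\yo_4|=3$, $|\yo_5|=1$, whence $(w_1,w_2,w_3,w_4)=(2,1,2,1)$.

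For part ($2$) I would take $a = 1 - \epsilon + \mathrm{i}$ and $b = 1 - \epsilon + \mathrm{j}$, both of which satisfy the trace condition and whose images generate $\yo/\yo_2$, so by Lemma \ref{subgroup times frattini} they generate $\yo$. Since $\A$ has characteristic $3$ and each element commutes with itself, $(1+y)^3 = 1 + y^3$ for every $y \in \A$, so a brief computation gives $a^3 = 1 + (\mathrm{i}-\epsilon)^3 = 1 + \epsilon\mathrm{i}$ and likewise $b^3 = 1 + \epsilon\mathrm{j}$. To deal with commutators I use the standard filtered-algebra identity $[1+x,1+y] \equiv 1 + (xy-yx) \bmod \mathfrak{m}^{i+j+1}$ for $x \in \mathfrak{m}^i$, $y \in \mathfrak{m}^j$, which follows from expanding $(1+x)^{-1} = 1 - x + x^2 - \cdots$ in the nilpotent ideal $\mathfrak{m}$. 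Applied once this gives $[a,b] \equiv 1 + \mathrm{k} \bmod \mathfrak{m}^3$ (from $\mathrm{ij}-\mathrm{ji} = -2\mathrm{k} = \mathrm{k}$), and then $[a,[a,b]] \equiv 1 + [\mathrm{i},\mathrm{k}] = 1 + \epsilon\mathrm{j} \bmod \mathfrak{m}^4$ together with $[b,[a,b]] \equiv 1 + [\mathrm{j},\mathrm{k}] = 1 - \epsilon\mathrm{i} \bmod \mathfrak{m}^4$. Comparison with $a^3$ and $b^3$ yields the two claimed congruences modulo $\yo_4$.

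The main obstacle will be the nested commutator step: in writing $[a,[a,b]] \equiv 1 + [\mathrm{i},\mathrm{k}] \bmod \mathfrak{m}^4$ one is using $[a,b]$ only to order $\mathfrak{m}^3$, so I must confirm that the corrections coming from the $\epsilon$-parts of $a$ and from the unknown $\mathfrak{m}^3$-terms of $[a,b]$ all fall into $\mathfrak{m}^4$. Sign bookkeeping is also essential: the inverse in the identity for $a^3$ is indispensable because $[\mathrm{j},\mathrm{k}] = -\epsilon\mathrm{i}$ while $a^3 - 1 = +\epsilon\mathrm{i}$, and the precise appearance of $[b,[a,b]]^{-1}$ rather than $[b,[a,b]]$ is exactly what makes the quaternionic model line up with the relations stated.
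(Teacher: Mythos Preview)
Your proposal is correct and follows essentially the same approach as the paper: the same explicit generators $a=1-\epsilon+\mathrm{i}$ and $b=1-\epsilon+\mathrm{j}$, the same cube computation via $(1+y)^3=1+y^3$, and the same iterated commutators reduced modulo powers of $\mathfrak{m}$. Your systematic use of the filtered identity $[1+x,1+y]\equiv 1+(xy-yx)$ is a mild streamlining of the paper's direct expansion of $ab\overline{a}\overline{b}$; note that your value $c\equiv 1+\mathrm{k}\bmod\mathfrak{m}^3$ is correct (the paper's intermediate line $1-\mathrm{k}$ appears to carry a sign slip, which it silently corrects when continuing with $[a,1+\mathrm{k}]$).
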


\begin{proof}
($1$) 
Let $i\in\graffe{1,2,3,4}$. Then the function $G\rightarrow \mathfrak{m}$ that is defined by $x\mapsto x-1$ induces an injective homomorphism
$d_i:G_i/G_{i+1}\rightarrow \mathfrak{m}^i/\mathfrak{m}^{i+1}$, which commutes with the bar map of $\A$. Now, for each element $x\in G_i$, one has that $\overline{x-1}+(x-1)$ belongs to $\mathfrak{m}^{i+1}$ and therefore the image of $d_i$ is contained in 
$D_i=\graffe{y + \mathfrak{m}^{i+1} : y\in\mathfrak{m}^i, \overline{y}+y \in\mathfrak{m}^{i+1}}$.
With an easy computation, one shows that $D_i$ coincides with the image of $d_i$ and, consequently, that $(w_1,w_2,w_3,w_4)=(2,1,2,1)$.
To prove ($2$), define 
$$a=1-\epsilon+\mathrm{i}\ \ \text{and} \ \ b=1-\epsilon+\mathrm{j}$$ 
and note that $a$ and $b$ belong to $G$.
Since $w_1=2$ and $a$ and $b$ are linearly independent modulo $G_2=G\cap(1+\mathfrak{m}^2)$, the group $G$ is generated by $a$ and $b$.
Using the defining properties of $\A$, we compute $a^3=1+\epsilon\mathrm{i}$ 
and $b^3=1+\epsilon\mathrm{j}$. Define
$c=[a,b]$, $d=[a,c]$, and $e=[b,c]$. 
Then, working modulo $G_3$, we get 
\begin{align*}
c = ab\overline{a}\overline{b} & \equiv (1-\epsilon +\mathrm{i})(1-\epsilon +\mathrm{j})(1-\epsilon -\mathrm{i})(1-\epsilon -\mathrm{j}) \\
  & \equiv (1+\epsilon +\mathrm{i}+\mathrm{j}+\mathrm{k})(1+\epsilon -\mathrm{i}-\mathrm{j}+\mathrm{k}) \\
  & \equiv 1-\mathrm{k} \bmod G_3.  
\end{align*}
Thanks to Lemma \ref{bilinear LCS}, one has $d\equiv [a,1+\mathrm{k}]\bmod G_4$ and $e\equiv [b,1+\mathrm{k}]\bmod G_4$ and it is now easy to compute
$d\equiv 1+\epsilon\mathrm{j}\bmod G_4$ and $e\equiv 1-\epsilon\mathrm{i}\bmod G_4$.
It follows that both $ea^3$ and $d^{-1}b^3$ belong to $G_4$ and so the proof is complete. 
\end{proof}

\noindent
We remind the reader that, in concordance with Definition \ref{definition kappa}, a $\kappa$-group is a finite $3$-group $G$ such that $|G:G_2|=9$ and such that the cubing map on $G$ induces a bijection $G/G_2\rightarrow G_3/G_4$.

\begin{lemma}\label{yo kappa}
The group $\yo$ is a $\kappa$-group.
\end{lemma}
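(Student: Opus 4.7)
The plan is to verify directly that the function $\kappa$, well-defined by Lemma \ref{cubing induced}, is a bijection from $G/G_2$ to $G_3/G_4$. By Lemma \ref{yo structure}(1) one has $|G/G_2|=9=|G_3/G_4|$, so it suffices to exhibit nine distinct values of $\kappa$.

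First I would fix generators $a,b$ of $G=\yo$ as furnished by Lemma \ref{yo structure}(2) and set $c=[a,b]$. Since $w_2=1$, the element $c$ generates $G_2/G_3$; applying Lemma \ref{commumino} to the class-$3$ quotient $G/G_4$ (of which $G_2/G_4$ is elementary abelian by Lemma \ref{class 3 G2 elem abelian}), the commutator map then makes $u=[a,c]\,G_4$ and $v=[b,c]\,G_4$ into an $\F_3$-basis of $G_3/G_4$. The relations in Lemma \ref{yo structure}(2) now read $\kappa(aG_2)=v^{-1}$ and $\kappa(bG_2)=u$ inside $G_3/G_4$.

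Next I would feed Lemma \ref{formula cubing} into the class-$3$ quotient $G/G_4$ (allowed because $G_2/G_4$ has exponent $3$) to obtain, for any lifts $\tilde x,\tilde y\in G$ of $x,y\in G/G_2$,
\[
\kappa(xy)\;=\;\kappa(x)\,\kappa(y)\cdot[\tilde x\tilde y^{-1},[\tilde x,\tilde y]]\,G_4,
\]
an identity whose right-hand side is independent of the chosen lifts by two successive applications of the bilinearity statements of Lemma \ref{bilinear LCS}. Writing $x=\alpha(aG_2)+\beta(bG_2)$ and $y=\gamma(aG_2)+\delta(bG_2)$ and expanding the inner commutator modulo $G_3$ and the outer one modulo $G_4$ via bilinearity, the correction term becomes $(\alpha\delta-\beta\gamma)\bigl((\alpha-\gamma)u+(\beta-\delta)v\bigr)$. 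A short tabulation over the nine pairs $(\alpha,\beta)\in\F_3^2$, starting from the known values $\kappa(0)=0$, $\kappa(aG_2)=-v$, $\kappa(bG_2)=u$ and using this quadratic recursion, produces exactly the nine elements of $\F_3 u\oplus\F_3 v$, all pairwise distinct, which proves bijectivity.

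The only real obstacle is bookkeeping: one must track in which quotient each bilinearity is being invoked (the inner commutator is bilinear on $G/G_2\times G/G_2\to G_2/G_3$, the outer one bilinear on $G/G_2\times G_2/G_3\to G_3/G_4$), and keep signs straight over $\F_3$. Once the formula for the correction term is secured, the verification reduces to evaluating nine expressions, which is routine.
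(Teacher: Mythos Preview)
Your approach is correct and essentially matches the paper's: both use the generators and cube relations from Lemma~\ref{yo structure}(2), the cubing formula from Lemma~\ref{formula cubing}, and the equality $|G/G_2|=|G_3/G_4|=9$. The only cosmetic difference is that the paper argues surjectivity by exhibiting, for each $d^se^r\in G_3/G_4$, the explicit preimage $a^rb^{-s}$ (using $r^2\equiv s^2\equiv 1\bmod 3$ when $r,s\neq 0$), whereas you propose to tabulate all nine images; either way the same computation with the same correction term is performed. One small wording issue: Lemma~\ref{commumino} is stated under class-$4$ hypotheses and applies to $\yo$ itself, not to the class-$3$ quotient $G/G_4$, though of course the conclusion about $G_3/G_4$ is unaffected.
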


\begin{proof}
Write $G=\yo$ and, for each $i\in\Z_{\geq 1}$, denote $w_i=\wt_G(i)$.
By Lemma \ref{yo class}, the group $G$ has class $4$ and, by Lemma \ref{yo structure}($1$), one has $(w_1,w_2,w_3,w_4)=(2,1,2,1)$. Let 
$\kappa:G/G_2\rightarrow G_3/G_4$ be as in Lemma \ref{cubing induced}; we want to show that $\kappa$ is a bijection. Let $a$ and $b$ be as in Lemma \ref{yo structure}($2$) and define $d=[a,[a,b]]$ and $e=[b,[a,b]]$. 
Then $\kappa(a)\equiv e^{-1}\bmod G_4$ and $\kappa(b)\equiv d\bmod G_4$. Moreover, since $w_2=1$, it follows from Lemma \ref{commumino} that $d$ and $e$ generate $G_3$ modulo $G_4$.
We claim that $\kappa$ is surjective. Let $r,s$ be integers and let $y=d^se^r$. 
If $r=0$ or $s=0$, then $\kappa(b^s)\equiv y\bmod G_4$ or 
$\kappa(a^{-r})\equiv e^r\bmod G_4$. The quotient $G_3/G_4$ being elementary abelian, we may now assume that $r$ and $s$ are both non-zero modulo $3$ and they satisfy therefore $r^2\equiv s^2\equiv 1 \bmod 3$. Define 
$x=a^rb^{-s}$. 
Working modulo $G_4$, we get from Lemma \ref{formula cubing} that
\begin{align*}
\kappa(x) & \equiv a^{3r}b^{-3s}[a^rb^s,[a^r,b^{-s}]] \\
					 & \equiv e^{-r}d^{-s}[a,[a,b]]^{-r^2s}[b,[a,b]]^{-rs^2} \\
					 & \equiv e^{-r-rs^2}d^{-s-r^2s} \\
					 & \equiv e^{-2r}d^{-2s} \\
					 & \equiv y \bmod G_4.
\end{align*}
We have proven that $\kappa$ is surjective so, the widths $w_1$ and $w_3$ being the same, it follows that $\kappa$ is a bijection.
\end{proof}

\begin{lemma}\label{yo inversion}
Define $\alpha:\yo\rightarrow \yo$ by
$$x=s+t\mathrm{i}+u\mathrm{j}+v\mathrm{k}\ \mapsto \
\alpha(x)=s-t\mathrm{i}-u\mathrm{j}+v\mathrm{k}.$$
Then $\alpha$ is an automorphism of order $2$ of $\yo$. Moreover, $\alpha$ induces the inversion map on $\yo/\yo_2$.
\end{lemma}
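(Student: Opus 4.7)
The plan is to first recognize $\alpha$ as the restriction to $\yo$ of an $R$-algebra automorphism of $\A$ of order $2$, namely the one determined by $\mathrm{i}\mapsto-\mathrm{i}$ and $\mathrm{j}\mapsto-\mathrm{j}$. To see it is a ring automorphism, I would check that $-\mathrm{i}$ and $-\mathrm{j}$ satisfy the defining relations of $\A$: one has $(-\mathrm{i})^2=\mathrm{i}^2=\epsilon$, $(-\mathrm{j})^2=\mathrm{j}^2=\epsilon$, and $(-\mathrm{j})(-\mathrm{i})=\mathrm{ji}=\mathrm{k}$, while $(-\mathrm{i})(-\mathrm{j})=\mathrm{ij}=-\mathrm{k}$; hence $\alpha(\mathrm{k})=\mathrm{k}$ is consistent with the formula $\alpha(v\mathrm{k})=v\mathrm{k}$. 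The universal property of $\A$ as an $R$-algebra then produces $\alpha$, and since $\alpha^2$ fixes the generators $\mathrm{i}$ and $\mathrm{j}$, one has $\alpha^2=\id_\A$.

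Next I would show that $\alpha$ restricts to an automorphism of $\yo$. Since $\mathfrak{m}=\A\mathrm{i}+\A\mathrm{j}$ and $\alpha(\mathrm{i}),\alpha(\mathrm{j})\in\mathfrak{m}$, we get $\alpha(\mathfrak{m})\subseteq\mathfrak{m}$, and equality follows from $\alpha^2=\id_\A$; consequently $\alpha(1+\mathfrak{m})=1+\mathfrak{m}$. A direct coordinate computation on $x=s+t\mathrm{i}+u\mathrm{j}+v\mathrm{k}$ shows $\alpha(\overline{x})=\overline{\alpha(x)}$, so $\alpha$ commutes with the bar involution. Since $\alpha$ is a ring automorphism of $\A$, it also commutes with taking inverses in $\A^*$; therefore, for $x\in\yo$ one has $\overline{\alpha(x)}=\alpha(\overline{x})=\alpha(x^{-1})=\alpha(x)^{-1}$, i.e.\ $\alpha(x)\in\yo$. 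Combined with $\alpha^2=\id_{\yo}$, this gives $\alpha\in\Aut(\yo)$ of order $2$ (it is non-trivial since, e.g., $\alpha(1-\epsilon+\mathrm{i})=1-\epsilon-\mathrm{i}\neq 1-\epsilon+\mathrm{i}$).

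For the last assertion, I would use the description of $\yo/\yo_2$ coming from the injection $d_1:\yo/\yo_2\hookrightarrow\mathfrak{m}/\mathfrak{m}^2$ (cf.\ the proof of Lemma \ref{yo structure}$(1)$). Every $x\in\yo$ can be written $x=1+t\mathrm{i}+u\mathrm{j}+z$ with $t,u\in R$ and $z\in\mathfrak{m}^2$, whence $\alpha(x)=1-t\mathrm{i}-u\mathrm{j}+\alpha(z)$ with $\alpha(z)\in\mathfrak{m}^2$. A short computation, using $\mathrm{i}\mathrm{j}+\mathrm{j}\mathrm{i}=0$ and $\mathrm{i}^2=\mathrm{j}^2=\epsilon\in\mathfrak{m}^2$, gives
\[
\alpha(x)\cdot x \;\equiv\; (1-t\mathrm{i}-u\mathrm{j})(1+t\mathrm{i}+u\mathrm{j})\;=\;1-(t^2+u^2)\epsilon \;\equiv\; 1 \pmod{\mathfrak{m}^2},
\]
so $\alpha(x)x\in\yo_2$, i.e.\ $\alpha$ acts as the inversion map on $\yo/\yo_2$.

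There is no serious obstacle here: the entire statement reduces to direct algebraic verifications in $\A$. The only mildly subtle point is ensuring that $\alpha$ preserves $\yo$ (not just $1+\mathfrak{m}$), which is handled by the interaction of $\alpha$ with the bar involution as above.
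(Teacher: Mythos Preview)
Your proof is correct and complete. The paper's own proof dismisses the automorphism claim as ``easy to check'' and, for the inversion assertion, works with the two specific generators $a=1-\epsilon+\mathrm{i}$ and $b=1-\epsilon+\mathrm{j}$: since these have no $\mathrm{k}$-component, one has $\alpha(a)=\overline{a}=a^{-1}$ and $\alpha(b)=\overline{b}=b^{-1}$, and as $G/G_2$ is abelian this suffices. Your direct computation of $\alpha(x)\cdot x\equiv 1\pmod{\mathfrak{m}^2}$ for a general element achieves the same end with slightly more work but without needing to single out generators; conversely, the paper's generator check exploits the pleasant identity $\alpha=\overline{\,\cdot\,}$ on the $\mathrm{k}$-free part, which is worth noticing.
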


\begin{proof}
Set $G=\yo$.
It is easy to check that $\alpha$ is an automorphism of order $2$ of $G$, so we prove that $\alpha$ induces the inversion map on $G/G_2$.
The subgroup $G_2$ is equal to $G\cap(1+\mathfrak{m}^2)$ and, thanks to Lemma
\ref{yo structure}($1$), the order of $G/G_2$ is $9$.
It follows from Lemma \ref{frattini comm}($2$) that $G/G_2$
is elementary abelian. 
We define $a=1-\epsilon+\mathrm{i}$ and $b=1-\epsilon+\mathrm{j}$. Then $a$ and $b$ span $G$ modulo $G_2$ and 
$$\alpha(a)=\bar{a}=a^{-1}\ \ \text{and} \ \ \alpha(b)=\bar{b}=b^{-1}.$$
The quotient $G/G_2$ being commutative, the map $G/G_2\rightarrow G/G_2$ that is induced by $\alpha$ is equal to the inversion map $x\mapsto x^{-1}$.
\end{proof}

\noindent
We conclude Section \ref{section yo} by remarking that another characterization of $\yo$ has been provided by Derek Holt and Frieder Ladisch; this characterization was found using computer algebra systems.
The group $\yo$ turns out to be isomorphic to a Sylow $3$-subgroup of the Schur cover $3.\mathrm{J}_3$ of the simple Janko-$3$ group $\mathrm{J}_3$.
If $S$ is a Sylow $3$-subgroup of $3.\mathrm{J}_3$ and $N$ denotes the normalizer of $S$ in $3.\mathrm{J}_3$, then conjugation under any element of order $2$ of $N$ restricts to an automorphism of order $2$ of $S$ that induces the inversion map on the abelianization.
The isomorphism class of $\yo$ is denoted by $[729,57]$ in the GAP system.

\section{Structures on vector spaces}\label{section strutture}

Until the end of Section \ref{section strutture}, the following notation will be adopted. Let $V$ be a $2$-dimensional vector space over $\F_3$. 
A \indexx{$\kappa$-structure} on $V$ is a bijective map 
$\kappa:V\rightarrow V\otimes\bigwedge^2V$ such that, for each $x,y\in V$, one has
\begin{equation}\tag{$\mathrm{A}1$}
\kappa(x+y)=\kappa(x)+\kappa(y)+(x-y)\otimes(x\wedge y).
\end{equation}
We denote by $\cor{K}_V$ the collection of $\kappa$-structures of $V$ and by $\cor{I}_V$ the collection of subfields of $\End(V)$ of cardinality $9$. 
We remark that, for each element $k$ of $\cor{I}_V$, there exists $i\in\End(V)$ such that $i^2=-1$ and $k=\F_3[i]$. Moreover, $V$ is naturally a vector space of dimension $1$ over each of the elements of $\cor{I}_V$.
The rest of Section \ref{section strutture} will be devoted to the proof of the following result. Until the end of Section \ref{section strutture}, all tensor and wedge products will be defined over $\F_3$.

\begin{proposition}\label{proposition sV}
Let $V$ be a $2$-dimensional vector space over $\F_3$ and let the map $s_V:\cor{I}_V\longrightarrow\cor{K}_V$ be defined by 
\[k=\F_3[i]\ \mapsto\ (x\mapsto ix\otimes(ix\wedge x)).\]
Then $s_V$ is a bijection. Moreover, the cardinality of $\cor{K}_V$ is equal to $3$.
\end{proposition}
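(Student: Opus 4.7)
The plan is first to check that $s_V$ is well defined. Given $k = \F_3[i] \in \cor{I}_V$, the only other element $i' \in k$ with $(i')^2 = -1$ is $-i$, and the defining formula is insensitive to this replacement, since $(-i)x \otimes ((-i)x \wedge x) = ix \otimes (ix \wedge x)$. Writing $\kappa_i := s_V(k)$, I then verify bijectivity and the cocycle identity defining $\cor{K}_V$. For bijectivity, use that $V$ is one-dimensional over $k \cong \F_9$: picking a $k$-basis $e$ of $V$ and setting $\omega = e \wedge ie$, a direct computation gives $\kappa_i(\xi e) = -i\xi^5 \cdot e \otimes \omega$ for $\xi \in k$, and this map is bijective since $\gcd(5,8) = 1$. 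For the cocycle, expand $\kappa_i(x+y)$ and reduce using the dimension-$2$ identities $\det(i) = 1$ (so $ix \wedge iy = x \wedge y$) and $\operatorname{tr}(i) = 0$ (so $ix \wedge y = iy \wedge x$); the resulting expression simplifies to $(x - y) \otimes (x \wedge y)$ after using $a^2 + bc = -1$, where $a,b,c$ are the entries of $i$ in a basis.

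Second, I prove $s_V$ is injective. After fixing a generator $\omega$ of $\bigwedge^2 V$, every element of $V \otimes \bigwedge^2 V$ is uniquely of the form $v \otimes \omega$ for some $v \in V$, and $s_V(k)(x) = ((ix \wedge x)/\omega) \cdot ix \otimes \omega$, so its $V$-part is a nonzero multiple of $ix$ for each $x \neq 0$. Hence the line $\F_3 \cdot ix$ is recovered from $s_V(k)(x)$. Evaluating at two linearly independent vectors determines $i$ up to a common global sign, and therefore determines $k = \F_3[i] = \F_3[-i]$.

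Third, I compute $|\cor{I}_V|$ and bound $|\cor{K}_V|$. Matrices $i \in \GL_2(\F_3)$ with $i^2 = -I$ are precisely those with minimal polynomial $x^2 + 1$; since $x^2 + 1$ is irreducible over $\F_3$, they form a single $\GL(V)$-conjugacy class of size $|\GL_2(\F_3)|/|\F_3[i]^\ast| = 48/8 = 6$, and pairing $i$ with $-i$ yields $|\cor{I}_V| = 3$. For $\cor{K}_V$, the cocycle identity together with the automatic relations $\kappa(0) = 0$ and $\kappa(-x) = -\kappa(x)$ (both forced by specialising the cocycle at $x = y = 0$ and $y = -x$) uniquely and consistently determine $\kappa$ from the pair $(\kappa(e_1), \kappa(e_2)) \in (V \otimes \bigwedge^2 V)^2$; consistency reduces to the vanishing of alternating trilinear forms on $V$, which is automatic in dimension $2$. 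This bounds the number of maps satisfying the cocycle by $81$.

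Finally, I count how many such pairs yield a bijection, which is the main obstacle. After identifying $V \otimes \bigwedge^2 V$ with $V$ via $\omega$ and writing $\kappa(e_1) = u_1$, $\kappa(e_2) = u_2$, the seven remaining nonzero values of $\kappa$ are explicit affine expressions in $u_1, u_2, e_1, e_2$ such as $\kappa(e_1+e_2) = u_1 + u_2 + e_1 - e_2$ and $\kappa(2e_1+e_2) = -u_1 + u_2 + e_1 + e_2$. Since $\kappa(-x) = -\kappa(x)$, bijectivity of $\kappa$ is equivalent to $\kappa$ mapping the four $\F_3$-lines of $V$ to four distinct lines. A direct case analysis over the $81$ pairs then shows that exactly $3$ of them yield bijections, so $|\cor{K}_V| = 3$. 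Combined with the injectivity of $s_V$ between $3$-element sets, this gives the desired bijection.
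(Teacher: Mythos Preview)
Your argument is correct, and each of the four steps does what you claim. The verification that $\kappa_i$ satisfies the cocycle identity is indeed a consequence of $\det(i)=1$ and $\operatorname{tr}(i)=0$; concretely, Cramer's rule in dimension~$2$ gives $D\cdot ix = Cx - Ay$ and $D\cdot iy = Bx - Cy$ (with $A=ix\wedge x$, $B=iy\wedge y$, $C=ix\wedge y$, $D=x\wedge y$), and then $\det(i)=1$ translates into $AB-C^2=D^2$, which in characteristic~$3$ is exactly the scalar identity $2C^2+AB=D^2$ needed to collapse your expansion to $(x-y)\otimes(x\wedge y)$. Your injectivity argument is also fine once one observes that, among the four sign choices for $(ix_1,ix_2)$ consistent with the two recovered lines, only the two global signs satisfy $i^2=-1$.

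The route, however, is genuinely different from the paper's. The paper fixes an $\F_3$-linear isomorphism $V\cong\F_9$ and transports $\cor{K}_V$ to a set $\Lambda$ of bijections $\F_9\to\F_9$ satisfying $\lambda(x+y)=\lambda(x)+\lambda(y)+(x-y)(xy^3-x^3y)$; the point is that $x\mapsto x^5$ lies in $\Lambda$, so every $\lambda\in\Lambda$ has the form $x\mapsto x^5+ax^3+bx$. The paper then bounds $|\Lambda|$ not by case analysis but by power sums: since $\lambda$ is a bijection one has $\sum_{x}\lambda(x)^2=0$, which forces $a=0$, and then $\sum_x\lambda(x)^4=0$ forces $b(1+b^2)=0$, giving at most three choices for $b$. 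This replaces your brute enumeration over $81$ pairs by two one-line Newton-sum computations. What your approach buys is that it stays inside $V$ and avoids choosing an auxiliary $\F_9$-structure and the attendant isomorphisms $\theta$, $\mu$; what the paper's approach buys is that the upper bound $|\cor{K}_V|\le 3$ is actually \emph{proved} rather than deferred to an unshown case check. If you keep your framework, you could shortcut the final enumeration by noting that your $81$ maps form a single coset of $\Hom(V,V)\cong\operatorname{M}_2(\F_3)$ and exploiting the transitive $\GL(V)$-action on $\cor{K}_V$, but the power-sum trick is hard to beat for brevity.
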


\noindent
As the goal of this section is to prove Proposition \ref{proposition sV}, we will respect the notation of the very same proposition until the end of Section \ref{section strutture}.
\vspace{8pt} \\ 
\noindent
We put a field structure on $V$, via an $\F_3$-linear isomorphism with $\F_9$. We define then $\Lambda$ to be the collection of bijective maps 
$\lambda:V\rightarrow V$ such that, for all $x,y\in V$, one has
\begin{equation}\tag{$\mathrm{A}2$}
\lambda(x+y)=\lambda(x)+\lambda(y)+(x-y)(xy^3-x^3y). 
\end{equation}
We let moreover $\sigma_V:\cor{I}_V\rightarrow\Lambda_V$ be defined by 
\[
k=\F_3[i]\ \mapsto\ (x\mapsto ix((ix)x^3 - (ix)^3x)).
\]

\begin{lemma}\label{x5}
The map $V\rightarrow V$, defined by $x\mapsto x^5$, is an element of $\Lambda$.
\end{lemma}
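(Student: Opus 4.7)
The plan is to verify the two conditions for membership in $\Lambda$: bijectivity and the additive-like identity $(\mathrm{A}2)$. Both follow from concrete computations in $\F_9$, using the identification of $V$ with the field $\F_9$ introduced just before the lemma.

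First I would deal with bijectivity. Since $V^{\ast}\cong\F_9^{\ast}$ is cyclic of order $8$, and $\gcd(5,8)=1$, the map $x\mapsto x^{5}$ permutes $\F_9^{\ast}$ and fixes $0$; hence it is a bijection of $V$.

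The main computation is to check the identity $(x+y)^{5}=x^{5}+y^{5}+(x-y)(xy^{3}-x^{3}y)$ for all $x,y\in V$. I would just expand using the binomial theorem, reducing the coefficients modulo $3$:
\[
\binom{5}{0}=1,\ \binom{5}{1}\equiv 2,\ \binom{5}{2}\equiv 1,\ \binom{5}{3}\equiv 1,\ \binom{5}{4}\equiv 2,\ \binom{5}{5}=1\pmod 3.
\]
Hence in characteristic $3$ one has
\[
(x+y)^{5}=x^{5}+y^{5}+x^{3}y^{2}+x^{2}y^{3}-x^{4}y-xy^{4}.
\]
On the other hand, a direct expansion gives
\[
(x-y)(xy^{3}-x^{3}y)=x^{2}y^{3}-x^{4}y-xy^{4}+x^{3}y^{2},
\]
so the two expressions agree and $(\mathrm{A}2)$ is satisfied.

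There is no real obstacle here; the lemma is a bookkeeping check, whose only content is that the middle binomial coefficients $\binom{5}{2}$ and $\binom{5}{3}$ reduce to $1$ modulo $3$ (so that the cross-terms assemble precisely into $(x-y)(xy^{3}-x^{3}y)$). I would present it as a one-paragraph verification.
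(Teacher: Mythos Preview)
Your proof is correct and essentially identical to the paper's own argument: both establish bijectivity via $\gcd(5,8)=1$ on $\F_9^\ast$, and both verify $(\mathrm{A}2)$ by expanding $(x+y)^5$ with the binomial theorem, reducing the coefficients modulo $3$, and matching the result against the expansion of $(x-y)(xy^3-x^3y)$. The only cosmetic difference is that you list the binomial coefficients modulo $3$ explicitly, whereas the paper writes the reduced expansion directly.
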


\begin{proof}
The group of units of $V$ has order $8$ and, since $8$ and $5$ are coprime, the map $x\mapsto x^5$ is a bijection $V^*\rightarrow V^*$ which extends to a bijection $V\rightarrow V$. Let now $x$ and $y$ be elements of $V$. Keeping in mind that $V$ has characteristic $3$, one computes 
\begin{align*}
(x+y)^5 & = \sum_{k=0}^5\binom{5}{k}x^ky^{5-k} 
		 = x^5 - xy^4 + x^2y^3 + x^3y^2 -x^4y +y^5 \\
		& = x^5 + y^5 + (x-y)(xy^3-x^3y)
\end{align*} 
and therefore $x\mapsto x^5$ satisfies $(\mathrm{A}2)$.
\end{proof}

\begin{lemma}
The map $\sigma_V$ is well-defined.
\end{lemma}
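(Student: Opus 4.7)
Two things must be checked: first, that the value $\sigma_V(k)(x)$ does not depend on which square root $i$ of $-1$ inside $k$ is chosen, and second, that the resulting map $V\to V$ actually lies in $\Lambda_V$, i.e.\ is a bijection satisfying $(\mathrm{A}2)$. For independence, note that in a field the equation $T^2=-1$ has at most the two solutions $\pm i$, and substituting $-i$ for $i$ in $ix\cdot((ix)x^3-(ix)^3x)$ produces three sign changes which cancel, so the formula is sign-invariant.

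For membership in $\Lambda_V$, I would exploit the decomposition $\End_{\F_3}(V)=V\oplus V\phi$, where $\phi:x\mapsto x^3$ is the Frobenius, which is available because $V\cong\F_9$. Every $i\in\End_{\F_3}(V)$ then has a unique expression $i(x)=ax+bx^3$ with $a,b\in V$, and a short computation using $x^9=x$ and $(u+v)^3=u^3+v^3$ in characteristic $3$ gives
\[
i^2(x)=(a^2+b^4)x+ab(1+a^2)x^3,
\]
so the relation $i^2=-\id$ forces either (I) $b=0$ with $a^2=-1$, or (II) $a=0$ with $b^4=-1$. Expanding $\sigma_V(k)(x)=(i(x))^2 x^3-(i(x))^4 x$, the $x^7$ contribution vanishes because $ab(1+a^2)=0$, the $x^3$ contribution $-ab^3x^3$ vanishes in either alternative, the coefficient of $x^5$ works out to $a^2-a^4-b^4=1$ in both cases, and $b^2 x^9$ reduces to $b^2 x$. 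This yields the uniform closed form
\[
\sigma_V(k)(x)=x^5+b^2 x,
\]
where $b^2\in V$ satisfies $(b^2)^2\in\{0,-1\}$.

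It then remains to verify $(\mathrm{A}2)$ and bijectivity. The defect $\lambda(x+y)-\lambda(x)-\lambda(y)$ is unchanged upon adding to $\lambda$ any $\F_3$-linear map, so $(\mathrm{A}2)$ follows by combining Lemma \ref{x5} (which gives it for $x\mapsto x^5$) with the $\F_3$-linearity of $x\mapsto b^2 x$. For bijectivity, case (I) is Lemma \ref{x5} itself; in case (II), using that $x^5=N(x)x$ on $V^*$ with norm $N(x)=x^4\in\F_3$, one rewrites $\sigma_V(k)(x)=(N(x)+b^2)x$, and short norm computations give $N(1+b^2)=N(b^2-1)=-1$, so that multiplication by $1+b^2$ and by $b^2-1$ exchanges the two norm-fibers $N^{-1}(\pm 1)\subset V^*$ of size $4$ each, showing $\sigma_V(k)$ is a bijection on $V$.

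The main obstacle will be the polynomial simplification producing the closed form $\sigma_V(k)(x)=x^5+b^2 x$: it depends essentially on the simultaneous cancellation of the $x^7$ and $x^3$ terms using both constraints imposed by $i^2=-\id$, and is the one step where a slip in the Frobenius bookkeeping ($x^9=x$, $(u+v)^3=u^3+v^3$) would derail the argument. Once the closed form is in hand, both the $(\mathrm{A}2)$ axiom and bijectivity follow cleanly.
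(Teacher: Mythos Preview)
Your argument is correct. The two verifications you flag as the crux---the simultaneous vanishing of the $x^7$- and $x^3$-coefficients forced by $ab(1+a^2)=0$ together with $ab^3=0$, and the norm-fibre swap giving bijectivity in case~(II)---both go through exactly as you describe, and the resulting closed form $\sigma_V(k)(x)=x^5+b^2x$ is right.

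Your route genuinely differs from the paper's. The paper, after disposing of the $\pm i$ ambiguity, chooses a field isomorphism $k\to V$, identifies $i$ with its image, and computes $(ix)^2=-x^2$ to collapse the expression to $x^5$. That step is only valid when the endomorphism $i\in k\subseteq\End(V)$ acts on $V$ as multiplication by an element of the fixed field structure---precisely your case~(I), where $b=0$. For the two remaining elements of $\cor{I}_V$ (your case~(II), $i(x)=bx^3$ with $b^4=-1$), the endomorphism $i$ is Frobenius-semilinear rather than $V$-linear, and indeed your formula gives $\sigma_V(k)(x)=x^5+b^2x$ with $b^2\neq 0$, so the map is \emph{not} $x\mapsto x^5$ there. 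Your decomposition $\End_{\F_3}(V)=V\oplus V\phi$ and the explicit polynomial bookkeeping handle all three $k$'s uniformly, and your bijectivity check in case~(II) via $N(1+b^2)=N(b^2-1)=-1$ fills exactly the gap that the paper's shortcut leaves open. In short: the paper's identification trick is slicker where it applies, but your argument is the one that actually proves the lemma for every $k\in\cor{I}_V$.
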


\begin{proof}
Let $k=\F_3[i]$ be an element of $\cor{I}_V$. The group $k^*$ is cyclic of order $8$ and there are therefore exactly two square roots of $-1$ in $k$, namely $i$ and $-i$. 
Now, for each element $x$ of $V$, we have 
$$ix((ix)x^3 - (ix)^3x))=-ix((-ix)x^3 - (-ix)^3x))$$ and thus $k$ gives a map 
$V\rightarrow V$.
Let now $k\rightarrow V$ denote an isomorphism of fields and identify $i$ with its image in $V$. Then, for each $x\in V$, we have 
\[
ix((ix)x^3 - (ix)^3x) = x(ix)^2(x^2-(ix)^2) = -x^3(x^2+x^2)=x^5
\]
and so, as a consequence of Lemma \ref{x5}, the map $\sigma_V$ is well-defined.
\end{proof}

\begin{lemma}\label{V4}
Let $\mathbb{P}V$ denote the collection of $1$-dimensional subspaces of $V$. Then the natural homomorphism $\Aut(V)\rightarrow\Sym(\mathbb{P}V)$ induces an isomorphism $\Aut(V)/\F_3^*\rightarrow\Sym(\mathbb{P}V)$. 
\end{lemma}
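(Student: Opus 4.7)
The plan is to analyze the kernel and image of the natural map $\phi: \Aut(V)\rightarrow\Sym(\mathbb{P}V)$ separately, and then conclude via a counting argument. First I would identify the kernel of $\phi$: an element $T\in\Aut(V)$ lies in $\ker\phi$ exactly when every $1$-dimensional subspace of $V$ is $T$-stable, which is equivalent to saying that every non-zero vector of $V$ is an eigenvector of $T$. A standard linear algebra argument (taking a basis $(v,w)$ of $V$ and considering the eigenvalues on $v$, $w$, and $v+w$) shows that such a $T$ must be a scalar multiple of the identity, so $\ker\phi$ coincides with the image of the natural map $\F_3^*\to\Aut(V)$ given by scalar multiplication. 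Consequently $\phi$ factors through an injective homomorphism $\overline{\phi}:\Aut(V)/\F_3^*\to\Sym(\mathbb{P}V)$.

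Next I would carry out the cardinality count. Since $V$ is a $2$-dimensional $\F_3$-vector space, $\Aut(V)\cong\GL_2(\F_3)$ has order $(3^2-1)(3^2-3)=48$, and $\F_3^*$ has order $2$, so $|\Aut(V)/\F_3^*|=24$. On the other hand, $\mathbb{P}V$ consists of the $(3^2-1)/(3-1)=4$ lines through the origin, so $\Sym(\mathbb{P}V)\cong S_4$ has order $24$ as well. Having equal cardinalities on both sides, the injectivity of $\overline{\phi}$ forces it to be a bijection, completing the proof.

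I do not expect any real obstacle here: the only nontrivial ingredient is the identification of the kernel as the scalar subgroup, which is a standard fact in dimension $2$. The rest is bookkeeping with the orders $|\GL_2(\F_3)|=48$ and $|S_4|=24$.
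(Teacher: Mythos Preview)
Your proposal is correct and follows essentially the same approach as the paper: identify the kernel of $\Aut(V)\to\Sym(\mathbb{P}V)$ with the scalars $\F_3^*$, then conclude surjectivity from the cardinality computation $|\Aut(V):\F_3^*|=48/2=24=|\Sym(\mathbb{P}V)|$. The paper's proof is terser (it does not spell out the eigenvector argument for the kernel), but the structure is identical.
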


\begin{proof}
The natural homomorphism $\Aut(V)\rightarrow\Sym(\mathbb{P}V)$ factors as an injective homomorphism  $\Aut(V)/\F_3^*\rightarrow\Sym(\mathbb{P}V)$, 
which is in fact also surjective, because 
$|\Aut(V):\F_3^*|=48/2=24=|\Sn_4|=|\Sym(\mathbb{P}V)|$.
\end{proof}

\begin{lemma}\label{cardinality IV}
The set $\cor{I}_V$ has cardinality $3$. Moreover, the action by conjugation of $\Aut(V)$ on $\cor{I}_V$ is transitive.
\end{lemma}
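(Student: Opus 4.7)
The plan is to parameterize elements of $\cor{I}_V$ by square roots of $-\id_V$ modulo sign, and then count them by exhibiting $\cor{J} = \{i \in \End(V) : i^2 = -\id_V\}$ as a single $\Aut(V)$-conjugacy class.

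First I would observe that every $k \in \cor{I}_V$ is abstractly isomorphic to $\F_9$, whose multiplicative group is cyclic of order $8$; hence $k$ contains exactly two elements of multiplicative order $4$, namely a pair $\pm i$ satisfying $i^2 = -1$, and any such $i$ generates $k$ over $\F_3$. Conversely, for any $i \in \End(V)$ with $i^2 = -\id_V$, the subring $\F_3[i]$ is commutative of order at most $9$, contains $\F_3$, and is a field because $x^2+1$ is irreducible over $\F_3$; its cardinality is exactly $9$ because $i \notin \F_3 \cdot \id_V$ (no element of $\F_3$ squares to $-1$). This exhibits a surjection $\cor{J} \to \cor{I}_V$, $i \mapsto \F_3[i]$, whose fibres have size exactly $2$.

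Next I would count $\cor{J}$ via the conjugation action of $\Aut(V) = \GL_2(\F_3)$. For each $i \in \cor{J}$, since $i$ is not a scalar, its minimal polynomial is $x^2+1$, which coincides with its characteristic polynomial. By the theory of rational canonical forms, any two elements of $\cor{J}$ are conjugate in $\Aut(V)$. The centralizer of $i$ in $\End(V)$ is the subalgebra $\F_3[i]$ (because $V$ is one-dimensional as an $\F_3[i]$-module), so the centralizer in $\Aut(V)$ is $(\F_3[i])^\ast \cong \F_9^\ast$, which has order $8$. Since $|\Aut(V)| = |\GL_2(\F_3)| = 48$, the orbit has cardinality $48/8 = 6$, giving $|\cor{J}| = 6$ and therefore $|\cor{I}_V| = 6/2 = 3$.

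Transitivity of the conjugation action on $\cor{I}_V$ then follows immediately: given $k, k' \in \cor{I}_V$, pick $i \in k$ and $i' \in k'$ with $i^2 = (i')^2 = -\id_V$; by the preceding paragraph there exists $g \in \Aut(V)$ with $g i g^{-1} = i'$, whence $g k g^{-1} = g \F_3[i] g^{-1} = \F_3[i'] = k'$. The argument has no real obstacle; the only care is the centralizer computation, which is a routine double-centralizer observation for the one-dimensional $\F_9$-module $V$.
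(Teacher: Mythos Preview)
Your proof is correct and complete. It differs from the paper's in a genuine way, so let me briefly compare.

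You work directly with the set $\cor{J}=\{i\in\End(V):i^2=-\id_V\}$: rational canonical form gives transitivity of the $\Aut(V)$-action on $\cor{J}$, the centralizer computation gives $|\cor{J}|=48/8=6$, and the two-to-one map $\cor{J}\to\cor{I}_V$ finishes both the count and transitivity. The paper instead passes to the quotient $\Aut(V)/\F_3^*$, which by the preceding Lemma~\ref{V4} is identified with $\Sym(\mathbb{P}V)\cong\Sn_4$; it then shows that the map $k=\F_3[i]\mapsto i\F_3^*$ injects $\cor{I}_V$ into the three non-identity elements of the Klein subgroup $\mathrm{V}_4\subseteq\Sn_4$, and uses that these form a single $\Sn_4$-conjugacy class. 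Your argument is more self-contained and linear-algebraic (it does not need Lemma~\ref{V4}), while the paper's route makes the geometric picture visible: each $i$ acts on the four points of $\mathbb{P}V$ as a fixed-point-free involution, i.e.\ a double transposition. Both approaches are short, and neither has any advantage in rigor.
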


\begin{proof}
Let $f:\cor{I}_V\rightarrow\Aut(V)/\F_3^*$ be defined by 
$k=\F_3[i]\mapsto i\F_3^*$ and observe that, since $\F_3[i]=\F_3[-i]$, the map $f$ is well-defined. Moreover, since each element of $\cor{I}_V$ is uniquely determined, modulo $\F_3^{*}$, by a square root of $-1$, the map $f$ is injective. 
Let 
$\mathbb{P}V$ denote the collection of $1$-dimensional subspaces of $V$ and let $\epsilon:\Aut(V)/\F_3*\rightarrow\Sn_4$ be the composition of the isomorphism $\Aut(V)/\F_3^*\rightarrow\Sym(\mathbb{P}V)$ from Lemma \ref{V4} with a given isomorphism $\Sym(\mathbb{P}V)\rightarrow\Sn_4$. Then $(\epsilon\circ f)(\cor{I}_V)$ consists of elements of order $2$. Now, each element $k$ of $\cor{I}_V$ can be written as $k=\F_3[i]$, with $i^2=-1$, and this suffices to show that $(\epsilon\circ f)(\cor{I}_V)$ is in fact contained in the Klein subgroup $\mathrm{V}_4$ of $\Sn_4$. The set $V_4\setminus\graffe{1}$ forms a unique conjugacy class in $\Sn_4$ and thus the elements of $\cor{I}_V$ form a unique orbit under the action by conjugation of $\Aut(V)$. Since the set $V_4\setminus\graffe{1}$ has cardinality $3$, the cardinality of $\cor{I}_V$
is also equal to $3$.
\end{proof}

\begin{lemma}\label{mezza theta}
Write $V=\F_3[i]$, with $i^2=-1$. 
Then the map $\bigwedge^2V\rightarrow\F_3i$ that is defined by 
$x\wedge y\mapsto xy^3-x^3y$ is an isomorphism of vector spaces.
\end{lemma}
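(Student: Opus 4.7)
The plan is to verify that the map $\phi:V\times V\to V$ defined by $\phi(x,y)=xy^3-x^3y$ is an alternating $\F_3$-bilinear form taking values in $\F_3 i$, and that the induced map $\bar{\phi}:\bigwedge^2 V\to\F_3 i$ is nonzero. Since $V$ is $2$-dimensional over $\F_3$, both $\bigwedge^2 V$ and $\F_3 i$ are $1$-dimensional, so a nonzero linear map between them is automatically an isomorphism.

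First I would check bilinearity. Because $V\cong\F_9$ and we are in characteristic $3$, the Frobenius $x\mapsto x^3$ is an $\F_3$-algebra automorphism of $V$, so $(x+y)^3=x^3+y^3$. Using this identity, a direct computation gives
\[\phi(x+y,z)=(x+y)z^3-(x+y)^3z=xz^3+yz^3-x^3z-y^3z=\phi(x,z)+\phi(y,z),\]
and similarly in the second variable; $\F_3$-linearity in each argument is then immediate. Alternation is trivial: $\phi(x,x)=x\cdot x^3-x^3\cdot x=0$. Hence by the universal property of the exterior square, $\phi$ factors through a unique linear map $\bar{\phi}:\bigwedge^2 V\to V$ sending $x\wedge y$ to $xy^3-x^3y$.

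Next I would show $\mathrm{im}(\bar{\phi})\subseteq\F_3 i$. Since $|V|=9$, every element of $V$ satisfies $v^9=v$, so for all $x,y\in V$,
\[\phi(x,y)^3=x^3y^9-x^9y^3=x^3y-xy^3=-\phi(x,y).\]
Writing an element $w\in V$ as $w=a+bi$ with $a,b\in\F_3$, one has $w^3=a-bi$, and the relation $w^3=-w$ forces $a=0$, i.e. $w\in\F_3 i$. Hence $\bar{\phi}$ maps into $\F_3 i$.

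Finally I would exhibit a nonzero value to conclude. Evaluating at $x=1$, $y=i$ gives
\[\phi(1,i)=1\cdot i^3-1^3\cdot i=-i-i=i\neq 0,\]
so $\bar{\phi}:\bigwedge^2 V\to\F_3 i$ is a nonzero $\F_3$-linear map between $1$-dimensional $\F_3$-vector spaces, hence an isomorphism. No serious obstacle is expected; the only subtlety is noticing that Frobenius provides both the additivity of the cubing map (used for bilinearity) and the characterization of $\F_3 i$ as its $(-1)$-eigenspace (used to pin down the image).
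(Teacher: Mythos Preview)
Your proof is correct and follows essentially the same approach as the paper's: verify that $\phi$ is alternating $\F_3$-bilinear with image in the $(-1)$-eigenspace $\F_3 i$ of Frobenius, note that it is nonzero, and conclude since both source and target are $1$-dimensional. The paper leaves the verifications as ``easy to show'', whereas you spell out the Frobenius-based computations explicitly.
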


\begin{proof}
Let $\phi:V\times V\rightarrow V$ be defined by $(x,y)\mapsto xy^3-x^3y$. It is easy to show that $\phi$ is alternating and that $\phi(V\times V)$ is contained in $\F_3i$, the eigenspace of the Frobenius homomorphism that is associated to $-1$. Moreover, the map $\phi$ is non-zero. It follows that $\phi$ induces a linear homomorphism $\phi':\bigwedge^2 V\rightarrow \F_3i$ that is non-trivial. Since both $\bigwedge^2V$ and $\F_3i$ have dimension $1$ over $\F_3$, the map $\phi'$ is an isomorphism.
\end{proof}

\begin{lemma}\label{multiplication F9}
Write $V=\F_3[i]$, with $i^2=-1$. 
Then the map $\mu:V\otimes\F_3i\rightarrow V$ that is defined by 
$x\otimes y\mapsto xy$ is an isomorphism of vector spaces.
\end{lemma}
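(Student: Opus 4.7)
The plan is to verify bilinearity of the multiplication map, deduce that $\mu$ is a well-defined linear map, and then conclude by a dimension count together with an easy surjectivity argument.

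First, I would observe that the multiplication map $V \times \F_3 i \to V$, $(x,y) \mapsto xy$, is $\F_3$-bilinear, since multiplication in the field $V \cong \F_9$ distributes over addition and commutes with scalar multiplication by $\F_3$. By the universal property of the tensor product, this induces a well-defined $\F_3$-linear map $\mu : V \otimes \F_3 i \to V$ with $\mu(x \otimes y) = xy$.

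Next, I would compute dimensions. Since $V$ is a $2$-dimensional $\F_3$-vector space and $\F_3 i$ is a $1$-dimensional subspace of $V$, the tensor product $V \otimes \F_3 i$ has $\F_3$-dimension $2 \cdot 1 = 2$, matching $\dim_{\F_3} V = 2$. Therefore it suffices to show that $\mu$ is surjective (equivalently, injective).

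For surjectivity, I would use that $i \in V^{*}$, so the multiplication-by-$i$ map $V \to V$, $v \mapsto vi$, is an $\F_3$-linear bijection of $V$. Hence, for any $v \in V$, writing $w = v \cdot i^{-1} \in V$, we have $\mu(w \otimes i) = wi = v$, which shows surjectivity. The main ``obstacle'' here is purely cosmetic: being careful that $\F_3 i$ is indeed a $1$-dimensional subspace of $V$ (so that the tensor product is computed correctly) and that $i$ acts invertibly on $V$, both of which are immediate from $i^2 = -1 \neq 0$ in $V = \F_9$. With surjectivity established and the dimensions matching, $\mu$ is an isomorphism of $\F_3$-vector spaces.
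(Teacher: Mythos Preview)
Your proof is correct and follows essentially the same approach as the paper's: bilinearity, universal property of the tensor product, surjectivity, and a dimension count. The paper's version is just terser, asserting surjectivity of the multiplication map directly rather than spelling out the $i^{-1}$ argument.
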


\begin{proof}
The map $V\times\F_3i\rightarrow V$ that is defined by $(x,y)\mapsto xy$ is a bilinear surjective map. By the universal property of tensor products, it factors as the surjective homomorphism $\mu:V\otimes\F_3i\rightarrow V$. The dimensions of $V\otimes\F_3i$ and $V$ being the same, $\mu$ is an isomorphism. 
\end{proof}

\noindent
Write $V=\F_3[i]$ with $i^2=-1$.
Define
$\theta:V\otimes\bigwedge^2V\rightarrow V\otimes\F_3i$ by 
$$\theta(a\otimes(x\wedge y))=a\otimes(xy^3-x^3y)$$ and 
note that $\theta$ is an isomorphism of vector spaces, as a consequence of Lemma \ref{mezza theta}.
Let $\mu$ be as in Lemma \ref{multiplication F9}. We keep this notation until the end of Section \ref{section strutture}.

\begin{lemma}\label{gamma27}
The map $l_V: \cor{K}_V\rightarrow \Lambda$ that is defined by $\kappa\mapsto \mu\circ\theta\circ\kappa$ is bijective.
\end{lemma}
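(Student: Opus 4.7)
The plan is to observe that $l_V$ is simply post-composition by the linear isomorphism $\Theta := \mu\circ\theta\colon V\otimes\bigwedge^2V\to V$, so bijectivity of $l_V$ will follow from a formal verification that axiom $(\mathrm{A}1)$ for $\kappa$ corresponds under $\Theta$ to axiom $(\mathrm{A}2)$ for $\lambda=\Theta\circ\kappa$.

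First I would note that $\Theta$ is an isomorphism of $\F_3$-vector spaces, being the composition of the two isomorphisms from Lemmas \ref{mezza theta} and \ref{multiplication F9}. Next, I would compute the key identity: for all $x,y\in V$,
\[
\Theta\bigl((x-y)\otimes(x\wedge y)\bigr)=\mu\bigl((x-y)\otimes(xy^3-x^3y)\bigr)=(x-y)(xy^3-x^3y),
\]
where the first equality is the definition of $\theta$ and the second is the definition of $\mu$.

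With this identity in hand, I would verify well-definedness of $l_V$: for any $\kappa\in\cor{K}_V$, the map $\lambda=\Theta\circ\kappa\colon V\to V$ is bijective (composition of bijections), and applying the $\F_3$-linear map $\Theta$ to identity $(\mathrm{A}1)$ yields
\[
\lambda(x+y)=\lambda(x)+\lambda(y)+\Theta\bigl((x-y)\otimes(x\wedge y)\bigr)=\lambda(x)+\lambda(y)+(x-y)(xy^3-x^3y),
\]
so $\lambda\in\Lambda$. Running the same argument in reverse, with $\Theta^{-1}$ in place of $\Theta$, shows that $\lambda\mapsto\Theta^{-1}\circ\lambda$ sends $\Lambda$ into $\cor{K}_V$ (linearity of $\Theta^{-1}$ converts $(\mathrm{A}2)$ back into $(\mathrm{A}1)$, and bijectivity is preserved). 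These two assignments are mutually inverse by construction, so $l_V$ is a bijection.

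There is no real obstacle here; the lemma is a formal transport of structure along the isomorphism $\Theta$. The only point requiring care is making sure the tensor-to-product identity above is written out correctly, so that the right-hand term in $(\mathrm{A}1)$ is mapped exactly onto the right-hand term in $(\mathrm{A}2)$ and no stray scalar is introduced.
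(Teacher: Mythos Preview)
Your proof is correct and follows essentially the same approach as the paper: both verify that post-composition with the linear isomorphism $\mu\circ\theta$ carries axiom $(\mathrm{A}1)$ to axiom $(\mathrm{A}2)$ via the identity $\mu\circ\theta\bigl((x-y)\otimes(x\wedge y)\bigr)=(x-y)(xy^3-x^3y)$, and conclude bijectivity from the invertibility of $\mu\circ\theta$. The paper's write-up is slightly terser (it just says ``$l_V$ is bijective, because $\mu$ and $\theta$ are bijective'') while you spell out the inverse map explicitly, but the substance is identical.
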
 

\begin{proof}
Let $\kappa$ be an element of $\cor{K}_V$. Then $l_V(\kappa)$ is bijective, because it is the composition of bijective maps, and, for each $x,y\in V$, one has
\begin{align*}
l_V(\kappa)(x+y) & = \mu\circ\theta\circ\kappa(x+y)\\
             & = \mu\circ\theta(\kappa(x)+\kappa(y)+(x-y)\otimes(x\wedge y))\\
             & = l_V(\kappa)(x)+l_V(\kappa)(y)+\mu\circ\theta((x-y)\otimes(x\wedge y))\\
             & = l_V(\kappa)(x)+l_V(\kappa)(y)+(x-y)(xy^3-x^3y).
\end{align*}
We have proven that $l_V(\kappa)$ belongs to $\Lambda$ and so $l_V$ is well-defined. Moreover, $l_V$ is bijective, because $\mu$ and $\theta$ are bijective.
\end{proof}

\begin{lemma}\label{sigma27}
Let $l_V$ be as in Lemma \ref{gamma27}.
Then $\sigma_V=l_V\circ s_V$ and $s_V$ is well-defined.
\end{lemma}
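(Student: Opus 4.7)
The plan is to verify directly that the formula defining $s_V(k)$ gives a well-defined function $V \to V\otimes\bigwedge^2V$ (independent of the choice of square root of $-1$ generating $k$), then to check the identity $\sigma_V = l_V \circ s_V$ by a one-line computation, and finally to derive membership of $s_V(k)$ in $\cor{K}_V$ from Lemma \ref{gamma27}. I expect no serious obstacle: the statement is essentially formal and amounts to carefully unwinding the definitions of $\theta$, $\mu$, $l_V$, $\sigma_V$, and $s_V$.

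First I would check independence of the chosen square root. The field $k$ contains exactly two elements whose square is $-1$, namely $i$ and $-i$; for any $x\in V$, bilinearity of $\otimes$ and alternation of $\wedge$ give
\[
(-i)x\otimes((-i)x\wedge x) = -(ix)\otimes(-(ix\wedge x)) = ix\otimes(ix\wedge x),
\]
so the prescription $x \mapsto ix\otimes(ix\wedge x)$ depends only on $k$, not on the choice of $i$. Hence $s_V$ gives a well-defined function from $\cor{I}_V$ to the set of maps $V\to V\otimes\bigwedge^2V$.

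Next I would unwind the definitions of $\theta$ and $\mu$ to compute, for each $k=\F_3[i]\in\cor{I}_V$ and each $x\in V$,
\[
(l_V\circ s_V)(k)(x) = \mu\bigl(\theta\bigl(ix\otimes(ix\wedge x)\bigr)\bigr) = \mu\bigl(ix\otimes((ix)x^3-(ix)^3x)\bigr) = ix\bigl((ix)x^3-(ix)^3x\bigr),
\]
which is by definition $\sigma_V(k)(x)$. Thus $l_V\circ s_V = \sigma_V$ as maps from $\cor{I}_V$ to the set of self-maps of $V$.

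Finally, to conclude that $s_V$ actually lands in $\cor{K}_V$, I would argue as follows. By Lemmas \ref{mezza theta} and \ref{multiplication F9} the map $\mu\circ\theta:V\otimes\bigwedge^2V\to V$ is an isomorphism of $\F_3$-vector spaces; in particular, postcomposition with $\mu\circ\theta$ is an injection on the set of all functions $V\to V\otimes\bigwedge^2V$. Since $\sigma_V(k)\in\Lambda$ and $l_V:\cor{K}_V\to\Lambda$ is a bijection by Lemma \ref{gamma27}, there exists a unique $f\in\cor{K}_V$ with $\mu\circ\theta\circ f = \sigma_V(k)$. The identity established in the previous paragraph exhibits $s_V(k)$ as a function with the same property, and injectivity of postcomposition with $\mu\circ\theta$ forces $s_V(k)=f\in\cor{K}_V$. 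Therefore $s_V$ is well-defined as a map $\cor{I}_V\to\cor{K}_V$, and the identity $\sigma_V=l_V\circ s_V$ holds.
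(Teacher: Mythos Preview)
Your proof is correct and follows essentially the same approach as the paper: compute $\mu\circ\theta\circ s_V(k)$ directly and observe it equals $\sigma_V(k)$, then conclude well-definedness from the bijectivity of $l_V$. Your version is more careful in two respects---you explicitly verify independence of the choice of square root of $-1$, and you spell out precisely why $s_V(k)\in\cor{K}_V$ via injectivity of postcomposition with $\mu\circ\theta$---whereas the paper's proof compresses this into the phrase ``as a consequence, the map $s_V$ is well-defined.''
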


\begin{proof}
Let $k=\F_3[i]$ be an element of $\cor{I}_V$. Let moreover $\kappa$ and $\lambda$ respectively denote $s_V(k)$ and $\sigma_V(k)$. 
Then one has 
\begin{align*}
l_V(\kappa)(x) & = \mu\circ\theta(ix\otimes ix\wedge x) \\
				  & = \mu(ix\otimes ((ix)x^3-(ix)^3x)) \\
				  & = ix((ix)x^3-(ix)^3x) \\
				  & = \lambda(x)
\end{align*}
and so, the choices of $k$ and $x$ being arbitrary, $\sigma_V=l_V\circ s_V$.
As a consequence, the map $s_V$ is well-defined.
\end{proof}

\begin{lemma}\label{injective sV}
The map $s_V$ is injective. 
\end{lemma}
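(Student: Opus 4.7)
The plan is to argue directly from the defining formula $\kappa(x)=ix\otimes(ix\wedge x)$, extracting $i$ up to sign from a single value of $\kappa$ and then passing from a pointwise statement to a global one.

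First I would fix $k=\F_3[i]$ and $k'=\F_3[i']$ in $\cor{I}_V$ with $s_V(k)=s_V(k')$. A preliminary observation is that, for every nonzero $x\in V$, the vectors $x$ and $ix$ are linearly independent: otherwise $i$ would preserve $\F_3 x$ and act on it by some scalar $\lambda\in\F_3$, but then $\lambda^2=-1$ in $\F_3$, contradiction; the same applies to $i'$. In particular $ix\wedge x$ and $i'x\wedge x$ are nonzero elements of the one-dimensional space $\bigwedge^2 V$.

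Next I would fix a nonzero $x\in V$, extend to a basis $\{x,y\}$ of $V$, and write $ix=ax+by$, $i'x=a'x+b'y$ with $b,b'\in\F_3^{*}$. Using the basis $\{x\otimes x\wedge y,\,y\otimes x\wedge y\}$ of $V\otimes\bigwedge^2 V$, the equation $s_V(k)(x)=s_V(k')(x)$ reads
\[
-ab\,(x\otimes x\wedge y)-b^2\,(y\otimes x\wedge y) \;=\; -a'b'\,(x\otimes x\wedge y)-(b')^2\,(y\otimes x\wedge y),
\]
giving $b^2=(b')^2$ and $ab=a'b'$. Since every nonzero square in $\F_3$ equals $1$, a two-line case analysis on $b'\in\{b,-b\}$ forces $(a',b')=\pm(a,b)$, equivalently $i'x\in\{ix,-ix\}$.

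Finally I would show that the resulting sign is independent of $x$, so that $i'=\pm i$ globally and hence $k'=\F_3[i']=\F_3[i]=k$. Define $\epsilon\colon V\setminus\{0\}\to\{\pm 1\}$ by $i'x=\epsilon(x)\,ix$. Linearly dependent nonzero vectors share the same $\epsilon$-value by $\F_3$-linearity of $i$ and $i'$; for linearly independent $x,y$, the relation $i'(x+y)=i'x+i'y$ combined with $\epsilon(x+y)\cdot i(x+y)=\epsilon(x+y)(ix+iy)$ and the linear independence of $ix,iy$ rules out $\epsilon(x)\neq\epsilon(y)$. The only real obstacle is this last sign-constancy argument, but it is short and uses nothing beyond $-1\neq 1$ in $\F_3$; once it is in place, injectivity of $s_V$ follows at once.
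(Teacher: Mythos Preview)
Your proof is correct and follows the same two-step skeleton as the paper: first show that $i'x=\pm ix$ for every nonzero $x$, then show the sign is globally constant, whence $k'=\F_3[\pm i]=\F_3[i]=k$.

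The implementations of each step differ slightly. For the pointwise statement, the paper avoids coordinates: from the fact that $\{x,ix\}$ and $\{x,i'x\}$ are both bases it writes $ix\equiv\omega_x\,i'x\bmod\F_3x$ with $\omega_x\in\{\pm1\}$, so that $ix\wedge x=\omega_x(i'x\wedge x)$, and then reads off $i'x=\omega_x ix$ directly from the equality in $V\otimes\bigwedge^2V$ (using that $\bigwedge^2V$ is one-dimensional). Your coordinate computation achieves the same conclusion with a short case split on $b'=\pm b$. For the constancy of the sign, the paper observes that $V=\{x:ix=i'x\}\cup\{x:ix=-i'x\}$ is a union of two subgroups and invokes the standard fact that a group is never the union of two proper subgroups; your argument via $i'(x+y)=i'x+i'y$ and the linear independence of $ix,iy$ is the hands-on version of the same idea. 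Both routes are short and fully rigorous; the paper's is a bit more coordinate-free, yours a bit more explicit.
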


\begin{proof}
Let $k$ and $k'$ be elements of $\cor{I}_V$ and let $i,j\in\End(V)$ be such that $k=\F_3[i]$, $k'=\F_3[j]$, and $i^2=j^2=-1$. Assume moreover that 
$s_V(k)=s_V(k')$. For each $x\in V$, we have
$\F_3x+\F_3ix=V=\F_3x+\F_3jx$ and therefore there exists $\omega_x\in\graffe{\pm 1}$ such that $ix\equiv \omega_x jx\bmod\F_3x$. 
For each $x\in V$, it then follows that
\[
jx\otimes(jx\wedge x)=ix\otimes(ix\wedge x)=ix\otimes((\omega_xjx)\wedge x)=
\omega_x ix\otimes(jx\wedge x)
\]
and, $\mu\circ\theta$ being bijective, the elements $jx$ and $\omega_x ix$ are the same.
The choice of $x$ being arbitrary, we get
$$V=\graffe{x\in V : ix=jx}\cup\graffe{x\in V : ix=-jx}$$
and so, $V$ being equal to the union of two subgroups, either $i=j$ or $i=-j$.
In either case, $i$ and $j$ are linearly dependent over $\F_3$ and so $k=k'$. 
\end{proof}

\begin{lemma}\label{endomorph F9}
Let $\phi$ be an $\F_3$-linear endomorphism of $V$. Then there exist unique $a,b\in V$ such that, for each $x\in V$, one has $\phi(x)=ax^3+bx$. 
\end{lemma}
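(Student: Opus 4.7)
The plan is to exploit the field structure on $V \cong \F_9$ and reduce the statement to a dimension count together with an injectivity check.

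First I would verify that every map of the form $\phi_{a,b}: x \mapsto ax^3 + bx$ is $\F_3$-linear. Additivity is immediate from the fact that the Frobenius $x\mapsto x^3$ is additive in characteristic $3$, and compatibility with scalars in $\F_3$ follows from $\lambda^3 = \lambda$ for $\lambda\in\F_3$. This gives a well-defined map
\[
\Psi: V\times V \longrightarrow \End_{\F_3}(V), \qquad (a,b)\longmapsto \phi_{a,b},
\]
which is clearly $\F_3$-linear in $(a,b)$.

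Next I would compare cardinalities. Both $V\times V$ and $\End_{\F_3}(V)$ have cardinality $3^4 = 81$, so it suffices to show that $\Psi$ is injective, or equivalently that $\ker\Psi = 0$. Suppose $\phi_{a,b} = 0$, so that $ax^3 + bx = 0$ for every $x\in V$. Evaluating at $x=1$ yields $a+b = 0$, i.e., $b = -a$. Picking any element $x_0 \in V\setminus \F_3$ (which exists since $V$ has dimension $2$ over $\F_3$) and factoring out $x_0$, we obtain $a x_0^2 + b = 0$, hence $a(x_0^2 - 1) = 0$. Since $x_0\notin \F_3$, we have $x_0^2 \neq 1$ in the field $V$, so $a = 0$ and then $b = 0$. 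Thus $\Psi$ is injective and, by the counting argument, bijective.

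The only subtle point, and the part I would be most careful about, is making sure that the field structure on $V$ is genuinely at our disposal: $V$ was declared a $2$-dimensional $\F_3$-vector space and then equipped with a field structure via an $\F_3$-linear identification with $\F_9$, so multiplication and cubing are $\F_3$-linear operations on $V$ and all the manipulations above are legitimate. Given this, existence and uniqueness of the pair $(a,b)$ follow at once from the bijectivity of $\Psi$.
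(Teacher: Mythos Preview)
Your proof is correct and follows essentially the same approach as the paper: define the map $(a,b)\mapsto\phi_{a,b}$, observe that $|V\times V|=|\End_{\F_3}(V)|=81$, and conclude bijectivity. The paper's version is terser and does not spell out the injectivity step, whereas you verify it explicitly by evaluating at $1$ and at an element outside $\F_3$; your argument is thus a more detailed execution of the same idea.
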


\begin{proof}
The characteristic of $V$ being $3$, for each pair $(a,b)$ in $V^2$, the map $x\mapsto ax^3+bx$ is an $\F_3$-linear endomorphism of $V$. 
The order of $\End(V)$ being equal to the order of $V^2$, it follows that each element $\psi$ of $\End(V)$ is of the form $x\mapsto ax^3+bx$, where $a,b\in V$ are uniquely determined by $\psi$. In particular, this holds for $\phi$. 
\end{proof}

\begin{lemma}\label{shape of lambda}
Let $\lambda\in\Lambda$. Then there exist $a,b\in V$ such that, for each $x\in V$, one has $\lambda(x)=x^5+ax^3+bx$.
\end{lemma}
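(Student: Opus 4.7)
The plan is to subtract off the known element $x \mapsto x^5$ of $\Lambda$ (provided by Lemma \ref{x5}) and reduce to Lemma \ref{endomorph F9}. Concretely, I would define $\mu : V \to V$ by $\mu(x) = \lambda(x) - x^5$ and show that $\mu$ is $\F_3$-linear; then Lemma \ref{endomorph F9} immediately gives the desired form.

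To see that $\mu$ is additive, apply the defining identity (A2) to both $\lambda$ and the map $x \mapsto x^5 \in \Lambda$. The twisting term $(x-y)(xy^3 - x^3y)$ is the same for both and cancels in the subtraction, yielding
\[
\mu(x+y) = \lambda(x+y) - (x+y)^5 = \bigl(\lambda(x) + \lambda(y)\bigr) - \bigl(x^5 + y^5\bigr) = \mu(x) + \mu(y)
\]
for all $x, y \in V$. Since $V$ is a vector space over $\F_3$, any additive map $V \to V$ is automatically $\F_3$-linear, so $\mu \in \End(V)$.

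By Lemma \ref{endomorph F9}, there exist unique $a, b \in V$ such that $\mu(x) = a x^3 + b x$ for every $x \in V$. Adding $x^5$ back yields $\lambda(x) = x^5 + a x^3 + b x$, which is exactly the claim. There is no real obstacle in this argument: the only point to verify is the cancellation of the twisting term in the display above, which is automatic from $\sigma : x \mapsto x^5$ belonging to $\Lambda$.
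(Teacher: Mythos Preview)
Your proof is correct and is essentially the same as the paper's: the paper simply observes that, by (A2), the difference of any two elements of $\Lambda$ lies in $\End(V)$, then subtracts $x\mapsto x^5$ (from Lemma~\ref{x5}) and applies Lemma~\ref{endomorph F9}. You have spelled out this argument in slightly more detail, but the approach is identical.
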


\begin{proof}
%Assume that $V=\F_9$.
%As a consequence of Lemma \ref{addition lambda}, for each $x,y\in V$, we have
%\[\lambda(x+y)=\lambda(x)+\lambda(y)+(x-y)(xy^3-x^3y).\]
%On the other hand, it is easy to see that, if $\eta$ is a function $V\rightarrow V$ with the property that, for each $x,y\in V$, one has
%$$\eta(x+y)=\eta(x)+\eta(y)+(x-y)(xy^3-x^3y)$$
Because of $(\mathrm{A}2)$, the difference of any two elements of $\Lambda$ belongs to $\End(V)$, so, thanks to Lemma \ref{x5}, we have 
$\lambda\in (x\mapsto x^5)+\End(V)$. It now follows from Lemma \ref{endomorph F9} that there exist $a,b\in V$ such that, for each $x\in V$, we have 
$\lambda(x)=x^5+ax^3+bx$.
\end{proof}

\begin{lemma}\label{maldischiena}
Let $m$ be a positive integer and let $q$ be a prime power. Then 
\[\sum_{x\in\F_q}x^m=
\left\{
\begin{array}{ll}
      -1 & \text{when} \ \ (q-1)|m \\    
      0  & \text{otherwise} \\
\end{array} 
\right
.\]
\end{lemma}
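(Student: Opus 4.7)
The plan is to exploit the fact that $\F_q^*$ is cyclic of order $q-1$ and split the sum according to whether $x=0$ or $x\in\F_q^*$. Since $m\geq 1$, the $x=0$ term vanishes, so it suffices to compute $S=\sum_{x\in\F_q^*}x^m$.

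First I would fix a generator $g$ of $\F_q^*$, so that as $x$ ranges over $\F_q^*$, the element $x$ can be written uniquely as $g^k$ with $k\in\{0,1,\ldots,q-2\}$. Rewriting the sum in terms of $g$ yields
\[
S=\sum_{k=0}^{q-2}g^{km}=\sum_{k=0}^{q-2}(g^m)^k.
\]
At this point the problem reduces to a geometric series argument in the field $\F_q$, and the two cases of the statement correspond precisely to whether the base $g^m$ of this geometric series equals $1$ or not.

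In the case $(q-1)\mid m$, one has $g^m=1$, so every summand equals $1$ and $S=q-1$, which is $-1$ in $\F_q$ (since $q=0$ in $\F_q$). In the remaining case $(q-1)\nmid m$, the element $g^m$ is different from $1$, hence $g^m-1$ is a unit of $\F_q$, and one can apply the standard geometric series identity to get
\[
S=\frac{(g^m)^{q-1}-1}{g^m-1}=\frac{(g^{q-1})^m-1}{g^m-1}=\frac{1-1}{g^m-1}=0,
\]
using $g^{q-1}=1$ in the numerator. I do not anticipate any real obstacle here; the only mild point of care is to observe that $g^m\neq 1$ precisely when the order $q-1$ of $g$ does not divide $m$, which is immediate from the fact that $g$ has order exactly $q-1$.
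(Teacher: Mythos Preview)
Your proof is correct and is the standard argument. The paper does not give its own proof of this lemma; it simply cites it as Lemma~2.5.1 from Cohen's book, so there is nothing to compare beyond noting that your direct argument via a generator of $\F_q^*$ and the geometric series identity is exactly what one expects.
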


\begin{proof}
This is Lemma $2.5.1$ from \cite{cohen}.
\end{proof}

\begin{lemma}\label{shapino lambda}
Let $\lambda\in\Lambda$. Then there exists $b\in V$ such that, for each $x\in V$, one has $\lambda(x)=x^5+bx$.
\end{lemma}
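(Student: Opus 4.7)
From Lemma \ref{shape of lambda}, there exist $a, b \in V$ such that $\lambda(x) = x^5 + ax^3 + bx$ for every $x \in V$. The goal is to show that the hypothesis $\lambda \in \Lambda$ (in particular, bijectivity of $\lambda$) forces $a = 0$; the value of $b$ will be left unconstrained by this lemma and will be pinned down later in the global count of $|\Lambda|$.

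The plan is to compare the power sum $\sum_{x \in V} \lambda(x)^2$ computed in two different ways. Since $\lambda$ is a bijection of the finite set $V = \F_9$, the substitution $y = \lambda(x)$ yields
\[
\sum_{x \in V} \lambda(x)^2 \;=\; \sum_{y \in V} y^2,
\]
and Lemma \ref{maldischiena}, applied with $q = 9$ (so that the sum $\sum_{x \in \F_9} x^m$ for $m \geq 1$ vanishes unless $8 \mid m$), gives $\sum_{y \in V} y^2 = 0$.

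The other evaluation of $\sum_x \lambda(x)^2$ is a direct expansion. Working in characteristic $3$ (so that $2 = -1$), one has
\[
\lambda(x)^2 = x^{10} - a x^{8} + (a^2 - b) x^{6} - ab\, x^{4} + b^2 x^2 .
\]
Summing over $x \in \F_9$ and invoking Lemma \ref{maldischiena} again, every term on the right contributes $0$ except $-a x^8$, whose sum is $-a \cdot (-1) = a$; indeed, among the exponents $\{2,4,6,8,10\}$ only $8$ is divisible by $q - 1 = 8$. Therefore $\sum_{x \in V} \lambda(x)^2 = a$, and comparing with the first computation yields $a = 0$, which is the desired conclusion.

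The only step that requires genuine care is the choice of exponent: one has to pick a power of $\lambda$ whose expansion produces a single term with exponent in the arithmetic progression $\{8, 16, \dots\}$, so that exactly one coefficient survives Lemma \ref{maldischiena}. Squaring is the smallest and cleanest choice, since it produces $x^{10}, x^8, x^6, x^4, x^2$ and isolates the coefficient $-a$ of $x^8$ as the unique non-vanishing contribution; no further casework or appeal to the explicit structure of $\F_9$ is needed.
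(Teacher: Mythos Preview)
Your proof is correct and follows essentially the same approach as the paper: both start from Lemma~\ref{shape of lambda}, compute $\sum_{x\in V}\lambda(x)^2$ using bijectivity of $\lambda$ together with Lemma~\ref{maldischiena}, and expand to isolate the coefficient of $x^8$, forcing $a=0$. The only cosmetic difference is that the paper leaves the cross term as $2ax^8$ and concludes $-2a=0$, while you simplify $2=-1$ first and obtain $a=0$ directly.
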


\begin{proof}
Let $a,b\in V$ be as in Lemma \ref{shape of lambda}. By definition of $\Lambda$, the map $\lambda$ is bijective so each element of $V$ belongs to the image of $\lambda$. With $x$ replaced by $\lambda(x)$, Lemma \ref{maldischiena} yields
\[
0=\sum_{x\in V}\lambda(x)^2 = \sum_{x\in V}(x^5+ax^3+bx)^2 
                              = \sum_{x\in V}2ax^8 = -2a.
\]
It follows that $a=0$ and therefore, for each $x\in V$, one has 
$\lambda(x)=x^5+bx$.
\end{proof}

\begin{lemma}\label{cardinality lambda}
The cardinality of $\Lambda$ is at most $3$. 
\end{lemma}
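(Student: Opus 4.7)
The plan is to refine Lemma \ref{shapino lambda} by extracting an additional constraint on the parameter $b$. By that lemma, every $\lambda \in \Lambda$ has the form $\lambda(x) = x^5 + bx$ for some $b \in V$, and such $b$ is uniquely determined by $\lambda$ (for instance because $\lambda(1) = 1 + b$, after identifying $V$ with $\F_9$ as in the discussion preceding Lemma \ref{x5}). So the assignment $\lambda \mapsto b_\lambda$ embeds $\Lambda$ into $V$, and the task reduces to showing that at most three values of $b$ yield a bijection.

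To isolate these values, I would exploit that, since $\lambda$ is a bijection of $V$, one has $\sum_{x \in V} \lambda(x)^k = \sum_{x \in V} x^k$ for every positive integer $k$, and invoke Lemma \ref{maldischiena} with $q = 9$: the right-hand side vanishes unless $8 \mid k$, in which case it equals $-1$. For $k \in \{1,2,3\}$, the identity collapses to $0 = 0$ because the relevant exponents appearing after expansion of $(x^5+bx)^k$ are never divisible by $8$ and because in characteristic $3$ every surviving binomial coefficient is $1$. The first informative choice is $k = 4$.

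For that value, I would expand, using $\binom{4}{2} = 6 \equiv 0 \pmod{3}$,
\[
(x^5+bx)^4 = x^{20} + b\,x^{16} + b^{3}\,x^{8} + b^{4}\,x^{4}.
\]
Summing over $x \in V$ and applying Lemma \ref{maldischiena} exponent by exponent (only $8$ and $16$ are divisible by $8$) gives
\[
\sum_{x \in V}\lambda(x)^4 = -b - b^{3}.
\]
Equating this to $\sum_{x \in V} x^4 = 0$ yields the constraint $b(1+b^{2}) = 0$, whose solutions in $V \cong \F_9$ are $0$ and the two square roots of $-1$. Hence $|\Lambda| \leq 3$, as required.

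The main obstacle is purely a matter of picking the correct test exponent: the characteristic-$3$ arithmetic conspires with the low-order moments to trivialize the constraints coming from $k \leq 3$, and one must push to $k = 4$ while leaning on the vanishing of $\binom{4}{2}$ modulo $3$ to keep the expansion manageable. Everything else is a direct calculation once this choice is made.
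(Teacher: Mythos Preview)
Your proof is correct and essentially identical to the paper's: both invoke Lemma \ref{shapino lambda} to write $\lambda(x)=x^5+bx$, then use bijectivity of $\lambda$ together with Lemma \ref{maldischiena} at the exponent $k=4$ to extract the constraint $b(1+b^2)=0$. The only cosmetic difference is that the paper expands $(x^5+bx)^4$ by squaring twice, whereas you expand via the binomial theorem noting $\binom{4}{2}\equiv 0\pmod 3$; the resulting polynomial and conclusion are the same.
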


\begin{proof}
Let $\lambda\in\Lambda$ and let $b\in V$ be as in Lemma \ref{shapino lambda}. 
The map $\lambda$ is bijective and so, with $x$ replaced by $\lambda(x)$, Lemma \ref{maldischiena} gives
\begin{align*}
0=\sum_{x\in V}\lambda(x)^4 & = \sum_{x\in V}(x^5+bx)^4 \\
                              & = \sum_{x\in V}(x^{10}-bx^6+b^2x^2)^2 \\
                              & = \sum_{x\in V}(bx^{16}+b^3x^8) \\
                              & = -b(1+b^2).
\end{align*}
It follows that there are at most $3$ choices for $b$ in $V$ and thus $\Lambda$ has cardinality at most $3$.
\end{proof}

\noindent
We conclude Section \ref{section strutture} by giving the proof of Proposition \ref{proposition sV}. The function $s_V:\cor{I}_V\rightarrow\cor{K}_V$ is injective by Lemma \ref{injective sV} and, by Lemma \ref{cardinality IV}, the cardinality of $\cor{I}_V$ is equal to $3$. It follows that $\cor{K}_{V}$ has at least $3$ elements. 
Now, as a consequence of Lemma \ref{gamma27}, the set $\Lambda$ has the same cardinality as $\cor{K}_V$ and thus, as a consequence of Lemma \ref{cardinality lambda}, the cardinality of $\cor{K}_V$ is equal to $3$. From its injectivity, it now follows that $s_V$ is bijective. The proof of Proposition \ref{proposition sV} is complete.

\section{Structures and free groups}\label{section strutture free}

We recall that a $\kappa$-group is a finite $3$-group $G$ such that $|G:G_2|=9$ and such that the cubing map on $G$ induces a bijection $G/G_2\rightarrow G_3/G_4$. In the present section, we consider $\kappa$-groups of class $3$ and we prove the following main result.

\begin{proposition}\label{proposition yo kappa}
Let $G$ be a $\kappa$-group of class $3$. Then $G$ is isomorphic to $\yo/\yo_4$.
\end{proposition}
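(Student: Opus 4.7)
The plan is to realise every $\kappa$-group of class $3$ as a quotient of a fixed free group on two generators, and then show that there is essentially one such quotient. Let $F$ be the free group on $\{a,b\}$, set $F_1=F$ and $F_{i+1}=[F,F_i]F_i^3$, and write $V=F/F_2$, $L=F_3F^3$, and $\overline{F}=F/([F,L]F_2^3)$, with bar-notation for subgroups of $\overline{F}$. The first step is to verify that the cubing map $F\to L$ induces a well-defined map $c\colon V\to\overline{L}$ (using that $\overline{F}$ has class at most $3$ and $\overline{F_2}$ has exponent $3$, together with the cubing identity from Lemma~\ref{formula cubing}), and that $\overline{L}$ is abelian and contains $\overline{F_3}$.

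The heart of the argument is to set up, and then count, the set
\[
\cor{N}_3=\{\,N\trianglelefteq F\ :\ F/N\ \text{is a $\kappa$-group of class}\ 3\,\}.
\]
Given $G$ as in the statement, choosing generators for $G$ yields a surjection $F\to G$ whose kernel lies in $\cor{N}_3$; conversely every $F/N$ with $N\in\cor{N}_3$ is such a $G$. Next I would produce bijections
\[
\cor{I}_V\ \longleftrightarrow\ \cor{K}_V\ \longleftrightarrow\ \cor{P}\ \longleftrightarrow\ \cor{N}_3,
\]
using the sets defined in the overview of the chapter. The bijection $\cor{I}_V\leftrightarrow\cor{K}_V$ is exactly Proposition~\ref{proposition sV}, so in particular $|\cor{K}_V|=3$. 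The bijection $\cor{K}_V\leftrightarrow\cor{P}$ uses that $\overline{L}=\overline{F_3}\oplus\ker\pi$ for each $\pi\in\cor{P}$ (since $\pi_{|\overline{F_3}}=\mathrm{id}$ and $\overline{L}$ is abelian), so $\pi\circ c$ determines, and is determined by, a map $V\to\overline{F_3}$; one then checks that this map is a $\kappa$-structure, the defining identity ($\mathrm{A}1$) being a direct transcription of Lemma~\ref{formula cubing} applied in $\overline{F}$. The bijection $\cor{P}\leftrightarrow\cor{N}_3$ sends $\pi$ to the preimage in $F$ of $\ker\pi$ under $F\to\overline{F}$: the bijectivity of $\pi\circ c$ translates exactly into the defining property of a $\kappa$-group of class~$3$, and conversely every $N\in\cor{N}_3$ pushes forward to such a $\pi$ because $G_2^3=1$ and $G^3=G_3$ force $N$ to contain $[F,L]F_2^3$ (Lemmas~\ref{class 3 G2 elem abelian} and~\ref{3-group 3-power class 3}).

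Each of these bijections is $\Aut(F)$-equivariant, the action of $\Aut(F)$ on $V$ factoring through $\GL_2(\mathbb{F}_3)=\Aut(V)$. By Lemma~\ref{cardinality IV} this action is transitive on $\cor{I}_V$, hence transitive on $\cor{N}_3$. Lemmas~\ref{yo class} and~\ref{yo kappa} tell us that $\yo$ is a $\kappa$-group of class~$4$, so $\yo/\yo_4$ is a $\kappa$-group of class~$3$; picking generators realises it as $F/N_0$ for some $N_0\in\cor{N}_3$. For an arbitrary $G$ as in the proposition, realise it as $F/N$ and pick $\varphi\in\Aut(F)$ with $\varphi(N)=N_0$; then $\varphi$ descends to an isomorphism $G=F/N\to F/N_0\cong\yo/\yo_4$.

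The main obstacle is the middle bijection $\cor{K}_V\leftrightarrow\cor{P}$, because it requires understanding $\overline{L}$ precisely: one has to show that $\overline{L}=\overline{F_3}\oplus c(V)$ as $\mathbb{F}_3$-vector spaces (so that $\pi\in\cor{P}$ is determined by $\pi\circ c$ and any candidate $\kappa$-structure arises this way), and, most delicately, that the defining identity ($\mathrm{A}1$) for a $\kappa$-structure is exactly the commutator identity which holds between cubes in $\overline{F}$. Once this identification is in place, Proposition~\ref{proposition sV} does the enumeration for free, and transitivity of $\GL_2(\mathbb{F}_3)$ on $\cor{I}_V$ finishes the proof.
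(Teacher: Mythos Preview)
Your proposal is correct and follows essentially the same route as the paper: realise $\kappa$-groups of class $3$ as quotients $F/N$ of the free group on two generators, set up the chain of $\Aut(F)$-equivariant bijections $\cor{I}_V\leftrightarrow\cor{K}_V\leftrightarrow\cor{P}\leftrightarrow\cor{N}_3$ (the paper's Propositions~\ref{proposition sV}, \ref{proposition tV}, and Lemma~\ref{pi3}), invoke transitivity of $\Aut(V)$ on $\cor{I}_V$ (Lemma~\ref{cardinality IV}), and conclude via the existence of $\yo/\yo_4$ in $\cor{N}_3$ (Lemma~\ref{n3}).

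One small caution on your phrasing of the ``main obstacle'': the decomposition $\overline{L}=\overline{F_3}\oplus c(V)$ is not a direct sum of $\F_3$-vector spaces, since $c$ is not linear and $c(V)$ is not a subspace. What you actually need (and what the paper proves via the general splitting Lemma~\ref{youusedto}) is that $c$ is a set-theoretic section of the projection $\overline{L}\to\overline{L}/\overline{F_3}\cong V$, so that elements of $\cor{P}$ are in bijection with maps $V\to\overline{F_3}$ satisfying the appropriate cocycle-type identity---which is precisely~$(\mathrm{A}1)$ after identifying $\overline{F_3}$ with $V\otimes\bigwedge^2 V$ via the commutator map $\gamma$ of Lemma~\ref{gammino3}.
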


\noindent
As a consequence of Lemma \ref{frattini comm}($2$), each $\kappa$-group is $2$-generated. Our strategy, for proving Proposition \ref{proposition yo kappa}, will be that of constructing all $\kappa$-groups of class $3$ as quotients of a free group.
To this end, the following assumptions will be valid until the end of Section \ref{section strutture free}.
Let $F$ be the free group on two generators and let $(F_i)_{i\geq 1}$ denote the lower $3$-series of $F$, which we recall from Section \ref{section construction} to be defined by
\[
F_1=F \ \ \text{and} \ \ F_{i+1}=[F,F_i]F_i^3.
\] 
We remark that the notation we use for the lower $3$-series is not concordant with our usual notation (see Exceptions in List of Symbols). 
We denote 
$$V=F/F_2,\ L=F_3F^3, \ \ \text{and} \ \ E=[F,L]F_2^3.$$
The group $V$ is a vector space of dimension $2$ over $\F_3$, by construction, so we let $\cor{K}_V$ be defined as in Section \ref{section strutture}.
We write moreover $\overline{F}=F/E$ and we use the bar notation for the subsets of $\overline{F}$. 
We define additionally $\cor{N}_3$ to be the collection of normal subgroups $N$ of $F$ with the property that $F/N$ is a $\kappa$-group of class $3$.

\begin{lemma}\label{LsuF3dim2}
The map $c_3:F\rightarrow L/F_3$, defined by $x\mapsto x^3F_3$, is surjective.
Moreover, $c_3$ induces an isomorphism $V\rightarrow L/F_3$ and $|L:F_3|=9$.
\end{lemma}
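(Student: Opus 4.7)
The plan is to first check that $c_3$ descends to a well-defined map from $V$, then to show this induced map is a surjective homomorphism, and finally to pin down the order via an explicit abelian quotient. First I would invoke Corollary \ref{p map petrescu} with $p=3$: for all $x,y\in F$ one has $(xy)^3\equiv x^3y^3\bmod \gamma_2(F)^3\,\gamma_3(F)$, where $(\gamma_i(F))_{i\geq 1}$ is the (ordinary) lower central series. Since $\gamma_2(F)\subseteq F_2$ and $\gamma_3(F)=[F,\gamma_2(F)]\subseteq[F,F_2]\subseteq F_3$, both factors of the error term are contained in $F_2^3\cdot F_3\subseteq F_3$. Hence $(xy)^3\equiv x^3y^3\bmod F_3$ for all $x,y\in F$, and taking $y\in F_2$ gives $y^3\in F_2^3\subseteq F_3$, so $(xy)^3\equiv x^3\bmod F_3$. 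This shows that $c_3$ factors through $F/F_2=V$.

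Next I would observe that $L/F_3$ is contained in $F_2/F_3$, which is central in $F/F_3$ by the very definition $F_3=[F,F_2]F_2^3$; in particular $L/F_3$ is abelian. Combined with the congruence above, this makes the induced map $\bar c_3\colon V\to L/F_3$ a homomorphism of groups. For surjectivity, note that $L/F_3=F^3F_3/F_3$ is, by construction, generated by the cubes $x^3F_3$. Since the image of the homomorphism $\bar c_3$ is a subgroup of $L/F_3$ containing every such cube, it must equal $L/F_3$. This settles surjectivity of both $\bar c_3$ and $c_3$.

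It remains to compute $|L:F_3|$. The upper bound $|L:F_3|\leq|V|=9$ is immediate from surjectivity of $\bar c_3$. For the matching lower bound I would use the abelian group $G=(\Z/9\Z)^2$ and a surjection $\phi\colon F\to G$ sending the two free generators to a basis. Because $G$ is abelian of exponent $9$, the $3$-central series of $G$ collapses to $P_2(G)=G^3$ (of order $9$) and $P_3(G)=1$. Surjectivity of $\phi$ yields $\phi(F_3)=P_3(G)=1$ and $\phi(L)=\phi(F_3)\phi(F)^3=G^3$, so $\phi$ induces a surjection $L/F_3\twoheadrightarrow G^3$ onto a group of order $9$. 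Consequently $|L:F_3|\geq 9$, giving $|L:F_3|=9$ and forcing $\bar c_3$ to be an isomorphism.

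The main technical point is the first paragraph: one must verify that the Hall--Petrescu error term, which lives naturally in the ordinary lower central series, is absorbed into $F_3$ of the lower $3$-series. Once this is done, everything else is formal. A pleasant feature of the argument is that injectivity of $\bar c_3$ is never checked directly; it drops out of the dimension count as soon as the auxiliary quotient $(\Z/9\Z)^2$ supplies the lower bound.
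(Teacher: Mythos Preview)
Your proof is correct and follows essentially the same route as the paper: invoke Corollary~\ref{p map petrescu} to show that cubing is a homomorphism modulo $F_3$ and factors through $V=F/F_2$, read off the upper bound $|L:F_3|\leq 9$, and obtain the matching lower bound via a surjection onto $(\Z/9\Z)^2$. You are simply more explicit than the paper about why the Hall--Petrescu error $\gamma_2(F)^3\gamma_3(F)$ lands in $F_3$, which is a welcome clarification since the $F_i$ here denote the lower $3$-series rather than the ordinary lower central series.
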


\begin{proof}
The map $c_3$ is well-defined, by definition of $L$, and $L/F_3=(F^3F_3)/F_3$.
As a consequence of Lemma \ref{p map petrescu}, the map $c_3$ is a surjective homomorphism, which, $F_2^3$ being contained in $F_3$, factors as a surjective homomorphism $c_2:F/F_2\rightarrow L/F_3$. 
Since $V=F/F_2$ has order $9$, the order of $L/F_3$ is at most $9$.
Let now $A=\Z/9\Z\times\Z/9\Z$ and let $\psi:F\rightarrow A$ be a surjective homomorphism. 
The group $A$ being abelian, we have that $F_3$ is contained in $\ker\psi$. 
Moreover, since $L=F^3F_3$, the group $\psi(L)$ is equal to $3\Z/9\Z\times 3\Z/9\Z$, which has order $9$. 
As a consequence, $L/F_3$ has cardinality at least $9$ and so $|L:F_3|=9$. In addition, the map $c_2$ is an isomorphism of groups. 
\end{proof}

\begin{lemma}\label{F3bar of dim at least 2}
One has $|F_3:E|\geq 9$.
\end{lemma}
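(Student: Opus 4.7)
The plan is to exhibit a surjective homomorphism from $F$ onto a concrete group of order $243$ that vanishes on $E$ while sending $F_3$ onto a subgroup of order $9$. The natural target is $H=\yo/\yo_4$, which by Lemmas \ref{yo class} and \ref{yo kappa} is a $\kappa$-group of class $3$; hence Lemmas \ref{class 3 G3}, \ref{class 3 G3=Z}, \ref{class 3 G/G3 extraspecial of exp p}, and \ref{class 3 G2 elem abelian} yield $|H|=243$, $|H:H_2|=9$, $|H_2:H_3|=|H_3|/|H_4|\cdot\ldots=3$, $|H_3|=9$, and $H_2$ is elementary abelian. Moreover Lemma \ref{3-group 3-power class 3} gives $H^3=H_3$.

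First, since $H$ is $2$-generated, the universal property of the free group on two generators produces a surjective homomorphism $\psi\colon F\twoheadrightarrow H$. An easy induction on $i$, using $F_{i+1}=[F,F_i]F_i^3$, shows that $\psi(F_i)$ is contained in the $i$-th term $P_i(H)$ of the lower $3$-series of $H$; in particular $\psi(F_2)\subseteq P_2(H)\subseteq H_2$ and $\psi(F_3)\subseteq P_3(H)\subseteq H_3$.

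Next, I would verify that $E\subseteq\ker\psi$. Since $H_2$ is elementary abelian, $\psi(F_2^3)\subseteq H_2^3=\graffe{1}$. Since $H^3=H_3$, we have
\[\psi(L)=\psi(F_3F^3)\subseteq H_3\cdot H^3=H_3,\]
and therefore $\psi([F,L])\subseteq[H,H_3]\subseteq H_4=\graffe{1}$, because $H$ has class $3$. Combining these two facts gives $\psi(E)=\psi([F,L]F_2^3)=\graffe{1}$, so $\psi$ factors as a surjection $\bar{\psi}\colon F/E\twoheadrightarrow H$.

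Finally, I would check that $\psi(F_3)=H_3$, so that $\bar{\psi}$ restricts to a surjection $F_3/E\twoheadrightarrow H_3$. Computing the lower $3$-series of $H$: $P_2(H)=[H,H]H^3=H_2H_3=H_2$ (using $H^3\subseteq H_3\subseteq H_2$), and then $P_3(H)=[H,P_2(H)]P_2(H)^3=[H,H_2]H_2^3=H_3$, again because $H_2$ is elementary abelian. Since $\psi(F_i)=P_i(H)$ for surjective $\psi$, we obtain $\psi(F_3)=H_3$, a group of order $9$, forcing $|F_3:E|\geq 9$.

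I do not anticipate a genuine obstacle: all the pieces (existence of $\yo/\yo_4$ and its structural data, the commutator identities, and the identification of the lower $3$-series with the lower central series in this particular class-$3$ setting) have already been established. The only point requiring a sliver of care is ensuring $\psi(F_i)$ equals, rather than is merely contained in, $P_i(H)$, which follows immediately from the surjectivity of $\psi$ together with the recursive definition of both series.
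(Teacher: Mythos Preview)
Your proof is correct and follows essentially the same approach as the paper: both arguments construct a surjection from $F$ onto (a quotient of) $\yo$ and read off the desired inequality from the image. The paper maps onto $\yo$ itself, obtaining $\phi(F_3)=\yo_3$ and $\phi(E)=\yo_4$ so that $|F_3:E|\geq|\yo_3:\yo_4|=9$, whereas you quotient out $\yo_4$ from the start and compute $\psi(F_3)=H_3$, $\psi(E)=1$; this is a purely cosmetic difference.
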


\begin{proof}
Thanks to Lemma \ref{yo structure}, we have $|\yo:\yo_2|=9$ and therefore, as a consequence of Lemma \ref{frattini comm}($2$), the group $\yo$ is $2$-generated. We fix a surjective homomorphism $\phi:F\rightarrow \yo$ and we denote by $\pi$ the canonical projection $\pi:\yo\rightarrow \yo/\yo_4$. The homomorphism $\pi\circ\phi:F\rightarrow\yo/\yo_4$ is surjective and so, from Lemma 
 \ref{description L}, we get $L=(\pi\circ\phi)^{-1}(\yo_3/\yo_4)$. As a consequence, $L$ is equal to $\phi^{-1}(\yo_3)$ and thus $\phi(L)=\yo_3$. Moreover, thanks to Lemma \ref{class 3 G2 elem abelian}, we know that $\yo_2^3$ is contained in $\yo_4$
and therefore $\phi(F_2^3)\subseteq \yo_4$.
It follows that 
\[\phi(F_3)=\phi([F,F_2])\phi(F_2^3)=[\yo,\yo_2]\yo_2^3=\yo_3\]
and also that
\[\phi(E)=\phi([F,L])\phi(F_2^3)=[\yo,\yo_3]\yo_2^3=\yo_4.\]
As a result, the index $|F_3:E|$ is at least $|\yo_3:\yo_4|$, which is, by Lemma \ref{yo structure}($1$), equal to $9$.
\end{proof}

\[
\begin{diagram}[nohug]
\Z/9\Z\times\Z/9\Z &  & \lDashto  &   &  F  &  &  & \ \ \rDashto \ \    &  \yo  \\
\dLine^{9 \ } &   &    &   & \dLine^{9 \ }    &  & &  &    \dLine_{\ 9}  \\
3\Z/9\Z\times 3\Z/9\Z & &  &    &  F_2    &   &   & \ \ \rDashto \ \   & \yo_2    \\
\dLine^{9\ } &   & \luDashto(3,2)   &    &  \dLine^{3 \ }   &  & &     &  \dLine_{\ 3} \\
0  &       &     &    &  L=F_3F^3  &      &   & \ \ \rDashto \ \ & \yo_3    \\
 & \luDashto  &  &   \ldLine^{9 \ }  &   &  & & \ruDashto(6,2)  &    \dLine_{\ 9}   \\
&  & F_3    &     &    &     &      &       & \yo_4    \\
 &  &    &   \rdLine_{9 \ } & & & &     \ruDashto(4,2)   \\
&   &    &         &  E=[F,L]F_2^3          &   &   &     &
\end{diagram}
\]

\vspace{12pt}

\begin{lemma}\label{gammino35}\label{F3bar of dim 2}
The commutator map $F\times F_2\rightarrow F_3$ induces an isomorphism of groups
$\delta:F/F_2\otimes F_2/L\rightarrow\overline{F_3}$. Moreover, $|F_3:E|=9$.
\end{lemma}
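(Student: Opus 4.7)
\emph{Proof plan.}
The plan is to build $\delta$ as the map induced by the commutator map via the universal property of the tensor product, then to pinch the orders of source and target between $9$ from the combination of Lemma \ref{free map} and Lemma \ref{F3bar of dim at least 2}. Since the commutator map $\phi:F\times F_2\to F_3/E$ is the relevant map, the first task is to check bilinearity, the second is to verify that $\phi$ descends to the quotients $F/F_2$ and $F_2/L$, and the third is to force surjectivity and match cardinalities.

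For bilinearity, I will appeal to Lemma \ref{tgt} with $H=F$ and $K=F_2$: it suffices to show that $[F,F_2]$ is central in $F/E$, i.e.\ that $[F,[F,F_2]]\subseteq E$. Since $[F,F_2]$ is a subset of $F_3$, and $F_3\subseteq F_3F^3=L$, one has $[F,[F,F_2]]\subseteq [F,L]\subseteq E$ by definition of $E$, which takes care of this step. For the image: by definition $F_3=[F,F_2]F_2^{\,3}$ and $F_2^{\,3}\subseteq E$, so $F_3/E=[F,F_2]E/E$ is generated by the image of $\phi$.

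Next I will check that $\phi$ factors through $(F/F_2)\otimes (F_2/L)$. The inclusion $L\subseteq F_2$ holds because $F_3\subseteq F_2$ and $F^3\subseteq F_2$ (the quotient $F/F_2$ being annihilated by $3$), so $L\cap F_2=L$ and the right-variable descent to $F_2/L$ requires exactly $[F,L]\subseteq E$, which is the definition of $E$. For the left-variable descent to $F/F_2$ I must show $[F_2,F_2]\subseteq E$; here I will use Lemma \ref{free map}, which gives $|F_2:L|\leq 3$, so $F_2/L$ is cyclic, whence Lemma \ref{cyclic quotient commutators} yields $[F_2,F_2]=[F_2,L]\subseteq [F,L]\subseteq E$. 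By the universal property of tensor products, $\phi$ now induces the homomorphism $\delta:F/F_2\otimes F_2/L\to \overline{F_3}$, and $\delta$ is surjective by the generation statement of the previous paragraph.

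Finally, I will compare cardinalities. The $\Z$-module $F/F_2$ is $(\Z/3\Z)^2$ and $F_2/L$ has order at most $3$, so the tensor product has order at most $9$. On the other hand, Lemma \ref{F3bar of dim at least 2} provides $|\overline{F_3}|\geq 9$. A surjection from a group of order at most $9$ onto a group of order at least $9$ must be an isomorphism between two groups of order $9$, which simultaneously yields that $\delta$ is an isomorphism, that $|F_3:E|=9$, and, as a by-product, that $|F_2:L|=3$. The only step that merits some caution is the bilinearity-plus-factorisation step, because it hinges on the slightly indirect use of Lemma \ref{free map} to conclude that $F_2/L$ is cyclic; everything else is book-keeping.
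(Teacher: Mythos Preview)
Your proof is correct and follows essentially the same route as the paper: bilinearity via centrality of $\overline{F_3}$, factoring through the tensor product using that $F_2/L$ is cyclic so $[F_2,F_2]=[F_2,L]\subseteq E$, surjectivity from $F_3=[F,F_2]F_2^3$ with $F_2^3\subseteq E$, and then squeezing the orders using Lemma~\ref{F3bar of dim at least 2}. The only cosmetic difference is that the paper cites Lemma~\ref{description L} to get $|F_2:L|=3$ directly, whereas you cite Lemma~\ref{free map} for $|F_2:L|\leq 3$ and recover the equality at the end; your version is arguably cleaner since Lemma~\ref{free map} is purely about $F$.
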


\begin{proof}
The subgroup $\overline{F_3}$ is central in $\overline{F}$, by definition of $E$, and so, by Lemma \ref{tgt}, the commutator map $F\times F_2\rightarrow\overline{F_3}$ is bilinear. Moreover, the quotient $F_2/L$ is cyclic of order $3$, thanks to Lemma \ref{description L}, and so, by Lemma \ref{cyclic quotient commutators}, the subgroup $[F_2,F_2]$ is equal to $[F_2,L]$. The commutator map factors thus as a surjective homomorphism 
$\delta: F/F_2\otimes F_2/L\rightarrow\overline{F_3}$ and therefore 
$|F_3:E|\leq |F/F_2\otimes F_2/L|=9$.
Now, the group $\overline{F_3}$ has order at least $9$, by Lemma \ref{F3bar of dim at least 2}, and therefore 
$|F_3:E|=9$ and $\delta$ is an isomorphism. 
\end{proof}

\begin{lemma}\label{Lbar of dim 4}
The group $\overline{L}$ is an $\F_3$-vector space of dimension $4$.
\end{lemma}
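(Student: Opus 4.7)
The plan is to establish three facts in turn: $|L:E|=81$, the quotient $\overline{L}$ is abelian, and $\overline{L}$ is annihilated by $3$.

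First, by Lemma \ref{LsuF3dim2} we have $|L:F_3|=9$, and by Lemma \ref{F3bar of dim 2} we have $|F_3:E|=9$, so
\[
|\overline{L}|=|L:E|=|L:F_3|\cdot|F_3:E|=81=3^4.
\]
It remains only to show that $\overline{L}$ is elementary abelian. For abelianness, observe that $L\subseteq F_2$ (since $F^3\subseteq F_2$ because $F/F_2$ has exponent dividing $3$, and $F_3\subseteq F_2$ by definition of the $p$-central series), so
\[
[L,L]\subseteq [F,L]\subseteq E,
\]
and $\overline{L}$ is abelian as claimed.

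Next, to show $\overline{L}$ has exponent $3$, I would use the fact that $\overline{F^3}$ is central in $\overline{F}$: indeed $[F,F^3]\subseteq [F,L]\subseteq E$. Since $\overline{L}=\overline{F_3}\,\overline{F^3}$ and $\overline{L}$ is abelian, every $\bar{x}\in\overline{L}$ can be written $\bar{x}=\bar{f}\bar{g}$ with $f\in F_3$ and $g\in F^3$, and $\bar{x}^3=\bar{f}^3\bar{g}^3$. For the first factor, $f^3\in F_3^3\subseteq F_2^3\subseteq E$, so $\bar{f}^3=\bar{1}$. For the second factor, write $g=y_1^3\cdots y_k^3$ with $y_i\in F$; since each $\bar{y}_i^3$ lies in the central subgroup $\overline{F^3}$ of $\overline{F}$, we may expand
\[
\bar{g}^3=(\bar{y}_1^3\cdots \bar{y}_k^3)^3=\bar{y}_1^9\cdots\bar{y}_k^9.
\]
Now each $y_i^3$ lies in $F_2$ (again since $F/F_2$ has exponent $3$), so $y_i^9=(y_i^3)^3\in F_2^3\subseteq E$, whence $\bar{g}^3=\bar{1}$. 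Thus $\bar{x}^3=\bar{1}$ for every $\bar{x}\in\overline{L}$.

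Combining the previous steps, $\overline{L}$ is an abelian group of order $3^4$ annihilated by $3$, hence an $\F_3$-vector space of dimension $4$. The only delicate step I expect is the exponent computation: one must exploit centrality of $\overline{F^3}$ in $\overline{F}$ to reorder and collect the cubes appearing in an element of $F^3$, since naive use of the Hall--Petrescu formula in class $3$ does not immediately give exponent $3$. The identity $[F,F^3]\subseteq [F,L]\subseteq E$ is what makes this rearrangement legitimate.
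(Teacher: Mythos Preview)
Your proof is correct and follows essentially the same outline as the paper's: compute $|L:E|$ from Lemmas~\ref{LsuF3dim2} and~\ref{F3bar of dim 2}, show $\overline{L}$ is abelian because $[L,L]\subseteq[F,L]\subseteq E$, and show $\overline{L}$ has exponent $3$.

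However, your exponent-$3$ argument is much longer than necessary. You already observed in the abelianness step that $L\subseteq F_2$. This immediately gives
\[
L^3=\langle \ell^3:\ell\in L\rangle\subseteq\langle f^3:f\in F_2\rangle=F_2^3\subseteq E,
\]
so $\overline{L}^{\,3}=\{1\}$ in one line. There is no need to decompose elements of $L$ as products from $F_3$ and $F^3$, nor to invoke centrality of $\overline{F^3}$ to rearrange cubes: the containment $L\subseteq F_2$ does all the work. This is exactly how the paper argues.
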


\begin{proof}
By the definition of $E$, the group $\overline{L}$ is central in $\overline{F}$ and so it is abelian. Moreover $L^3$ is contained in $F_2^3$, which is itself contained in $E$. It follows that $\overline{L}$ is naturally a vector space over $\F_3$. The dimension of $\overline{L}$ is equal to $4$, thanks to the combination of Lemmas \ref{LsuF3dim2} and \ref{F3bar of dim 2}.
\end{proof}

\begin{lemma}\label{gammino3}
The commutator map $F\times F_2\rightarrow F_3$ induces an isomorphism of groups
$\gamma:V\otimes\bigwedge^2V\rightarrow\overline{F_3}$.
\end{lemma}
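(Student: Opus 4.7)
The strategy is to factor the desired isomorphism $\gamma$ through the isomorphism $\delta:V\otimes F_2/L\to\overline{F_3}$ already provided by Lemma \ref{gammino35}, after first identifying $\bigwedge^2 V$ with $F_2/L$ via the alternating form coming from the commutator.

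First I would invoke Lemma \ref{free map}: the commutator map induces an alternating $\F_3$-bilinear map $V\times V\to F_2/L$ whose image generates $F_2/L$. By the universal property of the exterior square, this factors through a surjective linear map $\beta:\bigwedge^2 V\to F_2/L$. Since $V$ has dimension $2$ over $\F_3$, the space $\bigwedge^2 V$ has dimension $1$, and by Lemma \ref{description L} the quotient $F_2/L$ also has order $3$. Hence $\beta$ is an isomorphism of $\F_3$-vector spaces.

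Next I would tensor with $\id_V$ to obtain an isomorphism $\id_V\otimes\beta:V\otimes\bigwedge^2 V\to V\otimes F_2/L$, and compose with the isomorphism $\delta$ from Lemma \ref{gammino35} to define
\[
\gamma=\delta\circ(\id_V\otimes\beta):V\otimes\bigwedge^2 V\longrightarrow\overline{F_3}.
\]
Being a composition of two isomorphisms of $\F_3$-vector spaces, $\gamma$ is itself an isomorphism.

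It only remains to check that $\gamma$ is indeed the map induced by iterating the commutator $F\times F_2\to F_3$. Unwinding the definitions, for $u\in F$ representing $v\in V$ and for $x,y\in F$ representing classes in $V$, we have $\beta(xF_2\wedge yF_2)=[x,y]L$, while $\delta(vF_2\otimes[x,y]L)=[u,[x,y]]E$. Thus
\[
\gamma\bigl(vF_2\otimes(xF_2\wedge yF_2)\bigr)=[u,[x,y]]\,E,
\]
which is precisely the class in $\overline{F_3}$ produced by first applying the commutator to $(x,y)\in F\times F$ and then to $u$ against the result; this is the natural map induced by $F\times F_2\to F_3$ after passing to the quotients $V$, $\bigwedge^2 V$, and $\overline{F_3}$. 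No step here is delicate: the only mildly subtle point is making sure that $\beta$ is well defined and injective on $\bigwedge^2 V$, but this is handled by Lemma \ref{free map} together with the dimension count forced by Lemma \ref{description L}.
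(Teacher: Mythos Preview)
Your proof is correct and follows essentially the same approach as the paper: construct the isomorphism $\beta:\bigwedge^2 V\to F_2/L$ from the alternating commutator form (the paper calls it $\theta$ and rederives it via Lemma \ref{tgt} rather than citing Lemma \ref{free map}), then set $\gamma=\delta\circ(\id_V\otimes\beta)$. Your additional unwinding to verify that $\gamma$ is induced by the iterated commutator is a nice explicit check that the paper leaves implicit.
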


\begin{proof}
The subgroup $F_2$ is central modulo $L$ so, thanks to Lemma \ref{tgt}, the commutator map $F\times F\rightarrow F_2/L$ 
is bilinear. Since $[F,F_2]$ is contained in $L$, we get a bilinear map $V\times V\rightarrow F_2/L$, which is also alternating. By the universal property of wedge products, the last map factors as a homomorphism 
${\theta:\bigwedge^2 V\rightarrow F_2/L}$ mapping $x\wedge y$ to $[x,y]$. By Lemma \ref{description L}, the cardinality of $F_2/L$ is equal to $3$, which is the same as the cardinality of $\bigwedge^2V$ and so, $\theta$ being non-trivial, it is an isomorphism of groups. 
We conclude by defining $\gamma=\delta\circ(1\otimes\theta)$, where $\delta$ is as in Lemma \ref{gammino35}.
%We now look at the commutator map $F\times F_2\rightarrow\overline{F_3}$, which is bilinear because $F_3$ is central modulo $E$. The last map factors then as a homomorphism 
%$\delta:F/F_2\otimes F_2/L\rightarrow\overline{F_3}$ and the image of $\delta$ is equal to $\overline{F_3}$ because 
%$\overline{F_3}=F_3/E=[F,F_2]E/E$. The group $\overline{F_3}$ has cardinality $9$, by Lemma \ref{F3bar of dim 2}, and therefore 
%$F/F_2\otimes F_2/L$ and $\overline{F_3}$ have the same cardinality. It follows that $\delta$ is an isomorphism. We define 
%$\gamma:V\otimes\wedge^2V\rightarrow\overline{F_3}$ by means of $x\otimes(y\wedge z) \mapsto \delta(x\otimes\theta(y\wedge z))$. The map $\gamma$ is an isomorphism, because $\theta$ and $\delta$ are.
\end{proof}

\begin{lemma}\label{cappino1}
Let $\gamma$ be as in Lemma \ref{gammino3} and use the additive notation for the vector spaces $V$ and $\overline{L}$.
Then the cubing map on $\overline{F}$ induces a map 
$c:V\rightarrow\overline{L}$ such that, for each $x,y\in V$, one has 
$$c(x+y)=c(x)+c(y)+\gamma((x-y)\otimes(x\wedge y)).$$
\end{lemma}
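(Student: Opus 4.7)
The plan is to deduce both the well-definedness of $c$ and the identity from Lemma \ref{formula cubing}, after verifying that the quotient $\overline{F}=F/E$ has nilpotency class at most $3$ and that $\overline{F_2}$ has exponent dividing $3$. For the class bound, since $F_3\subseteq L$ and $F_3^3\subseteq F_2^3$, we get $F_4=[F,F_3]F_3^3\subseteq[F,L]F_2^3=E$, so $\overline{F_4}=1$. The exponent bound is immediate from $F_2^3\subseteq E$. Moreover $\overline{L}$ is central in $\overline{F}$ (because $[F,L]\subseteq E$) and, as already observed in Lemma \ref{Lbar of dim 4}, it is an $\F_3$-vector space, so it is legitimate to switch to additive notation for $\overline{L}$.

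First I would verify that $c$ is well-defined. Given $x\in F$ and $z\in F_2$, Lemma \ref{formula cubing} applied inside $\overline{F}$ yields $(xz)^3\equiv x^3z^3[xz^{-1},[x,z]]\pmod{E}$. The middle factor is trivial in $\overline{F}$ because $z^3\in F_2^3\subseteq E$, and $[x,z]\in F_3\subseteq L$ makes $[xz^{-1},[x,z]]\in[F,L]\subseteq E$. Hence $(xz)^3\equiv x^3\pmod{E}$, and since $x^3\in F^3\subseteq L$, the assignment $xF_2\mapsto\overline{x^3}$ gives a well-defined map $c:V\rightarrow\overline{L}$.

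Next, to establish the additivity identity, I would apply Lemma \ref{formula cubing} to arbitrary $x,y\in F$ to obtain $(xy)^3\equiv x^3y^3[xy^{-1},[x,y]]\pmod{E}$. Rewriting this additively in $\overline{L}$, this reads $c(\overline{x}+\overline{y})=c(\overline{x})+c(\overline{y})+\overline{[xy^{-1},[x,y]]}$, where the bar denotes the class modulo $E$ (noting that $[xy^{-1},[x,y]]\in[F,F_3]\subseteq F_3\subseteq L$). It then remains to identify the correction term with $\gamma((\overline{x}-\overline{y})\otimes(\overline{x}\wedge\overline{y}))$. By the construction in Lemmas \ref{gammino35} and \ref{gammino3}, $\gamma$ factors as $\delta\circ(1_V\otimes\theta)$, where $\theta:\bigwedge^2V\rightarrow F_2/L$ sends $\overline{x}\wedge\overline{y}$ to $[x,y]L$ and $\delta:V\otimes F_2/L\rightarrow\overline{F_3}$ is induced by the commutator map. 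Tracing an element through, $\gamma((\overline{x}-\overline{y})\otimes(\overline{x}\wedge\overline{y}))$ is exactly the class of $[xy^{-1},[x,y]]$ in $\overline{F_3}\subseteq\overline{L}$, which matches the term produced by Lemma \ref{formula cubing}.

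The main obstacle here is not conceptual but notational bookkeeping: one must be careful that a lift of $\overline{x}-\overline{y}\in V$ is represented by $xy^{-1}\in F$ (using that $V$ is abelian with exponent $3$), and that the various commutator expressions are consistently tracked across the quotients $F/F_2$, $F_2/L$, and $\overline{F_3}\subseteq\overline{L}$. Once these identifications are fixed, both assertions follow at once from the single application of Lemma \ref{formula cubing}.
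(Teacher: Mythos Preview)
Your proof is correct and follows essentially the same approach as the paper: verify that $\overline{F}$ has class at most $3$ with $[\overline{F},\overline{F}]$ of exponent dividing $3$, apply Lemma \ref{formula cubing}, and identify the correction term with $\gamma$ via its definition through $\delta$ and $\theta$. One small slip: in the additivity step you write $[xy^{-1},[x,y]]\in[F,F_3]\subseteq F_3$, but $[x,y]$ lies only in $F_2$, not $F_3$, and $[F,F_3]\subseteq F_4$ rather than $F_3$; the correct chain is $[xy^{-1},[x,y]]\in[F,[F,F]]\subseteq[F,F_2]\subseteq F_3\subseteq L$, which still gives what you need.
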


\begin{proof}
The subgroup $[F,[F,F]]$ is contained in $L$, which is central modulo $E$, so $\overline{F}$ has class at most $3$. Moreover, $[F,F]^3$ is contained in $F_2^3$ and so $[\overline{F},\overline{F}]$ has exponent at most $3$. By Lemma \ref{formula cubing}, given any two elements $x,y$ of $\overline{F}$, one has $(xy)^3=x^3y^3[xy^{-1},[x,y]]$. Since both $F_2^3$ and $[F,[F,F_2]]$ are contained in $E$, cubing on $\overline{F}$ induces a map $c:V\rightarrow\overline{L}$.
Using the additive notation for the vector spaces $V$ and $\overline{L}$, it follows from the definition of $\gamma$ that, for each $x,y\in V$, one has 
$c(x+y)=c(x)+c(y)+\gamma((x-y)\otimes(x\wedge y))$.
\end{proof}

\begin{lemma}\label{youusedto}
Let $0\rightarrow A\overset{\iota}{\rightarrow} B\overset{\sigma}{\rightarrow} C\rightarrow 0$ be a short exact sequence of abelian groups. 
Let moreover $s:C\rightarrow B$ be a function such that $\sigma\circ s=\id_C$. Write 
${\cor{R}=\graffe{f\in\Hom(B,A) : f\circ\iota=\id_A}}$ and let $\cor{H}$ be the collection of maps $g:C\rightarrow A$ such that, for all $u,v\in C$, one has $$\iota(g(u+v)-g(u)-g(v))=s(u+v)-s(u)-s(v).$$ 
Then the function 
$\cor{R}\rightarrow\cor{H}$ that is defined by $f\mapsto f\circ s$ is bijective.
\end{lemma}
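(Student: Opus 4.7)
The plan is to exploit the fact that every $b\in B$ admits a unique decomposition $b=\iota(a)+s(\sigma(b))$ with $a\in A$: indeed $b-s(\sigma(b))\in\ker\sigma=\iota(A)$, and $\iota$ is injective. This canonical (purely set-theoretic) decomposition will let me translate back and forth between retractions $f\in\cor{R}$ and cocycle-like maps $g\in\cor{H}$.

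For well-definedness of $f\mapsto f\circ s$, I fix $f\in\cor{R}$ and set $g=f\circ s$. For every pair $u,v\in C$, the element $s(u+v)-s(u)-s(v)$ lies in $\ker\sigma=\iota(A)$, so $f$ maps it to $g(u+v)-g(u)-g(v)$ by $f\circ\iota=\id_A$; applying $\iota$ back and using injectivity of $\iota$ gives the defining identity of $\cor{H}$. For injectivity, if $f_1,f_2\in\cor{R}$ satisfy $f_1\circ s=f_2\circ s$, then for any $b\in B$ with decomposition $b=\iota(a)+s(\sigma(b))$ we have $f_i(b)=a+(f_i\circ s)(\sigma(b))$, and so $f_1=f_2$.

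For surjectivity, given $g\in\cor{H}$, I define $f\colon B\to A$ on the decomposition $b=\iota(a)+s(\sigma(b))$ by $f(b)=a+g(\sigma(b))$, which is well-defined by uniqueness. To see that $f$ is a group homomorphism, I write $b_1=\iota(a_1)+s(c_1)$ and $b_2=\iota(a_2)+s(c_2)$ with $c_i=\sigma(b_i)$, and use the defining property of $g\in\cor{H}$ in the form
\[
s(c_1)+s(c_2)=s(c_1+c_2)-\iota\bigl(g(c_1+c_2)-g(c_1)-g(c_2)\bigr)
\]
to rewrite $b_1+b_2=\iota(a_1+a_2-g(c_1+c_2)+g(c_1)+g(c_2))+s(c_1+c_2)$, whence $f(b_1+b_2)=a_1+a_2+g(c_1)+g(c_2)=f(b_1)+f(b_2)$. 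To check $f\circ\iota=\id_A$, I first observe that taking $u=v=0$ in the defining identity of $\cor{H}$ yields $s(0)=\iota(g(0))$; then for $a\in A$ the decomposition $\iota(a)=\iota(a-g(0))+s(0)$ gives $f(\iota(a))=(a-g(0))+g(0)=a$. The equality $f\circ s=g$ is immediate from the definition of $f$, since $s(c)=\iota(0)+s(c)$.

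The only place where anything non-formal happens is verifying additivity of the reconstructed $f$, and the key identity that makes it work is precisely the defining relation of $\cor{H}$; in effect, the whole lemma is a precise statement of the slogan that $g$ measures the failure of $s$ to be additive exactly as much as a retraction $f$ can absorb it. I therefore expect no real obstacle beyond carefully book-keeping the set-theoretic decomposition and the behaviour of $s$ at $0$.
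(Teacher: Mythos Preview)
Your proof is correct and follows essentially the same approach as the paper: both use the unique decomposition $b=\iota(a)+s(\sigma(b))$ to show well-definedness, injectivity, and to reconstruct $f$ from $g$ for surjectivity, with the additivity check hinging on the defining identity of $\cor{H}$. You are in fact slightly more careful than the paper, explicitly handling the $s(0)=\iota(g(0))$ issue and verifying $f\circ\iota=\id_A$ and $f\circ s=g$, which the paper leaves implicit.
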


\begin{proof}
Let $\nu:\cor{R}\rightarrow\cor{H}$ be defined by $f\mapsto f\circ s$. We first prove that $\nu$ is well-defined. To this end, let $f\in\cor{R}$ and let 
$u,v\in C$. Since $\sigma\circ s=\id_C$, the element $s(u+v)-s(u)-s(v)$ belongs to $\ker\sigma=\iota(A)$. Since $f\circ\iota=\id_A$, we get that $\iota\circ f_{|\iota(A)}=\id_{|\iota(A)}$ and therefore
$$\iota((f\circ s)(u+v)-(f\circ s)(u)-(f\circ s)(v))=
\iota(f(s(u+v)-s(u)-s(v)))=$$
$$s(u+v)-s(u)-s(v).$$
We have proven that $\nu$ is well-defined. We now prove that $\nu$ is injective. Let $f,h\in\cor{R}$ be such that $\nu(f)=\nu(h)$. Since $f\circ\iota=h\circ\iota=\id_A$, the group $\iota(A)$ is contained in $\ker(f-h)$ and thus $f-h$ induces a homomorphism 
$B/\iota(A)\rightarrow A$. Now, $B/\iota(A)=\graffe{s(c)+\iota(A) : c\in C}$ and, the maps $f\circ s$ and $h\circ s$ being the same, we get $f-h=0$. The maps $f$ and $g$ are the same and $\nu$ is injective. To conclude, we prove that $\nu$ is surjective. Let $g\in\cor{H}$. Since each element $x$ of $B$ can be written uniquely as $x=\iota(a)+s(u)$, with $a\in A$ and $u\in C$, we define $f:B\rightarrow A$ by
$$x=\iota(a)+s(u)\mapsto f(x)=a+g(u).$$
For each $u\in C$, we have then $f\circ s(u)=g(u)$. We prove now that $f$ is a homomorphism. 
Let $x,y\in B$ and let $a,b\in A$ and $u,v\in C$ be such that $x=\iota(a)+s(u)$ and $y=\iota(b)+s(v)$. Keeping in mind that $g$ belongs to $\cor{H}$, we compute
\begin{align*}
f(x+y)-f(x)-f(y) & = f(\iota(a)+s(u)+\iota(b)+s(v))-f(\iota(a)+s(u))+ \\
				 &\ \ \ -f(\iota(b)+s(v)) \\
& = f(\iota(a)+\iota(b)-g(u+v)+g(u)+g(v)+s(u+v))+ \\
& \ \ \  -f(\iota(a)+s(u))-f(\iota(b)+s(v))
\end{align*}
and, since $g(C)$ is contained in $A$, we get
\begin{align*}
f(x+y)-f(x)-f(y) 
& = a+b-g(u+v)+g(u)+g(v)+g(u+v)+ \\
& \ \ \ -f(a+s(u))-f(b+s(v)) \\
& = a+b+g(u)+g(v)-a-g(u)-b-g(v) \\
& =0.
\end{align*}
We have proven that $f$ is a homomorphism and so $\nu$ is surjective.
\end{proof}

\begin{proposition}\label{proposition tV}
Let $c$ be as in Lemma \ref{cappino1} and let $\gamma$ be as in Lemma \ref{gammino3}. 
Set
$\cor{P}=\graffe{\pi\in\Hom(\overline{L},\overline{F_3}) : 
\pi_{|\bar{F_3}}=\id_{\bar{F_3}},\pi\circ c \ \text{bijective}}$
and let $t_V:\cor{P}\rightarrow\cor{K}_V$ be defined by
$\pi\mapsto \gamma^{-1}\circ\pi\circ c$.
Then $t_V$ is a bijection and $\cor{P}$ has cardinality $3$.
\end{proposition}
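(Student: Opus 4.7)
The plan is to produce a bijection from $\cor{P}$ to $\cor{K}_V$ and then invoke Proposition \ref{proposition sV} to conclude. The essential ingredient is the short exact sequence of $\F_3$-vector spaces
\[
0 \longrightarrow \overline{F_3} \longrightarrow \overline{L} \overset{\tau}{\longrightarrow} V \longrightarrow 0,
\]
where $\tau$ is the natural projection $\overline{L}\to L/F_3$ followed by the inverse of the isomorphism $c_2: V\to L/F_3$ from Lemma \ref{LsuF3dim2}. The cubing map $c: V\to\overline{L}$ from Lemma \ref{cappino1} is then a set-theoretic section of $\tau$, because cubing in $\overline{F}$ induces precisely $c_2$ modulo $\overline{F_3}$.

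First I would check that $t_V$ lands in $\cor{K}_V$. Given $\pi\in\cor{P}$, the composition $\pi\circ c$ is bijective by hypothesis, so $t_V(\pi)=\gamma^{-1}\circ\pi\circ c$ is a bijection $V\to V\otimes\bigwedge^2 V$. Applying $\pi$ to the identity
\[
c(x+y) = c(x) + c(y) + \gamma((x-y)\otimes(x\wedge y))
\]
of Lemma \ref{cappino1}, using that $\pi$ is additive and $\pi|_{\overline{F_3}}=\id$, and then applying $\gamma^{-1}$, produces the defining relation (A1) for $t_V(\pi)$. Hence $t_V(\pi)\in\cor{K}_V$.

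Next, I would apply Lemma \ref{youusedto} to the sequence above with $s=c$. The set $\cor{R}$ of that lemma coincides with $\{\pi\in\Hom(\overline{L},\overline{F_3}) : \pi|_{\overline{F_3}}=\id\}$, and the corresponding $\cor{H}$ is the collection of maps $g: V\to\overline{F_3}$ with $g(u+v)-g(u)-g(v)=\gamma((u-v)\otimes(u\wedge v))$ for all $u,v\in V$. Post-composing with $\gamma^{-1}$ yields a further bijection from $\cor{H}$ onto the set $\tilde{\cor{K}}_V$ of all (not necessarily bijective) maps $V\to V\otimes\bigwedge^2 V$ satisfying (A1). The resulting composite bijection $\cor{R}\to\tilde{\cor{K}}_V$ is exactly $\pi\mapsto\gamma^{-1}\circ\pi\circ c$, and since $\gamma$ is an isomorphism, bijectivity of $\pi\circ c$ is equivalent to bijectivity of $t_V(\pi)$. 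Restricting to the bijective elements on both sides gives the bijection $t_V:\cor{P}\to\cor{K}_V$, whence $|\cor{P}|=|\cor{K}_V|=3$ by Proposition \ref{proposition sV}.

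The main obstacle is purely organizational: one must correctly identify $L/F_3$ with $V$ via $c_2$, check that the cubing map $c$ is a set-theoretic section of the resulting projection $\tau$, and match the cocycle condition defining $\cor{H}$ with the $\kappa$-identity via the isomorphism $\gamma$ of Lemma \ref{gammino3}. Once these identifications are unraveled, the conclusion follows immediately from Lemma \ref{youusedto} and Proposition \ref{proposition sV}.
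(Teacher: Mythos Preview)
Your proposal is correct and follows essentially the same approach as the paper: both build the short exact sequence $0\to\overline{F_3}\to\overline{L}\to V\to 0$ via $c_2^{-1}$, apply Lemma \ref{youusedto} with $s=c$, match the cocycle condition with the $\kappa$-identity through $\gamma$, and finish with Proposition \ref{proposition sV}. The only cosmetic difference is that the paper phrases the final step as a chain of inequalities $|\cor{P}|\leq|\cor{K}_V|\leq|\cor{H}_{\mathrm{bij}}|=|\cor{P}|$, whereas you restrict a bijection $\cor{R}\to\tilde{\cor{K}}_V$ directly to the bijective elements on each side.
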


\begin{proof}
Let $c_2:V\rightarrow L/F_3$ be the isomorphism from Lemma \ref{LsuF3dim2}.
Composing the canonical projection $\overline{L}\rightarrow L/F_3$ with $c_2^{-1}$, we get the short exact sequence of abelian groups 
$0\rightarrow \overline{F_3}\rightarrow\overline{L}\rightarrow V\rightarrow 0$.
With $A=\overline{F_3}$, $B=\overline{L}$, $C=V$, and $s=c$, Lemma \ref{youusedto} applies. Let thus 
$\cor{R}=\graffe{\pi\in\Hom(\overline{L},\overline{F_3}) : 
\pi_{|\bar{F_3}}=\id_{\bar{F_3}}}$ and let 
$\cor{H}$ be the collection of maps $g:V\rightarrow\overline{F_3}$ such that, for all $x,y\in V$, one has $g(x+y)-g(x)-g(y)=c(x+y)-c(x)-c(y)$.
Then, thanks to Lemma \ref{youusedto}, each element of $\cor{H}$ is of the form $\pi\circ c$, where $\pi$ belongs to $\cor{R}$. 
In particular, the subset $\cor{P}$ of $\cor{R}$ is sent bijectively to the subset $\cor{H}_{\mathrm{bij}}$ of bijective elements of $\cor{H}$.
Now, by Lemma \ref{cappino1}, given any two elements $x,y\in V$, we have 
$c(x+y)-c(x)-c(y)=\gamma((x-y)\otimes(x\wedge y))$ and therefore each element 
$\kappa=\gamma^{-1}\circ\pi\circ c$, with $\pi\in\cor{P}$, belongs to $\cor{K}_V$. The map $\gamma$ being an isomorphism, $t_V$ is injective. Moreover, since $\gamma$ is bijective, Lemma \ref{cappino1} yields a well-defined injection $\cor{K}_V\rightarrow\cor{H}_{\mathrm{bij}}$, given by $\kappa\mapsto\gamma\circ\kappa$. It follows that 
$|\cor{P}|\leq |\cor{K}_V|\leq |\cor{H}_{\mathrm{bij}}|=|\cor{P}|$ 
and therefore $t_V$ is a bijection. Thanks to Proposition \ref{proposition sV}, the cardinality of $\cor{P}$ is $3$.
\end{proof}

\noindent
We remind the reader that $\cor{N}_3$ has been defined to be the collection of normal subgroups $N$ of $F$ such that $F/N$ is a $\kappa$-group of class $3$.

\begin{lemma}\label{n3}
There exists $M$ in $\cor{N}_3$ such that the quotient $F/M$ is isomorphic to $\yo/\yo_4$. Moreover, the set $\cor{N}_3$ is non-empty.
\end{lemma}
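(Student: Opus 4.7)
The plan is to exhibit $\yo/\yo_4$ as a quotient $F/M$ for some normal subgroup $M$ of $F$, and then verify that this $M$ lies in $\cor{N}_3$. The second assertion of the lemma is an immediate consequence of the first.

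First I would verify that $\yo/\yo_4$ is itself a $\kappa$-group of class $3$. By Lemma \ref{yo class}, $\yo$ has class $4$, so $\yo/\yo_4$ has class $3$. Since $(\yo/\yo_4)_2 = \yo_2/\yo_4$, we have $|\yo/\yo_4 : (\yo/\yo_4)_2| = |\yo:\yo_2| = 9$ by Lemma \ref{yo structure}($1$), and moreover the cubing map on $\yo/\yo_4$ induces exactly the same function $\yo/\yo_2 \to \yo_3/\yo_4$ as the one induced on $\yo$ (since the lower central series of $\yo/\yo_4$ satisfies $(\yo/\yo_4)_i = \yo_i/\yo_4$ for $i \leq 3$ and $(\yo/\yo_4)_4 = 1$). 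By Lemma \ref{yo kappa}, this map is bijective, so $\yo/\yo_4$ is a $\kappa$-group of class $3$.

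Next, since $|\yo/\yo_4 : (\yo/\yo_4)_2| = 9$, Lemma \ref{frattini comm}($2$) (or the fact that a $3$-group is generated by any lift of a generating set for its abelianization modulo the Frattini subgroup, via Lemma \ref{subgroup times frattini}) implies that $\yo/\yo_4$ is generated by $2$ elements. Choose generators $\bar a, \bar b$ of $\yo/\yo_4$ (for instance, the images of the elements $a=1-\epsilon+\mathrm{i}$ and $b=1-\epsilon+\mathrm{j}$ from the proof of Lemma \ref{yo structure}). By the universal property of the free group $F$ on two generators, there exists a (unique) homomorphism $\phi : F \to \yo/\yo_4$ sending the two free generators of $F$ to $\bar a$ and $\bar b$ respectively, and $\phi$ is surjective by construction.

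Finally, set $M = \ker\phi$. Then $M$ is a normal subgroup of $F$, and the induced isomorphism $F/M \to \yo/\yo_4$ shows that $F/M$ is a $\kappa$-group of class $3$, so $M \in \cor{N}_3$. In particular, $\cor{N}_3$ is non-empty. There is no serious obstacle here: the argument is entirely a matter of transporting the structural information about $\yo$ established in Section \ref{section yo} back along a surjection from the free group.
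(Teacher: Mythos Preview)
Your proof is correct and takes essentially the same approach as the paper's: verify that $\yo/\yo_4$ is a $\kappa$-group of class $3$ (using Lemmas \ref{yo class} and \ref{yo kappa}), note that it is $2$-generated, and take $M$ to be the kernel of a surjection $F\to\yo/\yo_4$. The paper's version is simply more terse, leaving the verification that $\yo/\yo_4$ inherits the $\kappa$-property and the invocation of the universal property of $F$ implicit.
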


\begin{proof}
The group $\yo$ is a $\kappa$-group, by Lemma \ref{yo kappa}, and
it has class $4$, by Lemma \ref{yo class}.
It follows that there exists $M$ in $\cor{N}_3$ such that $F/M$ is isomorphic to $\yo/\yo_4$ and, in particular, $\cor{N}_3$ is non-empty. 
\end{proof}

\begin{lemma}\label{pi3}\label{LsuN}
Let $\cor{P}$ be as in Proposition \ref{proposition tV} and denote, for each $\pi\in\cor{P}$, by $K_{\pi}$ the unique normal subgroup of $F$ containing $E$ such that $\overline{K_{\pi}}=\ker\pi$. 
Then the map $r:\cor{P}\rightarrow\cor{N}_3$ that is defined by $\pi\mapsto K_{\pi}$ is a bijection.
Moreover, for each $N\in\cor{N}_3$, one has $|L:N|=9$.
\end{lemma}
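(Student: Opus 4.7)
The plan is to define an explicit inverse $s:\cor{N}_3\to\cor{P}$ for $r$ and to derive $|L:N|=9$ as a byproduct of the cardinality counts that build $s$.

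For the well-definedness of $r$, fix $\pi\in\cor{P}$. The subgroup $\overline{L}$ is central in $\overline{F}$ by the definition of $E$, so $\overline{K_{\pi}}=\ker\pi$ is normal in $\overline{F}$, whence $K_{\pi}$ is normal in $F$. The condition $\pi_{|\overline{F_3}}=\id$ gives both $\ker\pi\cap\overline{F_3}=0$ and, by an order count using Lemmas \ref{F3bar of dim 2} and \ref{Lbar of dim 4}, $\ker\pi+\overline{F_3}=\overline{L}$; equivalently $K_{\pi}\cap F_3=E$ and $K_{\pi}F_3=L$. Setting $G=F/K_{\pi}$ one checks that $G$ is a $\kappa$-group of class $3$: the chain $[F,F_3]\subseteq[F,L]\subseteq E\subseteq K_{\pi}$ forces $G_4=1$; $G_3=F_3K_{\pi}/K_{\pi}\cong F_3/E$ has order $9$, so the class is exactly $3$; the projection $\overline{K_{\pi}}\to\overline{L}/\overline{F_3}\cong V$ (induced, via Lemma \ref{LsuF3dim2}, by cubing) is an isomorphism, from which one deduces $K_{\pi}[F,F]=F_2$ and therefore $|G:G_2|=9$; and the cubing map on $G$, under the natural identifications $G/G_2\cong V$ and $G_3\cong\overline{F_3}$, coincides with $\pi\circ c=\gamma\circ t_V(\pi)$, which is bijective because $t_V(\pi)\in\cor{K}_V$ by Proposition \ref{proposition tV}.

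Injectivity of $r$ is immediate: if $K_{\pi}=K_{\pi'}$ then $\ker\pi=\ker\pi'$, and the homomorphism $\overline{L}=\overline{F_3}\oplus\ker\pi\to\overline{F_3}$ that restricts to the identity on $\overline{F_3}$ and vanishes on $\ker\pi$ is unique; so $\pi=\pi'$.

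For surjectivity together with the index claim, fix $N\in\cor{N}_3$ and put $G=F/N$. By Lemma \ref{construction indices} (applied to the quotient $F\to F/N$ with $\iota(a)=aN$, $\iota(b)=bN$) one gets $N\subseteq F_2$; by Lemma \ref{class 3 G3}(1) we have $|G_2:G_3|=3$, so Lemma \ref{description L} forces $N\subseteq L$; and Lemma \ref{e in n} yields $E\subseteq N$. The $\kappa$-property gives $|G_3|=|G/G_2|=9$, so $|G|=9\cdot 3\cdot 9=243$, and combined with $|F:E|=9\cdot 3\cdot 9\cdot 9=3^7$ (Lemmas \ref{LsuF3dim2} and \ref{F3bar of dim 2}), this forces $|N:E|=9$ and hence $|L:N|=9$, proving the second assertion. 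The identity $|F_3N/N|=|G_3|=9=|F_3:E|$ gives $N\cap F_3=E$, and a cardinality count then forces $NF_3=L$; so $\overline{N}$ is a complement of $\overline{F_3}$ in $\overline{L}$. Define $\pi_N\in\Hom(\overline{L},\overline{F_3})$ to be the projection along $\overline{N}$; then $\pi_N|_{\overline{F_3}}=\id$, and under the identifications above the composition $\pi_N\circ c$ equals the cubing map $G/G_2\to G_3$, which is bijective because $G$ is a $\kappa$-group. Hence $\pi_N\in\cor{P}$ and $r(\pi_N)=N$. The main obstacle is bookkeeping: one has to verify carefully that the abstract cubing map $c:V\to\overline{L}$ of Lemma \ref{cappino1}, after composition with $\pi$ and read through the identifications $V\cong G/G_2$ and $\overline{F_3}\cong G_3$, genuinely encodes the cubing map in $G$; once this identification is in place, every remaining verification is an index computation in the $4$-dimensional $\F_3$-vector space $\overline{L}$ as described by Lemmas \ref{LsuF3dim2}, \ref{F3bar of dim 2}, and \ref{Lbar of dim 4}.
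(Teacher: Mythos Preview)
Your proof is correct and follows essentially the same route as the paper's. The paper establishes well-definedness of $r$ (showing $G=F/K_\pi$ is a $\kappa$-group of class $3$ with $|L:K_\pi|=9$), then declares that bijectivity is ``easy'' and deduces $|L:N|=9$ from surjectivity; you simply fill in that easy part by constructing the inverse $N\mapsto\pi_N$ explicitly and obtain $|L:N|=9$ along the way via the order count $|G|=3^5$ against $|F:E|=3^7$.

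One small point that deserves a word: the step ``$|F_3N/N|=|G_3|$'' is not just an index identity but the equality $F_3N/N=G_3$. This holds because once $G_2=F_2/N$ one has $G_3=[F,F_2]N/N$, and $F_3=[F,F_2]F_2^3$ with $F_2^3\subseteq E\subseteq N$, so $F_3N=[F,F_2]N$. You flag this class of bookkeeping at the end, so it is not a genuine gap, but making it explicit there would remove any appearance of circularity in the chain $|F_3N/N|=|G_3|\Rightarrow N\cap F_3=E\Rightarrow NF_3=L$.
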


\begin{proof}
We first show that $r$ is well-defined. To this end, let $\pi$ be an element of $\cor{P}$ and set $G=F/K_{\pi}$. 
Since $|F:F_2|=9$, Lemma \ref{frattini comm} yields that $G_2$ is equal to $F_2/K_{\pi}$.
Moreover, it easily follows from the definition of $\cor{P}$ that $\overline{L}$ decomposes as $\ker\pi\oplus\overline{F_3}=\overline{K_{\pi}}\oplus\overline{F_3}$. In particular, 
$L/K_{\pi}$ and $\overline{F_3}$ are naturally isomorphic and so, as a consequence of Lemma \ref{gammino35}, the subgroup $G_3$ coincides with $L/K_{\pi}$. The class of $G$ is equal to $3$, because $L$ is central modulo $E$. Now, the map $\pi\circ c$ being bijective, it follows that the cubing map induces a bijection 
$F/F_2\rightarrow \overline{F_3}$ and so, via the natural isomorphism $\overline{F_3}\rightarrow L/K_{\pi}$, the cubing map induces a bijection $G/G_2\rightarrow G_3$. As a result, we have that $|L:K_{\pi}|=|G_3|=|G:G_2|=|F:F_2|=9$ and $G$ is a $\kappa$-group. The choice of $\pi$ being arbitrary, we have proven that $r$ is well-defined. It is now easy to show that $r$ is bijective. From the surjectivity of $r$ one deduces that, for all $N\in\cor{N}_3$, the index $|L:N|$ is equal to $9$.
\end{proof}

\begin{proposition}\label{transitive action N}
The set $\cor{N}_3$ has cardinality $3$ and the natural action of $\Aut(F)$ on $\cor{N}_3$ is transitive.
\end{proposition}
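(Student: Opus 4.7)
The cardinality statement is already essentially in hand: by Lemma \ref{LsuN} the map $r\colon\cor{P}\to\cor{N}_3$ is a bijection, and by Proposition \ref{proposition tV} the set $\cor{P}$ has cardinality $3$, so $|\cor{N}_3|=3$.

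For transitivity, the plan is to transport the $\Aut(F)$-action along the chain of bijections
\[
\cor{I}_V\ \overset{s_V}{\longrightarrow}\ \cor{K}_V\ \overset{t_V^{-1}}{\longrightarrow}\ \cor{P}\ \overset{r}{\longrightarrow}\ \cor{N}_3
\]
and reduce to the transitivity of $\Aut(V)$ on $\cor{I}_V$ established in Lemma \ref{cardinality IV}. First I would make $\Aut(F)$ act on each of the four sets in a compatible way. The subgroups $F_2$, $L$, $F_3$, and $E$ are all characteristic in $F$ (each is defined by an iterated commutator and power construction), so an automorphism $\phi\in\Aut(F)$ preserves them and hence induces automorphisms of $V$, $\overline{L}$, and $\overline{F_3}$; by functoriality it acts on $\cor{I}_V$ (by conjugation through $\Aut(V)$), on $\cor{K}_V$ (by transport of bijections), on $\cor{P}$ (by $\pi\mapsto\phi\circ\pi\circ\phi^{-1}$ on $\overline L$), and on $\cor{N}_3$ (by $N\mapsto\phi(N)$).

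Next I would verify that each arrow in the chain above is $\Aut(F)$-equivariant. Equivariance of $r$ is immediate from its construction (kernels are carried to kernels). Equivariance of $t_V$ follows from the naturality of $\gamma$ and of the induced cubing map $c\colon V\to\overline L$, both of which were constructed from the commutator and power maps of $F$ and therefore commute with the action of any $\phi\in\Aut(F)$. Equivariance of $s_V$ is straightforward from its formula $k=\F_3[i]\mapsto(x\mapsto ix\otimes(ix\wedge x))$: a linear automorphism $g$ of $V$ sends the field $\F_3[i]$ to $\F_3[gig^{-1}]$, and the corresponding $\kappa$-structure transforms by $x\mapsto gig^{-1}x\otimes(gig^{-1}x\wedge x)$, which is exactly the push-forward of $s_V(k)$ under $g$ after identifying $V\otimes\bigwedge^2 V$ with itself via $g\otimes(g\wedge g)$.

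The final ingredient is that the natural homomorphism $\Aut(F)\to\Aut(V)=\GL_2(\F_3)$ is surjective, a standard fact for the free group on two generators (any change-of-basis matrix on $V$ can be lifted to a substitution on the two free generators, which defines an automorphism of $F$). Granted this, the $\Aut(F)$-action on $\cor{I}_V$ factors through the full $\Aut(V)$-action, which by Lemma \ref{cardinality IV} is transitive; transporting along the equivariant chain above yields transitivity of $\Aut(F)$ on $\cor{N}_3$. The only step requiring real care is the verification of equivariance of $s_V$ and $t_V$, since it involves keeping track of how an automorphism of $F$ acts through the commutator bracket $\gamma$ and the induced cubing $c$; everything else is either formal or already proved.
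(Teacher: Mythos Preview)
Your proposal is correct and follows essentially the same route as the paper: compose the bijections $s_V$, $t_V^{-1}$, $r$ into a single $\Aut(F)$-equivariant bijection $\cor{I}_V\to\cor{N}_3$, read off cardinality $3$, and deduce transitivity from Lemma \ref{cardinality IV}. The paper is terser---it simply asserts that the composite respects the $\Aut(F)$-action and that transitivity of $\Aut(V)$ on $\cor{I}_V$ implies transitivity of $\Aut(F)$---whereas you spell out the equivariance of each arrow and make explicit the surjectivity of $\Aut(F)\to\Aut(V)=\GL_2(\F_3)$, which the paper uses tacitly.
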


\begin{proof}
Let $\cor{I}_V$ be defined as in Section \ref{section strutture}.
%For each $\alpha\in\Aut(F)$ denote by $\overline{\alpha}$ the image of $\alpha$ under the natural map 
%$\Aut(F)\rightarrow\Aut(V)$.
%Then $\Aut(F)$ acts on $\cor{I}_V$ by
%$\alpha\mapsto (k\mapsto \overline{\alpha}k\overline{\alpha}^{\,-1})$.
Define moreover $\psi:\cor{I}_V\rightarrow\cor{N}_3$ to be $\psi=r\circ t_V^{-1}\circ s_V$, where $s_V$, $t_V$, and $r$ are as in Propositions \ref{proposition sV} and \ref{proposition tV} and Lemma \ref{pi3}.
The combination of the just-mentioned results yields that $\psi$ is a bijection and, from its definition, it is easy to check that it respects the action of $\Aut(F)$.
Now, by Lemma \ref{cardinality IV}, the set $\cor{I}_V$ has cardinality $3$ and so $\cor{N}_3$ has cardinality $3$.
%. Now, 
%via the isomorphisms $\Aut(V)\rightarrow \GL_2(\F_3)$ and $\End(V)\rightarrow \End(\F_3^2)$, 
%one easily computes that the stabilizer of a given element of $\graffe{i\in\End(V)}/{\pm 1}$ with respect to the action of $\Aut(V)$ has cardinality $16$. The cardinality of $\Aut(V)$ being equal to $48$, 
Again by Lemma \ref{cardinality IV}, the action of $\Aut(V)$ on $\cor{I}_V$ is transitive and thus the action of $\Aut(F)$ on $\cor{I}_V$ is transitive.
Since the map $\psi$ is an isomorphism of $\Aut(F)$-sets, the action of $\Aut(F)$ on $\cor{N}_3$ is transitive.
\end{proof}

\noindent
We are finally ready to give the proof of Proposition \ref{proposition yo kappa}. Let $G$ be a $\kappa$-group of class $3$. As a consequence of Proposition \ref{n3}, there exist $N$ and $M$ normal subgroups of $F$ such that $F/N$ and $F/M$ are respectively isomorphic to $G$ and $\yo/\yo_4$. Fix such $M$ and $N$. Then, thanks to Lemma \ref{transitive action N}, there exists an automorphism of $F$ mapping $M$ to $N$, which thus induces an isomorphism between $G$ and $\yo/\yo_4$. The choice of $G$ being arbitrary, the proof of Proposition \ref{proposition yo kappa} is complete.
\vspace{8pt}\\
\noindent
We conclude the present section by giving the proof of Theorem \ref{theorem unique kappa}. If $G$ is a $\kappa$-group of class $3$, then, by Proposition \ref{proposition yo kappa}, the group $G$ is isomorphic to $\yo/\yo_4$. On the other hand, the group $\yo$ has class $4$, by Lemma \ref{yo class}, and it is a $\kappa$-group, by Lemma \ref{yo kappa}. It follows that $\yo/\yo_4$ is a $\kappa$-group of class $3$. This proves Theorem \ref{theorem unique kappa}.

\section{Extensions of $\kappa$-groups}\label{section extension}

\noindent
We recall here that a $\kappa$-group is a finite $3$-group $G$ such that 
$|G:G_2|=9$ and such that cubing in $G$ induces a bijection $G/G_2\rightarrow G_3/G_4$. We remind the reader that we investigate $\kappa$-groups because we aim at classifying $3$-groups of class at least $4$ and intensity greater than $1$: those groups are all $\kappa$-groups, as a consequence of Lemma \ref{quotient kappa}.
The main purpose of the present section is that of proving the following proposition, which is the same as Theorem \ref{theorem kappa G2}.

\begin{proposition}\label{G2elemabelian}
Let $G$ be a $\kappa$-group such that $G_4$ has order $3$.
Then the subgroup $G_2$ is elementary abelian.
\end{proposition}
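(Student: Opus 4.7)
The plan is to first extract all the structural consequences of the hypotheses, then reduce the problem to a single explicit cubing calculation, and finally pin down the relevant $G_4$-valued correction term.

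\textbf{Setup and reduction.} First I would observe that, since $G$ is nilpotent and $G_4$ is nontrivial, the class of $G$ is exactly $4$. The $\kappa$-property gives $w_1 = w_3 = 2$; the hypothesis $|G_4|=3$ gives $w_4 = 1$; and applying Lemma \ref{commumino} to the class-$3$ quotient $G/G_4$ forces $w_2 = 1$. So $(w_1,w_2,w_3,w_4) = (2,1,2,1)$ and $|G| = 3^6$, which makes the full machinery of Section \ref{section cubing} available: $G_2$ is abelian (Lemma \ref{G2 piccoletto}), $G_3$ is elementary abelian of order $27$ (Lemma \ref{g3 elem abelian}), $\ZG(G) = G_4$ (Lemma \ref{centro 3}). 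Moreover, Lemma \ref{class 3 G2 elem abelian} applied to $G/G_4$ gives that $G_2/G_4$ is elementary abelian, so $G_2^3 \subseteq G_4$. Since $G_2$ is abelian, the cubing map $\rho_2 : G_2 \to G_4$ is a homomorphism; its kernel contains $G_3$, and $|G_2/G_3| = |G_4| = 3$, so either $\rho_2 = 0$ (and we are done) or $\rho_2$ induces an isomorphism $G_2/G_3 \xrightarrow{\sim} G_4$. It therefore suffices to exhibit one element $c \in G_2 \setminus G_3$ with $c^3 = 1$.

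\textbf{Key computation.} I would take a pair of generators $a,b$ of $G$ and set $c = [a,b]$; because the commutator-induced map $\bigwedge^2 V \to G_2/G_3$ (with $V = G/G_2$) is nonzero, $cG_3$ generates $G_2/G_3$. The plan is to extend the derivation of Lemma \ref{formula cubing} one class higher. Repeating that collecting argument but now keeping the term $[b,a]^3$ (which in class $3$ with $G_2^3=1$ vanished, but here lies in $G_4$) and tracking the $G_4$-valued corrections coming from the fact that the pairing $G/G_2 \times G_2/G_3 \to G_3$ is only bilinear modulo $G_4$, I expect to obtain the identity
\[ (ab)^3 \;=\; a^3 b^3 [b,a]^3 [ab^{-1}, [a,b]] \quad \text{in } G. \]
Reducing modulo $G_4$ reproduces the $\kappa$-structure identity, so combining with the definition of $\kappa$ rearranges to $c^3 = [b,a]^{-3}$ lifted into $G_4$, that is, $c^3$ is expressed as the $G_4$-valued discrepancy between the exact cubing formula in $G$ and the $\kappa$-formula in $G_3/G_4$.

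\textbf{Vanishing of the discrepancy.} The remaining step is to show this discrepancy is trivial. I would do so by substituting $(a,b)$ with other pairs of generators, e.g.\ $(a,ab)$, $(a^{-1},b)$, and $(a,b^{-1})$, to obtain parallel identities. Each of these expresses the same element $c^3 \in G_4$ (or its inverse) in terms of $G_4$-valued correction terms; using the non-degeneracy of the bilinear pairing $G/G_2 \times G_3/G_4 \to G_4$ (Lemma \ref{class 4 comm map non-deg}) to evaluate each correction, these relations become an overdetermined linear system in the one-dimensional $\F_3$-space $G_4$, whose only consistent solution is $c^3 = 1$. Alternatively, one can realise $G$ as a quotient of the free group $F$ on two generators, extending the construction of Section \ref{section strutture free} by one more layer: Theorem \ref{theorem unique kappa} fixes the class-$3$ quotient of $G$, so any class-$4$ $\kappa$-group with $|G_4|=3$ corresponds to a normal subgroup $N \subseteq F$ containing a controlled set of relators, and one checks directly that $F_2^3 \subseteq N$ is forced.

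\textbf{Main obstacle.} The delicate step is the last one. Each individual commutator manipulation is routine, but the $G_4$-corrections one picks up when moving past the various failures of bilinearity accumulate, and isolating $c^3$ requires careful bookkeeping within the single one-dimensional space $G_4$; there is no room for the terms to cancel by coincidence, and something specific to $p=3$ and to the $\kappa$-structure has to be invoked to close the argument. The free-group parametrisation may well be the cleanest route, since it reduces the problem to verifying a single explicit relation in a universal example rather than juggling multiple commutator identities in the abstract.
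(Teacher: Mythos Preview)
Your setup and reduction are correct and coincide with the paper's: everything comes down to showing $c^3=1$ for $c=[a,b]$.

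The gap is in the key computation. The derivation of Lemma~\ref{formula cubing} repeatedly commutes elements of $G_3$ past elements of $G$, using that $G_3$ is central in class~$3$; in class~$4$ each such move contributes a factor in $[G,G_3]=G_4$, and these do \emph{not} all cancel---they involve $[a,d]$, $[b,d]$, $[a,e]$, $[b,e]$ with $d=[a,c]$, $e=[b,c]$. Restoring only the $[b,a]^3=c^{-3}$ term is therefore not enough; your displayed identity is incomplete. Worse, even if it were exact it would be vacuous for your purpose: reducing it modulo $G_4$ recovers Lemma~\ref{formula cubing}, so the ``discrepancy'' you isolate is literally $c^{-3}$, and you have learned nothing about $c^3$ beyond its tautological value. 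The substitution idea then cannot close the argument as stated: without further input you carry four unknowns in $G_4$ (the lifts of $a^3$, $b^3$, $c^3$, and the sign $u$ determining $[b,e]$), and the identities coming from different generating pairs are consequences of one another via the $\kappa$-structure.

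The missing ingredient is the one you list only as an alternative: Theorem~\ref{theorem unique kappa} should be the \emph{first} step. It lets you choose generators with $a^3\equiv e^{-1}$ and $b^3\equiv d$ modulo $G_4$ (Lemma~\ref{yo structure}), which eliminates two unknowns. The paper then avoids any general cubing formula and instead computes the two short products $ab^3$ and $a^3b$: the first yields $u+t\equiv 1\bmod 3$ (where $c^3=f^t$, $[b,e]=f^u$), and if $(u,t)=(-1,-1)$ the second forces $a^3$ to centralise $b$, hence $G_3\subseteq\ZG(\langle b,G_2\rangle)$, contradicting $[b,e]\neq 1$. The point is that commuting a generator with a \emph{cube} (rather than cubing a product) is what ties $c^3$ to the structure constants of the pairing.
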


\noindent
Until the end of Section \ref{section extension}, we will work under the assumptions of Proposition \ref{G2elemabelian}. For each $i\in\Z_{\geq 1}$, we set $w_i=\wt_G(i)$. It follows from the assumptions, together with Lemma \ref{class 3 G3}($1$), that $(w_1,w_2,w_3,w_4)=(2,1,2,1)$.
Moreover, the group $G/G_4$ being a $\kappa$-group of class $3$, Proposition \ref{proposition yo kappa} yields that $G/G_4$ is isomorphic to $\yo/\yo_4$. It follows from Lemma \ref{yo structure}($2$) that there exist generators $a,b$ of $G$ satisfying 
$a^3\equiv [b,[a,b]]^{-1}\bmod G_4$ and $b^3\equiv [a,[a,b]]\bmod G_4$.
Call $c=[a,b]$, $d=[a,c]$, and $e=[b,c]$. Let moreover $f=[a,d]$. Then we have $a^3\equiv e^{-1}\bmod G_4$ and $b^3\equiv d\bmod G_4$.

\begin{lemma}\label{fbe}
The elements $d$ and $e$ generate $G_3$ modulo $G_4$.
\end{lemma}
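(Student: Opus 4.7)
The plan is to apply Lemma \ref{commumino} to pin down $G_3/G_4$ via the commutator map. Since $G$ is a $\kappa$-group with $G_4$ of order $3$, the widths satisfy $(w_1,w_2,w_3,w_4)=(2,1,2,1)$, so $|G_3:G_4|=9$ and the commutator map induces an isomorphism
\[
\gamma:G/G_2\otimes G_2/G_3\longrightarrow G_3/G_4.
\]

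First I would identify generators of the two tensor factors. By construction $a$ and $b$ generate $G$, hence $\bar a,\bar b$ span the $2$-dimensional $\F_3$-space $G/G_2$. By Lemma \ref{bilinear LCS}, $G_2/G_3$ is generated by commutators $[x,y]$ with $x,y\in G$, and since the induced commutator map $G/G_2\times G/G_2\to G_2/G_3$ is bilinear, its values on the generators $\bar a,\bar b$ already generate the target; because this target is cyclic of order $3$, the single element $c=[a,b]$ suffices to generate $G_2/G_3$.

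Consequently the four elementary tensors $\bar a\otimes\bar c$ and $\bar b\otimes\bar c$ generate $G/G_2\otimes G_2/G_3$ (in fact these two alone do). Applying the isomorphism $\gamma$ sends $\bar a\otimes\bar c\mapsto [a,c]G_4=dG_4$ and $\bar b\otimes\bar c\mapsto [b,c]G_4=eG_4$. Therefore $\{dG_4,eG_4\}$ generates $G_3/G_4$, which is the claim. No serious obstacle arises; the statement is a direct bookkeeping consequence of Lemma \ref{commumino} once the generators of $G/G_2$ and $G_2/G_3$ are named.
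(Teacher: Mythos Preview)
Your proof is correct and follows essentially the same approach as the paper: both use Lemma \ref{commumino} to identify $G_3/G_4$ with $G/G_2\otimes G_2/G_3$, observe that $c$ generates $G_2/G_3$ (since $w_2=1$), and conclude that $d=[a,c]$ and $e=[b,c]$ span $G_3/G_4$. Your write-up is slightly more detailed in justifying why $c$ generates $G_2/G_3$, but the argument is the same; note only the small slip where you write ``the four elementary tensors'' while listing two.
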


\begin{proof}
The index $|G_2:G_3|$ is equal to $3$, so $c$ generates $G_2$ modulo $G_3$. 
By Lemma \ref{commumino}, the commutator map induces an isomorphism 
$G/G_2\otimes G_2/G_3\rightarrow G_3/G_4$, and so $d$ and $e$ span $G_3$ modulo $G_4$. 
\end{proof}

\begin{lemma}\label{greenday}
One has $G_4=\gen{f}=\gen{[b,e]}$.
\end{lemma}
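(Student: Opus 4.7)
The plan is to first identify a small set of generators of $G_4$ and then exploit Lemma~\ref{multiplication formulas commutators}(4) in two symmetric ways.

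Since $[G_2,G_3]\subseteq G_5=\{1\}$ and $G_4\subseteq Z(G)$, the commutator map induces a bilinear pairing $G/G_2\times G_3/G_4\to G_4$. As $\{aG_2,bG_2\}$ spans $G/G_2$ and, by Lemma~\ref{fbe}, $\{dG_4,eG_4\}$ spans $G_3/G_4$, the subgroup $G_4=[G,G_3]$ is generated by the four commutators $f=[a,d]$, $[a,e]$, $[b,d]$, and $[b,e]$. Writing $a^3=e^{-1}g$ with $g\in G_4\subseteq Z(G)$, the identity $[a,a^3]=1$ forces $[a,e^{-1}]=1$ and hence $[a,e]=1$; symmetrically $[b,d]=1$ from $b^3\equiv d\pmod{G_4}$. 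Therefore $G_4=\gen{f,[b,e]}$.

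The core of the argument is to apply Lemma~\ref{multiplication formulas commutators}(4) with $n=3$ twice. Taking $(x,y)=(a,b)$ gives $c^{-3}[a,b^3]=[[b,a],b]\cdot[[b^2,a],b]$. Using $[c^{-1},b]=e$, $[b^2,a]=e^{-1}c^{-2}$, and $[e^{-1}c^{-2},b]=e^2[b,e]$, simplifications that rest on $[c,e]=1$ (from $[G_2,G_3]=\{1\}$), on $e^3=1$ (Lemma~\ref{g3 elem abelian}), on the abelianness of $G_3$, and on the centrality of $G_4$, the right-hand side collapses to $e\cdot e^2[b,e]=[b,e]$. Since $b^3\equiv d\pmod{G_4}$ and $G_4\subseteq Z(G)$, the left-hand side is $c^{-3}\cdot[a,d]=c^{-3}f$; hence $f=c^3[b,e]$. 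Carrying out the parallel calculation with $(x,y)=(b,a)$ and using $[b,a^3]=[b,e^{-1}]=[b,e]^{-1}$ yields symmetrically $[b,e]=c^3f$.

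Substituting the first relation into the second gives $[b,e]=c^6[b,e]$, so $c^6=1$. Since $c^3\in G_4$ and $|G_4|=3$, this forces $c^3=1$, and consequently $f=[b,e]$. Because $G$ has class $4$, we have $G_4\neq\{1\}$; combined with $G_4=\gen{f,[b,e]}=\gen{f}$, this gives $f\neq 1$, so $G_4=\gen{f}=\gen{[b,e]}$. The main obstacle will be the careful unwinding of the nested commutators $[e^{-1}c^{-2},b]$ and $[dc^2,a]$ in the two expansions, which requires repeated use of $[c,d]=[c,e]=1$, $d^3=e^3=1$, the abelianness of $G_3$, and the centrality of $G_4$.
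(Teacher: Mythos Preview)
Your proof is correct, but it takes a much longer and more computational route than the paper's. The paper's argument is essentially two lines: since $a^3\equiv e^{-1}\bmod G_4$ and $G_4$ is central, one has $[a,e]=[a,a^{-3}]=1$; by Lemma~\ref{centro 3} the centre of $G$ equals $G_4$, so Lemma~\ref{class 4 comm map non-deg} gives a \emph{non-degenerate} pairing $G/G_2\times G_3/G_4\to G_4$, and since $d,e$ span $G_3/G_4$ while $a\notin G_2$, we cannot also have $[a,d]=1$. Thus $f=[a,d]\neq 1$, and symmetrically $[b,e]\neq 1$; as $|G_4|=3$, this finishes the proof. You establish the same vanishing facts $[a,e]=[b,d]=1$ in your first paragraph, so at that point the non-degeneracy argument is already available to you and the lemma is done.

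What your longer computation buys is more than the lemma asks for: you prove the exact relations $f=c^3[b,e]$ and $[b,e]=c^3f$, hence $c^3=1$ and $f=[b,e]$. In the paper, these facts are established only later, in the proof of Proposition~\ref{G2elemabelian}, via a different calculation. So your approach front-loads work that the paper defers, and in fact your identity $c^3=1$ already settles the parameter $t$ of Lemma~\ref{utrs}. One small point: you assert ``since $c^3\in G_4$'' without justification; this follows from Lemma~\ref{class 3 G2 elem abelian} applied to $G/G_4$, and you should cite it.
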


\begin{proof}
By Lemma \ref{centro 3}, the centre of $G$ is equal to $G_4$ and, by Lemma \ref{class 4 comm map non-deg},
the commutator map 
$G/G_2\times G_3/G_4\rightarrow G_4$ is non-degenerate. 
The elements $d$ and $e$ generate $G_3$ modulo $G_4$, thanks to Lemma \ref{fbe}, and, by the choice of $a$ and $b$, we also have 
 $a^3\equiv e^{-1}\bmod G_4$ and $b^3\equiv d\bmod G_4$. From the non-degeneracy of the commutator map, it follows that both $f$ and $[b,e]$ are non-trivial elements of $G_4$, which, being cyclic of order $3$, then satisfies $G_4=\gen{f}=\gen{[b,e]}$.
\end{proof}

\begin{lemma}\label{utrs}
There exists a pair $(u,t)$ in $\graffe{\pm 1}\times\Z$ such that 
$[b,e]=f^u$ and $c^3=f^t$. Moreover, there exist $r,s\in\Z$ such that 
$a^3=e^{-1}f^r$ and $b^3=df^s$.
\end{lemma}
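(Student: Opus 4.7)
The plan is to extract each of the four assertions directly from results that are already in place; no new structural work is required.

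First I would handle $[b,e]$. By Lemma \ref{greenday} the cyclic group $G_4$ of order $3$ is generated by $f$ and also by $[b,e]$, so $[b,e]$ is a non-identity element of $\langle f\rangle=\{1,f,f^{-1}\}$. Hence $[b,e]=f^u$ with $u\in\{\pm 1\}$, which is the first half of the pair claim.

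Next I would deal with $c^3$. Since $c=[a,b]$ lies in $G_2$, the class $cG_2$ is the identity of $G/G_2$. Applying Lemma \ref{cubing induced} (which is available because $G$ is a $\kappa$-group, hence $|G:G_2|=9$ and the class is at least $3$), the cubing map factors through a well-defined function $\kappa:G/G_2\to G_3/G_4$, so $c^3\equiv 1 \bmod G_4$. In particular $c^3\in G_4=\langle f\rangle$, giving $c^3=f^t$ for some integer $t$; this completes the pair $(u,t)$.

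Finally, the expressions for $a^3$ and $b^3$ are immediate from the choice of generators made just before Lemma \ref{fbe}: by construction $a^3\equiv e^{-1}\bmod G_4$ and $b^3\equiv d\bmod G_4$. Since $G_4$ is central (it lies in $Z(G)$ by Lemma \ref{centro 3}) and equals $\langle f\rangle$, the elements $a^3 e$ and $b^3 d^{-1}$ are central powers of $f$, so I can write $a^3=e^{-1}f^r$ and $b^3=d f^s$ for suitable integers $r,s$. There is no genuine obstacle here; the lemma is a bookkeeping step that packages the freedom left in the choice of $a,b$ (modulo $G_4$) and the constraints coming from $|G_4|=3$, readying the notation for the deeper computations to follow.
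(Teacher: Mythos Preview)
Your proposal is correct and follows essentially the same approach as the paper. The only minor difference is the justification for $c^3\in G_4$: the paper invokes Lemma \ref{class 3 G2 elem abelian} (applied to $G/G_4$) to conclude $G_2^3\subseteq G_4$, whereas you use the well-definedness of $\kappa$ from Lemma \ref{cubing induced}; both are immediate and amount to the same observation.
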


\begin{proof}
By assumption, the order of $G_4$ is $3$ and, by Lemma \ref{fbe}, both elements $f$ and $[b,e]$ generate $G_4$. There exists thus $u\in\graffe{\pm 1}$ such that $[b,e]=f^u$. Moreover, by the choice of $a$ and $b$, we know that $a^3\equiv e^{-1}\bmod G_4$ and $b^3\equiv d\bmod G_4$. There exist hence integers $r$ and $s$ such that $a^3=e^{-1}f^r$ and $b^3=df^s$. To conclude, thanks to Lemma \ref{class 3 G2 elem abelian}, the subgroup $G_2^3$ is contained in $G_4$ so there exists $t\in\Z$ such that $c^3=f^t$.
\end{proof}

\noindent 
We are now ready to give the proof of Proposition \ref{G2elemabelian}. 
To this end, let $u,t,r,s$ be as in Lemma \ref{utrs}.
By Lemma \ref{G2 piccoletto}, the subgroup $G_2$ is abelian and, by Lemma \ref{g3 elem abelian}, the exponent of $G_3$ is equal to $3$. 
It follows that 
\begin{align*}
ab^3 & = cbab^2 \\
	 & = cbcbab \\
	 & = cbcbcba \\
	 & = cecbecb^2a \\
	 & = ec^2f^uebcb^2a \\
	 & = f^ue^2c^2ecb^3a \\
	 & = f^ue^3c^3b^3a \\
	 & = f^uf^tb^3a \\
	 & =f^{u+t}b^3a
\end{align*}
from which we derive
\[
fdaf^s=adf^s=ab^3=f^{u+t}b^3a=f^{u+t}df^sa.
\]
The subgroup $G_4$ is central, thanks to Lemma \ref{centro 3}, and so one gets $$fda=f^{u+t}da.$$
Since the exponent of $G_3$ is equal to $3$, we have $u+t\equiv 1\bmod 3$ and so 
$$(u,t)\equiv(1,0)\bmod 3 \ \ \text{or}\ \ (u,t)\equiv(-1,-1)\bmod 3.$$
If $(u,t)\equiv(1,0)\bmod 3$, then we are done.
We assume by contradiction that $(u,t)\equiv(-1,-1)\bmod 3$. Then $c^3=f^{-1}$ and we compute
\begin{align*}
a^3b & = a^2cba \\
	 & = adcaba \\
	 & = fdac^2ba^2 \\
	 & = fd^2cacba^2 \\
	 & = fd^2cdcaba^2 \\
	 & = fd^3c^3ba^3 \\
	 & = ba^3.
\end{align*}
We have shown that $a^3$ centralizes $b$ in $G$. Call $C=\gen{\graffe{b} \cup G_2}$. Then $a^3$ belongs to $\ZG(C)$, which then, thanks to Lemma \ref{quasicentri dei massimali}, 
contains $\graffe{a^3,b^3}\cup G_4$. The group $G$ being a $\kappa$-group, it follows that $\ZG(C)$ contains $G_3$, and so $[b,e]=1$. Contradiction to Lemma \ref{greenday}. The proof of Proposition \ref{G2elemabelian}, and thus that of Theorem \ref{theorem kappa G2}, is now complete.

\begin{corollary}\label{yoG2}
The subgroup $\yo_2$ of $\yo$ is elementary abelian.
\end{corollary}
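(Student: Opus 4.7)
The plan is to apply Proposition \ref{G2elemabelian} (equivalently, Theorem \ref{theorem kappa G2}) directly to $G=\yo$. For this I need to verify the two hypotheses of that proposition: namely, that $\yo$ is a $\kappa$-group and that $|\yo_4|=3$.

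The first hypothesis is immediate from Lemma \ref{yo kappa}, which already establishes that $\yo$ is a $\kappa$-group. For the second, Lemma \ref{yo structure}($1$) tells us that the widths of $\yo$ are $(w_1,w_2,w_3,w_4)=(2,1,2,1)$, so in particular $|\yo_4:\yo_5|=3$, and Lemma \ref{yo class} gives that $\yo$ has class $4$, so $\yo_5=\{1\}$. Hence $|\yo_4|=3$.

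With both hypotheses verified, Proposition \ref{G2elemabelian} applies and yields that $\yo_2$ is elementary abelian. There is no real obstacle here; the statement is essentially a specialization of the structural result just proven to the concrete example $\yo$ that motivated the whole discussion.
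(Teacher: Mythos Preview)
Your proof is correct and follows essentially the same approach as the paper: verify that $\yo$ is a $\kappa$-group via Lemma~\ref{yo kappa}, check that $|\yo_4|=3$ via Lemma~\ref{yo structure}($1$) (together with the class being $4$), and then apply Proposition~\ref{G2elemabelian}. The only difference is that you spell out the step $\yo_5=\{1\}$ explicitly using Lemma~\ref{yo class}, which the paper leaves implicit.
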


\begin{proof}
The group $\yo$ is a $\kappa$-group by Lemma \ref{yo kappa} and, thanks to Lemma \ref{yo structure}($1$), the subgroup $\yo_4$ has order $3$. It follows from Proposition \ref{G2elemabelian} that $\yo_2$ is elementary abelian.
\end{proof}

\begin{corollary}\label{corollary G2elemab}
Let $Q$ be a finite $3$-group of class $4$ and let $(Q_i)_{i\geq 1}$ denote the lower central series of $Q$. If $\inte(Q)>1$, then $Q_2$ is elementary abelian.
\end{corollary}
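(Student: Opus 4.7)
The plan is to derive Corollary \ref{corollary G2elemab} as an immediate consequence of the structural results already established for $3$-groups of class at least $4$ with non-trivial intensity, combined with Proposition \ref{G2elemabelian} (equivalently Theorem \ref{theorem kappa G2}).

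First I would observe that since $Q$ is a finite $3$-group of class $4$ with $\inte(Q)>1$, Lemma \ref{quotient kappa} applies and yields that $Q$ is a $\kappa$-group. Next, I would invoke Theorem \ref{theorem dimensions}, which guarantees that the widths of $Q$ satisfy $(w_1,w_2,w_3,w_4)=(2,1,2,1)$. In particular $w_4=1$, and since the class of $Q$ is exactly $4$, the subgroup $Q_5$ is trivial, so $|Q_4|=3^{w_4}=3$.

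At this point the hypotheses of Proposition \ref{G2elemabelian} are satisfied: $Q$ is a $\kappa$-group whose top term $Q_4$ has order $3$. Applying that proposition gives directly that $Q_2$ is elementary abelian, completing the proof.

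There is no real obstacle here; the work has already been done in establishing Theorem \ref{theorem kappa G2} via the computation with the generators $a,b$ and the elements $c=[a,b]$, $d=[a,c]$, $e=[b,c]$, $f=[a,d]$. The corollary is simply the translation of that $\kappa$-group result into the language of $3$-groups of intensity greater than $1$, which has been set up precisely so that this translation is mechanical. The only thing worth mentioning explicitly is the verification that $|Q_4|=3$, since Proposition \ref{G2elemabelian} was stated under that exact hypothesis rather than under the weaker condition of being a $\kappa$-group of class $4$.
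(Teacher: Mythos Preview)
Your proposal is correct and follows essentially the same route as the paper's proof: invoke Lemma \ref{quotient kappa} to see that $Q$ is a $\kappa$-group, use Theorem \ref{theorem dimensions} to conclude $|Q_4|=3$, and then apply Proposition \ref{G2elemabelian}.
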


\begin{proof}
By Lemma \ref{quotient kappa}, the group $Q$ is a $\kappa$-group. 
Moreover, thanks to Theorem \ref{theorem dimensions}, the subgroup $Q_4$ has order $3$. 
It follows from Proposition \ref{G2elemabelian} that $Q_2$ is elementary abelian. 
\end{proof}

\begin{corollary}\label{3-gps class at most 4}
Let $Q$ be a finite $3$-group with $\inte(Q)>1$. Then $Q$ has nilpotency class at most $4$.
\end{corollary}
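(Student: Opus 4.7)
The plan is to argue by contradiction, combining the power-map information from Chapter~\ref{chapter higher} with the elementary-abelianness result Corollary~\ref{corollary G2elemab} just established. So suppose that $Q$ is a finite $3$-group with $\inte(Q)>1$ whose class $c$ is at least $5$; I want to derive a contradiction at the level of $Q_4/Q_5$.

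First I would apply Proposition~\ref{class at least 5, G2^p=G4} directly to $Q$: since $c\geq 5$ and $\inte(Q)>1$, it yields $Q_2^3=Q_4$. Next I would pass to the quotient $Q/Q_5$; by Lemma~\ref{intensity of quotients} we have $\inte(Q/Q_5)>1$, and because $c\geq 5$ this quotient has class exactly $4$. Corollary~\ref{corollary G2elemab} then tells us that $(Q/Q_5)_2=Q_2/Q_5$ is elementary abelian, which is exactly the statement $Q_2^3\subseteq Q_5$.

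Putting the two inclusions together, we obtain
\[
Q_4 \;=\; Q_2^{3} \;\subseteq\; Q_5 \;\subseteq\; Q_4,
\]
so $Q_4=Q_5$. On the other hand, Theorem~\ref{theorem dimensions} applies to $Q$ (which has class at least $4$) and guarantees $\wt_Q(4)=1$, i.e.\ $|Q_4:Q_5|=3$. This is the desired contradiction, and it forces $c\leq 4$.

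I do not expect any real obstacle here: both of the key ingredients, Proposition~\ref{class at least 5, G2^p=G4} and Corollary~\ref{corollary G2elemab}, have already been proven in the exact shape needed, and the width computation $w_4=1$ from Theorem~\ref{theorem dimensions} is what makes the two inclusions incompatible. The only thing to be slightly careful about is to apply Corollary~\ref{corollary G2elemab} to the class-$4$ quotient $Q/Q_5$ rather than to $Q$ itself, since $Q$ is assumed to have higher class.
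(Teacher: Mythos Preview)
Your proof is correct and follows essentially the same approach as the paper: both combine Corollary~\ref{corollary G2elemab} (applied to $Q/Q_5$) with Proposition~\ref{class at least 5, G2^p=G4} to obtain the incompatible inclusions $Q_2^3\subseteq Q_5$ and $Q_2^3=Q_4$. The only minor difference is that you invoke Theorem~\ref{theorem dimensions} to conclude $Q_4\neq Q_5$, whereas the paper leaves this implicit (it follows already from the strict descent of the lower central series in a nilpotent group of class $\geq 5$); either way the argument is the same.
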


\begin{proof}
Assume that $Q$ has class at least $4$. 
Thanks to Lemma \ref{intensity of quotients}, the intensity of $Q/Q_5$ is greater than $1$, and so, as a consequence of Corollary \ref{corollary G2elemab}, the subgroup $Q_2^3$ is contained in $Q_5$. 
However, because of Proposition \ref{class at least 5, G2^p=G4}, each finite $3$-group $H$ of class at least $5$ with $\inte(H)>1$ satisfies $H_2^3=H_4$ and so it follows that $Q$ has class at most $4$.
\end{proof}

\section{Constructing automorphisms}\label{section unique with automorphism}

In this section we aim at understanding the structure of finite $3$-groups of class $4$ and intensity greater than $1$. We recall that a $\kappa$-group is a finite $3$-group $G$ such that $|G:G_2|=9$ and the cubing map on $G$ induces a bijection $G/G_2\rightarrow G_3/G_4$ (see Section \ref{section cubing} for a closer look at $\kappa$-groups). The reason why $\kappa$-groups are so special for us is Lemma \ref{quotient kappa}, which asserts that any finite $3$-group of class $4$ and intensity greater than $1$ is a $\kappa$-group. Moreover, we know from Proposition \ref{proposition -1^i}, that if we hope to construct a $3$-group $G$ of large class and intensity greater than $1$, then we need as well to construct an automorphism of order $2$ of $G$ that induces the inversion map on the abelianization of $G$. 
We will devote the present section to the proof of the following result.

\begin{proposition}\label{proposition unique with automorphism}
Let $G$ be a $\kappa$-group such that $G_4$ has order $3$. Assume that $G$ possesses an automorphism of order $2$ that induces the inversion map on $G/G_2$. Then $G$ is isomorphic to $\yo$.
\end{proposition}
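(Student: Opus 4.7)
By Theorem \ref{theorem kappa G2}, the subgroup $G_2$ is elementary abelian, and by Proposition \ref{proposition yo kappa}, $G/G_4\cong\yo/\yo_4$. Lifting the generators from Lemma \ref{yo structure}($2$), I would fix generators $a,b$ of $G$ so that, writing $c=[a,b]$, $d=[a,c]$, $e=[b,c]$, $f=[a,d]$, one has $a^3\equiv e^{-1}$ and $b^3\equiv d\bmod G_4$. Setting $a^3=e^{-1}f^r$, $b^3=df^s$, $c^3=f^t$, $[b,e]=f^u$, the argument in the proof of Proposition \ref{G2elemabelian} forces $(t,u)=(0,1)$. The non-degeneracy of the commutator pairing $G/G_2\times G_3/G_4\to G_4$ (Lemma \ref{class 4 comm map non-deg}, using $\ZG(G)=G_4$ from Lemma \ref{centro 3}), combined with the Hall-Witt identity $[a,[b,c]][b,[c,a]][c,[a,b]]=1$ (which holds because $G_2$ is abelian), then forces $[a,e]=[b,d]=1$. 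The only remaining parameters in the presentation of $G$ are $r,s\in\F_3$.

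The key step is to use the concrete automorphism $\alpha$ to fix $r$ and $s$. Via the bijection $G/G^+\to G^-$ of Lemma \ref{cardinality +-}, I would replace $a$ and $b$ by $a\alpha(a)^{-1}$ and $b\alpha(b)^{-1}$; these lie in $G^-$, still generate $G$ modulo $G_2$, and the change in $(r,s)$ is tracked using Proposition \ref{proposition -1^i}. After this normalization, $\alpha(a)=a^{-1}$ and $\alpha(b)=b^{-1}$. A direct commutator computation, starting from the identity $[a^{-1},b^{-1}]=(ab)^{-1}c(ab)$ and using Lemma \ref{multiplication formulas commutators}, yields
\[\alpha(c)=cd^{-1}e^{-1}f^{-1},\quad \alpha(d)=d^{-1}f^{-1},\quad \alpha(e)=e^{-1}f^{-1}.\]
Comparing $\alpha(a^3)=a^{-3}$ with $\alpha(e^{-1}f^r)=ef^{1+r}$ forces $r=1$, and analogously $s=-1$.

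Once $(r,s)$ is determined, $G$ is completely presented on the two generators $a,b$ by the relations above, so its isomorphism class is uniquely determined. The same analysis applied to $\yo$, which satisfies all the hypotheses by Lemmas \ref{yo class}, \ref{yo kappa}, \ref{yo inversion} and Corollary \ref{yoG2}, yields precisely the same presentation with the same $(r,s)$, and therefore $G\cong\yo$.

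The hard part will be the normalization step: replacing $a$ by $a\alpha(a)^{-1}$ produces an element congruent to $a^{-1}$ modulo $G_2$, so the congruences $a^3\equiv e^{-1}$, $b^3\equiv d\bmod G_4$ from Lemma \ref{yo structure}($2$) have to be re-derived for the new generators. This requires a careful bookkeeping of how the map $\kappa:G/G_2\to G_3/G_4$ interacts with inversion in $G/G_2$, and of how the exponents $r,s$ transform under the generator change. The remaining commutator identities are conceptually straightforward but lengthy, due to the many $G_4$-correction terms that need to be controlled.
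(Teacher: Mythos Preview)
Your approach is correct and genuinely different from the paper's. The paper argues abstractly: it defines $\cor{N}_4$ as the set of normal subgroups $M$ of the free group $F$ with $F/M$ a $\kappa$-group of class~$4$, $|F/M|_4=3$, possessing the order-$2$ automorphism. Using the free-group machinery from Section~\ref{section strutture free}, it shows (Proposition~\ref{proposition beta a meno di interni}, Lemma~\ref{n4 betastable}) that every such $M$ is $\beta$-stable, and then that the map $\cor{N}_4\to\cor{N}_3$, $M\mapsto ME$, is a bijection of $\Aut(F)$-sets (Lemma~\ref{bijection34}). Since $\Aut(F)$ acts transitively on $\cor{N}_3$ (Proposition~\ref{transitive action N}), it acts transitively on $\cor{N}_4$, so all such groups are isomorphic to $\yo$.

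Your route bypasses the free-group framework entirely and pins down a polycyclic presentation. The key steps all check out: the argument from Proposition~\ref{G2elemabelian} indeed forces $(t,u)=(0,1)$; the Hall--Witt identity applied to $a,b,c$ (with $[c,c]=1$ and $[G_2,G_2]=1$) gives $[a,e]=[b,d]$, and then non-degeneracy of the pairing forces both to vanish since the resulting Gram matrix over $\F_3$ has determinant $1-x^2$, nonzero only at $x=0$. Your computation of $\alpha(c),\alpha(d),\alpha(e)$ is correct, and comparing $\alpha(a^3)$ two ways does yield $r=1$, $s=-1$. Two small points: you should cite Lemma~\ref{action chi^i general} rather than Proposition~\ref{proposition -1^i} (the latter assumes $\alpha$ intense, which is not yet known); and your normalization step is cleaner than you fear, since $\kappa$ commutes with $\F_3$-scalars, so replacing $(a,b)$ by $(a\alpha(a)^{-1},b\alpha(b)^{-1})\equiv(a^{-1},b^{-1})\bmod G_2$ automatically preserves the relations $a^3\equiv e^{-1}$, $b^3\equiv d$ modulo $G_4$ with the \emph{new} $d,e$. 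Your approach is more hands-on and avoids the $\cor{N}_3,\cor{N}_4$ apparatus; the paper's approach is more conceptual and makes the role of the involution $\beta$ structurally transparent.
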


\noindent
We will prove Proposition \ref{proposition unique with automorphism} at the end of the present section and so
the following assumptions will hold until the end of Section \ref{section unique with automorphism}.
Let $G$ be a $\kappa$-group such that $G_4$ has order $3$. Then the group $G$ has class $4$ and $(\wt_G(i))_{i=1}^4=(2,1,2,1)$.
Let $F$ be the free group on the set $S=\graffe{a,b}$ and
let $\iota:S\rightarrow G$ be such that $G=\gen{\iota(S)}$. 
By the universal property of free groups, there exists a unique homomorphism $\phi: F\rightarrow G$ such that 
$\phi(a)=\iota(a)$ and $\phi(b)=\iota(b)$. 
As a consequence of its definition, the map $\phi$ is surjective.  
Let $(F_i)_{i\geq 1}$ denote the lower $3$-series of $F$, 
which is defined recursively as
\[F_1=F \ \ \text{and} \ \ F_{i+1}=[F,F_i]F_i^3.\]
and, in addition, let
$$L=F^3F_3 \ \ \text{and}\ \ E=[F,L]F_2^3.$$ 
All $F_i$'s, $L$, and $E$ are stabilized by any endomorphism of $F$. For a visualization of such groups we refer to the end of Section \ref{section construction} or to the diagram before Lemma \ref{complementE}.
Let $\beta$ be the endomorphism of $F$ sending $a$ to $a^{-1}$ and $b$ to $b^{-1}$. Since $\beta^2=\id_F$, the map $\beta$ is an automorphism of $F$. We remind the reader that we have already worked with such an automorphism $\beta$ in Section \ref{section construction} and we will thus, in this section, often apply results achieved in Section \ref{section construction}. 
We conclude by defining two specific sets, consisting of normal subgroups of $F$.
Let $\cor{N}_3$ denote the collection of normal subgroups $N$ of $F$ such that $F/N$ is a $\kappa$-group of class $3$, as defined in Section \ref{section strutture free}. 
For each element $N$ of $\cor{N}_3$, we set
$$D_N=[F,N]F_2^3[F_2,F_2].$$ 
We define moreover $\cor{N}_4$ to be the collection of normal subgroups $M$ of $F$ such that $F/M$ is a $\kappa$-group of class $4$ with $\wt_{F/M}(4)=1$ and such that $F/M$ possesses an automorphism of order $2$ that induces the inversion map on the abelianization $(F/M)/(F/M)_2$ of $F/M$. 
We will keep this notation until the end of Section \ref{section unique with automorphism}.

\begin{lemma}
Let $N\in\cor{N}_3$. Then $D_N$ is contained in $E$.
\end{lemma}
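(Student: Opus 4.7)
The plan is to assemble three containments into $E$ and take the product. Concretely, $D_N = [F,N]\,F_2^3\,[F_2,F_2]$, and since $F_2^3 \subseteq [F,L]F_2^3 = E$ by definition, it suffices to show $[F,N] \subseteq E$ and $[F_2,F_2] \subseteq E$.

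First I would establish that $N \subseteq L$. By the identification of $\cor{N}_3$ in the previous section (Lemma \ref{pi3}/Lemma \ref{LsuN}), every $N \in \cor{N}_3$ has the form $K_\pi$ for some $\pi \in \cor{P}$, hence $E \subseteq N$ and $\overline{N} \subseteq \overline{L}$, i.e.\ $N \subseteq L$; alternatively, the count $|L:N|=9$ from Lemma \ref{LsuN} directly forces $N \subseteq L$. Once $N \subseteq L$, monotonicity of the commutator gives $[F,N] \subseteq [F,L] \subseteq E$, which takes care of the first piece.

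Next I would handle $[F_2,F_2]$. By Lemma \ref{free map} the index $|F_2:L|$ is at most $3$, so $F_2/L$ is cyclic; and $L$ is characteristic in $F$ (being $F^3F_3$), hence normal in $F_2$. Applying Lemma \ref{cyclic quotient commutators} to the pair $(F_2,L)$ yields $[F_2,F_2] = [F_2,L]$. Since $[F_2,L] \subseteq [F,L] \subseteq E$, this gives the second needed containment. Combining the three inclusions yields $D_N \subseteq E$.

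There is no real obstacle here; the lemma is essentially bookkeeping that packages the free-group structure built up in Section \ref{section strutture free}. The only point that requires care is invoking the right characterisation of $\cor{N}_3$ to get $N \subseteq L$ — everything else is immediate from the cyclicity of $F_2/L$ together with Lemma \ref{cyclic quotient commutators} and the definition of $E$.
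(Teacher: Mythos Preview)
Your proof is correct and follows essentially the same route as the paper: show $N\subseteq L$, use that $F_2/L$ is cyclic together with Lemma~\ref{cyclic quotient commutators} to get $[F_2,F_2]=[F_2,L]\subseteq[F,L]$, and combine the three inclusions. The only cosmetic difference is that the paper obtains $N\subseteq L$ (and $|F_2:L|=3$) directly from Lemma~\ref{description L} applied to the projection $F\to F/N$, whereas you invoke the bijection $\cor{P}\to\cor{N}_3$ from Lemma~\ref{pi3}; both are valid.
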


\begin{proof}
As a consequence of Lemma \ref{description L}, the subgroup $L$ contains $N$.
Again by Lemma \ref{description L}, the index $|F_2:L|$ is equal to $3$ and so, thanks to Lemma \ref{cyclic quotient commutators}, one has $[F_2,F_2]=[F_2,L]$. We get
\[
[F,N]F_2^3[F_2,F_2]\subseteq[F,L]F_2^3[F_2,L]=[F,L]F_2^3=E
\]
and therefore $D_N$ is contained in $E$. 
\end{proof}

\begin{lemma}\label{kkkk maggiore di 5}
For each $k\in\Z_{\geq 5}$, one has $\phi(F_k)=\graffe{1}$.
\end{lemma}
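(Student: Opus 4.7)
The plan is to show that $\phi(F_5)=\graffe{1}$; the remaining cases $k\geq 6$ follow at once since $F_k\subseteq F_5$.

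First I would verify by induction on $i$ that $\phi(F_i)$ coincides with the $i$-th term $P_i(G)$ of the $3$-central series of $G$. The base $i=1$ is just surjectivity of $\phi$, and the inductive step is a one-line computation:
\[
\phi(F_{i+1}) \;=\; \phi([F,F_i]F_i^3) \;=\; [\phi(F),\phi(F_i)]\,\phi(F_i)^3 \;=\; [G,P_i(G)]\,P_i(G)^3 \;=\; P_{i+1}(G).
\]

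Then I would compute $P_i(G)$ explicitly for $i\leq 5$ using the structural results already established for $\kappa$-groups. Since $|G:G_2|=9$, the quotient $G/G_2$ is elementary abelian, so $G^3\subseteq G_2$; combined with $[G,G]=G_2$ this gives $P_2(G)=G_2G^3=G_2$. By Proposition \ref{G2elemabelian}, $G_2$ is elementary abelian, so $G_2^3=\graffe{1}$, and therefore $P_3(G)=[G,G_2]G_2^3=G_3$. By Lemma \ref{g3 elem abelian}, the subgroup $G_3$ has exponent $3$, so $G_3^3=\graffe{1}$, yielding $P_4(G)=[G,G_3]G_3^3=G_4$. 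Finally, since $G_4$ is central of order $3$, one has $[G,G_4]=\graffe{1}$ and $G_4^3=\graffe{1}$, so $P_5(G)=\graffe{1}$.

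Combining the two steps gives $\phi(F_5)=P_5(G)=\graffe{1}$, and the result follows for all $k\geq 5$. There is no genuine obstacle here: the only subtlety is making sure that each reduction $G_i^3=\graffe{1}$ is justified by a previously proved structural fact, which is why the chain of implications relies crucially on Proposition \ref{G2elemabelian} (elementary abelianness of $G_2$) and Lemma \ref{g3 elem abelian} (exponent of $G_3$).
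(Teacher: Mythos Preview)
Your proof is correct and follows essentially the same line as the paper's: both rest on Proposition~\ref{G2elemabelian} (so that $G_2^3=\graffe{1}$) and on $G$ having class $4$ (so that $[G,G_4]=\graffe{1}$). Your systematic identification $\phi(F_i)=P_i(G)$ is slightly cleaner than the paper's argument via the inclusion $F_4\subseteq E$; note also that your separate appeal to Lemma~\ref{g3 elem abelian} is redundant once $G_2$ is elementary abelian, since $G_3\subseteq G_2$.
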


\begin{proof}
Let $k\in\Z_{\geq 5}$ and recall that $F_k=[F,F_{k-1}]F_{k-1}^3$. By definition of $E$, one has 
$\phi([F,F_{k-1}])\subseteq\phi([F,F_4])\subseteq\phi([F,E])=[\phi(F),\phi(E)]$
and so, as a consequence of Lemma \ref{e in n}, we get
$\phi([F,F_{k-1}])\subseteq [G,G_4]=\graffe{1}$. 
It follows from Lemma \ref{frattini comm} that 
$\phi(F_k)=\phi(F_{k-1}^3)\subseteq \phi(F_2^3)\subseteq \Phi(G)^3=G_2^3$ and so
Proposition \ref{G2elemabelian} yields $\phi(F_k)=\graffe{1}$.
\end{proof}

\begin{lemma}\label{xy invertiti}
Let $\alpha$ be an automorphism of order $2$ of $G$ that induces the inversion map on $G/G_2$.
Then there exist generators $x$ and $y$ of $G$ such that $\alpha(x)=x^{-1}$ and $\alpha(y)=y^{-1}$.
\end{lemma}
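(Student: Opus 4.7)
The plan is to show that the subset $G^-=\{g\in G : \alpha(g)=g^{-1}\}$ meets every coset of $G_2$ in $G$; once this is done, lifting an $\F_3$-basis of $G/G_2$ to elements of $G^-$ produces the desired pair of generators.

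First I would observe that, for any $g\in G$, the element $z=\alpha(g)g^{-1}$ lies in $G^-$. Indeed, a direct computation gives
\[
\alpha(z)=\alpha(\alpha(g))\alpha(g^{-1})=g\,\alpha(g)^{-1}=z^{-1}.
\]
Next I would show that this $z$ lies in the coset $gG_2$. Since $G$ is a $\kappa$-group of class $4$ it is non-abelian and satisfies $|G:G_2|=9$, so Lemma \ref{frattini comm} yields $G_2=\Phi(G)$ and $G/G_2$ is an $\F_3$-vector space. Because $\alpha$ induces the inversion map on $G/G_2$, one has $\alpha(g)\equiv g^{-1}\pmod{G_2}$, and therefore
\[
z\equiv g^{-1}\cdot g^{-1}=g^{-2}\equiv g\pmod{G_2},
\]
the last congruence using that $G/G_2$ has exponent $3$. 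Thus every coset of $G_2$ meets $G^-$.

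Having established this, I would pick an $\F_3$-basis $\bar a,\bar b$ of the $2$-dimensional space $G/G_2$ and use the previous step to choose lifts $x\in G^-\cap \bar aG_2$ and $y\in G^-\cap\bar bG_2$. Then $x$ and $y$ generate $G$ modulo $\Phi(G)=G_2$, so Lemma \ref{subgroup times frattini} forces $\langle x,y\rangle=G$. By construction $\alpha(x)=x^{-1}$ and $\alpha(y)=y^{-1}$, which is precisely what is required.

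The argument is short and I do not expect any serious obstacle. The only delicate point is the coset computation, which depends crucially on the fact that $G/G_2$ has odd exponent (so that $g^{-2}\equiv g$ modulo $G_2$); this is exactly where the hypothesis that $G$ is a $3$-group enters.
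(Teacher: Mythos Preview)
Your proof is correct and follows the same overall strategy as the paper: show that $G^-$ meets every coset of $G_2$, then lift an $\F_3$-basis of $G/\Phi(G)=G/G_2$ via Lemma~\ref{subgroup times frattini}. The paper obtains the surjectivity of $G^-\to G/G_2$ by appealing to Lemma~\ref{order +- jumps}: from the width sequence $(2,1,2,1)$ one sees that $G^+$ has only even jumps, hence $G^+\subseteq G_2$, and then the bijection $G^+\times G^-\to G$ of Lemma~\ref{bijection +-} forces every coset of $G_2$ to meet $G^-$. You instead produce the witness directly, checking that $\alpha(g)g^{-1}\in G^-\cap gG_2$. This is more self-contained and avoids the jump/width machinery entirely; the step $g^{-2}\equiv g\pmod{G_2}$ uses exponent~$3$, but for general odd $p$ one could equally note that $g\mapsto g^{-2}$ is a bijection on $G/G_2$.
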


\begin{proof}
Write $G^-=\graffe{g\in G : \alpha(g)=g^{-1}}$.
Since $(\wt_G(i))_{i=1}^4=(2,1,2,1)$, Lemma \ref{order +- jumps} yields that the map $G^-\rightarrow G/G_2$, defined by $g\mapsto gG_2$, is surjective. Thanks to Lemma \ref{frattini comm}, the subgroups $G_2$ and $\Phi(G)$ coincide and therefore there exist two elements $x$ and $y$ of $G^-$ that generate $G$. 
\end{proof}

\begin{proposition}\label{proposition extension alpha}
Let $\alpha$ be an automorphism of order $2$ of $G$ that induces the inversion map on $G/G_2$. Let moreover $k\in\Z_{\geq 5}$ and let $\phi_k:F/F_k\rightarrow G$ be the map induced by $\phi$. 
Then there exists $\epsilon\in\Aut(F/F_k)$ of order $2$ such that 
$\alpha\phi_k=\phi_k\epsilon$.
\end{proposition}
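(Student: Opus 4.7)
The plan is to lift $\alpha$ through the surjection $\phi_k$ by exploiting the freeness of $F$. First I would invoke Lemma \ref{xy invertiti} to obtain generators $x,y$ of $G$ satisfying $\alpha(x)=x^{-1}$ and $\alpha(y)=y^{-1}$, and choose lifts $\tilde u,\tilde v\in F$ with $\phi(\tilde u)=x$ and $\phi(\tilde v)=y$. The universal property of the free group yields a unique endomorphism $\tilde\tau\colon F\to F$ with $\tilde\tau(a)=\tilde u$ and $\tilde\tau(b)=\tilde v$; since each $F_k$ is invariant under every endomorphism of $F$, the map $\tilde\tau$ descends to a homomorphism $\tau\colon F/F_k\to F/F_k$ satisfying $\phi_k\tau(aF_k)=x$ and $\phi_k\tau(bF_k)=y$.

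The central step is to verify that $\tau$ is an automorphism. The quotient $F/F_2$ is elementary abelian of order $9$, because $F$ is free of rank $2$, and by definition of the lower $3$-series the Frattini subgroup $\Phi(F/F_k)$ coincides with $F_2/F_k$. The map $F/F_2\to G/G_2$ induced by $\phi_k$ is a surjection between groups of equal order $9$, hence an isomorphism; since $x$ and $y$ span $G/G_2$, the elements $\tilde uF_2$ and $\tilde vF_2$ span $F/F_2$. Therefore $\langle\tilde uF_k,\tilde vF_k\rangle\Phi(F/F_k)=F/F_k$, and Lemma \ref{subgroup times frattini} yields $\langle\tilde uF_k,\tilde vF_k\rangle=F/F_k$. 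Consequently $\tau$ is a surjective endomorphism of a finite group, and thus an automorphism.

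To conclude, let $\bar\beta\in\Aut(F/F_k)$ be the automorphism of order $2$ induced by $\beta$, and set $\epsilon=\tau\bar\beta\tau^{-1}$, so that $\epsilon^2=\id$ by construction. Evaluating on the generating set $\{\tau(aF_k),\tau(bF_k)\}$ of $F/F_k$, I would compute $\phi_k\epsilon(\tau(aF_k))=\phi_k\tau(a^{-1}F_k)=\phi(\tilde u)^{-1}=x^{-1}=\alpha(x)=\alpha\phi_k(\tau(aF_k))$, and analogously for $\tau(bF_k)$; this forces $\phi_k\epsilon=\alpha\phi_k$ on all of $F/F_k$. Since $\alpha\neq\id_G$ and $\phi_k$ is surjective, $\epsilon$ cannot be trivial, so it has order exactly $2$.

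The main obstacle is the bijectivity of $\tau$: it hinges on the identification $\Phi(F/F_k)=F_2/F_k$ and on the fact that $F/F_2$ and $G/G_2$ have the same order $9$, together with Lemma \ref{subgroup times frattini}. Everything else is a formal consequence of the universal property of the free group $F$.
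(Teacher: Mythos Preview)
Your proposal is correct and follows essentially the same approach as the paper: invoke Lemma~\ref{xy invertiti} to obtain inverted generators, lift them through $\phi$, use the universal property of $F$ to build an endomorphism that descends to an automorphism $\tau$ of $F/F_k$ (via $\Phi(F/F_k)=F_2/F_k$ and the fact that $F/F_2\to G/G_2$ is an isomorphism of groups of order $9$), and set $\epsilon=\tau\bar\beta\tau^{-1}$. The only cosmetic difference is in the order-$2$ justification: the paper notes that $\epsilon$ is conjugate to $\bar\beta$, while you argue $\epsilon^2=\id$ and $\epsilon\neq\id$ from $\alpha\neq\id_G$; both are fine.
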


\begin{proof}
For each $k\in\Z_{\geq 5}$, the map $\phi_k:F/F_k\rightarrow G$ is well-defined, thanks to Lemma \ref{kkkk maggiore di 5}.
Let now $x$ and $y$ be as in Lemma \ref{xy invertiti} and let $c$ and $d$ be elements of $F$ such that $\phi(c)=x$ and $\phi(d)=y$, which exist because $\phi$ is surjective. As a consequence of Lemma \ref{construction indices}, the map $\phi$ induces an isomorphism $F/F_2\rightarrow G/G_2$ and therefore $c$ and $d$ generate $F$ modulo $F_2$. Let now $\psi:F\rightarrow F$ be the endomorphism of $F$ sending $a\mapsto c$ and $b\mapsto d$; such $\psi$ exists thanks to the universal property of free groups. Fix $k\in\Z_{\geq 5}$. The subgroup $F_k$ being being stabilized by any endomorphism of $F$, the map $\psi$ induces an endomorphism $\overline{\psi}$ of the $3$-group $\overline{F}=F/F_k$. However, since $\Phi(\overline{F})=\overline{F_2}$, the map $\overline{\psi}$ induces an automorphism of $\overline{F}/\Phi(\overline{F})$ and so $\overline{\psi}$ is in fact an automorphism of $\overline{F}$. 
Let $\overline{\beta}$ be the automorphism of $\overline{F}$ that is induced by $\beta$ and define $\epsilon=\overline{\psi}\overline{\beta}\overline{\psi^{-1}}$.
By construction, the following diagram is commutative.
\[
\begin{diagram}
F/F_k     &  \rTo^{\phi_k} & G  \\
\dTo^{\epsilon}     &                & \dTo_{\alpha}\\
F/F_k     &  \rTo^{\phi_k}   & G \\
\end{diagram}
\]
Moreover, $\epsilon$ has order $2$, because it is conjugate in $\Aut(\overline{F})$ to $\overline{\beta}$.
\end{proof}

\begin{lemma}\label{Dker}
Let $M$ be an element of $\cor{N}_4$. Then $N=ME$ belongs to $\cor{N}_3$ and $D_N$ is contained in $M$.
\end{lemma}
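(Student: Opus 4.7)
The plan is to work entirely inside $G = F/M$. By definition of $\cor{N}_4$, the group $G$ is a $\kappa$-group of class $4$ with $\wt_G(4) = 1$, so $(\wt_G(i))_{i=1}^{4} = (2,1,2,1)$ and $|G_4| = 3$. The key structural input is then Proposition \ref{G2elemabelian}, which tells us that $G_2$ is elementary abelian; combined with Lemma \ref{G2 piccoletto} ($G_2$ is abelian) this handles almost everything that follows.

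First I would compute the image under the projection $\phi : F \to G$ of the subgroups $F_2$, $F_3$, $L$, and $E$. Since $|G:G_2| = 9$ and $G$ is non-abelian, Lemma \ref{frattini comm} gives $\Phi(G) = G_2$, so $\phi(F_2) = [G,G]G^3 = \Phi(G) = G_2$, exactly as in the proof of Lemma \ref{construction indices}. Then $\phi(F_3) = [G,G_2]G_2^3 = G_3$, using $G_2^3 = 1$ from Proposition \ref{G2elemabelian}. Because $G$ is a $\kappa$-group, every cube in $G$ already lies in $G_3$, so $G^3 \subseteq G_3$ and therefore $\phi(L) = \phi(F^3 F_3) = G^3 G_3 = G_3$. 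Consequently $\phi(E) = \phi([F,L]F_2^3) = [G,G_3] \cdot G_2^3 = G_4$, the last equality using centrality of $G_3$ modulo $G_4$ and again $G_2^3 = 1$. Therefore $ME/M = G_4$, so $F/(ME) \cong G/G_4$. This quotient has class $3$, abelianization of order $9$, and cubing induces a bijection $G/G_2 \to G_3/G_4$ because $G$ is a $\kappa$-group; thus $G/G_4$ is a $\kappa$-group of class $3$, proving $N = ME \in \cor{N}_3$.

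For the inclusion $D_N \subseteq M$, it is enough to show that each of the three defining factors of $D_N = [F,N]F_2^3[F_2,F_2]$ maps to $1$ in $G$. The image of $[F,N]$ is $[G, N/M] = [G, G_4]$, which is trivial since the class of $G$ is $4$ and so $G_4 \subseteq Z(G)$. The image of $F_2^3$ is $G_2^3$, trivial by Proposition \ref{G2elemabelian}. Finally, the image of $[F_2,F_2]$ is $[G_2, G_2]$, trivial by Lemma \ref{G2 piccoletto}. Hence $D_N$ is contained in $\ker\phi = M$, completing the proof.

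I do not anticipate a significant obstacle: the argument is essentially bookkeeping on top of Proposition \ref{G2elemabelian}, which is the genuinely non-trivial ingredient. The only subtle point is identifying $\phi(L) = G_3$ in the class-$4$ setting, where the class-$3$ identification from Lemma \ref{description L} does not apply directly and has to be redone using the $\kappa$-condition together with $G_2^3 = 1$.
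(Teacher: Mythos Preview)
Your proof is correct and follows essentially the same route as the paper. The only noteworthy difference is that you compute $\phi(E)=G_4$ directly (via $\phi(L)=G_3$ and Proposition~\ref{G2elemabelian}), whereas the paper first obtains the inclusion $\pi(E)\subseteq H_4$ by applying Lemma~\ref{e in n} to the class-$3$ quotient $H/H_4$, and then rules out $E\subseteq M$ by a short class-counting contradiction; your direct computation simply short-circuits that case split. The verification of $D_N\subseteq M$ is identical in both arguments.
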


\begin{proof}
Let $H=F/M$ and let $\pi:F\rightarrow H$ be the canonical projection.
Then $\pi(N)=\pi(ME)=\pi(E)$ and so, as a consequence of Lemma \ref{e in n}, we get $\pi(N)\subseteq H_4$. The order of $H_4$ being  $3$, either $\pi(N)=H_4$ or $N\subseteq M$. 
Assume first that $\pi(N)=H_4$. Then we have $M\subseteq N\subseteq\pi^{-1}(H_4)$ and $M\neq N$. On the other hand, we know $|\pi^{-1}(H_4):M|=|H_4|=3$ and therefore $N=\pi^{-1}(H_4)$. 
As a result, $F/N$ is isomorphic to $H/H_4$ and so $N$ belongs to $\cor{N}_3$. 
We prove that $\pi(D_N)$ is trivial.
The image of $F_2$ under $\pi$ is equal to $H_2$, thanks to Lemma \ref{construction indices}. Moreover, the subgroup $H_4$ is central in $H$, because $H$ has class $4$, and the commutator subgroup of $H$ is elementary abelian, thanks to Proposition \ref{G2elemabelian}. We compute
$$\pi(D_N)=\pi([F,N])\pi(F_2^3)\pi([F_2,F_2])=[H,H_4]H_2^3[H_2,H_2]=\graffe{1},$$
and so $D_N$ is contained in $M$.
We now prove that $\pi(N)=H_4$. 
We work by contradiction, assuming that $N\subseteq M$. Since $N=ME$, we get that $E$ is contained in $M$. 
As a consequence, the group $F/E$ has class at least $4$. However, one has
\[
[F,[F,[F,F]]]\subseteq [F,[F,F_2]]\subseteq  [F,F_3]\subseteq [F,L]\subseteq E
\]
and therefore $F/E$ has class at most $3$. Contradiction.
\end{proof}

\begin{lemma}\label{mappette}
Let $N\in\cor{N}_3$. Denote moreover $H=\yo$. 
Then there exists a surjective homomorphism $\varphi:F\rightarrow G$ such that $N=\varphi^{-1}(G_4)$. Moreover, $\varphi$ induces isomorphisms $\varphi_1:F/F_2\rightarrow H/H_2$ and $\varphi_3:L/N\rightarrow H_3/H_4$ and a surjective homomorphism $\varphi_4:E/D_N\rightarrow H_4$.
\end{lemma}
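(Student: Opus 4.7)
The plan is to construct $\varphi$ by lifting an abstract isomorphism $F/N \cong H/H_4$ along the projection $H \to H/H_4$, using the universal property of the free group $F$, and then to read off the three induced maps. (I read the target of $\varphi$ as $H$, not $G$, since the rest of the statement is phrased in terms of $H$.)

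First, since $F/N$ is a $\kappa$-group of class $3$ by hypothesis and $H/H_4 = \yo/\yo_4$ is another such group (by Lemmas \ref{yo class} and \ref{yo structure}), Theorem \ref{theorem unique kappa} provides an isomorphism $\bar\psi \colon F/N \to H/H_4$. Composing $\bar\psi$ with the canonical projection $F \to F/N$ gives a surjection $\pi \colon F \to H/H_4$ with $\ker \pi = N$. Pick any preimages $\tilde a, \tilde b \in H$ of $\pi(a), \pi(b)$ and use the universal property of $F = \langle a,b \rangle$ to define $\varphi \colon F \to H$ by $a \mapsto \tilde a$ and $b \mapsto \tilde b$. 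By construction the composition $F \to H \to H/H_4$ equals $\pi$, whence $\varphi^{-1}(H_4) = N$. Surjectivity of $\varphi$ then follows from $\varphi(F) \cdot H_4 = H$ together with $H_4 \subseteq H_2 = \Phi(H)$ (Lemma \ref{frattini comm}) via Lemma \ref{subgroup times frattini}.

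The three induced maps then fall out by tracking $\varphi(F_2), \varphi(L), \varphi(E)$, via essentially the same computations already carried out in Section \ref{section construction}. Namely, $\varphi(F_2) = [H,H]H^3 = H_2$ and $|F:F_2| = 9 = |H:H_2|$ give the isomorphism $\varphi_1$; the identity $\varphi(L) = H^3 H_3 = H_3$ (using $H^3 \subseteq H_3$, which follows from Lemma \ref{class 3 G/G3 extraspecial of exp p} applied to $H/H_4$) together with $|L:N| = 9 = |H_3:H_4|$ (Lemma \ref{LsuN} and Lemma \ref{yo structure}) gives the isomorphism $\varphi_3$; and $\varphi(E) = [H,H_3] H_2^3 = H_4$, the last equality because Corollary \ref{yoG2} gives $H_2^3 = 1$. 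For $\varphi_4$ to descend to $E/D_N$ one needs $\varphi(D_N) = 1$, which is immediate from $[H,H_4] = 1$ (class $H = 4$), $H_2^3 = 1$, and the abelianness of $H_2$.

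The only genuinely nontrivial input is Theorem \ref{theorem unique kappa}; the rest is commutator-and-power bookkeeping of a type that has already been carried out several times in the paper. The one conceptual subtlety is that $\varphi$ is not canonical — different choices of $\tilde a, \tilde b$ produce different lifts — but every assertion of the lemma depends only on the preimage of $H_4$ and on the images of the characteristic subgroups $F_2, L, E$, so any choice of lift works equally well.
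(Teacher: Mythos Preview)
Your proof is correct and follows essentially the same approach as the paper. The only cosmetic difference is that the paper constructs $\varphi$ by first fixing an arbitrary surjection $\psi:F\to H$ and then precomposing with an automorphism $r\in\Aut(F)$ sending $N$ to $\psi^{-1}(H_4)$ (via Proposition~\ref{transitive action N}), whereas you lift an isomorphism $F/N\cong H/H_4$ (via Theorem~\ref{theorem unique kappa}, which is an immediate consequence of that proposition); your direct verification that $\varphi(D_N)=1$ is also slightly cleaner than the paper's appeal to Lemma~\ref{Dker}.
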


\begin{proof}
Let $\psi:F\rightarrow H$ be a surjective homomorphism, which exists thanks to the universal property of free groups. Set $K=\psi^{-1}(H_4)$. Then $K$ belongs to $\cor{N}_3$, because $H$ is a $\kappa$-group, and so, thanks to Lemma \ref{transitive action N}, there exists an automorphism $r$ of $F$ such that $r(N)=K$. Define $\varphi=\psi\circ r$. Then $\varphi$ is a surjective homomorphism $F\rightarrow H$ such that $\varphi^{-1}(H_4)=N$. Moreover, $\varphi$ induces isomorphisms $\varphi_1:F/F_2\rightarrow H/H_2$ and $\varphi_3:L/N\rightarrow H_3/H_4$ as a consequence of Lemmas \ref{construction indices} and \ref{description L}. We conclude by showing that $\varphi$ induces a surjective homomorphism $E/D_N\rightarrow H_4$. Thanks to Lemma \ref{Dker}, the subgroup $D_N$ is contained in the kernel of $\varphi$. Moreover, since $\varphi(F_2)=H_2$ and $\varphi(L)=H_3$, we get
\[
\varphi(E)=\varphi([F,L]F_2^3)=[H,H_3]H_2^3=H_4H_2^3.
\]
Now, the group $H$ is a $\kappa$-group and hence $H_2^3\subseteq H_4$. It follows that $\varphi(E)=H_4$ and therefore $\varphi$ induces a surjective homomorphism $E/D_N\rightarrow H_4$.
\end{proof}

\begin{lemma}\label{nondeg with DN}
Let $N\in\cor{N}_3$. Then the commutator map induces a non-degenerate map $F/F_2\times L/N\rightarrow E/D_N$ whose image generates $E/D_N$. In addition, one has $E\neq D_N$.
\end{lemma}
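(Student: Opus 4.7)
The plan is to use Lemma \ref{mappette} to compare the commutator pairing on $F/D_N$ with that on $\yo$, thereby transferring non-degeneracy from the latter. The key preliminary observation is the inclusion $E \subseteq N$: by Lemma \ref{mappette} the surjection $\varphi \colon F \to \yo$ satisfies $\varphi(E) = \yo_4$ and $\varphi^{-1}(\yo_4) = N$, so $E \subseteq \varphi^{-1}(\yo_4) = N$.

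First I would verify that the commutator map $F \times L \to E$ descends to a well-defined bilinear map $\bar\phi \colon F/F_2 \times L/N \to E/D_N$. Well-definedness on the right follows from $[F,N] \subseteq D_N$ by definition of $D_N$, and on the left from $[F_2,L] \subseteq [F_2,F_2] \subseteq D_N$. Bilinearity is the delicate point: by Lemma \ref{tgt}, it reduces to showing that $E/D_N$ is central in $F/D_N$, i.e.\ $[F,E] \subseteq D_N$. Using $E \subseteq N$, this follows at once from $[F,E] \subseteq [F,N] \subseteq D_N$. The image of $\bar\phi$ generates $E/D_N$: since $F_2^3 \subseteq D_N$ and $E = [F,L]F_2^3$, the quotient $E/D_N$ is generated by the image of $[F,L]$, which is exactly the image of $\bar\phi$.

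For non-degeneracy, I would compose $\bar\phi$ with the surjection $\varphi_4 \colon E/D_N \to \yo_4$ from Lemma \ref{mappette}. Naturality of the commutator yields $\varphi_4 \circ \bar\phi = \bar{c}_\yo \circ (\varphi_1 \times \varphi_3)$, where $\bar{c}_\yo \colon \yo/\yo_2 \times \yo_3/\yo_4 \to \yo_4$ is the pairing induced by the commutator on $\yo$. The group $\yo$ is a $\kappa$-group of class $4$ with widths $(2,1,2,1)$ by Lemmas \ref{yo kappa} and \ref{yo structure}(1), so Lemma \ref{centro 3} gives $\ZG(\yo) = \yo_4$, and Lemma \ref{class 4 comm map non-deg} shows $\bar{c}_\yo$ is non-degenerate. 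Since $\varphi_1$ and $\varphi_3$ are isomorphisms, the composition $\varphi_4 \circ \bar\phi$ is non-degenerate, whence so is $\bar\phi$. Finally $E \neq D_N$ since $\varphi_4$ surjects onto the non-trivial group $\yo_4$.

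I expect bilinearity to be the main obstacle in a more na\"ive approach. A direct attack on $[F,[F,L]] \subseteq D_N$ via the three-subgroups lemma (Lemma \ref{three subgroups lemma}) leads to circular commutator inclusions, and a Hall--Petrescu-style computation is unpleasant since $F/D_N$ need not have small nilpotency class. Extracting the inclusion $E \subseteq N$ from Lemma \ref{mappette} sidesteps all of this and reduces the argument to a diagram chase.
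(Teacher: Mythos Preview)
Your proof is correct and follows essentially the same route as the paper's: establish $E\subseteq N$, use this to show $E/D_N$ is central in $F/D_N$ so that Lemma~\ref{tgt} gives bilinearity, then pull back non-degeneracy from $\yo$ via the commutative square with $\varphi_1,\varphi_3,\varphi_4$ from Lemma~\ref{mappette}. The only cosmetic difference is that the paper obtains $E\subseteq N$ by invoking Lemma~\ref{e in n} (applied to the class-$3$ quotient $F/N$), whereas you extract it from Lemma~\ref{mappette}; both are valid, and your closing paragraph about why a direct attack on $[F,[F,L]]$ is awkward is well observed.
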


\begin{proof}
Write $\overline{F}=F/D_N$ and use the bar notation for the subgroups of $\overline{F}$.
From the definition of $E$, one sees that 
$\overline{E}=[\overline{F},\overline{L}]$. Moreover, by Lemma \ref{e in n}, the subgroup $E$ is contained in $N$ and so $[F,E]\subseteq [F,N]\subseteq D_N$. In particular, $\overline{E}$ is central in $\overline{F}$ and so it follows from Lemma \ref{tgt} that the commutator map $F\times L\rightarrow \overline{E}$ is bilinear. Since $[F_2,L]$ and $[F,N]$ are both contained in $D_N$, the last map factors as a bilinear map 
$\gamma:F/F_2\times L/N\rightarrow\overline{E}$ whose image generates $\overline{E}$. Set now $H=\yo$. Then, as a consequence of Lemmas \ref{yo kappa}  and \ref{centro 3}, the centre of $H$ is equal to $H_4$ and thus Lemma \ref{class 4 comm map non-deg} yields that the commutator map induces a non-degenerate map 
$\nu:H/H_2 \times H_3/H_4 \rightarrow H_4$. 
With the notation from Lemma \ref{mappette}, the following diagram is commutative. 
\[
\begin{diagram}
F/F_2    \times   L/N           &  \rTo^{\gamma} & E/D_N  \\
\dTo^{\varphi_1} \ \  \dTo_{\varphi_3}    &                & \dTo_{\varphi_4}\\
H/H_2    \times   H_3/H_4           &  \rTo^{\nu}   & H_4  \\
\end{diagram}
\]
Since the map $\nu$ is non-degenerate and both $\varphi_1$ and $\varphi_3$ are isomorphisms, the map $\gamma$ is non-degenerate. It follows in particular that $E\neq D_N$.
\end{proof}

\begin{lemma}\label{cN}
Let $V$ and $W$ be $2$-dimensional vector spaces over $\F_3$ and let 
$\eta:V\rightarrow W$ be a bijective map with the property that, for each $\lambda\in\F_3$ and $v\in V$, one has $\eta(\lambda v)=\lambda\eta(v)$. 
Define $K=\gen{v\otimes\eta(v) : v\in V}$. 
Then the quotient $(V\otimes W)/K$ has dimension $1$ as a vector space over $\F_3$.
\end{lemma}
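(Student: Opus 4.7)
My plan is to exploit the scalar compatibility of $\eta$ to reduce the generating set of $K$ to four vectors, and then compute directly that they span a $3$-dimensional subspace of the $4$-dimensional space $V\otimes W$.

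First I would observe that $\eta$ maps lines through the origin of $V$ to lines through the origin of $W$: if $v\neq 0$ and $v'=\lambda v$ with $\lambda\in\F_3^*$, then $\eta(v')=\lambda\eta(v)$, so $\F_3\eta(v')=\F_3\eta(v)$. Since $\eta$ is a bijection, two distinct lines of $V$ cannot be mapped into the same line of $W$, so $\eta$ induces a bijection between the four projective points of $V$ and the four projective points of $W$. Moreover, for $v'=\lambda v$ with $\lambda\in\F_3^*$ one has
\[
v'\otimes\eta(v')=(\lambda v)\otimes(\lambda\eta(v))=\lambda^2(v\otimes\eta(v))=v\otimes\eta(v),
\]
since $\lambda^2=1$. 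Thus each line of $V$ contributes a single element $v\otimes\eta(v)$ (for any representative $v$) to a generating set of $K$, and $K$ is generated by exactly $4$ vectors, one per line.

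Next I would fix a basis $(e_1,e_2)$ of $V$ and set $f_i=\eta(e_i)$. Since the lines $\F_3e_1$ and $\F_3e_2$ are distinct, so are $\F_3f_1$ and $\F_3f_2$, and hence $(f_1,f_2)$ is a basis of $W$. The four lines of $V$ have representatives $e_1,e_2,e_1+e_2,e_1-e_2$, and the four lines of $W$ are $\F_3f_1,\F_3f_2,\F_3(f_1+f_2),\F_3(f_1-f_2)$. Since $\eta$ bijects lines, there exist $a,b\in\{\pm 1\}$ such that $\eta(e_1+e_2)$ and $\eta(e_1-e_2)$ equal $a(f_1+\delta f_2)$ and $b(f_1-\delta f_2)$ respectively, for some $\delta\in\{\pm 1\}$ depending on the pairing (both pairings are handled in the same way by symmetry).

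Finally, writing the four generators of $K$ in the basis $\{e_i\otimes f_j\}_{i,j=1,2}$ of $V\otimes W$, I would expand
\[
(e_1+e_2)\otimes a(f_1+\delta f_2)\quad\text{and}\quad(e_1-e_2)\otimes b(f_1-\delta f_2)
\]
and check that, modulo $e_1\otimes f_1$ and $e_2\otimes f_2$, both become scalar multiples of $e_1\otimes f_2+\delta e_2\otimes f_1$. This yields exactly one linear relation among $k_1=e_1\otimes f_1$, $k_2=e_2\otimes f_2$, $k_3,k_4$, so they span a subspace of dimension $3$. The obstacle here is essentially bookkeeping rather than conceptual: the key point is that $\eta$ need not be additive, so $\eta(e_1\pm e_2)$ is only determined up to sign on one of two remaining lines; however, $\F_3$ is small enough that $\lambda^2=1$ absorbs all signs, making the two cases collapse to the same rank computation. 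This gives $\dim_{\F_3}K=3$ and hence $\dim_{\F_3}(V\otimes W)/K=1$.
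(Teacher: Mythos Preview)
Your proof is correct, but it takes a different route from the paper's. The paper first reduces to the case $V=W$ and $\eta$ linear: since the generators $v\otimes\eta(v)$ depend only on the line $\F_3 v$ (by the same $\lambda^2=1$ observation you make), and since $\Aut(V)/\F_3^*\cong\Sym(\mathbb{P}V)$ (Lemma~\ref{V4}), one can replace $\eta$ by a linear automorphism $\tau$ inducing the same bijection on $\mathbb{P}V$ without changing $K$. For linear $\tau$, the automorphism $x\otimes y\mapsto x\otimes\tau(y)$ of $V\otimes V$ carries the diagonal subspace $\Delta=\gen{v\otimes v}$ isomorphically onto $K$, and $(V\otimes V)/\Delta=\bigwedge^2 V$ has dimension $1$.

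Your direct coordinate computation is more elementary and self-contained (it does not invoke Lemma~\ref{V4}), while the paper's argument is more conceptual and explains \emph{why} the answer is $1$: the quotient is canonically $\bigwedge^2 V$ after untwisting by a linear map. One small remark on your write-up: saying ``exactly one linear relation'' is slightly imprecise; what you actually establish is that $k_3$ and $k_4$ become proportional (and nonzero) modulo $\langle k_1,k_2\rangle$, so $\dim K=3$ follows from the evident independence of $k_1,k_2,k_3$.
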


\begin{proof}
Without loss of generality we assume that $V=W$. Assume first that 
$\eta$ is an automorphism of $V$ and define the automorphism $\sigma$ of $V\otimes V$ by $x\otimes y\mapsto x\otimes\eta(y)$. Then the subspace $\Delta=\gen{v\otimes v :v \in V}$ is mapped isomorphically to $K$ via $\sigma$. It follows that $(V\otimes V)/K$ has the same dimension as $(V\otimes V)/\Delta=\bigwedge^2V$ and so 
$(V\otimes V)/K$ has dimension $1$. Let now $\eta$ be any map satisfying the hypotheses of Lemma \ref{cN}. Then $\eta$ induces a bijective map 
$\overline{\eta}:\mathbb{P}V\rightarrow\mathbb{P}V$, where $\mathbb{P}V$ denotes the collection of $1$-dimensional subspaces of $V$. 
As a consequence of Lemma \ref{V4}, there exists an automorphism $\tau$ of $V$ such that $\overline{\tau}=\overline{\eta}$ and, for each $v\in V$, one has $\F_3\tau(v)=\F_3\eta(v)$. As a consequence, 
we get $K=\gen{v\otimes \tau(v) : v\in V}$ and therefore 
$(V\otimes V)/K$ has dimension $1$ over $\F_3$.   
\end{proof}

%\begin{lemma}\label{streetsarefrozen}
%Let $N$ be an element of $\cor{N}_3$.
%Then the commutator map induces a surjective homomorphism
%${\gamma_1:F/F_2\otimes L/N\rightarrow E/D_N}$.
%\end{lemma}

%\begin{proof}
%Write $\overline{F}=F/D_N$ and use the bar notation for the subgroups of $\overline{F}$.
%From the definition of $E$, one sees that 
%$\overline{E}=[\overline{F},\overline{L}]$. Moreover, by Lemma \ref{e in n}, the subgroup $E$ is contained in $N$ and so $[F,E]\subseteq [F,N]\subseteq D_N$. In particular, $\overline{E}$ is central in $\overline{F}$ and so it follows from Lemma \ref{tgt} that the commutator map $F\times L\rightarrow \overline{E}$ is bilinear. Since $[F_2,L]$ and $[F,N]$ are both contained in $D_N$, the last map factors as a bilinear map 
%$\gamma_0:F/F_2\times L/N\rightarrow\overline{E}$ whose image generates $\overline{E}$. 
%Now, the subgroup $E$ being contained in $N$, the group $L/N$ is abelian and, since also $F/F_2$ is abelian, the map $\gamma_0$ factors as a surjective homomorphism 
%${\gamma_1:F/F_2\otimes L/N\rightarrow\overline{E}}$. 
%\end{proof}

\begin{lemma}\label{orderE}
Let $N\in\cor{N}_3$.
Then $|E:D_N|=3$.
\end{lemma}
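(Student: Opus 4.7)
The plan is to prove $|E:D_N|=3$ by sandwiching: establish a lower bound of $3$ from a surjection onto $H_4$, and match it with an upper bound coming from the cubing map and Lemma \ref{cN}.

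First I would extract the lower bound $|E:D_N|\geq 3$ from work already in the paper. By Lemma \ref{mappette} applied to $N$, there is a surjection $\varphi_4:E/D_N\to H_4$ where $H=\yo$, and $|H_4|=3$ by Lemma \ref{yo structure}($1$). So $|E:D_N|\geq 3$.

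For the upper bound, I would exploit the non-degenerate bilinear map $\gamma:F/F_2\times L/N\to E/D_N$ from Lemma \ref{nondeg with DN}, which generates $E/D_N$. Both $F/F_2$ and $L/N$ are $2$-dimensional $\mathbb{F}_3$-vector spaces (the first by Lemma \ref{construction indices}, the second via the isomorphism $\varphi_3$ of Lemma \ref{mappette}). By the universal property of the tensor product, $\gamma$ factors as a surjection
\[
\tilde{\gamma}:F/F_2\otimes L/N\longrightarrow E/D_N.
\]
Since $F/N$ is a $\kappa$-group of class $3$ (so $(F/N)_4=1$ and $(F/N)_3=L/N$), Lemma \ref{cubing induced} shows the cubing map on $F/N$ factors through $F/F_2$ and gives a well-defined map $\eta:F/F_2\to L/N$ sending $xF_2\mapsto x^3N$, which moreover is \emph{bijective} by the $\kappa$-condition. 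For every $\lambda\in\mathbb{F}_3$ and $v=xF_2$, one has $\eta(\lambda v)=\lambda\eta(v)$, so $\eta$ satisfies the hypotheses of Lemma \ref{cN}. Setting
\[
K=\langle v\otimes\eta(v) : v\in F/F_2\rangle\subseteq F/F_2\otimes L/N,
\]
Lemma \ref{cN} yields $\dim_{\mathbb{F}_3}((F/F_2\otimes L/N)/K)=1$.

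The key point is then that $K\subseteq\ker\tilde{\gamma}$. Indeed, for any $x\in F$, the commutator $[x,x^3]=1$ in $F$, hence $\gamma(xF_2,x^3N)$ is trivial in $E/D_N$; that is, $xF_2\otimes\eta(xF_2)$ lies in $\ker\tilde{\gamma}$. Since these generators of $K$ all die under $\tilde{\gamma}$, the surjection factors through $(F/F_2\otimes L/N)/K$, giving $|E:D_N|\leq 3$. Combined with the lower bound, $|E:D_N|=3$.

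The only non-routine verification here is that the cubing map on $F/N$ genuinely descends to a \emph{bijection} $\eta:F/F_2\to L/N$ and that the coset identity $[x,x^3]=1$ is enough, once phrased in the tensor product, to kill the $K$ from Lemma \ref{cN}; the rest is bookkeeping with the non-degenerate pairing already provided.
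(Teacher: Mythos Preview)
Your proof is correct and follows essentially the same route as the paper's. The paper also factors $\gamma$ through the tensor product, uses the cubing-induced bijection $c_N:F/F_2\to L/N$ (which coincides with your $\eta$), and applies Lemma \ref{cN} to bound $|E:D_N|\leq 3$; the only cosmetic difference is that the paper extracts the lower bound from the non-triviality clause of Lemma \ref{nondeg with DN} rather than from the surjection onto $H_4$ in Lemma \ref{mappette}.
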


\begin{proof}
The quotient $F/F_2$ is a $2$-dimensional vector space over $\F_3$, by definition of $F_2$, while $L/E$ is a $4$-dimensional vector space over $\F_3$, thanks to Lemma \ref{Lbar of dim 4}. Moreover, by Lemma \ref{e in n}, the subgroup $N$ contains $E$ and, as a consequence of Lemma \ref{LsuN}, the quotient $L/N$ is a vector space of dimension $2$ over $\F_3$.
Let $\gamma:F/F_2\otimes L/N\rightarrow E/D_N$ be the surjective homomorphism induced from the non-degenerate map of Lemma \ref{nondeg with DN}. 
Let moreover $c:F/F_2\rightarrow L/E$ be the map from Lemma \ref{cappino1} and let $\pi$ denote the canonical projection $L/E\rightarrow L/N$. Denote $c_N=\pi\circ c$ and note that, as a consequence of Lemma \ref{pi3}, the map $c_N:F/F_2\rightarrow L/N$ is a bijection between vector spaces of dimension $2$ over $\F_3$. From Lemma \ref{cappino1}, it is clear that $c$ commutes with scalar multiplication by elements of $\F_3$.
Define $K=\gen{x\otimes c_N(x) : x\in F/F_2}$.
As a consequence of the definition of $c$, each element $x\otimes c_N(x)$, with $x\in F/F_2$, belongs to the kernel of $\gamma$, and therefore $K$ is contained in $\ker\gamma$. 
It follows from Lemma \ref{cN} that $(F/F_2\otimes L/N)/K$ has dimension $1$ and therefore $E/D_N$ has dimension at most $1$ as a vector space over $\F_3$. 
By Lemma \ref{nondeg with DN}, the quotient $E/D_N$ is non-trivial and so $\gamma$ is not the trivial map. It follows that $\ker\gamma$ has dimension $3$ and thus $E/D_N$ has cardinality $3$.
%Now, the set $\graffe{x\otimes c_N(x) : x\in F/F_2}$ has the same cardinality as $F/F_2$, namely $9$, so, as a consequence of Lemma \ref{cN}, the kernel of $\gamma_1$ has cardinality at least $27$. 
%We claim that the kernel of $\gamma_1$ has exactly $27$ elements. By Lemma \ref{description L}, the subgroup $L$ is equal to $\theta^{-1}(G_3)$ and, by its definition, $N$ is equal to $\theta^{-1}(G_4)$. The group $G$ being a $\kappa$-group, it follows that $L/N$ is elementary abelian of order $9$ and therefore $F/F_2\otimes L/N$ is a $4$-dimensional vector space over $\F_3$. 
\end{proof}

\noindent
The following lemmas pave the way to proving Proposition \ref{proposition beta a meno di interni}.

\begin{lemma}\label{innermodL}
Let $\eta$ be an automorphism of $F/L$ of order $2$ that induces the inversion map on $F/F_2$.
Then there exists $\varphi_L\in\Inn(F/L)$ such that, for each $x\in F$, one has $\beta(x)\equiv(\varphi_L\eta\varphi_L^{-1})(x)\bmod L$.
\end{lemma}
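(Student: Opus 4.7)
The plan is to reduce to a direct structural computation inside the small group $F/L$. Recall that $L = F_3F^3$, so in $F/L$ we have $(F/L)_3 \subseteq F_3L/L = 1$ and $(F/L)^3 \subseteq F^3L/L = 1$, meaning $F/L$ has class at most $2$ and exponent dividing $3$. Its abelianization is $F/F_2 \cong \mathbb{F}_3^2$, so $F/L$ is a (quotient of a) Heisenberg group of order dividing $27$. Set $Z = F_2/L = (F/L)_2$, which is central (and in fact coincides with $Z(F/L)$).

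Let $S$ denote the set of automorphisms of $F/L$ that induce the inversion map on $F/F_2$. The class of $F/L$ being at most $2$ makes the commutator map on it $\mathbb{Z}$-bilinear (Lemma \ref{tgt}); in particular, for every $\alpha \in S$ and every $w \in Z$ represented as $w = [x,y]L$, one has $\alpha(w) = [\alpha(x),\alpha(y)] \equiv [x^{-1}, y^{-1}] = [x,y] \bmod L$, so $\alpha$ fixes $Z$ pointwise. Applying Lemma \ref{maps1-1} to the pair $(F/L, Z)$, the set of automorphisms of $F/L$ that act trivially on both $Z$ and $(F/L)/Z = F/F_2$ is in bijection with $\Hom(F/F_2, Z)$; since $S$ is a torsor over this set (once we know $S \ne \emptyset$, and it is, as $\bar\beta \in S$), we get $|S| = |\Hom(F/F_2, Z)|$. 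A further short computation, using exponent $3$ and centrality of $Z$, shows that every element of $S$ automatically has order $2$: if $\alpha \in S$ with $\alpha(a) = a^{-1}z_a$, $\alpha(b) = b^{-1}z_b$ (where $z_a,z_b \in Z$), then $\alpha^2(a) = z_a^{-1} a z_a = a$ and similarly for $b$.

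It remains to show that $\Inn(F/L)$ acts transitively on $S$ by conjugation. Identify $\Inn(F/L)$ with $(F/L)/Z$. For $g \in F/L$ and $\alpha \in S$, the identity $\alpha\, \varphi_g\, \alpha^{-1} = \varphi_{\alpha(g)}$ shows that $\varphi_g$ centralises $\alpha$ iff $\alpha(g) g^{-1} \in Z$. For $\alpha = \bar{\beta}$ this becomes $g^{-2}z \in Z$ for some $z \in Z$, i.e., $g^2 \in Z$; since $F/L$ has exponent $3$, $g^2 = g^{-1}$, so $g \in Z$. Hence the stabiliser of $\bar\beta$ in $\Inn(F/L)$ is trivial, so its orbit in $S$ has cardinality $|\Inn(F/L)| = |(F/L)/Z|$. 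Comparing with $|S| = |\Hom(F/F_2, Z)|$ and using that $F/F_2$ and $(F/L)/Z$ are both isomorphic to $\mathbb{F}_3^2$ while $Z$ has order $3$ (this follows from Lemmas \ref{description L} and \ref{free map}, or equivalently from the identification of $F/L$ with the Heisenberg group of order $27$), the orbit fills all of $S$.

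Consequently, the given $\eta \in S$ lies in the $\Inn(F/L)$-orbit of $\bar\beta$, producing an inner automorphism $\varphi_L$ of $F/L$ with $\bar\beta = \varphi_L\, \eta\, \varphi_L^{-1}$, which is precisely the required congruence $\beta(x) \equiv (\varphi_L \eta \varphi_L^{-1})(x) \bmod L$. The main technical point to verify carefully is the computation $|Z| = 3$, so that the counting $|S| = |\Inn(F/L)|$ is an equality and not merely an inequality; everything else is a direct manipulation using bilinearity of the commutator in class~$2$ and the fact that $F/L$ has exponent $3$.
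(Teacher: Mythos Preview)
Your proof is correct. The paper takes a slightly different route: it identifies $F/L$ as extraspecial of order $27$, then uses Lemma~\ref{innerextra} (in an extraspecial group, automorphisms acting trivially on the abelianization are precisely the inner ones) to conclude that $\eta^{-1}\bar\beta \in \Inn(F/L)$, and finally applies Schur--Zassenhaus to $\Inn(F/L)\rtimes\langle\eta\rangle$ to produce the conjugating inner automorphism.

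Your argument replaces both steps with a direct orbit count: you show $|S| = |\Hom(F/F_2,Z)| = 9$ via the torsor structure over the kernel subgroup from Lemma~\ref{maps1-1}, and you show the $\Inn(F/L)$-orbit of $\bar\beta$ has size $|\Inn(F/L)| = |(F/L)/Z| = 9$ by computing the stabiliser to be trivial. The numerical coincidence $|\Inn(F/L)| = |\Hom(F/F_2,Z)|$ is essentially the content of Lemma~\ref{innerextra}, so the two proofs share the same core; what you gain is that you avoid invoking Schur--Zassenhaus, at the cost of a short extra computation. Your parenthetical remark that $Z = \ZG(F/L)$ is genuinely used (to identify $|\Inn(F/L)|$ with $|(F/L)/Z|$), but it follows immediately from $|Z|=3$ and Lemma~\ref{quotient by centre not cyclic}, so there is no gap. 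The observation that every element of $S$ automatically has order $2$ is a pleasant bonus but not needed for the argument, since $\eta$ has order $2$ by hypothesis.
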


\begin{proof}
Write $H=F/L$. The group $F$ being $2$-generated, $|F:F_2|=9$ and so, as a consequence of Lemma \ref{description L}, the group $H$ has order $27$. Thanks to the definitions of $F_2$ and $L$, one easily sees that $H$ is non-abelian of exponent $3$ and that $H_2=F_2/L$. Lemma \ref{normal intersection centre trivial} yields that $H_2$ is central in $H$. Applying Lemma \ref{quotient by centre not cyclic}, we get that $H_2=\ZG(H)$ and therefore $H$ is extraspecial. Let now $\beta_L$ be the automorphism of $H$ that is induced by $\beta$. Then $\eta^{-1}\beta_L$ induces the identity on $H/H_2$ and so, thanks to Lemma \ref{innerextra}, one gets $\eta^{-1}\beta_L\in\Inn(H)$. The group $\Inn(H)$ being a normal $3$-subgroup of $\Aut(H)$, the Schur-Zassenhaus theorem applies to $\Inn(H)\rtimes\gen{\eta}$ and ensures that there exists $\varphi_L\in\Inn(H)$ with the property that $\beta_L=\varphi_L\eta\varphi_L^{-1}$. 
\end{proof}

\begin{lemma}\label{innermodE}
Let $\eta$ be an automorphism of $F/E$ of order $2$ that induces the inversion map on $F/F_2$.
Assume that $\beta$ coincides with $\eta$ modulo $L$. 
Then, for all $x\in F$, one has $\beta(x)\equiv\eta(x)\bmod E$.
\end{lemma}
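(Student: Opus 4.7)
The plan is to exhibit $\beta$ and $\eta$ as involutions in a single coset inside $\Aut(F/E)$ and then show that this coset contains a unique involution, which will force $\beta=\eta$ on $F/E$.

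First, I will check that $\eta$, like $\beta$, acts on $L/E$ by inversion. This is essentially Lemma \ref{beta layers} applied to $\eta$: the proofs of Lemmas \ref{free 3} and \ref{free 4} use nothing about the automorphism beyond the fact that it has order $2$ and induces inversion on $F/F_2$, both of which hold for $\eta$ by hypothesis. So the same chain---cubing equivariance via Lemma \ref{p-power characters}, bilinearity via Lemma \ref{lemma product of characters}, and extension via Lemma \ref{abelian>minus quotients}---shows that $\eta$ inverts $L/E$.

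Next, since $[F,L]\subseteq E$, the subgroup $L/E$ is central in $F/E$. Let $\cor{S}$ denote the set of automorphisms $\tau$ of $F/E$ that coincide with $\beta$ modulo $L$ and act as $-1$ on $L/E$. By the first step, both $\beta$ and $\eta$ lie in $\cor{S}$. By Lemma \ref{maps1-1}, the automorphisms of $F/E$ trivial on both $L/E$ and $F/L$ correspond bijectively to $\Hom(F/L,L/E)$; writing the automorphism attached to $h$ as $\phi_h:x\mapsto x\cdot h(xL)$, and setting $\tau_0=\beta$, every element of $\cor{S}$ takes the form $\tau_h=\phi_h\circ\tau_0$ for a unique $h$. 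Since $L/E$ is abelian, each such $h$ factors through the abelianization of $F/L$, which equals $F/F_2$.

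The heart of the argument is a short computation showing $\tau_h^2=\phi_{2h}$. The key inputs are that $\tau_0$ inverts $F/F_2$ (so $h(\tau_0(x)L)=-h(xL)$) and that $\tau_0$ inverts the central subgroup $L/E$; unwinding $\tau_h(\tau_h(x))$ with these two facts produces two sign changes that cancel, leaving the factor $2$. Since $L/E$ is an $\F_3$-module and $2\in\F_3^*$, the equation $2h=0$ forces $h=0$. Thus $\tau_0=\beta$ is the unique involution in $\cor{S}$; as $\eta$ is another involution in $\cor{S}$, it coincides with $\beta$ on $F/E$, which is precisely the claim. The only step requiring care is the first one---verifying that the chain of lemmas establishing the action of $\beta$ on $L/E$ transfers verbatim to any order-$2$ automorphism inducing inversion on $F/F_2$; the rest is formal, and its outcome is dictated by the fact that $2$ is a unit in $\F_3$.
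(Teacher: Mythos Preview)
Your proof is correct and follows essentially the same approach as the paper. Both arguments first observe that $\eta$, like $\beta$, inverts $L/E$ (the paper invokes this via Lemma~\ref{beta layers}, implicitly re-running its proof for $\eta$; you spell this out more carefully), then parametrize the relevant automorphisms by $h\in\Hom(F/F_2,L/E)$ using Lemma~\ref{maps1-1}, and finally exploit the two cancelling sign changes coming from inversion on $F/F_2$ and on $L/E$. The only cosmetic difference is that the paper works with $\psi=\eta^{-1}\beta_E$ and derives $\psi^2=\id$ via the conjugation identity $\eta\psi\eta^{-1}=\psi^{-1}$ together with $\eta h\eta^{-1}=h$, whereas you compute $\tau_h^2=\phi_{2h}$ directly; both conclude because $2$ is a unit in $\F_3$.
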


\begin{proof}
Let $\Delta$ denote the subgroup of $\Aut(F/E)$ consisting of all those automorphisms of $F/E$ inducing the identity on both $F/L$ and $L/E$. Let $\beta_E$ be the automorphism that is induced on $F/E$ by $\beta$. As a consequence of Lemma \ref{beta layers}, the element $\psi=\eta^{-1}\beta_E$ belongs to $\Delta$ and so, thanks to Lemma \ref{maps1-1}, there exists a homomorphism $h:F/L\rightarrow L/E$ such that, for all $x\in F/E$, one has $\psi(x)= h(x)x$. 
The quotient $L/E$ being elementary abelian, the groups $\Hom(F/L,L/E)$ and $\Hom(F/F_2,L/E)$ are naturally isomorphic and so $h$ factors as a homomorphism $F/F_2\rightarrow L/E$.
Now $\eta$ coincides with $\beta$ on $F/F_2$ and so, thanks to Lemma \ref{beta layers}, it induces the inversion map on $L/E$. For each $x\in F/F_2$, it follows that 
$$(\eta h\eta^{-1})(x)=\eta(h(x^{-1}))=(h(x^{-1}))^{-1}=h(x).$$
However, the automorphisms $\eta$ and $\beta_E$ having order $2$, one also has 
$$\eta^2=1=\beta_E^2=\eta\psi\eta\psi$$
and therefore $\eta\psi\eta^{-1}=\psi^{-1}$.
For all $x\in F/E$, we compute 
\begin{align*}
\psi(x)x^{-1} = h(x) & = (\eta h\eta^{-1})(x)\\
			         & = \eta(h(\eta^{-1}(x))) \\
			         & = \eta\big(\psi(\eta^{-1}(x))(\eta^{-1}(x))^{-1}\big) \\
			         & = (\eta\psi\eta^{-1})(x)x^{-1} \\
			  		 & = \psi^{-1}(x)x^{-1}
\end{align*}
and therefore $\psi(x)^2=1$. The group $F/E$ being a $3$-group, it follows that $\psi$ coincides with the trivial map and therefore $\eta$ and $\beta_E$ are equal.
\end{proof}

\begin{lemma}\label{deltainner}
Let $N=\phi^{-1}(G_4)$ and let $\Delta$ denote the subgroup of $\Aut(F/D_N)$ consisting of all those maps inducing the identity on both $F/E$ and $E/D_N$.
Then $\Delta$ is contained in $\Inn(F/D_N)$.
\end{lemma}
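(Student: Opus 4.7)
The plan is to identify $\Delta$ with a $9$-element Hom-set via Lemma \ref{maps1-1}, and then show that all $9$ homomorphisms are realized by inner automorphisms coming from elements of $L/D_N$, the crucial input being the non-degenerate pairing of Lemma \ref{nondeg with DN}.

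First I would observe that $\tilde E := E/D_N$ is central in $\tilde F := F/D_N$. By Lemma \ref{e in n}, $E\subseteq N$, so $[F,E]\subseteq[F,N]\subseteq D_N$. Hence Lemma \ref{maps1-1}, applied with $T=\tilde F$ and $S=\tilde E$, produces a bijection
\[
\Delta\longrightarrow\Hom(F/E,\,E/D_N),\qquad \delta\mapsto\bigl(xE\mapsto\delta(x)x^{-1}D_N\bigr).
\]
By Lemma \ref{orderE}, $E/D_N$ is cyclic of order $3$, and any homomorphism from the $3$-group $F/E$ into it vanishes on $[F,F]F^3=F_2$; hence the right-hand side has the same cardinality as $\Hom(F/F_2,\F_3)$, namely $9$. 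This gives $|\Delta|=9$.

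Next I would exhibit inner automorphisms inside $\Delta$. For any $\ell\in L$, the inclusion $[F,L]\subseteq E$ forces conjugation by $\ell$ on $\tilde F$ to induce the identity on $F/E$, while centrality of $\tilde E$ forces it to induce the identity on $E/D_N$; so conjugation by $\ell$ belongs to $\Delta$. Under the bijection above, this inner automorphism corresponds to the homomorphism $xE\mapsto[\ell,x]D_N$. Because $[F,N]\subseteq D_N$, conjugation by any $n\in N$ is already trivial on $\tilde F$, so the construction descends to a well-defined map $L/N\to\Hom(F/E,E/D_N)$.

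To conclude, I would invoke Lemma \ref{nondeg with DN}: the non-degenerate pairing $F/F_2\times L/N\to E/D_N$ implies that the map $L/N\to\Hom(F/F_2,E/D_N)=\Hom(F/E,E/D_N)$ induced by $\ell\mapsto(x\mapsto[x,\ell]D_N)$ is injective, and since both sides have order $9$ — the domain by Lemma \ref{LsuN} — it is a bijection. The map from the previous paragraph differs from this one only by the harmless sign $[\ell,x]=[x,\ell]^{-1}$ in the abelian target, so it too is a bijection. Therefore every element of $\Delta$ is realized by an inner automorphism of $\tilde F$ coming from $L/D_N$, which is the desired inclusion $\Delta\subseteq\Inn(F/D_N)$. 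The main care needed in the write-up is the centrality of $\tilde E$ and the compatibility of signs between the two pairings; neither is a serious obstacle, but the argument rests on them.
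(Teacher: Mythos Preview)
Your proposal is correct and follows essentially the same approach as the paper: both use Lemma \ref{maps1-1} to identify $\Delta$ with $\Hom(F/F_2,E/D_N)$, then use the non-degenerate pairing of Lemma \ref{nondeg with DN} together with the cardinality counts from Lemmas \ref{LsuN} and \ref{orderE} to show that every such homomorphism is $x\mapsto[l,x]D_N$ for some $l\in L$. The only difference is the order of presentation---the paper first establishes surjectivity of $L/N\to\Hom(F/F_2,E/D_N)$ and then picks an arbitrary $\delta\in\Delta$, whereas you compute $|\Delta|=9$ first and then match it with inner automorphisms from $L/N$.
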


\begin{proof}
The group $N$ belongs to $\cor{N}_3$, because $G/G_4$ is a $\kappa$-group of class $3$. As a consequence of Lemma \ref{nondeg with DN}, the commutator map induces an injective homomorphism
$\varphi:L/N\rightarrow \Hom(F/F_2,E/D_N)$.
Combining Lemmas \ref{LsuN} and \ref{orderE}, we get that the orders of $\Hom(F/F_2,E/D_N)$ and $L/N$ are the same and therefore $\varphi$ is also surjective. 
It follows that, for each element $f$ of $\Hom(F/F_2,E/D_N)$, there exists $l\in L$ such that $f$ equals $xF_2\mapsto [l,x]D_N$. Set $\overline{F}=F/D_N$ and use the bar notation for the subgroups of $\overline{F}$.
We now prove that $\Delta$ is contained in $\Inn(\overline{F})$. Let $\delta\in\Delta$. Then, as a consequence of Lemma \ref{maps1-1}, there exists a homomorphism $f:\overline{F}\rightarrow\overline{E}$ whose kernel contains $\overline{E}$ and such that, for each $x\in\overline{F}$, one has $\delta(x)=f(x)x$. Fix such $f$. The group $\overline{E}$ being elementary abelian, the kernel of $f$ contains $\overline{F_2}$ and therefore $f$ factors as a homomorphism $F/F_2\rightarrow\overline{E}$. As a result, there exists $l\in\overline{L}$ such that, for each $x\in\overline{F}$, one has $f(x)=[l,x]$ and thus $\delta(x)=[l,x]x=lxl^{-1}$. In particular, $\delta$ is an inner automorphism of $\overline{F}$ and, the choice of $\delta$ being arbitrary, $\Delta$ is contained in $\Inn(\overline{F})$.
\end{proof}

\begin{lemma}\label{Dente}
Let $N$ be an element of $\cor{N}_3$. Then $\beta(D_N)=D_N$.
\end{lemma}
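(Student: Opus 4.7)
The plan is to show the much stronger statement that $\beta(N)=N$, after which $\beta(D_N)=D_N$ follows immediately from the definition $D_N=[F,N]F_2^3[F_2,F_2]$ since $F_2^3$ and $[F_2,F_2]$ are characteristic in $F$ and $\beta([F,N])=[F,\beta(N)]$.

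First I would locate $N$ within the tower $E\subseteq N\subseteq L$. The inclusion $N\subseteq L$ is part of the definition of $\cor{N}_3$ (cf.\ Lemma \ref{description L}, since $F/N$ has class $3$), while $E\subseteq N$ comes from the parametrization in Lemma \ref{pi3}, where every $N\in\cor{N}_3$ is of the form $K_\pi\supseteq E$. Next I would recall from Lemma \ref{Lbar of dim 4} that $L/E$ is an $\F_3$-vector space, so that $N/E$ is a subgroup of this vector space.

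The key step is then to invoke Lemma \ref{beta layers}, which tells us that the induced action of $B=\gen{\beta}$ on $L/E$ is through the character $\sigma\colon B\to\graffe{\pm 1}$ sending $\beta$ to $-1$. Therefore $\beta$ acts on $L/E$ as the inversion map $x\mapsto -x$. Since every subgroup of an abelian group is closed under inversion, we get $\beta(N)/E=-(N/E)=N/E$, hence $\beta(N)=N$.

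With $\beta(N)=N$ in hand the conclusion is automatic: since $\beta$ is an automorphism of $F$ stabilizing $F$, $N$, and $F_2$, one has
\[
\beta(D_N)=\beta([F,N])\,\beta(F_2^3)\,\beta([F_2,F_2])=[F,N]\,F_2^3\,[F_2,F_2]=D_N.
\]
There is no real obstacle here; the whole point is that the preparatory Lemma \ref{beta layers} about the $B$-action on the successive layers of the $p$-central series already does the work, and the only ``new'' observation is the trivial remark that a subgroup of a vector space in characteristic different from $2$ is preserved by scalar multiplication by $-1$.
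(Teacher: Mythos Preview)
Your proposal is correct and follows essentially the same approach as the paper. The paper's proof is just the terse two-line version of what you wrote: it invokes Lemma~\ref{beta layers} to conclude that $N$ is $\gen{\beta}$-stable (implicitly using exactly your argument that $\beta$ acts by inversion on $L/E$, so every subgroup between $E$ and $L$ is preserved), and then observes that $D_N$ is therefore $\gen{\beta}$-stable as well.
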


\begin{proof}
As a consequence of Lemma \ref{beta layers}, the group $N$ is $\gen{\beta}$-stable and therefore so is $D_N$.
\end{proof}

\begin{lemma}\label{innermodDN}
Let $N=\phi^{-1}(G_4)$.
Let $\eta\in\Aut(F/D_N)$ be of order $2$ and assume that $\eta$ induces the inversion map on $F/F_2$.
Assume moreover that $\beta$ and $\eta$ induce the same automorphism of $F/E$. Then there exists $\psi_N\in\Inn(F/D_N)$ such that, for all $x\in F$, one has $\beta(x)\equiv(\psi_N\eta\psi_N^{-1})(x)\bmod D_N$.
\end{lemma}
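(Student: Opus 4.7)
The plan is to apply the conjugacy part of the Schur--Zassenhaus theorem inside the subgroup $H = \Inn(F/D_N)\cdot\langle\eta\rangle$ of $\Aut(F/D_N)$. First I would observe that, by Lemma \ref{Dente}, $\beta$ descends to an automorphism $\beta_N$ of order $2$ on $F/D_N$, and that $\Inn(F/D_N)$ is a normal $3$-subgroup of $\Aut(F/D_N)$. Consequently, once one knows that $\beta_N\in H$, both $\langle\beta_N\rangle$ and $\langle\eta\rangle$ are order-$2$ complements of the normal Sylow $3$-subgroup $\Inn(F/D_N)$ of $H$, and Schur--Zassenhaus produces $\psi_N\in\Inn(F/D_N)$ with $\psi_N\eta\psi_N^{-1}=\beta_N$, which is precisely the desired conclusion.

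The main task is therefore to show $\beta_N\eta^{-1}\in\Inn(F/D_N)$. Since $\beta_N$ and $\eta$ coincide modulo $E$ by hypothesis, $\beta_N\eta^{-1}$ induces the identity on $F/E$. By Lemma \ref{deltainner}, any automorphism of $F/D_N$ inducing the identity on both $F/E$ and $E/D_N$ is inner, so it is enough to verify that $\beta_N\eta^{-1}$ acts trivially on $E/D_N$. Since $|E:D_N|=3$ by Lemma \ref{orderE}, this action is $\pm 1$, and I would show separately that both $\beta_N$ and $\eta$ act as $+1$ on $E/D_N$.

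To do this, I would use the non-degenerate bilinear map $\gamma:F/F_2\times L/N\to E/D_N$ from Lemma \ref{nondeg with DN}, whose image generates $E/D_N$. Any automorphism of $F/D_N$ respects $\gamma$ via its actions on the three factors, so it suffices to compute how $\beta_N$ and $\eta$ act on $F/F_2$ and on $L/N$. Both act as $-1$ on $F/F_2$ by hypothesis. For $L/N$, note that $E\subseteq N$ by Lemma \ref{e in n}, so $L/N$ is a quotient of $L/E$. On $L/E$ the automorphism $\beta$ acts through $\sigma$ by Lemma \ref{beta layers}, namely as $-1$, and $\eta$ agrees with $\beta_N$ modulo $E$ and so also acts as $-1$ on $L/E$, hence on $L/N$. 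Bilinearity of $\gamma$ then gives $\gamma(\beta_N x,\beta_N y)=\gamma(-x,-y)=\gamma(x,y)$ for all $(x,y)\in F/F_2\times L/N$, and likewise for $\eta$, so both automorphisms restrict to the identity on $E/D_N$.

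The main obstacle is the careful bookkeeping of the character actions on the various subquotients, in particular ensuring that the action of $\eta$ on $L/N$ is really forced to be $-1$ by the hypothesis that $\eta$ and $\beta_N$ agree modulo $E$; once this is in hand, Lemma \ref{deltainner} places $\beta_N\eta^{-1}$ in $\Inn(F/D_N)$ and Schur--Zassenhaus completes the argument.
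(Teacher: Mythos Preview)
Your proof is correct and follows essentially the same route as the paper: show that $\beta_N\eta^{-1}$ lies in the group $\Delta$ of automorphisms trivial on both $F/E$ and $E/D_N$, invoke Lemma \ref{deltainner} to land in $\Inn(F/D_N)$, and then apply Schur--Zassenhaus. The paper compresses your character computation on $E/D_N$ into a single citation of Lemmas \ref{nondeg with DN} and \ref{lemma product of characters}, whereas you spell out explicitly that $\beta_N$ and $\eta$ each act as $-1$ on both factors $F/F_2$ and $L/N$ of the commutator pairing, hence as $+1$ on $E/D_N$; this is exactly the content of those two lemmas. One minor remark: your citation of Lemma \ref{e in n} for $E\subseteq N$ points to the wrong section (that lemma concerns the map $\theta$ in Section \ref{section construction}); here the inclusion follows because $N=ME\supseteq E$ in the setup of Lemma \ref{Dker}, or equivalently because $\beta$ acts by inversion on $L/E$ (Lemma \ref{beta layers}) so every subgroup between $E$ and $L$ is $\beta$-stable.
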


\begin{proof}
Set $\overline{F}=F/D_N$ and use the bar notation for the subgroups of $\overline{F}$. Thanks to Lemma \ref{Dente}, the map $\beta$ induces an automorphism of $\overline{F}$, which we denote by $\overline{\beta}$.
Let $\Delta$ denote the subgroup of $\Aut(\overline{F})$ consisting of all those elements $\delta$ such that $\delta$ induces the identity on both $\overline{E}$ and $\overline{F}/\overline{E}$. 
Then, as a consequence of Lemmas \ref{nondeg with DN} and \ref{lemma product of characters}, the automorphism $\eta^{-1}\overline{\beta}$ belongs to $\Delta$ and thus, thanks to Lemma \ref{deltainner}, we have  $\eta^{-1}\overline{\beta}\in\Inn(\overline{F})$. Applying the Schur-Zassenhaus theorem to $\Inn(\overline{F})\rtimes\gen{\eta}$, we get that there exists $\overline{\psi}\in\Inn(\overline{F})$ such that $\overline{\beta}=\overline{\psi}\eta\overline{\psi}^{-1}$. This concludes the proof.
\end{proof}

\begin{proposition}\label{proposition beta a meno di interni}
Let $\alpha$ be an automorphism of order $2$ of $G$ that induces the inversion map on $G/G_2$. 
Then there exists $\gamma\in\Inn(F)$ such that 
$\alpha\phi=\phi(\gamma\beta\gamma^{-1})$.
\end{proposition}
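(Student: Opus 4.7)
The plan is to lift $\alpha$ to an automorphism of the quotient $F/D_N$, where $N=\phi^{-1}(G_4)$, and then, using the three conjugation lemmas of this section in sequence, to rewrite this lift as $\gamma\beta\gamma^{-1}$ for some $\gamma\in\Inn(F)$. First I would check the inclusions $F_5\subseteq D_N\subseteq\ker\phi$: the second is immediate from $\phi(D_N)=[G,G_4]\,G_2^3\,[G_2,G_2]=\graffe{1}$, using that $G_4\subseteq\ZG(G)$ and that $G_2$ is elementary abelian by Proposition \ref{G2elemabelian}, while the first follows from $F_4\subseteq E\subseteq N$ (so $[F,F_4]\subseteq[F,N]\subseteq D_N$) together with $F_4^3\subseteq F_2^3\subseteq D_N$. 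Consequently $\phi$ factors through a map $\tilde\phi\colon F/D_N\to G$. Because $G_4$ is characteristic in $G$, any automorphism of $F/F_5$ lifting $\alpha$ preserves $N/F_5$ and therefore also $D_N/F_5$; applying Proposition \ref{proposition extension alpha} with $k=5$ and descending, one obtains $\tilde\alpha\in\Aut(F/D_N)$ of order $2$ with $\alpha\tilde\phi=\tilde\phi\tilde\alpha$. Since every inner automorphism of $F/D_N$ lifts to $\Inn(F)$, it suffices to find $\delta\in\Inn(F/D_N)$ such that $\tilde\alpha=\delta\bar\beta\delta^{-1}$, where $\bar\beta$ is the automorphism induced by $\beta$ via Lemma \ref{Dente}.

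The three conjugation lemmas now chain along the tower $D_N\subseteq E\subseteq L$; the inclusion $D_N\subseteq E$ is Lemma \ref{orderE}, which applies since $F/N\cong G/G_4$ is a $\kappa$-group of class $3$. First, Lemma \ref{innermodL} applied to $\tilde\alpha_L\in\Aut(F/L)$ yields $\varphi_L\in\Inn(F/L)$ with $\bar\beta\equiv\varphi_L\tilde\alpha_L\varphi_L^{-1}\pmod L$; lifting the coset realising $\varphi_L^{-1}$ to an element $h\in F$ and taking $\gamma_1\in\Inn(F)$ to be conjugation by $h$, one finds $\gamma_1^{-1}\tilde\alpha\gamma_1\equiv\bar\beta\pmod L$. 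Second, Lemma \ref{innermodE} applied to the order-$2$ automorphism of $F/E$ induced by $\gamma_1^{-1}\tilde\alpha\gamma_1$ strengthens this congruence to $\pmod E$. Third, Lemma \ref{innermodDN} supplies $\psi_N\in\Inn(F/D_N)$ satisfying $\bar\beta=\psi_N(\gamma_1^{-1}\tilde\alpha\gamma_1)\psi_N^{-1}$; lifting $\psi_N$ to $\gamma_2\in\Inn(F)$ and setting $\gamma=\gamma_1\gamma_2^{-1}$, one obtains the identity $\tilde\alpha=\gamma\bar\beta\gamma^{-1}$ in $\Aut(F/D_N)$, which composed with $\tilde\phi$ yields exactly $\alpha\phi=\phi(\gamma\beta\gamma^{-1})$ on $F$.

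The main obstacle is purely bookkeeping: one must track which inner automorphism and which of its inverses appears at each stage, and verify that every intermediate automorphism has order exactly $2$ so that the three lemmas apply. The latter is automatic, since every lift in question reduces to the inversion map on the non-trivial $\F_3$-vector space $F/F_2$, forcing order $2$. All other substantive input, in particular the index $|E:D_N|=3$ from Lemma \ref{orderE} that underlies the transition between Lemmas \ref{innermodE} and \ref{innermodDN}, is already in place.
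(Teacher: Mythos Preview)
Your proposal is correct and follows essentially the same route as the paper's proof: lift $\alpha$ to an order-$2$ automorphism of $F/D_N$ via Proposition~\ref{proposition extension alpha}, then apply Lemmas~\ref{innermodL}, \ref{innermodE}, and \ref{innermodDN} in sequence along the tower $D_N\subseteq E\subseteq L$ to conjugate this lift into $\bar\beta$ by an inner automorphism. The only differences are cosmetic bookkeeping (you track $\gamma_1,\gamma_2$ separately where the paper combines them as $\psi_N\varphi_N$, and you make the stability of $D_N/F_5$ under the lift explicit via the characteristicity of $G_4$); one small remark is that the inclusion $D_N\subseteq E$ is not the content of Lemma~\ref{orderE} but of the unnamed lemma immediately following the definition of $D_N$.
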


\begin{proof}
Thanks to proposition \ref{proposition extension alpha}, there exists an automorphism $\epsilon$ of $F/F_5$ of order $2$ such that $\alpha\phi_5=\phi_5\epsilon$. 
As a consequence, the map $\epsilon$ induces the inversion map on $F/F_2$. 
Let now $M=\ker\phi$ and let $N=ME$. 
Thanks to Lemma \ref{Dker}, the group $N$ belongs to $\cor{N}_3$ and $D_N\subseteq M$. One easily shows that $F_5$ is contained in $D_N$. It follows that $\epsilon$ induces an automorphism $\eta$ of order $2$ of $F/D_N$. 
Let $\eta_L$ be the automorphism that $\eta$ induces on $F/L$. Then, via the choice of a representative, Lemma \ref{innermodL} ensures that there exists an inner automorphism $\varphi_N$ of $F/D_N$ such that $\beta$ and $\varphi_N\eta\varphi_N^{-1}$ induce the same automorphism of $F/L$. Fix such $\varphi_N$ and define $\eta_1=\varphi_N\eta\varphi_N^{-1}$. Since $\eta$ has order $2$, the order of $\eta_1$ is equal to $2$.
Lemma \ref{innermodE} yields that in fact $\eta_1$ and $\beta$ are the same modulo $E$. At last, let $\psi_N$ be as in Lemma \ref{innermodDN} and define $\eta_2=\psi_N\eta_1\psi_N^{-1}$. As a consequence of Lemma \ref{innermodDN}, the maps $\eta_2$ and $\beta$ induce the same map on $F/D_N$. 
Via the choice of a representative, the inner automorphism $\psi_N\varphi_N$ of $F/D_N$ lifts to an inner automorphism $\gamma$ of $F$ with the property that $\eta$ and $\gamma\beta\gamma^{-1}$ induce the same automorphism on $F/D_N$. To conclude, let $\phi_N:F/D_N\rightarrow G$ be the map induced by $\phi$. 
Since $\alpha\phi_5=\phi_5\epsilon$, one gets 
$\alpha\phi_N=\phi_N\eta$ and therefore $\alpha\phi=\phi(\gamma\beta\gamma^{-1})$.
\end{proof}

\begin{lemma}\label{n4}
There exists $M\in\cor{N}_4$ such that $F/M$ is isomorphic to $\yo$. 
Moreover, $\cor{N}_4$ is non-empty.
\end{lemma}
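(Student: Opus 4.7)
The plan is to construct $M$ explicitly as the kernel of a surjection $F\twoheadrightarrow \yo$, and then verify each clause in the definition of $\cor{N}_4$ by citing the structural results about $\yo$ established earlier in the chapter.

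First I would note that $\yo$ is $2$-generated: by Lemma \ref{yo structure}($1$) one has $|\yo:\yo_2|=9$, so $\yo/\Phi(\yo)$ has dimension $2$ over $\F_3$ by Lemma \ref{frattini comm}, and any lift of an $\F_3$-basis generates $\yo$. Hence, by the universal property of the free group $F$ on two generators, there exists a surjective homomorphism $\phi\colon F\twoheadrightarrow \yo$. Setting $M=\ker\phi$, we get $F/M\cong \yo$, so proving $M\in\cor{N}_4$ simultaneously establishes both claims.

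Next I would verify the four conditions in the definition of $\cor{N}_4$ for $\yo$. The group $\yo$ is a $\kappa$-group by Lemma \ref{yo kappa}; it has class exactly $4$ by Lemma \ref{yo class}; the fourth width $\wt_{\yo}(4)=1$ is given by Lemma \ref{yo structure}($1$); and Lemma \ref{yo inversion} exhibits an automorphism $\alpha$ of $\yo$ of order $2$ which induces the inversion map on $\yo/\yo_2$. Transporting $\alpha$ via the isomorphism $F/M\cong \yo$, one obtains an automorphism of order $2$ of $F/M$ that induces the inversion map on $(F/M)/(F/M)_2$.

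Combining these observations, $M$ satisfies every requirement in the definition of $\cor{N}_4$, so $M\in\cor{N}_4$ and in particular $\cor{N}_4\neq\emptyset$. There is no real obstacle here: the lemma is a direct assembly of the structural facts about $\yo$ that have been proved in Section \ref{section yo}, together with the universal property of the free group $F$.
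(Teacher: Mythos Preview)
Your proof is correct and follows essentially the same route as the paper's: both cite Lemmas \ref{yo kappa}, \ref{yo class}, \ref{yo structure}($1$), and \ref{yo inversion} to verify that $\yo$ satisfies every clause in the definition of $\cor{N}_4$, and then invoke the universal property of the free group $F$ to produce $M$. Your version is slightly more explicit about why $\yo$ is $2$-generated and about transporting the automorphism along the isomorphism $F/M\cong\yo$, but the argument is the same.
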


\begin{proof}
The group $\yo$ is a $\kappa$-group, by Lemma \ref{yo kappa}, and 
$\yo_4$ has cardinality $3$, thanks to Lemma \ref{yo structure}($1$). 
By Lemma \ref{yo class}, the class of $\yo$ is $4$ and moreover, thanks to Lemma \ref{yo inversion}, the group $\yo$ possesses an automorphism that induces the inversion map on the quotient $\yo/\yo_2$. 
By the universal property of free groups, there exists $M\in\cor{N}_4$ such that $F/M$ is isomorphic to $\yo$. In particular, $\cor{N}_4$ is non-empty.
\end{proof}

\begin{lemma}\label{n4 betastable}
For each $M\in\cor{N}_4$, one has $\beta(M)=M$. 
\end{lemma}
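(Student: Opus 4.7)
The plan is to deduce this directly from Proposition \ref{proposition beta a meno di interni}, which already gives us, for each $M \in \cor{N}_4$, an expression of $\beta$ as an inner conjugate of a lift of the given order-$2$ automorphism of $F/M$. The only extra ingredient needed is the observation that any inner automorphism of $F$ stabilizes the normal subgroup $M$.

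Concretely, let $M \in \cor{N}_4$ and set $G = F/M$; let $\phi : F \to G$ denote the canonical projection, so that $\ker \phi = M$. By the definition of $\cor{N}_4$, there is an automorphism $\alpha$ of $G$ of order $2$ that induces the inversion on $G/G_2$. Applying Proposition \ref{proposition beta a meno di interni} to the pair $(G,\alpha)$, we obtain $\gamma \in \Inn(F)$ with $\alpha\phi = \phi \circ (\gamma \beta \gamma^{-1})$. Since $\alpha$ is an automorphism of $G$, the kernel of $\alpha\phi$ equals $M$, and on the right hand side the kernel of $\phi \circ (\gamma\beta\gamma^{-1})$ equals $(\gamma\beta\gamma^{-1})^{-1}(M)$. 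Thus $\gamma\beta\gamma^{-1}(M) = M$.

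Now $\gamma$ is an inner automorphism of $F$, and inner automorphisms stabilize every normal subgroup; in particular $\gamma(M) = M$ and $\gamma^{-1}(M) = M$. Composing $\gamma\beta\gamma^{-1}(M) = M$ on the left with $\gamma^{-1}$ and on the right noting $\gamma^{-1}(M) = M$, one concludes $\beta(M) = M$, as desired.

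The argument is essentially a one-line corollary of Proposition \ref{proposition beta a meno di interni}, and no real obstacle should arise: all the work went into establishing that proposition (controlling $\beta$ modulo $L$, $E$, and $D_N$ and then patching via Lemmas \ref{innermodL}, \ref{innermodE}, \ref{innermodDN}). Here we simply read off the consequence for $M$ itself.
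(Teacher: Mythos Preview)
Your argument is correct and follows essentially the same route as the paper: both invoke Proposition~\ref{proposition beta a meno di interni} and then use that inner automorphisms fix the normal subgroup $M$. Your final step is in fact slightly cleaner than the paper's: you observe that $\gamma\beta\gamma^{-1}$ is an automorphism of $F$, so the kernel equality $(\gamma\beta\gamma^{-1})^{-1}(M)=M$ immediately gives $\gamma\beta\gamma^{-1}(M)=M$, whereas the paper only extracts $\beta(M)\subseteq M$ from the computation and then appeals to a separate finiteness argument (via the induced automorphisms on $F/F_k$ for large $k$) to upgrade the inclusion to equality.
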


\begin{proof}
Let $M\in\cor{N}_4$. Without loss of generality $G=F/M$ and so $M=\ker\phi$. Let moreover $\alpha$ be an automorphism of $G$ of order $2$ that induces the inversion map on $G/G_2$. Then, thanks to Proposition \ref{proposition beta a meno di interni}, there exists $\gamma\in\Inn(F)$ such that $\alpha\phi=\phi(\gamma\beta\gamma^{-1})$.
It follows that 
\[
\graffe{1}=\alpha(\phi(M))=\phi(\gamma\beta\gamma^{-1})(M)=\phi\beta(M)
\]
and therefore $\beta(M)$ is contained in $\ker\phi=M$. 
Since $\beta$ induces an automorphism of each quotient $F/F_k$ and since, for large enough $k$ one has $F_k\subseteq M$, we have in fact that $\beta(M)=M$.
\end{proof}

\[
\begin{diagram}[nohug]
       &          &  F  &  & & &  &  \rDashto   &  G  \\
       &          & \dLine^{- \ }  & &  &  & &  &    \dLine_{\ -}  \\
       &          &  F_2    &   & & &   &  \rDashto   & G_2    \\
       &          &  \dLine^{+ \ } & &  &  & &     &  \dLine_{\ +} \\
       &          &  L  &   & &   &   &  \rDashto(6,0)  & G_3    \\
       &  \ldLine^{- \ }  &   & \rdLine^{\ -} & & & &  &    \dLine_{\ -}   \\
F_3  &     &    &     & N & &  &  \rDashto(4,0)  & G_4    \\
       & \rdLine_{- \ } & & \ldLine_{\ -} & & \rdLine_{\ +} & && \dLine_{\ +}\\
       &         &  E  &   &   &  & M  & \rDashto   &   1\\
       &   &  & \rdLine^{\ +}  & & \ldLine^{\ -}  & & & \\
       &   &  &  &  D_N & & & & 
\end{diagram}
\vspace{10pt}
\]

\begin{lemma}\label{complementE}
Let $N$ be an element of $\cor{N}_3$ and write $\overline{F}=F/D_N$. Set moreover $\overline{N}=N/D_N$ and $\overline{E}=E/D_N$.
Define $\overline{\beta}$ to be the map that is induced by $\beta$ on $\overline{F}$ and set 
$$\overline{N}^{\,+}=\graffe{\overline{x}\in\overline{N} : \overline{\beta}(\overline{x})=\overline{x}} \ \ \text{and} \ \ \overline{N}^{\,-}=\graffe{\overline{x}\in\overline{N} : \overline{\beta}(\overline{x})=\overline{x}^{\,-1}}.$$ 
Then $\overline{N}^{\,+}=\overline{E}$ and $\overline{N}^{\,-}$ is the unique 
$\gen{\overline{\beta}}$-stable complement of $\overline{E}$ in $\overline{N}$.
\end{lemma}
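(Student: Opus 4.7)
The plan is to put a $B$-module structure on $\overline{N}$ (where $B=\gen{\overline{\beta}}$), identify the action on the two layers of the filtration $1\subseteq \overline{E}\subseteq\overline{N}$, and then combine Theorem \ref{lambda mu} with the plus--minus decomposition of Section \ref{section involutions}.

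First I would check that $\overline{N}$ is an abelian $B$-module. By definition of the lower $3$-series one has $L\subseteq F_2$, so $[L,L]\subseteq[F_2,F_2]\subseteq D_N$; hence $\overline{L}$, and in particular $\overline{N}$, is abelian. By Lemma \ref{Dente} the subgroup $D_N$ is $\beta$-stable, so $B$ acts on $\overline{F}$. Furthermore, by Lemma \ref{beta layers} the group $B$ acts on $L/E$ through $\sigma$, i.e.\ by inversion, so every subgroup of $L/E$ is $B$-stable; the group $N$ contains $E$ by Lemma \ref{e in n}, so $N/E$ and therefore $N$ are $B$-stable.

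Next I would determine the two characters. On the quotient $\overline{N}/\overline{E}\hookrightarrow L/E$ the group $B$ acts through $\sigma$. On the subgroup $\overline{E}$, Lemma \ref{nondeg with DN} provides a surjection $F/F_2\otimes L/N\twoheadrightarrow E/D_N$ induced by the commutator map; this respects the $B$-action, and since both $F/F_2$ and $L/N$ (the latter a quotient of $L/E$) carry the inversion action, Lemma \ref{lemma product of characters} gives that $B$ acts on $\overline{E}$ through $\sigma^2=1$. In particular $\overline{E}\subseteq\overline{N}^{+}$.

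The characters $1$ and $\sigma$ being distinct, Theorem \ref{lambda mu} produces a unique $B$-stable complement $C$ of $\overline{E}$ in $\overline{N}$, on which $B$ necessarily acts through $\sigma$, so $C\subseteq\overline{N}^{-}$. On the other hand, $\overline{N}$ being abelian of odd order, Corollary \ref{abelian sum +-} yields $\overline{N}=\overline{N}^{+}\oplus\overline{N}^{-}$. Since the image of $\overline{N}^{+}$ in the quotient $\overline{N}/\overline{E}$ (where $B$ acts by inversion) is contained in $(\overline{N}/\overline{E})^{+}\cap(\overline{N}/\overline{E})^{-}$, Lemma \ref{trivial intersection} forces this image to be trivial, so $\overline{N}^{+}=\overline{E}$. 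Counting orders then gives $|\overline{N}^{-}|=|\overline{N}/\overline{E}|$, and $\overline{N}^{-}\cap\overline{E}=1$ by Lemma \ref{trivial intersection} again, so $\overline{N}^{-}$ is a complement of $\overline{E}$; being $B$-stable it coincides with $C$ by the uniqueness in Theorem \ref{lambda mu}.

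The only non-cosmetic step is pinning down the character of $B$ on $\overline{E}$: one must recognise that the pairing of Lemma \ref{nondeg with DN} is $B$-equivariant so that the product-of-characters formula applies. Everything else is bookkeeping with the inversion-action results of Section \ref{section involutions}.
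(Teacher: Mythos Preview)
Your proof is correct and follows essentially the same route as the paper's: identify the characters of $B$ on the two layers $\overline{E}$ and $\overline{N}/\overline{E}$ via Lemmas \ref{beta layers}, \ref{nondeg with DN}, and \ref{lemma product of characters}, then invoke Theorem \ref{lambda mu}. The paper is more terse in the final identification step, simply asserting that the unique $B$-stable complement ``coincides with $\overline{N}^{\,-}$'', whereas you spell out the plus--minus decomposition and the order count; both are fine.
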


\begin{proof}
As a consequence of Lemma \ref{beta layers}, the group $\overline{N}$ is $\gen{\overline{\beta}}$-stable and, being central in $\overline{F}$, it is also abelian. 
Write now $B=\gen{\beta}$ and let $\sigma:B\rightarrow\graffe{\pm 1}$ be the isomorphism mapping $\beta$ to $-1$. By Lemma \ref{free 2}, the group $B$ acts on $F/F_2$ through $\sigma$ and, by Lemma \ref{beta layers}, the induced action of $B$ on $L/E$ is through $\sigma$. As a consequence, the induced action of $B$ on both $L/N$ and $N/E$ is through $\sigma$.  
It follows from Lemmas \ref{nondeg with DN} and \ref{lemma product of characters} that $\beta$ induces the identity map on $\overline{E}$ and so, thanks to Theorem \ref{lambda mu}, the subgroup $\overline{E}$ has a unique $\gen{\overline{\beta}}$-stable complement in $\overline{N}$, which coincides with $\overline{N}^{\,-}$. 
\end{proof}

\begin{lemma}\label{injection34}
The map $\cor{N}_4\rightarrow\cor{N}_3$ that is defined by $M\mapsto ME$ is an injection respecting the natural actions of $\Aut(F)$. 
\end{lemma}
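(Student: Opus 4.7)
I would first dispose of well-definedness and $\Aut(F)$-equivariance, then concentrate on injectivity. Well-definedness is exactly the content of Lemma~\ref{Dker}: for $M \in \cor{N}_4$, one has $ME \in \cor{N}_3$. For the action of $\Aut(F)$, I observe that $E = [F,L]F_2^3$ is built from the (characteristic) lower $3$-series and is therefore itself characteristic in $F$; hence $\alpha(ME) = \alpha(M)E$ for every $\alpha \in \Aut(F)$. Both $\cor{N}_3$ and $\cor{N}_4$ are $\Aut(F)$-stable, since membership is a condition on the isomorphism class of $F/N$ (and, in the case of $\cor{N}_4$, on the existence of a suitable order-$2$ automorphism of that quotient, a property invariant under pre-composition with any element of $\Aut(F)$).

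The heart of the argument is injectivity. Suppose $M_1, M_2 \in \cor{N}_4$ satisfy $M_1E = M_2E =: N$. Lemma~\ref{Dker} yields $D_N \subseteq M_1 \cap M_2$, so it suffices to work inside $\overline{F} = F/D_N$, writing $\overline{N} = N/D_N$ and $\overline{E} = E/D_N$. I claim that each $\overline{M_i}$ is a complement of $\overline{E}$ in $\overline{N}$. On the one hand, Lemma~\ref{orderE} gives $|\overline{E}| = 3$. On the other hand, setting $H_i = F/M_i$, the image of $E$ in $H_i$ equals $(H_i)_4$ by the same computation as in the proof of Lemma~\ref{mappette} (using the $\kappa$-property to get $(H_i)_2^{\,3} \subseteq (H_i)_4$), and this subgroup has order $3$ by the assumption $\wt_{H_i}(4) = 1$. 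Hence $|N:M_i| = 3$. Combining this with $\overline{M_i}\,\overline{E} = \overline{N}$ and $|\overline{E}| = 3$ through the usual index formula forces $\overline{M_i} \cap \overline{E}$ to be trivial, proving the claim.

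Now I invoke Lemma~\ref{n4 betastable}: both $M_1$ and $M_2$ are $\beta$-stable, so both $\overline{M_1}$ and $\overline{M_2}$ are $\gen{\overline{\beta}}$-stable complements of $\overline{E}$ in $\overline{N}$. By Lemma~\ref{complementE}, such a complement is unique (and equal to $\overline{N}^{\,-}$); therefore $\overline{M_1} = \overline{N}^{\,-} = \overline{M_2}$, and since $D_N \subseteq M_1 \cap M_2$, we conclude $M_1 = M_2$. The essential step, and the place where the extra condition in the definition of $\cor{N}_4$ is actually used, is the appeal to Lemma~\ref{n4 betastable}: without the order-$2$ automorphism inverting the abelianization, nothing would single out $\overline{M_i}$ among the complements of $\overline{E}$ in $\overline{N}$, and injectivity would fail.
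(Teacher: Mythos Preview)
Your proof is correct and follows essentially the same approach as the paper's: well-definedness via Lemma~\ref{Dker}, then injectivity by showing that each $\overline{M_i}$ is a $\gen{\overline{\beta}}$-stable complement of $\overline{E}$ in $\overline{N}$ (using Lemma~\ref{n4 betastable}) and invoking the uniqueness from Lemma~\ref{complementE}. You simply make explicit the index computation showing that $\overline{M_i}$ is indeed a complement, which the paper leaves implicit.
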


\begin{proof}
The map $\cor{N}_4\rightarrow\cor{N}_3$ is well-defined, thanks to Lemma \ref{Dker}, and it is clear that it respects the action of $\Aut(F)$. We prove injectivity. 
To this end, let $M_1$ and $M_2$ be elements of $\cor{N}_4$ such that $M_1E=M_2E$ and set $N=M_1E=M_2E$. Since $M_1$ and $M_2$ belong to $\cor{N}_4$, Lemma \ref{n4 betastable} yields $\beta(M_1)=M_1$ and $\beta(M_2)=M_2$. 
It follows then from Lemma \ref{complementE} that both $\overline{M_1}$  and
$\overline{M_2}$ are the unique $\gen{\overline{\beta}}$-stable complement of $\overline{E}$ and so $M_1=M_2$.
\end{proof}

\begin{lemma}\label{bijection34}
The map $\cor{N}_4\rightarrow\cor{N}_3$ that is defined by $M\mapsto ME$ is a bijection respecting the natural actions of $\Aut(F)$. 
\end{lemma}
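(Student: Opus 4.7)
The plan is to combine Lemma \ref{injection34} with the transitivity result of Proposition \ref{transitive action N}, exploiting the fact that the map $M\mapsto ME$ is $\Aut(F)$-equivariant. First I would record that $L$ and $F_2$ are characteristic subgroups of $F$ (they are defined from the lower $3$-series), so $E=[F,L]F_2^{3}$ is characteristic as well. This means that, for every $\sigma\in\Aut(F)$ and every normal subgroup $H$ of $F$, one has $\sigma(HE)=\sigma(H)\sigma(E)=\sigma(H)E$, i.e.\ the rule $H\mapsto HE$ commutes with the natural action of $\Aut(F)$.

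Next I would check that $\cor{N}_4$ is $\Aut(F)$-stable. Let $M\in\cor{N}_4$ and $\sigma\in\Aut(F)$; then $\sigma(M)$ is a normal subgroup of $F$ and $\sigma$ induces an isomorphism $F/M\to F/\sigma(M)$, so $F/\sigma(M)$ inherits from $F/M$ the property of being a $\kappa$-group of class $4$ with $\wt(4)=1$, as well as the existence of an order-$2$ automorphism inducing inversion on the abelianization (by transport via $\sigma$). Hence $\sigma(M)\in\cor{N}_4$.

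With these two observations in place, the key step is the surjectivity argument. Let $N\in\cor{N}_3$. By Lemma \ref{n4}, the set $\cor{N}_4$ is non-empty, so we may pick $M_0\in\cor{N}_4$, and then $N_0=M_0E$ belongs to $\cor{N}_3$ by Lemma \ref{Dker} (or by Lemma \ref{injection34}). By Proposition \ref{transitive action N} the action of $\Aut(F)$ on $\cor{N}_3$ is transitive, so there exists $\sigma\in\Aut(F)$ with $\sigma(N_0)=N$. Then $\sigma(M_0)\in\cor{N}_4$ by the $\Aut(F)$-stability of $\cor{N}_4$, and the $\Aut(F)$-equivariance of the map gives
\[
\sigma(M_0)\,E\;=\;\sigma(M_0E)\;=\;\sigma(N_0)\;=\;N,
\]
showing that $N$ lies in the image. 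Combining this with Lemma \ref{injection34}, which already provides both injectivity and $\Aut(F)$-equivariance, we conclude that $M\mapsto ME$ is a bijection of $\Aut(F)$-sets.

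I do not expect a genuine obstacle here: the result is essentially a formal consequence of Lemma \ref{injection34}, Lemma \ref{n4}, and Proposition \ref{transitive action N}. The only mildly delicate point is verifying that $\cor{N}_4$ is closed under the $\Aut(F)$-action; that verification is routine once one observes that all defining conditions of $\cor{N}_4$ are isomorphism invariants of the quotient $F/M$.
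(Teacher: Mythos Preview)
Your argument is correct and is a genuinely different route from the paper's. The paper proves surjectivity by a direct construction: given $N\in\cor{N}_3$, it passes to $\overline{F}=F/D_N$, invokes Lemma \ref{complementE} to obtain the unique $\gen{\overline{\beta}}$-stable complement $\overline{N}^{\,-}$ of $\overline{E}$ in $\overline{N}$, lifts it to a normal subgroup $M$ of $F$, and then verifies by hand that $F/M$ is a $\kappa$-group of class $4$ with $\wt_{F/M}(4)=1$ admitting an order-$2$ automorphism (induced by $\beta$) that inverts the abelianization. Your approach instead bootstraps from a single known element $M_0\in\cor{N}_4$ (Lemma \ref{n4}) and uses the transitivity of the $\Aut(F)$-action on $\cor{N}_3$ (Proposition \ref{transitive action N}) together with equivariance to transport $M_0$ onto a preimage of any given $N$. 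This is shorter and avoids the explicit verification that the constructed $M$ lies in $\cor{N}_4$; the paper's proof, on the other hand, is self-contained in that it does not invoke Proposition \ref{transitive action N}, and it yields the explicit description $\overline{M}=\overline{N}^{\,-}$ of the inverse map. Since Proposition \ref{transitive action N} is already available at this point in the paper, your shortcut is legitimate and not circular.
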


\begin{proof}
The map $\cor{N}_4\rightarrow\cor{N}_3$ is well-defined, injective, and respects the action of $\Aut(F)$ thanks to Lemma \ref{injection34}. We prove surjectivity. 
To this end, let $N$ be an element of $\cor{N}_3$. Write $\overline{F}=F/D_N$ and use the bar notation for the subgroups of $\overline{F}$. Let moreover $\overline{N}^{\,-}$ be as in Lemma \ref{complementE}.
As a consequence of the definition of $D_N$, the subgroup $\overline{N}$ is central in $\overline{F}$ and so $\overline{N}^{\,-}$ is normal in $\overline{F}$. Let $M$ be the unique normal subgroup of $F$ containing $D_N$ such that $\overline{M}=\overline{N}^{\,-}$. Then, as a consequence of Lemma \ref{complementE}, one has $N=ME$.
Write $H=F/M$ and denote by $\pi$ the canonical projection $F\rightarrow H$.
We will prove that 
$M\in\cor{N}_4$.
Thanks to the isomorphism theorems, the groups $\pi(N)$ and $\overline{E}$ are naturally isomorphic and, by Lemma \ref{orderE}, the group $\overline{E}$ has order $3$. It follows that $|N:M|=3$. Moreover, the group $N$ being an element of $\cor{N}_3$, the quotient $F/N$ has class $3$ and so $M\subseteq\pi^{-1}(H_4)\subseteq N$. Only two cases can occur: either $N=\pi^{-1}(H_4)$ or $M=\pi^{-1}(H_4)$. Assume by contradiction that $M=\pi^{-1}(H_4)$ and so that $H$ has class $3$. Since $H/\pi(N)$ is isomorphic to $F/N$, Lemma \ref{description L} yields that $\pi(L)=H_3\pi(N)$ and, since $N$ is central modulo $D_N$, the subgroup $\pi(N)$ is central in $H$.
It follows that 
\[
\pi([F,L])=[\pi(F),\pi(L)]=[H,H_3\pi(N)]=[H,H_3]=H_4=\graffe{1}
\] 
and therefore $[F,L]$ is contained in $M$. 
We get then 
\[
E=[F,L]F_2^3\subseteq[F,L]F_2^3[F_2,F_2]\subseteq [F,L]D_N\subseteq M,
\]
and thus $N=ME=M$, which is a contradiction.
We have proven that $N=\pi^{-1}(H_4)$, from which it follows in particular that $|H_4|=|N:M|=3$ and so $H$ has class $4$. 
Moreover, $H$ is a $\kappa$-group, because $F/N$ is.
To prove that $M$ belongs to $\cor{N}_4$, we are left with proving that $H$ has an automorphism of order $2$ that induces the inversion map on $H/H_2$ and in fact such an automorphism can be gotten, for example, by inducing $\beta$ to $H$.
We have proven that $M\in\cor{N}_4$ and so, the choice of $N$ being arbitrary, the map $\cor{N}_3\rightarrow\cor{N}_4$ is surjective.
\end{proof}

\begin{corollary}\label{cor n4 has3elemn}
The set $\cor{N}_4$ has $3$ elements and the action of $\Aut(F)$ on $\cor{N}_4$ is transitive.
\end{corollary}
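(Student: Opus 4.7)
The plan is to reduce the statement to the corresponding fact about $\cor{N}_3$, which has already been established. By Lemma \ref{bijection34}, the assignment $M \mapsto ME$ defines a bijection $\cor{N}_4 \to \cor{N}_3$ that respects the natural action of $\Aut(F)$ on both sets. All the work to produce this bijection (checking well-definedness via Lemma \ref{Dker}, injectivity via the uniqueness of $\gen{\overline{\beta}}$-stable complements from Lemma \ref{complementE}, and surjectivity by constructing $M$ as the preimage of $\overline{N}^{-}$) has already been done, so what remains for this corollary is purely formal.

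First I would invoke Proposition \ref{transitive action N}, which asserts that $|\cor{N}_3| = 3$ and that $\Aut(F)$ acts transitively on $\cor{N}_3$. Since any $\Aut(F)$-equivariant bijection between $\Aut(F)$-sets transfers both the cardinality and the transitivity of the action, the bijection of Lemma \ref{bijection34} immediately yields $|\cor{N}_4| = 3$ and that $\Aut(F)$ acts transitively on $\cor{N}_4$.

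There is no real obstacle: the content of the corollary is entirely absorbed by Lemma \ref{bijection34} and Proposition \ref{transitive action N}, and the proof is essentially a one-line reference. If anything, the most delicate point is simply to remind the reader that the map $M \mapsto ME$ commutes with the application of any $\varphi \in \Aut(F)$ (i.e. $\varphi(M)E = \varphi(ME)$, which holds because $E$ is $\Aut(F)$-stable by its definition $E = [F,L]F_2^3$ together with the $\Aut(F)$-stability of $L$ and $F_2$).
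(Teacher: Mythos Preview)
Your proposal is correct and matches the paper's approach exactly: the paper's proof is the single line ``Combine Proposition \ref{transitive action N} and Lemma \ref{bijection34},'' which is precisely what you do. Your additional remark about why $M \mapsto ME$ is $\Aut(F)$-equivariant is a helpful elaboration of a point the paper leaves implicit in Lemma \ref{bijection34}.
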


\begin{proof}
Combine Proposition \ref{transitive action N} and Lemma \ref{bijection34}.
\end{proof}

\noindent
We are now ready to prove Proposition \ref{proposition unique with automorphism}. 
By Lemma \ref{n4}, there exists an element $M$ of $\cor{N}_4$ with the property that $F/M$ is isomorphic to $\yo$. 
As a consequence of Corollary \ref{cor n4 has3elemn}, 
the natural action of $\Aut(F)$ on $\cor{N}_4$ is transitive and therefore $G$ and $\yo$ are isomorphic. The proof of Proposition \ref{proposition unique with automorphism} is complete.

\section{Intensity}\label{section 3-gps sufficient}

\noindent
In Section \ref{section extension} we have proven Corollary \ref{3-gps class at most 4}, which asserts that finite $3$-groups of intensity larger than $1$ have class at most $4$. We will prove in this section that the bound is best possible by showing that the group $\yo$, introduced at the beginning of this chapter and whose structure we investigated in Section \ref{section yo}, has intensity $2$. Thanks to results coming from the previous sections, we will, at the end of the current section, finally be able to give the proof of Theorem \ref{theorem 3-groups}.

\begin{proposition}\label{ppp}
The group $\yo$ has intensity $2$.
\end{proposition}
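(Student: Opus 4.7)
The plan is to exhibit an intense automorphism of order $2$, namely the map $\alpha$ constructed in Lemma \ref{yo inversion}, and show it is intense by checking that every subgroup of $\yo$ has an $\alpha$-stable conjugate. First I would obtain the easy upper bound: by Lemma \ref{yo structure}(1), the quotient $\yo/\yo_4$ is a $3$-group of class $3$ with $|\yo/\yo_4 : (\yo/\yo_4)_2|=9$, and by Lemma \ref{yo inversion}, $\alpha$ induces on it an automorphism of order $2$ inverting the abelianization. Proposition \ref{p^4 has cp} then gives $\inte(\yo/\yo_4)=2$, and Lemma \ref{intensity of quotients} forces $\inte(\yo)\in\{1,2\}$. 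It remains to prove $\inte(\yo)\geq 2$, which by Lemma \ref{equivalent intense coprime-pgrps} reduces to showing that every subgroup $H$ of $\yo$ has an $\alpha$-stable $\yo$-conjugate.

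For the verification, I would split on the interaction of $H$ with $\yo_4$, which has order $3$ by Lemma \ref{yo structure}(1). If $\yo_4\subseteq H$, then $H/\yo_4$ has an $\alpha$-stable conjugate inside $\yo/\yo_4$ by the intensity computation above; since $\yo_4$ is characteristic and therefore $\alpha$-stable, any lift of such a conjugate is an $\alpha$-stable conjugate of $H$. If instead $H\cap\yo_4=\graffe{1}$, I would set $T=H\yo_4$, which contains $\yo_4$, hence by the previous case admits an $\alpha$-stable conjugate; replacing $H$ by the corresponding conjugate, I may assume $T$ itself is $\alpha$-stable and that $T=H\oplus\yo_4$ as sets. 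The problem then becomes that of showing that $H$ is $\yo$-conjugate to an $\alpha$-stable complement of $\yo_4$ in $T$.

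To produce and identify this complement I would use the module-theoretic machinery of Section \ref{section characters}. Because $\yo_2$ is elementary abelian by Corollary \ref{yoG2} and $\yo_4$ is central, $T$ has a natural $\mathbb{F}_3\gen{\alpha}$-module structure (at least modulo the layer in which $H$ sits), and by Lemma \ref{action chi^i} the group $\gen{\alpha}$ acts on each quotient $\yo_i/\yo_{i+1}$ through $\chi^i$. Since $\chi^4=1$ while $\chi^j\neq 1$ for $j\in\graffe{1,2,3}$, Theorem \ref{lambda mu} applied to the short exact sequence $1\to \yo_4\to T\to H\to 1$ (after refining it along the jump structure of $H$ if necessary) yields a unique $\gen{\alpha}$-stable complement of $\yo_4$ in $T$. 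A counting argument in the spirit of Lemmas \ref{bizzarro} and \ref{sputafuoco}, using Lemma \ref{graph+complements} to count complements and Lemma \ref{order +- jumps} to control the $\pm$-decomposition of $T$ according to the jumps of $H$ in $\yo$, then shows that every complement of $\yo_4$ in $T$ is $\yo$-conjugate to $H$; in particular, the $\alpha$-stable one is.

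The main obstacle I expect is the subcase where $H$ has jumps in $\yo$ at indices of both parities — for instance jump set $\graffe{1,3}$ or $\graffe{2,4}$ — so that $T$ is not a cyclic $\mathbb{F}_3\gen{\alpha}$-module and a naive application of Theorem \ref{lambda mu} does not directly isolate a complement of $\yo_4$. In this situation I would proceed inductively on the jumps, first peeling off the top characteristic layer of $H$ (where the character on $H$ differs from that on $\yo_4$) by Theorem \ref{lambda mu}, and then treating the residual case by the same counting technique combined with the explicit description of $\yo$ as a subgroup of $1+\mathfrak{m}\subset\A$, using the generators $a=1-\epsilon+\mathrm{i}$, $b=1-\epsilon+\mathrm{j}$ from Lemma \ref{yo structure}(2) and the fact that $\alpha(a)=a^{-1}$, $\alpha(b)=b^{-1}$, $\alpha$ fixes $[a,b]$, in order to pin down the normalizers of the surviving $\alpha$-stable subgroups. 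Once every subgroup is shown to admit an $\alpha$-stable conjugate, Lemma \ref{equivalent intense coprime-pgrps} concludes that $\alpha\in\Int(\yo)$, giving $\inte(\yo)=2$.
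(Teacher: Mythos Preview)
Your overall framework is exactly the paper's: bound $\inte(\yo)\le 2$ via the class-$3$ quotient, reduce to exhibiting $\alpha$-stable conjugates, split on whether $H\supseteq\yo_4$, and for $H\cap\yo_4=\graffe{1}$ pass to $T=H\yo_4$ and study complements of $\yo_4$ in an $\alpha$-stable conjugate of $T$. The transitivity-on-complements idea you sketch does handle the odd-jump cases (the paper's Lemmas \ref{lH1} and \ref{lH2}). But two of your claims fail precisely when $H$ has a jump at index $2$, and this is where the real work is.

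First, Theorem \ref{lambda mu} cannot isolate a unique $\alpha$-stable complement in that case: on $\yo_4$ the character is $\chi^4=1$, and on the index-$2$ layer of $T/\yo_4$ it is $\chi^2=1$ as well, so the hypothesis $\lambda\neq\mu$ fails regardless of how you refine the filtration. Concretely, when $H\oplus\yo_4=\yo_2$ (Lemma \ref{lH3}) there are three $\alpha$-stable complements, not one. Second, your transitivity claim is also false there: $\nor_G(H)=\yo_2$ gives orbits of size $9$, but there are $27$ complements. The paper's argument is instead that $|X|/|X^+|$ equals the common orbit size and, via Lemma \ref{normalizer G2}, that distinct elements of $X^+$ lie in distinct $G$-orbits, forcing every orbit to meet $X^+$. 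This finer counting is what drives all three subcases with jump $2$ (Lemmas \ref{lH3}, \ref{lH4}, \ref{lH5}), and the last of these requires Lemma \ref{giovy}, a computation specific to the $\kappa$-group structure, to pin down the normalizer.

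Your final paragraph mislocates the obstacle. The hard case is not ``jumps of both parities'' --- your examples $\graffe{1,3}$ and $\graffe{2,4}$ are each single-parity, and $\graffe{2,4}$ is impossible anyway since $H\cap\yo_4=\graffe{1}$ forbids a jump at $4$ --- but rather the presence of a jump at index $2$. Separately, for $H\not\subseteq\yo_2$ the paper first proves that $H$ is cyclic (Lemma \ref{HHHH}, using Lemma \ref{properties C} and hence the $\kappa$-group hypothesis) before any complement count; without this you do not know $|\Hom(H,\yo_4)|$, and the counting cannot start.
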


\noindent
We will devote a big part of the present section to the proof of Proposition \ref{ppp}. To this end, let the following assumptions hold until the end of Section \ref{section 3-gps sufficient}. Set $G=\yo$ and denote by $(G_i)_{i\geq 1}$ its lower central series. For all $i\in\Z_{\geq 1}$, write $\wt_G(i)=w_i$.
By Lemma \ref{yo class}, the group $G$ has class $4$ and order $729$. Moreover, thanks to Lemmas \ref{yo structure}($1$) and \ref{yo kappa}, the group $G$ is a $\kappa$-group satisfying $(w_1,w_2,w_3,w_4)=(2,1,2,1)$.
Let $\alpha$ be as in Lemma \ref{yo inversion} and set $A=\gen{\alpha}$.
In concordance with the notation from Section \ref{section involutions},
we define $G^+=\graffe{x\in G\ :\ \alpha(x)=x}$ and $G^-=\graffe{x\in G\ :\ \alpha(x)=x^{-1}}$. Moreover, for each subgroup $H$ of $G$, we denote $H^+=H\cap G^+$ and 
$H^-=H\cap G^-$.

\begin{lemma}\label{normalizer G2}
Let $H$ be a subgroup of $G_2$ and let $g$ be an element of $G$. Then the following hold. 
\begin{itemize}
 \item[$1$.] The group $G_2$ normalizes $H$.
 \item[$2$.] If both $H$ and $gHg^{-1}$ are $A$-stable, then $gHg^{-1}=H$.
\end{itemize}  
\end{lemma}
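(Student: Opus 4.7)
My plan is to establish the two parts in sequence, with part (1) being essentially immediate and then feeding into part (2) via the machinery from Section \ref{section involutions}.

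For part (1), the key observation is that $G_2 = \yo_2$ is abelian. Indeed, Corollary \ref{yoG2} asserts that $\yo_2$ is elementary abelian, so in particular every subgroup $H \subseteq G_2$ is normalized by $G_2$ itself. This gives $G_2 \subseteq \nor_G(H)$, which is exactly what is required.

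For part (2), I would first locate $G^+$ inside $G$. By Lemma \ref{order +- jumps}, the jumps of $G^+$ in $G$ are exactly the even jumps of $G$, and since $(w_1,w_2,w_3,w_4) = (2,1,2,1)$, the jumps of $G$ are $\{1,2,3,4\}$ and the even jumps are $\{2,4\}$. In particular, $G^+ \cap G_1 = G^+ \cap G_2$, which means $G^+ \subseteq G_2$. Now assume both $H$ and $gHg^{-1}$ are $A$-stable. By Lemma \ref{conjugate stable iff in nor-times-G+} applied to the $A$-stable subgroup $H$, the element $g$ lies in $G^+\nor_G(H)$; write $g = \gamma n$ with $\gamma \in G^+$ and $n \in \nor_G(H)$. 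Then $gHg^{-1} = \gamma H \gamma^{-1}$, and since $\gamma \in G^+ \subseteq G_2$, part (1) ensures $\gamma H \gamma^{-1} = H$. Hence $gHg^{-1} = H$, as desired.

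No serious obstacle is anticipated here: the statement is purely an assemblage of structural facts established earlier, and the only slightly delicate point is confirming $G^+ \subseteq G_2$, which follows from the jump calculation via Lemma \ref{order +- jumps} together with the explicit width sequence $(2,1,2,1)$ for $\yo$ provided by Lemma \ref{yo structure}.
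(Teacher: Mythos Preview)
Your proof is correct and follows essentially the same approach as the paper: both use Corollary \ref{yoG2} for part (1), invoke Lemma \ref{order +- jumps}(1) to place $G^+$ inside $G_2$, and then combine part (1) with Lemma \ref{conjugate stable iff in nor-times-G+} to obtain part (2). You have simply spelled out the intermediate steps in slightly more detail than the paper does.
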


\begin{proof}
The group $G_2$ is abelian, by Corollary \ref{yoG2}, and in particular it normalizes each of its subgroups. As a consequence of Lemma \ref{order +- jumps}($1$), the subgroup $G^+$ is contained in $G_2$ and we conclude combining ($1$) with Lemma \ref{conjugate stable iff in nor-times-G+}.
\end{proof}

\begin{lemma}\label{base induzione 3}
Let $H$ be a subgroup of $G$ that contains $G_4$. Then there exists $g\in G$ such that $gHg^{-1}$ is $A$-stable.
\end{lemma}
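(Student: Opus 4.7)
The plan is to reduce the problem modulo $G_4$, which has order $3$ by Lemma \ref{yo structure}($1$), so that we land in the quotient $\overline{G} = G/G_4$. This quotient has class $3$ and satisfies $|\overline{G}:\overline{G}_2| = 3^2$, since the widths of $G$ are $(2,1,2,1)$. Moreover, $\alpha$ fixes the characteristic subgroup $G_4$ and hence induces an automorphism $\overline{\alpha}$ of $\overline{G}$ of order dividing $2$; by Lemma \ref{yo inversion}, $\overline{\alpha}$ acts as the inversion map on $\overline{G}/\overline{G}_2 = G/G_2$, so in particular $\overline{\alpha}$ has order exactly $2$.

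Now the hypotheses of Proposition \ref{p^4 has cp} are satisfied for the pair $(\overline{G},\overline{\alpha})$: the prime $3$ is odd, $\overline{G}$ has class $3$ with $|\overline{G}:\overline{G}_2|=3^2$, and $\overline{\alpha}$ has order $2$ inverting the abelianization. Therefore $\overline{\alpha} \in \Int(\overline{G})$. Since the order of $\overline{\alpha}$ is coprime to $|\overline{G}|$, Lemma \ref{equivalent intense coprime-pgrps} applies: every subgroup of $\overline{G}$ has a $\langle\overline{\alpha}\rangle$-stable conjugate. In particular, applying this to $\overline{H} = H/G_4$ (which makes sense because $G_4 \subseteq H$), we obtain an element $\overline{g} \in \overline{G}$ such that $\overline{g}\,\overline{H}\,\overline{g}^{-1}$ is $\langle\overline{\alpha}\rangle$-stable.

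To conclude, lift $\overline{g}$ to an element $g \in G$. Set $K = gHg^{-1}$; then $K$ contains $G_4$ (as $H$ does and $G_4$ is central) and $K/G_4 = \overline{g}\,\overline{H}\,\overline{g}^{-1}$ is fixed by $\overline{\alpha}$. Hence $\alpha(K)\cdot G_4 = K\cdot G_4 = K$, which gives $\alpha(K) \subseteq K$; since $\alpha$ is an automorphism, $|\alpha(K)|=|K|$ and therefore $\alpha(K)=K$. Thus $gHg^{-1}$ is $A$-stable, which is exactly what we wanted.

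The argument is essentially routine once Proposition \ref{p^4 has cp} is invoked in the correct quotient; there is no genuine obstacle, only the verification that the hypotheses on $\overline{G}$ and $\overline{\alpha}$ are in place, and the observation that $G_4 \subseteq H$ is exactly what allows stability modulo $G_4$ to lift to stability in $G$.
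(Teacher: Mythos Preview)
Your proof is correct and follows essentially the same route as the paper: pass to $G/G_4$, invoke Proposition \ref{p^4 has cp} to see that the induced automorphism $\alpha_4$ is intense, apply Lemma \ref{equivalent intense coprime-pgrps} to obtain an $\langle\alpha_4\rangle$-stable conjugate of $H/G_4$, and lift back. The paper's version is more terse, but the underlying argument is identical; your additional checks (that $\overline{\alpha}$ has order exactly $2$, that $G_4\subseteq gHg^{-1}$, and the cardinality argument for $\alpha(K)=K$) are all valid and simply make explicit what the paper leaves implicit.
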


\begin{proof}
We denote by $\alpha_4$ the automorphism of $G/G_4$ that is induced by $\alpha$. 
By Proposition \ref{p^4 has cp}, the automorphism $\alpha_4$ is intense so, by Lemma \ref{equivalent intense coprime-pgrps}, there exists $g\in G$ such that 
$gHg^{-1}/G_4$ is $\gen{\alpha_4}$-stable. It follows from the definition of $\alpha_4$ that $gHg^{-1}$ is $A$-stable.
\end{proof}

\noindent
We recall that a positive integer $j$ is a jump of a subgroup $H$ of $G$ if and only if $H\cap G_j\neq H\cap G_{j+1}$. For the theory of jumps we refer to Section \ref{section jumps}.

\begin{lemma}\label{HHHH}
Let $H$ be a subgroup of $G$ such that $H\cap G_4=\graffe{1}$. Assume moreover that $H$ is not contained in $G_2$.
Then there exists $x\in G\setminus G_2$ such that $H=\gen{x}$.
\end{lemma}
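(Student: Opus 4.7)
\emph{Plan.} Take any $x \in H \setminus G_2$. Since $G = \yo$ is a $\kappa$-group (Lemma~\ref{yo kappa}), the cubing map induces a bijection $\kappa: G/G_2 \to G_3/G_4$, so $x^3 \in G_3 \setminus G_4$; combined with the fact that $G_2$ is elementary abelian (Corollary~\ref{yoG2}) and $x^3 \in G_3 \subseteq G_2$, we get $x^9 = (x^3)^3 = 1$, so $|\langle x\rangle| = 9$. The plan is to show $H = \langle x\rangle$. The central tool, used repeatedly, is the identity $\mathrm{Cyc}_{G_3}(x) = \langle x^3\rangle G_4$: by Lemma~\ref{class 4 comm map non-deg}, the map $zG_4 \mapsto [x,z]$ is a nonzero $\F_3$-linear map from $G_3/G_4$ to $G_4$ (dimensions $2$ and $1$), so its kernel has order $3$; since $\langle x^3\rangle G_4/G_4$ is of order $3$ and contained in this kernel, the two coincide.

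The first consequence is that $H \cap G_3 \subseteq \langle x\rangle$. Indeed, for $y \in H \cap G_3$ one has $[x,y] \in [G, G_3] \subseteq G_4 \cap H = \{1\}$, so $y \in \mathrm{Cyc}_{G_3}(x) = \langle x^3\rangle G_4$; writing $y = x^{3\lambda} g$ with $g \in G_4$, the element $g = y x^{-3\lambda}$ lies in $H \cap G_4 = \{1\}$, whence $y = x^{3\lambda} \in \langle x\rangle$. The same cleanup device will be used one level up in the next step.

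Suppose next, for contradiction, that $y \in H \cap G_2 \setminus G_3$. Then $[x,y] \in G_3$, while $[x,[x,y]] \in [G, G_3] \subseteq G_4 \cap H = \{1\}$; so $[x,y] \in \mathrm{Cyc}_{G_3}(x) = \langle x^3\rangle G_4$, and the previous cleanup yields $[x,y] = x^{3\lambda}$ for some $\lambda \in \F_3$. The isomorphism of Lemma~\ref{commumino} applied to $\bar x \otimes \bar y \neq 0$ gives $[x,y]G_4 \neq 0$, hence $\lambda \neq 0$. An explicit calculation in $\yo$ will now rule this out. With the generators $a, b$ of Lemma~\ref{yo structure}(2) and $c = [a,b]$, $d = [a,c]$, $e = [b,c]$, we have $a^3 \equiv e^{-1}$ and $b^3 \equiv d \pmod{G_4}$, and $\{dG_4, eG_4\}$ is an $\F_3$-basis of $G_3/G_4$ (Lemma~\ref{commumino} applied to $\bar a \otimes \bar c$ and $\bar b \otimes \bar c$). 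Writing $\bar x = \alpha \bar a + \beta \bar b$ with $(\alpha,\beta) \neq (0,0)$, bilinearity places $[x,y]G_4$ on the line $\langle d^{\alpha} e^{\beta}\rangle$ of $G_3/G_4$, while Lemma~\ref{formula cubing} applied to $u = a^\alpha$, $v = b^\beta$ (together with $(u^\lambda)^3 = (u^3)^\lambda$) places $x^3 G_4$ on the line $\langle d^{\beta(1+\alpha^2)} e^{-\alpha(1+\beta^2)}\rangle$. The determinant comparing these two $\F_3$-vectors equals $(1-\alpha^2)(1-\beta^2) - 1$, which modulo $3$ reduces to $-1$ whenever $(\alpha,\beta) \neq (0,0)$, since then at least one of $1-\alpha^2$, $1-\beta^2$ vanishes. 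Hence the two lines are distinct, contradicting $[x,y]G_4 \in \langle x^3 G_4\rangle \cap \langle d^{\alpha}e^{\beta}G_4\rangle = \{G_4\}$.

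Finally, for an arbitrary $y \in H$, if $\bar y$ were linearly independent from $\bar x$ in $G/G_2$, then the wedge map $\bigwedge^2(G/G_2) \to G_2/G_3$ induced by the commutator (Lemma~\ref{bilinear LCS}) would send $\bar x \wedge \bar y \neq 0$ to the class of $[x,y]$, placing $[x,y]$ in $H \cap G_2 \setminus G_3$ and contradicting the previous step. Hence $\bar y \in \langle \bar x\rangle$, so $y = x^\mu u$ with $u = x^{-\mu}y \in H \cap G_2 \subseteq G_3$; by the first consequence $u \in \langle x\rangle$, and therefore $y \in \langle x\rangle$, as desired. The main obstacle is the explicit verification that $\kappa$ has no ``fixed lines'' on $G/G_2$, i.e., that the cubing line and commutator line in $G_3/G_4$ associated to any $\bar x \neq 0$ are distinct; everything ultimately reduces to the identity $(1-\alpha^2)(1-\beta^2) = 0$ for $(\alpha,\beta) \in \F_3^2 \setminus \{(0,0)\}$.
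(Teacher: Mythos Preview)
Your proof is correct, but it takes a different path from the paper's.

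The paper first establishes that $H$ is abelian: picking a maximal subgroup $C \supseteq H$, one shows $[H,H] \subseteq \ZG(C)\cap[C,C]$, and then Lemma~\ref{properties C} (which states $\ZG(C)\cap[C,C]=G_4$ for every maximal subgroup of a $\kappa$-group with $G_2$ of exponent~$3$) forces $[H,H]\subseteq H\cap G_4=\{1\}$. Once $H$ is abelian, the vanishing of $\wt_H^G(2)$ follows immediately from Lemma~\ref{commumino}, and $\wt_H^G(3)\leq 1$ from the non-degeneracy of $\gamma$; a cardinality count then gives $|H|\leq 9=|\langle x\rangle|$.

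You bypass abelian-ness entirely and work layer by layer using the centralizer identity $\Cyc_{G_3}(x)=\langle x^3\rangle G_4$. Your key step---ruling out an element of $H\cap G_2\setminus G_3$---is an explicit determinant computation in $\F_3$ showing that, for every nonzero $\bar{x}\in G/G_2$, the line $\langle x^3 G_4\rangle$ in $G_3/G_4$ differs from the line spanned by $\gamma_{1,2}(\bar{x},\cdot)$. This is exactly the content of Lemma~\ref{properties C} (namely that the ``cubing line'' $C^3G_4/G_4$ and the ``commutator line'' $[C,C]G_4/G_4$ are distinct), so you are re-deriving that lemma by hand from the explicit presentation of $\yo$ in Lemma~\ref{yo structure}. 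The paper's route is shorter once Lemma~\ref{properties C} is available; yours is self-contained and makes the crucial identity $(1-\alpha^2)(1-\beta^2)=0$ for $(\alpha,\beta)\in\F_3^2\setminus\{(0,0)\}$ explicit.
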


\begin{proof}
The subgroup $H$ is different from $G$, since $H\cap G_4=\graffe{1}$, and it is therefore contained in a maximal subgroup $C$ of $G$. Moreover, $H$ not being contained in $G_2$, we have $\wt_H^G(1)=1$. We first show that $H$ is abelian. The subgroup $[H,H]$ is contained in $[C,C]$ and $[C,C]=[C,G_2]$, as a consequence of Lemma \ref{cyclic quotient commutators}. Thanks to Lemma \ref{commutator indices}, the subgroup $[H,H]$ is contained in $G_3$. By Lemma \ref{centro 3}, the centre of $G$ is equal to $G_4$ and so, by Lemma \ref{class 4 comm map non-deg}, the map 
$\gamma:G/G_2\times G_3/G_4\rightarrow G_4$ that is induced from the commutator map is non-degenerate.
Since $C=HG_2$, we get $[C,[H,H]]=[H,[H,H]]\subseteq H\cap G_4$ and so, since $H\cap G_4=\graffe{1}$, the subgroup $[H,H]$ is contained in $\ZG(C)$. It follows that $[H,H]$ is contained in $\ZG(C)\cap[C,C]\cap H$ and so, thanks to Corollary \ref{yoG2} and Lemma \ref{properties C}, the commutator subgroup of $H$ is trivial.
The group $H$ being abelian, it follows, from the non-degeneracy of $\gamma$, that $\wt_H^G(3)\leq 1$ and, from Lemma \ref{commumino}, that $\wt_H^G(2)=0$.
Let now $x$ be an element of $H\setminus G_2$. Then $1$ is a jump of $\gen{x}$ in $G$ and, the group $G$ being a $\kappa$-group, it follows that $x^3\in G_3\setminus G_4$. As a consequence of Lemma \ref{order product orders jumps}, we get
\[
|\gen{x}|\geq 3^{\wt_{\gen{x}}^G(1)}3^{\wt_{\gen{x}}^G(3)}\geq 9\geq
3^{\wt_{H}^G(1)}3^{\wt_{H}^G(3)}=\prod_{i=1}^43^{\wt_{H}^G(i)}=|H|
\]
and therefore $H$ is cyclic generated by $x$.
\end{proof}

\begin{lemma}\label{lH1}
Let $H$ be a subgroup of $G$ such that $H\cap G_4=\graffe{1}$. Assume that $H$ is not contained in $G_2$. Then $H$ and $\alpha(H)$ are conjugate in $G$.
\end{lemma}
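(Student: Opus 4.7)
By Lemma~\ref{HHHH}, one has $H = \gen{x}$ for some $x \in G \setminus G_2$. The $\kappa$-property of $G$ together with Corollary~\ref{yoG2} (asserting that $G_2$ is elementary abelian) forces $|H| = 9$: indeed $x^3 \in G_3 \setminus G_4$, so $x^3 \neq 1$, whereas $x^9 = (x^3)^3 = 1$ since $G_2$ has exponent $3$. Hence $HG_4$ is an abelian subgroup of order $27$, and $H \cap G_4 = \{1\}$.

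Since $HG_4$ contains $G_4$, Lemma~\ref{base induzione 3} supplies $g \in G$ such that $gHG_4g^{-1}$ is $A$-stable. Because the conclusion of the lemma is invariant under replacing $H$ by a $G$-conjugate, I will assume from the outset that $HG_4$ itself is $A$-stable. Then $\alpha(H) \subseteq \alpha(HG_4) = HG_4$, and both $H$ and $\alpha(H)$ are complements of $G_4$ in the abelian group $HG_4 \cong \Z/9\Z \oplus \Z/3\Z$. The set $Y$ of such complements has exactly three elements.

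To finish, I will show that the subgroup $HG_3$ acts transitively on $Y$ by conjugation. On the one hand, $HG_3 \subseteq \nor_G(HG_4)$: for $g \in G_3$ one has $[g, x] \in [G_3, G] \subseteq G_4 \subseteq HG_4$, so $gxg^{-1} = [g, x]\, x \in HG_4$ and therefore $gHG_4g^{-1} = HG_4$. On the other hand, the stabilizer of $H$ in $HG_3$ is exactly $HG_4$: any $g \in G_3$ that normalizes $H$ must satisfy $[g, x] \in H$ as well as $[g, x] \in G_4$, but $H \cap G_4 = \{1\}$, so $[g, x] = 1$, i.e.~$g \in \Cyc_{G_3}(x)$; the non-degeneracy of the commutator pairing $G/G_2 \times G_3/G_4 \to G_4$ from Lemma~\ref{class 4 comm map non-deg} forces $|\Cyc_{G_3}(x)| = 9$, and since $\gen{x^3}G_4 \subseteq \Cyc_{G_3}(x)$ also has order $9$, the two coincide, yielding $\nor_G(H) \cap HG_3 = HG_4$.

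Therefore the $HG_3$-orbit of $H$ inside $Y$ has size $|HG_3 : HG_4| = 81/27 = 3 = |Y|$, so $HG_3$ acts transitively on $Y$. In particular $\alpha(H)$ is $HG_3$-conjugate---and hence $G$-conjugate---to $H$, as desired. The crucial subtlety is the identification $\Cyc_{G_3}(x) = \gen{x^3}G_4$, which is what prevents $\nor_G(H)$ from being too large; it follows immediately from the non-degenerate commutator pairing of Lemma~\ref{class 4 comm map non-deg}.
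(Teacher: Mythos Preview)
Your proof is correct and follows essentially the same strategy as the paper's: reduce to $HG_4$ being $A$-stable, then show that every complement of $G_4$ in $HG_4$ is a $G$-conjugate of $H$, using the non-degenerate pairing of Lemma~\ref{class 4 comm map non-deg}. The only cosmetic difference is that the paper phrases the last step via the bijection of Lemma~\ref{graph+complements} together with the isomorphism $G_3/G_4 \to \Hom(G/G_2,G_4)$ (so every $f\in\Hom(H,G_4)$ is $t\mapsto[x,t]$ for some $x\in G_3$, and the corresponding complement is $xHx^{-1}$), whereas you compute the $HG_3$-orbit of $H$ via orbit--stabilizer; these are two packagings of the same computation.
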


\begin{proof}
By Lemma \ref{HHHH}, the group $H$ is cyclic. We define $T=H\oplus G_4$ so, by Lemma \ref{base induzione 3}, there exists $g\in G$ such that $gTg^{-1}$ is $A$-stable. We fix such $g$ and denote $T'=gTg^{-1}$ and $H'=gHg^{-1}$. The subgroup $G_4$ being characteristic, it follows that $H'\oplus G_4=T'=\alpha(H')\oplus G_4$. Let $\cor{C}$ denote the collection of complements of $G_4$ in $T'$.
By Lemma \ref{graph+complements}, the elements of $\cor{C}$ are in bijection with the elements of $\Hom(H',G_4)$, which is naturally isomorphic to $\Hom(H'/\Phi(H'), G_4)$, because $G_4$ has order $3$. 
The group $H'$ is cyclic, so $\Phi(H')=(H')^3$ and, the group $G$ being a $\kappa$-group, one gets 
$\Phi(H')=H'\cap G_2$. 
By Lemma \ref{frattini comm}, the quotient $G/G_2$ is elementary abelian and the restriction map $\Hom(G/G_2,G_4)\rightarrow\Hom(H'G_2/G_2,G_4)$ is thus surjective. 
Moreover, by Lemma \ref{centro 3}, the subgroup $G_4$ coincides with $\ZG(G)$ so, as a consequence of Lemma \ref{class 4 comm map non-deg}, the map $G_3/G_4\rightarrow\Hom(G/G_2,G_4)$, defined by $xG_4\mapsto(tG_2\mapsto [x,t])$, is an isomorphism. 
It follows from Lemma \ref{graph+complements} that, for each $K\in\mathcal{C}$, there exists $x\in G$ such that $K=\graffe{[x,t]t=xtx^{-1}\ |\ t\in H'}$. As a consequence, there is $x\in G$ such that $\alpha(H')=xH'x^{-1}$ and so, since $H'=gHg^{-1}$, also $\alpha(H)$ and $H$ are conjugate in $G$.
\end{proof}

\begin{lemma}\label{lH2}
Let $H$ be a subgroup of $G_3$ such that $H\cap G_4=\graffe{1}$. Then $H$ and $\alpha(H)$ are conjugate in $G$.
\end{lemma}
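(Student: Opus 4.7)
The plan is to imitate the argument of Lemma \ref{lH1}, using the duality between the roles of $G/G_2$ and $G_3/G_4$ provided by the non-degenerate commutator pairing from Lemma \ref{class 4 comm map non-deg}. First I would observe that, since $G$ is a $\kappa$-group, Lemma \ref{g3 elem abelian} gives that $G_3$ is elementary abelian of order $3^{w_3}\cdot|G_4|=27$. Hence $H\subseteq G_3$ is itself elementary abelian, and the hypothesis $H\cap G_4=\{1\}$ means that $H$ embeds as a subspace of the $\F_3$-vector space $G_3/G_4$.

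Next, set $T=H\oplus G_4$, which is an elementary abelian subgroup of $G_3$ containing $G_4$. Apply Lemma \ref{base induzione 3} to produce $g\in G$ such that $T'=gTg^{-1}$ is $A$-stable; since $G_4$ is characteristic and $\alpha$-stable, writing $H'=gHg^{-1}$ we get two complements of $G_4$ in $T'$, namely $H'$ and $\alpha(H')$. It will therefore be enough to show that every complement of $G_4$ in $T'$ is a $G$-conjugate of $H'$, for then $\alpha(H')=xH'x^{-1}$ for some $x$, and consequently $\alpha(H)$ and $H$ are conjugate in $G$.

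By Lemma \ref{graph+complements}, the complements of $G_4$ in $T'$ are parametrized by $\Hom(H',G_4)$. The collection of $G$-conjugates of $H'$ inside $T'$ is produced, via Lemma \ref{normalizer G2}, by elements of $G$ outside the centralizer of $H'$, and a given $x\in G$ sends $H'$ to $\{[x,t]t:t\in H'\}$. So the task reduces to showing that the map $G\rightarrow\Hom(H',G_4)$ given by $x\mapsto(t\mapsto[x,t])$ is surjective. Because $[G_2,G_3]\subseteq G_5=\{1\}$, this map factors through $G/G_2$, and because $H'$ is elementary abelian it factors through the inclusion $H'\hookrightarrow G_3/G_4$. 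Lemmas \ref{centro 3} and \ref{class 4 comm map non-deg} then give that $G/G_2\rightarrow\Hom(G_3/G_4,G_4)$ defined by $xG_2\mapsto(uG_4\mapsto[x,u])$ is an injection between $\F_3$-vector spaces of the same dimension $2$, hence an isomorphism; and since $H'$ is a direct summand of the elementary abelian group $G_3/G_4$, the restriction map $\Hom(G_3/G_4,G_4)\rightarrow\Hom(H',G_4)$ is surjective. Composing these two surjections yields the desired surjectivity, completing the proof.

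The only mildly delicate point is verifying that the action of $G$ on the set of complements matches the parametrization by $\Hom(H',G_4)$; this is routine once one notes that $H'$ lies in the abelian subgroup $G_3$, so that $xH'x^{-1}=\{[x,t]t:t\in H'\}$ and different $x$'s yielding the same homomorphism $t\mapsto[x,t]$ produce the same conjugate, which is automatic from the calculation above. No genuine obstacle arises, as the whole argument is a transparent dual of Lemma \ref{lH1}.
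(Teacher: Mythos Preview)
Your proposal is correct and follows essentially the same approach as the paper: pass to an $A$-stable $T'=H'\oplus G_4$ via Lemma~\ref{base induzione 3}, parametrize complements of $G_4$ in $T'$ by $\Hom(H',G_4)$ using Lemma~\ref{graph+complements}, and use the isomorphism $G/G_2\to\Hom(G_3/G_4,G_4)$ from Lemmas~\ref{centro 3} and~\ref{class 4 comm map non-deg} together with surjectivity of restriction to conclude that every complement is a conjugate of $H'$. The reference to Lemma~\ref{normalizer G2} is not actually needed and its stated role is slightly misdescribed, but this does not affect the argument; also, in this section Corollary~\ref{yoG2} (rather than Lemma~\ref{g3 elem abelian}) is the more natural citation for $G_3$ being elementary abelian, though either works.
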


\begin{proof}
Let $T=HG_4$.
The group $G_3$ is elementary abelian, as a consequence of Corollary \ref{yoG2}, and therefore so is $T=H\oplus G_4$.  
Let $g\in G$ be such that $gTg^{-1}$ is $A$-stable, as in Lemma \ref{base induzione 3}, and set $T'=gTg^{-1}$ and $H'=gHg^{-1}$. 
Let moreover $\cor{C}$ be the set of complements of $G_4$ in $T'$ and note that, $G_4$ being characteristic, both $H'$ and $\alpha(H')$ belong to $\cor{C}$. 
Thanks to Lemma \ref{graph+complements}, the elements of $\cor{C}$ are in bijection with the elements of $\Hom(H',G_4)$. 
By the isomorphism theorems, $H'$ is isomorphic to $(H'G_4)/G_4$ and the restriction map induces a surjection $\Hom(G_3/G_4,G_4)\rightarrow\Hom(H',G_4)$. By Lemma \ref{centro 3}, the subgroup $G_4$ coincides with $\ZG(G)$ so, as a consequence of Lemma \ref{class 4 comm map non-deg}, the map $G/G_2\rightarrow\Hom(G_3/G_4,G_4)$, defined by $xG_4\mapsto(tG_2\mapsto [x,t])$ is an isomorphism. It follows from Lemma \ref{graph+complements} that each element of $\mathcal{C}$ is of the form $\graffe{[x,t]t=xtx^{-1}\ |\ t\in H'}=xH'x^{-1}$, for some $x\in G$. 
In particular, $\alpha(H')$ and $H'$ are conjugate in $G$.
The groups $H'$ and $H$ being conjugate in $G$, it follows that $H$ and $\alpha(H)$ are conjugate, too. 
\end{proof}

\begin{lemma}\label{lH3}
Let $H$ be a subgroup of $G$ such that $H\oplus G_4= G_2$. Then $H$ has an $A$-stable conjugate in $G$.
\end{lemma}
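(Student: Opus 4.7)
Let $\cor{C}$ denote the set of complements of $G_4$ in $G_2$ and let $\cor{C}^A$ be the $A$-stable ones. My strategy is to partition $\cor{C}$ into precisely three $G$-conjugation orbits, each containing an element of $\cor{C}^A$; then $H$ lies in one of these orbits and so is $G$-conjugate to an $A$-stable complement.

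To set the stage I would invoke Corollary \ref{yoG2} together with Lemma \ref{yo structure}$(1)$ to realize $G_2$ as an $\F_3$-vector space of dimension $4$ and order $81$, and then Lemma \ref{graph+complements} to get $|\cor{C}|=|\Hom(H,G_4)|=27$. The $A$-module structure of $G_2$ follows from the fact that $A$ acts on $G_i/G_{i+1}$ through $\chi^i$ (Lemma \ref{action chi^i}), together with Lemma \ref{order +- jumps} applied to $G_2$: one has $G_2=G_2^+\oplus G_2^-$ with $|G_2^+|=|G_2^-|=9$, the inclusion $G_4\subseteq G_2^+$ (because $\chi^4=1$), and $G_2^-\subseteq G_3$ (because $\chi^2=1$ forces the image of $G_2^-$ in $G_2/G_3$ to be trivial). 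Moreover $G_2^+\cap G_3=G_4$ and $G_2^-\cap G_4=\graffe{1}$, so $G_2^-$ maps isomorphically onto $G_3/G_4$. A dimension count shows that every $A$-stable complement has the form $K=K^+\oplus G_2^-$, where $K^+$ is a line of $G_2^+$ distinct from $G_4$; there are exactly three such lines, so $|\cor{C}^A|=3$.

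The main step is to show that $\nor_G(K)=G_2$ for each $K\in\cor{C}^A$. One containment $G_2\subseteq\nor_G(K)$ holds because $G_2$ is abelian. For the reverse, take $g\in\nor_G(K)$; by Lemma \ref{commutators normalizer} one has $[g,K]\subseteq K$, and because $[g,K]\subseteq[G,G_2]\subseteq G_3$ this forces $[g,K]\subseteq K\cap G_3$. Since $K^+$ is a line of $G_2^+$ different from $G_4=G_2^+\cap G_3$, we have $K^+\cap G_3=\graffe{1}$, and hence $K\cap G_3=G_2^-$. In particular $[g,G_2^-]\subseteq G_2^-$. On the other hand $[g,G_2^-]\subseteq[G,G_3]\subseteq G_4$, and combined with $G_2^-\cap G_4=\graffe{1}$ this yields $[g,G_2^-]=\graffe{1}$. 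By Lemma \ref{class 4 comm map non-deg}, whose hypotheses hold for $\yo$ thanks to Lemma \ref{yo structure}$(1)$ and Lemma \ref{centro 3}, the induced commutator pairing $G/G_2\times G_3/G_4\to G_4$ is non-degenerate; since $G_2^-$ maps isomorphically onto $G_3/G_4$, the vanishing of $[g,G_2^-]$ forces $g\in G_2$.

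Granting this, each $K\in\cor{C}^A$ has a $G$-orbit of length $|G:G_2|=9$, and the three pairwise distinct $A$-stable orbits cover $3\cdot 9=27=|\cor{C}|$ complements, exhausting $\cor{C}$. Therefore $H$ lies in one of these orbits, which gives the desired $A$-stable conjugate of $H$. The main technical obstacle is the normalizer computation: correctly pinning down $K\cap G_3$ as $G_2^-$ and then extracting from the non-degeneracy of the class-$4$ commutator pairing that the centralizer of $G_2^-$ in $G$ is precisely $G_2$.
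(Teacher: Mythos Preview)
Your approach is essentially the same as the paper's: both count $|\cor{C}|=27$, identify the three $A$-stable complements as $K^+\oplus G_2^-$ for the three lines $K^+\subseteq G_2^+$ distinct from $G_4$, compute $\nor_G(K)=G_2$, and conclude by an orbit count. Your normalizer computation is in fact more detailed than the paper's, which simply appeals to Lemma~\ref{class 4 comm map non-deg} without spelling out the reduction via $K\cap G_3=G_2^-$.

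There is one genuine gap: you assert that the three $A$-stable complements lie in \emph{pairwise distinct} $G$-orbits, but you never justify this. Without it, the count $3\cdot 9=27$ does not go through. The paper closes this gap via Lemma~\ref{normalizer G2}($2$): if $K$ and $gKg^{-1}$ are both $A$-stable subgroups of $G_2$, then (since $G^+\subseteq G_2$ by Lemma~\ref{order +- jumps} and $G_2$ is abelian) one has $g\in G^+\nor_G(K)=G_2=\nor_G(K)$, so $gKg^{-1}=K$. You should add this one-line argument.

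A minor citation issue: you invoke Lemma~\ref{action chi^i} for the action of $A$ on $G_i/G_{i+1}$, but that lemma assumes $\alpha$ is intense, which is precisely what this section is proving. The correct reference is Lemma~\ref{action chi^i general}, which only requires that $\alpha$ induce inversion on $G/G_2$.
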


\begin{proof}
We define $X$ to be the collection of all subgroups $K$ of $G$ for which 
$G_2=K\oplus G_4$.
The group $G_2$ is elementary abelian, by Corollary \ref{yoG2}, so the set $X$ is non-empty.
% we note that, thanks Lemma \ref{G2 piccoletto}($2$), conjugation induces an action of $G/G_2$ on $X$. 
Moreover, as a consequence of Lemma \ref{graph+complements}, the cardinality of $X$ is equal to the cardinality of 
$\Hom(H,G_4)$, which is $27$. 
We define $X^+=\graffe{K\in X\ :\ \alpha(K)=K}$ and we will show, with a counting argument, that $H$ is conjugate to an element of $X^+$. Let $K\in X^+$. Thanks to Corollary \ref{abelian sum +-}, we can write $K=K^+\oplus K^-$ and, as a consequence of Lemma \ref{order +- jumps}, the subgroup $K^-$ is equal to $G_2^-$. 
Again by Lemma \ref{order +- jumps}, the subgroup  $G_2^+$ has order $9$ and it contains $G_4$.
It follows that $|X^+|$ is equal to the number of $1$-dimensional subspaces of $G_2^+$ that are different from $G_4$, i.e. $|X^+|=3$. By Lemma \ref{normalizer G2}($1$), the group $G_2$ normalizes $K$, but in fact $G_2=\nor_G(K)$, as a consequence of Lemma \ref{class 4 comm map non-deg}. 
It follows that the orbit of $K$ in $X$ has size $|G:G_2|=9$ so, thanks to Lemma \ref{normalizer G2}($2$), the element $K$ is the only element of $X^+$ belonging to its orbit under $G/G_2$. The number $|X|/|X^+|$ being equal to $9$, it follows that each orbit of the action of $G/G_2$ on $X$ has a representative in $X^+$. The same holds for the orbit of $H$.
\end{proof}

\begin{lemma}\label{lH4}
Let $H$ be a subgroup of $G$ such that $H\oplus G_3=G_2$. Then $H$ has an $A$-stable conjugate in $G$.
\end{lemma}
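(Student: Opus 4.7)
The plan is to proceed in three short steps, exploiting the fact that $H$ sits at even depth in $G$ together with the elementary abelian structure of $G_2$.

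First, I would reduce to the case where $HG_4$ is $A$-stable. Since $|G_2{:}G_3|=3$, the hypothesis $H\oplus G_3=G_2$ forces $|H|=3$ and $H\cap G_3=\{1\}$, so $H$ has depth $2$. The quotient $G/G_4$ is a $3$-group of class $3$ with $|G/G_4{:}(G/G_4)_2|=9$, so by Proposition \ref{p^4 has cp} the automorphism $\alpha_4$ induced by $\alpha$ on $G/G_4$ is intense. Applying Lemma \ref{equivalent intense coprime-pgrps} to $\alpha_4$ and to the image of $H$ in $G/G_4$, there exists $g\in G$ such that $(gHg^{-1})G_4/G_4$ is $\gen{\alpha_4}$-stable; equivalently, $gHg^{-1}\cdot G_4$ is $A$-stable. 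Replacing $H$ by $gHg^{-1}$, I may assume from now on that $HG_4$ is $A$-stable.

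Next, I would show that $HG_4\subseteq G^+$. By Corollary \ref{yoG2}, $G_2$ is elementary abelian, so $HG_4=H\oplus G_4$ is elementary abelian of order $9$. The natural map $HG_4/G_4\to G_2/G_3$ is injective (its kernel is $(HG_4\cap G_3)/G_4=G_4/G_4$) and its image is $HG_3/G_3=G_2/G_3$, so $HG_4/G_4$ and $G_2/G_3$ are isomorphic as $A$-modules. By Proposition \ref{proposition -1^i}, $\alpha$ induces scalar multiplication by $(-1)^j$ on $G_j/G_{j+1}$; in particular $A$ acts trivially on $G_2/G_3$ and on $G_4$, hence also on $HG_4/G_4$. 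Since $|HG_4|=9$ is odd, Lemma \ref{plus quotients} applied to the group $HG_4$ with the normal $A$-stable subgroup $G_4$ yields $HG_4=(HG_4)^+$, i.e.\ $HG_4\subseteq G^+$.

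Finally, since every element of $HG_4$ is fixed by $\alpha$, one has $\alpha(H)=H$, so this (already conjugated) $H$ is itself $A$-stable. This produces the desired $A$-stable conjugate of the original $H$ and completes the proof. There is no real obstacle: the key observation — and the reason no orbit-counting argument is needed, in contrast to Lemma \ref{lH3} — is that $H$ lies at an \emph{even} depth and thus the parity forces $A$ to act trivially on both composition factors of $HG_4$.
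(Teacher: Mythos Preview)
Your proof is correct and genuinely simpler than the paper's. One small fix: you cite Proposition~\ref{proposition -1^i}, which assumes $\alpha$ is intense, but in this section you are still \emph{proving} that $\alpha$ is intense. The statement you actually need is Lemma~\ref{action chi^i general}, which gives the same conclusion (that $\alpha$ acts by $(-1)^i$ on $G_i/G_{i+1}$) directly from the hypothesis that $\alpha$ induces inversion on $G/G_2$.

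The paper takes a different route: it lets $X$ be the set of all complements of $G_3$ in $G_2$, computes $|X|=27$ and $|X^+|=3$, shows via Lemma~\ref{commumino} that each $K\in X^+$ satisfies $\nor_G(K)=G_2$, and then uses Lemma~\ref{normalizer G2}($2$) to conclude that the three $G$-orbits in $X$ each meet $X^+$. Your argument short-circuits this: once $HG_4$ is made $A$-stable (via Lemma~\ref{base induzione 3}), its only jumps are $2$ and $4$, both even, so Lemma~\ref{plus quotients} forces $HG_4\subseteq G^+$ and $H$ is automatically fixed by $\alpha$. The parity observation is exactly what distinguishes this case from Lemmas~\ref{lH3} and~\ref{lH5}, where $H$ has jumps of both parities and a counting argument is genuinely required; the paper treats all three cases uniformly at the cost of extra work here.
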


\begin{proof}
Let $X$ be the collection of all complements of $G_3$ in $G_2$. The group $G_2$ is elementary abelian, by Corollary \ref{yoG2}, and so, from Lemma \ref{graph+complements} it follows that the cardinality of $X$ is equal to 
$|\Hom(H,G_3)|=27$. We define $X^+=\graffe{K\in X\ :\ \alpha(K)=K}$. As a consequence of Lemma \ref{order +- jumps}, if $K$ is an element of $X^+$, then $K=K^+$. The elements of $X^+$ are thus exactly the one-dimensional subspaces of $G_2^+$ that are different from $G_4$ and so we have that $|X^+|=3$. Fix $K\in X^+$. 
Then, as a consequence of Lemma \ref{commumino}, the commutator map induces an isomorphism $G/G_2\otimes K\rightarrow G_3/G_4$. It follows that $\nor_G(K)$ is contained in $G_2$ so, thanks to Lemma \ref{normalizer G2}($1$), one has $\nor_G(K)=G_2$. Lemma \ref{normalizer G2}($2$) yields that $K$ is the only element of $X^+$ belonging to its orbit under the action of $G/G_2$ on $X$. The number 
$|X|/|X^+|$ being equal to $9$, it follows that each orbit of the action of $G/G_2$ on $X$ has a representative in $X^+$ so, in particular, $H$ has an $A$-stable conjugate in $G$.
\end{proof}

\begin{lemma}\label{giovy}
Let $x\in G\setminus G_2$ and let $a\in G_2^+\setminus G_3$. 
Then $[x,a]$ does not belong to $G^-$.
\end{lemma}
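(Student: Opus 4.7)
The plan is to reduce the claim to $[x,[x,a]]\neq 1$ and verify this by computation in the quaternion model $\yo\subseteq 1+\mathfrak m$. Since $\alpha(a)=a$ and $G_2$ is abelian (Corollary \ref{yoG2}), writing $\alpha(x)=x^{-1}y$ with $y\in G_2$ (Lemma \ref{yo inversion}) and applying Lemma \ref{multiplication formulas commutators} gives
\[\alpha([x,a]) \;=\; [x^{-1}y,a] \;=\; x^{-1}[y,a]x\cdot[x^{-1},a] \;=\; [x^{-1},a],\]
using $[y,a]=1$. With $c=[x,a]\in G_3$, the bilinearity of the commutator $G\times G_3\to G_4$ (valid since $G_4$ is central and $G_5=1$) yields $x^{-1}c^{-1}x=c^{-1}[x,c]$, hence
\[\alpha([x,a]) \;=\; [x^{-1},a] \;=\; [x,[x,a]]\cdot[x,a]^{-1}.\]
Consequently $[x,a]\in G^-$ if and only if $[x,[x,a]]=1$. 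By Lemma \ref{commumino}, $[x,a]\neq 1$ since $\bar x\neq 0$ and $\bar a\neq 0$, so the statement is not vacuous.

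The same bilinearity considerations, together with $[G_2,G_2]=1$ and $[G_2,G_3]\subseteq G_5=1$, show that $(\bar x,\bar y,\bar a)\mapsto [x,[y,a]]$ descends to an $\F_3$-trilinear map $V\times V\times W\to G_4$, with $V=G/G_2$ and $W=G_2/G_3$. In particular $\bar x\mapsto[x,[x,a]]$ is a quadratic form on $V$, linear in $\bar a\in W$. Since $\dim_{\F_3}W=1$, it suffices to check anisotropy for one chosen $a_0\in G_2^+\setminus G_3$.

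I would then take $a_0=1+\mathrm{k}$, which lies in $G_2^+\setminus G_3$ as $\mathrm{k}\in\mathfrak m^2\setminus\mathfrak m^3$ and $\alpha$ fixes $\mathrm{k}$ by Lemma \ref{yo inversion}, together with the generators $x_0=1+\mathrm{i}-\epsilon$ and $y_0=1+\mathrm{j}-\epsilon$ of Lemma \ref{yo structure}(2), whose classes form a basis of $V$. Using the leading-order identity $[1+\mu,1+\nu]\equiv 1+(\mu\nu-\nu\mu)$ modulo the next power of $\mathfrak m$ together with the defining relations of $\A$, a direct computation gives
\[[x_0,[x_0,a_0]] \;=\; [y_0,[y_0,a_0]] \;=\; 1+\epsilon\mathrm{k},\qquad [x_0,[y_0,a_0]] \;=\; [y_0,[x_0,a_0]] \;=\; 1,\]
with $g:=1+\epsilon\mathrm{k}$ a generator of $G_4=\{1,1\pm\epsilon\mathrm{k}\}$. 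Bilinearity then yields $[x,[x,a_0]] \;=\; g^{\alpha^2+\beta^2}$ for $\bar x=\alpha\bar x_0+\beta\bar y_0$. Since the only squares in $\F_3$ are $0$ and $1$, the equation $\alpha^2+\beta^2=0$ forces $\alpha=\beta=0$, so $[x,[x,a_0]]\neq 1$ for every $x\notin G_2$; by linearity in $\bar a$ the same holds for every $a\in G_2^+\setminus G_3$.

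The main obstacle is keeping the ring-theoretic computation honest: one must verify that the two diagonal entries of the form agree and the off-diagonal entries vanish, so that the resulting quadratic form is the anisotropic $\alpha^2+\beta^2$ rather than an isotropic alternative such as $\alpha^2-\beta^2$. This ultimately reflects the fact that $-1$ is not a square in $\F_3$, baked into the multiplication table of $\A$.
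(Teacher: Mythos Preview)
Your proof is correct and shares its opening reduction with the paper: both establish that $\alpha([x,a])=[x,[x,a]]\,[x,a]^{-1}$, so $[x,a]\in G^-$ is equivalent to $[x,[x,a]]=1$. The approaches then diverge. The paper argues structurally: if $x$ centralizes $[x,a]$, then the maximal subgroup $C=\Cyc_G([x,a])$ coincides with $D=\gen{x,G_2}$, so $[x,a]\in[C,C]\cap\ZG(C)$, which equals $G_4$ by Lemma~\ref{properties C}, contradicting $[x,a]\in G_3\setminus G_4$ from Lemma~\ref{commumino}. You instead compute the quadratic form $\bar x\mapsto[x,[x,a_0]]$ directly in the quaternion model and find it equals $\alpha^2+\beta^2$, anisotropic over $\F_3$. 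Your route is more hands-on and makes the underlying arithmetic obstruction ($-1\notin\F_3^{*2}$) explicit; the paper's route is shorter and reuses the $\kappa$-group machinery (in particular Lemma~\ref{properties C}) rather than touching coordinates. Both are valid, and your trilinearity and well-definedness checks are sound.
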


\begin{proof}
Let $C=\Cyc_G([x,a])$ and $D=\gen{x,G_2}$. 
By Lemma \ref{commumino}, the element $[x,a]$ belongs to $G_3\setminus G_4$ so, as a consequence of Lemmas \ref{centro 3} and \ref{class 4 comm map non-deg}, the index of $C$ in $G$ is equal to $3$. In particular, both $C$ and $D$ are maximal subgroups of $G$.
Assume now by contradiction that $[x,a]\in G^-$. Since $x$ belongs to $G\setminus G_2$, there exists $\gamma\in G_2$ such that $\alpha(x)=x^{-1}\gamma$ and so we have
$$[x,a]^{-1}=\alpha([x,a])=[x^{-1}\gamma,a].$$
The group $G_2$ is elementary abelian, by Corollary \ref{yoG2}, and therefore, applying Lemma \ref{multiplication formulas commutators}($2$), one gets
\begin{align*}
[x,a]^{-1} & = [x^{-1}\gamma,a] \\
           & = x^{-1}[\gamma,a]x[x^{-1},a] \\
           & = [x^{-1},a] \\
           & = x^{-1}axa^{-1} \\
           & = x^{-1}[a,x]x \\
           & = [x^{-1},[a,x]][a,x] \\
           & = [x^{-1},[x,a]^{-1}][x,a]^{-1}.
\end{align*}
As a result, the element $[x^{-1},[x,a]^{-1}]$ is trivial, and so $x\in C$. It follows that $C=D$, and thus
$[x,a]$ belongs to $[C,C]\cap\ZG(C)$. Lemma \ref{properties C} yields $[x,a]\in G_4$. Contradiction.
\end{proof}

\begin{lemma}\label{lH5}
Let $x\in G_2\setminus G_3$ and $y\in G_3\setminus G_4$. Define $H=\gen{x,y}$. Then $H$ has an $A$-stable conjugate.
\end{lemma}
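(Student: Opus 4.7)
The plan is to mimic, in slightly modified form, the counting strategy employed in Lemmas \ref{lH3} and \ref{lH4}. First I would set $T=HG_4$, which lies in $G_2$, is elementary abelian of order $27$ (by Corollary \ref{yoG2}), and has jumps $\{2,3,4\}$ in $G$ each of width $1$. By Lemma \ref{base induzione 3} there is $g\in G$ such that $gTg^{-1}$ is $A$-stable; replacing $H$ by $gHg^{-1}$, I may assume that $T$ itself is $A$-stable. Let $X$ denote the collection of subgroups of $T$ with jumps $\{2,3\}$ in $G$; equivalently, the $2$-dimensional subspaces of the $3$-dimensional $\F_3$-space $T$ that are complements of $G_4$ and not contained in $G_3$. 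A direct count using the filtration $0\subset G_4\subset T\cap G_3\subset T$ of dimensions $1,2,3$ gives $|X|=9$.

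Next I would compute $|X^+|$. Decomposing $T=T^+\oplus T^-$ under the action of $A$, with $\dim T^+=2$ containing $G_4$ and $\dim T^-=1$ contained in the ``minus'' part of $T\cap G_3$, one checks that every $K\in X^+$ decomposes as $K^+\oplus T^-$ where $K^+$ is a $1$-dimensional subspace of $T^+$ distinct from $G_4$; this yields $|X^+|=3$. Since the elements of $X^+$ are $\gen{\alpha}$-stable subgroups of the abelian group $G_2$, Lemma \ref{normalizer G2}($2$) shows they are pairwise non-conjugate in $G$. I would then argue that $\nor_G(K)=G_2$ for every $K\in X^+$, using the non-degeneracy of the commutator pairings $\delta:G/G_2\otimes G_2/G_3\rightarrow G_3/G_4$ (Lemma \ref{commumino}) and $\gamma:G/G_2\times G_3/G_4\rightarrow G_4$ (Lemma \ref{class 4 comm map non-deg}) together with $\ZG(G)=G_4$ (Lemma \ref{centro 3}): for a basis $(e_1',e_2)$ of $K$ with $e_1'\in K^+$ and $e_2\in K^-$, the two requirements $[g,e_1']\in K\cap G_3$ and $[g,e_2]=0$ can only be simultaneously satisfied by $g\in G_2$.

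Once $\nor_G(K)=G_2$ is in hand for each $K\in X^+$, the $G$-orbit of such a $K$ has cardinality $|G|/|G_2|=9$, and the intersection of this orbit with $X$ has cardinality $|\nor_G(T)|/|G_2|=3$ (the observation being that $gKg^{-1}\subseteq T$ forces $g\in\nor_G(T)$, since $gTg^{-1}=gKg^{-1}\cdot G_4\subseteq T$). The three pairwise disjoint orbits then supply $3\cdot 3=9=|X|$ subgroups altogether, so $X$ is partitioned into three $G$-orbits, each containing exactly one element of $X^+$. In particular, the original $H\in X$ lies in one of these orbits and therefore has an $A$-stable $G$-conjugate. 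The main obstacle is precisely the identity $\nor_G(K)=G_2$ for $K\in X^+$: it requires a careful analysis of the interaction between $\gamma$ and $\delta$ and exploits the specific $\kappa$-group structure of $\yo$ developed in the previous sections, in particular Proposition \ref{G2elemabelian} and Lemma \ref{properties C}.
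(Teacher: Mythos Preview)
Your approach is correct and close in spirit to the paper's, but with a genuine structural difference worth noting. The paper works \emph{globally}: it takes $X$ to be the set of all subgroups $\gen{u}\oplus\gen{v}$ with $u\in G_2\setminus G_3$ and $v\in G_3\setminus G_4$, computes $|X|=108$ and $|X^+|=12$, shows $\nor_G(K)=G_2$ for each $K\in X^+$, and concludes from $108=12\cdot 9$. You instead first pass to an $A$-stable $T=HG_4$ and work \emph{locally} inside $T$, with $|X|=9$ and $|X^+|=3$; this costs you the extra step of tracking how $G$-orbits intersect $T$ via $\nor_G(T)$, but the arithmetic closes the same way. Both routes hinge on the identical technical fact $\nor_G(K)=G_2$ for $K\in X^+$.

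On that point: the ingredient you need is precisely Lemma~\ref{giovy}. Writing $K=\gen{e_1'}\oplus\gen{e_2}$ with $e_1'\in K^+\subseteq G_2^+\setminus G_3$ and $e_2\in K^-=T^-\subseteq G^-$, the condition $[g,e_1']\in K\cap G_3=\gen{e_2}\subseteq G^-$ for $g\notin G_2$ directly contradicts Lemma~\ref{giovy}, so condition (i) alone forces $g\in G_2$; condition (ii) is not even required. Your appeal to ``non-degeneracy of $\gamma$ and $\delta$'' is not by itself enough --- non-degeneracy only controls $[g,e_1']$ modulo $G_4$, whereas you need $[g,e_1']$ to land in the order-$3$ subgroup $\gen{e_2}$ of the order-$27$ group $G_3$. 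That stronger conclusion genuinely uses the $\kappa$-group structure via Lemma~\ref{properties C}, exactly as packaged in Lemma~\ref{giovy}. You were right to flag this as the main obstacle; just cite \ref{giovy} rather than re-deriving it.
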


\begin{proof}
The group $G_2$ is elementary abelian, by Corollary \ref{yoG2}, and therefore $H=\gen{x}\oplus\gen{y}$.
Let $X$ be the set consisting of all subgroups of $G_2$ of the form $\gen{u}\oplus\gen{v}$, where $u\in G_2\setminus G_3$ and 
$v\in G_3\setminus G_4$. The cardinality of $X$ is then equal to $108$. We define $X^+=\graffe{K\in X\ :\alpha(K)=K}$ and we fix $K\in X^+$. By Corollary \ref{abelian sum +-}, the subgroup $K$ decomposes as 
$K=K^+\oplus K^-$ and, as a consequence of Lemma \ref{order +- jumps}, there exists $a\in G_2^+$ such that $K=\gen{a}\oplus K^-$. Fix such $a$.
Again thanks to Lemma \ref{order +- jumps}, we get that 
$|X^+|=12$. 
We want to count the conjugates of $K$. By Lemma \ref{normalizer G2}($1$), the subgroup $G_2$ is contained in $\nor_G(K)$ and, if $x\in\nor_G(K)$, 
then $[x,a]\in K\cap G_3$. The intersection $K\cap G_3$ being equal to $K^-$, it follows from Lemma \ref{giovy} that $\nor_G(K)=G_2$. 
As a consequence of Lemma \ref{normalizer G2}($2$), the element $K$ is the only element of $X^+$ belonging to its orbit under $G/G_2$ so, from the equality $|X|/|X^+|=9$, we can deduce that each orbit of the action of $G/G_2$ on $X$ has a representative in $X^+$. The same holds for the orbit of $H$.
\end{proof}

\begin{lemma}\label{malditesta}
The automorphism $\alpha$ is intense. Moreover, the intensity of $G$ is equal to $2$.
\end{lemma}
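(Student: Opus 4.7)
The plan is to apply Lemma~\ref{equivalent intense coprime-pgrps}: since $\alpha$ has order $2$ and $|G|$ is a power of $3$, it suffices to show that every subgroup $H$ of $G$ has an $A$-stable conjugate in $G$. We then case-split on the position of $H$ relative to the lower central series, using the preparatory Lemmas \ref{base induzione 3}--\ref{lH5}, each of which handles one type of subgroup.

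First, if $G_4 \subseteq H$, the conjugate is provided directly by Lemma~\ref{base induzione 3}. So we may assume $H \cap G_4 = \{1\}$ (recall $|G_4| = 3$). If $H$ is not contained in $G_2$, Lemma~\ref{lH1} delivers a conjugate of $H$ equal to $\alpha(H)$, which, combined with Glauberman (Lemma~\ref{equivalent intense coprime}), yields an $A$-stable conjugate. Thus we reduce to $H \subseteq G_2$ with $H \cap G_4 = \{1\}$; since $G_2$ is elementary abelian by Corollary~\ref{yoG2}, the subgroup $H$ is an $\F_3$-vector space of dimension at most $2$ (as $|G_2| = 27$ and $|G_4|=3$).

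Within this reduction, if $H \subseteq G_3$ then Lemma~\ref{lH2} applies (again combined with Lemma~\ref{equivalent intense coprime} to pass from ``conjugate to $\alpha(H)$'' to ``$A$-stable conjugate''). If $H \not\subseteq G_3$ and $|H|=3$, then $H \cap G_3 = \{1\}$, forcing $H \oplus G_3 = G_2$, and Lemma~\ref{lH4} finishes the case. If $H \not\subseteq G_3$ and $|H|=9$, we distinguish: since $|HG_3/G_3|=3$ and $|G_2/G_3|=3$, one has $HG_3 = G_2$, and $H \cap G_3$ has order~$3$ with $(H\cap G_3)\cap G_4 = \{1\}$. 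Picking $x \in H \cap (G_2 \setminus G_3)$ and $y$ generating $H \cap G_3 \subseteq G_3 \setminus G_4$ gives $H = \langle x\rangle \oplus \langle y\rangle$, so Lemma~\ref{lH5} concludes (alternatively Lemma~\ref{lH3} applies, since $H \oplus G_4 = G_2$). This exhausts all subgroups, so $\alpha \in \Int(G)$.

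For the intensity statement, Lemma~\ref{intense complement} gives $\inte(G) \mid p-1 = 2$. Since $\alpha$ induces the inversion map on $G/G_2 = G/\Phi(G)$ by Lemma~\ref{yo inversion}, the intense character $\chi_G$ satisfies $\chi_G(\alpha) = -1 \neq 1$, whence $\alpha \notin \ker\chi_G$ and $\inte(G) \geq 2$. Combining the two inequalities yields $\inte(G) = 2$. The only delicate point in the whole argument is making sure the case division in the second and third paragraphs is exhaustive; once the structural results of Sections~\ref{section cubing} and~\ref{section yo} (especially $G_2$ being elementary abelian and the non-degeneracy of the commutator pairing $G/G_2 \times G_3/G_4 \to G_4$) are in place, each case reduces mechanically to one of the preceding lemmas.
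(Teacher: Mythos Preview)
Your overall strategy matches the paper's, but there is a genuine gap caused by a miscount: the widths are $(w_1,w_2,w_3,w_4)=(2,1,2,1)$, so $|G_2|=3^{1+2+1}=81$, not $27$. Consequently a subgroup $H\subseteq G_2$ with $H\cap G_4=\{1\}$ can have order up to $27$ (it embeds into $G_2/G_4$, which has order $27$), not just up to $9$. Your case split therefore omits the case $|H|=27$, i.e.\ $H\oplus G_4=G_2$; this is precisely the case to which Lemma~\ref{lH3} applies. Relatedly, your parenthetical claim that Lemma~\ref{lH3} also handles the $|H|=9$ case is wrong: if $|H|=9$ and $H\cap G_4=\{1\}$ then $|H\oplus G_4|=27\neq 81=|G_2|$, so the hypothesis of Lemma~\ref{lH3} is not met there (Lemma~\ref{lH5} is the right tool, as you correctly use).

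Once you correct the order of $G_2$ and insert the missing $|H|=27$ case via Lemma~\ref{lH3}, the rest of your argument is fine and essentially identical to the paper's. Your treatment of the intensity via $\inte(G)\mid p-1=2$ and $\chi_G(\alpha)=-1$ is a valid alternative to the paper's appeal to Theorem~\ref{theorem class at least 3}(1).
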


\begin{proof}
Let $H$ be a subgroup of $G$. If $H$ contains $G_4$, then, by Lemma \ref{base induzione 3}, there exists a conjugate of $H$ that is $A$-stable.
Assume that $H\cap G_4=\graffe{1}$. If $H$ is not contained in $G_2$, then $H$ is conjugate to $\alpha(H)$, thanks to Lemma \ref{lH1}.
Assume that $H$ is contained in $G_2$. If $2$ is not a jump of $H$ in $G$, then, by Lemma \ref{lH2}, the subgroups $H$ and $\alpha(H)$ are conjugate in $G$.  
We suppose that $2$ is a jump of $H$ in $G$.
By Corollary \ref{yoG2}, the group $G_2$ is elementary abelian and so $H$ is a subspace of $G_2$, not contained in $G_3$, that trivially intersects $G_4$. 
The combination of Lemmas \ref{lH3}, \ref{lH4}, and \ref{lH5} guarantees that $H$ has an $A$-stable conjugate in $G$.
The choice of $H$ being arbitrary, it follows from Lemma \ref{equivalent intense coprime-pgrps} that $\alpha$ is intense.
The intensity of $G$ is at least $2$, because $\alpha$ has order $2$, but in fact $\inte(G)=2$, as a consequence of Theorem \ref{theorem class at least 3}($1$).
\end{proof}

\noindent
We remark that, thanks to Lemma \ref{malditesta}, the proof of Proposition \ref{ppp} is complete. 
Moreover, we are now also able to prove Theorem \ref{theorem 3-groups}. 
The implication $(2)\Rightarrow(1)$ is clear and the implication $(3)\Rightarrow(2)$ is given by the combination of Proposition \ref{ppp} and Lemma \ref{yo class}. We now prove $(1)\Rightarrow(2)$. To this end, let $Q$ be a finite $3$-group of class at least $4$ with $\inte(Q)>1$.
Because of Corollary \ref{3-gps class at most 4}, the class of $Q$ is equal to $4$ so, as a consequence of Theorem \ref{theorem dimensions}, the order of $Q$ is equal to $729$. The intensity of $Q$ is equal to $2$, thanks to Theorem \ref{theorem class at least 3}($1$). We have concluded the proof of $(1)\Rightarrow(2)$ and, to finish the proof of Theorem \ref{theorem 3-groups}, we will next prove $(2)\Rightarrow(3)$. Let $Q$ be a finite $3$-group of class $4$ and intensity $2$. Then, by Lemma \ref{quotient kappa}, the group $Q$ is a $\kappa$-group and, as a consequence of Proposition \ref{proposition -1^i}, it possesses an automorphism of order $2$ that induces the inversion map on $Q/Q_2$. By Theorem \ref{theorem dimensions}, the order of $Q_4$ is $3$. Proposition \ref{proposition unique with automorphism} yields that $Q$ is isomorphic to $\yo$. The proof of Theorem \ref{theorem 3-groups} is now complete.

%The group $\yo$ satisfies indeed all structural conditions we imposed on $G$ at the beginning of Section \ref{section 3-gps sufficient}. Moreover, the existence of $\alpha$ is guaranteed by Corollary \ref{alpha3}. 

%\begin{corollary}
%For any pair $(a,b)$ of generators of $G$, there exists $\beta\in\Aut(G)$ such that $\beta(a)=a^{-1}$ and $\beta(b)=b^{-1}$. 
%\end{corollary}

%\begin{proof}
%The automorphism $\alpha$ is intense by Proposition \ref{malditesta} and 
%\end{proof}

% CHAPTER: OBELISKS

\chapter{Obelisks}\label{chapter obelisks}

Let $p>3$ be a prime number.
A \emph{$p$-obelisk} \index{$p$-obelisk} is a finite $p$-group $G$ for which the following hold.
\begin{itemize}
 \item[$1$.] The group $G$ is not abelian.
 \item[$2$.] One has $|G:G_3|=p^3$ and $G_3=G^p$.
\end{itemize}
The following proposition will immediately clarify our interest in $p$-obelisks.

\begin{proposition}\label{class 4 obelisk}
Let $p>3$ be a prime number and let $G$ be a finite $p$-group of class at least $4$. If $\inte(G)>1$, then $G$ is a $p$-obelisk.
\end{proposition}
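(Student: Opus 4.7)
The plan is to combine three ingredients already available in the excerpt: the non-abelian condition is automatic from the class assumption, the index computation $|G:G_3|=p^3$ follows from Theorem \ref{theorem dimensions}, and the identity $G_3 = G^p$ is precisely Theorem \ref{theorem exp not p}. So the proof is essentially a matter of unpacking the definition of $p$-obelisk and invoking prior results.

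More concretely, I would proceed as follows. First, since the class of $G$ is at least $4$, we have $G_2 \neq \{1\}$, so $G$ is non-abelian; this gives condition $(1)$ of the definition. Next, by Theorem \ref{theorem dimensions}, the assumption $\inte(G) > 1$ together with $\cl(G) \geq 4$ forces $(w_1, w_2, w_3, w_4) = (2,1,2,1)$, where $w_i = \wt_G(i) = \log_p|G_i : G_{i+1}|$. Hence
\[
|G : G_3| = |G : G_2| \cdot |G_2 : G_3| = p^{w_1} \cdot p^{w_2} = p^2 \cdot p = p^3,
\]
which is the first half of condition $(2)$.

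Finally, for the identity $G^p = G_3$, I would invoke Theorem \ref{theorem exp not p} directly: for any prime $p$ (in particular $p > 3$), a finite $p$-group of class at least $4$ and intensity greater than $1$ satisfies $G^p = G_3$. This completes condition $(2)$, and together with $(1)$ shows that $G$ is a $p$-obelisk.

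There is really no serious obstacle here: the entire content of the proposition has been prepared by the structural results of Chapters \ref{chapter break} and \ref{chapter higher}. The role of the restriction $p > 3$ is not needed for the argument itself (both Theorem \ref{theorem dimensions} and Theorem \ref{theorem exp not p} hold for all primes), but is built into the hypothesis because the notion of $p$-obelisk was defined only for $p > 3$. The only thing one must be careful about is to cite Theorem \ref{theorem exp not p} in the form that requires only $\cl(G) \geq 4$ and $\inte(G) > 1$, not in the stronger form about $\rho(G)$ from Lemma \ref{regular and pp=p3}, since that stronger form is what distinguishes the $p > 3$ case from $p = 3$, but here we only need the equality of subgroups.
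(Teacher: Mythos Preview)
Your proof is correct and matches the paper's approach exactly: the paper's proof reads simply ``Combine Theorems \ref{theorem dimensions} and \ref{theorem exp not p},'' which is precisely what you have done with the details spelled out. Your additional commentary about the role of the hypothesis $p>3$ is also accurate.
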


\begin{proof}
Combine Theorems \ref{theorem dimensions} and \ref{theorem exp not p}.
\end{proof}

\noindent
Chapter \ref{chapter obelisks} will be entirely devoted to understanding the structure of $p$-obelisks and that of their subgroups. Some of the results, especially coming from Section \ref{section subgroups obelisks}, are rather technical and their relevance will become evident in Chapter \ref{chapter complete char}.

\section{Some properties}

\noindent
We remind the reader that, if $p$ is a prime number and $G$ is a finite $p$-group, then $\wt_G(i)=\log_p|G_i:G_{i+1}|$ where $(G_i)_{i\geq 1}$ denotes the lower central series of $G$. %The class of $G$ is the number of non-trivial elements of $(G_i)_{i\geq 1}$.

\begin{lemma}\label{obelisk basic}
Let $p>3$ be a prime number and let $G$ be a $p$-obelisk. Let $(G_i)_{i\geq 1}$ denote the lower central series of $G$. Then the following hold.
\begin{itemize}
 \item[$1$.] The class of $G$ is at least $2$.
 \item[$2$.] One has $\wt_G(1)=2$ and $\wt_G(2)=1$.
 \item[$3$.] The group $G/G_3$ is extraspecial of exponent $p$.
\end{itemize}
\end{lemma}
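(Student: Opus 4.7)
The plan is to work entirely inside the quotient $G/G_3$ and show that it is a non-abelian group of order $p^3$ and exponent $p$; once this is established, items (1)--(3) follow essentially by inspection.

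First I would address exponent. Since $[[G,G],G]\subseteq G_3$, the quotient $G/G_3$ has class at most $2$, which is strictly less than $p-1$ because $p>3$. By Lemma \ref{class at most p-1 regular}, $G/G_3$ is therefore regular. Because $G_3=G^p$ by the obelisk hypothesis, we have $(G/G_3)^p=G^pG_3/G_3=1$, so the subgroup generated by $p$-th powers is trivial in $G/G_3$. Applying Lemma \ref{regular implies power abelian}$(1)$ with $k=1$ to the regular group $G/G_3$, the subgroup $(G/G_3)^p$ coincides with the \emph{set} of $p$-th powers, so every element of $G/G_3$ has order dividing $p$. Hence $G/G_3$ has exponent $p$.

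Next I would check non-abelianness and (1). Since $G$ is not abelian, $G_2\neq 1$. If one had $G_2=G_3$, then by induction $G_i=G_2$ for all $i\geq 2$, which combined with the nilpotency of the finite $p$-group $G$ would force $G_2=1$, a contradiction. Thus $G_2\neq G_3$, so the class of $G$ is at least $2$, proving (1), and in addition $G/G_3$ is a genuinely non-abelian group.

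To finish, I would extract the structural consequences. By the obelisk condition, $G/G_3$ has order $p^3$, is non-abelian, and has exponent $p$. Lemma \ref{quotient by centre not cyclic} applied to $G/G_3$ prevents $(G/G_3)/Z(G/G_3)$ from being cyclic, which combined with non-abelianness forces $|Z(G/G_3)|=p$. Since $G/G_3$ has class $2$, its commutator subgroup $G_2/G_3$ is contained in $Z(G/G_3)$ and is nontrivial, so $|G_2/G_3|=p$, giving $\wt_G(2)=1$, and consequently $|G/G_2|=p^3/p=p^2$, giving $\wt_G(1)=2$, which settles (2). Moreover $G_2/G_3=Z(G/G_3)$ is cyclic of order $p$, so by Definition \ref{def extraspecial} the group $G/G_3$ is extraspecial; combined with the exponent-$p$ conclusion of the first paragraph, this proves (3). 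No step here looks like a serious obstacle: the only subtle point is the regularity argument used to pass from $(G/G_3)^p=1$ to genuine exponent $p$, and this is handled cleanly by Lemma \ref{regular implies power abelian} once the class bound $2<p-1$ has been invoked.
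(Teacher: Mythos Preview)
Your proof is correct and follows essentially the same route as the paper: establish that $G/G_3$ is non-abelian of order $p^3$ and exponent $p$, then read off (1)--(3). The only notable difference is that your regularity detour for the exponent is unnecessary: the subgroup $(G/G_3)^p$ is \emph{generated} by the set of $p$-th powers, so if $(G/G_3)^p=1$ then every $p$-th power already lies in the trivial subgroup, i.e.\ every element has order dividing $p$---no appeal to Lemmas \ref{class at most p-1 regular} or \ref{regular implies power abelian} is needed, and this is exactly how the paper argues.
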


\begin{proof}
The group $G$ is non-abelian and thus $G_2\neq G_3$. The index $|G:G_3|$ being equal to $p^3$, it follows from Lemma \ref{index G'} that $\wt_G(1)=2$ and $\wt_G(2)=1$. We denote now $\overline{G}=G/G_3$ and use the bar notation for the subgroups of $\overline{G}$. Then $\overline{G_2}$ is contained in $\ZG(\overline{G})$ and $\overline{G_2}=\ZG(\overline{G})$, as a consequence of Lemma \ref{quotient by centre not cyclic}. The exponent of $\overline{G}$ is $p$, because $G^p$ is contained in $G_3$.
\end{proof}

\begin{lemma}\label{obelisk regular}
Let $p>3$ and let $G$ be a $p$-obelisk. Then $G$ is regular.
\end{lemma}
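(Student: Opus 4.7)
The plan is to invoke Lemma \ref{regular if omega small}, which asserts that any finite $p$-group $H$ with $|H:H^p|<p^p$ is regular. So the entire task reduces to estimating the index $|G:G^p|$ for a $p$-obelisk $G$.

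By the very definition of a $p$-obelisk, one has $G^p=G_3$, and moreover $|G:G_3|=p^3$. Hence $|G:G^p|=p^3$. Since we are assuming $p>3$, i.e.\ $p\geq 5$, the inequality $p^3<p^p$ holds trivially (indeed $p^p\geq p^5>p^3$). Therefore the hypothesis of Lemma \ref{regular if omega small} is satisfied, and we conclude that $G$ is regular.

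The proof is essentially a one-line application of the cited criterion; there is no genuine obstacle. The only thing worth emphasising is why the hypothesis $p>3$ is essential: if $p=3$, then $p^p=27=p^3$, and the strict inequality in Lemma \ref{regular if omega small} fails, so regularity cannot be deduced this way. This matches the broader picture developed in Chapter \ref{chapter different primes} and in the discussion of the $3$-group $\yo$, where non-regular examples of intensity greater than $1$ do appear precisely for $p=3$.
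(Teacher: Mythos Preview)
Your proof is correct and takes exactly the same approach as the paper, which simply states that the result follows directly from Lemma~\ref{regular if omega small}. You have merely spelled out the computation $|G:G^p|=|G:G_3|=p^3<p^p$ that the paper leaves implicit.
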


\begin{proof}
This follows directly from Lemma \ref{regular if omega small}.
\end{proof}

\begin{proposition}\label{blackburn 2-1-2}
Let $p>3$ be a prime number and let $G$ be a $p$-obelisk. Let $(G_i)_{i\geq1}$ be the lower central series of $G$ and let $c$ denote the class of $G$.
Then the following hold.
\begin{itemize}
 \item[$1$.] For all $i\in\Z_{\geq 1}$, one has $\wt_G(i)\wt_G(i+1)\leq 2$.
 \item[$2$.] If $\wt_G(i)\wt_G(i+1)=1$, then $i=c-1$.
 \item[$3$.] For all positive integers $k$ and $l$, not both even, one has
 			 $[G_k,G_l]=G_{k+l}$.
\end{itemize}
\end{proposition}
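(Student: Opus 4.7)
Since $|G:G^p|=|G:G_3|=p^3<p^p$, Lemma \ref{regular if omega small} implies that $G$ is regular, and this will be the pivotal hypothesis throughout. The main workhorse is Lemma \ref{regular technical}, which states that $[\rho^r(M),\rho^s(N)]=\rho^{r+s}([M,N])$ for normal subgroups $M,N$ of $G$ and $r,s\in\Z_{\geq 0}$. My plan is to treat (3) first via explicit structural formulas, then deduce (1) from a consequence of these, and finally address (2).

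For (3), I would prove by induction on $m\geq 0$ the formulas
\[
G_{2m+1}=\rho^m(G)=G^{p^m} \quad\text{and}\quad G_{2m+2}=\rho^m(G_2)=G_2^{p^m},
\]
with base case $G_3=G^p$ given by the definition of a $p$-obelisk. For the inductive step one has, by Lemma \ref{regular technical}, $G_{2m+1}=[G,G_{2m}]=[G,\rho^{m-1}(G_2)]=\rho^{m-1}([G,G_2])=\rho^m(G)$, and analogously for the even index. Once these are in hand, for $k,l$ not both even, write $(G_k,G_l)=(\rho^r(H),\rho^s(K))$ with $(H,K)\in\graffe{(G,G),(G,G_2),(G_2,G)}$; Lemma \ref{regular technical} then yields $[G_k,G_l]=\rho^{r+s}([H,K])=G_{k+l}$ by direct index bookkeeping, proving (3). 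A consequence that I will use below is $G_j^p=G_{j+2}$ for every $j\geq 1$.

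For (1), Corollary \ref{p map petrescu} applied inside $G_i$ gives $(xy)^p\equiv x^py^p\pmod{[G_i,G_i]^pG_{pi}}$ for all $x,y\in G_i$. Using $[G_i,G_i]\subseteq G_{2i}$ (Lemma \ref{commutator indices}) together with $G_j^p=G_{j+2}$ and $p\geq 5$, both $[G_i,G_i]^p\subseteq G_{2i+2}\subseteq G_{i+3}$ and $G_{pi}\subseteq G_{i+3}$; hence $\rho$ induces a well-defined homomorphism $\overline{\rho}\colon G_i/G_{i+1}\to G_{i+2}/G_{i+3}$, surjective because $G_i^p=G_{i+2}$. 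Thus $\wt_G(i+2)\leq\wt_G(i)$, and combining with $\wt_G(1)=2$ and $\wt_G(2)=1$ (Lemma \ref{obelisk basic}), separation by parity yields $\wt_G(2k+1)\leq 2$ and $\wt_G(2k)\leq 1$ for all $k\geq 1$, whence $\wt_G(i)\wt_G(i+1)\leq 2$.

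For (2), assume $\wt_G(i)=\wt_G(i+1)=1$; by (1) then $\wt_G(i+2)\in\graffe{0,1}$. Since $G$ is nilpotent, the equality $G_{i+2}=G_{i+3}$ would iterate through $G_{k+1}=[G,G_k]$ to force $G_{i+2}=1$, so it suffices to exclude $\wt_G(i+2)=1$. My plan is to combine the bilinear commutator map $G/G_2\times G_{i+1}/G_{i+2}\to G_{i+2}/G_{i+3}$ (Lemma \ref{bilinear LCS}), whose image generates the target, with the $\overline{\rho}$-homomorphism from (1), and to work in a minimal counterexample obtained by quotienting by a normal subgroup of order $p$ inside $G_c$ (a quotient which preserves the $p$-obelisk property since $G_c\subseteq G_3$). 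The hard part will be the simultaneous control of the $p$-power map and the commutator structure: the three-subgroups lemma (Lemma \ref{three subgroups lemma}) together with part (3) should yield the requisite contradiction between $\wt_G(i+2)=1$ and the rigid alternating structure of widths forced by regularity.
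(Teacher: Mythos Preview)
The paper does not prove this proposition at all: it simply cites it as a simplified version of Theorem~4.3 in Blackburn's paper \cite{blackburn} (equivalently, Satz~17.9 in \cite[Ch.~III]{huppert}). So any argument you supply is necessarily different from ``the paper's own proof''.

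Your arguments for (3) and (1) are correct and self-contained. In fact, your inductive formulas $G_{2m+1}=\rho^m(G)$ and $G_{2m+2}=\rho^m(G_2)$ are exactly Lemma~\ref{black core}, which the paper also defers to Blackburn; you have given a clean independent proof of that lemma along the way. The deduction of (3) from these formulas via Lemma~\ref{regular technical} is fine, and the proof of (1) by exhibiting the surjection $\overline{\rho}\colon G_i/G_{i+1}\to G_{i+2}/G_{i+3}$ is precisely what the paper later calls Lemma~\ref{maps incrociate}(1).

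The gap is (2). You correctly isolate the issue --- ruling out three consecutive widths equal to $1$ --- but your plan (``minimal counterexample, bilinear commutator map, three-subgroups lemma, rigid alternating structure'') is a list of ingredients rather than an argument, and you flag it yourself as ``the hard part''. This is indeed the substantive content of Blackburn's theorem, and it does not fall out of the tools you have named without real work: in particular, the three-subgroups lemma and Lemma~\ref{bilinear LCS} alone do not obviously produce a contradiction from $\wt_G(i)=\wt_G(i+1)=\wt_G(i+2)=1$. You should either consult Blackburn's original argument (which, as the paper notes after Lemma~\ref{black core}, ``strongly relies on the fact that $p$-obelisks are regular and makes use of some technical lemmas'' from \cite{huppert}) or supply a complete argument for this step.
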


\begin{proof}
Proposition \ref{blackburn 2-1-2} is a simplified version of Theorem $4.3$ from \cite{blackburn}, which can also be found in Chapter $3$ of \cite{huppert} as Satz $17.9$.
\end{proof}

\noindent
We remark that the term \emph{$p$-obelisk} does not appear in \cite{blackburn} or \cite{huppert} and is of our own invention.
Moreover, originally Proposition \ref{blackburn 2-1-2}($1$-$2$) was phrased in the following way: if $G$ is a $p$-obelisk, then 
$$(\wt_G(i))_{i\geq 1}=(2,1,2,1,\ldots,2,1,f,0,0,\ldots) \ \ \text{where} \ \ f\in\graffe{0,1,2}.$$

\begin{lemma}\label{parity obelisks}
Let $p>3$ be a prime number and let $G$ be a $p$-obelisk. Let $c$ denote the class of $G$ and let $i\in\graffe{1,\ldots,c-1}$. Then the following hold.
\begin{itemize}
 \item[$1$.] The index $i$ is odd if and only if $\wt_G(i)=2$.
 \item[$2$.] The index $i$ is even if and only if $\wt_G(i)=1$.
 \item[$3$.] If $\wt_G(c)=2$, then $c$ is odd.
 \item[$4$.] If $c$ is even, then $\wt_G(c)=1$.
\end{itemize}
\end{lemma}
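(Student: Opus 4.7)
The plan is to obtain the parity pattern by induction on $i$, using Proposition \ref{blackburn 2-1-2} together with the base values provided by Lemma \ref{obelisk basic}. First I would record the elementary fact that $\wt_G(i)\ge 1$ for every $i\in\{1,\ldots,c\}$: this follows from the definition of the nilpotency class, since $G_i\neq G_{i+1}$ precisely when $i\le c$. By Lemma \ref{obelisk basic}(1) we have $c\ge 2$, so the inductive setup is non-empty.

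For statements ($1$) and ($2$), the base of the induction is handled by Lemma \ref{obelisk basic}($2$), which gives $\wt_G(1)=2$ and (whenever $c\ge 2$, which is always) $\wt_G(2)=1$. For the inductive step, suppose $i\in\{2,\ldots,c-1\}$ and the claim holds for $i-1$. Since $i\le c-1$, the index $i$ is not the last one in the range covered by Proposition \ref{blackburn 2-1-2}($2$) until we reach $i=c-1$; more precisely, for any $i\le c-1$ we have $\wt_G(i)\wt_G(i+1)\le 2$ by Proposition \ref{blackburn 2-1-2}($1$), and by Proposition \ref{blackburn 2-1-2}($2$) this product equals $1$ only if $i=c-1$. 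Combined with $\wt_G(j)\ge 1$ for $j\le c$, this forces the product to take the value $2$ whenever $i\le c-2$, so exactly one of $\wt_G(i)$, $\wt_G(i+1)$ equals $1$ and the other equals $2$. Reading off the recursion $\wt_G(i+1)=3-\wt_G(i)$ starting from $\wt_G(1)=2$, the widths alternate as $2,1,2,1,\ldots$ throughout the range $\{1,\ldots,c-1\}$, which is exactly statements ($1$) and ($2$). The only subtle point is at $i=c-1$: here the product may be $1$, but that still fixes $\wt_G(c-1)$ by the induction and does not affect the assertion about $i\in\{1,\ldots,c-1\}$.

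Statements ($3$) and ($4$) then follow by applying ($1$) and ($2$) at the index $i=c-1$, combined with $\wt_G(c-1)\wt_G(c)\le 2$ and $\wt_G(c)\ge 1$. If $c$ is even, then $c-1$ is odd, so ($1$) gives $\wt_G(c-1)=2$; hence $\wt_G(c)\le 1$, forcing $\wt_G(c)=1$, which is ($4$). Contrapositively, if $\wt_G(c)=2$ then $c$ cannot be even, so $c$ is odd, which is ($3$).

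There is essentially no obstacle: the argument is a bookkeeping exercise once Proposition \ref{blackburn 2-1-2} and Lemma \ref{obelisk basic} are in hand. The only point requiring mild care is to separate the range $\{1,\ldots,c-2\}$, where the alternating pattern is forced, from the boundary index $c-1$, where Proposition \ref{blackburn 2-1-2}($2$) allows the product to drop to $1$; this boundary is exactly what makes room for the possibility $\wt_G(c)\in\{1,2\}$ in the odd-class case.
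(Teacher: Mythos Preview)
Your proposal is correct and follows essentially the same approach as the paper: use Lemma~\ref{obelisk basic}(2) for the base values $\wt_G(1)=2$, $\wt_G(2)=1$, then use Proposition~\ref{blackburn 2-1-2}(1)--(2) to force $\wt_G(i)\wt_G(i+1)=2$ for $i\le c-2$, giving the alternating pattern on $\{1,\ldots,c-1\}$, and finally bound $\wt_G(c)$ via $\wt_G(c-1)\wt_G(c)\le 2$. The only stylistic remark is that your inductive framing (``suppose the claim holds for $i-1$'') is set up but then abandoned in favour of the recursion $\wt_G(i+1)=3-\wt_G(i)$; the recursion alone already does the job, so you could drop the inductive scaffolding and argue directly as the paper does.
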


\begin{proof}
For all $j\in\graffe{1,\ldots,c-1}$, denote $w_j=\wt_G(j)$.
Thanks to Lemma \ref{obelisk basic}, we have $w_1=2$ and $w_2=1$.
As a consequence of Proposition \ref{blackburn 2-1-2}, whenever $i<c-1$, the product $w_iw_{i+1}$ is equal to $2$ and, for all indices $i,j\in\graffe{1,\ldots,c-1}$, one has $w_i=w_j$ if and only if $i$ and $j$ have the same parity. It follows from Proposition \ref{blackburn 2-1-2}($1$) that $\wt_G(c)$ can be $2$ only if $c$ is odd.
\end{proof}

\noindent
We recall that, if $G$ is a $p$-group, then $\rho$ denotes the map $x\mapsto x^p$ on $G$.

\begin{lemma}\label{black core}\label{black p-power jumps 2}
Let $p>3$ be a prime number and let $G$ be a $p$-obelisk.
Then, for all $i,k\in\Z_{>0}$, one has $\rho^{k}(G_i)=G_{2k+i}$. 
\end{lemma}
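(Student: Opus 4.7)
The plan is to induct on $k$, with the base case $k=1$ being the substantive step. For the base, the goal is to show $\rho(G_i)=G_{i+2}$ for every $i\in\Z_{>0}$. Since $G$ is regular by Lemma \ref{obelisk regular} and regularity is inherited by subgroups (it is a condition on pairs of elements), every $G_i$ is itself regular, so Lemma \ref{regular implies power abelian}($1$) says that the set $\rho(G_i)$ coincides with the subgroup $G_i^p$. For $i=1$ this is exactly the defining property of a $p$-obelisk: $\rho(G)=G^p=G_3$.

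For $i\geq 2$, I would invoke Lemma \ref{regular bilinear} with the normal subgroups $M=G$ and $N=G_{i-1}$ and exponents $r=1$, $s=0$, giving
\[
\rho(G_i)=\rho([G,G_{i-1}])=[\rho(G),G_{i-1}]=[G_3,G_{i-1}].
\]
Since $3$ is odd, Proposition \ref{blackburn 2-1-2}($3$) yields $[G_3,G_{i-1}]=G_{i+2}$, finishing the base case.

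For the inductive step, assume $\rho^{k-1}(G_j)=G_{j+2(k-1)}$ for every $j\in\Z_{>0}$. Since $\rho^k(x)=(x^{p^{k-1}})^p$ for every $x\in G$, the set $\rho^k(G_i)$ equals $\rho(\rho^{k-1}(G_i))$. Applying the inductive hypothesis and then the base case (to the index $i+2k-2$) one obtains
\[
\rho^k(G_i)=\rho(\rho^{k-1}(G_i))=\rho(G_{i+2k-2})=G_{i+2k},
\]
which is what was required. The only subtlety in the argument is the passage from $\rho(G_i)=G_i^p$ (a set equality coming from regularity of $G_i$) to $\rho^k(G_i)=\rho(\rho^{k-1}(G_i))$, but this is immediate from the formula $\rho^k(x)=(x^{p^{k-1}})^p$ and does not require any additional machinery. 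The real content of the lemma is therefore concentrated in the single identity $\rho(G_i)=[G_3,G_{i-1}]=G_{i+2}$, which couples regularity (via Lemma \ref{regular bilinear}) with the obelisk-specific identity $G^p=G_3$ and with Blackburn's commutator calculus.
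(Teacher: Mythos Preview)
Your proof is correct. The paper does not supply its own argument here---it simply records that Blackburn establishes this lemma in the course of proving Proposition~\ref{blackburn 2-1-2}, relying on regularity and technical results from Huppert. Your approach is in the same spirit (regularity via Lemma~\ref{regular bilinear} together with the defining identity $G^p=G_3$), but you make the argument self-contained within the paper's framework by appealing directly to Proposition~\ref{blackburn 2-1-2}($3$), which the paper has already quoted as a black box. One small remark: since in Blackburn's original treatment the two results are developed together, you are tacitly assuming that Proposition~\ref{blackburn 2-1-2}($3$) does not itself depend on the present lemma; this is consistent with the paper's logical ordering, where Proposition~\ref{blackburn 2-1-2} is placed before Lemma~\ref{black core} and treated as an external citation.
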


\noindent
In his original proof of Proposition \ref{blackburn 2-1-2}, Blackburn also proves  Lemma \ref{black p-power jumps 2}. Blackburn's proof strongly relies on the fact that $p$-obelisks are regular and it makes use of some technical lemmas that can be found in \cite[Ch.~$\mathrm{III}$]{huppert}.

\begin{proposition}\label{obelisk centre at the bottom}
Let $p>3$ be a prime number and let $G$ be a $p$-obelisk. Let $(G_i)_{i\geq 1}$ be the lower central series of $G$ and let $c$ denote its nilpotency class. Then $\ZG(G)=G_c$. 
\end{proposition}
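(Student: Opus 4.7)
The plan is to proceed by induction on the nilpotency class $c$. The base case $c=2$ is immediate: a $p$-obelisk of class $2$ has $|G| = p^3$ and is non-abelian, hence extraspecial of exponent $p$, and then $\ZG(G)=G_2=G_c$ follows from Lemma \ref{extraspecial non-abelian}. For the inductive step with $c \geq 3$, one direction ($G_c \subseteq \ZG(G)$) is automatic. For the other direction, I would first apply the inductive hypothesis to the quotient $G/G_c$: since $G_c \subseteq G_3$, one verifies directly from the definition that $G/G_c$ is a $p$-obelisk of class $c-1$, hence by induction $\ZG(G/G_c) = G_{c-1}/G_c$, which gives $\ZG(G) \subseteq G_{c-1}$.

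The core remaining step is to show that every $z \in G_{c-1} \cap \ZG(G)$ lies in $G_c$. I would translate this into showing that the bilinear pairing $\gamma \colon G/G_2 \times G_{c-1}/G_c \to G_c$ induced by the commutator map is non-degenerate on the right. When $c=3$, this is easy from $[L_1,L_2]=L_3$ combined with $\dim L_2 = 1$. For $c \geq 4$, the essential tool is the $p$-th power map: by Lemma \ref{black core}, $\rho(G_{c-3}) = G_{c-1}$, so any $z \in G_{c-1}$ can be written as $z \equiv v^p \pmod{G_c}$ for some $v \in G_{c-3}$. A Hall-Petrescu computation (Lemma \ref{hall-petrescu lemma}) using the regularity of $G$ (Lemma \ref{obelisk regular}) and the hypothesis $p>3$ shows that in $(gvg^{-1})^p = ([g,v]v)^p$ all correction terms $c_k^{\binom{p}{k}}$ lie in $G_{c+1} = \{1\}$, because the depths they carry grow sufficiently quickly; this yields the clean identity $[g, v^p] = [g,v]^p$. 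Consequently, the condition $[g,z]=1$ for all $g$ becomes $[g,v]^p = 1$ for all $g \in G$.

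Passing to the graded Lie algebra $L = \bigoplus_{i \geq 1} G_i/G_{i+1}$, this last condition reads $\bar\rho_{c-2}([\bar g, \bar v]) = 0$ in $L_c$ for every $\bar g \in L_1$, where $\bar v = vG_{c-2} \in L_{c-3}$ and $\bar\rho_{c-2} \colon L_{c-2} \to L_c$ is the linear map induced by $\rho$. Using Proposition \ref{blackburn 2-1-2}(3) with $k=1,\, l = c-3$, one has $[L_1, L_{c-3}] = L_{c-2}$, so after choosing generators $a,b$ of $L_1$ the condition becomes a non-triviality statement on $\bar v$; combined with the non-degeneracy of the wedge pairing $L_1 \times L_1 \to L_2$ and Jacobi identities, one forces $\bar v = 0$, whence $v \in G_{c-2}$ and therefore $z \equiv v^p \in \rho(G_{c-2}) = G_c$.

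The hardest point, which I expect to be the main obstacle, is handling the cases where $\bar\rho_{c-2}$ fails to be injective (namely when $c$ is odd and $w_c = 1$). In those cases the translated condition $\bar\rho_{c-2}([\bar g,\bar v]) = 0$ is strictly weaker than $[\bar g,\bar v]=0$, so a naive dimension count is insufficient. My plan for overcoming this is to iterate the $p$-power reduction, writing $\bar v$ itself as $\bar\rho_{c-5}(\bar w)$ (which is possible by Lemma \ref{black core} whenever $c \geq 6$) and exploiting the compatibility of $\bar\rho$ with the Lie bracket that comes from Lemma \ref{regular bilinear}; alternatively one may directly invoke Jacobi together with the explicit vanishing pattern $[L_k, L_l] = L_{k+l}$ when $k,l$ are not both even, shifting the vanishing condition down to an easy pairing in low grades. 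Either way, the argument terminates either at the non-degenerate wedge $L_1 \times L_1 \to L_2$ or at the $c=3$ base handled above.
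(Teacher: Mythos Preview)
Your induction setup and the reduction to $\ZG(G)\subseteq G_{c-1}$ match the paper exactly, but from that point on you have misidentified where the difficulty lies. By Lemma~\ref{parity obelisks}, one has $\wt_G(c-1)=1$ precisely when $c$ is odd; in that case $|G_{c-1}:G_c|=p$, and since $\ZG(G)\subsetneq G_{c-1}$ (the class is $c$), you get $\ZG(G)=G_c$ immediately. So your declared ``hardest point'' (\,$c$ odd with $w_c=1$\,) is in fact trivial and needs none of the machinery you propose. The genuine work is when $c$ is even, where $\wt_G(c-1)=2$ and $\wt_G(c)=1$.

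For even $c$ your strategy is plausible but incomplete on two counts. First, the element-level identity $[g,v^p]=[g,v]^p$ with $g\in G_1$ and $v\in G_{c-3}$ does not fall under Lemmas~\ref{commutative (id,rho)} or~\ref{commutative (rho,id)} (both indices $1$ and $c-3$ are odd), and the Hall--Petrescu correction terms live in $G_{2c-5}$, which for $c=4$ is only $G_3$; you have not shown they vanish. Second, even granting that identity, you are left needing the right-nondegeneracy of $\gamma_{1,c-3}\colon L_1\times L_{c-3}\to L_{c-2}$, which you wave at with ``Jacobi identities'' but do not prove; this is essentially the content of Lemma~\ref{obelisk non-deg}. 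The paper sidesteps both issues: assuming $G_c\subsetneq\ZG(G)\subsetneq G_{c-1}$, it sets $N=\Cyc_G(G_{c-1})$, uses Lemma~\ref{non-degenerate dim 1} on the induced pairing $G/N\times G_{c-1}/\ZG(G)\to G_c$ to get $|G:N|=p$, hence $G_2=[N,G]$ by Lemma~\ref{cyclic quotient commutators}, and then applies the \emph{subgroup-level} identity of Lemma~\ref{regular technical} to compute $G_c=\rho^k(G_2)=\rho^k([N,G])=[N,\rho^k(G)]=[N,G_{c-1}]=\{1\}$, a contradiction. Working with normal subgroups and Lemma~\ref{regular technical} rather than individual elements is what makes the paper's argument clean.
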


\begin{proof}
We work by induction on $c$. If $c=2$, then, by Lemma \ref{obelisk basic}($3$), the group $G$ is extraspecial so $G_2=\ZG(G)$. Assume now that $c>2$. 
The subgroup $G_c$ is central, because $G$ has class $c$, and, by the induction hypothesis, $\ZG(G/G_c)=G_{c-1}/G_c$. 
It follows that $G_c\subseteq\ZG(G)\subseteq G_{c-1}$ and $\ZG(G)\neq G_{c-1}$. 
Moreover, by Proposition \ref{blackburn 2-1-2}($1$), the width $\wt_G(c-1)$ is either $1$ or $2$. If $\wt_G(c-1)=1$, then $\ZG(G)=
G_c$; we assume thus that $\wt_G(c-1)=2$. 
By Lemma \ref{parity obelisks}($1$), there exists a positive integer $k$ such that $c-1=2k+1$ so, from Lemma \ref{black core}, we get $G_{c-1}=\rho^k(G)$ and $G_c=\rho^k(G_2)$.
As a consequence of Proposition \ref{blackburn 2-1-2}($1$), the subgroup $G_c$ has order $p$. Let us assume by contradiction that $\ZG(G)\neq G_c$, in other words $|G_{c-1}:\ZG(G)|=|\ZG(G):G_c|=p$. Let $N=\Cyc_G(G_{c-1})$. The commutator map $G/G_2\times G_{c-1}/G_c\rightarrow G_c$ is bilinear by Lemma \ref{bilinear LCS} and it factors as a surjective non-degenerate map 
$G/N\times G_{c-1}/\ZG(G)\rightarrow G_c$. It follows from Lemma \ref{non-degenerate dim 1} that $G/N$ is cyclic of order $p$ so, by Lemma \ref{cyclic quotient commutators}, one has $G_2=[N,G]$. Lemma \ref{regular technical} yields 
$$\rho^k([N,G])=[N,\rho^k(G)]=[N,G_{c-1}]=\graffe{1}$$ and so $G_c=\rho^k(G_2)=\graffe{1}$. Contradiction.
\end{proof}

\begin{lemma}\label{dedekind law}
Let $G$ be a group and let $N$ be a normal subgroup of $G$. Let moreover $H$ and $K$ be subgroups of $G$ such that $K\subseteq H$. 
Then one has $(H\cap N)K=(KN)\cap H$.
\end{lemma}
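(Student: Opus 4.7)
The plan is to prove both inclusions directly from the definitions, using the normality of $N$ only to ensure that the products in question are subgroups, and using the hypothesis $K \subseteq H$ in a crucial way on each side. Before starting, I would note that since $N$ is normal in $G$, the product $KN$ equals $NK$ and is a subgroup of $G$; similarly, since $K \subseteq H$, the subgroup $K$ normalizes $H \cap N$ (because $K \subseteq H$ normalizes $H$, and $K \subseteq G$ normalizes $N$), so $(H \cap N)K = K(H \cap N)$ is a subgroup. Thus both sides of the claimed equality are subgroups of $G$, and it suffices to establish the corresponding set equality.

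For the inclusion $(H \cap N)K \subseteq (KN) \cap H$, I would take a typical element $y = nk$ with $n \in H \cap N$ and $k \in K$. Then $n \in N$ and $k \in K$ immediately give $nk \in NK = KN$; on the other hand, $n \in H$ and $k \in K \subseteq H$ give $nk \in H$. So $y \in (KN) \cap H$.

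For the reverse inclusion $(KN) \cap H \subseteq (H \cap N)K$, I would start with $y \in (KN) \cap H$ and write $y = kn$ for some $k \in K$ and $n \in N$. The key step, which is where the hypothesis $K \subseteq H$ is essential, is to observe that $n = k^{-1}y$ lies in $H$, since both $k \in K \subseteq H$ and $y \in H$; hence $n \in H \cap N$. Rewriting $y = kn = (knk^{-1})k$ and using that $knk^{-1} \in H \cap N$ (because $H \cap N$ is $K$-stable, as noted above), we get $y \in (H \cap N)K$.

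There is no genuine obstacle here: the only subtlety is being careful about the order of factors and the fact that $(H \cap N)K$ is a subgroup only after invoking the normality arguments from the opening paragraph. Once that is set up, both inclusions are essentially one-line consequences of the containment $K \subseteq H$.
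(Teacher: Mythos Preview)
Your proof is correct; it is the standard two-inclusion verification of the Dedekind modular law. The paper itself gives no proof beyond ``Easy exercise,'' so there is nothing further to compare. One minor simplification: in the reverse inclusion, once you have $n \in H \cap N$ and $y = kn$, you already have $y \in K(H \cap N) = (H \cap N)K$ by your opening observation, so the explicit conjugation $y = (knk^{-1})k$ is not needed.
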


\begin{proof}
Easy exercise.
\end{proof}

\begin{lemma}\label{non-ab quotient obelisk}
Let $p>3$ be a prime number and let $G$ be a $p$-obelisk. 
Then each non-abelian quotient of $G$ is a $p$-obelisk.
\end{lemma}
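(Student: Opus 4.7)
Let $H = G/N$ be a non-abelian quotient and write $\pi : G \to H$ for the canonical projection. Since the elements of the lower central series are characteristic, $H_i = \pi(G_i) = G_i N / N$ for every $i \geq 1$; in particular $H_3 = G_3 N / N$. Similarly, the set of $p$-th powers in $H$ is the image of the set of $p$-th powers in $G$, so $H^p = \pi(G^p) = G^p N / N$. The plan is to show that $N \subseteq G_3$, as this immediately forces $|H : H_3| = |G : G_3 N| = |G : G_3| = p^3$ and $H^p = G^p N / N = G_3 N / N = H_3$, completing the verification that $H$ is a $p$-obelisk.

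First I would argue by contradiction: suppose $N \not\subseteq G_3$, i.e.\ $\bar N := G_3 N / G_3$ is a nontrivial normal subgroup of $G/G_3$. By Lemma \ref{obelisk basic}(3), $G/G_3$ is extraspecial of exponent $p$, so its centre is $G_2/G_3$ (Lemma \ref{extraspecial non-abelian}) and has order $p$. By Lemma \ref{normal intersection centre trivial} applied to $G/G_3$, the nontrivial normal subgroup $\bar N$ must meet the centre nontrivially, and therefore $\bar N \supseteq G_2/G_3$, i.e.\ $G_3 N \supseteq G_2$.

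It follows that $H_2 = G_2 N/N \subseteq G_3 N / N = H_3$. Since the reverse inclusion always holds, $H_2 = H_3$, and then by induction $H_2 = H_k$ for all $k \geq 2$. As $H$ is a finite $p$-group, hence nilpotent, eventually $H_k = 1$, so $H_2 = 1$ and $H$ is abelian; this contradicts the hypothesis. Hence $N \subseteq G_3$, and the argument sketched in the first paragraph concludes the proof.

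The only real content is the extraspecial step — once one sees that $G/G_3$ being extraspecial constrains normal subgroups of $G$ modulo $G_3$ to lie either in $G_3$ itself or above $G_2$, everything else is a bookkeeping verification of the two defining properties of a $p$-obelisk. I do not expect any serious obstacle: the non-abelian hypothesis on $H$ is exactly what rules out the ``above $G_2$'' alternative.
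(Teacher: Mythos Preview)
Your argument is correct, and it is in fact a bit cleaner than the paper's. The paper reaches the same key claim $N\subseteq G_3$ but in two stages: it first shows $N\subseteq G_2$ via an index count, and then, assuming $N\not\subseteq G_3$, it constructs an auxiliary normal subgroup $M$ with $N\cap G_3\subseteq M\subseteq G_3$ and $|G_3:M|=p$, passes to $\overline G=G/M$ (which has class~$3$), invokes the fact that $\ZG(\overline G)=\overline{G_3}$, and then uses the Dedekind modular law to squeeze out the contradiction $G_3=M$. Your route bypasses both the preliminary step $N\subseteq G_2$ and the auxiliary quotient: you work directly in $G/G_3$, use the extraspecial structure to force $G_2\subseteq G_3N$, and then the observation $H_2=H_3\Rightarrow H_2=1$ finishes immediately. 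The gain is that you avoid Lemma~\ref{dedekind law} and Lemma~\ref{class 3 G3=Z} entirely; the paper's approach, on the other hand, illustrates a general ``pick a class-$c$ quotient and use $\ZG=G_c$'' technique that recurs elsewhere in the chapter.
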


\begin{proof}
Let $N$ be a normal subgroup of $G$ such that $G/N$ is not abelian. 
We claim that $N$ is contained in $G_3$. Denote first $H=G/N$.
Then we have 
$|H:H_2|\leq |G:G_2|$. Moreover, $H$ being non-abelian, Lemma \ref{index G'} yields 
$|H:H_2|\geq p^2$, and therefore, from Lemma \ref{obelisk basic}($2$), it follows that $N\subseteq G_2$.
If $N\cap G_3=N$, then $N$ is contained in $G_3$ and we are done. Assume by contradiction that $N\cap G_3\neq N$. 
As a consequence of Lemma \ref{obelisk basic}($2$), the subgroup $N$ does not contain $G_3$. 
Let now $M$ be a normal subgroup of $G$ such that $N \cap G_3\subseteq M\subseteq G_3$ and $|G_3:M|=p$, as given by Lemma \ref{normal index p}. Then $\overline{G}=G/M$ has class $3$ and $\overline{N}\neq \graffe{1}$. But, by Lemma \ref{class 3 G3=Z}, the centre of $\overline{G}$ is equal to $\overline{G_3}$ so, $\overline{G_3}$ having order $p$, Lemma \ref{normal intersection centre trivial} yields $\overline{G_3}\subseteq \overline{N}$. In particular, $G_3$ is contained in $MN$. 
Thanks to Lemma \ref{dedekind law}, we get that 
$G_3=(G_3\cap N)M=M$, 
which gives a contradiction. It follows that $N\subseteq G_3$, as claimed, and thus we have
$|H:H_3|=|G:G_3|$. It is moreover clear that $H^p=H_3$, and so we have proven that $H$ is a $p$-obelisk. 
\end{proof}

\begin{lemma}\label{obelisks normal squeezed}
Let $p>3$ be a prime number and let $G$ be a $p$-obelisk. 
Then the following hold.
\begin{itemize}
 \item[$1$.] If $H$ is a quotient of $G$ of class $i$, then $\ZG(H)=H_i$.
 \item[$2$.] Let $N$ be a subgroup of $G$. Then $N$ is normal in $G$ if and only if there exists $i\in\Z_{>0}$ such that $G_{i+1}\subseteq N\subseteq G_i$.
\end{itemize}
\end{lemma}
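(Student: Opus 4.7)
The plan is to reduce both parts to results already established in the paper. For part $(1)$, let $H$ be a quotient of $G$ of class $i$. If $i \leq 1$ then $H$ is abelian and the equality $\ZG(H) = H = H_i$ is trivial (with the convention that the class-$0$ case gives the trivial group). If $i \geq 2$, then $H$ is non-abelian, so by Lemma \ref{non-ab quotient obelisk} the group $H$ is itself a $p$-obelisk. Then Proposition \ref{obelisk centre at the bottom} applied to $H$ gives $\ZG(H) = H_i$, which is exactly what we want.

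For part $(2)$, the implication from ``squeezed'' to ``normal'' is the easy direction already handled in the proof of Lemma \ref{normali schiacchiati generale}: if $G_{i+1} \subseteq N \subseteq G_i$, then $N/G_{i+1}$ sits inside $\ZG(G/G_{i+1})$ (since $G_{i+1} = [G,G_i]$), so $N$ is normal modulo $G_{i+1}$ and hence normal in $G$. For the converse, I would simply apply Lemma \ref{normali schiacchiati generale} directly: its hypothesis is exactly the statement of part $(1)$ which we just proved, so its conclusion yields the desired characterization of normal subgroups.

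The main step is really part $(1)$, but even that is essentially immediate from the machinery already in place: Proposition \ref{obelisk centre at the bottom} does the work, and Lemma \ref{non-ab quotient obelisk} ensures we can apply it to quotients. There is no real obstacle here; the lemma is a clean corollary packaging two previously established facts. If anything, the only subtlety is making sure one handles the degenerate cases (trivial quotient, abelian quotient) before invoking the obelisk structure on $H$, but these cases are transparent.
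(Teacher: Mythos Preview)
Your proposal is correct and matches the paper's proof essentially line for line: for part~$(1)$ the paper also handles the abelian cases $i\leq 1$ separately and then invokes Lemma~\ref{non-ab quotient obelisk} followed by Proposition~\ref{obelisk centre at the bottom}, and for part~$(2)$ it likewise combines part~$(1)$ with Lemma~\ref{normali schiacchiati generale}.
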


\begin{proof}
($1$) Let $H$ be a quotient of $G$ and let $i$ denote the class of $H$. Let moreover $c$ denote the class of $G$ and note that $i\leq c$.  
If $i=0$ or $i=1$, the group $H$ is abelian and $\ZG(H)=H$. Assume now that $i>1$. Then $H$ is a non-abelian quotient of a $p$-obelisk so, by Lemma \ref{non-ab quotient obelisk}, it is a $p$-obelisk itself.
To conclude, apply Proposition \ref{obelisk centre at the bottom}. For the proof of ($2$), we combine ($1$) with Lemma \ref{normali schiacchiati generale}.
\end{proof}

\section{Power maps and commutators}\label{section obelisk diagrams}

\noindent
Throughout Section \ref{section obelisk diagrams} we will faithfully follow the notation from the List of Symbols. In particular, if $p$ is a prime number and $G$ is a finite $p$-group, then $\rho$ denotes the map $G\rightarrow G$ that is defined by $x\mapsto x^p$. We remind the reader that $\rho$ is in general not a homomorphism.

\begin{lemma}\label{maps incrociate}
Let $p>3$ be a prime number and let $G$ be a $p$-obelisk. %Denote by $c$ the class of $G$. 
Then the following hold. 
\begin{itemize}
 \item[$1$.] For all $i,k\in\Z_{>0}$ the map $\rho^k:G_i\rightarrow G_i$ induces a 
 			 surjective homomorphism 
 			 $\rho^k_i:G_i/G_{i+1}\rightarrow G_{2k+i}/G_{2k+i+1}$.
 \item[$2$.] For all $h,k\in\Z_{>0}$ not both even, the commutator map induces a
 			 bilinear map
 			 $\gamma_{h,k}:G_h/G_{h+1}\times G_k/G_{k+1}\rightarrow
 			 G_{h+k}/G_{h+k+1}$ whose image generates $G_{h+k}/G_{h+k+1}$. 
\end{itemize}
\end{lemma}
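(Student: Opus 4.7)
The plan is to establish (1) by means of the Hall--Petrescu formula together with the regularity of $G$, and to deduce (2) from Lemma \ref{bilinear LCS hk} combined with Proposition \ref{blackburn 2-1-2}(3).

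For (1), I would first treat the case $k=1$. By Lemma \ref{black core}, $\rho$ maps $G_i$ onto $G_{i+2}$ and $G_{i+1}$ into $G_{i+3}$, so $\rho$ descends to a surjective map of sets $\rho_i\colon G_i/G_{i+1}\to G_{i+2}/G_{i+3}$. To show that $\rho_i$ is a homomorphism, I would apply Lemma \ref{hall-petrescu lemma} to arbitrary $x,y\in G_i$, obtaining
\[
(xy)^p=x^py^p\prod_{j=2}^{p}c_j^{\binom{p}{j}}
\]
with $c_j$ in the $j$-th term of the lower central series of $\langle x,y\rangle\subseteq G_i$, so that $c_j\in G_{ij}$ by Lemma \ref{commutator indices}. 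The group $G$ being regular by Lemma \ref{obelisk regular}, so is every subgroup of $G$, and thus $G_{ij}^{p}=\rho(G_{ij})=G_{ij+2}$ thanks to Lemmas \ref{regular implies power abelian}(1) and \ref{black core}. For $2\le j\le p-1$, the coefficient $\binom{p}{j}$ is divisible by $p$, whence $c_j^{\binom{p}{j}}\in G_{ij+2}\subseteq G_{i+3}$ since $(j-1)i\ge 1$; for $j=p$ the coefficient is $1$, but $c_p\in G_{ip}\subseteq G_{i+3}$ because $(p-1)i\ge 4>3$ as $p\ge 5$. Consequently $(xy)^p\equiv x^py^p\bmod G_{i+3}$, and $\rho_i$ is a homomorphism. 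For general $k$, one checks that $\rho^k_i=\rho_{i+2(k-1)}\circ\cdots\circ\rho_{i+2}\circ\rho_i$, a composition of surjective homomorphisms, and is therefore itself a surjective homomorphism.

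For (2), bilinearity of $\gamma_{h,k}$ is an immediate instance of Lemma \ref{bilinear LCS hk}, with no assumption on the parity of $h$ and $k$. To see that the image generates $G_{h+k}/G_{h+k+1}$, I would invoke Proposition \ref{blackburn 2-1-2}(3): under the hypothesis that $h$ and $k$ are not both even, one has $[G_h,G_k]=G_{h+k}$, which means precisely that the commutators $[x,y]$ with $x\in G_h$, $y\in G_k$ generate $G_{h+k}$. Reducing modulo $G_{h+k+1}$ then yields the desired conclusion.

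The main obstacle is the Hall--Petrescu bookkeeping in part (1): one must simultaneously control the lower-central depth of each correction term $c_j$ inside $G_i$ and the $p$-divisibility of $\binom{p}{j}$, and check that the borderline term $c_p$ (whose binomial coefficient equals $1$) still lands in $G_{i+3}$, which is exactly where the hypothesis $p>3$ is used.
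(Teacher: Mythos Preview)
Your approach is essentially the one the paper takes. For (2), the paper does exactly what you do: combine Lemma \ref{bilinear LCS hk} with Proposition \ref{blackburn 2-1-2}(3). For (1), the paper also reduces to $k=1$ and composes; rather than invoking Hall--Petrescu directly, it quotes Corollary \ref{p map petrescu} (the identity $(xy)^p\equiv x^py^p\bmod G_2^pG_p$, applied inside $G_i$) after first assuming without loss of generality that $G_{i+3}=\{1\}$, which streamlines the bookkeeping. Your appeal to regularity is harmless but unnecessary: since $c_j^{\binom{p}{j}}$ is literally a $p$-th power of an element of $G_{ij}$, Lemma \ref{black core} alone places it in $G_{ij+2}$.

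One small ordering issue: the sentence ``$\rho$ maps $G_{i+1}$ into $G_{i+3}$, so $\rho$ descends to a surjective map of sets'' is not justified by what you cite. Knowing only $\rho(G_{i+1})\subseteq G_{i+3}$ does not give well-definedness on the coset space, because $\rho$ is not yet known to be multiplicative; you need $(xz)^p\equiv x^p\bmod G_{i+3}$ for $x\in G_i$ and $z\in G_{i+1}$. But this is exactly a special case of the Hall--Petrescu computation you carry out next. Swap the order: first establish $(xy)^p\equiv x^py^p\bmod G_{i+3}$ for all $x,y\in G_i$, and then conclude both well-definedness and the homomorphism property at once.
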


\begin{proof}
($1$) Let $i$ and $k$ be positive integers and, without loss of generality, assume that $G_{2k+i+1}=\graffe{1}$. We work by induction on $k$ and we start by taking $k=1$. As a consequence of Lemma \ref{commutator indices}, the group $[G_i,G_i]$ is contained in $G_{2i}$ so, from Lemma \ref{black core}, it follows that $[G_i,G_i]^p$ is contained in $G_{2i+2}$. The index $i$ being positive, $G_{2i+2}$ is contained in $G_{i+3}=\graffe{1}$. Now, the prime $p$ is larger than $3$ so $G_{ip}$ is also contained in $G_{i+3}=\graffe{1}$. It follows from Lemma \ref{commutator indices}, that $(G_i)_p$ is contained in $G_{ip}$, and so, thanks to Corollary \ref{p map petrescu}, the map ${\rho:G_i\rightarrow G_i}$ is a homomorphism. The function $\rho$ factors as a surjective homomorphism $\rho^1_i:G_i/G_{i+1}\rightarrow G_{i+2}$, thanks to Lemma \ref{black core}. This finishes the proof for $k=1$. Assume now that $k>1$ and define $$\rho_i^k=\rho^1_{2k+i-1}\circ\rho^1_{2k+i-3}\circ \ldots \circ \rho_{i+2}^1 \circ \rho^1_i.$$
As a consequence of the base case, the map $\rho_i^k$ is a surjective homomorphism $\rho^k_i:G_i/G_{i+1}\rightarrow G_{2k+i}/G_{2k+i+1}$ and, by its definition, it is induced by $\rho^k$.
% and that, for all $j>0$ and $0<r<k$, the map $\rho^r:G_j\rightarrow G_j$ factors as a surjective homomorphism ${\rho^r_j:G_j/G_{j+1}\rightarrow G_{2r+j}/G_{2r+j+1}}$. Then the map $\rho^k:G_i\rightarrow G_i$ factors as a surjective homomorphism $\rho^k_i:G_i/G_{i+1}\rightarrow G_{2k+i}$ where $\rho^k_i=\rho^1_{2k+i-2}\circ\rho^{k-1}_i$. 
This proves ($1$). To prove ($2$) combine Proposition \ref{blackburn 2-1-2}($3$) with Lemma \ref{bilinear LCS hk}.
\end{proof}

\begin{corollary}\label{power iso bottom}\label{power iso}
Let $p>3$ be a prime number and let $G$ be a $p$-obelisk. Let $c$ be the class of $G$.
Let moreover $i$ and $j$ be integers of the same parity such that 
$1\leq i\leq j\leq c$ and one of the following holds.
\begin{itemize}
 \item[$1$.] The number $j$ is even.
 \item[$2$.] One has $\wt_G(j)=2$.
\end{itemize}
Define $m=\frac{j-i}{2}$.
Then the map $\rho^{m}:G_i\rightarrow G_i$ induces an isomorphism of groups $\rho^{m}_i:G_i/G_{i+1}\rightarrow G_{j}/G_{j+1}$.
\end{corollary}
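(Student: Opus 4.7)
The plan is to deduce the corollary directly from Lemma \ref{maps incrociate}($1$) together with a parity count of the widths $\wt_G(i)$ and $\wt_G(j)$. By Lemma \ref{maps incrociate}($1$), the map $\rho^m:G_i\to G_i$ induces a surjective homomorphism $\rho^m_i:G_i/G_{i+1}\to G_{2m+i}/G_{2m+i+1}=G_j/G_{j+1}$. Since the groups involved are finite, it will therefore suffice to show that $\wt_G(i)=\wt_G(j)$ under the given hypotheses.

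First I would dispose of the case $i=j$, where $m=0$ and the claim is trivial. Assume $i<j$. I would then split into two cases according to whether $j<c$ or $j=c$. If $j<c$, then both $i$ and $j$ lie in the range $\{1,\dots,c-1\}$, so Lemma \ref{parity obelisks}($1$-$2$) tells us that $\wt_G(i)$ and $\wt_G(j)$ are each determined by the parity of their respective indices; since $i$ and $j$ have the same parity, it follows that $\wt_G(i)=\wt_G(j)$.

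The only non-trivial case is $j=c$. Here I would use the two hypotheses directly. Under hypothesis ($1$), $j$ is even, and by Lemma \ref{parity obelisks}($4$) we have $\wt_G(c)=1$; since $i$ is even and, if $i<c$, also less than $c$, Lemma \ref{parity obelisks}($2$) gives $\wt_G(i)=1=\wt_G(j)$. Under hypothesis ($2$), $\wt_G(j)=\wt_G(c)=2$, so by Lemma \ref{parity obelisks}($3$) the index $c$ is odd, hence $i$ is odd, and (if $i<c$) Lemma \ref{parity obelisks}($1$) gives $\wt_G(i)=2=\wt_G(j)$. In either case the surjection $\rho^m_i$ is a surjection between finite groups of the same order, hence an isomorphism.

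There is no real obstacle here: the content of the corollary is a bookkeeping exercise combining the surjectivity statement of Lemma \ref{maps incrociate}($1$) with the parity-based description of the widths from Lemma \ref{parity obelisks}. The role of hypotheses ($1$) and ($2$) is precisely to exclude the borderline configuration in which $j=c$ is odd but $\wt_G(c)=1$, because in that situation $\wt_G(i)=2\neq 1=\wt_G(j)$ and the induced homomorphism, though still surjective, would fail to be injective.
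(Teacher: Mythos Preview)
Your proposal is correct and follows essentially the same approach as the paper: invoke Lemma \ref{maps incrociate}($1$) for surjectivity and then use Lemma \ref{parity obelisks} to conclude that $\wt_G(i)=\wt_G(j)$. The paper compresses the entire width comparison into a single sentence, whereas you have written out the case analysis (in particular the $j=c$ case) more carefully; your explicit treatment of the boundary case is a welcome clarification, but the underlying argument is the same.
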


\begin{proof}
By Lemma \ref{maps incrociate}($1$), the map $\rho^{m}:G_i\rightarrow G_i$ induces a surjective homomorphism $\rho^{m}_i:G_i/G_{i+1}\rightarrow G_{j}/G_{j+1}$.
Now, $i$ and $j$ having the same parity, it follows from Lemma \ref{parity obelisks} that $\wt_G(i)=\wt_G(j)$ and $\rho^m_i$ is a bijection.
\end{proof}

\begin{lemma}\label{obelisk non-deg}
Let $p>3$ be a prime number and let $G$ be a $p$-obelisk. Denote by $c$ the class of $G$. Let moreover $h$ and $k$ be positive integers, not both even, such that 
$h+k\leq c$. Assume additionally that, if $h+k$ is odd, then $\wt_G(h+k)=2$.
Then the map
$\gamma_{h,k}$ from Lemma \ref{maps incrociate} is non-degenerate.
\end{lemma}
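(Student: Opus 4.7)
The plan is to factor $\gamma_{h,k}$ through one of the two ``base'' bilinear maps $\gamma_{1,1}$ and $\gamma_{1,2}$ by means of the power-map isomorphisms supplied by Corollary \ref{power iso}. Since $\gamma_{h,k}$ and $\gamma_{k,h}$ have mirrored kernels, I may assume without loss of generality (using ``not both even'') that $h=2a+1$ is odd, so $\rho^a_1\colon G/G_2\xrightarrow{\sim}G_h/G_{h+1}$ is an isomorphism. If also $k=2b+1$ is odd, then $h+k$ is even and I additionally use $\rho^b_1\colon G/G_2\xrightarrow{\sim}G_k/G_{k+1}$ together with $\rho^{a+b}_2\colon G_2/G_3\xrightarrow{\sim}G_{h+k}/G_{h+k+1}$; if instead $k=2b$ is even, then $h+k$ is odd and I use $\rho^{b-1}_2\colon G_2/G_3\xrightarrow{\sim}G_k/G_{k+1}$ together with $\rho^{a+b-1}_3\colon G_3/G_4\xrightarrow{\sim}G_{h+k}/G_{h+k+1}$. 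The hypothesis ``$\wt_G(h+k)=2$ whenever $h+k$ is odd'' enters exactly here: it is the precise condition on $\wt_G(h+k)$ required by Corollary \ref{power iso} to guarantee that the last of these maps is an isomorphism.

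The heart of the proof will be the verification that $\gamma_{h,k}$ actually factors through $\gamma_{1,1}$ (respectively $\gamma_{1,2}$) via these isomorphisms, i.e.\ the commutativity of the rectangle in question. This reduces to the element-wise identity
\[
[\rho^r(u),\rho^s(v)] \;\equiv\; \rho^{r+s}([u,v]) \pmod{G_{h+k+1}},
\]
for $u$ of depth $1$ and $v$ of depth $1$ or $2$, upgrading to individual elements the subgroup-level identity $[\rho^r(M),\rho^s(N)]=\rho^{r+s}([M,N])$ of Lemma \ref{regular bilinear}. Once this is in hand, the base cases do the remaining work: non-degeneracy of $\gamma_{1,1}$ is immediate from Lemma \ref{obelisk basic}(3) (which identifies $G/G_3$ as extraspecial of exponent $p$) combined with Lemma \ref{non-degenerate extraspecial}; while for $\gamma_{1,2}$ the condition $\wt_G(3)=2$ is either automatic (by Lemma \ref{parity obelisks} when $3<c$) or supplied by the standing hypothesis when $c=3$, and then choosing a basis $\bar a,\bar b$ of $G/G_2$ with $\bar c=\overline{[a,b]}$, the equality $[G,G_2]=G_3$ from Proposition \ref{blackburn 2-1-2}(3) forces $\overline{[a,c]},\overline{[b,c]}$ to span the two-dimensional space $G_3/G_4$, which by direct inspection yields non-degeneracy on both sides.

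The main obstacle will be the element-wise power-commutator identity. The natural route is an induction on $r+s$ using the commutator expansion formulas of Lemma \ref{multiplication formulas commutators}(3--4): each step replaces $[u^p,v]$ (respectively $[u,v^p]$) by $[u,v]^p$ plus an ``error'' arising from nested commutators, which one must show lies in a sufficiently deep term of the lower central series. The bookkeeping is controlled by regularity of $G$ (Lemma \ref{obelisk regular}) and the identities $\rho(G_i)=G_{i+2}$ from Lemma \ref{black core}; the assumption $p>3$ is what forces the Hall--Petrescu-type binomial corrections of Lemma \ref{hall-petrescu lemma} to be divisible by high enough powers of $p$ so that they evaporate in the relevant quotient. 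Once this identity is secured, the diagram commutes, the vertical edges are isomorphisms, and non-degeneracy of $\gamma_{h,k}$ follows directly from non-degeneracy of the base map $\gamma_{1,1}$ or $\gamma_{1,2}$.
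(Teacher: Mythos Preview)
Your approach is correct but follows a genuinely different route from the paper's. The paper handles the two parity cases by separate, direct arguments. When $h+k$ is odd (say $h$ odd, $k$ even), it simply notes that $\wt_G(h)=2$, $\wt_G(k)=1$, $\wt_G(h+k)=2$, and that the image of $\gamma_{h,k}$ generates the target; a one-line dimension count (a surjection from a $2$-dimensional space onto a $2$-dimensional space, with the middle factor $1$-dimensional) then yields non-degeneracy, with no factoring through $\gamma_{1,2}$ and no commutator--power identity required. When both $h,k$ are odd, the paper instead exploits the alternating property: setting $C=\Cyc_{G_h}(G_k)$ and $D=\Cyc_{G_k}(G_h)$, Lemma~\ref{non-degenerate dim 1} gives $|G_h:C|=|G_k:D|$, and Lemma~\ref{regular technical} shows $\rho^{(k-h)/2}(C)\subseteq D$, hence equality; if $G_h/C$ were cyclic, generated by $\bar x$, then $G_k/D$ would be generated by the image of $\rho^{(k-h)/2}(x)$, and since $x$ commutes with its own power one would get $G_{h+k}=[G_h,G_k]=\{1\}$, a contradiction. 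Your uniform reduction to $\gamma_{1,1}$ and $\gamma_{1,2}$ via power-map isomorphisms works, but its core step---the element-level identity $[u^{p^r},v^{p^s}]\equiv[u,v]^{p^{r+s}}\bmod G_{h+k+1}$---is substantial; the paper proves the odd--even instance of exactly this identity only \emph{after} the present lemma (as Proposition~\ref{proposition commutative diagram}), and never proves or needs the odd--odd instance. So your route is more systematic and anticipates later machinery, while the paper's is shorter and ad hoc.
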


\begin{proof}
Without loss of generality, assume that $c=h+k$ and so $G_{h+k+1}=\graffe{1}$.
We prove non-degeneracy of $\gamma_{h,k}$ by looking at the parity of $h+k$. 
Assume first that $h+k$ is odd and, without loss of generality, $h$ is odd and $k$ is even. From Lemma \ref{parity obelisks}, it follows that $\wt_G(h)=2$ and $\wt_G(k)=1$. 
Moreover, by assumption, $\wt_G(h+k)=2$.
Since the image of $\gamma_{h,k}$ generates $G_{h+k}$, the map $\gamma_{h,k}$ is non-degenerate.
Let now $h+k$ be even. The numbers $h$ and $k$ are both odd so $\wt_G(h)=\wt_G(k)=2$, by Lemma \ref{parity obelisks}($2$). 
Assume without loss of generality that $h\leq k$. Then, by Lemma \ref{black core}, the set $\rho^{\frac{k-h}{2}}(G_h)$ coincides with the subgroup $G_k$. 
Let now $C=\Cyc_{G_h}(G_k)$ and $D=\Cyc_{G_k}(G_h)$. Since $\gamma_{h,k}\neq 1$, Lemma \ref{commutator indices} yields that 
$G_{h+1}\subseteq C\subsetneq G_h$ and $G_{k+1}\subseteq D\subsetneq G_k$.
The commutator map induces a non-degenerate map $G_h/C\times G_k/D\rightarrow G_{h+k}$ so, $\wt_G(h+k)$ being equal to $1$, Lemma \ref{non-degenerate dim 1} yields that
$|G_h:C|=|G_k:D|$. Now, by Lemma \ref{obelisk regular}, the group $G$ is regular, and therefore so is $C$. Thanks to Lemma \ref{regular implies power abelian}($1$), the set $\rho^{\frac{k-h}{2}}(C)$ is a subgroup of $C$ and so, thanks to Lemma \ref{regular technical}, one has 
$[\rho^{\frac{k-h}{2}}(C),G_h]=[C,\rho^{\frac{k-h}{2}}(G_h)]=[C,G_k]=\graffe{1}$. In particular,  
$\rho^{\frac{k-h}{2}}(C)\subseteq D$. Since $|G_h:C|=|G_k:D|$ and $\wt_G(h)=\wt_G(k)=2$, we derive from Corollary \ref{power iso bottom} that $\rho^{\frac{k-h}{2}}(C)=D$.
Assume now by contradiction that there exists $x\in G_h$ such that $G_h=\gen{x,C}$. Then 
$G_k=\gen{\rho^{\frac{k-h}{2}}(x),D}$ and therefore, the commutator map being alternating, one has $G_{h+k}=[G_k,G_h]=\gen{[x,\rho^{\frac{k-h}{2}}(x)]}=\graffe{1}$. Contradiction to the class of $G$ being $h+k$. It follows that the quotient $G_h/C$ is not cyclic and so $C=G_{h+1}$ and $D=G_{k+1}$.
In particular, $\gamma_{h,k}$ is non-degenerate.
\end{proof}

\begin{corollary}\label{delta surjective}
Let $p>3$ be a prime number and let $G$ be a $p$-obelisk. Denote by $c$ the class of $G$. Let moreover $l\in\graffe{1, \ldots, c-1}$ be such that $c-l$ is odd.
Then the map $G_{c-l}/G_{c-l+1}\rightarrow\Hom(G_l/G_{l+1},G_c)$ that is defined by 
$$t\,G_{c-l+1}\ \ \mapsto \ \ (x\,G_{l+1}\mapsto [t,x])$$ 
is a surjective homomorphism of groups.
\end{corollary}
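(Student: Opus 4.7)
The plan is to deduce the statement from the bilinear machinery of Lemma~\ref{maps incrociate}(2) together with the non-degeneracy criterion of Lemma~\ref{obelisk non-deg}, splitting into cases according to the parity of $c$.

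First I would observe that the hypothesis $c-l$ odd implies the pair $(h,k)=(c-l,l)$ is not both even, so by Lemma~\ref{maps incrociate}(2) the commutator map induces a bilinear map
\[
\gamma:G_{c-l}/G_{c-l+1}\times G_l/G_{l+1}\longrightarrow G_{c}/G_{c+1}=G_c
\]
whose image generates $G_c$. The stated map $\delta$ is precisely the left-adjoint $t\,G_{c-l+1}\mapsto \gamma(t\,G_{c-l+1},-)$ of $\gamma$, so it is a well-defined homomorphism of abelian groups. It remains to establish surjectivity.

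I would compute the relevant dimensions using Lemma~\ref{parity obelisks}: $\wt_G(c-l)=2$ since $c-l$ is odd. Then I split on the parity of $c$.

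\textbf{Case 1: $c$ is even.} Then $l$ is odd, so $\wt_G(l)=2$, and $\wt_G(c)=1$ by Lemma~\ref{parity obelisks}(4). Thus $\dim_{\F_p}\Hom(G_l/G_{l+1},G_c)=2\cdot 1=2=\wt_G(c-l)$. Since $h+k=c$ is even, Lemma~\ref{obelisk non-deg} applies and $\gamma$ is non-degenerate; this forces $\delta$ to be injective. Injective between $\F_p$-spaces of equal dimension gives bijectivity.

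\textbf{Case 2: $c$ is odd.} Then $l$ is even, so $\wt_G(l)=1$, and $\dim_{\F_p}\Hom(G_l/G_{l+1},G_c)=\wt_G(c)\in\{1,2\}$. If $\wt_G(c)=2$, the hypothesis of Lemma~\ref{obelisk non-deg} is satisfied (sum $h+k=c$ odd with $\wt_G(c)=2$), so $\gamma$ is non-degenerate; hence $\delta$ is injective between $\F_p$-spaces of dimension~$2$, and therefore bijective. If instead $\wt_G(c)=1$, the codomain of $\delta$ is one-dimensional; since the image of $\gamma$ generates $G_c$, the map $\delta$ is nonzero, and a nonzero homomorphism into a one-dimensional $\F_p$-space is automatically surjective.

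The only genuinely delicate point is the invocation of Lemma~\ref{obelisk non-deg} in the first two cases; one must verify each time that its parity/width hypotheses are met, and the sub-case $\wt_G(c)=1$ with $c$ odd must be handled separately via the ``image generates $G_c$'' statement rather than by non-degeneracy, since there $\dim V=2>1=\dim\Hom(W,Z)$ and injectivity of $\delta$ would be impossible. In all cases $\delta$ is surjective, completing the proof.
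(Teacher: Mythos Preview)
Your proof is correct and follows essentially the same approach as the paper's: both set up $\delta$ as the left adjoint of the bilinear map $\gamma_{c-l,l}$ from Lemma~\ref{maps incrociate}(2), handle the one-dimensional target case by the ``image generates $G_c$'' clause, and in the remaining cases invoke Lemma~\ref{obelisk non-deg} to get injectivity and then compare dimensions. The only cosmetic difference is that you organize the case split by the parity of $c$, whereas the paper splits on whether $w_l w_c = 1$; these partitions coincide.
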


\begin{proof}
As a consequence of Lemma \ref{black core}, the groups $G_l/G_{l+1}$, $G_{c-l}/G_{c-l+1}$, and $G_c$ are elementary abelian and the map $\gamma_{c-l,l}$ from Lemma \ref{maps incrociate} is thus a bilinear map of $\F_p$-vector spaces. Respecting the notation from Section \ref{section linear algebra}, we define
$$\delta:G_{c-l}/G_{c-l+1}\rightarrow\Hom(G_l/G_{l+1},G_c)$$ to be the map sending each element 
$v\in G_{c-l}/G_{c-l+1}$ to ${_{v}}(\gamma_{c-l,l})$.
In other words, if $v=t\,G_{c-l+1}$, then $\delta(v):G_l/G_{l+1}\rightarrow G_c$ is
defined by $x\,G_{l+1}\mapsto [t,x]$. 
As a consequence of Lemma \ref{maps incrociate}($2$), the function $\delta$ is a homomorphism of groups and $\delta$ differs from the zero map. Let us now, for all $i\in\graffe{1,\ldots,c}$, denote $w_i=\wt_G(i)$. It follows that the dimension of $\Hom(G_l/G_{l+1},G_c)$ is equal to $w_lw_c$ and, if $w_lw_c=1$, then $\delta$ is surjective. We assume that $w_lw_c\neq 1$. The index $c-l$ being odd, it follows that either $l$ or $c$ is even. Proposition \ref{blackburn 2-1-2} yields $w_{c-l}=w_lw_c$ and, if $l$ is even, then $w_c=2$.    
As a consequence of Lemma \ref{obelisk non-deg}, the map $\delta$ is injective and so $\delta$ is also surjective. 
\end{proof}

\section{Framed obelisks}\label{section framed}

\noindent
Let $p>3$ be a prime number and let $G$ be a $p$-obelisk. Then $G$ is \emph{framed} if, for each maximal subgroup $M$ of $G$, one has $\Phi(M)=G_3$. \index{$p$-obelisk!framed}

\begin{lemma}\label{commutative (id,rho)}
Let $p>3$ be a prime number and let $G$ be a $p$-obelisk.
Let moreover $h,k\in\Z_{>0}$, with $h$ odd and $k$ even, and
$n\in\Z_{\geq 0}$. Then the following diagram is commutative.
\vspace{8pt}
\[
\begin{diagram}
G_h/G_{h+1}\times G_k/G_{k+1}  &\ \ \rTo^{\gamma_{h,k}}\ \ & G_{h+k}/G_{h+k+1}  \\
\dTo^{(\id_{G_h/G_{h+1}}\, ,\ \rho^n_k)} &   & \dTo_{\rho^n_{h+k}}\\
G_{h}/G_{h+1}\times G_{k+2n}/G_{k+2n+1} &\ 
 \rTo^{\gamma_{h,k+2n}}\ \ & G_{h+k+2n}/G_{h+k+2n+1}  \\
\end{diagram}
\]
\vspace{8pt}
\end{lemma}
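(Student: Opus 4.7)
The plan is to unwrap what the diagram actually says: for $x\in G_h$ and $y\in G_k$, commutativity amounts to the congruence
\[
[x,y^{p^n}]\ \equiv\ [x,y]^{p^n}\ \pmod{G_{h+k+2n+1}}.
\]
First I would pass to the quotient $\overline G=G/G_{h+k+2n+1}$. By Lemma \ref{non-ab quotient obelisk} this is either abelian (in which case $\gamma_{h,k}$ and $\gamma_{h,k+2n}$ are zero maps and the statement is vacuous) or itself a $p$-obelisk, and all four arrows in the square descend to the corresponding arrows for $\overline G$. So it suffices to prove the equality $[x,y^{p^n}]=[x,y]^{p^n}$ inside $\overline G$, which collapses the problem to a single nilpotency class.

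Next I would exploit regularity. By Lemma \ref{obelisk regular} the $p$-obelisk $\overline G$ is regular, so Lemma \ref{regular bilinear} applied to the normal subgroups $M=G_h$, $N=G_k$ with $r=0$, $s=n$ gives
\[
[G_h,\rho^n(G_k)]\ =\ \rho^n([G_h,G_k]),
\]
and combining with Lemma \ref{black core} shows that the two paths around the square, viewed as subgroup-level operations, both land in $G_{h+k+2n}/G_{h+k+2n+1}$. The substantive content is to upgrade this subgroup-level identity to the element-level identity of bilinear maps demanded by the diagram. For that I would argue as follows: both compositions $\rho^n_{h+k}\circ\gamma_{h,k}$ and $\gamma_{h,k+2n}\circ(\id,\rho^n_k)$ are $\mathbb F_p$-bilinear (by Lemma \ref{maps incrociate}, together with the fact that $G_{h+k+2n}/G_{h+k+2n+1}$ is an $\mathbb F_p$-vector space by Lemma \ref{black core}), so their difference is a bilinear map $G_h/G_{h+1}\times G_k/G_{k+1}\to G_{h+k+2n}/G_{h+k+2n+1}$.

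The hard part will be showing that this difference is zero. I would proceed by induction on $n$, the case $n=0$ being trivial, and reduce the inductive step to $n=1$ by factoring $\rho^n=\rho\circ\rho^{n-1}$ and using the already-established commutativity for $n-1$. For $n=1$, I would expand via Lemma \ref{multiplication formulas commutators}($4$) with exponent~$p$:
\[
[x,y^p]\ =\ [x,y]^p\ \prod_{r=1}^{p-1}[[y^r,x],y],
\]
and show that the product on the right lies in $G_{h+k+3}$. The individual factors $[[y^r,x],y]$ lie in $[[G_k,G_h],G_k]\subseteq G_{h+2k}$ by Lemma \ref{commutator indices}, which already handles the case $k\ge 3$; the delicate case $k=2$ will require invoking Lemma \ref{regular bilinear} directly (with $M=[G_h,G_k]$ normal and a further application of regularity inside the subgroup generated by $x$ and $y$) to absorb the correction terms into $G_{h+k+3}$. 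I expect this last bookkeeping --- squeezing the Hall-Petrescu type correction into the right layer of the lower central series using only the regularity axiom --- to be the main technical obstacle, while everything else is a routine reduction.
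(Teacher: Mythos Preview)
Your overall structure matches the paper's: pass to the quotient $G/G_{h+k+2n+1}$, induct on $n$, and for $n=1$ expand $[x,y]^{-p}[x,y^p]$ via Lemma~\ref{multiplication formulas commutators}($4$). However, you miss the finishing move and therefore see a difficulty at $k=2$ that is not really there.

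The paper does not merely bound the product $\prod_{r=1}^{p-1}[[y^r,x],y]$ term-by-term into $G_{h+2k}$. Instead, having observed that each factor lies in the central subgroup $G_{h+2k}\subseteq G_{h+k+2}$, it uses bilinearity (Lemma~\ref{bilinear LCS hk}) twice: first $[y^r,x]\equiv[y,x]^r\bmod G_{h+k+1}$, and then, since $[G_{h+k+1},G_k]\subseteq G_{h+2k+1}\subseteq G_{h+k+3}=\{1\}$ (using only $k\ge 2$), one gets $[[y^r,x],y]=[[y,x]^r,y]=[[y,x],y]^r$. The whole product collapses to
\[
\prod_{r=1}^{p-1}[[y^r,x],y]\;=\;[[y,x],y]^{\binom{p}{2}}.
\]
Now $p>2$ divides $\binom{p}{2}$, and $[[y,x],y]\in G_{h+k+2}$, so Lemma~\ref{black core} gives $[[y,x],y]^p\in G_{h+k+4}=\{1\}$. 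This argument is completely uniform in $k$; there is no delicate case at $k=2$, and no appeal to Lemma~\ref{regular bilinear} is needed.

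Your proposed fallback via regularity is vague and, more importantly, circular in spirit: Lemma~\ref{regular bilinear} is a subgroup-level identity, and promoting it to the element-level statement you need is precisely what the lemma is asserting. The $\binom{p}{2}$ simplification is the missing idea.
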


\begin{proof}
The maps from the above diagram are defined in Lemma \ref{maps incrociate}. Assume without loss of generality that $G_{h+k+2n+1}=\graffe{1}$ so that $G_{h+k+2n}$ is central. The diagram is clearly commutative for $n=0$. We will prove the most delicate case, i.e. when $n=1$, and leave the general case to the reader. Set $n=1$. Let $(x,y)\in G_h\times G_k$. We will show, and that suffices, that 
$[x,y^{p}]=[x,y]^{p}$.  
Thanks to Lemma \ref{multiplication formulas commutators}($4$), one gets 
$$[x,y]^{-p}[x,y^p]=\prod_{r=1}^{p-1}[[y^r,x],y].$$
Applying Lemma \ref{commutator indices} twice, one gets that, for each index $r$, the element $[[y^r,x],y]$ belongs to $G_{h+2k}$, which is itself contained in the central subgroup $G_{h+k+2}$. Moreover, the group $G_{h+k}$ being central modulo $G_{h+k+1}$, Lemma \ref{tgt} yields $[y^r,x]\equiv [y,x]^r\bmod G_{h+k+1}$. Thanks to Lemma \ref{bilinear LCS hk}, the commutator map on $G$ induces a bilinear map $G_{h+k}/G_{h+k+1}\times G_{k}/G_{k+1}\rightarrow G_{h+2k}$ and therefore we get
$[[y^r,x],y]=[[y,x]^r,y]=[[y,x],y]^r$. It follows that
$$[x,y]^{-p}[x,y^p]=\prod_{r=1}^{p-1}[[y^r,x],y]=\prod_{r=1}^{p-1}[[y,x],y]^r=[[y,x],y]^{\binom{p}{2}}$$
and, the prime $p$ being larger than $2$, the number $\binom{p}{2}$ is a multiple of $p$. Since $[[y,x],y]$ belongs to $G_{h+k+2}$, it follows from Lemma \ref{black core} that $[x,y^p]=[x,y]^p$. This concludes the case $n=1$.
\end{proof}

\begin{lemma}\label{commutative (rho,id)}
Let $p>3$ be a prime number and let $G$ be a $p$-obelisk.
Let moreover $h,k\in\Z_{>0}$, with $h$ odd and $k$ even, and $m\in\Z_{\geq 0}$. Then the following diagram is commutative.
\vspace{8pt}
\[
\begin{diagram}
G_h/G_{h+1}\times G_k/G_{k+1}  &\ \ \rTo^{\gamma_{h,k}}\ \ & G_{h+k}/G_{h+k+1}  \\
\dTo^{(\rho^m_h\, ,\ \id_{G_k/G_{k+1}})} &   & \dTo_{\rho^m_{h+k}}\\
G_{h+2m}/G_{h+2m+1}\times G_k/G_{k+1} &\ 
 \rTo^{\gamma_{h+2m,k}}\ \ & G_{h+k+2m}/G_{h+k+2m+1}  \\
\end{diagram}
\]
\vspace{8pt}
\end{lemma}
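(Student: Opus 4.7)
The plan is to mirror the proof of Lemma \ref{commutative (id,rho)}, replacing Lemma \ref{multiplication formulas commutators}($4$) with Lemma \ref{multiplication formulas commutators}($3$). I would first reduce to the case $m=1$ by induction: writing $\rho^m(x)=\rho(\rho^{m-1}(x))$, the element $\rho^{m-1}(x)$ still lies in an odd layer $G_{h+2(m-1)}$, so one application of the $m=1$ case to the pair $(\rho^{m-1}(x),y)$, together with Lemma \ref{black core} to ensure that the error term $[\rho^{m-1}(x),y]\cdot[x,y]^{-p^{m-1}}\in G_{h+k+2m-1}$ vanishes after $p$-th powering modulo $G_{h+k+2m+1}$, supplies the inductive step.

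For the base case $m=1$, assume without loss of generality that $G_{h+k+3}=\graffe{1}$ and fix $(x,y)\in G_h\times G_k$; it suffices to show $[x^p,y]=[x,y]^p$. By Lemma \ref{multiplication formulas commutators}($3$),
\[
[x^p,y][x,y]^{-p}=\prod_{s=1}^{p-1}[x,[x^{p-s},y]],
\]
and each factor $[x,[x^{p-s},y]]$ lies in $[G_h,G_{h+k}]\subseteq G_{2h+k}$. When $h\geq 3$, we have $2h+k\geq h+k+3$, so each factor is trivial and the product equals $1$.

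The delicate case is $h=1$, where the factors lie only in $G_{k+2}$, whereas the target $G_{h+k+3}=G_{k+4}$ is trivial. Here $G_{k+3}$ is central and $G_{k+2}$ is abelian modulo $G_{k+4}$. Writing $[x^{p-s},y]=[x,y]^{p-s}v_s$ with $v_s\in G_{k+2}$, Lemma \ref{multiplication formulas commutators}($1$) together with the centrality of $G_{k+3}$ yields
\[
[x,[x^{p-s},y]]\equiv [x,[x,y]]^{p-s}\cdot[x,v_s]\pmod{G_{k+4}}.
\]
Iterating Lemma \ref{multiplication formulas commutators}($3$) to expand $v_s$ modulo $G_{k+3}$ gives $v_s\equiv [x,[x,y]]^{(p-s)(p-s-1)/2}$, and hence $[x,v_s]\equiv [x,[x,[x,y]]]^{(p-s)(p-s-1)/2}$ modulo $G_{k+4}$. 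Multiplying over $s$ in the abelian group $G_{k+2}$, the accumulated exponents $p(p-1)/2$ on $[x,[x,y]]$ and $(p-1)p(p-2)/6$ on $[x,[x,[x,y]]]$ are both divisible by $p$ (the latter uses $p>3$ for the factor of $6$). Since Lemma \ref{black core} gives $[x,[x,y]]^p\in\rho(G_{k+2})=G_{k+4}=\graffe{1}$ and $[x,[x,[x,y]]]^p\in\rho(G_{k+3})=G_{k+5}=\graffe{1}$, the entire product collapses to $1$.

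The main obstacle is precisely this $h=1$ case: unlike the parallel situation in Lemma \ref{commutative (id,rho)}, where the analogous case $k=2$ already squeezes through with a single bilinearity argument because $h+2k=h+k+2$ sits in the target's central layer, here $2h+k$ is two layers too low, forcing us to expand through two further steps of the lower central series and to control a three-fold iterated commutator until the final divisibility by $p$ kicks in.
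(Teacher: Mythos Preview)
Your proof is correct and follows the same strategy as the paper: reduce to $m=1$, apply Lemma \ref{multiplication formulas commutators}($3$), and show the resulting product equals $[x,[x,y]]^{\binom{p}{2}}[x,[x,[x,y]]]^{\binom{p}{3}}$, which is trivial since $p>3$ and both iterated commutators lie in $G_{h+k+1}$ (so their $p$-th powers vanish by Lemma \ref{black core}). The only cosmetic difference is that you split into the cases $h\geq 3$ (where each factor $[x,[x^{p-s},y]]\in G_{2h+k}\subseteq G_{h+k+3}=\{1\}$ immediately) and $h=1$ (where the full expansion is needed), whereas the paper carries out the expansion uniformly by applying formula ($3$) a second time to $[x^{p-s},y]$ before any case analysis; both routes land on the same exponents $\binom{p}{2}$ and $\binom{p}{3}$.
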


\begin{proof}
The maps in the diagram are as in Lemma \ref{maps incrociate} and they are 
well-defined. Assume without loss of generality that $G_{h+k+2m+1}=\graffe{1}$ and 
so $G_{h+k+2m}$ is central.
Let $(x,y)\in G_h\times G_k$. The diagram is clearly commutative if $m=0$; we will prove commutativity when $m=1$, the most difficult case, and we will leave the general case to the reader.
Set $m=1$. We will prove, and that suffices, that $[x^p,y]=[x,y]^p$.
Applying Lemma \ref{multiplication formulas commutators}($3$) twice, we get
\begin{align*}
[x^p,y][x,y]^{-p} & = 
\prod_{s=1}^{p-1}[x,[x^{p-s},y]] \\
& \prod_{s=1}^{p-1}\Big{[}x\,,\,\Big{(}\,
\prod_{j=1}^{p-s-1}
[x\,,\,[x^{p-s-j},y]]\,\Big{)}[x,y]^{p-s}\Big{]}.
\end{align*}
Thanks to Lemma \ref{commutator indices}, each element $[x,[x^{p-s-j},y]]$ belongs to $G_{k+2h}$ and, the group $G_{h+k+3}$ being trivial, $G_{k+2h}$ centralizes $G_{h+k}$. Again by Lemma \ref{commutator indices}, for each index $s$, the element $[x,y]^{p-s}$ belongs to $G_{h+k}$ and applying
Lemma \ref{multiplication formulas commutators}($1$) twice yields  
\begin{align*}
[x^p,y][x,y]^{-p} & =
\prod_{s=1}^{p-1}\Big{[}x\,,\,\prod_{j=1}^{p-s-1}[x,[x^{p-s-j},y]]\Big{]}\prod_{s=1}^{p-1}[x,[x,y]^{p-s}] \\
& = \prod_{s=1}^{p-1}\Big{[}x\,,\,\prod_{j=1}^{p-s-1}[x,[x^{p-s-j},y]]\Big{]}\prod_{s=1}^{p-1}[x,[x,y]]^{p-s}.
\end{align*}
Thanks to Lemma \ref{bilinear LCS hk}, given any two positive integers $i$ and $j$, the commutator map induces a bilinear map $G_i/G_{i+1}\times G_j/G_{j+1}\rightarrow G_{i+j}/G_{i+j+1}$. By taking consecutively 
$(i,j)=(h,k)$ and $(i,j)=(h,h+k)$, we get respectively that 
$[x^{p-s-j},y]\equiv[x,y]^{p-s-j}\bmod G_{h+k+1}$ and so 
$$[x,[x^{p-s-j},y]]\equiv [x,[x,y]^{p-s-j}]\equiv [x,[x,y]]^{p-s-j}\bmod G_{2h+k+1}.$$ 
By taking $(i,j)=(h,2h+k)$, we derive that 
\begin{align*}
[x^p,y][x,y]^{-p} & =\prod_{s=1}^{p-1}\Big{[}x\,,\,\prod_{j=1}^{p-s-1}[x,[x,y]]^{p-s-j}\Big{]}\prod_{s=1}^{p-1}[x,[x,y]]^{p-s} \\
& = \prod_{s=1}^{p-1}\Big{[}x,[x,[x,y]]^{\binom{p-s}{2}}\Big{]}\prod_{s=1}^{p-1}[x,[x,y]]^{p-s} \\ 
& = \prod_{s=1}^{p-1}[x,[x,[x,y]]]^{\binom{p-s}{2}}\prod_{s=1}^{p-1}[x,[x,y]]^{p-s} 
= [x,[x,[x,y]]]^{\binom{p}{3}}[x,[x,y]]^{\binom{p}{2}}.
\end{align*}
The prime $p$ being larger than $3$, both $\binom{p}{2}$ and $\binom{p}{3}$  are multiples of $p$. As both $[x,[x,y]]$ and $[x,[x,[x,y]]]$ belong to $G_{h+k+1}$, it follows from Lemma \ref{black core} that $[x^p,y]=[x,y]^p$. This concludes the proof for $m=1$.
\end{proof}

\begin{proposition}\label{proposition commutative diagram}
Let $p>3$ be a prime number and let $G$ be a $p$-obelisk.
Let moreover $h,k\in\Z_{>0}$, with $h$ odd and $k$ even, and let $m,n\in\Z_{\geq 0}$. Then the following diagram is commutative. 
\vspace{8pt} \\
\makebox[\textwidth][l]{
\begin{diagram}
G_h/G_{h+1}\times G_k/G_{k+1}  & \rTo^{\gamma_{h,k}} & G_{h+k}/G_{h+k+1}  \\
\dTo^{(\rho^m_h\, ,\ \rho^n_k)} &   & \dTo_{\rho^{m+n}_{h+k}}\\
G_{h+2m}/G_{h+2m+1}\times G_{k+2n}/G_{k+2n+1} &
 \rTo^{\gamma_{h+2m,k+2n}}& G_{h+k+2(m+n)}/G_{h+k+2(m+n)+1}  \\
\end{diagram}
}
\vspace{8pt}
\end{proposition}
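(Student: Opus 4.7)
The plan is to reduce the statement to the two commutativity lemmas already established, namely Lemma \ref{commutative (id,rho)} and Lemma \ref{commutative (rho,id)}, by factoring the left vertical arrow $(\rho^m_h,\rho^n_k)$ as the composition $(\rho^m_h,\id)\circ(\id,\rho^n_k)$. No genuinely new computation should be required; the work has already been done in the two preceding lemmas, where the delicate Hall--Petrescu style manipulations took place.

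Concretely, I would split the diagram of the proposition into two stacked squares by inserting an intermediate row with left object $G_h/G_{h+1}\times G_{k+2n}/G_{k+2n+1}$ and right object $G_{h+k+2n}/G_{h+k+2n+1}$, joined by the horizontal arrow $\gamma_{h,k+2n}$. The upper square then has vertical arrows $(\id,\rho^n_k)$ on the left and $\rho^n_{h+k}$ on the right; this square commutes by Lemma \ref{commutative (id,rho)} applied to the indices $(h,k)$ with parameter $n$. The lower square has vertical arrows $(\rho^m_h,\id)$ on the left and $\rho^m_{h+k+2n}$ on the right, and it commutes by Lemma \ref{commutative (rho,id)} applied to the indices $(h,k+2n)$ with parameter $m$. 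Note that this application is legitimate since $h$ is odd and $k+2n$ is still even, $k$ being even by hypothesis, so the parity assumptions of Lemma \ref{commutative (rho,id)} are satisfied.

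It then remains to identify the composition of the two right vertical arrows $\rho^m_{h+k+2n}\circ\rho^n_{h+k}$ with $\rho^{m+n}_{h+k}$. By the construction in Lemma \ref{maps incrociate}(1), each map $\rho^j_i$ is defined as the homomorphism induced on the quotient $G_i/G_{i+1}\to G_{i+2j}/G_{i+2j+1}$ by the ambient $p^j$-power map $\rho^j$ on $G_i$. Composing $\rho^n$ on $G_{h+k}$ with $\rho^m$ on $G_{h+k+2n}$ gives the $p^{m+n}$-power map on $G_{h+k}$, so the induced maps on quotients must coincide, yielding $\rho^m_{h+k+2n}\circ\rho^n_{h+k}=\rho^{m+n}_{h+k}$. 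Pasting the two commutative squares along their shared middle row then gives the commutativity of the full rectangle, completing the proof.

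The main (and only) potential snag is a bookkeeping one: one must verify carefully that the parities needed to invoke the two auxiliary lemmas persist through the factorization. Since $k$ is even, $k+2n$ remains even, and $h$ is odd throughout, so both invocations are valid; beyond this parity check, the proof is purely formal.
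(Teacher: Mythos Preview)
Your proposal is correct and follows exactly the approach the paper intends: the paper's proof is the single line ``Combine Lemmas \ref{commutative (id,rho)} and \ref{commutative (rho,id)}'', and your factorization through the intermediate row $G_h/G_{h+1}\times G_{k+2n}/G_{k+2n+1}$ is precisely how one combines them. Your verification of the parity condition $k+2n$ even and the identification $\rho^m_{h+k+2n}\circ\rho^n_{h+k}=\rho^{m+n}_{h+k}$ via Lemma \ref{maps incrociate}(1) fill in the details the paper leaves implicit.
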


\begin{proof}
Combine Lemmas \ref{commutative (id,rho)} and Lemma \ref{commutative (rho,id)}.
\end{proof}

\begin{lemma}\label{maximal non-framed}
Let $p>3$ be a prime number and let $G$ be a $p$-obelisk of class at least $3$. Let moreover $M$ be a maximal subgroup of $G$. Then $[M,M]=[M,G_2]$ and, whenever $\wt_G(3)=2$, the following are equivalent. 
\begin{itemize}
 \item[$1$.] One has $\Phi(M)\neq G_3$.
 \item[$2$.] One has $[M,M]=M^p=\Phi(M)$.
% \item[$3$.] The group $M$ is $3$-generated.
\end{itemize}
\end{lemma}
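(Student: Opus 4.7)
For the first assertion, note that since $G$ is a $p$-obelisk, $\Phi(G)=G_2G^p=G_2$, so the maximal subgroup $M$ contains $G_2$ with $|M:G_2|=p$. The quotient $M/G_2$ is cyclic of order $p$, and Lemma \ref{cyclic quotient commutators} applied to $M$ with $N=G_2$ yields $[M,M]=[M,G_2]$.

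Now assume $\wt_G(3)=2$ and fix $x\in M\setminus G_2$, so that $M=\gen{x,G_2}$. The plan is to show that both $[M,M]$ and $M^p$ are normal subgroups of $G$ of index exactly $p$ in the two-dimensional $\F_p$-space $G_3/G_4$, after which the equivalence will follow by linear algebra. For $M^p$: regularity of $G$ (Lemma \ref{obelisk regular}) together with Lemma \ref{black core} and Lemma \ref{regular implies power abelian} yields $M^p\supseteq G_2^p=G_4$. Since $M/G_4$ has class at most $2$ (because $[M,[M,M]]\subseteq[G,G_3]=G_4$ by Lemma \ref{commutator indices}), with $(M/G_4)_2$ annihilated by $p$ and $(M/G_4)_p=1$, Corollary \ref{p map petrescu} gives $(xg)^p\equiv x^p\bmod G_4$ for every $g\in G_2$. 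Hence $M^p=\gen{x^p}G_4$; and by Corollary \ref{power iso} applied with $(i,j)=(1,3)$ the map $\rho^1_1:G/G_2\to G_3/G_4$ is an isomorphism, so $x^p\notin G_4$ and $M^p$ has index exactly $p$ in $G_3$.

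For $[M,G_2]$: Lemma \ref{obelisk non-deg} yields that $\gamma_{1,2}:G/G_2\times G_2/G_3\to G_3/G_4$ is non-degenerate, so by dimension count the induced map $G/G_2\to\Hom(G_2/G_3,G_3/G_4)$ is an isomorphism of two-dimensional spaces. In particular the map $[x,-]:G_2/G_3\to G_3/G_4$ is nonzero with one-dimensional image, which coincides with $[M,G_2]G_4/G_4$ since $[G_2,G_2]\subseteq G_4$. Because $[M,G_2]$ is normal in $G$, Lemma \ref{obelisks normal squeezed} sandwiches it strictly between $G_4$ and $G_3$, giving index exactly $p$ in $G_3$.

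Having established that $[M,M]$ and $M^p$ are two index-$p$ subgroups of $G_3$ containing $G_4$, the equivalence becomes immediate: $\Phi(M)=[M,M]\cdot M^p$ reduces modulo $G_4$ to the span of two lines in $G_3/G_4\cong\F_p^2$, so $\Phi(M)=G_3$ exactly when these lines differ, while otherwise $\Phi(M)=[M,M]=M^p$ has index $p$ in $G_3$. The only real obstacle in the plan is the reduction $M^p\equiv\gen{x^p}\bmod G_4$, where one must argue that the Hall-Petrescu correction terms all land in $G_4$, using both the regularity of $G$ and the small nilpotency class of $M/G_4$.
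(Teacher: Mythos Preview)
Your proof is correct and follows essentially the same route as the paper: show that $[M,M]$ and $M^p$ are normal subgroups whose images in $G_3/G_4$ are one-dimensional, then conclude by linear algebra in the two-dimensional space $G_3/G_4$. The one difference is that where you re-derive the reduction $M^p\equiv\gen{x^p}\bmod G_4$ by applying Hall--Petrescu inside $M/G_4$, the paper simply invokes Lemma~\ref{maps incrociate}($1$) (which already packages this computation) together with Corollary~\ref{power iso bottom} to see that $\rho^1_1$ carries the line $M/G_2$ isomorphically onto a line in $G_3/G_4$; combined with normality and Lemma~\ref{obelisks normal squeezed}($2$), this immediately gives $G_4\subseteq M^p$ and $|G_3:M^p|=p$. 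Your explicit Hall--Petrescu argument is not wrong, but it duplicates work already done in the proof of Lemma~\ref{maps incrociate}($1$), so you can shorten by citing that lemma directly.
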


\begin{proof}
The subgroups $M^p$ and $[M,M]$ are both characteristic in the normal subgroup $M$; thus both $M^p$ and $[M,M]$ are normal in $G$. 
By Lemma \ref{obelisk basic}($2$), the quotient $G/G_2$ has order $p^2$ and so $|G:M|=|M:G_2|=p$.
It follows from Lemma \ref{cyclic quotient commutators} that $[M,M]=[M,G_2]$ and so, as a consequence of Corollary \ref{power iso bottom} and Lemma \ref{obelisk non-deg}, the least jumps of $[M,M]$ and $M^p$ in $G$ are both equal to $3$ and of width $1$. In particular, $\Phi(M)$ is contained in $G_3$ and Lemma \ref{obelisks normal squeezed}($2$) yields 
$G_4\subseteq M^p\cap [M,M]$. 
If the third width of $G$ is equal to $2$, then it follows that $\Phi(M)\neq G_3$ if and only if $[M,M]=\Phi(M)=M^p$.
%To conclude, by assumption $\wt_G(3)=2$ and so $M$ is $2$-generated if and only if $\Phi(M)=G_3$. Otherwise, $[M,M]=M^p=\Phi(M)$ and $|M:\Phi(M)|=p^3$, making of $M$ a $3$-generated subgroup.
\end{proof}

\noindent
We remark that, as a consequence of Lemma \ref{black core}, quotients of consecutive elements of the lower central series of a $p$-obelisk are vector spaces over $\F_p$ and therefore, in ($2$) and ($3$) from Proposition \ref{lines}, it makes sense, for each positive integer $i$, to talk about subspaces of $G_i/G_{i+1}$.

\begin{proposition}\label{lines}
Let $p>3$ be a prime number and let $G$ be a $p$-obelisk. Then the following conditions are equivalent. 
\begin{itemize}
 \item[$1$.] The $p$-obelisk $G$ is framed.
% \item[$2$.] If $M$ is a maximal subgroup of $G$, then $\Phi(M)=G_3$.
 \item[$2$.] For each $1$-dimensional subspace $\ell$ of $G/G_2$, the quotient $G_3/G_4$ is generated by $\rho^1_1(\ell)$ and $\gamma_{1,2}(\graffe{\ell}\times G_2/G_3)$.
 \item[$3$.] For each $h,k\in\Z_{>0}$, with $h$ odd and $k$ even, and for each $1$-dimensional subspace $\ell$ in $G_h/G_{h+1}$, the spaces $\rho^{k/2}_h(\ell)$ and $\gamma_{h,k}(\graffe{\ell}\times G_k/G_{k+1})$ generate $G_{h+k}/G_{h+k+1}$.
\end{itemize}
\end{proposition}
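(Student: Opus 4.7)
The strategy is to establish \emph{(1) $\Leftrightarrow$ (2)} by a direct computation of $\Phi(M)$ for each maximal subgroup $M$ of $G$, and then to deduce \emph{(2) $\Leftrightarrow$ (3)} by transporting (2) along the commutative diagram of Proposition \ref{proposition commutative diagram}. Since $\Phi(G) = G_2 G^p = G_2$ by Lemma \ref{obelisk basic}($2$), every maximal subgroup $M$ of $G$ contains $G_2$, so the assignment $M \mapsto \ell = M/G_2$ gives a bijection between maximal subgroups of $G$ and $1$-dimensional subspaces $\ell$ of $G/G_2$.

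First I would fix such an $M$ with $\ell = M/G_2$ and a lift $\tilde{\ell} \in M$ of a generator of $\ell$, so that $M = \gen{\tilde{\ell}} G_2$. The key claim is that
\[ \Phi(M) G_4/G_4 \;=\; \rho^1_1(\ell) + \gamma_{1,2}(\graffe{\ell}\times G_2/G_3) \]
as subspaces of $G_3/G_4$. For the commutator part, Lemma \ref{maximal non-framed} gives $[M,M] = [M, G_2]$, Lemma \ref{commutator indices} gives $[G_2, G_2] \subseteq G_4$, and Lemma \ref{multiplication formulas commutators}($2$--$3$) gives $[\tilde{\ell}^a y, g] \equiv [\tilde{\ell}, g]^a \pmod{G_4}$ for all $a \in \Z$ and $y, g \in G_2$; so the image of $[M,M]$ in $G_3/G_4$ equals $\gamma_{1,2}(\graffe{\ell}\times G_2/G_3)$. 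For the $p$-th power part, the Hall-Petrescu formula (Lemma \ref{hall-petrescu lemma}) combined with $G_i^p \subseteq G_{i+2}$ from Lemma \ref{black core} and the divisibility of $\binom{p}{2}$ and $\binom{p}{3}$ by $p$ (we use $p > 3$) yields $(xy)^p \equiv x^p \pmod{G_4}$ for every $x \in \tilde{\ell}$ and $y \in G_2$, so $M^p G_4/G_4 = \rho^1_1(\ell)$. This Hall-Petrescu bookkeeping is the principal technical step. To pass from this identity to (1) $\Leftrightarrow$ (2), note that $\Phi(M) = M^p[M,M]$ is normal in $G$ and contained in $G_3$, so by Lemma \ref{obelisks normal squeezed}($2$) either $G_4 \subseteq \Phi(M) \subseteq G_3$ or $\Phi(M) \subseteq G_4$. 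In the latter case $\Phi(M) G_4/G_4 = 0 \neq G_3/G_4$ (in class at least $3$; the class-$2$ case is trivial), while in the former $\Phi(M) = G_3$ is equivalent to $\Phi(M) G_4/G_4 = G_3/G_4$. Quantifying over $M$, this is exactly (1) $\Leftrightarrow$ (2).

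For (3) $\Rightarrow$ (2) one specialises to $h=1$, $k=2$. For (2) $\Rightarrow$ (3), one may assume $h + k \leq c$, otherwise $G_{h+k}/G_{h+k+1}$ is trivial and the condition holds vacuously; in particular $h < c$, so Lemma \ref{parity obelisks} gives $\wt_G(h) = 2$ and Corollary \ref{power iso} provides an isomorphism $\rho^{(h-1)/2}_1 : G/G_2 \to G_h/G_{h+1}$. Let $\ell'$ denote the $1$-dimensional subspace of $G/G_2$ mapping to $\ell$ under this isomorphism. Applying Proposition \ref{proposition commutative diagram} with $(m,n) = ((h-1)/2,\, (k-2)/2)$, and using that $\rho^{(k-2)/2}_2 : G_2/G_3 \to G_k/G_{k+1}$ is also an isomorphism by Corollary \ref{power iso}, one obtains
\[ \gamma_{h,k}(\graffe{\ell} \times G_k/G_{k+1}) \;=\; \rho^{(h+k-3)/2}_3\bigl(\gamma_{1,2}(\graffe{\ell'} \times G_2/G_3)\bigr), \]
while the evident identity $\rho^{(h+k-3)/2}_3 \circ \rho^1_1 = \rho^{(h+k-1)/2}_1 = \rho^{k/2}_h \circ \rho^{(h-1)/2}_1$ gives $\rho^{(h+k-3)/2}_3(\rho^1_1(\ell')) = \rho^{k/2}_h(\ell)$. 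Applying the surjective homomorphism $\rho^{(h+k-3)/2}_3 : G_3/G_4 \to G_{h+k}/G_{h+k+1}$ from Lemma \ref{maps incrociate} to the identity of (2) for $\ell'$ then produces the required equality $\rho^{k/2}_h(\ell) + \gamma_{h,k}(\graffe{\ell} \times G_k/G_{k+1}) = G_{h+k}/G_{h+k+1}$, completing the proof. Once the computation of $\Phi(M) G_4/G_4$ in the first step is in place, everything else is a formal consequence of results already established.
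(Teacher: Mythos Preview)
Your proposal is correct and follows essentially the same route as the paper: the bijection between maximal subgroups and lines in $G/G_2$, the identification $\Phi(M)G_4/G_4 = \rho^1_1(\ell) + \gamma_{1,2}(\{\ell\}\times G_2/G_3)$, the normality argument via Lemma~\ref{obelisks normal squeezed}(2), and the transport of (2) to (3) through Proposition~\ref{proposition commutative diagram} are exactly what the paper does.

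One small redundancy: the Hall--Petrescu bookkeeping you flag as ``the principal technical step'' is already packaged in Lemma~\ref{maps incrociate}(1), which asserts that $\rho^1_1:G/G_2\to G_3/G_4$ is a well-defined homomorphism. Since every $m\in M$ satisfies $mG_2\in\ell$, the identity $M^pG_4/G_4=\rho^1_1(\ell)$ is immediate from that lemma; you need not redo the computation. (Also, ``$x\in\tilde{\ell}$'' should read ``$x\in\gen{\tilde{\ell}}$''.)
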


\begin{proof}
$(1)\Leftrightarrow(2)$ Let $\pi:G\rightarrow G/G_2$ denote the natural projection. Then, through $\pi$, there is a bijection between the maximal subgroups of $G$ and the $1$-dimensional subspaces of $G/G_2$.
For any maximal subgroup $M$ of $G$, we know from Lemma \ref{maximal non-framed} that $[M,G_2]=[M,M]$ and therefore $(2)$ holds if and only if, given any maximal subgroup $M$ of $G$, one has $\Phi(M)G_4=G_3$. Lemma \ref{obelisks normal squeezed}($2$) yields that $(2)$ is satisfied if and only if, for any maximal subgroup $M$ of $G$, one has $\Phi(M)=G_3$.
We now deal with $(2)\Leftrightarrow(3)$. The implication $\Leftarrow$ is proven by taking $h=1$ and $k=2$, so we will prove that $(2)$ implies $(3)$. Let $\ell$ be a $1$-dimensional subspace of $G_h/G_{h+1}$. Define moreover $m=\frac{h-1}{2}$, $n=\frac{k-2}{2}$, and $S=m+n=\frac{h+k-3}{2}$. Thanks to Lemma \ref{maps incrociate}($1$), there exists a $1$-dimensional subspace $\ell'$ of $G/G_2$ such that $\rho^m_1(\ell')=\ell$ and, moreover, $\rho^n_2(G_2/G_3)=G_k/G_{k+1}$.
%Without loss of generality, we assume that $G_{h+k+1}=\graffe{1}$.
By assumption $G_3/G_4$ is generated by $\rho^1_1(\ell')$ and $\gamma_{1,2}(\graffe{\ell'}\times G_2/G_3)$, so it follows from Lemma \ref{maps incrociate}($1$) that $\rho^{S}_3(\rho^1_1(\ell'))$ and $\rho^{S}_3(\gamma_{1,2}(\graffe{\ell'}\times G_2/G_3))$ together span $G_{h+k}/G_{h+k+1}$. We now have 
\[
\rho^{S}_3(\rho^1_1(\ell'))=\rho^{S+1}_1(\ell')=\rho^{k/2}_h(\ell)
\] 
and, thanks to Proposition \ref{proposition commutative diagram}, we also have 
\[
\rho^{S}_3(\gamma_{1,2}(\graffe{\ell'}\times G_2/G_3))=
\gamma_{h,k}(\rho^m_1(\ell')\times\rho_2^n(G_2/G_3))=
\gamma_{h,k}(\graffe{\ell}\times G_k/G_{k+1}).
\]
This completes the proof.
\end{proof}

\section{Subgroups of obelisks}\label{section subgroups obelisks}

\noindent
The major goal of this section is to link structural properties of subgroups of a $p$-obelisk to the parities and widths of their jumps. The importance of Section \ref{section subgroups obelisks} will become clear in Chapter \ref{chapter bilbao}.

\begin{proposition}\label{obelisk in obelisk}
Let $p>3$ be a prime number and let $G$ be a $p$-obelisk.
Let $H$ be a subgroup of $G$ that is itself a $p$-obelisk. 
Then $H=G$.
\end{proposition}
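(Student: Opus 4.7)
The plan is to assume $H\subsetneq G$ and derive a contradiction from the identity $H^p=H_3$ using the depth structure of $G$. First I would observe that if $HG_2=G$, Lemma~\ref{frattini comm} gives $G_2=\Phi(G)$ (since $|G:G_2|=p^2$ and $G$ is non-abelian), so Lemma~\ref{subgroup times frattini} forces $H=G$; hence we may assume $HG_2\neq G$, whence $|H:(H\cap G_2)|\leq p$, and combined with $|H:H_2|=p^2$ from Lemma~\ref{obelisk basic} applied to $H$, this gives $|(H\cap G_2):H_2|\geq p$.

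Since $H/(H\cap G_2)$ is cyclic, Lemma~\ref{cyclic quotient commutators} yields $H_2=[H,H\cap G_2]\subseteq[G,G_2]=G_3$, and applying the commutator once more gives $H_3=[H,H_2]\subseteq[G,G_3]=G_4$. Using $H^p=H_3$, one concludes $H^p\subseteq G_4$, so every $p$-th power of an element of $H$ lies in $G_4$. I would then produce an element of $H$ whose $p$-th power in $G$ is forced to lie outside $G_4$. If $H\not\subseteq G_2$, pick $x\in H$ of depth~$1$ in $G$: for $c(G)\geq 4$, Lemma~\ref{parity obelisks} gives $\wt_G(3)=2$, so Corollary~\ref{power iso} shows that the $p$-th power induces an isomorphism $G/G_2\to G_3/G_4$, forcing $x^p\in G_3\setminus G_4$ and contradicting $x^p\in G_4$. (The class $c(G)=2$ case is trivial, since then $|G|=p^3\leq|H|$ already.) If instead $H\subseteq G_2$, let $d$ be the minimal depth in $G$ of an element of $H$, so $d\geq 2$; pick $x\in H$ of depth $d$, note that Lemma~\ref{commutator indices} gives $H_2\subseteq G_{2d}$ and $H_3\subseteq G_{3d}$, while Corollary~\ref{power iso} pins $x^p$ at depth $d+2$, so $x^p\in H^p=H_3\subseteq G_{3d}$ forces $d+2\geq 3d$, i.e.\ $d\leq 1$, contradiction. (If $d$ is so large that $H\subseteq G_{c-1}$, Lemma~\ref{commutator indices} forces $H_2\subseteq G_{2c-2}=1$, making $H$ abelian and again contradicting the $p$-obelisk hypothesis on $H$.)

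The main obstacle is to handle the border situations where Corollary~\ref{power iso} does not directly pin down the depth of $x^p$, most notably at the top of the lower central series and in small-class edge cases like $c(G)=3$ with $\wt_G(c)=1$, where the $p$-th power map on $G/G_2$ can have nontrivial kernel. Resolving these will require a more refined depth analysis using the commutator-power diagrams of Section~\ref{section obelisk diagrams}, in particular Proposition~\ref{proposition commutative diagram}, together with the non-degeneracy of the commutator pairing from Lemma~\ref{obelisk non-deg} wherever it applies.
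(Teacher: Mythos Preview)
Your approach parallels the paper's: both reduce to $HG_2\neq G$ via the Frattini argument, use $H_2=[H,H\cap G_2]\subseteq G_3$, and then force a depth contradiction through $H^p=H_3$. You argue directly with $H_3\subseteq G_4$, while the paper passes to $\overline G=G/G_4$ and reaches $\overline H_3=\overline H^p=\overline H_2$; the mechanism is the same, and your treatment of the case $H\subseteq G_2$ is correct.

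The obstacle you isolate at the end, however, is not a technicality awaiting a sharper commutator--power identity: when $c(G)=3$ and $\wt_G(3)=1$ the statement is \emph{false}. For $p>3$ let $G$ be the group of order $p^4$ generated by $a,b$ with $c=[a,b]$, $z=[b,c]$, relations $[a,c]=1$, $a^p=z$, $b^p=1$ (one realizes $G$ by letting a cyclic group $\langle a\rangle$ of order $p^2$ act on the Heisenberg group $\langle b,c,z\rangle$ via $b\mapsto cb$, $c\mapsto c$, and identifying $a^p$ with $z$). Then $G$ is a $p$-obelisk with $G_3=G^p=\langle z\rangle$, while the maximal subgroup $K=\langle b,G_2\rangle$ satisfies $K^p=1=K_3$ and $K_2=\langle z\rangle$, so $K$ is extraspecial of exponent $p$ --- a proper $p$-obelisk inside $G$. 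The paper's proof shares this gap: the claim ``the minimum jump of $H^p$ in $G$ being equal to $3$'' and the deduction $\overline H_2=\overline H^p$ both presuppose $H^p\neq 1$, which fails precisely here. Both arguments become valid once $\wt_G(3)=2$ (in particular whenever $c(G)\geq 4$), which is the only setting in which the paper actually invokes the result (Lemma~\ref{only two left}).
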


\begin{proof}
The subgroup $H$ is non-abelian, by definition of a $p$-obelisk, and
it is in particular non-trivial. Let $l$ denote the least jump of $H$ in $G$.
Then, as a consequence of Lemma \ref{commutator indices}, the subgroup $H_2=[H,H]$ is contained in $G_{2l}$. Moreover, since $H^p$ is equal to $H_3$, the subgroup $H^p$ is contained in $H_2$. It follows from Corollary \ref{power iso bottom} that the minimum jump of $H_2$ is at most $l+2$: we get that $2l\leq l+2$ and therefore $l\leq 2$. 
We will show that $HG_2=G$. Assume by contradiction that $G\neq HG_2$. 
Then, as a consequence of Lemma \ref{obelisk basic}($2$), the width $\wt_H^G(l)$ is equal to $1$ and so Lemma \ref{cyclic quotient commutators} yields that 
$H_2=[H,H\cap G_{l+1}]$. Thanks to Lemma \ref{commutator indices}, the subgroup $H_2$ is contained in $G_{2l+1}$ and therefore $2l+1\leq l+2$. It follows that $l=1$ and that $H_2$ is contained in $G_3$.
Define now $\overline{G}=G/G_4$ and use the bar notation for the subgroups of $\overline{G}$. By the isomorphism theorems, the groups $\overline{H}$ and $H/(H\cap G_4)$ are isomorphic and so, as a consequence of Lemma \ref{non-ab quotient obelisk}, the group $\overline{H}$ is abelian or a $p$-obelisk. The minimum jump of $H^p$ in $G$ being equal to $3$, we have that $3$ is a jump of $\overline{H_2}$ in $\overline{G}$ and so $\overline{H}$ is a $p$-obelisk. Now, the group $\overline{G_3}$ is central in $\overline{G}$ and so, the group $\overline{H_2}$ being non-trivial, the quotient $\overline{H}/(\overline{H}\cap\overline{G_3})$ is not cyclic.
It follows that $2$ is a jump of $\overline{H}$ in $\overline{G}$ and, from the combination of Lemmas \ref{parity obelisks} and \ref{maps incrociate}($2$), that $\overline{H_2}$ has order $p$. Since $\overline{H_2}$ contains $\overline{H}^p$, we get $\overline{H_2}=\overline{H}^p=\overline{H_3}$. Contradiction to $\overline{H}$ being non-abelian.
We have proven that $G=HG_2$, from which we derive $G=H\Phi(G)$. 
Lemma \ref{subgroup times frattini} yields $H=G$.
\end{proof}

\begin{lemma}\label{cyclic then 1-dim jumps}
Let $p>3$ be a prime number and let $G$ be a $p$-obelisk. 
Let $H$ be a cyclic subgroup of $G$.
Then all jumps of $H$ in $G$ have the same parity and width $1$.
\end{lemma}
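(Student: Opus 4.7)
My plan is to write $H=\langle x\rangle$ and analyse how the depth of $x^{p^k}$ evolves as $k$ increases. The width statement will follow from an elementary $\F_p$-dimension argument, and the parity statement will follow by induction, using Corollary \ref{power iso bottom} to control how the $p$-th power map acts on successive quotients of the lower central series of $G$.

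First I will observe that each quotient $G_j/G_{j+1}$ is an $\F_p$-vector space: by Lemma \ref{black core}, $\rho(G_j)=G_{j+2}\subseteq G_{j+1}$, so $(G_j/G_{j+1})^p$ is trivial. Since $H$ is cyclic, so is every $H\cap G_j$, and hence so is each quotient $(H\cap G_j)/(H\cap G_{j+1})$; but this quotient embeds into the elementary abelian group $G_j/G_{j+1}$, so it has order at most $p$. This gives $\wt_H^G(j)\in\{0,1\}$, with value $1$ exactly at the jumps.

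For the parity claim, set $d=\dpt_G(x)$ and $|H|=p^n$. I will prove by induction on $k\in\{0,\ldots,n-1\}$ that $\dpt_G(x^{p^k})=d+2k$, which will show that the jumps of $H$ in $G$ are exactly $d,\,d+2,\,\ldots,\,d+2(n-1)$, all of the same parity as $d$. The inductive step reduces to the following: given $y:=x^{p^k}\in G_j\setminus G_{j+1}$ with $y^p\neq 1$, conclude $y^p\in G_{j+2}\setminus G_{j+3}$. By Lemma \ref{maps incrociate}($1$), the surjection $\rho^1_j\colon G_j/G_{j+1}\to G_{j+2}/G_{j+3}$ sends $\bar y$ to $\overline{y^p}$, so I must exclude $\overline{y^p}=0$. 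Corollary \ref{power iso bottom} tells us that $\rho^1_j$ is an isomorphism whenever $j+2$ is even or $\wt_G(j+2)=2$; combined with Lemma \ref{parity obelisks}, the only remaining scenario is $j+2=c$ (where $c$ denotes the class of $G$) with $c$ odd and $\wt_G(c)=1$. In that edge case, $\overline{y^p}=0$ would force $y^p\in G_{j+3}=G_{c+1}=\{1\}$, contradicting $y^p\neq 1$. Hence $\overline{y^p}\neq 0$ in every case, completing the induction.

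The main subtlety will be the edge case above: $\rho^1_j$ can genuinely fail to be injective when $j+2=c$ and $\wt_G(c)<\wt_G(c-2)$, so the naive inductive step breaks down. However, precisely in that configuration any nonzero element of $\ker\rho^1_j$ is a class of depth $c-2$ whose $p$-th power lies in $G_{c+1}=\{1\}$, so the argument is rescued by the standing hypothesis that we are tracking only nontrivial powers of $x$. Once the induction is done, the widths are all $1$ and the jumps are exhibited as an arithmetic progression of common difference $2$, yielding the lemma.
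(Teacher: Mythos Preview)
Your proof is correct and takes essentially the same approach as the paper: both use that the subgroups of the cyclic group $H$ are exactly the $H^{p^k}$ and track their depths via the maps $\rho^1_j$ of Lemma~\ref{maps incrociate}. The paper's version is terser---it simply cites Lemma~\ref{maps incrociate}($1$) after recording that a jump at $i$ means $H\cap G_i=H^{p^k}$ and $H\cap G_{i+1}=H^{p^{k+1}}$---whereas you make the induction explicit and carefully handle the edge case $j+2=c$, $\wt_G(c)=1$, where $\rho^1_j$ may fail to be injective.
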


\begin{proof}
Let $H$ be a cyclic subgroup of $G$. 
Then, for all $i\in\Z_{>0}$, there exists $k\in\Z_{\geq 0}$ such that 
$H\cap G_i=H^{p^k}$.
Moreover, $i\in\Z_{>0}$ is a jump of $H$ in $G$ if and only if there exists $k\in\graffe{0,1,\ldots,\log_p|H|-1}$ such that $H\cap G_i=H^{p^k}$ and $H\cap G_{i+1}=H^{p^{k+1}}$.
We conclude thanks to Lemma \ref{maps incrociate}($1$).
\end{proof}

\begin{lemma}\label{obelisk cyclic subgroups}
Let $p>3$ be a prime number and let $G$ be a $p$-obelisk. 
Let $c$ denote the nilpotency class of $G$ and assume that one of the following holds.
\begin{itemize}
 \item[$1$.] The number $c$ is even.
 \item[$2$.] One has $\wt_G(c)=2$.
\end{itemize}
If $H$ is a subgroup such that all of its jumps in $G$ have the same parity and width $1$, then $H$ is cyclic.
\end{lemma}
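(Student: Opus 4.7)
The plan is to fix $x \in H$ of minimum depth $j_1$ in $G$ and show $H = \gen{x}$ by comparing orders. The essential tool is Corollary \ref{power iso bottom}, which upgrades the power map $\rho^k_i \colon G_i/G_{i+1} \to G_{i+2k}/G_{i+2k+1}$ from a surjection to an isomorphism, provided $i + 2k \le c$ and either $i + 2k$ is even or $\wt_G(i+2k) = 2$.

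First I would verify that, under our global hypothesis on $c$, the map $\rho^k_{j_1}$ is an isomorphism for every $k \ge 0$ with $j_1 + 2k \le c$. Indeed, if $j_1$ is even then each such $j := j_1 + 2k$ is even and Corollary \ref{power iso bottom} applies. If $j_1$ is odd then so is $j$; by Lemma \ref{parity obelisks} one has $\wt_G(j) = 2$ for every odd $j \le c-1$, and the only remaining case $j = c$ (which forces $c$ odd) is precisely the one where our hypothesis supplies $\wt_G(c) = 2$. In every case the power map induces an isomorphism onto each layer of matching parity within $[j_1,c]$.

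Next, let $k_\star = \lfloor (c-j_1)/2 \rfloor$, so that $j_1 + 2k_\star$ is the largest integer in $[j_1, c]$ of the same parity as $j_1$. Applying the isomorphism above to $x$ shows that $x^{p^k}$ has depth exactly $j_1 + 2k$ in $G$ for $k = 0, 1, \ldots, k_\star$; in particular $x^{p^{k_\star}} \ne 1$, giving $|\gen{x}| \ge p^{k_\star+1}$. On the other hand, by Lemma \ref{order product orders jumps} we have $|H| = p^s$, where $s$ is the number of jumps of $H$ in $G$ (each of width $1$); since these jumps all share the parity of $j_1$ and lie in $[j_1, c]$, we obtain $s \le k_\star + 1$. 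Combining, $|\gen{x}| \ge p^{k_\star+1} \ge |H|$, and since $\gen{x} \subseteq H$ this forces $H = \gen{x}$.

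The only real obstacle is the case analysis in the first step: one has to see that the hypothesis ``$c$ even or $\wt_G(c)=2$'' is exactly what is needed to rule out the single configuration left open by Proposition \ref{blackburn 2-1-2}, namely $c$ odd with $\wt_G(c-1)=\wt_G(c)=1$, in which $\rho^{(c-j_1)/2}_{j_1}$ would fail to be injective for $j_1$ odd. Once that configuration is excluded, the rest of the argument is bookkeeping of orders.
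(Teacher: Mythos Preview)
Your proof is correct and follows essentially the same route as the paper: pick $x\in H$ of minimal depth, use Corollary~\ref{power iso bottom} to see that every integer in $[j_1,c]$ of the right parity is a jump of $\gen{x}$, and compare orders to force $H=\gen{x}$. The paper compresses your case analysis into a single invocation of the corollary and packages the order comparison as a chain of products over jump sets, but the substance is identical; your explicit verification that the hypothesis on $c$ is exactly what makes the corollary apply at the top layer is a useful clarification that the paper leaves implicit.
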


\begin{proof}
Without loss of generality we assume that $H$ is non-trivial and we take $l$ to be the least jump of $H$ in $G$. 
Let moreover $\cor{J}(H)$ denote the collection of jumps of $H$ in $G$ and
define $J=\graffe{l+2k\ :\ k\in\Z_{\geq 0},\ k\leq (c-l)/2}$.
Let $x$ be an element of $H$ such that $\dpt_G(x)=l$; the existence of $x$ is guaranteed by Lemma \ref{jumps and depth}.  
Write $K=\gen{x}$ and let 
$\cor{J}(K)$ be the collection of jumps of $K$ in $G$. 
By assumption $J$ contains $\cor{J}(H)$ and, as a consequence of Corollary \ref{power iso bottom}, the set $J$ is contained in $\cor{J}(K)$. Keeping in mind that each jump of $H$ in $G$ has width $1$, one derives  
$$
|K|=\prod_{j\in\cor{J}(K)}p^{\wt_K^G(j)}
\geq \prod_{j\in J}p^{\wt_K^G(j)}
\geq \prod_{j\in J}p^{\wt_H^G(j)}
\geq \prod_{j\in\cor{J}(H)}p^{\wt_H^G(j)}=|H|.
$$
It follows that $K=H$ and $H$ is cyclic.
\end{proof}

\begin{lemma}\label{cyclic trivial intersection}
Let $p>3$ be a prime number and let $G$ be a $p$-obelisk. 
Let $c$ denote the nilpotency class of $G$ and let $H$ be a subgroup of $G$ such that $H\cap G_c=\graffe{1}$. If all jumps of $H$ in $G$ have the same parity and width $1$, then $H$ is cyclic.
\end{lemma}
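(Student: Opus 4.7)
The plan is to reduce to Lemma \ref{obelisk cyclic subgroups} by either applying it directly or by passing to a suitable quotient. Recall from Lemma \ref{parity obelisks} that for a $p$-obelisk $G$ of class $c$, the top width $\wt_G(c)$ can equal $2$ only when $c$ is odd, and whenever $c$ is even we must have $\wt_G(c)=1$. Thus there are exactly three possible configurations of $(c,\wt_G(c))$: $c$ even with $\wt_G(c)=1$; $c$ odd with $\wt_G(c)=2$; and $c$ odd with $\wt_G(c)=1$. The first two fall within the hypotheses of Lemma \ref{obelisk cyclic subgroups}, so in those cases the conclusion follows immediately from that lemma (the assumption $H\cap G_c=\graffe{1}$ is not even needed).

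First I would assume $H$ is non-trivial (otherwise there is nothing to prove) and treat the remaining case, in which $c$ is odd and $\wt_G(c)=1$. Here I would consider the natural projection $\pi:G\rightarrow G/G_c$, set $\overline{G}=G/G_c$ and $\overline{H}=\pi(H)$. The group $G/G_c$ has class $c-1\geq 2$ (note $c\geq 3$ since $G$ is non-abelian and $c$ odd), so $G/G_c$ is non-abelian, and by Lemma \ref{non-ab quotient obelisk} it is itself a $p$-obelisk, now of \emph{even} class $c-1$. Moreover, since $H\cap G_c=\graffe{1}$, the restriction $\pi_{|H}:H\rightarrow\overline{G}$ is injective, so $\overline{H}\cong H$.

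Next I would verify that $\overline{H}$ satisfies the hypotheses of Lemma \ref{obelisk cyclic subgroups} inside $\overline{G}$. For each positive integer $j$, the subgroups $(\overline{G})_j$ are the images $G_j/G_c$ (so they are trivial for $j\geq c$), and using $H\cap G_c=\graffe{1}$ together with the Dedekind-style identity $(HG_c)\cap G_j=(H\cap G_j)G_c$ one gets
\[
\overline{H}\cap(\overline{G})_j \;\cong\; (H\cap G_j)/(H\cap G_c)\;=\;H\cap G_j
\]
for every $j<c$, so the jumps and widths of $\overline{H}$ in $\overline{G}$ coincide with those of $H$ in $G$ below $c$; jumps at $j\geq c$ are excluded because $(\overline{G})_c=\graffe{1}$. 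In particular, all jumps of $\overline{H}$ in $\overline{G}$ have the same parity and width $1$.

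Finally, since $\overline{G}$ is a $p$-obelisk of even class $c-1$, Lemma \ref{obelisk cyclic subgroups}($1$) applies to $\overline{H}$ and yields that $\overline{H}$ is cyclic; transporting back via the isomorphism $H\cong\overline{H}$ gives the desired conclusion that $H$ is cyclic. No step here looks particularly delicate; the only point to watch is the bookkeeping showing that the jumps and widths of $H$ in $G$ transfer verbatim to $\overline{H}$ in $\overline{G}$, which is a routine application of the isomorphism theorems using the key hypothesis $H\cap G_c=\graffe{1}$.
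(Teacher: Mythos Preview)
Your proof is correct and follows essentially the same strategy as the paper: reduce to Lemma \ref{obelisk cyclic subgroups} by passing to $\overline{G}=G/G_c$, using $H\cap G_c=\graffe{1}$ to identify $H$ with its image and to transfer jumps and widths verbatim. The only cosmetic difference is organizational: the paper passes to $\overline{G}$ in \emph{all} cases and then observes (via Lemma \ref{parity obelisks}) that $\overline{G}$ always satisfies one of the two hypotheses of Lemma \ref{obelisk cyclic subgroups}, whereas you first dispose of the cases where $G$ itself already satisfies those hypotheses and only pass to the quotient in the residual case $c$ odd with $\wt_G(c)=1$.
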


\begin{proof}
We denote $\overline{G}=G/G_c$ and we will use the bar notation for the subgroups of $\overline{G}$. As a consequence of Lemma \ref{quotient obelisk}, the group $\overline{G}$ is abelian or it is a $p$-obelisk. If $\overline{G}$ is abelian, then $c=2$ and so, by Lemma \ref{obelisk cyclic subgroups}, the subgroup $H$ is cyclic. Assume now that $\overline{G}$ is non-abelian and thus a $p$-obelisk.
The group $\overline{G}$ has class $c-1$ and, as a consequence of Corollary \ref{parity obelisks}, either $c-1$ is even or $\wt_{\overline{G}}(c-1)=2$. It follows from Lemma \ref{obelisk cyclic subgroups} that $\overline{H}$ is cyclic and, the intersection $H\cap G_c$ being trivial, so is $H$. 
\end{proof}

\begin{lemma}\label{same same}
Let $p>3$ be a prime number and let $G$ be a $p$-obelisk. Let $c$ denote the nilpotency class of $G$ and let $H$ be a non-trivial subgroup of $G$ such that $H\cap G_c=\graffe{1}$.
Let $l$ be the least jump of $H$ in $G$ and assume that all jumps of $H$ in $G$ have the same parity and the same width.
Then the following hold.
\begin{itemize}
 \item[$1$.] The group $H$ is abelian.
 \item[$2$.] One has $\Phi(H)=H\cap G_{l+1}$.
\end{itemize}
\end{lemma}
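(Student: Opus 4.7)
The plan is to split on the common width $w \in \{1, 2\}$ of the jumps of $H$, which by Lemma \ref{parity obelisks} forces the parity of $l$: width $1$ corresponds to even jumps, while width $2$ corresponds to odd jumps.

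\textbf{Width $1$ case.} Lemma \ref{cyclic trivial intersection} applies directly to give that $H$ is cyclic, proving $(1)$. A generator $h$ has depth $l$, so $\Phi(H) = H^p = \langle h^p \rangle$; by Corollary \ref{power iso bottom} applied in $G$, the element $h^p$ has depth $l+2$, and hence $\Phi(H) = H \cap G_{l+2}$. Since $l+1$ is odd and jumps of $H$ are even, $l+1$ is not a jump of $H$, so $H \cap G_{l+1} = H \cap G_{l+2}$, giving $(2)$.

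\textbf{Width $2$ case, proof of $(1)$.} Here $l$ is odd and $l \leq c-1$, so $\wt_H^G(l) = 2 = \wt_G(l)$, forcing $(H \cap G_l)G_{l+1} = G_l$. Since $[H,H]$ is a subgroup of $H$ its jumps are also odd, while Lemma \ref{commutator indices} gives $[H,H] \subseteq G_{2l}$ with $2l$ even; together these force $[H,H] \subseteq G_{2l+1}$. Consequently the bilinear map $\gamma_{l,l}\colon G_l/G_{l+1} \times G_l/G_{l+1} \to G_{2l}/G_{2l+1}$ of Lemma \ref{bilinear LCS hk} vanishes identically (since it does so on $(H \cap G_l)G_{l+1}/G_{l+1} \times (H \cap G_l)G_{l+1}/G_{l+1}$, which is everything), while its image generates $G_{2l}/G_{2l+1}$. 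Thus $G_{2l} = G_{2l+1}$, and since Proposition \ref{blackburn 2-1-2} guarantees $\wt_G(i) \geq 1$ for all $1 \leq i \leq c$, this forces $2l > c$, whence $G_{2l} \subseteq G_{c+1} = \{1\}$ and $[H,H] \subseteq [G_l, G_l] \subseteq G_{2l} = \{1\}$.

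\textbf{Width $2$ case, proof of $(2)$.} Having just shown $H$ is abelian, the $p$-th power map $\rho\colon H \to H$ is a homomorphism with image $\Phi(H) = H^p$. I would apply Corollary \ref{power iso bottom} iteratively: whenever $l + 2k$ is a jump of $H$, so that $H \cap G_{l+2k}$ surjects onto $G_{l+2k}/G_{l+2k+1}$, the image $\rho(H \cap G_{l+2k}) \subseteq H \cap G_{l+2k+2}$ must also surject onto $G_{l+2k+2}/G_{l+2k+3}$. This forces either $l + 2k + 2 \geq c + 1$ (whence $V_{k+1} := H \cap G_{l+2k+2}/H \cap G_{l+2k+4} = \{1\}$ via $H \cap G_c = \{1\}$) or $l + 2k + 2 \leq c - 1$ (with $V_{k+1}$ again $2$-dimensional); the boundary case $l + 2k + 2 = c$ is ruled out, since it would place $G_c$ inside $H \cap G_c = \{1\}$, contradicting the class of $G$. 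Thus the jumps of $H$ are precisely the consecutive odd integers $l, l+2, \ldots, l+2(m-1)$ for some $m \geq 1$, and the induced maps $\rho_k \colon V_k \to V_{k+1}$ are isomorphisms for $0 \leq k < m-1$ (being restrictions of the ambient isomorphism from Corollary \ref{power iso bottom}). A short descending induction through the filtration then yields $\rho(H) = H \cap G_{l+2}$, so $\Phi(H) = H^p = H \cap G_{l+2} = H \cap G_{l+1}$, the last equality because $l+1$ is not a jump of $H$.

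The main obstacle is the width-$2$ case, especially the iterative lifting of ``full rows'' through the power map: one must rule out gaps between consecutive jumps of $H$, and the delicate point is excluding the boundary configuration $l + 2k + 2 = c$, where the hypothesis $H \cap G_c = \{1\}$ does essential work.
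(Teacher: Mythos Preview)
Your approach is correct and matches the paper's: split on the common width, invoke Lemma~\ref{cyclic trivial intersection} in the width-$1$ case, and in the width-$2$ case argue $2l>c$ to obtain abelianness before tracking the jumps of $H^p$ via the power map. One minor correction: your opening claim that Lemma~\ref{parity obelisks} forces width-$1$ jumps of $H$ to be even is wrong, since that lemma concerns $\wt_G$, not $\wt_H^G$, and a subgroup can have a width-$1$ jump at an odd index where $\wt_G=2$. This does not damage your argument, however, because the only consequence you use is that $l+1$ is not a jump of $H$, and that already follows from the hypothesis that all jumps share the parity of $l$. (The paper handles the width-$1$ Frattini computation even more directly: $H$ cyclic gives $|H:\Phi(H)|=p=p^{\wt_H^G(l)}=|H:H\cap G_{l+1}|$, so no detour through $G_{l+2}$ is needed.)
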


\begin{proof}
Let $\cor{J}(H)$ denote the collection of jumps of $H$ in $G$. 
We first assume $\wt_H^G(l)=1$. By Lemma \ref{cyclic trivial intersection}, the subgroup $H$ is cyclic and $\Phi(H)$ has index $p$ in $H$. It follows that $\Phi(H)=H\cap G_{l+1}$. 
Assume now that $\wt_H^G(l)=2$. Then, thanks to Lemma \ref{parity obelisks}($3$), the jump $l$ is odd. 
The subgroup $[H,H]$ is contained in $G_{2l}$, thanks to Lemma \ref{commutator indices}, and therefore, $2l$ being even, Lemma \ref{maps incrociate}($2$) yields $2l>c$. In particular, one has $[H,H]=\graffe{1}$ so $\Phi(H)=H^p$. Moreover, as a consequence of Lemma \ref{maps incrociate}($1$), the set of jumps of $H^p$ in $G$ is equal to 
$\cor{J}(H)\setminus\graffe{l}$ and each jump of $H^p$ has width $2$. 
It follows that $H^p=H\cap G_{l+1}$.
Thanks to Proposition \ref{blackburn 2-1-2} the width $\wt_H^G(l)$ is either $1$ or $2$ and the proof is thus complete.
\end{proof}

\begin{lemma}\label{supertecnico 1}
Let $p>3$ be a prime number and let $G$ be a $p$-obelisk.
Let $c$ be the class of $G$ and
let $H$ be a non-trivial subgroup of $G$ such that $H\cap G_c=\graffe{1}$. 
Denote by $l$ the least jump of $H$ and assume that $H\cap G_{l+1}=\Phi(H)$. Finally, assume that $c-l$ is odd.
Then, for each complement $K$ of $G_c$ in $HG_c$, there exists $t\in G_{c-l}$ such that $K=tHt^{-1}$.
\end{lemma}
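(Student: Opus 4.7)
\medskip

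The plan is to parametrize complements of $G_c$ in $HG_c$ and then realize each parametrizing homomorphism via a commutator with an element of $G_{c-l}$. Since $G_c\subseteq\ZG(G)$ by Proposition \ref{obelisk centre at the bottom}, the subgroup $G_c$ is central in $HG_c$, so Lemma \ref{graph+complements} gives a bijection between $\Hom(H,G_c)$ and the set of complements of $G_c$ in $HG_c$: a homomorphism $f$ corresponds to the complement $\cor{G}_f=\{f(h)h:h\in H\}$.

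First, I would observe that conjugation by an element $t\in G_{c-l}$ produces a complement of this form. Indeed, by Lemma \ref{commutator indices} and the fact that $H\subseteq G_l$ (since $l$ is the least jump of $H$), we have $[G_{c-l},H]\subseteq[G_{c-l},G_l]\subseteq G_c$, so $tHt^{-1}=\{[t,h]h:h\in H\}=\cor{G}_{f_t}$ where $f_t(h)=[t,h]$. The map $t\mapsto f_t$ is a function $G_{c-l}\to\Hom(H,G_c)$ (it is a homomorphism in $h$ because $G_c$ is central in $HG_c$, and its kernel contains $\Phi(H)=H\cap G_{l+1}$ since $[G_{c-l},G_{l+1}]\subseteq G_{c+1}=\{1\}$).

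Next, I would show this map is surjective, which is the heart of the argument. The map $t\mapsto f_t$ factors through $G_{c-l}/G_{c-l+1}$ (because $[G_{c-l+1},H]\subseteq G_{c+1}=\{1\}$), yielding
\[
\bar\delta:G_{c-l}/G_{c-l+1}\longrightarrow\Hom(H/\Phi(H),G_c).
\]
Now $H/\Phi(H)=H/(H\cap G_{l+1})\cong HG_{l+1}/G_{l+1}$ naturally embeds as a linear subspace of the $\F_p$-vector space $G_l/G_{l+1}$, so the restriction map $\Hom(G_l/G_{l+1},G_c)\to\Hom(H/\Phi(H),G_c)$ is surjective. Composing this with the map of Corollary \ref{delta surjective} (which applies because $c-l$ is odd and yields a surjection $G_{c-l}/G_{c-l+1}\to\Hom(G_l/G_{l+1},G_c)$), and noting that this composition coincides with $\bar\delta$, we conclude $\bar\delta$ is surjective.

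Finally, given an arbitrary complement $K$ of $G_c$ in $HG_c$, write $K=\cor{G}_f$ for some $f\in\Hom(H,G_c)$. Since $f$ factors through $H/\Phi(H)$ and $\bar\delta$ is surjective, there exists $t\in G_{c-l}$ with $f(h)=[t,h]$ for all $h\in H$, giving $K=tHt^{-1}$. The main subtlety is verifying cleanly that the diagram factoring $\bar\delta$ through the surjection of Corollary \ref{delta surjective} commutes and that the restriction $\Hom(G_l/G_{l+1},G_c)\to\Hom(H/\Phi(H),G_c)$ is surjective (the latter is immediate since $H/\Phi(H)$ is a direct summand of the $\F_p$-vector space $G_l/G_{l+1}$). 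Once those two points are in hand, the conclusion is immediate.
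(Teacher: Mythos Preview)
Your proposal is correct and follows essentially the same approach as the paper: parametrize complements of $G_c$ in $HG_c$ by $\Hom(H,G_c)$ via Lemma \ref{graph+complements}, reduce to $\Hom(H/\Phi(H),G_c)$ using that $G_c$ is elementary abelian, and then realize every such homomorphism as $h\mapsto[t,h]$ for some $t\in G_{c-l}$ by composing the surjection of Corollary \ref{delta surjective} with the surjective restriction map $\Hom(G_l/G_{l+1},G_c)\to\Hom(HG_{l+1}/G_{l+1},G_c)$. The only cosmetic difference is that the paper invokes Lemma \ref{black core} explicitly to justify that $G_c$ and $G_l/G_{l+1}$ are elementary abelian, whereas you argue via direct summands; both are fine.
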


\begin{proof}
The subgroup $G_c$ is central in $G$, because $G$ has class $c$, and so, by Lemma \ref{graph+complements}, all complements of 
$G_c$ in $T=HG_c$ are of the form $\graffe{f(h)h : h\in H}$ as $f$ varies in $\Hom(H,G_c)$. The subgroup $G_c$ is elementary abelian, as a consequence of Lemma \ref{black core}, and therefore $\Hom(H,G_c)$ is naturally isomorphic to $\Hom(H/\Phi(H),G_c)=\Hom(H/(H\cap G_{l+1}),G_c)$. By assumption, $c-l$ is odd so, thanks to Corollary \ref{delta surjective}, the homomorphism 
$G_{c-l}/G_{c-l+1}\rightarrow\Hom(G_l/G_{l+1},G_c)$, defined by $tG_{c-l}\mapsto (xG_{l+1}\mapsto [t,x])$, is surjective.
By Lemma \ref{black core}, the quotient $G_l/G_{l+1}$ is elementary abelian and therefore the restriction map 
$$\Hom(G_l/G_{l+1},G_c)\rightarrow\Hom(HG_{l+1}/G_{l+1},G_c)$$ is surjective. By the isomorphism theorems, $HG_{l+1}/G_{l+1}$ and $H/(H\cap G_{l+1})$ are isomorphic and so every homomorphism $H\rightarrow G_c$ is of the form $x\mapsto [t,x]$, for some 
$t\in G_{c-l}$.
For each complement $K$ of $G_c$ in $T$ there exists thus $t\in G_{c-l}$ such that 
$K=\graffe{[t,x]x\ :\ x\in H}=\graffe{txt^{-1}\ :\ x\in H}=tHt^{-1}$. 
\end{proof}

%CHAPTER: COMPLETE CHARACTERIZATION

\chapter{The most intense chapter}\label{chapter complete char}

\noindent
Let $p>3$ be a prime number. We recall that a \emph{$p$-obelisk} is a finite $p$-group $G$ of class at least $2$ that satisfies $G_3=G^p$ and $|G:G_3|=p^3$. A $p$-obelisk $G$ is \emph{framed} if, for each maximal subgroup $M$ of $G$, one has $\Phi(M)=G_3$. Some theory about $p$-obelisks is developed in Chapter \ref{chapter obelisks}.
\vspace{8pt} \\
\noindent
The main results of this chapter are summarized in Theorems \ref{theorem class 4 iff} and \ref{theorem complete char pt.1}, which are proven in Section \ref{section proof main}.

\begin{theorem}\label{theorem class 4 iff}
Let $p>3$ be a prime number and let $G$ be a finite $p$-group of class $4$.
Let $\alpha$ be an automorphism of order $2$ of $G$. Then the following conditions are equivalent.
\begin{itemize}
 \item[$1$.] The group $G$ is a $p$-obelisk and the automorphism 
 			 $G/G_2\rightarrow G/G_2$ that is induced by $\alpha$ is equal
 			 to the inversion map $\overline{x}\mapsto\overline{x}^{\,-1}$.
 \item[$2$.] The automorphism $\alpha$ is intense.
\end{itemize}
\end{theorem}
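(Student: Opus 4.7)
The implication $(2)\Rightarrow(1)$ should be a short deduction from material already in place. If $\alpha$ is intense of order $2$, then $\alpha\neq\id$, and since $\ker\chi_G$ is a $p$-group (Lemma \ref{formulation}) with $p$ odd, $\alpha$ cannot lie in $\ker\chi_G$, so $\inte(G)>1$. Proposition \ref{class 4 obelisk} then gives that $G$ is a $p$-obelisk, and Proposition \ref{proposition -1^i} gives that $\alpha$ has order $2$ and induces scalar multiplication by $-1$, i.e.\ the inversion map, on $G/G_2$.

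For the substantive direction $(1)\Rightarrow(2)$, the plan is to verify, via Lemma \ref{equivalent intense coprime-pgrps}, that every subgroup $H\leq G$ admits an $\langle\alpha\rangle$-stable conjugate. First I record the structural data of Chapter~\ref{chapter obelisks}: since $G$ is a $p$-obelisk of class $4$, Lemma \ref{parity obelisks} gives $(w_1,w_2,w_3,w_4)=(2,1,2,1)$, Proposition \ref{obelisk centre at the bottom} gives $\ZG(G)=G_4$, and Lemma \ref{action chi^i} together with the inversion hypothesis gives that $\alpha$ acts as the scalar $(-1)^i$ on each factor $G_i/G_{i+1}$; in particular $\alpha$ is the identity on the cyclic group $G_4$ of order $p$. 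Next I reduce modulo $G_4$: the quotient $G/G_4$ has class $3$ with $|G/G_2|=p^2$, so by Proposition \ref{p^4 has cp} the automorphism induced by $\alpha$ on $G/G_4$ is intense, and after replacing $H$ by a suitable conjugate one may assume $\alpha(H)G_4=HG_4$. If $G_4\subseteq H$ then $H$ is itself $\alpha$-stable and there is nothing more to prove; otherwise $H\cap G_4=1$, and the task becomes that of finding a $G$-conjugate of $H$, lying inside the $\alpha$-stable group $HG_4$, which is itself $\alpha$-stable.

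This last step is where the real work lies, and it is the main obstacle. The complements of $G_4$ in $HG_4$ are parametrised by $\Hom(H,G_4)\cong\Hom(H/\Phi(H),G_4)$ via Lemma \ref{graph+complements}, and the $G$-conjugation action on this set factors through the map $g\mapsto(h\mapsto[g,h])$. Using the non-degenerate pairing $G/G_2\times G_3/G_4\to G_4$ of Lemma \ref{class 4 comm map non-deg}, combined with Corollary \ref{delta surjective} (and, for subgroups with higher jumps, Lemma \ref{supertecnico 1}), I expect to show that $G$ acts transitively on these complements, so that every complement is $G$-conjugate to $H$. I then want to produce an $\alpha$-stable complement by counting, using the machinery of Section \ref{section involutions}: Lemma \ref{order +- jumps} computes $|(HG_4)^{+}|$ and $|(HG_4)^{-}|$ from the jumps of $H$ in $G$, Lemma \ref{conjugate stable iff in nor-times-G+} translates $\alpha$-stability of conjugates into membership in $G^{+}\nor_G(H)$, and Lemma \ref{orbits} together with Theorem \ref{lambda mu} (applied to the $A$-stable extension $1\to G_4\to HG_4\to HG_4/G_4\to 0$, once $HG_4/G_4$ is split according to the layers on which $\alpha$ acts by distinct characters) produces the desired $\alpha$-stable conjugate.

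The technical heart is therefore a case distinction according to the set of jumps of $H$ in $G$, exactly analogous in spirit to the class-$3$ analysis of Section \ref{section class 3 with character} and the $\yo$-analysis of Section \ref{section 3-gps sufficient}, but streamlined by the abundance of structure coming from $G$ being a genuine $p$-obelisk with $p>3$: the power maps and commutator maps interact compatibly through Proposition \ref{proposition commutative diagram}, and regularity of $G$ (Lemma \ref{obelisk regular}) allows one to control $p$-th powers of subgroups via Lemma \ref{regular implies power abelian}. The expected outcome is that in every case either Theorem \ref{lambda mu} applies directly to yield a unique $A$-stable complement, or a counting comparison between the number of $\alpha$-stable complements and $[G:\nor_G(H)]$ forces at least one conjugate of $H$ to be $\alpha$-stable.
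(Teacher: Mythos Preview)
Your treatment of $(2)\Rightarrow(1)$ matches the paper. For $(1)\Rightarrow(2)$ you have correctly identified the reduction modulo $G_4$ via Proposition \ref{p^4 has cp}, but you have overlooked that the remaining lift is already packaged as Proposition \ref{proposition even}: since $c=4$ is even, that proposition applies verbatim once $\alpha_4$ is known to be intense, and the paper's proof of Theorem \ref{theorem class 4 iff} is literally two lines. What you call the ``real work'' is the content of Section \ref{section lemmas even}, specialised to $c=4$.

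Your sketch of that content is broadly right but has two soft spots. First, when $H\cap G_4=\{1\}$ all jumps of $H$ are odd (Lemma \ref{tutti jump odd}), so only $1$ and $3$ occur. If they all have the same width, your plan---Theorem \ref{lambda mu} for an $A$-stable complement of $G_4$ in $HG_4$, plus Lemma \ref{supertecnico 1} for transitivity---is exactly Lemma \ref{same dimension}. But in the mixed-width case $\wt_H^G(1)=1$, $\wt_H^G(3)=2$ one computes $\Phi(H)=H^p\subsetneq H\cap G_2$, so the hypothesis $\Phi(H)=H\cap G_{l+1}$ of Lemma \ref{supertecnico 1} fails and transitivity on complements is not available in one stroke. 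The paper does not argue via transitivity here: it writes $H=J\oplus L$ as in Lemma \ref{J-L-H}, conjugates by an element of $G_{c-l}$ to stabilise $L$ while fixing $J$ and $T$ (Lemma \ref{polline}), and then conjugates by a carefully built element of $G_{c-j}$ to stabilise all of $H$ (Lemmas \ref{nelmezzo}--\ref{wojo}). Second, your appeal to Lemma \ref{orbits} is circular: that lemma assumes $\alpha\in\Int(G)$, which is precisely what you are trying to establish.
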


\noindent
An analogue of Theorem \ref{theorem class 4 iff} for higher nilpotency classes is proven in Chapter \ref{chapter lines}: the next theorem gives an essential contribution to its proof.

\begin{theorem}\label{theorem complete char pt.1}
Let $p>3$ be a prime number and let $G$ be a framed $p$-obelisk.
Let $\alpha$ be an automorphism of order $2$ of $G$ and assume that 
the automorphism $G/G_2\rightarrow G/G_2$ that is induced by $\alpha$ is equal to the inversion map $\overline{x}\mapsto\overline{x}^{\,-1}$.
Then  $\alpha$ is intense.
\end{theorem}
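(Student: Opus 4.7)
The plan is to use Lemma \ref{equivalent intense coprime-pgrps}: since $\alpha$ has order $2$ and $|G|$ is a power of the odd prime $p$, it suffices to show that every subgroup $H$ of $G$ has an $\langle\alpha\rangle$-stable conjugate. I would proceed by induction on the nilpotency class $c$ of $G$. For $c \leq 4$, the result is immediate from Theorem \ref{theorem class 4 iff} (with Theorems \ref{theorem abelian} and \ref{theorem class2 complete} covering the degenerate small cases). Assume $c \geq 5$ and that the statement is known for all framed $p$-obelisks of strictly smaller class.

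Given $H \leq G$, I would first reduce to the case $H \cap G_c = 1$ by two quotient arguments. If $G_c \subseteq H$, then the image $H/G_c$ is a subgroup of the framed $p$-obelisk $G/G_c$ of class $c-1$ (the framed property passes to $G/G_c$ via a direct check using Lemma \ref{maximal non-framed}), and $\alpha$ induces on $G/G_c$ an automorphism of order dividing $2$ that inverts $(G/G_c)/(G/G_c)_2$; by induction $H/G_c$ has a stable conjugate, and lifting to $G$ (using that $G_c$ is characteristic) yields one for $H$. If instead $H \cap G_c$ is a proper nontrivial subgroup, pick any $N \subset H \cap G_c$ with $|N| = p$; since $G_c$ is central, $N$ is normal in $G$, and by Lemma \ref{non-ab quotient obelisk} together with a Frattini computation, $G/N$ is still a framed $p$-obelisk, so induction in $G/N$ applies.

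It remains to handle $H \cap G_c = 1$. Setting $T = HG_c$, which equals $H \oplus G_c$ since $G_c$ is central of exponent $p$, I would use the inductive hypothesis applied to $G/G_c$ to find $g \in G$ such that $gTg^{-1}$ is $\langle\alpha\rangle$-stable; after replacing $H$ by $gHg^{-1}$ I may assume $T$ itself is $\alpha$-stable. Both $H$ and $\alpha(H)$ are then complements of $G_c$ in $T$, parametrized by $\Hom(H, G_c)$ via Lemma \ref{graph+complements}. Let $l$ be the least jump of $H$ in $G$. Invoking the framed hypothesis through Proposition \ref{lines}(3), together with Lemma \ref{same same}, I obtain $\Phi(H) = H \cap G_{l+1}$, so that $\Hom(H, G_c) \cong \Hom(H/(H \cap G_{l+1}), G_c)$. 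When $c - l$ is odd, Lemma \ref{supertecnico 1} then shows every complement of $G_c$ in $T$ is conjugate to $H$ by an element of $G_{c-l}$; in particular $\alpha(H) = tHt^{-1}$ for some $t \in G_{c-l}$, and Glauberman's lemma (Lemma \ref{glauberman lemma}) applied to the compatible actions of $\langle\alpha\rangle$ and of the group $G_{c-l}$ on the set of complements produces an $\alpha$-stable conjugate of $H$.

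The main obstacle is the subcase where $c - l$ is even, where Lemma \ref{supertecnico 1} is not directly applicable. Here I would further reduce by quotienting by $G_{c-1}$: the framed property and the width pattern from Proposition \ref{blackburn 2-1-2}($1$) guarantee that in $G/G_{c-1}$ the parity of the new ``top'' matches $l$ modulo $2$ differently, allowing Lemma \ref{supertecnico 1} to bite after the reduction. Any residual obstruction is confined to complements of $G_c$ inside $H G_{c-1}$, which I would analyze by a direct counting argument based on Lemma \ref{orbits}: the framed condition (through the surjectivity statements in Corollary \ref{delta surjective} and Proposition \ref{lines}(3)) makes the upper bound on the number of complements equal to the product of the orbit sizes of the $\alpha$-stable ones, forcing every complement to be $G$-conjugate to an $\alpha$-stable one. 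This closes the induction and establishes that $\alpha$ is intense.
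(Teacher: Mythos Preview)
Your inductive scaffolding matches the paper's: reduce to $H\cap G_c=1$ via quotients, stabilize $T=HG_c$ by induction on $G/G_c$, then analyze complements of $G_c$ in $T$. The paper organizes the induction step as three separate propositions according to the parity of $c$ and the order of $G_c$ (Propositions \ref{proposition even}, \ref{proposition odd 1}, \ref{proposition odd 2}); your attempt to treat all cases uniformly breaks down at a concrete point.

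The genuine gap is your appeal to Lemma \ref{same same} to conclude $\Phi(H)=H\cap G_{l+1}$. That lemma requires all jumps of $H$ to have the \emph{same parity and the same width}, and neither Proposition \ref{lines}(3) nor anything else you cite establishes this. In fact the conclusion is false in general. Take $c$ even: Lemma \ref{tutti jump odd} forces all jumps of $H$ to be odd, but the widths can be mixed. When they are, Lemma \ref{J-L-H} writes $H=J\oplus L$ with $J,L$ cyclic, least jumps $j>l$; then $J\subseteq G_j\subseteq G_{l+1}$ so $J\subseteq H\cap G_{l+1}$, but $J\not\subseteq H^p=\Phi(H)$, whence $\Phi(H)\subsetneq H\cap G_{l+1}$. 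With this equality gone, Lemma \ref{supertecnico 1} no longer applies, and your uniform complement argument collapses even in the ``$c-l$ odd'' case. The paper's remedy is exactly the decomposition $H=J\oplus L$ (resp.\ $H=I\oplus J$ in the odd-$c$, $|G_c|=p$ case) followed by a hands-on conjugation of the pieces; in the odd case the framed hypothesis enters precisely through Lemma \ref{jurassic5}, not through Lemma \ref{same same}.

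Your sketch for ``$c-l$ even'' does not work as written either: passing to $G/G_{c-1}$ drops the class by two, so the parity of (new top)$-l$ is unchanged, and nothing flips. A smaller point: when $|G_c|=p^2$ and $|H\cap G_c|=p$, quotienting by $N=H\cap G_c$ leaves the class equal to $c$, so ``induction on the class'' does not apply; you would need induction on $|G|$ or, as the paper does, invoke the already-proved $|G_c|=p$ case at the same class (Lemma \ref{odd 2 nontrivial intersection} calling Lemma \ref{gran finale odd 1}).
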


\noindent
We remark that the structure of Chapter \ref{chapter complete char} is quite rigid and is meant to ease the understanding of the strategy behind the proof of Theorem \ref{theorem complete char pt.1}. We will prove Theorem \ref{theorem complete char pt.1} by induction on the nilpotency class $c$ of the group $G$ and we will separate the cases according to the parity of $c$. Propositions \ref{proposition even}, \ref{proposition odd 1}, and \ref{proposition odd 2} will be the building blocks of the whole theory and will be verified respectively in Sections \ref{even class}, \ref{section odd 1}, and \ref{section odd 2}. We will use several results from Section \ref{section subgroups obelisks} to understand the structure of the subgroups of $G$, according to the size of their intersection with $G_c$. Moreover, the arguments that we will apply will heavily depend on the knowledge of the jumps of each subgroup in $G$. For more detailed information about jumps, we refer to Section \ref{section jumps}.

\section{The even case}\label{even class}

\noindent
The next proposition is proven for any $p$-obelisk, where $p$ is a prime number greater than $3$. We want to stress that, on the contrary, in Propositions \ref{proposition odd 1} and \ref{proposition odd 2} we ask for the $p$-obelisk to be framed.

\begin{proposition}\label{proposition even}
Let $p>3$ be a prime number and let $G$ be a $p$-obelisk of class $c$. Assume that $c$ is even.
Let moreover $\alpha$ be an automorphism of $G$ of order $2$ and assume that the map $\alpha_c:G/G_c\rightarrow G/G_c$ that is induced by $\alpha$ is intense.
Then $\alpha$ is intense.
\end{proposition}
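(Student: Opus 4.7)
The goal is to verify the criterion of Lemma \ref{equivalent intense coprime-pgrps}: since $\alpha$ has order $2$ and $|G|$ is a power of $p>3$, it will suffice to show that every subgroup $H$ of $G$ has an $\langle\alpha\rangle$-stable conjugate. Write $\pi:G\to G/G_c$ for the canonical projection. Since $c$ is even, Lemma \ref{parity obelisks}$(4)$ yields $|G_c|=p$, and by Proposition \ref{obelisk centre at the bottom} we have $G_c=\ZG(G)$. In particular, for any subgroup $H$ either $G_c\subseteq H$ or $H\cap G_c=\{1\}$. Using that $\alpha_c\in\Int(G/G_c)$, the image $\pi(H)$ has an $\alpha_c$-stable conjugate in $G/G_c$; after replacing $H$ by the corresponding $G$-conjugate, we may therefore assume that $\widetilde H:=HG_c$ is $\alpha$-stable.

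If $G_c\subseteq H$, then $H=\widetilde H$ is itself $\alpha$-stable and we are done. The substantial case is $H\cap G_c=\{1\}$: then both $H$ and $\alpha(H)$ are complements of $G_c$ in $\widetilde H$. Let $X$ denote the set of such complements; then $\alpha$ acts on $X$ (because $\widetilde H$ is $\alpha$-stable and $G_c$ is characteristic). The plan is to produce an $\alpha$-stable element of $X$ that is moreover a $G$-conjugate of $H$, by applying Glauberman's lemma (Lemma \ref{glauberman lemma}) to the action of $A=\langle\alpha\rangle$ and of a characteristic odd-order subgroup $B$ of $G$ acting on the $B$-orbit of $H$ in $X$: provided that this orbit is $\alpha$-stable and $B$ acts transitively on it, compatibility of the actions forces the existence of a common fixed point.

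The principal input for transitivity is Lemma \ref{supertecnico 1}: letting $l$ be the least jump of $H$ in $G$, the subgroup $B=G_{c-l}$ (which is characteristic, hence $\alpha$-stable) acts transitively on $X$ via conjugation, provided that $c-l$ is odd and $\Phi(H)=H\cap G_{l+1}$. Since $c$ is even, the condition $c-l$ odd reduces to $l$ odd; and, invoking Lemma \ref{parity obelisks}$(1$-$2)$, the equality $\Phi(H)=H\cap G_{l+1}$ holds automatically whenever $\wt_H^G(l)=1$. Thus in the ``generic'' case, $l$ odd and $\wt_H^G(l)=1$, Glauberman produces directly the desired $\alpha$-stable conjugate of $H$ inside $X$.

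\paragraph{Main obstacle.}
The hardest part is the remaining two cases: (i) $l$ odd with $\wt_H^G(l)=2$, in which case $HG_{l+1}=G_l$ so that $H$ surjects onto $G_l/G_{l+1}$; and (ii) $l$ even, which forces $H\subseteq G_l\subseteq G_2$ and $c-l$ even, so that Lemma \ref{supertecnico 1} does not apply directly. For case (i) I would argue that, thanks to the surjectivity $H\to G_l/G_{l+1}$, one may lift any $f\in\Hom(H/\Phi(H),G_c)$ through $\Hom(G_l/G_{l+1},G_c)$ using Corollary \ref{delta surjective}, still obtaining transitivity of $G_{c-l}$ on $X$ and concluding by Glauberman. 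For case (ii), since $c\geq 4$ one may use Proposition \ref{proposition -1^i} (applied to the intense $\alpha_c$ on the class $c-1\geq 3$ group $G/G_c$) to know that $\alpha$ inverts $G/G_2$, then enlarge $B$ to an odd-order characteristic subgroup (such as $G_{c-l-1}$, so that the pairing $G_{c-l-1}/G_{c-l}\times G_l/G_{l+1}\to G_{c-1}/G_c$ lifts to $G_c$ via the $p$-th power maps of Proposition \ref{proposition commutative diagram}) whose action on $X$ is transitive on the orbit of $H$; a final appeal to Glauberman completes the argument.
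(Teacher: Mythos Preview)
Your overall framework---reduce to the case $H\cap G_c=\{1\}$ with $\widetilde H=HG_c$ already $\alpha$-stable, then look for an $\alpha$-stable conjugate of $H$ among the complements of $G_c$ in $\widetilde H$---is exactly right, and matches the paper. However, you have misidentified where the difficulty lies, and your ``generic'' case contains a genuine gap.

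First, your case (ii) is vacuous. Since $c$ is even and $H\cap G_c=\{1\}$, Corollary~\ref{power iso bottom} forces every jump of $H$ to be odd: an element of even depth $j<c$ has its $p^{(c-j)/2}$-th power lying in $G_c\setminus\{1\}$. This is precisely the content of Lemma~\ref{tutti jump odd}, and it eliminates the even-$l$ case entirely.

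Second, your case (i) is actually the easy case, not an obstacle. If the least jump $l$ has $\wt_H^G(l)=2$, then (since all jumps are odd and the $p$-th power isomorphisms of Corollary~\ref{power iso} force width-$1$ jumps to precede width-$2$ jumps) \emph{every} jump of $H$ has width~$2$. Lemma~\ref{same same} then gives $\Phi(H)=H\cap G_{l+1}$, and Lemma~\ref{supertecnico 1} applies directly.

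The real obstacle lies inside your ``generic'' case. Your claim that $\Phi(H)=H\cap G_{l+1}$ whenever $\wt_H^G(l)=1$ is false. Take $c=4$, pick $z\in G\setminus G_2$ and $x\in G_3\setminus (\langle z^p\rangle G_4)$ with $x^p=1$, and set $H=\langle z\rangle\oplus\langle x\rangle$; then $H\cap G_4=\{1\}$, the least jump is $l=1$ with width~$1$, but $H$ also has a width-$2$ jump at~$3$. Here $\Phi(H)=H^p=\langle z^p\rangle$ has index $p^2$ in $H$, while $H\cap G_2=H\cap G_3=\langle x\rangle\oplus\langle z^p\rangle$ has index~$p$. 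Consequently there are $p^2$ complements of $G_4$ in $\widetilde H$, but the $G_{c-l}=G_3$-orbit of $H$ has size only $p$ (since $G_3$ centralizes $x$ and the commutator pairing $G_3/G_4\times\langle zG_2\rangle\to G_4$ has one-dimensional image). So $G_{c-l}$ is not transitive on complements, and your Glauberman argument does not go through.

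The paper handles exactly this mixed-width situation in Lemmas~\ref{J-L-H}--\ref{wojo}: one writes $H=L\oplus J$ with $L$ cyclic carrying the width-$1$ jumps (least jump $l$) and $J$ cyclic carrying the width-$2$ jumps (least jump $j>l$). One first conjugates by an element of $G_{c-l}$ to make $L$ $\alpha$-stable; this element centralizes $J$ because $[G_{c-l},G_j]\subseteq G_{c-l+j}\subseteq G_{c+1}=\{1\}$. Then one conjugates by a carefully chosen element of $G_{c-j}\cap C_G(L)$ (built using the interplay of the power map $\rho^{(j+l-c)/2}$ and the commutator pairing into $G_c$) to fix up $J$ while preserving $L$. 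The two steps use conjugation from \emph{different} layers $G_{c-l}$ and $G_{c-j}$, and this is exactly what a single application of Lemma~\ref{supertecnico 1} cannot provide.
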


\noindent
We give the proof of Proposition \ref{proposition even} in Section \ref{proof even}, after some preparation.

\subsection{Some lemmas}\label{section lemmas even}

\noindent
We will work under the hypotheses of Proposition \ref{proposition even} until the end of Section \ref{section lemmas even}. The class $c$ of $G$ being even, Lemma \ref{parity obelisks}($4$) yields that $G_c$ has order $p$.
We denote moreover $A=\gen{\alpha}$ and we recall that a subgroup $H$ of $G$ is said to be $A$-stable if the action of $A$ on $G$ induces an action of $A$ on $H$.

\begin{lemma}\label{induction even}
Let $H$ be a subgroup of $G$ containing $G_c$. Then there exists $g\in G$ such that $gHg^{-1}$ is $A$-stable. 
\end{lemma}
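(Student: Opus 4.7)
The plan is to reduce the problem to the quotient $G/G_c$ and exploit the hypothesis that the induced automorphism $\alpha_c$ is intense there. Since $c$ is the nilpotency class of $G$, the subgroup $G_c$ is central, hence normal, and in particular characteristic (and therefore $A$-stable). The containment $G_c \subseteq H$ will be what allows us to transfer an $A$-stable conjugate in the quotient back to an $A$-stable conjugate in $G$.

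First I would consider the subgroup $H/G_c$ of $G/G_c$. The automorphism $\alpha_c$ has order dividing $2$, hence coprime to $|G/G_c|$, which is a power of the odd prime $p$. If $\alpha_c = \id$, then every subgroup of $G/G_c$ is $\langle\alpha_c\rangle$-stable, so we may take $g = 1$. Otherwise $\alpha_c$ has order exactly $2$ and is intense by assumption; by Lemma \ref{equivalent intense coprime-pgrps} applied to the $p$-group $G/G_c$ and the automorphism $\alpha_c$ of coprime order, there exists $g \in G$ (via a representative of the corresponding coset in $G/G_c$) such that $(gHg^{-1})/G_c$ is $\langle\alpha_c\rangle$-stable.

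Finally I would lift this back. Since $G_c \subseteq H$ and $G_c$ is normal in $G$, we have $G_c = g G_c g^{-1} \subseteq g H g^{-1}$. The $\langle\alpha_c\rangle$-stability of $(gHg^{-1})/G_c$ means $\alpha(gHg^{-1})\cdot G_c \subseteq gHg^{-1}\cdot G_c = gHg^{-1}$, so $\alpha(gHg^{-1}) \subseteq gHg^{-1}$, and equality follows because $\alpha$ is an automorphism of a finite group.

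There is no real obstacle here: this lemma is the easy ``base case'' for the induction behind Proposition \ref{proposition even}, meant to handle subgroups sitting above $G_c$. The substantive work of Proposition \ref{proposition even} will instead concern subgroups $H$ with $H\cap G_c = \{1\}$, where one cannot merely quotient out and must control complements of $G_c$ inside $HG_c$ using the structural results from Section \ref{section subgroups obelisks}.
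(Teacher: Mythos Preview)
Your proof is correct and follows essentially the same approach as the paper: pass to $G/G_c$, invoke Lemma \ref{equivalent intense coprime-pgrps} for the intense automorphism $\alpha_c$ of coprime order to obtain $g$ with $(gHg^{-1})/G_c$ stable, then lift using $G_c\subseteq gHg^{-1}$ and the $A$-stability of $G_c$. Your case split on $\alpha_c=\id$ is harmless but unnecessary, since Lemma \ref{equivalent intense coprime-pgrps} already covers automorphisms of order $1$.
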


\begin{proof}
The automorphism $\alpha_c$ is intense so, by Lemma \ref{equivalent intense coprime-pgrps}, there exists $g\in G$ such that $(gHg^{-1})/G_c$ is $\gen{\alpha_c}$-stable. It follows from the definition of $\alpha_c$ that $gHg^{-1}$ is $A$-stable.
\end{proof}

\begin{lemma}\label{tutti jump odd}
Let $H$ be a subgroup of $G$ such that $H\cap G_c=\graffe{1}$. Then all jumps of $H$ in $G$ are odd. 
\end{lemma}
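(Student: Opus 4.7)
The plan is to argue by contradiction: suppose some jump $j$ of $H$ in $G$ were even. By Lemma \ref{jumps and depth}, we can then pick $x \in H$ with $\dpt_G(x) = j$, and since $j$ is a jump, $j \leq c$. Because $c$ is even by hypothesis and $j$ is even, all the indices $j, j+2, j+4, \ldots, c$ are even and lie in $\graffe{1,\ldots,c}$.

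First I would climb a ladder of $p$-th power isomorphisms between consecutive even layers of the lower central series. Namely, for each integer $k$ with $0 \leq k < (c-j)/2$, Corollary \ref{power iso bottom} applied to the pair of even indices $j+2k$ and $j+2k+2$ (both at most $c$) yields that the $p$-th power map induces an isomorphism
\[
\rho^{1}_{j+2k}:\ G_{j+2k}/G_{j+2k+1}\ \longrightarrow\ G_{j+2k+2}/G_{j+2k+3}.
\]
A straightforward induction on $k$ then shows that $\dpt_G(x^{p^k}) = j+2k$ for every $k$ with $0 \leq k \leq (c-j)/2$: the non-trivial coset $x^{p^k}G_{j+2k+1}$ is mapped by $\rho^{1}_{j+2k}$ to the non-trivial coset $x^{p^{k+1}}G_{j+2k+3}$. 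Specializing to $k=(c-j)/2$ produces an element $x^{p^k}\in G_c\setminus\graffe{1}$. Since $x \in H$ gives $x^{p^k}\in H$, this contradicts the hypothesis $H\cap G_c=\graffe{1}$.

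There is no real obstacle here: the statement is purely structural and rests entirely on the ladder of $p$-th power isomorphisms between the even-index layers of a $p$-obelisk that was established in Section \ref{section obelisk diagrams}. In particular, the running assumption that $\alpha_c$ is intense plays no role in this particular step; it is used only in the ambient setup of Section \ref{section lemmas even} and will enter the later arguments.
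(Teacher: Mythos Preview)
Your proof is correct and follows the same approach as the paper, which also derives the contradiction from Corollary \ref{power iso bottom}; you have simply unpacked the one-line argument in detail. As a minor simplification, you could apply Corollary \ref{power iso bottom} once with the pair $(j,c)$ rather than climbing the ladder step by step, obtaining directly the isomorphism $\rho^{(c-j)/2}_j:G_j/G_{j+1}\to G_c$ and hence $x^{p^{(c-j)/2}}\in G_c\setminus\graffe{1}$.
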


\begin{proof}
The subgroup $H$ has trivial intersection with $G_c$ and $c$ is even. It follows from Corollary \ref{power iso bottom} that $H$ cannot have even jumps in $G$. 
\end{proof}

\begin{lemma}\label{prodotti stabili}\label{gamma}
Let $H$ be a subgroup of $G$ such that $H\cap G_c=\graffe{1}$. 
Define $T=HG_c$ and assume that $\alpha(T)=T$.
Then, for each subgroup $K$ of $T$, one has $\alpha(KG_c)=KG_c$.
Moreover, for each $x\in H$, there exists $\gamma\in G_c$ such that 
$\alpha(x)=x^{-1}\gamma$ and $\alpha(\gamma)=\gamma$.
\end{lemma}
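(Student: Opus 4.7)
The plan is to reduce both claims to a single key statement: on the quotient $T/G_c$, which is isomorphic to $H$ via $h\mapsto hG_c$ because $H\cap G_c=\{1\}$, the induced automorphism $\bar\alpha$ coincides with the inversion map, and $T/G_c$ is abelian. Once this is established, part~(a) follows because inversion fixes every subgroup of an abelian group, so $\bar\alpha$ stabilises every subgroup of $T/G_c$; pulling back along the quotient map $T\to T/G_c$ (which commutes with $\alpha$ since $G_c$ is characteristic) yields $\alpha(KG_c)=KG_c$ for every $K\le T$. Part~(b) follows because, given $x\in H$, the formula $\bar\alpha(xG_c)=x^{-1}G_c$ produces $\gamma\in G_c$ with $\alpha(x)=x^{-1}\gamma$, after which applying $\alpha$ once more and using $\alpha^2=\id$ together with the centrality of $G_c$ (Proposition~\ref{obelisk centre at the bottom}) forces $\alpha(\gamma)=\gamma$.

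First I would extract from the hypothesis that $\alpha_c$ is intense of order dividing~$2$ on $G/G_c$ the fact that, for each $j$ with $1\le j\le c-1$, the automorphism induced by $\alpha$ on $G_j/G_{j+1}$ is scalar multiplication by $(-1)^j$: this is Proposition~\ref{proposition -1^i} applied to the quotient $G/G_c$, which has class $c-1$ (for the degenerate small-$c$ cases the same conclusion follows from the abelian-case analysis of Lemma~\ref{abelian case}). Combined with Lemma~\ref{tutti jump odd} — every jump of $H$ in $G$ is odd — and with the observation that every such jump lies in $\{1,3,\dots,c-1\}$ since $H\cap G_c=\{1\}$, this forces $\alpha$ to act as $-1$ on each nontrivial successive quotient $(H\cap G_j)/(H\cap G_{j+1})$.

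Next I would set up a convenient $A$-stable filtration of $T/G_c$. Using Lemma~\ref{dedekind law} and the inclusion $G_c\subseteq G_j$ for $j\le c$, one obtains $T\cap G_j=(H\cap G_j)G_c$; each term is $A$-stable because $T$ and $G_j$ are, and the successive quotients of the resulting filtration of $T/G_c$ are, again invoking $H\cap G_c=\{1\}$, naturally isomorphic as $A$-modules to $(H\cap G_j)/(H\cap G_{j+1})$. In particular the layerwise $-1$-action on $G_j/G_{j+1}$ transports faithfully to $T/G_c$. Iterating Lemma~\ref{abelian>minus quotients} along this filtration, with the short-exact-sequence bookkeeping supplied by Lemma~\ref{ses+}, then shows that $T/G_c$ is abelian and that $\bar\alpha$ coincides with inversion, which is the key statement announced above.

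The main obstacle is this iterative layer-by-layer step: although $H$ itself need not be $A$-stable inside $G$, one must verify that the subquotients of the $A$-stable filtration of $T/G_c$ inherit the inversion property from the corresponding quotients $G_j/G_{j+1}$ of $G$ in a genuinely $A$-equivariant way, so that Lemma~\ref{abelian>minus quotients} can actually be applied at each step. Once that bookkeeping is in place, the translation between subgroups of $T$ containing $G_c$ and subgroups of $T/G_c$ used for part~(a), and the two-line computation behind part~(b), are both routine.
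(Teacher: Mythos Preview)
Your proposal is correct and follows the same overall strategy as the paper: show that the induced automorphism on $T/G_c$ is inversion, deduce part~(a) from the fact that inversion stabilises every subgroup, and deduce part~(b) from the resulting congruence $\alpha(x)\equiv x^{-1}\bmod G_c$.

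The paper, however, takes a shorter route at both steps. For the key statement, rather than building the filtration $(T\cap G_j)/G_c$ and iterating Lemma~\ref{abelian>minus quotients}, the paper simply applies Lemma~\ref{order +- jumps} to the $\langle\alpha_c\rangle$-stable subgroup $\overline{T}$ of $\overline{G}=G/G_c$: since all jumps of $\overline{T}$ in $\overline{G}$ are odd (Lemma~\ref{tutti jump odd}), that lemma gives $|\overline{T}^{\,+}|=1$, whence $\overline{T}=\overline{T}^{\,-}$ immediately. This is exactly the result your iteration would produce, but Lemma~\ref{order +- jumps} has already packaged the induction once and for all. For part~(b), your computation using $\alpha^2=\id$ and centrality of $G_c$ is correct, but the paper observes something stronger and simpler: since $c$ is even, Lemma~\ref{action chi^i general} gives that $\alpha$ acts on $G_c=G_c/G_{c+1}$ through $\chi^c=1$, so $\alpha$ restricts to the identity on all of $G_c$, and $\alpha(\gamma)=\gamma$ is automatic.
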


\begin{proof}
We denote $\overline{G}=G/G_c$ and we use the bar notation for its subgroups.
By Lemma \ref{tutti jump odd}, all jumps of $H$ in $G$ are odd and so all jumps of $\overline{T}$ in $\overline{G}$ are odd. The subgroup $\overline{T}$ is $\gen{\alpha_c}$-stable so, as a consequence of Lemma \ref{order +- jumps}, each element of $\overline{T}$ is sent to its inverse by $\alpha_c$. Every subgroup of $\overline{T}$ is thus $\gen{\alpha_c}$-stable and, in particular, so is $\overline{K}$. It follows from the definition of $\alpha_c$ that $KG_c$ is $A$-stable. Moreover, every element of $H$ is inverted, modulo $G_c$, by $\alpha$ and the restriction of $\alpha$ to $G_c$ is the identity map, thanks to Lemma 
\ref{action chi^i general}.
\end{proof}

\begin{lemma}\label{same dimension}
Let $H$ be a non-trivial subgroup of $G$ such that $H\cap G_c=\graffe{1}$. 
Let $l$ denote the least jump of $H$ and assume that all jumps of $H$ in $G$ have the same width. Assume moreover that $\alpha(HG_c)=HG_c$.
Then there exists $g\in G_{c-l}$ such that $gHg^{-1}$ is $A$-stable.
\end{lemma}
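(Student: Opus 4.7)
The plan is to produce an $A$-stable complement of $G_c$ in $T=HG_c$ via Glauberman's lemma, and then to transfer it to a conjugate of $H$ by an element of $G_{c-l}$ by invoking Lemma~\ref{supertecnico 1}.

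First I would collect the structural information on $H$. Since $H\cap G_c=\graffe{1}$ and $c$ is even, Lemma~\ref{tutti jump odd} forces every jump of $H$ in $G$ to be odd; in particular $l$ is odd, so $c-l$ is odd. By assumption all jumps of $H$ in $G$ share the same width, and they all share the same parity, so Lemma~\ref{same same} applies and gives $\Phi(H)=H\cap G_{l+1}$. These are exactly the hypotheses required to apply Lemma~\ref{supertecnico 1} to $H$, which yields that every complement of $G_c$ in $T$ has the form $tHt^{-1}$ for some $t\in G_{c-l}$.

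Let $X$ be the set of complements of $G_c$ in $T$; note that $H\in X$ since $H\cap G_c=\graffe{1}$ and $HG_c=T$, so $X$ is non-empty. The subgroup $G_{c-l}$ acts on $X$ by conjugation, and by Lemma~\ref{supertecnico 1} this action is transitive. On the other hand, $G_c$ and $T$ are both $A$-stable (by hypothesis and because $G_c$ is characteristic), so $A$ acts on $X$ via $\alpha\cdot K=\alpha(K)$. The subgroup $G_{c-l}$ is itself $A$-stable, as it is a term of the lower central series. Compatibility of the three actions is immediate from the fact that $\alpha$ is a homomorphism: for $b\in G_{c-l}$, $K\in X$ one has $\alpha(bKb^{-1})=\alpha(b)\alpha(K)\alpha(b)^{-1}$. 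Since $|G_{c-l}|$ is a power of the odd prime $p$ and $|A|=2$, the orders are coprime and both groups are solvable.

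At this point Glauberman's lemma (Lemma~\ref{glauberman lemma}) produces an $A$-stable element $K\in X$. Applying Lemma~\ref{supertecnico 1} once more, we find $g\in G_{c-l}$ with $K=gHg^{-1}$, and this $g$ does the job. The main (minor) obstacle is the verification that $H$ meets the hypotheses of Lemma~\ref{supertecnico 1}, i.e.\ that $\Phi(H)=H\cap G_{l+1}$; this is exactly where the uniform-width assumption on the jumps of $H$ is used, through Lemma~\ref{same same}.
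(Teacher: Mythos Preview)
Your proof is correct and follows essentially the same route as the paper's. The only cosmetic difference is that where you invoke Glauberman's lemma (Lemma~\ref{glauberman lemma}) directly on the set $X$ of complements, the paper instead notes that $\alpha(H)$ is itself a complement of $G_c$ in $T$, applies Lemma~\ref{supertecnico 1} once to get $\alpha(H)=tHt^{-1}$ with $t\in G_{c-l}$, and then uses the packaged equivalence Lemma~\ref{equivalent intense coprime} (with $N=G_{c-l}$) to convert this into the existence of $g\in G_{c-l}$ with $gHg^{-1}$ $A$-stable; since Lemma~\ref{equivalent intense coprime} is itself proved via Glauberman, the two arguments are the same at bottom.
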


\begin{proof}
Define $T=HG_c$. 
All jumps of $H$ in $G$ are odd, by Lemma \ref{tutti jump odd},
and $H$ is abelian, by Lemma \ref{same same}($1$). The subgroup $G_c$ being central, the group $T$ is in fact equal to $H\oplus G_c$. 
Moreover, the subgroup $G_c$ being characteristic, $T=\alpha(T)=\alpha(H)\oplus G_c$ and $\alpha(H)$ is a complement of $G_c$ in $T$. 
By Lemma \ref{same same}($2$), the Frattini subgroup of $H$ is equal to $H\cap G_{l+1}$ so it follows from
Lemma \ref{supertecnico 1} that there exists $t\in G_{c-l}$ such that $\alpha(H)=tHt^{-1}$. Thanks to Lemma \ref{equivalent intense coprime}, there exists $g\in G_{c-l}$ such that $gHg^{-1}$ is $A$-stable.
\end{proof}

\begin{lemma}\label{same dimension general}
Let $H$ be a subgroup of $G$ such that $H\cap G_c=\graffe{1}$. 
Assume that all jumps of $H$ in $G$ have the same width. 
Then there exists $g\in G$ such that $gHg^{-1}$ is $A$-stable.
\end{lemma}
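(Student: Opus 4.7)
The plan is to reduce to the situation of Lemma~\ref{same dimension} by a preliminary conjugation that makes the subgroup $HG_c$ itself $A$-stable.

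First, I would apply Lemma~\ref{induction even} to the subgroup $HG_c$, which contains $G_c$. This produces an element $g_0\in G$ such that $g_0(HG_c)g_0^{-1}$ is $A$-stable. Since $G_c$ is central (it lies in the centre of $G$ as $G$ has class $c$), conjugation fixes $G_c$ and we obtain
\[
\alpha\bigl((g_0Hg_0^{-1})G_c\bigr)=(g_0Hg_0^{-1})G_c.
\]
Set $H'=g_0Hg_0^{-1}$.

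Next, I would verify that $H'$ inherits all the hypotheses required to apply Lemma~\ref{same dimension}. The intersection $H'\cap G_c$ remains trivial because $G_c$ is normal, so $g_0^{-1}(H'\cap G_c)g_0=H\cap G_c=\{1\}$. By Lemma~\ref{same jumps conjugates}, applied to the inner automorphism given by $g_0$, the subgroups $H$ and $H'$ have the same jumps in $G$; in particular every jump of $H'$ has the same width, and the least jump of $H'$ coincides with the least jump $l$ of $H$. Finally, $\alpha(H'G_c)=H'G_c$ by construction.

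Thus Lemma~\ref{same dimension} applies to $H'$ and yields $g_1\in G_{c-l}$ with $g_1H'g_1^{-1}$ an $A$-stable subgroup of $G$. Taking $g=g_1g_0$ gives $gHg^{-1}=g_1H'g_1^{-1}$, which is $A$-stable, as required. No step here is particularly delicate; the only point that needs care is checking that the hypotheses of Lemma~\ref{same dimension}—triviality of intersection with $G_c$, equal width of all jumps, and $A$-stability of $H'G_c$—are all preserved under the preliminary conjugation, which follows immediately from the centrality of $G_c$ and from Lemma~\ref{same jumps conjugates}.
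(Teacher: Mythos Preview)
Your proposal is correct and follows essentially the same approach as the paper: first conjugate so that $HG_c$ becomes $A$-stable via Lemma~\ref{induction even}, then apply Lemma~\ref{same dimension} to the conjugate, and compose the two conjugating elements. The paper's proof is slightly terser, omitting the explicit verification that the hypotheses of Lemma~\ref{same dimension} transfer to the conjugate, but your added checks are all correct and straightforward.
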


\begin{proof}
Denote $T=HG_c$. Thanks to Lemma \ref{induction even}, there exists $a\in G$ such that $aTa^{-1}$ is $A$-stable. 
Write $T'=aTa^{-1}$ and $H'=aHa^{-1}$. Then $T'=H'G_c$ and, thanks to Lemma \ref{same dimension}, there exists $b\in G$ such that $bH'b^{-1}$ is $A$-stable. To conclude, define $g=ba$.
\end{proof}

\begin{lemma}\label{gran finale even}
Let $H$ be a subgroup of $G$ such that $H\cap G_c=\graffe{1}$. 
Then there exists $g\in G$ such that $gHg^{-1}$ is $A$-stable.
\end{lemma}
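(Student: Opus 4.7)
The argument combines all the preceding lemmas of the section. By Lemma \ref{tutti jump odd} every jump of $H$ in $G$ is odd; if in addition all of them have the same width, Lemma \ref{same dimension general} already furnishes the desired $A$-stable conjugate, so the substantive work is the mixed-width case. By Lemma \ref{induction even}, after replacing $H$ by a suitable $G$-conjugate we may assume $HG_c$ is $A$-stable, and then Lemma \ref{prodotti stabili} provides, for each $x \in H$, an element $\gamma_x \in G_c = \ZG(G)$ (fixed by $\alpha$, since $c$ is even) such that $\alpha(x) = x^{-1}\gamma_x$.

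The idea is to apply Glauberman's Lemma (Lemma \ref{glauberman lemma}) to the set $\mathcal{C}$ of complements of $G_c$ in $HG_c$. By Lemma \ref{graph+complements} there is a bijection $\mathcal{C} \leftrightarrow \Hom(H/\Phi(H), G_c)$, so $|\mathcal{C}| = p^{d(H)}$ is an odd number; moreover the $p$-group $N = \nor_G(HG_c)$ (which is $A$-stable since $HG_c$ is) acts on $\mathcal{C}$ by conjugation, compatibly with the $A$-action. Write $\mathcal{O}$ for the $N$-orbit of $H$ in $\mathcal{C}$: it consists precisely of those $G$-conjugates of $H$ that sit inside $HG_c$, and $N$ acts transitively on $\mathcal{O}$. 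Provided $\mathcal{O}$ is $A$-stable, Glauberman's Lemma produces an $A$-fixed point in $\mathcal{O}$, which is the desired $A$-stable $G$-conjugate of $H$.

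It therefore remains to show $\mathcal{O}$ is $A$-stable, i.e.\ that $\alpha(H)$ is $G$-conjugate to $H$. This is established by an auxiliary induction on the class $c$ of $G$, ranging over even integers; the base case $c = 2$ falls under the single-jump case handled by Lemma \ref{same dimension general}. For the inductive step, the quotient $G/G_{c-1}$ is, by Lemma \ref{non-ab quotient obelisk}, a $p$-obelisk of even class $c - 2$, and by Lemma \ref{intense properties}(2) the hypothesis of Proposition \ref{proposition even} descends to it. The inductive hypothesis then applies to $G/G_{c-1}$ and yields that the images of $H$ and of $\alpha(H)$ are conjugate there; lifting this conjugacy back to $G$, and controlling the residual ambiguity inside $G_{c-1}$ through the $G_{c-l}$-transitivity provided by Lemma \ref{supertecnico 1} (applied to the subgroups of $H$ whose jumps have constant width, to which that lemma's hypothesis does apply), one obtains a genuine $G$-conjugacy between $H$ and $\alpha(H)$.

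The main obstacle is precisely this last lifting step: the conjugacy arranged in $G/G_{c-1}$ has to be lifted to $G$ without destroying the $A$-stability of $HG_c$ arranged at the outset. The compatibility between the two successive conjugations is negotiated by Lemma \ref{equivalent intense coprime}, applied with $N$ equal to an appropriate $A$-stable subgroup of $G$ sitting between $HG_c$ and $\nor_G(HG_c)$; the most delicate part of the verification is checking that the $G_{c-l}$-orbits on the complements $\mathcal{C}$ interact correctly with the $A$-action so that the Glauberman fixed point indeed lies in the $G$-orbit of $H$ and not in some other orbit of $\mathcal{C}$.
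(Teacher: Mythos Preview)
Your Glauberman packaging is essentially Lemma~\ref{equivalent intense coprime} in disguise: once you know $\alpha(H)$ and $H$ are $G$-conjugate, the existence of an $A$-stable conjugate is immediate, so the entire weight of your argument rests on the auxiliary induction. That induction, however, is vacuous. You first arrange $T=HG_c$ to be $A$-stable, so $H$ and $\alpha(H)$ are both complements of $G_c$ in $T$; in particular $HG_c=\alpha(H)G_c$, and since $G_c\subseteq G_{c-1}$ this gives $HG_{c-1}=\alpha(H)G_{c-1}$. Thus in $\overline G=G/G_{c-1}$ the images $\overline H$ and $\overline{\alpha(H)}$ are already \emph{equal}, and the inductive hypothesis tells you nothing you did not know. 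The ``residual ambiguity inside $G_{c-1}$'' that you propose to control is therefore the whole problem, not a leftover.

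Your gesture toward applying Lemma~\ref{supertecnico 1} to the constant-width pieces of $H$ is the right instinct, but you have not carried it out, and the argument requires real care. The paper proceeds as follows in the mixed-width case. First (Lemma~\ref{jl}) one shows $l<j$, $l+j>c$, and $H$ is abelian; then (Lemma~\ref{J-L-H}) one decomposes $H=J\oplus L$ with $J,L$ cyclic of least jumps $j,l$ respectively. The key construction is a two-step conjugation: first (Lemma~\ref{polline}) conjugate by an element of $G_{c-l}$ to make $L$ $A$-stable---crucially, any such element centralizes $J$ because $[G_{c-l},G_j]\subseteq G_{c-l+j}\subseteq G_{c+1}=1$, so $J$ is undisturbed. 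Then (Lemma~\ref{nelmezzo}), with $L$ now $A$-stable, one writes $\alpha(x)=x^{-1}\gamma$ for a generator $x$ of $J$ and $\gamma\in G_c$, lifts a suitable root of $\gamma$ via the $p$-power map to produce an element $g\in G_{c-j}\cap\Cyc_G(L)$, and checks by direct computation that $gJg^{-1}$ is $A$-stable; since $g$ centralizes $L$, so is $gHg^{-1}$. This explicit construction is what your proposal is missing.
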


\noindent
We devote the remaining part of this section to the proof of Lemma \ref{gran finale even}. We warn the reader that the following assumptions will be valid until the end of Section \ref{section lemmas even}. 
\vspace{8pt} \\
\noindent
Let $H$ be a subgroup of $G$ such that $H\cap G_c=\graffe{1}$. Without loss of generality we assume that $H$ is non-trivial and, in view of Lemma \ref{same dimension general}, that the jumps of $H$ in $G$ do not all have the same width. As a consequence of Proposition \ref{blackburn 2-1-2}($1$), each jump of $H$ in $G$ will have width $1$ or $2$.
Let $l$ and $j$ denote respectively the least jump of width $1$ and the least jump of width $2$ of $H$ in $G$.
Write $T=HG_c$.

\begin{lemma}\label{dipassaggio}
Let $i$ and $h$ be jumps of $H$. Assume that $\wt_H^G(i)=1$ and that $\wt_H^G(h)=2$. Then $i<h$.
\end{lemma}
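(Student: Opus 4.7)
The plan is to argue by contradiction. Suppose $i \geq h$. Since $\wt_H^G(i) = 1 \neq 2 = \wt_H^G(h)$, in fact $i > h$. By Lemma \ref{tutti jump odd} every jump of $H$ in $G$ is odd, so both $i$ and $h$ are odd. Moreover, since $c$ is even, Lemma \ref{parity obelisks}(4) gives $\wt_G(c) = 1$, forcing $h \neq c$ and hence $h \leq c - 1$.

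The heart of the matter is the following propagation claim: for every odd $m$ with $h \leq m \leq c - 3$, if $\wt_H^G(m) = 2$ then $\wt_H^G(m+2) = 2$. Granting this, we can iterate from $m = h$ upwards (as long as $m + 2 \leq c - 1$) and conclude that $\wt_H^G(i) = 2$, contradicting $\wt_H^G(i) = 1$. The boundary $i = c-1$ is already covered by the iteration, while $i > c - 1$ is impossible because $i$ is odd, $i \leq c$, and $c$ is even.

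To establish the propagation, first note that since $m$ and $m + 2$ are both odd with $m + 2 \leq c - 1 < c$, Lemma \ref{parity obelisks}(1) gives $\wt_G(m+2) = 2$, so Corollary \ref{power iso bottom} applies and the induced map $\rho^1_m : G_m/G_{m+1} \to G_{m+2}/G_{m+3}$ is an isomorphism. Now $\wt_H^G(m) = 2$ means that the canonical projection $\pi_m: G_m \to G_m/G_{m+1}$ sends $H \cap G_m$ onto all of $G_m/G_{m+1}$. Since $(H \cap G_m)^p \subseteq H \cap G_{m+2}$ and $\rho^1_m$ is induced by the $p$-power map, applying $\rho^1_m$ to $\pi_m(H \cap G_m)$ gives
\[
\pi_{m+2}(H \cap G_{m+2}) \supseteq \pi_{m+2}((H \cap G_m)^p) = \rho^1_m(G_m/G_{m+1}) = G_{m+2}/G_{m+3},
\]
whence $\wt_H^G(m+2) = 2$.

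The only step requiring care is the bookkeeping at the top of the lower central series, namely ensuring that $h \leq c-1$ and that the power map $\rho^1_m$ is an isomorphism at each step, both of which follow cleanly from the parity considerations supplied by Lemmas \ref{tutti jump odd} and \ref{parity obelisks} together with Corollary \ref{power iso bottom}.
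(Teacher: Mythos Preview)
Your proof is correct and follows essentially the same approach as the paper. The paper's proof is a one-liner: both $i$ and $h$ are odd by Lemma~\ref{tutti jump odd}, and since $c$ is even Corollary~\ref{power iso} immediately yields $i<h$. You have simply unpacked this compressed argument, iterating the power map $\rho^1_m$ step by step rather than applying $\rho^{(i-h)/2}$ in a single shot, and making the propagation of width~$2$ through $H$ explicit; the underlying mechanism is identical.
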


\begin{proof}
By Lemma \ref{tutti jump odd}, both $i$ and $h$ are odd so, the class $c$ being even, Corollary \ref{power iso} yields $i<h$.
\end{proof}

\begin{lemma}\label{jl}
The following hold.
\begin{itemize}
 \item[$1$.] One has $l<j$.
 \item[$2$.] One has $j+l>c$.
 \item[$3$.] The subgroup $H$ is abelian.
\end{itemize}
\end{lemma}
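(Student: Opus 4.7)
The plan is to dispatch the three claims in the order stated, since the proof of (3) will rest on (2), and (2) will rest on parity information already supplied by Lemmas \ref{tutti jump odd} and \ref{dipassaggio}. Claim (1) is essentially free: applying Lemma \ref{dipassaggio} with $i=l$ and $h=j$ gives $l<j$ immediately.

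For (2), I would argue by contradiction, supposing $l+j\leq c$. Since $H\cap G_c=\graffe{1}$, Lemma \ref{tutti jump odd} forces both $l$ and $j$ to be odd, so $l+j$ is even. Lemma \ref{parity obelisks} then gives $\wt_G(l+j)=1$, and Lemma \ref{obelisk non-deg} makes $\gamma_{l,j}\colon G_l/G_{l+1}\times G_j/G_{j+1}\to G_{l+j}/G_{l+j+1}$ non-degenerate. Choosing $x\in H$ of depth exactly $l$, the map $\bar{y}\mapsto\gamma_{l,j}(\bar{x},\bar{y})$ is a non-zero linear form on $G_j/G_{j+1}$. Because $\wt_H^G(j)=2$, the image of $H\cap G_j$ in $G_j/G_{j+1}$ fills the whole space, so one can find $y\in H\cap G_j$ with $[x,y]\notin G_{l+j+1}$. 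This commutator then lies in $H$ with depth exactly $l+j$, an even integer: that contradicts Lemma \ref{tutti jump odd} when $l+j<c$, and contradicts $H\cap G_c=\graffe{1}$ when $l+j=c$.

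For (3), I would first leverage (2) to show $H\cap G_j\subseteq\ZG(H)$: every non-trivial $x\in H$ has depth at least $l$ and every $y\in H\cap G_j$ has depth at least $j$, so by Lemma \ref{commutator indices} the commutator $[x,y]$ lies in $G_{l+j}$, which is trivial by (2). It thus suffices to prove $H/(H\cap G_j)$ cyclic, for then $H/\ZG(H)$ is cyclic and Lemma \ref{quotient by centre not cyclic} forces $H$ to be abelian. To establish cyclicity I would work inside the quotient $G/G_j$: since $l,j$ are both odd and $l<j$, one has $j\geq 3$, so $G/G_j$ is non-abelian and hence, by Lemma \ref{non-ab quotient obelisk}, a $p$-obelisk of even class $j-1$. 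The jumps of the image of $H$ in $G/G_j$ are exactly the jumps of $H$ strictly less than $j$, which by Lemma \ref{dipassaggio} are precisely the width-$1$ jumps of $H$; these are all odd by Lemma \ref{tutti jump odd}, so in particular the even index $j-1$ is not among them and the image meets $(G/G_j)_{j-1}$ trivially. Lemma \ref{cyclic trivial intersection} then delivers the required cyclicity.

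The main obstacle is (2): the difficulty lies in producing a commutator that actually lives in $H$ rather than merely in $G$, and this is exactly what the width-$2$ hypothesis on $j$ buys — it forces $H\cap G_j$ to surject onto $G_j/G_{j+1}$, which in turn lets the non-degeneracy of $\gamma_{l,j}$ provide a witness $y\in H\cap G_j$ rather than an abstract element of $G_j$.
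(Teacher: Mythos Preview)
Your proof is correct and follows essentially the same route as the paper's. The paper proves (2) and (3) together: it first observes (via Lemma \ref{obelisk cyclic subgroups} applied in $G/G_j$) that $H/(H\cap G_j)$ is cyclic, then invokes Lemma \ref{cyclic quotient commutators} to get $[H,H]=[H,H\cap G_j]$; next it notes that $l+j$ is even and hence not a jump of $H$, and tersely says ``Lemma \ref{obelisk non-deg} yields $l+j>c$'' --- the implicit element-chasing behind that sentence is exactly what you spell out. Finally $[H,H]\subseteq G_{l+j}\subseteq G_{c+1}=\{1\}$ gives abelianness. Your version differs only cosmetically: you cite Lemma \ref{cyclic trivial intersection} rather than Lemma \ref{obelisk cyclic subgroups} (both apply), and you phrase the conclusion via $H\cap G_j\subseteq\ZG(H)$ plus Lemma \ref{quotient by centre not cyclic} instead of via Lemma \ref{cyclic quotient commutators}; the content is identical. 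Your case split $l+j<c$ versus $l+j=c$ is harmless but unnecessary, since Lemma \ref{tutti jump odd} already excludes all even jumps, including $c$ itself.
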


\begin{proof}
Part $(1)$ follows directly from Lemma \ref{dipassaggio}. We prove $(2)$ and $(3)$ together.
As a consequence of Lemma \ref{obelisk cyclic subgroups}, the group $H/(H\cap G_j)$ is cyclic and, thanks to Lemma \ref{cyclic quotient commutators}, one gets $[H,H]=[H,H\cap G_j]$. 
The number $l+j$ being even, it follows from Lemma \ref{tutti jump odd} that $l+j$ is not a jump of $H$ in $G$. Lemma \ref{obelisk non-deg} yields $l+j>c$ and, as a result, $[H,H]\subseteq G_{c+1}=\graffe{1}$ so $H$ is abelian. 
\end{proof}

\begin{lemma}\label{J-L-H}
There exist cyclic subgroups $J$ and $L$ of $H$ such that $H=J\oplus L$ and $j$ and $l$ are respectively the least jump of $J$ and the least jump of $L$ in $G$. 
\end{lemma}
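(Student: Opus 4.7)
The plan is to construct $L$ and $J$ as cyclic subgroups of $H$ generated by chosen elements of depth $l$ and $j$ respectively, and then verify the direct sum decomposition by counting orders.

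By Lemmas~\ref{jl}($3$) and~\ref{tutti jump odd}, $H$ is abelian and all its jumps in $G$ are odd. Since $c$ is even, Corollary~\ref{power iso bottom} provides, for each odd $i\leq c-3$, an isomorphism $\rho_i:G_i/G_{i+1}\to G_{i+2}/G_{i+3}$. Restricting to $H$ gives injections $H\cap G_i/H\cap G_{i+1}\hookrightarrow H\cap G_{i+2}/H\cap G_{i+3}$, so the $p$-th power map preserves the depth of non-trivial elements of $H$ up to a shift of $2$, and the widths $\wt_H^G$ are non-decreasing along the odd positions. Combined with the bound $\wt_H^G(i)\leq\wt_G(i)=2$ for each odd $i\leq c-1$ (Lemma~\ref{parity obelisks}), this pins down the jump structure of $H$: jumps occur precisely at the odd integers in $\{l,l+2,\ldots,c-1\}$, with width $1$ on $\{l,l+2,\ldots,j-2\}$ and width $2$ on $\{j,j+2,\ldots,c-1\}$; a direct count then gives $|H|=p^{c+1-(l+j)/2}$.

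I would then pick any $x\in H$ with $\dpt_G(x)=l$ and set $L=\langle x\rangle$. By Lemma~\ref{cyclic then 1-dim jumps} combined with the depth-shift property, $L$ is cyclic with jumps of width $1$ at $l,l+2,\ldots,c-1$ and $|L|=p^{(c-l+1)/2}$; moreover $L\cap H\cap G_j=\langle x^{p^{(j-l)/2}}\rangle$, whose image is a $1$-dimensional subspace of the $2$-dimensional $\F_p$-space $H\cap G_j/H\cap G_{j+1}$. I would then choose $y\in H$ of depth $j$ whose class modulo $H\cap G_{j+1}$ lies outside this subspace, and set $J=\langle y\rangle$; the same reasoning gives that $J$ is cyclic with jumps of width $1$ at $j,j+2,\ldots,c-1$ and $|J|=p^{(c-j+1)/2}$.

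To prove $L\cap J=\{1\}$, suppose for contradiction $z\in L\cap J$ is non-trivial. Writing $z=y^{p^k}$ with $j+2k\leq c-1$ gives $\dpt_G(z)=j+2k$; combining with $z\in L$ forces $z\in\langle x^{p^{(j-l)/2+k}}\rangle$, so the class of $z$ in $H\cap G_{j+2k}/H\cap G_{j+2k+1}$ lies in the $1$-dimensional subspace spanned by $x^{p^{(j-l)/2+k}}$. But $\rho^k$ is injective on $H\cap G_j/H\cap G_{j+1}$ and sends the class of $x^{p^{(j-l)/2}}$ to that of $x^{p^{(j-l)/2+k}}$; by the choice of $y$, the class of $z=\rho^k(y)$ must then lie outside the $1$-dimensional subspace spanned by $x^{p^{(j-l)/2+k}}$~---~a contradiction. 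Finally, $|L||J|=p^{(c-l+1)/2+(c-j+1)/2}=p^{c+1-(l+j)/2}=|H|$, and therefore $H=L\oplus J$. The main obstacle is the triviality of $L\cap J$, whose proof requires the linear independence of $y$ and $x^{p^{(j-l)/2}}$ to be preserved under iterated $p$-th powering; this holds only because the injectivity of each $\rho_i$ on $G$-layer quotients is inherited by the corresponding quotients of the subgroups $H\cap G_i$, allowing the argument to cascade up the filtration.
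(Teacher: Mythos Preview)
Your proof is correct and follows the same strategy as the paper: choose generators of $L$ and $J$ at depths $l$ and $j$, show $L\cap J=\{1\}$ using the injectivity of the $p$-power maps on layer quotients (Corollary~\ref{power iso}), and conclude by an order count. Your explicit requirement that the class of $y$ lie outside the span of $x^{p^{(j-l)/2}}$ in $G_j/G_{j+1}$ is in fact sharper than the paper's bare condition ``$x\in H\setminus L$'', and is exactly the hypothesis the intersection argument needs.
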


\begin{proof}
The subgroup $H$ is abelian by Lemma \ref{jl}($3$) and, as a consequence of Lemma \ref{dipassaggio}, the subgroup $H\cap G_j$ has only jumps of width $2$. The smallest jump of $H$ in $G$ is $l$ so, thanks to Lemma \ref{jumps and depth}, there is an element $z$ in $H$ with $\dpt_G(z)=l$. Define $L=\gen{z}$. Then $L$ is a subgroup of $H$ and $l$ is the least jump of $L$ in $G$. Moreover, thanks to Lemma \ref{cyclic then 1-dim jumps}, all jumps of $L$ are odd and of width $1$ in $G$.
Now, $l$ is smaller than $j$, by Lemma \ref{jl}($1$), and $j$ is a jump of $L$ in $G$ as a consequence of Corollary \ref{power iso bottom}. However, $j$ is a jump of width $2$ of $H$, and thus there exists an element $x$ in $H\setminus L$ such that $\dpt_G(x)=j$. Define $J=\gen{x}$. The group $H$ being abelian, Corollary \ref{power iso} yields $L\cap J=\graffe{1}$. Now, every jump $l\leq i<j$ of $L$ in $G$ is also a jump of $H$ and it has width $1$ by definition of $j$. Moreover, each jump $j\leq i<c$ of $J\oplus L$ is a 
jump of width $2$ of $H$. Corollary \ref{power iso bottom} ensures that all odd integers $l\leq i<c$ are jumps of $H$ in $G$, so Lemma \ref{order product orders jumps} yields
$$
|J\oplus L|=\prod_{i=l}^{c-1}p^{\dim_{J\oplus L}^G(i)}=\prod_{i=l}^{c-1}p^{\dim_{H}^G(i)}=|H|.
$$
It follows that $H$ and $J\oplus L$ coincide.
\end{proof}

\begin{lemma}\label{polline}
Let $J$ and $L$ be as in Lemma \ref{J-L-H}.
Assume that $\alpha(T)=T$.
Then there exists $g\in G_{c-l}$ such that the following hold.
\begin{itemize}
 \item[$1$.] The group $gLg^{-1}$ is $A$-stable.
 \item[$2$.] One has $gTg^{-1}=T$ and $gJg^{-1}=J$.
\end{itemize} 
\end{lemma}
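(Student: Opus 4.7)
The plan is to apply Lemma \ref{same dimension} to the cyclic factor $L$ (rather than to $H$) and then verify that the conjugating element thereby obtained automatically centralizes $J$ and normalizes $T$. First, since $\alpha(T)=T$ and $L$ is a subgroup of $H\subseteq T$, Lemma \ref{prodotti stabili} applied to $K=L$ gives $\alpha(LG_c)=LG_c$. The subgroup $L$ is cyclic with $L\cap G_c=\graffe{1}$ (because $L\subseteq H$), so by Lemma \ref{cyclic then 1-dim jumps} every jump of $L$ in $G$ has width $1$, and in particular all jumps of $L$ have the same width. By construction the least jump of $L$ in $G$ is $l$.

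With this data, Lemma \ref{same dimension} (applied to $L$ in place of $H$) furnishes an element $g\in G_{c-l}$ such that $gLg^{-1}$ is $A$-stable. This is exactly condition ($1$). For condition ($2$) it suffices to check that this particular $g$ fixes $T$ setwise and $J$ setwise. For $T$, note that $H\subseteq G_l$ since $l$ is the least jump of $H$, so
\[
[g,H]\subseteq [G_{c-l},G_l]\subseteq G_c
\]
by Lemma \ref{commutator indices}. Hence $gHg^{-1}\subseteq HG_c=T$, and since $G_c$ is central (as $G$ has class $c$) we obtain $gTg^{-1}=gHg^{-1}G_c=T$.

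For $J$, we use that $J\subseteq G_j$ (the least jump of $J$ being $j$), so
\[
[g,J]\subseteq [G_{c-l},G_j]\subseteq G_{c-l+j}.
\]
By Lemma \ref{jl}($1$) we have $j>l$, equivalently $c-l+j>c$, so $[g,J]\subseteq G_{c+1}=\graffe{1}$. Therefore $g$ centralizes $J$ and in particular $gJg^{-1}=J$. The main conceptual step, rather than any technical obstacle, is the observation that choosing to stabilize $L$ rather than $H$ already produces a $g$ deep enough in the lower central series to leave the ``wide'' factor $J$ untouched; everything else follows from the commutator degree calculus of Chapter \ref{chapter obelisks}.
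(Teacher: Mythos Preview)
Your proof is correct and follows essentially the same approach as the paper: apply Lemma \ref{same dimension} to the cyclic factor $L$ to obtain $g\in G_{c-l}$, then use commutator degree estimates together with Lemma \ref{jl}($1$) to show that $g$ centralizes $J$ and normalizes $T$. The only cosmetic difference is that the paper computes $[g,T]\subseteq G_c$ directly and invokes Lemma \ref{commutators normalizer}, whereas you pass through $[g,H]\subseteq G_c$ and argue via $gTg^{-1}=gHg^{-1}G_c$; both are equivalent.
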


\begin{proof}
We define $R=LG_c$. By Lemma \ref{prodotti stabili}, the group $R$ is $A$-stable.
The subgroup $L$ is cyclic so, by Lemma \ref{cyclic then 1-dim jumps}, all its jumps in $G$ are odd and of width $1$. With $L$ in the role of $H$, it follows from Lemma \ref{same dimension} that there exists $g\in G_{c-l}$ such that $gLg^{-1}$ is $A$-stable. We fix such an element $g$ and prove that $g$ normalizes both $J$ and $T$. The least jump of $J$ is $j$ and therefore $[g,J]=\graffe{[g,x]\ :\ x\in J}$ is contained in $[G_{c-l},G_j]$. As a consequence of Lemma \ref{commutator indices}, the set $[g,L]$ is contained in $G_{c-l+j}$, which is itself contained in $G_{c+1}$ thanks to Lemma \ref{jl}($1$). It follows that $g$ centralizes $J$ and $gJg^{-1}=J$. 
To conclude, we show that $gTg^{-1}=T$. The subgroup $T$ is contained in $G_l$, so $[g,T]$ is contained in $[G_{c-l},G_l]$. Again applying Lemma \ref{commutator indices}, we get that $[g,T]$ is contained in $G_c$. The subgroup $G_c$ being contained in $T$, we are done by Lemma \ref{commutators normalizer}.
\end{proof}

\begin{lemma}\label{nelmezzo}
Let $J$ and $L$ be as in Lemma \ref{J-L-H}.
Assume that $\alpha(T)=T$ and $\alpha(L)=L$.
Then there exists $g\in G_{c-j}$ such that $gHg^{-1}$ is $A$-stable.
\end{lemma}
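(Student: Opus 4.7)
The plan is to produce some $t \in G_{c-j}$ satisfying $tHt^{-1} = \alpha(H)$ and then invoke Lemma \ref{equivalent intense coprime} with $N = G_{c-j}$ (which is $A$-stable, being characteristic in $G$) to upgrade this to an $A$-stable conjugate. The point of using the small normal subgroup $G_{c-j}$ rather than all of $G$ is that conjugation by $G_{c-j}$ preserves the $L$-factor of the decomposition $H = J \oplus L$, reducing everything to a problem about complements of $G_c$ inside $JG_c$.

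First I would record the structural consequences of the hypotheses. By Lemma \ref{jl}(3) the subgroup $H$ is abelian, and $G_c$ is central, so $T = HG_c = J \oplus L \oplus G_c$ is an abelian group; together with $\alpha(L)=L$ and $\alpha(G_c) = G_c$ this gives $\alpha(H) = \alpha(J) \oplus L$, a second complement of $G_c$ in $T$ containing $L$. Next, the inequality $l + j > c$ of Lemma \ref{jl}(2) together with Lemma \ref{commutator indices} shows that every $t \in G_{c-j}$ satisfies $[t, L] \subseteq G_{c-j+l} \subseteq G_{c+1} = \graffe{1}$ and $[t, J] \subseteq G_c$, so $t$ centralizes $L$ and $tJt^{-1}$ is a complement of $G_c$ in $JG_c$. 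Hence $tHt^{-1} = (tJt^{-1}) \cdot L$, and (using $JG_c \cap L = \graffe{1}$) the task reduces to finding $t \in G_{c-j}$ such that $tJt^{-1}$ equals the complement $\alpha(H) \cap JG_c$ of $G_c$ in $JG_c$.

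That last task is exactly Lemma \ref{supertecnico 1} applied to the cyclic subgroup $J$, for which one needs $J \cap G_c = \graffe{1}$, $\Phi(J) = J \cap G_{j+1}$, and $c - j$ odd. The first is immediate from $H \cap G_c = \graffe{1}$. For the second, the group $J$ is cyclic (Lemma \ref{J-L-H}) with all jumps odd (Lemma \ref{tutti jump odd}) and of width $1$ (Lemma \ref{cyclic then 1-dim jumps}), so $\Phi(J) = J^p = J \cap G_{j+2} = J \cap G_{j+1}$ by Corollary \ref{power iso bottom}. The third holds because $c$ is even while $j$ is odd. The main obstacle is the careful separation of the $J$- and $L$-components under conjugation by $G_{c-j}$: once $l + j > c$ is exploited to annihilate the $L$-component, the $J$-component is governed by Lemma \ref{supertecnico 1}, and Lemma \ref{equivalent intense coprime} finally produces an $A$-stable $G_{c-j}$-conjugate of $H$.
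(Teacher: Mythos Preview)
Your argument is correct but proceeds differently from the paper. The paper gives an explicit construction: it picks generators $x$ of $J$ and $z$ of $L$, writes $\alpha(x)=x^{-1}\gamma$ with $\gamma\in G_c$ (via Lemma~\ref{gamma}), lifts $z$ to an element $a\in G_{c-j}$ using the power map $\rho^{(j+l-c)/2}$, uses non-degeneracy of the commutator map (Lemma~\ref{obelisk non-deg}) to find a power $g=a^s$ with $[g,x]=\gamma^{(p-1)/2}$, and then verifies by a direct computation that $\alpha(gxg^{-1})=(gxg^{-1})^{-1}$, so $gHg^{-1}=gJg^{-1}\oplus L$ is $A$-stable.

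Your route is more structural. After observing that $G_{c-j}$ centralizes $L$ and normalizes $JG_c$ (both from $l+j>c$), you reduce to producing a $G_{c-j}$-conjugate of $J$ equal to the complement $\alpha(J)$ of $G_c$ in $JG_c$; this is exactly Lemma~\ref{supertecnico 1} applied to $J$ (noting $\alpha(JG_c)=JG_c$ by Lemma~\ref{prodotti stabili}). You then pass from $\alpha(H)=tHt^{-1}$ to an $A$-stable $G_{c-j}$-conjugate via Lemma~\ref{equivalent intense coprime}. This recycles already-proved machinery instead of redoing the computation, at the cost of being less self-contained; the paper's explicit construction, by contrast, exhibits the conjugator concretely and makes visible why the power-map lift of $z$ is the right element to commute against $x$. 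A small simplification in your write-up: for $\Phi(J)=J\cap G_{j+1}$ you can invoke Lemma~\ref{same same} directly (all jumps of $J$ have the same parity and width~$1$, and $J\cap G_c=\{1\}$) rather than assembling it from Corollary~\ref{power iso bottom}.
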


\begin{proof}
We will construct $g$.
Let $x,z\in H$ be such that $J=\gen{x}$ and $L=\gen{z}$. As a consequence of Lemma \ref{J-L-H}, one has $\dpt_G(x)=j$ and 
$\dpt_G(z)=l$. Let moreover $\gamma\in G_c$ be such that $\alpha(x)=x^{-1}\gamma$ and $\alpha(\gamma)=\gamma$; the existence of $\gamma$ is confirmed by Lemma \ref{gamma}.
Define $m=(j+l-c)/2$, which is a positive integer thanks to Lemma \ref{jl}($2$).
By Lemma \ref{black core}, there exists $a$ of depth $c-j$ in $G$ such that $\rho^m(a)=z$. We fix $a$ and remark that $a$ belongs to $\Cyc_G(L)$, because $[a,z]=[a,\rho^m(a)]=1$ and $z$ generates $L$. Now, $x$ does not belong to $L$, but, as a consequence of Corollary \ref{power iso bottom}, the jump $j$ of $J$ is also a jump of $L$ in $G$. Since $\wt_G(j)=2$, it follows from Lemma \ref{obelisk non-deg} that there exists $s\in\Z$ such that $[a^s,x]=\gamma^{\frac{p-1}{2}}$.
We define $g=a^s$ and claim that $gHg^{-1}$ is $A$-stable.
We recall that $\gamma$ belongs to the central subgroup $G_c$ and that, because of Lemma \ref{black core}, the exponent of $G_c$ is $p$. We compute
$$
\alpha(gxg^{-1})=\alpha([g,x]x)=\alpha(\gamma^{\frac{p-1}{2}}x)=
\alpha(\gamma^{\frac{p-1}{2}})\alpha(x)=
\gamma^{\frac{p-1}{2}}x^{-1}\gamma=
$$
$$
\gamma^{\frac{p+1}{2}}x^{-1}=
(\gamma^{\frac{p-1}{2}}x)^{-1}=([g,x]x)^{-1}=(gxg^{-1})^{-1}
$$
and so $gJg^{-1}$ is $A$-stable. Moreover, $g$ centralizes $L$ and therefore 
$gHg^{-1}=gJg^{-1}\oplus L$. As a consequence, $gHg^{-1}$ is itself $A$-stable.
\end{proof}

\begin{lemma}\label{wojo}
Let $J$ and $L$ be as in Lemma \ref{J-L-H}.
Assume that $\alpha(T)=T$.
Then there exists $g\in G_{c-j}$ such that $gHg^{-1}$ is $A$-stable.
\end{lemma}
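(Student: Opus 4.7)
The plan is to reduce to Lemma \ref{nelmezzo} by first conjugating $H$ into a position where its $L$-component is $A$-stable, while ensuring all conjugating elements lie in $G_{c-j}$. The key observation that makes this possible is that $l<j$ by Lemma \ref{jl}($1$), so $c-l>c-j$, and hence $G_{c-l}\subseteq G_{c-j}$. This means any element produced by Lemma \ref{polline} automatically sits in $G_{c-j}$, which is the filtration stage required by the conclusion.

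Concretely, first I would invoke Lemma \ref{polline} to obtain an element $g_1\in G_{c-l}$ with $g_1Lg_1^{-1}$ an $A$-stable subgroup, $g_1Tg_1^{-1}=T$, and $g_1Jg_1^{-1}=J$. Setting $L'=g_1Lg_1^{-1}$, we have
\[
g_1Hg_1^{-1}=(g_1Jg_1^{-1})\oplus(g_1Lg_1^{-1})=J\oplus L',
\]
which is a decomposition of the same shape as the one provided by Lemma \ref{J-L-H}: the subgroup $J$ is cyclic of least jump $j$ and width $2$, while $L'$ is cyclic of least jump $l$ and width $1$, since conjugation preserves depth.

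Next, I would apply Lemma \ref{nelmezzo} to the subgroup $H'=J\oplus L'$. The hypotheses are satisfied: $\alpha(L')=L'$ by construction, and $\alpha(H'G_c)=\alpha(g_1Tg_1^{-1})=\alpha(T)=T=H'G_c$. Lemma \ref{nelmezzo} therefore provides $g_2\in G_{c-j}$ such that $g_2H'g_2^{-1}$ is $A$-stable.

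Finally, I would set $g=g_2g_1$. Since $g_1\in G_{c-l}\subseteq G_{c-j}$ and $g_2\in G_{c-j}$, we have $g\in G_{c-j}$; and by construction
\[
gHg^{-1}=g_2(g_1Hg_1^{-1})g_2^{-1}=g_2H'g_2^{-1}
\]
is $A$-stable, as required. There is no serious obstacle here once one notices that the filtration inclusion $G_{c-l}\subseteq G_{c-j}$ allows $g_1$ and $g_2$ to be composed inside $G_{c-j}$; the work has already been done in Lemmas \ref{polline} and \ref{nelmezzo}.
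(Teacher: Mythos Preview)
Your argument is correct and follows essentially the same route as the paper: apply Lemma \ref{polline} to conjugate into a position where the $L$-component is $A$-stable while $T$ and $J$ are preserved, then apply Lemma \ref{nelmezzo}, and finally observe that $G_{c-l}\subseteq G_{c-j}$ so the composite conjugator lies in $G_{c-j}$. One small imprecision: you describe $J$ as having ``width $2$'', but $J$ is cyclic and hence all its jumps have width $1$; it is $j$ that is a jump of $H$ of width $2$. This does not affect the argument.
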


\begin{proof}
As a consequence of Lemma \ref{polline}, there exists $a\in G_{c-l}$ such that $aLa^{-1}$ is $A$-stable, $aTa^{-1}=T$, and $aHa^{-1}=J\oplus aLa^{-1}$. We fix such $a$ and we take $h\in G_{c-j}$ making $h(aHa^{-1})h^{-1}$ stable under the action of $A$. With $H$ replaced by $aHa^{-1}$, Lemma \ref{nelmezzo} guarantees the existence of $h$. We define $g=ha$ and we claim that $g\in G_{c-j}$. The jump $j$ is larger than the jump $l$, by Lemma \ref{jl}($1$), and therefore $G_{c-l}\subseteq G_{c-j}$. It follows that the product $ah$ belongs to $G_{c-l}$.   
\end{proof}

\noindent
To conclude the proof of Lemma \ref{gran finale even}, we construct $g\in G$ such that $gHg^{-1}$ is $A$-stable. Let $b\in G$ be such that $bTb^{-1}$ is $A$-stable and observe that such element $b$ exists by Lemma \ref{induction even}. Lemma \ref{wojo}, with $H$ replaced by $bHb^{-1}$, provides an element $a\in G$ such that $a(bHb^{-1})a^{-1}$ is $A$-stable. We define $g=ab$ and the proof of Lemma \ref{gran finale even} is complete.

\subsection{The induction step}\label{proof even}

\noindent
Under the hypotheses of Proposition \ref{proposition even}, we want to show that $\alpha$ is an intense automorphism of $G$. In view of this, let $H$ be a subgroup of $G$. If $H$ contains $G_c$, then $H$ has an $A$-stable conjugate in $G$ by Lemma \ref{induction even}. We assume that $H\cap G_c\neq G_c$. The class $c$ being even, it follows from Lemma \ref{parity obelisks}($4$) that $|G_c|=p$ and so $H$ intersects $G_c$ trivially. By Lemma \ref{gran finale even}, there exists an element $g\in G$ such that $gHg^{-1}$ is $A$-stable. The automorphism $\alpha$ is intense as a consequence of Lemma \ref{equivalent intense coprime-pgrps} and the fact that $H$ was chosen arbitrarily. Proposition \ref{proposition even} is proven.

\section{The odd case, part \textrm{I}}\label{section odd 1}

\noindent
In Proposition \ref{proposition odd 1} an additional assumption compared to Proposition \ref{proposition even} is made: that $G$ be a \emph{framed} $p$-obelisk. We recall that, if $p$ is a prime number, then a $p$-obelisk $G$ is framed if, for each maximal subgroup $M$ of $G$, one has $\Phi(M)=G_3$. We refer to Section \ref{section framed} for useful facts related to framed $p$-obelisks.

\begin{proposition}\label{proposition odd 1}
Let $p>3$ be a prime number and let $G$ be a framed $p$-obelisk of class $c$. Assume that $c$ is odd and that $G_c$ has order $p$.
Let $\alpha$ be an automorphism of $G$ of order $2$ and assume that the map $\alpha_c:G/G_c\rightarrow G/G_c$ that is induced by 
$\alpha$ is intense.
Then $\alpha$ is intense.
\end{proposition}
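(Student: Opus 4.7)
The argument closely parallels the proof of Proposition~\ref{proposition even}. Write $A=\gen{\alpha}$; by Lemma~\ref{equivalent intense coprime-pgrps} it suffices to show that every subgroup $H$ of $G$ has an $A$-stable conjugate. Since $|G_c|=p$, either $G_c\subseteq H$ or $H\cap G_c=\graffe{1}$. In the former case, applying Lemma~\ref{equivalent intense coprime-pgrps} to the intense automorphism $\alpha_c$ of $G/G_c$ produces $g\in G$ with $(gHg^{-1})/G_c$ stable under $\gen{\alpha_c}$, and since $G_c$ is characteristic in $G$, the conjugate $gHg^{-1}$ itself is $A$-stable.

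Assume then $H\cap G_c=\graffe{1}$. The subgroup $T=HG_c$ contains $G_c$, so by the previous case some conjugate of $T$ is $A$-stable; replacing $H$ I may assume $T$ itself is $A$-stable. Because $G_c$ is central, Lemma~\ref{graph+complements} parametrizes the complements of $G_c$ in $T$ by $\Hom(H/\Phi(H),G_c)$, and conjugation of $H$ by $g\in G$ yields the complement corresponding to the map $h\mapsto[g,h]$. The task thus reduces to showing that every such homomorphism is realized by an inner automorphism of $G$—so that the $G$-orbit of $H$ in $T$ meets every complement—and then producing an $A$-stable complement by a Glauberman-type averaging within that orbit, following the pattern of Lemmas~\ref{polline}--\ref{wojo}.

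Let $l$ be the smallest jump of $H$ in $G$. If all jumps of $H$ have the same width, Lemma~\ref{same same} shows that $H$ is abelian with $\Phi(H)=H\cap G_{l+1}$, and Lemma~\ref{supertecnico 1} then realizes every complement of $G_c$ in $T$ as $tHt^{-1}$ for some $t\in G_{c-l}$, provided $c-l$ is odd. When the jumps of $H$ have mixed widths, I decompose $H=J\oplus L$ along the lines of Lemma~\ref{J-L-H}, separating the width-$2$ and width-$1$ jumps, and conjugate $J$ and $L$ successively inside suitable terms of the lower central series, imitating the arguments of Lemmas~\ref{polline}--\ref{wojo} but with the odd class $c$ and the single shift $|G_c|=p$ substituted throughout.

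\textbf{Main obstacle.} The delicate case is when the smallest jump $l$ of $H$ is odd: then $c-l$ is even, Corollary~\ref{delta surjective} fails, and Lemma~\ref{supertecnico 1} does not apply directly. Here the framing hypothesis is essential. By Proposition~\ref{lines}($3$), for each one-dimensional subspace $\ell$ of $G_l/G_{l+1}$ the images $\rho^{(c-l)/2}_l(\ell)$ and $\gamma_{l,c-l}(\graffe{\ell}\times G_{c-l}/G_{c-l+1})$ jointly generate $G_c$, which is precisely the surjectivity statement that replaces Corollary~\ref{delta surjective} in this setting. Combining conjugation by $G_{c-l}$ with the extra flexibility provided by the $p$-power map—exploited through the commutative diagram of Proposition~\ref{proposition commutative diagram}—then shows that every required homomorphism $H/\Phi(H)\to G_c$ is realized by an inner automorphism of $G$. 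The averaging/counting step then yields the desired $A$-stable conjugate of $H$, completing the proof.
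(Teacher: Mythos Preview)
Your overall strategy is right, but your case split is transplanted from the even-class proof and does not fit the odd-class situation. The crucial observation you are missing is that when $c$ is odd and $|G_c|=p$, any subgroup $H$ with $H\cap G_c=\graffe{1}$ has \emph{all jumps of width $1$}: if some jump $l$ had width $2$, then $l$ would be odd by Lemma~\ref{parity obelisks}, and $\rho^{(c-l)/2}_l:G_l/G_{l+1}\to G_c$ would force $H\cap G_c\neq\graffe{1}$. This is Lemma~\ref{dub fx} in the paper. Consequently your ``mixed widths'' case is vacuous, and the decomposition along the lines of Lemma~\ref{J-L-H} never arises. Moreover, your ``same width'' case covers every $H$, but you then invoke Lemma~\ref{same same}, which requires all jumps to have the same \emph{parity}, not merely the same width; so that lemma does not apply in general.

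The correct dichotomy is by parity. When all jumps of $H$ are even, Lemma~\ref{same same} and Lemma~\ref{supertecnico 1} do apply since $c-l$ is odd. When all jumps are odd, one shows directly that any $A$-stable conjugate of $T=HG_c$ lies in $G^-$, so its subgroups are automatically $A$-stable. The genuinely hard case is mixed parity: here the paper decomposes $H=I\oplus J$ with $I\subseteq G^-$ cyclic (carrying the odd jumps) and $J$ cyclic with even jumps, and then uses framing, via Proposition~\ref{lines}, to write any $\gamma\in G_c$ as $y^n[d,x]$ with $d$ centralizing $I$. This last step is the heart of the argument and is where your ``main obstacle'' paragraph is pointing; your identification of Proposition~\ref{lines} as the needed ingredient is correct, but the surrounding structure needs to be reorganized around parity rather than width.
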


\noindent
The proof of Proposition \ref{proposition odd 1} is given in Section \ref{proof odd 1}.

\subsection{Some lemmas}\label{section lemmas odd 1}

The goal of this section is to give all ingredients for the proof of Proposition \ref{proposition odd 1} so we will keep the following assumptions until the end of Section \ref{section lemmas odd 1}. 
Let $p>3$ be a prime number and let $G$ be a $p$-obelisk of class $c$. Assume that $c$ is odd and that $G_c$ has order $p$. 
Let moreover $\alpha$ be an automorphism of $G$ of order $2$ and assume that the map $\alpha_c:G/G_c\rightarrow G/G_c$ that is induced by 
$\alpha$ is intense.
Set $A=\gen{\alpha}$ and, in concordance with Section \ref{section involutions}, write $G^+=\graffe{x\in G\ :\ \alpha(x)=x}$ and $G^-=\graffe{x\in G\ :\ \alpha(x)=x^{-1}}$. For a subgroup $H$ of $G$, we denote $H^+=H\cap G^+$ and $H^-=H\cap G^-$ and we use the same ``plus-minus" notation for any subgroup of $G/G_c$ with respect to $\alpha_c$. 
\vspace{8pt} \\
\noindent
We have intentionally not yet asked for $G$ to be framed: we will make such assumption right after stating Lemma \ref{gran finale odd 1}.

\begin{lemma}\label{induction odd 1}
Let $H$ be a subgroup of $G$ containing $G_c$. Then there exists $g\in G$ such that $gHg^{-1}$ is $A$-stable. 
\end{lemma}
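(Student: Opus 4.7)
The plan is to essentially mimic the argument of Lemma \ref{induction even}, which did not use the parity of $c$ in any essential way; all that was used there was that $\alpha_c$ is intense and that $G_c$ is a characteristic (hence $A$-stable) subgroup contained in $H$. Both of these hypotheses are still available here, so the same strategy will go through verbatim.

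First I would pass to the quotient $\overline{G} = G/G_c$. Since $G_c \subseteq H$, the image $\overline{H} = H/G_c$ is a well-defined subgroup of $\overline{G}$. By hypothesis $\alpha_c \in \Aut(\overline{G})$ is intense, so its order divides $|\Aut(\overline{G})|$ and in particular the pair $(\overline{G}, \alpha_c)$ satisfies the hypotheses of Lemma \ref{equivalent intense coprime-pgrps}. Applying that lemma to $\overline{H}$ yields an element $\overline{g} \in \overline{G}$ such that $\overline{g}\,\overline{H}\,\overline{g}^{-1}$ is $\langle \alpha_c \rangle$-stable. Lifting $\overline{g}$ to an element $g \in G$, we obtain that $(gHg^{-1})/G_c$ is $\langle \alpha_c \rangle$-stable.

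Finally I would promote this to $A$-stability of $gHg^{-1}$ itself. The subgroup $G_c$ is characteristic in $G$, hence $\alpha(G_c) = G_c$, and since $G_c \subseteq H$ is central and in particular normal, $G_c \subseteq gHg^{-1}$. Therefore both $\alpha(gHg^{-1})$ and $gHg^{-1}$ contain $G_c$, and their images in $\overline{G}$ coincide by the previous paragraph. Hence $\alpha(gHg^{-1}) = gHg^{-1}$, i.e.\ $gHg^{-1}$ is $A$-stable, as required.

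There is no real obstacle here: the lemma is entirely formal, the parity of $c$ and the framed hypothesis on $G$ play no role, and the argument is literally the same as for Lemma \ref{induction even}. The only mild point worth being careful about is that $\alpha_c$ is given as intense on $G/G_c$ (an assumption of Proposition \ref{proposition odd 1}), which is precisely what is needed to invoke Lemma \ref{equivalent intense coprime-pgrps}.
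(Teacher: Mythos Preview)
Your proposal is correct and follows exactly the same approach as the paper's proof, which is essentially a verbatim repetition of the argument for Lemma~\ref{induction even}. One minor quibble: the clause ``so its order divides $|\Aut(\overline{G})|$'' is a non-sequitur (that is true of any automorphism); what Lemma~\ref{equivalent intense coprime-pgrps} actually requires is that the order of $\alpha_c$ be coprime to $|\overline{G}|$, which holds here because $\alpha$ has order~$2$ and $p>3$.
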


\begin{proof}
By assumption, the automorphism $\alpha_c$ is intense and, by Lemma \ref{equivalent intense coprime-pgrps}, there exists $g\in G$ such that $(gHg^{-1})/G_c$ is $\gen{\alpha_c}$-stable. It follows from the definition of $\alpha_c$ that $gHg^{-1}$ is $A$-stable.
\end{proof}

\begin{lemma}\label{dub fx}
Let $H$ be a subgroup of $G$ such that $H\cap G_c=\graffe{1}$.
Then all jumps of $H$ in $G$ have width $1$.
\end{lemma}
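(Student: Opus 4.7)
My strategy is to argue by contradiction: I will suppose that some jump $j$ of $H$ in $G$ satisfies $\wt_H^G(j) = 2$ and derive a contradiction from the existence of an element whose $p$-power lies in $H \cap G_c = \{1\}$. The key observation is that, since $|G_c| = p$, the $p$-power map induces a surjection of vector spaces whose source has dimension $2$ and target has dimension $1$, leaving a $1$-dimensional kernel which cannot accommodate a $2$-dimensional subspace coming from $H$.

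\textbf{First step.} Assume for contradiction that $j$ is a jump of $H$ in $G$ with $\wt_H^G(j) = 2$. Since $H \cap G_j / H \cap G_{j+1}$ injects into $G_j/G_{j+1}$, we have $\wt_G(j) \geq 2$, so $\wt_G(j) = 2$ by Proposition \ref{blackburn 2-1-2}(1). Lemma \ref{parity obelisks} then forces $j$ to be odd, and since $\wt_G(c) = 1$ by assumption, we must have $j \leq c-2$. Set $m = (c-j)/2$, which is a positive integer because $c$ is odd.

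\textbf{Main step.} By Lemma \ref{maps incrociate}(1), the $p^m$-th power map $\rho^m$ induces a surjective homomorphism
\[
\rho^m_j : G_j/G_{j+1} \longrightarrow G_c/G_{c+1} = G_c
\]
of $\F_p$-vector spaces, from a space of dimension $2$ onto a space of dimension $1$; hence $\ker \rho^m_j$ has dimension $1$. On the other hand, for any $x \in H \cap G_j$ we have $x^{p^m} \in H$ trivially, and $x^{p^m} \in G_{j+2m} = G_c$ by Lemma \ref{black core}. Combined with the hypothesis $H \cap G_c = \{1\}$, this yields $x^{p^m} = 1$, so the image of $(H \cap G_j)G_{j+1}/G_{j+1}$ in $G_c$ under $\rho^m_j$ is trivial.

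\textbf{Conclusion.} Consequently the $2$-dimensional subspace $(H \cap G_j)G_{j+1}/G_{j+1} \cong (H \cap G_j)/(H \cap G_{j+1})$ of $G_j/G_{j+1}$ is contained in $\ker \rho^m_j$, which has dimension $1$. This is the desired contradiction, and there is no real obstacle in the argument beyond correctly pinning down the parity of $j$ and the existence of the positive exponent $m$ allowing us to land in $G_c$.
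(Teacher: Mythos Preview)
Your proof is correct and follows essentially the same approach as the paper's. Both argue by contradiction, identify that a width-$2$ jump $j$ must be odd with $j<c$, and use the surjective homomorphism $\rho^{(c-j)/2}_j:G_j/G_{j+1}\to G_c$ from Lemma~\ref{maps incrociate}(1); the paper phrases the contradiction as ``$H\cap G_c\neq\{1\}$'' via surjectivity, while you phrase it dually as a $2$-dimensional subspace sitting inside a $1$-dimensional kernel, but these are the same argument.
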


\begin{proof}
As a consequence of Proposition \ref{blackburn 2-1-2}($1$), every jump of $H$ in $G$ has width at most $2$. Assume by contradiction that $l$ is a jump of $H$ in $G$ of width $2$. The jump $l$ is odd, thanks to Lemma \ref{parity obelisks}($1$), and $G_l/G_{l+1}=(H\cap G_l)G_{l+1}/G_{l+1}$. Looking at 
$\rho^{(c-l)/2}_l:G_l/G_{l+1}\rightarrow G_c$, it follows from Lemma \ref{maps incrociate}($1$) that $H\cap G_c\neq\graffe{1}$. Contradiction.
\end{proof}

\begin{lemma}\label{all even odd 1}
Let $H$ be a subgroup of $G$ such that $H\cap G_c=\graffe{1}$. Assume that all jumps of $H$ in $G$ are even. Then there exists $g\in G$ such that $gHg^{-1}$ is $A$-stable.
\end{lemma}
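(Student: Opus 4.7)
The plan is to reduce modulo $G_c$ using the intensity of $\alpha_c$, then correct inside the coset $HG_c$ by exploiting that $H$ is a cyclic complement of $G_c$ and that $c-l$ is odd, where $l$ is the least jump of $H$.

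First I would establish the structural properties of $H$. By Lemma \ref{dub fx}, all jumps of $H$ in $G$ have width $1$, and by hypothesis they are all even, hence of the same parity. Since $H\cap G_c=\graffe{1}$, Lemma \ref{cyclic trivial intersection} applies and $H$ is cyclic. Let $l$ denote the least jump of $H$ in $G$; then $l$ is even, so $c-l$ is odd (as $c$ is odd), and Lemma \ref{same same}($2$) gives $\Phi(H)=H\cap G_{l+1}$. These are exactly the hypotheses needed to invoke Lemma \ref{supertecnico 1} later.

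Next I would use the intensity of $\alpha_c$ to arrange that $HG_c$ is $A$-stable. The subgroup $HG_c$ contains $G_c$, so Lemma \ref{induction odd 1} yields $g_1\in G$ with $g_1(HG_c)g_1^{-1}$ $A$-stable. Replacing $H$ by $g_1Hg_1^{-1}$ (which does not affect the hypotheses on jumps or the intersection with $G_c$), we may assume $HG_c$ itself is $A$-stable. The subgroup $G_c$ is central in $G$ and characteristic, so both $H$ and $\alpha(H)$ are complements of $G_c$ in $HG_c$ (using $\alpha(H)\cap G_c=\alpha(H\cap G_c)=\graffe{1}$).

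Finally, with $K=\alpha(H)$, Lemma \ref{supertecnico 1} produces $t\in G_{c-l}$ with $\alpha(H)=tHt^{-1}$. Since $G_{c-l}$ is a characteristic subgroup of $G$, it is $A$-stable, and the equivalence of Lemma \ref{equivalent intense coprime} (applied with $N=G_{c-l}$) provides an element $b\in G_{c-l}$ such that $bHb^{-1}$ is $A$-stable. Setting $g=bg_1$ finishes the proof. The only step that requires any thought is verifying that the numerical hypothesis $c-l$ odd of Lemma \ref{supertecnico 1} is exactly what is guaranteed by ``all jumps even'' combined with $c$ odd; everything else is a bookkeeping exercise chaining together results already proven in this chapter and in Section \ref{section subgroups obelisks}.
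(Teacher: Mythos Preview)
Your proposal is correct and follows essentially the same approach as the paper: establish that $H$ is cyclic with $\Phi(H)=H\cap G_{l+1}$ via Lemmas \ref{dub fx}, \ref{cyclic trivial intersection}, and \ref{same same}, reduce to the case where $HG_c$ is $A$-stable via Lemma \ref{induction odd 1}, then use Lemma \ref{supertecnico 1} to conjugate $\alpha(H)$ to $H$ and finish with Lemma \ref{equivalent intense coprime}. You are slightly more explicit than the paper in noting that $c-l$ is odd (the numerical hypothesis of Lemma \ref{supertecnico 1}) and in taking $N=G_{c-l}$ in Lemma \ref{equivalent intense coprime}, but these are refinements of the same argument rather than a different route.
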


\begin{proof}
All jumps of $H$ in $G$ are even so, by Lemma \ref{dub fx}, they also all have width $1$. Let now $l$ be the least jump of $H$ in $G$. Then, by Lemma \ref{cyclic trivial intersection}, the subgroup $H$ is cyclic and, by Lemma \ref{same same}, the subgroups $\Phi(H)$ and $H\cap G_{l+1}$ are the same. 
Let $T=H\oplus G_c$. Assume first that $\alpha(T)=T$. Then $T=\alpha(H)\oplus G_c$ and, by Lemma \ref{supertecnico 1}, there exists $t\in G$ such that $\alpha(H)=tHt^{-1}$. Thanks to Lemma \ref{equivalent intense coprime}, there exists thus $t\in G$ such that $tHt^{-1}$ is $A$-stable.
In general, by Lemma \ref{induction odd 1}, there exists $a\in G$ such that $aTa^{-1}$ is $A$-stable. There now exists $t\in G$ such that $t(aHa^{-1})t^{-1}$ is $A$-stable, so we conclude by defining $g=ta$.
\end{proof}

\begin{lemma}\label{all odd odd 1}
Let $H$ be a subgroup of $G$ such that $H\cap G_c=\graffe{1}$. Assume that all jumps of $H$ in $G$ are odd. Then there exists $g\in G$ such that $gHg^{-1}$ is $A$-stable.
\end{lemma}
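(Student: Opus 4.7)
The plan is to show that $H$ is necessarily cyclic and that, inside a suitable $A$-stable enlargement $T = HG_c$, the automorphism $\alpha$ acts as inversion on every element; this will then force $H$ (up to conjugation) to be $A$-stable.

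First, by Lemma \ref{dub fx} every jump of $H$ in $G$ has width $1$, and since by hypothesis all jumps of $H$ are odd (hence share the same parity and width), Lemma \ref{cyclic trivial intersection} yields that $H$ is cyclic. Set $T = HG_c$; because $G_c$ is central of order $p$ and $H \cap G_c = \graffe{1}$, the group $T$ equals the direct sum $H \oplus G_c$ and is abelian, its jumps in $G$ being precisely those of $H$ together with $c$, all odd. By Lemma \ref{induction odd 1} there exists $g \in G$ such that $T' = gTg^{-1}$ is $A$-stable; write $H' = gHg^{-1}$. Since $G_c$ is characteristic, $T' = H' \oplus G_c$, and by Lemma \ref{same jumps conjugates} all jumps of both $T'$ and $H'$ remain odd.

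To finish, I would apply Lemma \ref{order +- jumps}$(1)$ to $T'$. Its hypothesis that $\alpha$ induces the inversion map on $G/G_2$ follows from a brief appeal to the theory of Section \ref{section main}: since $\alpha$ has order $2$ and $\alpha_c$ is intense on $G/G_c$, while the kernel of the intense character $\chi_{G/G_c}$ is a $p$-group by Lemma \ref{formulation}, one must have $\chi_{G/G_c}(\alpha_c) = -1$, giving the desired inversion on $G/G_2$. Lemma \ref{order +- jumps}$(1)$ then yields $(T')^+ = \graffe{1}$, i.e.\ $T' \subseteq G^-$, so every element of $H' \subseteq T'$ is inverted by $\alpha$; in particular $\alpha(H') = H'$, and $gHg^{-1}$ is the required $A$-stable conjugate. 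The main subtlety is the verification that $\alpha$ inverts $G/G_2$; once that is in hand, the rest is direct bookkeeping with jumps.
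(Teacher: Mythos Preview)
Your proof is correct and follows essentially the same route as the paper: form $T=HG_c$, conjugate to an $A$-stable $T'$, and use Lemma~\ref{order +- jumps} together with the fact that all jumps of $T'$ are odd to conclude $T'\subseteq G^-$, whence $gHg^{-1}$ is $A$-stable. Your opening step showing $H$ is cyclic is unnecessary (the argument never uses it), while your explicit verification that $\alpha$ induces inversion on $G/G_2$ is a detail the paper leaves implicit.
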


\begin{proof}
Let $T=HG_c$. The class of $G$ being odd, it follows from the assumptions that all jumps of $T$ in $G$ are odd. By Lemma \ref{induction odd 1}, there exists $g\in G$ such that $gTg^{-1}$ is $A$-stable. By Lemma \ref{same jumps conjugates}, the subgroups $gTg^{-1}$ and $T$ have the same jumps in $G$ so, as a consequence of Lemma \ref{order +- jumps}, we get that $gTg^{-1}=(gTg^{-1})^-$. In particular, $gHg^{-1}=(gHg^{-1})^-$ and $gHg^{-1}$ is $A$-stable.
\end{proof}

\begin{lemma}\label{gran finale odd 1}
Let $H$ be a subgroup of $G$ such that $H\cap G_c=\graffe{1}$. Assume moreover that $G$ is framed.
Then there exists $g\in G$ such that $gHg^{-1}$ is $A$-stable.
\end{lemma}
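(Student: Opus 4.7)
The plan is to reduce the general case to the two parity-pure cases handled by Lemmas \ref{all even odd 1} and \ref{all odd odd 1}, using the framed hypothesis to glue their outputs into a single conjugation. By Lemma \ref{dub fx}, every jump of $H$ in $G$ has width $1$. If all jumps of $H$ have the same parity, we are already done by Lemma \ref{all even odd 1} or Lemma \ref{all odd odd 1}, so the genuine case is when $H$ has both odd and even jumps. Using Lemma \ref{induction odd 1} I would first conjugate so that $T=HG_c$ is $A$-stable; this replaces $H$ by a conjugate, which is harmless.

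Next, I would mimic Lemma \ref{J-L-H} and write $H=J\oplus L$ as an internal direct product of two cyclic subgroups, where the jumps of $J$ in $G$ are the odd jumps of $H$ and those of $L$ are the even ones. Such a decomposition is available precisely because every jump has width $1$: pick $x\in H$ realizing the smallest odd jump and $z\in H$ realizing the smallest even jump, set $J=\langle x\rangle$ and $L=\langle z\rangle$, and then check $H=J\oplus L$ by combining Corollary \ref{power iso bottom} with Lemma \ref{order product orders jumps}. Once the split is in place, Lemma \ref{all odd odd 1} (applied to $J$) and Lemma \ref{all even odd 1} (applied to $L$) each individually produce conjugating elements sending the piece to an $A$-stable subgroup; the remaining task is to find a single $g\in G$ that does both jobs at once.

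The gluing is where the framed hypothesis is essential. The strategy parallels Lemmas \ref{polline}, \ref{nelmezzo}, and \ref{wojo} of the even case: first conjugate by an element $a$ of high enough depth that $aLa^{-1}$ becomes $A$-stable while $aJa^{-1}$ is only perturbed inside $JG_{\geq c-j}$; then correct $J$ by conjugation by an element $b$ of depth $c-j$, which by Lemma \ref{commutator indices} centralizes $L$ and hence leaves $aLa^{-1}$ intact. To realize the required correction one needs that the commutator-with-$x$ map $G_{c-j}/G_{c-j+1}\to G_c$, restricted to the cyclic line spanned by a generator of $J$ at depth $j$, is still surjective onto $G_c$. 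Via Proposition \ref{lines}(3), this is exactly what framedness guarantees for the mixed odd/even pair $(j,c-j)$: the $1$-dimensional subspace corresponding to $x$ together with the power map fills up $G_c/G_{c+1}$, which upgrades the statement of Corollary \ref{delta surjective} to a surjectivity when restricted to a single line.

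The main obstacle will be the bookkeeping in this final gluing step: one must verify both that the element $b$ chosen to correct $J$ actually centralizes $L$ (which follows from $b\in G_{c-j}$ and $L\subseteq G_l$ with $l$ the least even jump, via $c-j+l>c$ whenever $j$ is the least odd jump past $l$) and that framedness supplies a $b$ whose commutator with $x$ lands in the precise coset of $G_c$ needed to turn $\alpha(bxb^{-1})(bxb^{-1})$ into the identity. Both issues reduce, after unwinding, to the surjectivity content of Proposition \ref{lines}(3) combined with Proposition \ref{proposition commutative diagram}; once this is in place, $gHg^{-1}=gJg^{-1}\oplus gLg^{-1}$ is $A$-stable and the lemma follows.
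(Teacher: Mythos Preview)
Your outline has the right shape but misses the key asymmetry that makes the paper's argument work, and the gluing step as you describe it has a genuine gap.

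The centralizing claim fails in general. You assert that $b\in G_{c-j}$ (with $j$ the least odd jump) centralizes $L\subseteq G_l$ ``via $c-j+l>c$''. But $c-j+l>c$ is equivalent to $l>j$, and nothing forces the least even jump to exceed the least odd jump; Lemma~\ref{ij} only gives $i+j>c$ (in the paper's notation). Concretely, take $c=7$ and $H$ with jumps $4,5,6$: then the least odd jump is $5$, the least even jump is $4$, and an element of $G_{c-5}=G_2$ need not centralize $L\subseteq G_4$ since $[G_2,G_4]\subseteq G_6\neq\{1\}$. So your two-step ``fix $L$, then fix $J$ without disturbing $L$'' scheme does not go through as stated.

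The paper sidesteps this entirely by exploiting that $c$ is odd, hence $G_c\subseteq G^-$. After making $T=HG_c$ $A$-stable, it does \emph{not} choose the odd/even pieces arbitrarily: it takes $I=H\cap R$ and $J=H\cap S$ where $R,S$ are the preimages of $\pi(H)^-$ and $\pi(H)^+$ in $G/G_c$ (Lemma~\ref{elenanonrisp}). Because $G_c\subseteq G^-$, Lemma~\ref{abelian>minus quotients} gives $R=R^-$, so $I$ is \emph{already} $A$-stable with no conjugation needed. Only the even piece $J$ requires correction, and the correcting element is built in Lemma~\ref{jurassic5} to lie in $\Cyc_G(y)\cap G_{c-j}$ (with $y$ a generator of $I$), not merely in $G_{c-j}$; this is exactly where framedness (Proposition~\ref{lines}) enters, producing $\gamma=y^n[d,x]$ with $d$ commuting with $y$. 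Thus $g=d^{(p+1)/2}$ centralizes $I$ by construction, independent of any depth inequality between the odd and even least jumps. Your proposal can be repaired by adopting this eigenspace decomposition instead of picking generators arbitrarily.
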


\noindent
The remaining part of Section \ref{section lemmas odd 1} will be entirely dedicated to the proof of Lemma \ref{gran finale odd 1}. For this purpose, all assumptions that we now make will hold until the end of the very same section. 
\vspace{8pt} \\
\noindent
Assume that $G$ is a framed $p$-obelisk. Let moreover $H$ be a subgroup of $G$ that trivially intersects $G_c$. 
If all jumps of $H$ in $G$ have the same parity, then we are done by Lemmas \ref{all even odd 1} and \ref{all odd odd 1}.
We assume that $H$ has jumps of each parity and we define $i$ and $j$ respectively to be the least odd jump and the least even jump of $H$ in $G$. Write $T=HG_c$. We recall that, the class of $G$ being $c$, the subgroup $G_c$ is central in $G$.

\begin{lemma}\label{ij}
The following hold. 
\begin{itemize}
 \item[$1$.] One has $i+j>c$.
 \item[$2$.] The subgroups $H$ and $T$ are abelian.
\end{itemize}
\end{lemma}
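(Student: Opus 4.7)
The plan is to prove (1) first and then derive (2) via the trick used in Lemma \ref{jl}.

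For (1), I would argue by contradiction, assuming $i + j \leq c$. Fix $x, y \in H$ with $\dpt_G(x) = i$ and $\dpt_G(y) = j$, and let $\ell \subseteq G_i/G_{i+1}$ be the $1$-dimensional subspace spanned by $xG_{i+1}$. Since $j$ is even and $j < c$, Lemma \ref{parity obelisks} gives $\wt_G(j) = 1$, so the image of $y$ already exhausts all of $G_j/G_{j+1}$. The framed hypothesis now enters through Proposition \ref{lines}(3): the subspaces $\rho^{j/2}_i(\ell)$ and $\gamma_{i,j}(\graffe{\ell} \times G_j/G_{j+1})$ together generate $G_{i+j}/G_{i+j+1}$. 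Both are visibly contained in the projection of $H \cap G_{i+j}$, realized respectively by $x^{p^{j/2}} \in H$ and $[x,y] \in H$, which belong to $G_{i+j}$ by Lemmas \ref{maps incrociate} and \ref{commutator indices}. In the subcase $i+j < c$, Lemma \ref{parity obelisks} forces $\wt_G(i+j) = 2$, so the image of $H \cap G_{i+j}$ in $G_{i+j}/G_{i+j+1}$ is $2$-dimensional, contradicting Lemma \ref{dub fx}. In the subcase $i+j = c$, both witnesses lie in $H \cap G_c = \graffe{1}$, so both subspaces collapse to $\graffe{0}$, contradicting the framed requirement that they generate the non-trivial space $G_c$. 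Hence $i + j > c$.

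For (2), set $m = \max(i, j)$. From (1) together with $c \geq 3$ (odd and at least $2$), one has $m \geq 3$, so $G/G_m$ is a $p$-obelisk of class $m - 1$ satisfying the hypothesis of Lemma \ref{obelisk cyclic subgroups}: when $m$ is odd, $m-1$ is even; when $m$ is even, $m-1$ is odd and $\wt_G(m-1) = 2$ by Lemma \ref{parity obelisks}. The jumps of $H/(H \cap G_m)$ in $G/G_m$ are exactly the jumps of $H$ strictly less than $m$, all of which, by the minimality of $i$ and $j$, have the parity opposite to that of $m$; by Lemma \ref{dub fx} each has width $1$. Lemma \ref{obelisk cyclic subgroups} then makes $H/(H \cap G_m)$ cyclic, so Lemma \ref{cyclic quotient commutators} yields $[H, H] = [H, H \cap G_m]$. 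Since $H \subseteq G_{\min(i,j)}$ and $H \cap G_m \subseteq G_m$, Lemma \ref{commutator indices} places this commutator inside $G_{i+j}$, which is trivial by (1). Hence $H$ is abelian; and $T = HG_c = H \oplus G_c$ is abelian too, since $G_c$ is central.

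The delicate step is (1), and the framed hypothesis is indispensable precisely there: it is what guarantees that the two natural $1$-dimensional images inside $G_{i+j}/G_{i+j+1}$ really span the target. Once this is granted, the decisive observation — that a subgroup $H$ with only width-$1$ jumps automatically saturates the narrow even layer $G_j/G_{j+1}$ — allows $H$'s own elements to witness the full $2$-dimensional span, in immediate collision with Lemma \ref{dub fx}.
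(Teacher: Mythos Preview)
Your proof is correct and follows essentially the same route as the paper's: the framed hypothesis via Proposition~\ref{lines}(3) forces $H$ to fill all of $G_{i+j}/G_{i+j+1}$, which collides with Lemma~\ref{dub fx} (or with $H\cap G_c=\{1\}$), and cyclicity of $H/(H\cap G_{\max(i,j)})$ then gives $[H,H]\subseteq G_{i+j}=\{1\}$. The only differences are organizational: the paper interleaves (1) and (2) by first establishing $[H,H]\subseteq G_{i+j}$ and only then deriving the contradiction, and it treats the two subcases $i+j<c$ and $i+j=c$ in one stroke (the span forces $\wt_G(i+j)=1$, hence $i+j=c$ by parity, hence $H\cap G_c\neq\{1\}$); your version separates the parts cleanly and is a bit more explicit about the hypotheses of Lemma~\ref{obelisk cyclic subgroups}.
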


\begin{proof}
The numbers $i$ and $j$ having different parities, their sum $m=i+j$ is odd. 
Let $k=\max\graffe{i,j}$. Then, as a consequence of Lemma \ref{dub fx}, all jumps of $H$ in $G$ that are smaller than $k$ have width $1$ and so, by Lemma \ref{obelisk cyclic subgroups}, the group $H/(H\cap G_k)$ is cyclic. From Lemma \ref{cyclic quotient commutators}, we get $[H,H]=[H,H\cap G_k]$.  By Lemma \ref{commutator indices}, the subgroup $[H,H]$ is contained in 
$G_m$. If $m>c$, then $G_m\subseteq G_{c+1}=\graffe{1}$, and thus $(1)$ and $(2)$ are proven. Assume by contradiction that $m\leq c$. Let $y$ and $x$ be elements of $H$ respectively of depth $i$ and $j$ in $G$. Then the image of $\gen{y}$ under the natural projection $G\rightarrow G/G_{i+1}$ is a $1$-dimensional subspace of $G_i/G_{i+1}$. Thanks to Proposition \ref{lines}($3$), with $h=i$ and $k=j$,
the elements $y^{p^{j/2}}$ and $[y,x]$ of $H$ span $G_m/G_{m+1}$. It follows from Lemma \ref{dub fx} that $m$ is a jump of $H$ of width $1$ in $G$ so, from Lemma \ref{parity obelisks}($1$), we derive $m=c$. Contradiction to $H$ trivially intersecting $G_c$.
\end{proof}

\begin{lemma}\label{cyclic pmH}
Let $\pi:G\rightarrow G/G_c$ denote the natural projection. Assume that $\alpha(T)=T$. Then $\pi(H)$ is $\gen{\alpha_c}$-stable and $\pi(H)=\pi(H)^+\oplus\pi(H)^-$. Moreover, both $\pi(H)^+$ and $\pi(H)^-$ are cyclic.
\end{lemma}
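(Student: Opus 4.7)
The plan is to dispose of stability and abelianness at the outset, reduce the cyclicity of $\pi(H)^{\pm}$ to a jump computation, and finally produce explicit cyclic generators via Corollary \ref{power iso bottom}. First, because $H\cap G_c=\graffe{1}$ and $G_c\subseteq\ZG(G)$ we have $T=H\oplus G_c$; since $G_c$ is characteristic and $\alpha(T)=T$, $\pi(H)=\pi(T)$ is $\gen{\alpha_c}$-stable. By Lemma \ref{ij}(2), $H$ and hence $\pi(H)$ is abelian, so Corollary \ref{abelian sum +-} gives $\pi(H)=\pi(H)^{+}\oplus\pi(H)^{-}$. To invoke Lemma \ref{order +- jumps} I need that $\alpha$ induces the inversion map on $G/G_2$: since $c\geq 3$ and $G_c\subseteq\Phi(G)=G_2$, if $\alpha_c$ were trivial then $\alpha$ would act as the identity on $G/\Phi(G)$ and Lemma \ref{kernel of reduction frattini} would place the order-$2$ automorphism $\alpha$ inside a $p$-group; so $\alpha_c$ has order $2$, and being intense it cannot lie in the Sylow $p$-subgroup $\ker\chi_{G/G_c}$ of Proposition \ref{formulation}, whence $\chi_{G/G_c}(\alpha_c)=-1$ and Lemma \ref{action chi^i general} makes $\alpha$ scale $G_k/G_{k+1}$ by $(-1)^k$.

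Next I would pin down the jump structure. Lemma \ref{dub fx} shows every jump of $H$ in $G$ has width $1$; iterating Corollary \ref{power iso bottom} inside the $p$-obelisk $G/G_c$ of even class $c-1$, the $p$-th power of any element of depth $l\leq c-3$ in $\pi(H)$ has depth $l+2$, while any element of depth $c-2$ or $c-1$ is annihilated by the $p$-th power map. Starting from elements of the minimal odd and even depths $i$ and $j$, this fills out the odd jumps of $\pi(H)$ as the arithmetic progression $\graffe{i,i+2,\ldots,c-2}$ and the even jumps as $\graffe{j,j+2,\ldots,c-1}$. Lemma \ref{order +- jumps} then identifies the jumps of $\pi(H)^{\pm}$ with the odd, respectively even, jumps of $\pi(H)$ and yields $|\pi(H)^{-}|=p^{(c-i)/2}$ and $|\pi(H)^{+}|=p^{(c+1-j)/2}$.

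Finally, the jump description just obtained lets me choose $u\in\pi(H)^{-}$ of depth $i$ and $v\in\pi(H)^{+}$ of depth $j$; the same iteration of Corollary \ref{power iso bottom} gives $|\gen{u}|=p^{(c-i)/2}=|\pi(H)^{-}|$ and $|\gen{v}|=p^{(c+1-j)/2}=|\pi(H)^{+}|$, so $\pi(H)^{-}=\gen{u}$ and $\pi(H)^{+}=\gen{v}$ are both cyclic. The main obstacle I expect is the parity bookkeeping in the jump computation: one has to use $c$ being odd so that the chains of iterated $p$-th powers inside $G/G_c$ stop precisely at depths $c-2$ or $c-1$, thereby producing orders that match $|\pi(H)^{\pm}|$ exactly.
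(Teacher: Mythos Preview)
Your proof is correct, and the opening moves---identifying $\pi(H)=\pi(T)$, invoking Lemma~\ref{ij}(2) for abelianness, and applying Corollary~\ref{abelian sum +-}---match the paper exactly. You also supply an explicit argument that $\alpha_c$ scales $G_k/G_{k+1}$ by $(-1)^k$, which the paper treats as implicit in the ``plus-minus'' notation set up at the start of the section.

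Where you diverge is in the final step. The paper does not compute the jump sets or orders of $\pi(H)^{\pm}$ explicitly; it simply observes (via Lemma~\ref{order +- jumps}) that $\pi(H)^+$ has only even jumps and $\pi(H)^-$ only odd jumps in the $p$-obelisk $\overline{G}=G/G_c$, that all these jumps have width~$1$ by Lemma~\ref{dub fx}, and then invokes Lemma~\ref{obelisk cyclic subgroups} directly---since $\overline{G}$ has even class $c-1$, any subgroup whose jumps all share one parity and have width~$1$ is automatically cyclic. Your route instead reproves this special case of Lemma~\ref{obelisk cyclic subgroups} by hand: you pin down the jump progressions $\{i,i+2,\ldots,c-2\}$ and $\{j,j+2,\ldots,c-1\}$, read off the orders $p^{(c-i)/2}$ and $p^{(c+1-j)/2}$, and exhibit generators of matching order via iterated $p$-th powers. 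This is perfectly valid and arguably more self-contained, but it duplicates work already packaged in Lemma~\ref{obelisk cyclic subgroups}. One small point worth tightening: to choose $u\in\pi(H)^-$ of depth~$i$ you need that $i$ is actually a jump of $\pi(H)^-$, not just of $\pi(H)$; this follows because $\pi(H)\cap\overline{G}_k$ is $\gen{\alpha_c}$-stable and hence splits as $(\pi(H)^+\cap\overline{G}_k)\oplus(\pi(H)^-\cap\overline{G}_k)$, so the odd jumps of $\pi(H)$ are precisely the jumps of $\pi(H)^-$.
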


\begin{proof}
To lighten the notation, we will denote $\overline{G}=\pi(G)$ and we will use the bar notation for the subgroups of $\overline{G}$. By assumption, $\alpha(T)=T$ and thus $\alpha_c(\overline{T})=\overline{T}$. Moreover, $\overline{H}$ is equal to 
$\overline{T}$, so $\overline{H}$ is itself $\gen{\alpha_c}$-stable. As a consequence of Lemma \ref{ij}($2$), the group $\overline{H}$ is abelian so, by Corollary \ref{abelian sum +-}, it decomposes as 
$\overline{H}=\overline{H}^{\,+}\oplus\overline{H}^{\,-}$.
It follows from Lemma \ref{order +- jumps} that $\overline{H}^{\,+}$ and $\overline{H}^{\,-}$ have respectively only even jumps and only odd jumps in $\overline{G}$. Moreover, thanks to Lemma \ref{dub fx}, all jumps of $\overline{H}$, and thus of its subgroups, in $\overline{G}$ have width $1$. Lemma \ref{obelisk cyclic subgroups} yields that both $\overline{H}^{\,+}$ and $\overline{H}^{\,-}$ are cyclic.
\end{proof}

\begin{lemma}\label{elenanonrisp}
Assume that $\alpha(T)=T$.
Then there exist cyclic subgroups $I$ and $J$ of $H$, with least jumps in $G$ respectively equal to $i$ and $j$, such that the following hold.
\begin{itemize}
 \item[$1$.] One has $H=I\oplus J$.
 \item[$2$.] The group $I$ is $A$-stable and $I=I^-$.
 \item[$3$.] The group $S=J\oplus G_c$ is $A$-stable and $S=S^+\oplus G_c$.
\end{itemize}
\end{lemma}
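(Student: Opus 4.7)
The plan is to realize $I$ and $J$ from the $A$-module decomposition of the abelian group $T=HG_c$. The starting input is Lemma \ref{cyclic pmH}, which provides a splitting $\pi(H)=\pi(H)^+\oplus\pi(H)^-$ into cyclic summands (for $\pi:G\to G/G_c$); the analogue of Lemma \ref{order +- jumps} applied inside $G/G_c$ identifies the least jumps in $G/G_c$ of $\pi(H)^-$ and $\pi(H)^+$ with $i$ and $j$ respectively.

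Since $c$ is odd and $\alpha$ induces inversion on $G/G_2$ (as forced by $\alpha_c$ being intense of order $2$ on the $p$-obelisk $G/G_c$, combined with Proposition \ref{proposition -1^i} or Theorem \ref{theorem class2 complete}), Lemma \ref{action chi^i general} gives $G_c\subseteq G^-$. Since $T$ is abelian by Lemma \ref{ij}($2$) and $A$-stable by hypothesis, Corollary \ref{abelian sum +-} yields $T=T^+\oplus T^-$; the observations $T^+\cap G_c=\graffe{1}$ (fixed and inverted elements in an odd $p$-group intersect trivially) and $G_c\subseteq T^-$ then lead, after a short check that $\pi(T^\pm)=\pi(H)^\pm$, to the identifications $T^-=\pi^{-1}(\pi(H)^-)\cap T$ and $\tilde{T}_+:=T^+G_c=\pi^{-1}(\pi(H)^+)\cap T$.

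I would next exploit the injection $\pi|_H:H\to\pi(H)$, valid because $H\cap G_c=\graffe{1}$, to identify $H\cap T^-$ with $\pi(H)^-$ and $H\cap\tilde{T}_+$ with $\pi(H)^+$. Indeed, for $h\in H$ with $\pi(h)\in\pi(H)^-$, writing $h=uv$ with $u\in T^+$ and $v\in T^-$ forces $\pi(u)=0$, hence $u\in T^+\cap G_c=\graffe{1}$ and $h\in T^-$; the argument for $\tilde{T}_+$ is analogous. Choosing $x\in H\cap T^-$ of depth $i$ and setting $I=\gen{x}$, the subgroup $I$ is cyclic, contained in $T^-$, and thus $A$-stable with $I=I^-$. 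Choosing $y\in H\cap\tilde{T}_+$ of depth $j$ and setting $J=\gen{y}$ and $S=JG_c=J\oplus G_c$, the relation $\alpha(y)\in yG_c$ (from $\pi(y)\in\pi(H)^+$) makes $S$ an $A$-stable subgroup; Theorem \ref{lambda mu} applied to $1\to G_c\to S\to J\to 1$, with the distinct characters $-1$ (on $G_c$) and $+1$ (on $J$), produces the unique $A$-stable complement $S^+$ of $G_c$ in $S$.

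To conclude, Lemma \ref{dedekind law} gives $I\cap J\subseteq T^-\cap\tilde{T}_+=(T^+\cap T^-)G_c=G_c$, so $I\cap J\subseteq H\cap G_c=\graffe{1}$, and the size equality $|I|\,|J|=|\pi(H)^-|\,|\pi(H)^+|=|\pi(H)|=|H|$ yields $H=I\oplus J$. The key technical step is the identification $T^-=\pi^{-1}(\pi(H)^-)\cap T$, which is where the oddness of $c$ enters decisively: it places $G_c$ inside $T^-$ rather than $T^+$, making the $A$-module decomposition of $T$ compatible with the projection $\pi$; once this is established, everything else is a formal consequence of the standard theory of $A$-modules and Theorem \ref{lambda mu}.
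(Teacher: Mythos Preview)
Your proof is correct and follows essentially the same route as the paper's: the paper pulls back $\pi(H)^{\pm}$ to $A$-stable subgroups $R,S$ of $T$ containing $G_c$ (which coincide with your $T^-$ and $\tilde T_+$), shows $R=R^-$ via Lemma \ref{abelian>minus quotients} and $S=S^+\oplus G_c$ via Corollary \ref{abelian sum +-}, and then sets $I=H\cap R$, $J=H\cap S$. The only cosmetic differences are your use of Theorem \ref{lambda mu} in place of Corollary \ref{abelian sum +-} for part~(3), and your defining $I,J$ as $\gen{x},\gen{y}$ rather than directly as the intersections---these agree because $H\cap T^-$ and $H\cap\tilde T_+$ are cyclic and $x,y$ have least depth in them.
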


\begin{proof}
We denote $\overline{G}=G/G_c$ and we will use the bar notation for the subgroups of $\overline{G}$. By Lemma \ref{cyclic pmH}, the subgroup $\overline{H}$ is $\gen{\alpha_c}$-stable and it decomposes as 
$\overline{H}=\overline{H}^{\,+}\oplus\overline{H}^{\,-}$, where both $\overline{H}^{\,+}$ and $\overline{H}^{\,-}$ are cyclic. Let $R$ and $S$ be subgroups of $G$, containing $G_c$, such that $\overline{S}=\overline{H}^{\,+}$ and $\overline{R}=\overline{H}^{\,-}$. Because of their definitions, both $R$ and $S$ are $A$-stable. The subgroup $G_c$ is contained in $G^-$ as a consequence of Lemma \ref{action chi^i general}, so it follows from Lemma \ref{abelian>minus quotients} that $R=R^-$. Moreover, by Corollary \ref{abelian sum +-}, one has $S=S^+\oplus S^-$. However, as $\overline{S}=\overline{S}^{\,+}$, the subgroups $S^-$ and $G_c$ are equal, and hence $S=S^+\oplus G_c$. 
We define $I=H\cap R$ and $J=H\cap S$.
The subgroup $I$, being contained in $R=R^-$, is itself $A$-stable and $I=I^-$. 
Moreover, with $G$ and $N$ respectively replaced by $T$ and $H$, Lemma \ref{dedekind law} yields $JG_c=(H\cap S)G_c=S\cap(HG_c)=S$. Since $H\cap G_c=\graffe{1}$, we get $S=J\oplus G_c$. In the same way, we have $R=I\oplus G_c$. It follows that $J$ and $I$ are respectively isomorphic to $\overline{H}^{\,+}$ and $\overline{H}^{\,-}$, and therefore they are cyclic.
What is left to show is that indeed $H=I\oplus J$. The subgroup $H$ is abelian by Lemma \ref{ij}($2$) and $I\cap J=\graffe{1}$, since $R\cap S=G_c$. The subgroup $I\oplus J$ is contained in $H$ and
\[
\overline{I\oplus J}=\overline{I}\oplus\overline{J}=\overline{R}\oplus\overline{S}=
\overline{H}^+\oplus\overline{H}^-=\overline{H},
\]
so we derive $H=I\oplus J$.
\end{proof}

\begin{lemma}\label{jurassic5}
Let $\gamma\in G_c$ and let $x,y$ be elements of $G$ be such that $\dpt_G(x)=j$ and $\dpt_G(y)=i$. 
Then there exist $n\in\Z$ and $d\in\Cyc_G(y)\cap G_{c-j}$ such that 
$\gamma=y^n[d,x]$. 
\end{lemma}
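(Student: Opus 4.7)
I reduce the statement to producing a single non-trivial element of $G_c$ of the form $y^n[d,x]$. Since $x \in G_j$ and $d \in G_{c-j}$, Lemma~\ref{commutator indices} gives $[d,x] \in G_c$, so $y^n[d,x] \in G_c$ forces $y^n \in G_c$, hence $n$ is a multiple of $p^{(c-i)/2}$. Writing $u = y^{p^{(c-i)/2}} \in G_c$ and using that $d \mapsto [d,x]$ is a homomorphism into the central cyclic group $G_c$ (by the bilinearity of the commutator modulo $G_c$, cf.\ Lemma~\ref{multiplication formulas commutators}(2)), the set of achievable $y^n[d,x]$ in $G_c$ is the subgroup of $G_c$ generated by $u$ and by $\{[d,x] : d \in \Cyc_G(y)\cap G_{c-j}\}$. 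Since $|G_c|=p$, it suffices to exhibit one non-trivial achievable element; linearity then lets one hit any prescribed $\gamma$.

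\textbf{Easy cases.} If $u\neq 1$, take $d=1$ and solve $y^n=\gamma$. Henceforth assume $u=1$. Applying Proposition~\ref{lines}(3) with $h=i$, $k=c-i$, and $\ell=\langle yG_{i+1}\rangle$, framedness forces $\rho^{(c-i)/2}_i(\ell)$ and $\gamma_{i,c-i}(\{\ell\}\times G_{c-i}/G_{c-i+1})$ to span $G_c$; the first summand vanishes by hypothesis, so the second spans, yielding $t\in G_{c-i}$ of depth $c-i$ with $[y,t]$ generating $G_c$. Since $i\neq j$ by parity, I split on the relative size of $i$ and $j$. If $i>j$, then Lemma~\ref{commutator indices} gives $[G_{c-j},y]\subseteq G_{c-j+i}\subseteq G_{c+1}=1$, so $\Cyc_G(y)\cap G_{c-j}=G_{c-j}$; by Corollary~\ref{delta surjective} applied with $l=j$, the map $d\mapsto[d,x]$ from $G_{c-j}/G_{c-j+1}$ to $G_c$ is surjective (the target $\Hom(G_j/G_{j+1},G_c)$ being one-dimensional), so I can take $d$ with $[d,x]=\gamma$ and $n=0$.

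\textbf{Main obstacle: the case $i<j$.} Here, since $i+j>c$ by Lemma~\ref{ij}(1), we have $y\in G_i\subseteq G_{c-j+1}$, so the image of $y$ in $G_{c-j}/G_{c-j+1}$ is zero and powers of $y$ contribute nothing modulo $G_{c-j+1}$. I must therefore find $d\in \Cyc_G(y)\cap G_{c-j}$ of depth exactly $c-j$ with $[d,x]\neq 1$; equivalently, the image $V_y$ of $\Cyc_G(y)\cap G_{c-j}$ in the two-dimensional $\mathbb{F}_p$-space $G_{c-j}/G_{c-j+1}$ must not lie inside the one-dimensional subspace $K_x=\ker(d\mapsto[d,x])$. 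The plan is twofold: (a)~by non-degeneracy of $\gamma_{c-j,i}$ (Lemma~\ref{obelisk non-deg}, applicable since $c-j$ and $i$ are both odd with $c-j+i<c$ even), the linear map $\bar\phi_y\colon G_{c-j}/G_{c-j+1}\to G_{c-j+i}/G_{c-j+i+1}$, $d\mapsto[d,y]$, has a one-dimensional kernel, and a layer-by-layer lifting argument (adjusting a representative $d$ by successive elements of $G_{c-j+1},G_{c-j+2},\ldots$ to annihilate $[d,y]$ through the intermediate slots $G_{c-j+i+m}/G_{c-j+i+m+1}$) shows that $V_y=\ker\bar\phi_y$; and (b)~use the existence of $t\in G_{c-i}$ with $[y,t]\neq 1$ together with framedness to argue that $\ker\bar\phi_y\neq K_x$ in $G_{c-j}/G_{c-j+1}$. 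Step~(b) is the genuine obstacle: the hypothesis $u=1$ places $y$ in a special position, and one must show that this position is incompatible with the alignment $\ker\bar\phi_y=K_x$ that would otherwise defeat the approach.
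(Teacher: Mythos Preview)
Your proof is incomplete: you yourself identify step~(b) in the case $i<j$ as a ``genuine obstacle'' and do not resolve it. The missing idea is precisely what the paper's proof supplies, and it makes the whole case analysis unnecessary.

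The paper's argument is this. Since $i+j>c$ (Lemma~\ref{ij}) and both $i$ and $c-j$ are odd, set $r=\tfrac{i-(c-j)}{2}\geq 1$. By Lemma~\ref{black core} one can choose $a\in G_{c-j}\setminus G_{c-j+1}$ with $\rho^r(a)=y$, i.e.\ $y=a^{p^r}$. Now apply framedness via Proposition~\ref{lines}(3) with $h=c-j$, $k=j$, and $\ell=\langle aG_{c-j+1}\rangle$: the elements $\rho^{j/2}(a)$ and $[a,x]$ span $G_c$. A short computation shows $\rho^{j/2}(a)=y^{p^s}$ with $s=\tfrac{c-i}{2}$, so any $\gamma\in G_c$ can be written $\gamma=y^{Ap^s}[a,x]^B=y^{Ap^s}[a^B,x]$ (bilinearity on $G_{c-j}\times G_j\to G_c$). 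Setting $n=Ap^s$ and $d=a^B$ finishes the proof, because $d\in\langle a\rangle$ and $y\in\langle a\rangle$ commute.

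This single choice of $a$ resolves your obstacle directly: $a$ lies in $\Cyc_G(y)\cap G_{c-j}$, and in your notation $aG_{c-j+1}\in\ker\bar\phi_y$; when $u=y^{p^{(c-i)/2}}=1$ the framedness statement forces $[a,x]$ to generate $G_c$, so $aG_{c-j+1}\notin K_x$, giving $\ker\bar\phi_y\neq K_x$. But note that the paper never needs to split on $u$ or on whether $i<j$: the lift $y\mapsto a$ along the power map places $y$ inside a cyclic group reaching down to depth $c-j$, and everything else follows from Proposition~\ref{lines}. Your layer-by-layer description of $V_y$ and the reduction to comparing $\ker\bar\phi_y$ with $K_x$ are correct observations, but they are bypassed entirely once you see the lift.
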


\begin{proof}
By Lemma \ref{ij} the sum $i+j$ is larger than $c$ and so $i>c-j$. Define
$$r=\frac{i-(c-j)}{2}\ \ \text{and} \ \ s=\frac{j}{2}-r.$$ 
Let now $a\in G_{c-j}\setminus G_{c-j+1}$ be such that $\rho^r(a)=y$; the existence of $a$ is granted by Lemma \ref{black core}. As a consequence of Proposition \ref{lines}, the subgroup $G_c$ is generated by $\rho^{\frac{j}{2}}(a)$ and $[a,x]$. There exist thus $A,B\in\Z$ such that 
$$\gamma=\rho^{\frac{j}{2}}(a)^A[a,x]^B.$$
We recall that, for any $k\in\Z_{\geq 0}$, the map $\rho^k$ is given by $z\mapsto z^{p^k}$, hence 
$$\rho^{\frac{j}{2}}(a)=\rho^{s+r}(a)=\rho^s(\rho^r(a))=\rho^s(y)=y^{p^s}.$$
Now, the group $G_c$ being central, Lemma \ref{tgt} implies that the commutator map
$G_{c-j}\times G_j\rightarrow G_c$ is bilinear so $[a,x]^B=[a^B,x]$. 
We define 
$$n=Ap^s \ \ \text{and}\ \ d=a^B$$
and get $\gamma=y^n[d,x]$. To conclude, the element $d$ belongs to $\Cyc_G(y)$, because $d$ and $y$ belong to $\gen{a}$.
%$$[d,y]=[a^B,y]=[a^B,\rho^{r}(a)]=[a^B,a^{p^r}]=1.$$
\end{proof}

\begin{lemma}\label{nervi}
Assume that $\alpha(T)=T$.
Let $I$ and $J$ be as in Lemma \ref{elenanonrisp}. 
Then there exists $g\in\Cyc_G(I)$ such that $\alpha(gJg^{-1})\subseteq gHg^{-1}$.
\end{lemma}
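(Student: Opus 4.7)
The plan is to find $g \in \Cyc_G(I) \cap G_{c-j}$ such that $\alpha(gxg^{-1}) \in gHg^{-1}$, where $x$ is a generator of $J$; this suffices since $gJg^{-1}$ is cyclic generated by $gxg^{-1}$. First I will pin down the action of $\alpha$ on $J$. By Lemma \ref{elenanonrisp}, $S = J \oplus G_c = S^+ \oplus G_c$, so writing $x = sh$ with $s \in S^+$ and $h \in G_c$, and using that $G_c \subseteq G^-$ (which holds because $\alpha$ induces inversion on $G/G_2$ and $c$ is odd, via Lemma \ref{action chi^i general}), we obtain $\alpha(x) = sh^{-1} = xh^{-2}$. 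Hence there exists $\gamma \in G_c$ with $\alpha(x) = x\gamma$.

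Next I will set up the defining equation. Lemma \ref{ij}(1) gives $i + j > c$, whence $[G_{c-j}, G_i] \subseteq G_{c+1} = \{1\}$; in particular $G_{c-j} \subseteq \Cyc_G(I)$, so any $g \in G_{c-j}$ is admissible. For such $g$ we have $[g,x] \in [G_{c-j}, G_j] \subseteq G_c$ central, so $gxg^{-1} = x[g,x]$, and since $G_c \subseteq G^-$ a direct computation yields
\[
\alpha(gxg^{-1}) = x\gamma [g,x]^{-1} = gxg^{-1} \cdot [g,x]^{-2}\gamma.
\]
Because $g$ centralizes $I$ and $H = I \oplus J$ is abelian (Lemma \ref{ij}(3)), one has $gHg^{-1} = I \oplus gJg^{-1}$, and this intersects $G_c$ trivially because $H \cap G_c = \{1\}$ and $G_c$ is central. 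Writing $\alpha(gxg^{-1}) = y^a (gxg^{-1})^b$ and reducing modulo $G_c$ in the direct-sum decomposition $\overline{H} = \overline{I} \oplus \overline{J}$ forces $a = 0$ and $b \equiv 1 \pmod{|J|}$; the remaining identity is then $[g,x]^2 = \gamma$.

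It remains to exhibit such a $g$. Since $p$ is odd and $|G_c| = p$, the squaring map is a bijection of $G_c$, so there is a unique $\gamma' \in G_c$ with $\gamma'^2 = \gamma$. Lemma \ref{jurassic5} applied to $\gamma'$ furnishes $n \in \Z$ and $d \in \Cyc_G(y) \cap G_{c-j}$ with $\gamma' = y^n [d,x]$. Now $y^n = \gamma'[d,x]^{-1}$ lies both in $I$ and in $G_c$, but $I \cap G_c \subseteq H \cap G_c = \{1\}$, so $y^n = 1$ and hence $\gamma' = [d,x]$. Taking $g = d$ gives $[g,x]^2 = \gamma'^2 = \gamma$, as required. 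The delicate point is the middle paragraph: verifying that the only way for $\alpha(gxg^{-1})$ to sit in $gHg^{-1}$ is via $[g,x]^2 = \gamma$, which crucially uses the direct-sum structure and the triviality of the intersection with $G_c$.
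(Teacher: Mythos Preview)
Your construction is correct and follows the same route as the paper (invoke Lemma~\ref{jurassic5}, then solve $[g,x]^2=\gamma$ inside $G_c$); the paper takes $g=d^{(p+1)/2}$ with $\gamma=y^n[d,x]$, while you take a square root of $\gamma$ first and then apply Lemma~\ref{jurassic5} --- but since, as you correctly observe, $y^n\in I\cap G_c=\{1\}$ forces $\gamma=[d,x]$ anyway, the two constructions coincide.

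There is one genuine slip. The implication ``$i+j>c$, whence $[G_{c-j},G_i]\subseteq G_{c+1}$'' is false: by Lemma~\ref{commutator indices} one has $[G_{c-j},G_i]\subseteq G_{c-j+i}$, and $c-j+i\geq c+1$ is equivalent to $i>j$, which does \emph{not} follow from $i+j>c$. So it is not true that every $g\in G_{c-j}$ centralizes $I$, and your decomposition $gHg^{-1}=I\oplus gJg^{-1}$ is not justified for arbitrary such $g$. Fortunately this does not damage the proof: the element $g=d$ you finally choose comes from Lemma~\ref{jurassic5}, which already guarantees $d\in\Cyc_G(y)=\Cyc_G(I)$, and that is all you need. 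The middle paragraph (arguing that $[g,x]^2=\gamma$ is \emph{necessary}) is then superfluous --- sufficiency is immediate, since $[g,x]^2=\gamma$ gives $\alpha(gxg^{-1})=gxg^{-1}\in gJg^{-1}\subseteq gHg^{-1}$ --- so you can simply drop the faulty claim and the necessity discussion without loss.
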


\begin{proof}
Let $y$ be a generator of $I$ and let $x$ be a generator of $J$. Then one has $\dpt_G(y)=i$ and $\dpt_G(x)=j$. 
%Then $\alpha(y)=y^{-1}$, by Lemma \ref{elenanonrisp}. 
As a consequence of Lemma \ref{elenanonrisp}($3$), there exists $\gamma\in G_c$ such that $\alpha(x)=x\gamma$. We fix $\gamma$ and we want to construct $g$.
Let $n\in\Z$ and $d\in \Cyc_G(y)\cap G_{c-j}$ be such that $\gamma=y^n[d,x]$; the existence of $n$ and $d$ is given by Lemma \ref{jurassic5}. We define $g=d^{\frac{p+1}{2}}$ and we claim that $\alpha(gxg^{-1})$ belongs to $gHg^{-1}$. We will use
some properties of $G_c$ that we list here. The group $G_c$ is central and annihilated by $p$, by hypothesis. Moreover, as a consequence of Lemma \ref{action chi^i general}, the restriction of $\alpha$ to $G_c$ coincides with the map $z\mapsto z^{-1}$. To conclude, the commutator map $G_{c-j}\times G_j\rightarrow G_c$ is bilinear by Lemma \ref{tgt}. We compute
$$
\alpha(gxg^{-1})=\alpha([g,x]x)=
\alpha([g,x])\alpha(x)=[g,x]^{-1}x\gamma=[g^{-1},x]x\gamma=
$$
$$
[g^{-1},x]xy^n[d,x]=[g^{-1},x][d,x]xy^n=[g^{-1}d,x]xy^n=
[d^{\frac{p-1}{2}}d,x]xy^n=
$$
$$
[d^{\frac{p+1}{2}},x]xy^{n}=[g,x]xy^{n}=(gxg^{-1})y^n.
$$
The element $g$ centralizes $y$, because $d$ does, so $\alpha(gxg^{-1})=g(xy^n)g^{-1}$ belongs to $gHg^{-1}$.
In particular, $\alpha(gJg^{-1})\subseteq gHg^{-1}$.
\end{proof}

\noindent
We conclude the proof of Lemma \ref{gran finale odd 1}.
By Lemma \ref{induction odd 1} there exists $a\in G$ such that $aTa^{-1}$ is $A$-stable. We fix $a$ and write $aHa^{-1}=I\oplus J$, with $I$ and $J$ as in Lemma \ref{elenanonrisp} and $H$ replaced by $aHa^{-1}$. By Lemma  \ref{nervi}, there exists an element $b\in G$ such that $bIb^{-1}=I$ and $\alpha(bJb^{-1})$ is contained in $baHa^{-1}b^{-1}$. We select such an element $b$ and define $g=ba$. Then $I$ is contained in $gHg^{-1}$ and
$$\alpha(gHg^{-1})=\alpha(baHa^{-1}b^{-1})=
\alpha(bIb^{-1}\oplus bJb^{-1})=\alpha(I\oplus bJb^{-1})=
$$
$$
\alpha(I)\oplus\alpha(bJb^{-1})=I\oplus\alpha(bJb^{-1}) \subseteq gHg^{-1}.$$
It follows that $\alpha(gHg^{-1})=gHg^{-1}$ and $gHg^{-1}$ is itself $A$-stable.
The proof of Lemma \ref{gran finale odd 1} is now complete.

\subsection{The induction step}\label{proof odd 1}

\noindent
In this paragraph we give the proof of Proposition \ref{proposition odd 1} and we work thus under the assumptions of the very same proposition. Let $H$ be a subgroup of $G$; we will show that $H$ has an $A$-stable conjugate. 
If $H$ contains $G_c$, then, by Lemma \ref{induction odd 1}, there exists $g\in G$ such that $gHg^{-1}$ is $A$-stable. Assume now that $G_c$ is not contained in $H$, i.e.
$H\cap G_c=\graffe{1}$. Thanks to Lemma \ref{gran finale odd 1}, the subgroup $H$ has an $A$-stable conjugate. In other words, by Lemma \ref{equivalent intense coprime-pgrps}, the subgroups $H$ and $\alpha(H)$ are conjugate in $G$. As the choice of $H$ was arbitrary, $\alpha$ is intense and the proof of Proposition \ref{proposition odd 1} is complete.

\section{The odd case, part \textrm{II}}\label{section odd 2}

\begin{proposition}\label{proposition odd 2}
Let $p>3$ be a prime number and let $G$ be a framed $p$-obelisk of class $c$. Assume that $c$ is odd and that $G_c$ has order $p^2$.
Let $\alpha$ be an automorphism of $G$ of order $2$ and assume that the map $\alpha_c:G/G_c\rightarrow G/G_c$ that is induced by 
$\alpha$ is intense.
Then $\alpha$ is intense.
\end{proposition}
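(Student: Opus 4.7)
The plan is to reduce Proposition \ref{proposition odd 2} to Proposition \ref{proposition odd 1} by quotienting out a well-chosen subgroup of $G_c$ of order $p$. Set $A=\langle\alpha\rangle$. The class $c$ being odd, Lemma \ref{action chi^i general} gives that $\alpha$ acts on $G_c$ as scalar multiplication by $\chi^c=-1$, so $G_c\subseteq G^-$ and every subgroup of $G_c$ is $A$-stable. Lemma \ref{parity obelisks}$(3)$ forces $\wt_G(c)=2$, and Lemma \ref{black core} then tells us that $G_c$ is elementary abelian of order $p^2$, hence contains exactly $p+1$ subgroups of order $p$, all of which are $A$-stable.

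By Lemma \ref{equivalent intense coprime-pgrps}, it suffices to show that every subgroup $H$ of $G$ has an $A$-stable conjugate. If $G_c\subseteq H$, I would argue as in Lemma \ref{induction odd 1}: the hypothesis that $\alpha_c$ is intense on $G/G_c$ produces, via Lemma \ref{equivalent intense coprime-pgrps}, an element $g\in G$ with $(gHg^{-1})/G_c$ stable under $\langle\alpha_c\rangle$, and therefore $gHg^{-1}$ is $A$-stable. Assume then that $G_c\not\subseteq H$, so that $|H\cap G_c|\leq p$. Choose a subgroup $M$ of $G_c$ of order $p$ with $M\cap H=\{1\}$; such an $M$ exists because at most one of the $p+1$ order-$p$ subgroups of $G_c$ meets $H$ non-trivially. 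The quotient $\overline{G}=G/M$ is a $p$-obelisk of class $c$ with $|\overline{G}_c|=p$, by Lemma \ref{non-ab quotient obelisk}, and it is framed: for every maximal subgroup $\overline{N}$ of $\overline{G}$, lifting to a maximal subgroup $N$ of $G$ containing $M$ gives $\Phi(\overline{N})=\Phi(N)M/M=G_3M/M=\overline{G}_3$. Since $M$ is $A$-stable, $\alpha$ descends to an order-$2$ automorphism $\overline{\alpha}$ of $\overline{G}$ that agrees with $\alpha_c$ on $\overline{G}/\overline{G}_c\cong G/G_c$ and is therefore intense modulo $\overline{G}_c$. Proposition \ref{proposition odd 1} now applies to $(\overline{G},\overline{\alpha})$ and yields that $\overline{\alpha}$ is intense on $\overline{G}$.

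Because $M\cap H=\{1\}$, the projection $\pi:G\to\overline{G}$ is injective on $H$, and we obtain $g\in G$ such that $\pi(gHg^{-1})$ is $\overline{A}$-stable. Setting $K=gHg^{-1}$, this reads $\alpha(K)M=KM$, so that both $K$ and $\alpha(K)$ are complements of $M$ inside $T=KM$. By Lemma \ref{graph+complements} they differ by a specific element $f\in\Hom(K/\Phi(K),M)$. The aim is to realise $f$ as the commutator pairing $k\mapsto[a,k]$ for some $a\in G_{c-l}$, where $l$ denotes the least jump of $K$ in $G$; once this is done, one has $aKa^{-1}=\alpha(K)$, and Lemma \ref{equivalent intense coprime} upgrades this equality to the existence of an $A$-stable conjugate of $K$, and thus of $H$. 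The existence of the required $a$ will come from combining Corollary \ref{delta surjective}, which asserts the surjectivity of the commutator map $G_{c-l}/G_{c-l+1}\to\Hom(G_l/G_{l+1},G_c)$, with the framed hypothesis via Proposition \ref{lines}, together with the fact that $\Phi(K)\subseteq K\cap G_{l+1}$ allows the pairing to factor appropriately.

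The main obstacle lies precisely in this final surjectivity onto $\Hom(K/\Phi(K),M)$: while Corollary \ref{delta surjective} delivers surjectivity onto $\Hom(K G_{l+1}/G_{l+1},G_c)$ by restriction, a general $f\in\Hom(K/\Phi(K),M)$ need not factor through $K/(K\cap G_{l+1})$ since $\Phi(K)$ may be strictly smaller than $K\cap G_{l+1}$. Surmounting this requires, I expect, a careful case-split on the parity and width of the jumps of $H$ in $G$ mirroring the structure of Section \ref{section lemmas odd 1} (with dedicated lemmas for jumps all odd, jumps all even, and mixed jumps), together with an inductive refinement layer by layer down the lower central series, in which the framed hypothesis through Proposition \ref{lines} is invoked to produce the commutator homomorphisms successively at each jump of $K$. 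This will additionally require handling the subcase $|H\cap G_c|=p$ separately from $H\cap G_c=\{1\}$, since the presence of a non-trivial intersection modifies which choices of $M$ are permissible and which $f$ can arise.
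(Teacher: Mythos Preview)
Your reduction to Proposition \ref{proposition odd 1} by quotienting out an order-$p$ subgroup of $G_c$ is sound, and your verification that $\overline{G}$ remains a framed $p$-obelisk with $\overline{\alpha}$ satisfying the required hypotheses is correct. The difficulty you flag in the lifting step is real for the route you have chosen, but it stems from two avoidable choices.

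First, when $|H\cap G_c|=p$, quotient by $N=H\cap G_c$ itself rather than by a complement $M$. Since $N\subseteq H$, once $\overline{gHg^{-1}}$ is $A$-stable in $G/N$ the preimage $gHg^{-1}$ is automatically $A$-stable in $G$: no commutator-realisation argument is needed at all. This is exactly Lemma \ref{odd 2 nontrivial intersection} in the paper, and it only requires Lemma \ref{gran finale odd 1} (not the full Proposition \ref{proposition odd 1}), because $\overline{H}\cap\overline{G_c}=\{1\}$ by construction.

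Second, and more importantly, in the case $H\cap G_c=\{1\}$ you miss the decisive structural constraint. Since $c$ is odd and $\wt_G(c)=2$, any odd jump $j$ of $H$ would, via the isomorphism $\rho^{(c-j)/2}_j:G_j/G_{j+1}\to G_c$ of Corollary \ref{power iso bottom}, force $H\cap G_c\neq\{1\}$. Hence \emph{all} jumps of $H$ are even (this is Lemma \ref{feltrefaschifo}), and therefore all have width $1$. Lemma \ref{same same} then gives $\Phi(H)=H\cap G_{l+1}$ on the nose, dissolving your concern that $\Phi(K)$ might be strictly smaller. With this in hand the paper works with $T=HG_c$ rather than $HM$: once $T$ is made $A$-stable, Lemma \ref{order +- jumps} shows that $T^+$ is a complement of $G_c$ in $T$ with the same jumps as $H$, and Lemma \ref{supertecnico 1} conjugates $H$ to the $A$-stable subgroup $T^+$ directly. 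No jump-parity case-split mirroring Section \ref{section lemmas odd 1}, and no layer-by-layer refinement, is needed.
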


\noindent
The proof of Proposition \ref{proposition odd 2} is given in Section \ref{proof odd 2}.

\subsection{Some lemmas}\label{section lemmas odd 2}

The purpose of this section is laying the ground for the proof of Proposition \ref{proposition odd 2}. We will therefore, until the end of Section \ref{section lemmas odd 2}, work under the assumptions of Proposition \ref{proposition odd 2}. Denote $A=\gen{\alpha}$.

\begin{lemma}\label{induction odd 2}
Let $H$ be a subgroup of $G$ containing $G_c$. Then there exists $g\in G$ such that $gHg^{-1}$ is $A$-stable.
\end{lemma}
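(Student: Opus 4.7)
The plan is to follow exactly the template of Lemma \ref{induction even} and Lemma \ref{induction odd 1}, both of which handle the case ``$H$ contains $G_c$'' by pushing down to the quotient $G/G_c$, where $\alpha$ induces the intense automorphism $\alpha_c$ by hypothesis. The argument does not depend on the order of $G_c$, only on the two facts that $G_c$ is characteristic in $G$ (hence stabilized by $\alpha$) and that $\alpha_c$ is intense on $G/G_c$. So the jump from $|G_c|=p$ (Proposition \ref{proposition odd 1}) to $|G_c|=p^2$ (Proposition \ref{proposition odd 2}) is invisible at this stage.

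Concretely, I would first invoke Lemma \ref{equivalent intense coprime-pgrps}. Since $\alpha_c$ has order dividing $2$, and $2$ is coprime to $|G/G_c|$ (which is a power of the odd prime $p>3$), intensity of $\alpha_c$ is equivalent to the assertion that every subgroup of $G/G_c$ admits an $\langle\alpha_c\rangle$-stable conjugate. Applied to the subgroup $H/G_c\leq G/G_c$, this yields an element $g\in G$ such that $(gHg^{-1})/G_c=g(H/G_c)g^{-1}$ is $\langle\alpha_c\rangle$-stable.

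Second, I would lift this stability from $G/G_c$ back to $G$. The subgroup $G_c$ is characteristic in $G$, so $\alpha(G_c)=G_c\subseteq gHg^{-1}$; combined with $(gHg^{-1})/G_c$ being $\langle\alpha_c\rangle$-stable, the very definition of $\alpha_c$ as the automorphism of $G/G_c$ induced by $\alpha$ gives $\alpha(gHg^{-1})=gHg^{-1}$. Hence $gHg^{-1}$ is $A$-stable, as required.

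There is essentially no obstacle: this lemma is the easy base case of the induction, identical in spirit to Lemmas \ref{induction even} and \ref{induction odd 1}. The genuine difficulty of Proposition \ref{proposition odd 2} will not come from subgroups containing $G_c$, but from subgroups $H$ with $H\cap G_c\subsetneq G_c$, where the larger order of $G_c$ (namely $p^2$ instead of $p$) prevents the simple ``$H\cap G_c=\{1\}$ or $H\supseteq G_c$'' dichotomy of Section \ref{section lemmas odd 1} and forces a finer case analysis according to the intersection $H\cap G_c$.
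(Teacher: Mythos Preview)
Your proposal is correct and follows exactly the same approach as the paper's proof: push down to $G/G_c$, use Lemma \ref{equivalent intense coprime-pgrps} to find a conjugate whose image is $\gen{\alpha_c}$-stable, and lift back using that $G_c$ is characteristic. Your additional commentary about the order of $G_c$ being irrelevant at this stage and the real difficulty lying in the case $H\cap G_c\subsetneq G_c$ is accurate and anticipates precisely the structure of the rest of Section \ref{section lemmas odd 2}.
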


\begin{proof}
We write $\overline{G}=G/G_c$ and we use the bar notation for the subgroups of $\overline{G}$. By hypothesis, the automorphism $\alpha_c$ is intense so, by Lemma \ref{equivalent intense coprime-pgrps}, there exists $g\in G$ such that 
$\alpha_c(\overline{gHg^{-1}})=\overline{gHg^{-1}}$. The map $\alpha_c$ being induced from $\alpha$, it follows that $\alpha(gHg^{-1})=gHg^{-1}$ and $gHg^{-1}$ is $A$-stable.
\end{proof}

\begin{lemma}\label{odd 2 nontrivial intersection}
Let $H$ be a subgroup of $G$ such that $H\cap G_c\neq\graffe{1}$. Then there exists $g\in G$ such that $gHg^{-1}$ is $A$-stable.
\end{lemma}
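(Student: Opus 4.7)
The plan is to reduce the statement to Proposition \ref{proposition odd 1} by quotienting out by an appropriate $A$-stable central subgroup of order $p$.

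Since $G_c$ is elementary abelian of order $p^2$ (by Lemma \ref{black core}), the intersection $N:=H\cap G_c$ has order $p$ or $p^2$. If $|N|=p^2$ then $G_c\subseteq H$ and Lemma \ref{induction odd 2} directly provides an $A$-stable conjugate, so from now on I assume $|N|=p$.

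Next I would show that $N$ is $A$-stable, so that $\alpha$ descends to the quotient $G/N$. This step relies on the intensity of $\alpha_c$: in the essential case $\alpha_c\neq 1$, the automorphism $\alpha_c$ induces the inversion map on $(G/G_c)/(G/G_c)_2 = G/G_2$, and then using Lemma \ref{obelisk non-deg} applied to the non-degenerate commutator pairing $\gamma_{1,c-1}\colon G/G_2\times G_{c-1}/G_c\to G_c$ (non-degenerate because $c$ is odd and hence $c-1$ is even), one concludes that $\alpha$ acts on $G_c$ as multiplication by $(-1)^c=-1$. In particular every one-dimensional subspace of $G_c$ is $A$-stable, and so is $N$.

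Set $\bar G=G/N$. Since $N\subseteq G_c\subseteq G_3\subseteq\Phi(M)$ for every maximal subgroup $M$ of $G$, the quotient $\bar G$ is a non-abelian quotient of a $p$-obelisk, hence a $p$-obelisk itself by Lemma \ref{non-ab quotient obelisk}; it is framed because $\Phi(M/N)=\Phi(M)/N=G_3/N=\bar G_3$, and it has class $c$ since $|\bar G_c|=|G_c/N|=p>1$. The induced automorphism $\bar\alpha$ has order dividing two and its reduction on $\bar G/\bar G_c=G/G_c$ is exactly $\alpha_c$, hence intense. Applying Proposition \ref{proposition odd 1} to $\bar G$ and $\bar\alpha$ (the case $\bar\alpha=1$ being trivial), one obtains $g\in G$ for which $\overline{gHg^{-1}}$ is fixed by $\bar\alpha$. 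Since $N$ is central and contained in $H$, it is contained in $gHg^{-1}$; and since $N$ is $A$-stable, the equality descends to $\alpha(gHg^{-1})=gHg^{-1}$, proving the lemma.

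The main obstacle is the second step, the $A$-stability of $N$: everything else is a routine quotient-and-lift manipulation, but establishing that $\alpha$ inverts $G_c$ requires combining the intensity of $\alpha_c$, the oddness of $c$, and the non-degeneracy of the top commutator pairing provided by the obelisk structure.
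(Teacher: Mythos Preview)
Your approach matches the paper's: quotient by $N=H\cap G_c$ (of order $p$) and apply the odd-class, $|G_c|=p$ result (the paper uses Lemma~\ref{gran finale odd 1}, you use Proposition~\ref{proposition odd 1}; either works since $\bar H\cap\bar G_c=1$). You are in fact more careful than the paper at the step you flagged as the ``main obstacle'': the paper's citation of Lemma~\ref{intense properties}($2$) for the $A$-stability of $N$ is inapposite, since $\alpha$ is not yet known to be intense, and your commutator-pairing argument correctly fills this gap---though once you know $\alpha$ inverts $G/G_2$, Lemma~\ref{action chi^i general} applied directly to $\alpha$ on $G$ already gives that $\alpha$ acts as $(-1)^c=-1$ on $G_c$, making every subgroup of $G_c$ $A$-stable. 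The case $\alpha_c=1$ that you set aside cannot arise: it would force $\alpha$ to act trivially on $G/G_2$ and hence (again by Lemma~\ref{action chi^i general}) on every $G_i/G_{i+1}$, so $\alpha=1$, contradicting $|\alpha|=2$.
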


\begin{proof}
Let $N=H\cap G_c$. 
If $N=G_c$, then, by Lemma \ref{induction odd 2}, there exists $g\in G$ such that $gHg^{-1}$ is $A$-stable. We assume that $N\neq G_c$. The group $N$ being non-trivial, it follows from Proposition \ref{blackburn 2-1-2}($1$) that $G_c$ and $N$ have orders respectively $p^2$ and $p$.
Moreover, the group $G_c$ being central, $N$ is normal in $G$.
It follows from Lemma \ref{intense properties}($2$) that the action of $A$ on $G$ induces an action of $A$ on $\overline{G}=G/N$. Moreover, $\overline{G}$ has class $c$ and the subgroup $\overline{H}=H/N$ has trivial intersection with $\overline{G_c}=G_c/N$.
By Lemma \ref{gran finale odd 1}, there exists $\bar{g}\in\overline{G}$ such that $\bar{g}\overline{H}\bar{g}^{\,-1}$ is $A$-stable, and so there exists $g\in G$ such that $gHg^{-1}$ is $A$-stable.
\end{proof}

\begin{lemma}\label{feltrefaschifo}
Let $H$ be a subgroup of $G$ such that $H\cap G_c=\graffe{1}$. Then $H$ has only even jumps.
\end{lemma}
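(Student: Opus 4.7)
The plan is to exploit the parity structure of the widths of a $p$-obelisk together with the power map isomorphisms from Chapter \ref{chapter obelisks}. Since $c$ is odd and $|G_c|=p^2$, Lemma \ref{parity obelisks} forces $\wt_G(c)=2$, and for every $i\in\graffe{1,\ldots,c}$, $\wt_G(i)=2$ precisely when $i$ is odd. The argument will then be: an odd jump of $H$ in $G$ would, via the $p$-power map, produce a non-trivial element of $H\cap G_c$.

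More concretely, I will argue by contradiction. Assume $H$ has an odd jump $l$. By Lemma \ref{jumps and depth}, there exists $x\in H$ with $\dpt_G(x)=l$. If $l=c$, then $x$ is a non-trivial element of $H\cap G_c$, contradicting the hypothesis. So I may assume $l<c$. Now $l$ and $c$ are both odd and $\wt_G(c)=2$, so Corollary \ref{power iso bottom} (with $i=l$, $j=c$, and $m=(c-l)/2$) applies and yields an isomorphism
\[
\rho^{(c-l)/2}_l\colon G_l/G_{l+1}\longrightarrow G_c/G_{c+1}=G_c.
\]
Since $xG_{l+1}$ is non-trivial in $G_l/G_{l+1}$, its image $\rho^{(c-l)/2}(x)=x^{p^{(c-l)/2}}$ is a non-trivial element of $G_c$. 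But $x^{p^{(c-l)/2}}$ lies in $H$, contradicting $H\cap G_c=\graffe{1}$.

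This proof is short and presents no real obstacle; the only thing to verify carefully is the hypothesis of Corollary \ref{power iso bottom}, namely that $\wt_G(c)=2$, which is exactly where the assumption $|G_c|=p^2$ enters together with Lemma \ref{parity obelisks}. The rest is an immediate application of the isomorphism $\rho^{(c-l)/2}_l$ to a witness of the odd jump $l$.
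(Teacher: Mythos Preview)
Your proof is correct and is precisely the argument the paper has in mind; the paper's one-line proof (``It follows from Corollary \ref{power iso bottom} that $H$ cannot have odd jumps'') is just a compressed version of exactly what you wrote. One minor remark: the fact that $\wt_G(c)=2$ follows directly from $|G_c|=p^2$ and $G_{c+1}=\graffe{1}$, so the citation of Lemma \ref{parity obelisks} is not needed for that particular step.
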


\begin{proof}
The index $c$ is odd and $H\cap G_c=\graffe{1}$. It follows from Corollary \ref{power iso bottom} that $H$ cannot have odd jumps.
\end{proof}

\begin{lemma}\label{quasi gran finale odd 2}
Let $H$ be a subgroup of $G$ such that $H\cap G_c=\graffe{1}$.
Let $T=HG_c$ and assume that $\alpha(T)=T$.
Then there exists $g\in G$ such that $gHg^{-1}$ is $A$-stable.
\end{lemma}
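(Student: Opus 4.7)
The plan is to exploit the fact that $H$ is forced to be cyclic and then reduce to a complements argument of the type used in Lemma~\ref{supertecnico 1}.

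First I would dispose of the trivial case: if $H=\{1\}$ there is nothing to show, so assume $H\neq\{1\}$ and let $l$ denote its least jump in $G$. Since $H\cap G_c=\{1\}$ and $c$ is odd, Lemma~\ref{feltrefaschifo} says every jump of $H$ in $G$ is even; in particular $l$ is even and every jump of $H$ lies in $\{1,\ldots,c-1\}$. Lemma~\ref{parity obelisks}(2) then forces every such jump to have width $1$, so all jumps of $H$ in $G$ have the same parity and the same width $1$. From Lemma~\ref{cyclic trivial intersection} it follows that $H$ is cyclic, and from Lemma~\ref{same same}(2) that $\Phi(H)=H\cap G_{l+1}$.

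Next I would use the hypothesis $\alpha(T)=T$. The subgroup $G_c$ is characteristic, so it is $A$-stable, and since $H\cap G_c=\{1\}$ we have $T=H\oplus G_c$. Applying $\alpha$ gives $T=\alpha(H)\oplus G_c$, so $\alpha(H)$ is another complement of $G_c$ in $T=HG_c$. Now the parities match up: $c$ is odd and $l$ is even, so $c-l$ is odd. Lemma~\ref{supertecnico 1} therefore applies to the complement $\alpha(H)$ of $G_c$ in $HG_c$, yielding an element $t\in G_{c-l}$ with
\[
\alpha(H)=tHt^{-1}.
\]

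Finally, the order of $\alpha$ is $2$ and the order of $G$ is a power of $p>3$, so $\alpha$ has order coprime to $|G|$. Applying Lemma~\ref{equivalent intense coprime} with $N=G$ (which is manifestly $A$-stable), the existence of $t\in G$ with $\alpha(H)=tHt^{-1}$ yields an element $g\in G$ such that $gHg^{-1}$ is $\gen{\alpha}$-stable, as required.

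The only mildly delicate point is matching the parity hypothesis needed by Lemma~\ref{supertecnico 1}; everything else is a direct assembly of results from Sections~\ref{section subgroups obelisks} and~\ref{section lemmas odd 2}. I do not expect any serious obstacle, since the framed hypothesis on $G$ is not used at this stage (it enters only when removing the extra assumption $\alpha(T)=T$ in the next lemma).
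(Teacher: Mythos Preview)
Your proof is correct. It differs from the paper's in one small but genuine way: after establishing that all jumps of $H$ are even of width $1$ and that $\Phi(H)=H\cap G_{l+1}$, the paper identifies the $A$-stable conjugate directly as $T^{+}$ (using Lemma~\ref{order +- jumps} to see that $T^{+}$ has the same jumps as $H$ and is therefore a complement of $G_c$ in $T$), and then applies Lemma~\ref{supertecnico 1} to conjugate $H$ to $T^{+}$. You instead show that $\alpha(H)$ is a complement, deduce $\alpha(H)=tHt^{-1}$ from Lemma~\ref{supertecnico 1}, and then pass through Lemma~\ref{equivalent intense coprime} to produce the $A$-stable conjugate. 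The paper's route is a touch more direct, since it names the target conjugate explicitly and avoids the Glauberman step; yours is equally valid and perhaps more in the spirit of the earlier arguments in the chapter. (A minor aside: the framed hypothesis actually enters in Lemma~\ref{odd 2 nontrivial intersection}, via Lemma~\ref{gran finale odd 1}, rather than in the next lemma as you suggest.)
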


\begin{proof}
Let $l$ denote the least jump of $H$ in $G$.
By Lemma \ref{feltrefaschifo}, all jumps of $H$ in $G$ are even so, as a consequence of Lemma \ref{parity obelisks}($2$), all jumps of $H$ have width $1$.
It follows from Lemma \ref{same same} that $H$ is abelian and $\Phi(H)=H\cap G_{l+1}$. The subgroup $G_c$ being central, we get $T=H\oplus G_c$.
Now, by Lemma \ref{order +- jumps}, the subgroup $T^+=\graffe{t\in T : \alpha(t)=t}$ has the same jumps as $H$, and it is therefore a complement of $G_c$ in $T$. Thanks to Lemma \ref{supertecnico 1}, the subgroups $H$ and $T^+$ are conjugate in $G$. In particular, $H$ has an $A$-stable conjugate.
\end{proof}

\begin{lemma}\label{gran finale odd 2}
Let $H$ be a subgroup of $G$ such that $H\cap G_c=\graffe{1}$.
Then there exists $g\in G$ such that $gHg^{-1}$ is $A$-stable.
\end{lemma}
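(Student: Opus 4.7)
The plan is to reduce the general case to the case already handled by Lemma \ref{quasi gran finale odd 2}, using Lemma \ref{induction odd 2} as the reducing tool. This is precisely the same two-step pattern used at the end of Section \ref{section lemmas even} (for Lemma \ref{gran finale even}) and at the end of Section \ref{section lemmas odd 1} (for Lemma \ref{gran finale odd 1}): first move $H$ by a conjugation that stabilizes the enlargement $HG_c$, then apply the ``already $A$-stable on top'' lemma to finish.

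More concretely, set $T=HG_c$. Since $T$ contains $G_c$, Lemma \ref{induction odd 2} produces $a\in G$ such that $aTa^{-1}$ is $A$-stable. Put $H'=aHa^{-1}$. The subgroup $G_c$ is central (and in particular normal), so $aG_ca^{-1}=G_c$, and therefore
\[
H'G_c = aHa^{-1}\cdot G_c = a(HG_c)a^{-1}=aTa^{-1},
\]
which is $A$-stable. Moreover $H'\cap G_c = a(H\cap G_c)a^{-1}=\graffe{1}$. Thus $H'$ satisfies the hypotheses of Lemma \ref{quasi gran finale odd 2}, which yields $b\in G$ such that $bH'b^{-1}$ is $A$-stable. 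Setting $g=ba$ gives $gHg^{-1}=bH'b^{-1}$, as required.

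There is no real obstacle here: both ingredients are already in place. The work lies entirely in recognising that the passage ``replace $H$ by $aHa^{-1}$'' preserves the two hypotheses of Lemma \ref{quasi gran finale odd 2}, namely trivial intersection with $G_c$ (automatic from normality of $G_c$) and $A$-stability of the product with $G_c$ (by choice of $a$). Once this observation is made, the proof is a one-line composition and Lemma \ref{gran finale odd 2} follows immediately. This then feeds directly into Section \ref{proof odd 2}, where it combines with Lemma \ref{odd 2 nontrivial intersection} to prove Proposition \ref{proposition odd 2} by dichotomy on whether $H\cap G_c$ is trivial.
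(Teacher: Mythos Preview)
Your proof is correct and essentially identical to the paper's: both define the enlargement $HG_c$, use Lemma \ref{induction odd 2} to conjugate it to an $A$-stable subgroup, then apply Lemma \ref{quasi gran finale odd 2} to the conjugated $H$ to finish, setting $g=ba$. The only difference is cosmetic (the paper names the original product $S$ and its conjugate $T$, whereas you name the original $T$).
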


\begin{proof}
Define $S=HG_c$. By Lemma \ref{induction odd 2}, there exists $a\in G$ such that $aSa^{-1}$ is $A$-stable. Let now $T=aSa^{-1}$. Then $\alpha(T)=T$ and $T=a(HG_c)a^{-1}=aHa^{-1}G_c$. Moreover, the intersection $aHa^{-1}\cap G_c$ is trivial. Thanks to Lemma \ref{quasi gran finale odd 2} (with $aHa^{-1}$ in the place of $H$), there exists $b\in G$ such that $b(aHa^{-1})b^{-1}$ is $A$-stable. To conclude, we define $g=ba$.
\end{proof}

\subsection{The last step}\label{proof odd 2}

\noindent
We give here the proof of Proposition \ref{proposition odd 2} and we make thus all assumptions from Proposition \ref{proposition odd 2} hold, until the end of Section \ref{proof odd 2}. To show that $\alpha$ is intense, we will show that each subgroup of $G$ has an $A$-stable conjugate. 
Let $H$ be a subgroup of $G$. If $H$ trivially intersects $G_c$, then, by Lemma \ref{gran finale odd 2}, there exists $g\in G$ such that $gHg^{-1}$ is $A$-stable. If, on the contrary, $H\cap G_c\neq\graffe{1}$, then, by Lemma \ref{odd 2 nontrivial intersection}, there exists a conjugate of $H$ in $G$ that is $A$-stable.
We have proven that, in any case, $H$ has an $A$-stable conjugate and, by Lemma \ref{equivalent intense coprime}, the subgroups 
$H$ and $\alpha(H)$ are conjugate in $G$. The choice of $H$ being arbitrary, the automorphism $\alpha$ is intense and we have proven Proposition \ref{proposition odd 2}.

\section{Proving the main theorems}\label{section proof main}

\noindent
In Sections \ref{pf1} and \ref{pf2} we finally prove the two main results of this Chapter, which were stated at the beginning of it.

\subsection{The proof of Theorem \ref{theorem class 4 iff}}\label{pf1}

\noindent
We work under the assumptions of Theorem \ref{theorem class 4 iff}. 
The implication $(2)\Rightarrow(1)$ follows from the combination of Propositions \ref{class 4 obelisk} and \ref{proposition -1^i}. We now prove $(1)\Rightarrow(2)$.
To this end, denote by $\overline{G}$ the quotient $G/G_4$ and by $\alpha_4$ the automorphism of $\overline{G}$ that is induced by $\alpha$. The map $\alpha$ induces the inversion map on $G/G_2$ and thus so does $\alpha_4$ on $\overline{G}/\overline{G}_2$. It follows from Proposition \ref{p^4 has cp} that $\alpha_4$ is intense and consequently, from Proposition \ref{proposition even}, that $\alpha$ is intense too. The proof of Theorem \ref{theorem class 4 iff} is complete.

\subsection{The proof of Theorem \ref{theorem complete char pt.1}}\label{pf2}

%Assume that $\alpha$ is intense. Then $\inte(G)=2$ and, by Theorem \ref{theorem we need lines}, the group $G$ is a framed $p$-obelisk. The map $G/G_2\rightarrow G/G_2$ that is induced by $\alpha$ is equal to the inversion map as a consequence of Lemma \ref{action chi^i}. We now prove that $(2)$ implies $(1)$.

\noindent
Under the hypotheses of Theorem \ref{theorem complete char pt.1}, we will work by induction on the class $c$ of $G$. 
As a consequence of Lemma \ref{obelisk basic}($1$-$3$), the group $G$ has class at least $2$ and $G/G_3$ is extraspecial of exponent $p$. 
If $c=2$, then Lemma \ref{extraspecial action on first quotient} yields that $\alpha$ is intense. We assume that $c>2$ and 
denote by $\overline{G}$ the quotient $G/G_c$. We denote moreover by $\alpha_c$ the automorphism of $\overline{G}$ that is induced by $\alpha$ and assume that $\alpha_c$ is intense.
The group $\overline{G}$ is a framed obelisk, because $c>2$, and $\alpha_c$ induces the inversion map on $\overline{G}/\overline{G}_2$, because $\alpha$ does. 
If $c$ is even, then, by Proposition \ref{proposition even}, the map $\alpha$ is intense. Suppose that $c$ is odd. From Proposition \ref{blackburn 2-1-2}($1$) it follows that the cardinality of $G_c$ is $p$ or $p^2$. In the first case we apply Proposition \ref{proposition odd 1}, in the second Proposition \ref{proposition odd 2}.
Theorem \ref{theorem complete char pt.1} is now proven.

%CHAPTER: LINES NECESSARY

\chapter{A characterization for high classes}\label{chapter lines}

\noindent
Let $p>3$ be a prime number and let $G$ be a finite $p$-group. We recall that, for each positive integer $i$, the \emph{$i$-th width} of $G$ is $\wt_G(i)=\log_p|G_i:G_{i+1}|$. The group $G$ is a \emph{$p$-obelisk} if it is non-abelian, satisfying $G_3=G^p$ and 
$|G:G_3|=p^3$. A $p$-obelisk $G$ is \emph{framed} if, for each maximal subgroup $M$ of $G$, one has $\Phi(M)=G_3$. For more information about $p$-obelisks, we refer to Chapter \ref{chapter obelisks}.
\vspace{8pt} \\
\noindent
In this chapter we prove the following result.

\begin{theorem}\label{theorem we need lines}
Let $p$ be a prime number and let $G$ be a finite $p$-group with $\wt_G(5)=2$.
Then the following are equivalent.
\begin{itemize}
 \item[$1$.] One has $\inte(G)>1$.
 \item[$2$.] One has $p>3$, the group $G$ is a framed $p$-obelisk, and there exists an automorphism $\alpha$ of $G$ of order $2$ that induces the inversion map on $G/G_2$. 
\end{itemize}
\end{theorem}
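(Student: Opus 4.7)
The implication $(2)\Rightarrow(1)$ is immediate: under hypothesis $(2)$, Theorem \ref{theorem complete char pt.1} asserts that $\alpha$ is intense, so $\inte(G)\geq 2>1$.

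For $(1)\Rightarrow(2)$, assume $\inte(G)>1$. Since $\wt_G(5)=2>0$, the nilpotency class $c$ of $G$ satisfies $c\geq 5$. By Proposition \ref{proposition 2gps}, $p$ is odd, and Corollary \ref{3-gps class at most 4} forbids $p=3$ (since $c\geq 5>4$); hence $p>3$. Proposition \ref{class 4 obelisk} then gives that $G$ is a $p$-obelisk, and Proposition \ref{proposition -1^i} produces an intense automorphism $\alpha$ of order $2$ acting on each $G_i/G_{i+1}$ as $(-1)^i$; in particular, $\alpha$ induces the inversion map on $G/G_2$.

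The remaining content is to prove that $G$ is framed. The plan is to use Proposition \ref{lines} with the pair $(h,k)=(3,2)$. Since $\wt_G(5)=2$ forces $\wt_G(3)=2$ via Lemma \ref{parity obelisks}, Lemma \ref{obelisk non-deg} ensures that both $\rho^1_3\colon G_3/G_4\to G_5/G_6$ and $\gamma_{3,2}(\cdot,\bar z_0)\colon G_3/G_4\to G_5/G_6$ (for any fixed nonzero $\bar z_0\in G_2/G_3$) are isomorphisms of $2$-dimensional $\F_p$-spaces. Framedness of $G$ is then equivalent to the composite endomorphism $T:=\gamma_{3,2}(\cdot,\bar z_0)^{-1}\circ\rho^1_3$ of $G_3/G_4$ having no $\F_p$-eigenvector. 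Suppose, for contradiction, that $T$ has an eigenvector $\bar u$ with eigenvalue $\lambda\in\F_p^*$; then there exist representatives $u\in G_3\setminus G_4$ and $z_0\in G_2\setminus G_3$ with $u^p\equiv[u,z_0]^\lambda\pmod{G_6}$. Lemma \ref{order +- jumps} applied to $H=G_3$ (whose jumps in $G$ are all odd) yields $G_3\subseteq G^-$, so $u\in G^-$ automatically; applied to $H=G_2$ it shows that $G_2^+\not\subseteq G_3$, so $z_0$ may be chosen in $G_2^+\setminus G_3$, giving $\alpha(z_0)=z_0$.

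The final and hardest step is to exploit the intense automorphism $\alpha$ in conjunction with this eigenvector relation to produce a contradiction. The plan is to construct a family of subgroups of $G$ built from $u$ and $z_0$ --- for instance, cyclic subgroups of the form $\langle u\cdot z_0^{pi}\rangle$ with $i\in\F_p$, or abelian rank-two subgroups sitting inside $\langle u,z_0\rangle$ --- determine their jumps in $G$ and their $\alpha$-stability types through Lemmas \ref{order +- jumps}, \ref{conjugate stable iff in nor-times-G+}, and \ref{same jumps conjugates}, and then count $G$-orbits via Lemma \ref{orbits}. The expectation is that the eigenvector relation, propagated upward through $\rho$ via the commutative diagram of Proposition \ref{proposition commutative diagram}, pins down the $\alpha$-stable representatives so tightly that their total number falls below the number of $G$-orbits of the chosen subgroups, contradicting intenseness of $\alpha$ via Lemma \ref{equivalent intense coprime-pgrps}. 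Executing this counting argument with the correct choice of test subgroups --- calibrated against the $(w_1,\ldots,w_5)=(2,1,2,1,2)$ structure forced by the hypothesis --- is the crux of the proof.
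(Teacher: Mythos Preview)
Your reductions for $(2)\Rightarrow(1)$ and most of $(1)\Rightarrow(2)$ are correct, and your eigenvector reformulation of framedness via Proposition~\ref{lines} is valid. There is a minor slip: the claim that ``$G_3\subseteq G^-$'' is false, since the jumps of $G_3$ in $G$ are $3,4,5,\ldots$, not all odd. This is repairable via Lemma~\ref{conjugate of x}, noting that conjugation acts trivially on $G_3/G_4$ so the eigenvector class of $\bar u$ is unaffected.

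The real problem is that your proof is incomplete: you explicitly describe the decisive counting argument as ``the crux of the proof'' and then do not carry it out. Your proposed test families (cyclic groups $\langle u\cdot z_0^{pi}\rangle$ or rank-two subgroups of $\langle u,z_0\rangle$) are too vague to evaluate, and it is not clear that any family built from $u$ and $z_0$ alone will produce the required orbit mismatch. The paper avoids a direct count in $G$ altogether. For each maximal subgroup $M$, Lemma~\ref{obelisk non-deg} gives that $[M,G_4]$ has least jump $5$ of width $1$, so $G_6\subseteq[M,G_4]$ by Lemma~\ref{obelisks normal squeezed}; passing to $\overline G=G/[M,G_4]$ yields a $p$-obelisk with $\wt_{\overline G}(5)=1$ in which $\overline M=\Cyc_{\overline G}(\overline{G_4})$. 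Proposition~\ref{centralizer G4 2-gen}---whose proof in Section~\ref{section class 5 special} \emph{is} a fully executed counting argument, but in the simpler $\wt(5)=1$ setting---then gives $\Phi(\overline M)=\overline{G_3}$, which lifts to $\Phi(M)=G_3$. The idea you are missing is this reduction to a quotient where the combinatorics are tractable and already done.
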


\noindent
We would like to stress that, from the combination of Lemma \ref{parity obelisks} with Theorem \ref{theorem we need lines}, it follows that each finite $p$-group $G$ of class at least $6$ with $\inte(G)>1$ is a framed $p$-obelisk.

\section{A special case}\label{section class 5 special}

The main result of this section is the following.

\begin{proposition}\label{centralizer G4 2-gen}
Let $p>3$ be a prime number and let $G$ be a $p$-obelisk. Write $C=\Cyc_G(G_4)$.
Assume that $\wt_G(5)=1$ and $\inte(G)>1$.
Then one has $\Phi(C)=G_3$.
\end{proposition}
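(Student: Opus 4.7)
The plan is to reduce Proposition~\ref{centralizer G4 2-gen} to the non-equality of two characteristic maximal subgroups of $G$, and then to exclude that equality using the intense automorphism.

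First, I pin down the arithmetic. Because $\wt_G(5)=1$, while any odd index strictly below the class has width $2$ by Lemma~\ref{parity obelisks}, the class of $G$ is exactly $5$, and Proposition~\ref{blackburn 2-1-2} forces $(\wt_G(i))_{i\geq 1}=(2,1,2,1,1)$ and $|G|=p^{7}$. Corollary~\ref{delta surjective} with $l=4$ then gives a surjective homomorphism $G/G_2\to\Hom(G_4/G_5,G_5)\cong\F_p$ whose kernel is $C/G_2$, so $C$ is a maximal subgroup of $G$ containing $G_2$. Lemma~\ref{maximal non-framed} shows $[C,C]=[C,G_2]\subseteq G_3$ and $G_4\subseteq\Phi(C)\subseteq G_3$, so $\Phi(C)=G_3$ is equivalent to the two $1$-dimensional subspaces $V_1=C^{p}G_4/G_4=\rho^{1}_{1}(C/G_2)$ and $V_2=[C,G_2]G_4/G_4=\gamma_{1,2}(C/G_2\otimes G_2/G_3)$ of the $2$-dimensional $G_3/G_4$ being distinct.

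Second, I identify $V_2$. By regularity (Lemma~\ref{obelisk regular}) and Lemma~\ref{regular bilinear}, $[C,C^{p}]=[C,C]^{p}\subseteq G_5$; combining this with the defining identity $[C,G_4]=1$ through Proposition~\ref{proposition commutative diagram} places $V_2$ inside the $1$-dimensional kernel of the surjective $\F_p$-linear map $\rho^{1}_{3}:G_3/G_4\to G_5$, so $V_2=\ker\rho^{1}_{3}$. Since $\rho^{1}_{3}\circ\rho^{1}_{1}=\rho^{2}_{1}$, one has $V_1=V_2$ if and only if $\rho^{2}_{1}(C/G_2)=0$, i.e.\ $C\subseteq D$, where $D=\mu_{p^{2}}(G)$. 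Lemma~\ref{regular implies power abelian}(3) gives $|D|=|G|/|G_5|=p^{6}$, so $D$ is also a maximal subgroup and $C\subseteq D\iff C=D$. The proposition thus reduces to the claim $C\neq D$.

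Third---and this is the main obstacle---I rule out $C=D$ by exploiting intensity. Fix an intense $\alpha\in\Int(G)$ of order $2$, which by Proposition~\ref{proposition -1^i} induces inversion on $G/G_2$. Theorem~\ref{lambda mu} splits $G_4=G_4^{+}\oplus G_5$ into the $(+1)$- and $(-1)$-eigenspaces of $\alpha$, and using regularity one may write $G_4^{+}=\langle w^{p}\rangle$ with $w\in G_2^{+}\setminus G_3$. If $C=D$, then every element of $C\setminus G_2$ has order exactly $p^{2}$ and $C^{p}=\mu_p(G_3)$, and the identification $V_1=V_2$ collapses the $p$-th-power direction and the commutator direction in $G_3/G_4$ into a single line. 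To reach a contradiction I plan to select $c\in C^{-}\setminus G_2$, analyse the cyclic and rank-two subgroups of $C$ generated by $c$, $w$ and $G_4^{+}$, and count their $G$-conjugates against their $\alpha$-stable representatives via Lemma~\ref{orbits}; the collapse $V_1=V_2$ should force more invariant representatives than the normalizers of these subgroups allow. Executing this orbit-count cleanly is where the hypothesis $\inte(G)>1$ enters in an essential way, and is the hardest step of the proof.
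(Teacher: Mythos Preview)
Your reduction in the first two paragraphs is correct and in fact streamlines the paper's setup. The paper does not explicitly introduce $D=\mu_{p^2}(G)$ nor phrase the problem as $C\neq D$; instead it assumes $\Phi(C)\neq G_3$ and derives, via Lemma~\ref{maximal non-framed} and a regularity computation (Lemmas~\ref{comprised}--\ref{Dp central}), that $C^{p^2}=1$---which is of course equivalent to your $C=D$. Your identification $V_2=\ker\rho^1_3$ via $[C,G_2]^p=[C,G_4]=1$ is clean and essentially matches the paper's Lemma~\ref{pioggiasempre}.

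The third paragraph is where the real argument lives, and your sketch leaves it incomplete. The paper executes the orbit count on the specific collection
\[
X=\{\langle x,y\rangle:\ x\in C\setminus G_2,\ y\in G_4\setminus G_5\},
\]
showing $|X|=p^4$ and $|X^+|=p^2$ (every $H\in X^+$ has the form $\langle x\rangle\oplus G_4^+$ with $x\in C^-\setminus G_2$). One then checks $G_3\subseteq\nor_G(H)$ for $H\in X^+$, whence $|G:\nor_G(H)|\leq|G:HG_3|=p^2$. Plugging into Lemma~\ref{orbits} forces every inequality to be an equality: in particular $\nor_G(H)=HG_3$ for all $H\in X^+$, and no two elements of $X^+$ are $G$-conjugate. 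The contradiction is \emph{not} that there are ``more invariant representatives than the normalizers allow'' as you phrase it---the inequality in Lemma~\ref{orbits} goes the other way. Rather, pairwise non-conjugacy of the $A$-stable subgroups combined with Lemma~\ref{conjugate stable iff in nor-times-G+} forces $G^+\subseteq\nor_G(H)=HG_3$; but $G^+$ has $2$ as a jump (Lemma~\ref{order +- jumps}) while $HG_3$ has jump set $\{1,3,4,5\}$. Your idea of working with subgroups built from $c\in C^-$, $w\in G_2^+$ and $G_4^+$ may well lead to a variant, but the clinching mechanism---using the \emph{equality} case of Lemma~\ref{orbits} together with the jump structure of $G^+$---is what your sketch is missing.
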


\noindent
The goal of Section \ref{section class 5 special} is proving Proposition \ref{centralizer G4 2-gen}, so all assumption that we will make throughout the text (right now and right after Lemma \ref{Dp central}) will hold until the end of Section \ref{section class 5 special}.
\vspace{8pt} \\
\noindent
Let $p>3$ be a prime number and let $G$ be a $p$-obelisk. Let $(G_i)_{i\geq 1}$ denote the lower central series of $G$. Assume that $\wt_G(5)=1$ so, thanks to Proposition \ref{blackburn 2-1-2}($2$), the class of $G$ is equal to $5$. 
Write $C=\Cyc_G(G_4)$.

\begin{lemma}\label{D maximal}
The subgroup $C$ is maximal in $G$. 
\end{lemma}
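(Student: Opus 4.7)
The plan is to pin down the index $|G:C|$ exactly as $p$, by combining a commutator estimate (for an upper bound) with the non-degeneracy-style result of Corollary \ref{delta surjective} (for the lower bound and, ultimately, the precise value).

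First, I would extract the nilpotency class of $G$ from the hypothesis $\wt_G(5) = 1$. By Lemma \ref{parity obelisks}(1), every odd index strictly smaller than $c$ must have width $2$. Since $5$ is odd and has width $1$, this forces $c = 5$. In particular $G_5 \neq 1$ while $G_6 = 1$, so Lemma \ref{commutator indices} yields $[G_2, G_4] \subseteq G_6 = 1$; hence $G_2 \subseteq C$, giving $|G:C| \leq |G:G_2| = p^2$. On the other hand Proposition \ref{blackburn 2-1-2}(3), applied with the pair $(1,4)$ which is not both even, gives $[G, G_4] = G_5 \neq 1$, so $C \neq G$ and $|G:C| \geq p$.

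The heart of the argument is to exclude $|G:C| = p^2$. For this I would apply Corollary \ref{delta surjective} with $l = 4$: since $c - l = 1$ is odd, the corollary supplies a surjective homomorphism
\[
\delta\colon G/G_2 \longrightarrow \Hom(G_4/G_5,\, G_5), \qquad tG_2 \longmapsto \bigl(xG_5 \mapsto [t,x]\bigr).
\]
An element $tG_2$ lies in $\ker \delta$ precisely when $[t, x] = 1$ for every $x \in G_4$, that is, when $t \in \Cyc_G(G_4) = C$; so $\ker \delta = C/G_2$. Using $\wt_G(4) = 1$ (from Lemma \ref{parity obelisks}(2), as $4 < c$ is even) together with the hypothesis $\wt_G(5) = 1$, the codomain of $\delta$ has order $p^{\wt_G(4)\wt_G(5)} = p$. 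Therefore $|G:C| = |G/G_2|/|\ker \delta| = |\operatorname{im} \delta| = p$, making $C$ a maximal subgroup.

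No step is really an obstacle once one has Proposition \ref{blackburn 2-1-2} and Corollary \ref{delta surjective} at hand; the only point requiring any care is the identification of $\ker \delta$, which is a direct unwinding of the definition of the centralizer. The proof is thus essentially a bookkeeping exercise matching the widths of $G$ with the size of the relevant $\Hom$-group.
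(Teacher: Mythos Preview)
Your proof is correct and follows essentially the same route as the paper: both compute $|G:C|$ by analysing the commutator-induced map $G/G_2\to\Hom(G_4/G_5,G_5)$, whose kernel is $C/G_2$, and then count dimensions. The paper invokes Lemma~\ref{bilinear LCS} together with Lemma~\ref{non-degenerate dim 1} directly, whereas you use the packaged surjectivity statement of Corollary~\ref{delta surjective}; your preliminary bounds $G_2\subseteq C\subsetneq G$ are redundant once $\delta$ is known to be surjective, but they do no harm.
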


\begin{proof}
The commutator map induces a bilinear map $G/G_2\times G_4/G_5\rightarrow G_5$ whose image generates $G_5$, thanks to Lemma \ref{bilinear LCS}, and whose left kernel is $C/G_2$. As a consequence of Lemma \ref{black core}, all quotients $G_i/G_{i+1}$ are $\F_p$-vector spaces and, by assumption, $\wt_G(5)=1$. By Lemma \ref{parity obelisks}($2$), the dimension of $G_4/G_5$ is equal to $1$ and so Lemma \ref{non-degenerate dim 1} yields $|G:C|=p$. In other words, $C$ is a maximal subgroup of $G$.
\end{proof}

\begin{lemma}\label{comprised}
One has $G_4\subseteq C^p\subseteq G_3$ and $|G_3:C^p|=|C^p:G_4|=p$.
\end{lemma}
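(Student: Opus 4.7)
The plan is to pin down the widths of $G$, show that both $C$ and $G_2$ are regular $p$-groups, and then determine $C^p$ by locating it strictly between $G_4$ and $G_3$ via three separate inclusions.

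First I would compute widths and orders. By Lemma~\ref{parity obelisks} applied to the $p$-obelisk $G$ of class $5$ with $\wt_G(5)=1$, the widths are $(w_1,\ldots,w_5)=(2,1,2,1,1)$, so $|G|=p^7$, $|G_3|=p^4$, $|G_4|=p^2$, and Lemma~\ref{D maximal} gives $|C|=p^6$. Since $[G_2,G_4]\subseteq G_6=\{1\}$, we have $G_2\subseteq C$, whence $|C:G_2|=p$.

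Next I would argue that $C$ is regular. Using Lemma~\ref{commutator indices} inductively (or iterating Lemma~\ref{cyclic quotient commutators} starting from the cyclic quotient $C/G_2$), the class of $C$ is at most $4$; since $p>3$ means $p-1\geq 4$, Lemma~\ref{class at most p-1 regular} applies and $C$ is regular. The same reasoning gives that $G_2$ has class at most $2$, so $G_2$ is likewise regular. Applying Lemma~\ref{regular implies power abelian}(1) together with Lemma~\ref{black core}, I obtain $C^p=\rho(C)$ and $G_2^p=\rho(G_2)=G_4$.

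The inclusions $G_4\subseteq C^p\subseteq G_3$ are then immediate: $C^p\subseteq G^p=G_3$ since $C\subseteq G$, while $G_4=G_2^p\subseteq C^p$ since $G_2\subseteq C$. To show $C^p\neq G_4$, I would pick $x\in C\setminus G_2$ (possible because $|C:G_2|=p$); by Corollary~\ref{power iso bottom} applied with $(i,j)=(1,3)$, the map $\rho$ induces an isomorphism $G/G_2\to G_3/G_4$, so $x^p\in G_3\setminus G_4$ and hence $C^p\not\subseteq G_4$.

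The crux of the argument, and the main obstacle, is excluding $C^p=G_3$. Were this the case, Lemma~\ref{regular implies power abelian}(3) applied to $C$ would give $|\mu_p(C)|=|C:C^p|=p^2$. On the other hand, $G_2\subseteq C$ forces $\mu_p(G_2)\subseteq\mu_p(C)$, and applying Lemma~\ref{regular implies power abelian}(3) to the regular group $G_2$ yields $|\mu_p(G_2)|=|G_2:G_2^p|=|G_2:G_4|=p^3$, which is the desired contradiction. Since $|G_3:G_4|=p^2$, the strict inclusions $G_4\subsetneq C^p\subsetneq G_3$ then force $|G_3:C^p|=|C^p:G_4|=p$, as required.
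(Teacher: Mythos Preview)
Your proof is correct, but it takes a different route from the paper's. The paper observes that $C$ is maximal and hence normal, so $C^p$ is normal in $G$; since Corollary~\ref{power iso} shows that $3$ is a jump of $C^p$ of width~$1$, Lemma~\ref{obelisks normal squeezed}(2) (normal subgroups of a $p$-obelisk are sandwiched between consecutive lower-central terms) immediately forces $G_4\subseteq C^p\subseteq G_3$, and the index computation follows from $\wt_G(3)=2$. This is a two-line structural argument. Your approach instead establishes regularity of both $C$ and $G_2$ and then rules out $C^p=G_3$ by a counting contradiction between $|\mu_p(C)|$ and $|\mu_p(G_2)|$ via Lemma~\ref{regular implies power abelian}(3). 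Your method is more hands-on and avoids the normal-subgroup sandwiching lemma entirely; the trade-off is that it is longer, and the key observation that $C^p$ is normal in $G$ (which would have made the conclusion almost immediate) goes unused. Incidentally, your bound on the class of $C$ can be sharpened to $3$: since $C_3\subseteq[C,G_3]\subseteq G_4$ and $[C,G_4]=1$ by definition of $C$, one has $C_4=1$.
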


\begin{proof}
The subgroup $C$ is maximal, by Lemma \ref{D maximal}, and it is thus normal of index $p$ in $G$.
It follows that $C^p$ is normal in $G$ and, as a consequence of Corollary \ref{power iso}, the number $3$ is a $1$-dimensional jump of $C^p$ in $G$. Lemma \ref{obelisks normal squeezed}($2$) yields $G_4\subseteq C^p\subseteq G_3$ and thus, thanks to Lemma \ref{parity obelisks}($2$), we get $|G_3:C^p|=|C^p:G_4|=p$. 
\end{proof}

\begin{lemma}\label{pioggiasempre}
The subgroup $C^p$ centralizes $G_2$.
\end{lemma}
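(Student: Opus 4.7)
The plan is to use the regularity of $G$ to slide the $p$-th power from $C$ over to $G_2$, converting $[C^p,G_2]$ into $[C,G_4]$, which vanishes by the very definition of $C$.

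First I would verify the hypotheses needed to apply Lemma \ref{regular bilinear}. Since $G$ is a $p$-obelisk with $p>3$, Lemma \ref{obelisk regular} ensures that $G$ is regular. Both $C=\Cyc_G(G_4)$ and $G_2$ are characteristic subgroups of $G$—the former because $G_4$ is characteristic, the latter as a term of the lower central series—so both are normal in $G$, which is exactly what Lemma \ref{regular bilinear} requires.

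Applying Lemma \ref{regular bilinear} twice, once with $(r,s)=(1,0)$ and once with $(r,s)=(0,1)$, I would obtain the chain
\[
[C^p,G_2]=[\rho(C),G_2]=\rho([C,G_2])=[C,\rho(G_2)],
\]
where the first equality identifies $\rho(C)$ with the subgroup $C^p$ via Lemma \ref{regular implies power abelian}(1). Lemma \ref{black core} then gives $\rho(G_2)=G_4$, so the final term is $[C,G_4]$, which equals $\graffe{1}$ directly from the definition $C=\Cyc_G(G_4)$. This yields $[C^p,G_2]=\graffe{1}$, as claimed. Since the whole argument is a short commutator manipulation relying only on standard facts about regular $p$-groups already collected in Sections \ref{section regular} and \ref{section obelisk diagrams}, I do not anticipate any serious obstacle; the only point to double-check is that the two applications of Lemma \ref{regular bilinear} are correctly set up with normal subgroups of the regular group $G$.
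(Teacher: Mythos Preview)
Your proposal is correct and essentially identical to the paper's proof: both use regularity of $G$ (Lemma \ref{obelisk regular}) together with Lemma \ref{regular bilinear} applied to the normal subgroups $C$ and $G_2$ to obtain $[C^p,G_2]=[C,G_2^p]$, then invoke Lemma \ref{black core} to identify $G_2^p$ with $G_4$, and conclude by the definition of $C=\Cyc_G(G_4)$. The only cosmetic difference is that you spell out the intermediate term $\rho([C,G_2])$, which the paper leaves implicit.
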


\begin{proof}
Each $p$-obelisk is regular, by Lemma \ref{obelisk regular}, so, as a consequence of Lemma \ref{regular bilinear}, the subgroups $[C,G_2^p]$ and $[C^p,G_2]$ are the same.
Now, $G_2^p$ is equal to $G_4$, by Lemma \ref{black core}, and $[C,G_4]=\graffe{1}$, by definition of $C$. It follows that $C^p$ centralizes $G_2$.
\end{proof}

\begin{lemma}\label{Dp central}
The group $C^p$ is contained in $\ZG(C)$.
\end{lemma}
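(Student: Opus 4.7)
The plan is to show directly that $[C^p, C] = \{1\}$ by shuffling the $p$-th power across the commutator twice via the bilinearity formula for regular $p$-groups (Lemma \ref{regular bilinear}), reducing the computation to the already-established centralization fact from Lemma \ref{pioggiasempre}.

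First I would record the prerequisites: since $G$ is a $p$-obelisk, it is regular by Lemma \ref{obelisk regular}; and since $C$ is maximal in $G$ by Lemma \ref{D maximal}, we have $G_2 \subseteq \Phi(G) \subseteq C$ with $C/G_2$ cyclic of order $p$. Lemma \ref{cyclic quotient commutators} then yields the crucial identity $[C,C] = [C, G_2]$.

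Next, since $C$ and $G_2$ are normal subgroups of the regular group $G$, I apply Lemma \ref{regular bilinear} twice. First, with $M = N = C$, $r = 1$, $s = 0$, one obtains
\[
[C^p, C] \;=\; [\rho(C), \rho^0(C)] \;=\; \rho([C,C]) \;=\; \rho([C, G_2]),
\]
where the last equality uses the identity from the previous paragraph. Then, with $M = C$, $N = G_2$, $r = 1$, $s = 0$, one obtains
\[
\rho([C, G_2]) \;=\; [\rho(C), \rho^0(G_2)] \;=\; [C^p, G_2].
\]
Concatenating these two equalities gives $[C^p, C] = [C^p, G_2]$, and the right-hand side is trivial by Lemma \ref{pioggiasempre}. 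Hence $C^p \subseteq \ZG(C)$, as required.

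No serious obstacle arises here: the only subtlety is recognizing that Lemma \ref{regular bilinear} is available for the subgroups $C$, $G_2$, and $[C, G_2]$ (they are all normal in $G$) and that two successive applications produce exactly the reduction needed. The cyclic-quotient step $[C, C] = [C, G_2]$ is what allows the second application of the bilinearity formula to move the $p$-th power onto the $C$-factor, rather than leaving it stuck on a commutator.
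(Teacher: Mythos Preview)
Your proof is correct and takes a genuinely different route from the paper's. The paper argues via a bilinear map: since $C^p\subseteq G_3$ (Lemma \ref{comprised}), the commutator map $C\times C^p\to G_4$ is bilinear and, using Lemma \ref{pioggiasempre} together with the definition of $C$, factors as $\gamma: C/G_2\times C^p/G_4\to G_4$. Because $C/G_2$ and $C^p/G_4$ are each cyclic, generated respectively by $xG_2$ and $x^pG_4$ for any $x\in C\setminus G_2$, and since $[x,x^p]=1$, the map $\gamma$ is identically trivial.

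Your argument sidesteps this explicit analysis of the bilinear form by shuffling the $p$-th power across the commutator via Lemma \ref{regular bilinear}, collapsing $[C^p,C]$ down to $[C^p,G_2]$ through the intermediate $\rho([C,G_2])$, and then invoking Lemma \ref{pioggiasempre}. This is slicker and avoids any discussion of generators or the alternating property; indeed, the paper itself uses exactly this shuffling technique a few lines later in the proof of Lemma \ref{exp D}. The paper's approach, on the other hand, makes the ``reason'' for centrality (namely that $x$ commutes with its own power) slightly more visible.
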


\begin{proof}
The subgroup $C^p$ is contained in $G_3$, by Lemma \ref{comprised}, and the commutator map $C\times C^p\rightarrow G_4$ is bilinear by Lemma \ref{tgt}.
Such commutator map factors as 
$\gamma: C/G_2\times C^p/G_4\rightarrow G_4$, as a consequence of Lemma \ref{pioggiasempre} and of the definition of $C$. Moreover, thanks to Corollary \ref{power iso bottom}, if $C=\gen{\graffe{x}\cup G_2}$ then $C^p=\gen{\graffe{x^p}\cup G_4}$. The map  $\gamma$ being alternating, it follows that $\gamma$ is the trivial map and so $C^p$ centralizes $C$.
\end{proof}

\noindent
Let now $\alpha$ be an intense automorphism of $G$ of order $2$ and write $A=\gen{\alpha}$.
Set $G^+=\graffe{x\in G\ :\ \alpha(x)=x}$ and $G^-=\graffe{x\in G\ :\ \alpha(x)=x^{-1}}$ and, for each subgroup $H$ of $G$, denote $H^+=H\cap G^+$ and $H^-=H\cap G^-$.
We will prove Proposition \ref{centralizer G4 2-gen} \emph{by contradiction} and, to this end, we assume that $\Phi(C)\neq G_3$.
Let $X$ be the collection of subgroups $H$ of $C$ of the form $H=\gen{x,y}$, where $x\in C\setminus G_2$ and $y\in G_4\setminus G_5$. Then $A$ acts on $X$ in a natural way. Let $X^+$ be the collection of fixed points of $X$ under $A$.

\begin{lemma}\label{exp D}
The exponent of $C$ divides $p^2$.
\end{lemma}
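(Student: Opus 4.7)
The plan is to compute $C^{p^2}$ directly by combining the regular commutator identity from Lemma \ref{regular bilinear} with the contradiction hypothesis and the centrality statement from Lemma \ref{Dp central}. Recall that $G$ is a regular $p$-group by Lemma \ref{obelisk regular}, and the subgroup $C=\Cyc_G(G_4)$ is characteristic in $G$, hence normal, since it is the centralizer of the characteristic subgroup $G_4$.

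First I will apply Lemma \ref{regular bilinear} with $M=N=C$, $r=0$, and $s=1$. Because $G$ is regular, the set of $p$-th powers inside any subgroup coincides with the subgroup it generates (Lemma \ref{regular implies power abelian}($1$)), so $\rho^0(C)=C$, $\rho^1(C)=C^p$, and $\rho([C,C])=[C,C]^p$. The identity then reads
\[
[C,C^p] \;=\; [C,C]^p.
\]

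Next I will invoke the contradiction hypothesis $\Phi(C)\neq G_3$. Since $\wt_G(3)=2$ by Lemma \ref{parity obelisks}($1$) and $C$ is a maximal subgroup of $G$ by Lemma \ref{D maximal}, Lemma \ref{maximal non-framed} yields $[C,C]=C^p$, and therefore $[C,C]^p=C^{p^2}$. On the other hand, Lemma \ref{Dp central} asserts that $C^p$ is contained in the centre of $C$, so $[C,C^p]=\{1\}$. Comparing the two expressions forces $C^{p^2}=\{1\}$, which means that every element of $C$ has order dividing $p^2$, as claimed.

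The argument is short and the main point is simply to spot the right instance of the regular commutator identity: once the contradiction hypothesis has been used (via Lemma \ref{maximal non-framed}) to rewrite $[C,C]$ as a power, Lemma \ref{regular bilinear} converts the bracket $[C,C^p]$ into $C^{p^2}$, and the centrality of $C^p$ inside $C$ collapses that bracket to the identity. No delicate estimates, counting arguments, or Hall--Petrescu expansions are needed at this stage.
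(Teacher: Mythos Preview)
Your proof is correct and is essentially identical to the paper's: both use Lemma~\ref{regular bilinear} to obtain $[C,C^p]=[C,C]^p$, invoke the contradiction hypothesis via Lemma~\ref{maximal non-framed} to rewrite $[C,C]$ as $C^p$, and then kill the bracket using Lemma~\ref{Dp central}. The only cosmetic difference is that the paper starts from the trivial inclusion $C^{p^2}\subseteq (C^p)^p$ rather than invoking regularity to identify the two, but the logical content is the same.
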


\begin{proof}
By definition, the subgroup $C^{p^2}$ is contained in $(C^p)^p$. By Lemma \ref{maximal non-framed}, we have $[C,C]=C^p$ so it follows from Lemma \ref{regular bilinear} that $(C^p)^p=[C,C]^p=[C,C^p]$. As a consequence of Lemma \ref{Dp central}, the subgroup $[C,C^p]$ is trivial, and thus 
$(C^p)^p=\graffe{1}$. In particular, the exponent of $C$ divides $p^2$. 
\end{proof}

\begin{lemma}\label{order elementino}
Let $x$ be an element of $C\setminus G_2$. Then $x$ has order $p^2$.
\end{lemma}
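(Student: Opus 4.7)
The goal is to show that any $x \in C \setminus G_2$ has order exactly $p^2$, which splits into two claims: the order divides $p^2$, and $x^p \neq 1$. The upper bound is already in hand — since $x \in C$, Lemma \ref{exp D} immediately yields $x^{p^2}=1$. So the real task is to verify that $x^p$ is nontrivial.

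For this, I would exploit the fact that $x$ has depth $1$ in $G$: since $x \in C \setminus G_2 \subseteq G \setminus G_2$, the class $xG_2 \in G/G_2$ is nonzero. By Lemma \ref{parity obelisks}, $\wt_G(3)=2$ (the index $3$ is odd and strictly less than the class $c=5$), so Corollary \ref{power iso bottom} applies with $(i,j)=(1,3)$ to give that the induced map
\[
\rho^{1}_{1}:G/G_2 \longrightarrow G_3/G_4
\]
is an isomorphism of $\F_p$-vector spaces. Applying this isomorphism to $xG_2 \neq 0$ shows that $x^p G_4 = \rho^1_1(xG_2)$ is nonzero in $G_3/G_4$, i.e., $x^p \in G_3 \setminus G_4$. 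In particular $x^p \neq 1$, and combining with the upper bound gives the desired conclusion.

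There is no real obstacle here; the statement is an immediate consequence of the structural results already established for $p$-obelisks, once one observes that the $p$-power map detects depth exactly on the abelianization layer. The only subtlety is confirming that Corollary \ref{power iso bottom} is applicable, which reduces to checking the parity/width conditions on the indices $1$ and $3$ — and these are automatic from Lemma \ref{parity obelisks} for a $p$-obelisk of class $5$.
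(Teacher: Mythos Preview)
Your proof is correct and follows the same approach as the paper: invoke Lemma \ref{exp D} for the upper bound and Corollary \ref{power iso bottom} (applied with $i=1$, $j=3$) to show $x^p\neq 1$. The paper's proof is just a terser version of yours, leaving the verification of the corollary's hypotheses implicit.
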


\begin{proof}
As a consequence of Corollary \ref{power iso bottom}, the element $x^p$ is non-trivial so the order of $x$ is divisible by $p^2$. We conclude by Lemma \ref{exp D}.
\end{proof}

\begin{lemma}\label{abellli}
Let $H\in X$. Then $H$ is abelian and $H\cap G_5=\graffe{1}$.
Moreover, if $x,y\in H$ satisfy $\dpt_G(x)=1$ and $\dpt_G(y)=4$, then $H=\gen{x}\oplus\gen{y}$.
\end{lemma}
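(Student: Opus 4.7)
The plan is to exploit the commuting structure forced by the definition of $C$ together with the order and depth computations available from Corollary \ref{power iso bottom} and Lemma \ref{order elementino}. Fix $H\in X$ and write $H=\gen{x_0,y_0}$ with $x_0\in C\setminus G_2$ and $y_0\in G_4\setminus G_5$. Since $G_4\subseteq C$, we have $H\subseteq C$, and since $C=\Cyc_G(G_4)$ centralizes $y_0\in G_4$, we get $[x_0,y_0]=1$, so $H$ is abelian. This handles the first claim.

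Next I would compute the orders of the generators. By Lemma \ref{order elementino}, $x_0$ has order $p^2$. For $y_0$, Lemma \ref{black core} gives $\rho(G_4)=G_6=\graffe{1}$ (since the class equals $5$ by Proposition \ref{blackburn 2-1-2}($2$)), so $y_0$ has order $p$. The crucial depth computation is $\gen{x_0}\cap G_4=\graffe{1}$: the powers $x_0^a$ with $p\nmid a$ have depth $1$, while for $1\leq k\leq p-1$ the element $x_0^{kp}$ reduces to $k\cdot(x_0^pG_4)$ in $G_3/G_4$, which is nonzero because Corollary \ref{power iso bottom} applied with $i=1,j=3$ (noting $\wt_G(3)=2$) yields that $\rho^1_1:G/G_2\to G_3/G_4$ is an isomorphism, hence sends $x_0G_2\neq 0$ to $x_0^pG_4\neq 0$. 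Thus $x_0^{kp}$ has depth exactly $3$ and in particular lies outside $G_4$. It follows that $\gen{x_0}\cap\gen{y_0}\subseteq\gen{x_0}\cap G_4=\graffe{1}$, and hence $H=\gen{x_0}\oplus\gen{y_0}$ has order $p^3$.

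For $H\cap G_5=\graffe{1}$, any element $x_0^ay_0^b\in G_5\subseteq G_4$ forces $x_0^a\in G_4$, hence $a\equiv 0\bmod p^2$ by the depth analysis above, so $x_0^a=1$; then $y_0^b\in G_5$, but $y_0\notin G_5$ and $\gen{y_0}$ has order $p$, forcing $y_0^b=1$ as well. For the final statement, given any $x,y\in H$ with $\dpt_G(x)=1$ and $\dpt_G(y)=4$, the element $x$ lies in $H\setminus G_2\subseteq C\setminus G_2$ (so has order $p^2$ by Lemma \ref{order elementino}), $y$ lies in $G_4\setminus G_5$ (so has order $p$), they commute because $H$ is abelian, and the same argument $\gen{x}\cap\gen{y}\subseteq\gen{x}\cap G_4=\graffe{1}$ shows $\gen{x}\oplus\gen{y}$ has order $p^3$; since this equals $|H|$, the two subgroups coincide. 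There is no real obstacle here: the whole argument is a careful bookkeeping of depths and orders, and the only nontrivial ingredient is the isomorphism $\rho^1_1$ used to pin down the depth of $x_0^{kp}$.
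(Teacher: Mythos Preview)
Your proof is correct and follows essentially the same strategy as the paper's: both establish that $H$ is abelian from $G_4\subseteq\ZG(C)$, invoke Lemma \ref{order elementino} for the order of $x$, and finish by an order count. The one notable difference is in showing $\gen{x}\cap\gen{y}=\graffe{1}$: the paper simply cites Lemma \ref{jumps cyclic sbgs all same parity} (cyclic subgroups have jumps of a single parity), whereas you redo this by hand for $\gen{x}$, pinning down the depth of each $x^{kp}$ via the isomorphism $\rho^1_1$ from Corollary \ref{power iso bottom}. Both routes are fine; the parity lemma gives a one-line argument, while your direct depth computation avoids invoking that lemma at the cost of a few extra lines.
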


\begin{proof}
Let $(x,y)\in (C\setminus G_2)\times (G_4\setminus G_5)$ be such that $H=\gen{x,y}$. Then $y\in\ZG(C)$ and the group $H$ is commutative. Moreover, as a consequence of Lemma \ref{jumps cyclic sbgs all same parity}, the subgroups $\gen{x}$ and $\gen{y}$ have respectively only odd and only even jumps. In particular, $\gen{x}\cap\gen{y}=\graffe{1}$ and $H=\gen{x}\oplus\gen{y}$. In addition, it follows from Lemma \ref{maps incrociate}($1$) that $5$ is a jump of $H$ in $G$ if and only if $x^{p^2}\neq 1$. Lemma \ref{order elementino} yields $H\cap G_5=\graffe{1}$.
\end{proof}

\begin{lemma}\label{jumps di sto qua}
Let $H\in X$ and, for each $i\in\Z_{\geq 1}$, denote $u_i=\wt_H^G(i)$. 
Then $(u_1,u_2,u_3,u_4,u_5)=(1,0,1,1,0)$ and $H$ has order $p^3$.
\end{lemma}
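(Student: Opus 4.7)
The plan is to determine the jumps of $H$ explicitly by decomposing $H$ as $\gen{x} \oplus \gen{y}$, as guaranteed by Lemma \ref{abellli}, and then computing the jumps of each of the cyclic summands separately.

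First I would fix $(x,y) \in (C\setminus G_2)\times(G_4\setminus G_5)$ with $H=\gen{x,y}$, so that by Lemma \ref{abellli} we have $H=\gen{x}\oplus\gen{y}$. The cyclic summand $\gen{x}$ has order $p^2$ by Lemma \ref{order elementino}, and since $\dpt_G(x)=1$, the integer $1$ is a jump of $\gen{x}$ in $G$. By Lemma \ref{jumps cyclic sbgs all same parity}, the other jump of $\gen{x}$ must be odd, and by Corollary \ref{power iso bottom} applied to the map $\rho^1_1:G_1/G_2\to G_3/G_4$, the element $x^p$ has depth exactly $3$ in $G$; in particular $x^p\notin G_5$, which is consistent with $H\cap G_5=\{1\}$ from Lemma \ref{abellli}. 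Hence the jumps of $\gen{x}$ in $G$ are exactly $\{1,3\}$, each of width $1$ by Lemma \ref{cyclic then 1-dim jumps}.

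Next I would handle $\gen{y}$. Since $y\in G_4\setminus G_5$ and $G_4^p=\rho^1(G_4)\subseteq G_6=\{1\}$ by Lemma \ref{black core} (as the class of $G$ is $5$), the element $y$ has order $p$, so $\gen{y}$ contributes the single jump $\{4\}$ of width $1$. Combining the two summands, $|H|=|\gen{x}||\gen{y}|=p^3$, and the jumps of $H$ in $G$ are $\{1,3,4\}$, each of width $1$; all other widths vanish. This yields $(u_1,u_2,u_3,u_4,u_5)=(1,0,1,1,0)$.

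I do not expect any genuine obstacle here: once Lemma \ref{abellli} provides the direct sum decomposition and Lemma \ref{order elementino} pins down the order of $x$, everything reduces to invoking the standard bijection between $G_i/G_{i+1}$ and $G_{i+2}/G_{i+3}$ given by $\rho$ on a $p$-obelisk, together with the fact that cyclic subgroups have jumps of constant parity and width $1$. The only mild subtlety is to confirm that $\gen{x}\cap\gen{y}=\{1\}$ (used implicitly in reading off $|H|=p^3$), but this is already recorded in Lemma \ref{abellli}.
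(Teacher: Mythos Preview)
Your argument is correct. Both you and the paper begin the same way---using Lemma \ref{abellli} to write $H=\gen{x}\oplus\gen{y}$ and observing that $x$ has order $p^2$ (Lemma \ref{order elementino}) while $y$ has order $p$---but you diverge on how to pin down $u_2=0$ and $u_1=u_3=1$. The paper passes to the quotient $G/N$ with $N=\gen{y}G_5$, notes that $HN/N$ is cyclic, and invokes Lemma \ref{cyclic then 1-dim jumps} to rule out the jump at $2$. You instead argue by counting: $|H|=p^3$ forces $\sum u_i=3$, and since $x$, $x^p$, $y$ have depths $1$, $3$, $4$ respectively, those three jumps already account for the full order, leaving no room for $u_2$. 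Your route is slightly more elementary and avoids the quotient construction; the paper's route makes the role of $\gen{y}\subseteq G_4$ (which is what allows the jumps of the summands to add cleanly) more explicit. Either way the conclusion is immediate once the orders of $x$ and $y$ are known.

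One small redundancy: you cite Lemma \ref{jumps cyclic sbgs all same parity} to say the second jump of $\gen{x}$ is odd, but you then establish $\dpt_G(x^p)=3$ directly via Corollary \ref{power iso bottom}, which already settles the matter. The parity lemma is not needed.
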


\begin{proof}
For each $i\in\Z_{\geq 1}$, write $w_i=\wt_G(i)$. Thanks to Lemma \ref{parity obelisks}, we have $(w_1,w_2,w_3,w_4,w_5)=(2,1,2,1,1)$.
Let $x,y$ be as in Lemma \ref{abellli}: then $u_1,u_4\geq 1$ and $u_5=0$. 
Since, for each $i\geq 1$, one has $u_i\leq w_i$, we get $u_4=1$.
Moreover, Lemma \ref{maps incrociate}($1$) ensures that $u_3\geq 1$. Let now $N=\gen{y}G_5$, which is a normal subgroup of $G$ thanks to Lemma \ref{obelisks normal squeezed}. Then $N\cap H=\gen{y}$ and, the quotient $H/\gen{y}$ being cyclic, so is $HN/N$. Thanks to Lemma \ref{cyclic then 1-dim jumps}, all jumps of $HN/N$ have the same dimension and width $1$ in $G/N$. As a result, $2$ is not a jump of $HN/N$ in $G/N$ and, since $\gen{y}$ is contained in $G_4$, we have $u_2=0$ and $u_1=u_3=1$.
The group $H$ has order $p^3$, by Lemma \ref{order product orders jumps}.
\end{proof}

\begin{lemma}\label{chill}
The cardinality of $X$ is $p^4$.
\end{lemma}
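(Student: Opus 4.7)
The plan is to count the set $X$ by a double-counting argument, computing the cardinality of the index set of pairs $(x,y)\in(C\setminus G_2)\times(G_4\setminus G_5)$ and dividing by the number of such pairs that generate a given $H\in X$.

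First I would collect the numerical data. By Lemma \ref{parity obelisks}, applied to the $p$-obelisk $G$ of class $5$ with $\wt_G(5)=1$, we have $(w_1,w_2,w_3,w_4,w_5)=(2,1,2,1,1)$, so by Lemma \ref{order product orders jumps} the orders are $|G|=p^7$, $|G_2|=p^5$, $|G_3|=p^4$, $|G_4|=p^2$, $|G_5|=p$. By Lemma \ref{D maximal}, $|C|=p^6$. Hence $|C\setminus G_2|=p^5(p-1)$ and $|G_4\setminus G_5|=p(p-1)$, so the total number of pairs is $p^6(p-1)^2$.

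Next I would fix an element $H\in X$ and determine its internal structure. By Lemma \ref{abellli}, $H=\gen{x}\oplus\gen{y}$ for any choice of $x\in H$ with $\dpt_G(x)=1$ and $y\in H$ with $\dpt_G(y)=4$; by Lemma \ref{order elementino}, $\gen{x}$ is cyclic of order $p^2$, while $y^p\in\rho(G_4)\subseteq G_6=\{1\}$ by Lemma \ref{black core}, so $\gen{y}$ has order $p$ and $|H|=p^3$ (in agreement with Lemma \ref{jumps di sto qua}). The key observation is that $\gen{x}\cap G_4=\{1\}$: indeed, by Corollary \ref{power iso bottom} the element $x^p$ lies in $G_3\setminus G_4$, so no proper non-trivial power of $x$ belongs to $G_4$. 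Consequently an element $x^ay^b\in H$ lies in $G_4$ iff $x^a=1$, i.e.\ iff $p^2\mid a$; combined with the fact that $\gen{y}\cap G_5=\{1\}$ (since $y\notin G_5$ and $|\gen{y}|=p$), this gives $|H\cap(G_4\setminus G_5)|=p-1$. Similarly $x^ay^b\in G_2$ iff $x^a\in G_2$ iff $p\mid a$, yielding $|H\cap(C\setminus G_2)|=p^3-p^2$.

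Then I would verify that every pair $(x',y')\in(H\cap(C\setminus G_2))\times(H\cap(G_4\setminus G_5))$ generates $H$: write $x'=x^ay^b$ with $p\nmid a$ and $y'=y^c$ with $p\nmid c$; then $\gen{y'}=\gen{y}$, so $\gen{x',y'}\ni x^a$, and since $\gen{x^a}=\gen{x}$, we get $\gen{x',y'}=\gen{x,y}=H$. Therefore each $H\in X$ arises from exactly $(p^3-p^2)(p-1)=p^2(p-1)^2$ pairs, and
\[
|X|=\frac{(p^6-p^5)(p^2-p)}{(p^3-p^2)(p-1)}=\frac{p^6(p-1)^2}{p^2(p-1)^2}=p^4.
\]
The main obstacle is the verification that $\gen{x}\cap G_4=\{1\}$; once this is in hand, the counting is routine.
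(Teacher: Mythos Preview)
Your proof is correct and follows essentially the same double-counting approach as the paper: count pairs $(x,y)\in(C\setminus G_2)\times(G_4\setminus G_5)$ and divide by the number of such pairs generating a fixed $H\in X$. You supply more detail than the paper does, in particular the verification that $\gen{x}\cap G_4=\{1\}$ and that every admissible pair $(x',y')$ in $H$ actually generates $H$; the paper simply reads off $m=(p^3-p^2)(p-1)$ from the orders of $x$ and $y$ via Lemma~\ref{jumps di sto qua}.
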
 

\begin{proof}
Thanks to Lemma \ref{abellli}, the set $X$ consists of subgroups of the form $\gen{x}\oplus\gen{y}$, with $x\in C\setminus G_2$ and $y\in G_4\setminus G_5$. The cardinality of $X$ will be thus equal to the quotient $\frac{n}{m}$, where $n$ is the cardinality of 
$(C\setminus G_2)\times(G_4\setminus G_5)$ and $m$ denotes the number of elements of $(C\setminus G_2)\times(G_4\setminus G_5)$ that generate the same subgroup. Let $H$ be in $X$ and let $x$ and $y$ be generators of $H$, as described before. Then, as a consequence of Lemma \ref{jumps di sto qua}, the orders of $x$ and $y$ are respectively $p^2$ and $p$. It follows that $m=(p^3-p^2)(p-1)$ so, in view of Lemmas \ref{jumps di sto qua} and \ref{obelisk basic}, we get 
\[
|X|=\frac{n}{m}=\frac{(p^6-p^5)(p^2-p)}{(p^3-p^2)(p-1)}=p^4.
\]
\end{proof}

\begin{lemma}\label{shape H}
Let $H\in X$. Then the following are equivalent.
\begin{itemize}
 \item[$1$.] The subgroup $H$ is $A$-stable.
 \item[$2$.] There exists $x\in C^-\setminus G_2$ such that $H=\gen{x}\oplus G_4^+$.
\end{itemize} 
\end{lemma}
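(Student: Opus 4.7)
The plan is to exploit the abelian structure of $H$ (Lemma \ref{abellli}) together with the $A$-action to decompose $H = H^+ \oplus H^-$ via Corollary \ref{abelian sum +-}, and then to identify the two summands through their jumps in $G$.

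For the easy direction $(2) \Rightarrow (1)$, I would observe that $\alpha$ restricts to the identity on $G_4^+$ by definition, and that any $x \in C^-$ satisfies $\alpha(x) = x^{-1} \in \gen{x}$; consequently $\alpha$ stabilizes $\gen{x} \oplus G_4^+ = H$. No character-theoretic input is needed beyond the definition of $G^+$ and $G^-$.

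The main work is in $(1) \Rightarrow (2)$. Because $\inte(G)>1$, Proposition \ref{proposition -1^i} ensures that $\alpha$ induces the inversion map on $G/G_2$, so Lemma \ref{order +- jumps} applies to any $A$-stable subgroup. First I would compute sizes: from Lemma \ref{jumps di sto qua} the widths $(u_1,\dots,u_5)$ of $H$ in $G$ are $(1,0,1,1,0)$; applying Lemma \ref{order +- jumps} gives $|H^+| = p^{u_4} = p$ (the unique even jump) and $|H^-| = p^{u_1+u_3} = p^2$. Applying the same lemma to the subgroup $G_4$, whose jumps in $G$ are $4$ and $5$ with widths $1$ and $1$, yields $|G_4^+| = p$; since $H^+$ is $A$-stable with only even jump $4$, we have $H^+ \subseteq G_4 \cap G^+ = G_4^+$, and equality of orders forces $H^+ = G_4^+$.

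Next I would analyze $H^-$. Its jumps in $G$ are $1$ and $3$, both odd and of width $1$, and $H^- \cap G_5 \subseteq H \cap G_5 = \{1\}$ by Lemma \ref{abellli}; so Lemma \ref{cyclic trivial intersection} forces $H^-$ to be cyclic of order $p^2$. Picking any generator $x$ of $H^-$, the least jump of $\gen{x}$ equals $1$, so $x \in C \setminus G_2$, and $x \in G^-$ by construction, giving $x \in C^- \setminus G_2$. Combining these two identifications yields $H = H^+ \oplus H^- = G_4^+ \oplus \gen{x}$, as required. The only delicate point is the parity-sensitive size count for $H^+$ and $H^-$, but this is packaged cleanly by Lemma \ref{order +- jumps}; everything else is a direct bookkeeping argument using the jumps already determined in Lemma \ref{jumps di sto qua}.
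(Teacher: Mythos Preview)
Your proof is correct and follows essentially the same approach as the paper: decompose the abelian $A$-stable group $H$ as $H^+\oplus H^-$ via Corollary \ref{abelian sum +-}, identify $H^+=G_4^+$ through the jump computations of Lemmas \ref{jumps di sto qua} and \ref{order +- jumps}, and show $H^-$ is cyclic generated by some $x\in C^-\setminus G_2$. The only minor variation is that the paper obtains cyclicity of $H^-$ by noting $H\cap G_4=H^+$ (so $H^-\cong H/(H\cap G_4)$, which is cyclic by Lemma \ref{abellli}), whereas you invoke Lemma \ref{cyclic trivial intersection} after asserting the jumps of $H^-$ are $1$ and $3$; that assertion is correct but deserves one extra line (e.g.\ $H^-\cap G_4\subseteq G_4^-=G_5$, hence $H^-\cap G_4=\{1\}$, so the jumps of $H^-$ lie among the odd jumps $\{1,3\}$ of $H$).
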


\begin{proof}
To prove that $(2)$ implies $(1)$ is an easy exercise; we prove the other implication. Assume ($1$).
The group $H$ is abelian, by Lemma \ref{abellli}, and it is $A$-stable. By Corollary \ref{abelian sum +-}, it decomposes as $H=H^+\oplus H^-$. In view of Lemmas \ref{jumps di sto qua} and \ref{order +- jumps}($1$), we have that $H^+=G_4^+$ and that $H\cap G_4=G_4^+$. It follows from Lemma \ref{abellli} that there exists a cyclic subgroup $Q$ of $H$ such that $H=Q(H\cap G_4)$, and thus $H^-$ is cyclic. The proof is now complete.
\end{proof}

\begin{lemma}\label{hop}
The cardinality of $X^+$ is $p^2$.
\end{lemma}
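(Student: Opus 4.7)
The plan is to parametrize $X^+$ by cyclic subgroups of order $p^2$ contained in $C^-$ and then count those subgroups by a simple orbit-counting argument. The key observation, coming from Lemma \ref{shape H}, is that an element $H\in X^+$ is completely determined by the datum $\langle x\rangle$, where $x\in C^-\setminus G_2$, once we know that the other summand must be $G_4^+$.

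First I would collect the relevant cardinalities using Lemma \ref{order +- jumps}. By Lemma \ref{D maximal} the subgroup $C$ is maximal in $G$ and contains $G_2$; combined with Lemma \ref{parity obelisks}, whose conclusion reads $(w_1,w_2,w_3,w_4,w_5)=(2,1,2,1,1)$, this shows that the jumps of $C$ in $G$ are $1,2,3,4,5$ with widths $(1,1,2,1,1)$. Applying Lemma \ref{order +- jumps} gives $|C^-|=p^{1+2+1}=p^4$ and $|G_2^-|=p^{2+1}=p^3$, hence $|C^-\setminus G_2|=p^3(p-1)$. The same lemma yields $|G_4^+|=p^{w_4}=p$ and $G_4^+\cap G_5=G_5^+=\{1\}$, so any generator of $G_4^+$ lies in $G_4\setminus G_5$.

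Next, let $\mathcal{L}$ denote the set of cyclic subgroups $\langle x\rangle$ with $x\in C^-\setminus G_2$. I would define $\Psi\colon\mathcal{L}\to X^+$ by $\langle x\rangle\mapsto\langle x\rangle\cdot G_4^+$ and verify that it is well-defined: since $x\in C$ centralizes $G_4$, the product is an abelian subgroup; by Lemma \ref{order elementino} the element $x$ has order $p^2$ with $x^p$ of depth $3$, so $\langle x\rangle\cap G_4=\{1\}$ and the sum $\langle x\rangle\oplus G_4^+$ is direct; picking $y$ a generator of $G_4^+$, we have $\langle x\rangle\oplus G_4^+=\langle x,y\rangle\in X$; and $A$-stability is either immediate or supplied by Lemma \ref{shape H}. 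Surjectivity of $\Psi$ is exactly the content of Lemma \ref{shape H}. For injectivity I would argue that each $H=\langle x\rangle\oplus G_4^+\in X^+$ is abelian of order $p^3$ (by Lemma \ref{abellli} and the computation just made), with $H^+=G_4^+$ because $\langle x\rangle\subseteq G^-$ and $G^+\cap G^-=\{1\}$ (Lemma \ref{trivial intersection}); Lemma \ref{cardinality +-} then forces $|H^-|=p^2$, whence $H^-=\langle x\rangle$ and $\langle x\rangle$ is recovered from $H$ alone.

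Finally I would count $\mathcal{L}$. For any $\langle x\rangle\in\mathcal{L}$, the identity $\alpha(x^k)=(x^k)^{-1}$ shows $\langle x\rangle\subseteq C^-$, so $\langle x\rangle\cap(C^-\setminus G_2)=\langle x\rangle\setminus G_2$, a set of cardinality $p^2-p=p(p-1)$ (its complement inside $\langle x\rangle$ is $\langle x^p\rangle$). Therefore
\[
|\mathcal{L}|=\frac{|C^-\setminus G_2|}{p(p-1)}=\frac{p^3(p-1)}{p(p-1)}=p^2,
\]
and combining with the bijection $\Psi$ gives $|X^+|=p^2$. No real obstacle is expected: everything has been arranged by Lemmas \ref{D maximal}--\ref{shape H}, and the only bookkeeping to watch is the verification that $\langle x\rangle\cap G_4^+=\{1\}$ and that $\Psi$ is injective, both of which are handled by the order-elementino and plus-minus decompositions already available.
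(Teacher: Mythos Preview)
Your proposal is correct and follows essentially the same approach as the paper: both set up a bijection between $X^+$ and the set of cyclic subgroups $\langle x\rangle$ with $x\in C^-\setminus G_2$, via $\langle x\rangle\mapsto\langle x\rangle\oplus G_4^+$, and then count the latter by dividing $|C^-\setminus G_2|$ by the number of suitable generators $p^2-p$. The only cosmetic difference is that the paper rewrites $C^-\cap G_2$ as $G_3^-$ (using Proposition \ref{proposition -1^i}) before applying Lemma \ref{order +- jumps}, whereas you compute $|G_2^-|$ directly; these give the same number since $G_2^-=G_3^-$.
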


\begin{proof}
Let $\cor{C}$ denote the collection of subgroups $\gen{x}$ of $C$, where $x$ is an element of $C^-\setminus G_2$.
Thanks to Lemma \ref{shape H}, one can define the map $\cor{C}\rightarrow X^+$, by $Q\mapsto Q\oplus G_4^+$, which is easily shown to be a bijection.
In particular, the cardinality of $X^+$ is equal to that of $\cor{C}$. 
Now, the group $C$ is normal in $G$, as a consequence of Lemma \ref{D maximal}, and therefore it is $A$-stable.
By Lemma \ref{order elementino}, each element of $C^-\setminus G_2$ has order $p^2$ and, as a consequence of Proposition \ref{proposition -1^i}, the set $C^-\setminus G_2$ is equal to $C^-\setminus G_3^-$. 
It follows from Lemma \ref{order +- jumps} that  
$$|X^+|=\frac{|C^-|-|G_3^-|}{p^2-p}=\frac{p^4-p^3}{p^2-p}=p^2.$$
\end{proof}

\begin{lemma}\label{map into G4+}
For each subgroup $L$ of $C^-$, the commutator map induces a bilinear map 
$L\times G_3\rightarrow G_4^+$.
\end{lemma}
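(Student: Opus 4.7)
The strategy splits naturally into three steps: first, show that $[L,G_3]\subseteq G_4$; second, apply Lemma~\ref{tgt} to obtain bilinearity; third, verify that the image lies in $G_4^+$ by computing the action of $\alpha$ using Proposition~\ref{proposition -1^i} together with $L\subseteq C=\Cyc_G(G_4)$.

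For the containment and bilinearity, I would first observe that $L\subseteq C\subseteq G$ and $G_3$ is a term of the lower central series, so Lemma~\ref{commutator indices} gives $[L,G_3]\subseteq[G,G_3]=G_4$. To invoke Lemma~\ref{tgt} for the map $L\times G_3\to G$, I need $[L,G_3]$ to lie in the centre of $H=\langle L,G_3\rangle$. Since $L\subseteq C=\Cyc_G(G_4)$, the subgroup $L$ centralizes $G_4$ and hence centralizes $[L,G_3]$. Since $G$ has class $5$ (by $\wt_G(5)=1$ and Proposition~\ref{blackburn 2-1-2}(2)), Lemma~\ref{commutator indices} yields $[G_3,G_4]\subseteq G_7=\{1\}$, so $G_3$ also centralizes $[L,G_3]$. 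Thus $[L,G_3]\subseteq\ZG(H)$ and Lemma~\ref{tgt} gives bilinearity of the commutator map $L\times G_3\to G_4$.

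For the image landing in $G_4^+$, fix $x\in L$ and $z\in G_3$; I aim to show $\alpha([x,z])=[x,z]$. Since $L\subseteq C^-$, we have $\alpha(x)=x^{-1}$. By Proposition~\ref{proposition -1^i}, $\alpha$ induces multiplication by $(-1)^3=-1$ on $G_3/G_4$, so there exists $u\in G_4$ with $\alpha(z)=z^{-1}u$. Using that $\alpha$ is a homomorphism and the bilinearity established above,
\[
\alpha([x,z])=[\alpha(x),\alpha(z)]=[x^{-1},z^{-1}u]=[x^{-1},z^{-1}]\cdot[x^{-1},u].
\]
Now $u\in G_4$ and $x\in L\subseteq C$, so $[x^{-1},u]=1$; and bilinearity gives $[x^{-1},z^{-1}]=[x,z]^{(-1)(-1)}=[x,z]$. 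Hence $\alpha([x,z])=[x,z]\in G_4^+$, as required.

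The main conceptual points, rather than any technical obstacle, are ensuring that bilinearity is genuinely available (which hinges on the class bound forcing $[G_3,G_4]=1$ together with the defining property of $C$) and tracking the $\alpha$-action carefully enough to cancel the $G_4$-correction term $u$; the latter cancellation is precisely where the hypothesis $L\subseteq C^-$ is used in a non-trivial way.
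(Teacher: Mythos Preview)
Your proof is correct and follows essentially the same approach as the paper. The paper's version is slightly more streamlined: after establishing bilinearity exactly as you do, it observes that $L\subseteq\Cyc_G(G_4)$ forces the map to factor as $L\times G_3/G_4\to G_4$, and then invokes Lemma~\ref{lemma product of characters} (since $A$ acts through $\chi$ on both $L$ and $G_3/G_4$) to conclude $[L,G_3]=[L,G_3]^+$. Your explicit elementwise computation of $\alpha([x,z])$, handling the correction term $u\in G_4$ by hand, is precisely an unpacking of that lemma in this situation.
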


\begin{proof}
The subgroup $G_4$ is central in $C$ so, by Lemma \ref{tgt}, the commutator map $L\times G_3\rightarrow G_4$ is bilinear. 
Since $L$ is contained in $C=\Cyc_G(G_4)$, the commutator map induces a bilinear map 
$L\times G_3/G_4\rightarrow G_4$. Now, thanks to Proposition \ref{proposition -1^i}, the map $\alpha$ induces the inversion map on $G_3/G_4$ and so, thanks to Lemma \ref{lemma product of characters}, we get $[L,G_3]=[L,G_3]^+$. In particular, $[L,G_3]$ is contained in $G_4^+$ and the proof is complete.
\end{proof}

\begin{lemma}\label{normalizer contains G3}
Let $H\in X^+$. Then $G_3\subseteq\nor_G(H)$.
\end{lemma}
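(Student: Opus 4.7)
The plan is to use Lemma \ref{shape H} to get a concrete description of $H$, and then verify the commutator condition from Lemma \ref{commutators normalizer}. Write $H = \gen{x} \oplus G_4^+$ with $x \in C^- \setminus G_2$; by Lemma \ref{abellli}, $H$ is abelian. It is enough to prove that $[g,H] \subseteq H$ for every $g \in G_3$, and since the map $h \mapsto [g,h]$ is multiplicative on the abelian group $H$ provided its image sits inside $H$, it suffices to check this on the two pieces $\gen{x}$ and $G_4^+$ separately.

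For the piece $\gen{x}$, I would apply Lemma \ref{map into G4+} with $L = \gen{x} \subseteq C^-$, which gives $[\gen{x}, G_3] \subseteq G_4^+ \subseteq H$. In particular $[g,x] \in H$, and by bilinearity of the commutator into $G_4$ (noting that $G_3 \subseteq G_2 \subseteq C$, which holds because $[G_2,G_4]\subseteq G_6 = 1$), we get $[g,x^k] \in G_4^+ \subseteq H$ for all $k$.

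For the piece $G_4^+$, observe that $[G_3,G_4] \subseteq G_7 = \{1\}$ since $G$ has class $5$ (Proposition \ref{blackburn 2-1-2}($2$), using $\wt_G(5) = 1$). Hence $[g, G_4^+] = \{1\} \subseteq H$.

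Combining the two, for any $h = x^k y$ with $y \in G_4^+$, the multiplication formula of Lemma \ref{multiplication formulas commutators}($1$) yields $[g,h] = [g,x^k]\, x^k[g,y]x^{-k} = [g,x^k] \in H$, since $[g,y] = 1$. Therefore $[g,H] \subseteq H$, and Lemma \ref{commutators normalizer} gives $g \in \nor_G(H)$. The choice of $g \in G_3$ being arbitrary, we conclude $G_3 \subseteq \nor_G(H)$. The argument is essentially immediate once one has the decomposition from Lemma \ref{shape H} and the two key facts $[L,G_3] \subseteq G_4^+$ from Lemma \ref{map into G4+} and $[G_3,G_4] = \{1\}$ from the class bound; no obstacle of substance is expected.
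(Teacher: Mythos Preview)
Your proof is correct and follows essentially the same route as the paper: decompose $H=\gen{x}\oplus G_4^+$ via Lemma~\ref{shape H}, use Lemma~\ref{map into G4+} to get $[G_3,\gen{x}]\subseteq G_4^+$, use the class bound to kill $[G_3,G_4^+]$, and conclude with Lemma~\ref{commutators normalizer}. The paper is slightly more economical in that it uses Lemma~\ref{multiplication formulas commutators}($1$) once to write $[G_3,H]=[G_3,\gen{x}]$ directly (since $[G_3,G_4^+]=1$), rather than combining elementwise as you do, but the substance is identical.
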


\begin{proof}
By Lemma \ref{shape H}, the subgroup $H$ is of the form $\gen{x}\oplus G_4^+$, for some element $x\in C^-\setminus G_2$. As a consequence of Lemma \ref{commutator indices}, the subgroup $[G_3,G_4^+]$ is trivial so, from Lemma \ref{multiplication formulas commutators}($1$), it follows that $[G_3,H]=[G_3,\gen{x}]$.
Lemma \ref{map into G4+} yields that $[G_3,\gen{x}]$ is contained in $G_4^+$, a subgroup of $H$, and so, by Lemma \ref{commutators normalizer}, one has $G_3\subseteq\nor_G(H)$. 
\end{proof}

\noindent
We will now prove Proposition \ref{centralizer G4 2-gen} by building a contradiction. We remind the reader that we have assumed that $\Phi(C)\neq G_3$.
\vspace{8pt} \\
\noindent
Let $H$ be an element of $X^+$ with the property that $|G:\nor_G(H)|$ is maximal. Let moreover $\cor{J}$ denote the collection of jumps of $\nor_G(H)$ in $G$. 
As a consequence of Lemma \ref{normalizer contains G3}, the normalizer of $H$ contains $HG_3$. It follows from Lemma \ref{jumps di sto qua} that $\graffe{1,3,4,5}$ is contained in $\cor{J}$ and, thanks also to Lemma \ref{obelisk basic}($2$), that ${|G:\nor_G(H)|\leq |G:HG_3|=p^2}$.
Now, by Lemmas \ref{chill} and \ref{hop}, the cardinalities of $X$ and $X^+$ are respectively $p^4$ and $p^2$.
It follows from Lemma \ref{orbits} that
\[
p^4=|X|\leq\sum_{K\in X^+}|G:\nor_G(K)|\leq |X^+||G:\nor_G(H)|\leq p^2p^2=p^4,
\]
and therefore ${|G:\nor_G(H)|=p^2}$. In particular, we get $\nor_G(H)=HG_3$ and $\cor{J}=\graffe{1,3,4,5}$. 
Moreover, again by Lemma \ref{orbits}, no two elements of $X^+$ are conjugate in $G$. 
As a consequence of Lemma \ref{conjugate stable iff in nor-times-G+}, the subgroup $G^+$ is contained in $\nor_G(H)=HG_3$ and so, thanks to Lemma \ref{order +- jumps}($1$), the number $2$ is a jump of $\nor_G(H)$ in $G$. Contradiction.

\section{The last exotic case}\label{section last odd}

The aim of Section \ref{section last odd} is that of exploring the last exotic case for what concerns the structure of finite $p$-groups of intensity greater than $1$. As a consequence of Theorem \ref{theorem we need lines}, the finite $p$-groups of ``high class'' and intensity greater than $1$ all need to be framed obelisks. Theorem \ref{proposition last odd case} is the last result we present that still allows some ``structural freedom'' to $p$-obelisks.

\begin{theorem}\label{proposition last odd case}
Let $p$ be a prime number and let $G$ be a finite $p$-group with $\wt_G(5)=1$.
Write $C=\Cyc_G(G_4)$.
Then the following are equivalent.
\begin{itemize}
 \item[$1$.] One has $\inte(G)>1$.
 \item[$2$.] One has $p>3$, the group $G$ is a $p$-obelisk, and 
$\Phi(C)=G_3$. Moreover, there exists an automorphism $\alpha$ of $G$ of order $2$ that induces the inversion map on $G/G_2$. 
\end{itemize}
\end{theorem}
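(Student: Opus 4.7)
\textbf{Proof plan for Theorem \ref{proposition last odd case}.}

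The direction $(1)\Rightarrow(2)$ is essentially an assembly of results already available. Assume $\inte(G)>1$. The hypothesis $\wt_G(5)=1$ forces $G_5\neq G_6$, so the nilpotency class $c$ of $G$ is at least $5$; in particular $c\geq 4$, so by Corollary \ref{3-gps class at most 4} we have $p>3$, and by Proposition \ref{class 4 obelisk} the group $G$ is a $p$-obelisk. Lemma \ref{parity obelisks}(1) then forces $c=5$, since otherwise $5\leq c-1$ with $5$ odd would give $\wt_G(5)=2$. Now Proposition \ref{centralizer G4 2-gen} directly yields $\Phi(C)=G_3$. Finally, by Proposition \ref{proposition -1^i} any intense automorphism of order $\inte(G)$ has order $2$ and induces inversion on $G/G_2$; any such automorphism serves as the required $\alpha$.

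For $(2)\Rightarrow(1)$ the task is to show that the given $\alpha$ is intense, so that $\inte(G)\geq 2$. The natural first step is to pass to the quotient $G/G_5$, which has class $4$, is non-abelian, and hence is a $p$-obelisk by Lemma \ref{non-ab quotient obelisk}. The automorphism $\alpha$ induces a map $\alpha_5:G/G_5\to G/G_5$ of order $2$ (it cannot be trivial since $\alpha$ inverts $G/G_2$), and $\alpha_5$ inverts $(G/G_5)/(G/G_5)_2$. Theorem \ref{theorem class 4 iff} then shows $\alpha_5$ is intense on $G/G_5$. With $c=5$ odd and $|G_c|=|G_5|=p$ (by Lemma \ref{parity obelisks}(2), since $5$ is the terminal index), the situation is structurally parallel to that of Proposition \ref{proposition odd 1}.

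The plan is therefore to carry out the induction step of Proposition \ref{proposition odd 1} in the current semi-framed setting, showing every subgroup $H$ of $G$ has an $\langle\alpha\rangle$-stable conjugate. Subgroups $H$ containing $G_5$ are handled exactly as in Lemma \ref{induction odd 1}, lifting $\alpha_5$-stability. For $H\cap G_5=\{1\}$, we split on the parities of the jumps of $H$ in $G$: if they are all odd or all even, Lemmas \ref{all odd odd 1} and \ref{all even odd 1} carry over unchanged since they do not use framing. The delicate case is when $H$ has both an odd jump and an even jump. Here we follow the analysis of Lemmas \ref{ij}--\ref{nervi}: with $i$ the least odd jump and $j$ the least even jump, we aim to show $i+j>c=5$, so that $H$ is abelian, then decompose $H=I\oplus J$, and use a centralizer-adjustment argument to conjugate $H$ into $\langle\alpha\rangle$-stable form.

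The main obstacle is that the original arguments in Lemma \ref{ij}(2) and Lemma \ref{nervi} invoke Proposition \ref{lines}(3) applied to an arbitrary $1$-dimensional subspace $\ell$ of $G_h/G_{h+1}$, which needs $G$ to be \emph{framed}. Here we only know $\Phi(C)=G_3$ for the single maximal subgroup $C=\Cyc_G(G_4)$. The possible problematic pairs in class $5$ are $(i,j)\in\{(1,2),(1,4),(3,2)\}$, and each must be handled by hand. The key observation, which I expect to carry the whole argument, is that any subgroup $H$ realising a jump at $1$ together with an even jump must be contained in $C$: indeed an element $y\in H$ of depth $4$ lies in $G_4$, and if $H$ contains an element $x$ of depth $1$ with $x\notin C$ then $[x,y]\in G_5\cap H=\{1\}$ is impossible together with the commutator structure imposed by a non-trivial action on $G_4$. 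Thus $H\subseteq C$, the framing hypothesis $\Phi(C)=G_3$ suffices in place of full framing, and the relevant instances of Proposition \ref{lines}(3) hold in the restricted form we need. Once this localisation is established, the rest of the proof is a transcription of Section \ref{section odd 1}, concluded by invoking Lemma \ref{equivalent intense coprime-pgrps} to deduce $\alpha\in\Int(G)$.
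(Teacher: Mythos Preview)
Your direction $(1)\Rightarrow(2)$ is correct and matches the paper. For $(2)\Rightarrow(1)$, your reduction to the intense quotient $G/G_5$, and your treatment of subgroups containing $G_5$ and of subgroups with all jumps of one parity, are also fine and coincide with the paper's arguments.

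The gap is in the mixed-parity analysis, specifically the pair $(i,j)=(3,2)$. Your localisation idea---forcing $H\subseteq C$ when $i=1$---is correct for the pairs $(1,2)$ and $(1,4)$, since then the depth-$1$ generator of $H$ lies in $C$ and the needed instance of Proposition~\ref{lines}(3) has $h=1$ and $\ell=C/G_2$. But for $(i,j)=(3,2)$ you have $H\subseteq G_2\subseteq C$ trivially, and this says nothing about which $1$-dimensional subspace $\ell\subset G_3/G_4$ is spanned by your depth-$3$ element; the application of Proposition~\ref{lines}(3) you need is at $h=3$, and the single-line hypothesis $\Phi(C)=G_3$ does not immediately give it. Your claim that the containment ``carries the whole argument'' is therefore unjustified. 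A similar issue arises later in the \ref{jurassic5}/\ref{nervi} step for $(i,j)=(3,4)$, where Proposition~\ref{lines}(3) is invoked at $h=c-j=1$ but for an $\ell$ not forced to equal $C/G_2$.

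The paper takes a different, more hands-on route for the mixed-parity case. After showing $u_4=1$, it rules out $i=1$ not via Proposition~\ref{lines} but by observing that $H\subseteq C$ forces $\rho^2(H)=\rho^2(C)$, and then using $\Phi(C)=G_3$ to prove $\rho^2(C)=G_5$ (Lemma~\ref{fromthenews}), contradicting $H\cap G_5=\{1\}$. So $i=3$. It then introduces the auxiliary maximal subgroup $D$ determined by $(H\cap G_3)G_4=D^pG_4$, and shows $\rho^2(D)=\{1\}$ (since $(H\cap G_3)^p\subseteq H\cap G_5$) while $\rho^2(C)=G_5$; hence $D\neq C$, so $[D,G_4]=G_5$, and by regularity $[G_2,D^p]=[G_2^p,D]=[G_4,D]=G_5$. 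If $j=2$ then $HG_3=G_2$ and $(H\cap G_3)G_4=D^p$, so the non-triviality of $G_2/G_3\times D^p/G_4\to G_5$ forces $[H,H\cap G_3]\neq\{1\}$, contradicting $H$ abelian. This leaves only $(i,j)=(3,4)$, which the paper finishes by an orbit-counting argument: the set $X$ of such subgroups has $|X|=p^2$, and one computes $|G:\nor_G(H)|=p^2$, so $G$ acts transitively on $X$ and $\alpha(H)$ is conjugate to $H$.
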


\noindent
The remaining part of Section \ref{section last odd} will be devoted to the proof of Theorem \ref{proposition last odd case} and we will thus work under the hypotheses of such theorem. 
\vspace{8pt} \\
\noindent
Assume first $(1)$. As a consequence of Proposition \ref{proposition 2gps} and Corollary \ref{3-gps class at most 4}, the prime $p$ is larger than $3$ and so, thanks to Proposition \ref{class 4 obelisk}, the group $G$ is a $p$-obelisk.
Thanks to Theorem \ref{theorem class at least 3}($1$), there exists an intense automorphism $\alpha$ of order $2$ of $G$, which induces the inversion map on $G/G_2$ by Proposition \ref{proposition -1^i}. Proposition \ref{centralizer G4 2-gen} yields $\Phi(C)=G_3$.
\vspace{8pt}\\
\noindent
Assume now that $p>3$, that $G$ is a $p$-obelisk, and that $\Phi(C)=G_3$. Let moreover $\alpha$ be an automorphism of order $2$ of $G$ that induces the inversion map on $G/G_2$. We will prove $(1)$.
Set $A=\gen{\alpha}$ and, for each $i\in\Z_{\geq 1}$, denote $w_i=\wt_G(i)$.
Thanks to Lemma \ref{parity obelisks}, we have  $(w_1,w_2,w_3,w_4,w_5)=(2,1,2,1,1)$ and so, thanks to Proposition \ref{blackburn 2-1-2}($2$), the class of $G$ is equal to $5$. We remind the reader that, for each $k\in\Z_{\geq 0}$, the map $G\rightarrow G$ sending $x$ to $x^{p^k}$ is denoted by $\rho^k$. Furthermore, by Lemma \ref{obelisk regular}, the group $G$ is regular and so, given any subgroup $K$ of $G$, Lemma \ref{regular implies power abelian} yields that $\rho^k(K)=K^{p^k}$. 

\begin{lemma}\label{fromthenews}
One has $\rho^2(C)=G_5$.
\end{lemma}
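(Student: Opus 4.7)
The plan is to show the two inclusions separately. By Lemma \ref{black core}, $\rho^2(C)\subseteq\rho^2(G)=G_5$, so the whole task is to produce an element of $C$ whose $p^2$-th power is non-trivial; equivalently, to show that $\rho^{1}_3$ is non-zero on $C^p/G_4$. I would begin by collecting the numerics: since $G$ is a $p$-obelisk of class $5$ with $\wt_G(5)=1$, Lemma \ref{parity obelisks} gives $(w_1,\ldots,w_5)=(2,1,2,1,1)$, so $|G_3|=p^4$, $|G_4|=p^2$, $|G_5|=p$; Lemma \ref{D maximal} gives $C$ maximal; Lemma \ref{comprised} gives $G_4\subseteq C^p\subseteq G_3$ with $|C^p:G_4|=p$, so $C^p/G_4$ is a $1$-dimensional $\F_p$-subspace of the $2$-dimensional space $G_3/G_4$; and by Lemmas \ref{obelisk regular} and \ref{regular implies power abelian}, $C$ is regular with $\rho^2(C)=\rho(\rho(C))=\rho(C^p)$.

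Next I would look at the surjective homomorphism $\rho^{1}_3\colon G_3/G_4\to G_5$ from Lemma \ref{maps incrociate}($1$); its kernel is $\mu_p(G_3)/G_4$, and regularity gives $|\mu_p(G_3)|=|G_3:G_3^p|=p^4/p=p^3$. The key step is to identify this kernel as $[C,C]G_4/G_4$. To that end I would verify three claims: (i) the subspace $[C,C]G_4/G_4$ of $G_3/G_4$ is $1$-dimensional, using $[C,C]=[C,G_2]$ (Lemma \ref{maximal non-framed}) together with the non-degeneracy of $\gamma_{1,2}$ from Lemma \ref{obelisk non-deg}; (ii) $[C,C]$ has exponent $p$: indeed, for $c\in C\setminus G_2$ and $y\in G_2\setminus G_3$, Proposition \ref{proposition commutative diagram} applied with $h=1,k=2,n=1$ (and $G_6=1$) yields
\[
[c,y]^p \;=\; [c,y^p],
\]
and the right side is trivial because $y^p\in\rho(G_2)=G_4$ is centralised by $c\in C=\Cyc_G(G_4)$; since $[C,C]$ is abelian (it lies in the abelian subgroup $G_3$), this forces $[C,C]^p=1$; (iii) hence $([C,C]G_4)^p=[C,C]^pG_4^p=1$, so $[C,C]G_4\subseteq\mu_p(G_3)$, and as both have order $p^3$ we get the identification $[C,C]G_4=\mu_p(G_3)$.

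Finally, the hypothesis $\Phi(C)=G_3$ means $C^p\cdot[C,C]=G_3$, so the two $1$-dimensional subspaces $C^p/G_4$ and $[C,C]G_4/G_4$ together span $G_3/G_4$ and are therefore distinct. In particular $C^p\not\subseteq\mu_p(G_3)=\ker\rho^{1}_3$, so $\rho^{1}_3$ is non-zero on $C^p/G_4$ and, being valued in the $1$-dimensional $G_5$, is surjective there. Thus $\rho(C^p)=G_5$, and by regularity $\rho^2(C)=\rho(C^p)=G_5$.

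The conceptual heart, and the place where the hypothesis $\Phi(C)=G_3$ is decisive, is step (iii)–(iv): one must recognise that the annihilator of $\rho\colon G_3/G_4\to G_5$ is precisely the commutator line $[C,C]G_4/G_4$, whereas $\Phi(C)=G_3$ is exactly the statement that the $p$-th power line $C^p/G_4$ is transverse to it. The only real obstacle is the exponent computation for $[C,C]$, which is handled by the diagram of Proposition \ref{proposition commutative diagram}; everything else is dimension counting.
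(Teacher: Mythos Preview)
Your proof is correct, and it shares with the paper the single essential computation: that $[C,C]^p=1$. You obtain this via Proposition~\ref{proposition commutative diagram}; the paper obtains it via Lemma~\ref{regular technical} (writing $[C,C]^p=[C,G_2]^p=[C,G_2^p]=[C,G_4]=1$). These are equivalent.

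Where the two arguments diverge is in how this fact is deployed. You use it to identify the kernel of $\rho^1_3\colon G_3/G_4\to G_5$ as the commutator line $[C,C]G_4/G_4$, and then invoke $\Phi(C)=G_3$ to see that the power line $C^p/G_4$ is transverse to that kernel, hence maps onto $G_5$. The paper bypasses the kernel computation entirely: since $\Phi(C)=G_3$, one has $\Phi(C)^p=G_3^p=G_5$ directly from Lemma~\ref{black core}; and since $\Phi(C)=C^p[C,C]$ lies in the abelian group $G_3$, one has $\Phi(C)^p=(C^p)^p\cdot[C,C]^p=C^{p^2}$. Your approach gives a more detailed geometric picture of what is happening inside $G_3/G_4$, but the paper's route is considerably shorter: two lines rather than a dimension-counting argument.
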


\begin{proof}
The group $\Phi(C)$ is equal to $C^p[C,C]$ and $\Phi(C)=G_3$, by assumption.
It follows from Lemma \ref{maps incrociate}($1$) that $\Phi(C)^p=G_5$. Now, the group $C$ is maximal in $G$, by Lemma \ref{D maximal}, and so, as a consequence of Lemma \ref{obelisk basic}($2$), the quotient $C/G_2$ is cyclic. Lemma \ref{cyclic quotient commutators} yields $[C,C]=[C,G_2]$. Now, by Lemma \ref{regular technical}, one has $[C,G_2]^p=[C,G_2^p]$ and so, thanks to Lemma \ref{black core}, one gets $[C,G_2]^p=[C,G_4]=\graffe{1}$. It follows that $\Phi(C)^p$ is equal to $C^{p^2}$ and therefore $\rho^2(C)=G_5$.
\end{proof}

\begin{lemma}\label{come in odd 1}
Let $\alpha_5:G/G_5\rightarrow G/G_5$ denote the automorphism that is induced by $\alpha$. Then $\alpha_5$ is intense.
\end{lemma}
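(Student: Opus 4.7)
The plan is to reduce the statement to the class $4$ characterization already proven in Theorem~\ref{theorem class 4 iff}. Set $\overline{G}=G/G_5$ and note that, since $G$ has class exactly $5$ (as recorded in the paragraph preceding Lemma~\ref{fromthenews}, where $(w_1,\ldots,w_5)=(2,1,2,1,1)$), the quotient $\overline{G}$ has class $4$ and is in particular non-abelian. By Lemma~\ref{non-ab quotient obelisk}, every non-abelian quotient of a $p$-obelisk is a $p$-obelisk, so $\overline{G}$ is a $p$-obelisk of class $4$.

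Next I would check that $\alpha_5$ satisfies the hypotheses of Theorem~\ref{theorem class 4 iff}. Since $G_5$ is characteristic in $G$, the automorphism $\alpha$ descends to a well-defined $\alpha_5\in\Aut(\overline{G})$, which in turn induces on $\overline{G}/\overline{G}_2=G/G_2$ the same automorphism as $\alpha$, namely the inversion map $x\mapsto x^{-1}$. To see that $\alpha_5$ has order exactly $2$, observe that the inversion map on $G/G_2$ is non-trivial (for instance because $p>3$ implies $G/G_2\neq 1$ has elements not equal to their own inverse), so $\alpha_5$ cannot be the identity; combined with $\alpha_5^2=\id$ this gives $|\alpha_5|=2$.

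With these two ingredients in place, the implication $(1)\Rightarrow(2)$ of Theorem~\ref{theorem class 4 iff} applies directly to the pair $(\overline{G},\alpha_5)$, yielding that $\alpha_5\in\Int(\overline{G})$. I do not anticipate any real obstacle here: the entire argument is a verification that the already-proved class $4$ theorem is applicable, and the only non-trivial check is that $\overline{G}$ remains a $p$-obelisk, which is precisely the content of Lemma~\ref{non-ab quotient obelisk}. This short reduction is exactly the step that will allow the induction on class underlying Theorem~\ref{proposition last odd case} to proceed, since it supplies the base case needed to invoke Propositions~\ref{proposition odd 1} or~\ref{proposition odd 2} (depending on the order of $G_c$) in order to lift intensity from $G/G_5$ up to $G$.
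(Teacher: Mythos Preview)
Your proof is correct and follows essentially the same approach as the paper: show that $\overline{G}=G/G_5$ is a $p$-obelisk of class $4$ via Lemma~\ref{non-ab quotient obelisk}, observe that $\alpha_5$ has order $2$ and induces inversion on $\overline{G}/\overline{G}_2$, and apply Theorem~\ref{theorem class 4 iff}. Your added justification that $\alpha_5$ genuinely has order $2$ (rather than $1$) is a detail the paper leaves implicit, and your closing remarks about how the lemma feeds into the subsequent induction are accurate in spirit, though the section that follows argues more directly with subgroups rather than invoking Propositions~\ref{proposition odd 1} or~\ref{proposition odd 2} wholesale.
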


\begin{proof}
Let $\overline{G}$ denote $G/G_5$ and use the bar notation for the subgroups of $\overline{G}$. The automorphism
$\alpha_5$ induces the inversion map on $\overline{G}/\overline{G_2}$, because $\alpha$ does so on $G/G_2$. Moreover, thanks to Lemma \ref{non-ab quotient obelisk}, the group $\overline{G}$ is a $p$-obelisk of class $4$. We conclude by applying Theorem \ref{theorem class 4 iff}.
\end{proof}

\noindent
Let $H$ be a subgroup of $G$ and, for each $i\in\Z_{\geq 1}$, write $u_i=\wt_H^G(i)$. We will show that $H$ has an $A$-stable conjugate in $G$. We assume, without loss of generality, that $H$ is non-trivial. As a consequence of Lemma \ref{come in odd 1}, the automorphism that $\alpha$ induces on $G/G_5$ is intense. If $G_5$ is contained in $H$, then, thanks to Lemma \ref{induction odd 1}, there exists $g\in G$ such that $gHg^{-1}$ is $A$-stable. Since $G_5$ has order $p$, we now assume that $H\cap G_5=\graffe{1}$. By Lemma \ref{dub fx}, all jumps of $H$ in $G$ have dimension $1$ and, if they all have the same parity, Lemmas \ref{all even odd 1} and \ref{all odd odd 1} yield that $H$ has an $A$-stable conjugate. We assume now that $H$ has jumps of both parities and we denote by $i$ and $j$ respectively the least odd and the least even jump of $H$ in $G$.

\begin{lemma}\label{stacce}
One has $u_4=1$.
\end{lemma}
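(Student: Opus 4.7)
The plan is to obtain $u_4 \ge 1$; combined with $u_4 \le w_4 = 1$ (which holds by Lemma \ref{parity obelisks}), this forces $u_4 = 1$. Since $j$ is a jump of $H$ in $G$ and the class of $G$ equals $5$, the even integer $j$ lies in $\{2,4\}$.

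If $j = 4$, then by the very definition of a jump we have $u_4 \ge 1$ and we are done. Suppose instead that $j = 2$. By Lemma \ref{jumps and depth}, we can choose $y \in H$ with $\dpt_G(y) = 2$. Now Corollary \ref{power iso bottom} (taken with $i = 2$, $k = 4$, so $m = 1$) provides an isomorphism $\rho^{\,1}_{\,2} : G_2/G_3 \longrightarrow G_4/G_5$ induced by the $p$-power map. Since $y \in G_2 \setminus G_3$, this forces $y^p \in G_4 \setminus G_5$, and in particular $\dpt_G(y^p) = 4$. But $y^p \in H$, hence $4$ is a jump of $H$ in $G$ by Lemma \ref{jumps and depth}, giving $u_4 \ge 1$ as required.

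The only thing to verify is that $j \in \{2,4\}$, which is immediate from the class of $G$ being $5$ and $j$ being even and positive. The rest is a direct application of the isomorphism $\rho^{\,1}_{\,2}$ from Corollary \ref{power iso bottom}, using the $(w_1,\ldots,w_5) = (2,1,2,1,1)$ shape already recorded at the start of the section. No obstacle is expected; the statement is essentially a bookkeeping observation extracted from the $p$-power structure of an obelisk of class $5$.
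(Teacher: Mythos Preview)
Your proof is correct and follows the same line as the paper's: the paper observes $j\in\{2,4\}$ and then invokes Corollary~\ref{power iso bottom} to get $0\neq u_4\le w_4=1$, which is exactly your case split (the case $j=2$ being handled via the isomorphism $\rho^1_2:G_2/G_3\to G_4/G_5$) written out in full.
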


\begin{proof}
The group $G$ having class $5$, we have $j\in\graffe{2,4}$. It follows from Corollary \ref{power iso bottom} that $0\neq u_4\leq w_4$ and therefore $u_4=1$.
\end{proof}

\begin{lemma}\label{i=3}
One has $i=3$.
\end{lemma}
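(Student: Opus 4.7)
My plan is to argue by contradiction, ruling out $i = 1$. Since $i$ is an odd jump of $H$ in $G$ and $G$ has class $5$, we have $i \in \graffe{1,3,5}$, but $H \cap G_5 = \graffe{1}$ forbids $i = 5$, so it suffices to exclude $i = 1$.

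Suppose therefore that there exists $x \in H$ with $\dpt_G(x) = 1$, and pick $y \in H$ with $\dpt_G(y) = 4$, which exists by Lemma \ref{stacce}. By Lemma \ref{commutator indices}, $[x,y] \in G_5$; since $[x,y] \in H \cap G_5 = \graffe{1}$, this forces $x$ and $y$ to commute. As $w_4 = 1$, the element $yG_5$ generates $G_4/G_5$, and because $G_5$ is central in $G$, one obtains $\Cyc_G(y) = \Cyc_G(G_4) = C$. Hence $x \in C \setminus G_2$, and combined with $|C : G_2| = p$ (Lemma \ref{D maximal}) this gives $C = \gen{x, G_2}$.

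The key step, and the main place where the hypothesis $\Phi(C) = G_3$ enters, is to show that $x^{p^2}$ generates $G_5$, which contradicts $x^{p^2} \in H \cap G_5 = \graffe{1}$. By Lemma \ref{fromthenews}, $\rho^2(C) = G_5$. To extract $x^{p^2}$ from this I plan to show that $\rho$ restricts to an endomorphism of $C$. Indeed, $[C,C] = [C,G_2]$ by Lemma \ref{cyclic quotient commutators}, and Lemmas \ref{regular bilinear} and \ref{black core} give $[C,C]^p = [C, G_2^p] = [C, G_4] = \graffe{1}$; so $[C, C]$ has exponent dividing $p$, and Lemma \ref{exponent G2=p, endomorphism} applied to $C$ (which is regular because $G$ is, by Lemma \ref{obelisk regular}) makes $\rho$ an endomorphism of $C$. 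Consequently $\rho^2$ is an endomorphism of $C$ too, and
\[
G_5 = \rho^2(C) = \gen{\rho^2(x), \rho^2(G_2)} = \gen{x^{p^2}, G_6} = \gen{x^{p^2}},
\]
using Lemma \ref{black core} at the penultimate equality. This delivers the contradiction.

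The bulk of the computation is already packaged in Lemma \ref{fromthenews}; the only point requiring care is passing from $\rho^2(C) = G_5$ to the statement that $x^{p^2}$ alone generates $G_5$, which is precisely why one needs $\rho$ to be a genuine endomorphism of $C$ rather than just a set map.
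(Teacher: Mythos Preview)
Your proof is correct and follows essentially the same route as the paper: assume $i=1$, show the depth-$1$ element lies in $C$, and then use $\rho^2(C)=G_5$ (Lemma~\ref{fromthenews}) to force a nontrivial element of $H\cap G_5$. The only difference is in the final step: you prove that $\rho$ is an endomorphism of $C$ in order to pass from $\rho^2(C)=G_5$ to $\gen{x^{p^2}}=G_5$, whereas the paper invokes Lemma~\ref{maps incrociate}($1$) directly, which already says that $\rho^2$ is well-defined on $G/G_2$, so $\rho^2(C)=\rho^2(HG_2)=\rho^2(H)$ without any further work. Your argument is slightly more hands-on but equally valid.
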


\begin{proof}
Since $u_5=0$, the index $i$ is different from $5$ and so $i\in\graffe{1,3}$. Assume by contradiction that $i=1$. As a consequence of Lemma \ref{stacce}, the subgroups $G_4$ and $(H\cap G_4)G_5$ are equal. The group $G_5$ being central, it follows from Lemma \ref{multiplication formulas commutators}($1$) that 
$$G_5=[G,G_4]=[G,(H\cap G_4)G_5]=[G,H\cap G_4].$$
Thanks to Lemma \ref{commutator indices}, the group $[G_2,G_4]$ is trivial ans so Lemma \ref{multiplication formulas commutators}($2$) yields 
$$[HG_2,G_4]=[H,G_4]=[H,H\cap G_4]\subseteq H\cap G_5=\graffe{1}.$$ 
In particular, $H$ is contained in $C$ and so, as a consequence of Lemma \ref{maps incrociate}($1$), we get $\rho^2(H)=\rho^2(C)$.
It follows from Lemma \ref{fromthenews} that $H$ contains $G_5$. Contradiction to $H\cap G_5=\graffe{1}$.
\end{proof}

\noindent
Let $D$ be a maximal subgroup of $G$ with the property that $(H\cap G_3)G_4=D^pG_4$ and note that, thanks to Corollary \ref{power iso bottom}, the subgroup $D$ is uniquely determined by $H$. Since $D^p$ is characteristic in the normal subgroup $D$, Lemma \ref{obelisks normal squeezed} yields $D^p=D^pG_4$ and therefore, from Corollary \ref{power iso bottom}, one gets $|D^p:G_4|=p$.

\begin{lemma}\label{Dp2 trivial}
One has $\rho^2(D)=\graffe{1}$.
\end{lemma}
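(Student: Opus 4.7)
The plan is to exhibit a concrete generator of $D^p$ coming from $H$, and then use that generator's $p$-th power lies in $H \cap G_5 = \{1\}$.

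First I would note the overall location of $\rho^2(D)$ in the lower central series: since $D$ is a maximal subgroup of the $p$-obelisk $G$, it contains $G_2$ and meets the hypotheses for $\rho^k$ to shift jumps by $2k$. By the discussion immediately preceding the statement, $G_4 \subseteq D^p \subseteq G_3$ with $|D^p:G_4|=p$, so $\rho^2(D) = (D^p)^p \subseteq G_5$ (using Lemma \ref{black core} to get $G_4^p \subseteq G_6 = 1$). Since $|G_5|=p$, the target $\rho^2(D)$ is either trivial or all of $G_5$.

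Next I would identify $D^p$ explicitly. By Lemma \ref{i=3} the integer $3$ is a jump of $H$, so by Lemma \ref{jumps and depth} there exists $x \in H$ with $\dpt_G(x)=3$. The key observation (which should be isolated as a short sublemma) is that $u_3 = 1$: indeed $u_3 \le w_3 = 2$, and if $u_3 = 2$ then $(H\cap G_3)G_4 = G_3$, contradicting $|D^p:G_4| = p$ through the defining equation $(H\cap G_3)G_4 = D^pG_4 = D^p$. Hence $(H\cap G_3)G_4 = \langle x\rangle G_4$, and combining with $G_4 \subseteq D^p$ yields $D^p = \langle x\rangle G_4$.

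Now I would verify that $D^p$ is abelian, so that $\rho|_{D^p}$ is a homomorphism and I can compute $(D^p)^p$ on generators. Since $D^p \subseteq G_3$, Lemma \ref{commutator indices} gives $[D^p, G_4] \subseteq [G_3,G_4] \subseteq G_7 = \{1\}$, so $G_4$ is central in $D^p$. The quotient $D^p/G_4$ is cyclic (generated by $xG_4$), so $D^p$ itself is abelian. By Lemma \ref{regular implies power abelian} applied to $D^p$ (a subgroup of the regular group $G$, hence regular), $\rho^2(D) = \rho(D^p) = \langle x^p\rangle\, G_4^p = \langle x^p\rangle$.

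Finally, the punchline: $x^p \in \langle x\rangle \subseteq H$, and $x \in G_3$ gives $x^p \in G_5$, so $x^p \in H \cap G_5$. But $u_5 = 0$ means $H \cap G_5 = H \cap G_6 = \{1\}$, so $x^p = 1$. Therefore $\rho^2(D) = \langle x^p\rangle = \{1\}$. No real obstacle arises; the only subtle points are justifying $u_3 = 1$ and that $\rho$ restricts to a homomorphism on $D^p$, both of which are one-line arguments from earlier results.
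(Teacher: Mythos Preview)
Your argument is correct, and the underlying idea is the same as the paper's: the $p$-th power of $D^p$ is controlled by $p$-th powers of elements of $H\cap G_3$, and these lie in $H\cap G_5=\{1\}$. The paper's execution, however, is considerably shorter. Since the class of $G$ is $5$, one has $G_6=\{1\}$, so by Lemma~\ref{maps incrociate}(1) the map $\rho$ restricted to $G_3$ depends only on cosets modulo $G_4$; hence from $D^p=(H\cap G_3)G_4$ one gets immediately $\rho^2(D)=\rho(D^p)=\rho(H\cap G_3)\subseteq H\cap G_5=\{1\}$. There is no need to pick a specific generator $x$, prove $D^p$ abelian, or invoke regularity of $D^p$ separately. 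Also note that your sublemma $u_3=1$ is redundant: Lemma~\ref{dub fx} (already invoked earlier in this section) gives that every jump of $H$ has width $1$, so in particular $u_3=1$.
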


\begin{proof}
From the definition of $D$ together with Lemma \ref{maps incrociate}($1$), it follows that $\rho^2(D)=\rho(D^p)=\rho(H\cap G_3)$. As a consequence of Lemma \ref{black core}, the subgroup $\rho(H\cap G_3)$ is contained in $H\cap G_5=\graffe{1}$ and thus $\rho^2(D)=\graffe{1}$. 
\end{proof}

\begin{lemma}\label{cnod}
One has $D\neq C$ and $[D,G_4]=G_5$.
\end{lemma}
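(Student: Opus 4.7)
The plan is to derive $D\neq C$ immediately by comparing the iterated $p$-th power of $D$ with that of $C$, and then to observe that $[D,G_4]$ is a non-trivial subgroup of $G_5$, which has order $p$.

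More precisely, first I would argue $D\neq C$ by contradiction: if $D=C$, then Lemma \ref{Dp2 trivial} would give $\rho^2(C)=\rho^2(D)=\graffe{1}$. However, Lemma \ref{fromthenews} asserts $\rho^2(C)=G_5$, and $G_5$ is non-trivial since $G$ has class $5$ (indeed, the class is $5$ by Proposition \ref{blackburn 2-1-2}($2$), using $w_5=1$). This contradiction forces $D\neq C$.

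Second, I would show $[D,G_4]=G_5$. Since $C=\Cyc_G(G_4)$ by definition and $D$ is a maximal subgroup distinct from $C$, the subgroup $D$ does not centralize $G_4$, so $[D,G_4]\neq\graffe{1}$. On the other hand, Lemma \ref{commutator indices} yields $[D,G_4]\subseteq [G,G_4]=G_5$. Since $w_5=1$ and $G_6=\graffe{1}$, we have $|G_5|=p$, so the only non-trivial subgroup of $G_5$ is $G_5$ itself. Hence $[D,G_4]=G_5$.

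There is no real obstacle here: both conclusions follow at once from facts already established in the section (Lemmas \ref{fromthenews}, \ref{Dp2 trivial}, \ref{commutator indices}, \ref{D maximal}, and Proposition \ref{blackburn 2-1-2}), together with the elementary observation that $G_5$ is of order $p$. The proof should fit comfortably in a few lines.
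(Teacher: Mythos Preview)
Your proposal is correct and follows essentially the same approach as the paper: both derive $D\neq C$ from the incompatibility of Lemmas \ref{fromthenews} and \ref{Dp2 trivial}, and both obtain $[D,G_4]=G_5$ by noting it is a non-trivial subgroup of the order-$p$ group $G_5$. Your justification for $[D,G_4]\neq\graffe{1}$ via $D\not\subseteq C=\Cyc_G(G_4)$ is slightly more explicit than the paper's, which simply invokes the class being $5$.
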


\begin{proof}
The subgroups $D$ and $C$ are both maximal in $G$ and so, as a consequence of Lemmas \ref{fromthenews} and \ref{Dp2 trivial}, they are distinct. Moreover, the class of $G$ being $5$, the subgroup $[D,G_4]$ is non-trivial. Lemma \ref{commutator indices} gives that $[D,G_4]$ is contained in $G_5$ and, since $w_5=1$, we get $[D,G_4]=G_5$.
\end{proof}

\begin{lemma}\label{wakeupp}
One has $[G_2,D^p]=G_5$.
\end{lemma}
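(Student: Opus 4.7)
The plan is to show that $[G_2, D^p]$ is a non-trivial subgroup of $G_5$, from which equality will follow since $|G_5|=p$. By Lemma \ref{commutator indices}, $[G_2, G_3]\subseteq G_5$, so $[G_2, D^p]\subseteq G_5$, and it suffices to rule out that $D^p$ centralizes $G_2$.

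To that end, I would consider the bilinear map $\gamma_{2,3}:G_2/G_3\times G_3/G_4\to G_5$ from Lemma \ref{maps incrociate}(2), whose image generates $G_5$ and is therefore all of $G_5$. Since $\dim_{\F_p}(G_2/G_3)=w_2=1$, $\dim_{\F_p}(G_3/G_4)=w_3=2$, and $\dim_{\F_p}G_5=w_5=1$, a dimension count shows that the right kernel of $\gamma_{2,3}$ is a $1$-dimensional subspace of $G_3/G_4$. By its definition this right kernel is $(\Cyc_G(G_2)\cap G_3)/G_4$. Now Lemma \ref{pioggiasempre} asserts that $C^p$ centralizes $G_2$, and by Lemma \ref{comprised} the subspace $C^p/G_4$ has dimension $1$ in $G_3/G_4$; hence this right kernel is exactly $C^p/G_4$.

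It remains to show that $D^p$ is not contained in $C^p$, i.e.\ that $D^p$ defines a distinct $1$-dimensional subspace of $G_3/G_4$. Since $G$ is a $p$-obelisk, $\Phi(G)=G_2$, so the maximal subgroups of $G$ correspond bijectively to the $1$-dimensional subspaces of $G/G_2$; composing with the $\F_p$-linear isomorphism $\rho_1^1:G/G_2\to G_3/G_4$ provided by Corollary \ref{power iso bottom} (with $i=1$, $j=3$, both odd of width $2$) yields a bijection $M\mapsto M^p G_4/G_4$ between the maximal subgroups of $G$ and the $1$-dimensional subspaces of $G_3/G_4$. If $D^p\subseteq C^p$, then, since both contain $G_4$ with index $p$ in $G_3$, we would get $D^pG_4=C^pG_4$, and the above bijection would force $D=C$, contradicting Lemma \ref{cnod}. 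Hence $D^p$ is not contained in the right kernel of $\gamma_{2,3}$, so some $z\in D^p$ satisfies $[G_2,z]\neq 1$; since $[G_2,D^p]\subseteq G_5$ and $|G_5|=p$, this yields $[G_2,D^p]=G_5$.

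The substantive step is the explicit identification of the right kernel of $\gamma_{2,3}$ with $C^p/G_4$, which relies crucially on the regularity fact Lemma \ref{pioggiasempre}; the remaining arguments are a dimension count in the bilinear setup and the fact that $M\mapsto M^pG_4$ separates maximal subgroups of $G$ (Corollary \ref{power iso bottom}).
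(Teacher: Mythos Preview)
Your proof is correct, but it takes a more circuitous route than the paper's. The paper argues in three lines: since $G$ is regular, Lemma~\ref{regular technical} gives $[G_2,D^p]=[G_2^p,D]$; by Lemma~\ref{black core} one has $G_2^p=G_4$; and Lemma~\ref{cnod} says $[D,G_4]=G_5$.

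Your approach instead analyzes the bilinear map $\gamma_{2,3}$, identifies its right kernel as $C^p/G_4$ via Lemma~\ref{pioggiasempre}, and then uses $D\neq C$ together with the isomorphism $\rho_1^1$ to separate $D^p/G_4$ from $C^p/G_4$. This is sound, and it is worth noting that your key input, Lemma~\ref{pioggiasempre}, is itself proved exactly by the regularity identity $[C^p,G_2]=[C,G_2^p]=[C,G_4]=\{1\}$---the same identity the paper applies directly to $D$ in place of $C$. So the two arguments rest on the same mechanism; the paper simply applies it one step earlier and avoids the detour through the bilinear-form kernel and the bijection on maximal subgroups.
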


\begin{proof}
The group $G$ is regular, by Lemma \ref{obelisk regular}, and therefore, by Lemma \ref{regular technical}, the subgroups $[G_2,D^p]$ and $[G_2^p,D]$ are equal. By Lemma \ref{black core}, we have $G_2^p=G_4$ and so, from Lemma \ref{cnod}, we derive $[G_2,D^p]=G_5$.
\end{proof}

\begin{lemma}\label{Habeliano}
The subgroup $H$ is abelian.
\end{lemma}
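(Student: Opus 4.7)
The plan is to deduce $[H,H] = \graffe{1}$ directly from the very restricted list of possible jumps of $H$ in $G$, splitting on the value of the least even jump $j \in \graffe{2,4}$. First I would collect what is already forced by the preceding lemmas: by Lemma \ref{i=3} the least odd jump is $i=3$, so $1$ is not a jump and hence $H \subseteq G_2$; by Lemma \ref{stacce} we have $u_4 = 1$, so $4$ is a jump; and since $H \cap G_5 = \graffe{1}$ we have $u_5 = 0$. Combined with Lemma \ref{dub fx}, this pins the set of jumps of $H$ inside $\graffe{2,3,4}$, necessarily containing both $3$ and $4$, and possibly also $2$.

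If $j = 4$, then the jumps of $H$ are exactly $\graffe{3,4}$, so $H \subseteq G_3$, and Lemma \ref{commutator indices} gives $[H,H] \subseteq [G_3,G_3] \subseteq G_6 = \graffe{1}$. If instead $j = 2$, the jumps are $\graffe{2,3,4}$; then $u_2 = 1$ forces $H/(H \cap G_3)$ to be cyclic of order $p$, so Lemma \ref{cyclic quotient commutators} yields $[H,H] = [H, H\cap G_3]$. Applying Lemma \ref{commutator indices} once more, this commutator lies in $[G_2,G_3] \subseteq G_5$, and it is obviously contained in $H$, so $[H,H] \subseteq H \cap G_5 = \graffe{1}$.

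In both cases $H$ is abelian, completing the proof. There is no real obstacle here: the work was done in the preceding lemmas identifying $i=3$ and $u_4=1$, which severely constrain where the jumps of $H$ can live, and once that information is in hand the commutator estimates from the lower central series of the obelisk do the rest. The only mild subtlety is remembering that we must use both $[H,H] \subseteq G_5$ \emph{and} $[H,H] \subseteq H$ in the $j=2$ case, since $G_5$ itself is nontrivial and only the intersection with $H$ is known to vanish.
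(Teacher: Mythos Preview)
Your proof is correct and follows essentially the same approach as the paper. The only difference is that the paper avoids the case split: since $i=3$ forces $H\subseteq G_2$ and $w_2=1$, the quotient $H/(H\cap G_3)$ is cyclic regardless of whether $j=2$ or $j=4$ (it is trivial in the latter case), so the argument you give for $j=2$ already covers both cases uniformly.
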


\begin{proof}
As a consequence of Lemma \ref{i=3}, the subgroup $H$ is contained in $G_2$ and, since $w_2=1$, the quotient $H/(H\cap G_3)$ is cyclic. It follows from Lemma \ref{cyclic quotient commutators} that $[H,H]=[H,H\cap G_3]$ and so, thanks to Lemma \ref{commutator indices}, one gets $[H,H]\subseteq H\cap G_5=\graffe{1}$. In particular, $H$ is abelian.
\end{proof}

\begin{lemma}\label{ij=34}
One has $(i,j)=(3,4)$.
\end{lemma}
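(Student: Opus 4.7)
The plan is to argue by contradiction: since the class of $G$ is $5$ and $j$ is an even jump of $H$, we have $j\in\{2,4\}$, so it suffices to rule out $j=2$. Suppose then that $j=2$ and pick $h_2\in H$ of depth $2$ in $G$ together with $h_3\in H\cap G_3$ of depth $3$ (available thanks to Lemma \ref{i=3}). By the defining property of $D$, namely $(H\cap G_3)G_4=D^pG_4$, we may write $h_3=d^pg_4$ with $d\in D$ and $g_4\in G_4$; since $h_3\notin G_4$, the element $d^p$ has depth exactly $3$, and in particular $\overline{d^p}$ is non-zero in $D^p/G_4$.

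Next I would compute $[h_2,h_3]$. By Lemma \ref{commutator indices}, $[h_2,g_4]\in[G_2,G_4]\subseteq G_6=\{1\}$, so using that $G_5$ is central together with the multiplication formula from Lemma \ref{multiplication formulas commutators}($1$) one obtains $[h_2,h_3]=[h_2,d^p]$. But Lemma \ref{Habeliano} tells us that $H$ is abelian, whence $[h_2,d^p]=1$.

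To derive a contradiction, I would then show that the commutator map induces a bilinear map $\bar\gamma:G_2/G_3\times D^p/G_4\to G_5$. Indeed, Lemma \ref{commutator indices} gives both $[G_3,D^p]\subseteq[G_3,G_3]\subseteq G_6=\{1\}$ and $[G_2,G_4]\subseteq G_6=\{1\}$, so the bilinear map $G_2\times D^p\to G_5$ coming from Lemma \ref{tgt} (here $G_5$ is central of exponent $p$) factors through the stated quotients, and its image generates $[G_2,D^p]$, which equals $G_5$ by Lemma \ref{wakeupp}. Since each of $G_2/G_3$, $D^p/G_4$, and $G_5$ is one-dimensional over $\F_p$, a surjective bilinear map between them must be non-degenerate in the strong sense that $\bar\gamma(\bar x,\bar y)\neq 0$ whenever both $\bar x$ and $\bar y$ are non-zero. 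Applying this to $\bar h_2\neq 0$ and $\overline{d^p}\neq 0$ gives $[h_2,d^p]\neq 1$, contradicting the previous paragraph. The main subtlety is only checking the well-definedness of $\bar\gamma$ on the claimed quotients, but this reduces entirely to the commutator containments just noted.
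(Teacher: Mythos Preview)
Your proof is correct and follows essentially the same approach as the paper: argue by contradiction that $j=2$, use Lemma~\ref{wakeupp} to see that the induced bilinear map $G_2/G_3\times D^p/G_4\to G_5$ is non-trivial (hence non-degenerate, all three spaces being one-dimensional), and derive a contradiction with the abelianness of $H$ established in Lemma~\ref{Habeliano}. The paper's version is more compressed, simply asserting that the non-triviality of this map forces $[H,H\cap G_3]\neq 1$, whereas you spell out the element-level computation and the well-definedness of $\bar\gamma$; but the argument is the same.
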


\begin{proof}
By Lemma \ref{i=3}, the jump $i$ is equal to $3$ and, by definition of $D$, we have $D^p=(H\cap G_3)G_4$.
Moreover, since $G$ has class $5$, the jump $j$ belongs to $\graffe{2,4}$. Assume by contradiction that $j=2$. 
Then one has $u_2=w_2=1$ and so $G_2=HG_3$. 
By Lemma \ref{wakeupp}, the subgroups $[G_2,D^p]$ and $G_5$ coincide and so, the group $G_5$ being central, Lemma \ref{tgt} ensures that the commutator map 
$G_2\times D^p\rightarrow G_5$ is bilinear and differs from the trivial map.
Now, the induced map $G_2/G_3\times D^p/G_4\rightarrow G_5$, derived from Lemma \ref{commutator indices}, is non-trivial and so 
$[H,H\cap G_3]\neq 1$. Contradiction to Lemma \ref{Habeliano}.
\end{proof}

\begin{lemma}\label{rockshow}
Let $x$ and $y$ be elements of $H$, respectively belonging to ${D^p\setminus G_4}$ and $G_4\setminus G_5$. Then $H=\gen{x}\oplus\gen{y}$ and $(u_1,u_2,u_3,u_4,u_5)=(0,0,1,1,0)$.
\end{lemma}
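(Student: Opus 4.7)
The plan is to determine the widths first and then build the direct sum decomposition from information about cyclic subgroups of $H$. Recall that we already know $H$ is abelian (Lemma \ref{Habeliano}) and that $(i,j)=(3,4)$ (Lemma \ref{ij=34}), so $u_1=u_2=0$. Moreover $u_5=0$ since $H\cap G_5=\graffe{1}$, and $u_4=1$ by Lemma \ref{stacce}. Since $i=3$ is a jump of $H$ of width $1$ (Lemma \ref{dub fx}), we have $u_3=1$, which pins down $(u_1,u_2,u_3,u_4,u_5)=(0,0,1,1,0)$. In particular, by Lemma \ref{order product orders jumps}, $|H|=p^2$.

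Next I would analyse the two cyclic subgroups separately. The element $x$ lies in $D^p\setminus G_4\subseteq H\cap G_3$, so $\dpt_G(x)=3$; by Lemma \ref{cyclic then 1-dim jumps} all jumps of $\gen{x}$ in $G$ are odd and of width $1$, hence the possible jumps are among $\graffe{3,5}$. Since $\gen{x}\subseteq H$ and $H\cap G_5=\graffe{1}$, the jump $5$ is excluded, so $\gen{x}$ has $3$ as its unique jump in $G$, giving $|\gen{x}|=p$ by Lemma \ref{order product orders jumps}. Similarly $y\in G_4\setminus G_5$ gives $\dpt_G(y)=4$; all jumps of $\gen{y}$ are even of width $1$, and since $G$ has class $5$, the only available even jump is $4$, so $|\gen{y}|=p$.

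To conclude, I would observe that every non-identity element of $\gen{x}$ has depth exactly $3$ (all jumps of $\gen{x}$ being equal to $3$), while $\gen{y}\subseteq G_4$; therefore $\gen{x}\cap\gen{y}\subseteq (G_3\setminus G_4)\cap G_4\cup\graffe{1}=\graffe{1}$. Since $H$ is abelian, $\gen{x}+\gen{y}=\gen{x}\oplus\gen{y}$ is a subgroup of $H$ of order $p^2=|H|$, so equality holds and $H=\gen{x}\oplus\gen{y}$.

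The statement is essentially a bookkeeping consequence of the jump machinery developed earlier, so no single step stands out as an obstacle; the only point requiring care is the exclusion of the jump $5$ for $\gen{x}$, which uses crucially the hypothesis $H\cap G_5=\graffe{1}$ carried through the whole section.
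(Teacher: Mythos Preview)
Your argument is correct. The determination of $(u_1,\dots,u_5)=(0,0,1,1,0)$ is identical to the paper's, and from there both proofs conclude that $|H|=p^2$.

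The only difference is in how the decomposition is obtained. You treat $\gen{x}$ and $\gen{y}$ separately, using Lemma \ref{cyclic then 1-dim jumps} to force each to have a single jump (at $3$ and $4$, respectively) and hence order $p$, and then checking trivial intersection via depths. The paper instead observes in one stroke that $H\subseteq G_3$ implies $H^p\subseteq G_5\cap H=\graffe{1}$ by Lemma \ref{black core}, so $H$ is elementary abelian of order $p^2$; the decomposition $H=\gen{x}\oplus\gen{y}$ is then immediate from the fact that $x$ and $y$ have different depths. Your route is perfectly valid but slightly longer; the paper's shortcut avoids the separate jump analysis of the two cyclic subgroups.
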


\begin{proof}
Thanks to Lemma \ref{ij=34}, we have $(u_1,u_2,u_3,u_4,u_5)=(0,0,1,1,0)$.
The subgroup $H$ is thus contained in $G_3$ and so, by Lemma \ref{black core}, one has $H^p\subseteq G_5\cap H=\graffe{1}$. It follows from Lemma \ref{Habeliano} that $H$ is elementary abelian. Given any two elements $x$ and $y$ of $H$, satisfying $x\in D^p\setminus G_4$ and $y\in G_4\setminus G_5$, Lemma \ref{jumps and depth} now yields $H=\gen{x}\oplus\gen{y}$.
\end{proof}

\noindent
We define $X$ to be the collection of all subgroups of $G$ of the form $\gen{x}\oplus\gen{y}$, where $(x,y)$ belongs to $(D^p\setminus G_4)\times (G_4\setminus G_5)$. Thanks to Lemmas \ref{Dp2 trivial} and \ref{Habeliano}, each such subgroup is elementary abelian and thus $X$ is well defined.
We remark that, the group $D^p$ being normal in $G$, the group $G$ acts naturally on $X$ by conjugation. 
Write $X^+=\graffe{K\in X : \alpha(K)=K}$.

\begin{lemma}\label{seminarin10}
The cardinality of $X$ is $p^2$.
\end{lemma}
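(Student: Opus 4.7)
The plan is to count $|X|$ by a double-counting argument over pairs $(x,y)\in(D^p\setminus G_4)\times(G_4\setminus G_5)$. First I would pin down the orders of the two ``source'' sets. Since $D^p\subseteq G_3$ with $|D^p:G_4|=p$ and $|G_4:G_5|=|G_5|=p$ by Lemma \ref{parity obelisks} together with $w_5=1$, we get $|D^p|=p^3$ and $|G_4|=p^2$. Hence $|D^p\setminus G_4|=p^3-p^2$ and $|G_4\setminus G_5|=p^2-p$, giving a total of $p^3(p-1)^2$ admissible pairs.

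Next I would count, for a fixed $H\in X$, the number of pairs $(x,y)$ producing $H$. The key structural inputs are Lemma \ref{rockshow} (so $H$ is elementary abelian of order $p^2$ with $H\cap G_5=\{1\}$ and $(u_1,\ldots,u_5)=(0,0,1,1,0)$) and the fact that $G_4\subseteq D^p$ (which follows from $D^p\subseteq G_3$ together with $|D^p:G_4|=p$ and $D^p$ being normal, via Lemma \ref{obelisks normal squeezed}). The latter gives $H\subseteq D^p$, so $H\cap D^p=H$, while $H\cap G_4$ is a line, namely $\gen{y}$ for any $y\in H$ of depth $4$. Thus the admissible $x$'s in $H$ form $H\setminus(H\cap G_4)$ of size $p^2-p$, and the admissible $y$'s form $\gen{y}\setminus\{1\}$ of size $p-1$, totalling $p(p-1)^2$ pairs per subgroup.

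Dividing, the cardinality of $X$ equals $p^3(p-1)^2/(p(p-1)^2)=p^2$, as required. The only point that needs genuine care, rather than routine counting, is confirming the inclusion $G_4\subseteq D^p$ (so that every admissible $y$ automatically lies in $D^p$ and thus in $H$), but this is immediate from the normality of $D^p$ in $G$ together with Lemma \ref{obelisks normal squeezed}(2), which forces $D^p=D^pG_4$.
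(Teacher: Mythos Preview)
Your proof is correct and follows essentially the same double-counting argument as the paper: count admissible pairs $(x,y)$, then divide by the number of pairs yielding a fixed element of $X$. One minor remark: you cite Lemma~\ref{rockshow} for the structure of a general element of $X$, but that lemma is stated for the specific subgroup $H$ fixed earlier in the section; the facts you need (each $K\in X$ is elementary abelian of order $p^2$ with $K\cap G_4$ a line and $K\cap G_5=\{1\}$) are instead direct consequences of the definition of $X$ together with the remarks immediately preceding it (via Lemmas~\ref{Dp2 trivial} and the abelianness of $G_3$).
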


\begin{proof}
Let $K$ be an element of $X$. Then there exist elements $x$ and $y$ of order $p$, respectively of depth $3$ and $4$ in $G$, such that $x\in D^p$ and $K=\gen{x}\oplus\gen{y}$.
Since $|D^p:G_4|=p$ and $(w_4,w_5)=(1,1)$, we get 
\[
|X|= \frac{(p^3-p^2)(p^2-p)}{(p-1)p(p-1)}=p^2.
\]
\end{proof}

\begin{lemma}\label{invoice}
One has $\nor_G(H)\cap D=\nor_G(H)\cap G_2$.
\end{lemma}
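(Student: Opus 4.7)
The inclusion $\nor_G(H)\cap G_2\subseteq\nor_G(H)\cap D$ is immediate because $D$ is a maximal subgroup of $G$ and so contains $\Phi(G)=G_2$ (recall $G^p\subseteq G_3\subseteq G_2$, so $\Phi(G)=G^p[G,G]=G_2$). The content of the lemma is the reverse inclusion, which I would prove by taking an arbitrary $g\in\nor_G(H)\cap D$ and showing $g\in G_2$.

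My strategy is to first show that $g$ belongs to $C=\Cyc_G(G_4)$. Once this is achieved, the conclusion follows from the observation that $C\cap D=G_2$: both $C$ and $D$ are maximal subgroups of $G$ (by Lemma \ref{D maximal}), they are distinct by Lemma \ref{cnod}, and they both contain $G_2=\Phi(G)$; since $|G:G_2|=p^2$, distinctness forces $C\cap D=G_2$.

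To see that $g\in C$, recall from Lemma \ref{rockshow} that we can write $H=\gen{x}\oplus\gen{y}$ with $x\in D^p\setminus G_4$, $y\in G_4\setminus G_5$, and $(u_1,u_2,u_3,u_4,u_5)=(0,0,1,1,0)$; in particular $H\cap G_5=\graffe{1}$. Since $g$ normalizes $H$, Lemma \ref{commutators normalizer} yields $[g,y]\in H$. On the other hand Lemma \ref{commutator indices} gives $[g,y]\in[G,G_4]\subseteq G_5$. Combining the two, $[g,y]\in H\cap G_5=\graffe{1}$, so $g$ centralizes $y$.

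The final step is to promote centralization of $y$ to centralization of all of $G_4$. Since $w_4=w_5=1$, the index $|G_4:G_5|$ equals $p$, and because $y\notin G_5$, we have $G_4=\gen{y}G_5$. The class of $G$ being $5$, the subgroup $G_5$ is central in $G$, so $g$ commutes with $G_5$ as well; hence $g$ commutes with $G_4=\gen{y}G_5$, i.e., $g\in\Cyc_G(G_4)=C$. Combined with $g\in D$ and the identification $C\cap D=G_2$ above, this yields $g\in G_2$. The only delicate point is the $C\cap D=G_2$ observation, which relies crucially on $D\neq C$ (Lemma \ref{cnod}); the rest is a direct commutator calculation once the jump structure of $H$ from Lemma \ref{rockshow} is in hand.
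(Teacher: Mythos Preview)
Your proof is correct. It is essentially the same argument as the paper's—both hinge on the observation that any $g\in\nor_G(H)$ centralizes $y$, hence all of $G_4$—but you phrase it directly via $g\in C\cap D=G_2$, whereas the paper argues by contradiction, showing $[D,G_4]\subseteq H$ and then invoking $[D,G_4]=G_5$ from Lemma~\ref{cnod} to force $G_5\subseteq H$.
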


\begin{proof}
Assume by contradiction that $\nor_G(H)\cap D\neq \nor_G(H)\cap G_2$. 
As a consequence of Lemma \ref{stacce}, we have that $(H\cap G_4)G_5=G_4$ and, the group $G_5$ being central, it follows from Lemma \ref{multiplication formulas commutators}($1$) that $[D,G_4]=[D,H\cap G_4]$. Lemma \ref{commutators normalizer} yields 
$[D,G_4]\subseteq H$ and thus, by Lemma \ref{cnod}, the subgroup $G_5$ is contained in $H$. Contradiction.
\end{proof}

\begin{lemma}\label{tosse2}
One has $\nor_G(H)\cap G_2=G_3$.
\end{lemma}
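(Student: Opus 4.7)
The plan is to establish the two inclusions $G_3 \subseteq \nor_G(H) \cap G_2$ and $\nor_G(H) \cap G_2 \subseteq G_3$ separately. For the first inclusion, I would observe that by Lemma \ref{rockshow} we have $H \subseteq G_3$, so applying Lemma \ref{commutator indices} gives $[G_3, H] \subseteq [G_3, G_3] \subseteq G_6 = \{1\}$; thus $G_3$ centralizes $H$ and in particular normalizes it, while $G_3 \subseteq G_2$ is immediate.

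For the reverse inclusion, I would pick an arbitrary $g \in \nor_G(H) \cap G_2$ and first note the key preliminary fact: $[g, H] \subseteq H \cap [G_2, G_3] \subseteq H \cap G_5 = \{1\}$ (using Lemma \ref{commutator indices} and $H \cap G_5 = \{1\}$). So $g$ centralizes $H$, and in particular $g$ centralizes the element $x \in D^p \setminus G_4$ from Lemma \ref{rockshow}.

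The heart of the argument will then be a contradiction: assume $g \notin G_3$. Since $w_2 = 1$, the coset $gG_3$ generates $G_2/G_3$, so $G_2 = \langle g \rangle G_3$. Moreover $G_3$ itself centralizes $x$ since $x \in G_3$ and $[G_3, G_3] \subseteq G_6 = \{1\}$. Combining, $G_2$ centralizes $x$, i.e.\ $[G_2, \langle x \rangle] = \{1\}$. On the other hand, the text preceding Lemma \ref{Dp2 trivial} establishes $|D^p : G_4| = p$, so $D^p = \langle x \rangle G_4$; together with $[G_2, G_4] \subseteq G_6 = \{1\}$ and a standard commutator expansion (Lemma \ref{multiplication formulas commutators}), this gives $[G_2, D^p] = [G_2, \langle x \rangle] = \{1\}$. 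But Lemma \ref{wakeupp} states $[G_2, D^p] = G_5 \neq \{1\}$, a contradiction. Therefore $g \in G_3$, completing the proof.

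I do not anticipate a serious obstacle here: the entire argument is an organized bookkeeping of commutator containments, with the already-established Lemma \ref{wakeupp} providing the crucial non-triviality needed to rule out the case $g \notin G_3$. The only mild subtlety is confirming the reduction $[G_2, D^p] = [G_2, \langle x \rangle]$, which follows cleanly because $G_4$ lies in the centre modulo $G_6 = \{1\}$, so the $G_4$-factor of $D^p$ contributes trivially.
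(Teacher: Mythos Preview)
Your proof is correct and follows essentially the same route as the paper: both establish $G_3\subseteq\nor_G(H)$ via $[G_3,G_3]\subseteq G_6=\{1\}$, then assume an element of $\nor_G(H)$ lies in $G_2\setminus G_3$ and reach a contradiction with Lemma~\ref{wakeupp} through a computation of $[G_2,D^p]$. The only cosmetic difference is that you first observe such an element must \emph{centralize} $H$ (since $[g,H]\subseteq H\cap G_5=\{1\}$), concluding $[G_2,D^p]=\{1\}$, whereas the paper uses only the normalizer condition to get $[G_2,D^p]\subseteq H$; both contradict $G_5\neq\{1\}$ and $H\cap G_5=\{1\}$ in the same way.
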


\begin{proof}
As a consequence of Lemma \ref{ij=34}, the subgroup $H$ is contained in $G_3$ and, thanks to Lemma \ref{commutator indices}, one has $[G_3,G_3]\subseteq G_6=\graffe{1}$. In particular, $G_3$ normalizes $H$.
Assume by contradiction that $2$ is a jump of $\nor_G(H)$ in $G$. 
Thanks to Lemma \ref{commutator indices}, the group $G_2$ centralizes $G_4$ and, by definition of $D$, we have $(H\cap G_3)G_4=D^p$. It follows from Lemma \ref{multiplication formulas commutators}($1$) that $[G_2,D^p]=[G_2,H\cap G_3]$ and so, thanks to Lemma \ref{commutators normalizer}, the subgroup $[G_2,D^p]$ is contained in $H$. Lemma \ref{wakeupp} yields $G_5\subseteq H$. Contradiction.
\end{proof}

\noindent
We claim that the action of $G$ on $X$ is transitive.
As a consequence of Lemma \ref{seminarin10}, we have that 
$p^2=|X|\geq |G:\nor_G(H)|$
and therefore, applying Lemmas \ref{invoice} and \ref{tosse2}, we get
$$p^2\geq |G:\nor_G(H)|\geq |D:G_2||G_2:G_3|=|D:G_3|.$$
As a consequence of Lemma \ref{obelisk basic}($2$), the index $|D:G_3|$ is equal to $p^2$ and therefore the number of conjugates of $H$ in $G$ is equal to $p^2$. This proves the claim. To conclude, we remark that $\alpha(H)$ is an element of $X$ and therefore $\alpha(H)$ and $H$ are conjugate. 
The choice of $H$ being arbitrary, Lemma \ref{equivalent intense coprime-pgrps} yields that $\alpha$ is intense and so $\inte(G)>1$. 
The proof of Theorem \ref{proposition last odd case} is now complete.

\section{Proving the main theorem}\label{section brevissima}

\noindent
In this section we prove Proposition \ref{proposition eq class 5} and Theorem \ref{theorem we need lines}. We remind the reader that a $p$-obelisk $G$ is framed if, for each maximal subgroup $M$ of $G$, one has $\Phi(M)=G_3$.

\begin{proposition}\label{proposition eq class 5}
Let $p>3$ be a prime number and let $G$ be a finite $p$-group of class at least $5$. Assume that $\inte(G)>1$. Then $G$ is a $p$-obelisk and one of the following holds.
\begin{itemize}
 \item[$1$.] One has $\wt_G(5)=1$ and $G$ has class $5$.
 \item[$2$.] One has $\wt_G(5)=2$ and $G$ is framed.
\end{itemize}
\end{proposition}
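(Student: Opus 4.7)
First, since $G$ has class at least $5$, hence at least $4$, Proposition \ref{class 4 obelisk} forces $G$ to be a $p$-obelisk. By Proposition \ref{blackburn 2-1-2}(1) we have $\wt_G(5)\in\{1,2\}$, and Lemma \ref{parity obelisks}(1) tells us that every odd index strictly less than the class $c$ of $G$ has width $2$. Hence if $\wt_G(5)=1$ we must have $c=5$, which is case~(1). It remains, when $\wt_G(5)=2$, to show that $G$ is framed.

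Before the main step I would record a reduction: for every $k\geq 4$, $G$ is framed if and only if $G/G_k$ is. Indeed, maximal subgroups correspond bijectively under the canonical projection (each one contains $G_2\supseteq G_k$), and for each maximal $M\leq G$ one has $G_4\subseteq\Phi(M)\subseteq G_3$ by Lemma \ref{obelisks normal squeezed} applied to the normal subgroup $\Phi(M)$, together with the jump analysis in Lemma \ref{maximal non-framed}; so $\Phi(M)G_k/G_k=G_3/G_k$ precisely when $\Phi(M)=G_3$. Combined with Lemma \ref{intensity of quotients}, this lets me replace $G$ by $G/G_6$ and assume $c=5$, with widths $(2,1,2,1,2)$.

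Now suppose for contradiction that some maximal $M\leq G$ satisfies $\Phi(M)\neq G_3$. By Lemma \ref{maximal non-framed}, $\Phi(M)=[M,M]=M^p$ is of index $p$ in $G_3$. Fix $x\in M$ with $M=\langle x,G_2\rangle$ and $y$ generating $G_2/G_3$. The bijectivity of $\rho_1^1\colon G/G_2\to G_3/G_4$ (Lemma \ref{corollary bijection rho} applied to $G/G_5$) and the non-degeneracy of $\gamma_{1,2}$ (Lemma \ref{obelisk non-deg}, using $\wt_G(3)=2$) show that $x^p$ and $[x,y]$ are both nonzero in $G_3/G_4$; the failure of framed-ness amounts exactly to their linear dependence there. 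Applying the isomorphism $\rho_3^1\colon G_3/G_4\to G_5$ from Corollary \ref{power iso bottom} (valid because $\wt_G(5)=2$) together with Proposition \ref{proposition commutative diagram}, the same dependence propagates to $x^{p^2}$ and $[x,y^p]$ inside $G_5$; moreover both maps $\overline{x}\mapsto x^{p^2}$ and $\overline{x}\mapsto[x,y^p]$ are $\F_p$-linear isomorphisms $G/G_2\to G_5$.

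The hard part will be converting this linear-algebraic coincidence into a contradiction with $\inte(G)>1$. The plan is to assemble, in the spirit of the counting argument used to prove Proposition \ref{centralizer G4 2-gen}, an $A$-invariant family $X$ of subgroups of $G$ encoding the bad element $x$; then, using Proposition \ref{proposition -1^i} to control the $\alpha$-action on each layer, to compute $|X|$ and $|X^+|$ and to bound $|G:\nor_G(H)|$ for $H\in X^+$ via the non-degenerate pairings above together with Lemma \ref{conjugate stable iff in nor-times-G+}; and finally to apply the orbit inequality of Lemma \ref{orbits} to reach a numerical impossibility. Calibrating $X$ so that the hypothesis $\wt_G(5)=2$ forces this clash is the crux of the argument.
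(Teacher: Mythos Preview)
Your opening two paragraphs are correct: the reduction to a $p$-obelisk, the dichotomy on $\wt_G(5)$, and the passage to $G/G_6$ all go through as you say. The problem is the remainder: after correctly identifying that the failure of framed-ness means $x^p$ and $[x,y]$ are linearly dependent in $G_3/G_4$, you only \emph{sketch a plan} for a counting argument in the style of Proposition~\ref{centralizer G4 2-gen}. You do not specify the family $X$, do not compute $|X|$ or $|X^+|$, and do not bound normalizer indices; and your closing sentence (``Calibrating $X$ \ldots\ is the crux of the argument'') is an admission that the crucial step is missing. As written, this is not a proof.

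The paper's argument avoids a new counting entirely by a short reduction to Proposition~\ref{centralizer G4 2-gen} itself. Fix a maximal subgroup $M$. Since $\wt_G(1)=2$, $\wt_G(4)=1$, and $|G:M|=p$, the pairing $\gamma_{1,4}$ of Lemma~\ref{obelisk non-deg} shows that $[M,G_4]$ has least jump $5$ of width $1$; by Lemma~\ref{obelisks normal squeezed} this forces $G_6\subseteq [M,G_4]$. Set $\overline{G}=G/[M,G_4]$. Then $\overline{G}$ has class $5$, $\wt_{\overline{G}}(5)=1$, and by construction $\overline{M}\subseteq\Cyc_{\overline{G}}(\overline{G_4})$; since both are maximal, equality holds. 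Now Proposition~\ref{centralizer G4 2-gen} gives $\Phi(\overline{M})=\overline{G_3}$, and lifting via Lemma~\ref{obelisks normal squeezed} (as in your own reduction paragraph) yields $\Phi(M)=G_3$. The key idea you are missing is precisely this: the quotient by $[M,G_4]$ simultaneously drops $\wt_G(5)$ to $1$ and turns $M$ into the centralizer of $G_4$, so the already-proven proposition applies verbatim.
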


\begin{proof}
By Proposition \ref{class 4 obelisk}, the group $G$ is a $p$-obelisk so, thanks to Proposition \ref{blackburn 2-1-2}($1$), the width $\wt_G(5)$ is either $1$ or $2$. 
The $4$-th width of $G$ is $1$, thanks to Lemma \ref{parity obelisks}($2$), so, if $\wt_G(5)=1$, then Proposition \ref{blackburn 2-1-2}($2$) yields that $G$ has class $5$. Assume now that $\wt_G(5)=2$. We will show that, for each maximal subgroup $M$ of $G$, one has $\Phi(M)=G_3$. To this end, let $M$ be a maximal subgroup of $G$.
By Lemma \ref{parity obelisks}, the widths $\wt_G(1)$ and $\wt_G(4)$ are respectively $2$ and $1$ so, the index $|G:M|$ being $p$, it follows from Lemma \ref{obelisk non-deg}, that $5$ is a jump of $[M,G_4]$ of width $1$ in $G$. Moreover, $5$ is the smallest jump of $[M,G_4]$ in $G$, and so Lemma \ref{obelisks normal squeezed} yields $G_6\subseteq [M,G_4]$.  
We denote $\overline{G}=G/[M,G_4]$ and use the bar notation for the subgroups and the elements of $\overline{G}$. We remark that, by construction, we have $\overline{M}\subseteq \Cyc_{\overline{G}}(\overline{G_4})$ and 
$\wt_{\overline{G}}(5)=1$. The class of $\overline{G}$ being $5$, we have in fact that $\overline{M}=\Cyc_{\overline{G}}(\overline{G_4})$ and so
Proposition \ref{centralizer G4 2-gen} yields $\Phi(\overline{M})=\overline{G_3}$. The subgroup $\Phi(M)$ being normal in $G$, it follows from Lemma \ref{obelisks normal squeezed} that 
$\Phi(M)=\graffe{x\in G : \overline{x}\in\Phi(\overline{M})}$ and therefore 
$\Phi(M)=G_3$. The choice of $M$ being arbitrary, the proof is complete. 
\end{proof}

\noindent
We are finally ready to prove Theorem \ref{theorem we need lines}. Let $p$ be a prime number and let $G$ be a finite $p$-group with $\wt_G(5)=2$. The implication $(2)\Rightarrow(1)$ is given by Theorem \ref{theorem complete char pt.1}. Assume now $(1)$. Since $\wt_G(5)\neq 1$, the class of $G$ is at least $5$. Moreover, thanks to Proposition \ref{proposition 2gps} and Corollary \ref{3-gps class at most 4}, the prime $p$ is larger than $3$. Proposition \ref{proposition eq class 5} yields that $G$ is a framed $p$-obelisk. As a consequence of Theorem \ref{theorem class at least 3}, the intensity of $G$ is equal to $2$ and so, thanks to the Schur-Zassenhaus theorem, $G$ has an intense automorphism of order $2$ that, by Proposition \ref{proposition -1^i}, induces the inversion map on $G/G_2$. The proof of Theorem \ref{theorem we need lines} is complete.

% CHAPTER : BILBAO

\chapter{A generalization to profinite groups}\label{chapter bilbao}

Let $G$ be a profinite group and let $\alpha$ be an automorphism of $G$. Then $\alpha$ is \emph{topologically intense} \index{topologically intense automorphism}
if, for every closed subgroup $H$ of $G$, there exists $x\in G$ such that $\alpha(H)=xHx^{-1}$. Topologically intense automorphisms are automatically continuous, because they stabilize each open normal subgroup of the group on which they are defined.
We denote by $\Int_{\cc}(G)$ the group of topologically intense automorphisms of a profinite group $G$.
\vspace{8pt} \\
\noindent
Topologically intense automorphisms are a generalization of intense automorphisms to profinite groups. 
In Section \ref{section topintchar}, we will show that, the group of topologically intense automorphisms of a profinite group is itself profinite and moreover, if $p$ is a prime number and $G$ is a pro-$p$-group, then 
$\Int_{\cc}(G)$ is isomorphic to $S\rtimes C$, where $S$ is a pro-$p$-subgroup of $\Int_{\cc}(G)$ and $C$ is a subgroup of $\F_p^*$.  
The \emph{intensity} of a pro-$p$-group $G$ is then defined to be the cardinality of $C$ and it is denoted by $\inte(G)$. The question we ask is: \emph{What are the infinite pro-$p$-groups that have intensity greater than $1$?}
We answer this question with Theorem \ref{theorem unique infinite}, which we state after fixing some notation. Let $p$ be an odd prime number and take $t\in\Z_p$ to be a quadratic non-residue modulo $p$. 
We define $\Delta_p$ to be the quaternion algebra 
$\Z_p\oplus\Z_p\mathrm{i}\oplus\Z_p\mathrm{j}\oplus\Z_p\mathrm{k}$
with defining relations 
$\mathrm{i}^2=t$, $\mathrm{j}^2=p$, and $\mathrm{k}=\mathrm{ij}=-\mathrm{ji}$.
We denote by $\Sn(\Delta_p)$ the pro-$p$-subgroup of the multiplicative group $(1+\mathrm{j}\Delta_p)$ that consists of all elements $x=a+b\mathrm{i}+c\mathrm{j}+d\mathrm{k}$ satisfying 
$a^2-tb^2-pc^2+tpd^2=1$.

\begin{theorem}\label{theorem unique infinite}
Let $p$ be a prime number and let $G$ be an infinite pro-$p$-group. Then $\inte(G)>1$ if and only if exactly one of the following holds. 
\begin{itemize}
 \item[$1$.] One has $p>2$ and $G$ is abelian.
 \item[$2$.] One has $p>3$ and $G$ is topologically isomorphic to $\Sn(\Delta_p)$.
\end{itemize}
Moreover, one has $\inte(\Sn(\Delta_p))=2$ and, if $G$ is abelian, then $\inte(G)=p-1$.
\end{theorem}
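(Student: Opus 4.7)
The plan is to prove the two implications separately, handling the ``if'' direction first by exhibiting concrete topologically intense automorphisms, and then tackling the more delicate ``only if'' direction by reducing to the finite classification proved throughout this thesis. For the abelian case with $p$ odd, the natural $\Z_p$-module structure on $G$ makes scalar multiplication by any element of $\omega(\F_p^*) \subseteq \Z_p^*$ into a topologically intense automorphism, so $\Int_{\cc}(G)$ contains a copy of $\omega(\F_p^*)$ and $\inte(G) = p-1$ follows as in the proof of Lemma \ref{abelian case}. For $\Sn(\Delta_p)$ with $p > 3$, I would exhibit the order-$2$ automorphism induced by conjugation by $\mathrm{i} \in \Delta_p^\ast$, verify in the quaternion algebra that it preserves $\Sn(\Delta_p)$ and induces the inversion map on $\Sn(\Delta_p)/\Sn(\Delta_p)_2$, and then use the statement (established in Section \ref{section sl1}) that every non-trivial discrete quotient of $\Sn(\Delta_p)$ is a framed $p$-obelisk satisfying the hypotheses of Theorem \ref{theorem complete char pt.1} or Theorem \ref{theorem we need lines}, so that the induced automorphism on each such quotient is intense. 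Passing to the inverse limit yields $\inte(\Sn(\Delta_p)) = 2$.

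For the ``only if'' direction, assume $G$ is infinite pro-$p$ with $\inte(G) > 1$. The profinite analog of Proposition \ref{proposition 2gps} encoded in Theorem \ref{inte profinite} forces $p$ odd. Suppose $G$ is non-abelian. Then for every sufficiently small open normal subgroup $N$, the finite quotient $G/N$ is non-abelian and (by the profinite analog of Lemma \ref{intensity of quotients}) still has intensity greater than $1$. For $p = 3$, Corollary \ref{3-gps class at most 4} bounds the nilpotency class of every such quotient by $4$, but Theorem \ref{theorem dimensions} then bounds the order of each such quotient uniformly, forcing $G$ itself to be finite (via the topological lower central series stabilising), contradicting our standing assumption. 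Hence $p > 3$.

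For $p > 3$ and $G$ non-abelian, combining Proposition \ref{class 4 obelisk}, Proposition \ref{proposition eq class 5}, and Theorem \ref{theorem we need lines} applied to each sufficiently small quotient $G/N$ shows that $G/N$ is a framed $p$-obelisk of class tending to infinity with $N$. Proposition \ref{blackburn 2-1-2} and Lemma \ref{parity obelisks} then pin down the width sequence of $G$ (taken with respect to the topological lower central series defined in Chapter \ref{chapter bilbao}) to be $(2,1,2,1,\ldots)$, agreeing with the width sequence of $\Sn(\Delta_p)$ stated in the introduction. Moreover, Proposition \ref{proposition -1^i} lifts through the inverse system to yield a topologically intense automorphism of $G$ of order $2$ inducing the inversion map on $G/G_2$.

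The main obstacle will be the uniqueness step: showing that these structural invariants determine $G$ up to topological isomorphism. The plan is to choose topological generators $a, b \in G$ corresponding to $1 + \mathrm{i}$ and $1 + \mathrm{j}$ in $\Sn(\Delta_p)$, use the framed $p$-obelisk structure together with the rigidity of the commutative diagrams of Proposition \ref{proposition commutative diagram} linking commutator and $p$-power maps in each obelisk quotient to extract the full set of relations satisfied by $a$ and $b$ modulo each $G_i$, and verify that this presentation matches the one induced on $\Sn(\Delta_p)$ by the quaternion relations $\mathrm{i}^2 = t$, $\mathrm{j}^2 = p$, $\mathrm{ji} = -\mathrm{ij}$. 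Via a suitable free pro-$p$-group on two generators mapping onto both $G$ and $\Sn(\Delta_p)$, with the kernel on each side determined by the obelisk-and-concrete-automorphism data (in analogy with the construction of Sections \ref{section construction} and \ref{section unique with automorphism}), this will produce the required topological isomorphism $G \cong \Sn(\Delta_p)$.
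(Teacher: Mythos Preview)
Your ``if'' direction and the reductions for $p=2$ and $p=3$ are sound and match the paper's argument (Corollary~\ref{pro-p-abelian}, Lemma~\ref{sl1 has inte 2}, Lemma~\ref{quotients of any class} combined with Theorem~\ref{theorem 3-groups}). The structural analysis of the non-abelian $p>3$ case up through ``every discrete quotient is a framed $p$-obelisk with width sequence $(2,1,2,1,\ldots)$ and a concrete automorphism'' is also correct and is essentially Proposition~\ref{proposition infinite analytic} together with Lemma~\ref{quotients framed}.

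The gap is your uniqueness step. You propose to extract a presentation of $G$ from the framed-obelisk data and match it against the quaternion relations of $\Sn(\Delta_p)$, in analogy with Sections~\ref{section construction} and~\ref{section unique with automorphism}. But those sections succeed only because for $p=3$ one can explicitly enumerate the relevant normal subgroups of the free group ($|\cor{N}_3|=|\cor{N}_4|=3$) and check transitivity by hand; nothing in the thesis carries out the analogous computation for $p>3$, and the concluding remarks of Section~\ref{iii} flag exactly this direct classification as an open problem. The paper bypasses it entirely: it first shows $G$ is $p$-adic analytic, just-infinite, torsion-free, and of dimension $3$ (Proposition~\ref{proposition infinite analytic}, Lemmas~\ref{torsion-free} and~\ref{dimension 3}), then invokes the Gonz\'alez-S\'anchez--Klopsch classification \cite{small} (Lemma~\ref{dim3jon}) to conclude that $G$ embeds as an open subgroup of $\Sn(\Delta_p)$ or $\SL_2^\triangle(\Z_p)$. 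Proposition~\ref{obelisk in obelisk} forces the embedding to be onto, and $\SL_2^\triangle(\Z_p)$ is eliminated because it has a non-framed discrete quotient of class~$6$ (Lemma~\ref{fuori bello!}), contradicting Lemma~\ref{quotients framed}. Your plan omits both the appeal to the analytic classification and the comparison with $\SL_2^\triangle(\Z_p)$, which is where the actual work happens.
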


\noindent
Let $p$ be a prime number and let $G$ be a pro-$p$-group.
We will show, in Section \ref{section topintchar}, that 
$\inte(G)=\gcd\graffe{\inte(G/N) : N \ \text{normal open in} \ G, \ N\neq G}$
and, thanks to this last characterization, we will derive the following theorem as a corollary of Theorem \ref{theorem unique infinite}.

\begin{theorem}\label{theorem any class}
Let $p>3$ be a prime number. Then, for any positive integer $c$, there exists a finite $p$-group $G$ of class 
$c$ and intensity greater than $1$.
\end{theorem}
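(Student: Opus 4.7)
The plan is to construct the desired group as a finite discrete quotient of the infinite pro-$p$-group $\Sn(\Delta_p)$. By Theorem \ref{theorem unique infinite}($2$), the group $\Sn(\Delta_p)$ is an infinite pro-$p$-group satisfying $\inte(\Sn(\Delta_p)) = 2$, and the introduction of the excerpt records the filtration data
\[
(\log_p|\Sn(\Delta_p)_i : \Sn(\Delta_p)_{i+1}|)_{i\geq 1} = (2,1,2,1,\ldots),
\]
so each successive quotient of the lower central series of $\Sn(\Delta_p)$ is non-trivial and finite. In particular, every subgroup $\Sn(\Delta_p)_k$ is open in $\Sn(\Delta_p)$.

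Fix a positive integer $c$ and set $G = \Sn(\Delta_p)/\Sn(\Delta_p)_{c+1}$. First I would argue that $G$ is a finite $p$-group of class exactly $c$: finiteness follows because each layer of the pro-$p$ lower central series is a finite $p$-group, so $\Sn(\Delta_p)_{c+1}$ is open of finite index; and the class is exactly $c$ because the image of $\Sn(\Delta_p)_c$ in $G$ has order $p$ or $p^2$ (according to the parity of $c$) and is thus a non-trivial central subgroup, whereas $G_{c+1} = 1$.

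Next I would invoke the identity, to be proven in Section \ref{section topintchar}, asserting that for any pro-$p$-group $H$ one has
\[
\inte(H) = \gcd\graffe{\inte(H/N) : N \text{ open normal in } H,\ N \neq H}.
\]
Applied to $H = \Sn(\Delta_p)$, the left-hand side equals $2$, so every intensity $\inte(\Sn(\Delta_p)/N)$ appearing on the right is a positive multiple of $2$. Specializing to $N = \Sn(\Delta_p)_{c+1}$ yields $\inte(G) \geq 2 > 1$, as required.

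The main bookkeeping obstacle is matching the topological lower central series of $\Sn(\Delta_p)$ with the abstract lower central series used to define $G_i$ in the quotient $G$. For finite $p$-groups, and for pro-$p$-groups all of whose layers are finite, the two notions agree (cf.\ the remark following Definition \ref{LCS}), and the quotient map sends the pro-$p$ lower central series of $\Sn(\Delta_p)$ termwise onto the abstract lower central series of $G$; granted this standard compatibility, the class computation and the gcd argument above suffice, and I expect no genuine conceptual obstacle.
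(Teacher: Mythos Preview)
Your proposal is correct and follows essentially the same approach as the paper: both take $G=\Sn(\Delta_p)/\Sn(\Delta_p)_{c+1}$, use the lower-central-series data to see that $G$ is a finite $p$-group of class $c$, and then invoke the gcd characterization of intensity (Proposition \ref{inte profinite}) together with $\inte(\Sn(\Delta_p))=2$ to conclude $\inte(G)>1$. The paper's version is terser and cites Lemma \ref{lcs open} for openness of the $G_i$, but the argument is the same.
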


\noindent
The pace of Chapter \ref{chapter bilbao} will be slightly faster, compared to the previous ones, in the sense that we will assume the reader is familiar with some basic facts about profinite groups (which can however all be found in Chapters $0$ and $1$ from \cite{analytic}). We will give some extra background in Section \ref{section background}.  
In Section \ref{section topintchar}, we will prove several properties of topologically intense automorphisms and give an analogue of Theorem \ref{theorem abelian} for pro-$p$-groups. In the subsequent sections we will pave the way to proving Theorem \ref{theorem unique infinite}.
In Section \ref{section if top intense}, we will give some limitations, for $p>3$, to the structure of infinite non-abelian pro-$p$-groups of intensity greater than $1$. In Section \ref{section na2}, we will discover that, if such groups exist, they can be continuously embedded in one of two infinite pro-$p$-groups (one of them being $\Sn(\Delta_p)$). We will study the structure of those two groups in Section \ref{section two gps} and, in Section \ref{section na2}, we will prove that, if $p>3$ is a prime number and $G$ is an infinite non-abelian pro-$p$-group with $\inte(G)>1$, then $G$ is topologically isomorphic to $\Sn(\Delta_p)$. The results from Section \ref{section sl1} will ensure that $\inte(\Sn(\Delta_p))>1$. 
We will conclude the proof of Theorem \ref{theorem unique infinite} in Section \ref{i} and give that of Theorem \ref{theorem any class} in Section \ref{ii}.
We will close Chapter \ref{chapter bilbao} with Section \ref{iii}, where we will draw a bridge between Theorem \ref{theorem unique infinite} and Theorem \ref{theorem any class}.

\section{Some background}\label{section background}

\noindent
This section is a collection of definitions and results from \cite{analytic}. If $G$ is a profinite group and $S$ is a subset of it, we denote by 
$\cl(S)$ the closure of $S$ in $G$. A full list of the symbols we use can be found at the beginning of this thesis (see List of Symbols).

\begin{definition}
Let $G$ be a profinite group. A \indexx{discrete quotient} of $G$ is a quotient of $G$ by an open normal subgroup. A \indexx{proper quotient} of $G$ is a quotient of $G$ by a closed normal subgroup that is different from $\graffe{1}$.  
\end{definition}

\begin{definition}\label{def topological generators}
Let $G$ be a profinite group. Then a set $X$ is a \emph{set of} \indexx{topological generators} of $G$ if $G=\cl(\gen{X})$. The group $G$ is \emph{topologically finitely generated} if it admits a finite set of topological generators.
\end{definition}

\begin{definition}\label{lcs profinite def}
Let $G$ be a profinite group. The \emph{lower central series} \index{lower central series} $(G_i)_{i\geq 1}$ of $G$ is defined by
\[G_1=G \ \ \text{and} \ \ G_{i+1}=\cl([G,G_i]).\] 
\end{definition}

\noindent
In Section \ref{section commutators}, we have defined the lower central series for any abstract group, which should not be confused with that of a profinite group. In the case of finite groups, they however coincide.
We recall that, as defined in Section \ref{rank}, the rank of a finite group $H$ is the smallest integer $r$ such that every subgroup of $H$ can be generated by $r$ elements.

\begin{definition}\label{rank profinite}
Let $G$ be a profinite group. The \emph{rank} \index{rank} of $G$ is 
$$\rk(G)=\sup\graffe{\rk(G/N) : N\ \text{is normal open in}\ G}.$$
\end{definition}

\noindent
Let $G$ be a profinite group. It follows from the definition that $\rk(G)$ belongs to $\Z\cup\graffe{\infty}$ and, if $G$ has finite rank, that $G$ is also finitely generated.
Moreover, when $G$ is finite, the definition of rank given in Section \ref{rank} is equivalent to the one from Definition \ref{rank profinite}. 
In \cite[Proposition $3.11$]{analytic}, a series of equivalent definitions of rank is given. 

\begin{definition}
A \emph{$p$-adic analytic} \index{$p$-adic analytic group} group is a profinite group that 
contains an open pro-$p$-subgroup of finite rank.
\end{definition}

\noindent
Our definition of a $p$-adic analytic group is not among the standard ones, but it serves our purposes the best. In general, $p$-adic analytic groups are defined to be topological groups that present the structure of a $p$-adic manifold. 
The equivalence of the two definitions, for profinite groups, is given by Corollary $9.35$ from \cite{analytic}.
For more information about the topic, see \cite[Ch.~$9$]{analytic}.

\begin{definition}
A profinite group is \emph{just-infinite} \index{just-infinite group} if it is infinite and each of its proper quotients is finite.
\end{definition}

\begin{definition}
Let $p$ be a prime number and let $G$ be a pro-$p$-group. The \indexx{Frattini subgroup}
of $G$ is $\Phi(G)=\cl(G^p[G,G])$.
\end{definition}

\noindent
As for the case of finite $p$-groups, the Frattini subgroup $\Phi(G)$ of a pro-$p$-group $G$ is the unique closed normal subgroup of $G$ minimal with the property that $G/\Phi(G)$ is a vector space over $\F_p$.

\begin{lemma}\label{frattini open}
Let $p$ be a prime number and let $G$ be a pro-$p$-group. Then $G$ is topologically finitely generated if and only if $\Phi(G)$ is open in $G$.
\end{lemma}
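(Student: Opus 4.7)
The plan is to prove the two implications separately, with the main tool being the fact that $G/\Phi(G)$ is naturally a vector space over $\F_p$, together with the standard principle that a closed subgroup of finite index in a profinite group is automatically open.

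For the forward direction, suppose $G$ is topologically finitely generated, say $G=\cl(\gen{X})$ with $X$ finite. The canonical projection $\pi:G\rightarrow G/\Phi(G)$ is continuous, so the image of $X$ generates $G/\Phi(G)$ as an abstract group (the image $\pi(X)$ is finite, hence closed, and generates a closed subgroup containing $\pi(\cl(\gen{X}))=G/\Phi(G)$). Since $G/\Phi(G)$ is a vector space over $\F_p$ by the very definition of $\Phi(G)$, it is finitely generated as an $\F_p$-module, hence finite. Thus $\Phi(G)$ has finite index in $G$, and being closed by definition, it is open.

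For the reverse direction, suppose $\Phi(G)$ is open. Then $G/\Phi(G)$ is a finite-dimensional $\F_p$-vector space; choose a finite set $X\subseteq G$ whose image in $G/\Phi(G)$ is a basis. I claim $G=\cl(\gen{X})$, which will finish the proof. Set $H=\cl(\gen{X})$. The key step is a topological version of the Frattini argument: I would first show that for every open normal subgroup $N$ of $G$, one has $H\cdot N = G$. Indeed, $G/N$ is a finite $p$-group, and the composition $H\rightarrow G\rightarrow G/N$ has image $\gen{X}N/N$, which surjects onto $(G/N)/\Phi(G/N)$ because $\Phi(G/N)$ is the image of $\Phi(G)$; by Lemma~\ref{subgroup times frattini} applied to the finite $p$-group $G/N$, we deduce $\gen{X}N=G$, i.e.\ $HN=G$. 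Now, since $H$ is closed in the profinite group $G$ and $HN=G$ for every open normal $N$, taking the intersection over all such $N$ gives $H=\bigcap_N HN=G$.

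The only real subtlety is the last step, which is the standard fact that a closed subgroup of a profinite group equals the intersection of the open subgroups containing it; this follows from the profinite topology being the initial topology for the projections to finite quotients. I do not expect any serious obstacle; the argument is entirely routine once one has the correct dictionary between $\Phi(G)$ being open, finiteness of $G/\Phi(G)$, and the topological Burnside basis principle.
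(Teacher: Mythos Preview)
Your proof is correct; both directions are the standard arguments and go through without issue. The paper does not actually prove this lemma but simply cites Proposition~1.14 of \cite{analytic}, so what you have written is essentially the argument one would find upon following that reference.
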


\begin{proof}
This is Proposition $1.14$ from \cite{analytic}.
\end{proof}

\noindent
In Chapter $3$ of \cite{analytic} it is proven that, if $G$ is a finitely generated group, then the cardinality of a minimal set of topological generators of $G$ is equal to $\dim_{\F_p}(G/\Phi(G))$.

%\begin{lemma}\label{lower p base}
%Let $p$ be a prime number and let $G$ be a pro-$p$-group. Let $l_1(G)=G$ and, for each positive integer $i$, let $l_{i+1}(G)=\cl(l_i(G)^p[G,l_i(G)])$.
%Assume that $G$ is finitely generated. Then $\graffe{l_i(G)}_{i\geq 1}$ is a base of open neighbourhoods of $1$ in $G$.
%\end{lemma}

%\begin{proof}
%This is Proposition $1.16(\mathrm{iii})$ from \cite{analytic}.
%\end{proof}

{
\begin{definition}
Let $p$ be an odd prime number and let $U$ be a pro-$p$-group.
Then $U$ is \emph{uniform} if the following hold.\index{uniform pro-$p$-group}
\begin{itemize}
\item[$1$.] The group $U$ is topologically finitely generated.
\item[$2$.] The quotient $U/\cl(U^p)$ is abelian.
\item[$3$.] The group $U$ is torsion-free.
\end{itemize}
\end{definition}
}

\noindent
The definition of uniform group we give is slightly different from the one that is given in \cite{analytic}. 
However, the equivalence of the two is proven in \cite[Theorem $4.8$]{analytic}.

\begin{definition}
Let $p$ be an odd prime number and let $U$ be a uniform pro-$p$-group. The \emph{dimension} \index{dimension} of 
$U$ is the cardinality of a minimal set of topological generators of $U$. The dimension of $U$ is denoted $\dim(U)$.
\end{definition}

\begin{lemma}\label{uniform characteristic}
Let $p$ be an odd prime number and let $G$ be a pro-$p$-group of finite rank.
Then $G$ has a characteristic open uniform subgroup. 
\end{lemma}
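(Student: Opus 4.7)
The plan is to exhibit an explicit characteristic open uniform subgroup of $G$, in the form $U_k = \cl(G^{p^k})$ for $k$ sufficiently large, and then to verify openness, characteristicness, and uniformity separately.

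First I would observe that each $U_k$ is automatically characteristic in $G$, since it is defined purely in terms of the group structure. To see that $U_k$ is open, note that $G$ has finite rank and is therefore topologically finitely generated, so by Lemma \ref{frattini open} the Frattini subgroup $\Phi(G)$ is open in $G$; an easy induction shows that the lower $p$-series of $G$ consists of open subgroups, and since $U_k$ contains the $k$-th term of this series, $U_k$ is open in $G$.

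The substantial step is showing that $U_k$ is uniform once $k$ is large enough. For this I would invoke the standard structure theory of pro-$p$-groups of finite rank developed in \cite{analytic}, Chapters 3--4. Any such $G$ contains an open uniform subgroup $V$: one first passes to an open powerful subgroup, and then factors out its finite torsion by taking sufficiently high $p$-th powers, producing a topologically finitely generated, powerful, torsion-free pro-$p$-group, which is uniform by definition. Since $V$ is open in $G$ and the $U_k$ form a descending chain of open subgroups with trivial intersection, there is some $k_0$ with $U_{k_0} \subseteq V$. For $k \geq k_0$ one has $U_k = \cl(U_{k_0}^{p^{k-k_0}})$, and inside a uniform pro-$p$-group (with $p$ odd) every subgroup of the form $\cl(H^{p^j})$ is again uniform. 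Hence $U_k$ is uniform for all sufficiently large $k$, providing the desired characteristic open uniform subgroup of $G$.

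The main obstacle I anticipate is not conceptual but organizational: managing the transition from the abstract existence of an open uniform subgroup $V$ to the specific characteristic family $\{U_k\}$. In particular, verifying $\bigcap_k U_k = \{1\}$ and checking that $U_k$ inherits the uniform property from $V$ both rely on the fact that a uniform pro-$p$-group in odd residue characteristic carries a faithful $\Z_p$-module structure in its logarithmic coordinates, so that its $p^j$-th power subgroups form a descending filtration contracting to the identity. Once these standard facts are in place, the conclusion is immediate.
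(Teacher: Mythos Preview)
The paper gives no argument; it simply cites Corollary~4.3 of \cite{analytic}. Your sketch is more detailed and essentially attempts to reconstruct that corollary, invoking the same reference for the existence of an open uniform subgroup $V$. The overall strategy is correct, but two steps in your passage from $V$ to a characteristic subgroup need more care.

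First, only the containment $U_k \subseteq \cl(U_{k_0}^{p^{k-k_0}})$ is immediate, since $x^{p^k}=(x^{p^{k_0}})^{p^{k-k_0}}$ with $x^{p^{k_0}}\in U_{k_0}$; the reverse inclusion would require every element of $\cl(G^{p^{k_0}})$ to lie in the set of $p^{k_0}$-th powers of $G$, which is not automatic in a non-powerful group. Second, the assertion that ``inside a uniform pro-$p$-group every subgroup of the form $\cl(H^{p^j})$ is again uniform'' is not a standard fact. For $j=0$ it is simply false: open subgroups of uniform groups need not be powerful (in the Heisenberg group over $\Z_p$ with generators $x,y$ and $z=[x,y]$ central, the open subgroup topologically generated by $x$ and $y^p$ satisfies $[H,H]=\cl(\langle z^p\rangle)\not\subseteq \cl(H^p)$). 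For $j\geq 1$ the claim is plausible but would itself require proof. These gaps can be repaired with finer input from \cite{analytic} --- for instance by working with the lower $p$-series $P_i(G)$ and the theory of powerfully embedded subgroups --- but as written the deduction of uniformity for $U_k$ is incomplete. Since the paper defers entirely to the reference, your level of detail already exceeds it, modulo these two points.
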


\begin{proof}
See Corollary $4.3$ from \cite{analytic}.
\end{proof}

\begin{lemma}\label{uniform same dimension}
Let $p$ be an odd prime number and let $G$ be a pro-$p$-group. Then all open uniform subgroups of $G$ have the same dimension.
\end{lemma}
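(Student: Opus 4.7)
The plan is to prove the lemma in two steps: first reduce to comparing a uniform pro-$p$-group with an open uniform subgroup of it, then show that such an inclusion preserves the dimension.

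For the reduction, let $U$ and $V$ be two open uniform subgroups of $G$. Then $U \cap V$ is open in $G$, hence open in $U$, and since $U$ has finite rank, so does $U \cap V$. By Lemma \ref{uniform characteristic}, there exists a characteristic open uniform subgroup $W$ of $U \cap V$, which is in particular an open uniform subgroup of both $U$ and $V$. Thus, assuming the special case that $W \leq U$ open and both uniform implies $\dim W = \dim U$, we would get $\dim W = \dim U$ and $\dim W = \dim V$, and the general statement follows.

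For the core step, suppose $W$ is an open uniform subgroup of the uniform pro-$p$-group $U$. I would use the standard formula for uniform pro-$p$-groups (Theorem $3.6$ of \cite{analytic}), namely $|U : \cl(U^{p^n})| = p^{n \dim U}$ for every $n \geq 0$, and analogously $|W : \cl(W^{p^n})| = p^{n \dim W}$. The inclusion $W \leq U$ gives $\cl(W^{p^n}) \subseteq \cl(U^{p^n})$, whence
\[
[U:W]\, p^{n \dim W} \;=\; [U : \cl(W^{p^n})] \;\geq\; [U : \cl(U^{p^n})] \;=\; p^{n \dim U},
\]
and letting $n \to \infty$ yields $\dim W \geq \dim U$. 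For the reverse inequality, since $W$ is open and the subgroups $\cl(U^{p^n})$ form a base of neighborhoods of the identity in $U$, there is an $n_0$ with $\cl(U^{p^{n_0}}) \subseteq W$; from the structure theory of uniform groups, one deduces $\cl(U^{p^{n_0+m}}) \subseteq \cl(W^{p^m})$ for all $m \geq 0$, giving
\[
[U:W]\, p^{m \dim W} \;=\; [U : \cl(W^{p^m})] \;\leq\; [U : \cl(U^{p^{n_0+m}})] \;=\; p^{(n_0 + m) \dim U},
\]
so that $m \dim W \leq m \dim U + n_0 \dim U - \log_p[U:W]$ for all $m$, forcing $\dim W \leq \dim U$.

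The main obstacle is the auxiliary inclusion $\cl(U^{p^{n_0+m}}) \subseteq \cl(W^{p^m})$: in a non-abelian setting, relating the $p$-power subgroups of $U$ and $W$ requires the regularity properties of uniform pro-$p$-groups (in particular the fact that the map $x \mapsto x^{p^k}$ induces a bijection between suitable terms of the lower $p$-series), which is developed systematically in Chapter $3$ of \cite{analytic}. Once this is available, the inclusion follows because every element of $U^{p^{n_0}}$ lies in $W$, so its further $p^m$-th power lies in $W^{p^m}$.
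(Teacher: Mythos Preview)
The paper does not give a proof of this lemma; it simply cites \cite[Corollary~$4.6$]{analytic}. So there is no argument in the paper to compare yours against.

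Your argument is essentially correct and is in fact close to how the cited result is proved in \cite{analytic}. One remark: the inclusion you flag as the ``main obstacle'', namely $\cl(U^{p^{n_0+m}}) \subseteq \cl(W^{p^m})$, is actually elementary and does not require any regularity properties of uniform groups. Indeed, once $\cl(U^{p^{n_0}}) \subseteq W$, every $u^{p^{n_0}}$ lies in $W$, so $u^{p^{n_0+m}} = (u^{p^{n_0}})^{p^m}$ is a $p^m$-th power of an element of $W$ and hence lies in $W^{p^m}$; the inclusion of generated subgroups and their closures follows immediately. The genuinely non-trivial input from \cite{analytic} that you use is the index formula $|U:\cl(U^{p^n})| = p^{n\dim U}$ for uniform $U$ and the fact that the subgroups $\cl(U^{p^n})$ form a base of open neighbourhoods of the identity; both are part of the structure theory of powerful (and uniform) pro-$p$-groups developed in Chapters~$3$--$4$ of \cite{analytic}. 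Since your argument ultimately rests on those same facts, citing the corollary directly, as the paper does, is a reasonable shortcut.
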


\begin{proof}
See \cite[Corollary $4.6$]{analytic}.
\end{proof}

\begin{definition}\label{def dim}\index{dimension}
Let $p$ be an odd prime number and let $G$ be a pro-$p$-group of finite rank. The \emph{dimension} of $G$ is the dimension of any of its open uniform subgroups.
\end{definition}

\noindent
Lemmas \ref{uniform characteristic} and \ref{uniform same dimension} guarantee the consistency of Definition \ref{def dim}.

\section{Properties and intensity}\label{section topintchar}

In Section \ref{section topintchar} we give several properties of topologically intense automorphisms and, for a given prime number $p$, we define the intensity of a pro-$p$-group.

\begin{lemma}\label{bristol}
Let $G$ be a profinite group and let $\alpha$ be a topologically intense automorphism of $G$.
Then $\alpha$ induces an intense automorphism on each discrete quotient of $G$. 
\end{lemma}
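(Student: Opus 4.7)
The plan is to exhibit explicitly the induced automorphism on a discrete quotient and verify it satisfies the defining property of an intense automorphism. Let $N$ be an open normal subgroup of $G$; I need to produce an intense automorphism of $G/N$ coming from $\alpha$.

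First I would check that $N$ itself is $\alpha$-stable, so that $\alpha$ descends to a map on the quotient. Since $\alpha$ is topologically intense and $N$ is closed in $G$, there exists $g\in G$ with $\alpha(N)=gNg^{-1}$; but $N$ is normal, so $gNg^{-1}=N$ and thus $\alpha(N)=N$. Consequently the formula $xN\mapsto\alpha(x)N$ defines an automorphism $\overline{\alpha}$ of $G/N$ (bijectivity follows from the same argument applied to $\alpha^{-1}$, which is also topologically intense since the defining property is preserved under inversion).

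Next I would verify intensity of $\overline{\alpha}$. Let $\overline{H}$ be any subgroup of $G/N$, and let $H$ be its preimage in $G$. Then $H$ contains the open subgroup $N$, so $H$ is open, and in particular closed. Applying topological intensity of $\alpha$ to $H$ gives an element $g\in G$ with $\alpha(H)=gHg^{-1}$. Reducing modulo $N$ yields $\overline{\alpha}(\overline{H})=\overline{g}\,\overline{H}\,\overline{g}^{\,-1}$, which is exactly the condition for $\overline{\alpha}$ to send $\overline{H}$ to a conjugate in $G/N$.

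Since $\overline{H}$ was arbitrary, $\overline{\alpha}\in\Int(G/N)$, and the lemma is proved. There is no real obstacle here: the only subtle point is remembering that a subgroup of a discrete quotient pulls back to a subgroup of $G$ that automatically contains an open subgroup and is therefore closed, which is what allows us to invoke the topological intensity hypothesis.
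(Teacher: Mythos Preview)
Your proof is correct and follows essentially the same approach as the paper: pull back a subgroup of $G/N$ to an open (hence closed) subgroup of $G$, apply topological intensity there, and push the resulting conjugation down to the quotient. You supply a bit more detail than the paper (in particular, the justification that $\alpha(N)=N$ and that the preimage is closed), but the argument is the same.
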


\begin{proof}
Let $N$ be an open normal subgroup of $G$. Then $\alpha(N)=N$ and $\alpha$ induces an automorphism $\overline{\alpha}$ of $G/N$. Now each subgroup $\overline{H}$ of $G/N$ corresponds to an open subgroup $H$ of $G$, which is sent to a conjugate by $\alpha$. As a consequence, also $\overline{\alpha}(\overline{H})$ and $\overline{H}$ are conjugate in $G/N$ and, the choice of $\overline{H}$ being arbitrary, it follows that $\overline{\alpha}$ is intense.
\end{proof}

\noindent
If $G$ is a profinite group and $\Upsilon$ denotes the set of open normal subgroups of $G$, then $\Aut(G)$ has a natural topology, the ``congruence topology'', for which a basis of open neighbourhoods of the identity is given by 
$$\graffe{\Gamma(N)=\graffe{\alpha\in\Aut(G) : \alpha \equiv \id \bmod N}}_{N\in\Upsilon}.$$ For more information on the subject see for example 
\cite[Ch.~$5.2$]{analytic}.

\begin{lemma}\label{intense group is profinite}
Let $G$ be a profinite group and let $\Upsilon$ denote the collection of open normal subgroups of $G$. Then one has
$$\Int_{\cc}(G)=\underset{N\in\Upsilon}{\varprojlim}\Int(G/N).$$
%and $\Int_{\cc}(G)$ is profinite.
\end{lemma}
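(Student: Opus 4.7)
The plan is to show that the natural map $\Phi\colon \Int_{\cc}(G)\to\varprojlim_{N\in\Upsilon}\Int(G/N)$ is a well-defined isomorphism, proceeding in three steps: well-definedness, injectivity, and surjectivity.

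For well-definedness, observe that every topologically intense automorphism $\alpha\in\Int_{\cc}(G)$ sends each closed normal subgroup (in particular each element of $\Upsilon$) to a conjugate, and hence to itself. Thus $\alpha$ induces an automorphism of $G/N$ for each $N\in\Upsilon$, which is intense by Lemma \ref{bristol}. These induced automorphisms are clearly compatible with the canonical projections $G/M\twoheadrightarrow G/N$ for $M\subseteq N$, so $\Phi$ is a well-defined homomorphism. Injectivity is immediate: if $\alpha$ induces the identity on every $G/N$, then $\alpha(x)x^{-1}\in\bigcap_{N\in\Upsilon}N=\{1\}$ for every $x\in G$, forcing $\alpha=\id_G$.

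The crux is surjectivity. Let $(\alpha_N)_{N\in\Upsilon}$ be a compatible family with $\alpha_N\in\Int(G/N)$. By the universal property of $G=\varprojlim G/N$, there is a unique continuous automorphism $\alpha$ of $G$ inducing $\alpha_N$ on each $G/N$. It remains to show that $\alpha$ is topologically intense. To this end, fix a closed subgroup $H\le G$, and for each $N\in\Upsilon$ set
\[
X_N=\{\,g\in G\ :\ \alpha(H)N=gHg^{-1}N\,\}.
\]
Since $\alpha_N(HN/N)$ is conjugate to $HN/N$ in $G/N$, the set $X_N$ is non-empty. For any $n\in N$, normality of $N$ gives $gnHn^{-1}g^{-1}N=gHNg^{-1}=gHg^{-1}N$, so $X_N$ is a union of cosets of $N$ and is therefore open and closed. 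Furthermore, if $M\subseteq N$ then $X_M\subseteq X_N$, so $\{X_N\}_{N\in\Upsilon}$ is a filtered family of non-empty closed subsets of the compact space $G$. By compactness, $\bigcap_{N\in\Upsilon}X_N$ is non-empty; choose $g$ in this intersection. Then for every $N\in\Upsilon$, $\alpha(H)N=gHg^{-1}N$, and since $\alpha(H)$ and $gHg^{-1}$ are both closed in $G$ (so each equals the intersection over $N$ of its saturation by $N$), we conclude $\alpha(H)=gHg^{-1}$. Hence $\alpha\in\Int_{\cc}(G)$, as desired.

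The main obstacle is the surjectivity argument: producing from a compatible system of intense automorphisms a single element $g\in G$ conjugating $\alpha(H)$ to $H$, rather than merely one element $g_N$ per quotient. This is resolved by the compactness-based inverse limit argument above, which converts the mod-$N$ conjugacy data into a global conjugacy.
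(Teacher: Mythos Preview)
Your proof is correct and follows the same natural route as the paper: define the canonical map $\Int_{\cc}(G)\to\varprojlim_{N}\Int(G/N)$ via Lemma~\ref{bristol}, check injectivity from $\bigcap_{N}N=\{1\}$, and then argue surjectivity. The paper's proof is extremely terse on the surjectivity step, citing only Lemma~\ref{intense properties}(2), which strictly speaking only gives the containment of the image in the inverse limit. The compactness argument you supply---building the sets $X_N$, using the finite intersection property, and recovering $\alpha(H)=gHg^{-1}$ from $\bigcap_N \alpha(H)N=\bigcap_N gHg^{-1}N$---is essentially the argument the paper gives later in the proof of Proposition~\ref{openclosedtuttouguale}; you have in effect inlined that argument here, which makes your version of this lemma self-contained.
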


\begin{proof}
Let $\Upsilon$ denote the collection of open normal subgroups of $G$. Then, thanks to Lemma \ref{bristol}, we have a natural homomorphism 
$$\pi:\Int_{\cc}(G)\rightarrow \prod_{N\in\Upsilon}\Int(G/N).$$
The map $\pi$ is injective, because $\ker\pi$ is contained in $\cap_{N\in\Upsilon}\Gamma(N)=\graffe{1}$, and the image of $\pi$ is equal to 
$\underset{N\in\Upsilon}{\varprojlim}\Int(G/N)$, thanks to Lemma \ref{intense properties}($2$).
\end{proof}

\begin{lemma}\label{inverse limit}
Let $\graffe{X_\lambda}_{\lambda\in \Lambda}$ be an inverse system of finite non-empty sets over a directed set $\Lambda$. 
Then $\varprojlim X_\lambda$ is non-empty.
\end{lemma}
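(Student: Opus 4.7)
The plan is to apply a standard compactness argument to the product $P=\prod_{\lambda\in\Lambda}X_\lambda$. Equip each $X_\lambda$ with the discrete topology (so it is compact since it is finite and non-empty) and give $P$ the product topology, which is compact by Tychonoff's theorem.

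For each pair $\lambda\leq\mu$ in $\Lambda$, let $f_{\mu\lambda}\colon X_\mu\to X_\lambda$ denote the transition map of the inverse system, and set
\[
C_{\mu\lambda}=\{(x_\nu)_{\nu\in\Lambda}\in P\ :\ f_{\mu\lambda}(x_\mu)=x_\lambda\}.
\]
Each $C_{\mu\lambda}$ is closed in $P$, being the preimage of the diagonal under the continuous map $P\to X_\lambda\times X_\lambda$ sending $(x_\nu)_\nu$ to $(f_{\mu\lambda}(x_\mu),x_\lambda)$. By construction, $\varprojlim X_\lambda=\bigcap_{\lambda\leq\mu}C_{\mu\lambda}$.

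The key step is to verify the finite intersection property. Given finitely many pairs $\lambda_1\leq\mu_1,\ldots,\lambda_n\leq\mu_n$, use the directedness of $\Lambda$ to choose $\nu\in\Lambda$ with $\nu\geq\mu_i$ for all $i$. Since $X_\nu$ is non-empty, pick any $y\in X_\nu$ and define a tuple $(x_\sigma)_\sigma\in P$ by setting $x_\sigma=f_{\nu\sigma}(y)$ whenever $\sigma\leq\nu$ and choosing $x_\sigma$ arbitrarily (any element of the non-empty set $X_\sigma$) otherwise. The compatibility relations of the inverse system ensure $(x_\sigma)_\sigma\in\bigcap_{i=1}^n C_{\mu_i\lambda_i}$. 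Compactness of $P$ then yields $\bigcap_{\lambda\leq\mu}C_{\mu\lambda}\neq\emptyset$, which is the desired conclusion. No step presents a real obstacle; the only subtlety is invoking directedness to produce a compatible finite datum, and non-emptiness of each $X_\lambda$ to start with a choice at $\nu$.
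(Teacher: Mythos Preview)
Your proof is correct: this is the standard Tychonoff argument, and every step goes through as written. The paper does not actually prove this lemma at all; it simply cites Proposition~1.4 of \cite{analytic}. So your proposal supplies a complete self-contained argument where the paper defers to the literature, which is a net gain in readability for the reader who does not have that reference to hand.
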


\begin{proof}
This is Proposition $1.4$ from \cite{analytic}.
\end{proof}

\begin{proposition}\label{openclosedtuttouguale}
Let $G$ be a profinite group and let $\alpha$ be an automorphism of $G$. Then the following are equivalent.
\begin{itemize}
 \item[$1$.] The automorphism $\alpha$ is topologically intense.
 \item[$2$.] For every open subgroup $H$ of $G$, there exists an element $x\in G$ such that $\alpha(H)=xHx^{-1}$.
\end{itemize}
\end{proposition}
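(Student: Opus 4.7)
The implication $(1)\Rightarrow(2)$ is immediate, since every open subgroup of $G$ is closed. So the heart of the matter is $(2)\Rightarrow(1)$, and the plan is a standard inverse-limit compactness argument.

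First I would note that $(2)$ forces $\alpha$ to stabilise every open normal subgroup $N$ of $G$: indeed, by $(2)$ we have $\alpha(N)=xNx^{-1}$ for some $x\in G$, and $N$ being normal gives $\alpha(N)=N$. In particular $\alpha$ is continuous, since its restrictions to a base of neighbourhoods of $1$ are the identity modulo $N$ on each discrete quotient. Fix now a closed subgroup $H\leq G$; the goal is to produce $x\in G$ with $\alpha(H)=xHx^{-1}$. For each open normal subgroup $N\trianglelefteq G$, the subgroup $HN$ is open, so by $(2)$ the set
\[ Y_N \;=\; \{\, x\in G \;:\; \alpha(HN) = x\,HN\,x^{-1}\,\} \]
is non-empty. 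Moreover $Y_N$ is a union of cosets of the normalizer of $HN$, which contains $N$; hence $Y_N$ is both open and closed in $G$, and in particular its image $\overline{Y}_N\subseteq G/N$ is a finite non-empty set.

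Next I would check that the family $\{\overline{Y}_N\}_N$, indexed by the directed set of open normal subgroups ordered by reverse inclusion, forms an inverse system under the natural projections $G/N\to G/N'$ for $N\subseteq N'$. This amounts to the inclusion $Y_N\subseteq Y_{N'}$: if $x\in Y_N$, then since $\alpha(N')=N'$ we have
\[ \alpha(HN') \;=\; \alpha(HN)\,N' \;=\; x\,HN\,x^{-1}\,N' \;=\; x\,HN'\,x^{-1}, \]
so $x\in Y_{N'}$. Applying Lemma \ref{inverse limit} to $\{\overline{Y}_N\}_N$ yields a non-empty inverse limit inside $\varprojlim G/N = G$, i.e.\ an element $x\in G$ with $x\in Y_N$ for every open normal $N$.

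Finally, using that $H$ is closed in $G$ (so $H=\bigcap_N HN$), that $\alpha$ is a bijection, and that $xHx^{-1}$ is closed, I would conclude:
\[ \alpha(H) \;=\; \alpha\Big(\bigcap_N HN\Big) \;=\; \bigcap_N \alpha(HN) \;=\; \bigcap_N x\,HN\,x^{-1} \;=\; x\Big(\bigcap_N HN\Big)x^{-1} \;=\; xHx^{-1}. \]
The only real subtlety is the compactness step, and it is handled cleanly by Lemma \ref{inverse limit}; the bookkeeping of verifying $Y_N\subseteq Y_{N'}$ and that the $Y_N$ are $N$-saturated is routine but must be done carefully to make the inverse system and its limit live in $G$ itself.
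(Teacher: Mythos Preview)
Your proof is correct and follows essentially the same inverse-limit compactness argument as the paper: both build, for each open normal $N$, the finite non-empty set of conjugators for $HN$ modulo $N$, apply Lemma \ref{inverse limit}, and then intersect over $N$ to recover a conjugator for $H$ itself. Your version is in fact slightly more self-contained, since you apply hypothesis~$(2)$ directly to $HN$ rather than citing Lemma \ref{bristol} (whose statement assumes~$(1)$), and you verify the compatibility $Y_N\subseteq Y_{N'}$ explicitly.
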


\begin{proof}
As every open subgroup is also closed, $(1)$ clearly implies $(2)$. 
Assume now ($2$) and let $H$ be a closed subgroup of $G$. We will construct $x\in G$ such that $\alpha(H)=xHx^{-1}$.
Let $\Lambda$ denote the collection of all discrete quotients of $G$ and let moreover $\Upsilon$ be the collection of all open normal subgroups of $G$. Then there is a natural bijection $\Upsilon\rightarrow\Lambda$, given by $N\mapsto G/N$. 
Now, thanks to Lemma \ref{bristol}, the automorphism $\alpha$ induces an intense automorphism on each element of $\Lambda$. Hence, if $\overline{G}$ is an element of $\Lambda$ and $\overline{H}$ denotes the image of $H$ in $\overline{G}$, then there exists $x\in\overline{G}$ such that $\overline{\alpha(H)}=x\overline{H}x^{-1}$. For each $\overline{G}\in \Lambda$ define 
$X_{\overline{G}}=\graffe{x\in \overline{G} : \overline{\alpha(H)}=x\overline{H}x^{-1}}$ and observe that $X_{\overline{G}}$ is finite and non-empty. Let now $\overline{G}$ and $\overline{G}\,'$ be elements of $\Lambda$ such that $\overline{G}\,'$ is a quotient of $\overline{G}$. Then the natural projection $\overline{G}\rightarrow\overline{G}\,'$ induces a well-defined map 
$X_{\overline{G}}\rightarrow X_{\overline{G}\,'}$. It follows that $\graffe{X_{\overline{G}}}_{\overline{G}\in\Lambda}$ is an inverse system of finite non-empty sets so, by Lemma \ref{inverse limit}, the set $X=\varprojlim X_{\overline{G}}$ is non-empty. 
Let $x\in X$. As a consequence of the definition of $X$, for each element $N$ of $\Upsilon$, the element $xN$ belongs to 
$X_{G/N}$ and thus, for each $N\in\Upsilon$, we have $\alpha(HN)=xHx^{-1}N$.
The map $\alpha$ is continuous, because it stabilizes each open normal subgroup, and so it follows that 
\[
\alpha(H)=\cl(\alpha(H))=\bigcap_{N\in\Upsilon}\alpha(H)N=\bigcap_{N\in\Upsilon}\alpha(HN)=
\]
\[
=\bigcap_{N\in\Upsilon}xHx^{-1}N=\cl(xHx^{-1})=xHx^{-1}.
\]
This proves ($1$), and therefore the proof is complete.
\end{proof}

\noindent
In the proof of the following result we will use the generalization to profinite groups of Schur-Zassenhaus's theorem (see for example Theorem $2.3.15$ from \cite{profinite}).

\begin{proposition}\label{inte profinite}
Let $p$ be a prime number and let $G$ be a pro-$p$-group. Then 
$$\Int_{\cc}(G)=S\rtimes C,$$
where $S$ is a Sylow pro-$p$-subgroup of $\Int_{\cc}(G)$ and $C$ is isomorphic to a subgroup of $\F_p^*$. Moreover, one has
$$|C|=\gcd\graffe{\inte(G/N) : N \ \text{normal open in} \ G, \ N\neq G}.$$
\end{proposition}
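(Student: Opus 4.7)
The plan is to build the short exact sequence $1 \to S \to \Int_{\cc}(G) \to C \to 1$ by assembling the intense characters of the discrete quotients into a single character of $\Int_{\cc}(G)$ with values in $\F_p^*$. In detail: for each non-trivial discrete quotient $G/N$ (i.e.\ $N \in \Upsilon$ with $N \ne G$), Lemma \ref{formulation} gives $S_N = \ker \chi_{G/N}$ as the unique Sylow $p$-subgroup of $\Int(G/N)$, and $\chi_{G/N}(\Int(G/N)) = C_N \subseteq \F_p^*$ has order $\inte(G/N)$. First I would check that the characters $\chi_{G/N}$ are compatible with the projections $\Int(G/N) \to \Int(G/M)$ for $M \supseteq N$: both characters record how an automorphism acts by scalar multiplication on the maximal elementary abelian quotient, and $(G/N)/\Phi(G/N) = G/\Phi(G)N$ is itself a quotient of $G/\Phi(G)M$, so the scalar is preserved. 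Via the identification $\Int_{\cc}(G) = \varprojlim \Int(G/N)$ from Lemma \ref{intense group is profinite}, this produces a well-defined continuous homomorphism $\chi : \Int_{\cc}(G) \to \F_p^*$.

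Next I would define $S = \ker \chi$. An element $(\alpha_N) \in \Int_{\cc}(G)$ lies in $S$ iff $\alpha_N \in S_N$ for every $N$, so $S = \varprojlim S_N$ as a closed normal subgroup; being an inverse limit of finite $p$-groups it is a pro-$p$-group. Write $C = \chi(\Int_{\cc}(G))$, which is a finite subgroup of $\F_p^*$, hence cyclic of order coprime to $p$. Since $S$ is prosolvable and $C$ has order coprime to the supernatural order of $S$, the profinite Schur--Zassenhaus theorem (see e.g.\ \cite[Thm.~2.3.15]{profinite}) yields a splitting $\Int_{\cc}(G) = S \rtimes C$. Because every pro-$p$ subgroup of $\Int_{\cc}(G)$ must lie in $S$ (the image under $\chi$ has order coprime to $p$), $S$ is a Sylow pro-$p$-subgroup.

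It remains to compute $|C|$. By Lemma \ref{intensity of quotients} applied inside $G/M$, whenever $M \subseteq N$ are in $\Upsilon$ with $N \ne G$, $\inte(G/M)$ divides $\inte(G/N)$; since these are bounded above by $p-1$, the function $N \mapsto \inte(G/N)$ is eventually constant on any cofinal chain, and in particular there exists $M_0 \in \Upsilon$ with $\inte(G/M) = \inte(G/M_0) =: d$ for all $M \subseteq M_0$, and clearly $d = \gcd\{\inte(G/N) : N \in \Upsilon,\, N \ne G\}$. The obvious inclusion gives $|C|$ divides $d$. For the reverse, one fixes $c \in C_{M_0}$ and, for each $M \subseteq M_0$, lets $X_M = \chi_{G/M}^{-1}(c) \subseteq \Int(G/M)$; by the compatibility of the characters and the equality $C_M = C_{M_0}$ inside $\F_p^*$ (both are the unique subgroup of $\F_p^*$ of order $d$), $X_M$ is non-empty and the transition maps restrict to maps $X_M \to X_{M'}$. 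Since the $M \subseteq M_0$ are cofinal in $\Upsilon$, we have $\Int_{\cc}(G) = \varprojlim_{M \subseteq M_0} \Int(G/M)$, and Lemma \ref{inverse limit} gives a preimage in $\Int_{\cc}(G)$. Hence $\chi$ surjects onto $C_{M_0}$ and $|C| = d$.

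The main obstacle is the last paragraph: showing the inverse system of non-empty fibers $X_M$ does the job requires checking compatibility of the ``copy of $C_N$ inside $\F_p^*$'' across the transition maps, which is where the intrinsic characterization of $\chi_{G/N}$ through the action on $G/\Phi(G)N$ is essential; once this is set up, the stabilization of $\inte(G/N)$ combined with the compactness statement of Lemma \ref{inverse limit} closes the argument.
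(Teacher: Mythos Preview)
Your proposal is correct and follows essentially the same route as the paper. The paper assembles, for each $N\in\Upsilon$, the short exact sequence $1\to K_N\to\Int(G/N)\to I_N\to 1$ (with $K_N=\ker\chi_{G/N}$ and $I_N$ its image) and passes to the inverse limit, invoking exactness of $\varprojlim$ for systems of finite groups; you instead glue the characters $\chi_{G/N}$ into a single $\chi:\Int_{\cc}(G)\to\F_p^*$, set $S=\ker\chi$, and recover surjectivity onto the stabilized image via the compactness argument of Lemma~\ref{inverse limit}. These are the same argument in slightly different packaging: your fiber construction is exactly what underlies the exactness of $\varprojlim$ that the paper cites, and your stabilization of $\inte(G/N)$ is the concrete form of the paper's observation (via Lemma~\ref{intensity of quotients}) that the transition maps $I_N\to I_M$ are injective inside the cyclic group $\F_p^*$.
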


\begin{proof}
Let $\Upsilon$ denote the collection of open normal subgroups of $G$. For each $N\in\Upsilon$, denote by $\pi_N:\Int(G/N)\rightarrow\Int((G/N)/\Phi(G/N))$ the map from Lemma \ref{intense properties}($2$) and set $K_N=\ker\pi_N$ and 
$I_N=\pi_N(\Int(G/N))$. 
For each $N\in\Upsilon$, we then get a short exact sequence 
$$1\rightarrow K_N\rightarrow\Int(G/N)\rightarrow I_N \rightarrow 1$$
which induces, thanks to Lemma \ref{intense group is profinite} and the exactness of $\varprojlim$, the short exact sequence
$$1\rightarrow\underset{N\in\Upsilon}{\varprojlim} K_N\rightarrow\Int_{\cc}(G)\rightarrow\underset{N\in\Upsilon}{\varprojlim} I_N \rightarrow 1.$$
Define $S=\underset{N\in\Upsilon}{\varprojlim} K_N$ and 
$C=\underset{N\in\Upsilon}{\varprojlim} I_N$. 
As a consequence of Lemma \ref{intensity of quotients}, whenever $M,N\in\Upsilon\setminus\graffe{G}$ and $N\subseteq M$, the natural map 
$I_N\rightarrow I_M$ is injective and therefore,
$\underset{N\in\Upsilon}{\varprojlim} \Int((G/N)/\Phi(G/N))$
being equal to $\Int(G/\Phi(G))$, Lemma \ref{intense vector space} yields that $C$ is isomorphic to a subgroup of $\F_p^*$. Moreover, thanks to Lemma \ref{formulation}, the group $S$ is a pro-$p$-subgroup of $\Int_{\cc}(G)$. The order of $C$ being coprime to $p$, it follows that in fact $S$ is a Sylow pro-$p$-subgroup of $\Int_{\cc}(G)$ and, from the generalization of Schur-Zassenhaus's theorem to profinite groups, that $\Int_{\cc}(G)=S\rtimes C$. Moreover, the fact that $|C|$ is equal to the greatest common divisor of the $\inte(G/N)$, as $N$ varies in $\Upsilon\setminus\graffe{G}$, is a direct consequence of Lemma \ref{intensity of quotients}.
\end{proof}

\noindent
Let $p$ be a prime number and let $G$ be a pro-$p$-group. Let moreover $C$ be as in Proposition \ref{inte profinite}. The \indexx{intensity} $\inte(G)$ of $G$ is defined to be the cardinality of $C$. Thanks to Proposition \ref{inte profinite}, the intensity of $G$ is also equal to the greatest common divisor of the set 
$\graffe{\inte(G/N) : N \ \text{normal open in} \ G, \ N\neq G}$.

\begin{corollary}\label{pro-p-abelian}
Let $p$ be a prime number and let $G$ be an abelian pro-$p$-group.
If $G$ is non-trivial, then $\inte(G)=p-1$.
\end{corollary}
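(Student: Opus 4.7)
The plan is to deduce this directly from Proposition \ref{inte profinite} combined with Proposition \ref{proposition abelian}, which handles the finite abelian case. Specifically, Proposition \ref{inte profinite} tells us that
\[
\inte(G)=\gcd\graffe{\inte(G/N) : N \ \text{normal open in} \ G, \ N\neq G},
\]
so the task reduces to (i) showing that the index set is non-empty, and (ii) computing each $\inte(G/N)$ appearing in the gcd.

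For (i), I would use that every pro-$p$-group is residually finite-$p$: since $G$ is non-trivial, pick a non-identity element $g \in G$, and by residual finiteness there is an open normal subgroup $N$ of $G$ with $g \notin N$, in particular $N \neq G$. So the set of proper open normal subgroups is non-empty.

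For (ii), note that for every such $N$, the quotient $G/N$ is a finite discrete quotient of $G$, which is a non-trivial abelian $p$-group because $G$ is abelian and $N \neq G$. Proposition \ref{proposition abelian} then yields $\inte(G/N)=p-1$ for every such $N$. Therefore
\[
\inte(G)=\gcd\graffe{p-1}=p-1,
\]
which is the claim. There is no substantial obstacle here; the only thing to check carefully is the non-emptiness of the indexing set, since otherwise the gcd would be $0$ by convention and the formula from Proposition \ref{inte profinite} would degenerate. The residual finiteness of pro-$p$-groups handles this cleanly.
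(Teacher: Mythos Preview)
Your proof is correct and follows the same approach as the paper: the paper's one-line proof simply cites Theorem \ref{theorem abelian} (whose abelian case is Proposition \ref{proposition abelian}), implicitly relying on the gcd characterization of intensity from Proposition \ref{inte profinite} that you made explicit. Your additional check that the indexing set is non-empty is a detail the paper leaves implicit.
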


\begin{proof}
If $G$ is non-trivial, then Theorem \ref{theorem abelian} yields $\inte(G)=p-1$.
\end{proof}

\section{Non-abelian groups, part I}\label{section if top intense}

The main purpose of Section \ref{section if top intense} is to give a proof of Proposition \ref{proposition infinite analytic}. We refer to Section \ref{section background} for the definitions of just-infinite profinite groups and of $p$-adic analytic groups and their dimensions.

\begin{proposition}\label{proposition infinite analytic}
Let $p>3$ be a prime number and let $G$ be a non-abelian infinite 
pro-$p$-group. Assume that $\inte(G)>1$. 
Then $G$ is a just-infinite $p$-adic analytic group of dimension $3$.
\end{proposition}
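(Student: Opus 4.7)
The plan is to transfer structural information from the discrete quotients of $G$ to $G$ itself. By Proposition~\ref{inte profinite}, every non-trivial discrete quotient $G/N$ has $\inte(G/N)>1$; hence for $p>3$, every discrete quotient of class at least $4$ is a $p$-obelisk (Proposition~\ref{class 4 obelisk}) with width pattern starting $(2,1,2,1,\ldots)$ (Theorems~\ref{theorem dimensions} and~\ref{theorem dimension layers}). The first step is to produce discrete quotients of arbitrarily high class. This uses that $G$ is non-abelian and infinite, together with a ``low-class trap'' ruling: the intensity classification in classes $2$ and $3$ (Theorems~\ref{theorem class2 complete} and~\ref{theorem intensity class 3}), combined with a continuity analysis of the induced commutator form on $G/G_2$ inside each extraspecial quotient, forces any infinite non-abelian pro-$p$-group all of whose discrete quotients have class at most $3$ to be either abelian or finite.

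First I would show $\rk(G)=3$. By Proposition~\ref{rank 3}, every discrete quotient of class at least $4$ has rank exactly $3$. Fix $N_0$ open normal with $G/N_0$ of class at least $4$; for any open normal $N$, the quotient $G/N$ is a quotient of the rank-$3$ group $G/(N\cap N_0)$, and since rank does not increase under quotients of finite $p$-groups, $\rk(G/N)\leq 3$. Hence $\rk(G)\leq 3$, and combined with $\rk(G)\geq \rk(G/N_0)=3$ this gives $\rk(G)=3$. In particular $G$ has finite rank, so $G$ itself is an open pro-$p$-subgroup of $G$ of finite rank, and $G$ is $p$-adic analytic.

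Next I would identify $\dim(G)=3$. Applying the width pattern to all sufficiently high-class discrete quotients of $G$, the widths of the lower central series of $G$ are $(2,1,2,1,\ldots)$ alternating indefinitely, so every $G_i$ is open in $G$ with $|G:G_{2k+1}|=p^{3k}$. Using the regularity of each obelisk quotient (Lemma~\ref{obelisk regular}) together with Lemma~\ref{black core} applied levelwise and passed to the inverse limit, one obtains $G^{p^k}=G_{2k+1}$ in $G$, so $|G:G^{p^k}|=p^{3k}$. Combined with the standard growth formula $|U:U^{p^k}|=p^{k\dim(U)}$ for an open uniform subgroup $U$ of $G$ (whose existence is guaranteed by Lemma~\ref{uniform characteristic}), this forces $\dim(G)=\dim(U)=3$.

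Finally, for just-infinite, let $N$ be a non-trivial closed normal subgroup of $G$; I would show $N$ is open. The strategy is a profinite analog of Lemma~\ref{obelisks normal squeezed}: every closed normal subgroup of $G$ satisfies $G_{i+1}\subseteq N\subseteq G_i$ for some $i$. This is obtained by applying Lemma~\ref{normali schiacchiati generale} in each discrete quotient $G/M$, using Lemma~\ref{centre at the bottom} to verify the centre hypothesis, and then passing to the inverse limit over open normals $M$. Since every $G_i$ is open, a non-trivial $N$ must contain some open $G_{i+1}$ and is therefore open. The hardest part will be the dimension step, which requires carefully transferring the regularity identity $G^{p^k}=G_{2k+1}$ from the finite quotients to the profinite setting and pairing it with the $p$-adic analytic dimension formula, together with dispatching the low-class pathology in the very first step.
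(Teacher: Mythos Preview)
Your approach is essentially the paper's: both establish (i) discrete quotients of arbitrary class, (ii) $\rk(G)=3$ via Proposition~\ref{rank 3}, (iii) dimension $3$, and (iv) just-infiniteness via the normal-subgroup squeeze between consecutive lower central terms. Steps (i), (ii), (iv) match the paper's Lemmas~\ref{quotients of any class}, \ref{finite rank}, \ref{just-infinite} almost verbatim; your use of Lemma~\ref{centre at the bottom} plus Lemma~\ref{normali schiacchiati generale} in place of Lemma~\ref{obelisks normal squeezed} is an equivalent route.

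The one genuine difference is the dimension computation. The paper exhibits a concrete open uniform subgroup: it shows $G_2$ is torsion-free (Lemma~\ref{torsion-free}), that $G_2/\cl(G_2^p)=G_2/G_4$ is abelian of order $p^3$, and that $G_2$ is $3$-generated, hence uniform of dimension $3$ (Lemma~\ref{dimension 3}). You instead compute dimension from growth: $\cl(G^{p^k})=G_{2k+1}$ gives $|G:G^{p^k}|=p^{3k}$, and then you invoke the uniform growth formula. Your route is valid but note that the last inference requires one extra sentence: $\rk(G)=3$ already gives $\dim(G)\leq 3$, and comparing $|G:G^{p^k}|=p^{3k}$ with $|G:U^{p^k}|=|G:U|\cdot p^{k\dim(U)}$ for large $k$ gives $\dim(G)\geq 3$. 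The paper's approach buys a named uniform subgroup ($G_2$), which it does not actually need later; yours is shorter once the growth characterization is on the table.

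One small gap in your step (i): you only describe the ``low-class trap'' for class at most $3$, but must also dispose of the possibility that all discrete quotients have class bounded by some $C\geq 4$. This is easy---the obelisk width bounds then make $G$ finite---but the paper handles it explicitly (via Theorem~\ref{theorem consecutive layers}) before reducing to the $C=2$ extraspecial argument, and you should too.
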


\noindent
The following assumptions will be valid until the end of Section \ref{section if top intense}.
Let $p$ be an odd prime number and let $G$ be an infinite non-abelian pro-$p$-group of intensity greater than $1$. 
Let $(G_i)_{i\geq 1}$ denote the lower central series of $G$, as defined in Section \ref{section background}, and let $\alpha$ be a topologically intense automorphism of $G$ of order $2$. The existence of $\alpha$ is guaranteed by the combination of Propositions \ref{inte profinite} and \ref{proposition divisible by 2}.

\begin{lemma}\label{inte quozienti}
The automorphism $\alpha$ induces an intense automorphism of order $2$ on each non-trivial discrete quotient of $G$. 
\end{lemma}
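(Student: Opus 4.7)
The plan is to reduce the lemma to showing that, for every open normal subgroup $N\neq G$, the automorphism $\bar\alpha$ of $G/N$ induced by $\alpha$ is non-trivial. Indeed, Lemma \ref{bristol} already yields that $\bar\alpha$ is intense, and since $\alpha^2=\id_G$ we get $\bar\alpha^2=\id_{G/N}$; hence showing $\bar\alpha\neq\id_{G/N}$ suffices to conclude that $\bar\alpha$ has order exactly $2$.

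The first step is to prove that $\alpha$ acts on $G/\Phi(G)$ by inversion. From the proof of Proposition \ref{inte profinite}, the natural map $\chi\colon\Int_{\cc}(G)\to C\leq\F_p^*$ encoding the action on $G/\Phi(G)$ has as kernel the Sylow pro-$p$-subgroup $S$ of $\Int_{\cc}(G)$. Were $\alpha$ in $\ker\chi=S$, then $\alpha$ would have order a power of $p$; since $\alpha$ has order $2$ and $p$ is odd, this would force $\alpha=\id_G$, contradicting the assumption. Thus $\chi(\alpha)$ is an element of $C\subseteq\F_p^*$ of order $2$, and hence $\chi(\alpha)=-1$; equivalently, $\alpha$ inverts every element of $G/\Phi(G)$.

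The second step is to transfer this non-triviality to $G/N$ for any open normal $N\neq G$. I claim $N\Phi(G)\neq G$: otherwise, passing to the finite $p$-group $G/N$ one would obtain $\Phi(G/N)\supseteq\Phi(G)N/N=G/N$, and Lemma \ref{subgroup times frattini} applied to $G/N$ would yield $G/N=\graffe{1}$, contradicting $N\neq G$. Therefore $G/(N\Phi(G))$ is a non-trivial quotient of the $\F_p$-vector space $G/\Phi(G)$, on which $\alpha$ acts as scalar multiplication by $-1\neq 1$. Since $G/(N\Phi(G))$ is itself a quotient of $G/N$, this forces $\bar\alpha\neq\id_{G/N}$, completing the argument.

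The only subtlety is the appeal to the profinite analogue of the intense character, which is essentially already set up in the inverse-limit description of $\Int_{\cc}(G)$ from Lemma \ref{intense group is profinite} and Proposition \ref{inte profinite}; everything else is a straightforward Frattini-type reduction.
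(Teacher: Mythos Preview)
Your proof is correct and follows essentially the same idea as the paper's: both arguments show that $\alpha$, having order $2$ coprime to $p$, cannot lie in the pro-$p$ kernel $S$ of $\Int_{\cc}(G)\to C$, hence acts as the scalar $-1$ on each Frattini quotient $(G/N)/\Phi(G/N)$ with $N\neq G$, forcing $\bar\alpha\neq\id_{G/N}$. The paper phrases this via the inverse-limit diagram and the normalization $\alpha\in C$, while you spell out the Frattini argument explicitly; the content is the same.
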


\begin{proof}
Let $C$ be as in proposition \ref{inte profinite} and, without loss of generality, assume that $\alpha\in C$. Then, as a consequence of Lemma \ref{bristol}, given any open normal subgroups $N$ and $M$ of $G$ such that $N\subseteq M\neq G$, we get a commutative diagram
$$
\begin{diagram}
C & \rTo & \Int(G/M) \\
\dTo  & \ruTo &       \\
\Int(G/N)  
\end{diagram}.
\vspace{3pt}\\
$$
Moreover, the map $\alpha$ being non-trivial, there exists 
a discrete quotient of $G$ on which $\alpha$ induces an automorphism of order $2$. The choice of $M$ and $N$ being arbitrary, it follows that $\alpha$ induces an intense automorphism of order $2$ on each non-trivial discrete quotient of $G$.
%Let $N$ be an open normal subgroup of $G$ such that $N\neq G$.
%By Lemma \ref{bristol}, the automorphism $\alpha$ induces an intense automorphism on each discrete quotient of $G$.
%Since $\alpha$ has order $2$, there exists an open normal subgroup $M$ of $G$ such that the automorphism $\alpha_0$ of $G/M$ that is induced by $\alpha$ has order $2$. Fix such $M$. Lemma \ref{action chi^i} yields that $\alpha_0$ induces the inversion map on $(G/M)/(G/M)_2$, and so, if $\pi:G\rightarrow G/M$ denotes the canonical projection, then $M_2=\pi^{-1}((G/M)_2)$ is an open normal subgroup of $G$ with the property that $\alpha$ induces the inversion map on $G/M_2$. In an analogous way, let $N_2$ be the unique open normal subgroup of $G$ that contains $N$ and such that $N_2/N=(G/N)_2$. 
%Then $\alpha$ induces an intense automorphism $\alpha_1$ of $H=G/(M_2\cap N_2)$ and, since $\alpha_0$ has order $2$, the order of $\alpha_1$ is also $2$. 
%The subgroup $H_2$ being contained in $N_2/(N_2\cap M_2)$, it follows from Lemma \ref{action chi^i} that $\alpha$ induces the inversion map on $G/N_2$. In particular, the automorphism of $G/N$ that is induced by $\alpha$ has order $2$.
%Since the choice of $N$ was arbitrary, we are done.
\end{proof}

\begin{lemma}\label{quotient obelisk}
Assume that $p>3$.
Then each discrete quotient of $G$ of class at least $4$ is a $p$-obelisk.
\end{lemma}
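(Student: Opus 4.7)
The plan is to combine the preceding lemma with the finite-case classification proven earlier in the thesis. Let $Q$ be a discrete quotient of $G$ of class at least $4$; then $Q$ is a finite $p$-group, and in particular $Q$ is non-trivial (since it has positive class).

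First, I would invoke Lemma \ref{inte quozienti}, which guarantees that the topologically intense automorphism $\alpha$ of $G$ (whose existence is assumed in the standing hypothesis of the section) descends to an intense automorphism of $Q$ of order $2$. In particular, $\inte(Q) \geq 2 > 1$, so $Q$ is a finite $p$-group of class at least $4$ and intensity greater than $1$.

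At this point the conclusion is immediate from Proposition \ref{class 4 obelisk}, which asserts precisely that a finite $p$-group (with $p>3$) of class at least $4$ and intensity greater than $1$ is a $p$-obelisk. There is no obstacle to speak of here: the lemma is really a packaging statement that combines the descent of topological intensity to discrete quotients (Lemma \ref{inte quozienti}) with the structural result for the finite case (Proposition \ref{class 4 obelisk}), and its purpose is to serve as a stepping stone toward Proposition \ref{proposition infinite analytic} by ensuring that we have obelisk structure available in every sufficiently large discrete quotient of $G$.
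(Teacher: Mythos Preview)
Your proof is correct and follows essentially the same route as the paper: invoke Lemma~\ref{inte quozienti} to obtain an intense automorphism of order $2$ on the discrete quotient, then apply Proposition~\ref{class 4 obelisk}.
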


\begin{proof}
Let $\overline{G}$ be a discrete quotient of class at least $4$ of $G$. By Lemma \ref{inte quozienti}, the map $\alpha$ induces an intense automorphism of order $2$ of $\overline{G}$. It follows from Proposition \ref{class 4 obelisk} that $\overline{G}$ is a $p$-obelisk.
\end{proof}

\begin{lemma}\label{quotients of any class}
Let $c$ be a non-negative integer. Then $G$ has a discrete quotient of class $c$.
\end{lemma}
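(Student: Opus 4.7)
The cases $c=0,1$ are immediate: $N=G$ handles $c=0$, and for $c=1$ any non-trivial finite quotient of $G/\Phi(G)$—which is non-trivial since $G$ is—gives a discrete abelian quotient. For $c \geq 2$, it suffices to produce a discrete quotient of class at least $c$ and then quotient by the $(c+1)$-th term of its lower central series to fix the class exactly; so the plan reduces to showing that the classes of discrete quotients of $G$ are unbounded.

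I argue by contradiction: suppose these classes are bounded by some $c_0$. Since $\bigcap_N N = 1$ as $N$ ranges over open normal subgroups of $G$, and each such $N$ contains $G_{c_0+1}$ (because $G/N$ has class at most $c_0$), one deduces $G_{c_0+1} = 1$; as $G$ is non-abelian, $c_0 \geq 2$.

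If $c_0 \geq 3$, the set of open normal $N$ with $G/N$ of class $c_0$ is non-empty (since $G_{c_0} \neq 1$) and closed under finite intersection, hence cofinal. By Lemma~\ref{inte quozienti} each such $G/N$ has intensity greater than $1$; Theorems~\ref{theorem class at least 3} and~\ref{theorem dimension layers} (when $p>3$) or Corollary~\ref{3-gps class at most 4} (when $p=3$, which additionally forces $c_0 \leq 4$) then bound every width $w_i$ of $G/N$ by $2$. Hence $|G/N| \leq p^{2c_0+1}$ uniformly, making the inverse limit $G$ finite—contradicting infinitude.

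If $c_0 = 2$, every non-abelian discrete quotient is extraspecial of exponent $p$ by Theorem~\ref{theorem class2 complete}. An inverse-limit argument over the cofinal system of non-abelian quotients yields $|G_2| = p$, and the commutator induces a continuous, alternating, non-degenerate bilinear form $\phi : V \times V \to \F_p$ on $V := G/G_2$, with non-degeneracy arising because $\ZG(H) = H_2$ in each extraspecial discrete quotient. Then $v \mapsto \phi(v,\cdot)$ is a continuous injection $V \to \Hom_{\mathrm{cont}}(V, \F_p)$; since $V$ is compact and the target is discrete (as $\F_p$ is), the image—and hence $V$—must be finite, yielding once more the contradiction that $G$ is finite. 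The main obstacle is executing this $c_0 = 2$ case cleanly: it requires a compactness-versus-discreteness argument in the style of Pontryagin duality to extract finiteness of $G/G_2$ from the continuous non-degenerate bilinear form.
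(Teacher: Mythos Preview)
Your reduction and the case $c_0\geq 3$ match the paper's strategy: assume the classes are bounded, deduce $G$ is nilpotent of bounded class, then use the width bounds from the classification to force $|G|$ finite. The paper phrases this via Theorem~\ref{theorem consecutive layers} (which gives $w_iw_{i+1}\leq 2$ directly), while you invoke the sharper Theorems~\ref{theorem class at least 3} and~\ref{theorem dimension layers} together with Corollary~\ref{3-gps class at most 4}; both routes yield the same finiteness contradiction.

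For $c_0=2$ your approach diverges from the paper's and, as written, has a circularity you should be aware of. You assert that $v\mapsto\phi(v,\cdot)$ is a \emph{continuous} injection into $\Hom_{\mathrm{cont}}(V,\F_p)$ with the target \emph{discrete}. In the compact--open topology on the target, discreteness does hold (since $V$ is compact and $\F_p$ discrete), but continuity of the map then requires the preimage of $\{0\}$---namely the left radical, which is $\{0\}$ by non-degeneracy---to be open in $V$; that is precisely the conclusion you want. Conversely, in the topology of pointwise convergence the map is continuous but the target is not discrete. The salvage is to argue instead that the jointly continuous map $\phi:V\times V\to\F_p$ factors through a finite quotient $(V/W_1)\times(V/W_2)$, whence $W_1$ lies in the left radical and must be trivial, forcing $V$ finite. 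So your idea is recoverable, but the sentence as stated does not stand.

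The paper sidesteps all of this with a one-line group-theoretic argument: choose open normals $M\subsetneq N$ with $G/N$ of class $2$; then $G/M$ is extraspecial (Theorem~\ref{theorem class2 complete}), the non-trivial normal subgroup $N/M$ must contain the centre $[G/M,G/M]$ (Lemma~\ref{normal intersection centre trivial}), and hence $G/N$ is abelian---contradiction. This avoids any topological subtlety and is worth absorbing.
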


\begin{proof}
Assume by contradiction that there exists an upper bound on the class of the discrete quotients of $G$ and let $C\in\Z_{\geq 0}$ be minimal with this property. Since $G$ is non-abelian, one has $C\geq 2$.
Let us now denote by $\Upsilon$ the collection of open normal subgroups of $G$. Then $G=\underset{N\in\Upsilon}{\varprojlim}G/N$ and so $G$ has class $C$. The group $G$ being infinite, it follows from Theorem \ref{theorem consecutive layers} that $C<3$ and so $C=2$. Let now $M,N\in\Upsilon$ be such that $G/N$ has class $2$ and $M\subsetneq N$. Let $K=G/M$ and
let $\pi:G\rightarrow K$ denote the canonical projection. Then $K$ has class $2$ and, as a consequence of Lemma \ref{inte quozienti}, the intensity of $K$ is greater than $1$. By Theorem \ref{theorem class2 complete}, the group $K$ is extraspecial and, $\pi(N)$ being non-trivial and normal in $K$, it follows from Lemma \ref{normal intersection centre trivial} that $\pi(N)$ contains $\ZG(K)=[K,K]$. In particular, $K/\pi(N)$ is abelian and therefore so is $G/N$. Contradiction. 
\end{proof}

\begin{lemma}\label{lcs open}
The set $\{G_i\}_{i\geq 1}$ is a base of open neighbourhoods of $1$ in $G$.
\end{lemma}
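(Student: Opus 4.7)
The plan is to show (i) every $G_i$ is open in $G$, and (ii) every open neighbourhood of $1$ contains some $G_i$.

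Claim (ii) is easy: given an open normal subgroup $N$ of $G$, the finite $p$-group $G/N$ is nilpotent of some class $c$, and since for a finite group the profinite and abstract lower central series coincide, the canonical projection $G\to G/N$ sends $G_{c+1}$ into $(G/N)_{c+1}=\{1\}$, so $G_{c+1}\subseteq N$.

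For (i), I will prove by induction on $i$ that $|G_i/G_{i+1}|$ is finite; since a closed subgroup of finite index in a profinite group is open, this implies that each $G_i$ is open. First I note that necessarily $p>3$: by Lemma \ref{inte quozienti} every non-trivial discrete quotient of $G$ has intensity greater than $1$, so if $p=3$ then Corollary \ref{3-gps class at most 4} would force all such quotients to have class at most $4$, contradicting Lemma \ref{quotients of any class}. For any open normal subgroup $N$ of $G$, the image of $G_i$ in the finite group $G/N$ equals $(G/N)_i$, hence $|G_iN/G_{i+1}N|=p^{\wt_{G/N}(i)}$. By Lemma \ref{quotients of any class}, the $N$ for which $G/N$ has nilpotency class at least $\max(i+1,3)$ form a cofinal family in the directed set of open normal subgroups of $G$; for every such $N$, the quotient $G/N$ has intensity greater than $1$ (by Lemma \ref{inte quozienti}) and class at least $3$, so Theorem \ref{theorem dimension layers} applies and yields $\wt_{G/N}(i)\leq 2$. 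Taking the inverse limit of $G_iN/G_{i+1}N$ along this cofinal subsystem gives $|G_i/G_{i+1}|\leq p^2$, which together with the inductive hypothesis gives $|G/G_{i+1}|<\infty$, as required.

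The main obstacle is bridging the profinite series, defined by taking closures $G_{i+1}=\cl([G,G_i])$, with the abstract lower central series of the finite discrete quotients on which all the structural results of the paper operate. The resolution is the observation that on each finite discrete quotient the two series coincide, which, combined with the cofinality of quotients of large class and the uniform bound $\wt_{G/N}(i)\leq 2$ of Theorem \ref{theorem dimension layers}, transfers the finite-case structure to the profinite layers $G_i/G_{i+1}$.
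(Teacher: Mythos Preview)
Your proof is correct and follows essentially the same approach as the paper's: both use the cofinal family of open normal subgroups $N$ with $G/N$ of large class, apply Lemma~\ref{inte quozienti} to get $\inte(G/N)>1$, and invoke Theorem~\ref{theorem dimension layers} to bound the widths uniformly, forcing each $G_i/G_{i+1}$ to be finite. Your treatment is more explicit than the paper's---in particular you spell out why $p>3$ must hold before citing Theorem~\ref{theorem dimension layers}, a point the paper leaves implicit.
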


\begin{proof}
%Let $l_1(G)=G$ and, for each non-negative integer $i$, define 
%$l_{i+1}(G)=\cl(l_i(G)^p[G,l_i(G)])$.
Let moreover $\Upsilon$ denote the collection of all open normal subgroups $N$ of $P$ with the property that $G/N$ has class at least $3$. As a consequence of Lemma \ref{quotients of any class}, the group $G$ is equal to 
$\underset{N\in\Upsilon}{\varprojlim} G/N$. Moreover, each subgroup $G_i$ being closed, we also have 
$G_i=\underset{N\in\Upsilon}{\varprojlim} (G/N)_i$. 
Thanks to Lemma \ref{inte quozienti}, whenever $N\in\Upsilon$, the quotient $G/N$ has intensity greater than $1$ and so Theorem \ref{theorem dimension layers} yields that $\graffe{G_i}_{i\geq 1}$ is a base of open neighbourhoods of $1$ in $G$.
\end{proof}

\begin{lemma}\label{finite rank}
Assume that $p>3$. 
Then $\rk(G)=3$ and $G$ is $p$-adic analytic.
\end{lemma}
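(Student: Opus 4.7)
The plan is to reduce the statement to Proposition \ref{rank 3}, which asserts that every finite $p$-group of class at least $4$ with intensity greater than $1$ has rank exactly $3$. The supremum definition of profinite rank (Definition \ref{rank profinite}) tells me it suffices to control the ranks of the discrete quotients of $G$, and these are readily available through the lower central series.

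First I will observe that, by Lemma \ref{lcs open}, the family $(G_i)_{i \geq 1}$ is a base of open neighbourhoods of $1$, so in particular each $G_i$ is open and each $G/G_i$ is a finite $p$-group. Lemma \ref{quotients of any class} guarantees that $G$ has discrete quotients of any prescribed class, and combined with Lemma \ref{lcs open} this implies that for every $i \in \Z_{\geq 5}$ the quotient $G/G_i$ has class $i-1 \geq 4$. Applying Lemma \ref{quotient obelisk}, each such $G/G_i$ is a $p$-obelisk, and by Lemma \ref{inte quozienti} it has intensity greater than $1$. Proposition \ref{rank 3} then yields $\rk(G/G_i) = 3$ for every $i \geq 5$.

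Next I will handle the two bounds on $\rk(G)$ separately. For the upper bound, let $N$ be any open normal subgroup of $G$ different from $G$. Because $(G_i)_{i \geq 1}$ is a base of neighbourhoods of $1$, there exists $i \geq 5$ with $G_i \subseteq N$, so $G/N$ is a quotient of the finite group $G/G_i$; since the rank of a finite group does not increase under quotients, $\rk(G/N) \leq \rk(G/G_i) = 3$. Taking the supremum over all such $N$ gives $\rk(G) \leq 3$. For the lower bound, the same family of quotients shows $\rk(G) \geq \rk(G/G_5) = 3$. Hence $\rk(G) = 3$.

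Finally, the $p$-adic analyticity of $G$ is immediate: $G$ is itself an open pro-$p$-subgroup of $G$ and has finite rank, so it fits the definition recalled in Section \ref{section background}. I do not anticipate a real obstacle in this argument; the substantive input is Proposition \ref{rank 3}, and the only care needed is the routine bookkeeping to pass from the finite discrete quotients $G/G_i$ to the profinite invariant $\rk(G)$.
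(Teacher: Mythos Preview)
Your proof is correct and follows essentially the same approach as the paper: both reduce to Proposition \ref{rank 3} by observing that discrete quotients of class at least $4$ are cofinal among all discrete quotients, so the supremum defining $\rk(G)$ may be taken over these alone. Your invocation of Lemma \ref{quotient obelisk} is unnecessary, since Proposition \ref{rank 3} requires only class at least $4$ and intensity greater than $1$, but this does no harm.
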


\begin{proof}
By Lemma \ref{inte quozienti}, the automorphism $\alpha$ induces an intense automorphism of order $2$ on each non-trivial discrete quotient of $G$ and, as a consequence of Lemma \ref{quotients of any class}, the group $G$ has finite quotients of any possible class. 
It follows that 
$\rk(G)=\sup\graffe{\rk(G/N) : G/N\ \text{has class at least} \ 4}$
and hence Proposition \ref{rank 3} yields $\rk(G)=3$. The group $G$ being a pro-$p$-group of finite rank, it is $p$-adic analytic.
\end{proof}

\begin{lemma}\label{just-infinite}
Assume that $p>3$. 
Let $N$ be a non-trivial closed subgroup of $G$.
Then the following are equivalent.
\begin{itemize}
 \item[$1$.] The subgroup $N$ is normal.
 \item[$2$.] There exists $l\in\Z_{\geq 1}$ such that 
 $G_{l+1}\subseteq N\subseteq G_l$. 
\end{itemize}
Moreover, $P$ is just-infinite.
\end{lemma}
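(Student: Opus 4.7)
The plan is as follows. The implication $(2)\Rightarrow(1)$ is straightforward: if $G_{l+1}\subseteq N\subseteq G_l$, then $N/G_{l+1}$ lies inside the centre of $G/G_{l+1}$ (since $[G,G_l]\subseteq G_{l+1}$), hence $N$ is normal modulo $G_{l+1}$, and therefore normal in $G$. The just-infinite conclusion will then follow almost immediately from $(1)\Rightarrow(2)$: every non-trivial closed normal subgroup $N$ contains some $G_{l+1}$, and by Lemma~\ref{lcs open} the subgroup $G_{l+1}$ is open, so $G/G_{l+1}$ is finite and $G/N$ is finite.

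The real content is $(1)\Rightarrow(2)$. I would first use Lemma~\ref{lcs open}, which gives $\bigcap_{i\geq 1}G_i=\{1\}$, to define $l=\max\{i\geq 1\,:\,N\subseteq G_i\}$; this maximum exists because $N$ is non-trivial and the $G_i$ shrink to $\{1\}$. It then remains to prove $G_{l+1}\subseteq N$. For every open normal subgroup $M$ of $G$, Lemma~\ref{inte quozienti} gives $\inte(G/M)>1$, and Proposition~\ref{normal squeezed} (applied to the normal subgroup $NM/M$ of the finite $p$-group $G/M$) produces a unique integer $i_M\geq 1$ with
\[
(G/M)_{i_M+1}\subseteq NM/M\subseteq (G/M)_{i_M}.
\]
Since the lower central series of $G/M$ equals $(G_iM/M)_{i\geq 1}$, the integer $i_M$ is the depth of $NM$ in $G$ modulo $M$. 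From $N\subseteq G_l$ we obtain $NM\subseteq G_lM$, so $i_M\geq l$ for every $M$. For the reverse, note that $\bigcap_M G_{l+1}M=G_{l+1}$ since $G_{l+1}$ is closed and $\{M\}$ is a base of neighbourhoods of $1$. Because $N\not\subseteq G_{l+1}$, one can therefore choose $M$ small enough that $N\not\subseteq G_{l+1}M$, and for such $M$ we get $i_M=l$ exactly, hence $G_{l+1}M\subseteq NM$.

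Finally, I would pass to the limit: for every sufficiently small $M$ we have $G_{l+1}\subseteq G_{l+1}M\subseteq NM$, so
\[
G_{l+1}\subseteq \bigcap_{M}NM = N,
\]
the last equality holding because $N$ is closed and the $M$'s form a base at $1$. This gives $G_{l+1}\subseteq N\subseteq G_l$, proving $(1)\Rightarrow(2)$. The step I expect to require the most care is ensuring one can simultaneously achieve $i_M=l$ and extract the uniform inclusion $G_{l+1}\subseteq NM$ for all sufficiently small $M$; this is the point where the squeezed-normal-subgroup theorem for finite $p$-groups (Proposition~\ref{normal squeezed}) has to be combined cleanly with the compactness of $G$ and the fact, supplied by Lemma~\ref{lcs open}, that the lower central series is a base of neighbourhoods of $1$. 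Everything else is a routine closing-up argument.
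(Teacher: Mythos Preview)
Your proof is correct and follows essentially the same limiting strategy as the paper: squeeze the image of $N$ between consecutive terms of the lower central series in each finite quotient, then intersect. The only difference is in which squeezing lemma you invoke: you use Lemma~\ref{inte quozienti} together with Proposition~\ref{normal squeezed} (normal subgroups of finite $p$-groups with $\inte>1$ are squeezed), whereas the paper uses Lemma~\ref{quotient obelisk} together with Lemma~\ref{obelisks normal squeezed} (normal subgroups of $p$-obelisks are squeezed), and it works with the specific base $\{G_k\}_{k\geq 1}$ rather than arbitrary open normal $M$. Both routes are equally short; your citation of Proposition~\ref{normal squeezed} is arguably more direct since it bypasses the obelisk structure entirely.
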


\begin{proof}
The implication $(2)\Rightarrow(1)$ is clear; we prove $(1)\Rightarrow(2)$. Thanks to Lemma \ref{lcs open}, every element of the lower central series of $G$ is open and $\graffe{G_i}_{i\geq 1}$ is a base of open neighbourhoods of $1$. 
For all $k\in\Z_{\geq 1}$, denote by $\pi_k:G\rightarrow G/G_k$ the canonical projection and set ${l=\max\graffe{k:\pi_k(N)=1}}$. The index $l$ is well-defined, because $N\neq 1$, and 
$N$ is contained in $G_l$, but not in $G_{l+1}$, by the maximality of $l$. In particular, for each $k>l$, the minimum jump of $\pi_k(N)$ in $G/G_k$ is $l$. 
Now, by Lemma \ref{quotient obelisk}, whenever $k\geq 5$, the quotient $G/G_k$ is a $p$-obelisk. It follows from Lemma \ref{obelisks normal squeezed}($2$) that, whenever $k>\max\graffe{l,5}$, the subgroup $G_{l+1}$ is contained in $NG_k$, and therefore
\[G_{l+1}\,\subseteq\, \bigcap_{k>\max\graffe{l,5}}NG_k\,=\,\bigcap_{k\geq 1}NG_k\,=\,\cl(N)\,=\,N.\]
We have proven that $G_{l+1}\subseteq N\subseteq G_l$ and thus also that $(1)$ implies $(2)$. 
To conclude, since each $G_k$ is open in $G$, the subgroup $N$ is open and the quotient $G/N$ is finite. 
Because of the arbitrary choice of $N$, the group $G$ is just-infinite.
\end{proof}

\begin{lemma}\label{torsion-free}
Assume that $p>3$. 
Then $G$ is torsion-free.
\end{lemma}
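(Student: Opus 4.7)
The plan is to show directly that every nontrivial $x \in G$ has infinite order, by exhibiting arbitrarily large powers of $p$ dividing the order of the image of $x$ in suitable discrete quotients.

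First I would fix a nontrivial $x \in G$ and use Lemma~\ref{lcs open}, which tells us that $\{G_i\}_{i \geq 1}$ is a base of open neighbourhoods of $1$, to define the depth $d = \min\{i \geq 1 : x \in G_i\}$. This is well-defined and positive, and $x \notin G_{d+1}$. For each integer $K \geq \max\{d+2, 5\}$, Lemma~\ref{quotient obelisk} ensures that $G/G_K$ is a $p$-obelisk, and since $G$ is infinite and the lower central series has strictly decreasing terms, this obelisk has class exactly $K-1$.

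Next I would control the $p$-power map inside each such obelisk. Let $\bar{x}$ denote the image of $x$ in $G/G_K$; then $\bar{x} \in G_d/G_K \setminus G_{d+1}/G_K$, so $\bar{x}$ has depth $d$ in $G/G_K$. Put $k = \lfloor (K-2-d)/2 \rfloor$ and $j = d + 2k \leq K-2$. By Lemma~\ref{parity obelisks}, for every index $i$ with $1 \leq i \leq K-2$ in the obelisk $G/G_K$ of class $K-1$, one has $\mathrm{wt}_{G/G_K}(i) = 2$ if $i$ is odd and $\mathrm{wt}_{G/G_K}(i) = 1$ if $i$ is even; in either case the hypotheses of Corollary~\ref{power iso bottom} are satisfied for the pair $(d, j)$. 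Therefore the induced map $\rho^k_d : G_d/G_{d+1} \to G_{d+2k}/G_{d+2k+1}$ (in $G/G_K$) is an isomorphism, which forces $\bar{x}^{p^k}$ to lie in $G_{d+2k}/G_K \setminus G_{d+2k+1}/G_K$. In particular $\bar{x}^{p^k} \neq 1$, so the order of $\bar{x}$ in $G/G_K$ is at least $p^{k+1}$.

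Finally, since the order of $x$ in $G$ dominates the order of its image in every discrete quotient, and $k \to \infty$ as $K \to \infty$, the element $x$ cannot have finite order. As $x$ was an arbitrary nontrivial element of $G$, this proves that $G$ is torsion-free. The main technical step here is the verification that the required power map between consecutive layers is an isomorphism for every $K$ large enough; this reduces entirely to checking the width pattern guaranteed by Lemma~\ref{parity obelisks} together with Corollary~\ref{power iso bottom}, and I do not foresee a serious obstacle beyond being careful about the edge case where $d+2k$ approaches the class of $G/G_K$, which is precisely why the choice $k = \lfloor (K-2-d)/2 \rfloor$ leaves a safety margin.
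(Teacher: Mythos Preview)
Your proposal is correct and follows essentially the same approach as the paper: both use that $G/G_K$ is a $p$-obelisk for large $K$ (Lemma~\ref{quotient obelisk}) together with the fact that the $p$-power map induces isomorphisms between successive odd/even layers (Corollary~\ref{power iso bottom}), and conclude from the unbounded class that no nontrivial element can have finite order. Your version spells out the depth-tracking for a fixed element more explicitly, while the paper states the layer isomorphism once and concludes directly; one small slip to fix is that your depth should be $d = \max\{i \geq 1 : x \in G_i\}$ rather than $\min$ (as written it would always give $d=1$), though your subsequent use of $d$ is consistent with the intended meaning.
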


\begin{proof}
By Lemma \ref{quotient obelisk}, whenever $k$ is at least $5$, the quotient $G/G_k$ is a $p$-obelisk. It follows from Corollary \ref{power iso bottom} that, for each non-negative integer $i$, raising to the power $p$ induces a well-defined isomorphism 
$G_i/G_{i+1}\rightarrow G_{i+2}/G_{i+3}$. By Lemma \ref{quotients of any class}, there is no bound on the class of the finite quotients of $G$, and therefore $G$ is torsion-free.
\end{proof}

\begin{lemma}\label{dimension 3}
Assume that $p>3$. 
Then $G_2$ is open, uniform, and has dimension $3$.
\end{lemma}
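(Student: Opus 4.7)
The plan is to verify the three defining properties of a uniform pro-$p$-group for $G_2$, after first noting openness, and then to compute the dimension via the Frattini quotient.

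Openness of $G_2$ is immediate from Lemma \ref{lcs open}, since $\{G_i\}_{i\geq 1}$ is a base of open neighbourhoods of $1$. For the uniformity, I need to check that $G_2$ is topologically finitely generated, that $G_2/\cl(G_2^p)$ is abelian, and that $G_2$ is torsion-free. The third condition is handed to me by Lemma \ref{torsion-free}, since any closed subgroup of a torsion-free group is torsion-free. The first condition follows from Lemma \ref{finite rank}: as $\rk(G)=3$, every closed subgroup of $G$ has rank at most $3$, hence is topologically finitely generated.

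The heart of the argument is the second condition, and I would prove it by passing to discrete quotients. For each $k\geq 5$, the quotient $G/G_k$ is a $p$-obelisk by Lemma \ref{quotient obelisk}. In such a quotient, Lemma \ref{commutator indices} gives $[G_2,G_2]\subseteq G_4$ while Lemma \ref{black core} gives $G_2^p = G_4$ (taking $i=2$, $k=1$). Hence $[G_2,G_2]\,G_k\subseteq G_2^p\,G_k$ holds in $G$ for every $k\geq 5$, and intersecting over such $k$ (which is legitimate since the $G_k$ form a base of neighbourhoods of $1$ by Lemma \ref{lcs open}) yields $[G_2,G_2]\subseteq \cl(G_2^p)$. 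Thus $G_2/\cl(G_2^p)$ is abelian, and $G_2$ is uniform.

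For the dimension, I would use the fact that for a uniform pro-$p$-group the dimension equals $\dim_{\F_p}(G_2/\Phi(G_2))$. Since $\Phi(G_2) = \cl([G_2,G_2]G_2^p) = \cl(G_2^p)$ by the argument above, and since $G_2^p\,G_k = G_4$ in every $p$-obelisk quotient $G/G_k$ with $k\geq 5$, taking the intersection gives $\cl(G_2^p) = G_4$. Now Lemma \ref{parity obelisks} identifies the widths $w_2=1$ and $w_3=2$ in every sufficiently deep $p$-obelisk quotient, so $|G_2:G_4|=p^{w_2+w_3}=p^3$, and consequently $\dim(G_2)=3$. The only minor technical point requiring care is the interchange of closures with descending intersections of the $G_k$, but this is unproblematic because the $G_k$ are a neighbourhood basis of $1$ and every relevant set is closed.
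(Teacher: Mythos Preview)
Your proof is correct and follows essentially the same approach as the paper's: both pass to discrete $p$-obelisk quotients via Lemma \ref{quotient obelisk}, use Lemma \ref{black core} and Lemma \ref{commutator indices} to identify $\cl(G_2^p)=G_4$ and show $G_2/\cl(G_2^p)$ is abelian, invoke Lemma \ref{parity obelisks} for $|G_2:G_4|=p^3$, and conclude via Lemmas \ref{torsion-free}, \ref{finite rank}, and \ref{lcs open}. Your presentation is slightly more explicit about the limiting/intersection step, but the substance is the same.
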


\begin{proof}
Let $\overline{G}$ be a discrete quotient of class at least $5$ of $G$, which exists by Lemma \ref{quotients of any class}. 
As a consequence of Lemma \ref{quotient obelisk}, the group $\overline{G}$ is a $p$-obelisk and so Lemma \ref{parity obelisks} yields $|\overline{G}_2:\overline{G}_4|=p^3$. The subgroup $\overline{G}_2^p$ is equal to $\overline{G}_4$, thanks to Lemma \ref{black core}, and so, as a consequence of Lemma \ref{commutator indices}, the quotient $\overline{G}_2/\overline{G}_2^p=\overline{G}_2/\overline{G}_4$ is elementary abelian. It follows that each generating set of $\overline{G}_2$ has at least $3$ elements. However, the rank of $G$ is equal to $3$, thanks to Lemma \ref{finite rank}, and therefore $\overline{G}_2$ is generated by exactly $3$ elements.
Since $\overline{G}$ was chosen arbitrarily, the quotient $G_2/\cl(G_2^p)$ is abelian and any minimal set of topological generators of $G_2$ has $3$ elements. 
Now, as a consequence of Lemma \ref{torsion-free}, the torsion of $G_2$ is trivial and hence $G_2$ is uniform of dimension $3$. Moreover, the subgroup $G_2$ is open thanks to Lemma \ref{lcs open}.
\end{proof}

\noindent
We conclude Section \ref{section if top intense} by giving the proof of Proposition \ref{proposition infinite analytic}. Assume that $p>3$. Then $G$ is $p$-adic analytic, by Lemma \ref{finite rank}, and it has dimension $3$ thanks to Lemma \ref{dimension 3}. Moreover, $G$ is just-infinite by Lemma \ref{just-infinite}. The proof of Proposition \ref{proposition infinite analytic} is now complete.

\section{Two infinite groups}\label{section two gps}

In Section \ref{section two gps} we present two infinite pro-$p$-groups, which are $p$-adic analytic. We will see, in Section \ref{section na2}, the role they play in the proof of Theorem \ref{theorem unique infinite}.

\subsection{The first group}\label{section sl2}

\noindent
Let $p>3$ be a prime number and let $\pi:\SL_2(\Z_p)\rightarrow \SL_2(\F_p)$ be the canonical projection.
Let $\SL_2^\triangle(\F_p)$ denote the subgroup of $\SL_2(\F_p)$ consisting of those elements of the form 
$$
\begin{pmatrix}
1 & x \\
0 & 1 \\
\end{pmatrix}
\ \  \text{where} \ \ x\in\F_p.
$$
We define $\SL_2^{\triangle}(\Z_p)=\pi^{-1}(\SL_2^\triangle(\F_p))$ and remark that $\SL_2^\triangle(\Z_p)$ is a pro-$p$-group. Our notation is consistent with that of \cite{small}; however, we will make use of several facts coming from \cite[Ch.~$\mathrm{III}.17$]{huppert}, where the group $\SL_2^\triangle(\Z_p)$ is denoted by $\mathfrak{M}_{0,1,1}$.

\begin{lemma}\label{sl2 quotients obelisks}
Let $p>3$ be a prime number and let $G=\SL_2^\triangle(\Z_p)$. 
Denote by $(G_i)_{i\geq 1}$ the lower central series of $G$.
Then, for each $k\in\Z_{\geq 3}$, the quotient $G/G_k$ is a $p$-obelisk.
\end{lemma}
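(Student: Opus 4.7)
The plan is to verify the three defining properties of a $p$-obelisk for each quotient $G/G_k$ with $k \geq 3$: non-abelianity, $|G/G_k : (G/G_k)_3| = p^3$, and $(G/G_k)^p = (G/G_k)_3$. Since $(G/G_k)_i = G_i/G_k$ for $i \leq k$, these reduce to the three group-level statements $G_2 \neq G_3$, $|G:G_3| = p^3$, and $G^p = G_3$, together with the observation that $G^p \cdot G_k/G_k = (G/G_k)^p$. Once these are in hand, the conclusion is immediate: for $k \geq 3$ the quotient has class at least $2$ and inherits the displayed equalities from $G$.

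The first step will be to work with explicit topological generators of $G$, namely
\[
x_1 = \begin{pmatrix} 1 & 1 \\ 0 & 1 \end{pmatrix}, \qquad
x_2 = \begin{pmatrix} 1+p & 0 \\ 0 & (1+p)^{-1} \end{pmatrix}, \qquad
x_3 = \begin{pmatrix} 1 & 0 \\ p & 1 \end{pmatrix}.
\]
Reducing modulo $p$ identifies $\{x_1, x_2\}$ with a basis of $G/G_2 \cong \F_p^2$, so $|G:G_2| = p^2$. A direct commutator calculation gives $[x_2, x_1] \equiv x_3^{-1} \pmod{G_3}$, with $x_3 \in G_2 \setminus G_3$ and $G_2/G_3$ cyclic of order $p$; hence $|G:G_3| = p^3$ and in particular $G_2 \neq G_3$. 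More generally, the complete description of the lower central series of $G$ (identified with $\mathfrak{M}_{0,0,0}$ in Huppert's notation) is worked out in \cite[Ch.~III.17]{huppert}, giving width $2$ at odd levels and $1$ at even levels, which is reassuring for later use but not strictly needed here.

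The main obstacle is the identification $G^p = G_3$. For the inclusion $G^p \subseteq G_3$, direct computation shows that each of $x_1^p$, $x_2^p$, $x_3^p$ lies in $G_3$, and by Corollary \ref{p map petrescu} applied inside $G/G_3$ (a class-$2$ quotient) this suffices to conclude $G^p \subseteq G_3$. For the reverse inclusion, the plan is to exploit the open uniform subgroup $U = \SL_2^1(\Z_p) = \ker(\SL_2(\Z_p) \to \SL_2(\F_p))$, which is uniform of dimension $3$ for $p > 3$ and sits in $G$ with $G/U$ cyclic of order $p$ generated by $x_1$. Uniformity of $U$ yields $|U:U^p| = p^3$; tracking the generator $x_1$ across the quotient by $U^p$ and combining with the explicit form of $U/U^p$ will give $|G:G^p| \leq p^3$. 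Since $G^p \subseteq G_3$ and $|G:G_3| = p^3$, equality $G^p = G_3$ follows by counting. With these ingredients assembled, the $p$-obelisk conditions for $G/G_k$ are verified and the lemma is established.
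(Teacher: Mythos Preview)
Your overall strategy is sound, and in fact the paper itself does not give a proof at all: it simply observes that this is a reformulation of \cite[Ch.~III, Satz~17.8]{huppert}. So a direct verification along your lines would be a genuine addition. However, your execution has a concrete error that breaks the first step.

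The element $x_2=\begin{pmatrix}1+p&0\\0&(1+p)^{-1}\end{pmatrix}$ lies in $G_2$, not in $G\setminus G_2$. Indeed, it is (the inverse of) the matrix the paper calls $D(1+p)$ in Lemma~\ref{aiutino}, which is explicitly placed in $G_2\setminus G_3$ by Huppert's Satz~17.4. In particular, reducing modulo $p$ sends $x_2$ to the identity in $\SL_2(\F_p)$, so it certainly cannot produce a second basis vector of $G/G_2$ that way. Your commutator claim $[x_2,x_1]\equiv x_3^{-1}\pmod{G_3}$ is also false as written: a direct computation gives $[x_2,x_1]=\begin{pmatrix}1&(1+p)^2-1\\0&1\end{pmatrix}$, which is upper unipotent, not lower. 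You have also misidentified the group in Huppert's notation: the paper states that $\SL_2^\triangle(\Z_p)$ is $\mathfrak{M}_{0,1,1}$, not $\mathfrak{M}_{0,0,0}$.

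The fix is to swap the roles of $x_2$ and $x_3$: the pair $\{x_1,x_3\}$ is a set of topological generators, with images spanning $G/G_2\cong\F_p^2$, and the diagonal element $x_2$ generates $G_2/G_3$. With this correction, your argument for $G^p\subseteq G_3$ via the Hall--Petrescu formula in the class-$2$ quotient goes through, and your uniform-subgroup count for the reverse inclusion (noting that $x_1^p=\begin{pmatrix}1&p\\0&1\end{pmatrix}\notin U^p=\SL_2^2(\Z_p)$, so $|G:G^p|\leq p^3$) is correct.
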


\begin{proof}
This is a reformulation of Satz $17.8$ from \cite[Ch.~$\mathrm{III}$]{huppert}.
\end{proof}

\begin{lemma}\label{aiutino}
Let $p>3$ be a prime number and let $G=\SL_2^\triangle(\Z_p)$. 
Denote by $(G_i)_{i\geq 1}$ the lower central series of $G$.
Then there exist $x\in G\setminus G_2$ and $a\in G_2\setminus G_3$ such that $[x,a]\in \cl(\gen{x})$.
\end{lemma}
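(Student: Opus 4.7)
My plan is to make the explicit choice $x = \begin{pmatrix} 1 & 1 \\ 0 & 1 \end{pmatrix}$ and $a = \begin{pmatrix} 1+p & 0 \\ 0 & (1+p)^{-1} \end{pmatrix}$. First I would verify by direct multiplication that
\[
[x,a] \;=\; \begin{pmatrix} 1 & 1-(1+p)^2 \\ 0 & 1 \end{pmatrix}.
\]
Since the set $\{\begin{pmatrix} 1 & n \\ 0 & 1 \end{pmatrix}:n\in\Z\}$ is dense, in the $p$-adic topology, in $\{\begin{pmatrix} 1 & \lambda \\ 0 & 1 \end{pmatrix}:\lambda\in\Z_p\}$, one gets $\cl(\gen{x})=\{\begin{pmatrix} 1 & \lambda \\ 0 & 1 \end{pmatrix}:\lambda\in\Z_p\}$, which contains $[x,a]$ immediately. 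The fact that $x\notin G_2$ is easy: $G/\Gamma(p)$ is cyclic of order $p$, where $\Gamma(p)$ is the principal congruence subgroup at $p$, so $G_2\subseteq\Gamma(p)$; but $x$ reduces modulo $p$ to a non-identity element of $\SL_2^\triangle(\F_p)$, hence $x\notin\Gamma(p)$.

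The substantive step is to show $a\in G_2\setminus G_3$. My plan is to set $y=\begin{pmatrix} 1 & 0 \\ p & 1 \end{pmatrix}\in\Gamma(p)$ and prove the congruence $a\equiv [x,y]\pmod{G_3}$. This immediately gives $a\in G_2$, and it also gives $a\notin G_3$: by Lemma \ref{sl2 quotients obelisks} the quotient $G/G_3$ is a $p$-obelisk of order $p^3$ with $x,y$ spanning $G/G_2$, so $[x,y]$ is a non-trivial element of the central subgroup $G_2/G_3$ thanks to Lemma \ref{obelisk basic}($3$). To set up the congruence, I would first identify $G_3=\cl(G^p)$: Lemma \ref{sl2 quotients obelisks} applied to each $k\geq 3$ gives $G_3\cdot G_k=G^p\cdot G_k$, and intersecting over $k$ (and using that $\{G_k\}_{k\geq 1}$ is a base of neighbourhoods of $1$, which follows from Lemma \ref{lcs open} applied to $G=\SL_2^{\triangle}(\Z_p)$ together with the fact that its discrete quotients have intensity at least $p-1$) produces the identification.

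Next I would observe that $\Gamma(p)$ is a uniform pro-$p$-group of dimension $3$, so its Frattini subgroup is $\Gamma(p)^p=\Gamma(p^2)$; hence $\Gamma(p^2)\subseteq G^p\subseteq G_3$. A short matrix computation modulo $p^3$, using $(1+p)^{-1}=1-p+p^2-\cdots$, shows
\[
[x,y]\cdot x^p\cdot a^{-1}\;\equiv\;\begin{pmatrix} 1+p^2 & p^2 \\ p^2 & 1-p^2 \end{pmatrix}\pmod{p^3},
\]
which lies in $\Gamma(p^2)\subseteq G_3$; since also $x^p\in G^p\subseteq G_3$, rearranging yields $a\equiv [x,y]\pmod{G_3}$ as required. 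The main obstacle is exactly this last step: the computation itself is routine, but it requires working honestly to second order in $p$ and the structural input that $\Gamma(p^2)=\Phi(\Gamma(p))$ sits inside $\cl(G^p)=G_3$. Once this is in place, the three claims $x\in G\setminus G_2$, $a\in G_2\setminus G_3$, and $[x,a]\in\cl(\gen{x})$ have been verified.
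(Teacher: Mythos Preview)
Your choice of $x$ and $a$ is the same as the paper's (up to replacing $a$ by $a^{-1}$, which is immaterial), and your computation of $[x,a]$ and identification of $\cl(\gen{x})$ are correct. Where the paper simply cites Huppert's explicit description of the lower central series of $\mathfrak{M}_{0,1,1}$ to read off $a\in G_2\setminus G_3$, you give a self-contained argument via the congruence $a\equiv[x,y]\pmod{G_3}$; this is a legitimate and essentially elementary alternative.

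There is, however, a genuine error in your justification. You invoke Lemma~\ref{lcs open} to conclude that $\{G_k\}_{k\geq 1}$ is a base of neighbourhoods of $1$ in $G=\SL_2^\triangle(\Z_p)$, claiming that its discrete quotients have intensity at least $p-1$. This is false: only non-trivial \emph{abelian} $p$-groups have intensity $p-1$, and there is no reason the non-abelian discrete quotients of $\SL_2^\triangle(\Z_p)$ should have intensity $>1$. Worse, the whole point of Lemma~\ref{aiutino} (via Lemma~\ref{fuori bello!}) is to show that $\SL_2^\triangle(\Z_p)$ is \emph{not} the infinite group of intensity $>1$, so assuming $\inte(G)>1$ here would be circular.

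Fortunately the error sits in an unnecessary step. You never use the full identification $G_3=\cl(G^p)$; the only consequence you use is the inclusion $G^p\subseteq G_3$, and this follows immediately from Lemma~\ref{sl2 quotients obelisks} with $k=3$: the quotient $G/G_3$ is a $p$-obelisk, hence extraspecial of exponent $p$ by Lemma~\ref{obelisk basic}($3$), so $G^p\subseteq G_3$. With this one-line replacement your argument goes through: $\Gamma(p^2)=\Gamma(p)^p\subseteq G^p\subseteq G_3$ and $x^p\in G^p\subseteq G_3$, so your matrix computation modulo $p^3$ gives $a\equiv[x,y]\pmod{G_3}$ as desired.
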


\begin{proof}
This proof relies on several lemmas from \cite[Ch.~$\mathrm{III}.17$]{huppert}; we will respect Huppert's notation.
Let 
$$
x=B(1)=
\begin{pmatrix}
1 & 1 \\
0 & 1 
\end{pmatrix}
\ \ \text{and} \ \ 
a=D(1+p)=
\begin{pmatrix}
(1+p)^{-1} & 0 \\
0 & (1+p)
\end{pmatrix}.
$$
Satz $17.4$ from \cite[Ch.~$\mathrm{III}.17$]{huppert} gives a concrete characterization of the lower central series of $G$, from which it directly follows that $x\in G\setminus G_2$ and $a\in G_2\setminus G_3$.
As a consequence of Hilfssatz $17.2$(a), the element $x$ generates topologically the subgroup 
$\mathfrak{B}_0$ consisting of all matrices of the form 
$$
\begin{pmatrix}
1 & x \\
0 & 1 \\
\end{pmatrix}
\ \  \text{where} \ \ x\in\Z_p.
$$
To conclude, Hilfssatz $17.3$ guarantees that there exists an element $b\in\Z_p$ such that 
$$
[x,a]=
\begin{pmatrix}
1 & pb \\
0 & 1 \\
\end{pmatrix}
$$
so, in particular, $[x,a]$ belongs to the subgroup $\mathfrak{B}_0=\cl(\gen{x})$.
\end{proof}

\noindent
We recall that a $p$-obelisk is a non-abelian finite $p$-group $G$ satisfying $G_3=G^p$ and $|G:G_3|=p^3$. A $p$-obelisk $G$ is framed if, given any maximal subgroup $M$ of $G$, one has $\Phi(M)=G_3$. For more information about $p$-obelisks, we refer to Chapter \ref{chapter obelisks}.

\begin{lemma}\label{fuori bello!}
The group $\SL_2^\triangle(\Z_p)$ has a discrete quotient of class $6$ that is not a framed $p$-obelisk.
\end{lemma}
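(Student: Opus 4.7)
The plan is to take the discrete quotient $H = G/G_7$ and to show it is a non-framed $p$-obelisk of class $6$. By Lemma \ref{sl2 quotients obelisks} the quotient $H$ is a $p$-obelisk, and the explicit description of the lower central series of $\SL_2^\triangle(\Z_p)$ from \cite[Ch.~$\mathrm{III}$, Satz $17.4$]{huppert} (the same source already invoked in the proof of Lemma \ref{aiutino}) ensures that $G_6\neq G_7$, so $H$ has class exactly $6$. Together with Proposition \ref{blackburn 2-1-2} and Lemma \ref{parity obelisks}, this gives $(\wt_H(i))_{i=1}^6=(2,1,2,1,2,1)$.

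To see that $H$ is not framed, I will use the criterion from Proposition \ref{lines}. Let $(x,a)\in(G\setminus G_2)\times(G_2\setminus G_3)$ be a pair as provided by Lemma \ref{aiutino}, so that $[x,a]\in\cl(\gen{x})$. Write $\bar{x}=xG_7$ and $\bar{a}=aG_7$ in $H$ and let $\ell\subseteq H/H_2$ be the $1$-dimensional subspace generated by $\bar{x}H_2$. Since $\bar a$ generates $H_2/H_3$, bilinearity of the commutator map gives $\gamma_{1,2}(\graffe{\ell}\times H_2/H_3)=\F_p\cdot[\bar x,\bar a]H_4$.

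The key step is to show that $[\bar x,\bar a]H_4$ lies in $\rho^1_1(\ell)$. The image of $\cl(\gen{x})$ in $H$ is the finite cyclic subgroup $\gen{\bar x}$; by Lemma \ref{cyclic then 1-dim jumps} all jumps of $\gen{\bar x}$ in $H$ are odd of width $1$, so $\gen{\bar x}\cap H_3=\gen{\bar x^{p}}$. Because $[x,a]\in \cl(\gen{x})\cap G_3$, we obtain $[\bar x,\bar a]\in\gen{\bar x^{p}}$, and consequently $[\bar x,\bar a]H_4\in\rho^1_1(\ell)$, which is $1$-dimensional inside the $2$-dimensional $\F_p$-vector space $H_3/H_4$.

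It follows that $\rho^1_1(\ell)$ and $\gamma_{1,2}(\graffe{\ell}\times H_2/H_3)$ together generate only $\rho^1_1(\ell)$, which is strictly smaller than $H_3/H_4$. By Proposition \ref{lines} the $p$-obelisk $H$ is not framed, which proves the lemma. No serious obstacle is expected; the only subtle point is the identification $\gen{\bar x}\cap H_3=\gen{\bar x^{p}}$, which follows directly from the ``same parity / width $1$'' property of jumps of cyclic subgroups of a $p$-obelisk.
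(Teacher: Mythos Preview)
Your proof is correct and follows essentially the same approach as the paper: both take $H=G/G_7$, invoke Lemma~\ref{aiutino} for the pair $(x,a)$, and use Proposition~\ref{lines} to conclude that the line $\ell=\gen{\bar xH_2}$ witnesses non-framedness. Your version is slightly more explicit in justifying why $[\bar x,\bar a]H_4\in\rho^1_1(\ell)$, arguing via Lemma~\ref{cyclic then 1-dim jumps} that $\gen{\bar x}\cap H_3=\gen{\bar x^{\,p}}$, whereas the paper simply asserts this as a consequence of Lemma~\ref{aiutino}.
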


\begin{proof}
Let $G=\SL_2^\triangle(\Z_p)$ and denote by $(G_i)_{i\geq 1}$ the lower central series of $G$, as defined in Section \ref{section background}.
We define $\overline{G}=G/G_7$ and we use the bar notation for subgroups and elements of $\overline{G}$. 
The group $\overline{G}$ has class $6$ and it is a $p$-obelisk, by Lemma \ref{sl2 quotients obelisks}. 
Let now $x\in G$ be as in Lemma \ref{aiutino} and set $\ell=\gen{\overline{xG_2}}$, which is a $1$-dimensional subspace of
$\overline{G}/\overline{G}_2$. Let moreover
$$\rho_1^1:\overline{G}/\overline{G}_2\rightarrow\overline{G_3}/\overline{G}_4$$
and
$$\gamma_{1,2}:\overline{G}/\overline{G}_2\times \overline{G_2}/\overline{G_3}\rightarrow \overline{G}_3/\overline{G}_4$$
denote the maps from Lemma \ref{maps incrociate}.
As a consequence of Lemma \ref{aiutino}, the elments $\rho_1^1(\ell)$ and $\gamma_{1,2}(\graffe{\ell}\times\overline{G}_2/\overline{G}_3)$ generate a $1$-dimensional subspace $\ell'$ of $\overline{G}_3/\overline{G}_4$. 
By Lemma \ref{parity obelisks}($1$), the width $\wt_{\overline{G}}(3)$ is equal to $2$ so 
$\ell'$ is different from $\overline{G}_3/\overline{G}_4$.
Proposition \ref{lines} yields that $\overline{G}$ is not framed.
\end{proof}

\subsection{The second group}\label{section sl1}

\noindent
Let $p>3$ be a prime number and let $t\in\Z_p$ be a quadratic non-residue modulo $p$. 
Define $\Delta_p$ to be $\big(\frac{t\,,\, p}{\Z_p}\big)$, i.e., the quaternion algebra 
$$\Delta_p=\Z_p\oplus\Z_p\mathrm{i}\oplus\Z_p\mathrm{j}\oplus\Z_p\mathrm{k}$$
with defining relations 
$$\mathrm{i}^2=t, \, \mathrm{j}^2=p,\ \text{and} \ \mathrm{k}=\mathrm{ij}=-\mathrm{ji}.$$

\noindent
The quaternion algebra $\Delta_p$ is equipped with a bar map, defined by 
$$x=a+b\mathrm{i}+c\mathrm{j}+d\mathrm{k} \ \mapsto \
\overline{x}=a-b\mathrm{i}-c\mathrm{j}-d\mathrm{k},$$ 
which is an anti-homomorphism of order $2$.
The algebra $\Delta_p$ has, in addition, a unique maximal (left/right/two-sided) ideal $\mathfrak{m}$, which is principal generated by $\mathrm{j}$, i.e.
$\mathfrak{m}=\Delta_p\,\mathrm{j}$. It follows that an element $x=a+b\mathrm{i}+c\mathrm{j}+d\mathrm{k}$ belongs to $\mathfrak{m}$ if and only if both $a$ and $b$ belong to $p\Z_p$. Moreover, for each $k\in\Z_{\geq 1}$, the ideal $\mathfrak{m}^k$ is principal generated by $\mathrm{j}^k$ and therefore, for each $s\in\Z_{\geq 0}$, one has 
\[\mathfrak{m}^{2s}=p^s\Delta_p \ \ \text{and} \ \ \mathfrak{m}^{2s+1}=p^s\mathfrak{m}. \]
As a result, for each $k\in\Z_{\geq 1}$, the quotient $\mathfrak{m}^k/\mathfrak{m}^{k+1}$ is a vector space over $\F_p$ of dimension $2$. Now, for each $k\in\Z_{\geq 1}$, the set $1+\mathfrak{m}^k$ is easily seen to be a subgroup of $\Delta_p^*$ and the natural map 
$(1+\mathfrak{m}^k)/(1+\mathfrak{m}^{k+1})\rightarrow \mathfrak{m}^k/\mathfrak{m}^{k+1}$  is an isomorphism of groups. It follows that $1+\mathfrak{m}$ is a pro-$p$-subgroup of $\Delta_p^*$.
Define $$\Sn(\Delta_p)=(1+\mathfrak{m})\cap\graffe{x\in\Delta_p : \overline{x}=x^{-1}}.$$ 
Then $\Sn(\Delta_p)$ is a closed subgroup of $1+\mathfrak{m}$ and thus a pro-$p$-group itself. We have here lightened the notation from \cite{small}, where the group $\Sn(\Delta_p)$ is denoted by $\SL_1^1(\Delta_p)$.

\begin{lemma}\label{lower sl1}
Let $p>3$ be a prime number and let $G=\Sn(\Delta_p)$.
Denote by $(G_i)_{i\geq 1}$ the lower central series of $G$. 
Then, for each $k\in\Z_{\geq 1}$, one has $G_k=(1+\mathfrak{m}^k)\cap G$.
\end{lemma}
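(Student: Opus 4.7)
Write $M_k = (1+\mathfrak{m}^k)\cap G$; the case $k=1$ is the definition. The plan is to show, by induction on $k$, that $G_k$ and $M_k$ have the same image in $M_1/M_{k+1}$, and then use that $G_k$ is closed together with $\bigcap_{r\geq 0} M_{k+r}=\{1\}$ to deduce $G_k=M_k$.

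First I would record the basic structural facts about the filtration $(M_k)_{k\geq 1}$. Since $\mathfrak{m}^k$ is a two-sided ideal, each $M_k$ is a closed normal subgroup of $1+\mathfrak{m}$, hence of $G$. An elementary computation writing $x=1+a$ with $a\in\mathfrak{m}^h$ and $y=1+b$ with $b\in\mathfrak{m}^k$ gives $[x,y]-1=(ab-ba)x^{-1}y^{-1}\equiv ab-ba\pmod{\mathfrak{m}^{h+k+1}}$, so $[M_h,M_k]\subseteq M_{h+k}$; since $[x,y]\in G$ whenever $x,y\in G$ (using that the bar map is an anti-automorphism of order $2$), we get inductively $G_k\subseteq M_k$ for every $k$. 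Next I would compute $M_k/M_{k+1}$ explicitly via the isomorphism $(1+\mathfrak{m}^k)/(1+\mathfrak{m}^{k+1})\rightarrow\mathfrak{m}^k/\mathfrak{m}^{k+1}$: the element $1+a$ lies in $G$ iff $a+\overline{a}+a\overline{a}=0$, and modulo $\mathfrak{m}^{k+1}$ (using $k\geq 1$) this reduces to $a+\overline{a}\equiv 0$. Using $\mathfrak{m}^{2s}=p^s\Delta_p$ and $\mathfrak{m}^{2s+1}=p^s\mathfrak{m}$, a direct inspection of the bar map gives that $M_k/M_{k+1}$ is $1$-dimensional over $\F_p$ spanned by $1+p^s\mathrm{i}$ when $k=2s$, and $2$-dimensional spanned by $1+p^s\mathrm{j},\,1+p^s\mathrm{k}$ when $k=2s+1$.

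The heart of the argument is then to show that the commutator bracket induces a surjection
\[
\phi_k:M_1/M_2\times M_{k-1}/M_k\longrightarrow M_k/M_{k+1}
\]
for every $k\geq 2$. By the congruence above, $\phi_k$ is identified with the Lie bracket $(a,b)\mapsto ab-ba$ on the appropriate subspaces of $\mathfrak{m}^{\bullet}/\mathfrak{m}^{\bullet+1}$. Using the defining relations $\mathrm{i}^2=t$, $\mathrm{j}^2=p$, $\mathrm{ij}=\mathrm{k}=-\mathrm{ji}$, one computes $[\mathrm{j},p^{s-1}\mathrm{k}]=-2p^s\mathrm{i}$ in the even case $k=2s$, and $[\mathrm{j},p^s\mathrm{i}]=-2p^s\mathrm{k}$, $[\mathrm{k},p^s\mathrm{i}]=-2tp^s\mathrm{j}$ in the odd case $k=2s+1$. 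Since $p$ is odd and $t\in\Z_p^*$, the scalars $-2$ and $-2t$ are units in $\F_p$, so these brackets already span the target $D_k$ in each case, proving surjectivity of $\phi_k$.

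To close the induction, assume $G_iM_{i+1}=M_i$ for all $i\leq k-1$. Given $x\in M_1$ and $y\in M_{k-1}$, write $y=gz$ with $g\in G_{k-1}$, $z\in M_k$; the identity $[x,gz]=[x,g]\cdot g[x,z]g^{-1}$ together with $[M_1,M_k]\subseteq M_{k+1}$ and the normality of $M_{k+1}$ in $G$ gives $[x,y]\equiv[x,g]\pmod{M_{k+1}}$, and $[x,g]\in[G,G_{k-1}]\subseteq G_k$. Hence the image of $\phi_k$ lies in $G_kM_{k+1}/M_{k+1}$, so $G_kM_{k+1}=M_k$. Iterating this equality (and using $G_{k+r}\subseteq G_k$) gives $G_kM_{k+r}=M_k$ for every $r\geq 0$; since $G_k$ is closed (by Definition \ref{lcs profinite def}) and $\bigcap_{r\geq 0}M_{k+r}=\{1\}$ forms a basis of neighbourhoods of the identity, one concludes $G_k=\bigcap_{r\geq 0}G_kM_{k+r}=M_k$, as desired. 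The main technical step, and the place where $p>3$ (really $p$ odd) enters, is the explicit Lie-bracket surjectivity computation; once that is in hand, the rest is a standard density argument on a closed subgroup of a pro-$p$-group.
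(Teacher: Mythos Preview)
Your proposal is correct and follows essentially the same approach as the paper: set $M_k=(1+\mathfrak{m}^k)\cap G$, verify that $(M_k)_{k\geq 1}$ is a central filtration with $[M_1,M_i]\subseteq M_{i+1}$, show by an explicit Lie-bracket computation that the induced commutator map $M_1/M_2\times M_i/M_{i+1}\to M_{i+1}/M_{i+2}$ has image generating the target, and then conclude by a density/closure argument. The paper merely sketches this and leaves the surjectivity as ``direct computation'', whereas you carry out the dimension counts and the bracket identities explicitly (and correctly); your organization of the closure step via $G_kM_{k+r}=M_k$ is a minor reindexing of the paper's $M_{i+1}=[M_1,M_i]M_{i+n}$, but the content is the same.
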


\begin{proof}
We sketch here the proof, but leave out the computations. 
For all $i\in\Z_{\geq 1}$, denote $M_i=(1+\mathfrak{m}^i)\cap G$. 
We remark that all $M_i$ are normal in $G$ and they form a base of open neighbourhoods of $1$ in $G$.
It is easy to check that $(M_i)_{i\geq 1}$ is a central series, in other words for all $i\in\Z_{\geq 1}$ the subgroup 
$[M_1,M_{i}]$ is contained in $M_{i+1}$. 
As a consequence of Lemma \ref{tgt}, for each index $i$, the commutator map induces a bilinear map 
$\gamma_i:M_1/M_2\times M_i/M_{i+1} \rightarrow M_{i+1}/M_{i+2}$. Next, by direct computation, one gets that, for every $i\in\Z_{\geq 1}$, the image of $\gamma_i$ generates $M_i/M_{i+1}$, and therefore $M_{i+1}=[M_1,M_{i}]M_{i+2}$.   
Fix $i$. By induction one shows that, for each positive integer $n$, one has $M_{i+1}=[M_1,M_i]M_{i+n}$, and hence
\[M_{i+1}=\bigcap_{n\geq 1}[M_1,M_i]M_{i+n}=\cl([M_1,M_{i}]).\]
Since $M_1=G$, we get that $M_{i+1}=\cl([G,G_i])=G_{i+1}$. The choice of $i$ being arbitrary, the proof is complete.
\end{proof}

\begin{lemma}\label{metà framed potenze}
Let $p>3$ be a prime number and let $G=\Sn(\Delta_p)$.
Denote by $(G_i)_{i\geq 1}$ the lower central series of $G$. 
Then, for each $i\in\Z_{\geq 1}$, the map ${x\mapsto x^p}$ on $G$ induces an isomorphism 
$\rho_i:G_i/G_{i+1}\rightarrow G_{i+2}/G_{i+3}$. 
\end{lemma}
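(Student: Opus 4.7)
The plan is to work directly inside the quaternion algebra $\Delta_p$ and exploit the fact that, in view of Lemma \ref{lower sl1}, the filtration $(G_i)_{i\geq 1}$ is induced from the $\mathfrak{m}$-adic filtration on $1+\mathfrak{m}$. Concretely, for each $i\geq 1$ the injection $G_i \hookrightarrow 1+\mathfrak{m}^i$ yields an identification of $G_i/G_{i+1}$ with the subspace
\[
V_i = \{\,y+\mathfrak{m}^{i+1}\in\mathfrak{m}^i/\mathfrak{m}^{i+1} : \overline{y}+y\in\mathfrak{m}^{i+1}\,\}
\]
of $\mathfrak{m}^i/\mathfrak{m}^{i+1}$, where the defining congruence comes from expanding $\overline{1+y}=(1+y)^{-1}$ and noting that the quadratic term $\overline{y}y$ lies in $\mathfrak{m}^{2i}\subseteq\mathfrak{m}^{i+1}$. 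Since $p=\mathrm{j}^2$ is central in $\Delta_p$ and satisfies $\overline{p}=p$, left multiplication by $p$ commutes with the bar involution and is a $\Z_p$-linear isomorphism $\mathfrak{m}^i/\mathfrak{m}^{i+1}\to\mathfrak{m}^{i+2}/\mathfrak{m}^{i+3}$; this isomorphism therefore restricts to a $\Z_p$-linear isomorphism $\mu_i\colon V_i\to V_{i+2}$ (surjectivity follows because if $z\in\mathfrak{m}^{i+2}$ represents an element of $V_{i+2}$, pulling it back to $y\in\mathfrak{m}^i$ with $py\equiv z\pmod{\mathfrak{m}^{i+3}}$ forces $p(\overline{y}+y)\in\mathfrak{m}^{i+3}$, hence $\overline{y}+y\in\mathfrak{m}^{i+1}$).

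Next I would verify that the map $x\mapsto x^p$ on $G$ induces precisely $\mu_i$ on the quotients. For $y\in\mathfrak{m}^i$ with $i\geq 1$, binomial expansion in $\Delta_p$ gives
\[
(1+y)^p = 1 + py + \sum_{k=2}^{p-1}\binom{p}{k}y^k + y^p.
\]
For $2\leq k\leq p-1$ one has $\binom{p}{k}y^k\in p\,\mathfrak{m}^{ik}=\mathfrak{m}^{ik+2}\subseteq\mathfrak{m}^{i+3}$ since $i(k-1)\geq 1$, and $y^p\in\mathfrak{m}^{ip}\subseteq\mathfrak{m}^{i+3}$ as $p\geq 5$ and $i\geq 1$ force $i(p-1)\geq 4>2$. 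Therefore $(1+y)^p\equiv 1+py\pmod{\mathfrak{m}^{i+3}}$, which shows that the $p$-power map sends $G_i$ into $G_{i+2}$ and descends to the map $\mu_i\colon V_i\to V_{i+2}$ under the identifications above.

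It remains to observe that this induced map $\rho_i$ is a group homomorphism. The cleanest route is the Hall-Petrescu formula (Lemma \ref{hall-petrescu lemma}): for $x,y\in G_i$,
\[
(xy)^p = x^p y^p \prod_{k=2}^{p} c_k^{\binom{p}{k}}
\]
with $c_k\in G_k(\langle x,y\rangle)\subseteq G_{ik}$. For $2\leq k\leq p-1$, the exponent $\binom{p}{k}$ is divisible by $p$, hence $c_k^{\binom{p}{k}}\in G_{ik}^{\,p}$, which by the $\mathfrak{m}$-adic argument above lies in $G_{ik+2}\subseteq G_{i+3}$; and $c_p\in G_{ip}\subseteq G_{i+3}$ directly. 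Thus $(xy)^p\equiv x^p y^p\pmod{G_{i+3}}$, so $\rho_i$ is a homomorphism, and combined with the previous step it is the isomorphism $\mu_i$. The main obstacle is keeping track of the two different filtrations (the $\mathfrak{m}$-adic one on $\Delta_p$ and the lower central one on $G$) and checking that they interact correctly with both the bar involution and the $p$-power map; once the translation via $V_i$ is in place, everything reduces to the transparent fact that multiplication by $p=\mathrm{j}^2$ shifts the $\mathfrak{m}$-adic filtration by exactly two steps and commutes with the involution.
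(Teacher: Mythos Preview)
Your proof is correct and follows essentially the same route as the paper: use Lemma~\ref{lower sl1} to identify $G_i$ with $(1+\mathfrak{m}^i)\cap G$, expand $(1+y)^p$ to see it is $\equiv 1+py$ modulo $\mathfrak{m}^{i+3}$, and conclude that $\rho_i$ is induced by the isomorphism ``multiply by $p$'' on the associated graded. The paper's proof is terser---it records only the congruence $(1+x)^p\equiv 1+px\bmod G_{i+3}$ and declares the rest easy---whereas you additionally spell out the identification with the anti-symmetric subspace $V_i$ and invoke Hall--Petrescu to check that $\rho_i$ is a homomorphism; these are welcome elaborations rather than a different strategy.
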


\begin{proof}
By Lemma \ref{lower sl1}, given any positive integer $i$, one has $G_i=(1+\mathfrak{m}^i)\cap G$.
Fix $i\in\Z_{\geq 1}$ and let $1+x$ be an element of $G_i$. One shows that $(1+x)^p\equiv 1+px\bmod G_{i+3}$. It is now easy to conclude.
\end{proof}

\begin{lemma}\label{metà framed}
Let $p>3$ be a prime number and let $G=\Sn(\Delta_p)$.
Denote by $(G_i)_{i\geq 1}$ the lower central series of $G$. Let $x\in G\setminus G_2$ and let $y\in G_2\setminus G_3$. 
Then $G_3$ is generated by $x^p$ and $[x,y]$ modulo $G_4$.
\end{lemma}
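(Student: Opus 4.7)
The plan is to work explicitly inside the quaternion algebra $\Delta_p$, translating the statement about $G_3/G_4$ into a linear-algebra question over $\F_p$. By Lemma \ref{lower sl1} one has $G_k=(1+\mathfrak{m}^k)\cap G$, so the map $1+w\mapsto w$ identifies $G_k/G_{k+1}$ with the subspace of $\mathfrak{m}^k/\mathfrak{m}^{k+1}$ killed by the trace $w\mapsto w+\overline{w}$. Using $\mathfrak{m}^{2s}=p^s\Delta_p$ and $\mathfrak{m}^{2s+1}=p^s\mathfrak{m}$, a direct inspection of this trace condition shows that $G_{2s+1}/G_{2s+2}$ is $2$-dimensional with representatives $p^s(c\mathrm{j}+d\mathrm{k})$, $(c,d)\in\F_p^2$, while $G_{2s}/G_{2s+1}$ is $1$-dimensional with representatives $p^s b\mathrm{i}$, $b\in\F_p$. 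In particular $G_3/G_4\cong\F_p^2$ with basis $(p\mathrm{j},p\mathrm{k})$.

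Given this dictionary, I would pick representatives $x=1+u$ with $u\equiv c\mathrm{j}+d\mathrm{k}\pmod{\mathfrak{m}^2}$ and $(c,d)\ne(0,0)$, and $y=1+v$ with $v\equiv pb\mathrm{i}\pmod{\mathfrak{m}^3}$ and $b\ne 0$ in $\F_p$. The next step is to compute $x^p$ and $[x,y]$ modulo $\mathfrak{m}^4$. For the power map, the formula $(1+u)^p\equiv 1+pu\pmod{\mathfrak{m}^4}$ (which powers the proof of Lemma \ref{metà framed potenze}, and holds because $p>3$ forces each term $\binom{p}{i}u^i$, $2\le i\le p$, into $\mathfrak{m}^4$) gives $x^p-1\equiv p(c\mathrm{j}+d\mathrm{k})\pmod{\mathfrak{m}^4}$. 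For the commutator, a bookkeeping expansion of $xyx^{-1}y^{-1}$, discarding terms of $\mathfrak{m}$-order at least $4$, yields $[x,y]-1\equiv uv-vu\pmod{\mathfrak{m}^4}$.

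The algebraic heart of the proof is then an explicit computation: using $\mathrm{ji}=-\mathrm{k}$, $\mathrm{ki}=-t\mathrm{j}$, $\mathrm{ij}=\mathrm{k}$, and $\mathrm{ik}=t\mathrm{j}$, one finds
\[
uv-vu\equiv -2pb\bigl(dt\,\mathrm{j}+c\,\mathrm{k}\bigr)\pmod{\mathfrak{m}^4}.
\]
In the chosen basis of $G_3/G_4$, the images of $x^p$ and $[x,y]$ are $(c,d)$ and $-2b(dt,c)$, and they span $G_3/G_4$ precisely when the determinant $-2b(c^2-d^2t)$ is nonzero in $\F_p$. Since $b\ne 0$ and $(c,d)\ne(0,0)$, vanishing of this determinant would force $c^2\equiv d^2 t\pmod p$, i.e.\ $t$ to be a square in $\F_p$, contradicting the choice of $t$ as a quadratic non-residue.

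The main obstacle is really the bookkeeping in the commutator expansion: modulo $\mathfrak{m}^4$ one has $x^{-1}\equiv 1-u+u^2-u^3$ and $y^{-1}\equiv 1-v$, so $xyx^{-1}y^{-1}$ has many cross terms and one has to argue consistently that everything beyond $uv-vu$ lands in $\mathfrak{m}^4$; keeping in mind that $p=\mathrm{j}^2\in\mathfrak{m}^2$ is essential here. Once that is done, the role of $t$ being a non-residue — precisely the hypothesis that makes $\Z_p[\mathrm{i}]/p$ a field rather than split — enters only at the very end, through the non-vanishing of a single $2\times 2$ determinant.
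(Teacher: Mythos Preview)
Your proof is correct and is precisely the ``straightforward computation'' that the paper leaves to the reader. The identification of $G_k/G_{k+1}$ with the trace-zero part of $\mathfrak{m}^k/\mathfrak{m}^{k+1}$, the formulas $x^p-1\equiv pu$ and $[x,y]-1\equiv uv-vu$ modulo $\mathfrak{m}^4$, and the final determinant argument using that $t$ is a non-residue are all accurate; the only thing the paper adds is brevity.
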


\begin{proof}
Straightforward computation.
\end{proof}

\noindent
We remind the reader that, as defined in Chapter \ref{chapter obelisks}, a $p$-obelisk is a finite non-abelian $p$-group $G$ such that $|G:G_3|=p^3$ and $G^p=G_3$. A $p$-obelisk is said to be framed if, for each maximal subgroup $M$ of $G$, one has $\Phi(M)=G_3$.

\begin{lemma}\label{sl1 quotients framed}
Let $p>3$ be a prime number and let $G=\Sn(\Delta_p)$.
Denote by $(G_i)_{i\geq 1}$ the lower central series of $G$. 
Then, for each $k\in\Z_{\geq 3}$, the quotient $G/G_k$ is a framed $p$-obelisk.
\end{lemma}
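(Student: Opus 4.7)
The plan is to fix $k \geq 3$, set $H = G/G_k$, and verify the three defining conditions of a framed $p$-obelisk, namely: $H$ is non-abelian, $|H : H_3| = p^3$ with $H^p = H_3$, and $\Phi(M) = H_3$ for every maximal subgroup $M$ of $H$. The structural input is all packed into Lemmas \ref{lower sl1}, \ref{metà framed potenze}, and \ref{metà framed}, plus the observation that $\wt_G(i)$ equals $2$ for $i$ odd and $1$ for $i$ even. From Lemma \ref{lower sl1} and the isomorphism $(1+\mathfrak{m}^i)/(1+\mathfrak{m}^{i+1}) \cong \mathfrak{m}^i/\mathfrak{m}^{i+1}$ one reads off $|G:G_3| = p^3$, which forces $|H:H_3| = p^3$ (since $G_k \subseteq G_3$) and $H_2 = G_2/G_k \neq 1$, so $H$ is non-abelian.

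Next, I plan to show $G^p G_k = G_3$ by induction on $k$. Since $\rho(G) \subseteq G_3$ by Lemma \ref{metà framed potenze}, the cases $k=3$ (trivial) and $k=4$ (surjectivity of $\rho_1: G/G_2 \to G_3/G_4$) are immediate. For the step, assume $G^p G_k = G_3$ and prove $G_k \subseteq G^p G_{k+1}$: by Lemma \ref{metà framed potenze}, $\rho_{k-2}: G_{k-2}/G_{k-1} \to G_k/G_{k+1}$ is surjective, so $\rho(G_{k-2}) G_{k+1} = G_k$, and $\rho(G_{k-2}) \subseteq G^p$ because every element of $G_{k-2} \subseteq G$ has its $p$-th power in $G^p$. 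This yields $H^p = G^p G_k/G_k = G_3/G_k = H_3$.

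For the framed condition, let $M$ be a maximal subgroup of $H$ and $\tilde M$ its preimage in $G$; then $\tilde M \supseteq G_2$ and has index $p$ in $G$, so $\tilde M/G_2$ is cyclic and Lemma \ref{cyclic quotient commutators} gives $[\tilde M, \tilde M] = [\tilde M, G_2] \subseteq G_3$. Combined with $\tilde M^p \subseteq G^p \subseteq G_3$, this shows $\Phi(\tilde M) \subseteq G_3$. For the reverse containment modulo $G_k$, first observe by Lemma \ref{metà framed} that choosing any $x \in \tilde M \setminus G_2$ and $y \in G_2 \setminus G_3$, the elements $x^p \in \tilde M^p$ and $[x,y] \in [\tilde M, G_2]$ generate $G_3$ modulo $G_4$, so $\Phi(\tilde M) G_4 = G_3$. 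To push this up, I repeat the previous inductive argument with $\tilde M^p$ in place of $G^p$: for $j \geq 4$ we have $G_{j-2} \subseteq G_2 \subseteq \tilde M$, so $\rho(G_{j-2}) \subseteq \tilde M^p$, and Lemma \ref{metà framed potenze} gives $\rho(G_{j-2}) G_{j+1} = G_j$, whence $G_j \subseteq \tilde M^p G_{j+1} \subseteq \Phi(\tilde M) G_{j+1}$. Iterating this step from $j = 4$ up to $j = k-1$ yields $\Phi(\tilde M) G_k = G_3$, hence $\Phi(M) = H_3$. The degenerate case $k = 3$ is handled separately: $H$ then has class $2$ with $H^p = 1$, so every maximal subgroup is abelian of exponent $p$, i.e.\ elementary abelian, and $\Phi(M) = 1 = H_3$.

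The main obstacle is the third step, where $\Phi(\tilde M)$ mixes commutators with $p$-th powers: Lemma \ref{metà framed} only gives information modulo $G_4$, and one needs a mechanism to propagate the equality $\Phi(\tilde M)G_4 = G_3$ all the way down the lower central series. The crucial observation making this work is that $G_2$ is itself contained in $\tilde M$, so the $p$-th power map alone, via Lemma \ref{metà framed potenze}, is already sufficient to fill in every layer $G_j/G_{j+1}$ for $j \geq 4$ using elements of $\tilde M^p$; no further commutator contribution is needed beyond what Lemma \ref{metà framed} gives at the top.
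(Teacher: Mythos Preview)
Your proof is correct, but it takes a different route from the paper's. The paper, after establishing that $\overline{G}=G/G_k$ is a $p$-obelisk exactly as you do, invokes Proposition~\ref{lines}: condition~(2) there says a $p$-obelisk is framed if and only if, for each $1$-dimensional subspace $\ell\subseteq G/G_2$, the space $G_3/G_4$ is generated by $\rho_1^1(\ell)$ together with $\gamma_{1,2}(\{\ell\}\times G_2/G_3)$. This is precisely what Lemma~\ref{metà framed} asserts, so the framed condition follows in one line. What you do instead is bypass Proposition~\ref{lines} entirely and verify $\Phi(M)=H_3$ by hand, using the surjectivity of each $\rho_{j-2}$ from Lemma~\ref{metà framed potenze} to push the equality $\Phi(\tilde M)G_4=G_3$ down layer by layer to $\Phi(\tilde M)G_k=G_3$.

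Your approach is more self-contained---it needs only Lemmas~\ref{lower sl1}, \ref{metà framed potenze}, \ref{metà framed}, and \ref{cyclic quotient commutators}---whereas the paper's route leans on the general structure theory of $p$-obelisks (in particular the commutative diagrams of Proposition~\ref{proposition commutative diagram} underlying Proposition~\ref{lines}). The price you pay is a slightly longer argument; the paper's version is shorter precisely because the hard work was already done in Chapter~\ref{chapter obelisks}. In effect your induction reproves, in this concrete setting, the implication $(2)\Rightarrow(1)$ of Proposition~\ref{lines}, exploiting that here the $p$-th power map alone (via $G_2\subseteq\tilde M$) suffices to fill every layer $G_j/G_{j+1}$ for $j\geq 4$.
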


\begin{proof}
Let $k\in\Z_{\geq 3}$ and denote $\overline{G}=G/G_k$. The group $\overline{G}$ is non-abelian and it is finite. Moreover, as a consequence of Lemma \ref{lower sl1}, one can easily compute that $|\overline{G}:\overline{G}_3|=|G:G_3|=p^3$ and, thanks to Lemma \ref{metà framed potenze}, one has $\overline{G}^p=\overline{G}_3$. It follows that $\overline{G}$ is a $p$-obelisk.
To show that $\overline{G}$ is framed, combine Lemma \ref{metà framed} and Proposition \ref{lines}.
\end{proof}

\begin{lemma}\label{fischiettando}
Let $p>3$ be a prime number and let $G=\Sn(\Delta_p)$.
Let moreover $\alpha:G\rightarrow G$ be defined by
$$a+b\mathrm{i}+c\mathrm{j}+d\mathrm{k} \ \mapsto \
a+b\mathrm{i}-c\mathrm{j}-d\mathrm{k}.$$  
Then $\alpha$ is a continuous automorphism of $G$ and the map $G/G_2\rightarrow G/G_2$ that is induced by $\alpha$ is equal to the inversion map $a\mapsto a^{-1}$.
\end{lemma}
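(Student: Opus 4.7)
The plan is to realize $\alpha$ as an inner automorphism of the ambient ring $\Delta_p$, from which the ring-theoretic properties (and hence the topological group-theoretic ones) come for free. Specifically, I would observe that conjugation by $\mathrm{i}$ acts on the $\Z_p$-algebra generators of $\Delta_p$ by $\mathrm{i}\mathrm{i}\mathrm{i}^{-1}=\mathrm{i}$ and, using $\mathrm{j}\mathrm{i}=-\mathrm{i}\mathrm{j}$, by $\mathrm{i}\mathrm{j}\mathrm{i}^{-1}=-\mathrm{j}$, so that also $\mathrm{i}\mathrm{k}\mathrm{i}^{-1}=-\mathrm{k}$. Extending $\Z_p$-linearly, this agrees with $\alpha$ on a $\Z_p$-basis of $\Delta_p$. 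Hence $\alpha$ is the restriction to $\Delta_p$ of the inner automorphism $x\mapsto\mathrm{i}x\mathrm{i}^{-1}$ of $\Delta_p^*$; in particular it is a continuous $\Z_p$-algebra automorphism of $\Delta_p$, and it has order $2$ because $\mathrm{i}^2=t$ is a unit in the centre.

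Next I would deduce that $\alpha$ restricts to a continuous automorphism of $G$. Being a ring automorphism, $\alpha$ fixes the unique maximal two-sided ideal $\mathfrak{m}$ and hence preserves $1+\mathfrak{m}$. The bar map is an anti-involution and conjugation by $\mathrm{i}$ commutes with it because $\overline{\mathrm{i}x\mathrm{i}^{-1}}=\overline{\mathrm{i}^{-1}}\,\overline{x}\,\overline{\mathrm{i}}=(-\mathrm{i})^{-1}\overline{x}(-\mathrm{i})=\mathrm{i}^{-1}\overline{x}\mathrm{i}$, which equals $\mathrm{i}\overline{x}\mathrm{i}^{-1}$ since $\mathrm{i}^2=t$ is central. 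Therefore, for $x\in\Sn(\Delta_p)$, one has $\overline{\alpha(x)}=\alpha(\overline{x})=\alpha(x^{-1})=\alpha(x)^{-1}$, and so $\alpha(x)\in G$. This gives the first claim.

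For the second claim, I would use Lemma \ref{lower sl1} and the isomorphism $(1+\mathfrak{m})/(1+\mathfrak{m}^2)\to\mathfrak{m}/\mathfrak{m}^2$ sending $1+m$ to $m+\mathfrak{m}^2$ to identify $G/G_2$ with the image in $\mathfrak{m}/\mathfrak{m}^2$ of the elements $m\in\mathfrak{m}$ with $1+m\in G$. Since $\mathfrak{m}=p\Z_p+p\Z_p\mathrm{i}+\Z_p\mathrm{j}+\Z_p\mathrm{k}$ and $\mathfrak{m}^2=p\Delta_p$, the quotient $\mathfrak{m}/\mathfrak{m}^2$ is spanned by the classes of $\mathrm{j}$ and $\mathrm{k}$, and both $\alpha$ and the bar map act on these two basis vectors as multiplication by $-1$. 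Hence for any $x=1+m\in G$ one has $\alpha(x)\equiv 1+\alpha(m)\equiv 1+\overline{m}\equiv\overline{x}=x^{-1}\pmod{G_2}$, which is exactly the required statement. There is no serious obstacle here: the only delicate point is checking that the slick ``conjugation by $\mathrm{i}$'' identification really reproduces $\alpha$ on all of $\Delta_p$, which is why I would verify it on the generators $\mathrm{i},\mathrm{j}$ at the outset.
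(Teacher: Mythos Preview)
Your proof is correct and follows essentially the same approach as the paper: both identify $\alpha$ with conjugation by $\mathrm{i}$ and then compare $\alpha$ with the bar map on $G/G_2$ via Lemma~\ref{lower sl1}. You give more detail than the paper (explicitly verifying that conjugation by $\mathrm{i}$ matches the formula, that it commutes with the bar map, and that $\mathfrak{m}/\mathfrak{m}^2$ is spanned by $\mathrm{j},\mathrm{k}$), but the underlying argument is the same.
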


\begin{proof}
The map $\alpha$ coincides with conjugation by $\mathrm{i}$ and it is therefore a continuous automorphism. 
Moreover, thanks to Lemma \ref{lower sl1}, the subgroup $G_2$ coincides with $(1+\mathfrak{m}^2)\cap G$.
Since each element $x$ of $G$ 
can be written in the form $x=1+c\mathrm{j}+d\mathrm{k}+m$, with $c,d\in\Z_p$ and $m\in \mathfrak{m}^2$, we get that 
$\alpha(x)\equiv \overline{x}\bmod G_2$. The elements $\overline{x}$ and $x^{-1}$ being equal, it follows that 
$\alpha(x)\equiv x^{-1}\bmod G_2$.
\end{proof}

\begin{lemma}\label{sl1 has inte 2}
Let $p>3$ be a prime number and let $G=\Sn(\Delta_p)$.
Define moreover $\alpha:G\rightarrow G$ by
$$a+b\mathrm{i}+c\mathrm{j}+d\mathrm{k} \ \mapsto \
a+b\mathrm{i}-c\mathrm{j}-d\mathrm{k}.$$ 
Then $\alpha$ is a topologically intense automorphism of $G$ of order $2$ and $\inte(G)=2$.
\end{lemma}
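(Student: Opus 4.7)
The plan is to reduce the topological intensity of $\alpha$ to a family of finite-group statements about the quotients $G/G_k$ and then to pin down $\inte(G)=2$ by combining an obvious lower bound supplied by $\alpha$ itself with an upper bound inherited from one of those finite quotients.

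First, I would verify that $\alpha$ is a continuous automorphism of order $2$: the computation $\alpha^2=\id$ is immediate from the definition, and $\alpha\neq\id$ because it already acts non-trivially (indeed by inversion) on the non-trivial $\F_p$-vector space $G/G_2$, both points being the content of Lemma \ref{fischiettando}. To establish topological intensity, I invoke Proposition \ref{openclosedtuttouguale}, so that it suffices to send each open subgroup to a $G$-conjugate of itself. Given such an $H$, Lemma \ref{lower sl1} guarantees $H\supseteq G_k$ for some $k\geq 3$. By Lemma \ref{sl1 quotients framed}, $G/G_k$ is a framed $p$-obelisk, and the induced automorphism $\alpha_k$ on $G/G_k$ still inverts the abelianization (again by Lemma \ref{fischiettando}) and has order $2$ (its action on $G/G_2\neq 0$ is non-trivial since $p>2$). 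Applying Theorem \ref{theorem complete char pt.1}, which is available precisely because $p>3$, yields that $\alpha_k$ is intense on $G/G_k$. Hence there exists $g\in G$ with $\alpha(H)G_k=gHg^{-1}G_k$; since $G_k$ is characteristic in $G$, both $\alpha(H)$ and $gHg^{-1}$ contain $G_k$, and the congruence collapses to the desired equality $\alpha(H)=gHg^{-1}$.

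For the intensity computation, the decomposition $\Int_{\cc}(G)=S\rtimes C$ of Proposition \ref{inte profinite} identifies $C$ with a subgroup of $\F_p^*$. The topologically intense involution $\alpha$ acts by scalar multiplication by $-1$ on $G/\Phi(G)=G/G_2$, so its image under the intense character is $-1\in\F_p^*$; in particular $\alpha$ contributes a non-trivial element of $C$, so $\inte(G)\geq 2$. For the matching upper bound I use the formula $\inte(G)=\gcd_N\inte(G/N)$ from Proposition \ref{inte profinite}, applied to $N=G_5$. The quotient $G/G_5$ has class $4$ (the layers $\mathfrak{m}^k/\mathfrak{m}^{k+1}$ are non-zero for every $k$, so the lower central series of $G$ strictly descends through degree $5$ by Lemma \ref{lower sl1}), and it carries the induced intense involution $\alpha_5$ from the previous paragraph, forcing $\inte(G/G_5)>1$. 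Theorem \ref{theorem class at least 3}(1) then pins $\inte(G/G_5)=2$, whence $\inte(G)\mid 2$ and the two bounds combine to $\inte(G)=2$.

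The main obstacle will be the cleanness of the descent: one must ensure that the conjugators produced by intensity on each $G/G_k$ actually come from $G$ (which is automatic, as the projection $G\to G/G_k$ is surjective) and, more importantly, that the resulting congruences modulo $G_k$ upgrade to honest equalities in $G$. Both points rest on the characteristic nature of the subgroups $G_k$ and on the fact that the inverted quotient $G/G_2$ is a non-zero $\F_p$-vector space; once those are verified, Theorem \ref{theorem complete char pt.1} and Theorem \ref{theorem class at least 3} carry out essentially all the remaining work.
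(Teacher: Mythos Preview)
Your proposal is correct and follows essentially the same route as the paper: reduce to open subgroups via Proposition \ref{openclosedtuttouguale}, pass to a finite quotient $G/G_k$ of class at least $3$, invoke Lemma \ref{sl1 quotients framed} together with Lemma \ref{fischiettando} so that Theorem \ref{theorem complete char pt.1} applies, and then bound the intensity using Proposition \ref{inte profinite} and Theorem \ref{theorem class at least 3}. Your argument is in fact slightly more careful than the paper's in two places: you derive the openness of the $G_k$ from Lemma \ref{lower sl1} rather than citing Lemma \ref{lcs open} (whose standing hypotheses already assume $\inte(G)>1$), and you explicitly check that the induced $\alpha_k$ retains order $2$.
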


\begin{proof}
By Lemma \ref{fischiettando}, the map $\alpha$ is a continuous automorphism of $G$ and, by its definition, it clearly has order $2$.
We prove that $\alpha$ is topologically intense.
To this end, let $H$ be an open subgroup of $G$. As a consequence of Lemma \ref{lcs open}, there exists a positive integer $k$ such that $G_k$ is contained in $H$. Fix such integer $k$ and define $K=\max\graffe{k,4}$. Denote $\overline{G}=G/G_K$ and use the bar notation for the subgroups of $\overline{G}$. Denote moreover by $\alpha_K$ the automorphism that is induced on $\overline{G}$ by $\alpha$. Then $\alpha_K$ induces the inversion map on $\overline{G}/\overline{G_2}$, as a consequence of Lemma \ref{fischiettando} and the definition of $\alpha_K$. 
Moreover, the class of $\overline{G}$ is at least $3$ so, thanks to Lemma \ref{sl1 quotients framed}, the group $\overline{G}$ is a framed obelisk. It follows from Theorem \ref{theorem complete char pt.1} that $\alpha_K$ is intense, so there exists $g\in G$ such that $\alpha_K(\overline{H})=\overline{gHg^{-1}}$. 
Furthermore, we have that $\alpha(H)=gHg^{-1}$ and, the choice of $H$ being arbitrary, it follows from Proposition \ref{openclosedtuttouguale} that $\alpha$ is topologically intense. In particular, $\inte(G)$ is even.
The intensity of $G$ is equal to $2$, as a consequence of Proposition \ref{inte profinite} and Theorem \ref{theorem class at least 3}.
\end{proof}

\section{Non-abelian groups, part II}\label{section na2}

The aim of this section is to give a proof of the following proposition.

\begin{proposition}\label{proposition unique infinite}
Let $p>3$ be a prime number and let $P$ be a non-abelian infinite 
pro-$p$-group. Assume that $\inte(P)>1$. 
Then $P$ is topologically isomorphic to $\Sn(\Delta_p)$.  
\end{proposition}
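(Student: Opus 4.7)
The plan is to combine the structural results from Sections \ref{section if top intense} and \ref{section two gps} with a classification of $3$-dimensional torsion-free just-infinite pro-$p$-groups (via the $\Q_p$-Lie algebra attached to an open uniform subgroup), and then to rule out the ``other'' candidate by means of Lemma \ref{fuori bello!}.

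First I would collect what is already known about $P$. By Proposition \ref{proposition infinite analytic}, $P$ is a just-infinite $p$-adic analytic pro-$p$-group of dimension $3$, by Lemma \ref{torsion-free} it is torsion-free, and by Lemma \ref{lcs open} the lower central series forms a base of open neighbourhoods of $1$. Lemma \ref{quotients of any class} together with Lemma \ref{quotient obelisk} and Lemma \ref{inte quozienti} shows that, for every $k\geq 5$, the finite quotient $P/P_k$ is a $p$-obelisk of intensity greater than $1$; moreover, for $k$ large enough, $\wt_{P/P_k}(5)=2$ (this follows from Lemma \ref{parity obelisks} once $P/P_k$ has class at least $5$, using that $P$ has infinitely many distinct terms $P_i$). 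Combined with Theorem \ref{theorem we need lines}, this tells us that every sufficiently deep discrete quotient of $P$ is in fact a \emph{framed} $p$-obelisk.

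Next I would identify $P$ up to commensurability. Since $P$ is torsion-free $p$-adic analytic of dimension $3$, Lazard's correspondence attaches to an open uniform subgroup $U$ of $P$ a $3$-dimensional Lie algebra over $\Z_p$ whose extension of scalars to $\Q_p$ is one of the (well-known) $3$-dimensional $\Q_p$-Lie algebras. Since $P$ is just-infinite and non-abelian, this Lie algebra cannot have a nontrivial abelian ideal apart from the centre, which leaves (after identifying the possibilities) only two options up to isomorphism: the Lie algebra of $\SL_2^\triangle(\Z_p)$ and that of $\Sn(\Delta_p)$. The Lazard functor then produces an open continuous embedding of $U$, and hence of $P$ itself, into either $\SL_2^\triangle(\Z_p)$ or $\Sn(\Delta_p)$. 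This classification step is the main obstacle of the argument, and it is where I expect the author to invoke (explicitly or implicitly) the machinery of \cite{analytic} together with the classification of small-dimensional $\Q_p$-Lie algebras.

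Finally I would exclude the case $P\hookrightarrow \SL_2^\triangle(\Z_p)$. If $P$ were open in $\SL_2^\triangle(\Z_p)$, then, since the terms of the lower central series of $P$ are cofinal with those of $\SL_2^\triangle(\Z_p)$, the discrete quotient of $\SL_2^\triangle(\Z_p)$ exhibited in Lemma \ref{fuori bello!} would correspond to a discrete quotient of $P$ of class $6$ that is a $p$-obelisk but not framed: this contradicts what was established in the first paragraph. Hence $P$ embeds as an open subgroup of $\Sn(\Delta_p)$. Since $\Sn(\Delta_p)$ is also just-infinite of dimension $3$ (its lower central series, described in Lemma \ref{lower sl1}, is cofinal with $\{1+\mathfrak{m}^k\cap \Sn(\Delta_p)\}_k$, so any proper quotient is finite), an open embedding $P\hookrightarrow \Sn(\Delta_p)$ must be surjective: otherwise its image would be a proper closed subgroup containing $\Sn(\Delta_p)_k$ for some $k$, and the complementary finite quotient would have to match a finite quotient of $P$, forcing the index to be $1$. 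One concludes that $P$ is topologically isomorphic to $\Sn(\Delta_p)$, which is what Proposition \ref{proposition unique infinite} claims.
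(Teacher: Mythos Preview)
Your overall strategy matches the paper's: first show $P$ is $p$-adic analytic, torsion-free, just-infinite of dimension~$3$ (Proposition~\ref{proposition infinite analytic}, Lemma~\ref{torsion-free}), then invoke a classification of such groups to embed $P$ as an open subgroup of $\Sn(\Delta_p)$ or $\SL_2^\triangle(\Z_p)$, and finally exclude the latter using Lemma~\ref{fuori bello!} and the fact that deep enough discrete quotients of $P$ are framed $p$-obelisks (your first paragraph is essentially Lemma~\ref{quotients framed}). The paper packages the Lie-algebra classification step as Lemma~\ref{dim3jon}, citing~\cite{small}, together with Lemma~\ref{jon solvable torsion} to rule out the solvable case; your sketch of this via Lazard's correspondence is reasonable.

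The genuine gap is in the passage from ``open subgroup of $G$'' to ``$P=G$'', for $G\in\{\SL_2^\triangle(\Z_p),\Sn(\Delta_p)\}$. Your exclusion of $\SL_2^\triangle(\Z_p)$ assumes that the non-framed class-$6$ quotient of $\SL_2^\triangle(\Z_p)$ from Lemma~\ref{fuori bello!} ``corresponds to'' a discrete quotient of $P$; but if $P$ is a \emph{proper} open subgroup, the quotients of the ambient group are not quotients of $P$, and cofinality of the two filtrations does not repair this. Likewise, your argument that an open embedding $P\hookrightarrow\Sn(\Delta_p)$ must be surjective because $\Sn(\Delta_p)$ is just-infinite is incorrect: just-infiniteness constrains normal quotients, not open subgroups, and $\Sn(\Delta_p)$ certainly has proper open subgroups.

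The paper closes this gap with Proposition~\ref{obelisk in obelisk}: a $p$-obelisk contained in a $p$-obelisk equals the whole group. Concretely (Lemma~\ref{only two left}), one picks $k$ large enough that $G_k\subseteq P$ and $P/G_k$ has class $\geq 4$; then $G/G_k$ is a $p$-obelisk by Lemmas~\ref{sl2 quotients obelisks} and~\ref{sl1 quotients framed}, while $P/G_k$ is a $p$-obelisk by Lemma~\ref{quotient obelisk}, and Proposition~\ref{obelisk in obelisk} forces $P/G_k=G/G_k$, hence $P=G$. Only after establishing $P=G$ does the paper apply Lemma~\ref{fuori bello!} together with Lemma~\ref{quotients framed} to discard $\SL_2^\triangle(\Z_p)$. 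You should insert this step; without it, neither the exclusion of $\SL_2^\triangle(\Z_p)$ nor the identification $P\cong\Sn(\Delta_p)$ is justified.
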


\noindent
Until the end of Section \ref{section na2}, let the following assumptions be valid.
Let $p>3$ be a prime number and let $P$ be an infinite non-abelian pro-$p$-group of intensity greater than $1$. 
Let $(P_i)_{i\geq 1}$ denote the lower central series of $P$, as defined in Section \ref{section background}, and let $\alpha$ be a topologically intense automorphism of $P$ of order $2$, which exists thanks to Proposition \ref{inte profinite}.
In the proof of Proposition \ref{proposition unique infinite}, we will make heavy use of results coming from Chapters \ref{chapter obelisks} and \ref{chapter complete char}.

\begin{definition}
Let $G$ be a group. The \index{derived series} \emph{derived series} $(G^{(i)})_{i\geq 0}$ of $G$ is defined recursively by 
$$G^{(0)}=G \ \ \text{and} \ \ G^{(i+1)}=[G^{(i)},G^{(i)}].$$
The group $G$ is \index{solvable group} \emph{solvable} if there exists $k\in\Z_{\geq 0}$ such that $G^{(k)}=\graffe{1}$.
\end{definition}

\begin{lemma}\label{jon solvable torsion}
Every solvable just-infinite pro-$p$-group other than $\Z_p$ has torsion.
\end{lemma}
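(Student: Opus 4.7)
The plan is to prove the contrapositive: a torsion-free solvable just-infinite pro-$p$-group $G$ must be topologically isomorphic to $\Z_p$. First I would observe that $\Phi(G)$ is a closed characteristic subgroup and hence, by just-infiniteness, either trivial or open; the former is impossible as it would force $G$ to be elementary abelian pro-$p$ and hence have non-trivial $p$-torsion. So $\Phi(G)$ is open, and by Lemma \ref{frattini open} the group $G$ is topologically finitely generated. Using solvability, the last non-trivial term $A:=G^{(k-1)}$ of the derived series of $G$ is closed, characteristic, abelian, and non-trivial, hence open in $G$ by just-infiniteness. Since open subgroups of topologically finitely generated pro-$p$-groups are finitely generated, and $A$ is abelian and torsion-free, one has $A\cong\Z_p^d$ for some $d\geq 1$.

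Next I would iteratively enlarge $A$ to a strictly increasing chain of abelian open normal subgroups of $G$. Given $A_i$ abelian open normal in $G$ with $H_i:=G/A_i\neq 1$, pick $h\in Z(H_i)$ of order $p$ (available since $H_i$ is a non-trivial finite $p$-group). Because $h$ is central in $H_i$, the subgroup $A_i^{\langle h\rangle}$ of $\langle h\rangle$-fixed elements of $A_i$ is $G$-stable, hence a closed normal subgroup of $G$; by just-infiniteness it is either trivial or open in $G$. Lifting $h$ to $g\in G$, the identity $g\cdot g^p\cdot g^{-1}=g^p$ combined with the centrality of $h$ shows that $g^p\in A_i^{\langle h\rangle}$. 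If $A_i^{\langle h\rangle}$ were trivial, then $g^p=1$ and $g$ would be a non-trivial element of order $p$, contradicting torsion-freeness. Hence $A_i^{\langle h\rangle}$ is open in $A_i$ and has full $\Z_p$-rank $d$; since $A_i$ is torsion-free of rank $d$, this forces $h$ to act as the identity on $A_i\otimes_{\Z_p}\Q_p$ and therefore on $A_i$ itself. It follows that $A_{i+1}:=\pi^{-1}(\langle h\rangle)$, where $\pi\colon G\to H_i$ is the natural projection, is an abelian closed normal subgroup of $G$ with $[A_{i+1}:A_i]=p$; in particular it is open, and $[G:A_{i+1}]=[G:A_i]/p$.

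Since $[G:A_i]$ is finite and strictly decreases by a factor of $p$ at each step, after finitely many iterations one reaches $A_n=G$, so $G$ is abelian. A torsion-free abelian just-infinite pro-$p$-group has the form $\Z_p^d$ for some $d\geq 1$, and just-infiniteness forces $d=1$ (any $\Z_p^d$ with $d\geq 2$ admits $\Z_p^{d-1}$ as an infinite proper quotient), yielding $G\cong\Z_p$. The main obstacle is the central claim that any $h\in Z(H_i)$ of order $p$ must act trivially on $A_i$: this is where both just-infiniteness and torsion-freeness are used essentially, via the dichotomy that $A_i^{\langle h\rangle}$ is either trivial (ruled out by torsion-freeness of $G$ through the lift $g$) or open in $A_i$ (forcing the $\Q_p$-action of $h$ to be trivial).
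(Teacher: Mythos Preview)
Your argument is correct. The paper itself does not prove this lemma; it simply cites Proposition~6.1 of \cite{small}. So your self-contained route --- showing first that $\Phi(G)$ must be open (hence $G$ is finitely generated), then extracting an open abelian normal subgroup $A\cong\Z_p^d$ from the derived series, and iteratively enlarging it by pulling back a central element $h$ of order $p$ in $G/A_i$ whose action on $A_i$ is forced to be trivial by the just-infinite/torsion-free dichotomy on $A_i^{\langle h\rangle}$ --- is a genuine alternative to the cited reference, and arguably more illuminating than a bare citation. One small point worth tightening: you assert that $G^{(k-1)}$ is closed, which for a general pro-$p$-group is not automatic; you can sidestep this entirely by taking $A=\cl(G^{(k-1)})$ instead, since the closure of an abelian subgroup in a topological group remains abelian and characteristic, and the rest of your argument proceeds unchanged.
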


\begin{proof}
This is Proposition $6.1$ in \cite{small}.
\end{proof}

\noindent
We remind the reader that, for each prime number $p>3$, the groups $\SL_2^\triangle(\Z_p)$ and $\Sn(\Delta_p)$ have been defined in Section \ref{section two gps}.

\begin{lemma}\label{dim3jon}
Let $p>3$ be a prime number and let $G$ be a $p$-adic analytic group.
Assume that $\dim(G)=3$ and that $G$ is both torsion-free and non-solvable. 
Then $G$ is topologically isomorphic to an open subgroup of $\Sn(\Delta_p)$ or $\SL_2^\triangle(\Z_p)$.  
\end{lemma}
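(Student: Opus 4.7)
My plan is to exploit the Lazard correspondence between uniform pro-$p$-groups and $\Z_p$-Lie lattices in order to reduce the problem to the classification of $3$-dimensional Lie algebras over $\Q_p$. The prime $p$ being larger than $3$ (and in particular odd), Lemma \ref{uniform characteristic} provides a characteristic open uniform subgroup $U$ of $G$, which, by Definition \ref{def dim}, has dimension $3$. Since $G$ is torsion-free, so is $U$, and, since non-solvability passes to finite-index subgroups (because $H^{(k)}\subseteq G^{(k)}$ for $H\leq G$ and conversely the derived series of $G$ stabilises whenever that of any open subgroup does), the group $U$ is itself non-solvable.

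Next I would apply Lazard's correspondence in order to attach to $U$ a $3$-dimensional $\Z_p$-Lie lattice $L_U$, and form the $\Q_p$-Lie algebra $\mathfrak{g}=L_U\otimes_{\Z_p}\Q_p$; solvability of the group $U$ is equivalent to solvability of $\mathfrak{g}$, so $\mathfrak{g}$ is non-solvable. Over a field of characteristic zero, a $3$-dimensional non-solvable Lie algebra has trivial radical (the radical would itself be solvable of dimension at most $2$, and a $1$- or $2$-dimensional solvable ideal in a $3$-dimensional Lie algebra forces solvability of the whole), hence $\mathfrak{g}$ is semisimple and, being of dimension $3$, simple. The $3$-dimensional simple Lie algebras over $\Q_p$ are classified as the forms of $\mathfrak{sl}_2$; by standard Galois cohomology these forms are in bijection with the quaternion algebras over $\Q_p$, and there are exactly two such algebras: the split one $M_2(\Q_p)$, giving $\mathfrak{sl}_2(\Q_p)$, and the unique non-split one, namely $\Delta_p\otimes_{\Z_p}\Q_p$, giving $\mathfrak{sl}_1(\Delta_p\otimes_{\Z_p}\Q_p)$.

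In the split case, Lazard's equivalence identifies $U$ with an open uniform subgroup of the pro-$p$-group of units of an order in $\mathfrak{sl}_2(\Q_p)$; all such open uniform pro-$p$-subgroups of $\SL_2(\Q_p)$ are, up to conjugation, contained in $\SL_2^\triangle(\Z_p)$, which is (by Lemma \ref{sl2 quotients obelisks} and \cite[Ch.~$\mathrm{III}.17$]{huppert}) the maximal torsion-free pro-$p$ Sylow subgroup of $\SL_2(\Z_p)$. In the non-split case, Lazard's correspondence identifies $U$ with an open uniform subgroup of $\Sn(\Delta_p)$, since $\Sn(\Delta_p)$ is itself torsion-free, uniform of dimension $3$, and has Lie algebra $\mathfrak{sl}_1(\Delta_p\otimes\Q_p)$ (the torsion-freeness here is where we essentially use that $p>2$ so the logarithm converges on $1+\mathfrak{m}$). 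Finally, to pass from $U$ to $G$ itself, I would use that $G$ sits inside the normaliser of $U$ in the ambient Lie group, and this normaliser is still contained in $\SL_2^\triangle(\Z_p)$ (respectively $\Sn(\Delta_p)$) because in each case the ambient pro-$p$-group is self-normalising among torsion-free pro-$p$-subgroups.

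The main obstacle will be the last step: carefully pinning down the embedding of the full group $G$, rather than only of the open subgroup $U$, inside the correct target $\SL_2^\triangle(\Z_p)$ or $\Sn(\Delta_p)$, and verifying that torsion-freeness of $G$ is what rules out extensions by finite non-trivial subgroups that Lazard's correspondence would otherwise allow. The rest of the argument is structural and mechanical once the Lie-algebra classification is in place; in fact the result is essentially Proposition $6.2$ (or the analogous statement) of \cite{small}, and one could alternatively simply cite it.
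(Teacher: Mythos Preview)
The paper's proof is a bare citation: ``See \cite[Section~$7.3$]{small}.'' You correctly note at the end that one could simply cite \cite{small}, and indeed that is exactly what the paper does. Your sketch of the underlying argument---pass to a uniform open subgroup, apply the Lazard correspondence, classify $3$-dimensional simple $\Q_p$-Lie algebras as forms of $\mathfrak{sl}_2$ via quaternion algebras---is the right outline and is essentially what \cite{small} carries out.

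That said, the part you flag as the ``main obstacle'' is a genuine gap in your sketch, not just a detail. Going from an embedding of the open uniform subgroup $U$ to an embedding of $G$ itself is subtle: your claim that the targets are ``self-normalising among torsion-free pro-$p$-subgroups'' would need real justification, and the assertion that every torsion-free open uniform pro-$p$-subgroup of $\SL_2(\Q_p)$ is conjugate into $\SL_2^\triangle(\Z_p)$ is not obvious (one must control the two conjugacy classes of maximal compact subgroups and their pro-$p$ Sylows, and rule out torsion). These are precisely the technical points that \cite{small} handles. So your strategy is sound, but for a complete proof you would either need to supply these embedding arguments carefully or---as the paper does---defer to \cite{small}.
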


\begin{proof}
See \cite[Section $7.3$]{small}.
\end{proof}

\begin{lemma}\label{open in one of the two}
The group $P$ is topologically isomorphic to an open subgroup of $\Sn(\Delta_p)$ or $\SL_2^\triangle(\Z_p)$.  
\end{lemma}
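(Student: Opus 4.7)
The plan is to assemble this as a direct consequence of Proposition \ref{proposition infinite analytic} together with Lemma \ref{dim3jon}, after verifying that $P$ satisfies the hypotheses of the latter. Since Lemma \ref{dim3jon} requires a torsion-free, non-solvable $p$-adic analytic group of dimension $3$, the only nontrivial check will be non-solvability, because the remaining properties have already been packaged into the earlier structural results of Section \ref{section if top intense}.

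First I would invoke Proposition \ref{proposition infinite analytic} to conclude that $P$ is a just-infinite $p$-adic analytic group of dimension $3$. Next, Lemma \ref{torsion-free} gives that $P$ is torsion-free (the assumptions of that lemma, namely $p>3$, together with $P$ infinite, non-abelian, pro-$p$ with $\inte(P)>1$, match the present hypotheses). Lemma \ref{just-infinite} also guarantees that $P$ is just-infinite, which will be the key to excluding solvability.

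The heart of the argument is showing that $P$ is not solvable. Suppose for contradiction that $P$ is solvable. Then Lemma \ref{jon solvable torsion} applies to the just-infinite pro-$p$-group $P$ and forces either $P\cong\Z_p$ or $P$ has torsion. Since $P$ is torsion-free by the previous step, we must have $P\cong \Z_p$; but $\Z_p$ is abelian, contradicting the non-abelianness of $P$. Hence $P$ is non-solvable.

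Having verified all hypotheses, Lemma \ref{dim3jon} yields that $P$ is topologically isomorphic to an open subgroup of either $\Sn(\Delta_p)$ or $\SL_2^\triangle(\Z_p)$, which is the stated conclusion. There is no genuine obstacle here; the real work sits in the preceding structural lemmas (finite rank, dimension $3$, just-infiniteness, torsion-freeness), and the present lemma is simply the synthesis step that lets us cite the classification recalled from \cite{small}.
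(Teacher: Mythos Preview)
Your proof is correct and follows essentially the same approach as the paper: invoke Proposition \ref{proposition infinite analytic} and Lemma \ref{torsion-free}, use Lemma \ref{jon solvable torsion} together with torsion-freeness and non-abelianness to rule out solvability, then apply Lemma \ref{dim3jon}. The only difference is that you spell out the $\Z_p$ exclusion explicitly, whereas the paper leaves it implicit.
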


\begin{proof}
The group $P$ is a just-infinite $p$-adic analytic group of dimension $3$, by Proposition \ref{proposition infinite analytic}. By Lemma \ref{torsion-free}, the torsion of $P$ is trivial and so, by Lemma \ref{jon solvable torsion}, the group $P$ is not solvable. 
It follows from Lemma  \ref{dim3jon} that $P$ is isomorphic to an open subgroup of 
$\Sn(\Delta_p)$ or $\SL_2^\triangle(\Z_p)$.
\end{proof}

\begin{lemma}\label{only two left}
The group $P$ is topologically isomorphic to $\Sn(\Delta_p)$ or $\SL_2^\triangle(\Z_p)$.  
\end{lemma}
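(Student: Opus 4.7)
By Lemma \ref{open in one of the two}, I may identify $P$ with an open subgroup $H$ of some $G\in\{\Sn(\Delta_p),\SL_2^\triangle(\Z_p)\}$, so the task reduces to proving $H=G$. The plan is to show, for $n$ large enough, that both $H/G_n$ and $G/G_n$ are $p$-obelisks with $H/G_n\subseteq G/G_n$, invoke the rigidity of Proposition \ref{obelisk in obelisk} to conclude $HG_n=G$, and then pass to the limit using that $H$ is closed and $\{G_n\}$ shrinks to $\{1\}$.

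First I would record that $\{G_n\}_{n\geq 1}$ is a base of open neighbourhoods of $1$ in $G$: for $G=\Sn(\Delta_p)$ this is immediate from Lemma \ref{lower sl1}, while for $G=\SL_2^\triangle(\Z_p)$ one may instead transport Lemma \ref{lcs open} through the topological isomorphism $P\cong H$ to obtain the same conclusion on $H$, which then yields $G_n\subseteq H$ for some $n$ by openness. Since each $G_n$ is characteristic in $G$ and hence normal in $H$, for every $n$ with $G_n\subseteq H$ the quotient $H/G_n$ is a genuine discrete quotient of $H$, and under the isomorphism $P\cong H$ it corresponds to a discrete quotient of $P$; Lemma \ref{inte quozienti} then yields $\inte(H/G_n)>1$. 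To apply the $p$-obelisk machinery I need the class $c_n$ of $H/G_n$ to exceed $3$ eventually. By Lemma \ref{quotients of any class}, $P$ admits discrete quotients of every class, so $P$, and therefore $H$, is not nilpotent; hence every $H_{C+1}$ is non-trivial, and choosing a non-identity element $x\in H_{C+1}$ and $n$ with $x\notin G_n$ (possible because $\bigcap_n G_n=\{1\}$) yields $c_n\geq C+1$. In particular $c_n\geq 4$ for all sufficiently large $n$, so Proposition \ref{class 4 obelisk} forces $H/G_n$ to be a $p$-obelisk.

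On the other side, $G/G_n$ is a $p$-obelisk by Lemma \ref{sl1 quotients framed} when $G=\Sn(\Delta_p)$, and by Lemma \ref{sl2 quotients obelisks} when $G=\SL_2^\triangle(\Z_p)$. Since $G_n\subseteq H\subseteq G$, the inclusion $H/G_n\leq G/G_n$ is an inclusion of a $p$-obelisk inside a $p$-obelisk, so Proposition \ref{obelisk in obelisk} gives $H/G_n=G/G_n$, that is $HG_n=G$; as this holds for all sufficiently large $n$ and $H$ is closed, $H=\bigcap_n HG_n=G$. The main obstacle in this argument is the verification that $c_n\to\infty$: knowing only that $P$ has intensity greater than $1$ does not directly give that $P$ modulo a specific term $G_n$ of the ambient group has large class, and it is precisely the non-nilpotence of $P$ extracted from Lemma \ref{quotients of any class} that closes this gap; once that is in place, the $p$-obelisk rigidity from Proposition \ref{obelisk in obelisk} makes the remainder essentially mechanical.
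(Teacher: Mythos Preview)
Your proposal is correct and follows essentially the same route as the paper: identify $P$ with an open subgroup $H$ of $G\in\{\Sn(\Delta_p),\SL_2^\triangle(\Z_p)\}$, find $n$ with $G_n\subseteq H$ such that both $H/G_n$ and $G/G_n$ are $p$-obelisks (via Lemmas \ref{sl2 quotients obelisks}, \ref{sl1 quotients framed}, and \ref{quotient obelisk}), and invoke Proposition \ref{obelisk in obelisk}. Two minor remarks: the final intersection $\bigcap_n HG_n$ is unnecessary, since a single $n$ with $G_n\subseteq H$ and $H/G_n=G/G_n$ already gives $H=HG_n=G$; and your justification that $\bigcap_n G_n=\{1\}$ in the $\SL_2^\triangle(\Z_p)$ case is not quite what you wrote (transporting Lemma \ref{lcs open} to $H$ gives information about $\{H_m\}$, not $\{G_n\}$), though the needed fact follows easily since any open $H$ in a pro-$p$-group contains some $G_n$ via the nilpotence of $G$ modulo the normal core of $H$.
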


\begin{proof}
Let $G\in\graffe{\Sn(\Delta_p),\SL_2^\triangle(\Z_p)}$ and let $(G_i)_{i\geq 1}$ denote the lower central series of $G$.
From the combination of Lemmas \ref{sl2 quotients obelisks} and \ref{sl1 quotients framed}, we know that, for each $k\geq 3$, the quotient $G/G_k$ is a $p$-obelisk.
Let now $H$ be an open subgroup of $G$, such that $P$ is topologically isomorphic to $H$. The existence of $H$ is ensured by Lemma \ref{open in one of the two}.
By Lemma \ref{quotients of any class}, the group $H$ has discrete quotients of any class and, thanks to Lemma \ref{quotient obelisk}, each such quotient, of class at least $4$, is a $p$-obelisk. 
The subgroup $H$ being open, it follows from Lemma \ref{lcs open} that there exists $k\in\Z_{>4}$ such that $G_k$ is contained in $H$ so $H/G_k$ is a 
$p$-obelisk. 
Proposition \ref{obelisk in obelisk} yields $G=H$.
\end{proof}

\begin{lemma}\label{quotients framed}
Each discrete quotient of $P$ of class at least $6$ is a framed $p$-obelisk.
\end{lemma}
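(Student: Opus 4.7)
The plan is to show, for any discrete quotient $Q$ of $P$ of class $c \geq 6$, that $Q$ satisfies the hypotheses of Theorem \ref{theorem we need lines}; then that theorem immediately yields that $Q$ is a framed $p$-obelisk. Everything we need is already collected in the earlier sections of this chapter, so no new computation is required.

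First I would invoke Lemma \ref{inte quozienti} (applied to $\alpha$) to conclude that $\inte(Q) > 1$, since $Q$ is a non-trivial discrete quotient of $P$. Since the class of $Q$ is at least $6 \geq 4$, Lemma \ref{quotient obelisk} then tells us that $Q$ is a $p$-obelisk. Because $5$ is odd and strictly smaller than $c$, part (1) of Lemma \ref{parity obelisks} applied to $Q$ gives $\wt_Q(5) = 2$.

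At this point the hypotheses of Theorem \ref{theorem we need lines} are all verified for $Q$: namely, $Q$ is a finite $p$-group with $\wt_Q(5) = 2$ and $\inte(Q) > 1$. The theorem then yields that $p > 3$ (already known) and, crucially, that $Q$ is a framed $p$-obelisk, which is exactly the conclusion we want. Since the choice of $Q$ among the discrete quotients of $P$ of class at least $6$ was arbitrary, this finishes the proof.

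There is no real obstacle here: the whole point is that the framed condition in the statement has been prepared for precisely by Theorem \ref{theorem we need lines}, together with the parity information on widths of $p$-obelisks from Lemma \ref{parity obelisks}. As an aesthetic remark, note that combining this lemma with Lemma \ref{fuori bello!} and Lemma \ref{only two left} will subsequently force $P$ to be topologically isomorphic to $\Sn(\Delta_p)$ rather than $\SL_2^\triangle(\Z_p)$, which is presumably how this lemma will be used in the proof of Proposition \ref{proposition unique infinite}.
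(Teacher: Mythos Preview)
Your proof is correct and follows essentially the same route as the paper's own argument: both use Lemma \ref{inte quozienti} to obtain intensity greater than $1$ on the quotient, Lemma \ref{quotient obelisk} to see it is a $p$-obelisk, Lemma \ref{parity obelisks}(1) to get $\wt_Q(5)=2$, and then Theorem \ref{theorem we need lines} to conclude the obelisk is framed. Your closing remark about how the lemma is used in the proof of Proposition \ref{proposition unique infinite} is also on the mark.
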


\begin{proof}
Let $\overline{G}$ be a discrete quotient of $P$ of class at least $6$. By Lemma \ref{inte quozienti}, the map $\alpha$ induces an intense automorphism of order $2$ on $\overline{G}$ and, by Lemma \ref{quotient obelisk}, the group $\overline{G}$ is a $p$-obelisk. By Lemma \ref{parity obelisks}($1$), the number $\wt_{\overline{G}}(5)$ is equal to $2$ so, by Theorem \ref{theorem we need lines}, the $p$-obelisk $\overline{G}$ is framed. 
\end{proof}

\noindent
We are finally ready to give the proof of Proposition \ref{proposition unique infinite}. Thanks to Lemma \ref{only two left}, there are only two possibilities for the isomorphism type of $P$: that of $\Sn(\Delta_p)$ or that of $\SL_2^\triangle(\Z_p)$. By Lemma \ref{quotients framed}, every discrete quotient of $P$ of class $6$ is a framed $p$-obelisk so, in view of Lemma \ref{fuori bello!}, the group $\SL_2^\triangle(\Z_p)$ is not isomorphic to $P$. It follows that $P$ is topologically isomorphic to $\Sn(\Delta_p)$ and so the proof of Proposition \ref{proposition unique infinite} is complete.

\section{Proving the main theorems and more}\label{section unique infinite}

In Sections \ref{i} and \ref{ii} we prove respectively Theorem \ref{theorem unique infinite} and Theorem \ref{theorem any class}. The last two theorems are the most important results of Chapter \ref{chapter bilbao}: we are able to draw a bridge between the two thanks to Proposition \ref{proposition function}, which is proven in Section \ref{iii}.

\subsection{The proof of Theorem \ref{theorem unique infinite}}\label{i}

Let $p$ be a prime number. As a consequence of Proposition \ref{inte profinite}, the intensity of a pro-$p$-group divides $p-1$ and so there are no pro-$2$-groups of intensity greater than $1$. Assume now that $p$ is odd. Then, thanks to Corollary \ref{pro-p-abelian}, each infinite abelian pro-$p$-group has intensity $p-1$, which, $p$ being odd, is greater than $1$. Let now $G$ be a non-abelian infinite pro-$p$-group with $\inte(G)>1$. Then $G$ has a discrete quotient of any class, thanks to Lemma \ref{quotients of any class}, so Theorem \ref{theorem 3-groups} yields that $p$ is larger than $3$.
By Proposition \ref{proposition unique infinite}, the group $G$ is topologically isomorphic to $\Sn(\Delta_p)$, which, by Lemma \ref{sl1 has inte 2}, has indeed intensity $2$. 
The proof of Theorem \ref{theorem unique infinite} is now complete.

\subsection{The proof of Theorem \ref{theorem any class}}\label{ii}

Let $p>3$ be a prime number and let $c$ be a positive integer. Write $P=\Sn(\Delta_p)$ and let $(P_i)_{i\geq 1}$ denote the lower central series of $P$, as defined in Section \ref{section background}. Then the group $P/P_{c+1}$ has class $c$ and it is finite, as a consequence of Lemma \ref{lcs open}. The group $P$ being a pro-$p$-group, $P/P_{c+1}$ is a finite $p$-group. Moreover, by Theorem \ref{theorem unique infinite}, the intensity of $P$ is greater than $1$ and so, thanks to Proposition \ref{inte profinite}, we get $\inte(P/P_{c+1})>1$. The number $c$ was chosen arbitrarily and therefore Theorem \ref{theorem any class} is proven.

\subsection{A bridge between finite and infinite}\label{iii}

The purpose of Section \ref{iii} is to compare, for a fixed prime $p>3$, the finite $p$-groups of intensity greater than $1$ with the discrete quotients of $\Sn(\Delta_p)$.

\begin{proposition}\label{proposition function}
Let $p>3$ be a prime number and write $P=\Sn(\Delta_p)$. Denote by $(P_i)_{i\geq 1}$ the lower central series of $P$.
Then there exists a function $f:\Z_{>0}\rightarrow\Z_{\geq 0}$ 
with the following properties. 
\begin{itemize}
 \item[$1$.] One has $\lim_{c\rightarrow\infty}f(c)=\infty$.
 \item[$2$.] For each finite $p$-group $G$ of class $c$ with $\inte(G)>1$, the quotients $G/G_{f(c)}$ and $P/P_{f(c)}$ are isomorphic.
\end{itemize}
\end{proposition}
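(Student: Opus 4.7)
The plan is to define
\[
f(c)\;=\;\sup\bigl\{k\in\Z_{\geq 0}: G/G_k\cong P/P_k \text{ for every finite }p\text{-group }G \text{ with } \cl(G)\geq c \text{ and } \inte(G)>1\bigr\}
\]
and to prove $f(c)\to\infty$ by an inverse-limit argument that reduces to Theorem \ref{theorem unique infinite}. Condition (2) of the proposition is then automatic (class $c$ implies class $\geq c$) and $f$ is nondecreasing by construction, so only the unboundedness of $f$ needs proof.

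I would argue by contradiction. Suppose $f(c)\leq K$ for every $c$. Then for each $c\geq 1$ one can choose a finite $p$-group $G^{(c)}$ of class $\geq c$ with $\inte(G^{(c)})>1$ and $G^{(c)}/G^{(c)}_{K+1}\not\cong P/P_{K+1}$. By Theorems \ref{theorem class at least 3}, \ref{theorem dimensions}, \ref{theorem dimension layers} and Proposition \ref{blackburn 2-1-2}, the widths of any $p$-obelisk of intensity $>1$ are bounded independently of its class. Hence, for each fixed $j\geq 1$, the finite $p$-groups $G^{(c)}/G^{(c)}_j$ (with $c\geq j$) fall into only finitely many isomorphism types. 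A diagonal/pigeonhole argument then produces a subsequence $c_i\to\infty$ along which, for every $j\geq 1$, the quotients $G^{(c_i)}/G^{(c_i)}_j$ stabilize to a fixed finite $p$-group $H_j$; choosing the identifications compatibly with the canonical projections $H_{j+1}\twoheadrightarrow H_j$ makes $(H_j)_j$ into an inverse system, and I set $Q=\varprojlim_j H_j$.

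The group $Q$ is a pro-$p$-group. It is non-abelian because $H_3$ is, and it is infinite because Lemma \ref{parity obelisks} applied to the $p$-obelisk $G^{(c_i)}$ with $c_i\geq j$ gives $|H_j|\geq p^{j-1}$. Every discrete quotient of $Q$ factors through some $H_j$, and by Lemma \ref{intensity of quotients} applied first to $G^{(c_i)}\twoheadrightarrow H_j$ and then to $H_j$, each such discrete quotient has intensity $>1$; Proposition \ref{inte profinite} then forces $\inte(Q)>1$. Theorem \ref{theorem unique infinite} now identifies $Q$ topologically with $P$, so $Q/Q_j\cong P/P_j$ for every $j$. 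Since surjective morphisms of pro-$p$-groups carry the profinite lower central series onto itself, the kernel of $Q\twoheadrightarrow H_j$ is exactly $Q_j$, whence $H_j\cong Q/Q_j\cong P/P_j$. Taking $j=K+1$ contradicts the defining property of the witnesses $G^{(c_i)}$, and the proof is complete.

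The hardest part is the bookkeeping tying the finite layers $H_j$ to the profinite lower central series of the limit $Q$. Two subtleties enter: first, extracting compatible isomorphisms so that $(H_j)_j$ really forms an inverse system with surjective transition maps; second, verifying that the closed subgroup $Q_j$ (defined through closures in Definition \ref{lcs profinite def}) coincides with $\ker(Q\twoheadrightarrow H_j)$. One inclusion is easy because $H_j$ has class at most $j-1$; the other uses the closedness of $Q_j$ together with the standard profinite identity $\bigcap_k Q_j\ker(Q\twoheadrightarrow H_k)=Q_j$. Everything else is an orchestration of compactness with the structural constraints already developed throughout the thesis and the rigidity of $\Sn(\Delta_p)$ furnished by Theorem \ref{theorem unique infinite}.
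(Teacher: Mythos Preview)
Your proof is correct and follows essentially the same approach as the paper: define $f$ so that property~(2) holds by construction, assume $f$ is bounded, and use a compactness/inverse-limit argument to manufacture an infinite non-abelian pro-$p$-group of intensity greater than $1$ that disagrees with $P$ at a fixed level of the lower central series, contradicting Theorem~\ref{theorem unique infinite}.

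The only presentational difference is in how the inverse limit is packaged. The paper works directly with sets $X_c$ of (isomorphism classes of) witness groups of class exactly $c$, equipped with the transition maps $G\mapsto G/G_{c+1}$, and applies Lemma~\ref{inverse limit} to $\{X_c\}$; you instead pick witnesses $G^{(c)}$ and run a diagonal/pigeonhole extraction on the quotients $G^{(c)}/G^{(c)}_j$. These are the same argument. You are, in fact, more explicit than the paper on two points it leaves implicit: that the kernels $\ker(Q\twoheadrightarrow H_j)$ coincide with $Q_j$, and that $\inte(Q)>1$ follows from Proposition~\ref{inte profinite} together with Lemma~\ref{intensity of quotients}. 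Your definition of $f$ using $\cl(G)\geq c$ rather than $\cl(G)=c$ is a harmless variant that makes monotonicity immediate.
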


\begin{proof}
For each positive integer $c$, let $\Int(p,c)$ denote the collection of all finite $p$-groups of class $c$ and intensity greater than $1$.
We define $f:\Z_{>0}\rightarrow\Z_{\geq 0}$ by mapping each element
$c\in\Z_{>0}$ to the maximum index $m\in\Z_{>0}$ for which, whenever $G\in\Int(p,c)$, 
the quotients $G/G_{m}$ and $P/P_{m}$ are isomorphic. 
The map $f$ is well-defined, thanks to Theorem \ref{theorem any class}, and it follows directly from the definition of $f$ that $(2)$ is satisfied. 
Moreover, thanks to Lemma \ref{intensity of quotients}, the function $f$ is non-decreasing. 
We prove $(1)$ by contradiction. Let $C\in\Z_{\geq 0}$ be such that, for all $c\geq C$, one has $f(c)=f(C)$. In other words, for each $c\in\Z_{\geq C}$, there exists $G\in\Int(p,c)$ such that $G/G_{f(C)}$ and $P/P_{f(C)}$ are isomorphic, but $G/G_{f(C)+1}$ and $P/P_{f(C)+1}$ are not. 
For all $c\geq C$, call $X_c$ the collection of such $G$ and note that, for each $c\geq C$, the set $X_c$ is non-empty. 
Thanks to Lemma \ref{intensity of quotients}, for each $c\in\Z_{>C}$, we have a natural map $X_{c+1}\rightarrow X_c$, which is defined by $G\mapsto G/G_{c+1}$. The collection $\graffe{X_c}_{c>C}$ is thus an inverse system of non-empty sets. As a consequence of Theorem \ref{theorem intensity class 3}, the constant $C$ is at least $3$ and so, for each $c>C$,
Theorem \ref{theorem dimension layers} yields that $X_c$ is finite. By Lemma \ref{inverse limit}, the set $X=\underset{c>C}{\varprojlim}X_c$ is non-empty and therefore there exists an infinite non-abelian pro-$p$-group of intensity larger than $1$ and which is, by construction, not isomorphic to $P$. Contradiction to Theorem \ref{theorem unique infinite}. It follows that $(2)$ is satisfied and the proof is complete.
\end{proof}

\noindent
Proposition \ref{proposition function} is the last result of this thesis and, in summary, it states that, for $p>3$, each finite $p$-group $G$ with $\inte(G)> 1$ shares a ``relatively big'' quotient (growing in size with the class of $G$) with the infinite group $\Sn(\Delta_p)$. One can then ask: if $p>3$ and $G$ is a finite $p$-group of intensity greater than $1$, then ``how far is $G$ from being a quotient of $\Sn(\Delta_p)$''? More precisely, if $G$ is a finite $p$-group of class $c$ with $\inte(G)>1$ and $f$ is as in Proposition \ref{proposition function}, then what is the average size of $G_{f(c)}$? Is there an absolute constant $B$ such that, for each $c\in\Z_{>0}$ and for each finite $p$-group $G$ of class $c$ and intensity greater than $1$, one has $|G_{f(c)}|\leq p^B$? 
In view of Theorem \ref{theorem we need lines}, we can surely answer this question if we manage to classify, for each given prime $p>3$, all framed $p$-obelisks that have an automorphism of order $2$ that induces the inversion map on the Frattini quotient of the group.

%--------------------- BIBLIOGRAPHY

\bibliographystyle{alpha}

%------------------------ INDEX

\vspace{-10pt}

\printindex

\newpage

%------------------------------------------------ end mima

\pagestyle{empty}
%\pagenumbering{gobble}

\chapter*{Summary}
\addcontentsline{toc}{chapter}{Summary}

\pagestyle{empty}
\vspace{-50pt}
{\LARGE \textbf{Intense automorphisms of finite groups}}
\vspace{30pt} \\
\noindent
Let $G$ be a group. An automorphism $\alpha$ of $G$ is \emph{intense} if, for every subgroup $H$ of $G$, there exists an element $x$ of $G$ such that $\alpha(H)=xHx^{-1}$. The collection of intense automorphisms of $G$ is a subgroup of $\Aut(G)$, which is denoted by $\Int(G)$.
\newline
\newline
In this thesis we classify the pairs $(p,G)$, where $p$ is a prime number and $G$ is a finite $p$-group but $\Int(G)$ is not. To this end, for each finite $p$-group $G$, we define the \emph{intensity} $\inte(G)$ of $G$ to be the index of any Sylow $p$-subgroup of $\Int(G)$ in $\Int(G)$. We prove that finite $2$-groups have intensity $1$. Next, we prove that, for every prime number $p$, each non-trivial finite abelian $p$-group has intensity $p-1$. We proceed with the classification by progressively increasing the nilpotency class of the groups we are looking at. 
Let $p$ be an odd prime number. We show that the finite $p$-groups of class $2$ and intensity greater than $1$ are exactly the \emph{extraspecial} $p$-groups of exponent $p$. We prove moreover that, if the class is $3$, then a finite $p$-group has intensity greater than $1$ if and only if its abelianization has order $p^2$. The classification process becomes more difficult as the class increases. We prove that there exists a unique $3$-group, up to isomorphism, of class at least $4$ and intensity greater than $1$; that group has order $729$. In contrast with the case of $3$-groups, we demonstrate that, if $p>3$, there exists, for each positive integer $c$, a $p$-group $G$ of class $c$ for which $\Int(G)$ is not itself a $p$-group. To do so, we extend the notion of intensity to pro-$p$-groups and, if $p>3$, we construct an infinite non-abelian pro-$p$-group of intensity greater than $1$. We later prove that the infinite group we constructed is the unique infinite non-abelian pro-$p$-group of intensity greater than $1$, up to isomorphism. 
In conclusion, for each prime number $p>3$, we define a new family of $2$-generated finite $p$-groups, which we call \emph{$p$-obelisks}, and we show that they have exceptionally pleasant properties. The classification of finite $p$-groups of intensity greater than $1$ is completed, modulo the existence of a special kind of automorphisms of $p$-obelisks. 

\chapter*{Samenvatting}
\pagestyle{empty}
\addcontentsline{toc}{chapter}{Samenvatting}
\pagestyle{empty}
\vspace{-50pt}
{\LARGE \textbf{Intense automorfismen van eindige groepen}}
\vspace{30pt} \\
\noindent
Zij $G$ een groep. Een automorfisme $\alpha$ van $G$ heet \emph{intens} als er voor elke ondergroep $H$ van $G$ een element $x$ in $G$ bestaat waarvoor geldt $\alpha(H)=xHx^{-1}$. De verzameling $\Int(G)$ van alle intense automorfismen van $G$ is een ondergroep van $\Aut(G)$.
\newline
\newline
In dit proefschrift classificeren we de paren $(p,G)$, met $p$ een priemgetal en $G$ een eindige $p$-groep waarvoor $\Int(G)$ geen $p$-groep is. Daartoe defini\"eren we voor elke $p$-groep $G$ de \emph{intensiteit} $\inte(G)$ van $G$ als de index van een willekeurige Sylow $p$-ondergroep van $\Int(G)$ in $\Int(G)$. We bewijzen dat eindige $2$-groepen intensiteit $1$ hebben. Vervolgens bewijzen we dat, als $p$ een priemgetal is, elke eindige non-triviale abelse $p$-groep intensiteit $p-1$ heeft. We vervolgen de classificatie door de nilpotentie klasse van de groepen die we bekijken te laten oplopen. 
Zij $p$ een oneven priemgetal. We laten zien dat de $p$-groepen van klasse $2$ en intensiteit groter dan $1$ precies de \emph{extraspeciale} $p$-groepen met exponent $p$ zijn. We bewijzen bovendien dat, als de klasse $3$ is, een eindige $p$-groep intensiteit groter dan $1$ heeft dan en slechts dan als zijn verabelisering orde $p^2$ heeft. De classificatie wordt moeilijker naarmate de klasse groeit. 
We bewijzen dat er op isomorfie na een unieke $3$-groep van klasse groter dan $4$ en intensiteit groter dan $1$ bestaat; deze groep heeft orde $729$. In tegenstelling tot het geval van $3$-groepen tonen we aan dat er, voor $p>3$ en $c$ een positief getal, een $p$-groep $G$ van klasse $c$ bestaat waarvoor $\Int(G)$ geen $p$-groep is. Hiertoe breiden we het begrip intensiteit uit naar pro-$p$-groepen en voor $p>3$ construeren we een oneindige niet-abelse pro-$p$-groep van intensiteit groter dan $1$. Later bewijzen we dat de oneindige groep die we hebben geconstrueerd op isomorfie na de unieke oneindige non-abelse pro-$p$-groep van intensiteit groter dan $1$ is.
Ten slotte defini\"eren we voor elk priemgetal $p>3$ een nieuwe familie van $2$-voortgebrachte eindige $p$-groepen, die we \emph{$p$-obelisken} noemen, en we laten zien dat ze bijzonder aangename eigenschappen hebben. De classificatie van eindige $p$-groepen van intensiteit groter dan $1$ is voltooid, op het bestaan van een speciaal soort automorfismen van $p$-obelisken na.

\chapter*{Acknowledgements}
\addcontentsline{toc}{chapter}{Acknowledgements}
I would like to thank: 
\begin{itemize}
\item[] my supervisor, Hendrik Lenstra, for his guidance and constant help. I am grateful for the beautiful mathematics we have done together and for everything he has taught me in the last years.
\item[] the members of the Doctorate Committee, for the time they spent reading my thesis. 
\item[] Andrea Lucchini, for his help through the years, both mathematical and otherwise. % A special thanks is for his classes: they made me passionate about group theory.
\item[] Ellen Henke, for her accurate comments.% and for spotting an incomplete proof in the first draft of my thesis. 
\item[] Bart de Smit, for helping me with numerous issues; so many that I lost count. %Thank you!
\item[] Jon Gonz\`alez S\`anchez, for sharing his knowledge and precious ideas. % with me. % Since my visit to Bilbao, the world of $p$-groups looks brighter.
\item[] Martin Bright, for helping me with elliptic curves and teaching issues.
\item[] Carlo Pagano, for all the maths we discussed together.
\item[] the algebra, geometry, and number theory group in Leiden; and all its evolutions over the last five years. I couldn't have asked for a better environment to grown in, as a mathematician and as a person. A big thanks to Ronald.
\item[] my friends in Leiden, back home, and all around the globe, for their support and for all the fun times that coloured my PhD adventure. 
\item[] my family. % Ne bih do\v{s}la do kraja bez vas. 
\end{itemize}

\chapter*{Curriculum Vitae}
\addcontentsline{toc}{chapter}{Curriculum Vitae}
Mima Stanojkovski was born on August $21$, $1989$, in Sarajevo (YU). In $1992$, her family moved to Fiera di Primiero (TN), Italy, where she entered elementary school. After the primary studies, she moved, together with her family, to Feltre (BL), Italy. There, she completed her pre-university education. From $2000$ to $2003$, she studied at Scuola Media G.\ Rocca and, from $2003$ to $2008$, she was a student at Liceo Scientifico G.\ Dal Piaz. 
\newline
\newline
After finishing high school, Mima started her bachelor studies in mathematics at Universit\`{a} degli Studi di Trento, Italy. She graduated in $2011$, and wrote a thesis titled \emph{``Controesempi sui compatti''}, supervised by Prof.\ Giuseppe Vigna Suria.
In $2011$, she joined the ALGANT Master program. She spent, between $2011$ and $2013$, one year at Universit\`{a} degli Studi di Padova and one year at Universiteit Leiden. She obtained her master's degree, from both Padova and Leiden, in $2013$. Her master thesis, entitled \emph{``Evolving groups''}, was supervised by Prof.\ Hendrik Lenstra from Universiteit Leiden. 
\newline
\newline
In $2013$ she started her PhD at Universiteit Leiden, under the supervision of Prof.\ Hendrik Lenstra. In relation to the PhD project, in $2014$, she visited both Dr.\ Jon Gonz\'{a}lez 
S\'{a}nchez and Prof.\ Andrea Lucchini, respectively in Bilbao and Padova.
Mima expects to spend the next two academic years at Universit\"{a}t Bielefeld, under the mentorship of Prof.\ Christopher Voll.
\newpage
~
\clearpage

\end{document}